\newcommand{\spn}{\mathrm{span}}
\newcommand{\Cinf}{C^\infty}
\newcommand{\diff}{\mathrm{d}}
\newcommand{\pr}{\mathrm{pr}}
\newcommand{\id}{\mathrm{id}}
\newcommand{\n}[1]{\left\Vert #1\right\Vert} % NORMI
\newcommand{\la}{\left\langle}
\newcommand{\ra}{\right\rangle} % SISÄTULO
\newcommand{\R}{\mathbb{R}}
\newcommand{\N}{\mathbb{N}}
\newcommand{\Z}{\mathbb{Z}}
\newcommand{\be}{\begin{equation}}
\newcommand{\ee}{\end{equation}}
\newcommand{\bea}{\begin{eqnarray}}
\newcommand{\eea}{\end{eqnarray}}
\newcommand{\ben}{\begin{displaymath}}
\newcommand{\een}{\end{displaymath}}
\newcommand{\bean}{\begin{eqnarray*}}
\newcommand{\eean}{\end{eqnarray*}}
\def\ol#1{\overline{#1}}
\newcommand{\mc}[1]{\mathcal{#1}}
\newcommand{\di}[2]{\frac{\diff #1}{\diff #2}}
\newcommand{\dif}[1]{\frac{\diff}{\diff #1}}
\newcommand{\pa}[1]{{{\partial}\over{\partial #1}}}
\newcommand{\mscr}[1]{\textrm{\fontencoding{OMS}\fontfamily{ztmcm}\selectfont#1}
}
\newcommand{\sfrac}[2]{{\textstyle{\frac{#1}{#2}}}}
\newcommand{\Euc}{\mathrm{SE}}
\newcommand{\IM}{\mathrm{im}}
\newcommand{\tr}{\mathrm{tr}}
\newcommand{\Rol}{\mathsf{Rol}}
\newcommand{\GL}{\mathrm{GL}}
\newcommand{\VF}{\mathrm{VF}}
\newcommand{\SO}{\mathrm{SO}}
\newcommand{\so}{\mathfrak{so}}
\newcommand{\gl}{\mathfrak{gl}}
\renewcommand{\di}[2]{\frac{\mathrm{d}{#1}}{\mathrm{d}{#2}}}
\newcommand{\Lie}{\mathrm{Lie}}
\newcommand{\rank}{\mathrm{rank\ }}
\newcommand{\sns}{(\Sigma)_{NS}}
\newcommand{\srol}{(\Sigma)_{R}}
\newcommand{\hM}{\hat{M}}
\newcommand{\hg}{\hat{g}}
\newcommand{\NSDist}{\mc{D}_{\mathrm{NS}}}
\newcommand{\RDist}{\mc{D}_{\mathrm{R}}}
\newcommand{\LNSD}{\mscr{L}_{\mathrm{NS}}}
\newcommand{\LRD}{\mscr{L}_{\mathrm{R}}}
\newcommand{\qmatrix}[1]{ \left( \begin{matrix} #1 \end{matrix} \right) }
\newcommand{\ENDP}{\mathrm{end}}
\def\[#1\]{\begin{align*}#1\end{align*}}
\newtheoremstyle{theorem}{0.5cm}{\topsep}%
   {\sffamily}%         Body font
   {}%         Indent amount (empty = no indent, \parindent = para indent)
   {\bfseries}% Thm head font
   {}%        Punctuation after thm head
   {2ex}%     Space after thm head (\newline = linebreak)
   {\thmname{#1}\thmnumber{ #2}\thmnote{ #3}}%         Thm head spec
\theoremstyle{theorem}
\newtheorem{theorem}{Theorem}[section]
\newtheorem{proposition}[theorem]{Proposition}
\newtheorem{example}[theorem]{Example}
\newtheorem{corollary}[theorem]{Corollary}
\newtheorem{remark}[theorem]{Remark}
\newtheorem{definition}[theorem]{Definition}
\newtheorem{lemma}[theorem]{Lemma}
\begin{document}

\title{Rolling Manifolds: Intrinsic Formulation and Controllability\thanks{
The work of the first author is supported by the ANR project GCM, program ``Blanche'',
(project number NT09$\_$504490)
and the DIGITEO-R\'egion Ile-de-France project CONGEO.
The work of the second author is supported by Finnish Academy of Science and Letters
and Saastamoinen Foundation.
}
}

\author{
Yacine Chitour\thanks{\texttt{yacine.chitour@lss.supelec.fr}, L2S, Universit\'e Paris-Sud XI, CNRS and Sup\'elec, Gif-sur-Yvette, 91192, France. 
}\and Petri Kokkonen\thanks{\texttt{petri.kokkonen@lss.supelec.fr}, L2S, Universit\'e Paris-Sud XI, CNRS and Sup\'elec, Gif-sur-Yvette, 91192, France and University of Eastern Finland, Department of Physics and Mathematics, 70211, Kuopio, Finland.}
}

\date{\today}

\maketitle

\begin{abstract}
  In this paper, we consider two cases of rolling of one smooth connected
  complete Riemannian manifold $(M,g)$ onto another one $(\hM,\hg)$
  of equal dimension $n\geq 2$.  The rolling problem $(NS)$ corresponds to the situation where  there is no relative spin (or twist) of one manifold with respect to the other one.  As for the rolling problem $(R)$, there is no relative spin and also no relative slip. Since the manifolds are not assumed to be embedded into an Euclidean space, we provide an intrinsic description of the two constraints ''without spinning'' and ''without slipping'' in terms of the Levi-Civita connections $\nabla^{g}$ and $\nabla^{\hg}$.  For that purpose, we recast the two rolling problems within the framework of geometric control and associate to each of them a distribution and a control system. We then investigate the relationships between the two control systems
 and we address for both of them the issue
  of complete controllability. For the rolling $(NS)$,
  the reachable set (from any point) can be described
  exactly in terms of the holonomy groups of $(M,g)$ and $(\hM,\hg)$
  respectively, and thus we achieve a complete understanding of the
  controllability properties of the corresponding control system. As
  for the rolling $(R)$, the problem turns out
  to be more delicate. We first provide basic global properties for the reachable set and investigate the associated Lie bracket structure. In particular, we point out the role played by a curvature tensor defined on the state space, that we call the \emph{rolling curvature}. In the case where one of the manifolds is a space form (let say $(\hM,\hg)$), we show that  it is enough to roll along loops of $(M,g)$ and the resulting orbits carry a structure of principal bundle which preserves the rolling $(R)$ distribution. In the zero curvature case, we deduce that the  rolling $(R)$ is completely controllable if and only if the holonomy group of $(M,g)$ is equal to $SO(n)$.  In the nonzero curvature case, we prove that the structure group of the principal bundle can be realized as the holonomy group of a connection on $TM\oplus \R$, that we call the rolling connection. We also show, in the case of positive (constant) curvature, that if the rolling connection is reducible, then $(M,g)$ admits, as Riemannian covering, the unit sphere with the metric induced from the Euclidean metric of $\R^{n+1}$.
When the two manifolds are three-dimensional, we provide a complete local characterization of the reachable sets when the two manifolds are three-dimensional and, in particular, we identify necessary and sufficient conditions for the existence of a non open orbit. Besides the trivial case where the manifolds  $(M,g)$ and $(\hM,\hg)$ are (locally) isometric, we show that (local) non controllability occurs if and only if  $(M,g)$ and $(\hM,\hg)$ are either warped products or contact manifolds with additional restrictions that we precisely describe. Finally, we extend the two types of rolling to the case where the manifolds have different dimensions.
  
  \end{abstract}
  
\tableofcontents

\newpage
\section{Introduction}

In this paper, we study the rolling of a manifold over another one. Unless otherwise precised, manifolds are smooth, connected, oriented, of finite dimension $n\geq 2$, endowed with a complete Riemannian metric. The rolling is assumed to be either without spinning 
$(NS)$ or without spinning nor slipping $(R)$. When both manifolds are isometrically embedded into an Euclidean space, the rolling problem is classical in differential geometry (see \cite{sharpe97}), through the notions of ''development of a manifold'' and ''rolling maps''.
To get an intuitive grasp of the problem, consider the rolling problem $(R)$ of a $2$D convex surface $S_1$ onto another one $S_2$ in the euclidean space $\R^3$, for instance the plate-ball problem, i.e., a sphere rolling onto a plane  in $\R^3$, (cf. \cite{jurd} and \cite{murray-sastry}).  The two surfaces are in contact i.e., they have a common tangent plane at the contact point and, equivalently, their exterior normal vectors are opposite at the contact point. 
If $\gamma:[0,T]\to S_1$ is a $C^1$ regular curve on $S_1$, one says that $S_1$ rolls onto $S_2$ along $\gamma$ without spinning nor slipping if the following holds. The curve traced on $S_1$ by the contact point is equal to $\gamma$ and let 
$\hat\gamma:[0,T]\to S_2$ be the curve traced on $S_2$ by the contact point. 
At time $t\in [0,T]$, the relative orientation of $S_2$ with respect to $S_1$ is measured by the angle 
$\theta(t)$ between  $\dot\gamma(t)$ and $\dot{\hat{\gamma}}(t)$ in the common tangent plane at the contact point. The state space $Q$ of the rolling problem is therefore five dimensional since a point in $Q$ is defined by fixing a point on $S_1$, a point on $S_2$ and an angle in $S^1$, the unit circle. The no-slipping condition says that $\dot{\hat{\gamma}}(t)$ is equal to $\dot\gamma(t)$ rotated by the angle $\theta(t)$ and the no-spinning condition
characterizes $\dot\theta(t)$ in term of the surface elements at $\gamma(t)$ and ${\hat{\gamma}}(t)$ respectively. Then, once a point on $S_2$ and an angle are chosen at time $t=0$, the curves $\hat\gamma$ and $\theta$ are uniquely determined. 
For the rolling $(NS)$, one must choose two  $C^1$ regular curves $\gamma$ and $\hat\gamma$ on $S_1$ and $S_2$ 
respectively, and an angle $\theta_0$ so that one says that $S_1$ rolls onto $S_2$ along $\gamma$ and $\hat\gamma$ without spinning if $(a)$ the curves traced on $S_1$ and $S_2$ by the contact point are equal to $\gamma$ and
$\hat\gamma$ respectively; $(b)$ the no-spin constraint and the initial condition $\theta_0$
determine a unique curve $\theta$ which measures the relative orientation of $S_2$ with respect to $S_1$ along the rolling. 
The most basic issue linked to the rolling problems is that of {\it controllability} i.e., to determine, for two given points $q_{\mathrm{init}}$ and $q_{\mathrm{final}}$ in the state space $Q$, if there exists a curve $\gamma$ so that the rolling of $S_1$ onto $S_2$ along $\gamma$ steers the system from $q_{\mathrm{init}}$ to $q_{\mathrm{final}}$. If this is the case for every points $q_{\mathrm{init}}$ and $q_{\mathrm{final}}$ in $Q$, then the rolling of $S_1$ onto $S_2$ is said to be {\it completely controllable}. 

If the manifolds rolling on each other are two-dimensional, then the controllability issue is well-understood thanks to the work of \cite{agrachev99}, \cite{bryant-hsu} and \cite{marigo-bicchi} especially. 
For instance, in the simply connected case, the rolling $(R)$ is completely controllable if and only if the manifolds are not isometric. In the case where the manifolds are isometric, \cite{agrachev99} also provides a description of the reachable 
sets in terms of isometries between the manifolds.
 
In particular, these reachable sets are immersed submanifolds of $Q$ of dimension either $2$ or $5$. In case the manifolds rolling on each other are isometric convex surfaces, \cite{marigo-bicchi} provides a beautiful description of a two dimensional reachable set: consider the initial configuration given by two (isometric) surfaces in contact so that one is the image of the other one by the symmetry with respect to the (common) tangent plane at the contact point. Then, this symmetry property (chirality) is preserved along the rolling $(R)$. Note that if the (isometric) convex surfaces are not spheres nor planes, the reachable set
starting at a contact point where the Gaussian curvatures are distinct, is open (and thus of dimension 5).
 
From a robotics point of view, once the controllability is well-understood, the next issue to address is that of {\it motion planning}, i.e., defining an effective procedure that produces, for every pair of points  ($q_{\mathrm{init}}$, $q_{\mathrm{final}}$) in the state space $Q$,  a curve $\gamma_{q_{\mathrm{init}},q_{\mathrm{final}}}$ so that the rolling of $S_1$ onto $S_2$ along $\gamma_{q_{\mathrm{init}},q_{\mathrm{final}}}$ steers the system from $q_{\mathrm{init}}$ to $q_{\mathrm{final}}$. In \cite{chelouah01}, an algorithm based on the continuation method 
was proposed to tackle the rolling problem $(R)$ of a strictly convex compact surface onto an Euclidean plane. That algorithm was also proved in \cite{chelouah01} to be convergent and it was numerically implemented in \cite{ACL} (see also \cite{mar-bic2} for another algorithm).

To the best of our knowledge, only the rolling $(R)$ was considered in the litterature, eventhough it is the more delicate, as explained below. The rolling problem $(R)$ is traditionally presented by isometrically embedding the rolling manifolds $M$ and $\hM$ in an Euclidean space  (cf. \cite{sharpe97}, \cite{huper07}) since it is the most intuitive way to provide a rigorous meaning to the notions of relative spin (or twist) and relative slip
of one manifold with respect to the other one. However, the rolling model will depend in general on the embedding. For instance, rolling two 2D spheres of different radii on each other can be isometrically embedded in (at least) two ways in $\R^3$: the smaller sphere can roll onto the bigger one either inside of it or outside. Then one should be able to define rolling without having to resort to any isometric embedding into an Euclidean space. To be satisfactory, that {\it intrinsic} formulation of the rolling should also allow one to address at least the controllability issue.

The first step towards an intrinsic formulation of the rolling starts 
with an intrinsic definition of  the state space $Q$. For $n\geq 3$, the relative orientation betwen two manifolds is deined (in coordinates) by an element of $SO(n)$. Therefore the state space $Q$ is of dimension $2n+n(n-1)/2$ since it is locally diffeomorphic to neighborhoods of 
$M\times\hM\times \SO(n)$.
There are two main approaches for an intrinsic formulation of the rolling problem $(R)$, first considered by \cite{agrachev99} and \cite{bryant-hsu} respectively. Note that the two references only deal with the two dimensional case but it is not hard to generalize them to higher dimensions. In \cite{agrachev99}, the state space $Q$ is given by
\[
Q=\{A:T|_x M\to T|_{\hat{x}} \hat{M}\ |\ A\ \textrm{o-isometry},\ x\in M,\ \hat{x}\in\hat{M}\},
\]
where ''o-isometry'' means positively oriented isometry, (see Definition \ref{defA} below) while in \cite{bryant-hsu}, one has equivalently
$$
Q=(F_{\mathrm{OON}}(M)\times F_{\mathrm{OON}}(\hat{M}))/\Delta,
$$
where $F_{\mathrm{OON}}(M)$, $F_{\mathrm{OON}}(\hat{M})$ be the oriented orthonormal frame bundles of $(M,g)$, $(\hat{M},\hat{g})$ respectively, and $\Delta$ is the diagonal right $\SO(n)$-action (see Proposition \ref{pr:char_of_Q} below) .

The next step towards an intrinsic formulation consists of using either the parallel transports with respect to $\nabla^g$ and $\nabla^{\hg}$ (Agrachev-Sachkov's approach) or alternatively,  
orthonormal moving frames and the structure equations (Bryant-Hsu's approach)
to translate the constraints of no-spinning and no-slipping and derive the admissible curves, i.e., the curves of $Q$ describing the rolling 
$(R)$, cf. Eq. (\ref{eq:cs_rolling}). Finally, one defines either a distribution or a codistribution depending which approach is chosen.  In the present paper, we adopt the Agrachev-Sachkov's approach and we construct an $n$-dimensional distribution
$\RDist$ on $Q$ so that the locally absolutely continuous curves tangent to $\RDist$ are exactly the admissible curves for the rolling problem, cf. Definition \ref{def:rdist}. The construction of $\RDist$ comes along with the construction of (local) basis of vector fields,
which allow one to compute the Lie algebraic structure associated to $\RDist$. 

One should mention the recent work \cite{norway} dealing with an intrinsic formulation of the rolling problem $(R)$ (see Definition 4 page 18 in the reference therein). However, that definition does not allow one to parameterize the admissible curves using a control system and a fortiori to construct a distribution (or a codistribution) associated to the rolling. Therefore, the computations in that paper related to controllability issues are all performed by embedding the rolling into an Euclidean space.

We now describe precisely the results of the present paper. In Section \ref{notations}, are gathered the notations used throughout the paper. After that, the control systems associated to the rolling problems $(NS)$ and $(R)$ are introduced in Section \ref{charac}. Besides the state space $Q$,  one must define the set of admissible controls. For $(NS)$, it is the set of locally absolutely continuous (l.a.c.) curves on $M\times \hM$ while, for $(R)$, it is the set of locally absolutely continuous (l.a.c.) curves on $M$ only. As control systems, we obtain two driftless control systems affine in the control $\sns$ and $\srol$ for $(NS)$ and $(R)$ respectively. We also provide, in Appendix \ref{app:local}, expressions in local coordinates for these control systems. 

The study of the rolling problem $(NS)$ is the objet of Section \ref{sec:2.0}. We first construct the distribution $\NSDist$ of rank $2n$ in $Q$ so that its tangent curves coincide with the admissible curves of $\sns$ and we provide (local) basis of vector fields for $\NSDist$. The controllability issue is completely addressed since we can describe exactly the reachable sets of $\sns$
in terms of  $H^{\nabla^g}$ and $\hat{H}^{\nabla^{\hg}}$, the holonomy groups of $\nabla^g$ and $\nabla^{\hg}$ respectively. We thus derive a necessary and sufficient condition for complete controllability of 
$(NS)$ in terms of the Lie algebras of $H^{\nabla^g}$ and $\hat{H}^{\nabla^{\hg}}$. For instance, if both manifolds $M$ and $\hM$ are simply connected and non symmetric, then the rolling problem $(NS)$ is completely controllable in dimension $n\neq 8$ if and only if  
$H^{\nabla^g}$ or $\hat{H}^{\nabla^{\hg}}$ is equal to 
$SO(n)$. We conclude that section by computing Lie brackets of vector fields tangent to $\NSDist$. 

In Section \ref{sec:2.55}, we start the study of the rolling problem $(R)$. As done for $(NS)$,
we construct the rolling distribution $\RDist$ as a sub-distribution of rank $n$ of $\NSDist$
so that its tangent curves coincide with the admissible curves of $\srol$ and we provide (local) basis of vector fields for $\RDist$. We show that  the rolling $(R)$ of $M$ over $\hM$ is  symmetric to that of $\hM$ over $M$ i.e., the reachable sets are diffeomorphic.
Already from these computations, one can see why we considered the rolling problem $(NS)$: from a technical point of view, it is much easier to
perform Lie brackets computations first with vector fields spanning $\NSDist$ and then specify these computations to vector fields spanning $\RDist$. Moreover, the complete controllability of $(NS)$ being a necessary condition for the  complete controllability of $(R)$, one can derive at once that, for simply connected and non symmetric rolling manifolds, if the rolling problem $(R)$ is completely controllable in dimension $n\neq 8$ then $H^{\nabla^g}$ or $\hat{H}^{\nabla^{\hg}}$ must be equal to 
$SO(n)$. 

The controllability issue for $(R)$ turns out to be much more delicate than that for $(NS)$. 
One reason is that, in general, there is no ''natural'' principal bundle structure on $\pi_{Q,M}:Q\to M$ which leaves invariant the rolling distribution $\RDist$.
Indeed, if it were the case, then all the reachable sets would be diffeomorphic and this is not true in general
(cf. the description of reachable sets of the rolling problem $(R)$ for two-dimensional isometric manifolds).
Despite this fact, we prove that the reachable sets are smooth bundles over $M$ (cf. Proposition \ref{pr:R_orbit_bundle}).

We also have an equivariance property of the reachable sets of  $\RDist$ with respect to the (global) isometries of the manifolds $M$ and $\hM$, as well as an interesting result linking the rolling problem $(R)$ for a pair of manifolds $M$ and $\hM$ and the rolling problem $(R)$ associated to Riemannian coverings of $M$ and $\hM$ respectively. As a consequence, we have that the complete controllability for the rolling problem $(R)$ associated to  a pair of manifolds $M$ and $\hM$ is equivalent to that of  the rolling problem $(R)$ associated to their universal Riemannian coverings. This implies that, as far as complete controllability is concerned, one can assume without loss of generality that $M$ and $\hM$ are simply connected. We then compute the first order Lie brackets of the vector fields generating $\RDist$ and find
that they are (essentially) equal to 
the vector fields given by the vertical lifts of 
\begin{equation}\label{rol00}
\Rol(X,Y)(A):=AR(X,Y)-\hat{R}(AX,AY)A,
\end{equation}
where $X,Y$ are smooth vector fields of $M$, $q=(x,\hat{x};A)\in Q$ and $R(\cdot,\cdot)$, $\hat{R}(\cdot,\cdot)$ are the curvature tensors of $g$ and $\hat{g}$ respectively. We call the vertical vector field
defined in Eq. (\ref{rol00}) the {\it Rolling Curvature}, cf Definition \ref{def-rol} below.
Higher order Lie brackets can now be expressed as linear combinations of covariant derivatives of the Rolling Curvature for the vertical part and evaluations on $\hM$
of the images of the Rolling Curvature and its covariant derivatives. 

In dimension two, the Rolling Curvature is (essentially) equal to $K^M(x)-K^{\hM}(\hat{x})$, where $K^M(\cdot)$, $K^{\hM}(\cdot)$ are the Gaussian curvatures of $M$ and $\hM$ respectively. At some point $q\in Q$ where $K^M(x)-K^{\hM}(\hat{x})\neq 0$, one immediately deduces that the dimension of the evaluation at $q$ of the  Lie algebra of the vector fields spanning  $\RDist$ is equal to five, (the dimension of $Q$) and thus the reachable set from $q$ is open in $Q$. From that fact, one has the following alternative: $(a)$ there exists $q_0\in Q$ so that $K^M-K^{\hM}\equiv 0$ over the reachable set from $q_0$, yielding easily that $M$ and $\hM$ have the same Riemannian covering space (cf. \cite{agrachev99} and \cite{bryant-hsu}); $(b)$ all the reachable sets are open and then the rolling problem $(R)$ is completely controllable. In dimension $n\geq 3$, the Rolling Curvature cannot be reduced to a scalar and it is seems difficult compute in general the rank of  the evaluations of the Lie algebra of the vector fields spanning  $\RDist$. 

We however propose several characterizations of isometry between two Riemannian manifolds based on the rolling perspective. The first one refers to a ''rolling against loops'' property which assumes that there is a $q_0=(x_0,\hat{x}_0;A_0)\in Q$
such that for every loop $\gamma$ on $M$ based at $x_0$, the corresponding rolling curve $\hat{\gamma}_{\RDist}(\gamma,q_0)$ on $\hM$ starting from $q_0$
is a loop based $\hat{x}_0$. Then we prove that, under the previous condition
$(M,g)$ and $(\hat{M},\hat{g})$ have the same universal Riemannian covering, cf. Theorem \ref{th:fixed_point}. 

The second characterization consists of revisiting the classical Ambrose theorem (see \cite{sakai91} Theorem III.5.1) and showing how the standard argument actually gets simplified when recast in the rolling context.
We also prove a version of the Cartan-Ambrose-Hicks theorem, Proposition \ref{pr:weak-C-A-H},
by using the rolling model. In this version, we also also include a condition for certain submersions to exist,
not only (local) geodesic embeddings.
Our proofs are in parallel to those presented in \cite{blumenthal89}, \cite{pawel02}.

In Section \ref{space-form}, we present controllability results when one of the manifolds, let say 
$(\hM,\hg)$, is a space form i.e., a simply connected complete Riemannian manifold of constant curvature.
Our results are actually preliminary and we hope to complete them in a future version of the present draft. 
Let us summarize them. The main feature of this particular case is that
there is a principal bundle structure on the bundle $\pi_{Q,M}:Q\to M$,
which is compatible with the rolling distribution $\RDist$.
In the case $\hat{M}$ has non-zero constant curvature,
this allows us to reduce the problem to a study of a vector bundle connection $\nabla^\Rol$
of the vector bundle $\pi_{TM\oplus\R}:TM\oplus\R\to M$ and its holonomy group,
which is a subgroup of $\SO(n+1)$ or $\SO(n,1)$ depending
whether the curvature of $\hat{M}$ is positive or negative, respectively. 
If $\hat{M}$ has zero curvature i.e., it is the Euclidean plane,
the problem reduces to the study of an affine connection and its holonomy group, a subgroup of $\Euc(n)$, in the sense of \cite{kobayashi63}.
In all the cases, the fibers over $M$ of the $\RDist$-orbits are all diffeomorphic
to the holonomy group of the connection in question.

In the zero curvature case, we prove that the rolling $(R)$ is completely controllable if and only if 
the (Riemannian) holonomy group of $\nabla^g$ is equal to $\SO(n)$. This result is actually similar to Theorem IV.7.1, p. 193 and Theorem IV.7.2, p. 194 in \cite{kobayashi63}. In the non-zero curvature case, we only study the rolling onto an $n$-dimensional sphere. We prove that if the holonomy group of the rolling connection 
$\nabla^{\Rol}$ is reducible, then the sphere endowed with the metric induced by the Euclidean metric of $\R^{n+1}$ must be a Riemannian covering space of $(M,g)$.

Section \ref{se:3D} collects our results for the rolling $(R)$ of three-dimensional Riemannian manifolds. We are able to provide a complete classification of the possible local structures of a non open orbit, and to each of them, to characterize precisely the manifolds $(M,g)$
and $(\hat{M},\hat{g})$ giving rise to such orbits. 

Roughly speaking, what we will prove is that the rolling problem $(R)$
\emph{is not} completely controllable i.e. $\mc{O}_{\RDist}(q_0)$ if and only if
the Riemannian manifolds $(M,g)$ and $(\hat{M},\hat{g})$
are \emph{locally} of the following types (i.e., in open dense sets):
\begin{itemize}
\item[(i)] isometric,
\item[(ii)] both are warped products with the same warping functions or
\item[(iii)] both are of class $\mc{M}_{\beta}$
with the same $\beta>0$.
\end{itemize}
Here, the manifolds of class $\mc{M}_{\beta}$ are defined as three-dimensional Riemannian manifolds carrying a contact structure of particular type, as described in \cite{agrachev10} and that we recall in Appendix \ref{app:3D}.
The possible values of the orbit dimension $d$ of a non open orbit $\mc{O}_{\RDist}(q_0)$ (i.e. $d=\dim\mc{O}_{\RDist}(q_0)$) are correspondingly in (i) $d=3$,
(ii) $d=6$ or $d=8$ where the latter corresponds to the case
where the initial orientation $A_0$ is "generic"
and finally (iii) we have $d=7$ or $d=8$ where again the latter case corresponds to a "generic" initial orientation $A_0$. 

Consequently, it follows that the possible orbit dimensions
for the rolling of 3D manifolds are
\[
\dim\mc{O}_{\RDist}(q_0)\in \{3,6,7,8,9\}
\]
where dimension $d=9$ corresponds to an open orbit (in $Q$).

We do not answer here to the question of global structure of $(M,g)$, $(\hat{M},\hat{g})$
when the rolling problem (R) is not completely controllable
and leave it to a future work.

In Section \ref{diff-dim}, we show how to extend the formalism developed previously to the case where the rolling manifolds have different dimensions. In that case, we show that the rolling of $M$ over $\hM$ is not anymore symmetric with that of $\hM$ over $M$, which is reasonable. We also provide basic controllability results. 

We finally gather in a series of appendices several results either used in the text or directly related to it. In particular, we show how the $\NSDist$ relates to the Sasaki-metric on the tensor space $T^*(M)\otimes T(\hat{M})$.
In the final appendix, we provide, for the sake of completeness, the classical formulation of the rolling problem $(R)$ as embedded in an Euclidean space.

{\bf Acknowledgements.} The authors want to thank P. Pansu and E. Falbel for helpful comments
as well as L. Rifford for having organized the conference "New Trends in Sub-Riemannian Geometry" in Nice
and where this work was first presented in April 2010.

\section{Notations}\label{notations}
For any sets $A,B,C$ and $U\subset A\times B$ and any map $F:U\to C$,
we write $U_a$ and $U^b$ for the sets defined by $\{b\in B\ |\ (a,b)\in U\}$
and $\{a\in A\ |\ (a,b)\in U\}$ respectively. Similarly, let $F_a:U_a\to C$ and $F^b:U^b\to C$
be defined by $F_a(b):=F(a,b)$ and $F^b(a):=F(a,b)$ respectively.
For any sets $V_1,\dots,V_n$ the map $\pr_i:V_1\times\dots\times V_n\to V_i$ denotes the projection onto 
the $i$-th factor. 

For a real matrix $A$, we use  $A^i_j$ to denote the real number on the $i$-th row and $j$-th column 
and the matrix $A$ can then be denoted by $[A^i_j]$.
If, for example, one has $A^i_j=a_{ij}$ for all $i,j$, then one uses the notation 
$A^i_j=(a_{ij})^i_j$ and thus $A=[(a_{ij})^i_j]$.
The matrix multiplication of $A=[A^i_j]$ and $B=[B^i_j]$ is therefore given by $AB=\big[\big(\sum_k A^i_k B^k_j\big)^i_j\big]$.

Suppose $V,W$ are finite dimensional $\R$-linear spaces, $L:V\to W$ is an $\R$-linear map and $F=(v_i)_{i=1}^{\dim V}$, $G=(w_i)_{i=1}^{\dim W}$
are bases of $V$, $W$ respectively. The $\dim W\times\dim V$-real matrix
corresponding to $L$ w.r.t. the bases $F$ and $G$ is denoted by $\mc{M}_{F,G}(L)$. In other words, $L(v_i)=\sum_j \mc{M}_{F,G}(L)_i^j w_j$
(corresponding to the right multiplication by a matrix of a row vector).
Notice that, if $K:W\to U$ is yet another $\R$-linear map to a finite dimensional linear space $U$ with basis $H=(u_i)_{i=1}^{\dim U}$, then 
\[
\mc{M}_{F,H}(K\circ L)=\mc{M}_{G,H}(K)\mc{M}_{F,G}(L).
\]
If $(V,g)$, $(W,h)$ are inner product spaces with inner products $g$ and $h$,
one defines $L^{T_{g,h}}:W\to V$ as the transpose (adjoint) of $A$ w.r.t $g$ and $h$ i.e., $g(L^{T_{g,h}}w,v)=h(w,Lv)$.
With bases $F$ and $G$ as above, one has 
$\mc{M}_{F,G}(L)^T=\mc{M}_{G,F}(L^{T_{g,h}})$,
where $T$ on the left is the usual transpose of a real matrix i.e., the transpose w.r.t standard Euclidean inner products in $\R^N$, $N\in\N$.

In this paper, by a smooth manifold, one means a smooth finite-dimensional, second countable, Hausdorff manifold
(see e.g. \cite{lee02}). 
A smooth manifold $N\subset M$ 
is an immersed submanifold of $M$ if the inclusion map $i:N\to M$
is a smooth immersion. We call $N$ embedded submanifold if the topology
on $N$ induced by the inclusion $i$ coincides with the manifold topology of $N$.
By a smooth submanifold of $M$, we always mean a smooth embedded submanifold.

A smooth bundle (over $M$) is a smooth map $\pi:E\to M$
between two smooth manifolds $E$ and $M$ together with a prescribed smooth manifold $F$
(unique up to diffeomorphism), called the typical fiber of $\pi$,
such that, for each $x\in M$, there is a neighbourhood $U$ of $x$ in $M$
and a smooth diffeomorphism $\tau:\pi^{-1}(U)\to U\times F$
with the property that $\pr_1\circ\tau=\pi|_{\pi^{-1}(U)}$. Such maps $\tau$ 
are called (smooth) local trivializations of $\pi$.

For any smooth map $\pi:E\to M$ between smooth manifolds $E$ and $M$,
 the set $\pi^{-1}(\{x\})=:\pi^{-1}(x)$
is called the $\pi$-fiber over $x$ and it is sometimes
denoted by $E|_x$, when $\pi$ is clear from the context.
A smooth section of a smooth map $\pi:E\to M$ is a smooth map $s:M\to E$
such that $\pi\circ s=\id_{M}$. The set of smooth sections of $\pi$
is denoted by $\Gamma(\pi)$. Local sections
of $\pi$ are sections defined only on open (possibly proper) subsets of $M$. The value $s(x)$ of a section $s$ at $x$ is
usually denoted by $s|_x$.

A smooth manifold $M$ is oriented if there exists a smooth (or continuous)
section, defined on all of $M$, of the bundle of $n$-forms $\pi_{\bigwedge^n(M)}:\bigwedge^n(M)\to M$
where $n=\dim M$. Otherwise mentioned, the smooth manifolds considered in this paper 
are connected and oriented.

A smooth vector bundle is a smooth bundle where the typical fiber $F$ is a finite dimensional $\R$-linear space together with a collection of local trivializations so that 
there is a well defined vector space structure on each $\pi$-fiber 
(see \cite{lee02} for the precise definition).
Some important vector bundles for us over a manifold $M$ are the tangent bundle $\pi_{T(M)}:T(M)\to M$
and different $(k,m)$-tensor bundles $\pi_{T^k_m(N)}:T^k_m(M)\to M$.
We will many times write $TM:=T(M)$ etc. to ease the notation.

If $G$ is a smooth Lie-group, a smooth bundle $\pi:E\to M$ is called a right principal $G$-bundle
if there exists a smooth right action $\mu:E\times G\to E$ of $G$ on $E$
(i.e., $\mu(\mu(y,g),h)=\mu(y,gh)$ where the product $gh$ is computed in $G$)
such that $\pi\circ\mu=\pi\circ\pr_1$ and $\mu$ is free (i.e., $\mu(y,g)=y$ for a $y\in E$ implies $g=e$ the identity of $G$)
and transitive on $\pi$-fibers (i.e., for every $y,z\in \pi^{-1}(x)$ there is a $g\in G$ such that $\mu(y,g)=z$).
It follows from the definition that this bundle has $G$ as the typical fiber.
Similarly, using a left action one defines a left principal $G$-bundle.
For short, by a principal bundle we mean a left or right principal bundle (the side of the action being clear
from context).
There is no difference between left and right principal bundles since
a right principal bundle $\pi_E$ with action $\mu:E\times G\to E$
can be identified with a left principal bundle $\pi_E$ with action $\lambda:G\times E\to E$;
$\lambda(g,y)=\mu(y,g^{-1})$ and vice versa.

For a smooth map $\pi:E\to M$ and $y\in E$, let $V|_y(\pi)$ be the set of all $Y\in T|_y E$ such that $\pi_*(Y)=0$.
If $\pi$ is a smooth bundle, the collection of spaces $V|_y(\pi)$, $y\in E$,
defines a smooth submanifold $V(\pi)$ of $T(E)$
and the restriction $\pi_{T(E)}:T(E)\to E$ to $V(\pi)$ is denoted by $\pi_{V(\pi)}$.
In this case $\pi_{V(\pi)}$ is a vector subbundle of $\pi_{T(E)}$ over $E$.

For a smooth manifold $M$, one uses $\VF(M)$ to denote the set of smooth vector fields on $M$
i.e., the set of smooth sections of the tangent bundle $\pi_{T(M)}:T(M)\to M$.
The flow of a vector field $Y\in \VF(M)$
is a smooth onto map $\Phi_Y:D\to M$ defined on an open subset $D$
of $\R\times M$ containing $\{0\}\times M$ such that $\frac{\partial}{\partial t}\Phi_Y(t,y)=Y|_{\Phi_Y(t,y)}$
for $(t,y)\in D$ and $\Phi_Y(0,y)=y$ for all $y\in M$.
As a default, we will take $D$ to be the maximal flow domain of $X$.

A subset $\mc{D}\subset T(M)$ of the tangent bundle of $M$
is called a smooth distribution on $M$ if
$\pi_{T(M)}|_{\mc{D}}$ is a smooth vector subbundle of $\pi_{T(M)}$ over $M$.
For $x\in M$, the fiber $\pi_{T(M)}|_{\mc{D}}^{-1}(\{x\})$ is denoted by $\mc{D}|_x$ and
the common dimension of the spaces $\mc{D}|_x$, $x\in M$, is called the rank of the distribution $\mc{D}$.

For any distribution $\mc{D}$ on a manifold $M$,  we use $\VF_{\mc{D}}$  to denote the set of vector fields $X\in\VF(M)$
tangent to $\mc{D}$ (i.e., $X|_x\in \mc{D}|_x$ for all $x\in M$)
and we define inductively for $k\geq 2$
\[
\VF_{\mc{D}}^k=\VF^{k-1}_{\mc{D}}+[\VF_{\mc{D}},\VF^{k-1}_{\mc{D}}],
\]
where $\VF^{1}_{\mc{D}}:=\VF_{\mc{D}}$.
The Lie algebra generated by $\VF_{\mc{D}}$
is denoted by $\Lie(\mc{D})$ and it equals $\bigcup_{k} \VF_{\mc{D}}^k$.

For any maps $\gamma:[a,b]\to X$, $\omega:[c,d]\to X$
into a set $X$ such that $\gamma(b)=\omega(c)$ we define
\[
\omega\sqcup\gamma:[a,b+d-c]\to X;
\quad
(\omega\sqcup\gamma)(t)
=\begin{cases}
\gamma(t), & t\in [a,b] \\
\omega(t-b+c), & t\in [b,b+d-c].
\end{cases}
\]
A map $\gamma:[a,b]\to X$ is a loop in $X$ based at $x_0\in X$ if $\gamma(a)=\gamma(b)=x_0$.
In the space of loops $[0,1]\to X$ based at some given point $x_0$,
one defines a group operation ''$.$'', concatenation,
by
\[
\omega.\gamma:=(t\mapsto \gamma(\sfrac{t}{2}))\sqcup (t\mapsto \omega(\sfrac{t}{2})).
\]
This operation gives a group structure on the set of loops of $X$ based at a given point $x_0$.
If $N$ is a smooth manifold and $y\in N$, we use $\Omega_y(N)$ to denote
the set of  all piecewise $C^1$-loops $[0,1]\to N$ of $N$ based at $y$.
In particular, $(\Omega_y(N),.)$ is a group.

A continuous map $c:I\to M$ from a real compact interval $I$ into a smooth manifold $M$
is called \emph{absolutely continuous}, or \emph{a.c.} for short
if, for every $t_0\in I$, there is a smooth coordinate chart $(\phi,U)$
of $M$ such that $c(t_0)\in U$ and $\phi\circ c|_{c^{-1}(U)}$ is absolutely continuous.

Given a smooth distribution $\mc{D}$
on a smooth manifold $M$, we call an absolutely continuous curve $c:I\to M$, $I\subset\R$,
$\mc{D}$-\emph{admissible} if $c$ it is tangent to $\mc{D}$ almost everywhere (a.e.)
i.e., if for almost all $t\in I$ it holds that $\dot{c}(t)\in \mc{D}|_{c(t)}$.
For $x_0\in M$, the endpoints of all the $\mc{D}$-admissible curves of $M$
starting at $x_0$ form the set called \emph{$\mc{D}$-orbit through $x_0$} and denoted $\mc{O}_{\mc{D}}(x_0)$.
More precisely,
\begin{align}\label{eq:orbit}
\mc{O}_{\mc{D}}(x_0)=\{c(1)\ |\ c:[0,1]\to M,\ \mc{D}\mathrm{-admissible},\ c(0)=x_0\}.
\end{align}
By the Orbit Theorem (see \cite{agrachev04}), it follows that $\mc{O}_{\mc{D}}(x_0)$ is an immersed
smooth submanifold of $M$ containing $x_0$.
It is also known that one may restrict to piecewise smooth curves in the
description of the orbit i.e.,
\[
\mc{O}_{\mc{D}}(x_0)=\{c(1)\ |\ c:[0,1]\to M\ \textrm{piecewise smooth and}\ \mc{D}\mathrm{-admissible},\ c(0)=x_0\}.
\] 
We call a smooth distribution $\mc{D}'$ on $M$ a subdistribution of $\mc{D}$
if $\mc{D}'\subset \mc{D}$. An immediate consequence
of the definition of the orbit shows that in this case
\[
\mc{O}_{\mc{D}'}(x_0)\subset \mc{O}_{\mc{D}}(x_0),\quad \forall x_0\in M.
\]

If $\pi:E\to M$, $\eta:F\to M$ are two smooth maps (e.g. bundles), let $C^\infty(\pi,\eta)$ be the set
of all bundle maps $\pi\to \eta$ i.e., smooth maps $g:E\to F$ such that $\eta\circ g=\pi$.
For a manifold $M$, let $\pi_{M_{\R}}:M\times \R\to M$  be the projection onto the first factor i.e., $(x,t)\mapsto x$ (i.e., $\pi_{M_{\R}}=\pr_1$).
Recall that there is a canonical bijection between the set $C^\infty(M)$ of smooth functions on $M$
and the set $C^\infty(\id_M,\pi_{M_{\R}})$ given by
$f\mapsto f_{\R}:=(x\mapsto (x,f(x)))$.

If $\pi:E\to M$, $\eta:F\to M$ are any smooth vector bundles over a
smooth manifold $M$, $f\in C^\infty(\pi,\eta)$ and $u,w\in \pi^{-1}(x)$,
one defines the vertical derivative $f$ at $u$ in the direction $w$ by
\begin{align}\label{eq:vert_diff_bundle_map}
\nu(w)|_u(f):=(D_\nu f)(u)(w):=\dif{t}\big|_0 f(u+tw).
\end{align}
Here $w\mapsto (D_\nu f)(u)(w)=\nu(w)|_u(f)$ is an $\R$-linear map between fibers $\pi^{-1}(x)\to \eta^{-1}(x)$.

In a similar way, in the case of $f\in\Cinf(E)$ and $u,w\in \pi^{-1}(x)$,
one defines the $\pi$-vertical derivative $\nu(w)|_u(f):=D_\nu f(u)(w):=\dif{t}|_0 f(u+tw)$ at $u$ in the direction $w$. This definition agrees with the above
one modulo the canonical bijection $\Cinf(E)\cong \Cinf(\id_E,\pi_{E_{\R}})$.  
This latter definition means that $\nu(w)|_u$ can be viewed as an element of $V|_u(\pi)$
and the mapping $w\mapsto \nu(w)|_u$ gives a (natural) $\R$-linear isomorphism between $\pi^{-1}(x)$
and $V|_u(\pi)$ where $\pi(u)=x$.
If $\tilde{u}\in\Gamma(\pi)$ is a smooth $\pi$-section,
let  $\nu(\tilde{w})$ be the $\pi$-vertical vector field on $E$ defined by
$\nu(\tilde{w})|_u(f)=\nu(\tilde{w}|_x)|_u(f)$, where $\pi(u)=x$ and $f\in\Cinf(E)$.
The same remark holds also locally.

In the case of smooth manifolds $M$ and $\hat{M}$, $x\in M$, $\hat{x}\in\hat{M}$,
we will use freely and without mention the natural inclusions ($\subset$) and isomorphisms ($\cong$): $T|_x M,T|_{\hat{x}}\hat{M}\subset T|_{(x,\hat{x})}(M\times \hat{M})\cong T|_x M\oplus T|_{\hat{x}} \hat{M}$, $T^*|_x M,T^*|_{\hat{x}} \hat{M}\subset T^*|_{(x,\hat{x})} (M\times \hat{M})\cong T^*|_x M\oplus T^*|_{\hat{x}} \hat{M}$.
An element of $T|_{(x,\hat{x})} (M\times \hat{M})\cong T|_{x}(M)\oplus T|_{\hat{x}}(\hat{M})$ with respect to the direct sum
splitting is denoted usually by $(X, \hat{X})$, where $X\in T|_x M$, $\hat{X}\in T|_{\hat{x}} \hat{M}$.
Sometimes it is even more convenient to write
$X+\hat{X}:=(X,\hat{X})$
when we make the identifications $(X, 0)=X$, $(0, \hat{X})=\hat{X}$.

Let $(M,g)$, $(\hat{M},\hat{g})$ be smooth Riemannian manifolds.
A map $f:M\to \hat{M}$ is a \emph{local isometry} if it is smooth, surjective
and for all $x\in M$, $f_*|_x:T|_x M\to T|_{f(x)}\hat{M}$ is an isometric linear map.
A bijective local isometry $f:M\to\hat{M}$ is called an \emph{isometry}
and then $(M,g)$, $(\hat{M},\hat{g})$ are said to be \emph{isometric}.

In this text we say that two Riemannian manifolds $(M,g)$, $(\hat{M},\hat{g})$ are \emph{locally isometric},
if there is a Riemannian manifold $(N,h)$ and local isometries $F:N\to M$ and $G:N\to \hat{M}$
which are also covering maps i.e. if they are \emph{Riemannian covering maps}.
One calls $(N,h)$ a common Riemannian covering space of $(M,g)$ and $(\hat{M},\hat{g})$.
Notice that being locally isometric is an equivalence relation in the class of smooth Riemannian manifolds
(the fact that we assume $F,G$ to be Riemannian covering maps, and not only local isometries,
implies the transitivity of this relation).

The space $\ol{M}=M\times \hat{M}$ is a Riemannian manifold, called the Riemannian product manifold of $(M,g)$, $(\hat{M},\hat{g})$,
when endowed with the product metric $\ol{g}:=g\oplus\hat{g}$.
One often writes this as $(M,g)\times (\hat{M},\hat{g})$.

Let $\nabla$, $\hat{\nabla},\ol{\nabla}$ (resp. $R,\hat{R},\ol{R}$) denote the Levi-Civita connections
(resp. the Riemannian curvature tensors)
of $(M,g)$, $(\hat{M},\hat{g}),(\ol{M}=M\times\hat{M},\ol{g}=g\oplus\hat{g})$ respectively.
From Koszul's formula (cf. \cite{lee02}), one has
\begin{align}
\ol{\nabla}_{(X, \hat{X})} (Y, \hat{Y})=(\nabla_{X} Y, \hat{\nabla}_{\hat{X}} \hat{Y}),
\end{align}
when $X,Y\in \VF(M)$, $\hat{X},\hat{Y}\in \mathrm{VF}(\hat{M})$
and hence from the definition of the Riemannian curvature tensor
\begin{align}
\ol{R}((X, \hat{X}),(Y, \hat{Y}))(Z, \hat{Z})=(R(X,Y)Z, \hat{R}(\hat{X},\hat{Y})\hat{Z}),
\end{align}
where $X,Y,Z\in T|_x M$, $\hat{X},\hat{Y},\hat{Z}\in T|_{\hat{x}}\hat{M}$.

For any $(k,m)$-tensor field $T$ on $M$
we define $\nabla T$ to be the $(k,m+1)$-tensor field
such that (see \cite{sakai91}, p. 30)
\begin{align}\label{eq:tensor_extension}
(\nabla T)(X_1,\dots,X_m,X)=(\nabla_X T)(X_1,\dots,X_m),
\end{align}
$X_1,\dots,X_m,X\in T|_x M$.

Let $x:I\to M$ and $X:I\to TM$ be a smooth curve and a smooth vector field 
along $x$ respectively i.e., a smooth map such that $X(t)\in T|_{x(t)} M$ for all $t\in I$.
A {\it local extension} of $X$ around $t_0$ is a vector field $\tilde{X}\in\VF(M)$ such that  there is an open interval $J$ with $t_0\in J\subset I$ and $\tilde{X}|_{x(t)}=X(t)$ for all $t\in J$.
Then one defines $\nabla_{\dot{x}(t_0)} X$ as $\nabla_{\dot{x}(t_0)} \tilde{X}$ and
it is easily seen that this vector does not depend on the choice of a local extension of $X$ around $t_0$.
The same construction holds true for tensor fields along the path $x(\cdot)$.

The parallel transport of a tensor $T_0\in T^k_m|_{x(0)}(M)$ from $x(0)$ to $x(t)$ along an absolutely continuous curve $x:I\to M$ (with $0\in I$) and with respect to the Levi-Civita connection of $(M,g)$
is denoted by $(P^{\nabla^g})^t_0(x) T_0$. In the notation of the Levi-Civita connection $\nabla^g$ (resp. parallel transport $P^{\nabla^g}$), the upper index $g$
(resp. $\nabla^g$) referring to the Riemannian metric $g$ (resp. the connection $\nabla^g$) is omitted if it is clear from the context. 
We also recall the following basic observation.

\begin{proposition}\label{pr:parallel_tensor}
Let $(M,g)$ be a smooth Riemannian manifold and $t\mapsto x(t)$ an absolutely continuous (a.c. for short) curve on $M$ defined on an open interval $I\ni 0$.
Then the parallel transport $T(t)=(P^{\nabla^g})^t_0(x) T_0$ along $t\mapsto x(t)$ w.r.t $g$ of any $(k,m)$-tensor $T_0\in T^k_m|_{x(0)}(M)$ uniquely exists and is absolutely continuous.
\end{proposition}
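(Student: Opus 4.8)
The plan is to localize in coordinate charts, recognize the parallel transport equation as a homogeneous linear Carath\'eodory system of ODEs whose coefficients are locally integrable \emph{precisely because} $x(\cdot)$ is absolutely continuous, invoke the classical existence--uniqueness theory for such systems, and finally patch the local solutions together along the curve. The case of a general open interval $I\ni 0$ follows by exhausting $I$ by compact subintervals containing $0$, so it suffices to work on a fixed compact $[0,b]\subset I$ (and symmetrically on $[-b,0]$).

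First I would use continuity of $x$ and compactness of $[0,b]$ to cover $x([0,b])$ by finitely many coordinate charts $(\phi_\alpha,U_\alpha)$, obtaining a partition $0=t_0<t_1<\dots<t_N=b$ with $x([t_{i-1},t_i])\subset U_{\alpha_i}$ for suitable indices $\alpha_i$. Working inside a chart and writing a $(k,m)$-tensor field $T(\cdot)$ along $x$ in components $T^{i_1\dots i_k}_{j_1\dots j_m}(t)$ with respect to the coordinate frame, the condition $\nabla^g_{\dot{x}(t)}T(t)=0$ unwinds, by the definition of the connection induced on $T^k_m(M)$, into a homogeneous linear system of the schematic form
\begin{align*}
\dot{T}^{i_1\dots i_k}_{j_1\dots j_m}(t)=\sum\ \pm\,\Gamma^{\,\cdot}_{\,\cdot\,l}\big(x(t)\big)\,\dot{x}^{l}(t)\,T^{\,\cdot}_{\,\cdot}(t),
\end{align*}
where the $\Gamma$'s are the Christoffel symbols of $g$ in the chart and the sum is the usual one over the indices being transported. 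Since the $\Gamma^{\,\cdot}_{\,\cdot\,l}$ are smooth, hence bounded on the compact set $x([t_{i-1},t_i])$, and since each $\dot{x}^{l}$ lies in $L^1([t_{i-1},t_i])$ by absolute continuity of $x$, the coefficient matrix of this linear system lies in $L^1([t_{i-1},t_i])$.

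Next I would apply the classical Carath\'eodory existence--uniqueness theorem for linear systems of ODEs with $L^1$ coefficients: for any prescribed value of $T(t_{i-1})$ there is a unique absolutely continuous solution on all of $[t_{i-1},t_i]$, global existence on the whole subinterval being automatic here because the system is linear (a Gronwall estimate rules out blow-up). Starting from $T(t_0)=T_0$ and solving successively on $[t_0,t_1],[t_1,t_2],\dots$, using at each step the previously obtained value as the new initial condition, produces an absolutely continuous curve $t\mapsto T(t)$ along $x$ on $[0,b]$; on the overlaps $x([t_{i-1},t_i])\cap x([t_i,t_{i+1}])$ the coordinate expressions are related by the transition maps, which are smooth and hence Lipschitz on the relevant compacta, so the resulting object is well defined as a section of $T^k_m(M)$ along $x$ and its absolute continuity does not depend on the choice of charts. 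Uniqueness of the global solution follows from uniqueness on each piece. As an alternative route, one may first prove the statement for $\VF(M)$ (the $(1,0)$-case) and deduce the general tensor case from the compatibility of $\nabla^g$ with tensor products and contractions, since $P^{\nabla^g}$ then acts on $T^k_m(M)$ as the corresponding tensor power of its actions on $TM$ and $T^*M$.

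The main obstacle is bookkeeping rather than conceptual: one must verify that the notion of ``absolutely continuous'' defined chart by chart is genuinely coordinate-independent for the constructed solution --- which reduces to the fact that a smooth map restricted to a compact set is Lipschitz and that the composition of a Lipschitz map with an absolutely continuous curve is again absolutely continuous --- and that the finitely-many-charts patching is consistent, which is exactly where the uniqueness part of the Carath\'eodory theorem enters.
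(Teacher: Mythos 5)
Your proof is correct, and it is essentially the argument the paper relies on: the proposition is stated there as a recalled ``basic observation'' without a written proof, but Appendix \ref{app:local} displays the parallel-transport equation in local coordinates (Eq. (\ref{eq:2.2:0a})) and notes precisely your point, namely that it is a homogeneous linear ODE with locally integrable coefficients, so the solution exists, is unique, is absolutely continuous, and extends to the whole interval. Your additional care about chart-patching and the coordinate-independence of absolute continuity fills in exactly the routine details the paper leaves implicit.
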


Let $(x,\hat{x}):I\to M\times\hat{M}$ be a smooth curve on $M\times\hat{M}$ defined on an open real interval $I$ containing $0$.
If $(X(t), \hat{X}(t)):I\to T(M\times\hat{M})$ is a smooth vector field on $M\times\hat{M}$ along $(x,\hat{x})$ i.e.,
$(X(t), \hat{X}(t))\in T|_{(x(t),\hat{x}(t))} (M\times\hat{M})$ then one has
\begin{align}\label{eq:product_connection}
\ol{\nabla}_{(\dot{x}(t),\dot{\hat{x}}(t))} (X, \hat{X})=(\nabla_{\dot{x}(t)} X, \hat{\nabla}_{\dot{\hat{x}}(t)} \hat{X})
\end{align}
only if the covariant derivatives on the right-hand side are well defined (see the next remark). 

\begin{remark}\label{re:product_connection}
Let $M=\R$, $\hat{M}=\R$ and $(c(t),\hat{c}(t))=(t,0)$, $(X(t), \hat{X}(t))=(1, t)$
and equip $M$ and $\hat{M}$ with the Euclidean metrics: $g(Y,Z)=YZ$,
$\hat{g}(\hat{Y},\hat{Z})=\hat{Y}\hat{Z}$.
Then the left hand side of (\ref{eq:product_connection}) is defined and equals $(0, 1)$
but on the right hand side the covariant derivative $\hat{\nabla}_{\dot{\hat{c}}(t)} \hat{X}=\hat{\nabla}_{0} t$
is not defined: if $\hat{Y}\in \VF(\hat{M})$ were a local extension of $\hat{X}$ around $t=0$
then $t=\hat{X}(t)=\hat{Y}|_{\hat{c}(t)}=\hat{Y}(0)$ for all $t$ in some open interval containing $0$.
This is a contradiction.
Note that an extension of $(X(t), \hat{X}(t))=(1, t)$ around $t=0$
is provided for example by $(x,\hat{x})\mapsto (1, x)$.
\end{remark}

\newcommand{\Iso}{\mathrm{Iso}}

If $(N,h)$ is a Riemannian manifold we
define $\Iso(N,h)$ to be the (smooth Lie) group of isometries of $(N,h)$
i.e., the set of diffeomorphisms $F:N\to N$ such that
$F_*|_y:T|_y N\to T|_{F(y)} N$ is an isometry for all $y\in N$,
cf. \cite{sakai91}, Lemma III.6.4, p. 118.

It is clear that the isometries respect parallel transport in the sense that
for any absolutely continuous $\gamma:[a,b]\to N$ and $F\in\Iso(N,g)$ one has
(cf. \cite{sakai91}, p. 41, Eq. (3.5))
\begin{align}\label{eq:iso_parallel}
F_*|_{\gamma(t)}\circ (P^{\nabla^h})^t_a(\gamma)=(P^{\nabla^h})^t_a(F\circ\gamma)\circ F_*|_{\gamma(a)}.
\end{align}

The following result is standard.

\begin{theorem}
Let $(N,h)$ be a Riemannian manifold
and for any absolutely continuous $\gamma:[0,1]\to M$, $\gamma(0)=y_0$,
define
\[
\Lambda^{\nabla^h}_{y_0}(\gamma)(t)=
\int_0^t (P^{\nabla^h})^0_s(\gamma)\dot{\gamma}(s)\diff s\in T|_{y_0} N,
\quad t\in [0,1].
\]
Then the map $\Lambda^{\nabla^h}_{y_0}:\gamma\mapsto \Lambda^{\nabla^h}_{y_0}(\gamma)(\cdot)$
is an injection from the set of absolutely continuous curves $[0,1]\to N$ starting at $y_0$
onto an open subset of the Banach space of absolutely continuous curves $[0,1]\to T|_{y_0} N$
starting at $0$.

Moreover, the map $\Lambda^{\nabla^h}_{y_0}$ is a bijection onto the latter Banach space if (and only if) $(N,h)$ is a complete Riemannian manifold.
\end{theorem}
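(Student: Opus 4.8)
The plan is to analyze the map $\Lambda^{\nabla^h}_{y_0}$ directly in terms of parallel transport, exploiting the fact that parallel transport along a curve is an invertible isometry between tangent spaces that depends absolutely continuously on the parameter (Proposition \ref{pr:parallel_tensor}). First I would set up the natural inverse candidate: given an absolutely continuous curve $v:[0,1]\to T|_{y_0}N$ with $v(0)=0$, one wants to produce a curve $\gamma$ on $N$ with $\Lambda^{\nabla^h}_{y_0}(\gamma)=v$. Differentiating the defining formula, the condition $\Lambda^{\nabla^h}_{y_0}(\gamma)(t)=v(t)$ reads $(P^{\nabla^h})^0_t(\gamma)\dot\gamma(t)=\dot v(t)$, equivalently $\dot\gamma(t)=(P^{\nabla^h})^t_0(\gamma)\dot v(t)$. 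This is a (time-dependent, non-autonomous) ODE for $\gamma$ on $N$, since the parallel transport operator $(P^{\nabla^h})^t_0(\gamma)$ depends on the curve $\gamma|_{[0,t]}$ itself; the right-hand side is continuous in $\gamma$ in the appropriate sense and $L^1$ in $t$. Injectivity of $\Lambda^{\nabla^h}_{y_0}$ is then immediate: if $\Lambda^{\nabla^h}_{y_0}(\gamma_1)=\Lambda^{\nabla^h}_{y_0}(\gamma_2)=v$, both $\gamma_1,\gamma_2$ solve the same ODE with the same initial point $y_0$, and uniqueness of solutions (Carathéodory theory, since the vector field is locally Lipschitz in the space variable and $L^1$ in time) forces $\gamma_1=\gamma_2$.

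Next I would address the openness of the image and the local solvability. Working in a coordinate chart (or using completeness for the global statement), the ODE $\dot\gamma(t)=(P^{\nabla^h})^t_0(\gamma)\dot v(t)$, $\gamma(0)=y_0$, has by Carathéodory a unique maximal absolutely continuous solution. For a general (not necessarily complete) $(N,h)$, this solution need not exist on all of $[0,1]$ — the curve may run off to infinity — but it does exist on some maximal subinterval, and a standard perturbation/continuity argument shows: if $v$ is in the image (so the solution exists on $[0,1]$), then for $w$ sufficiently $C^0$-close (in fact $L^1$-close in the derivative) to $v$, the corresponding solution also survives on $[0,1]$ by continuous dependence on parameters and a compactness argument on $[0,1]$ (the solution curve for $v$ stays in a compact subset of $N$, hence nearby solutions stay in a slightly larger compact set where the ODE is uniformly well-behaved). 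This yields that $\Lambda^{\nabla^h}_{y_0}$ maps onto an open subset of the Banach space in question. For the "moreover" part, completeness guarantees that the maximal solution cannot escape to infinity in finite time (bounded-length curves on a complete manifold have relatively compact images, and here the length of $\gamma$ on $[0,t]$ equals $\int_0^t\|\dot v(s)\|\,ds\leq\|v\|$ since parallel transport is an isometry), so the solution exists on all of $[0,1]$ for every $v$; conversely, if $(N,h)$ is incomplete, choosing $v$ whose associated constant-speed geodesic-like solution reaches the "edge" in finite time shows surjectivity fails, giving the "only if".

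The main obstacle I expect is making the ODE $\dot\gamma(t)=(P^{\nabla^h})^t_0(\gamma)\dot v(t)$ rigorous as a Carathéodory equation: the right-hand side is not a vector field on $N$ in the usual sense but a functional of the history of $\gamma$, so one must either (i) reformulate it as an honest ODE on the path space, or, more cleanly, (ii) lift everything to the orthonormal frame bundle $F_{\mathrm{OON}}(N)$, where parallel transport becomes the horizontal lift and the equation $\dot\gamma(t)=(P^{\nabla^h})^t_0(\gamma)\dot v(t)$ becomes a genuine non-autonomous ODE of the form $\dot u(t)=\sum_i \dot v^i(t)\,H_i(u)$ with $H_i$ the standard horizontal vector fields, driven by the $L^1$ controls $\dot v^i$. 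In that picture Carathéodory existence-uniqueness, continuous dependence, and the escape-to-infinity dichotomy are completely standard, and completeness of $(N,h)$ is exactly the condition ensuring the horizontal lifts $H_i$ are complete in the relevant sense. I would carry out the proof in this frame-bundle formulation, then project back down to $N$ to obtain the stated result.
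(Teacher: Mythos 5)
The paper does not actually prove this theorem: it is stated as a standard fact (the Cartan development/anti-development correspondence), with no argument supplied. Your proposal is correct and reconstructs precisely the standard proof: reformulating $\dot\gamma(t)=(P^{\nabla^h})^t_0(\gamma)\dot v(t)$ as the genuine Carath\'eodory ODE $\dot u(t)=\sum_i \dot v^i(t)H_i(u(t))$ on the orthonormal frame bundle, getting injectivity and openness from uniqueness and continuous dependence on the $L^1$ controls, and using the isometry property of parallel transport plus Hopf--Rinow to show that completeness is exactly what keeps the finite-length solution curve in a compact set, hence defined on all of $[0,1]$. Nothing essential is missing.
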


\begin{remark}
\begin{itemize}
\item[(i)] For example, in the case where $\gamma$ is the geodesic
$t\mapsto \exp_{y_0} (tY)$ for $Y\in T|_{y_0} N$,
one has
\[
\Lambda^{\nabla^h}_{y_0}(\gamma)(t)=tY.
\]

\item[(ii)] It is directly seen from the definition of $\Lambda^{\nabla^h}_{y_0}$
that it maps injectively (piecewise) $C^k$-curves, $k=1,\dots,\infty$,
starting at $y_0$ to (piecewise) $C^k$-curves starting at $0$.
Moreover, these correspondences are bijective if $(N,h)$ is complete. 

\item[(iii)] The map $\Lambda_{y_0}^{\nabla^h}$
could be used to give the space of absolutely continuous curves $[0,1]\to N$
starting at $y_0$ a structure of a Banach space if $(N,h)$ is complete
or an open subset of a Banach space in the case $(N,h)$ is not complete. 

\end{itemize}
\end{remark}

\section{State Space, Distributions and Computational Tools}\label{charac}

\subsection{State Space}
\subsubsection{Definition of the state space}
After \cite{agrachev99}, \cite{agrachev04} we make the following definition.

\begin{definition}\label{defA}
The \emph{state space $Q=Q(M,\hat{M})$} for the rolling of two $n$-dimensional
\emph{connected, oriented} smooth Riemannian manifolds
$(M,g),(\hat{M},\hat{g})$ is defined as
\[
Q=\{A:T|_x M\to T|_{\hat{x}} \hat{M}\ |\ A\ \textrm{o-isometry},\ x\in M,\ \hat{x}\in\hat{M}\},
\]
where ``o-isometry'' stands for ``orientation preserving isometry''
i.e., if $(X_i)_{i=1}^n$ is a positively oriented $g$-orthonormal frame of $M$ at $x$ then $(AX_i)_{i=1}^n$
is a positively oriented $\hat{g}$-orthonormal frame of $\hat{M}$ at $\hat{x}$.
\end{definition}

The linear space of $\R$-linear map $A:T|_x M\to T|_{\hat{x}} \hat{M}$ 
is canonically isomorphic to the tensor product $T^*|_x M\otimes T|_{\hat{x}}\hat{M}$.
On the other hand, by using the canonical inclusions $T^*|_x M\subset T^*|_{(x,\hat{x})}(M\times\hat{M})$,
$T|_{\hat{x}}\hat{M}\subset T|_{(x,\hat{x})}(M\times\hat{M})$,
the space $T^*|_x M\otimes T|_{\hat{x}}\hat{M}$ is canonically included
in the space $T^1_1(M\times\hat{M})|_{(x,\hat{x})}$ of $(1,1)$-tensors of $M\times\hat{M}$
at $(x,\hat{x})$. These inclusions make
$T^*M\otimes T\hat{M}:=\bigcup_{(x,\hat{x})\in M\times\hat{M}} T^*|_x M\otimes T|_{\hat{x}}\hat{M}$
a subset of $T^1_1(M\times\hat{M})$
such that $\pi_{T^*M\otimes T\hat{M}}:=\pi_{T^1_1(M\times\hat{M})}|_{T^*M\otimes T\hat{M}}:T^*M\otimes T\hat{M}\to M\times\hat{M}$
is a smooth vector subbundle of the bundle of $(1,1)$-tensors $\pi_{T^1_1(M\times\hat{M})}$
on $M\times\hat{M}$.

The state space $Q=Q(M,\hat{M})$ can now be described as a subset of $T^*M\otimes T\hat{M}$
as
\[
Q=\{A\in T^*(M)\otimes T(\hat{M})|_{(x,\hat{x})}\ |\ & (x,\hat{x})\in M\times\hat{M},\\
& \n{AX}_{\hat{g}}=\n{X}_{g},\ \forall X\in T|_x M,\ \det(A)=1\}.
\]
In the next subsection, we will show that $\pi_Q:=\pi_{T^*M\otimes T\hat{M}}|_Q$ is moreover
a smooth subbundle of $\pi_{T^*M\otimes T\hat{M}}$ though it is not a vector subbundle.

It is also convenient to consider the manifold $T^*M\otimes T\hat{M}$ 
and we will refer to it as the \emph{extended state space} for the rolling.
This concept of extended state space naturally makes sense also in the case where $M$ and $\hat{M}$ are not assumed to be oriented (or connected).

A point $A\in T^*M\otimes T\hat{M}$ with $\pi_{T^*M\otimes T\hat{M}}(A)=(x,\hat{x})$
(or $A\in Q$ with $\pi_Q(A)=(x,\hat{x})$) will
be sometimes denoted by $(x,\hat{x};A)$ to emphasize the fact that $A:T|_x M\to T|_{\hat{x}} \hat{M}$.
Thus the notation $q=(x,\hat{x};A)$ simply means that $q=A$.

%%%%%%%%%%%%%%%%%%%%%%%%%%%%%%%%%%%%%%%%%%%%
\subsubsection{The Bundle Structure of $Q$}\label{sec:1.1}
%%%%%%%%%%%%%%%%%%%%%%%%%%%%%%%%%%%%%%%%%%%%

In this subsection, it is shown that $\pi_Q$ is a bundle with typical fiber $\SO(n)$.
We will also argue that, even though $\SO(n)$ is a Lie-group, the bundle $\pi_Q$
cannot in general be given a natural (or useful) $\SO(n)$-principal bundle if $n> 2$
(see also Theorem \ref{th:no_useful_principal_bundle_for_pi_Q}).
We will now present the local trivializations of $\pi_Q$.

\begin{definition}
Suppose the vector fields $X_i\in\VF(M)$ (resp. $\hat{X}_i\in\VF(\hat{M})$), $i=1,\dots,n$ form a $g$-orthonormal
(resp. $\hat{g}$-orthonormal)
frame of vector fields on an open subset $U$ of $M$ (resp. $\hat{U}$ of $\hat{M}$). We denote $F=(X_i)_{i=1}^n$, $\hat{F}=(\hat{X}_i)_{i=1}^n$ and for $x\in U$, $\hat{x}\in \hat{U}$ we let $F|_x=(X_i|_x)_{i=1}^n$,
$\hat{F}|_{\hat{x}}=(\hat{X}_i|_{\hat{x}})_{i=1}^n$
Then a local trivialization $\tau=\tau_{F,\hat{F}}$ of $Q$ over $U\times \hat{U}$ \emph{induced by $F,\hat{F}$} is given by
\begin{align}
\tau:\pi_Q^{-1}(U\times\hat{U})&\to (U\times\hat{U})\times \SO(n)\nonumber\\
(x,\hat{x};A)&\mapsto \big((x,\hat{x}), \mc{M}_{F|_x,\hat{F}|_{\hat{x}}}(A)\big),\nonumber
\end{align}
where $\mc{M}_{F|_x,\hat{F}|_{\hat{x}}}(A)^j_i=\hat{g}(AX_i,\hat{X_j})$
since $AX_i|_x=\sum_j \hat{g}(AX_i|_x,\hat{X}_j|_{\hat{x}})\hat{X}_j|_{\hat{x}}$.
\end{definition}

For the sake of clarity, we shall write $\mc{M}_{F|_x,\hat{F}|_{\hat{x}}}(A)$ as $\mc{M}_{F,\hat{F}}(A)$.
Obviously $\n{AX}_{\hat{g}}=\n{X}_g$ for all $X\in T|_x M$ is equivalent to $A^{T_{g,\hat{g}}} A=\id_{T|_x M}$ and thus
we get
\[
\mc{M}_{F,\hat{F}}(A)^T\mc{M}_{F,\hat{F}}(A)=\mc{M}_{\hat{F},F}(A^{T_{g,\hat{g}}})\mc{M}_{F,\hat{F}}(A)
=\mc{M}_{F,F}(\id_{T|_x M})=\id_{\R^n},
\]
where $T$ denotes the usual transpose in $\gl(n)$, the set of Lie algebra of $n\times n$-real matrices. 
Since $\det \mc{M}_{F,\hat{F}}(A)=\det(A)=+1$, one finally has $\mc{M}_{F,\hat{F}}(A)\in\SO(n)$.

\begin{remark}
Notice that the above local trivializations $\tau_{F,\hat{F}}$ of $\pi_Q$ are just the restrictions of the vector bundle local trivializations
$$(\pi_{T^*(M)\otimes T(\hat{M})})^{-1} (U\times\hat{U})\to (U\times\hat{U})\times \gl(n)$$
of the bundle $\pi_{T^*(M)\otimes T(\hat{M})}$ induced by $F,\hat{F}$ and defined by the same formula as $\tau_{F,\hat{F}}$.
In this setting, one does not even have to assume that the local frames $F$, $\hat{F}$ are $g$- or $\hat{g}$-orthonormal.
Hence $\pi_Q$ is a smooth subbundle of $\pi_{T^*M\otimes T\hat{M}}$
with $Q$ a smooth submanifold of $T^*M\otimes T\hat{M}$.
\end{remark}

We next  spell out the transition functions of the above defined local trivializations of $\pi_Q$
(and also of $\pi_{T^*M\otimes T\hat{M}}$ by the above remark).
If $F'=((X'_i),U')$,$\hat{F}'=((\hat{X}'_i),\hat{U}')$ are other  $g$-, $\hat{g}$-orthonormal frames (with $U\cap U'\neq\emptyset$, $\hat{U}\cap \hat{U}'\neq\emptyset$)
and $A=[A^j_i]\in \SO(n)$, then
\bean
&& (\tau_{F',\hat{F}'}\circ \tau_{F,\hat{F}}^{-1})((x,\hat{x}),A)
=\tau_{F',\hat{F}'}\big(x,\hat{x}; \sum_{i,j} A^j_i g(X_i,\cdot)\hat{X}_j \big) \\
&=&\Big((x,\hat{x}),\Big[\Big(\sum_{i,j} A^j_i g( X_i,X'_k)\hat{g}(\hat{X}_j,\hat{X}'_l))\Big)^l_k\Big]\Big)\\
&=&\big((x,\hat{x}), [(\hat{g}( \hat{X}_j,\hat{X}'_l))_j^l] A [(g( X_i,X'_k))^k_i]^T \big) \\
&=&\big((x,\hat{x}), \mc{M}_{\hat{F},\hat{F}'}(\id_{T|_{\hat{x}} \hat{M}}) A \mc{M}_{F,F'}(\id_{T|_x M})^T \big)
\eean
for $x\in U\cap U'$, $\hat{x}\in \hat{U}\cap \hat{U}'$.

Any local trivialization $\tau:\pi_Q^{-1}(\ol{U})\to \ol{U}\times\SO(n)$  of $\pi_Q$
defined on an open set $\ol{U}\subset M\times\hat{M}$
would define a principal $\SO(n)$-bundle structure on $\pi_Q^{-1}(\ol{U})$
(or rather for $\pi_Q|_{\pi_Q^{-1}(\ol{U})}$)
by the formula (see \cite{spivakII99}, p. 307)
\begin{align}\label{eq:principal_action}
\mu((x,\hat{x};A),B)=\tau^{-1}((x,\hat{x}),(\pr_2\circ \tau)(x,\hat{x};A)B),
\end{align}
with $\mu:\pi_Q^{-1}(\ol{U})\times \SO(n)\to \pi_Q^{-1}(\ol{U})$
the right $\SO(n)$-action of this principal bundle structure.
However, we will show that if we take for the local trivializations $\tau$
the ones induced by local orthonormal frames $\tau=\tau_{F,\hat{F}}$
as above, then the (local) actions $\mu_{F,\hat{F}}$ defined by the above formula
by these different local trivializations $\tau_{F,\hat{F}}$ do not
glue up to form a global principal bundle structure for $\pi_Q$
if the dimension $n$ of $M$ and $\hat{M}$ is greater than $2$.
We state this in the following proposition.

\begin{proposition}
The local actions (\ref{eq:principal_action})
do not render the bundle $\pi_Q$ to a principal $\SO(n)$-bundle except when $n\leq 2$.
\end{proposition}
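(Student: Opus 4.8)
The plan is to show that the obstruction to gluing the local actions $\mu_{F,\hat{F}}$ is encoded in the transition functions, which were computed just above the statement, and that these transition functions fail to be right translations on $\SO(n)$ precisely when $n > 2$. First I would recall that, by the general construction (\ref{eq:principal_action}), two local trivializations $\tau_{F,\hat{F}}$ and $\tau_{F',\hat{F}'}$ define the \emph{same} local principal action on the overlap if and only if the transition function $\tau_{F',\hat{F}'}\circ\tau_{F,\hat{F}}^{-1}$ acts on the fiber $\SO(n)$ by \emph{left} translation (since a right $\SO(n)$-action commuting with right translations is itself given by right translations, and compatibility of $\mu_{F,\hat{F}}$ with $\mu_{F',\hat{F}'}$ forces the fiber part of the transition map to be a left translation). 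From the computation preceding the statement, the fiber component of the transition function is
\[
B\longmapsto \mc{M}_{\hat{F},\hat{F}'}(\id_{T|_{\hat{x}}\hat{M}})\, B\, \mc{M}_{F,F'}(\id_{T|_x M})^T,
\]
i.e. $B\mapsto P B Q^T$ with $P=\mc{M}_{\hat{F},\hat{F}'}(\id)\in\SO(n)$ and $Q=\mc{M}_{F,F'}(\id)\in\SO(n)$. So the local actions glue if and only if for every admissible overlap one has $Q^T = \id$, i.e. $Q=\id$; equivalently, the transition map is a pure left translation by $P$ for \emph{all} choices of orthonormal frames.

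Next I would exhibit frames for which $Q=\mc{M}_{F,F'}(\id)\neq\id$ while the overlap is nonempty. This is elementary: on any open set $U\subset M$ with an orthonormal frame $F=(X_i)$, and any smooth map $x\mapsto g(x)\in\SO(n)$ that is not identically $\id$ (which exists as soon as $n\geq 2$ — e.g. a rotation in a fixed coordinate $2$-plane by an angle that is a nonconstant bump function), the frame $F'=F\cdot g$ is another orthonormal frame over $U$, and $\mc{M}_{F,F'}(\id)=g(x)$. Taking $\hat{F}'=\hat{F}$ on $\hat{U}$, the transition function is $B\mapsto B\,g(x)^T$, which is a right translation — not a left translation — so $\mu_{F,\hat{F}}$ and $\mu_{F',\hat{F}'}$ disagree somewhere on $\pi_Q^{-1}((U\times\hat{U})\cap(U\times\hat{U}))$. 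Hence the local actions do not glue to a global principal action.

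It remains to explain why the case $n\leq 2$ is genuinely different, so that the ``except when $n\leq 2$'' clause is justified and not merely a gap in the argument. For $n=1$, $\SO(1)$ is trivial and there is nothing to prove; $Q$ is just (an open piece of) $M\times\hat{M}$. For $n=2$, $\SO(2)$ is abelian, so left and right translations coincide: $B\mapsto PBQ^T = (PQ^T)B$, which \emph{is} a left translation, so all the obstructions identified above vanish and the local actions do glue — indeed $\pi_Q$ is then an honest principal $\SO(2)$-bundle. The main subtlety — and the one point I would be careful about — is the converse direction of the logic in the first paragraph: I must argue cleanly that \emph{compatibility of the local right actions forces} the fiber part of each transition function to be a left translation, rather than merely observing that left translations suffice. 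This is where the non-commutativity of $\SO(n)$ for $n>2$ does the real work: a bijection $\phi:\SO(n)\to\SO(n)$ that intertwines the two right-multiplication actions $R_C$ must satisfy $\phi(BC)=\phi(B)C$ for all $C$, hence $\phi(B)=\phi(\id)\cdot B$ is left translation by $\phi(\id)$; and since the actual transition map $B\mapsto PBQ^T$ is left translation by $P$ composed with right translation by $Q^T$, it is of this form if and only if $Q^T$ is central in $\SO(n)$, i.e. $Q=\pm\id$, and in fact $Q\in\SO(n)$ with the frame orientation fixed forces $Q=\id$ — which the explicit frames above violate. Packaging this observation together with the explicit counterexample completes the proof.
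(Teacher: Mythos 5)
Your proof is correct and takes essentially the same route as the paper's: both reduce compatibility of the local actions to the condition that the frame-transition matrix $\mc{M}_{F',F}(\id)$ commute with every element of $\SO(n)$ (equivalently, that the fiber part of the transition map be a left translation), which fails for $n\geq 3$ by non-commutativity — you merely make this concrete with the explicit frame $F'=F\cdot g$ — and holds for $n\leq 2$ since $\SO(2)$ is abelian. One harmless slip: fixing orientation does not force $Q=\id$ (for even $n$ the frame $F'=-F$ gives $Q=-\id$), but your argument only requires exhibiting a non-central $Q$, which your bump-function rotation does.
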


\begin{proof}
If $\pi_Q$ were a principal $\SO(n)$-bundle w.r.t local trivializations
induced by the orthonormal frames of $M$ and $\hat{M}$, then the right action $\mu:Q\times \SO(n)\to Q$ of $\SO(n)$ on $Q$
of this principal bundle structure would be given (locally) by (see above)
\[
\mu((x,\hat{x};A),B)=\tau^{-1}((x,\hat{x}),(\pr_2\circ \tau)(x,\hat{x};A)B),
\]
for any of the local trivializations $\tau=\tau_{F,\hat{F}}$ induced by orthonormal local frames $F,\hat{F}$ of $M$, $\hat{M}$ and any $(x,\hat{x};A)$ with $x$, $\hat{x}$
in these domains and any $B\in \SO(n)$.
Equivalently, the above condition could be written as $$(\pr_2\circ \tau)(\mu(q,B))=(\pr_2\circ \tau)(q)B,$$
for any $q\in Q$ in the domain of definition of $\tau$ and $B\in\SO(n)$.

The formula for the transition maps of these local trivializations as expressed before this proposition
shows that the action $\mu$ is not well defined if $n\geq 3$.
In fact we would be led to an equation of the type
\[
(\pr_2\circ\tau_{F',\hat{F}'})(x,\hat{x};A)B=(\pr_2\circ \tau_{F',\hat{F}'}\circ \tau_{F,\hat{F}}^{-1})\big((x,\hat{x}),(\pr_2\circ\tau_{F,\hat{F}})(x,\hat{x};A)B\big),
\]
i.e.,
\[
\mc{M}_{F',\hat{F}'}(A)B&=\mc{M}_{\hat{F},\hat{F}'}(\id_{T|_{\hat{x}}\hat{M}})\big(\mc{M}_{F,\hat{F}}(A)B\big)\mc{M}_{F,F'}(\id_{T|_x M})^{-1} \\
&=\mc{M}_{F,\hat{F}'}(A)B\mc{M}_{F',F}(\id_{T|_x M}) \nonumber
\]
which, by multiplying by $\mc{M}_{F,\hat{F}'}(A)^{-1}$ from the left, is equivalent to
\begin{align}\label{eq:transition_of_action}
\mc{M}_{F',F}(\id_{T|_x M})B=B\mc{M}_{F',F}(\id_{T|_x M})
\end{align}
Since $\SO(n)$ is not commutative for $n\geq 3$,
the left and right hand sides are not equal in general:
they are equal for all $B,F,F'$ if and only if $\SO(n)$ is commutative i.e. if and only if $n\in \{1,2\}$.
Hence $\pi_Q$ is not a principal $\SO(n)$-bundle, at least w.r.t the trivializations that we used, if $n\geq 3$.
\end{proof}

\begin{remark}
If $M$ and $\hat{M}$ are parallelizable (e.g. if $M$ and $\hat{M}$ are Lie groups) i.e., if there are global frames
and hence global orthonormal frames $F$, $\hat{F}$,
then one can introduce a principal $\SO(n)$-bundle structure for $\pi_Q$
by Eq. (\ref{eq:principal_action}) even for $n>2$.
However, this principal bundle structure then depends on the choice
of the global frames $F$, $\hat{F}$ i.e., we might (and could if $n>2$) get a different
principal bundle structure by the choosing the orthonormal frames differently. 
We will define on $Q$ a distribution $\RDist$ (see Definition \ref{def:rdist})
that models the natural constraints for the rolling problem
and by simple computations one can check that in general for $n\geq 3$
the distribution $\RDist$ is not invariant with respect to this principal bundle action for $\pi_Q$.

Hence the principal bundle structure on parallelizable manifolds
(or, in the general case, the local principal bundle structures defined by (\ref{eq:principal_action}))
is (in general) not useful for the study of the rolling model.

We will also study briefly a less restrictive model of rolling (rolling with spinning allowed)
where one considers a distribution $\NSDist$ on $Q$.
In this case, it will be shown in Theorem \ref{th:no_useful_principal_bundle_for_pi_Q} below
that in general there cannot be a principal bundle structure for $\pi_Q$
which leaves $\NSDist$ invariant.

\end{remark}

\begin{remark}
Clearly the fact that we chose $\SO(n)$ to act on the right in (\ref{eq:principal_action}) 
does not affect the conclusion of the previous Proposition:
Left local actions (in an obvious manner) lead to the same conclusion i.e.,
they don't glue up correctly to give a "natural" global $\SO(n)$-action.

Indeed, if instead of (\ref{eq:principal_action}) we tried to define the \emph{left} $\SO(n)$-action on $Q$ by
demanding that locally
\[
\lambda(B,(x,\hat{x};A))=\tau^{-1}\big((x,\hat{x}),B(\pr_2\circ\tau)(x,\hat{x};A)\big),
\]
we still could not define the action globally.
Indeed, it is enough to notice that instead of (\ref{eq:transition_of_action}) we would get
\[
B\mc{M}_{F',\hat{F}'}(A)
=&\mc{M}_{\hat{F},\hat{F}'}(\id_{T|_{\hat{x}}\hat{M}})\big(B\mc{M}_{F,\hat{F}}(A)\big)\mc{M}_{F,F'}(\id_{T|_{x}M})^{-1} \\
=&\mc{M}_{\hat{F},\hat{F}'}(\id_{T|_{\hat{x}}\hat{M}})B\mc{M}_{F',\hat{F}}(A)
\]
i.e.
\[
B\mc{M}_{\hat{F},\hat{F}'}(\id_{T|_{\hat{x}}\hat{M}})=\mc{M}_{\hat{F},\hat{F}'}(\id_{T|_{\hat{x}}\hat{M}})B
\]
which, again, is only true for all $B,\hat{F},\hat{F}'$ if and only if $n\in \{1,2\}$.
\end{remark}

Despite the lack of a "natural" principal bundle structure for $\pi_Q$ when $n\geq 3$,
we may still make use of the vector bundle structure of the ambient bundle $\pi_{T^*(M)\otimes T(\hat{M})}$
(the extended state space).

Notice that any $\pi_Q$-vertical tangent vector (i.e., an element of $V|_{q}(\pi_Q)$)
is of the form $\nu(B)|_q$ for a unique $B\in T^*M\otimes T\hat{M}|_{(x,\hat{x})}$ where $q=(x,\hat{x};A)\in Q$.
The following simple proposition gives the 
condition when, for a $B\in T^*M\otimes T\hat{M}|_{(x,\hat{x})}$,
the vector $\nu(B)|_q\in V|_q(\pi_{T^*M\otimes T\hat{M}})$
is actually tangent to $Q$ i.e., an element of $V|_q(\pi_{Q})$.

\begin{proposition}\label{pr:vertical_of_Q}
Let $q=(x,\hat{x};A)\in Q$ and $B\in T^*(M)\otimes T(\hat{M})|_{(x,\hat{x})}$.
Then $\nu(B)|_q$ is tangent to $Q$ (i.e., is an element of $V|_q (\pi_Q)$) if and only if
\[
\hat{g}(AX,BY)+\hat{g}(BX,AY)=0
\]
for all $X,Y\in T|_x M$.
Denoting $\ol{T}=T_{g,\hat{g}}$, this latter condition can be stated equivalently as $A^{\ol{T}} B+B^{\ol{T}} A=0$ or more compactly as $B\in A(\so(T|_x M))$
\end{proposition}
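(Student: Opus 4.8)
The plan is to reduce everything to a fiberwise, linear‑algebraic computation, exploiting that $\pi_Q$ is a subbundle of the vector bundle $\pi_{T^*M\otimes T\hat{M}}$ whose fiber over $(x,\hat{x})$ is, in the local orthonormal frame trivializations $\tau_{F,\hat{F}}$, the group $\SO(n)$ sitting inside $\gl(n)$. First I would recall, as noted just before the statement, that every $\pi_Q$-vertical vector at $q=(x,\hat{x};A)$ is of the form $\nu(C)|_q$ for a unique $C\in T^*M\otimes T\hat{M}|_{(x,\hat{x})}$, and that $w\mapsto\nu(w)|_q$ is a linear isomorphism $T^*M\otimes T\hat{M}|_{(x,\hat{x})}\xrightarrow{\sim}V|_q(\pi_{T^*M\otimes T\hat{M}})$. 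Under this isomorphism the subspace $V|_q(\pi_Q)$ corresponds exactly to the tangent space at $A$ of the fiber $\pi_Q^{-1}(x,\hat{x})$, viewed as a submanifold of the vector space $\mathrm{Hom}(T|_xM,T|_{\hat{x}}\hat{M})$. Thus the proposition is equivalent to the purely linear statement $T_A\big(\pi_Q^{-1}(x,\hat{x})\big)=\{B:\hat{g}(AX,BY)+\hat{g}(BX,AY)=0\ \forall X,Y\in T|_x M\}$.

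To prove this I would realize $\pi_Q^{-1}(x,\hat{x})$ as an open subset of a regular level set. Fix $(x,\hat{x})$ and let $G$ be the smooth map $A'\mapsto (A')^{T_{g,\hat{g}}}A'$ from $\mathrm{Hom}(T|_xM,T|_{\hat{x}}\hat{M})$ into the (linear) space of $g$-self-adjoint endomorphisms of $T|_xM$. Then $\pi_Q^{-1}(x,\hat{x})$ is the open subset of $G^{-1}(\id_{T|_xM})$ cut out by $\det=+1$; it is genuinely open because $\SO(n)$ is a connected component (hence both open and closed) of $\mathrm{O}(n)$, so the orientation condition imposes no constraint on tangent vectors. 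A one-line differentiation gives $DG_A(B)=A^{T_{g,\hat{g}}}B+B^{T_{g,\hat{g}}}A$, and since $A$ is invertible (being an o-isometry) this linear map is onto the space of self-adjoint endomorphisms: $B=\tfrac12(A^{T_{g,\hat{g}}})^{-1}S$ is a preimage of any such $S$. Hence $\id_{T|_xM}$ is a regular value, $T_A(G^{-1}(\id_{T|_xM}))=\ker DG_A$, and therefore $\nu(B)|_q$ is tangent to $Q$ if and only if $A^{T_{g,\hat{g}}}B+B^{T_{g,\hat{g}}}A=0$.

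It then remains to unwind this operator identity into the stated forms. Pairing $(A^{T_{g,\hat{g}}}B+B^{T_{g,\hat{g}}}A)X$ with an arbitrary $Y$ in the $g$-inner product and using the defining relation of the adjoint turns the identity into $\hat{g}(BX,AY)+\hat{g}(AX,BY)=0$ for all $X,Y$, and nondegeneracy of $g$ makes the two formulations equivalent. For the compact form $B\in A(\so(T|_xM))$, I would use that $A^{T_{g,\hat{g}}}=A^{-1}$ for an isometry, so that setting $S:=A^{-1}B$ the relation $A^{T_{g,\hat{g}}}B+B^{T_{g,\hat{g}}}A=0$ reads $S+S^{T_g}=0$, i.e.\ $S\in\so(T|_xM)$ and $B=AS$; the converse is the same computation read backwards.

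I do not expect a genuine obstacle. The only point deserving care is the claim that the determinant constraint drops out of the vertical tangent space, which is immediate once one remembers that $\SO(n)\subset\mathrm{O}(n)$ is open; beyond that, it is the standard regular-value argument together with careful bookkeeping of the adjoint $T_{g,\hat{g}}$ and of the identifications $T^*M\otimes T\hat{M}|_{(x,\hat{x})}\cong\mathrm{Hom}(T|_xM,T|_{\hat{x}}\hat{M})\cong V|_q(\pi_{T^*M\otimes T\hat{M}})$.
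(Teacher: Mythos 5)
Your proof is correct. Note that the paper states this proposition without proof; the closest written argument is the proof of the analogous statement for manifolds of different dimensions (Proposition \ref{prop-vert-fiber} in Section \ref{diff-dim}), which obtains the necessity of the condition $A^{\ol{T}}B+B^{\ol{T}}A=0$ exactly as you do, by differentiating a vertical curve $t\mapsto A(t)$ in the fiber, but then gets the converse inclusion by a dimension count: the space of $B$ with $A^{\ol{T}}B\in\so(T|_xM)$ has the same dimension as $\pi_Q^{-1}(x,\hat{x})$, so the inclusion of the vertical space into it must be an equality. You instead exhibit the fiber as an open piece of the level set $G^{-1}(\id_{T|_xM})$ of $G(A')=(A')^{\ol{T}}A'$ and verify that $\id_{T|_xM}$ is a regular value by producing the explicit preimage $B=\tfrac12(A^{\ol{T}})^{-1}S$ of any $g$-self-adjoint $S$, so that both inclusions come at once from $T_A G^{-1}(\id)=\ker DG_A$. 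The two routes share the same core computation; yours buys a cleaner treatment of the orientation constraint (openness of $\SO(n)$ in $\mathrm{O}(n)$) and dispenses with the separate computation of $\dim\pi_Q^{-1}(x,\hat{x})$, at the cost of invoking the regular value theorem, while the paper's dimension-count version transfers with no change to the non-equidimensional setting where $A$ need not be invertible. Your final reformulations ($\hat{g}(AX,BY)+\hat{g}(BX,AY)=0$ and $B\in A(\so(T|_xM))$ via $A^{\ol{T}}=A^{-1}$) are exactly the intended ones.
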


We will be denoting the $(g,\hat{g})$-transpose operation $T_{g,\hat{g}}$ by $\ol{T}$ also in the sequel.
The proposition says that $V|_{(x,\hat{x};A)}(\pi_Q)$ is naturally $\R$-linearly isomorphic to $A(\so(T|_x M))$.

\begin{remark}
We may reformulate the fact given by the previous proposition as follows.
Define $\so(M)=\bigcup_{x\in M} \so(T|_x M)$
(with $M$ a Riemannian manifold)
i.e.,
\[
\so(M)=\{B\in T^1_1(M)\ |\ B^{T_g}+B=0\}.
\]
One sees that $\so(M)$ is a closed embedded submanifold of $T^1_1(M)=T^*M\otimes TM$.
Moreover, the map $\pi_{\so(M)}:=\pi_{T^1_1(M)}|_{\so(M)}$
clearly defines a smooth vector bundle with typical fiber $\so(n)$, where $n=\dim(M)$.

We may pull back $\pi_{\so(M)}$
with a map $\pi_{Q,M}:=\pr_1\circ\pi_Q:Q\to M$
to a smooth bundle $(\pi_{Q,M})^*(\pi_{\so(M)}):(\pi_{Q,M})^*(\so(M))\to Q$
over $Q$. Its elements are
all pairs $((x,\hat{x};A),B)\in Q\times \so(M)$
where $x=\pi_{\so(M)}(B)$
and the bundle map is defined by $(\pi_{Q,M})^*(\pi_{\so(M)})((x,\hat{x};A),B)=(x,\hat{x};A)$.

Proposition \ref{pr:vertical_of_Q} shows that the
bundle map $L:(\pi_{Q,M})^*(\pi_{\so(M)})\to V(\pi_Q)$
defined by
$L((x,\hat{x};A),B)=\nu(AB)|_{(x,\hat{x};A)}$
is a diffeomorphism.
\end{remark}

%%%%%%%%%%%%%%%%%%%%%%%
\subsubsection{The State Space as a Quotient}
%%%%%%%%%%%%%%%%%%%%%%%

In this subsection, we will show that (the $n$-dimensional version of) the construction of the state
space for rolling that has been used e.g. in \cite{bryant-hsu} in dimension two
is actually isomorphic to the state space $Q$.

\begin{proposition}\label{pr:char_of_Q}
Let $F_{\mathrm{OON}}(M)$, $F_{\mathrm{OON}}(\hat{M})$ be the oriented orthonormal frame bundles of $(M,g)$, $(\hat{M},\hat{g})$
(resp. let $F(M)$, $F(\hat{M})$ be the frame bundles of $M$ and $\hat{M}$).
Denote by $\mu$, $\hat{\mu}$ the right $\SO(n)$-actions
(resp. right $\GL(n)$-actions)
defining the usual principal bundle structures
on these spaces
i.e., $\mu((X_k)_{k=1}^n,[A^i_j])=(\sum_{k} A^k_iX_k)_{i=1}^n$ and similarly for $\hat{\mu}$.
Define a diagonal right $\SO(n)$-action
\[
\Delta:(F_{\mathrm{OON}}(M)\times F_{\mathrm{OON}}(\hat{M}))\times \SO(n)\to F_{\mathrm{OON}}(M)\times F_{\mathrm{OON}}(\hat{M}),
\]
by
(resp. right $\GL(n)$-action $\Delta:(F(M)\times F(\hat{M}))\times \GL(n)\to F(M)\times F(\hat{M}))$)
\[
\Delta(((X_i),(\hat{X}_j)),A)=(\mu((X_i),A),\hat{\mu}((\hat{X}_i),A)).
\]
The map $\xi:F_{\mathrm{OON}}(M)\times F_{\mathrm{OON}}(\hat{M})\to Q(M,\hat{M})$
(resp. $\xi:F(M)\times F(\hat{M})\to T^*M\otimes T\hat{M}$)
such that
\[
\xi((X_i),(\hat{X}_j)):=\big(\sum_i a_iX_i\mapsto \sum_i a_i \hat{X}_i)
\]
is a smooth surjective submersion. Moreover, for each
$q\in Q(M,\hat{M})$ (resp. $q\in T^*M\otimes T\hat{M}$)
the inverse image $\xi^{-1}(q)$ coincides with an orbit of $\Delta$.
Thus $\xi$ induces a diffeomorphism $\ol{\xi}:(F_{\mathrm{OON}}(M)\times F_{\mathrm{OON}}(\hat{M}))/\Delta\to Q(M,\hat{M})$.
(resp. $\ol{\xi}:(F(M)\times F(\hat{M}))/\Delta\to \{A\in T^*M\otimes T\hat{M}\ |\ A\ \textrm{is invertible}\}$).
\end{proposition}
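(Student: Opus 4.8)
The plan is to verify the three assertions in turn: that $\xi$ is a smooth surjective submersion, that the fibers of $\xi$ are exactly the $\Delta$-orbits, and that consequently $\ol{\xi}$ is a diffeomorphism. I will work in the oriented orthonormal case; the general (frame bundle) case is entirely parallel, replacing $\SO(n)$ by $\GL(n)$, orthonormal frames by arbitrary frames, o-isometries by invertible linear maps, and dropping the determinant/orientation bookkeeping.

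First I would check smoothness and the submersion property locally. Fix a point $((X_i),(\hat X_j))$ lying over $(x,\hat x)\in M\times\hat M$. Choose local orthonormal frames $F=(Y_i)$ of $M$ near $x$ and $\hat F=(\hat Y_j)$ of $\hat M$ near $\hat x$; these induce the standard local trivializations of $F_{\mathrm{OON}}(M)$, $F_{\mathrm{OON}}(\hat M)$ and, by the Definition preceding Proposition \ref{pr:vertical_of_Q}, the trivialization $\tau_{F,\hat F}$ of $\pi_Q$. In these coordinates a frame over $x$ is recorded by its transition matrix $P\in\SO(n)$ to $F|_x$ (i.e.\ $X_i=\sum_k P^k_i Y_k$), a frame over $\hat x$ by $\hat P\in\SO(n)$, and one computes directly that
\[
(\pr_2\circ\tau_{F,\hat F})\big(\xi((X_i),(\hat X_j))\big)=\hat P P^{-1}.
\]
Thus in these charts $\xi$ reads $((x,P),(\hat x,\hat P))\mapsto ((x,\hat x),\hat P P^{-1})$, which is visibly smooth; since the map $(P,\hat P)\mapsto \hat P P^{-1}$ from $\SO(n)\times\SO(n)$ onto $\SO(n)$ is a submersion (it is the composition of the diffeomorphism $(P,\hat P)\mapsto(P,\hat PP^{-1})$ with a projection), $\xi$ is a submersion, hence also open; and it is surjective because every $A\in Q$ over $(x,\hat x)$ can be written as $\hat P P^{-1}$ in a suitable trivialization, i.e.\ lies in the image of $\xi$ over any orthonormal frame of $M$ at $x$ by choosing the corresponding frame of $\hat M$.

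Next I would identify the fibers of $\xi$ with the $\Delta$-orbits. The inclusion $\xi\circ\Delta(\cdot,A)=\xi$ is immediate from the definitions: replacing $(X_i)$ by $(\sum_k A^k_i X_k)$ and $(\hat X_i)$ by $(\sum_k A^k_i\hat X_k)$ does not change the assignment $\sum a_i X_i\mapsto\sum a_i\hat X_i$. Conversely, if $\xi((X_i),(\hat X_j))=\xi((X'_i),(\hat X'_j))$, then both pairs lie over the same $(x,\hat x)$, so $X'_i=\sum_k A^k_i X_k$ for a unique $A\in\SO(n)$ (change of orthonormal frame at $x$), and applying the common map $A=\xi(\cdots)$ to $X'_i$ gives $\hat X'_i=A X'_i=\sum_k A^k_i(AX_k)=\sum_k A^k_i\hat X_k$, so the second frame is transformed by the same $A$; hence the two pairs lie in one $\Delta$-orbit. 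Therefore $\xi^{-1}(q)$ is a single $\Delta$-orbit for each $q$. Finally, since $\xi$ is a surjective submersion that is constant on $\Delta$-orbits and separates distinct orbits, it descends to a smooth bijection $\ol\xi$ on the quotient manifold $(F_{\mathrm{OON}}(M)\times F_{\mathrm{OON}}(\hat M))/\Delta$ (this quotient is a manifold because $\Delta$ is a free and proper action, being the diagonal of a principal action); a set-theoretic inverse is continuous because $\xi$ is open, and it is smooth because $\xi$ is a submersion (locally one reads off $\ol\xi^{\,-1}$ from a local section of $\xi$). Thus $\ol\xi$ is a diffeomorphism.

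The only point requiring genuine care — rather than routine bookkeeping — is the local-coordinate computation $(\pr_2\circ\tau_{F,\hat F})(\xi((X_i),(\hat X_j)))=\hat P P^{-1}$, since it is this identity that simultaneously makes smoothness, the submersion property, surjectivity, and the fiber description transparent; everything else is formal manipulation of principal actions and the universal property of quotients by free proper group actions. I expect the main (mild) obstacle to be keeping the index/transpose conventions of Section \ref{notations} consistent, i.e.\ tracking whether one gets $\hat PP^{-1}$ or $\hat P^{-1}P$ or a transpose thereof; this is purely a matter of unwinding the definition $\mc{M}_{F,\hat F}(A)^j_i=\hat g(AX_i,\hat X_j)$ carefully.
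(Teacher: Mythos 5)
Your proof is correct. It departs from the paper on the key point, the identification of fibers with orbits: the paper proves only the inclusion of a $\Delta$-orbit into a fiber (by the same computation you give) and then deduces equality from a dimension count, $\dim\xi^{-1}(q)=\dim(F_{\mathrm{OON}}(M)\times F_{\mathrm{OON}}(\hat{M}))-\dim Q(M,\hat{M})=\dim\SO(n)$, which equals the orbit dimension since $\Delta$ is free; you instead prove the reverse inclusion directly, noting that two frame-pairs with the same image lie over the same $(x,\hat{x})$, are related on the $M$-side by a unique $A\in\SO(n)$ (resp.\ $\GL(n)$), and that applying the common (iso)morphism $\xi(\cdot)$ forces the $\hat{M}$-frames to be related by the same $A$. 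Your route is more elementary and avoids the point left implicit in the dimension argument, namely that an orbit of full dimension must exhaust the (a priori possibly disconnected) fiber; the paper's count is shorter once the submersion property is granted. You also make explicit, via the local trivializations $\tau_{F,\hat{F}}$ and the formula $(P,\hat{P})\mapsto\hat{P}P^{-1}$ (up to the paper's transpose conventions), the smoothness, surjectivity and submersion properties that the paper simply declares obvious, and this coordinate identity is exactly the needed justification. Both arguments invoke freeness and properness of $\Delta$ for the quotient manifold structure — the paper in the remark following the proposition, you inline — and both conclude that $\ol{\xi}$ is a diffeomorphism in the standard way.
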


\begin{proof}
The smoothness and surjectivity of $\xi$ are obvious and
it is also easy to see that $\xi$ is a submersion.
Thus it is enough to show that $\xi^{-1}(q)$ coincides with an orbit of $\Delta$.
First suppose that 
$$\mu((X_i),A)_i=\sum_{j} A^j_i X_j,\quad
\hat{\mu}((\hat{X}_i),A)_i=\sum_{j} A^j_i \hat{X}_j.$$
Then, for any real numbers $a_1,\cdots,a_n$, one has
\[
& \xi\big(\Delta\big(((X_i),(\hat{X}_i)),A\big)\big)(\sum_k a_k X_k)
=\xi(\mu((X_i),A),\hat{\mu}((\hat{X}_i),A))(\sum_k a_k X_k) \\
=&\xi(\mu((X_i),A),\hat{\mu}((\hat{X}_i),A))(\sum_{k,i} a_k (A^{-1})^i_k \mu((X_j),A)_i ) \\
=&\sum_{k,i} a_k (A^{-1})^i_k \hat{\mu}((\hat{X}_j),A)_i
=\sum_{k,i,j} a_k (A^{-1})^i_k A^j_i \hat{X}_j=\sum_k a_k \hat{X}_k
=\xi((X_i),(\hat{X}_i))(\sum_k a_k X_k).
\]
This shows that
\[
\Delta\big(\{((X_i),(\hat{X}_i))\}\times G\big)\subset \xi^{-1}(\xi((X_i),(\hat{X}_i))),
\]
with $G=\SO(n)$ (resp. $G=\GL(n)$).
The orbits of $\Delta$ all have the same dimension as $\SO(n)$, i.e., $\frac{n(n-1)}{2}$
(resp. $\dim \GL(n)=n^2$)
and since $$\dim\xi^{-1}(q)=\dim(F_{\mathrm{OON}}(M)\times F_{\mathrm{OON}}(\hat{M}))-\dim Q(M,\hat{M})=\dim\SO(n),$$ for any $q\in Q(M,\hat{M})$
(resp. 
\[
\dim\xi^{-1}(q)=\dim(F(M)\times F(\hat{M}))-\dim T^*M\otimes T\hat{M}=\dim \GL(n)),
\]
we have that this inclusion is actually an equality.
This proves the proposition.
\end{proof}

\begin{remark}
In the above proposition we implicitly assumed that $(F_{\mathrm{OON}}(M)\times F_{\mathrm{OON}}(\hat{M}))/\Delta$
(resp. $(F(M)\times F(\hat{M}))/\Delta$) already
has a natural structure of a smooth manifold namely that of a quotient manifold.
But it is easily seen that the action $\Delta$ is free and proper and hence
by a well known result (see \cite{lee02} Theorem 9.16) it follows that unique smooth quotient manifold structures
for the above quotient sets exist. Hence the facts established in the above proof guarantee that $\ol{\xi}$ is a diffeomorphism.
\end{remark}

\begin{remark}
Here is the product right action $$\mu\times\hat{\mu}:(F_{\mathrm{OON}}(M)\times F_{\mathrm{OON}}(\hat{M}))\times (\SO(n)\times\SO(n))\to F_{\mathrm{OON}}(M)\times F_{\mathrm{OON}}(\hat{M})$$ of $\SO(n)\times\SO(n)$ on $F_{\mathrm{OON}}(M)\times F_{\mathrm{OON}}(\hat{M})$
given by $$\mu\times\hat{\mu}\big(((X_i),(\hat{X}_i)),(A,\hat{A}))=(\mu((X_i),A),\hat{\mu}((\hat{X}_i),\hat{A})).$$
As it is easily seen, it is unfortunately \emph{not} true that the action $\mu\times\hat{\mu}$ maps a $\Delta$-orbit into a $\Delta$-orbit,
unless the dimension $n$ is equal to two (in which case $\SO(n)=\SO(2)$ is commutative)
and hence, in the case $n>2$, the map $\mu\times\hat{\mu}$ does not induce a map $Q\times\SO(n)\to Q$
(where $Q\cong (F_{\mathrm{OON}}(M)\times F_{\mathrm{OON}}(\hat{M}))/\Delta$ by the above proposition).
This is yet another way of seeing that $Q=Q(M,\hat{M})$ cannot be given a "natural" $\SO(n)$-principal bundle structure for $n\geq 3$
i.e., we cannot induce on $Q$ the principal bundle structures of the
frame bundles $F_{\mathrm{OON}}(M)$ and $F_{\mathrm{OON}}(\hat{M})$
if $n>2$.
\end{remark}

\begin{remark}

Notice that on $F(M)$ (resp. on $F_{\mathrm{OON}}(M)$)one may also consider the left $\GL(n)$ (resp. $\SO(n)$) action $\lambda$ given 
by
$\lambda(A,(X_i))_i=\sum_j A_j^i X_j$.
Since $A_j^i=(A^T)_i^j$ it is trivial
that this is related to the above right action by $\lambda(A,(X_i))=\mu((X_i),A^
T)$.
Notice that $\mu(\lambda(A,(X_i)),B)=\mu(\mu((X_i),A^T),B)=\mu((X_i),A^TB)$
which, if $n\geq 3$ and  $A^TB\neq BA^T$, is different
from $\lambda(A,\mu((X_i)),B))=\mu(\mu((X_i),B),A^T)=\mu((X_i),BA^T)$.
This means that the left and right actions $\lambda$ and $\mu$ do not "commute".

Another way to define naturally a left actions is to use instead of above $\lambda$ the inverse right-action
$\lambda_{I}(A,(X_i)):=\mu((X_i),A^{-1})$. Also in this case, $\mu(\lambda_I(A,(X_i)),B)=\mu(\mu((X_i),A^{-1}),B)=\mu((X_i),A^{-1}B)$
is not equal, if $n\geq 3$ and $AB\neq BA$, to $\lambda_I(A,\mu((X_i)),B))=\mu(\mu((X_i),B),A^{-1})=\mu((X_i),BA^{-1})$.
On $F_{\mathrm{OON}}(M)$ it is clear that the actions $\lambda$ and $\lambda_I$ coincide.

It was proposed in \cite{norway} that one could use the inverse left action on $F_{\mathrm{OON}}(M)$
and the left action on $F_{\mathrm{OON}}(\hat{M})$
to induce, respectively, left and right actions on $Q$.
However this is not possible for the following reason (which basically is a repetition of what
has been said above).
Suppose $q=(x,\hat{x};A)\in Q$ and let $F,F'\in F_{\mathrm{OON}}(M)$, $\hat{F},\hat{F}'\in F_{\mathrm{OON}}(\hat{M})$
are such that $\xi(F,\hat{F})=q$ and $\xi(F',\hat{F}')=q$.
Then there is a $B\in\SO(n)$ such that
$\mu(F,B)=F'$, $\hat{\mu}(\hat{F},B)=\hat{F}'$.
By using, for example, the left $\SO(n)$-action $\lambda$ on $F_{\mathrm{OON}}(M)$
we get $\lambda(C,F')=\lambda(C,\mu(F,B))=\mu(F,BC^T)$
and also $\mu(\lambda(C,F),B)=\mu(F,C^TB)$.
But $\xi(\lambda(C,F'),\hat{F}')=\xi(\lambda(C,F),\hat{F})$
if and only if $\mu(\lambda(C,F),B)=\lambda(C,F')$
which thus is not true unless $C^TB=BC^T$.
The case of the inverse left action (which is just the right action $\hat{\mu})$ on $F_{\mathrm{OON}}(\hat{M})$
leads to the same conclusion.
\end{remark}

%%%%%%%%%%%%%%%%%%%%%%%%%%%%%%%%%%%%%%
\subsection{Distribution and the Control Problems}
\subsubsection{From Rolling to Distributions}\label{sec:2}
%%%%%%%%%%%%%%%%%%%%%%%%%%%%%%%%%%%%%%

Each point $(x,\hat{x};A)$ of the state space $Q=Q(M,\hat{M})$
can be viewed as describing a contact point of the two manifolds
which is given by the points $x$ and $\hat{x}$ of $M$ and $\hat{M}$, respectively,
and an isometry $A$ of the tangent spaces $T|_x M$, $T|_{\hat{x}} \hat{M}$ at this contact point.
The isometry $A$ can be viewed as measuring the relative orientation of these tangent spaces
relative to each other
in the sense that rotation of, say, $T|_{\hat{x}} \hat{M}$
corresponds to a unique change of the isometry $A$ from $T|_x M$ to $T|_{\hat{x}}\hat{M}$.
A curve $t\mapsto (x(t),\hat{x}(t);A(t))$ in $Q$ can then be seen as a
motion of $M$ against $\hat{M}$ such that at an instant $t$, $x(t)$ and $\hat{x}(t)$
represent the common point of contact in $M$ and $\hat{M}$, respectively,
and $A(t)$ measures the relative orientation of coinciding tangent spaces $T|_{x(t)} M$, $T|_{\hat{x}(t)} \hat{M}$
at this point of contact.

In order to call this motion \emph{rolling}, there are two kinematic constraints
that will be demanded (see e.g. \cite{agrachev99}, \cite{agrachev04} Chapter 24, \cite{chelouah01})
namely
\begin{itemize}
\item[(i)] the \emph{no-spinning} condition; 
\item[(ii)] the \emph{no-slipping} condition.
\end{itemize}

In this section, these conditions will be defined explicitly
and it will turn out that they are modeled by certain smooth distributions
on the state space $Q$.
The subsequent sections are then devoted to the detailed definitions and
analysis of the distribution $\NSDist$ and $\RDist$ on the state space $Q$,
the former capturing the no-spinning condition (i) while the latter capturing both of
the conditions (i) and (ii).

The first restriction (i) for the motion is
that the relative orientation of the two manifolds should not change
along motion.
This \emph{no-spinning condition} (also known as the no-twisting condition)
can be formulated as follows.

\begin{definition}\label{def:2:2}
An absolutely continuous (\emph{a.c.}) curve 
\begin{align}
q:\ I&\to Q,\nonumber\\
t&\mapsto (x(t),\hat{x}(t);A(t)),\nonumber
\end{align} defined on some real interval $I=[a,b]$,
is said to describe a \emph{motion without spinning} of $M$ against $\hat{M}$ if,
for every a.c. curve $[a,b]\to TM$; $t\mapsto X(t)$ of vectors along $t\mapsto x(t)$, we have
\be\label{eq:nospin}
\nabla_{\dot{x}(t)} X(t)=0 \quad \Longrightarrow\quad \hat{\nabla}_{\dot{\hat{x}}(t)} (A(t)X(t))=0
\quad\textrm{for a.e.}\ t\in [a,b].
\ee
\end{definition}

(See also \cite{norway} for a similar definition.) Notice that Condition (\ref{eq:nospin})
is equivalent to the following: for almost every $t$ and all parallel vector fields $X(\cdot)$ along $x(\cdot)$, one has
\[
(\ol{\nabla}_{(\dot{x}(t),\dot{\hat{x}}(t))} A(t))X(t)=0.
\]
(This is well defined as mentioned in the paragraph immediately below Eq. (\ref{eq:tensor_extension}).)

Since the parallel translation
$P_0^t(x):T|_{x(0)} M\to T|_{x(t)} M$ along $x(\cdot)$ is an (isometric) isomorphism (here $X(t)=P_0^t(x) X(0)$),
this shows that (\ref{eq:nospin}) is equivalent to
\begin{align}\label{eq:nospin2}
\ol{\nabla}_{(\dot{x}(t),\dot{\hat{x}}(t))} A(t)=0\quad \textrm{for a.e.}\ t\in [a,b].
\end{align}

The second restriction (ii) is that the manifolds should not
slip along each other as they move i.e., the velocity of the contact point
should be the same w.r.t both manifolds.        
This \emph{no-slipping condition} can be formulated as follows.

\begin{definition}\label{def:2:1}
An a.c. curve $I\to Q$; $t\mapsto (x(t),\hat{x}(t);A(t))$, defined on some real interval 
$I=[a,b]$,
is said to describe a \emph{motion without slipping} of $M$ against $\hat{M}$ if
\be\label{eq:noslip}
A(t)\dot{x}(t)=\dot{\hat{x}}(t)\quad \textrm{for a.e.}\ t\in [a,b].
\ee
\end{definition}

\begin{definition}\label{def:rolling}
An a.c. curve $I\to Q$; $t\mapsto (x(t),\hat{x}(t);A(t))$, defined on some real interval $I=[a,b]$,
is said to describe a \emph{rolling motion} i.e., a \emph{motion without slipping or spinning} of $M$ against $\hat{M}$
if it satisfied both of the conditions (\ref{eq:nospin}),(\ref{eq:noslip})
(or equivalently (\ref{eq:nospin2}),(\ref{eq:noslip})).
The corresponding curve $t\mapsto (x(t),\hat{x}(t);A(t))$ that satisfies these conditions
is called a \emph{rolling curve}.
\end{definition}

It is easily seen that $t\mapsto q(t)=(x(t),\hat{x}(t);A(t))$, $t\in [a,b]$,
is a rolling curve
if and only if it satisfies the following driftless control affine system
\begin{align}\label{eq:cs_rolling}
\srol\quad \begin{cases}
& \dot{x}(t)=u(t), \cr
& \dot{\hat{x}}(t)=A(t)u(t), \cr
& \ol{\nabla}_{(u(t),A(t)u(t))} A(t)=0,
\end{cases}
\quad \textrm{for a.e.}\ t\in [a,b].
\end{align}
where the control $u$ belongs to
$\mc{U}(M)$, the set of measurable $TM$-valued functions $u$ defined on some interval $I=[a,b]$ such that there exists a.c. $ y:[a,b]\to M$ verifying $u=\dot{y}$ a.e. on $ [ a,b]$.
Conversely, given any control $u\in \mc{U}(M)$ and $q_0=(x_0,\hat{x}_0;A_0)\in Q$,
a solution $q(\cdot)$ to this control system exists on a subinterval $[a,b']$, $a<b'\leq b$
satisfying the initial condition $q(a)=q_0$.
The fact that System (\ref{eq:cs_rolling}) is driftless and control affine
can be seen from its representation in local coordinates (see (\ref{simon}) in Appendix \ref{app:local}).

We end up this subsection by the following simple remark. 
\begin{remark}
In many cases, it is more convenient to work in the
extended state space $T^*(M)\otimes T(\hat{M})$ rather than in (its submanifold) $Q$
because $\pi_{T^*(M)\otimes T(\hat{M})}$ is a vector bundle.
Since the above constraints of motion (\ref{eq:nospin}) and (\ref{eq:noslip})
can also be formulated in this space in verbatim,
we will sometimes take this more general approach
and then restrict to $Q$.
\end{remark}

%%%%%%%%%%%%%%%%%%%%%%%%%%%%%%%%%%%%%%%%%%%%%%%%%%%%
\subsubsection{The No-Spinning Distribution $\NSDist$}\label{sec:2.1}
%%%%%%%%%%%%%%%%%%%%%%%%%%%%%%%%%%%%%%%%%%%%%%%%%%%%

In this section, we build a smooth distribution $\NSDist$ on the spaces $Q$ and $T^*M\otimes T\hat{M}$
which plays the role of modelling the no-spinning condition for the rolling, see (\ref{eq:nospin}).
We will also study the geometry related to this distribution.
For more general constructions and some more general results than the ones in this section, see \cite{joyce07}, \cite{kobayashi63}.

We begin by recalling some basic observations on parallel transport.
As noted in Proposition \ref{pr:parallel_tensor}, if one starts with a $(1,1)$-tensor $A_0\in T^1_1|_{(x_0,\hat{x}_0)}(M\times\hat{M})$
and has an a.c. curve $t\mapsto (x(t),\hat{x}(t))$ on $M\times\hat{M}$ with $x(0)=x_0$, $\hat{x}(0)=\hat{x}_0$, defined on an open interval $I\ni 0$,
then the parallel transport $A(t)=P^t_0(x,\hat{x})A_0$ exists on $I$ and determines an a.c. curve.
But now, if $A_0$ rather belongs to the subspace $T^*M\otimes T\hat{M}$ or $Q$ of $T^1_1(M\times\hat{M})$,
it will actually happen that the parallel translate $A(t)$ belongs to this subspace as well for all $t\in I$.
This is the content of the next proposition.

\begin{proposition}\label{pr:2.1:1}
Let $t\mapsto (x(t),\hat{x}(t))$ be an absolutely continuous curve in $M\times \hat{M}$ defined on some real interval $I\ni 0$.
Then we have 
\[
A_0\in T^*M\otimes TM &\Longrightarrow A(t)=P^t_0(x,\hat{x}) A_0\in T^*M\otimes T\hat{M}\quad \forall t\in I, \\
A_0\in Q\quad &\Longrightarrow\quad A(t)=P^t_0(x,\hat{x}) A_0\in Q\quad \forall t\in I,
\]
and
\begin{align}\label{eq:parallel_trans_A}
P^t_0(x,\hat{x}) A_0=P^t_0(\hat{x})\circ A_0\circ P_t^0(x)\quad \forall t\in I.
\end{align}
\end{proposition}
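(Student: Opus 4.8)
The plan is to prove the key identity \eqref{eq:parallel_trans_A} first, and then deduce the two invariance statements from it as easy corollaries, since $P^t_0(\hat{x})$ and $P^0_t(x)$ are isometric isomorphisms. The starting point is the explicit formula for the product connection along a curve, namely \eqref{eq:product_connection}: for a vector field $(X(t),\hat X(t))$ along $(x(t),\hat x(t))$ one has $\ol{\nabla}_{(\dot x,\dot{\hat x})}(X,\hat X)=(\nabla_{\dot x}X,\hat\nabla_{\dot{\hat x}}\hat X)$ whenever the right-hand side makes sense; in particular this always holds for vector fields that are genuinely along the curve (as opposed to the pathological case in Remark~\ref{re:product_connection}, which does not occur here since we shall only differentiate transported vectors). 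From this, the induced connection on the $(1,1)$-tensor bundle decomposes accordingly, so that for a curve $t\mapsto B(t)$ of $(1,1)$-tensors along $(x,\hat x)$, the covariant derivative $\ol\nabla_{(\dot x,\dot{\hat x})}B$ acts on $Y\in T|_{x(t)}M$ by the usual Leibniz rule $(\ol\nabla_{(\dot x,\dot{\hat x})}B)(Y)=\hat\nabla_{\dot{\hat x}}(B(P^{t}(Y)))-B(\nabla_{\dot x}(P^{t}(Y)))$ when $P^t(Y)$ denotes a parallel extension of $Y$ along $x$.

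The main step is then the following: define $\tilde A(t):=P^t_0(\hat x)\circ A_0\circ P^0_t(x)$, which is a priori a curve in $T^*|_{x(t)}M\otimes T|_{\hat x(t)}\hat M$ — so that the two invariance claims are automatic for $\tilde A$ — and check that $\tilde A$ satisfies the defining ODE of parallel transport, i.e. $\ol\nabla_{(\dot x,\dot{\hat x})}\tilde A=0$, together with $\tilde A(0)=A_0$. The initial condition is immediate. For the vanishing of the covariant derivative, I would fix $t_0$ and $Y_0\in T|_{x(t_0)}M$, let $Y(t)$ be the parallel vector field along $x$ with $Y(t_0)=Y_0$, and compute: $\tilde A(t)Y(t)=P^t_0(\hat x)\,A_0\,P^0_t(x)Y(t)=P^t_0(\hat x)\,A_0\,Y(0)$, which is a parallel vector field along $\hat x$ (it is the $P^\cdot_0(\hat x)$-transport of the fixed vector $A_0Y(0)\in T|_{\hat x_0}\hat M$). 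Hence $\hat\nabla_{\dot{\hat x}}(\tilde A(t)Y(t))=0$, while also $\nabla_{\dot x}Y(t)=0$; by the Leibniz formula above, $(\ol\nabla_{(\dot x,\dot{\hat x})}\tilde A)(Y_0)=0$. Since $Y_0$ was arbitrary, $\ol\nabla_{(\dot x,\dot{\hat x})}\tilde A|_{t_0}=0$, and since $t_0$ was arbitrary (a.e. suffices for the a.c. case), $\tilde A$ is parallel. By uniqueness of parallel transport (Proposition~\ref{pr:parallel_tensor}), $\tilde A(t)=P^t_0(x,\hat x)A_0$, which is \eqref{eq:parallel_trans_A}.

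Finally, the two displayed implications follow: if $A_0\in T^*|_{x_0}M\otimes T|_{\hat x_0}\hat M$ then by construction $\tilde A(t)=P^t_0(\hat x)A_0 P^0_t(x)$ is again of this form, so $P^t_0(x,\hat x)A_0\in T^*M\otimes T\hat M$; and if moreover $A_0\in Q$, i.e. $A_0$ is a linear isometry of determinant $+1$, then $\tilde A(t)$ is a composition of two isometries ($P^t_0(\hat x)$, $P^0_t(x)$) with $A_0$, hence itself an isometry, and its determinant with respect to positively oriented orthonormal frames equals $\det A_0=1$ because parallel transport along the Levi-Civita connection preserves both the metric and the orientation; thus $P^t_0(x,\hat x)A_0\in Q$. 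I expect the only delicate point to be the careful bookkeeping in the Leibniz/decomposition step — making sure the covariant derivative of the $(1,1)$-tensor $\tilde A$ along the product curve is legitimately computed by extending $Y_0$ parallelly and invoking \eqref{eq:product_connection} only for honest vector fields along the curve (thereby sidestepping the subtlety flagged in Remark~\ref{re:product_connection}); everything else is routine.
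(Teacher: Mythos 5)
Your proof is correct, and its core computation is the same one the paper uses: you show that $\tilde A(t)=P^t_0(\hat x)\circ A_0\circ P^0_t(x)$ annihilates parallel fields under $\ol{\nabla}_{(\dot x,\dot{\hat x})}$ and invoke uniqueness of parallel transport, which is exactly how the paper establishes \eqref{eq:parallel_trans_A} (with its curve $B(t)$ playing the role of your $\tilde A(t)$). Where you differ is in the overall organization: you prove the factorization formula first and then read off both membership statements from it — $\tilde A(t)$ is visibly in $T^*|_{x(t)}M\otimes T|_{\hat x(t)}\hat M$, and for $A_0\in Q$ it is a composition of orientation-preserving isometries — whereas the paper proves the two implications directly on $A(t)=P^t_0(x,\hat x)A_0$ before knowing the formula: it shows $A(t)\omega(t)=0$ and $A(t)\hat Y(t)=0$ by a linear-ODE/uniqueness argument for contractions against parallel covectors of $M$ and parallel vectors of $\hat M$, then gets the isometry property by differentiating $\n{A(t)Y(t)}^2_{\hat g}$ and the orientation by continuity of $\det A(t)\in\{-1,+1\}$. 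Your route is more economical, since the invariance of the subbundle $T^*M\otimes T\hat M$ and of $Q$ come for free once the factorization is known; the paper's route has the mild advantage of exhibiting the invariance intrinsically, without the explicit factorization, via the contraction technique it reuses in spirit elsewhere. Your handling of the delicate point flagged in Remark \ref{re:product_connection} is also sound: the only fields you differentiate componentwise are genuine parallel fields along $x$ and along $\hat x$, so \eqref{eq:product_connection} applies; just note, as the paper does, that $\tilde A$ is absolutely continuous (being a composition of parallel transports) so that the a.e.\ ODE plus uniqueness legitimately identifies it with $P^t_0(x,\hat x)A_0$.
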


\begin{proof}
Let $Y\in T|_{x(0)} M$, $\hat{Y}\in T|_{\hat{x}(0)}\hat{M}$ and let $Y(t)=P_0^t(x) Y$, $\hat{Y}(t)=P_0^t(\hat{x}) \hat{Y}$ be their
parallel translates along $t\mapsto x(t)$ and $t\mapsto \hat{x}(t)$ respectively.
Similarly, choose $\omega\in T^*|_{x(0)} M$ and denote $\omega(t)=P^t_0(x)\omega$ its parallel translate.
Then $Y(t)$, $\hat{Y}(t)$ and $\omega(t)$ can be viewed as a curves in $T(M\times\hat{M})$ and $T^*(M\times\hat{M})$
using the canonical inclusions $T|_{x(t)}M,T|_{\hat{x}(t)}\hat{M}\subset T|_{(x(t),\hat{x}(t))}(M\times\hat{M})$, $T^*|_{x(t)}M\subset T^*|_{(x(t),\hat{x}(t))}(M\times\hat{M})$.

With $A_0\in T^* M\otimes T\hat{M}|_{(x(0),\hat{x}(0))}\subset T^1_1(M\times\hat{M})|_{(x(0),\hat{x}(0))}$ and $A(t)=P^t_0(x,\hat{x}) A_0\in T^1_1(M\times\hat{M})|_{(x(t),\hat{x}(t))}$, we have, for a.e. $t$ (the contractions that use are obvious),
\[
\ol{\nabla}_{(\dot{x}(t),\dot{\hat{x}}(t))}(A(\cdot)\omega(\cdot))
=(\ol{\nabla}_{(\dot{x}(t),\dot{\hat{x}}(t))}A(\cdot))\omega(t)+A(t) (\nabla_{\dot{x}(t)}\omega(\cdot))=0,
\]
and similarly $\ol{\nabla}_{(\dot{x}(t),\dot{\hat{x}}(t))}(A(\cdot)\hat{Y}(\cdot))=0$. It implies
that $A(t)\omega(t)$, $A(t)\hat{Y}(t)$
(as elements of $T^1_1(M\times\hat{M})$) are parallel to $t\mapsto (\dot{x}(t),\dot{\hat{x}}(t))$
with initial conditions $A_0\omega=0$ and $A_0\hat{Y}=0$ since $A_0\in T^* M\otimes T\hat{M}$.
By the uniqueness of solutions of ODEs, this shows that $A(t)\omega(t)=0$ and $A(t)\hat{Y}(t)=0$ for all $t\in I$
i.e., since $\hat{Y},\omega$ were arbitrary, $A(t)\in T^*M\otimes T\hat{M}$ for all $t\in I$.

Suppose next that $A_0\in Q|_{(x(0),\hat{x}(0))}$ and denote $A(t)=P^t_0(x,\hat{x}) A_0$. Then $A_0\in T^*M\otimes T\hat{M}$
and, by what we just proved, $A(t)\in  T^*M\otimes T\hat{M}$ for all $t\in I$.
It follows that $A(t)Y(t)\in T|_{\hat{x}(t)} \hat{M}$ and thus taking its norm w.r.t $\hat{g}$
allows us to compute a.e.
\[
\dif{t} \n{A(t)Y(t)}_{\hat{g}}^2
=2\hat{g}\big((\ol{\nabla}_{(\dot{x}(t),\dot{\hat{x}}(t))} A(\cdot))Y(t)+A(t)\nabla_{\dot{x}(t)}Y(\cdot),A(t)Y(t)\big)=0.
\]
The initial condition for $\n{(A(t)Y(t)}_{\ol{g}}^2$
at $t=0$ is $\n{(A(0)Y(0)}_{\ol{g}}^2=\n{Y}_{g}^2$,
since $A_0=A(0)$ is an isometry (and $Y(0)=Y$).
Since $\n{Y(t)}_{\ol{g}}^2$ also satisfies 
$\dif{t}\n{Y(t)}_{\ol{g}}^2=0$ and the initial condition $\n{Y(0)}_{\ol{g}}^2=\n{Y}_{g}^2$,
we see that $\n{A(t)Y(t)}_{\ol{g}}^2=\n{Y(t)}_{\hat{g}}^2$
for all $t\in I$
(since the maps $t\mapsto\n{A(t)Y(t)}_{\ol{g}}^2$, $t\mapsto \n{Y(t)}_{\hat{g}}^2$ were a.c.).
Since the parallel translation $P_0^t(x):T|_{x(0)} M\to T|_{x(t)} M$ is
a linear (isometric) isomorphism for every $t$,
this proves that $A(t):T|_{x(t)} M\to T|_{\hat{x}(t)} \hat{M}$ is an isometry for every $t$.
Because $t\mapsto \det (A(t))$ is a continuous map $I\to \{-1,+1\}$
and $\det(A(0))=\det(A_0)=+1$, it follows that $\det(A(t))=+1$ for all $t$. Hence $A(t)\in Q$ for all $t$.

Finally Eq. (\ref{eq:parallel_trans_A}) is proved as follows. Consider $B(t):=P^t_0(\hat{x})\circ A_0\circ P_t^0(x)$, which is an a.c. curve in $T^*M\otimes T\hat{M}$(or even in $Q$ if $A_0\in Q$)
along $t\mapsto (x(t),\hat{x}(t))$.
Now $B(0)=A_0$ and, for $X_0\in T|_{x(0)} M$, $X(t):=P_0^t(x)X_0$, we have
\[
0=&\hat{\nabla}_{\dot{x}(t)} (P^t_0(\hat{x}) (A_0X_0))
=\hat{\nabla}_{\dot{\hat{x}}(t)} (B(t)X(t)) \\
=&\big(\ol{\nabla}_{(\dot{x},\dot{\hat{x}})(t)} B(t)\big) X(t)+B(t)\nabla_{\dot{x}(t)} X(t)
=\big(\ol{\nabla}_{(\dot{x},\dot{\hat{x}})(t)} B(t)\big) X(t),
\]
from which it follows, since $X_0$ was arbitrary, that $\ol{\nabla}_{(\dot{x},\dot{\hat{x}})(t)} B(t)=0$ for a.e. $t\in I$.
Thus $t\mapsto A(t)$ and $t\mapsto B(t)$ solve the same initial value problem and hence (being a.c.) are equal $A(t)=B(t)$ i.e.,
\[
P^t_0(x,\hat{x}) A_0=P^t_0(\hat{x})\circ A_0\circ P_t^0(x),\quad \forall t\in I,
\]
which is what we wished to prove.

\end{proof}

Let $T(M\times\hat{M})\times_{M\times\hat{M}} (T^*(M)\otimes T(\hat{M}))$
be the total space of the product vector bundle $\pi_{T(M\times\hat{M})}\times_{M\times\hat{M}}\pi_{T^*(M)\otimes T(\hat{M})}$ over $M\times\hat{M}$.
We will define certain \emph{lift} operations corresponding to parallel translation of elements of $T^*M \otimes T\hat{M}$.

\begin{definition}\label{def:LNSD}
The \emph{No-Spinning lift} is defined to be the map
\[
\LNSD:T(M\times\hat{M})\times_{M\times\hat{M}} \big(T^*(M)\otimes T(\hat{M})\big)\to T\big(T^*(M)\otimes T(\hat{M})\big),
\]
such that, if $q=(x,\hat{x};A)\in T^*(M)\otimes T(\hat{M})$, $X\in T|_xM$, $\hat{X}\in T|_{\hat{x}}\hat{M}$
and $t\mapsto (x(t),\hat{x}(t))$ is a smooth curve on in $M\times \hat{M}$
defined on an open interval $I\ni 0$ s.t. $\dot{x}(0)=X$, $\dot{\hat{x}}(0)=\hat{X}$, then one has
\begin{align}\label{eq:2.1:1}
\LNSD((X, \hat{X}),q)=\dif{t}\big|_0 P^t_0(x,\hat{x})A\in T|_{q} \big(T^*(M)\otimes T(\hat{M})\big).
\end{align}
\end{definition}

The smoothness of the map $\LNSD$ can be easily seen by using fiber or local coordinates (see Appendix \ref{app:local}).
We will usually use a notation $\LNSD(\ol{X})|_q$ for $\LNSD(\ol{X},q)$
when $\ol{X}\in T|_{(x,\hat{x})} (M\times\hat{M})$ and $q=(x,\hat{x};A)\in T^*(M)\otimes T(\hat{M})$. In particular, when $\ol{X}\in \VF(M\times\hat{M})$,
we get a \emph{lifted vector field} on $T^*(M)\otimes T(\hat{M})$ given by $q\mapsto \LNSD(\ol{X})|_q$.
The smoothness of $\LNSD(\ol{X})$ for $\ol{X}\in\VF(M\times\hat{M})$ follows immediately
from the smoothness of the map $\LNSD$.
Notice that, by Proposition \ref{pr:2.1:1}, the No-Spinning lift map $\LNSD$ restricts to
\[
\LNSD:T(M\times\hat{M})\times_{M\times\hat{M}} Q\to TQ,
\]
where  $T(M\times\hat{M})\times_{M\times\hat{M}} Q$
is the total space of the fiber product $\pi_{T(M\times\hat{M})}\times_{M\times\hat{M}} \pi_{Q}$.

We now define the distribution $\NSDist$ on $T^*(M)\otimes T(\hat{M})$ and $Q$
capturing the no-spinning condition (see Eq. (\ref{eq:nospin})).

\begin{definition}\label{def:2.1:1}
The \emph{No-Spinning (NS) distribution} $\NSDist$ on $T^*(M)\otimes T(\hat{M})$ is a $2n$-dimensional smooth distribution defined pointwise by 
\begin{align}\label{eq:2.1:2}
\NSDist|_{(x,\hat{x};A)}=\LNSD(T|_{(x,\hat{x})}(M\times \hat{M}))|_{(x,\hat{x};A)},
\end{align}
with $(x,\hat{x};A)\in T^*(M)\otimes T(\hat{M})$.
Since $\NSDist|_Q\subset T(Q)$ (by Proposition \ref{pr:2.1:1}) this distribution restricts to a $2n$-dimensional smooth distribution on $Q$ which we also denote by $\NSDist$ (instead of $\NSDist|_Q$).
\end{definition}

The No-Spinning lift $\LNSD$ will also be called $\NSDist$-lift
since it maps vectors of $M\times\hat{M}$ to vectors in $\NSDist$.

The distribution $\NSDist$ is smooth since  $\LNSD(\ol{X})$ is smooth for any smooth vector field $\ol{X}\in\VF(M\times\hat{M})$.
Also, the fact that the rank of $\NSDist$ exactly is $2n$ follows from
the next proposition,
which itself follows immediately from Eq. (\ref{eq:2.1:1}).

\begin{proposition}
For every $q=(x,\hat{x};A)\in T^*M\otimes T\hat{M}$
and $\ol{X}\in T|_{(x,\hat{x})} M\times\hat{M}$, one has
\[
(\pi_{T^*M\otimes T\hat{M}})_*(\LNSD(\ol{X})|_q)=\ol{X},
\]
and in particular $(\pi_Q)_*(\LNSD(\ol{X})|_q)=\ol{X}$
if $q\in Q$.
\end{proposition}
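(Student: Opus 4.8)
The plan is to reduce this to the chain rule, using the single fact that parallel transport of a tensor along a base curve is itself a curve in the total space lying over that base curve. First I would fix $q=(x,\hat{x};A)\in T^*M\otimes T\hat{M}$ and $\ol{X}\in T|_{(x,\hat{x})}(M\times\hat{M})$, choose a smooth curve $t\mapsto(x(t),\hat{x}(t))$ on $M\times\hat{M}$ defined near $0$ with $(x(0),\hat{x}(0))=(x,\hat{x})$ and $(\dot{x}(0),\dot{\hat{x}}(0))=\ol{X}$, and recall from Definition \ref{def:LNSD} that $\LNSD(\ol{X})|_q=\dif{t}\big|_0 P^t_0(x,\hat{x})A$, i.e. the velocity at $t=0$ of the curve $c:t\mapsto P^t_0(x,\hat{x})A$ in $T^*M\otimes T\hat{M}$.

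Next I would invoke Proposition \ref{pr:parallel_tensor}, applied to the Riemannian product $(\ol{M},\ol{g})$, to guarantee that $c$ is a well-defined (here smooth) curve in $T^1_1(M\times\hat{M})$, and in fact in the subbundle $T^*M\otimes T\hat{M}$ by Proposition \ref{pr:2.1:1}. By the very definition of parallel transport along $(x(\cdot),\hat{x}(\cdot))$, the curve $c$ lies over that base curve, i.e. $\pi_{T^*M\otimes T\hat{M}}\circ c=(x(\cdot),\hat{x}(\cdot))$. Differentiating this identity at $t=0$ and applying the chain rule then yields the first assertion:
\[
(\pi_{T^*M\otimes T\hat{M}})_*\big(\LNSD(\ol{X})|_q\big)
&= \dif{t}\Big|_0 \pi_{T^*M\otimes T\hat{M}}\big(P^t_0(x,\hat{x})A\big) \\
&= \dif{t}\Big|_0 (x(t),\hat{x}(t))
= (\dot{x}(0),\dot{\hat{x}}(0))
= \ol{X}.
\]
(In passing this confirms that the projection of the lift does not depend on the choice of representing curve.)

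For the statement concerning $\pi_Q$, I would use Proposition \ref{pr:2.1:1} once more: when $q\in Q$, the curve $c$ stays inside $Q$, so $\LNSD(\ol{X})|_q\in T|_q Q$; and since $\pi_Q$ is by definition the restriction of $\pi_{T^*M\otimes T\hat{M}}$ to $Q$, the computation above applies verbatim with $\pi_Q$ in place of $\pi_{T^*M\otimes T\hat{M}}$, giving $(\pi_Q)_*(\LNSD(\ol{X})|_q)=\ol{X}$. I do not anticipate any genuine obstacle here; the one point worth stating explicitly is that parallel transport along a curve produces a curve in the total space projecting onto it, which is precisely what Propositions \ref{pr:parallel_tensor} and \ref{pr:2.1:1} provide.
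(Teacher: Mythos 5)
Your proof is correct and is exactly the argument the paper intends: the paper gives no separate proof, stating only that the proposition "follows immediately from Eq. (\ref{eq:2.1:1})", and your chain-rule computation using the fact that $t\mapsto P^t_0(x,\hat{x})A$ lies over the base curve is precisely that immediate argument, with Propositions \ref{pr:parallel_tensor} and \ref{pr:2.1:1} cited to justify the curve's existence and its remaining in $Q$.
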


Thus $(\pi_{T^*M\otimes T\hat{M}})_*$ (resp. $\pi_Q$)
maps $\NSDist|_{(x,\hat{x};A)}$ isomorphically onto
$T|_{(x,\hat{x})} (M\times\hat{M})$
for every $(x,\hat{x};A)\in T^*M\otimes T\hat{M}$
(resp. $(x,\hat{x};A)\in Q$)
and the inverse map of $(\pi_{T^*M\otimes T\hat{M}})_*|_{\NSDist|_q}$
(resp. $(\pi_Q)_*|_{\NSDist|_q}$) is $\ol{X}\mapsto \LNSD(\ol{X})|_q$.

\begin{remark}
It should now be clear that an a.c. map $t\mapsto q(t)=(x(t),\hat{x}(t);A(t))$
in $T^*M\otimes T\hat{M}$ or $Q$
satisfies (\ref{eq:nospin}) if and only if $q$ is tangent a.e. to $\NSDist$
i.e., for a.e. $t$ it holds that $\dot{q}(t)\in \NSDist|_{q(t)}$.
\end{remark}

The following basic formula for the lift $\LNSD$ will be useful.

\begin{theorem}\label{th:2.1:1}\label{th:LNSD_formula}
For $\ol{X}\in T|_{(x,\hat{x})}(M\times\hat{M})$
and $A\in \Gamma(\pi_{T^*M\otimes T\hat{M}})$, we have
\begin{align}\label{eq:LNSD_formula}
\LNSD(\ol{X})|_{A|_{(x,\hat{x})}}=A_*(\ol{X})-\nu\big(\ol{\nabla}_{\ol{X}} A\big)|_{A|_{(x,\hat{x})}},
\end{align}
where $\nu$ denotes the vertical derivative in the
vector bundle $\pi_{T^* M\otimes T\hat{M}}$
and $A_*$ is the map $T(M\times\hat{M})\to T(T^*M\otimes T\hat{M})$.
\end{theorem}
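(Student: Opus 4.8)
The plan is to realize both $A_*(\ol{X})$ and $\LNSD(\ol{X})|_{A|_{(x,\hat{x})}}$ as initial velocities of curves in the vector bundle $E:=T^*M\otimes T\hat{M}$ that lie over one and the same base curve and start at the common point $q:=A|_{(x,\hat{x})}$; then the difference of these velocities is forced to be $\pi_E$-vertical, and it remains only to identify it with $\nu(\ol{\nabla}_{\ol{X}}A)|_q$. First I would fix a smooth curve $\gamma=(x,\hat{x}):I\to M\times\hat{M}$ with $\gamma(0)=(x,\hat{x})$ and $\dot{\gamma}(0)=\ol{X}$, and introduce the three curves
\[
\sigma_1(t):=A|_{\gamma(t)},\qquad \sigma_2(t):=P^t_0(\gamma)\big(A|_{(x,\hat{x})}\big),\qquad \delta(t):=P^0_t(\gamma)\big(\sigma_1(t)\big).
\]
Here $\sigma_1,\sigma_2$ are smooth curves in $E$ with $\pi_E\circ\sigma_i=\gamma$ and $\sigma_1(0)=\sigma_2(0)=q$, while $\delta$ is a curve in the fixed fibre $E|_{(x,\hat{x})}$ with $\delta(0)=q$. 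By Proposition \ref{pr:2.1:1}, $E$ is a $\ol{\nabla}$-parallel subbundle of $T^1_1(M\times\hat{M})$, so $\sigma_2$ indeed stays in $E$ and $\ol{\nabla}_{\ol{X}}A\in E|_{(x,\hat{x})}$, making $\nu(\ol{\nabla}_{\ol{X}}A)|_q$ meaningful. By definition of the pushforward $A_*$ one has $\dot{\sigma}_1(0)=A_*(\ol{X})$; by Definition \ref{def:LNSD} one has $\dot{\sigma}_2(0)=\LNSD(\ol{X})|_q$; and by the standard description of the covariant derivative of a section via parallel transport (see e.g. \cite{sakai91}), $\dot{\delta}(0)=\ol{\nabla}_{\ol{X}}A$.

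Next I would exploit the tautology $\sigma_1(t)=P^t_0(\gamma)\big(\delta(t)\big)$, i.e. $\sigma_1=G\circ(\id_I,\delta)$ where $G:I\times E|_{(x,\hat{x})}\to E$, $G(t,v)=P^t_0(\gamma)(v)$, is smooth (parallel transport solves a linear ODE depending smoothly on $t$ and linearly on $v$). Applying the chain rule at $t=0$ and splitting $T_{(0,q)}(I\times E|_{(x,\hat{x})})\cong\R\oplus E|_{(x,\hat{x})}$ gives
\[
A_*(\ol{X})=\dot{\sigma}_1(0)=G_*|_{(0,q)}\big(1,\dot{\delta}(0)\big)=G_*|_{(0,q)}(1,0)+G_*|_{(0,q)}\big(0,\dot{\delta}(0)\big).
\]
Now $G_*|_{(0,q)}(1,0)=\dif{t}\big|_0 G(t,q)=\dif{t}\big|_0 P^t_0(\gamma)(q)=\LNSD(\ol{X})|_q$, whereas, since $G(0,\cdot)=P^0_0(\gamma)=\id$, one has $G_*|_{(0,q)}(0,w)=\dif{s}\big|_0 (q+sw)=\nu(w)|_q$ for every $w\in E|_{(x,\hat{x})}$; taking $w=\dot{\delta}(0)=\ol{\nabla}_{\ol{X}}A$ yields $A_*(\ol{X})=\LNSD(\ol{X})|_q+\nu(\ol{\nabla}_{\ol{X}}A)|_q$, which is exactly (\ref{eq:LNSD_formula}). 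The statement for a vector field $\ol{X}\in\VF(M\times\hat{M})$ then follows by applying this identity pointwise.

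I expect the only genuinely delicate point to be the bookkeeping in the chain-rule step, namely verifying that $G_*|_{(0,q)}$ splits as claimed and that the fibre direction contributes precisely the vertical lift $\nu$, with the correct base point and sign. This can be made fully rigorous by working in a local vector-bundle trivialization of $E$ around $(x,\hat{x})$, in which parallel transport becomes a curve of invertible matrices (equal to the identity at $t=0$) and $\nu$ is literally differentiation in the fibre coordinate. Alternatively, one may bypass the invariant argument and instead compute both sides of (\ref{eq:LNSD_formula}) directly in the fibre/local coordinates of Appendix \ref{app:local}, using the explicit linear ODE satisfied by the parallel-transport matrices; both routes are routine once the identifications above are in place.
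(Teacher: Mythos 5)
Your proof is correct. The three curves you introduce are exactly the right objects, $\dot{\sigma}_1(0)=A_*(\ol{X})$, $\dot{\sigma}_2(0)=\LNSD(\ol{X})|_q$ and $\dot{\delta}(0)=\ol{\nabla}_{\ol{X}}A$ are all justified (the last one is the standard parallel-transport characterization of the covariant derivative, which is also the fact the paper invokes from Sakai), and the chain rule applied to $G(t,v)=P^t_0(\gamma)(v)$ with $G(0,\cdot)=\id$ cleanly produces the split $A_*(\ol{X})=\LNSD(\ol{X})|_q+\nu(\ol{\nabla}_{\ol{X}}A)|_q$. The paper proves the same identity from the same ingredients but with a different mechanism: it tests $A_*(\ol{X})-\LNSD(\ol{X})|_q$ against an arbitrary $f\in\Cinf(T^*M\otimes T\hat{M})$ and evaluates the resulting difference quotients using the explicit second-order expansion $P_t^0(\gamma)\big(A|_{\gamma(t)}\big)=A|_{(x,\hat{x})}+t\,\ol{\nabla}_{\ol{X}}A+t^2F(t)$, concluding that the difference acts on $f$ as $\nu(\ol{\nabla}_{\ol{X}}A)|_q$. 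Your chain-rule packaging avoids test functions and the remainder bookkeeping altogether and makes the horizontal/vertical decomposition structurally transparent; its only extra cost is the joint smoothness of $(t,v)\mapsto P^t_0(\gamma)(v)$ and the careful identification of $G_*|_{(0,q)}$ in the fibre direction with the vertical lift $\nu(\cdot)|_q$, both of which you address (and which can indeed be checked in a local trivialization, where parallel transport is a smooth curve of invertible matrices equal to the identity at $t=0$). Either route is fully rigorous; the paper's is more explicit about the remainder, yours is shorter and arguably more conceptual.
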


\begin{proof}
Choose smooth paths $c:[-1,1]\to M$, $\hat{c}:[-1,1]\to \hat{M}$
such that $(\dot{c}(0), \dot{\hat{c}}(0))=\ol{X}$
and take an arbitrary $f\in\Cinf(T^*M\otimes T\hat{M})$.
Define $\tilde{A}(t)=P_0^t(c,\hat{c}) A|_{(x,\hat{x})}$.
Then
\[
\LNSD(\ol{X})|_{A|_{(x,\hat{x})}}=\dot{\tilde{A}}(0)=\tilde{A}_*(\pa{t}).
\]
Also, it is known that (see e.g. \cite{sakai91}, p.29)
\begin{align}\label{eq:2.1:3a}
P_t^0(c,\hat{c})(A|_{(c(t),\hat{c}(t)})=A|_{(x,\hat{x})}+t\ol{\nabla}_{\ol{X}} A+t^2F(t),
\end{align}
with $t\mapsto F(t)$ a $\Cinf$-function $]-1,1[\to T^*|_x M\otimes T|_{\hat{x}}\hat{M}$.
On the other hand, one has
\[
& \big(A_*(\ol{X})-\tilde{A}_*(\pa{t})\big)f
=\lim_{t\to 0} \frac{f(A|_{(c(t),\hat{c}(t))})-f(P_0^t(c,\hat{c}) A|_{(x,\hat{x})})}{t} \\
=&\lim_{t\to 0} \frac{f(P_0^t(c,\hat{c}) A|_{(x,\hat{x})}+tP_0^t(c,\hat{c})\ol{\nabla}_{\ol{X}} A+t^2P_0^t(c,\hat{c}) F(t))-f(P_0^t(c,\hat{c}) A|_{(x,\hat{x})})}{t} \\
=&\lim_{t\to 0} \frac{1}{t}\int_0^t \dif{s}f\big(P_0^t(c,\hat{c}) A|_{(x,\hat{x})}+sP_0^t(c,\hat{c})\ol{\nabla}_{\ol{X}} A+s^2P_0^t(c,\hat{c}) F(t)\big)\diff s \\
=&\dif{s}\Big|_{s=0} f\big(A|_{(x,\hat{x})}+s\ol{\nabla}_{\ol{X}} A+s^2 F(0)\big)
=\nu(\ol{\nabla}_{\ol{X}} A)|_A f.
\]

\end{proof}

We shall write Eq. (\ref{eq:LNSD_formula}) from now on with a compressed notation
\[
\LNSD(\ol{X})|_A=A_*(\ol{X})-\nu(\ol{\nabla}_{\ol{X}} A)|_A.
\]

\begin{remark}\label{re:tangent_splitting}
If $A\in \Gamma(\pi_{T^*(M)\otimes T(\hat{M})})$ and $q:=A|_{(x,\hat{x})}\in Q$
(e.g. if $A\in\Gamma(\pi_Q)$), then on the right hand side of (\ref{eq:LNSD_formula}), both terms are elements of $T|_{q}(T^*M\otimes T\hat{M})$
but their difference is actually an element of $T|_{q}Q$.

Also, it is clear that Eq. (\ref{eq:LNSD_formula}) only indicates the decomposition of the map $A_*$ w.r.t to the direct sum
decomposition
\begin{align}\label{eq:2.1:4}
T\big(T^*M\otimes T\hat{M}\big)=\NSDist\oplus_{T^*M\otimes T\hat{M}} V(\pi_{T^*M\otimes T\hat{M}}),
\end{align}
when $A\in \Gamma(\pi_{T^*(M)\otimes T(\hat{M})})$ and
\begin{align}\label{eq:2.1:5}
TQ=\NSDist\oplus_{Q} V(\pi_Q),
\end{align}
when $A\in \Gamma(\pi_{Q})$ respectively.
\end{remark}

As a trivial corollary of the theorem, one gets the following. 

\begin{corollary}\label{cor:LNSD_along_path}
Suppose $t\mapsto (x(t),\hat{x}(t);A(t))$ is an a.c. curve on $T^*M\otimes T\hat{M}$ or $Q$ defined on an open real interval $I$.
Then, for a.e. $t\in I$,
\[
\LNSD\big(\dot{x}(t), \dot{\hat{x}}(t)\big)\big|_{(x(t),\hat{x}(t);A(t))}=\dot{A}(t)-\nu(\ol{\nabla}_{(\dot{x}(t),\dot{\hat{x}}(t))} A)|_{(x(t),\hat{x}(t);A(t))}.
\]
Hence $t\mapsto (x(t),\hat{x}(t);A(t))$ is tangent to $\NSDist$ at $t_0\in I$ if and only if $\ol{\nabla}_{(\dot{x}(t_0),\dot{\hat{x}}(t_0))} A=0$.
\end{corollary}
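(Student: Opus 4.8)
The plan is to derive this corollary as the ``curve version'' of Theorem~\ref{th:LNSD_formula}. The conceptual content is that $\NSDist$ is exactly the horizontal distribution of the linear connection $\ol{\nabla}$ on the vector bundle $\pi_{T^*M\otimes T\hat{M}}$: by Definition~\ref{def:LNSD}, $\LNSD(\ol{X})|_{A}$ is the velocity at $0$ of the parallel transport $t\mapsto P^t_0(x,\hat{x})A$, i.e. the horizontal lift of $\ol{X}$ through $A$, and the asserted formula is just the standard decomposition of the velocity of a lifted curve into horizontal and vertical parts. I would first fix the full-measure subset of $I$ on which $\dot{x}(t),\dot{\hat{x}}(t),\dot{A}(t)$ all exist and work at such a $t=t_0$, writing $c(t)=(x(t),\hat{x}(t))$ and $\ol{X}=\dot{c}(t_0)=(\dot{x}(t_0),\dot{\hat{x}}(t_0))$; by Proposition~\ref{pr:2.1:1} everything restricts verbatim from $T^*M\otimes T\hat{M}$ to $Q$, so the $Q$-case requires no extra argument. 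I also note that $\LNSD(\ol{X})|_{A(t_0)}$ is a purely pointwise operation on $\ol{X}$ and $A(t_0)$ (independent of any choice of smooth representative curve), so the mere absolute continuity of $c$ causes no difficulty here.

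Next, at $t_0$ choose any local smooth section $\tilde{A}\in\Gamma(\pi_{T^*M\otimes T\hat{M}})$ defined near $(x(t_0),\hat{x}(t_0))$ with $\tilde{A}|_{(x(t_0),\hat{x}(t_0))}=A(t_0)$ (such exists since $\pi_{T^*M\otimes T\hat{M}}$ is a vector bundle). Theorem~\ref{th:LNSD_formula} then gives
\[
\LNSD(\ol{X})|_{A(t_0)}=\tilde{A}_*(\ol{X})-\nu\big(\ol{\nabla}_{\ol{X}}\tilde{A}\big)\big|_{A(t_0)},\qquad \tilde{A}_*(\ol{X})=\dif{t}\Big|_{t_0}\tilde{A}(c(t)).
\]
The two curves $t\mapsto\tilde{A}(c(t))$ and $t\mapsto A(t)$ both project onto $c$ and take the common value $A(t_0)$ at $t_0$, so their velocities there differ by a $\pi_{T^*M\otimes T\hat{M}}$-vertical vector; a short computation in fibre coordinates (cf. Appendix~\ref{app:local}), using the coordinate expression of the covariant derivative along $c$, identifies this difference as $\nu\big(\ol{\nabla}_{\ol{X}}\tilde{A}-\ol{\nabla}_{\ol{X}}A\big)|_{A(t_0)}$. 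Substituting $\tilde{A}_*(\ol{X})=\dot{A}(t_0)+\nu(\ol{\nabla}_{\ol{X}}\tilde{A}-\ol{\nabla}_{\ol{X}}A)|_{A(t_0)}$ into the display and using linearity of $B\mapsto\nu(B)|_{A(t_0)}$, the $\ol{\nabla}_{\ol{X}}\tilde{A}$-terms cancel and one is left with precisely
\[
\LNSD(\ol{X})|_{(x(t_0),\hat{x}(t_0);A(t_0))}=\dot{A}(t_0)-\nu\big(\ol{\nabla}_{\ol{X}}A\big)\big|_{(x(t_0),\hat{x}(t_0);A(t_0))};
\]
in particular the auxiliary section $\tilde{A}$ has dropped out, as it must.

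For the ``hence'' clause, observe that in the last display $\LNSD(\ol{X})|_{q(t_0)}$ lies in $\NSDist|_{q(t_0)}$ while $\nu(\ol{\nabla}_{\ol{X}}A)|_{q(t_0)}$ is $\pi_{T^*M\otimes T\hat{M}}$-vertical (resp. $\pi_Q$-vertical when $q\in Q$); thus the display is the decomposition of $\dot{A}(t_0)$ with respect to the splitting of Remark~\ref{re:tangent_splitting}, and $\dot{A}(t_0)\in\NSDist|_{q(t_0)}$ if and only if its vertical component vanishes, i.e. if and only if $\nu(\ol{\nabla}_{\ol{X}}A)|_{q(t_0)}=0$, which by injectivity of the vertical lift is equivalent to $\ol{\nabla}_{(\dot{x}(t_0),\dot{\hat{x}}(t_0))}A=0$. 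The only step that is not pure bookkeeping with the already-established splitting $T(T^*M\otimes T\hat{M})=\NSDist\oplus V(\pi_{T^*M\otimes T\hat{M}})$ is the identification of $\tilde{A}_*(\ol{X})-\dot{A}(t_0)$ with $\nu(\ol{\nabla}_{\ol{X}}\tilde{A}-\ol{\nabla}_{\ol{X}}A)|_{A(t_0)}$ for a merely absolutely continuous $A(\cdot)$: one must run this in fibre coordinates and check that the coordinate formula $\ol{\nabla}_{\dot{c}(t)}A=\dot{\mathbf{A}}(t)+\Gamma(c(t))\big(\dot{c}(t),\mathbf{A}(t)\big)$ for the covariant derivative along an a.c. curve matches, at $t_0$, the value read off from the smooth section $\tilde{A}$.
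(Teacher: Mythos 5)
Your proposal is correct and follows essentially the same route as the paper, which states this result as an immediate consequence of Theorem~\ref{th:LNSD_formula} (applied at points of differentiability, with the $Q$-case handled by restriction via Proposition~\ref{pr:2.1:1}). Your extra bridging step---identifying $\tilde{A}_*(\ol{X})-\dot{A}(t_0)$ with $\nu\big(\ol{\nabla}_{\ol{X}}\tilde{A}-\ol{\nabla}_{\ol{X}}A\big)\big|_{A(t_0)}$ through the fibre-coordinate formula for the covariant derivative along an a.c. curve---is exactly the detail the paper leaves implicit when calling the corollary trivial, and it is carried out correctly.
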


%%%%%%%%%%%%%%%%%%%%%%%%%%%%%%%%%%%%%%%%%%%%%%%%%%%%
\subsubsection{The Rolling Distribution $\RDist$}\label{sec:2.5}
%%%%%%%%%%%%%%%%%%%%%%%%%%%%%%%%%%%%%%%%%%%%%%%%%%%%

We next define a subdistribution of $\NSDist$ which will correspond to the rolling with neither slipping nor spinning.
Recall that the no-spinning distribution $\NSDist$ defined on $Q$
models the fact that the admissible curves $t\mapsto q(t)=(x(t),\hat{x}(t);A(t))$ inscribed
on $Q$,
i.e., the curves describing the motion of $M$ against $\hat{M}$,
must verify the no-spinning condition (\ref{eq:nospin}).
The latter is equivalent to the condition that $t\mapsto q(t)$
is tangent (a.e.) to $\NSDist$, $\dot{q}(t)=\LNSD\big(\dot{x}(t),\dot{\hat{x}}(t)\big)\big|_{q(t)}$ for a.e. $t$.
As regards the rolling of one manifold onto another one, the admissible curve $q(\cdot)$ must also verify the no-slipping condition (\ref{eq:noslip})
that we recall next.
Since $q(\cdot)$ is tangent to $\NSDist$,
we have $A(t)=P_0^t(x,\hat{x}) A(0)$,
and hence the no-slipping condition (\ref{eq:noslip}) writes
$A(t)\dot{x}(t)=\dot{\hat{x}}(t)$.
It forces one to have, for a.e. $t$,
\[
\dot{q}(t)=\LNSD\big(\dot{x}(t), A(t)\dot{x}(t)\big)\big|_{q(t)}.
\]
Evaluating at $t=0$ and noticing that if $q_0:=q(0)$, with $q_0=(x_0,\hat{x}_0;A_0)\in Q$
and $\dot{x}(0)=:X\in T|_{x_0} M$ are arbitrary, we get
\[
\dot{q}(0)=\LNSD(X, A_0 X)|_{q_0}.
\]
This motivates the following definition.

\begin{definition}\label{def:LRD}
For $q=(x,\hat{x};A)\in Q$, we define the \emph{Rolling lift} or $\RDist$-lift as a bijective linear map
\[
\LRD:T|_x M\times Q|_{(x,\hat{x})}\to T|_q Q,
\]
given by
\begin{align}\label{eq:2.5:3}
\LRD(X,q)=\LNSD(X, AX)|_q.
\end{align}
\end{definition}

This map naturally induces $\LRD:\VF(M)\to \VF(Q)$ as follows.
For $X\in\VF(M)$ we define $\LRD(X)$, the \emph{Rolling lifted} vector field associated to $X$, by
\[
 \LRD(X):Q&\to T(Q), \\
 q&\mapsto \LRD(X)|_q,
\]
where $\LRD(X)|_q:=\LRD(X,q)$.

The Rolling lift map $\LRD$ allows one to construct a distribution on $Q$ (see \cite{bor06}) 
reflecting both of the rolling restrictions of motion
defined by the no-spinning condition, Eq. (\ref{eq:nospin}), and
the no-slipping condition, Eq. (\ref{eq:noslip}).

\begin{definition}\label{def:2.5:1}\label{def:rdist}
The \emph{rolling distribution} $\RDist$ on $Q$ is
the $n$-dimensional smooth distribution defined pointwise by
\begin{align}\label{eq:2.5:1}
\RDist|_{(x,\hat{x};A)}=\LRD(T|_x M)|_{(x,\hat{x};A)},
\end{align}
for $(x,\hat{x};A)\in Q$.
\end{definition}

The Rolling lift $\LRD$ will also be called $\RDist$-lift
since it maps vectors of $M$ to vectors in $\RDist$.
Thus an absolutely continuous curve $t\mapsto q(t)=(x(t),\hat{x}(t);A(t))$
in $Q$ is a rolling curve if and only if it is
a.e. tangent to $\RDist$
i.e., $\dot{q}(t)\in\RDist|_{q(t)}$ for a.e. $t$
or, equivalently,
if $\dot{q}(t)=\LRD(\dot{x}(t))|_{q(t)}$ for a.e. $t$.

Define $\pi_{Q,M}=\pr_1\circ \pi_Q:Q\to M$
and notice that its differential $(\pi_{Q,M})_*$
maps each $\RDist|_{(x,\hat{x};A)}$, $(x,\hat{x};A)\in Q$, isomorphically onto $T|_x M$.
This implies the following standard result.

\begin{proposition}\label{pr:rolling_curves}
For any $q_0=(x_0,\hat{x}_0;A_0)\in Q$ and absolutely continuous $\gamma:[0,a]\to M$, $a>0$,
such that $c(0)=x_0$,
there exists a unique absolutely continuous $q:[0,a']\to Q$, $q(t)=(\gamma(t),\hat{\gamma}(t);A(t))$, with $0<a'\leq a$ (and $a'$ maximal with the latter property),
which is tangent to $\RDist$ a.e. and $q(0)=q_0$.
We denote this unique curve $q$ by
\[
t\mapsto q_{\RDist}(\gamma,q_0)(t)=(\gamma(t),\hat{\gamma}_{\RDist}(\gamma,q_0)(t);A_{\RDist}(\gamma,q_0)(t)),
\]
and refer to it as the \emph{rolling curve} with initial conditions $(\gamma,q_0)$
or \emph{along $\gamma$ with initial position $q_0$}.
In the case that $\hat{M}$ is a complete manifold one has $a'=a$.

Conversely, any absolutely continuous curve $q:[0,a]\to Q$,
which is a.e. tangent to $\RDist$, is a rolling curve
along $\gamma=\pi_{Q,M}\circ q$
i.e., has the form $q_{\RDist}(\gamma,q(0))$.
\end{proposition}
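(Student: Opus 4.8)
The plan is to reduce the statement to the classical existence--uniqueness theorem for (non-autonomous) ODEs on manifolds, reading off the required properties from the local-coordinate expression of the control system $\srol$ given in Eq.~(\ref{eq:cs_rolling}) (and its coordinate form in Appendix~\ref{app:local}). First I would fix $q_0=(x_0,\hat{x}_0;A_0)\in Q$ and an absolutely continuous $\gamma:[0,a]\to M$ with $\gamma(0)=x_0$, and observe that $\dot{\gamma}(t)\in T|_{\gamma(t)}M$ for a.e.\ $t$ is, by definition, a measurable control $u=\dot\gamma\in\mc{U}(M)$. An a.c.\ curve $q(t)=(\gamma(t),\hat\gamma(t);A(t))$ is $\RDist$-tangent a.e.\ if and only if $\dot q(t)=\LRD(\dot\gamma(t))|_{q(t)}=\LNSD(\dot\gamma(t),A(t)\dot\gamma(t))|_{q(t)}$ for a.e.\ $t$ (Definitions~\ref{def:LRD}, \ref{def:2.5:1} and Corollary~\ref{cor:LNSD_along_path}); equivalently $A(t)\dot\gamma(t)=\dot{\hat\gamma}(t)$ and $\ol\nabla_{(\dot\gamma(t),\dot{\hat\gamma}(t))}A(t)=0$, which is precisely $\srol$ with control $u=\dot\gamma$. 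So the proposition is just the assertion that $\srol$, with a prescribed l.a.c.\ control and initial point in $Q$, has a unique maximal a.c.\ solution in $Q$, defined on all of $[0,a]$ when $\hat M$ is complete.

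Next I would set up the ODE. Over a trivializing neighbourhood for $\pi_Q$ given by orthonormal frames $F,\hat F$ (cf.\ Section~\ref{sec:1.1}), write $q$ as $(\gamma,\hat\gamma,\Theta)$ with $\Theta\in\SO(n)$; the coordinate form (\ref{simon}) of $\srol$ then reads $\dot{\hat\gamma}=\Theta\,\dot\gamma$ and $\dot\Theta = B(\gamma,\hat\gamma,\dot\gamma)\,\Theta$, where the entries of $B$ are built from the Christoffel symbols of $g$ and $\hat g$ contracted against $\dot\gamma$, hence measurable in $t$ and smooth in $(\gamma,\hat\gamma)$ and affine (in particular Lipschitz) in the state. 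This is a Carath\'eodory ODE: measurable in $t$, locally Lipschitz in the state, with an integrable local bound (since on a compact time subinterval $\dot\gamma\in L^1$ and the coefficients are locally bounded). By the Carath\'eodory existence theorem together with the standard uniqueness argument (Gronwall applied to the difference of two solutions), there is a unique a.c.\ local solution through $(x_0,\hat x_0,\Theta_0)$, and it extends to a unique maximal solution $q:[0,a')\to Q$, $a'\le a$, with $a'$ maximal. That $q(t)$ stays in $Q$ (i.e.\ $\Theta(t)$ stays orthogonal and orientation-preserving) is automatic: by Corollary~\ref{cor:LNSD_along_path} being $\NSDist$-tangent means $A(t)=P^t_0(\gamma,\hat\gamma)A_0$, and Proposition~\ref{pr:2.1:1} guarantees $P^t_0(\gamma,\hat\gamma)A_0\in Q$ for all $t$ since $A_0\in Q$; alternatively one checks directly that $\frac{d}{dt}(\Theta^T\Theta)=0$ because $B$ takes values in $\so(n)$, and $\det\Theta$ is continuous and integer-valued. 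Patching the local trivializations, the maximal solution is globally defined on $[0,a')$ and unique. The converse statement is immediate: if $q:[0,a]\to Q$ is a.e.\ $\RDist$-tangent, then setting $\gamma:=\pi_{Q,M}\circ q$ we have $\dot\gamma=(\pi_{Q,M})_*\dot q$, and by uniqueness $q=q_{\RDist}(\gamma,q(0))$.

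The last point is completeness: if $\hat M$ is complete then $a'=a$. The argument is the standard "escape to the boundary" one. Suppose $a'<a$. On $[0,a']$ the control $\dot\gamma\in L^1$, so the base curve $(\gamma,\hat\gamma)$ has finite length: indeed $\gamma$ extends continuously to $[0,a]$, and $\mathrm{length}(\hat\gamma|_{[0,a')})=\int_0^{a'}\|\Theta(t)\dot\gamma(t)\|_{\hat g}\,dt=\int_0^{a'}\|\dot\gamma(t)\|_g\,dt<\infty$ because $\Theta(t)$ is an isometry. By completeness of $\hat M$ and the Hopf--Rinow metric completeness, $\hat\gamma(t)$ converges as $t\to a'^-$ to some $\hat x_*\in\hat M$; since $\SO(n)$ is compact, some sequence $\Theta(t_k)$ converges to $\Theta_*\in\SO(n)$, and in fact $\Theta$ itself converges because $\Theta(t)=P^t_0(\gamma,\hat\gamma)A_0$ (parallel transport along a finite-length, hence extendable, path). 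Thus $q(t)$ converges to $q_*=(\gamma(a'),\hat x_*;\Theta_*)\in Q$; applying the local existence result again at $q_*$ with the shifted control extends $q$ past $a'$, contradicting maximality. Hence $a'=a$. The only genuinely non-routine ingredient is making sure the Carath\'eodory hypotheses (measurability in $t$, local Lipschitz/integrable bound in the state) are met uniformly across a chain of coordinate charts covering a compact time interval; this is exactly where one must use that $\dot\gamma\in\mc U(M)$, i.e.\ $\gamma$ absolutely continuous with $L^1$ velocity, rather than merely continuous.
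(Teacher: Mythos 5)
Your proposal is correct, but it settles the key point — that completeness of $\hat{M}$ forces $a'=a$ — by a different route than the paper. You run a maximal-solution/escape-to-the-boundary argument: finite length of $\hat\gamma$ on $[0,a')$ (since $A(t)$ is an isometry and $\dot\gamma\in L^1$), metric completeness to get convergence $\hat\gamma(t)\to\hat x_*$, convergence of $A(t)=P^t_0(\gamma,\hat\gamma)A_0$ along the extended a.c.\ base curve, and then a restart at the limit point to contradict maximality. The paper instead never extends a maximal solution: it builds the global solution in one stroke via the (anti-)development map, setting $\hat X(t)=A_0\int_0^t P^0_s(\gamma)\dot\gamma(s)\,ds\in T|_{\hat x_0}\hat M$ on all of $[0,a]$, invoking the stated bijectivity of $\hat\Lambda_{\hat x_0}$ for complete $\hat M$ to obtain $\hat\gamma$ on $[0,a]$, and defining $A(t)=P_0^t(\hat\gamma)\circ A_0\circ P_t^0(\gamma)$; one then checks directly that this triple is the rolling curve. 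Your argument is more self-contained (it uses only Hopf--Rinow and local Carath\'eodory theory, not the development-bijection theorem, which the paper cites without proof), while the paper's construction has the advantage of producing the explicit closed-form expression for $q_{\RDist}(\gamma,q_0)$ recorded in Remark~\ref{re:Lambda}, which is reused later. The local existence/uniqueness and $Q$-invariance part of your write-up matches what the paper leaves implicit (via Eq.~(\ref{simon}) and Proposition~\ref{pr:2.1:1}); only one phrase there should be corrected: the coordinate system (\ref{simon}) is affine in the \emph{control} but quadratic in the state variable $A$ (because of the term $\hat\Gamma^i_{l,k}A^k_jA^l_m v^m$), so the right justification is exactly the "locally Lipschitz in the state with $L^1$ time-dependence" clause you also give, not affineness.
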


\begin{proof}
We need to show only that completeness of $(\hat{M},\hat{g})$
implies that $a'=a$.
In fact, $\hat{X}(t):=A_0\int_0^t P^0_s(\gamma)\dot{\gamma}(s)\diff s$
defines an a.c. curve $t\mapsto \hat{X}(t)$ in $T|_{\hat{x}_0} \hat{M}$ defined on $[0,a]$
and the completeness of $\hat{M}$ implies that
there is a unique a.c. curve $\hat{\gamma}$ on $\hat{M}$
defined on $[0,a]$ such that $\hat{X}(t)=\int_0^t P^0_s(\hat{\gamma})\dot{\hat{\gamma}}(s)\diff s$
for all $t\in [0,a]$
(see also Remark \ref{re:Lambda} below).
Defining $A(t)=P_0^t(\hat{\gamma})\circ A_0\circ P_t^0(\gamma)$, $t\in [0,a]$
(parallel transports are always defined on the same interval as the a.c. curve along which
the parallel transport takes place)
we notice that $t\mapsto (\gamma(t),\hat{\gamma}(t);A(t))$
is the rolling curve along $\gamma$ starting at $q_0$ that is defined on the interval $[0,a]$.
Hence $a'=a$. 

\end{proof}

Of course, it is not important in the previous result that we start
the parametrization of the curve $\gamma$ at $t=0$.

\begin{remark}\label{re:group_property_of_rolling}
It follows immediately from the uniqueness statement
of the previous theorem that,
if $\gamma:[a,b]\to M$ and $\omega:[c,d]\to M$
are two a.c. curves
with $\gamma(b)=\omega(c)$
and $q_0\in Q$, then
\begin{align}
q_{\RDist}(\omega\sqcup \gamma,q_0)
=q_{\RDist}(\omega,q_{\RDist}(\gamma,q_0)(b))\sqcup q_{\RDist}(\gamma,q_0).
\end{align}

On the group $\Omega_{x_0}(M)$ of piecewise differentiable loops of $M$ based at $x_0$
one has
\[
q_{\RDist}(\omega. \gamma,q_0)
=q_{\RDist}(\omega,q_{\RDist}(\gamma,q_0)(1)). q_{\RDist}(\gamma,q_0),
\]
where $\gamma,\omega\in \Omega_{x_0}(M)$.
\end{remark}

Specializing to $(M,g)$ and $(\hat{M},\hat{g})$,
we will write in the sequel $\Lambda_{x_0}$ and $\hat{\Lambda}_{\hat{x}_0}$
for $\Lambda^{\nabla}_{x_0}$ and $\hat{\Lambda}^{\hat{\nabla}}_{\hat{x}_0}$ respectively,
where $x_0\in M$, $\hat{x}_0\in \hat{M}$.

\begin{remark}\label{re:Lambda}
It follows from Proposition \ref{pr:2.1:1}
that, for $q_0=(x_0,\hat{x}_0;A_0)$
and an a.c. curve $\gamma$ starting from $x_0$,
the corresponding rolling curve is given by
\begin{equation}\label{rol-curve}
q_{\RDist}(\gamma,q_0)(t)=(\gamma(t),\hat{\Lambda}_{\hat{x}_0}^{-1}(A_0\circ \Lambda_{x_0}(\gamma))(t);
P^t_0\big(\hat{\Lambda}_{\hat{x}_0}^{-1}(A_0\circ \Lambda_{x_0}(\gamma))\big)\circ A_0\circ P_t^0(\gamma)\big).
\end{equation}
\end{remark}

In the case where the curve $\gamma$ on $M$ is a geodesic,
we can give a more precise form of the rolling curve along $\gamma$
with a given initial position.

\begin{proposition}\label{pr:rol_geodesic}
Consider $q_0=(x_0,\hat{x}_0;A_0)\in Q$, $X\in T|_{x_0} M$
and $\gamma:[0,a]\to M$; $\gamma(t)=\exp_{x_0}(tX)$, a geodesic of $(M,g)$
with $\gamma(0)=x_0$, $\dot{\gamma}(0)=X$.
Then the rolling curve $q_{\RDist}{(\gamma,q_0)}=(\gamma,\hat{\gamma}_{\RDist}{(\gamma,q_0)};A_{\RDist}{(\gamma,q_0)}):[0,a']\to Q$, $0<a'\leq a$, along $\gamma$ with initial position $q_0$
is given by
\[
\hat{\gamma}_{\RDist}{(\gamma,q_0)}(t)=\widehat{\exp}_{\hat{x}_0}(tA_0 X),
\quad A_{\RDist}{(\gamma,q_0)}(t)=P_0^t(\hat{\gamma}_{\RDist}{(\gamma,q_0)})\circ A_0\circ P_t^0(\gamma).
\]
Of course, $a'=a$ if $\hM$ is complete.
\end{proposition}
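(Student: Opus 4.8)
The plan is to write down the candidate curve explicitly, verify it is $\RDist$-admissible with the right initial condition, and then conclude by the uniqueness part of Proposition \ref{pr:rolling_curves}. Set
\[
\hat{\gamma}(t):=\widehat{\exp}_{\hat{x}_0}(tA_0X),\qquad A(t):=P_0^t(\hat{\gamma})\circ A_0\circ P_t^0(\gamma),
\]
defined on the largest interval $[0,a']$, $0<a'\leq a$, on which $\hat{\gamma}$ makes sense (all of $[0,a]$ when $\hat{M}$ is complete, since then $\widehat{\exp}_{\hat{x}_0}$ is defined on all of $T|_{\hat{x}_0}\hat{M}$). I claim that $t\mapsto q(t):=(\gamma(t),\hat{\gamma}(t);A(t))$ equals $q_{\RDist}(\gamma,q_0)$. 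First, $q(0)=(x_0,\hat{x}_0;A_0)=q_0$. By Eq. (\ref{eq:parallel_trans_A}) of Proposition \ref{pr:2.1:1} applied along $(\gamma,\hat{\gamma})$ we have $A(t)=P^t_0(\gamma,\hat{\gamma})A_0$; hence $A(t)\in Q$ for all $t$ (second implication of Proposition \ref{pr:2.1:1}, as $A_0\in Q$), and by Corollary \ref{cor:LNSD_along_path} the curve $q$ is everywhere tangent to $\NSDist$, i.e.\ $\ol{\nabla}_{(\dot{\gamma},\dot{\hat{\gamma}})}A=0$. So it only remains to verify the no-slipping condition (\ref{eq:noslip}): once $A(t)\dot{\gamma}(t)=\dot{\hat{\gamma}}(t)$ holds, $q$ is tangent to $\RDist$ by Definition \ref{def:rdist}, and uniqueness in Proposition \ref{pr:rolling_curves} identifies $q$ with $q_{\RDist}(\gamma,q_0)$, yielding both displayed formulas (and $a'=a$ when $\hat{M}$ is complete, exactly as in that proposition).

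The check of (\ref{eq:noslip}) is the one genuine computation, and it is immediate from the geodesic hypothesis. Since $\gamma$ is a geodesic of $(M,g)$, its velocity is parallel along itself: $\dot{\gamma}(t)=P_0^t(\gamma)\dot{\gamma}(0)=P_0^t(\gamma)X$, so $P_t^0(\gamma)\dot{\gamma}(t)=X$, and therefore
\[
A(t)\dot{\gamma}(t)=P_0^t(\hat{\gamma})\big(A_0\,P_t^0(\gamma)\dot{\gamma}(t)\big)=P_0^t(\hat{\gamma})(A_0X).
\]
On the other hand $\hat{\gamma}(t)=\widehat{\exp}_{\hat{x}_0}(tA_0X)$ is a geodesic of $(\hat{M},\hat{g})$ with $\dot{\hat{\gamma}}(0)=A_0X$, hence $\dot{\hat{\gamma}}(t)=P_0^t(\hat{\gamma})\dot{\hat{\gamma}}(0)=P_0^t(\hat{\gamma})(A_0X)$. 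The two expressions agree, which is precisely (\ref{eq:noslip}).

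An equivalent route is through Remark \ref{re:Lambda}: for a geodesic one has $\Lambda_{x_0}(\gamma)(t)=tX$ (Remark (i) following the $\Lambda$-theorem), so $A_0\circ\Lambda_{x_0}(\gamma)(t)=tA_0X$; since the geodesic $t\mapsto\widehat{\exp}_{\hat{x}_0}(tA_0X)$ is sent by $\hat{\Lambda}_{\hat{x}_0}$ to $t\mapsto tA_0X$, injectivity of $\hat{\Lambda}_{\hat{x}_0}$ gives $\hat{\gamma}_{\RDist}(\gamma,q_0)(t)=\widehat{\exp}_{\hat{x}_0}(tA_0X)$, and the formula for $A_{\RDist}(\gamma,q_0)$ is then read off from (\ref{rol-curve}). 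There is no serious obstacle: the entire content is the observation that, once the parallel transports are stripped off, the no-slipping constraint $A\dot{\gamma}=\dot{\hat{\gamma}}$ along a geodesic reduces to the assertion that $\hat{\gamma}$ is the geodesic with initial velocity $A_0X$. The only point deserving a word of care is the domain of definition of $\hat{\gamma}$, which is handled by completeness of $\hat{M}$ exactly as in Proposition \ref{pr:rolling_curves}.
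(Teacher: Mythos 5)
Your proof is correct and follows essentially the same route as the paper: write down the candidate curve, use Proposition \ref{pr:2.1:1} to see it lies in $Q$ and is tangent to $\NSDist$, verify the no-slipping condition from the fact that $\gamma$ and $\hat{\gamma}$ are geodesics (so their velocities are parallel), and conclude by uniqueness of the rolling curve. The alternative argument via $\Lambda_{x_0}$ and Remark \ref{re:Lambda} is a fine aside but adds nothing essential beyond the paper's computation.
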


\begin{proof}
Let $0<a'\leq a$ such that $\hat{\gamma}(t):=\widehat{\exp}_{\hat{x}_0}(tA_0 X)$
is defined on $[0,a']$.
Then, by proposition \ref{pr:2.1:1}, $q(t):=(\gamma(t),\hat{\gamma}(t);A(t))$ with
$A(t):=P_0^t(\hat{\gamma})\circ A_0\circ P_t^0(\gamma)$, $t\in [0,a']$,
is a curve on $Q$
and $A(t)$ is parallel to $(\gamma,\hat{\gamma})$ in $M\times \hat{M}$.
Therefore $t\mapsto q(t)$ is tangent to $\NSDist$ on $[0,a']$
and thus $\dot{q}(t)=\LNSD(\dot{\gamma}(t), \dot{\hat{\gamma}}(t))|_{q(t)}$.
Moreover, since $\gamma$ and $\hat{\gamma}$ are geodesics,
\[
A(t)\dot{\gamma}(t)
=(P_0^t(\hat{\gamma})\circ A_0)(P_t^0(\gamma)\dot{\gamma}(t))
=P_0^t(\hat{\gamma})(A_0X)=\dot{\hat{\gamma}}(t),
\]
which shows that for $t\in [0,a']$,
\begin{align}
\dot{q}(t)
&=\LNSD(\dot{\gamma}(t), A(t)\dot{\gamma}(t))|_{q(t)}\nonumber\\
&=\LRD(\dot{\gamma}(t))\big|_{q(t)}.\nonumber
\end{align}
Hence $t\mapsto q(t)$ is tangent to $\RDist$
i.e., it is a rolling curve
along $\gamma$ with initial position $q(0)=(\gamma(0),\hat{\gamma}(0);A(0))=(x_0,\hat{x}_0;A_0)=q_0$.

\end{proof}

\begin{remark}
If $\gamma(t)=\exp_{x_0}(tA_0 X)$ and $q_0=(x_0,\hat{x}_0;A_0)$, the statement of the proposition can be written in a compact form as
\[
A_{\RDist}{(\gamma,q_0)}(t)
=P_0^t\big(s\mapsto \ol{\exp}_{(x_0,\hat{x}_0)}(s(X, A_0X))\big)A_0,
\]
for all $t$ where defined.
\end{remark}

The next proposition describes the symmetry of the study of the rolling problem
of $(M,g)$ rolling against $(\hat{M},\hat{g})$
to the problem of $(\hat{M},\hat{g})$ rolling against $(M,g)$.

\begin{proposition}\label{pr:rol_inverse}
Let $\widehat{\RDist}$ be the rolling distribution in $\hat{Q}:=Q(\hat{M},M)$.
Then the map
\[
\iota:Q\to\hat{Q};\quad \iota(x,\hat{x};A)=(\hat{x},x;A^{-1})
\]
is a diffeomorphism of $Q$ onto $\hat{Q}$ and
\[
\iota_*\RDist=\widehat{\RDist}.
\]
In particular, $\iota(\mc{O}_{\RDist}(q))=\mc{O}_{\widehat{\RDist}}(\iota(q))$.
\end{proposition}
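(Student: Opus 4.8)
The plan is to verify directly that $\iota$ is a diffeomorphism and then show that it carries $\RDist$-admissible curves to $\widehat{\RDist}$-admissible curves, which immediately gives $\iota_*\RDist = \widehat{\RDist}$ (since both are smooth distributions of the same rank $n$ and $\iota$ is a diffeomorphism, it suffices to check inclusion of tangent admissible curves, or equivalently to check the lift formula pointwise). First I would observe that $\iota$ is well defined: if $A\colon T|_xM\to T|_{\hat x}\hat M$ is an o-isometry, then $A^{-1}\colon T|_{\hat x}\hat M\to T|_x M$ is also an isometry, and it is orientation-preserving because $A$ is; hence $(\hat x,x;A^{-1})\in\hat Q$. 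Smoothness of $\iota$ and of its inverse $\iota^{-1}(\hat x,x;B)=(x,\hat x;B^{-1})$ follows from the smoothness of matrix inversion once expressed in the local trivializations $\tau_{F,\hat F}$ of $\pi_Q$ and $\tau_{\hat F,F}$ of $\pi_{\hat Q}$ induced by orthonormal frames: in these coordinates $\iota$ is simply $((x,\hat x),B)\mapsto((\hat x,x),B^{-1})$ on the $\SO(n)$-factor, which is smooth. Thus $\iota$ is a diffeomorphism, and $\iota\circ\iota = \mathrm{id}$ up to swapping the roles of $M$ and $\hat M$, i.e.\ $\iota^{-1}$ is the analogous map $\hat Q\to Q$.

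Next I would prove $\iota_*\RDist=\widehat{\RDist}$ by working with rolling curves. Let $\gamma:[0,a]\to M$ be a.c.\ with $\gamma(0)=x_0$, and let $q_0=(x_0,\hat x_0;A_0)\in Q$. By Proposition~\ref{pr:rolling_curves} the rolling curve is $q_{\RDist}(\gamma,q_0)(t)=(\gamma(t),\hat\gamma(t);A(t))$ where, using Remark~\ref{re:Lambda} and Proposition~\ref{pr:2.1:1}, $A(t)=P_0^t(\hat\gamma)\circ A_0\circ P_t^0(\gamma)$ and $A(t)\dot\gamma(t)=\dot{\hat\gamma}(t)$ a.e. I claim that $\iota\circ q_{\RDist}(\gamma,q_0)$ is precisely the rolling curve in $\hat Q$ along $\hat\gamma$ with initial position $\iota(q_0)=(\hat x_0,x_0;A_0^{-1})$. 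Indeed, $\iota(q_{\RDist}(\gamma,q_0)(t))=(\hat\gamma(t),\gamma(t);A(t)^{-1})$, and $A(t)^{-1}=P_0^t(\gamma)\circ A_0^{-1}\circ P_t^0(\hat\gamma)$, which by Proposition~\ref{pr:2.1:1} is parallel along $(\hat\gamma,\gamma)$ in $\hat M\times M$; moreover $A(t)^{-1}\dot{\hat\gamma}(t)=A(t)^{-1}A(t)\dot\gamma(t)=\dot\gamma(t)$ a.e., so the no-slipping condition for the rolling of $\hat M$ against $M$ holds. By the uniqueness in Proposition~\ref{pr:rolling_curves} (applied to $\widehat{\RDist}$), $\iota\circ q_{\RDist}(\gamma,q_0) = q_{\widehat{\RDist}}(\hat\gamma,\iota(q_0))$.

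From this, differentiating at $t=0$ gives $\iota_*(\LRD(X)|_{q_0}) = \LRDhat(A_0X)|_{\iota(q_0)}$ for every $X\in T|_{x_0}M$ (here $\LRDhat$ denotes the $\widehat{\RDist}$-lift, defined as in Definition~\ref{def:LRD} for the pair $(\hat M,M)$), since $\dot{\hat\gamma}(0)=A_0X$ and $\dot\gamma(0)=X$. As $X$ ranges over $T|_{x_0}M$ and $A_0$ is an isomorphism, $A_0X$ ranges over all of $T|_{\hat x_0}\hat M$, so $\iota_*(\RDist|_{q_0}) = \iota_*(\LRD(T|_{x_0}M)|_{q_0}) = \LRDhat(T|_{\hat x_0}\hat M)|_{\iota(q_0)} = \widehat{\RDist}|_{\iota(q_0)}$. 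Since $q_0$ was arbitrary, $\iota_*\RDist=\widehat{\RDist}$. Finally, the orbit statement $\iota(\mc O_{\RDist}(q))=\mc O_{\widehat{\RDist}}(\iota(q))$ is immediate: a diffeomorphism carrying one distribution onto another carries the admissible curves of the first bijectively onto those of the second (with the same endpoints, transported by $\iota$), hence it carries orbits to orbits by the definition \eqref{eq:orbit}. The only mildly delicate point is keeping track of the bookkeeping for parallel transport under inversion, i.e.\ the identity $(P_0^t(\hat\gamma)\circ A_0\circ P_t^0(\gamma))^{-1}=P_0^t(\gamma)\circ A_0^{-1}\circ P_t^0(\hat\gamma)$ together with the fact (from Proposition~\ref{pr:2.1:1}) that this is the parallel transport of $A_0^{-1}$ along $(\hat\gamma,\gamma)$; everything else is routine.
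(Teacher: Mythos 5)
Your proof is correct and follows essentially the same route as the paper: the paper also notes that $\iota$ is obviously a diffeomorphism and then observes that the no-slip and parallel-transport conditions for a curve $(\gamma,\hat\gamma;A)$ in $Q$ are equivalent to the corresponding conditions for $(\hat\gamma,\gamma;A^{-1})$ in $\hat{Q}$, which is exactly your rolling-curve correspondence. Your extra step of differentiating at $t=0$ to get the pointwise lift identity is just a more explicit way of concluding $\iota_*\RDist=\widehat{\RDist}$ from that equivalence.
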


\begin{proof}
It is obvious that $\iota$ is a diffeomorphism (with the obvious inverse map)
and for an a.c. path $q(t)=(\gamma(t),\hat{\gamma}(t);A(t))$
in $Q$, $(\iota\circ q)(t)=(\hat{\gamma}(t),\gamma(t);A(t)^{-1})$
is a.c. in $\hat{Q}$ and for a.e. $t$,
\[
\begin{cases}
\dot{\hat{\gamma}}(t)=A(t)\dot{\gamma}(t) \cr
A(t)=P_0^{t}(\hat{\gamma})\circ A(0)\circ P_t^0(\gamma)
\end{cases}
\quad\iff\quad
\begin{cases}
\dot{\gamma}(t)=A(t)^{-1}\dot{\hat{\gamma}}(t) \cr
A(t)^{-1}=P_0^{t}(\gamma)\circ A(0)^{-1}\circ P_t^0(\hat{\gamma})
\end{cases}.
\]
These simple remarks prove the claims.

\end{proof}

\begin{remark}
Notice that Definitions \ref{def:LRD} and \ref{def:2.5:1}
make sense not only in $Q$ but also in the space $T^*M\otimes T\hat{M}$.
It is easily seen that $\RDist$ defined on $T^*M\otimes T\hat{M}$
by Eq. (\ref{eq:2.5:1}) is actually tangent to $Q$ so
its restriction to $Q$ gives exactly $\RDist$ on $Q$ as defined above.
Similarly, Propositions \ref{pr:rolling_curves}, \ref{pr:rol_geodesic} and \ref{pr:rol_inverse}
still hold if we replace $Q$ by $T^*M\otimes T\hat{M}$
and $\hat{Q}$ by $T^*\hat{M}\otimes TM$
everywhere in their statements.
\end{remark}

%%%%%%%%%%%%%%%%%%%%%%%%%%%%%%%%%%%%%%%%%%%%%%%%%%%%
\subsection{Lie brackets of vector fields on $Q$}\label{sec:2.3}
%%%%%%%%%%%%%%%%%%%%%%%%%%%%%%%%%%%%%%%%%%%%%%%%%%%%

In this section, we compute commutators of the vectors fields of $T^*M\otimes T\hat{M}$ and $Q$
with respect to the splitting of $T(T^*M\otimes T\hat{M})$ (resp. $TQ$)
as a direct sum $\NSDist\oplus V(\pi_{T^*M\otimes T\hat{M}})$ (resp. $\NSDist\oplus V(\pi_Q)$)
as given in Remark \ref{re:tangent_splitting} above.
The main results are Propositions \ref{pr:NS_comm_HH}, \ref{pr:NS_comm_HH} and \ref{pr:NS_comm_VV}. These 
computations will serve as preliminaries for the Lie bracket computations relative to the rolling distribution 
$\RDist$ studied in the next section.
It is convenient to make the computations in $T^*M\otimes T\hat{M}$
and then to simply restrict the results to $Q$.

%%%%%%%%%%%%%%%%%%%%%%%%%%%%%%%%%%%%%%%%
\subsubsection{Computational tools}
%%%%%%%%%%%%%%%%%%%%%%%%%%%%%%%%%%%%%%%%

The next lemmas will be useful in the subsequent calculations.

\begin{lemma}\label{le:nice_extension_A}
Let $(x,\hat{x};A)\in T^*M\otimes T\hat{M}$ (resp. $(x,\hat{x};A)\in Q$). Then there exists a local $\pi_{T^*M\otimes T\hat{M}}$-section (resp. $\pi_Q$-section)
$\tilde{A}$ around $(x,\hat{x})$ such that $\tilde{A}|_{(x,\hat{x})}=A$ and $\ol{\nabla}_{\ol{X}} \tilde{A}=0$
for all $\ol{X}\in T|_{(x,\hat{x})} (M\times\hat{M})$.
\end{lemma}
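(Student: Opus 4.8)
The plan is to build $\tilde A$ by parallel transporting $A$ radially along geodesics of the product manifold $(\ol M,\ol g)=(M,g)\times(\hat M,\hat g)$ emanating from $(x,\hat x)$. First I would fix normal coordinates for $\ol\nabla$ on a geodesically convex normal neighbourhood $\ol U$ of $(x,\hat x)$ in $M\times\hat M$; equivalently, I would use the diffeomorphism $\ol{\exp}_{(x,\hat x)}$ from a star-shaped neighbourhood of $0$ in $T|_{(x,\hat x)}(M\times\hat M)$ onto $\ol U$. Then I would define, for each $(y,\hat y)\in\ol U$,
\[
\tilde A|_{(y,\hat y)}:=P^1_0\big(s\mapsto \ol{\exp}_{(x,\hat x)}(s\ol X_{(y,\hat y)})\big)\,A,
\]
where $\ol X_{(y,\hat y)}=\ol{\exp}_{(x,\hat x)}^{-1}(y,\hat y)$ is the initial velocity of the unique geodesic from $(x,\hat x)$ to $(y,\hat y)$, and $P$ denotes $\ol\nabla$-parallel transport of the $(1,1)$-tensor along that geodesic. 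By Proposition \ref{pr:2.1:1} (with $\ol\nabla = \ol\nabla^g\oplus\ol\nabla^{\hat g}$), since $A\in T^*M\otimes T\hat M$ (resp. $A\in Q$), each $\tilde A|_{(y,\hat y)}$ again lies in $T^*M\otimes T\hat M$ (resp. in $Q$), so $\tilde A$ is a genuine local section of $\pi_{T^*M\otimes T\hat M}$ (resp. of $\pi_Q$). Smoothness of $\tilde A$ follows from smooth dependence of geodesics and parallel transport on their endpoints/initial data, which can be read off from the ODE defining parallel transport in local coordinates, exactly as in Proposition \ref{pr:parallel_tensor} and the smoothness remark after Definition \ref{def:LNSD}. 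Clearly $\tilde A|_{(x,\hat x)}=A$ since the defining geodesic is constant.

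The key computation is $\ol\nabla_{\ol X}\tilde A=0$ at the centre point $(x,\hat x)$ for every $\ol X\in T|_{(x,\hat x)}(M\times\hat M)$. Here I would use the standard fact about normal coordinates: given $\ol X$, the curve $t\mapsto \ol\gamma(t):=\ol{\exp}_{(x,\hat x)}(t\ol X)$ is the radial geodesic with $\ol\gamma(0)=(x,\hat x)$, $\dot{\ol\gamma}(0)=\ol X$, and by construction $\tilde A|_{\ol\gamma(t)} = P^t_0(\ol\gamma)A$ is precisely the parallel transport of $A$ along $\ol\gamma$. Therefore $t\mapsto \tilde A|_{\ol\gamma(t)}$ is parallel along $\ol\gamma$, so $\ol\nabla_{\dot{\ol\gamma}(t)}\tilde A=0$ for all $t$; evaluating at $t=0$ gives $\ol\nabla_{\ol X}\tilde A=0$. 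Since $\ol X$ was arbitrary, this yields the claim. (One subtlety worth spelling out: one must check that $\tilde A|_{\ol\gamma(t)}$, as defined pointwise via the possibly-different geodesics from $(x,\hat x)$ to $\ol\gamma(t)$, coincides with the parallel transport of $A$ along $\ol\gamma$ itself — but this is immediate because the unique geodesic from $(x,\hat x)$ to $\ol\gamma(t)$ within the normal neighbourhood is the reparametrised restriction $\ol\gamma|_{[0,t]}$, and parallel transport is invariant under reparametrisation.)

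The main obstacle, such as it is, is not conceptual but bookkeeping: ensuring the normal neighbourhood $\ol U$ is chosen small enough that $\ol{\exp}_{(x,\hat x)}$ is a diffeomorphism onto it (so $\ol X_{(y,\hat y)}$ is well-defined and smooth in $(y,\hat y)$), and carefully invoking Proposition \ref{pr:2.1:1} to guarantee that the fibrewise image stays in $T^*M\otimes T\hat M$ (resp. $Q$) rather than drifting into the larger space $T^1_1(M\times\hat M)$ — this is exactly why the product structure $\ol\nabla=\nabla^g\oplus\nabla^{\hat g}$ is needed, so that parallel transport respects the decomposition $P^t_0(x,\hat x) = P^t_0(\hat x)\circ(\cdot)\circ P^0_t(x)$ of Eq. (\ref{eq:parallel_trans_A}). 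Everything else is a routine application of properties of the Levi-Civita connection in normal coordinates.
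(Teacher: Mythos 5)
Your construction — parallel transporting $A$ along the radial $\ol{g}$-geodesics $t\mapsto\ol{\exp}_{(x,\hat x)}(t\ol X)$ and invoking Proposition \ref{pr:2.1:1} to stay in $T^*M\otimes T\hat M$ (resp. $Q$) — is exactly the paper's proof, including the explicit formula $\tilde A|_{\ol y}=P_0^1\big(t\mapsto\ol{\exp}(t\ol{\exp}_{(x,\hat x)}^{-1}(\ol y))\big)A$. Your argument is correct; you simply spell out the vanishing of $\ol\nabla_{\ol X}\tilde A$ at the centre and the smoothness bookkeeping in more detail than the paper, which states these as clear.
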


\begin{proof}
Let $U$ be an open neighborhood of the origin of $T|_{(x,\hat{x})} (M\times\hat{M})$, where
the $\ol{g}$-exponential map $\ol{\exp}:U\to M\times\hat{M}$ is a diffeomorphism onto its image.
Parallel translate $A$ along geodesics $t\mapsto \ol{\exp}(t\ol{X})$, $\ol{X}\in U$,
to get a local section $\tilde{A}$ of $T^*(M)\otimes T(\hat{M})$ in a neighborhood of $\ol{x}=(x,\hat{x})$.
More explicitly, one has
\[
\tilde{A}|_{\ol{y}}=P_0^1\big(t\mapsto \ol{\exp}\big(t(\ol{\exp}_{\ol{x}})^{-1}(\ol{y})\big)\big)A,
\]
for $\ol{y}\in U$.
If $(x,\hat{x};A)\in Q$, this actually provides a local $\pi_Q$-section.
Moreover, we clearly have $\ol{\nabla}_{\ol{X}} \tilde{A}=0$ for all $\ol{X}\in T|_{(x,\hat{x})} (M\times\hat{M})$.

\end{proof}

Notice that the choice of $\tilde{A}$ corresponding to $(x,\hat{x};A)$ is, of course, not unique.

\begin{lemma}\label{le:comm_nabla_A}
Let $\tilde{A}$ be a smooth local $\pi_{T^* M\otimes T\hat{M}}$-section
and $\tilde{A}|_{(x,\hat{x})}=A$. Then, for any vector fields $\ol{X},\ol{Y}\in \VF(M\times\hat{M})$
such that $\ol{X}|_{(x,\hat{x})}=(X, \hat{X})$, $\ol{Y}|_{(x,\hat{x})}=(Y, \hat{Y})$,
one has
\begin{align}\label{eq:2.3:1b}
([\ol{\nabla}_{\ol{X}},\ol{\nabla}_{\ol{Y}}] \tilde{A})|_{(x,\hat{x})}
=-AR(X,Y)+\hat{R}(\hat{X},\hat{Y})A+(\ol{\nabla}_{[\ol{X},\ol{Y}]}\tilde{A})|_{(x,\hat{x})}.
\end{align}
Here $[\ol{\nabla}_{\ol{X}},\ol{\nabla}_{\ol{Y}}]$ is given by $\ol{\nabla}_{\ol{X}}\circ \ol{\nabla}_{\ol{Y}}-\ol{\nabla}_{\ol{Y}}\circ \ol{\nabla}_{\ol{X}}$ and is 
an $\R$-linear map on the set of local sections of $\pi_{T^* M\otimes T\hat{M}}$ around $(x,\hat{x})$.
\end{lemma}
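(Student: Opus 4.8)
The plan is to read Eq.~(\ref{eq:2.3:1b}) as the Ricci identity --- the action of the curvature operator of $\ol\nabla$ on $(1,1)$-tensor fields --- applied on the product manifold $\ol M=M\times\hat M$ to the section $\tilde A$, and then to simplify it using the product splittings of $T\ol M$ and of $\ol R$ together with the fact that $\tilde A$ takes values in the subbundle $T^*M\otimes T\hat M\subset T^1_1(\ol M)$.

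First I would record the general identity. Fix $\ol X,\ol Y\in\VF(\ol M)$ and set $D_{\ol X,\ol Y}:=[\ol\nabla_{\ol X},\ol\nabla_{\ol Y}]-\ol\nabla_{[\ol X,\ol Y]}$; on vector fields this is, by the very definition of the curvature, the endomorphism $D_{\ol X,\ol Y}W=\ol R(\ol X,\ol Y)W$. Applying $\ol\nabla$ to the contraction $\tilde AW$ and expanding twice by the Leibniz rule, the first-order cross terms $(\ol\nabla_{\ol X}\tilde A)(\ol\nabla_{\ol Y}W)$ and $(\ol\nabla_{\ol Y}\tilde A)(\ol\nabla_{\ol X}W)$ cancel, and one gets for every $W\in\VF(\ol M)$
\[
(D_{\ol X,\ol Y}\tilde A)(W)=D_{\ol X,\ol Y}(\tilde AW)-\tilde A(D_{\ol X,\ol Y}W)=\ol R(\ol X,\ol Y)(\tilde AW)-\tilde A(\ol R(\ol X,\ol Y)W).
\]
In particular the operator $\tilde A\mapsto D_{\ol X,\ol Y}\tilde A$ is $\Cinf(\ol M)$-linear, i.e. tensorial, so that both sides of (\ref{eq:2.3:1b}) depend only on the value $A=\tilde A|_{(x,\hat x)}$ and on the values $(X,\hat X)$, $(Y,\hat Y)$ of $\ol X$, $\ol Y$ at $(x,\hat x)$; hence it is enough to evaluate the displayed identity at $(x,\hat x)$.

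Next I would insert the product structures. At $(x,\hat x)$, write a general tangent vector as $W=(Z,\hat Z)\in T|_xM\oplus T|_{\hat x}\hat M$. Since $\tilde A|_{(x,\hat x)}=A$ lies in $T^*|_xM\otimes T|_{\hat x}\hat M$, it annihilates the $T|_{\hat x}\hat M$-summand and sends $(Z,\hat Z)$ to $AZ$, viewed as an element of the $T|_{\hat x}\hat M$-summand. Plugging in the product formula for the curvature of $\ol M$ recalled in Section~\ref{notations}, namely $\ol R((X,\hat X),(Y,\hat Y))(Z,\hat Z)=(R(X,Y)Z,\hat R(\hat X,\hat Y)\hat Z)$, the right-hand side terms become $\ol R(\ol X,\ol Y)(\tilde AW)=\hat R(\hat X,\hat Y)(AZ)$ and $\tilde A(\ol R(\ol X,\ol Y)W)=A(R(X,Y)Z)$. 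Regarded as linear maps $T|_xM\to T|_{\hat x}\hat M$, i.e. as elements of $T^*|_xM\otimes T|_{\hat x}\hat M$, this reads $D_{\ol X,\ol Y}\tilde A|_{(x,\hat x)}=\hat R(\hat X,\hat Y)A-AR(X,Y)$, which is precisely (\ref{eq:2.3:1b}).

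I do not expect a genuine obstacle here; the point is bookkeeping. One must be careful (i) that the Leibniz-rule cancellation above uses only the definition of the curvature endomorphism on vector fields, not torsion-freeness in any disguised form, and (ii) to keep track of how a section of $T^*M\otimes T\hat M$ sits inside $T^1_1(\ol M)$ --- it kills the $T\hat M$ factor of the tangent space and takes values in it --- so that of the two occurrences of $\ol R$ one collapses to its $R$-part and the other to its $\hat R$-part. As an alternative avoiding the general Ricci identity, one may take for $\tilde A$ the $\ol\nabla$-parallel extension of $A$ provided by Lemma~\ref{le:nice_extension_A}, for which $\ol\nabla_{[\ol X,\ol Y]}\tilde A|_{(x,\hat x)}=0$, compute $\ol\nabla_{\ol X}\ol\nabla_{\ol Y}\tilde A-\ol\nabla_{\ol Y}\ol\nabla_{\ol X}\tilde A$ at $(x,\hat x)$ directly, and then invoke the tensoriality observed above to pass to an arbitrary $\tilde A$.
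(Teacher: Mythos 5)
Your proof is correct and follows essentially the same route as the paper: the paper also expands $([\ol{\nabla}_{\ol{X}},\ol{\nabla}_{\ol{Y}}]\tilde{A})Z$ by the Leibniz rule (testing against $Z\in\VF(M)$ with arbitrary $Z|_x$), cancels the cross terms, rewrites the resulting commutators via $\ol{R}$ and $\ol{\nabla}_{[\ol{X},\ol{Y}]}$, and then uses the product splitting $\ol{R}((X,\hat X),(Y,\hat Y))(Z,\hat Z)=(R(X,Y)Z,\hat R(\hat X,\hat Y)\hat Z)$ together with the fact that $\tilde A$ kills the $T\hat M$-summand and takes values in it. Your explicit appeal to tensoriality of $[\ol{\nabla}_{\ol{X}},\ol{\nabla}_{\ol{Y}}]-\ol{\nabla}_{[\ol{X},\ol{Y}]}$ in $\tilde A$ is a harmless packaging difference, not a different argument.
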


\begin{proof}
For an arbitrary $Z\in \VF(M)$, which we may interpret as a vector field on $M\times\hat{M}$ as usual, we calculate
\[
& ([\ol{\nabla}_{\ol{X}},\ol{\nabla}_{\ol{Y}}] \tilde{A})Z
=\ol{\nabla}_{\ol{X}}((\ol{\nabla}_{\ol{Y}} \tilde{A})Z)-(\ol{\nabla}_{\ol{Y}}\tilde{A})(\ol{\nabla}_{\ol{X}} Z)
-\ol{\nabla}_{\ol{Y}}((\ol{\nabla}_{\ol{X}} \tilde{A})Z)+(\ol{\nabla}_{\ol{X}}\tilde{A})(\ol{\nabla}_{\ol{Y}} Z) \\
=&\ol{\nabla}_{\ol{X}}(\ol{\nabla}_{\ol{Y}}(\tilde{A}Z)-\tilde{A}\ol{\nabla}_{\ol{Y}} Z)-(\ol{\nabla}_{\ol{Y}}\tilde{A})(\ol{\nabla}_{\ol{X}} Z)\\
&-\ol{\nabla}_{\ol{Y}}(\ol{\nabla}_{\ol{X}}(\tilde{A}Z)-\tilde{A}\ol{\nabla}_{\ol{X}} Z)+(\ol{\nabla}_{\ol{X}}\tilde{A})(\ol{\nabla}_{\ol{Y}} Z) \\
=&[\ol{\nabla}_{\ol{X}},\ol{\nabla}_{\ol{Y}}](\tilde{A}Z)-(\ol{\nabla}_{\ol{X}}\tilde{A})(\ol{\nabla}_{\ol{Y}}Z)-\tilde{A}\ol{\nabla}_{\ol{X}}(\ol{\nabla}_{\ol{Y}} Z)
-(\ol{\nabla}_{\ol{Y}}\tilde{A})(\ol{\nabla}_{\ol{X}} Z) \\
&+(\ol{\nabla}_{\ol{Y}}\tilde{A})(\ol{\nabla}_{\ol{X}}Z)+\tilde{A}\ol{\nabla}_{\ol{Y}}(\ol{\nabla}_{\ol{X}} Z)+(\ol{\nabla}_{\ol{X}}\tilde{A})(\ol{\nabla}_{\ol{Y}} Z) \\
=&[\ol{\nabla}_{\ol{X}},\ol{\nabla}_{\ol{Y}}](\tilde{A}Z)+\tilde{A}[\ol{\nabla}_{\ol{Y}},\ol{\nabla}_{\ol{X}}]Z \\
=&\ol{R}(\ol{X},\ol{Y})(\tilde{A}Z)+\ol{\nabla}_{[\ol{X},\ol{Y}]}(\tilde{A}Z)+\tilde{A}(\ol{R}(\ol{Y},\ol{X})Z)+\tilde{A}\ol{\nabla}_{[\ol{Y},\ol{X}]} Z \\
=&-\tilde{A}(\ol{R}(\ol{X},\ol{Y})Z)+\ol{R}(\ol{X},\ol{Y})(\tilde{A}Z)+(\ol{\nabla}_{[\ol{X},\ol{Y}]}\tilde{A})Z,
\]
and evaluating the above quantity at $(x,\hat{x})$, we get
\[
([\ol{\nabla}_{\ol{X}},\ol{\nabla}_{\ol{Y}}] \tilde{A})Z|_{(x,\hat{x})}
=-A(R(X,Y)Z)+\hat{R}(\hat{X},\hat{Y})(AZ)+(\ol{\nabla}_{[\ol{X},\ol{Y}]}\tilde{A})Z|_{(x,\hat{x})}.
\]
Since the value $Z|_{x}$ can be chosen arbitrarily in $T|_x M$, the claim follows.

\end{proof}

We next define the actions of vectors $\LNSD(\ol{X})|_q\in T|_q(T^*M\otimes T\hat{M})$, $\ol{X}\in T|_{(x,\hat{x})} (M\times\hat{M})$, and
$\nu(B)|_q\in V|_q(\pi_{T^*M\otimes T\hat{M}})$, $B\in T|_{x}^*M\otimes T|_{\hat{x}}\hat{M}$, on certain bundle maps
instead of just functions (e.g. from $\Cinf(T^*M\otimes T\hat{M})$).
Recall that if $\eta:E\to N$ is a vector bundle and $y\in N$, $u\in E|_y=\eta^{-1}(y)$,
we have defined the isomorphism
\[
\nu_\eta|_u:E|_{y}\to V|_u(\eta);\quad \nu_\eta|_u(v)(f)=\dif{t}\big|_0 f(u+tv),\quad \forall f\in\Cinf(E).
\]
We normally omit the index $\eta$ in $\nu_{\eta}$, when it is clear from the context, and simply write $\nu$ instead of $\nu_{\eta}$ and
it is sometimes more convenient to write $\nu(v)|_u$ for $\nu|_u(v)$. 
By using this we make the following definition.

\begin{definition}\label{def:general_vert}
Suppose $B$ is a smooth manifold, $\eta:E\to N$ a vector bundle,
$\tau:B\to N$ and $F:B\to E$ smooth maps such that $\eta\circ F=\tau$.
Then, for $b\in B$ and $\mc{V}\in V|_b(\tau)$, we define
the vertical derivative of $F$ as
\[
\mc{V}F:=\nu|_{F(b)}^{-1}(F_*\mc{V})\in E|_{\tau(b)}.
\]
\end{definition}

This is well defined since $F_*\mc{V}\in V|_{F(b)}(\eta)$.
In this matter, we will show the following simple lemma that will be used later on.

\begin{lemma}\label{le:bundle_extension}
Let $N$ be a smooth manifold, $\eta:E\to N$ a vector bundle, $\tau:B\to N$ a smooth map,
$\mc{O}\subset B$ an immersed submanifold
and $F:\mc{O}\to E$ a smooth map such that $\eta\circ F=\tau|_{\mc{O}}$.

\begin{itemize}
\item[(i)] Then for every $b_0\in \mc{O}$,
there exists an open neighbourhood $V$ of $b_0$ in $\mc{O}$,
an open neighbourhood $\tilde{V}$ of $b_0$ in $B$ such that $V\subset \tilde{V}$
and a smooth map $\tilde{F}:\tilde{V}\to E$ such that $\eta\circ \tilde{F}=\tau|_{\tilde{V}}$
and $\tilde{F}|_V=F|_V$.
We call $\tilde{F}$ a local extension of $F$ around $b_0$.

\item[(ii)] Suppose $\tau:B\to N$ is also a vector bundle
and $\tilde{F}$ is any local extension of $F$ around $b_0$ as in case (i).
Then if $v\in B|_{\tau(b_0)}$ is such that $\nu|_{b_0}(v)\in T|_{b_0}\mc{O}$,
one has
\[
\nu|_{b_0}(v)(F)=\dif{t}\big|_0 \tilde{F}(b_0+tv) \in E|_{\tau(b_0)},
\]
where on the right hand side one views $t\mapsto \tilde{F}(b_0+tv)$
as a map into a fixed (i.e. independent of $t$) vector space $E|_{F(b_0)}$
and the derivative $\dif{t}$ is just the classical derivative of a vector valued map
(and not a tangent vector).
\end{itemize}
\end{lemma}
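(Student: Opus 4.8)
The plan is to prove the two parts of Lemma~\ref{le:bundle_extension} in order, the first being essentially a bundle-chart computation and the second a compatibility check between the abstract definition of the vertical derivative (Definition~\ref{def:general_vert}) and the naive vector-valued derivative.

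For part (i), I would work locally. Fix $b_0\in\mc{O}$ and set $x_0=\tau(b_0)\in N$. Choose a vector bundle trivialization $\psi:\eta^{-1}(U)\to U\times \R^k$ of $\eta$ over an open $U\ni x_0$ (with $k$ the rank of $\eta$), so that $\pr_1\circ\psi=\eta$ on $\eta^{-1}(U)$. On the preimage $\tau^{-1}(U)\subset B$, the map $F$ restricted to $\mc{O}\cap \tau^{-1}(U)$ is described by its second component $f:=\pr_2\circ\psi\circ F:\mc{O}\cap\tau^{-1}(U)\to\R^k$, a smooth $\R^k$-valued function on the immersed submanifold. Since $\mc{O}$ is an immersed submanifold of $B$, near $b_0$ there is (by the local normal form for immersions, or simply because the inclusion is a smooth immersion hence locally an embedding) an open neighbourhood $V$ of $b_0$ in $\mc{O}$ that is embedded in some open $\tilde V\subset \tau^{-1}(U)$ of $B$; shrinking if necessary, each component of $f|_V$ extends to a smooth function on $\tilde V$ (extend along a tubular/slice chart, or invoke the standard fact that a smooth function on an embedded submanifold extends smoothly to a neighbourhood). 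Call the resulting $\R^k$-valued smooth map $\tilde f:\tilde V\to\R^k$ and define $\tilde F:=\psi^{-1}\circ(\tau|_{\tilde V},\tilde f)$. Then $\eta\circ\tilde F=\tau|_{\tilde V}$ by construction and $\tilde F|_V=F|_V$, which is exactly what is claimed.

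For part (ii), now $\tau:B\to N$ is also a vector bundle, and we are given $v\in B|_{\tau(b_0)}$ with $\nu|_{b_0}(v)\in T|_{b_0}\mc{O}$. By Definition~\ref{def:general_vert}, the vertical derivative is $\nu|_{b_0}(v)(F)=\nu|_{F(b_0)}^{-1}\big(F_*(\nu|_{b_0}(v))\big)\in E|_{\tau(b_0)}$, where $\nu|_{b_0}(v)$ is the tangent vector $\dif{t}\big|_0 (b_0+tv)$ in $T|_{b_0}B$, which by hypothesis lies in $T|_{b_0}\mc{O}$ so that $F_*$ is applied to a vector genuinely tangent to $\mc{O}$. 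The curve $c(t)=b_0+tv$ in $B$ need not stay inside $\mc{O}$, but since $\dot c(0)=\nu|_{b_0}(v)\in T|_{b_0}\mc{O}$ we may pick, in a chart adapted to $\mc{O}\subset\tilde V$, a curve $\tilde c(t)$ in $\mc{O}$ with $\tilde c(0)=b_0$, $\dot{\tilde c}(0)=\nu|_{b_0}(v)$; then $F_*(\nu|_{b_0}(v))=\dif{t}\big|_0 F(\tilde c(t))=\dif{t}\big|_0 \tilde F(\tilde c(t))$ since $\tilde F|_V=F|_V$, and also $=\dif{t}\big|_0 \tilde F(b_0+tv)$ because $\tilde F$ is smooth on all of $\tilde V$ and $c,\tilde c$ have the same $1$-jet at $0$. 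Finally, because $\eta\circ\tilde F=\tau|_{\tilde V}$, the curve $t\mapsto\tilde F(b_0+tv)$ lies in the single fibre $E|_{\tau(b_0)}$, so $\dif{t}\big|_0\tilde F(b_0+tv)$ is literally an element of the fixed vector space $E|_{\tau(b_0)}$, and applying $\nu|_{F(b_0)}^{-1}$ — which identifies $V|_{F(b_0)}(\eta)$ with $E|_{\tau(b_0)}$ and sends the vertical velocity of a curve in a fibre to its ordinary derivative — returns precisely that ordinary derivative. This gives the stated formula.

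The main obstacle is the extension step in part (i): producing $\tilde F$ requires care because $\mc{O}$ is only \emph{immersed}, not embedded, so one must first pass to a neighbourhood $V$ of $b_0$ on which $\mc{O}$ is embedded before any neighbourhood-extension of a smooth function makes sense; once that reduction is made, the extension is standard. The rest of part (ii) is a bookkeeping verification that the two notions of ``derivative in the fibre direction'' coincide, which is straightforward once one notes that $\eta\circ\tilde F=\tau$ forces the relevant curve to stay in one fibre.
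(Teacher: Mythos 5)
Your proof is correct and follows essentially the same route as the paper: part (i) by expressing $F$ through a local trivialization/frame of $\eta$ and smoothly extending the component functions from an embedded neighbourhood of $b_0$ in $\mc{O}$ to a neighbourhood in $B$, and part (ii) by noting that $F_*\nu|_{b_0}(v)=\tilde F_*\nu|_{b_0}(v)$ (computed via any curve with that velocity, in particular $t\mapsto b_0+tv$) and that $\eta\circ\tilde F=\tau$ confines the curve $t\mapsto\tilde F(b_0+tv)$ to the single fibre $E|_{\tau(b_0)}$, so that $\nu|_{F(b_0)}^{-1}$ converts the tangent vector into the ordinary vector-valued derivative. The only cosmetic difference is your auxiliary curve $\tilde c$ in $\mc{O}$, which the paper bypasses by invoking $F_*=\tilde F_*$ on $T|_{b_0}V$ directly.
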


\begin{proof}
(i) For a given $b_0\in\mc{O}$, take a neighbourhood $W$ of $y_0:=\tau(b_0)$
in $N$
such that there exists a local frame $v_1,\dots,v_k$ of $\eta$ defined on $W$
(here $k=\dim E-\dim N$).
Since $\eta\circ F=\tau|_{\mc{O}}$, it follows that
\[
F(b)=\sum_{i=1}^k f_i(b)v_i|_{\tau(b)},\quad \forall b\in \tau^{-1}(W)\cap \mc{O},
\]
for some smooth functions $f_i:\tau^{-1}(W)\cap \mc{O}\to\R$, $i=1,\dots,k$.
Now one can choose a small open neighbourhood $V$ of $b_0$ in $\mc{O}$ 
and an open neigbourhood $\tilde{V}$ of $b_0$ in $B$
such that $V\subset \tilde{V}\subset \tau^{-1}(W)$ and there exist smooth $\tilde{f}_1,\dots,\tilde{f}_k:\tilde{V}\to\R$
extending the functions $f_i|_V$ i.e. $\tilde{f}_i|_V=f_i|_V$, $i=1,\dots,k$.
To finish the proof of case (i), it suffices to define $\tilde{F}:\tilde{V}\to E$ by
\[
\tilde{F}(b)=\sum_{i=1}^k \tilde{f}_i(b)v_i|_{\tau(b)},\quad \forall b\in \tilde{V}.
\]

(ii)
The fact that $t\mapsto \tilde{F}(b_0+tv)$ is a map into a fixed vector space $E|_{F(b_0)}$
is clear since $\tilde{F}(b_0+tv)\in E|_{\eta(\tilde{F}(b_0+tv))}=E|_{\tau(b_0+tv)}=E|_{\tau(b_0)}$.
Since $F|_V=\tilde{F}|_V$ and $\nu|_{b_0}(v)\in T|_{b_0} V$, we have
$F_*\nu|_{b_0}(v)=\tilde{F}_*\nu|_{b_0}(v)$.
Also, $t\mapsto b_0+tv$ is a curve in $E|_{\tau(b_0)}$, and hence in $E$,
whose tangent vector at $t=0$ is exactly $\nu|_{b_0}(v)$.
Hence
\[
\nu|_{F(b_0)}(\nu|_{b_0}(v)F)=F_*\nu|_{b_0}(v)
=\tilde{F}_*\nu|_{b_0}(v)=\dif{t}\big|_0 \tilde{F}(b_0+tv).
\]
Here on the rightmost side, the derivative $=:T$ is still viewed as a tangent vector of $E$ at $\tilde{F}(b_0)$
i.e. $t\mapsto \tilde{F}(b_0+tv)$ is thought of as a map into $E$.
On the other hand, if one views $t\mapsto \tilde{F}(b_0+tv)$ as a map into a fixed linear space $E|_{\tau(b_0)}$,
its derivative $=:D$ at $t=0$, as the usual derivative of vector valued maps,
is just $D=\nu_{F(b_0)}^{-1}(T)$.
In the statement, it is exactly $D$ whose expression we wrote as $\dif{t}\big|_0 \tilde{F}(b_0+tv)$.
This completes the proof.
\end{proof}

\begin{remark}
The advantage of the formula in case (ii) of the above lemma is that
it simplifies in many cases the computations of $\tau$-vertical derivatives
because $t\mapsto \tilde{F}(b_0+tv)$ is a map from a real interval into a \emph{fixed}
vector space $E|_{F(b_0)}$
and hence we may use certain computational tools (e.g. Leibniz rule)
coming from the ordinary vector calculus.
\end{remark}

Let $\mc{O}$  be an immersed submanifold of $T^*M\otimes T\hat{M}$
and write $\pi_{\mc{O}}:=\pi_{T^*M\otimes T\hat{M}}|_{\mc{O}}$.
Then if $\ol{T}:\mc{O}\to T^k_m(M\times \hat{M})$ with $\pi_{T^k_m(M\times\hat{M})}\circ\ol{T}=\pi_{\mc{O}}$
(i.e. $\ol{T}\in \Cinf(\pi_{\mc{O}},\pi_{T^k_m(M\times\hat{M})})$)
and if $q=(x,\hat{x};A)\in \mc{O}$ and $\ol{X}\in T|_{(x,\hat{x})} (M\times \hat{M})$
are such that $\LNSD(\ol{X})|_q\in T|_q\mc{O}$, we next want to define
what it means to take the derivative $\LNSD(\ol{X})|_{q}\ol{T}$.
Our main interest will be the case where $k=1$, $m=0$ i.e. $T^k_m(M\times \hat{M})=T(M\times\hat{M})$,
but some arguments below require this slightly more general setting.

First, for a moment, we take $\mc{O}=T^*M\otimes T\hat{M}$.
Choose some local $\pi_{T^*M\otimes T\hat{M}}$-section $\tilde{A}$ defined on a neighbourhood of $(x,\hat{x})$
such that $\tilde{A}|_{(x,\hat{x})}=A$ and define
\begin{align}\label{eq:semi-general_LNSD}
\LNSD(\ol{X})|_q\ol{T}:=\ol{\nabla}_{\ol{X}}(\ol{T}(\tilde{A}))-\nu(\ol{\nabla}_{\ol{X}} \tilde{A})|_q\ol{T}\in T^k_m|_{(x,\hat{x})}(M\times\hat{M}),
\end{align}
which is inspired by Eq. (\ref{eq:LNSD_formula}).
Here as usual, $\tilde{T}(\tilde{A})=\tilde{T}\circ\tilde{A}$
is a locally defined $(k,m)$-tensor field on $M\times\hat{M}$.

Notice that this does not depend on the choice of $\tilde{A}$
since if $\ol{\omega} \in \Gamma(\pi_{T^m_k(M\times\hat{M})})$
and if we write $(\ol{T}\ol{\omega})(q):=\ol{T}(q)\ol{\omega}|_{(x,\hat{x})}$ as a full contraction
for $q=(x,\hat{x};A)\in T^*M\otimes T\hat{M}$, whence $\ol{T}\ol{\omega}\in\Cinf(T^*M\otimes T\hat{M})$,
we may compute (where all the contractions are full)
\[
(\LNSD(\ol{X})|_q\ol{T})\ol{\omega}=&\big(\ol{\nabla}_{\ol{X}}(\ol{T}(\tilde{A}))\big)\ol{\omega}
-\big(\dif{t}\big|_0 \ol{T}(A+t\ol{\nabla}_{\ol{X}} \tilde{A})\big)\ol{\omega} \nonumber \\
=&\ol{\nabla}_{\ol{X}}(\ol{T}(\tilde{A})\ol{\omega})
-\ol{T}(q)\ol{\nabla}_{\ol{X}}\ol{\omega}
-\dif{t}\big|_0 \big(\ol{T}(A+t\ol{\nabla}_{\ol{X}} \tilde{A})\ol{\omega}\big) \nonumber \\
=& \ol{\nabla}_{\ol{X}}((\ol{T}\ol{\omega}))(\tilde{A})\big)
-\dif{t}\big|_0 (\ol{T}\ol{\omega})(A+t\ol{\nabla}_{\ol{X}} \tilde{A})
-\ol{T}(q) \ol{\nabla}_{\ol{X}}\ol{\omega} \nonumber \\
\]
i.e.
\begin{align}\label{eq:general_LNSD}
(\LNSD(\ol{X})|_q\ol{T})\ol{\omega}=&\LNSD(\ol{X})|_q(\ol{T}\ol{\omega})-\ol{T}(q)\ol{\nabla}_{\ol{X}}\ol{\omega},
\end{align}
for all $\ol{\omega}\in\Gamma(\pi_{T^m_k(M\times\hat{M})})$
and where $\LNSD(\ol{X})|_q$ on the right hand side acts as a tangent vector to a function $\ol{T}\ol{\omega}\in\Cinf(T^*M\otimes T\hat{M})$
as defined in subsection \ref{sec:2.1}.

Now the right hand side is know to be independent of any choice of local extension $\tilde{A}$ of $A$
(i.e. $\tilde{A}|_{(x,\hat{x})}=A$),
it follows that the definition of $\LNSD(\ol{X})|_q\ol{T}$ is independent of this choice as well.
Alternatively, we could have taken Eq. (\ref{eq:general_LNSD}) as the definition
of $\LNSD(\ol{X})|_q\ol{T}$.

Now if $\mc{O}\subset T^*M\otimes T\hat{M}$ is just an immersed submanifold,
we take the formula \eqref{eq:general_LNSD} as the definition of $\LNSD(\ol{X})|_q\ol{T}$.

\begin{definition}\label{def:general_LNSD}
Let $\mc{O}\subset T^*M\otimes T\hat{M}$ be an immersed submanifold and $q=(x,\hat{x};A)\in \mc{O}$, $\ol{X}\in T|_{(x,\hat{x})} (M\times\hat{M})$
be such that $\LNSD(\ol{X})|_q\in T|_q \mc{O}$.
Then for $\ol{T}:\mc{O}\to T^k_m(M\times \hat{M})$ such that $\pi_{T^k_m(M\times\hat{M})}\circ\ol{T}=\pi_{\mc{O}}$,
we define $\LNSD(\ol{X})|_q\ol{T}$ to be the unique element in $T^k_m|_{(x,\hat{x})}(M\times\hat{M})$
such that Eq. (\ref{eq:general_LNSD}) holds for every $\ol{\omega}\in\Gamma(\pi_{T^m_k(M\times\hat{M})})$,
and call it the derivative of $\ol{T}$ with respect to $\LNSD(\ol{X})|_q$.
\end{definition}

We now to provide the (unique) decomposition of any vector field
of $T^*M\otimes T\hat{M}$
defined over $\mc{O}$ (not necessarily tangent to it) according to the direct sum \eqref{eq:2.1:4}
i.e. $T(T^*M\otimes T\hat{M})=\NSDist\oplus V(\pi_{T^*M\otimes T\hat{M}})$.

\begin{proposition}\label{pr:decomposition_of_VF}
Let $\mc{X}\in \Cinf(\pi_{\mc{O}}, \pi_{T(T^*M\otimes T\hat{M})})$ be a smooth bundle map
(i.e. a vector field of $T^*M\otimes T\hat{M}$ along $\mc{O}$)
where $\mc{O}\subset T^*M\otimes T\hat{M}$ is a smooth immersed submanifold.
Then there are unique smooth bundle maps
$\ol{T}\in C^\infty(\pi_{\mc{O}},\pi_{T(M\times\hat{M})})$,
$U\in C^\infty(\pi_{\mc{O}},\pi_{T^*M\otimes T\hat{M}})$
such that
\begin{align}
\mc{X}|_{q}=\LNSD(\ol{T}(q))|_{q}+\nu(U(q))|_{q},
\quad q\in \mc{O}.
\end{align}
\end{proposition}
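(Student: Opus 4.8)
The plan is to read off the decomposition directly from the splitting of the tangent bundle of the extended state space recorded in Eq.~(\ref{eq:2.1:4}), namely $T(T^*M\otimes T\hat{M})=\NSDist\oplus_{T^*M\otimes T\hat{M}} V(\pi_{T^*M\otimes T\hat{M}})$. Here $\NSDist$ is a smooth rank $2n$ subbundle of $\pi_{T(T^*M\otimes T\hat{M})}$ and $V(\pi_{T^*M\otimes T\hat{M}})$ is the (smooth) vertical subbundle, and their fibrewise direct sum is the whole tangent space at each point. Consequently the two projections $\pr_{\NSDist}$ and $\pr_{V}$ onto the summands along the other are smooth bundle endomorphisms of $\pi_{T(T^*M\otimes T\hat{M})}$: in a local frame obtained by juxtaposing a local frame of $\NSDist$ with a local frame of $V(\pi_{T^*M\otimes T\hat{M}})$ these projections are represented by constant matrices.

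Given $\mc{X}\in C^\infty(\pi_{\mc{O}},\pi_{T(T^*M\otimes T\hat{M})})$, I would first apply these projections pointwise: for $q=(x,\hat{x};A)\in\mc{O}$ write $\mc{X}|_q=\pr_{\NSDist}(\mc{X}|_q)+\pr_{V}(\mc{X}|_q)$, with the first term in $\NSDist|_q$ and the second in $V|_q(\pi_{T^*M\otimes T\hat{M}})$. By the proposition stated in Section~\ref{sec:2.1} (just after Definition~\ref{def:2.1:1}), $(\pi_{T^*M\otimes T\hat{M}})_*$ restricts to a linear isomorphism $\NSDist|_q\to T|_{(x,\hat{x})}(M\times\hat{M})$ whose inverse is $\ol{X}\mapsto\LNSD(\ol{X})|_q$; so I define $\ol{T}(q):=(\pi_{T^*M\otimes T\hat{M}})_*\big(\pr_{\NSDist}(\mc{X}|_q)\big)\in T|_{(x,\hat{x})}(M\times\hat{M})$, which then satisfies $\LNSD(\ol{T}(q))|_q=\pr_{\NSDist}(\mc{X}|_q)$. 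Likewise $\nu(\cdot)|_q$ is a linear isomorphism $T^*|_xM\otimes T|_{\hat{x}}\hat{M}\to V|_q(\pi_{T^*M\otimes T\hat{M}})$, and I set $U(q):=\nu|_q^{-1}\big(\pr_{V}(\mc{X}|_q)\big)$. Adding these gives $\mc{X}|_q=\LNSD(\ol{T}(q))|_q+\nu(U(q))|_q$, and the relations $\pi_{T(M\times\hat{M})}\circ\ol{T}=\pi_{\mc{O}}$ and $\pi_{T^*M\otimes T\hat{M}}\circ U=\pi_{\mc{O}}$ hold by construction, so $\ol{T}$ and $U$ are genuine bundle maps.

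It then remains to check smoothness and uniqueness. Smoothness of $\ol{T}$ and $U$ follows by composing smooth maps: $\mc{X}$ is smooth on $\mc{O}$ (an immersed submanifold, so the inclusion into $T^*M\otimes T\hat{M}$ is smooth), $\pr_{\NSDist}$ and $\pr_{V}$ are smooth bundle maps, $(\pi_{T^*M\otimes T\hat{M}})_*$ is smooth, and $\nu|_q^{-1}$ depends smoothly on $q$ since it is the inverse of a smooth vector bundle isomorphism (the explicit fibre/local-coordinate formulas of Appendix~\ref{app:local}, together with Lemma~\ref{le:bundle_extension} for working over the possibly non-embedded $\mc{O}$, make this completely routine). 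For uniqueness, if $\LNSD(\ol{T}_1(q))|_q+\nu(U_1(q))|_q=\LNSD(\ol{T}_2(q))|_q+\nu(U_2(q))|_q$, then since the $\LNSD$-terms lie in $\NSDist|_q$ and the $\nu$-terms in the complementary summand $V|_q(\pi_{T^*M\otimes T\hat{M}})$, directness of the sum forces $\LNSD(\ol{T}_1(q))|_q=\LNSD(\ol{T}_2(q))|_q$ and $\nu(U_1(q))|_q=\nu(U_2(q))|_q$, whence $\ol{T}_1=\ol{T}_2$ and $U_1=U_2$ by injectivity of $\LNSD(\cdot)|_q$ and $\nu(\cdot)|_q$. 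I do not anticipate any real obstacle: the statement is essentially the assertion that a smooth decomposition of a vector bundle into complementary subbundles yields smooth projections, combined with the two canonical identifications $\NSDist\cong(\pi_{T^*M\otimes T\hat{M}})^*T(M\times\hat{M})$ (via $\LNSD$) and $V(\pi_{T^*M\otimes T\hat{M}})\cong(\pi_{T^*M\otimes T\hat{M}})^*(T^*M\otimes T\hat{M})$ (via $\nu$) already set up in the preceding subsections; the only point needing a little care, rather than genuine difficulty, is phrasing the smoothness correctly for bundle maps defined over an immersed submanifold.
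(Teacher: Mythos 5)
Your proof is correct and follows essentially the same route as the paper's: split $\mc{X}$ pointwise according to $T(T^*M\otimes T\hat{M})=\NSDist\oplus V(\pi_{T^*M\otimes T\hat{M}})$, then recover $\ol{T}$ via $(\pi_{T^*M\otimes T\hat{M}})_*$ and $U$ via $\nu|_q^{-1}$. The paper dismisses smoothness and uniqueness with "this clearly proves the claims," so your explicit verification is a harmless elaboration of the same argument.
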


\begin{proof}
First of all, there are unique smooth vector fields 
$$\mc{X}^h,\mc{X}^v\in 
\Cinf(\pi_{\mc{O}}, \pi_{T(T^*M\otimes T\hat{M}})),$$
of $T^*M\otimes T\hat{M}$ along $\mc{O}$
such that 
\[
\mc{X}^h|_{q}\in \NSDist|_{q},\ 
\mc{X}^v|_{q}\in V|_{q}(\pi_{T^*M\otimes T\hat{M}}),
\]
for all $q\in \mc{O}$
and $\mc{X}=\mc{X}^h+\mc{X}^v$.
Then, we define
\[
\ol{T}(q)=(\pi_{T^*M\otimes T\hat{M}})_*\mc{X}^h|_{q},\quad
U(q)=\nu|_{q}^{-1}(\mc{X}^v|_{q}),
\]
where $q=(x,\hat{x};A)\in \mc{O}$ and $\nu|_{q}$ is the isomorphism 
\begin{align}
T^*|_xM\otimes T|_{\hat{x}}\hat{M}\to V|_{q} (\pi_{T^*M\otimes T\hat{M}});\quad
B\mapsto \nu(B)|_{q}.\nonumber
\end{align}
This clearly proves the claims.
\end{proof}

\begin{remark}
The previous results shows that to know how to compute
the Lie brackets of two vector fields $\mc{X},\mc{Y}\in\VF(\mc{O})$
where $\mc{O}\subset T^*M\otimes T\hat{M}$ is an immersed submanifold (e.g. $\mc{O}=Q$),
one needs, in practice, just to know how to compute the Lie brackets
between vectors fields of the form $q\mapsto \LNSD(\ol{T}(q))|_q,\LNSD(\ol{S}(q))$
and $q\mapsto \nu(U(q))|_q,\nu(V(q))|_q$
where $\mc{X}|_q= \LNSD(\ol{T}(q))|_q+\nu(U(q))|_q$
and $\mc{Y}|_q= \LNSD(\ol{S}(q))|_q+\nu(V(q))|_q$
as above.
\end{remark}

\begin{remark}
Notice that if $\mc{O}\subset T^*M\otimes T\hat{M}$ is an immersed
submanifold, $q=(x,\hat{x};A)\in\mc{O}$, $\mc{X}\in T|_q\mc{O}$
and $\ol{T}\in\Cinf(\pi_{\mc{O}},\pi_{T^k_m(M\times\hat{M})})$,
then we may define the derivative $\mc{X}\ol{T}\in T^k_m(M\times\hat{M})$
by decomposing $\mc{X}=\LNSD(\ol{X})|_q+\nu(U)|_q$
for the unique $\ol{X}\in T|_{(x,\hat{x})} (M\times\hat{M})$ and $U\in (T^*M\otimes T\hat{M})|_{(x,\hat{x})}$.
\end{remark}

We finish this subsection with some obvious but useful rules of calculation,
that will be useful in the computations of Lie brackets on $\mc{O}\subset T^*M\otimes T\hat{M}$
and we will make use of them especially in section \ref{se:3D}.

\begin{lemma}\label{le:rules_of_computation}
Let $\mc{O}\subset T^*M\otimes T\hat{M}$ be an immersed submanifold, $q=(x,\hat{x};A)\in\mc{O}$,
$\ol{T}\in\Cinf(\pi_{\mc{O}},\pi_{T^k_m(M\times \hat{M})})$,
$F\in\Cinf(\mc{O})$, $h\in\Cinf(\R)$,
$\ol{X}\in T|_{(x,\hat{x})} (M\times\hat{M})$
such that $\LNSD(\ol{X})|_q\in T|_q\mc{O}$
and finally $U\in (T^*M\times T\hat{M})|_{(x,\hat{x})}$ such that $\nu(U)|_q\in T|_q \mc{O}$
Then
\begin{itemize}
\item[(i)] $\LNSD(\ol{X})|_q(F\ol{T})=(\LNSD(\ol{X})|_qF)\ol{T}(q)+F(q)\LNSD(\ol{X})|_q\ol{T}$
\item[(ii)] $\LNSD(\ol{X})|_q(h\circ F)=h'(F(q))\LNSD(\ol{X})|_q F$
\item[(iii)] $\nu(U)|_q(F\ol{T})=(\nu(U)|_q F)\ol{T}(q)+F(q)\nu(U)|_q \ol{T}$
\item[(iv)] $\nu(U)|_q(h\circ F)=h'(F(q))\nu(U)|_q F$
\end{itemize}

If $T:\mc{O}\to TM\subset T(M\times\hat{M})$ such that $T(q)\in T|_x M$ for all $q=(x,\hat{x};A)\in \mc{O}$
and one writes (see Remark \ref{re:rules_of_computations} below)
\[
(\cdot) T(\cdot):\mc{O}\to T\hat{M}\subset T(M\times\hat{M});\quad q=(x,\hat{x};A)\mapsto AT(q),
\]

then
\begin{itemize}
\item[(v)] $\LNSD(\ol{X})|_q\big((\cdot)T(\cdot)\big)=A\LNSD(\ol{X})|_q T\in T|_{\hat{x}}\hat{M}$
\item[(vi)] $\nu(U)|_q \big((\cdot)T(\cdot)\big)=UT(q)+A\nu(U)|_q T\in T|_{\hat{x}}\hat{M}$,
\end{itemize}
where $\LNSD(\ol{X})|_q T,\nu(U)|_q T\in T|_x M$.

Finally, if $Y\in\VF(M)$ is considered as a map $\mc{O}\to TM;\ (x',\hat{x}';A')\mapsto Y|_{x'}$
and if we write $\ol{X}=(X,\hat{X})\in T|_{x} M\oplus T|_{\hat{x}}\hat{M}$,
then
\begin{itemize}
\item[(vii)] $\LNSD(\ol{X})|_qY=\nabla_X Y$.
\end{itemize}
\end{lemma}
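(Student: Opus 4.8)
The plan is to verify the seven identities by reducing each one to the definition of the relevant derivative, namely Eq.~(\ref{eq:general_LNSD}), and to the fact that any $\mc{X}\in T|_q\mc{O}$ decomposes uniquely as $\LNSD(\ol{X})|_q+\nu(U)|_q$ (Proposition~\ref{pr:decomposition_of_VF}). I would treat the statements in two groups: items (i)--(iv) concern Leibniz-type rules for products $F\ol{T}$ and chain rules for $h\circ F$, and these follow from the ordinary derivations properties of the operators $\LNSD(\ol{X})|_q$ and $\nu(U)|_q$ once we recognise that a product $F\ol{T}$ paired with a form $\ol{\omega}$ becomes $F\cdot(\ol{T}\ol{\omega})$, a genuine scalar function, and then apply Eq.~(\ref{eq:general_LNSD}). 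Items (v)--(vii) concern how these operators interact with the ``application of $A$'' map and with vector fields pulled back from $M$.

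Concretely, for (i): fix $\ol{\omega}\in\Gamma(\pi_{T^m_k(M\times\hat{M})})$ and apply the defining relation (\ref{eq:general_LNSD}) to $F\ol{T}$, getting $(\LNSD(\ol{X})|_q(F\ol{T}))\ol{\omega}=\LNSD(\ol{X})|_q(F\cdot(\ol{T}\ol{\omega}))-F(q)\ol{T}(q)\ol{\nabla}_{\ol{X}}\ol{\omega}$. Now use that $\LNSD(\ol{X})|_q$ acts as a tangent vector on the scalar $F\cdot(\ol{T}\ol{\omega})\in\Cinf(T^*M\otimes T\hat{M})$, hence is a derivation: $\LNSD(\ol{X})|_q(F\cdot(\ol{T}\ol{\omega}))=(\LNSD(\ol{X})|_qF)(\ol{T}(q)\ol{\omega})+F(q)\LNSD(\ol{X})|_q(\ol{T}\ol{\omega})$. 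Substituting back and again invoking (\ref{eq:general_LNSD}) for $\ol{T}$ alone gives $((\LNSD(\ol{X})|_qF)\ol{T}(q)+F(q)\LNSD(\ol{X})|_q\ol{T})\ol{\omega}$; since $\ol{\omega}$ is arbitrary, (i) follows by uniqueness in Definition~\ref{def:general_LNSD}. Item (iii) is verbatim the same argument with $\nu(U)|_q$ in place of $\LNSD(\ol{X})|_q$, using the vertical-derivative version of (\ref{eq:general_LNSD}). Items (ii) and (iv) are just the usual chain rule for the derivation $\LNSD(\ol{X})|_q$ (resp.\ $\nu(U)|_q$) acting on the composite $h\circ F$ of smooth scalar maps, which holds because these operators are tangent vectors at $q$.

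For (v) and (vi) I would exploit the explicit formula (\ref{eq:semi-general_LNSD}): writing $\ol{T}:=(\cdot)T(\cdot)$, i.e.\ $\ol{T}(x',\hat{x}';A')=A'T(x',\hat{x}';A')$, and picking a parallel local section $\tilde{A}$ with $\tilde{A}|_{(x,\hat{x})}=A$ and $\ol{\nabla}_{\ol{X}}\tilde{A}=0$ (Lemma~\ref{le:nice_extension_A}), we get $\LNSD(\ol{X})|_q\ol{T}=\ol{\nabla}_{\ol{X}}(\ol{T}(\tilde{A}))=\ol{\nabla}_{\ol{X}}(\tilde{A}\,T(\tilde{A}))$. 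Since $\tilde{A}$ is $\ol{\nabla}$-parallel at $(x,\hat{x})$ along $\ol{X}$, the connection passes through $\tilde{A}$ there, yielding $A\,\ol{\nabla}_{\ol{X}}(T(\tilde{A}))=A\,\LNSD(\ol{X})|_q T$, which is (v). For (vi) one uses instead the vertical derivative: differentiating $t\mapsto (A+tU)T(A+tU)$ at $t=0$ via the Leibniz rule of ordinary vector calculus (Lemma~\ref{le:bundle_extension}(ii)) produces the two terms $UT(q)+A\,\nu(U)|_q T$. Finally (vii) follows by taking $\ol{T}$ to be the constant-in-$A$ map $(x',\hat{x}';A')\mapsto Y|_{x'}$, so $\ol{\nabla}_{\ol{X}}(\ol{T}(\tilde{A}))=\ol{\nabla}_{\ol{X}}Y=\nabla_X Y$ (using $\ol\nabla_{(X,\hat X)}Y=\nabla_X Y$ for $Y\in\VF(M)$), while the $\nu(\ol{\nabla}_{\ol{X}}\tilde{A})|_q\ol{T}$ term vanishes because $\ol{T}$ does not depend on the $A$-variable.

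The main obstacle is essentially bookkeeping rather than a genuine difficulty: one must be careful that in (v)--(vii) the ``derivative'' $\LNSD(\ol{X})|_q\ol{T}$ is the bundle-map derivative of Definition~\ref{def:general_LNSD} (a tensor in $T^k_m|_{(x,\hat{x})}(M\times\hat{M})$), not a tangent vector, and that the decomposition $T\hat M,TM\subset T(M\times\hat M)$ is respected, so that statements like ``$A\LNSD(\ol{X})|_q T\in T|_{\hat x}\hat M$'' actually typecheck. One should also make sure the formula (\ref{eq:general_LNSD}) and the parallel-extension trick are legitimate when $\mc{O}$ is only an immersed submanifold with $\LNSD(\ol{X})|_q\in T|_q\mc{O}$ (resp.\ $\nu(U)|_q\in T|_q\mc{O}$); this is exactly what Definition~\ref{def:general_LNSD} and Lemma~\ref{le:bundle_extension} are set up to handle, so invoking them suffices. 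No new estimates or constructions are needed.
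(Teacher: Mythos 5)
Your proof is correct and proceeds essentially as the paper's own: you obtain (i)--(iv) from the defining relation (\ref{eq:general_LNSD}) together with the derivation/chain-rule property of the tangent vectors $\LNSD(\ol{X})|_q$ and $\nu(U)|_q$, and (v)--(vii) from the local extensions of Lemma \ref{le:bundle_extension} plus a direct computation with a local section. The only cosmetic difference is that in (v) and (vii) you suppress the $\ol{\nabla}_{\ol{X}}\tilde{A}$-terms by choosing the parallel section of Lemma \ref{le:nice_extension_A}, whereas the paper works with a general section and lets those terms cancel; the content is the same.
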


\begin{remark}\label{re:rules_of_computations}
\begin{itemize}
\item[(a)] In the cases (v) and (vii)
we think of $T:\mc{O}\to TM$, to adapt to our previous notations,
as a map $T:\mc{O}\to \pr_1^*(TM)$
where $\pr_1:M\times\hat{M}\to M$
is the projection onto the first factor.
Here $\pr_1^*(\pi_{TM})$ is a vector subbundle of $\pi_{T(M\times\hat{M})}$ 
which we wrote, slightly imprecisely, as $TM\subset T(M\times \hat{M})$
in the statement of the proposition.
Thus $T(q')\in T|_{x'} M$ for all $q'=(x',\hat{x}';A')\in\mc{O}$
just means that $\pr_1^*(\pi_{TM})\circ T=\pi_{\mc{O}}$.

\item[(b)]
We could write a more extensive list of rules of computation
by noticing that $\LNSD(\ol{X})|_q$ and $\nu(U)|_q$
act by Leibniz-rule to any contraction of $\ol{T}$ and $\ol{S}$
where $\ol{T}\in\Cinf(\pi_{\mc{O}},\pi_{T^k_m(M\times \hat{M})})$,
$\ol{S}\in\Cinf(\pi_{\mc{O}},\pi_{T^{k'}_{m'}(M\times \hat{M})})$.
The rules (i)-(vii) are, though, sufficient for our needs.
\end{itemize}
\end{remark}

\begin{proof}
In what follows, we choose 
a small open neighbourhood $V$ of $q$ in $\mc{O}$,
a small open neighbourhood $\tilde{V}$ of $q$ in $T^*M\otimes T\hat{M}$ such that $V\subset \tilde{V}$
a smooth $\tilde{\ol{T}}:\tilde{V}\to T(M\times \hat{M})$ such that $\tilde{\ol{T}}|_V=\ol{T}|_V$ and
$\pi_{T(M\times \hat{M})}\circ \tilde{\ol{T}}=\pi_{T^*M\otimes T\hat{M}}|_{\tilde{V}}$
and a smooth map $\tilde{F}:\tilde{V}\to\R$ such that $\tilde{F}|_V=F|_V$.
These are provided by Lemma \ref{le:bundle_extension}.

For the case (i) we take some $\ol{\omega}\in \Gamma(T^m_k(M\times\hat{M}))$ and
compute
\[
& (\LNSD(\ol{X})|_q(F\ol{T}))\ol{\omega}
=\LNSD(\ol{X})|_q(F\ol{T}\ol{\omega})-F(q)\ol{T}(q)\ol{\nabla}_{\ol{X}}\ol{\omega} \\
=&\LNSD(\ol{X})|_q(F)\ol{T}(q)\ol{\omega}|_{(x,\hat{x})} 
+F(q)\LNSD(\ol{X})|_q(\ol{T}\ol{\omega})
-F(q)\ol{T}(q)\ol{\nabla}_{\ol{X}}\ol{\omega} \\
=&\big(\LNSD(\ol{X})|_q(F)\ol{T}(q)+F(q)\LNSD(\ol{X})|_q\ol{T}\big)\ol{\omega}|_{(x,\hat{x})}.
\]

For (ii), take $t\mapsto \Gamma(t)=(\gamma(t),\hat{\gamma}(t);A(t))$
be any curve in $\mc{O}$ with $\Gamma(0)=q$, $\dot{\Gamma}(0)=\LNSD(\ol{X})|_q$.
Since $\LNSD(\ol{X})|_q\in T|_q\mc{O}=T|_q V$, we may compute that
\[
& \LNSD(\ol{X})|_q(h\circ F)=\LNSD(\ol{X})|_q(h\circ \tilde{F}) \\
=&\dif{t}\big|_0 (h\circ \tilde{F})(A(t))-\dif{t}\big|_0(h\circ \tilde{F})\big(A+t\nabla_{\ol{X}}A(\cdot)\big) \\
=&h'(\tilde{F}(q))\dif{t}\big|_0 \tilde{F}(A(t))-h'(\tilde{F}(q))\dif{t}\big|_0\tilde{F}\big(A+t\nabla_{\ol{X}}A(\cdot)\big) \\
=&h'(\tilde{F}(q))\LNSD(\ol{X})|_q(\tilde{F})=h'(F(q))\LNSD(\ol{X})|_q(F).
\]

To prove (iii), notice that $(\tilde{F}\tilde{\ol{T}})|_V=(F\ol{T})|_V$
and hence
\[
& \nu(U)|_q(F\ol{T})=\nu(U)|_q(\tilde{F}\tilde{\ol{T}})
=\dif{t}\big|_0\tilde{F}(A+tU)\tilde{\ol{T}}(A+tU) \\
=&\big(\dif{t}\big|_0\tilde{F}(A+tU)\big)\tilde{\ol{T}}(q)
+\tilde{F}(q)\dif{t}\big|_0\tilde{\ol{T}}(A+tU) \\
=&(\nu(U)|_q \tilde{F})\ol{T}(q)+F(q)\nu(U)|_q \tilde{\ol{T}}
=(\nu(U)|_q F)\ol{T}(q)+F(q)\nu(U)|_q \ol{T}.
\]

To prove (iv), take a curve $\Gamma$ in $\mc{O}$ with $\Gamma(0)=q$, $\dot{\Gamma}(0)=\nu(U)|_q$
and compute
\[
\nu(U)|_q(h\circ F)=\dif{t}\big|_0 h(F(\Gamma(t)))=h'(F(q))\dif{t}\big|_0 F(\Gamma(t))=h'(F(q))\nu(U)|_q(F).
\]

Let us prove (v) and (vi).
We take a small open neighbourhood $V$ of $q$ in $\mc{O}$,
a small open neighbourhood $\tilde{V}$ of $q$ in $T^*M\otimes T\hat{M}$ such that $V\subset \tilde{V}$
a smooth $\tilde{T}:\tilde{V}\to TM$ such that $\tilde{T}|_V=T|_V$
and $\tilde{T}(q')\in T|_{x'} M$ for all $q'=(x',\hat{x}';A')\in \tilde{V}$.
Such an extension $\tilde{T}$ of $T$ is
provided by Lemma \ref{le:bundle_extension} by taking $b_0=q$, $\tau=\pi_{T^*M\otimes T\hat{M}}$,
$\eta=\pr_1^*(\pi_{TM})$ with $\pr_1:M\times\hat{M}=M$
the projection onto the first factor (see also Remark \ref{re:rules_of_computations} above).
Then taking $t\mapsto \Gamma(t)=(\gamma(t),\hat{\gamma}(t);A(t))$
to be any curve in $\mc{O}$ with $\Gamma(0)=q$, $\dot{\Gamma}(0)=\LNSD(\ol{X})|_q$,
we have
\[
& \LNSD(\ol{X})|_q((\cdot)T(\cdot))
=\LNSD(\ol{X})|_q((\cdot)\tilde{T}(\cdot)) \\
=&\ol{\nabla}_{\ol{X}} (A(\cdot)\tilde{T}(A(\cdot)))
-\dif{t}\big|_0 (A+t\ol{\nabla}_{\ol{X}}A(\cdot))\tilde{T}(A+t\ol{\nabla}_{\ol{X}}A(\cdot)) \\
=&(\ol{\nabla}_{\ol{X}} A(\cdot))\tilde{T}(q)+A\ol{\nabla}_{\ol{X}} (\tilde{T}(A(\cdot)))
-(\ol{\nabla}_{\ol{X}}A(\cdot))\tilde{T}(q)-A\dif{t}\big|_0 \tilde{T}(A+t\ol{\nabla}_{\ol{X}}A(\cdot)) \\
=&A\LNSD(\ol{X})|_q\tilde{T}=A\LNSD(\ol{X})|_qT
\]
where the first and the last steps follow from the facts that $((\cdot)\tilde{T}(\cdot))|_V=((\cdot)T(\cdot))|_V$
and $\tilde{T}|_V=T|_V$.
This gives (v).

To prove (vi) we observe that $(\cdot)\tilde{T}(\cdot):\tilde{V}\to T\hat{M}$
satisfies $((\cdot)\tilde{T}(\cdot))|_{V}=((\cdot)T(\cdot))|_V$ which allows us to compute
\[
& \nu(U)|_q\big((\cdot)T(\cdot)\big)
=\nu(U)|_q\big((\cdot)\tilde{T}(\cdot)\big)
=\dif{t}\big|_0 (A+tU)\tilde{T}(A+tU) \\
=&\big(\dif{t}\big|_0 (A+tU)\big)\tilde{T}(q)+A\dif{t}\big|_0 \tilde{T}(A+tU)
=UT(q)+A\nu(U)|_q\tilde{T} \\
=&UT(q)+A\nu(U)|_q T.
\]

Finally, we prove (vii). Suppose that $Y\in\VF(M)$.
Then the map $\mc{O}\to TM;\ (x',\hat{x}';A')\mapsto Y|_{x'}$ is nothing more than $Y\circ \pr_1\circ \pi_{\mc{O}}$
where $\pr_1:M\times\hat{M}\to M$ is the projection onto the first factor.
Take a local $\pi_{T^*M\otimes T\hat{M}}$-section $\tilde{A}$ with $\tilde{A}|_{(x,\hat{x})}=A$.
Then since $Y\circ \pr_1\circ \pi_{\mc{O}}=Y\circ \pr_1\circ \pi_{T^*M\otimes T\hat{M}}|_{\mc{O}}$,
we have
\[
&\LNSD(\ol{X})|_q(Y\circ \pr_1\circ \pi_{\mc{O}})
=\LNSD(\ol{X})|_q(Y\circ \pr_1\circ \pi_{T^*M\otimes T\hat{M}}) \\
=&\ol{\nabla}_{(X,\hat{X})}(Y\circ \pr_1\circ \pi_{T^*M\otimes T\hat{M}}\circ\tilde{A})-\dif{t}\big|_0 (Y\circ \pr_1\circ \pi_{T^*M\otimes T\hat{M}})(A+t\ol{\nabla}_{\ol{X}}\tilde{A}).
\]
But $(Y\circ \pr_1\circ \pi_{T^*M\otimes T\hat{M}}\circ\tilde{A})|_{(x',\hat{x}')}=Y|_{x'}=(Y,0)|_{(x,\hat{x})}$ for all $(x',\hat{x}')$
and $(Y\circ \pr_1\circ \pi_{T^*M\otimes T\hat{M}})(A+t\ol{\nabla}_{\ol{X}}\tilde{A})=Y|_x$ for all $t$ and hence
\[
\LNSD(\ol{X})|_q(Y\circ \pr_1\circ \pi_{\mc{O}})=\ol{\nabla}_{(X,\hat{X})} (Y,0)-0=\nabla_X Y.
\]

\end{proof}

\subsubsection{Computation of Lie brackets}

We now embark into the computation of Lie brackets. 
\begin{proposition}\label{pr:NS_comm_HH}
Let $\mc{O}\subset T^*M\otimes T\hat{M}$ be an immersed submanifold,
$\ol{T}=(T,\hat{T}),\ol{S}=(S,\hat{S})\in\Cinf(\pi_{\mc{O}},\pi_{T(M\times\hat{M})})$
with $\LNSD(\ol{T}(q))|_q,\LNSD(\ol{S}(q))|_q\in T|_q\mc{O}$ for all $q=(x,\hat{x};A)\in \mc{O}$.
Then, for every $q\in \mc{O}$, one has
\begin{align}
[\LNSD(\ol{T}(\cdot)),\LNSD(\ol{S}(\cdot))]|_q=&\LNSD\big(\LNSD(\ol{T}(q))|_q\ol{S}-\LNSD(\ol{S}(q))|_q\ol{T}\big)\big|_q \nonumber \\
&+\nu\big(AR(T(q),S(q))-\hat{R}(\hat{T}(q),\hat{S}(q))A\big)\big|_q,
\end{align}
with both sides tangent to $\mc{O}$.
\end{proposition}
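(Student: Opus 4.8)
The plan is to compute the Lie bracket $[\LNSD(\ol{T}(\cdot)),\LNSD(\ol{S}(\cdot))]$ by evaluating it on an arbitrary test function $f\in\Cinf(T^*M\otimes T\hat M)$, using the fundamental formula of Theorem~\ref{th:LNSD_formula}, namely $\LNSD(\ol X)|_A=A_*(\ol X)-\nu(\ol\nabla_{\ol X}A)|_A$, after choosing a convenient local section. Since everything in the statement is a pointwise identity at a given $q=(x,\hat x;A)\in\mc O$, I would first fix $q$ and, via Lemma~\ref{le:nice_extension_A}, pick a local section $\tilde A$ of $\pi_{T^*M\otimes T\hat M}$ (or $\pi_Q$) with $\tilde A|_{(x,\hat x)}=A$ and $\ol\nabla_{\ol X}\tilde A=0$ for \emph{all} $\ol X\in T|_{(x,\hat x)}(M\times\hat M)$; this kills the first-derivative terms of $\tilde A$ at the basepoint and isolates the curvature. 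I would also extend $\ol T,\ol S$ to vector fields $\ol X,\ol Y\in\VF(M\times\hat M)$ with $\ol X|_{(x,\hat x)}=\ol T(q)$, $\ol Y|_{(x,\hat x)}=\ol S(q)$ if that is convenient, but more precisely I want to keep track of how $\ol T,\ol S$ vary over $\mc O$, since the $\LNSD(\LNSD(\ol T(q))|_q\ol S-\cdots)$ term records exactly that variation.

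The key computational step is to write, near $q$, $\LNSD(\ol T(\cdot))|_{q'}=\tilde A_*(\ol T(q'))-\nu(\ol\nabla_{\ol T(q')}\tilde A)|_{q'}$ and similarly for $\ol S$, then apply $\LNSD(\ol T(\cdot))$ to $\LNSD(\ol S(\cdot))f$ and antisymmetrize. The $\tilde A_*$-parts contribute a ``horizontal'' bracket which, when pushed down by $(\pi_{T^*M\otimes T\hat M})_*$, gives $[\ol T,\ol S]$-type terms, while the cross-terms between $\tilde A_*$ and the $\nu$-corrections, together with the derivative of the $\ol\nabla\tilde A$ corrections, will produce the curvature contribution. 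Concretely I expect the commutator of the two $\nu$-correction terms to vanish at $q$ (both are order-one small in $\tilde A$), and the surviving non-horizontal piece at $q$ to be $\nu\big(\ol\nabla_{\ol T(q)}\ol\nabla_{\ol S(\cdot)}\tilde A-\ol\nabla_{\ol S(q)}\ol\nabla_{\ol T(\cdot)}\tilde A\big)|_q$, i.e. $\nu\big([\ol\nabla_{\ol T},\ol\nabla_{\ol S}]\tilde A\big)|_q$ up to a $\ol\nabla_{[\ol T,\ol S]}\tilde A$ term which is zero at $q$ by our choice of $\tilde A$. Lemma~\ref{le:comm_nabla_A} then converts this into $-AR(T(q),S(q))+\hat R(\hat T(q),\hat S(q))A$ (again the $\ol\nabla_{[\cdot,\cdot]}\tilde A$ term drops at $q$), which up to sign is the claimed vertical term; I must be careful with the sign convention so it comes out as $AR(T(q),S(q))-\hat R(\hat T(q),\hat S(q))A$, matching the statement. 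The horizontal part is then forced to be $\LNSD\big(\LNSD(\ol T(q))|_q\ol S-\LNSD(\ol S(q))|_q\ol T\big)|_q$ by Definition~\ref{def:general_LNSD} of the derivative $\LNSD(\ol X)|_q\ol T$: indeed $\LNSD(\ol T(q))|_q\ol S$ is precisely the ``horizontal derivative'' of the bundle map $\ol S$ along $\LNSD(\ol T(q))|_q$, and its antisymmetrization gives the horizontal component of the bracket.

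Finally I would record that both sides are tangent to $\mc O$: the right-hand side lies in $T\mc O$ because $\LNSD(\ol T(q))|_q,\LNSD(\ol S(q))|_q\in T|_q\mc O$ by hypothesis and the vertical curvature term is tangent to $\mc O$ (when $\mc O=Q$, this is Proposition~\ref{pr:vertical_of_Q} applied to $AR(T,S)-\hat R(\hat T,\hat S)A$, which one checks is skew in the appropriate sense; in general it follows because the left-hand side is manifestly tangent to $\mc O$, being a bracket of vector fields tangent to $\mc O$). The main obstacle I anticipate is bookkeeping: correctly handling the fact that $\ol T,\ol S$ are bundle \emph{maps} on $\mc O$ rather than vector fields on $M\times\hat M$, so that ``$\LNSD(\ol T(q))|_q\ol S$'' must be interpreted through Definition~\ref{def:general_LNSD}, and making sure every first-derivative-of-$\tilde A$ term and every $\ol\nabla_{[\ol X,\ol Y]}\tilde A$ term that is \emph{not} automatically zero is either shown to cancel after antisymmetrization or absorbed into the horizontal derivative term — and, as always with curvature computations, nailing down the sign so the final answer reads $AR(T(q),S(q))-\hat R(\hat T(q),\hat S(q))A$ and not its negative.
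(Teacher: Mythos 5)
Your plan follows the paper's proof almost step for step --- the parallel-at-a-point section of Lemma \ref{le:nice_extension_A}, the expansion of the lifts through Theorem \ref{th:LNSD_formula} acting on a test function, Lemma \ref{le:comm_nabla_A} to convert $[\ol\nabla_{\ol T},\ol\nabla_{\ol S}]\tilde A$ into curvature, and Definition \ref{def:general_LNSD} to recognize the horizontal remainder --- and in that part it is correct. Two cautions, though. First, the identity $\LNSD(\ol X)|_{q'}=\tilde A_*(\ol X)-\nu(\ol\nabla_{\ol X}\tilde A)|_{q'}$ holds only at points $q'$ lying on the image of the section $\tilde A$, not on a neighbourhood of $q$ in the total space, so the computation must be organized as in the paper: let the first lift act, via its defining formula, on the function $\LNSD(\ol S(\cdot))f$, rather than bracketing a ``decomposed'' vector field. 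Second, the surviving vertical piece is $\nu\big(\ol\nabla_{\ol S(q)}(\ol\nabla_{\ol T(\tilde A)}\tilde A)-\ol\nabla_{\ol T(q)}(\ol\nabla_{\ol S(\tilde A)}\tilde A)\big)|_q=-\nu\big([\ol\nabla_{\ol T(\tilde A)},\ol\nabla_{\ol S(\tilde A)}]\tilde A\big)|_q$; your tentative $+$ sign would produce the negative of the stated term, whereas with the minus sign Lemma \ref{le:comm_nabla_A} and $\ol\nabla\tilde A|_{(x,\hat x)}=0$ give exactly $\nu(AR(T(q),S(q))-\hat R(\hat T(q),\hat S(q))A)|_q$, as claimed.

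The one step your plan genuinely omits is the reduction from an arbitrary immersed submanifold $\mc O$ to the open case. As written, your computation evaluates the bundle maps $\ol T,\ol S$ (and needs their values) at points that need not lie in $\mc O$ --- the section values $\tilde A|_{(x',\hat x')}$ and the displaced points $A+tB$ --- whereas $\ol T,\ol S$ are only defined on $\mc O$. The paper first proves the formula for $\mc O$ open in $T^*M\otimes T\hat M$, then invokes Lemma \ref{le:bundle_extension} to extend $\ol T,\ol S$ to bundle maps $\tilde{\ol T},\tilde{\ol S}$ on an ambient neighbourhood agreeing with them near $q$ in $\mc O$; since the lifts are tangent to $\mc O$ by hypothesis, the ambient bracket restricts to the intrinsic one, and one must still verify, using Eq. (\ref{eq:general_LNSD}) together with the hypothesis $\LNSD(\ol T(q))|_q\in T|_q\mc O$, that $\LNSD(\ol T(q))|_q\ol S=\LNSD(\ol T(q))|_q\tilde{\ol S}$, so that the right-hand side is independent of the chosen extension. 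Without this step you have only proved the statement for open $\mc O$, which is not enough for its later use on orbits; with it, your argument coincides with the paper's.
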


\begin{proof}
We will deal first with the case where $\mc{O}$ is an open subset of $T^*M\otimes T\hat{M}$.
Take a local $\pi_{T^*M\otimes T\hat{M}}]$ section $\tilde{A}$ around $(x,\hat{x})$
such that $\tilde{A}|_{(x,\hat{x})}=A$, $\ol{\nabla} \tilde{A}|_{(x,\hat{x})}=0$; see Lemma \ref{le:nice_extension_A}. 
In some expressions we will write $q=A$ for clarity.

Let $f\in C^\infty(T^* M\otimes T\hat{M})$.  By using the definition of $\LNSD$ and $\nu$, one obtains
\[
& \LNSD(\ol{T}(A))|_q(\LNSD(\ol{S}(\cdot)(f))) \\
=&\ \ol{T}(A)(\LNSD(\ol{S}(\tilde{A}))|_{\tilde{A}}(f))-\dif{t}\big|_0 \LNSD(\ol{S}(A+t\ol{\nabla}_{\ol{T}(A)} \tilde{A}))|_{A+t\ol{\nabla}_{\ol{T}(A)}\tilde{A}} (f) \\
=&\ \ol{T}(A)\big(\ol{S}(\tilde{A})(f(\tilde{A}))-\dif{t}\big|_0 f(\tilde{A}+t\ol{\nabla}_{\ol{S}(\tilde{A})}\tilde{A})\big)\\
&-\dif{t}\big|_0 \ol{S}(A+t\ol{\nabla}_{\ol{T}(A)} \tilde{A})(f(\tilde{A}+t\ol{\nabla}_{\ol{T}(\tilde{A})}\tilde{A})) \\
& +\frac{\partial^2}{\partial t\partial s}\big|_0 f\big(A+t\ol{\nabla}_{\ol{T}(A)}\tilde{A}+s\ol{\nabla}_{\ol{S}(A+t\ol{\nabla}_{\ol{T}(A)} \tilde{A})} (\tilde{A}+t\ol{\nabla}_{\ol{T}(\tilde{A})} \tilde{A}))\big).
\]
Here, we use the fact that $\ol{\nabla}_{\ol{X}}\tilde{A}=0$ for all $\ol{X}\in T|_{(x,\hat{x})} (M\times\hat{M})$
and the fact that $\frac{\partial}{\partial t}$ and $\ol{T}(\tilde{A})$ commute (as the obvious vector fields on $M\times\hat{M}\times \R$
with points $(x,\hat{x},t)$)
to write the last expression in the form
\[
&\ol{T}(A)(\ol{S}(\tilde{A})(f(\tilde{A}))\big)-\dif{t}\big|_0 \ol{T}(A)(f(\tilde{A}+t\ol{\nabla}_{\ol{S}(\tilde{A})}\tilde{A}))
-\dif{t}\big|_0 \ol{S}(A)(f(\tilde{A}+t\ol{\nabla}_{\ol{T}(\tilde{A})}\tilde{A})) \\
& +\frac{\partial^2}{\partial t\partial s}\big|_0 f\big(A+st\ol{\nabla}_{\ol{S}(A)} (\ol{\nabla}_{\ol{T}(\tilde{A})} \tilde{A}))\big).
\]
By interchanging the roles of $\ol{T}$ and $\ol{S}$ and using the definition of commutator of vector fields, we get from this
\[
& [\LNSD(\ol{T}(\cdot)),\LNSD(\ol{S}(\cdot))]|_q(f) \\
=&[\ol{T}(\tilde{A}),\ol{S}(\tilde{A})]|_q(f(\tilde{A})) 
+\frac{\partial^2}{\partial t\partial s}\big|_0 f\big(A+st\ol{\nabla}_{\ol{S}(A)} (\ol{\nabla}_{\ol{T}(\tilde{A})} \tilde{A}))\big) \\
&-\frac{\partial^2}{\partial t\partial s}\big|_0 f\big(A+st\ol{\nabla}_{\ol{T}(A)} (\ol{\nabla}_{\ol{S}(\tilde{A})} \tilde{A}))\big) \\
=&[\ol{T}(\tilde{A}),\ol{S}(\tilde{A})]|_q(f(\tilde{A}))+\dif{t}\big|_0 \nu(t\ol{\nabla}_{\ol{S}(A)} (\ol{\nabla}_{\ol{T}(\tilde{A})} \tilde{A}))|_q(f) \\
& -\dif{t}\big|_0 \nu(t\ol{\nabla}_{\ol{T}(A)} (\ol{\nabla}_{\ol{S}(\tilde{A})} \tilde{A}))|_q(f) \\
=&[\ol{T}(\tilde{A}),\ol{S}(\tilde{A})]|_q(f(\tilde{A}))+\nu(\ol{\nabla}_{\ol{S}(A)} (\ol{\nabla}_{\ol{T}(\tilde{A})} \tilde{A}))|_q(f)-\nu(\ol{\nabla}_{\ol{T}(A)} (\ol{\nabla}_{\ol{S}(\tilde{A})} \tilde{A}))|_q(f) \\
=&[\ol{T}(\tilde{A}),\ol{S}(\tilde{A})]|_q(f(\tilde{A}))
-\nu([\ol{\nabla}_{\ol{T}(\tilde{A})}, \ol{\nabla}_{\ol{S}(\tilde{A})}] \tilde{A}))|_q(f).
\] 
Using Lemma \ref{le:comm_nabla_A},  we get that the last line is equal to
\[
& [\ol{T}(\tilde{A}),\ol{S}(\tilde{A})]|_{(x,\hat{x})}(f(\tilde{A})) \\
& -\nu(\ol{\nabla}_{[\ol{T}(\tilde{A}),\ol{S}(\tilde{A})]|_{(x,\hat{x})}}\tilde{A}-AR(T(A),S(A))+\hat{R}(\hat{T}(A),\hat{S}(A))A)|_q(f),
\]
from which, by using the definition of $\LNSD$, linearity of $\nu(\cdot)|_q$ and arbitrariness
of $f\in\Cinf(T^*M\otimes T\hat{M})$, we get
\[
& [\LNSD(\ol{T}(\cdot)),\LNSD(\ol{S}(\cdot))]|_q=\LNSD([\ol{T}(\tilde{A}),\ol{S}(\tilde{A})])|_q \\
&-\nu(AR(T(A),S(A))-\hat{R}(\hat{T}(A),\hat{S}(A))A)|_q.
\]
Finally,
\[
\dif{t}\big|_0 \ol{S}(A+t\underbrace{\ol{\nabla}_{\ol{T}(q)} \tilde{A}}_{=0})=\dif{t}\big|_0 \ol{S}(A)=0, \\
\dif{t}\big|_0 \ol{T}(A+t\underbrace{\ol{\nabla}_{\ol{S}(q)} \tilde{A}}_{=0})=\dif{t}\big|_0 \ol{T}(A)=0,
\]
since $\ol{T}(q),\ol{S}(q)\in T|_{(x,\hat{x})} (M\times\hat{M})$
and hence by Eq. \eqref{semi-general_LNSD},
\[
[\ol{T}(\tilde{A}),\ol{S}(\tilde{A})]=\ol{\nabla}_{\ol{T}(q)}(\ol{S}(\tilde{A}))-\ol{\nabla}_{\ol{S}(q)}(\ol{T}(\tilde{A}))
=\LNSD(\ol{T}(q))|_q\ol{S}-\LNSD(\ol{S}(q))|_q\ol{T}.
\]
The claim thus holds in this case (i.e. when $\mc{O}$ is an open subset of $T^*M\otimes T\hat{M}$).

Now we let $\mc{O}\subset T^*M\otimes T\hat{M}$ to be an immersed submanifold
and $\ol{T},\ol{S}:\mc{O}\to T(M\times\hat{M})$
are such that, for all $q=(x,\hat{x};A)\in \mc{O}$,
$\ol{T}(x,\hat{x};A)$, $\ol{S}(x,\hat{x};A)$ belong to $T|_{(x,\hat{x})}(M\times\hat{M})$ and $\LNSD(\ol{T}(q))|_q$, $\LNSD(\ol{S}(q))|_q$ belong to $T|_q \mc{O}$.

For a fixed $q=(x,\hat{x};A)\in \mc{O}$, we may, thanks to Lemma \ref{le:bundle_extension}
(taking $\tau=\pi_{T^*M\otimes T\hat{M}}$, $\eta=\pi_{T(M\times\hat{M})}$, $b_0=q$
and $F=\ol{T}$ or $F=\ol{S}$ there)
take a small open neighbourhood $V$ of $q$ in $\mc{O}$,
a neighbourhood $\tilde{V}$ of $q$ in $Q$
such that $V\subset \tilde{V}$
and some extensions $\tilde{\ol{T}},\tilde{\ol{S}}:\tilde{V}\to T(M\times\hat{M})$
of $\ol{T}|_V,\ol{S}|_V$
with $\tilde{\ol{T}}(x',\hat{x}';A'),\tilde{\ol{S}}(x',\hat{x}';A')\in T|_{(x',\hat{x}')}(M\times\hat{M})$ for all $(x',\hat{x}';A')\in \tilde{V}$.
Then since $\LNSD(\ol{T}(\cdot))|_V=\LNSD(\tilde{\ol{T}}(\cdot))|_V$,
$\LNSD(\ol{S}(\cdot))|_V=\LNSD(\tilde{\ol{S}}(\cdot))|_V$,
we may compute, because of what has been shown already,
\[
& [\LNSD(\ol{T}),\LNSD(\ol{S})]|_q
=[\LNSD(\tilde{\ol{T}})|_V,\LNSD(\tilde{\ol{S}})|_V]|_q
=([\LNSD(\tilde{\ol{T}}),\LNSD(\tilde{\ol{S}})]|_V)|_q \\
=&\LNSD\big(\LNSD(\ol{T}(q))|_q\tilde{\ol{S}}-\LNSD(\ol{S}(q))|_q\tilde{\ol{T}}\big)\big|_q
+\nu\big(AR(T(q),S(q))-\hat{R}(\hat{T}(q),\hat{S}(q))A\big)\big|_q,
\]
where in the last line we used that $\tilde{\ol{T}}(q)=\ol{T}(q)=(T(q),\hat{T}(q))$,
$\tilde{\ol{S}}(q)=\ol{S}(q)=(S(q),\hat{S}(q))$.

Take any $\ol{\omega}\in\Gamma(\pi_{T^m_k(M\times\hat{M})})$.
Since $\LNSD(\ol{T}(q))|_q\in T|_q\mc{O}=T|_q V$ by assumption
and since $(\ol{S}\ol{\omega})|_V=(\tilde{\ol{S}}\ol{\omega})|_V$,
we have $\LNSD(\ol{T}(q))|_q(\ol{S}\ol{\omega})=\LNSD(\ol{T}(q))|_q(\tilde{\ol{S}}\ol{\omega})|_V$.
But then Eq. (\ref{eq:general_LNSD}) i.e. the definition of
$\LNSD(\ol{T}(q))|_q\ol{S}$ implies that
\[
&(\LNSD(\ol{T}(q))|_q\ol{S})\ol{\omega}
=\LNSD(\ol{T}(q))|_q(\ol{S}\ol{\omega})-\ol{S}(q)\ol{\nabla}_{\ol{T}(q)}\ol{\omega} \\
=&\LNSD(\ol{T}(q))|_q(\tilde{\ol{S}}\ol{\omega})-\tilde{\ol{S}}(q)\ol{\nabla}_{\ol{T}(q)}\ol{\omega}
=(\LNSD(\ol{T}(q))|_q\tilde{\ol{S}})\ol{\omega}
\]
i.e. $\LNSD(\ol{T}(q))|_q\ol{S}=\LNSD(\ol{T}(q))|_q\tilde{\ol{S}}$
and similarly $\LNSD(\ol{S}(q))|_q\ol{T}=\LNSD(\ol{S}(q))|_q\tilde{\ol{T}}$. This shows
that on $\mc{O}$ we have the formula
\[
[\LNSD(\ol{T}),\LNSD(\ol{S})]|_q=&\LNSD\big(\LNSD(\ol{T}(q))|_q \ol{S}-\LNSD(\ol{S}(q))|_q \ol{T}\big)\big|_q \\
&+\nu\big(AR(T(q),S(q))-\hat{R}(\hat{T}(q),\hat{S}(q))A\big)\big|_q,
\]
where both sides belong to $T|_q \mc{O}$ (since the left hand side obviously belongs to $T|_q\mc{O}$).

\end{proof}

\begin{proposition}\label{pr:NS_comm_HV}
Let $\mc{O}\subset T^*M\otimes T\hat{M}$ be an immersed submanifold,
$\ol{T}=(T,\hat{T})\in\Cinf(\pi_{\mc{O}},\pi_{T(M\times\hat{M})})$,
$U\in\Cinf(\pi_{\mc{O}},\pi_{T^*M\otimes T\hat{M}})$
be such that, for all $q=(x,\hat{x};A)\in\mc{O}$,
$$
\LNSD(\ol{T}(q))|_q\in T|_q\mc{O}, \quad \nu(U(q))|_q\in T|_q\mc{O}.
$$
Then
\[
[\LNSD(\ol{T}(\cdot)),\nu(U(\cdot))]|_q
=-\LNSD(\nu(U(q))|_q\ol{T})|_q+\nu(\LNSD(\ol{T}(q))|_q U)|_q,
\]
with both sides tangent to $\mc{O}$.
\end{proposition}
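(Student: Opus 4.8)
The plan is to reduce, as in the proof of Proposition \ref{pr:NS_comm_HH}, to the case where $\mc{O}$ is an open subset of $T^*M\otimes T\hat{M}$, since once the formula is established there, the localization argument using Lemma \ref{le:bundle_extension} (choosing extensions $\tilde{\ol{T}}$ of $\ol{T}|_V$ and $\tilde{U}$ of $U|_V$ near a fixed $q$) together with the intrinsic characterizations of $\LNSD(\ol{T}(q))|_q\ol{S}$ etc. via Eq. (\ref{eq:general_LNSD}) carries it verbatim to an arbitrary immersed submanifold. So first I would assume $\mc{O}$ open. Fix $q=(x,\hat{x};A)$ and, by Lemma \ref{le:nice_extension_A}, pick a local $\pi_{T^*M\otimes T\hat{M}}$-section $\tilde{A}$ with $\tilde{A}|_{(x,\hat{x})}=A$ and $\ol{\nabla}\tilde{A}|_{(x,\hat{x})}=0$; this choice kills all the first-order covariant-derivative terms in $\tilde{A}$ at the base point, exactly as in the previous proof.

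Next I would take an arbitrary $f\in\Cinf(T^*M\otimes T\hat{M})$ and expand both $\LNSD(\ol{T}(A))|_q(\nu(U(\cdot))f)$ and $\nu(U(A))|_q(\LNSD(\ol{T}(\cdot))f)$ using the definitions: $\LNSD(\ol{X})|_{A'}f=\ol{X}(f(\tilde{A}))-\dif{t}|_0 f(A'+t\ol{\nabla}_{\ol{X}}\tilde{A})$ and $\nu(B)|_{A'}f=\dif{t}|_0 f(A'+tB)$. Because $\ol{\nabla}_{\ol{T}(q)}\tilde{A}=0$, the ``horizontal-then-vertical'' term collapses to $\ol{\nabla}_{\ol{T}(q)}(\nu(U(\tilde{A}))f)$, producing a vertical-derivative-of-$U$ contribution plus a mixed second-derivative term; the ``vertical-then-horizontal'' term produces a $\LNSD$ applied to the vertical derivative of $\ol{T}$ (this is where $\nu(U(q))|_q\ol{T}$ appears — note this is a well-defined element of $T|_{(x,\hat{x})}(M\times\hat{M})$ by Definition \ref{def:general_vert} since $\nu(U(q))|_q\in T|_q\mc{O}$ and $\ol{T}$ is a bundle map over $\pi_{\mc{O}}$) plus a mixed term. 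Subtracting, the mixed second-order terms cancel against each other because the vertical directions commute (the $\nu$-flow on $T^*M\otimes T\hat{M}$ is linear along fibers and the flow of $\LNSD(\ol{T})$ commutes with $\dif{t}$ on the fiber to first order at $q$, thanks to $\ol{\nabla}\tilde{A}|_{(x,\hat{x})}=0$), leaving exactly
\[
[\LNSD(\ol{T}(\cdot)),\nu(U(\cdot))]|_q f
=-\LNSD(\nu(U(q))|_q\ol{T})|_q f+\nu(\ol{\nabla}_{\ol{T}(q)}(U(\tilde{A}))-U(q)\,(\ol{\nabla}\tilde{A}\text{-terms}))|_q f,
\]
and since $\ol{\nabla}\tilde{A}|_{(x,\hat{x})}=0$ the second $\nu$-argument is precisely $\LNSD(\ol{T}(q))|_q U$ as defined by Eq. (\ref{eq:semi-general_LNSD})/(\ref{eq:general_LNSD}). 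Arbitrariness of $f$ and $\R$-linearity of $\nu(\cdot)|_q$ and $\LNSD(\cdot)|_q$ then give the stated identity.

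The main obstacle I expect is bookkeeping rather than conceptual: one must be careful that every differentiation of $U(\cdot)$ and $\ol{T}(\cdot)$ is taken along admissible directions (so that $\LNSD(\ol{T}(q))|_q U$ and $\nu(U(q))|_q\ol{T}$ genuinely make sense on $\mc{O}$, not merely on an ambient open set), and that the mixed second-order term $\frac{\partial^2}{\partial t\partial s}\big|_0 f(A + sU(A) + t(\cdots))$ from the horizontal leg really matches the one from the vertical leg after using $\ol{\nabla}_{\ol{T}(q)}\tilde A = 0$; the subtlety is that $U$ is evaluated at a moving point $A+t\ol{\nabla}_{\ol{T}(A)}\tilde A$ in the first expansion, and one needs $\ol{\nabla}_{\ol{T}(q)}\tilde A=0$ to freeze that evaluation at first order. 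Finally, for the immersed-submanifold case the only additional point to check is that $\LNSD(\ol{T}(q))|_q U$ and $\nu(U(q))|_q\ol{T}$ computed via extensions $\tilde{U},\tilde{\ol{T}}$ agree with the intrinsic definitions on $\mc{O}$ — this is the same argument already used at the end of the proof of Proposition \ref{pr:NS_comm_HH}, invoking Eq. (\ref{eq:general_LNSD}) and Lemma \ref{le:bundle_extension}(ii).
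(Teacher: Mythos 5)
Your plan follows the paper's proof essentially verbatim: reduce to the case of open $\mc{O}$, normalize a local section $\tilde{A}$ via Lemma \ref{le:nice_extension_A} so that $\ol{\nabla}\tilde{A}$ vanishes at the base point, expand both legs of the bracket on a test function $f$, and then localize to an immersed $\mc{O}$ with Lemma \ref{le:bundle_extension} and Eq. (\ref{eq:general_LNSD}) exactly as at the end of Proposition \ref{pr:NS_comm_HH}. One small wording correction: the mixed second-order terms do not cancel against each other — the one coming from the horizontal-then-vertical leg vanishes outright because $\ol{\nabla}_{\ol{T}(q)}\tilde{A}=0$, while the one from the vertical-then-horizontal leg survives and is precisely what becomes $\nu\big(\LNSD(\ol{T}(q))|_q U\big)\big|_q$, as your displayed formula in fact records.
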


\begin{proof}
As in the proof of Proposition \ref{pr:NS_comm_HH},
we will deal first with the case where $\mc{O}$ is an open subset of $T^*M\otimes T\hat{M}$.
Take a local $\pi_{T^*M\otimes T\hat{M}}]$ section $\tilde{A}$ around $(x,\hat{x})$
such that $\tilde{A}|_{(x,\hat{x})}=A$, $\ol{\nabla} \tilde{A}|_{(x,\hat{x})}=0$; see Lemma \ref{le:nice_extension_A}. 
In some expressions we will write $q=A$ for clarity.

Let $f\in\Cinf(T^* M\otimes T\hat{M})$. Then
$\LNSD(\ol{T}(A))|_q\big(\nu(U(\cdot))(f)\big)$ is equal to 
$$
\ol{T}(A)\big(\nu(U(\tilde{A}))\big|_{\tilde{A}}(f)\big)-\dif{t}\big|_0 \nu(U(A+t\ol{\nabla}_{\ol{T}(A)} \tilde{A}))\big|_{A+t\ol{\nabla}_{\ol{T}(A)} \tilde{A}}(f),$$
which is equal to $\ol{T}(A)\big(\nu(U(\tilde{A}))\big|_{\tilde{A}}(f)\big)$
once we recall that $\ol{\nabla}_{\ol{T}(A)} \tilde{A}=0$.
In addition, one has
\[
&\nu(U(A))|_q\big(\LNSD(\ol{T}(\cdot))(f)\big) =\dif{t}\big|_0 \LNSD(\ol{T}(A+tU(A))\big|_{A+tU(A)}(f) \\
=&\dif{t}\big|_0 \ol{T}(A+tU(A))\big(f(\tilde{A}+tU(\tilde{A}))\big)\\&-\frac{\partial^2}{\partial s\partial t}\big|_0 f\big(A+tU(A)+s\ol{\nabla}_{\ol{T}(A+tU(A))} (\tilde{A}+tU(\tilde{A}))\big) \\
=&\dif{t}\big|_0 \ol{T}(A+tU(A))\big(f(\tilde{A}+tU(\tilde{A}))\big)-\frac{\partial^2}{\partial s\partial t}\big|_0 f\big(A+tU(A)+st\ol{\nabla}_{\ol{T}(A+tU(A))} (U(\tilde{A}))\big),
\]
since $\ol{\nabla}_{\ol{T}(A+tU(A))} \tilde{A}=0$.

We next simplify the first term on the last line to get
\[
&\dif{t}\big|_0 \ol{T}(A+tU(A))\big(f(\tilde{A}+tU(\tilde{A}))\big)\\
=&(\nu(U(q))|_q\ol{T})\big(f(\tilde{A})\big)+\ol{T}(A)\big(\nu(U(\tilde{A}))|_{\tilde{A}}(f)\big)
\]
and then, for the second term, one obtains
\[
&\frac{\partial^2}{\partial s\partial t}\big|_0 f\big(A+tU(A)+st\ol{\nabla}_{\ol{T}(A+tU(A))} (U(\tilde{A}))\big)\\
=&\dif{s}\big|_0 f_*|_q\nu\Big(\dif{t}\big|_0 \big(tU(A)+st\ol{\nabla}_{\ol{T}(A+tU(A))} (U(\tilde{A}))\big)\Big)\Big|_q \\
=&\dif{s}\big|_0 f_*|_q\nu\big(U(A)+s\ol{\nabla}_{\ol{T}(A)} (U(\tilde{A}))\big)\big)\big|_q \\
=&\dif{s}\big|_0 \Big(f_*|_q\nu(U(A))|_q+sf_*|_q\nu\big(\ol{\nabla}_{\ol{T}(A)} (U(\tilde{A}))\big)|_q\Big) \\
=&f_*\nu\big(\ol{\nabla}_{\ol{T}(A)} (U(\tilde{A}))\big)|_q
=\nu(\ol{\nabla}_{\ol{T}(A)} (U(\tilde{A})))|_q f.
\]

Therefore one deduces
\[
&[\LNSD(\ol{T}(\cdot)),\nu(U(\cdot))]|_q(f)
=-(\nu(U(q))|_q\ol{T})\big(f(\tilde{A})\big)+\nu(\ol{\nabla}_{\ol{T}(A)} (U(\tilde{A})))|_q f \\
=&-\tilde{A}_*(\nu(U(A))|_q\ol{T})(f)+\nu\big(\ol{\nabla}_{\ol{T}(A)} (U(\tilde{A}))\big)\big|_q(f) \\
=&-\LNSD(\nu(U(A))|_q\ol{T})|_q(f)+\nu\big(\ol{\nabla}_{\ol{T}(A)} (U(\tilde{A}))\big)\big|_q(f),
\]
where the last line follows from the definition of $\LNSD$ and the fact that 
\newline $\ol{\nabla}_{\nu(U(A))|_q\ol{T}}\tilde{A}=0$. 
Finally, Eq. (\ref{eq:semi-general_LNSD}) implies
\[
\ol{\nabla}_{T(q)} (U(\tilde{A}))=\ol{\nabla}_{T(q)} (U(\tilde{A}))-\nu(\underbrace{\ol{\nabla}_{T(q)}\tilde{A}}_{=0})|_q U
=\LNSD(\ol{T}(A))|_qU.
\]
Thus the claimed formula holds in the special case where $\mc{O}$ is an open
subset of $T^*M\otimes T\hat{M}$.

More generally, let $\mc{O}\subset T^*M\otimes T\hat{M}$ be an immersed submanifold,
and $\ol{T}=(T,\hat{T}):\mc{O}\to T(M\times\hat{M})=TM\times T\hat{M}$,
$U:\mc{O}\to T^*M\times T\hat{M}$
as in the statement of this proposition.

For a fixed $q=(x,\hat{x};A)\in \mc{O}$,
Lemma \ref{le:bundle_extension} implies the existence of a neighbourhood $V$ of $q$ in $\mc{O}$,
a neighbourhood $\tilde{V}$ of $q$ in $T^*M\otimes T\hat{M}$
and smooth $\tilde{\ol{T}}:\tilde{V}\to T(M\times \hat{M})$, $\tilde{U}:\tilde{V}\to T^*M\otimes T\hat{M}$
such that $\tilde{\ol{T}}(x,\hat{x};A)\in T|_{(x,\hat{x})} (M\times \hat{M})$,
$\tilde{U}(x,\hat{x};A)\in (T^*M\otimes T\hat{M})|_{(x,\hat{x})}$
and $\tilde{\ol{T}}|_V=\ol{T}|_V$, $\tilde{U}|_V=U|_V$
(for the case of an extension $\tilde{U}$ of $U$,
take in Lemma \ref{le:bundle_extension}, $\tau=\pi_{T^*M\otimes T\hat{M}}$, $\eta=\pi_{T_1^1(M\times\hat{M})}$, $F=U$, $b_0=q$).

In the same way as in the proof of Proposition \ref{le:NS_comm_HH},
we have $[\LNSD(\ol{T}),\nu(U)]|_q=[\LNSD(\tilde{\ol{T}}),\nu(\tilde{U})]_q$
and $\LNSD(\tilde{\ol{T}}(q))|_q\tilde{U}=\LNSD(\ol{T}(q))|_q U$.
Hence by what was already shown above, 
\[
[\LNSD(\ol{T}),\nu(U)]|_q=-\LNSD(\nu(U(q))|_q\tilde{\ol{T}})|_q+\nu(\LNSD(\ol{T}(q))|_q U)|_q.
\]

We are left to show that $\nu(U(q))|_q\tilde{\ol{T}}=\nu(U(q))|_q\ol{T}$
and for that, it suffices to show that $\nu(\nu(U(q))|_q\tilde{\ol{T}})|_{\ol{T}(q)}=\nu(\nu(U(q))|_q\ol{T})|_{\ol{T}(q)}$.
Indeed, if $f\in\Cinf(T(M\times \hat{M}))$, then
\[
\nu(\nu(U(q))|_q\tilde{\ol{T}})|_{\ol{T}(q)} f
=&\big(\tilde{\ol{T}}_*\nu(U(q))|_q\big)f
=\nu(U(q))|_q(f\circ \tilde{\ol{T}})
=\nu(U(q))|_q(f\circ \ol{T}) \\
=&\big(\ol{T}_*\nu(U(q))|_q\big)f
=\nu(\nu(U(q))|_q\ol{T})|_{\ol{T}(q)} f,
\]
where at the 3rd equality we used the fact that $(f\circ \tilde{\ol{T}})|_V=(f\circ \ol{T})|_V$
and $\nu(U(q))|_q\in T|_q\mc{O}=T|_q V$.
This completes the proof.

\end{proof}

Finally, we derive a formula for the commutators of two vertical vector fields.

\begin{proposition}\label{pr:NS_comm_VV}
Let $\mc{O}\subset T^*M\otimes T\hat{M}$ be an immersed submanifold
and $U,V\in C^\infty(\pi_{\mc{O}},\pi_{T^* M\otimes T\hat{M}})$
be such that $\nu(U(q))|_q,\nu(V(q))|_q\in T|_q\mc{O}$
for all $q\in \mc{O}$.
Then
\begin{align}\label{eq:NS_comm_VV}
[\nu(U(\cdot)),\nu(V(\cdot))]|_q
=&\nu\big(\nu(U(q))|_q V-\nu(V(q))|_q U\big)|_q.
\end{align}
\end{proposition}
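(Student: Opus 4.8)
The plan is to mimic the structure of the proofs of Propositions \ref{pr:NS_comm_HH} and \ref{pr:NS_comm_HV}: first establish the formula when $\mc{O}$ is an open subset of $T^*M\otimes T\hat{M}$, where the vertical vector fields can be manipulated directly using fiberwise (vector calculus) derivatives, and then reduce the general immersed-submanifold case to this one via the local extension result, Lemma \ref{le:bundle_extension}. In the open case, fix $q=(x,\hat{x};A)$ and take an arbitrary $f\in\Cinf(T^*M\otimes T\hat{M})$. The key observation is that a vertical vector field $q'\mapsto\nu(U(q'))|_{q'}$ with $q'=(x',\hat{x}';A')$, when applied to $f$, only differentiates along the fibers: $\nu(U(\cdot))(f)$ is the function $q'\mapsto \dif{t}|_0 f(A'+tU(A'))$. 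So first I would compute $\nu(U(A))|_q\big(\nu(V(\cdot))(f)\big)$ by writing it as a mixed second derivative $\frac{\partial^2}{\partial s\partial t}\big|_0 f\big(A+sU(A)+tV(A+sU(A))\big)$ and expanding. Since $A\mapsto A+sU(A)$ stays in the same fiber, the curve $s\mapsto A+sU(A)+tV(A+sU(A))$ has, at $s=0$, the tangent data $U(A)+t(\nu(U(A))|_q V)$ (where $\nu(U(A))|_q V$ denotes the fiberwise derivative of the bundle map $V$ in the direction $U(A)$, in the sense of Definition \ref{def:general_vert}), plus $tV(A)$ already present; the $\frac{\partial^2}{\partial s\partial t}$ then picks out two contributions: a "Hessian of $f$" term that is symmetric in $U,V$ and a first-order term $f_*\nu\big(\nu(U(A))|_q V\big)|_q$.

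Then I would write down the analogous expansion with the roles of $U$ and $V$ interchanged, and subtract. The symmetric Hessian-type term cancels (this is the usual mechanism by which the second-order part drops out of a commutator), leaving exactly
\[
[\nu(U(\cdot)),\nu(V(\cdot))]|_q(f)=\nu\big(\nu(U(q))|_q V\big)|_q(f)-\nu\big(\nu(V(q))|_q U\big)|_q(f),
\]
for all $f$, which is the claimed identity in the open case. Concretely, to see the cancellation I would invoke part (ii) of Lemma \ref{le:bundle_extension}, which lets me treat $t\mapsto f(A+tW)$ as an ordinary vector-valued (real) map, so that the mixed partials commute and the second-order terms are manifestly symmetric under swapping $U\leftrightarrow V$.

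For the general case where $\mc{O}$ is merely an immersed submanifold, I would fix $q=(x,\hat{x};A)\in\mc{O}$ and apply Lemma \ref{le:bundle_extension}(i) (with $\tau=\pi_{T^*M\otimes T\hat{M}}$, $\eta=\pi_{T^1_1(M\times\hat{M})}$, $F=U$ or $F=V$) to obtain, on a neighbourhood $V$ of $q$ in $\mc{O}$ sitting inside a neighbourhood $\tilde V$ in $T^*M\otimes T\hat{M}$, smooth extensions $\tilde U,\tilde V$ with $\tilde U|_V=U|_V$, $\tilde V|_V=V|_V$. Since $\nu(U(\cdot))|_V=\nu(\tilde U(\cdot))|_V$ and likewise for $V$, the Lie bracket at $q$ agrees: $[\nu(U(\cdot)),\nu(V(\cdot))]|_q=[\nu(\tilde U(\cdot)),\nu(\tilde V(\cdot))]|_q$, and the open-set case already proved applies to the right-hand side. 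It then remains to check that $\nu(U(q))|_q\tilde V=\nu(U(q))|_q V$ and $\nu(V(q))|_q\tilde U=\nu(V(q))|_q U$; this follows exactly as at the end of the proof of Proposition \ref{pr:NS_comm_HV}, because $\nu(U(q))|_q\in T|_q\mc{O}=T|_q V$ and $\tilde V$ and $V$ agree on $V$, so the fiberwise directional derivative in direction $U(q)$ is the same whether computed with $\tilde V$ or $V$ (testing against $f\in\Cinf(T^1_1(M\times\hat M))$ and using $(f\circ\tilde V)|_V=(f\circ V)|_V$). The main obstacle, such as it is, is purely bookkeeping: keeping careful track of the two senses of "vertical derivative" — $\nu(\cdot)|_q$ as producing a tangent vector versus the induced derivative $\nu(U(q))|_q(\cdot)$ of a bundle map producing an element of the fiber — and making sure the second-order terms in the two orderings really are identical before cancelling; Lemma \ref{le:bundle_extension}(ii) and the rules in Lemma \ref{le:rules_of_computation} are exactly what make this routine.
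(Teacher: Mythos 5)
Your proposal is correct and follows essentially the same route as the paper: the open case via the mixed second derivative $\frac{\partial^2}{\partial t\partial s}\big|_0 f(A+tU(A)+sV(A+tU(A)))$ and the general immersed-submanifold case via the local extensions of Lemma \ref{le:bundle_extension}, exactly as in Propositions \ref{pr:NS_comm_HH} and \ref{pr:NS_comm_HV}. The only cosmetic difference is that you keep the symmetric Hessian-type term and cancel it upon antisymmetrization, while the paper's computation isolates the first-order contribution $\nu\big(\nu(U(A))|_q V\big)|_q f$ directly before forming the commutator.
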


\begin{proof}
Again we begin with the case where $\mc{O}$ is an open subset of $T^*M\otimes T\hat{M}$
and write $q=(x,\hat{x};A)\in\mc{O}$ simply as $A$.
Let $f\in C^\infty(T^*M\otimes T\hat{M})$. Then,
\[
& \nu(U(A))|_q\big(\nu(V(\cdot))(f)\big)
=\dif{t}\big|_0\nu(V(A+tU(A))|_{A+tU(A)} (f) \\
=&\frac{\partial^2}{\partial t\partial s}\big|_0 f(A+tU(A)+sV(A+tU(A))) \\
=&\dif{s}\big|_0 f_*|_q\nu\Big(\dif{t}\big|_0 \big(tU(A)+sV(A+tU(A))\big)\Big)\Big|_q \\
=&\dif{s}\big|_0 f_*\nu\big(U(A)+s\nu(U(A))|_q V\big)|_q \\
=&f_*\nu\big(\nu(U(A))|_q V\big)|_q=\nu(\nu(U(A))|_q V)|_q f.
\]
from which the result follows
in the case that $\mc{O}$ is an open subset of $T^*M\otimes T\hat{M}$.

The case where $\mc{O}$ is only an immersed submanifold of $T^*M\otimes T\hat{M}$
can be treated by using Lemma \ref{le:bundle_extension}
in the same way as in the proofs of Propositions \ref{pr:NS_comm_HH}, \ref{pr:NS_comm_HV}.

\end{proof}

As a corollary to the previous three propositions, we have the following.

\begin{corollary}
Let $\mc{O}\subset T^*M\otimes T\hat{M}$ be an immersed
submanifold and $\mc{X},\mc{Y}\in\VF(\mc{O})$.
Letting for $q\in\mc{O}$,
\[
\mc{X}|_q=\LNSD(\ol{T}(q))|_q+\nu(U(q))|_q,
\quad
\mc{Y}|_q=\LNSD(\ol{S}(q))|_q+\nu(V(q))|_q,
\]
to be the unique decompositions given by Proposition \ref{pr:decomposition_of_VF}.
Writing $\ol{T}=(T,\hat{T})$, $\ol{S}=(S,\hat{S})$
corresponding to $T(M\times\hat{M})=TM\times T\hat{M}$, we get
\[
[\mc{X},\mc{Y}]|_q=&
\big(\LNSD(\mc{X}|_q\ol{S})|_q+\nu(\mc{X}|_qV)|_q\big)
-\big(\LNSD(\mc{Y}|_q\ol{T})|_q+\nu(\mc{Y}|_qU)|_q\big) \\
&+\nu\big(AR(T(q),S(q))-\hat{R}(\hat{T}(q),\hat{S}(q))A\big)|_q
\]
(for the notation, see the second remark after Proposition \ref{pr:decomposition_of_VF}).
\end{corollary}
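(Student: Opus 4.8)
The plan is to reduce the statement to the three preceding propositions by expanding $[\mc{X},\mc{Y}]$ bilinearly with respect to the splitting $T(T^*M\otimes T\hat{M})=\NSDist\oplus V(\pi_{T^*M\otimes T\hat{M}})$ of Remark \ref{re:tangent_splitting}. Write $\mc{X}^h|_q=\LNSD(\ol{T}(q))|_q$, $\mc{X}^v|_q=\nu(U(q))|_q$ and $\mc{Y}^h|_q=\LNSD(\ol{S}(q))|_q$, $\mc{Y}^v|_q=\nu(V(q))|_q$ for the components provided by Proposition \ref{pr:decomposition_of_VF}; in the situations where the corollary is applied (e.g. $\mc{O}=Q$ or $\mc{O}=T^*M\otimes T\hat{M}$, or an $\RDist$-orbit), these components are themselves tangent to $\mc{O}$, so that the hypotheses of Propositions \ref{pr:NS_comm_HH}, \ref{pr:NS_comm_HV}, \ref{pr:NS_comm_VV} are met. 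Since $\mc{X}=\mc{X}^h+\mc{X}^v$ and $\mc{Y}=\mc{Y}^h+\mc{Y}^v$ as elements of $\VF(\mc{O})$, $\R$-bilinearity of the Lie bracket gives
\[
[\mc{X},\mc{Y}]=[\mc{X}^h,\mc{Y}^h]+[\mc{X}^h,\mc{Y}^v]+[\mc{X}^v,\mc{Y}^h]+[\mc{X}^v,\mc{Y}^v].
\]

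Next I would substitute the three formulas. Proposition \ref{pr:NS_comm_HH} handles $[\mc{X}^h,\mc{Y}^h]|_q$, producing the term $\LNSD(\LNSD(\ol{T}(q))|_q\ol{S}-\LNSD(\ol{S}(q))|_q\ol{T})|_q$ together with the curvature term $\nu(AR(T(q),S(q))-\hat{R}(\hat{T}(q),\hat{S}(q))A)|_q$. Proposition \ref{pr:NS_comm_HV} handles $[\mc{X}^h,\mc{Y}^v]|_q=-\LNSD(\nu(V(q))|_q\ol{T})|_q+\nu(\LNSD(\ol{T}(q))|_q V)|_q$; applying it again with $\ol{S},U$ in place of $\ol{T},V$ and using antisymmetry gives $[\mc{X}^v,\mc{Y}^h]|_q=\LNSD(\nu(U(q))|_q\ol{S})|_q-\nu(\LNSD(\ol{S}(q))|_q U)|_q$. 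Finally Proposition \ref{pr:NS_comm_VV} gives $[\mc{X}^v,\mc{Y}^v]|_q=\nu(\nu(U(q))|_q V-\nu(V(q))|_q U)|_q$.

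Then I would collect $\NSDist$-components and $V(\pi_Q)$-components separately. For the horizontal part, the arguments of $\LNSD$ sum to $\bigl(\LNSD(\ol{T}(q))|_q\ol{S}+\nu(U(q))|_q\ol{S}\bigr)-\bigl(\LNSD(\ol{S}(q))|_q\ol{T}+\nu(V(q))|_q\ol{T}\bigr)$, which by the second remark following Proposition \ref{pr:decomposition_of_VF} is exactly $\mc{X}|_q\ol{S}-\mc{Y}|_q\ol{T}$; by bilinearity of $\LNSD$ this is $\LNSD(\mc{X}|_q\ol{S})|_q-\LNSD(\mc{Y}|_q\ol{T})|_q$. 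For the vertical part, the $V$-terms combine as $\nu(\LNSD(\ol{T}(q))|_q V+\nu(U(q))|_q V)|_q=\nu(\mc{X}|_q V)|_q$ and the $U$-terms as $-\nu(\LNSD(\ol{S}(q))|_q U+\nu(V(q))|_q U)|_q=-\nu(\mc{Y}|_q U)|_q$, leaving the curvature term untouched. Reassembling yields precisely the claimed formula.

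The computation is essentially pure bookkeeping, so there is no deep obstacle; the one place to be careful is the sign handling in the two mixed brackets (in particular remembering that $[\mc{X}^v,\mc{Y}^h]$ is obtained from Proposition \ref{pr:NS_comm_HV} with a sign flip) and the recognition that $\mc{X}|_q$ acting on a bundle map $\ol{S}$ or $V$ splits as $\LNSD(\ol{T}(q))|_q(\cdot)+\nu(U(q))|_q(\cdot)$, which is what makes the mixed and same-type contributions recombine into $\mc{X}|_q\ol{S}$, $\mc{X}|_q V$, etc. A secondary point worth a line in the proof is the tangency of $\mc{X}^h,\mc{X}^v,\mc{Y}^h,\mc{Y}^v$ to $\mc{O}$, needed so the three propositions are literally applicable.
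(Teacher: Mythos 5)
Your algebraic bookkeeping matches the paper's: the bilinear expansion of $[\mc{X},\mc{Y}]$ into four brackets, the substitution of Propositions \ref{pr:NS_comm_HH}, \ref{pr:NS_comm_HV}, \ref{pr:NS_comm_VV}, and the regrouping into $\mc{X}|_q\ol{S}-\mc{Y}|_q\ol{T}$, $\mc{X}|_qV-\mc{Y}|_qU$ and the curvature term are exactly the intended computation, signs included.

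However, there is a genuine gap in how you justify that this computation is legitimate. The corollary is stated for an \emph{arbitrary} immersed submanifold $\mc{O}\subset T^*M\otimes T\hat{M}$ and arbitrary $\mc{X},\mc{Y}\in\VF(\mc{O})$, and for such data the components $\LNSD(\ol{T}(\cdot))$, $\nu(U(\cdot))$, etc., are in general only bundle maps along $\mc{O}$, \emph{not} vector fields on $\mc{O}$: the splitting $T(T^*M\otimes T\hat{M})=\NSDist\oplus V(\pi_{T^*M\otimes T\hat{M}})$ has no reason to be compatible with $T\mc{O}$. Consequently the four brackets $[\mc{X}^h,\mc{Y}^h]$, $[\mc{X}^h,\mc{Y}^v]$, \dots are not even defined intrinsically on $\mc{O}$, and the hypotheses of the three propositions (which require $\LNSD(\ol{T}(q))|_q,\nu(U(q))|_q\in T|_q\mc{O}$) are not met. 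Your escape clause — restricting to ``situations where the corollary is applied'', such as $Q$ or an $\RDist$-orbit — both changes the statement being proved and is not justified even for orbits: a general vector field tangent to $\mc{O}_{\RDist}(q_0)$ need not have its horizontal and vertical parts separately tangent to the orbit. The paper resolves this by extending $\ol{T},\ol{S},U,V$ via Lemma \ref{le:bundle_extension} to an open neighbourhood of $q$ in $T^*M\otimes T\hat{M}$, performing the four-bracket computation there (where tangency is automatic), and then observing that the resulting expressions — the derivatives $\mc{X}|_q\ol{S}$, $\mc{X}|_qV$, etc., and $[\mc{X},\mc{Y}]|_q$ itself, since $\mc{X},\mc{Y}$ are tangent to $\mc{O}$ — are independent of the chosen extensions, as was verified inside the proofs of the three propositions. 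Your proof needs this extension step (or an equivalent device) to be complete.
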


\begin{proof}
We will assume that $\ol{T},\ol{S},U,V$ are, temporarily in the course of computations below, extended
by Lemma \ref{le:bundle_extension} to an open neighbourhood $\tilde{O}$
of $q$ in $T^*M\otimes T\hat{M}$.
By abuse of notation, we don't give new names for them.
Then Propositions \ref{pr:NS_comm_HH}, \ref{pr:NS_comm_HV} and \ref{pr:NS_comm_VV}
we obtain
\[
[\mc{X},\mc{Y}]|_q
=&[\LNSD(\ol{T}(\cdot)),\LNSD(\ol{S}(\cdot))]|_q
+[\LNSD(\ol{T}(\cdot)),\nu(V(\cdot))]|_q \\
&+[\nu(U(\cdot)),\LNSD(\ol{S}(\cdot))]|_q
+[\nu(U(\cdot)),\nu(V(\cdot))]|_q \\
=&\LNSD(\LNSD(\ol{T}(q))\ol{S}-\LNSD(\ol{S}(q))\ol{T})|_q \\
&+\nu(AR(T(q),S(q))-\hat{R}(\hat{T}(q),\hat{S}(q))A)|_q \\
&-\LNSD(\nu(V(q))|_q\ol{T})|_q+\nu(\LNSD(\ol{T}(q))|_q V)|_q \\
&+\LNSD(\nu(U(q))|_q\ol{S})|_q-\nu(\LNSD(\ol{S}(q))|_q U)|_q \\
&+\nu(\nu(U(q))|_qV-\nu(V(q))|_q U)|_q \\
=&\LNSD(\mc{X}|_q\ol{S}-\mc{Y}|_q\ol{T})|_q
+\nu(\mc{X}|_qV-\mc{Y}|_qU)|_q \\
&+\nu(AR(T(q),S(q))-\hat{R}(\hat{T}(q),\hat{S}(q))A)|_q.
\]

\end{proof}

\section{Study of the Rolling problem $(NS)$}\label{sec:2.0}

We next parameterize the set of all absolutely continuous curves which are tangent to the distribution $\NSDist$ as a driftless control affine system.

An a.c. curve $t\mapsto q(t)=(x(t),\hat{x}(t);A(t))$ in $Q$
describes a rolling motion of $M$ against $\hat{M}$ without spinning
if and only if $\dot{q}(t)=\LNSD(\dot{x}(t), \dot{\hat{x}}(t))|_{q(t)}$ for a.e. $t$.
This can be expressed equivalently by saying that $q(\cdot)$ is a solution of
a control affine driftless system
\begin{align}\label{eq:cs_nsrolling}\sns\quad 
\begin{cases}
\dot{x}(t)=u(t), \\
\dot{\hat{x}}(t)=\hat{u}(t), \\
\ol{\nabla}_{(u(t),\hat{u}(t))} A(\cdot)=0
\end{cases},\quad\textrm{for a.e. $t\in [a,b]$},
\end{align}
where the control $(u,\hat{u})$
belongs to the set $\mc{U}([a,b],M)\times \mc{U}([a,b],\hat{M})$.
The fact that System (\ref{eq:cs_nsrolling}) is driftless and control affine
can also be seen from its representation in local coordinates; see (\ref{simon}) in Appendix \ref{app:local}.

In the rest of the section, we investigate the structure of the reachable sets associated to $\sns$ 
and relate them to the holonomy groups of the Riemannian manifolds $(M,g)$ and $(\hat{M},\hat{g})$.

\subsection{Description of the Orbits of $\sns$}

We begin this section by recalling some standard definitions
and introducing some notation concerning the subsequent subsections.
If $(N,h)$ is a Riemannian manifold, then the holonomy group $H^{\nabla^h}|_y$
of it at $y$ is defined by
\[
H^{\nabla^h}|_y=\{(P^{\nabla^h})_0^1(\gamma)\ |\ \gamma\in \Omega_y(N)\},
\]
and it is a subgroup $\mathrm{O}(T|_y N)$ of all $h$-orthogonal
transformations of $T|_y N$.
If $N$ is oriented, then one can easily prove that
$H^{\nabla^h}|_y$ is actually a subgroup of $\SO(T|_y N)$.
If $F=(Y_i)_{i=1}^n$, $n=\dim N$, is an orthonormal frame of $N$ at $y$
we write
\[
H^{\nabla^h}|_F=\{\mc{M}_{F,F}(A)\ |\ A\in H^{\nabla^h}|_y\}.
\]
This is a subgroup of $\SO(n)$, isomorphic (as Lie group) to $H^{\nabla^h}|_y$.
Lie algebra of the holonomy group $H^{\nabla^h}|_y$ (resp. $H^{\nabla^h}|_F$)
will be denoted by $\mathfrak{h}^{\nabla^h}|_y$ (resp. $\mathfrak{h}^{\nabla^h}|_F$).
The Lie algebra $\mathfrak{h}^{\nabla^h}|_y$
is a Lie subalgebra of the Lie algebra $\so(T|_y N)$
of $h$-antisymmetric linear maps $T|_y N\to T|_y N$
while $\mathfrak{h}^{\nabla^h}|_F$ is a Lie subalgebra of $\so(n)$.

In this setting, we will be using the notations $H|_x=H^{\nabla}|_x$ and $\hat{H}|_{\hat{x}}=H^{\hat{\nabla}}|_{\hat{x}}$ respectively 
for the holonomy groups of $(M,g)$ and $(\hat{M},\hat{g})$ at $x\in M$ and $\hat{x}\in \hat{M}$.
If $F$ and $\hat{F}$ are respectively orthonormal frames of $M$ and $\hat{M}$ we use 
$H|_F$ and $\hat{H}|_{\hat{F}}$ respectively to denote $H^{\nabla}|_{F}$
and $H^{\hat{\nabla}}|_{\hat{F}}$.
The corresponding Lie algebras will be written as
$\mathfrak{h}|_{x}$, $\hat{\mathfrak{h}}|_{\hat{x}}$, $\mathfrak{h}|_{F}$, $\hat{\mathfrak{h}}|_{\hat{F}}$.

We now describe the structure of the orbit $\mc{O}_{\NSDist}(A_0)$
of $\NSDist$ through a point $(x_0,\hat{x}_0;A_0)\in Q$ as follows.

\begin{theorem}\label{th:NS_orbit_V}
Let $q_0=(x_0,\hat{x}_0;A_0)\in Q$. Then the part of the orbit $\mc{O}_{\NSDist}(q_0)$ of $\NSDist$ through $q_0$
that lies in the $\pi_{Q}$-fiber over $(x_0,\hat{x}_0)$ is given by
\begin{align}\label{eq:NS_orbit_V}
\mc{O}_{\NSDist}(q_0)\cap Q|_{(x_0,\hat{x}_0)}
&=\{\hat{h}\circ A_0\circ h\ |\ \hat{h}\in \hat{H}|_{\hat{x}_0},\ h\in H|_{x_0}^{-1}\} \\
&=:\hat{H}|_{\hat{x}_0}\circ A_0\circ H|_{x_0}^{-1}, \nonumber 
\end{align}
and is an immersed submanifold of the fiber $Q|_{(x_0,\hat{x}_0)}=\pi_{Q}^{-1}(x_0,\hat{x}_0)$.

Moreover, if $F$, $\hat{F}$ are orthonormal frames at $x_0$, $\hat{x}_0$, as above,
then there is a diffeomorphism (depending on $F$ and $\hat{F}$)
\begin{align}\label{eq:NS_orbit_V_frame}
\mc{O}_{\NSDist}(q_0)\cap Q|_{(x_0,\hat{x}_0)}\cong \hat{H}|_{\hat{F}} \mc{M}_{F,\hat{F}}(A_0) H|_{F}^{-1},
\end{align}
where the groups on the right hand side are Lie subgroups of $\SO(n)$.

\end{theorem}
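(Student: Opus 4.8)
The plan is to characterize the fiber part of the orbit $\mc{O}_{\NSDist}(q_0) \cap Q|_{(x_0,\hat{x}_0)}$ by tracking exactly how the endpoint isometry $A(1)$ can vary among $\NSDist$-admissible curves that return to the starting base point $(x_0,\hat{x}_0)$. The key structural observation, already available to us, is Corollary \ref{cor:LNSD_along_path} together with Proposition \ref{pr:2.1:1}: an a.c. curve $t \mapsto (x(t),\hat{x}(t);A(t))$ is tangent to $\NSDist$ if and only if $A(t)$ is the parallel transport $P^t_0(x,\hat{x})A_0$, and by Eq. (\ref{eq:parallel_trans_A}) this equals $P^t_0(\hat{x}) \circ A_0 \circ P^0_t(x)$. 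So given any $\NSDist$-admissible curve from $q_0$ whose base projection $(x,\hat{x})$ is a loop at $(x_0,\hat{x}_0)$, the endpoint is $P^1_0(\hat{x}) \circ A_0 \circ P^0_1(x)$, where $P^1_0(\hat{x}) \in \hat{H}|_{\hat{x}_0}$ (holonomy along a loop $\hat{x}$) and $P^0_1(x) = (P^1_0(x))^{-1} \in H|_{x_0}^{-1}$. This gives the inclusion $\subseteq$ in (\ref{eq:NS_orbit_V}).

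For the reverse inclusion $\supseteq$, given $\hat h \in \hat H|_{\hat x_0}$ and $h \in H|_{x_0}$, realize $\hat h = P^1_0(\hat\delta)$ for some piecewise-$C^1$ loop $\hat\delta$ at $\hat x_0$ and $h = P^1_0(\delta)$ for some loop $\delta$ at $x_0$. First roll along $\delta^{-1}$ (traversed backwards), i.e.\ take the $\NSDist$-admissible curve with base projection $(\delta^{-1}, \hat x_0\text{-constant})$, contributing a factor $A_0 \circ P^0_1(\delta) = A_0 \circ h^{-1}$... here I need to be slightly careful with the direction conventions: I would instead build the concatenation so that the $M$-loop contributes $h$ on the right and the $\hat M$-loop contributes $\hat h$ on the left, using the group property in Remark \ref{re:group_property_of_rolling} (which holds verbatim for $\NSDist$) to compose the two pieces. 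Concretely: roll along the base loop that first runs the constant path at $x_0$ while $\hat x$ traverses $\hat\delta$ (this is $\NSDist$-admissible since the no-spin equation only constrains $A$ via $\ol\nabla$, and here $\dot x = 0$ so $A$ evolves by $\hat\nabla$-parallel transport along $\hat\delta$), then continue along the base loop that runs $\delta^{-1}$ on $M$ with $\hat x$ constant. The endpoint isometry is then $\hat h \circ A_0 \circ h$ after choosing $\delta$, $\hat\delta$ appropriately; adjusting $h \mapsto h^{-1}$ absorbs into $H|_{x_0}^{-1}$. This establishes the set equality.

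The manifold and Lie-group statements then follow: $\hat H|_{\hat x_0}$ and $H|_{x_0}$ are Lie subgroups of $\SO(T|_{\hat x_0}\hat M)$ and $\SO(T|_{x_0}M)$ respectively (standard holonomy theory, cited in the paragraph introducing them), hence the map $(\hat h, h) \mapsto \hat h \circ A_0 \circ h$ is a smooth map from the Lie group $\hat H|_{\hat x_0} \times H|_{x_0}$ into $Q|_{(x_0,\hat x_0)}$ whose image is exactly the orbit slice; its fibers are cosets of the stabilizer $\{(\hat h, h) : \hat h A_0 h = A_0\}$, a closed subgroup, so the image is an immersed submanifold diffeomorphic to the homogeneous space $(\hat H|_{\hat x_0} \times H|_{x_0})/\mathrm{stab}$. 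Finally, transporting everything through the matrix representation $\mc{M}_{F,\hat F}$ using the identity $\mc{M}_{F,\hat F}(\hat h \circ A_0 \circ h) = \mc{M}_{\hat F,\hat F}(\hat h)\,\mc{M}_{F,\hat F}(A_0)\,\mc{M}_{F,F}(h)$ from the composition rule in Section \ref{notations}, and noting $\mc{M}_{\hat F,\hat F}$, $\mc{M}_{F,F}$ identify $\hat H|_{\hat x_0}, H|_{x_0}$ with the Lie subgroups $\hat H|_{\hat F}, H|_F$ of $\SO(n)$, yields the diffeomorphism (\ref{eq:NS_orbit_V_frame}). The main obstacle I anticipate is bookkeeping the left/right placement and the inversion on the $M$-holonomy factor consistently across the "forward" versus "backward" rolling conventions — that is, making sure the no-slipping is genuinely irrelevant here (only no-spin matters for $\NSDist$) and that concatenating a pure-$\hat M$-motion loop with a pure-$M$-motion loop indeed produces the product $\hat h \circ A_0 \circ h$ rather than some conjugate; once the direction conventions are pinned down via Proposition \ref{pr:2.1:1} and Remark \ref{re:group_property_of_rolling}, the rest is routine.
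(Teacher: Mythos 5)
Your proposal is correct and follows essentially the same route as the paper: both identify the fiber part of the orbit with endpoints $P_0^1(\hat{x})\circ A_0\circ P_1^0(x)$ of $\NSDist$-horizontal lifts over loops (the paper simply notes the base projection is an arbitrary pair of loops in $\Omega_{x_0}(M)\times\Omega_{\hat{x}_0}(\hat{M})$, so your concatenation of a pure-$\hat{M}$ loop with a pure-$M$ loop is just a repackaging of the same observation, and the $h\leftrightarrow h^{-1}$ bookkeeping is immaterial since $H|_{x_0}^{-1}=H|_{x_0}$), and both obtain the immersed-submanifold structure from the orbit map $(\hat{h},h)\mapsto\hat{h}\circ A_0\circ h^{-1}$ of the Lie group $\hat{H}|_{\hat{x}_0}\times H|_{x_0}$ modulo its stabilizer, followed by the same frame conjugation for (\ref{eq:NS_orbit_V_frame}).
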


In the previous statement, we have used the following notation. If $G$ is a group and $S$ is a subset of $G$,
then $S^{-1}:=\{g^{-1}\ |\ g\in S\}$. Of course $G^{-1}=G$ but,
 in Eq. (\ref{eq:NS_orbit_V}), it is
somewhat more convenient to leave $H|_{x_0}^{-1}$ and not to replace it by $H|_{x_0}$.

\begin{proof}
Notice that $q_1=(x_0,\hat{x}_0;A_1)\in \mc{O}_{\NSDist}(q_0)\cap \pi_{Q}^{-1}(x_0,\hat{x}_0)$
if and only if there is a piecewise $C^1$ path $t\mapsto q(t)=(x(t),\hat{x}(t);A(t))$, $t\in [0,1]$,
with $q(0)=q_0$, $q(1)=q_1$ and tangent to $\NSDist$.
This is, on the other hand, equivalent, by the definiton of $\NSDist$,
to the fact that $A(t)=P_0^t(x,\hat{x})A_0$.
It is also clear that $t\mapsto (x(t),\hat{x}(t))$ is a piecewise $C^1$ loop of $M\times\hat{M}$ based at $(x_0,\hat{x}_0)$
i.e., it belongs to $\Omega_{(x_0,\hat{x}_0)}(M\times\hat{M})$
which can be identified, in a natural way, with $\Omega_{x_0}(M)\times \Omega_{\hat{x}_0}(\hat{M})$.
By these remarks, Eq. (\ref{eq:parallel_trans_A}) and the above definition of the holonomy groups, we get
\[
& \mc{O}_{\NSDist}(q_0)\cap \pi_{Q}^{-1}(x_0,\hat{x}_0)
=\{P_0^1(\ol{x}) A_0\ |\ \ol{x}\in \Omega_{(x_0,\hat{x}_0)}(M\times\hat{M})\} \\
=&\{P_0^1(x,\hat{x}) A_0\ |\ x\in \Omega_{x_0}(M), \hat{x}\in \Omega_{\hat{x}_0}(\hat{M})\} \\
=&\{P_0^1(\hat{x}) \circ A_0\circ P_1^0(x)\ |\ x\in \Omega_{x_0}(M), \hat{x}\in \Omega_{\hat{x}_0}(\hat{M})\}
=\hat{H}|_{\hat{x}_0}\circ A_0\circ H|_{x_0}^{-1}.
\]

We next prove that $\hat{H}|_{\hat{x}_0}\circ A_0\circ H|_{x_0}^{-1}$ is an immersed submanifold of $Q|_{(x_0,\hat{x}_0)}$.
Let $f:\hat{H}|_{\hat{x}_0}\times H|_{x_0}\to Q|_{(x_0,\hat{x}_0)}$
be a map given by
$f(\hat{h},h):=\hat{h}\circ A_0\circ h^{-1}$. The map $f$ is clearly smooth, when we consider
$H|_{x_0}$ (resp. $\hat{H}|_{\hat{x}_0}$) as a Lie subgroup of $\SO(T|_{x_0} M)$ (resp. $\SO(T|_{\hat{x}_0} \hat{M})$).
Moreover, denote $G=\hat{H}|_{\hat{x}_0}\times H|_{x_0}$ and consider the smooth (left) group actions
$\mu:G\times Q|_{(x_0,\hat{x}_0)}\to Q|_{(x_0,\hat{x}_0)}$
and $m:G\times G\to G$ of $G$ on $Q|_{(x_0,\hat{x}_0)}$ and itself given by
$\mu((\hat{h},h),A)=\hat{h}\circ A\circ h^{-1}$, $m((\hat{h},h),(\hat{k},k))=(\hat{h}\hat{k},hk)$.
Then we see that
\[
& \mu((\hat{h},h),f(\hat{k},k))=\hat{h}\circ \big(\hat{k}\circ A_0\circ k^{-1}\big)\circ h^{-1} \\
=&(\hat{h}\hat{k})\circ A_0\circ (hk)^{-1}=f(\hat{h}\hat{k},hk)=f\big(m((\hat{h},h),(\hat{k},k))\big),
\]
which shows that $f$ is $G$-equivariant map. Since $G$ acts transitively (by the action $m$) on itself,
it follows that $f$ has constant rank (see \cite{lee02} Theorem 9.7).

Unfortunately $f$ is not injective but there is an easy solution to this obstacle.
Notice that $K:=f^{-1}(A_0)$ is a closed subgroup of $G$, hence $G/K$ (the right coset space) is a smooth manifold
and $f$ induces a smooth map $\ol{f}:G/K\to Q|_{(x_0,\hat{x}_0)}$, which is still $G$-equivariant,
when one uses the (left) $G$-action $\ol{m}$ on $G/K$ induced by $m$. Now $\ol{f}$ is injective and constant rank,
hence an injective immersion (see \cite{lee02} Theorem 7.14) into $Q|_{(x_0,\hat{x}_0)}$.
But the image of $\ol{f}$ is exactly $\hat{H}|_{\hat{x}_0}\circ A_0\circ H|_{x_0}^{-1}$.

Moreover, given orthonormal frames $F$ and $\hat{F}$, we clearly see that
 $$\hat{h}\circ A_0\circ h\mapsto \mc{M}_{\hat{F},\hat{F}}(\hat{h})\mc{M}_{F,\hat{F}}(A_0)\mc{M}_{F,F}(h)$$
gives the desired diffeomorphism $$\hat{H}|_{\hat{x}_0}\circ A_0\circ H|_{x_0}^{-1}\to \hat{H}|_{\hat{F}} \mc{M}_{F,\hat{F}}(A_0) H|_{F}^{-1}.$$
\end{proof}

\begin{corollary}\label{cor:compact_fiber_NSD_orbit}
If $M$ and $\hat{M}$ are simply-connected,
then each $\pi_Q$-fiber $\mc{O}_{\NSDist}(q_0)\cap Q|_{(x,\hat{x})}$, with $(x,\hat{x})\in M\times\hat{M}$,
of any orbit $\mc{O}_{\NSDist}(q_0)$, $q_0=(x_0,\hat{x}_0;A_0)$, is a compact connected
embedded smooth submanifold $Q$.
In particular, if a $\NSDist$-orbit is open in $Q$ then it is equal to $Q$.
\end{corollary}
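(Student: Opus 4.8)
The plan is to combine Theorem \ref{th:NS_orbit_V} with the simple-connectedness hypothesis to control both the topology of the $\pi_Q$-fibers of an $\NSDist$-orbit and the way these fibers fit together over $M\times\hat{M}$. First I would recall that for a simply-connected Riemannian manifold the holonomy group is connected; this is the classical Borel--Lichnerowicz fact that $H^{\nabla}|_{x_0}/H^{\nabla}_0|_{x_0}$ is a quotient of $\pi_1$. Since $M$ and $\hat{M}$ are also complete, their (restricted = full) holonomy groups $H|_{x_0}\subset\SO(T|_{x_0}M)$ and $\hat{H}|_{\hat{x}_0}\subset\SO(T|_{\hat{x}_0}\hat{M})$ are compact connected Lie subgroups. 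Via the frame diffeomorphism \eqref{eq:NS_orbit_V_frame} of Theorem \ref{th:NS_orbit_V}, the fiber
\[
\mc{O}_{\NSDist}(q_0)\cap Q|_{(x_0,\hat{x}_0)}\cong \hat{H}|_{\hat{F}}\,\mc{M}_{F,\hat{F}}(A_0)\,H|_{F}^{-1},
\]
and the right-hand side is the image of the compact connected group $\hat{H}|_{\hat{F}}\times H|_{F}$ under the continuous map $(\hat h,h)\mapsto \hat h\,\mc{M}_{F,\hat{F}}(A_0)\,h^{-1}$ into $\SO(n)$; hence it is compact and connected. This handles the fiber over the base point $(x_0,\hat{x}_0)$.

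Next I would extend this to an arbitrary base point $(x,\hat x)\in M\times\hat M$ lying in the image of the orbit. The point is that if $q_1=(x,\hat x;A_1)\in\mc{O}_{\NSDist}(q_0)$, then by the Orbit Theorem $\mc{O}_{\NSDist}(q_1)=\mc{O}_{\NSDist}(q_0)$, so
\[
\mc{O}_{\NSDist}(q_0)\cap Q|_{(x,\hat x)}=\mc{O}_{\NSDist}(q_1)\cap Q|_{(x,\hat x)}=\hat H|_{\hat x}\circ A_1\circ H|_{x}^{-1},
\]
again by Theorem \ref{th:NS_orbit_V}, and the same compactness/connectedness argument applies with frames at $x$ and $\hat x$. (If $(x,\hat x)$ is not in the image of the orbit, the intersection is empty, which is trivially compact.) So every nonempty $\pi_Q$-fiber of the orbit is compact and connected.

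It remains to upgrade ``immersed'' to ``embedded.'' Here the key is that a compact immersed submanifold of a manifold is automatically embedded provided the immersion is injective — and injectivity is exactly the content of Theorem \ref{th:NS_orbit_V}, which exhibits the fiber as the image of the \emph{injective} immersion $\ol f:G/K\to Q|_{(x_0,\hat x_0)}$. An injective immersion from a compact manifold is a topological embedding (it is a continuous injection from a compact space into a Hausdorff space, hence a homeomorphism onto its image), and a topological embedding that is an immersion is a smooth embedding. Thus each nonempty fiber is a compact connected embedded smooth submanifold of $Q$. The main subtlety to get right is verifying that the domain $G/K$ of $\ol f$ is itself compact, which follows since $G=\hat H|_{\hat x}\times H|_{x}$ is compact and $K$ is a closed subgroup, so $G/K$ is a compact manifold; I would spell this out explicitly since it is the linchpin of the embedding claim.

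Finally, for the last sentence: suppose $\mc{O}_{\NSDist}(q_0)$ is open in $Q$. Being an orbit it is also a (weakly) embedded connected immersed submanifold, and an open subset of $Q$ is in particular a $9$-dimensional... more precisely, it is an open submanifold, hence closed in $Q$ if and only if $Q\setminus\mc{O}_{\NSDist}(q_0)$ is open. Here I would argue that the complement is a union of other $\NSDist$-orbits, each of which has the same dimension $\dim Q$ (since $\NSDist$-orbits all have dimension equal to the rank of the corresponding reachable algebra, and the hypothesis that one orbit is open forces $\NSDist$ to be bracket-generating on that orbit, hence — using that the Lie bracket structure of $\NSDist$ is determined pointwise by holonomy data that is locally constant — on all of $Q$). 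Thus every orbit is open, the complement of $\mc{O}_{\NSDist}(q_0)$ is open, and by connectedness of $Q$ we conclude $\mc{O}_{\NSDist}(q_0)=Q$. The hard part here is making precise why ``one orbit open'' propagates to ``all orbits open''; the clean way is to note that openness of an orbit is equivalent to $\Lie(\NSDist)|_q$ spanning $T|_qQ$ at one (hence, by the structure of the bracket formulas in Propositions \ref{pr:NS_comm_HH}–\ref{pr:NS_comm_VV} together with the holonomy description, at every) point of $Q$, since $M$ and $\hat M$ are connected.
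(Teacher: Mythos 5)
Your treatment of the fibers is essentially the paper's proof: simple connectedness gives closed, hence compact, connected holonomy groups, the fiber is the continuous image of the compact connected set $\hat{H}|_{\hat{x}_0}\times H|_{x_0}$ under the map $f$ of Theorem \ref{th:NS_orbit_V}, and the injectively immersed compact fiber is therefore embedded; reducing an arbitrary base point $(x,\hat{x})$ to the base point of the orbit is also the intended reading. (Completeness is not needed for closedness of the holonomy group of a simply connected manifold, but invoking it is harmless.)

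The final claim is where your argument has a genuine gap, and it is avoidable precisely because you do not use the compactness you just proved. First, ``the orbit is open, hence $\NSDist$ is bracket-generating along it'' is not automatic in the smooth category: the Orbit Theorem only gives the inclusion $\Lie(\NSDist)|_q\subset T|_q\mc{O}_{\NSDist}(q_0)$, and the correct infinitesimal description of the vertical part of the orbit's tangent space is Corollary \ref{cor:NS_orbit_V_inf}, which involves the full holonomy algebras rather than pointwise bracket data. Second, and more seriously, even the correct openness criterion at a point $(x_0,\hat{x}_0;A)$, namely $\mathfrak{h}+A^{-1}\hat{\mathfrak{h}}A=\so(n)$, genuinely depends on $A$: it can hold for one $A$ and fail for another in the same $\pi_Q$-fiber, which is exactly why Theorem \ref{theo-00} must quantify over all $A\in\SO(n)$, and why the fibers $\hat{H}|_{\hat{x}_0}\circ A_0\circ H|_{x_0}^{-1}$ of different orbits can have different dimensions. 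So no ``locally constant holonomy data'' argument propagates openness from one orbit to all of $Q$, and the step ``all orbits are open, hence the complement is open'' is unjustified. The clean route is the one your compactness result sets up, and it is the paper's: if $\mc{O}_{\NSDist}(q_0)$ is open in $Q$, its fiber $\mc{O}_{\NSDist}(q_0)\cap Q|_{(x_0,\hat{x}_0)}$ is open in $Q|_{(x_0,\hat{x}_0)}$, and being compact it is also closed there; since $Q|_{(x_0,\hat{x}_0)}\cong\SO(n)$ is connected, the orbit contains the whole fiber. Because the orbit is saturated under $\NSDist$-lifts along arbitrary curves (equivalently, by Proposition \ref{pr:NS_orbit_bundle} it is a subbundle of $\pi_Q$ over the connected base $M\times\hat{M}$, with parallel transport mapping fibers bijectively onto fibers), it then contains every $\pi_Q$-fiber, i.e.\ it equals $Q$.
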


\begin{proof}
Without loss of generality we may assume that $(x,\hat{x})=(x_0,\hat{x}_0)$.
By Theorem 3.2.8 in \cite{joyce07}
(in this relation, see also Appendix 5 in \cite{kobayashi63}),
the simply connectedness assumption
implies that $H|_{x_0}$ and $\hat{H}|_{\hat{x}_0}$ 
are respectively (closed and hence) compact connected Lie-subgroups of $\SO(T|_{x_0} M)$ and $\SO(T|_{\hat{x}_0} \hat{M})$.

Now $\mc{O}_{\NSDist}(q_0)\cap Q|_{(x_0,\hat{x}_0)}$ 
is compact (as a subset of $Q$) and connected since it
is a continuous image (by the map $f$ in the proof of Theorem \ref{th:NS_orbit_V}) of the compact connected set $\hat{H}|_{\hat{x}_0}\times H|_{x_0}$,
Finally notice that a compact immersed submanifold is embedded.

The last claim follows from the fact that
an open orbit $\mc{O}_{\NSDist}(q_0)$
has a open fiber $\mc{O}_{\NSDist}(q_0)\cap Q|_{(x_0,\hat{x}_0)}$ in $Q|_{(x_0,\hat{x}_0)}$.
This fiber is also compact by what we just proved
and hence $\mc{O}_{\NSDist}(q_0)\cap Q|_{(x_0,\hat{x}_0)}=Q|_{(x_0,\hat{x}_0)}$
by connectedness of $Q|_{(x_0,\hat{x}_0)}$. This clearly implies that $Q=\mc{O}_{\NSDist}(q_0)$.
\end{proof}

The next corollary gives the infinitesimal version of Theorem \ref{th:NS_orbit_V}.

\begin{corollary}\label{cor:NS_orbit_V_inf}
Let $q_0=(x_0,\hat{x}_0;A_0)\in Q$.
Then
\begin{align}\label{eq:NS_orbit_V_inf}
T|_{q_0} \mc{O}_{\NSDist}(q_0)\cap V|_{q_0} (\pi_{Q})
&=\nu(\{\hat{k}\circ A_0-A_0\circ k\ |\ k\in \mathfrak{h}|_{x_0}, \hat{k}\in \hat{\mathfrak{h}}|_{\hat{x}_0}\})|_{q_0} \\
&=:\nu(\hat{\mathfrak{h}}|_{\hat{x}_0}\circ A_0-A_0\circ \mathfrak{h}|_{x_0})|_{q_0}, \nonumber
\end{align}
where $\mathfrak{h}|_{x_0},\hat{\mathfrak{h}}|_{\hat{x}_0}$
are the Lie algebras of the holonomy groups $H|_{x_0},\hat{H}|_{\hat{x}_0}$ of $M,\hat{M}$.
\end{corollary}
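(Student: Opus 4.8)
The plan is to read off the infinitesimal statement from Theorem \ref{th:NS_orbit_V} in two moves. Write $\mc{O}:=\mc{O}_{\NSDist}(q_0)$. First I would show that the left-hand side is exactly the tangent space at $q_0$ of the fiber part $\mc{O}\cap Q|_{(x_0,\hat{x}_0)}$:
\begin{align*}
T|_{q_0}\mc{O}\cap V|_{q_0}(\pi_Q)=T|_{q_0}\big(\mc{O}\cap Q|_{(x_0,\hat{x}_0)}\big).
\end{align*}
To see this, recall (Orbit Theorem) that $\mc{O}$ is an immersed submanifold of $Q$ with $\NSDist|_q\subset T|_q\mc{O}$ for every $q\in\mc{O}$; since $(\pi_Q)_*$ maps $\NSDist|_q$ isomorphically onto $T|_{(x,\hat{x})}(M\times\hat{M})$, the restriction $\pi_Q|_{\mc{O}}\colon\mc{O}\to M\times\hat{M}$ is a submersion. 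Hence $(\pi_Q|_{\mc{O}})^{-1}(x_0,\hat{x}_0)=\mc{O}\cap Q|_{(x_0,\hat{x}_0)}$ is an embedded submanifold of $\mc{O}$ (so an immersed submanifold of $Q$) whose tangent space at $q_0$ equals $\ker\big((\pi_Q)_*|_{T|_{q_0}\mc{O}}\big)=T|_{q_0}\mc{O}\cap V|_{q_0}(\pi_Q)$; indeed, in a submersion chart a vertical vector of $\mc{O}$ at $q_0$ is automatically tangent to the fiber-slice through $q_0$.

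Next I would compute $T|_{q_0}\big(\mc{O}\cap Q|_{(x_0,\hat{x}_0)}\big)$ directly from the proof of Theorem \ref{th:NS_orbit_V}. There, the fiber part is realized as $\hat{H}|_{\hat{x}_0}\circ A_0\circ H|_{x_0}^{-1}$, the image of the smooth $G$-equivariant map $f\colon G:=\hat{H}|_{\hat{x}_0}\times H|_{x_0}\to Q|_{(x_0,\hat{x}_0)}$, $f(\hat{h},h)=\hat{h}\circ A_0\circ h^{-1}$, which factors through $G/K$ (with $K=f^{-1}(A_0)$) as an injective immersion $\ol{f}\colon G/K\to Q|_{(x_0,\hat{x}_0)}$ post-composed with the quotient submersion $G\to G/K$. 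Consequently $T|_{q_0}\big(\mc{O}\cap Q|_{(x_0,\hat{x}_0)}\big)=\mathrm{Im}\big(f_*|_{(\id,\id)}\big)$. Since the Lie algebra of $G$ is $\hat{\mathfrak{h}}|_{\hat{x}_0}\oplus\mathfrak{h}|_{x_0}$, for $(\hat{k},k)$ there I would pick curves $t\mapsto\hat{h}(t)\in\hat{H}|_{\hat{x}_0}$, $t\mapsto h(t)\in H|_{x_0}$ through $\id$ with velocities $\hat{k},k$ and differentiate inside the fixed linear fiber $T^*|_{x_0}M\otimes T|_{\hat{x}_0}\hat{M}$, using $\dif{t}\big|_0 h(t)^{-1}=-k$:
\begin{align*}
\dif{t}\big|_0\big(\hat{h}(t)\circ A_0\circ h(t)^{-1}\big)=\hat{k}\circ A_0-A_0\circ k,
\end{align*}
whose image under the canonical vertical identification is $\nu\big(\hat{k}\circ A_0-A_0\circ k\big)|_{q_0}$ (this lies in $V|_{q_0}(\pi_Q)$, consistently with Proposition \ref{pr:vertical_of_Q}, since $\hat{k}\circ A_0-A_0\circ k=A_0\big(A_0^{-1}\hat{k}A_0-k\big)\in A_0\,\so(T|_{x_0}M)$). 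Letting $(\hat{k},k)$ range over $\hat{\mathfrak{h}}|_{\hat{x}_0}\oplus\mathfrak{h}|_{x_0}$ gives $T|_{q_0}\big(\mc{O}\cap Q|_{(x_0,\hat{x}_0)}\big)=\nu\big(\hat{\mathfrak{h}}|_{\hat{x}_0}\circ A_0-A_0\circ\mathfrak{h}|_{x_0}\big)|_{q_0}$, and combining with the first move finishes the proof.

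The bracketed calculations (the Leibniz rule for $f$, the sign from $h(t)^{-1}$, the vertical identification) and the submersion observation are routine. The one point I would treat carefully — and regard as the real content here — is the reconciliation of the two manifold structures on $\mc{O}\cap Q|_{(x_0,\hat{x}_0)}$: the homogeneous-space image $\ol{f}(G/K)$ of Theorem \ref{th:NS_orbit_V} versus the embedded-in-$\mc{O}$ fiber of the submersion $\pi_Q|_{\mc{O}}$. These have the same underlying set, and since orbits (and embedded submanifolds thereof) are weakly embedded, $\ol{f}$ factors smoothly through the embedded fiber, whence $\mathrm{Im}(f_*|_{(\id,\id)})\subset T|_{q_0}\mc{O}\cap V|_{q_0}(\pi_Q)$; a dimension count — both spaces have dimension $\dim\mc{O}-\dim(M\times\hat{M})$, using that $\mc{O}$ is locally a product $U\times\big(\mc{O}\cap Q|_{(x_0,\hat{x}_0)}\big)$ over small simply connected $U\subset M\times\hat{M}$ via $\NSDist$-lifts — then upgrades this inclusion to equality.
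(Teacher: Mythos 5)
Your proposal is correct and follows essentially the same route as the paper: differentiate the map $f(\hat{h},h)=\hat{h}\circ A_0\circ h^{-1}$ from the proof of Theorem \ref{th:NS_orbit_V} at the identity and identify its image with $T|_{q_0}\mc{O}_{\NSDist}(q_0)\cap V|_{q_0}(\pi_Q)$, the derivative computation giving exactly $\nu(\hat{k}\circ A_0-A_0\circ k)|_{q_0}$. The only difference is that you spell out (via the submersion $\pi_Q|_{\mc{O}}$, weak embeddedness of the orbit, and a dimension count) the identification of the fiber's tangent space with $T|_{q_0}\mc{O}\cap V|_{q_0}(\pi_Q)$, which the paper simply asserts "by the previous considerations."
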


\begin{proof}
As in the previous proof, consider the map 
\begin{align}
f:\hat{H}|_{\hat{x}_0}\times H|_{x_0}&\to \mc{O}_{\NSDist}(q_0)\cap \pi_{Q}^{-1}(x_0,\hat{x}_0),\nonumber\\
(\hat{h},h)&\mapsto \hat{h}\circ A_0\circ h^{-1},\nonumber
\end{align}
which is known to be a submersion by the previous considerations. We deduce that$$f_*(\hat{\mathfrak{h}}\times \mathfrak{h})=T|_{q_0} (\mc{O}_{\NSDist}(A_0)\cap \pi_{Q}^{-1}(x_0,\hat{x}_0))=T|_{q_0} \mc{O}_{\NSDist}(q_0)\cap V|_{q_0}(\pi_{Q}).$$
But it is obvious that $f_*|_{\hat{\mathfrak{h}}\times \mathfrak{h}}(\hat{k},k)=\nu(\hat{k}\circ A_0-A_0\circ k)|_{q_0}$,
which then proves the claim. 

\end{proof}

\begin{remark}
By the previous corollary and the Ambrose-Singer holonomy theorem (see \cite{joyce07}, \cite{kobayashi63}),
we have for $q_0=(x_0,\hat{x}_0;A_0)\in Q$,
\[
T|_{q_0} \mc{O}_{\NSDist(q_0)}\cap V|_{q_0} (\pi_{Q})
=&\big\{ P_1^0(\hat{c}) \hat{R}|_{\hat{x}}(\hat{X},\hat{Y}) P_0^1(\hat{c}) A_0
   -A_0P_1^0(c) R|_{x}(X,Y) P_0^1(c)\ \big|\ \\
 &   x\in M,\ \hat{x}\in\hat{M},\ X,Y\in T|_x M,\ \hat{X},\hat{Y}\in T|_{\hat{x}}\hat{M}, \\
 &    \ c\in C^1_{\mathrm{pw}}([0,1],M),\ c(0)=x_0,\ c(1)=x,\\
  &  \hat{c}\in C^1_{\mathrm{pw}}([0,1],\hat{M}),\ \hat{c}(0)=\hat{x}_0,\ \hat{c}(1)=\hat{x}\big\},
\]
where $C^1_{\mathrm{pw}}([0,1],M)$ (resp. $C^1_{\mathrm{pw}}([0,1],\hat{M})$)
is the set of piecewise continuously differentiable maps $[0,1]\to M$ (resp. $[0,1]\to \hat{M}$).
\end{remark}

Theorem \ref{th:NS_orbit_V} shows that,
since $M,\hat{M}$ are connected, all the $\pi_{Q}$-fibers of the reachable set 
$\mc{O}_{\NSDist}(q_0)$ are diffeomorphic i.e.,
\[
\mc{O}_{\NSDist}(q_0)\cap \pi_{Q}^{-1}(x_0,\hat{x}_0)\cong \mc{O}_{\NSDist}(q_0)\cap \pi_{Q}^{-1}(x_1,\hat{x}_1),
\]
for every $(x_1,\hat{x}_1)\in M\times\hat{M}$.
This follows from the fact that if points $x,y\in M$, then (since $M$ is connected) $H|_x$ and $H|_y$
are isomorphic, the same observation holding in $\hat{M}$.
We will now prove that the reachable set $\mc{O}_{\NSDist}(q_0)$ has actually a bundle structure
over $M\times\hat{M}$.

\begin{proposition}\label{pr:NS_orbit_bundle}
For $q_0=(x_0,\hat{x}_0;A_0)\in Q$,
denote $\pi_{\mc{O}_{\NSDist}(q_0)}:=\pi_{Q}|_{\mc{O}_{\NSDist}(q_0)}$.
Then $\pi_{\mc{O}_{\NSDist}(q_0)}:\mc{O}_{\NSDist}(q_0)\to M\times\hat{M}$
is a smooth subbundle of $\pi_Q$
with typical fiber $\hat{H}|_{\hat{x}_0}\circ A_0\circ H|_{x_0}^{-1}$
and $\mc{O}_{\NSDist}(q_0)$ is a smooth immersed submanifold of $Q$.
\end{proposition}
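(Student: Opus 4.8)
The plan is to exhibit explicit local trivializations of $\pi_{\mc{O}}:=\pi_Q|_{\mc{O}}$, where $\mc{O}:=\mc{O}_{\NSDist}(q_0)$, built from parallel transport; the immersed-submanifold assertion is immediate from the Orbit Theorem already quoted, so only the subbundle claim needs work. First I would record two preliminary facts. The map $\pi_{\mc{O}}:\mc{O}\to M\times\hat{M}$ is onto, since every piecewise $C^1$ curve in $M\times\hat{M}$ starting at $(x_0,\hat{x}_0)$ lifts through $\LNSD$ to a $\NSDist$-admissible curve starting at $q_0$ (Proposition \ref{pr:2.1:1}), and $M\times\hat{M}$ is connected; and $\pi_{\mc{O}}$ is a submersion, since $\NSDist|_q\subseteq T|_q\mc{O}$ for $q\in\mc{O}$ (the orbit tangent space contains $\VF_{\NSDist}$ pointwise) while $(\pi_Q)_*$ restricts to an isomorphism $\NSDist|_q\to T|_{(x,\hat{x})}(M\times\hat{M})$.

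Next I would fix $(x_1,\hat{x}_1)\in M\times\hat{M}$, choose (by surjectivity) some $q_1=(x_1,\hat{x}_1;A_1)\in\mc{O}$, and note that $\mc{O}_{\NSDist}(q_1)=\mc{O}$ because orbits through points of the same orbit coincide. Theorem \ref{th:NS_orbit_V} applied at $q_1$ then identifies the fiber $\mc{O}\cap Q|_{(x_1,\hat{x}_1)}$ with $F:=\hat{H}|_{\hat{x}_1}\circ A_1\circ H|_{x_1}^{-1}$, an immersed submanifold of $Q|_{(x_1,\hat{x}_1)}$; since $M,\hat{M}$ are connected the holonomy groups at all base points are isomorphic, so $F$ is diffeomorphic to the claimed typical fiber $\hat{H}|_{\hat{x}_0}\circ A_0\circ H|_{x_0}^{-1}$. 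I would then pick normal neighbourhoods $U\ni x_1$ in $M$ and $\hat{U}\ni\hat{x}_1$ in $\hat{M}$ and let $\gamma_x$, $\hat{\gamma}_{\hat{x}}$ be the radial geodesics from $x_1$ to $x\in U$ and from $\hat{x}_1$ to $\hat{x}\in\hat{U}$, which depend smoothly on their endpoints, and define
\[
\tau:\pi_{\mc{O}}^{-1}(U\times\hat{U})\to(U\times\hat{U})\times F,\qquad \tau(x,\hat{x};A)=\big((x,\hat{x}),\,P^0_1(\hat{\gamma}_{\hat{x}})\circ A\circ P^1_0(\gamma_x)\big),
\]
with candidate inverse $((x,\hat{x}),B)\mapsto(x,\hat{x};\,P^1_0(\hat{\gamma}_{\hat{x}})\circ B\circ P^0_1(\gamma_x))$.

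The substantive step, and the one I expect to be the main obstacle, is checking that $\tau$ is well defined, i.e. that $P^0_1(\hat{\gamma}_{\hat{x}})\circ A\circ P^1_0(\gamma_x)\in F$ whenever $(x,\hat{x};A)\in\mc{O}$. Using Proposition \ref{pr:2.1:1} I would write $A=P^1_0(\hat{c})\circ A_0\circ P^0_1(c)$ for a path $(c,\hat{c})$ from $(x_0,\hat{x}_0)$ to $(x,\hat{x})$, fix once and for all a path $(c_1,\hat{c}_1)$ from $(x_0,\hat{x}_0)$ to $(x_1,\hat{x}_1)$ realizing $A_1=P^1_0(\hat{c}_1)\circ A_0\circ P^0_1(c_1)$, and rearrange $P^0_1(\hat{\gamma}_{\hat{x}})\circ A\circ P^1_0(\gamma_x)=\hat{h}\circ A_1\circ h$, where $\hat{h}$ is parallel transport around the loop at $\hat{x}_1$ obtained by running $\hat{c}_1$ backwards, then $\hat{c}$, then $\hat{\gamma}_{\hat{x}}$ backwards, so $\hat{h}\in\hat{H}|_{\hat{x}_1}$, and likewise $h\in H|_{x_1}$; running this computation in reverse shows $P^1_0(\hat{\gamma}_{\hat{x}})\circ B\circ P^0_1(\gamma_x)\in\mc{O}$ for every $B\in F$, so the candidate inverse genuinely inverts $\tau$. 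Smoothness of $\tau$ and $\tau^{-1}$ then follows from the smooth dependence of parallel transport on the curve and on its endpoints, together with the fact that $\mc{O}$ and $F$ are weakly embedded (initial) submanifolds — $\mc{O}$ as an orbit produced by the Orbit Theorem, $F$ as an orbit of the Lie group action used in the proof of Theorem \ref{th:NS_orbit_V}. Since $\pr_1\circ\tau=\pi_{\mc{O}}$ by construction, these $\tau$ are smooth local trivializations with typical fiber $\hat{H}|_{\hat{x}_0}\circ A_0\circ H|_{x_0}^{-1}$, which, combined with the immersed-submanifold fact noted at the outset, is exactly the asserted subbundle structure.
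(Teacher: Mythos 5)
Your proof is correct and follows essentially the same route as the paper's: explicit local trivializations of $\pi_{\mc{O}_{\NSDist}(q_0)}$ built by parallel transporting back along a smooth family of radial curves (the paper uses chart-radial curves rather than radial geodesics), with well-definedness reduced via Proposition \ref{pr:2.1:1} and Theorem \ref{th:NS_orbit_V} to recognizing a loop-holonomy factorization $\hat{h}\circ A_1\circ h$, and smoothness of the inverse obtained from the orbit's initial-submanifold property. The only cosmetic difference is that you carry out the construction at an arbitrary base point $(x_1,\hat{x}_1)$, whereas the paper does it at $(x_0,\hat{x}_0)$ and invokes the identical argument elsewhere.
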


\begin{proof}
The surjectivity of $\pi_{\mc{O}_{\NSDist}(q_0)}$ onto $M\times\hat{M}$ follows immediately from the connectivity of $M$, $\hat{M}$.

Choose local charts $(\phi,U)$ and $(\hat{\phi},\hat{U})$ of $M$ and $\hat{M}$
around $x_0$, $\hat{x}_0$ centered at $x_0$, $\hat{x}_0$ (i.e., $\phi(x_0)=0$, $\hat{\phi}(\hat{x}_0)=0$)
and so that $\phi(U)$ and $\hat{\phi}(\hat{U})$ are convex.
Then, define
\begin{align}
\tau_{(\phi,\hat{\phi})}:\pi_{\mc{O}_{\NSDist}(q_0)}^{-1}(U\times\hat{U})&\to (U\times\hat{U})\times \big(\hat{H}|_{\hat{x}_0}\circ A_0\circ H|_{x_0}^{-1}\big)\nonumber \\
(x,\hat{x};A)&\mapsto\big((x,\hat{x}),P_1^0\big(t\mapsto (\phi^{-1}(t\phi(x)),\hat{\phi}^{-1}(t\hat{\phi}(\hat{x}))) \big)A\big),\nonumber
\end{align}
where we notice that, since $A=P_0^1(c,\hat{c})A_0=P_0^1(\hat{c})\circ A_0\circ P_1^0(c)$
for some piecewise $C^1$ paths $c:[0,1]\to M$ and $\hat{c}:[0,1]\to \hat{M}$ with $c(0)=x_0$, $\hat{c}(0)=\hat{x}_0$,
one has
\[
& P_1^0\big(t\mapsto (\phi^{-1}(t\phi(x)),\hat{\phi}^{-1}(t\hat{\phi}(\hat{x}))) \big)A\\
&=P_0^1\big(t\mapsto \hat{\phi}^{-1}(t\hat{\phi}(\hat{x})) \big)\circ A \circ P_1^0\big(t\mapsto \phi^{-1}(t\phi(x)))\big) \\
&=P_0^1\big(t\mapsto \hat{\phi}^{-1}(t\hat{\phi}(\hat{x})) \big)\circ P_0^1(\hat{c})\circ A_0 \circ P_1^0(c)\circ P_1^0\big(t\mapsto \phi^{-1}(t\phi(x)))\big).
\]
The concatenation of the path $c$ and $t\mapsto \phi^{-1}(t\phi(x))$ is a piecewise $C^1$ loop of $M$ based at $x_0$
and the concatenation of $\hat{c}$ and $t\mapsto \hat{\phi}^{-1}(t\hat{\phi}(\hat{x}))$ is a piecewise $C^1$ loop based of $\hat{M}$ at $\hat{x}_0$.
Thus $P_1^0\big(t\mapsto (\phi^{-1}(t\phi(x)),\hat{\phi}^{-1}(t\hat{\phi}(\hat{x}))) \big)A$ is an element of $\hat{H}|_{\hat{x}_0}\circ A_0\circ H|_{x_0}^{-1}$.

It is clear that $\tau_{(\phi,\hat{\phi})}$ is a smooth bijection onto $(U\times\hat{U})\times \big(\hat{H}|_{\hat{x}_0}\circ A_0\circ H|_{x_0}^{-1}\big)$.
Its inverse map is given by $\psi_{(\phi,\hat{\phi})}$,
\[
\psi_{(\phi,\hat{\phi})}((x,\hat{x}),B)=\big(x,\hat{x};P_0^1\big(t\mapsto (\phi^{-1}(t\phi(x)),\hat{\phi}^{-1}(t\hat{\phi}(\hat{x}))) \big)B\big),
\]
which is clearly smooth into $Q$ with image contained in $\mc{O}_{\NSDist}(q_0)$
and hence it is smooth into $\mc{O}_{\NSDist}(q_0)$
by the basic properties of an orbit.
This shows that $\mc{O}_{\NSDist}(q_0)$ is a smooth bundle.

Since the maps $\ol{\tau}_{(\phi,\hat{\phi})}$
defined on $\pi_Q^{-1}(U\times \hat{U})$ by the same formula as $\tau_{(\phi,\hat{\phi})}$
are diffeomorphisms (by an identical argument as above) onto $(U\times\hat{U})\times \pi_Q^{-1}(x_0,\hat{x}_0)$,
we see that $\pi_{\mc{O}_{\NSDist}(q_0)}$ is a smooth (immersed) subbundle of $\pi_Q$.

\end{proof}

We may now also prove that $\pi_Q:Q\to M\times\hat{M}$
cannot be equipped with a principal bundle structure leaving the distribution $\NSDist$
invariant except in special cases.

\begin{theorem}\label{th:no_useful_principal_bundle_for_pi_Q}
Generically, in dimension $n\geq 3$, $\pi_Q$ cannot be equipped with a principal bundle structure
which leaves $\NSDist$ invariant.

More precisely, if $n\geq 3$ and $F,\hat{F}$ are oriented orthonormal frames of $M$ and $\hat{M}$ at $x_0$ and $\hat{x}_0$, respectively,
and if $H|_F\subset\SO(n)$, $\hat{H}|_{\hat{F}}\subset\SO(n)$ are the holonomy groups
with respect to these frames,
then $H|_F\cap \hat{H}|_{\hat{F}}\neq \{\id_{\R^n}\}$
implies that there is no principal bundle structure on $\pi_Q$ which leaves $\NSDist$ invariant.

Especially this holds if $M$ (resp. $\hat{M}$) has full holonomy $\SO(n)$
and $\hat{M}$ (resp. $M$) is not flat.
\end{theorem}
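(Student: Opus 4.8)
The plan is to argue by contradiction. Suppose $\pi_Q$ carries a principal $G$-bundle structure, where $G$ is a Lie group (necessarily diffeomorphic to the typical fiber $\SO(n)$, so $\dim G=\tfrac{n(n-1)}{2}$) with right action $\mu:Q\times G\to Q$ covering $\id_{M\times\hat M}$, and assume each $\mu_g:=\mu(\cdot,g)$ satisfies $(\mu_g)_*\NSDist=\NSDist$. Since $\NSDist$ is a complement of the vertical bundle $V(\pi_Q)$ inside $TQ$ by Eq. \eqref{eq:2.1:5}, the $G$-invariance makes $\NSDist$ a \emph{principal connection} on $\pi_Q$. The first step is to recall the standard rigidity of principal connections: for any loop $\sigma$ of $M\times\hat M$ based at $(x_0,\hat x_0)$, the $\NSDist$-parallel transport around $\sigma$ is a $G$-equivariant diffeomorphism of the fiber $Q|_{(x_0,\hat x_0)}$ onto itself; fixing $q_0$ in this fiber and writing the transport of $q_0$ as $q_0\cdot a_\sigma$ with $a_\sigma\in G$, equivariance forces the transport of an arbitrary $q_0\cdot g$ to be $q_0\cdot(a_\sigma g)$, so this diffeomorphism has a fixed point only if $a_\sigma=e$, i.e.\ only if it is the identity map; equivalently, the holonomy group acts freely on the fiber.

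Next I would make the holonomy of $\NSDist$ completely explicit using the computation inside the proof of Theorem \ref{th:NS_orbit_V} together with Eq. \eqref{eq:parallel_trans_A}. Choosing oriented orthonormal frames $F$ at $x_0$ and $\hat F$ at $\hat x_0$ and identifying $Q|_{(x_0,\hat x_0)}\cong\SO(n)$ by $A\mapsto B:=\mc{M}_{F,\hat F}(A)$, the $\NSDist$-parallel transport around a loop $\sigma=(\gamma,\hat\gamma)$ acts on this fiber by $B\mapsto \hat h\,B\,h^{-1}$, where $h:=\mc{M}_{F,F}\big((P^{\nabla})^1_0(\gamma)\big)\in H|_F$ and $\hat h:=\mc{M}_{\hat F,\hat F}\big((P^{\hat\nabla})^1_0(\hat\gamma)\big)\in\hat H|_{\hat F}$. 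Taking $\hat\gamma$ (resp.\ $\gamma$) constant realizes every transformation $B\mapsto Bh^{-1}$ with $h\in H|_F$ (resp.\ $B\mapsto \hat h B$ with $\hat h\in\hat H|_{\hat F}$); hence the $\NSDist$-holonomy group at $q_0$ is exactly the group of transformations $\{\,B\mapsto\hat h B h^{-1}\ :\ (\hat h,h)\in\hat H|_{\hat F}\times H|_F\,\}$ of $\SO(n)$.

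The contradiction then comes from combining the two steps. Pick $k\in H|_F\cap\hat H|_{\hat F}$ with $k\neq\id_{\R^n}$, allowed by hypothesis, and consider the holonomy transformation corresponding to $(\hat h,h)=(k,k)$, namely $B\mapsto kBk^{-1}$; it fixes $B=\id$, so by the first step it must be the identity map of $\SO(n)$, i.e.\ $k$ lies in the center of $\SO(n)$. For $n\geq 3$ this is impossible unless $n$ is even and $k=-\id$, so the argument is already complete whenever $H|_F\cap\hat H|_{\hat F}$ contains an element other than $\pm\id_{\R^n}$. More crudely, since the holonomy group $\{B\mapsto\hat h B h^{-1}\}$ must act freely on $\SO(n)$ (being conjugate, via a torsor identification $G\to\SO(n)$ sending $q_0$ to a point, to a subgroup of the left translations of $G$), and since the kernel of $(\hat h,h)\mapsto\big(B\mapsto\hat h B h^{-1}\big)$ is contained in $Z(\SO(n))\times Z(\SO(n))$ and hence finite, one gets the dimension bound $\dim H|_F+\dim\hat H|_{\hat F}\leq\dim G=\tfrac{n(n-1)}{2}$. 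This bound fails outright when $M$ has full holonomy $\SO(n)$ and $\hat M$ is not flat, because then $\hat H|_{\hat F}$ has positive dimension by the Ambrose--Singer theorem (the curvature of $\hat M$ being nonzero somewhere), which proves the explicit corollary; moreover a non-flat manifold has non-discrete restricted holonomy, so $\hat H|_{\hat F}\not\subseteq\{\pm\id_{\R^n}\}$ in that case, which also disposes of the $n$-even exceptional element.

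The step I expect to be the genuine obstacle is controlling the degenerate situation $n$ even with $H|_F\cap\hat H|_{\hat F}=\{\pm\id_{\R^n}\}$: there the fixed-point argument produces only the trivial transformation and the dimension bound need not be violated, so the bare statement holds only ``generically'' and one must lean either on the dimension count (whenever $\dim H|_F+\dim\hat H|_{\hat F}>\tfrac{n(n-1)}{2}$) or, in the corollary, on the non-flatness hypothesis. A secondary, purely bookkeeping point is to justify rigorously the identification ``$G$-invariant complement of the vertical $=$ principal connection'' and the equivariance of its parallel transport, which I would simply quote from \cite{kobayashi63}.
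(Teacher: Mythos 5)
Your argument is essentially the paper's own proof in more conceptual clothing: invariance of $\NSDist$ under the putative action makes $\NSDist$-parallel transport equivariant, the fiberwise holonomy is computed as $B\mapsto \hat{h}B h^{-1}$ with $(\hat{h},h)\in\hat{H}|_{\hat{F}}\times H|_F$, and a common element $k$ realized simultaneously by loops in $M$ and $\hat{M}$ yields a holonomy transformation fixing the identity, which by equivariance (your torsor/freeness lemma; in the paper, the observation that the $G$-orbit of $A_0$ is the whole fiber, so conjugation by $k$ is trivial on all of $\SO(n)$) forces $k\in Z(\SO(n))$. The only real divergence is the endgame: the paper simply asserts that $Z(\SO(n))=\{\id_{\R^n}\}$ for all $n\geq 3$, which is false for even $n$ (where $-\id_{\R^n}$ is central), so the residual case you honestly flag — $n$ even with $H|_F\cap\hat{H}|_{\hat{F}}$ containing no element besides $\pm\id_{\R^n}$ — is an unacknowledged gap in the paper's proof as well, not a defect peculiar to yours. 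Your dimension count (via finiteness of the kernel of $(\hat{h},h)\mapsto(B\mapsto\hat{h}Bh^{-1})$ and the fact that a non-flat manifold has positive-dimensional restricted holonomy) is a sound way to settle the ``Especially'' clause; note, though, that this clause also follows at once from the main statement, since $H|_F=\SO(n)$ gives $H|_F\cap\hat{H}|_{\hat{F}}=\hat{H}|_{\hat{F}}$, which is not contained in $\{\pm\id_{\R^n}\}$ when $\hat{M}$ is not flat.
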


\begin{proof}
Suppose $\mu:G\times Q\to Q$ is a left principal bundle structure for $\pi_Q$
leaving $\NSDist$ invariant. Notice that $G$ is diffeomorphic to the $\pi_Q$-fibers i.e., to $\SO(n)$
(but, of course, does not need to be isomorphic to it as a Lie group).
The fact that for all $g\in G$ we have $(\mu_g)_*\NSDist\subset\NSDist$
is clearly equivalent to $(\mu_g)_*\LNSD(X, \hat{X})|_q=\LNSD(X, \hat{X})|_{\mu(g,q)}$
for all $q=(x,\hat{x};A)\in Q$ and $X\in T|_x M$, $\hat{X}\in T|_{\hat{x}}\hat{M}$.
But this means that for all $g\in G$, $(x,\hat{x};A)\in Q$ and a.c. paths $\gamma$, $\hat{\gamma}$
starting at $x$, $\hat{x}$ respectively, we have
$\mu(g,q_{\NSDist}(q)(t))=q_{\NSDist}(\mu(g,q))(t)$ where $q_{\NSDist}(q)$ is the
unique solution to $\dot{q}(t)=\LNSD(\dot{\gamma}(t), \hat{\dot{\gamma}}(t))|_{q(t)}$, $q(0)=q$.
Since we know that if $q=(x,\hat{x};A)\in Q$,
then $q_{\NSDist}(q)(t)=(\gamma(t),\hat{\gamma}(t);P_0^t(\hat{\gamma})\circ A\circ P_t^0(\gamma))$ for all $t$,
we get that
\[
\mu\big(g,P_0^t(\hat{\gamma})\circ A\circ P_t^0(\gamma)\big)=P_0^t(\hat{\gamma})\circ \mu(g,A)\circ P_t^0(\gamma).
\]

Let $F$, $\hat{F}$ be chosen as in the statement above.
Define $(x_0,\hat{x}_0;A_0)\in Q$ by $A_0=\sum_i g(X_i,\cdot)\hat{X}_i$
and choose $B\in H|_{F}\cap \hat{H}|_{\hat{F}}$, $B\neq\id_{\R^n}$.
Choose loops $\gamma$, $\hat{\gamma}$ based at $x_0$, $\hat{x}_0$
such that $\mc{M}_{F,F}(P_0^1(\gamma))=B$, $\mc{M}_{\hat{F},\hat{F}}(P_0^1(\hat{\gamma}))=B$.
Since $\mc{M}_{F,\hat{F}}(A_0)=\id_{\R^n}$ by the definition of $A_0$,
we have 
\[
\mc{M}_{F,\hat{F}}(A_0)=&\id_{\R^n}
=B\id_{\R^n}B^{-1}
=\mc{M}_{\hat{F},\hat{F}}(P_0^1(\hat{\gamma}))\mc{M}_{F,\hat{F}}(A_0)\underbrace{\mc{M}_{F,F}(P_0^1(\gamma))^{-1}}_{=\mc{M}_{F,F}(P_1^0(\gamma))} \\
=&\mc{M}_{F,\hat{F}}(P_0^1(\hat{\gamma})\circ A_0\circ P_1^0(\gamma))
\]
i.e.,
\[
A_0=P_0^1(\hat{\gamma})\circ A_0\circ P_1^0(\gamma).
\]
Applying to this what was done above, we get
\[
\mu(g,A_0)=\mu(g,P_0^1(\hat{\gamma})\circ A_0\circ P_1^0(\gamma))
=P_0^1(\hat{\gamma})\circ \mu(g,A_0)\circ P_1^0(\gamma),\quad \forall g\in G
\]
i.e.,
\[
\mc{M}_{F,\hat{F}}(\mu(g,A_0))=B\mc{M}_{F,\hat{F}}(\mu(g,A_0))B^{-1},\quad g\in G.
\]
But $\mu(G,A_0)=\pi_Q^{-1}(x_0,\hat{x}_0)$ whence $\mc{M}_{F,\hat{F}}(\mu(G,A_0))=\SO(n)$
and thus we have found a $B\in\SO(n)$ which is not the identity $\id_{\R^n}$ such that
$C=BCB^{-1}$ for all $C\in\SO(n)$
i.e., $B$ belongs to the center of $\SO(n)$. But in dimension $n\geq 3$ the center of $\SO(n)$
is $\{\id_{\R^n}\}$, contradicting the fact that $B\neq \id_{\R^n}$.
This contradiction shows that the existence of a principal bundle structure $\mu$
on $\pi_Q$ that preserves $\NSDist$ is impossible in this case.
\end{proof}

%%%%%%%%%%%%%%%%%%%%%%%%%%%%%%%%%%%%%%%%%%%%%%%
\subsection{Consequences for Controllability}
%%%%%%%%%%%%%%%%%%%%%%%%%%%%%%%%%%%%%%%%%%%%%%%

From the previous characterizations of the reachable set of  
$\sns$, we now derive consequences for 
the controllability of the control system $\sns$.

We start with the following remark.

\begin{remark}
All the results, except Theorem \ref{th:no_useful_principal_bundle_for_pi_Q}, of the previous section can obviously be formulated in verbatim
in the space $T^*M\otimes T\hat{M}$ instead of $Q$
(i.e., we may replace $Q$ by $T^*M\otimes T\hat{M}$ everywhere)
and the statements hold true in this setting.
However, Theorem \ref{th:NS_orbit_V} (formulated in $T^*M\otimes T\hat{M}$)
then implies each orbit $\mc{O}_{\NSDist}(q_0)$ of $\NSDist$ in $T^*M\otimes T\hat{M}$,
$q_0=(x_0,\hat{x};A_0)\in T^*M\otimes T\hat{M}$, can have dimension
of at most $2n+\dim H|_{x_0}+\dim \hat{H}|_{\hat{x}_0}\leq n^2+n$.
Since the dimension of $T^*M\otimes T\hat{M}$ is $n^2+2n$,
the orbit $\mc{O}_{\NSDist}(q_0)$ has a codimension of at least $n$.
This shows that $\NSDist$ (or the related control problem) is never
completely controllable in $T^*M\otimes T\hat{M}$.
\end{remark}

Theorem \ref{th:NS_orbit_V} states that the controllability of $\NSDist$ is completely determined by the holonomy groups of $M$ and $\hat{M}$.
The next theorem highlights that fact at the Lie algebraic level. 

\begin{theorem}\label{theo-00}
The control system $\sns$ is completely controllable if and only if,
for every $A\in \SO(n)$, the following holds:
\begin{align}\label{eq:lin-alg}
\mathfrak{h}+A^{-1}\hat{\mathfrak{h}}A=\so(n),
\end{align}
where $\mathfrak{h}$ and $\hat{\mathfrak{h}}$ are
respectively the Lie subalgebras of $\so(n)$
isomorphic (as Lie algebras) to the holonomy Lie algebras
of $\nabla$ and $\hat{\nabla}$.
\end{theorem}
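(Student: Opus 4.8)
The statement to prove (Theorem \ref{theo-00}) says: $\sns$ is completely controllable if and only if $\mathfrak{h}+A^{-1}\hat{\mathfrak{h}}A=\so(n)$ for all $A\in\SO(n)$. The plan is to deduce this directly from Theorem \ref{th:NS_orbit_V} together with the bundle structure of the orbits established in Proposition \ref{pr:NS_orbit_bundle} and the infinitesimal description in Corollary \ref{cor:NS_orbit_V_inf}. The key observation is that complete controllability of $\sns$ means $\mc{O}_{\NSDist}(q_0)=Q$ for every (equivalently, some) $q_0\in Q$, and by Proposition \ref{pr:NS_orbit_bundle} the orbit is a smooth subbundle of $\pi_Q$ over the connected base $M\times\hat M$; hence the orbit equals $Q$ if and only if one (equivalently every) fiber $\mc{O}_{\NSDist}(q_0)\cap Q|_{(x_0,\hat x_0)}$ equals the whole fiber $Q|_{(x_0,\hat x_0)}$.

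First I would fix orthonormal frames $F$, $\hat F$ at $x_0$, $\hat x_0$ and use the diffeomorphism of Eq.~(\ref{eq:NS_orbit_V_frame}): the fiber of the orbit through $q_0=(x_0,\hat x_0;A_0)$ corresponds to the coset-type set $\hat H|_{\hat F}\,\mc{M}_{F,\hat F}(A_0)\,H|_F^{-1}\subset\SO(n)$, while the full fiber $Q|_{(x_0,\hat x_0)}$ corresponds to all of $\SO(n)$. Writing $A:=\mc{M}_{F,\hat F}(A_0)\in\SO(n)$, which can be made to be an arbitrary element of $\SO(n)$ by varying $A_0$, complete controllability is equivalent to: $\hat H|_{\hat F}\,A\,H|_F^{-1}=\SO(n)$ for all $A\in\SO(n)$, i.e.\ $\hat H|_{\hat F}\cdot A\cdot H|_F=\SO(n)$ for all $A$ (since $H|_F^{-1}=H|_F$ as a group). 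Equivalently, $A^{-1}\hat H|_{\hat F}A\cdot H|_F=\SO(n)$ for all $A\in\SO(n)$.

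Next I would pass from the group statement to the Lie-algebra statement. Since $M$ and $\hat M$ are connected, $H|_F$ and $\hat H|_{\hat F}$ are connected Lie subgroups of $\SO(n)$ (path-connectedness of the holonomy group for connected base), with Lie algebras $\mathfrak{h}:=\mathfrak{h}|_F$ and $\hat{\mathfrak{h}}:=\hat{\mathfrak{h}}|_{\hat F}$. The product of two connected Lie subgroups $G_1 G_2$ of a connected Lie group exhausts the group if and only if $\mathfrak{g}_1+\mathfrak{g}_2$ generates the full Lie algebra as a \emph{vector space} — more precisely, $G_1G_2$ has nonempty interior iff $\mathfrak{g}_1+\mathfrak{g}_2=\mathfrak{g}$, and when $G$ is connected a subset with nonempty interior that is invariant under left/right translations by $G_1,G_2$ and whose "span" is all of $G$ forces $G_1G_2=G$. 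Applying this with $G_1=A^{-1}\hat H|_{\hat F}A$ (whose Lie algebra is $A^{-1}\hat{\mathfrak{h}}A$) and $G_2=H|_F$ (Lie algebra $\mathfrak{h}$), the condition $A^{-1}\hat H|_{\hat F}A\cdot H|_F=\SO(n)$ for all $A$ becomes exactly $\mathfrak{h}+A^{-1}\hat{\mathfrak{h}}A=\so(n)$ for all $A\in\SO(n)$, which is Eq.~(\ref{eq:lin-alg}). One should also note that $A$ ranging over $\SO(n)$ on the left (as $\mc{M}_{F,\hat F}(A_0)$) is the same as $A$ ranging over $\SO(n)$ in (\ref{eq:lin-alg}), modulo the fact that changing frames $F,\hat F$ conjugates $\mathfrak h$ and $\hat{\mathfrak h}$, which is absorbed into the universal quantifier over $A$.

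The main obstacle will be the step "$G_1G_2=G$ iff $\mathfrak{g}_1+\mathfrak{g}_2=\mathfrak{g}$" for connected Lie groups. The direction $G_1G_2=G\Rightarrow\mathfrak g_1+\mathfrak g_2=\mathfrak g$ is immediate from dimension count (the multiplication map $G_1\times G_2\to G$ has image $G$, hence is a submersion somewhere, and its differential at the identity is $(\xi,\eta)\mapsto\xi+\eta$). The converse requires care: $\mathfrak g_1+\mathfrak g_2=\mathfrak g$ makes the multiplication map a submersion at $(e,e)$, so $G_1G_2$ contains an open neighborhood $U$ of $e$; then $G_1G_2=\bigcup_{g\in G_1G_2}gU$ (using that $G_1G_2$ is closed under left multiplication by $G_1$ and, more subtly, one exploits $G_1U, UG_2\subset G_1G_2$) shows $G_1G_2$ is open; but it is also the image of a connected set hence connected, and an open subgroup-like set... — actually $G_1G_2$ need not be a subgroup, so I would instead argue: $G_1G_2\supset U$ open around $e$ implies $G_1G_2\supset G_1\cdot U\supset$ a neighborhood of $G_1$, and iterating with the connectedness of $G$ (every element of a connected Lie group is a product of elements near $e$) together with the translation-invariance $G_1\cdot(G_1G_2)=G_1G_2$ gives $G_1G_2=G$. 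I would either cite this as a standard fact or include this short topological argument. Everything else — the reduction via Proposition \ref{pr:NS_orbit_bundle}, the frame identification (\ref{eq:NS_orbit_V_frame}), connectedness of holonomy groups — is routine given the earlier results.
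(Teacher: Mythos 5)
Your reduction to the group-level statement is fine and matches the paper's Theorem \ref{th:NS_orbit_V} together with Proposition \ref{pr:NS_orbit_bundle}: complete controllability is indeed equivalent to $\hat{H}|_{\hat{F}}\,A\,H|_{F}=\SO(n)$ for every $A\in\SO(n)$. The genuine gap is in your passage from this to the Lie-algebra condition. The lemma you invoke --- for connected Lie subgroups $G_1,G_2$ of a connected Lie group $G$, $G_1G_2=G$ if and only if $\mathfrak{g}_1+\mathfrak{g}_2=\mathfrak{g}$ --- is false in the direction you need. A standard counterexample: in $G=\mathrm{SL}(2,\R)$ take $G_1$ the connected group of upper triangular matrices (positive diagonal) and $G_2$ the lower unipotent group; then $\mathfrak{g}_1+\mathfrak{g}_2=\mathfrak{sl}(2,\R)$, yet $G_1G_2$ consists exactly of the matrices whose $(2,2)$ entry is positive --- a proper open subset (a Bruhat cell). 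Your iteration argument breaks precisely here: $G_1G_2$ is invariant under left multiplication by $G_1$ and right multiplication by $G_2$, but \emph{not} under multiplication by arbitrary elements of the neighbourhood $U$ of $e$, so ``every element is a product of elements near $e$'' does not propagate membership in $G_1G_2$. Compactness of $\SO(n)$ does not rescue the lemma as you state it unless the holonomy groups are known to be closed, which the paper only gets under simple connectedness (Corollary \ref{cor:compact_fiber_NSD_orbit}). Your forward direction is also imprecise: Sard gives a submersion point of $(\hat{h},h)\mapsto \hat{h}Ah$ somewhere, and there the (translated) image of the differential is $\hat{\mathfrak{h}}+\mathrm{Ad}(\hat{h}A)\mathfrak{h}$, not $\hat{\mathfrak{h}}+\mathrm{Ad}(A)\mathfrak{h}$.

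Both defects are repairable, and the repair shows where the quantifier ``for every $A$'' is really used. For the converse, if $\mathfrak{h}+A^{-1}\hat{\mathfrak{h}}A=\so(n)$ for \emph{every} $A$, then for every $B$ the map $(\hat{h},h)\mapsto\hat{h}Bh$ is submersive at \emph{every} point (its translated differential image at $\hat{h}Bh$ is $\hat{\mathfrak{h}}+\mathrm{Ad}(\hat{h}B)\mathfrak{h}$, full by hypothesis applied to $A=\hat{h}B$); hence every double coset $\hat{H}|_{\hat{F}}BH|_F$ is open, these double cosets partition $\SO(n)$, so each is also closed, and connectedness of $\SO(n)$ forces a single coset, i.e.\ $\hat{H}|_{\hat{F}}AH|_F=\SO(n)$ for all $A$. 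For the direct implication, the clean statement is Corollary \ref{cor:NS_orbit_V_inf}: the vertical part of the tangent space to the orbit at $q=(x_0,\hat{x}_0;A)$ is $\nu(\hat{\mathfrak{h}}|_{\hat{x}_0}\circ A-A\circ\mathfrak{h}|_{x_0})|_q$, and if the orbit is all of $Q$ this must equal $V|_q(\pi_Q)=\nu(A\,\so(T|_{x_0}M))|_q$ (Proposition \ref{pr:vertical_of_Q}), which is exactly (\ref{eq:lin-alg}). This is in fact the route the paper takes for the whole theorem: it never uses a group factorization at all, but argues infinitesimally --- an orbit is open iff its vertical tangent space fills $V|_q(\pi_Q)$ (via $TQ=\NSDist\oplus V(\pi_Q)$), every orbit meets every $\pi_Q$-fiber by Proposition \ref{pr:NS_orbit_bundle}, and connectedness of $Q$ converts ``all orbits open'' into complete controllability. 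Either adopt that infinitesimal route, or keep your group-level route with the double-coset argument above in place of the false lemma.
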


\begin{proof}
Clearly, an orbit $\mc{O}_{\NSDist}(q_0)=Q$, where $(x_0,\hat{x}_0;A_0)=q_0\in Q$,
is an open subset of $Q$ if and only if $T|_{q} \mc{O}_{\NSDist}(q_0)=T|_q Q$
for some (and hence every) $q\in \mc{O}_{\NSDist}(q_0)$.
Thus the decomposition given by Eq. (\ref{eq:2.1:5})
implies that an orbit $\mc{O}_{\NSDist}(q_0)$
is open in $Q$ if and only if $V|_q(\pi_Q)\subset T|_{q} \mc{O}_{\NSDist}(q_0)$
for some $q\in \mc{O}_{\NSDist}(q_0)$.

By connectedness of $Q$, we get that $\NSDist$ is controllable
i.e., $\mc{O}_{\NSDist}(q_0)=Q$
for some (and hence every) $(x_0;\hat{x}_0;A_0)=q_0\in Q$
if and only if every orbit $\mc{O}_{\NSDist}(q)$,
$(x,\hat{x};A)=q\in Q$ is open in $Q$.

From now on, fix $(x_0,\hat{x}_0)\in M\times\hat{M}$.
Proposition \ref{pr:NS_orbit_bundle}
implies that every $\NSDist$ orbit intersects
every $\pi_Q$-fiber.
Hence $\NSDist$ is controllable
if and only if
$V|_q(\pi_Q)\subset T|_{q} \mc{O}_{\NSDist}(q)$
for every $q=(x_0,\hat{x}_0;A)\in Q|_{(x_0,\hat{x}_0)}$.
By Corollary \ref{cor:NS_orbit_V_inf},
this condition is equivalent to the condition that, for every 
$q=(x_0,\hat{x}_0;A)\in Q|_{(x_0,\hat{x}_0)}$,
\[
\nu(\hat{\mathfrak{h}}|_{\hat{x}_0}\circ A-A\circ\mathfrak{h}|_{x_0})=V|_q(\pi_Q).
\]

Next, by Proposition \ref{pr:vertical_of_Q}, one deduces that, 
for every $q\in Q$, 
$$V|_q(\pi_Q)=\nu(A(\so(T|_x M)))|_q$$ and thus
we conclude that $\NSDist$ is controllable if and only if, for all $q=(x_0,\hat{x}_0;A)\in Q|_{(x_0,\hat{x}_0)}$,
\[
A^{-1}\circ \hat{\mathfrak{h}}|_{\hat{x}_0}\circ A-\mathfrak{h}|_{x_0}=\so(T|_x M).
\]

Choosing arbitrary orthonormal local frames $F$ and $\hat{F}$
of $M$ and $\hat{M}$ at $x_0$ and $\hat{x}_0$, respectively,
we see that the above condition is equivalent to
\[
\mc{M}_{F,\hat{F}}(A)^{-1}\hat{\mathfrak{h}}|_{\hat{F}}\mc{M}_{F,\hat{F}}(A)-\mathfrak{h}|_{F}=\so(n),
\quad \forall A\in Q|_{(x_0,\hat{x}_0)},
\]
where
$$\mathfrak{h}|_{F}=\{\mc{M}_{F}(k)\ |\ k\in \mathfrak{h}\},\quad 
\hat{\mathfrak{h}}|_{\hat{F}}=\{\mc{M}_{\hat{F}}(\hat{k})\ |\ \hat{k}\in \hat{\mathfrak{h}}\},$$
are the holonomy Lie algebras as subalgebras
of $\so(n)$ w.r.t. the frames $F$ and $\hat{F}$ respectively.

The proof is finished by noticing that $\{\mc{M}_{F,\hat{F}}(A)\ |\ A\in Q|_{(x_0,\hat{x}_0)}\}=\SO(n)$
and that the orthonormal frames $F$, $\hat{F}$ were arbitrary chosen.

\end{proof}

\begin{theorem}\label{theo-01}
Suppose $M,\hat{M}$ are simply connected. Then $\sns$ is completely controllable 
if and only if
\begin{align}\label{eq:lin-alg2}
\mathfrak{h}+\hat{\mathfrak{h}}=\so(n)
\end{align}
where $\mathfrak{h}$,$\hat{\mathfrak{h}}$ are the Lie subalgebra of $\so(n)$
isomorphic (as Lie algebras) to the holonomy Lie algebras
of $\nabla$ and $\hat{\nabla}$ respectively.
\end{theorem}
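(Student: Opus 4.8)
The plan is to reduce Theorem~\ref{theo-01} to Theorem~\ref{theo-00} together with one elementary Lie‑group lemma, the simple‑connectedness hypothesis entering only to guarantee that the holonomy groups are \emph{compact} and connected (exactly as already used in Corollary~\ref{cor:compact_fiber_NSD_orbit}, citing \cite{joyce07},\cite{kobayashi63}). Fix oriented orthonormal frames $F$ of $M$ at some $x_0$ and $\hat{F}$ of $\hat{M}$ at some $\hat{x}_0$, and write $H:=H|_F\subseteq\SO(n)$, $\hat{H}:=\hat{H}|_{\hat{F}}\subseteq\SO(n)$ for the holonomy groups and $\mathfrak{h}=\mathfrak{h}|_F$, $\hat{\mathfrak{h}}=\hat{\mathfrak{h}}|_{\hat{F}}$ for their Lie algebras; by the cited result $H$ and $\hat{H}$ are compact connected Lie subgroups of $\SO(n)$. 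By Theorem~\ref{theo-00}, $\sns$ is completely controllable if and only if $\mathfrak{h}+A^{-1}\hat{\mathfrak{h}}A=\so(n)$ for every $A\in\SO(n)$; in particular, taking $A=\id_{\R^n}$, complete controllability forces $\mathfrak{h}+\hat{\mathfrak{h}}=\so(n)$, which settles the ``only if'' direction (and does not even need simple connectedness).

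For the converse, the key lemma I will prove is: for compact connected subgroups $H,\hat{H}$ of $\SO(n)$, the product map
\begin{align*}
\Phi:\hat{H}\times H\to\SO(n),\qquad \Phi(\hat{h},h)=\hat{h}h,
\end{align*}
has constant rank equal to $\dim(\mathfrak{h}+\hat{\mathfrak{h}})$, and therefore $\hat{H}\cdot H=\SO(n)$ if and only if $\mathfrak{h}+\hat{\mathfrak{h}}=\so(n)$. The constant‑rank claim follows from the computation that, after right translation to the identity, the image of $\mathrm{d}\Phi_{(\hat{h}_0,h_0)}$ equals $\hat{\mathfrak{h}}+\mathrm{Ad}(\hat{h}_0)\mathfrak{h}=\mathrm{Ad}(\hat{h}_0)\big(\hat{\mathfrak{h}}+\mathfrak{h}\big)$, where one uses that $\mathrm{Ad}(\hat{h}_0)$ preserves $\hat{\mathfrak{h}}$ and $\mathrm{Ad}(h_0)$ preserves $\mathfrak{h}$. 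If $\mathfrak{h}+\hat{\mathfrak{h}}=\so(n)$ then $\Phi$ is a submersion, hence open, so $\hat{H}\cdot H=\operatorname{im}\Phi$ is open; it is also compact, being a continuous image of $\hat{H}\times H$, hence closed; by connectedness of $\SO(n)$ it equals $\SO(n)$. If $\mathfrak{h}+\hat{\mathfrak{h}}\subsetneq\so(n)$, the constant‑rank theorem shows $\operatorname{im}\Phi$ has empty interior, so $\hat{H}\cdot H\neq\SO(n)$.

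To conclude, I apply this lemma to the pair $(H,\;A^{-1}\hat{H}A)$: since $A^{-1}\hat{H}A$ is again compact connected with Lie algebra $A^{-1}\hat{\mathfrak{h}}A$ and $(A^{-1}\hat{H}A)\cdot H=A^{-1}(\hat{H}\cdot A\cdot H)$, we get, for each $A\in\SO(n)$, the equivalence $\mathfrak{h}+A^{-1}\hat{\mathfrak{h}}A=\so(n)\iff \hat{H}\cdot A\cdot H=\SO(n)$. Now suppose $\mathfrak{h}+\hat{\mathfrak{h}}=\so(n)$. The lemma (with $A=\id$) gives $\hat{H}\cdot H=\SO(n)$, so any $A\in\SO(n)$ may be written $A=\hat{h}_0h_0$ with $\hat{h}_0\in\hat{H}$, $h_0\in H$, whence $\hat{H}\cdot A\cdot H=\hat{H}\hat{h}_0h_0H=\hat{H}\cdot H=\SO(n)$ (because $\hat{H}\hat{h}_0=\hat{H}$ and $h_0H=H$). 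Thus $\mathfrak{h}+A^{-1}\hat{\mathfrak{h}}A=\so(n)$ for every $A$, and by Theorem~\ref{theo-00} the system $\sns$ is completely controllable.

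I expect the main, and only genuinely non‑formal, point to be the implication ``$\hat{H}\cdot H=\SO(n)\Rightarrow\mathfrak{h}+\hat{\mathfrak{h}}=\so(n)$'': surjectivity of $\Phi$ as a map does not a priori give surjectivity of its differential anywhere, so one must identify the rank of $\mathrm{d}\Phi$ and exploit the $\mathrm{Ad}(\hat{H})$‑ and $\mathrm{Ad}(H)$‑invariance of the holonomy algebras to see that this rank is constant and drops below $\dim\SO(n)$ as soon as $\mathfrak{h}+\hat{\mathfrak{h}}\neq\so(n)$. Everything else — the reduction via Theorem~\ref{theo-00}, the open‑and‑closed argument, and the coset manipulation $\hat{H}\cdot A\cdot H=\hat{H}\cdot H$ — is routine.
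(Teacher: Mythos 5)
Your proof is correct. The "only if" direction is handled exactly as in the paper (specialize Theorem~\ref{theo-00} to $A=\id$), but your sufficiency argument takes a genuinely different, purely group-theoretic route. The paper's proof stays in the orbit picture: the hypothesis $\mathfrak{h}+\hat{\mathfrak{h}}=\so(n)$ produces one point $q_0=(x_0,\hat x_0;A_0)$ at which $V|_{q_0}(\pi_Q)\subset T|_{q_0}\mc{O}_{\NSDist}(q_0)$, hence the orbit is open, and Corollary~\ref{cor:compact_fiber_NSD_orbit} (compactness of the fibers $\hat H|_{\hat x_0}\circ A_0\circ H|_{x_0}^{-1}$, which is where simple connectedness enters) forces the open orbit to be all of $Q$. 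You instead upgrade the hypothesis at $A=\id$ to the hypothesis of Theorem~\ref{theo-00} at \emph{every} $A$, via the lemma $\hat H\cdot H=\SO(n)\iff\mathfrak{h}+\hat{\mathfrak{h}}=\so(n)$ and the coset identity $\hat H\hat h_0h_0H=\hat H H$, and then invoke Theorem~\ref{theo-00} as a black box. The two arguments rest on the same inputs — compact connected holonomy groups, and an open-and-closed argument in the connected group $\SO(n)$ (your $\operatorname{im}\Phi$ is precisely the paper's orbit fiber $\hat H A_0 H^{-1}$ read through the frames) — so the difference is one of packaging rather than substance; what your version buys is that the geometry is quarantined entirely inside Theorem~\ref{theo-00}. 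Two small remarks: your constant-rank computation and the $\mathrm{Ad}$-invariance of $\mathfrak h$, $\hat{\mathfrak h}$ under $H$, $\hat H$ are correct; and the harder direction of your lemma ($\hat H A H=\SO(n)\Rightarrow\mathfrak h+A^{-1}\hat{\mathfrak h}A=\so(n)$) can in fact be bypassed, since once $A=\hat h_0h_0$ one has directly
\begin{align*}
\mathfrak h+A^{-1}\hat{\mathfrak h}A=\mathfrak h+\mathrm{Ad}(h_0^{-1})\hat{\mathfrak h}
=\mathrm{Ad}(h_0^{-1})\bigl(\mathfrak h+\hat{\mathfrak h}\bigr)=\so(n),
\end{align*}
which needs only the submersion ("if") half of your lemma.
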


\begin{proof}
By Theorem \ref{theo-00}, necessity of the condition is obvious.

Conversely suppose that the condition in Eq. (\ref{eq:lin-alg2}) holds.
This condition implies
that for $(x_0,\hat{x}_0)\in M\times\hat{M}$
there is an $q_0=(x_0,\hat{x}_0;A_0)\in Q|_{(x_0,\hat{x}_0)}$
such that
\[
A_0^{-1}\circ \hat{\mathfrak{h}}|_{\hat{x}_0}\circ A_0-\mathfrak{h}|_{x_0}=\so(T|_{x_0} M).
\]
By Proposition \ref{pr:vertical_of_Q} and Corollary \ref{cor:NS_orbit_V_inf}
this means that $T|_{q_0} \mc{O}_{\NSDist}(q_0)\cap V|_{q_0}(\pi_Q)=V|_{q_0}(\pi_Q)$
and hence $T|_{q_0} \mc{O}_{\NSDist}(q_0)=T|_{q_0}Q$ by Eq. (\ref{eq:2.1:5})
which implies that $\mc{O}_{\NSDist}(q_0)$ is open in $Q$.
Corollary \ref{cor:compact_fiber_NSD_orbit} then implies
that $\mc{O}_{\NSDist}(q_0)=Q$
i.e., $\sns$ is completely controllable.
\end{proof}

There is a complete classification of holonomy groups of Riemannian manifolds by Cartan (for symmetric spaces, see \cite{helgason78})
and Berger (for non-symmetric spaces, see \cite{joyce07}).
Hence the above theorems reduce the question of complete controllability of $\sns$
to an essentially  linear algebraic problem. 

For instance, in the case where both manifolds are non-symmetric, simply connected and irreducible, we get the following proposition.
\begin{theorem}\label{th:non-sym}
Assume that the manifolds $M$ and $\hat M$ are complete non-symmetric, simply connected, irreducible and $n\neq 8$.
Then, the control system $\sns$ is completely controllable if and only if
either $H$ or $\hat{H}$ is equal to $\SO(n)$ (w.r.t some orthonormal frames).
\end{theorem}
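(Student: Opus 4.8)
The plan is to invoke Theorem \ref{theo-01}, which reduces complete controllability of $\sns$ (for simply connected $M,\hat M$) to the purely Lie-algebraic condition $\mathfrak{h}+\hat{\mathfrak{h}}=\so(n)$, where $\mathfrak h,\hat{\mathfrak h}$ are the holonomy Lie algebras realized as subalgebras of $\so(n)$ via some choice of orthonormal frames. So the entire task becomes: \emph{under the hypotheses (non-symmetric, simply connected, irreducible, $n\neq 8$), show that $\mathfrak h+\hat{\mathfrak h}=\so(n)$ holds if and only if $\mathfrak h=\so(n)$ or $\hat{\mathfrak h}=\so(n)$.} The ``if'' direction is trivial. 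For the ``only if'' direction I would argue by contraposition: assume neither holonomy algebra is all of $\so(n)$ and deduce $\mathfrak h+\hat{\mathfrak h}\subsetneq\so(n)$, i.e. that the sum is never all of $\so(n)$ no matter how the two frames (equivalently, the conjugacy of the two subalgebras inside $\so(n)$) are chosen — but note that by Theorem \ref{theo-01} the frames are already fixed/irrelevant in the simply connected case, so I only need to rule out one particular sum being $\so(n)$.

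\textbf{Key steps.} First, invoke Berger's classification (available since $M,\hat M$ are non-symmetric and irreducible, cf. \cite{joyce07}): the holonomy group of such a manifold is one of $\SO(n)$, $\mathrm{U}(m)$ or $\mathrm{SU}(m)$ ($n=2m$), $\mathrm{Sp}(m)$ or $\mathrm{Sp}(m)\mathrm{Sp}(1)$ ($n=4m$), $G_2$ ($n=7$), or $\mathrm{Spin}(7)$ ($n=8$). Excluding $n=8$ removes $\mathrm{Spin}(7)$; so if $\mathfrak h\neq\so(n)$ then $\mathfrak h$ (up to conjugacy) is one of the remaining proper Berger subalgebras, and similarly for $\hat{\mathfrak h}$. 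Second, I would do a dimension count combined with a representation-theoretic obstruction. The cleanest obstruction: each proper Berger subalgebra $\mathfrak g\subsetneq\so(n)$ in the above list acts \emph{reducibly or with extra structure} on $\Lambda^2\R^n\cong\so(n)$ under the adjoint action — more precisely, each such $\mathfrak g$ preserves a proper nonzero subspace of $\so(n)$ that is ``small'' (e.g. the unitary/quaternionic algebras preserve the line/subspace spanned by the Kähler form(s), and $G_2$ preserves the $7$-dimensional summand $\Lambda^2_7\subset\Lambda^2\R^7$). I would then show that for any two proper Berger subalgebras $\mathfrak g_1,\mathfrak g_2$, one has $\dim\mathfrak g_1+\dim\mathfrak g_2<\dim\so(n)=\binom n2$ in every case (this is a finite check: the largest proper Berger subalgebra is $\mathfrak{u}(m)$ of dimension $m^2$ with $n=2m$, and $2m^2<\binom{2m}{2}=m(2m-1)$ for $m\geq 2$; $\dim G_2=14<21=\binom 72$, and $2\cdot 14=28>21$ so the dimension count alone is not quite enough for two copies of $G_2$, forcing a separate argument there). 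Hence for $n\geq 4$ (and $n=7$) two proper subalgebras cannot sum to $\so(n)$ on dimension grounds, except possibly the two-$G_2$ case in $n=7$, which I handle by the common-invariant-subspace argument: both copies of $\mathfrak{g}_2$ fix the canonical $G_2$-form, but two generic copies of $G_2$ in $\SO(7)$ have no common invariant $3$-form; actually the slicker route is that $\mathfrak g_2\subset\so(7)$ acts on $\so(7)=\Lambda^2_7\oplus\Lambda^2_{14}$ and $\Lambda^2_{14}=\mathfrak g_2$ itself, so $\mathfrak g_1+\mathfrak g_2$ has dimension at most $14+7=21$ with equality only if the two $\Lambda^2_7$-complements are transverse — which I rule out since both contain... — here I'd supply the precise invariant-theory statement. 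Third, handle the small-dimensional edge cases $n=2,3$ directly: for $n=2$, $\so(2)$ is one-dimensional and abelian, any nonzero holonomy algebra already equals $\so(2)$, so the dichotomy is automatic; for $n=3$, $\so(3)$ is simple of dimension $3$, and the only proper subalgebras are abelian (dimension $\leq 1$), so again two proper ones cannot sum to $\so(3)$ and irreducibility forces full holonomy anyway.

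\textbf{Main obstacle.} The crux is the $n=7$, double-$G_2$ case, where the naive dimension count ($14+14=28>21$) fails. The resolution must be genuinely representation-theoretic: I expect to use that under \emph{any} copy of $G_2\subset SO(7)$, the adjoint module $\so(7)$ splits as $\mathfrak g_2\oplus V_7$ where $V_7\cong\R^7$ is the standard representation, and that $\mathfrak g_1+\mathfrak g_2=\so(7)$ would force the $7$-dimensional complement $V_7^{(1)}$ of $\mathfrak g_1$ to be contained in $\mathfrak g_2=\mathfrak g_2^{(2)}$; but $\mathfrak g_2$ as a module over itself is irreducible of dimension $14$, so it contains no $G_2$-invariant $7$-subspace — wait, $V_7^{(1)}$ need not be $\mathfrak g_2$-invariant. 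So the real argument is a transversality/genericity statement: I'd show that two distinct copies of $\mathfrak g_2$ in $\so(7)$ always intersect in a subalgebra of positive dimension (since $\mathrm{Spin}(7)/G_2\cong S^7$ has dimension $21-14=7$ while the variety of $G_2$-subalgebras has dimension $\dim SO(7)-\dim N_{SO(7)}(G_2)=21-14=7$, and two such $7$-dimensional families in the $14$-dimensional Grassmannian-like space of $14$-planes must be expected to meet), forcing $\dim(\mathfrak g_1+\mathfrak g_2)\leq 14+14-1=27$ — still not below $21$. The honest fix is to note that both $\mathfrak g_1$ and $\mathfrak g_2$ act on $\R^7$ preserving the \emph{same} Euclidean structure and each preserves a $3$-form; the stabilizer in $\so(7)$ of any $3$-form in the open $G_2$-orbit is exactly a copy of $\mathfrak g_2$, and $\mathfrak g_1\cap\mathfrak g_2$ is the stabilizer of the $2$-dimensional span of the two $3$-forms, which is nonzero (the $G_2$-stabilizer of a generic $2$-plane of $3$-forms is $SU(3)$ or $SU(2)$, of dimension $\geq 3$). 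Hence $\dim(\mathfrak g_1+\mathfrak g_2)\leq 28-3=25<\binom 72$? — no, $25>21$. This shows the bare arithmetic genuinely does not close, and the correct proof must instead exhibit an explicit common invariant: both copies of $G_2$ sit inside $SO(7)$ and hence $\mathfrak g_1+\mathfrak g_2\subseteq\so(7)$ with the inclusion strict because any element of $\mathfrak g_1+\mathfrak g_2$, written as $\xi_1+\xi_2$, annihilates no common vector in general but... I will cite instead the sharper fact that $G_2$ acts irreducibly on $\R^7$ yet $\so(7)$ is \emph{not} generated by any two proper Berger subalgebras — a statement I would prove by the following clean observation: if $\mathfrak g_1+\mathfrak g_2=\so(7)$ then the isotropy representation forces both to be non-proper, contradicting Berger. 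Concretely, I would reduce to showing $\mathfrak g_1+\mathfrak g_2\neq\so(7)$ by checking that the common orthogonal complement of $\mathfrak g_1$ and $\mathfrak g_2$ inside $\so(7)$ (with respect to the Killing form) is nontrivial, which holds because both $14$-dimensional subspaces are the $\mathfrak g_i$-isotypic pieces and their orthogonal complements $V_7^{(i)}$ are $7$-dimensional, so $(V_7^{(1)})^\perp\cap(V_7^{(2)})^\perp=\mathfrak g_1\cap\mathfrak g_2$ has dimension $\geq 14+14-21=7>0$, hence $\mathfrak g_1+\mathfrak g_2=(\mathfrak g_1\cap\mathfrak g_2)^\perp$ has dimension $\leq 21-7=14<21$. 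That is the argument that closes the gap, and it is the step I expect to need to state carefully.
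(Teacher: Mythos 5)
Your overall architecture — reduce to the Lie‑algebraic criterion of Theorems \ref{theo-00}/\ref{theo-01} and then run through Berger's list — is exactly the paper's, and your treatment of the "if" direction and of $n=2,3$ is fine. The "only if" direction, however, has a genuine gap, and it is located precisely where you sensed trouble. First, your blanket dimension count is arithmetically wrong: the inequality you assert, $2m^2<\binom{2m}{2}=m(2m-1)$, reads $2m^2<2m^2-m$ and is false for every $m\geq 1$. Consequently two copies of $\mathfrak{u}(m)$ (or $\mathfrak{u}(m)+\mathfrak{su}(m)$, or two copies of $\mathfrak{su}(m)$ for $m\geq 2$) already defeat the count, not only the $G_2$–$G_2$ pair. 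Second, your attempted rescue of the $G_2$–$G_2$ case rests on the false identity $\mathfrak g_1+\mathfrak g_2=(\mathfrak g_1\cap\mathfrak g_2)^\perp$; the correct duality is $(\mathfrak g_1+\mathfrak g_2)^\perp=\mathfrak g_1^\perp\cap\mathfrak g_2^\perp$, and the bound $\dim(\mathfrak g_1\cap\mathfrak g_2)\geq 14+14-21=7$ only gives $\dim(\mathfrak g_1+\mathfrak g_2)\leq 21$, which does not exclude equality. So as written the hard cases are not closed.

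The missing idea — which you actually state in your opening paragraph and then abandon — is that Theorem \ref{theo-00} quantifies over \emph{all} relative positions $A\in\SO(n)$, so non-controllability requires exhibiting only \emph{one} $A$ for which $\mathfrak h+A^{-1}\hat{\mathfrak h}A\neq\so(n)$. Since conjugation by $A$ is free, whenever one Berger subalgebra is (conjugate to) a subalgebra of the other you may choose $A$ to realize the inclusion, and the sum is then contained in the larger one, hence proper. The chain $\Sp(m)\subset \mathrm{SU}(2m)\subset \mathrm{U}(2m)\subset\SO(4m)$, together with the fact that for $n=7$ the only proper entry on Berger's list is $G_2$ (so the two subalgebras are conjugate and can be made to coincide), disposes of every pair except $\{\mathrm{U}(2m),\ \Sp(m)\cdot\Sp(1)\}$ with $m\geq 2$. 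For that pair one again uses $\Sp(m)\subset \mathrm{U}(2m)$ to bound the sum by $\dim \mathrm{U}(2m)+\dim\Sp(1)=4m^2+3$, which is strictly less than $\dim\SO(4m)=8m^2-2m$ for $m\geq 2$. This is the paper's proof; your universal "proper for every relative position" statement is both stronger than needed and not establishable by dimension counting alone.
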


\newcommand{\Sp}{\mathrm{Sp}}
\newcommand{\Ug}{\mathrm{U}}

\begin{proof} Suppose first that $H|_F=\SO(n)$.
Choose any $q_0=(x_0,\hat{x}_0;A_0)\in Q$ and define $\hat{F}=A_0F$ (which is an orthonormal frame of $\hat{M}$ at $\hat{x}_0$ since $A_0\in Q$)
and compute, noticing that $\mc{M}_{F,\hat{F}}(A_0)=\id_{\R^n}$,
\[
\pi_Q^{-1}(x_0,\hat{x}_0)\cap \mc{O}_{\NSDist}(q_0)
\cong \hat{H}|_{\hat{F}} H|_F=H|_{\hat{F}}\SO(n)=\SO(n),
\]
where the first diffeomorphism comes from Theorem \ref{th:NS_orbit_V}.
But the $\pi_Q$-fibers of $Q$ are diffeomorphic to $\SO(n)$ and hence
$\pi_Q^{-1}(x_0,\hat{x}_0)\cap \mc{O}_{\NSDist}(q_0)=\pi_Q^{-1}(x_0,\hat{x}_0)$.
By connectedness of $M,\hat{M}$ it follows that $Q=\mc{O}_{\NSDist}(q_0)$.

Assume now that both holonomy groups are different from $\SO(n)$. 
We also remark that if one holonomy group is included in the other one, then
complete controllability cannot hold according to Eq. (\ref{eq:lin-alg}).
Using Berger's list, see \cite{joyce07},
and taking into account that
\[
\Sp(m)\subset \mathrm{SU}(2m)\subset \mathrm{U}(2m)\subset\SO(4m) 
\]
where $n=4m$, it only remains  to study the following case:
$n=4m$ with $m\geq 2$, one group is equal to $U(2m)$ and the other one to
$\Sp(m)\cdot \Sp(1)$. Recall that
\[
\dim \big(\underbrace{U(2m)\big(\Sp(m)\cdot\Sp(1)\big)}_{U(2m)\cdot\Sp(1)}\big)
\leq \dim U(2m)+\dim \Sp(1)=4m^2+3.
\]
On the other hand $\dim\SO(4m)=8m^2-2m$ which is always strictly larger than
$4m^2+3$ for all $m\geq 2$.

\end{proof}

\begin{remark}
If $n=8$, one is left with the study of the case where one of the holonomy groups is  equal to $\mathrm{Spin}(7)$ and the other one is either equal to $\mathrm{U}(4)$ or to $\Sp(2)\cdot \Sp(1)$.
\end{remark}

As a corollary to Theorem \ref{th:NS_orbit_V} and Theorem \ref{theo-00}, we get the following result of non controllability in the case where both manifolds are reducible.

\begin{proposition}\label{pr:NS_orbit_of_reducible}
Suppose that both $(M,g)$ and $(\hat{M},\hat{g})$ are reducible Riemannian  manifolds.
Then $\sns$ is not completely controllable.
\end{proposition}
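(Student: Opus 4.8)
Everything should follow from Theorem~\ref{theo-00}, which characterizes complete controllability of $\sns$ by the requirement that $\mathfrak{h}+A^{-1}\hat{\mathfrak{h}}A=\so(n)$ for every $A\in\SO(n)$, where $\mathfrak{h},\hat{\mathfrak{h}}\subset\so(n)$ are the holonomy Lie algebras of $\nabla$ and $\hat{\nabla}$ realized (via fixed orthonormal frames) as subalgebras of $\so(n)$. The plan is to negate this by producing one $A\in\SO(n)$ for which the sum is a proper subspace. It is convenient to pass to orthogonal complements: endow $\so(n)$ with the $\mathrm{Ad}$-invariant inner product $\langle X,Y\rangle=-\tr(XY)$; since $\mathrm{Ad}_{\SO(n)}$ preserves it and $(A^{-1}\hat{\mathfrak{h}}A)^{\perp}=A^{-1}\hat{\mathfrak{h}}^{\perp}A$, the failure of complete controllability is equivalent to the existence of $A\in\SO(n)$ with $\mathfrak{h}^{\perp}\cap A^{-1}\hat{\mathfrak{h}}^{\perp}A\neq\{0\}$.

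Next I would turn the reducibility hypotheses into explicit elements of these complements. Reducibility of $(M,g)$ means $\mathfrak{h}$ leaves invariant a subspace $W\subset\R^n$ with $0<\dim W<n$, hence also $W^{\perp}$. Fix unit vectors $a\in W$ and $b\in W^{\perp}$ and let $a\wedge b\in\so(n)$ be the rank-two map $(a\wedge b)x=\langle a,x\rangle b-\langle b,x\rangle a$; it is nonzero, and for $X\in\mathfrak{h}$ one computes $\langle a\wedge b,X\rangle=2\langle Xa,b\rangle=0$ because $Xa\in W$ is orthogonal to $b$. Thus $a\wedge b\in\mathfrak{h}^{\perp}$. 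Symmetrically, $\hat{\mathfrak{h}}$ leaves invariant some $\hat W$ with $0<\dim\hat W<n$, and $\hat a\wedge\hat b\in\hat{\mathfrak{h}}^{\perp}$ for unit vectors $\hat a\in\hat W$, $\hat b\in\hat W^{\perp}$.

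Then I would choose $A\in\SO(n)$ with $Aa=\hat a$ and $Ab=\hat b$. For $n\ge 3$ this is possible: the isometry of $\spn(a,b)$ onto $\spn(\hat a,\hat b)$ carrying the orthonormal pair $(a,b)$ to $(\hat a,\hat b)$ extends to an element of $\mathrm O(n)$, whose determinant can be adjusted to $+1$ by composing with a reflection supported in the nonzero subspace $\spn(\hat a,\hat b)^{\perp}$. For $n=2$ the statement is immediate, since a reducible Riemannian surface is flat, so $\mathfrak{h}=\hat{\mathfrak{h}}=\{0\}$ and $\mathfrak{h}+A^{-1}\hat{\mathfrak{h}}A=\{0\}\neq\so(2)$. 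For the chosen $A$, orthogonality gives $A^{-1}(\hat a\wedge\hat b)A=(A^{-1}\hat a)\wedge(A^{-1}\hat b)=a\wedge b$, so the nonzero element $a\wedge b$ lies in $A^{-1}\hat{\mathfrak{h}}^{\perp}A$ as well as in $\mathfrak{h}^{\perp}$. Hence $\mathfrak{h}^{\perp}\cap A^{-1}\hat{\mathfrak{h}}^{\perp}A\neq\{0\}$, so $\mathfrak{h}+A^{-1}\hat{\mathfrak{h}}A\subsetneq\so(n)$, and Theorem~\ref{theo-00} yields that $\sns$ is not completely controllable.

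The subtle point — and the reason a one-line dimension count will not do — is that $\dim\mathfrak{h}+\dim\hat{\mathfrak{h}}$ may exceed $\dim\so(n)=\tfrac{n(n-1)}2$ even when both holonomy representations are reducible (for instance when each fixes only a line, equivalently a hyperplane), so $\mathfrak{h}\subsetneq\so(n)$ and $\hat{\mathfrak{h}}\subsetneq\so(n)$ are by themselves insufficient. The real content is the alignment step: choosing $A$ so that a decomposable $2$-vector in $\mathfrak{h}^{\perp}$ is conjugated onto one in $\hat{\mathfrak{h}}^{\perp}$; the remaining points (the determinant correction when building $A$, and the degenerate case $n=2$) are routine.
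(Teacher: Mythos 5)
Your proof is correct, and it follows the same overall strategy as the paper's: invoke Theorem \ref{theo-00} and exhibit a single $A\in\SO(n)$ for which $\mathfrak{h}+A^{-1}\hat{\mathfrak{h}}A$ misses a rank-two element $a\wedge b$ with $a$ in one invariant factor and $b$ in its complement. The execution differs in two worthwhile ways. The paper passes through the Ambrose--Singer theorem to write $\mathfrak{h}|_F=\mathfrak{h}_1\oplus\mathfrak{h}_2\subset\so(n_1)\oplus\so(n_2)$ (and likewise for $\hat{\mathfrak{h}}$), then builds $A_0$ by matching the \emph{entire} frames, using the normalization $\hat{n}_1\ge n_1$ to nest $V_1$ inside $\hat V_1$ so that both algebras become block-diagonal in a common basis and the sum visibly omits $E_{n1}-E_{1n}$. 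Your version needs neither Ambrose--Singer nor the block decomposition: the only input is that each algebra is $\langle\cdot,\cdot\rangle$-orthogonal to one decomposable $2$-vector, and $A$ has to align only the pair $(a,b)$ with $(\hat a,\hat b)$ rather than full frames — which is why you must (and correctly do) supply the determinant correction for $n\ge 3$ and treat $n=2$ separately, whereas the paper's frame-matching construction handles all $n$ uniformly. You also rightly flag that a pure dimension count fails; the paper's $E_{n1}-E_{1n}$ observation is exactly your alignment step in coordinates, so the two arguments are dual presentations of the same obstruction, with yours the more economical and the paper's the more explicit.
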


\begin{proof}
We need to show that, under the assumptions,
there exists $q_0=(x_0;\hat{x}_0;A_0)\in Q$ so that the orbit $\mc{O}_{\NSDist}(q_0)$
is a proper subset of $Q$.

Fix $x_0\in M$ and $\hat{x}_0\in\hat{M}$.
Since $(M,g)$ and $(\hat{M},\hat{g})$ are reducible,
there exist subspaces $V_1,V_2\subset T|_{x_0} M$ and
$\hat{V}_1,\hat{V}_2\subset T|_{\hat{x}_0} \hat{M}$,
with $n_i=\dim(V_i)\geq 1$,
$\hat{n}_i=\dim(\hat{V}_i)\geq 1$
and such that $H|_{x_0}(V_i)\subset V_i$,
$\hat{H}|_{\hat{x}_0}(\hat{V}_i)\subset \hat{V}_i$, for $i=1,2$.

Let $X^1_1,\dots,X^1_{n_1}$ and $X^2_{1},\dots,X^2_{n_2}$ be an orthonormal basis of $V_1$
and an orthonormal basis of $V_2$ respectively
and, similarly, let 
$\hat{X}^1_1,\dots,\hat{X}^1_{\hat{n}_1}$ and $\hat{X}^2_{1},\dots,\hat{X}^2_{\hat{n}_2}$
be an orthonormal basis of $\hat{V}_1$
and an orthonormal basis of $\hat{V}_2$ respectively .
Here, $V_i$ and $\hat{V}_i$, $i=1,2$, are equipped with the metrics induced
by $g|_{x_0}$ and  $\hat{g}|_{\hat{x}_0}$ respectively.
These vectors form orthonormal frames $F$ and  $\hat{F}$ of $M$ and 
$\hat{M}$ at $x_0$ and $\hat{x}_0$ respectively.

It follows from the Ambrose-Singer Holonomy Theorem
(cf. \cite{joyce07} Theorem 2.4.3, \cite{kobayashi63} Theorem 8.1)
that the Lie algebras $\mathfrak{h}|_F$ and $\hat{\mathfrak{h}}|_{\hat{F}}$
of $H|_F$ and $\hat{H}|_{\hat{F}}$ respectively 
split into direct sums of Lie-subalgebras,
\[
\mathfrak{h}|_F & =\mathfrak{h}_1\oplus\mathfrak{h}_2\subset\so(n_1)\oplus\so(n_2), \\
\mathfrak{\hat{h}}|_{\hat{F}} & =\hat{\mathfrak{h}}_1\oplus\hat{\mathfrak{h}}_2\subset\so(\hat{n}_1)\oplus\so(\hat{n}_2).
\]
Without loss of generality, we assume that $\hat{n}_1\geq n_1$.

Finally, we define the linear map $A_0:T|_{x_0} M\to T|_{\hat{x}_0} \hat{M}$ by
$$A_0(X^1_j)=\hat{X}^1_j,\quad j=1,\dots,n_1,\quad
A_0(X^2_j)=\hat{X}^1_{n_1+j}, \quad j=1,\dots,\hat{n}_1-n_1,$$
and 
$$A_0(X^2_j)=\hat{X}^2_{j-(\hat{n}_1-n_1)},\quad j=\hat{n}_1-n_1+1,\dots,n_2.$$
Thus, we have $\mc{M}_{F,\hat{F}}(A_0)=\id_{\R^n}$ and hence 
\[
\mathfrak{h}|_F+\mc{M}_{F,\hat{F}}(A_0)^{-1}\hat{\mathfrak{h}}|_{\hat{F}}\mc{M}_{F,\hat{F}}(A_0)
=\mathfrak{h}_1\oplus\mathfrak{h}_2+\hat{\mathfrak{h}}_1\oplus\hat{\mathfrak{h}}_2.
\]
The latter linear vector space is necessarily a proper subset of $\so(n)$.
In fact, if $E_{ij}$ is the $n\times n$-matrix with $1$ at the $i$-th row, $j$-th column and zero otherwise,
then the above linear space does not contain $E_{n1}-E_{1n}\in\so(n)$.
Therefore, the claim follows from Theorem \ref{theo-00}.

\end{proof}

\begin{corollary}\label{cor:NS_orbit_of_product}
Suppose that $(M,g)$ and  $(\hat{M},\hat{g})$ are equal to the Riemannian products
$(M_1\times M_2,g_1\oplus g_2)$ and $(\hat{M}_1\times\hat{M}_2,\hat{g}_1\oplus\hat{g}_2)$, with $\dim M_i\geq 1$, $\dim \hat{M}_i\geq 1$, $i=1,2$ respectively.
Then, $\sns$ is not controllable on $Q$. 
\end{corollary}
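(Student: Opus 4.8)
The plan is to reduce this corollary directly to Proposition \ref{pr:NS_orbit_of_reducible}, which already asserts that $\sns$ fails to be completely controllable as soon as both $(M,g)$ and $(\hat{M},\hat{g})$ are reducible Riemannian manifolds. Consequently, the only thing that requires verification is that a Riemannian product of two factors, each of dimension at least one, is reducible in the sense used there, i.e.\ that its Riemannian holonomy group at some (hence every) point admits a proper nonzero invariant subspace; in fact, as the proof of Proposition \ref{pr:NS_orbit_of_reducible} actually wants, a splitting into two such subspaces.

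First I would recall the behaviour of the Levi-Civita connection under Riemannian products. Exactly as in the formula $\ol{\nabla}_{(X,\hat{X})}(Y,\hat{Y})=(\nabla_X Y,\hat{\nabla}_{\hat{X}}\hat{Y})$ displayed earlier for $M\times\hat{M}$, Koszul's formula shows that the Levi-Civita connection of $(M_1\times M_2,g_1\oplus g_2)$ is the direct sum of the Levi-Civita connections of the two factors. Hence, for any absolutely continuous loop $\gamma=(\gamma_1,\gamma_2)$ in $M_1\times M_2$ based at $(x_1,x_2)$, the parallel transport $P^1_0(\gamma)$ along $\gamma$ respects the splitting $T|_{(x_1,x_2)}(M_1\times M_2)=T|_{x_1}M_1\oplus T|_{x_2}M_2$, being equal to $P^1_0(\gamma_1)\oplus P^1_0(\gamma_2)$. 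Therefore each of $T|_{x_1}M_1$ and $T|_{x_2}M_2$ is invariant under the holonomy group $H|_{(x_1,x_2)}$, and since $\dim M_i\geq 1$ both subspaces are nonzero and proper inside $T|_{(x_1,x_2)}(M_1\times M_2)$, whose dimension equals $\dim M_1+\dim M_2\geq 2$. Thus $(M,g)=(M_1\times M_2,g_1\oplus g_2)$ is reducible, and the identical argument shows that $(\hat{M},\hat{g})=(\hat{M}_1\times\hat{M}_2,\hat{g}_1\oplus\hat{g}_2)$ is reducible as well, with invariant splitting $T|_{\hat{x}_1}\hat{M}_1\oplus T|_{\hat{x}_2}\hat{M}_2$.

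Having established that both manifolds are reducible, the conclusion is immediate: by Proposition \ref{pr:NS_orbit_of_reducible} there exists $q_0=(x_0,\hat{x}_0;A_0)\in Q$ such that the orbit $\mc{O}_{\NSDist}(q_0)$ is a proper subset of $Q$, so $\sns$ is not completely controllable on $Q$. I do not expect any genuine obstacle here, as the statement is a formal corollary; the only mild point of care is that the orthonormal decompositions of $T|_{x_0}M$ and $T|_{\hat{x}_0}\hat{M}$ invoked in the proof of Proposition \ref{pr:NS_orbit_of_reducible} may (and should) be chosen adapted to the product structures, namely by taking orthonormal bases of each tangent space $T|_{x_i}M_i$ and $T|_{\hat{x}_i}\hat{M}_i$, which is precisely consistent with the holonomy-invariant subspaces produced above. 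I would add a one-line remark to that effect and nothing more.
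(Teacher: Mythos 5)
Your proof is correct and follows essentially the same route as the paper: both arguments reduce the corollary to Proposition \ref{pr:NS_orbit_of_reducible} by observing that the holonomy action of a Riemannian product is reducible, the paper citing the product decomposition $H|_x=H^{\nabla^{g_1}}|_{x_1}\times H^{\nabla^{g_2}}|_{x_2}$ while you derive the invariance of the factor subspaces directly from the splitting of parallel transport, which amounts to the same thing.
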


\begin{proof}
From the basic result on holonomy groups, we get the following decomposition
$H|_x=H^{\nabla^{g_1}}|_{x_1}\times H^{\nabla^{g_2}}|_{x_2}$,
where $x=(x_1,x_2)\in M$, and 
$\hat{H}|_{x}=H^{\nabla^{\hat{g}_1}}|_{\hat{x}_1}\times H^{\nabla^{\hat{g}_2}}|_{\hat{x}_2}$,
where $\hat{x}=(\hat{x}_1,\hat{x}_2)\in\hat{M}$.
This shows that the actions of $H$ and $\hat{H}$
on $T|_x M$, $T|_{\hat{x}} \hat{M}$, respectively, are both
reducible. Thus, the claim follows from the previous proposition.
\end{proof}

%%%%%%%%%%%%%%%%%%%%%%%%%%%%%%%%%%%%%%%%%%%%%%%%%%%%
\section{Study of the Rolling problem $(R)$}\label{sec:2.55}
%%%%%%%%%%%%%%%%%%%%%%%%%%%%%%%%%%%%%%%%%%%%%%%%%%%%

In this section, we investigate the rolling problem
as a control system $\srol$ associated to a subdistribution $\RDist$
of $\NSDist$ defined as follows.

%%%%%%%%%%%%%%%%%%%%%%%%%%%%%%%%%%%%%%%%%%%%%%%%%%%%%%
\subsection{Global properties of a $\RDist$-orbit}
%%%%%%%%%%%%%%%%%%%%%%%%%%%%%%%%%%%%%%%%%%%%%%%%%%%%%%

We begin with the following remark.

\begin{remark}\label{re:Q_bundle_over_M}
Notice that the map $\pi_{Q,M}:Q\to M$ is in fact a bundle.
Indeed, let $F=(X_i)_{i=1}^n$ be a local oriented orthonormal frame
of $M$ defined on an open set $U$. Then the local trivialization of  $\pi_{Q,M}$ induced by $F$ is
\[
\tau_F:\pi_{Q,M}^{-1}(U)\to U\times F_{\mathrm{OON}}(\hat{M});\quad
\tau_F(x,\hat{x};A)=(x,(AX_i|_x)_{i=1}^n),
\]
is a diffeomorphism. 

We also notice that since $\pi_{Q,M}$-fibers are diffeomorphic to $F_{\mathrm{OON}}(\hat{M})$,
in order that there would be a principal $G$-bundle structure for $\pi_{Q,M}$,
it is necessary (but not sufficient) that $F_{\mathrm{OON}}(\hat{M})$ is diffeomorphic to the Lie-group $G$.
In section \ref{space-form} we consider special cases where there is indeed a
principal bundle structure on $\pi_{Q,M}$ which moreover leaves $\RDist$ invariant.
\end{remark}

From Proposition \ref{pr:rol_geodesic}, we deduce
that each $\RDist$-orbit is a smooth bundle over $M$.
This is given in the next proposition
(the proof being similar to that of Proposition \ref{pr:NS_orbit_bundle}).

\begin{proposition}\label{pr:R_orbit_bundle}
Let $q_0=(x_0,\hat{x}_0;A_0)\in Q$ and suppose that $\hat{M}$ is complete.
Then $$
\pi_{\mc{O}_{\RDist}(q_0),M}:=\pi_{Q,M}|_{\mc{O}_{\RDist}(q_0)}:\mc{O}_{\RDist}(q_0)\to M,$$
is a smooth subbundle of $\pi_{Q,M}$.
\end{proposition}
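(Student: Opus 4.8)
The plan is to mimic the proof of Proposition \ref{pr:NS_orbit_bundle}, exploiting the fact that rolling curves along geodesics have the explicit form given in Proposition \ref{pr:rol_geodesic}, and that completeness of $\hat{M}$ guarantees rolling curves exist on the whole parameter interval (Proposition \ref{pr:rolling_curves}). First I would fix $q_0=(x_0,\hat{x}_0;A_0)\in Q$ and set $\mc{O}:=\mc{O}_{\RDist}(q_0)$, which by the Orbit Theorem is an immersed smooth submanifold of $Q$. I would observe that $\pi_{\mc{O},M}:=\pi_{Q,M}|_{\mc{O}}:\mc{O}\to M$ is surjective: given $x\in M$, pick a piecewise smooth curve $\gamma$ from $x_0$ to $x$ (possible since $M$ is connected), then $q_{\RDist}(\gamma,q_0)(1)\in\mc{O}$ projects to $x$ under $\pi_{Q,M}$.

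Next I would construct the local trivializations. Fix $x\in M$ and choose a normal coordinate chart $(\phi,U)$ of $M$ centered at $x$ with $\phi(U)$ a ball (so every point of $U$ is joined to $x$ by a unique minimizing geodesic lying in $U$, and more importantly the radial geodesics $t\mapsto \phi^{-1}(t\phi(y))$ depend smoothly on $y$). Without loss of generality I may assume $x$ lies over a chosen reference point in $\mc{O}$, i.e. that there is $q_x=(x,\hat{x}_x;A_x)\in\mc{O}$; the general $\pi_{\mc{O},M}$-fiber over $x$ is then $\pi_{Q,M}^{-1}(x)\cap\mc{O}$, which I will denote $\mc{O}_x$. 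The candidate trivialization is
\[
\tau:\pi_{\mc{O},M}^{-1}(U)\to U\times \mc{O}_x;\quad
\tau(q)=\big(\pi_{Q,M}(q),\, q_{\RDist}(\gamma_y^{-1},q)(1)\big),
\]
where $y=\pi_{Q,M}(q)$ and $\gamma_y:t\mapsto \phi^{-1}(t\phi(y))$ is the radial geodesic from $x$ to $y$, so $\gamma_y^{-1}$ is the reversed geodesic from $y$ to $x$. By Remark \ref{re:group_property_of_rolling} (the concatenation/uniqueness property of rolling curves) and the fact that the concatenation of a curve into $\mc{O}$ with $\gamma_y^{-1}$ still produces a point reachable from $q_0$, the image of $\tau$ indeed lands in $U\times\mc{O}_x$; completeness of $\hat{M}$ ensures $q_{\RDist}(\gamma_y^{-1},q)$ is defined on all of $[0,1]$. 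Its inverse is $\psi(y,B)=q_{\RDist}(\gamma_y,B)(1)$, and by Proposition \ref{pr:rol_geodesic} (applied to the geodesic $\gamma_y$) this equals $\big(y,\widehat{\exp}_{\hat{x}_B}(\phi(y)^\sharp\text{-transported data});\,P_0^1(\ldots)\circ B\circ P_1^0(\ldots)\big)$, an explicit smooth expression in $(y,B)$; since its image lies in $\mc{O}$ it is smooth into $\mc{O}$ by the submanifold property of orbits. Thus $\tau$ and $\psi$ are mutually inverse smooth maps and $\pr_1\circ\tau=\pi_{\mc{O},M}$, so $\tau$ is a local trivialization.

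Finally I would note that the same formula defines local trivializations $\ol{\tau}$ of $\pi_{Q,M}$ itself over $U$ (replacing $\mc{O}_x$ by the full fiber $\pi_{Q,M}^{-1}(x)\cong F_{\mathrm{OON}}(\hat M)$), again a diffeomorphism by Proposition \ref{pr:rol_geodesic}, and that $\tau$ is the restriction of $\ol{\tau}$ to $\pi_{\mc{O},M}^{-1}(U)$. This exhibits $\mc{O}_{\RDist}(q_0)$ as a smooth subbundle of $\pi_{Q,M}$, with typical fiber $\mc{O}_{\RDist}(q_0)\cap\pi_{Q,M}^{-1}(x_0)$. The main subtlety — the point I would be most careful about — is the well-definedness and smoothness of the fiber identifications: one must check that for any two points $q,q'$ over the same $y$ that both lie in $\mc{O}$, the rolling transport along $\gamma_y^{-1}$ sends them to points of $\mc{O}_x$, and that this is independent of the (unique, by the radial geodesic choice) path used; this is exactly where Remark \ref{re:group_property_of_rolling} and the uniqueness half of Proposition \ref{pr:rolling_curves} are doing the real work, together with the fact that the radial geodesics $\gamma_y$ form a smooth family in $y$ so that Proposition \ref{pr:rol_geodesic} gives joint smoothness. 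Unlike the $(NS)$ case, one does \emph{not} claim the fibers are all diffeomorphic as one varies the base point across different orbits, but over a single connected $M$ the above trivializations suffice.
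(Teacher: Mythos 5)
Your argument is essentially the paper's own proof: surjectivity by rolling along (broken) geodesics using completeness of $\hat{M}$, followed by local trivializations obtained by rolling back along the radial geodesics of a normal neighbourhood of $x$ (the paper's map $\tau_x$ with $\gamma_{y,x}(t)=\exp_x((1-t)\exp_x^{-1}(y))$ is exactly your $\gamma_y^{-1}$), with smoothness of the inverse handled, as in the paper, by the ``image contained in the orbit implies smooth into the orbit'' property. The only step the paper makes explicit that you leave implicit is that $\pi_{Q,M}|_{\mc{O}_{\RDist}(q_0)}$ is a submersion (because $\RDist$ is tangent to the orbit and $(\pi_{Q,M})_*$ maps $\RDist|_q$ isomorphically onto $T|_{\pi_{Q,M}(q)}M$), which is what endows each fiber $\mc{O}_{\RDist}(q_0)\cap\pi_{Q,M}^{-1}(x)$ with the smooth manifold structure needed before one may speak of $\tau$ as a diffeomorphism onto $U\times\big(\mc{O}_{\RDist}(q_0)\cap\pi_{Q,M}^{-1}(x)\big)$.
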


\begin{proof}
We first show that $\pi_{\mc{O}_{\RDist}(q_0),M}$ is surjective.
If $x\in M$, there is a piecewise smooth path $\gamma:[a,b]\to M$ from $x_0$ to $x$
such that each smooth piece is a $g$-geodesic.
By Proposition \ref{pr:rol_geodesic} and completeness of $\hat{M}$ it follows
that there is a rolling path $q_{\RDist}{(\gamma,q_0)}:[a,b]\to Q$
along $\gamma$ with initial position $q_0$
defined on the whole interval $[a,b]$.
But then $\pi_{\mc{O}_{\RDist}(q_0),M}(q_{\RDist}{(\gamma,q_0)}(b))=x$
which proves the claimed surjectivity.

Since $\RDist|_q\subset T|_q \mc{O}_{\RDist}(A_0)$
for every $q\in \mc{O}_{\RDist}(q_0)$ and
$(\pi_{Q,M})_*$ maps $\RDist|_q$ isomorphically onto $T|_{\pi_{Q,M}(q)} M$, one immediately deduces that $\pi_{\mc{O}_{\RDist}(q_0),M}$ is also a submersion.
This implies that each fiber
$(\pi_{\mc{O}_{\RDist}(q_0),M})^{-1}(x)=\mc{O}_{\RDist}(q_0)\cap \pi_{Q,M}^{-1}(x)$, $x\in M$,
is a smooth closed submanifold of $\mc{O}_{\RDist}(q_0)$.

Choose next, for each $x\in M$, an open convex $U_x\subset T|_x M$ such that $\exp_x|_{U_x}$ is a diffeomorphism onto its image and $0\in U$.
Define 
\begin{align}
\tau_x:\pi_{Q,M}^{-1}(U_x)&\to U_x\times \pi_{Q,M}^{-1}(x),\nonumber\\
q=(y,\hat{y};A)&\mapsto \big(y,\big(x,\hat{\gamma}_{\RDist}{(\gamma_{y,x},q)}(1);A_{\RDist}{(\gamma_{y,x},q)}(1)\big)\big),\nonumber
\end{align}
where $\gamma_{y,x}:[0,1]\to M$; $\gamma_{y,x}(t)=\exp_x((1-t)\exp_x^{-1}(y))$
is a geodesic from $y$ to $x$.
It is obvious that $\tau_x$ is a smooth bijection.
Moreover, restricting $\tau_x$ to $\mc{O}_{\RDist}(q_0)$
clearly gives a smooth bijection 
\[
\mc{O}_{\RDist}(q_0)\cap \pi_{Q,M}^{-1}(U_x)\to U_x\times (\mc{O}_{\RDist}(q_0)\cap \pi_{Q,M}^{-1}(x)).
\]
The inverse of $\tau_x$,
$\tau_x^{-1}:U_x\times \pi_{Q,M}^{-1}(x)\to \pi_{Q,M}^{-1}(U_x)$
is constructed with a formula similar to that of $\tau_x$
and  is seen, in the same way, to be smooth.
This inverse restricted to $U_x\times (\mc{O}_{\RDist}(q_0)\cap \pi_{Q,M}^{-1}(x))$
maps bijectively onto $\mc{O}_{\RDist}(q_0)\cap \pi_{Q,M}^{-1}(U_x)$
and thus $\tau_x$ is a smooth local trivialization
of $\mc{O}_{\RDist}(q_0)$.
This completes the proof.

\end{proof}

\begin{remark}\label{re:R_orbit_bundle}
In the case where $\hat{M}$ is not complete,
the result of Proposition \ref{pr:R_orbit_bundle}
remains valid if we just claim
that $\pi_{\mc{O}_{\RDist}(q_0),M}$ is a bundle
over its image $M^\circ:=\pi_{Q,M}(\mc{O}_{\RDist}(q_0))$, which is an open connected subset of $M$.

Write $\hat{M}^\circ:=\pi_{Q,\hat{M}}(\mc{O}_{\RDist}(q_0))$.
Then using the diffeomorphism $\iota:Q:=Q(M,\hat{M})\to \hat{Q}:=Q(\hat{M},M)$; $(x,\hat{x};A)\mapsto (\hat{x},x;A^{-1})$
(Proposition \ref{pr:rol_inverse})
one gets
\[
\pi_{\mc{O}_{\RDist}(q_0),\hat{M}}=&\pi_{Q,\hat{M}}|_{\mc{O}_{\RDist}(q_0)}
=\pi_{Q,\hat{M}}\circ\iota^{-1}|_{\mc{O}_{\widehat{\RDist}}(\iota(q_0))}\circ\iota|_{\mc{O}_{\RDist}(q_0)} \\
=&\pi_{\hat{Q},\hat{M}}|_{\mc{O}_{\widehat{\RDist}}(\iota(q_0))}\circ\iota|_{\mc{O}_{\RDist}(q_0)}
=\pi_{\mc{O}_{\widehat{\RDist}}(\iota(q_0)),\hat{M}}\circ \iota|_{\mc{O}_{\RDist}(q_0)},
\]
from which we see that $\pi_{\mc{O}_{\RDist}(q_0),\hat{M}}$
is also a bundle over its image $\hat{M}^\circ$
since $\iota|_{\mc{O}_{\RDist}(q_0)}:\mc{O}_{\RDist}(q_0)\to \mc{O}_{\widehat{\RDist}}(\iota(q_0))$
is a diffeomorphism and since by the previous proposition and the above remark
$\pi_{\mc{O}_{\widehat{\RDist}}(\iota(q_0)),\hat{M}}$ is a bundle over its image,
which necessarily is $\hat{M}^\circ$.
Notice also that if $M$ is complete, then $\hat{M}^\circ=\hat{M}$.
\end{remark}

The next remark illustrates this point.

\begin{remark}
In the previous proposition, the assumption of completeness of $\hat{M}$
cannot be removed.
In fact, choose $M=\R^2$, $\hat{M}=\{\hat{x}\in\R^2\ |\ \n{\hat{x}}<1\}$
(with $\n{\cdot}$ the Euclidean norm).
Then
\[
Q\cong M\times \hat{M}\times\SO(2),
\quad T(Q)\cong Q\times\R^2\times\R^2\times\so(2)
\]
and $\RDist$ is given by
\[
\RDist|_{(x,\hat{x};A)}=\{(v,Av,0)\ |\ v\in\R^2\},
\]
as a subspace of $T|_{(x,\hat{x};A)} Q\cong\R^2\times\R^2\times\so(2)$.
If $x_0=0$, $\hat{x}_0=0$ and $A_0=\id_{\R^2}$ is the identity map 
$T|_0 M\cong\R^2\to T|_0\hat{M}\cong\R^2$,
we have that the orbit is equal to the 2-dimensional submanifold of $Q$
given by 
$\{(x,A_0x,A_0)\ |\ \n{x}<1\}$
and its image under the projection on the first factor, $\pi_{Q,M}$
is a proper open subset $\{x\in\R^2\ |\ \n{x}<1\}$
of $M$.
Thus $\pi_{Q,M}|_{\mc{O}_{\RDist}(x_0,\hat{x}_0;A_0)}$ is not a bundle over $M$,
since this map is not surjective.
\end{remark}

\begin{proposition}\label{pr:iso_equivariance}
For any Riemannian isometries $F\in\Iso(M,g)$ and $\hat{F}\in\Iso(\hat{M},\hat{g})$
of $(M,g)$, $(\hat{M},\hat{g})$ respectively,
one defines smooth free right and left actions of $\Iso(M,g)$, $\Iso(\hat{M},\hat{g})$
on $Q$ by
\[
q_0\cdot F:=(F^{-1}(x_0),\hat{x}_0;A_0\circ F_*|_{F^{-1}(x_0)}),
\quad
\hat{F}\cdot q_0:=(x_0,\hat{F}(\hat{x}_0);\hat{F}_*|_{\hat{x}_0}\circ A_0),
\]
where $q_0=(x_0,\hat{x}_0;A_0)\in Q$.
We also set 
\[
\hat{F}\cdot q_0\cdot F:=(\hat{F}\cdot q_0)\cdot F=\hat{F}\cdot (q_0\cdot F).
\]

Then for any $q_0=(x_0,\hat{x}_0;A_0)\in Q$, a.c. $\gamma:[0,1]\to M$, $\gamma(0)=x_0$,
and $F\in\Iso(M,g)$, $\hat{F}\in\Iso(\hat{M},\hat{g})$, one has
\begin{align}\label{eq:equivariance_of_rolling}
\hat{F}\cdot q_{\RDist}(\gamma,q_0)(t)\cdot F
=q_{\RDist}(F^{-1}\circ\gamma,\hat{F}\cdot q_0\cdot F)(t),
\end{align}
for all $t\in [0,1]$.
In particular,
\[
\hat{F}\cdot\mc{O}_{\RDist}(q_0)\cdot F=\mc{O}_{\RDist}(\hat{F}\cdot q_0\cdot F).
\]
\end{proposition}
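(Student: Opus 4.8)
The plan is to verify the equivariance formula \eqref{eq:equivariance_of_rolling} directly from the explicit description of rolling curves, and then deduce the statement about orbits as an immediate corollary. First I would fix $q_0=(x_0,\hat x_0;A_0)\in Q$, an a.c. curve $\gamma:[0,1]\to M$ with $\gamma(0)=x_0$, and isometries $F\in\Iso(M,g)$, $\hat F\in\Iso(\hat M,\hat g)$. The key tool is Remark \ref{re:Lambda}, which gives the rolling curve $q_{\RDist}(\gamma,q_0)$ explicitly in terms of parallel transport and the maps $\Lambda_{x_0}$, $\hat\Lambda_{\hat x_0}$; combined with Proposition \ref{pr:2.1:1} (namely $P^t_0(x,\hat x)A_0 = P^t_0(\hat x)\circ A_0\circ P^0_t(x)$) this reduces everything to bookkeeping with parallel transports along $\gamma$ and its image under $F^{-1}$, respectively along the traced curve $\hat\gamma_{\RDist}(\gamma,q_0)$ and its image under $\hat F$.

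The essential input is the compatibility of isometries with parallel transport, Eq. \eqref{eq:iso_parallel}: for an isometry $G$ of a Riemannian manifold one has $G_*\circ (P^{\nabla})^t_a(c) = (P^{\nabla})^t_a(G\circ c)\circ G_*$. Using this for $F$ on $M$ and for $\hat F$ on $\hat M$, one can track how each of the three components of the rolling curve transforms. Concretely, write $q_{\RDist}(\gamma,q_0)(t)=(\gamma(t),\hat\gamma(t);A(t))$ with $A(t)=P^t_0(\hat\gamma)\circ A_0\circ P^0_t(\gamma)$ and $\hat\gamma$ determined by the no-slip condition $\dot{\hat\gamma}(t)=A(t)\dot\gamma(t)$. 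I would check that the curve $t\mapsto \hat F\cdot q_{\RDist}(\gamma,q_0)(t)\cdot F = (F^{-1}\gamma(t),\hat F\hat\gamma(t);\hat F_*\circ A(t)\circ F_*|_{F^{-1}\gamma(t)})$ is a.c., starts at $\hat F\cdot q_0\cdot F$, and satisfies both the no-spinning equation \eqref{eq:nospin2} and the no-slipping equation \eqref{eq:noslip} relative to the base curve $F^{-1}\circ\gamma$. For no-slip: $\frac{d}{dt}(F^{-1}\hat F\hat\gamma)$... no, rather one checks $(\hat F_*\circ A(t)\circ F_*)\cdot\frac{d}{dt}(F^{-1}\gamma)(t)=\hat F_*(A(t)\dot\gamma(t))=\hat F_*\dot{\hat\gamma}(t)=\frac{d}{dt}(\hat F\hat\gamma)(t)$, using that $F_*\circ (F^{-1})_*=\id$. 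For no-spin: apply \eqref{eq:iso_parallel} to rewrite $\hat F_*\circ P^t_0(\hat\gamma)\circ A_0\circ P^0_t(\gamma)\circ F_*$ as $P^t_0(\hat F\circ\hat\gamma)\circ(\hat F_*\circ A_0\circ F_*)\circ P^0_t(F^{-1}\circ\gamma)$, which by Proposition \ref{pr:2.1:1} is exactly the parallel transport of $\hat F_*\circ A_0\circ F_* = \hat F\cdot q_0\cdot F$ along $(F^{-1}\gamma, \hat F\hat\gamma)$; hence it is $\ol\nabla$-parallel, i.e. \eqref{eq:nospin2} holds. By the uniqueness in Proposition \ref{pr:rolling_curves}, this identifies the transformed curve with $q_{\RDist}(F^{-1}\circ\gamma,\hat F\cdot q_0\cdot F)$, giving \eqref{eq:equivariance_of_rolling}.

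Before this, one should dispatch the preliminary claims: that the formulas define smooth actions (associativity, the identity axiom, and that left and right actions commute) and that they are free. Smoothness follows because $F\mapsto F_*$ is smooth on the Lie group $\Iso$ (cf.\ \cite{sakai91}, Lemma III.6.4) and composition of linear maps is smooth; associativity of $q_0\cdot(FF') = (q_0\cdot F)\cdot F'$ uses the chain rule $(FF')_* = F'_*\circ F_*|_{(F')^{-1}(\cdot)}$ together with $(FF')^{-1}=(F')^{-1}F^{-1}$, and similarly on the left; commutativity of the two actions is immediate since one acts on the source factor and the other on the target. Freeness of the right action: if $q_0\cdot F = q_0$ then $F^{-1}(x_0)=x_0$ and $A_0\circ F_*|_{x_0}=A_0$, so $F_*|_{x_0}=\id$ (as $A_0$ is invertible), whence $F=\id_M$ by the standard rigidity fact that an isometry fixing a point and with identity differential there is the identity on a connected manifold; the left action is handled identically.

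Finally, the orbit statement $\hat F\cdot\mc O_{\RDist}(q_0)\cdot F = \mc O_{\RDist}(\hat F\cdot q_0\cdot F)$ follows: a point of the left-hand side is $\hat F\cdot q_{\RDist}(\gamma,q_0)(1)\cdot F$ for some a.c.\ $\gamma$ from $x_0$, which by \eqref{eq:equivariance_of_rolling} equals $q_{\RDist}(F^{-1}\circ\gamma,\hat F\cdot q_0\cdot F)(1)$, an endpoint of a rolling curve from $\hat F\cdot q_0\cdot F$ along the a.c.\ curve $F^{-1}\circ\gamma$ (which starts at $F^{-1}(x_0)$, the base point of $\hat F\cdot q_0\cdot F$); hence it lies in $\mc O_{\RDist}(\hat F\cdot q_0\cdot F)$. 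The reverse inclusion is obtained by applying the same argument with $F^{-1}$, $\hat F^{-1}$ to the point $\hat F\cdot q_0\cdot F$, using that $(F^{-1})^{-1}\circ(F^{-1}\circ\gamma) = \gamma$. I do not anticipate a genuine obstacle here; the only mild subtlety is being careful with the base points and the direction of the reparametrization $\gamma\mapsto F^{-1}\circ\gamma$ (note the inverse), and with the fact that Proposition \ref{pr:rolling_curves} only guarantees the rolling curve on a maximal subinterval when $\hat M$ is incomplete — but since we are transporting an already-existing rolling curve defined on all of $[0,1]$, and $\hat F$ is a global diffeomorphism, the transformed curve is automatically defined on all of $[0,1]$ as well, so maximality causes no trouble.
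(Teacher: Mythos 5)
Your proof is correct and follows essentially the same route as the paper's: both hinge on Eq.~\eqref{eq:iso_parallel} combined with a uniqueness argument. The only cosmetic difference is that you verify the no-slip and no-spin conditions for the transformed triple directly and invoke uniqueness of rolling curves (Proposition~\ref{pr:rolling_curves}), whereas the paper first identifies the traced curve $\hat{\gamma}$ via the development equation of Remark~\ref{re:Lambda} and uniqueness of solutions of ODEs, and then transports $A$.
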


\begin{proof}
The fact that the group actions are well defined is clear
and the smoothness of these actions
can be proven by writing out the Lie-group structures 
of the isometry groups (using e.g. Lemma III.6.4 in \cite{sakai91}). 
If $q_0\cdot F=q_0\cdot F'$ for some $F,F'\in\Iso(M,g)$ and $q_0\in Q$,
then $F^{-1}(x_0)={F'}^{-1}(x_0)$, $F_*|_{x_0}=F'_*|_{x_0}$
and hence $F=F'$ since $M$ is connected (see \cite{sakai91}, p. 43).
This proves the freeness of the right $\Iso(M,g)$-action. The same argument
proves the freeness of the left $\Iso(\hat{M},\hat{g})$-action.

Finally, Eq. (\ref{eq:equivariance_of_rolling}) follows from
a simple application of Eq. (\ref{eq:iso_parallel}).
In fact, by Remark~\ref{re:Lambda} the rolling curve $q_{\RDist}(\gamma,q_0)=(\gamma,\hat{\gamma}_{\RDist}(\gamma,q_0);A_{\RDist}(\gamma,q_0))$
is defined by
\[
& P^0_t(\hat{\gamma}_{\RDist}(\gamma,q_0))\dot{\hat{\gamma}}_{\RDist}(\gamma,q_0)(t)
=A_0P^0_t(\gamma)\dot{\gamma}(t), \\
& A_{\RDist}(\gamma,q_0)(t)=P_0^t(\hat{\gamma}_{\RDist}(\gamma,q_0))\circ A_0\circ P^0_t(\gamma).
\]

First, by using (\ref{eq:iso_parallel}), we get
\[
& P^0_t(\hat{F}\circ \hat{\gamma}_{\RDist}(\gamma,q_0))\dif{t}(\hat{F}\circ \hat{\gamma}_{\RDist}(\gamma,q_0))(t)
=\hat{F}_*P^0_t(\hat{\gamma}_{\RDist}(\gamma,q_0))\hat{F}_*^{-1}(\hat{F}_*\dot{\hat{\gamma}}_{\RDist}(\gamma,q_0)(t)) \\
=&\hat{F}_*A_0P^0_t(\gamma)\dot{\gamma}(t)
=(\hat{F}_*A_0F_*) (F^{-1}_*P^0_t(\gamma)F_*) F^{-1}_*\dot{\gamma}(t) \\
=&(\hat{F}_*A_0F_*) P^0_t(F^{-1}\circ \gamma)\dif{t}(F^{-1}\circ \gamma)(t),
\]
and since by definition one has
\[
& P_t^0(\hat{\gamma}_{\RDist}(F^{-1}\circ \gamma,\hat{F}\cdot q_0\cdot F))\dot{\hat{\gamma}}_{\RDist}(F^{-1}\circ\gamma,\hat{F}\cdot q_0\cdot F) \\
=&(\hat{F}_*A_0F_*)P_t^0(F^{-1}\circ \gamma)\dif{t}(F^{-1}\circ \gamma)(t),
\]
the uniqueness of solutions of a system of ODEs 
gives that
\[
& \hat{F}\circ \hat{\gamma}_{\RDist}(\gamma,q_0)=\hat{\gamma}_{\RDist}(F^{-1}\circ\gamma,\hat{F}\cdot q_0\cdot F).
\]
Hence
\[
& \hat{F}_*A_{\RDist}(\gamma,q_0)F_*
=\hat{F}_*(P_0^t(\hat{\gamma}_{\RDist}(\gamma,q_0))\circ A_0\circ P^0_t(\gamma))F_* \\
=&P_0^t(\hat{F}\circ \hat{\gamma}_{\RDist}(\gamma,q_0))\circ (\hat{F}_*A_0F_*)\circ P^0_t(F^{-1}\circ \gamma) \\
=&P_0^t(\hat{\gamma}_{\RDist}(F^{-1}\circ\gamma,\hat{F}\cdot q_0\cdot F))\circ (\hat{F}_*A_0F_*)\circ P^0_t(F^{-1}\circ \gamma)
=A_{\RDist}(F^{-1}\circ \gamma,\hat{F}\cdot q_0\cdot F)
\]
which proves (\ref{eq:equivariance_of_rolling}).

\end{proof}

\begin{corollary}
Let $q_0=(x_0,\hat{x}_0;A_0)\in Q$ and $\gamma,\omega:[0,1]\to M$ be absolutely continuous
such that $\gamma(0)=\omega(0)=x_0$, $\gamma(1)=\omega(1)$.
Then assuming that $q_{\RDist}(\gamma,q_0)$,
$q_{\RDist}(\omega,q_0)$, $q_{\RDist}(\omega^{-1}.\gamma,q_0)$ exist
and if there exists $\hat{F}\in\Iso(\hat{M},\hat{g})$
such that
\[
\hat{F}\cdot q_0=q_{\RDist}(\omega^{-1}.\gamma,q_0)(1),
\]
then
\[
\hat{F}\cdot q_{\RDist}(\omega,q_0)(1)=q_{\RDist}(\gamma,q_0)(1).
\]
\end{corollary}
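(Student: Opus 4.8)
The plan is to deduce the corollary directly from the group property of rolling curves (Remark~\ref{re:group_property_of_rolling}) together with the equivariance property (\ref{eq:equivariance_of_rolling}) of Proposition~\ref{pr:iso_equivariance}. The key idea is that concatenating $\gamma$ with the reversed loop $\omega^{-1}$ and then with $\omega$ returns us to a curve rolling--equivalent to $\gamma$ itself.

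First I would observe that, since $\gamma(1)=\omega(1)$, the concatenation $\omega.(\omega^{-1}.\gamma)$ is an admissible concatenation of a.c.\ curves (here I use the concatenation/reversal conventions from Section~\ref{notations}), and up to the usual reparametrization it is homotopic-with-fixed-endpoints---indeed, just a ``stop and go back and forth'' reparametrization---of $\gamma$; more importantly, by the uniqueness in Proposition~\ref{pr:rolling_curves} the rolling curve $q_{\RDist}(\omega.(\omega^{-1}.\gamma),q_0)$ traces out exactly the same points as $q_{\RDist}(\gamma,q_0)$, so in particular their endpoints agree:
\[
q_{\RDist}(\gamma,q_0)(1)=q_{\RDist}\big(\omega.(\omega^{-1}.\gamma),q_0\big)(1).
\]
(One should be slightly careful: Remark~\ref{re:group_property_of_rolling} is phrased for loops, but the underlying statement $q_{\RDist}(\omega\sqcup\eta,q_0)=q_{\RDist}(\omega,q_{\RDist}(\eta,q_0)(\text{end}))\sqcup q_{\RDist}(\eta,q_0)$ holds for any composable a.c.\ curves; I would invoke that form with $\eta=\omega^{-1}.\gamma$.) Applying the group property, the right-hand side equals $q_{\RDist}\big(\omega,q_{\RDist}(\omega^{-1}.\gamma,q_0)(1)\big)(1)$, and by hypothesis $q_{\RDist}(\omega^{-1}.\gamma,q_0)(1)=\hat F\cdot q_0$. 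Hence
\[
q_{\RDist}(\gamma,q_0)(1)=q_{\RDist}\big(\omega,\hat F\cdot q_0\big)(1).
\]

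Next I would bring in equivariance. Taking $F=\id_M$ in (\ref{eq:equivariance_of_rolling}) gives $\hat F\cdot q_{\RDist}(\omega,q_0)(t)=q_{\RDist}(\omega,\hat F\cdot q_0)(t)$ for all $t$; evaluating at $t=1$ yields
\[
q_{\RDist}\big(\omega,\hat F\cdot q_0\big)(1)=\hat F\cdot q_{\RDist}(\omega,q_0)(1).
\]
Combining the two displayed equalities gives exactly $q_{\RDist}(\gamma,q_0)(1)=\hat F\cdot q_{\RDist}(\omega,q_0)(1)$, which is the claim. A minor point to address is existence/definedness of all the rolling curves involved on $[0,1]$: the corollary already assumes $q_{\RDist}(\gamma,q_0)$, $q_{\RDist}(\omega,q_0)$, $q_{\RDist}(\omega^{-1}.\gamma,q_0)$ exist; the curve $q_{\RDist}(\omega,\hat F\cdot q_0)$ then exists because, by equivariance, it is just the image of $q_{\RDist}(\omega,q_0)$ under the diffeomorphism $q\mapsto \hat F\cdot q$ of $Q$.

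I expect the only real subtlety---hence the ``main obstacle''---to be the bookkeeping around concatenation and reversal: making sure that $\omega.(\omega^{-1}.\gamma)$ is literally a reparametrization of $\gamma$ as far as the rolling dynamics are concerned, i.e.\ that rolling along a curve and then rolling back and forth along a loop-piece does not change the final configuration. This is precisely the content of the uniqueness statement in Proposition~\ref{pr:rolling_curves} applied to the (reparametrized) curve, so no genuine difficulty arises; everything else is a two-line combination of the group property and the $F=\id_M$ special case of equivariance.
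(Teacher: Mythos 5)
Your argument is correct and follows the paper's own proof essentially verbatim: both rewrite $q_{\RDist}(\gamma,q_0)(1)$ as $q_{\RDist}(\omega.\omega^{-1}.\gamma,q_0)(1)$, apply the concatenation (group) property of rolling curves, substitute the hypothesis $q_{\RDist}(\omega^{-1}.\gamma,q_0)(1)=\hat F\cdot q_0$, and conclude with the $F=\id_M$ case of the equivariance formula (\ref{eq:equivariance_of_rolling}). The only cosmetic difference is your parenthetical appeal to homotopy/reparametrization, which is unnecessary and slightly misleading (rolling endpoints are not homotopy invariant in general); what actually justifies the first equality, both in your write-up and in the paper, is uniqueness of rolling curves, i.e.\ rolling back and forth along $\omega$ returns to the same configuration.
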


\begin{proof}
\[
& q_{\RDist}(\gamma,q_0)(1)
=q_{\RDist}(\omega.\omega^{-1}.\gamma,q_0)(1)
=\big(q_{\RDist}(\omega,q_{\RDist}(\omega^{-1}.\gamma,q_0)(1)).q_{\RDist}(\omega^{-1}.\gamma,q_0)\big)(1) \\
=&\big(q_{\RDist}(\omega,\hat{F}\cdot q_0).q_{\RDist}(\omega^{-1}.\gamma,q_0)\big)(1)
=q_{\RDist}(\omega,\hat{F}\cdot q_0)(1)=\hat{F}\cdot q_{\RDist}(\omega,q_0)(1).
\]
\end{proof}

\begin{proposition}\label{cov}
Let $\pi_1:(M_1,g_1)\to (M,g)$ and $\hat{\pi}:(\hat{M}_1,\hat{g}_1)\to (\hat{M},\hat{g})$
be Riemannian coverings.
Write $Q_1=Q(M_1,\hat{M}_1)$ and $(\RDist)_1$
for the rolling distribution in $Q_1$. Then the map
\[
\Pi:Q_1\to Q;
\quad \Pi(x_1,\hat{x}_1;A_1)=\big(\pi(x_1),\hat{\pi}(\hat{x}_1);\hat{\pi}_*|_{\hat{x}_1}\circ A_1\circ (\pi_*|_{x_1})^{-1}\big)
\]
is a covering map of $Q_1$ over $Q$ and
\[
\Pi_*(\RDist)_1=\RDist.
\]
Moreover, for every $q_1\in Q_1$ the restriction onto $\mc{O}_{(\RDist)_1}(q_1)$
of $\Pi$ is a covering map $\mc{O}_{(\RDist)_1}(q_1)\to \mc{O}_{\RDist}(\Pi(q_1))$.
Then, for every $q_1\in Q_1$, $\Pi(\mc{O}_{(\RDist)_1}(q_1))=\mc{O}_{\RDist}(\Pi(q_1))$ 
and  one has $\mc{O}_{(\RDist)_1}(q_1)=Q_1$ 
if and only if $\mc{O}_{\RDist}(\Pi(q_1))=Q$. 
\end{proposition}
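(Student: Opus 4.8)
The plan is to verify the four assertions in sequence, each one building on the previous. First I would check that $\Pi$ is a covering map. Since $\pi_1$ and $\hat\pi$ are covering maps, the product $\pi_1\times\hat\pi:M_1\times\hat M_1\to M\times\hat M$ is a covering map. I claim $\Pi$ fits into a commutative square over $\pi_1\times\hat\pi$ via the bundle projections $\pi_{Q_1}$ and $\pi_Q$. The key point is that $\Pi$ is fiberwise a diffeomorphism: for fixed $(x_1,\hat x_1)$, the map $A_1\mapsto \hat\pi_*|_{\hat x_1}\circ A_1\circ(\pi_{1*}|_{x_1})^{-1}$ is a diffeomorphism $Q_1|_{(x_1,\hat x_1)}\to Q|_{(\pi_1(x_1),\hat\pi(\hat x_1))}$ because $\pi_{1*}|_{x_1}$ and $\hat\pi_*|_{\hat x_1}$ are linear isometries (the coverings are Riemannian). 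Using local trivializations of $\pi_{Q_1}$ and $\pi_Q$ induced by orthonormal frames that are $\pi_1$- resp. $\hat\pi$-related (which exist locally since local isometries pull back orthonormal frames to orthonormal frames), $\Pi$ reads in coordinates as $(\text{id of a small open set})\times(\text{const})$ composed with the covering $\pi_1\times\hat\pi$ downstairs, hence $\Pi$ is a smooth covering map.

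Next I would prove $\Pi_*(\RDist)_1=\RDist$. Since both are $n$-dimensional distributions and $\Pi$ is a local diffeomorphism, it suffices to show $\Pi_*$ maps $(\RDist)_1|_{q_1}$ into $\RDist|_{\Pi(q_1)}$ for each $q_1$. The cleanest way is via rolling curves: take $q_1=(x_1,\hat x_1;A_1)\in Q_1$, a tangent vector $X_1\in T|_{x_1}M_1$, and a $g_1$-geodesic $\gamma_1(t)=\exp^{M_1}_{x_1}(tX_1)$. By Proposition~\ref{pr:rol_geodesic} the $(\RDist)_1$-rolling curve along $\gamma_1$ is $t\mapsto(\gamma_1(t),\widehat{\exp}^{\hat M_1}_{\hat x_1}(tA_1X_1);P_0^t(\cdot)\circ A_1\circ P_t^0(\gamma_1))$. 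Because Riemannian coverings are local isometries, $\pi_1\circ\gamma_1$ is a $g$-geodesic, $\hat\pi\circ(\widehat{\exp}^{\hat M_1}_{\hat x_1}(tA_1X_1))=\widehat{\exp}^{\hat M}_{\hat\pi(\hat x_1)}(t\,\hat\pi_*A_1X_1)$, and covering maps intertwine parallel transport. Hence $\Pi$ carries the $(\RDist)_1$-rolling curve along $\gamma_1$ to the $\RDist$-rolling curve along $\pi_1\circ\gamma_1$ starting at $\Pi(q_1)$; differentiating at $t=0$ gives $\Pi_*\LRD(X_1)|_{q_1}=\LRD((\pi_1)_*X_1)|_{\Pi(q_1)}\in\RDist|_{\Pi(q_1)}$. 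Since the vectors $\LRD(X_1)|_{q_1}$ span $(\RDist)_1|_{q_1}$ and $\Pi_*$ is injective, equality of the distributions follows.

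Now the orbit statements. Because $\Pi$ is a covering map and $\Pi_*(\RDist)_1=\RDist$, an absolutely continuous curve in $Q_1$ is $(\RDist)_1$-admissible if and only if its image under $\Pi$ is $\RDist$-admissible, and conversely every $\RDist$-admissible curve in $Q$ starting at $\Pi(q_1)$ lifts uniquely (by the path-lifting property of coverings) to an $(\RDist)_1$-admissible curve in $Q_1$ starting at $q_1$ — uniqueness of the lift of an a.c.\ curve through a covering is standard, and admissibility is preserved since $\Pi$ is a local diffeomorphism matching the distributions. This yields $\Pi(\mc{O}_{(\RDist)_1}(q_1))=\mc{O}_{\RDist}(\Pi(q_1))$: the inclusion $\subset$ is immediate from pushing forward admissible curves, and $\supset$ from lifting admissible curves in $Q$ through $\Pi$. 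Restricting $\Pi$ to $\mc{O}_{(\RDist)_1}(q_1)$, it is then a surjection onto $\mc{O}_{\RDist}(\Pi(q_1))$; since the orbits are immersed submanifolds tangent to the respective distributions and $\Pi$ restricted to each distribution is a fiberwise isomorphism, $\Pi|_{\mc{O}_{(\RDist)_1}(q_1)}$ is a local diffeomorphism, and the path-lifting property (admissible paths lift) upgrades it to a covering map $\mc{O}_{(\RDist)_1}(q_1)\to\mc{O}_{\RDist}(\Pi(q_1))$.

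Finally, the equivalence $\mc{O}_{(\RDist)_1}(q_1)=Q_1\iff\mc{O}_{\RDist}(\Pi(q_1))=Q$. If $\mc{O}_{(\RDist)_1}(q_1)=Q_1$, then $\Pi(\mc{O}_{(\RDist)_1}(q_1))=\Pi(Q_1)=Q$ by surjectivity of $\Pi$, so $\mc{O}_{\RDist}(\Pi(q_1))=Q$. Conversely, suppose $\mc{O}_{\RDist}(\Pi(q_1))=Q$. Take any $q_1'\in Q_1$. Then $\Pi(q_1')\in Q=\mc{O}_{\RDist}(\Pi(q_1))$, so there is an $\RDist$-admissible curve in $Q$ from $\Pi(q_1)$ to $\Pi(q_1')$; lift it through $\Pi$ to an $(\RDist)_1$-admissible curve in $Q_1$ starting at $q_1$. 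Its endpoint lies over $\Pi(q_1')$, hence equals $q_1'\cdot(\text{deck transformation})$; one must note that the endpoint is not a priori $q_1'$ itself, but only in the same $\Pi$-fiber. To close this gap I would argue that $Q_1$ is connected (product of connected frame-type bundles over connected manifolds, as in Remark~\ref{re:Q_bundle_over_M}), and that the deck group of $\Pi$ acts by maps preserving $(\RDist)_1$ — indeed deck transformations of $\Pi$ are induced by deck transformations of $\pi_1$ and $\hat\pi$, which are isometries, so by Proposition~\ref{pr:iso_equivariance} they preserve $\RDist$-orbits; hence if one point of a $\Pi$-fiber lies in $\mc{O}_{(\RDist)_1}(q_1)$ we need the fiber point $q_1'$ itself to be reachable. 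The main obstacle is precisely this fiber-matching issue: reachability of \emph{some} preimage of $\Pi(q_1')$ must be promoted to reachability of $q_1'$. I expect the resolution to use that $\mc{O}_{(\RDist)_1}(q_1)$ is a union of $\Pi$-fibers over $M_1$ when the downstairs orbit is everything — more carefully, since $\mc{O}_{\RDist}(\Pi(q_1))=Q$ forces the $\pi_{Q,M}$-fibers of the orbit to be full, the lifted orbit $\mc{O}_{(\RDist)_1}(q_1)$ has full $\pi_{Q_1,M_1}$-fibers as well (the covering $\Pi$ restricted to a fiber of $\pi_{Q_1,M_1}$ maps onto a fiber of $\pi_{Q,M}$, and the orbit-bundle structure of Proposition~\ref{pr:R_orbit_bundle} together with connectedness of the fibers of $\pi_{Q_1,M_1}$ forces surjectivity of $\Pi$ on orbits to be a genuine equality $\mc{O}_{(\RDist)_1}(q_1)=Q_1$).
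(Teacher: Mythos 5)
Your first three steps follow essentially the same route as the paper: the even covering of $\pi_Q^{-1}(U\times\hat{U})$ over evenly covered $U\times\hat{U}$, the identity $\Pi_*\LRD(X_1)|_{q_1}=\LRD((\pi_1)_*X_1)|_{\Pi(q_1)}$ obtained by transporting rolling curves through the local isometries, and push-forward/lifting of admissible curves to get $\Pi(\mc{O}_{(\RDist)_1}(q_1))=\mc{O}_{\RDist}(\Pi(q_1))$. One caveat on the third step: an orbit carries its own immersed-submanifold topology, which need not be the subspace topology, so ``surjective local diffeomorphism with path lifting is a covering'' must be verified in that topology; the paper does this via the Orbit Theorem, using the end-point maps $\ENDP_{(Y_1,\dots,Y_d)}(q,W)$ of lifted generating vector fields to manufacture evenly covered neighbourhoods intrinsically inside the orbit. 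Your appeal to a generic path-lifting criterion is the one under-justified point in steps 1--3, though the idea is sound.

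The genuine gap is in the ``if'' direction of the final equivalence. You correctly isolate the obstacle — the lift of an admissible curve from $\Pi(q_1)$ to $\Pi(q_1')$ ends at \emph{some} point of $\Pi^{-1}(\Pi(q_1'))$, not necessarily at $q_1'$ — but neither of your proposed repairs closes it. Deck transformations preserving $(\RDist)_1$ only yield $\delta(\mc{O}_{(\RDist)_1}(q_1))=\mc{O}_{(\RDist)_1}(\delta(q_1))$; they do not make the orbit deck-invariant, so reaching one preimage of $\Pi(q_1')$ says nothing about reaching $q_1'$ itself. Likewise, ``connectedness of the fibers of $\pi_{Q_1,M_1}$ forces equality'' would require the orbit's trace in each fiber to be open and closed there, which is precisely the openness you never establish. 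The clean resolution, and the one the paper uses, bypasses endpoint tracking entirely: if $\mc{O}_{\RDist}(\Pi(q_1))=Q$, then every downstairs orbit equals $Q$, so for \emph{every} $q_1'\in Q_1$ the restriction of $\Pi$ is a covering $\mc{O}_{(\RDist)_1}(q_1')\to Q$; hence every upstairs orbit is an immersed submanifold of dimension $\dim Q=\dim Q_1$ and is therefore open in $Q_1$. The orbits partition $Q_1$ into nonempty open sets, and $Q_1$ is connected, so there is exactly one orbit, giving $\mc{O}_{(\RDist)_1}(q_1)=Q_1$.
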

As an immediate corollary  of the above proposition, we obtain the following result regarding the complete controllability of $(\RDist)$.
\begin{corollary}\label{CC-simply-conn}
Let $\pi_1:(M_1,g_1)\to (M,g)$ and $\hat{\pi}:(\hat{M}_1,\hat{g}_1)\to (\hat{M},\hat{g})$
be Riemannian coverings. Write $Q=Q(M,\hat{M})$, $\RDist$ and $Q_1=Q(M_1,\hat{M}_1)$, $(\RDist)_1$ respectively for the state space and 
for the rolling distribution in the respective state space. Then the control system associated to $\RDist$ is completely controllable if and only if the control system associated to $(\RDist)_1$ is completely controllable. As a consequence, when one addresses the complete
controllability issue for the rolling distribution $\RDist$, one can assume with no loss of generality that both manifolds $M$ and $\hat{M}$ are simply connected. 
\end{corollary}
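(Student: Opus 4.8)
The plan is to derive the final Corollary \ref{CC-simply-conn} directly from Proposition \ref{cov}, and then to observe that the "simply connected reduction" follows by applying that equivalence to the universal Riemannian coverings. The first part is essentially immediate: Proposition \ref{cov} already tells us that for every $q_1 \in Q_1$ one has $\mc{O}_{(\RDist)_1}(q_1) = Q_1$ if and only if $\mc{O}_{\RDist}(\Pi(q_1)) = Q$. Since $\Pi: Q_1 \to Q$ is a covering map (in particular surjective), every point $q \in Q$ is of the form $\Pi(q_1)$ for some $q_1 \in Q_1$; hence complete controllability of $(\RDist)_1$ (i.e. $\mc{O}_{(\RDist)_1}(q_1) = Q_1$ for all $q_1$) holds if and only if $\mc{O}_{\RDist}(q) = Q$ for all $q \in Q$, which is complete controllability of $\RDist$. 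This is the heart of the argument and requires no real work beyond quoting Proposition \ref{cov}.

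For the consequence about simply connected manifolds, the plan is the following. Let $\pi_M : (\tilde M, \tilde g) \to (M,g)$ and $\pi_{\hat M} : (\tilde{\hat M}, \tilde{\hat g}) \to (\hat M, \hat g)$ be the universal Riemannian coverings; these exist and are Riemannian coverings because $(M,g)$ and $(\hat M, \hat g)$ are connected smooth Riemannian manifolds, with $\tilde M$, $\tilde{\hat M}$ simply connected by construction. Here one should note that completeness is not an issue for the existence of the universal covering, but since in the paper the rolling manifolds are assumed complete, and Riemannian coverings of complete manifolds are complete, the universal covers are complete as well. Applying the first part of the corollary to these particular coverings, the control system associated to $\RDist$ on $Q = Q(M,\hat M)$ is completely controllable if and only if the control system associated to $(\RDist)_1$ on $Q_1 = Q(\tilde M, \tilde{\hat M})$ is completely controllable, and now both $\tilde M$ and $\tilde{\hat M}$ are simply connected. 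This is exactly the assertion that, as far as complete controllability is concerned, one may assume without loss of generality that both manifolds are simply connected.

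I do not expect a genuine obstacle here; the whole content has been pushed into Proposition \ref{cov}, and the only mild points to be careful about are: (i) that $\Pi$ being a covering map guarantees surjectivity onto $Q$, so that quantifying over $\Pi(q_1)$ is the same as quantifying over all $q \in Q$; and (ii) that the universal Riemannian covering is indeed a Riemannian covering in the sense used in Proposition \ref{cov}, so that the proposition applies. Both are standard. The proof can therefore be written in a few lines.

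\begin{proof}[Proof of Corollary \ref{CC-simply-conn}]
By Proposition \ref{cov}, the map $\Pi : Q_1 \to Q$ is a covering map, hence surjective, and for every $q_1 \in Q_1$ one has $\mc{O}_{(\RDist)_1}(q_1) = Q_1$ if and only if $\mc{O}_{\RDist}(\Pi(q_1)) = Q$. Assume the control system associated to $\RDist$ is completely controllable, i.e. $\mc{O}_{\RDist}(q) = Q$ for all $q \in Q$. Given any $q_1 \in Q_1$, applying the equivalence to $q = \Pi(q_1)$ yields $\mc{O}_{(\RDist)_1}(q_1) = Q_1$; since $q_1$ was arbitrary, $(\RDist)_1$ is completely controllable. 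Conversely, assume $(\RDist)_1$ is completely controllable, i.e. $\mc{O}_{(\RDist)_1}(q_1) = Q_1$ for all $q_1 \in Q_1$. Let $q \in Q$ be arbitrary. By surjectivity of $\Pi$ there is $q_1 \in Q_1$ with $\Pi(q_1) = q$, and the equivalence gives $\mc{O}_{\RDist}(q) = \mc{O}_{\RDist}(\Pi(q_1)) = Q$; since $q$ was arbitrary, $\RDist$ is completely controllable.

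For the final statement, let $\pi_M : (\tilde M, \tilde g) \to (M,g)$ and $\pi_{\hat M} : (\tilde{\hat M}, \tilde{\hat g}) \to (\hat M, \hat g)$ be the universal Riemannian coverings of $(M,g)$ and $(\hat M, \hat g)$, with $\tilde M$ and $\tilde{\hat M}$ simply connected (and complete, being Riemannian coverings of complete manifolds). Applying the equivalence just established to these coverings, the control system associated to $\RDist$ on $Q = Q(M,\hat M)$ is completely controllable if and only if the control system associated to the rolling distribution on $Q(\tilde M, \tilde{\hat M})$ is completely controllable. Hence, when addressing complete controllability of $\RDist$, one may assume without loss of generality that both $M$ and $\hat M$ are simply connected.
\end{proof}
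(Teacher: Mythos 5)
Your proposal is correct and follows exactly the route the paper intends: the corollary is stated there as an immediate consequence of Proposition \ref{cov} (surjectivity of the covering $\Pi$ plus the orbit correspondence $\mc{O}_{(\RDist)_1}(q_1)=Q_1 \iff \mc{O}_{\RDist}(\Pi(q_1))=Q$), specialized to the universal Riemannian coverings for the last assertion. The only remark is that completeness plays no role in this argument, so your parenthetical about complete covers, while harmless, is not needed.
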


We now proceed with the proof of Proposition \ref{cov}.
\begin{proof}
It is clear that $\Pi$ is a local diffeomorphism onto $Q$.
To show that it is a covering map,
let $q_1=(x_1,\hat{x}_1;A_1)$
and choose evenly covered w.r.t $\pi$, $\hat{\pi}$ open sets $U$ and $\hat{U}$
of $M$, $\hat{M}$ containing $\pi(x_1)$, $\hat{\pi}(\hat{x}_1)$,
respectively. Thus $\pi^{-1}(U)=\bigcup_{i\in I} U_i$
and $\hat{\pi}^{-1}(\hat{U})=\bigcup_{i\in \hat{I}} \hat{U}_i$
where $U_i$, $i\in I$ (resp. $\hat{U}_i$, $i\in\hat{I}$)
are mutually disjoint connected open subsets of $M_1$ (resp. $\hat{M}_1$)
such that $\pi$ (resp. $\hat{\pi}$) maps each $U_i$ (resp. $\hat{U}_i$) diffeomorphically onto $U$ (resp. $\hat{U}$).
Then
\[
\Pi^{-1}(\pi_Q^{-1}(U\times\hat{U}))
=\pi_{Q_1}^{-1}((\pi\times\hat{\pi})^{-1}(U\times\hat{U}))
=\bigcup_{i\in I,j\in\hat{I}}\pi_{Q_1}^{-1}(U_i\times\hat{U}_j),
\]
where $\pi_{Q_1}^{-1}(U_i\times\hat{U}_j)$ for $(i,j)\in I\times\hat{I}$
are clearly mutually disjoint and connected.
Now if for a given $(i,j)\in I\times\hat{I}$
we have $(y_1,\hat{y}_1,B_1),(z_1,\hat{z}_1;C_1)\in \pi_{Q_1}^{-1}(U_i\times\hat{U}_j)$
such that $\Pi(y_1,\hat{y}_1;B_1)=\Pi(z_1,\hat{z}_1,C_1)$,
then $y_1=z_1$, $\hat{y}_1=\hat{z}_1$ 
and hence $B_1=C_1$, which shows that $\Pi$ restricted to $\pi_{Q_1}^{-1}(U_i\times\hat{U}_j)$ is injective.
It is also a local diffeomorphism, as mentioned above, and clearly surjective
onto $\pi_Q^{-1}(U\times\hat{U})$,
which proves that $\pi_Q^{-1}(U\times\hat{U})$ is evenly covered with respect to $\Pi$.
This finishes the proof that $\Pi$ is a covering map.

Suppose next that $q_1(t)=(\gamma_1(t),\hat{\gamma}_1(t);A_1(t))$ is a smooth path on $Q_1$ tangent to 
$(\RDist)_1$ and defined on an interval containing $0\in\R$. Define $q(t)=(\gamma(t),\hat{\gamma}(t);A(t)):=(\Pi\circ q_1)(t)$. Then
\[
\dot{\hat{\gamma}}(t)=&\hat{\pi}_*\dot{\hat{\gamma}}_1(t)=\hat{\pi}_*A_1(t)\dot{\gamma}_1(t)
=A(t)\pi_*\dot{\gamma}_1(t)=A(t)\dot{\gamma}(t) \\
A(t)=&\hat{\pi}_*|_{\hat{\gamma}_1(t)}\circ P_0^t(\hat{\gamma}_1(t))\circ A_1(0)\circ P_t^0(\gamma_1)\circ (\pi_*|_{\gamma_1(t)})^{-1} \\
=&P_0^t(\hat{\gamma}(t))\circ \hat{\pi}_*|_{\hat{\gamma}_1(t)}\circ A_1(0)\circ (\pi_*|_{\gamma_1(t)})^{-1}\circ P_t^0(\gamma) \\
=&P_0^t(\hat{\gamma}(t))\circ A(0)\circ P_t^0(\gamma),
\]
which shows that $q(t)$ is tangent to $\RDist$. This shows that $\Pi_*(\RDist)_1\subset\RDist$
and the equality follows from the fact that $\Pi$ is a local diffeomorphism and
the ranks of $(\RDist)_1$ and $\RDist$ are the same i.e., $=n$.

Let $q_1=(x_1,\hat{x}_1;A_1)$. We proceed to show that the restriction of $\Pi$ gives a covering
$\mc{O}_{(\RDist)_1}(q_1)\to \mc{O}_{\RDist}(\Pi(q_1))$.
First, since $\Pi_*(\RDist)_1=\RDist$ and $\Pi:Q_1\to Q$ is a covering map, it follows that $\Pi(\mc{O}_{(\RDist)_1}(q_1))=\mc{O}_{\RDist}(\Pi(q_1))$.

Let $q:=\Pi(q_1)$ and let $U\subset Q$ be an evenly covered neighbourhood of $q$ w.r.t. $\Pi$.
By the Orbit Theorem, there exists vector fields $Y_1,\dots,Y_d\in\VF(Q)$
tangent to $\RDist$ and $(u_1,\dots,u_d)\in (L^1([0,1]))^d$
and a connected open neighbourhood $W$ of $(u_1,\dots,u_d)$ in $(L^1([0,1]))^d$
such that the image of the end point map $\ENDP_{(Y_1,\dots,Y_d)}(q,W)$
is an open subset of the orbit $\mc{O}_{\RDist}(q)$ containing $q$
and included in the $\Pi$-evenly covered set $U$.
Let $(Y_i)_1$, $i=1,\dots,d$, be the unique vector fields on $Q_1$
defined by $\Pi_*(Y_i)_1=Y_i$, $i=1,\dots,d$.
Since $\Pi_*(\RDist)_1=\RDist$, it follows that $(Y_i)_1$ are tangent to $(\RDist)_1$
and also, $\Pi\circ \ENDP_{((Y_1)_1,\dots,(Y_d)_1)}=\ENDP_{(Y_1,\dots,Y_d)}\circ (\Pi\times \id)$.
It follows that $\ENDP_{((Y_1)_1,\dots,(Y_d)_1)}(q_1',W)$ is an open subset of 
$\mc{O}_{(\RDist)_1}(q_1)$ contained in $\Pi^{-1}(U)$ for every $q_1'\in (\Pi|_{\mc{O}_{(\RDist)_1}(q_1)})^{-1}(q)$.

Since $\ENDP_{((Y_1)_1,\dots,(Y_d)_1)}$ is continuous and $W$ is connected,
it thus follows that for each $q_1'\in (\Pi|_{\mc{O}_{(\RDist)_1}(q_1)})^{-1}(q)$,
the connected set $\ENDP_{((Y_1)_1,\dots,(Y_d)_1)}(q_1',W)$
is contained in a single component of $\Pi^{-1}(U)$
which, since $U$ was evenly covered, is mapped diffeomorphically by $\Pi$ onto $U$.
But then $\Pi$ maps $\ENDP_{((Y_1)_1,\dots,(Y_d)_1)}(q_1',W)$
diffeomorphically onto $\ENDP_{(Y_1,\dots,Y_d)}(q,W)$.
Since it is also obvious that
\[
(\Pi|_{\mc{O}_{(\RDist)_1}(q_1)})^{-1}\big(\ENDP_{(Y_1,\dots,Y_d)}(q,W)\big)
=\bigcup_{q_1'\in (\Pi|_{\mc{O}_{(\RDist)_1}(q_1)})^{-1}(q)} \ENDP_{((Y_1)_1,\dots,(Y_d)_1)}(q_1',W),
\]
we have proved that $\ENDP_{(Y_1,\dots,Y_d)}(q,W)$ is an evenly covered
neighbourhood of $q$ in $\mc{O}_{\RDist}(q)$ w.r.t $\Pi|_{\mc{O}_{(\RDist)_1}(q_1)}$.

Finally, let us prove that for every $q_1\in Q_1$, the following implication
holds true, 
\[
\mc{O}_{\RDist}(\Pi(q_1))=Q\quad \Longrightarrow\quad
\mc{O}_{(\RDist)_1}(q_1)=Q_1,
\]
 (the converse statement being trivial).
Indeed, if $\mc{O}_{\RDist}(\Pi(q_1))=Q$,
then, for every $q\in Q$, $\mc{O}_{\RDist}(q)=Q$ 
and, on the other hand, the fact that $\Pi$ restricts to a covering map $\mc{O}_{(\RDist)_1}(q_1')\to \mc{O}_{\RDist}(\Pi(q_1'))=Q$
for any $q'_1\in Q_1$
implies that all the orbits $\mc{O}_{(\RDist)_1}(q_1')$, $q_1'\in Q_1$,
are open on $Q_1$. But $Q_1$ is connected (and orbits are non-empty) and hence there cannot be but one orbit.
In particular, $\mc{O}_{(\RDist)_1}(q_1)=Q_1$.
\end{proof}

%%%%%%%%%%%%%%%%%%%%%%%%%%%%%%%%%%%%%%%%%%%%%%%%%%%%%%%%%%%%
\subsection{Rolling Curvature and Lie Algebraic Structure of $\RDist$}
%%%%%%%%%%%%%%%%%%%%%%%%%%%%%%%%%%%%%%%%%%%%%%%%%%%%%%%%%%%%
\subsubsection{Rolling Curvature}
We compute some commutators of the
vector fields of the form $\LRD(X)$ with $X\in\VF(M)$.
The formulas obtained hold both in $Q$ and $T^*M\otimes T\hat{M}$
and thus we do them in the latter space.

The first commutators of the $\RDist$-lifted fields are given in the following theorem.

\begin{proposition}\label{pr:2.5:1}\label{pr:R_comm_L2}
If $X,Y\in \VF(M)$, $q=(x_0,\hat{x}_0;A)\in T^*(M)\otimes T(\hat{M})$, then
the commutator of the lifts $\LRD(X)$ and $\LRD(Y)$ at $q$ is given by
\begin{align}\label{eq:2.5:4}
[\LRD(X),\LRD(Y)]|_q
&=\LRD([X,Y])|_q+\nu(AR(X,Y)-\hat{R}(AX,AY)A)|_q.
\end{align}
\end{proposition}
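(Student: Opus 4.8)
The plan is to reduce this to the general bracket formula already established in Proposition~\ref{pr:NS_comm_HH}, applied with $\mc{O}=T^*M\otimes T\hat{M}$ (so that the immersed-submanifold tangency hypotheses are vacuous), together with the defining identity $\LRD(X)|_q=\LNSD(X,AX)|_q$ from Definition~\ref{def:LRD}. Concretely, I would set $\ol{T}(q)=(X|_x,AX|_x)$ and $\ol{S}(q)=(Y|_x,AY|_x)$ as smooth sections in $\Cinf(\pi_{\mc{O}},\pi_{T(M\times\hat M)})$, so that $\LRD(X)=\LNSD(\ol{T}(\cdot))$ and $\LRD(Y)=\LNSD(\ol{S}(\cdot))$. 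Then Proposition~\ref{pr:NS_comm_HH} gives immediately
\[
[\LRD(X),\LRD(Y)]|_q=\LNSD\big(\LNSD(\ol{T}(q))|_q\ol{S}-\LNSD(\ol{S}(q))|_q\ol{T}\big)\big|_q+\nu\big(AR(X,Y)-\hat R(AX,AY)A\big)\big|_q,
\]
since the second component of $\ol{T}(q)$ is $AX$ and of $\ol{S}(q)$ is $AY$, matching the curvature term in the statement verbatim. So the entire content of the proof is to identify the horizontal term $\LNSD(\ol{T}(q))|_q\ol{S}-\LNSD(\ol{S}(q))|_q\ol{T}$ with $(\ [X,Y]|_x,\ A[X,Y]|_x\ )$, i.e. with the $\RDist$-lift data of $[X,Y]$.

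For this identification I would compute each component of $\LNSD(\ol{T}(q))|_q\ol{S}$ using the calculus rules in Lemma~\ref{le:rules_of_computation}. Writing $\ol{S}=(S,\hat S)$ with $S(q')=Y|_{x'}$ (a vector field of $M$ viewed as a map $\mc{O}\to TM$) and $\hat S(q')=A'Y|_{x'}$, rule (vii) gives $\LNSD(X,AX)|_q S=\nabla_X Y$, and rule (v) gives $\LNSD(X,AX)|_q\big((\cdot)S(\cdot)\big)=A\,\LNSD(X,AX)|_q S=A\nabla_X Y$. Hence $\LNSD(\ol{T}(q))|_q\ol{S}=(\nabla_X Y,\ A\nabla_X Y)$, and symmetrically $\LNSD(\ol{S}(q))|_q\ol{T}=(\nabla_Y X,\ A\nabla_Y X)$. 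Subtracting and using torsion-freeness of $\nabla$, i.e. $\nabla_X Y-\nabla_Y X=[X,Y]$, yields $\LNSD(\ol{T}(q))|_q\ol{S}-\LNSD(\ol{S}(q))|_q\ol{T}=([X,Y]|_x,\ A[X,Y]|_x)$. Feeding this back into $\LNSD(\cdot)|_q$ and applying Definition~\ref{def:LRD} once more, $\LNSD([X,Y]|_x,A[X,Y]|_x)|_q=\LRD([X,Y])|_q$, which completes the proof.

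There is essentially no serious obstacle here: the work has all been done in the preceding section, and the only subtlety is the bookkeeping of treating $X,Y$ as constant-in-the-fiber maps $\mc{O}\to TM$ so that rule (vii) applies, and then passing the result through the linear map $A$ via rule (v) to get the $\hat M$-component. One small point worth stating explicitly in the write-up is that $q=(x_0,\hat x_0;A)$ is fixed but the formula is pointwise, so the sections $\ol{T},\ol{S}$ need only be defined in a neighbourhood of $q$ (or globally, since $X,Y\in\VF(M)$ extend naturally); either way Proposition~\ref{pr:NS_comm_HH} applies with $\mc{O}$ open. I would also remark that the statement as given is phrased in $T^*M\otimes T\hat M$, and since $\RDist$ and $\LRD$ restrict to $Q$ (as noted in the remark following Proposition~\ref{pr:rol_inverse}) and the curvature term $\nu(AR(X,Y)-\hat R(AX,AY)A)|_q$ is tangent to $Q$ whenever $q\in Q$ — which is consistent with Proposition~\ref{pr:vertical_of_Q} since $AR(X,Y)-\hat R(AX,AY)A\in A\,\so(T|_xM)$ by the antisymmetry of the curvature operators — the formula holds verbatim on $Q$ as well.
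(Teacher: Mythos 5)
Your proposal is correct and follows essentially the same route as the paper: both reduce the bracket to Proposition~\ref{pr:NS_comm_HH} with $\ol{T}(B)=(X,BX)$, $\ol{S}(B)=(Y,BY)$, and then identify the horizontal term with $([X,Y],A[X,Y])$, i.e.\ with $\LRD([X,Y])|_q$, using torsion-freeness of $\nabla$. The only difference is cosmetic: you package the horizontal computation via rules (v) and (vii) of Lemma~\ref{le:rules_of_computation}, whereas the paper carries out the same covariant-derivative computation directly with a local section $\tilde{A}$ satisfying $\ol{\nabla}\tilde{A}|_{(x_0,\hat{x}_0)}=0$ from Lemma~\ref{le:nice_extension_A}.
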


\begin{proof}
Choosing $\ol{T}(B)=(X, BX)$, $\ol{S}(B)=(Y, BY)$ for $B\in T^*(M)\otimes T(\hat{M})$ in proposition \ref{pr:NS_comm_HH}
we have
\[
[\LRD(X),\LRD(Y)]|_q=\LNSD([X+\tilde{A}X,Y+\tilde{A}Y])|_q+\nu(AR(X,Y)-\hat{R}(AX,AY)A)|_q,
\]
where
\[
&\big[(X, \tilde{A}X),(Y, \tilde{A}Y)\big]\big|_{(x_0,\hat{x_0})}
=\ol{\nabla}_{(X, \tilde{A}X)} (Y, \tilde{A}Y)-\ol{\nabla}_{(Y, \tilde{A}Y)} (X, \tilde{A}X) \\
=&\big(\nabla_X Y-\nabla_Y X, \hat{\nabla}_{AX} (\tilde{A}Y)-\hat{\nabla}_{AX} (\tilde{A}Y)\big)\big|_{(x_0,\hat{x}_0)}+\ol{\nabla}_{(0,AX)} Y-\ol{\nabla}_{(0,AY)} X\\
&+\ol{\nabla}_{(X,0)} (\tilde{A}Y)|_{(x_0,\hat{x}_0)}-\ol{\nabla}_{(Y,0)} (\tilde{A}X)|_{(x_0,\hat{x}_0)},
\]
in which e.g.
\[
& \hat{\nabla}_{AX} (\tilde{A}Y)|_{\hat{x}_0}
=(\ol{\nabla}_{(0,AX)}\tilde{A})|_{(x_0,\hat{x}_0)}\tilde{Y}|_{\hat{x}_0}+\tilde{A}|_{(x_0,\hat{x}_0)}(\nabla_{0} Y)|_{x_0}=0, \\
& \ol{\nabla}_{(0,AX)} Y=0, \\
& \ol{\nabla}_{(X,0)} (\tilde{A}Y)|_{(x_0,\hat{x}_0)}
=(\ol{\nabla}_{(X,0)} \tilde{A})|_{(x_0,\hat{x}_0)}Y|_{x_0}+A\nabla_{X} Y|_{x_0}
=A\nabla_{X} Y|_{x_0}.
\]
Therefore
\[
[\LRD(X),\LRD(Y)]|_q
=&\LNSD\Big(\big(\nabla_X Y-\nabla_Y X, 0\big)+0+\big(0, A\nabla_X Y-A\nabla_Y X\big)\Big)\Big|_q \\
&+\nu(AR(X,Y)-\hat{R}(AX,AY)A)|_q,
\]
which proves the claim after noticing
that, by torsion freeness of $\nabla$,
one has $\nabla_X Y-\nabla_Y X=[X,Y]$.
\end{proof}

Proposition \ref{pr:R_comm_L2} justifies the next definition.

\begin{definition}\label{def-rol}
Given vector fields $X,Y,Z_1,\dots,Z_k\in \VF(M)$, we define the \emph{Rolling Curvature} of the rolling of $M$ against $\hat{M}$ as 
the smooth mapping
$$\Rol(X,Y):\pi_{T^*M\otimes T\hat{M}}\to \pi_{T^*M\otimes T\hat{M}},$$ by
\begin{align}
 \Rol(X,Y)(A):=AR(X,Y)-\hat{R}(AX,AY)A, \label{rol0}.
\end{align}
Moreover, we use $\Rol_q$ to denote the linear map $\wedge^2 T|_x M\to T^*|_x M\wedge T|_{\hat{x}}\hat{M}$ defined on pure elements of $\wedge^2 T|_x M$ by 
\begin{align}\label{def:rol-q}
\Rol_q(X\wedge Y)= \Rol(X,Y)(A).
\end{align}
\end{definition}
Similarly, for $k\geq 0$, the smooth mapping
$$\ol{\nabla}^k\Rol(X,Y,Z_1,\dots,Z_k):\pi_{T^*M\otimes T\hat{M}}\to \pi_{T^*M\otimes T\hat{M}},$$
by
\begin{align}
\ol{\nabla}^k\Rol(X,Y,Z_1,\dots,Z_k)(A)&:=
A\nabla^k R(X,Y,(\cdot),Z_1,\dots,Z_k)\nonumber\\
&-\hat{\nabla}^k \hat{R}(AX,AY,A(\cdot),AZ_1,\dots,AZ_k).\label{rol1}
\end{align}
Restricting to $Q$, we have
\[
\Rol(X,Y),\ol{\nabla}^k \Rol(X,Y,Z_1,\dots,Z_k)(A)\in C^\infty(\pi_{Q},\pi_{T^*M\otimes T\hat{M}}),
\]
such that, for all $(x,\hat{x};A)\in Q$,
\[
\Rol(X,Y)(A),\ol{\nabla}^k \Rol(X,Y,Z_1,\dots,Z_k)(A)\in A(\so(T|_x M)).
\]

\begin{remark}
With this notation, Eq. (\ref{eq:2.5:4}) of Proposition \ref{pr:2.5:1} can be written as
\[
[\LRD(X),\LRD(Y)]|_q&=\LRD([X,Y])|_q+\nu(\Rol(X,Y)(A))|_q.
\]
\end{remark}

\begin{remark}
Recall that both $R|_x$, $\hat{R}|_{\hat{x}}$ are (real) symmetric endomorphisms on $\wedge^2 T|_x M$ and $\wedge^2 T|_{\hat{x}} \hat{M}$ respectively. 
Since $A:T|_x M\to T|_{\hat{x}}\hat{M}$ is an isometry,
it follows that $\Rol_q$, defined in \eqref{de:rol-q}, is a (real) symmetric map $\wedge^2 T|_x M\to \wedge^2 T|_{\hat{x}}\hat{M}$. Here of course, we understand that $\wedge^2 T|_x M$ and $\wedge^2 T|_{\hat{x}} \hat{M}$ are endowed with the metrics induced in a natural way from $g|_x$ and $\hat{g}|_{\hat{x}}$.
\end{remark}
In order to take advantage of the spectral properties of a (real) symmetric endomorphism, we introduce the following operator associated to the rolling curvature.

Recall that using the metric $g$, one may identify $T^*|_xM\wedge T|_xM=\so(T|_x M)$ with $\wedge^2 T|_x M$
as we usually do without mention.
Given this, we make the following definition.

\begin{definition}\label{def:rol-tilde}
If $q=(x,\hat{x};A)\in Q$, let $\widetilde{\Rol}_q:\wedge^2 T|_x M\to \wedge^2 T|_x M$ be the  (real) symmetric endomorphism defined by
\begin{align}\label{eq:rol-tilde}
\widetilde{\Rol}_q:=A^{\ol{T}}\Rol_q.
\end{align}
In particular, eigenvalues of $R|_x$, $\hat{R}|_{\hat{x}}$ and $\widetilde{\Rol}_q$  are real and the eigenspaces corresponding to distinct eigenvalues are orthogonal one to the other.
\end{definition}

Recall that, on a Riemannian manifold $(N,h)$,
a smooth vector field $t\mapsto Y(t)$
along a smooth curve $t\mapsto \gamma(t)$
is a Jacobi field if $Y$ satisfies the following second order ODE:
\[
\nabla^h_{\dot{\gamma}(t)}\nabla^h_{\dot{\gamma}(\cdot)} Y(\cdot)=R^h(\dot{\gamma}(t),Y(t))\dot{\gamma}(t).
\]
The next lemma relates the rolling curvature $\Rol$
to the Jacobi fields of $M$ and $\hat{M}$.

\begin{lemma}\label{le:rol_Jacobi}
Suppose that $q_0=(x_0,\hat{x}_0;A_0)\in T^*M\otimes T\hat{M}$,
$\gamma:[a,b]\to M$ is a smooth curve with $\gamma(a)=x_0$
and that the rolling problem along $\gamma$
has a solution $q_{\RDist}{(\gamma,q_0)}=(\gamma,\hat{\gamma}_{\RDist}{(\gamma,q_0)};A_{\RDist}{(\gamma,q_0)})$
on the interval $[a,b]$.
If $t\mapsto Y(t)$ is a Jacobi field of $(M,g)$ 
along $\gamma$,
then $\hat{Y}(t)=A_{\RDist}{(\gamma,q_0)}(t)Y(t)$ is a vector field along
$\hat{\gamma}_{\RDist}{(\gamma,q_0)}$ and, for all $t\in [a,b]$,
\[
\hat{\nabla}_{\dot{\hat{\gamma}}_{\RDist}{(\gamma,q_0)}(t)} \hat{\nabla}_{\dot{\hat{\gamma}}_{\RDist}{(\gamma,q_0)}(\cdot)} \hat{Y}(\cdot)
=&\hat{R}\big(\dot{\hat{\gamma}}_{\RDist}{(\gamma,q_0)}(t),\hat{Y}(t)\big)\dot{\hat{\gamma}}_{\RDist}{(\gamma,q_0)}(t) \\
&+\Rol(\dot{\gamma}(t),Y(t))(A_{\RDist}{(\gamma,q_0)}(t))\dot{\gamma}(t).
\]
\end{lemma}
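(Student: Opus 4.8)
The plan is to reduce the statement to a direct computation using the characterization of the rolling curve along a curve in terms of parallel transport, namely Eq.~(\ref{eq:parallel_trans_A}) applied to the path $(\gamma,\hat{\gamma}_{\RDist}{(\gamma,q_0)})$. Recall from Proposition~\ref{pr:rolling_curves} and Remark~\ref{re:Lambda} that along the rolling curve one has $A_{\RDist}{(\gamma,q_0)}(t)=P_0^t(\hat{\gamma}_{\RDist}{(\gamma,q_0)})\circ A_0\circ P_t^0(\gamma)$, so that $A_{\RDist}{(\gamma,q_0)}$ is precisely the parallel transport of $A_0$ along $(\gamma,\hat{\gamma}_{\RDist}{(\gamma,q_0)})$ in $T^*M\otimes T\hat{M}\subset T^1_1(M\times\hat{M})$, i.e. $\ol{\nabla}_{(\dot{\gamma},\dot{\hat{\gamma}}_{\RDist}{(\gamma,q_0)})} A_{\RDist}{(\gamma,q_0)}=0$. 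The no-slipping condition further gives $\dot{\hat{\gamma}}_{\RDist}{(\gamma,q_0)}(t)=A_{\RDist}{(\gamma,q_0)}(t)\dot{\gamma}(t)$. I will abbreviate $A(t):=A_{\RDist}{(\gamma,q_0)}(t)$, $\hat{\gamma}(t):=\hat{\gamma}_{\RDist}{(\gamma,q_0)}(t)$ throughout.

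The key computational step is to differentiate $\hat{Y}(t)=A(t)Y(t)$ twice covariantly along $\hat{\gamma}$. Since $A(\cdot)$ is parallel along $(\gamma,\hat\gamma)$, the Leibniz rule for the induced connection on $T^*M\otimes T\hat{M}$ acting on the contraction $A(t)Y(t)$ gives $\hat{\nabla}_{\dot{\hat\gamma}(t)}(A(\cdot)Y(\cdot)) = (\ol\nabla_{(\dot\gamma,\dot{\hat\gamma})(t)}A)Y(t) + A(t)\nabla_{\dot\gamma(t)}Y(t) = A(t)\nabla_{\dot\gamma(t)}Y(t)$, and iterating, $\hat{\nabla}_{\dot{\hat\gamma}(t)}\hat{\nabla}_{\dot{\hat\gamma}(\cdot)}\hat{Y}(\cdot) = A(t)\,\nabla_{\dot\gamma(t)}\nabla_{\dot\gamma(\cdot)}Y(\cdot)$. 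Now I invoke the Jacobi equation for $Y$ along $\gamma$ in $(M,g)$: $\nabla_{\dot\gamma(t)}\nabla_{\dot\gamma(\cdot)}Y(\cdot) = R(\dot\gamma(t),Y(t))\dot\gamma(t)$, whence $\hat{\nabla}_{\dot{\hat\gamma}(t)}\hat{\nabla}_{\dot{\hat\gamma}(\cdot)}\hat{Y}(\cdot) = A(t)R(\dot\gamma(t),Y(t))\dot\gamma(t)$. It then remains to massage the right-hand side into the claimed form. Using $\dot{\hat\gamma}(t)=A(t)\dot\gamma(t)$ and $\hat{Y}(t)=A(t)Y(t)$, the definition~(\ref{rol0}) of $\Rol$ gives $\Rol(\dot\gamma(t),Y(t))(A(t))\dot\gamma(t) = A(t)R(\dot\gamma(t),Y(t))\dot\gamma(t) - \hat{R}(A(t)\dot\gamma(t),A(t)Y(t))A(t)\dot\gamma(t) = A(t)R(\dot\gamma(t),Y(t))\dot\gamma(t) - \hat{R}(\dot{\hat\gamma}(t),\hat{Y}(t))\dot{\hat\gamma}(t)$. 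Adding $\hat{R}(\dot{\hat\gamma}(t),\hat{Y}(t))\dot{\hat\gamma}(t)$ to both sides yields exactly $A(t)R(\dot\gamma(t),Y(t))\dot\gamma(t) = \hat{R}(\dot{\hat\gamma}(t),\hat{Y}(t))\dot{\hat\gamma}(t) + \Rol(\dot\gamma(t),Y(t))(A(t))\dot\gamma(t)$, which is the asserted identity.

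The only points requiring a little care, rather than being genuine obstacles, are: (i) justifying that the covariant derivatives in $M$ and $\hat M$ of the contraction $A(t)Y(t)$ are correctly computed via the product connection $\ol\nabla$ on $M\times\hat M$ — this is legitimate because $Y(t)\in T|_{\gamma(t)}M$, $\hat Y(t)\in T|_{\hat\gamma(t)}\hat M$ are genuine vector fields along the respective curves (no issue of the sort flagged in Remark~\ref{re:product_connection} arises here, since both arguments are honestly sections of the respective tangent bundles along $\gamma$ and $\hat\gamma$), and the Leibniz rule (\ref{eq:tensor_extension}) for $\ol\nabla$ on $T^1_1(M\times\hat M)$ together with $\ol\nabla_{(\dot\gamma,\dot{\hat\gamma})}A=0$ reduces everything to covariant derivatives of $Y$ alone; and (ii) the fact that $A(\cdot)$ is $\ol\nabla$-parallel along $(\gamma,\hat\gamma)$, which is precisely Corollary~\ref{cor:LNSD_along_path} (or directly Proposition~\ref{pr:2.1:1}) applied to the rolling curve, since a rolling curve is tangent to $\RDist\subset\NSDist$. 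With these two facts in hand the proof is the short chain of equalities above. I anticipate no serious difficulty; the "hard part" is merely bookkeeping with the identifications $T|_xM,T|_{\hat x}\hat M\subset T|_{(x,\hat x)}(M\times\hat M)$ and being explicit that the Jacobi equation is applied in $(M,g)$ while the conclusion is phrased in $(\hat M,\hat g)$.
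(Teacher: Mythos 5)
Your proof is correct and follows essentially the same route as the paper's: differentiate $\hat Y=A Y$ twice using $\ol\nabla$-parallelism of $A_{\RDist}(\gamma,q_0)$ along $(\gamma,\hat\gamma)$, apply the Jacobi equation for $Y$ in $(M,g)$, and rewrite $A R(\dot\gamma,Y)\dot\gamma$ via the definition of $\Rol$ together with the no-slip identities $A\dot\gamma=\dot{\hat\gamma}$, $AY=\hat Y$. The extra bookkeeping you include on the Leibniz rule is fine and consistent with the paper's (more terse) computation.
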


\begin{proof}
Since $\ol{\nabla}_{(\dot{\gamma}(t),\dot{\hat{\gamma}}_{\RDist}{(\gamma,q_0)}(t))} A_{\RDist}{(\gamma,q_0)}(\cdot)=0$
and $Y$ is a Jacobi field, one has
\[
& \hat{\nabla}_{\dot{\hat{\gamma}}_{\RDist}{(\gamma,q_0)}(t)} \hat{\nabla}_{\dot{\hat{\gamma}}_{\RDist}{(\gamma,q_0)}(\cdot)} \hat{Y}(\cdot) \\
=&A_{\RDist}{(\gamma,q_0)}(t)\nabla_{\dot{\gamma}(t)} \nabla_{\dot{\gamma}(\cdot)} Y(\cdot)
=A_{\RDist}{(\gamma,q_0)}(t)R(\dot{\gamma}(t),Y(t))\dot{\gamma}(t) \\
=&\Rol(\dot{\gamma}(t),Y(t))(A_{\RDist}{(\gamma,q_0)}(t))\dot{\gamma}(t) \\
&+\hat{R}(A_{\RDist}{(\gamma,q_0)}(t)\dot{\gamma}(t),A_{\RDist}{(\gamma,q_0)}(t)Y(t))A_{\RDist}{(\gamma,q_0)}(t)\dot{\gamma}(t)
\]
from which the claim follows by using the facts that
$$A_{\RDist}{(\gamma,q_0)}(t)\dot{\gamma}(t)=
\dot{\hat{\gamma}}_{\RDist}{(\gamma,q_0)}(t)\hbox{ and }
A_{\RDist}{(\gamma,q_0)}(t)Y(t)=\hat{Y}(t).$$
\end{proof}

We will use Lemma \ref{le:rol_Jacobi} to prove Theorem \ref{th:cartan}.

\begin{remark}
Notice that if, in Lemma \ref{le:rol_Jacobi},
it held that 
$$\Rol(Y(t),\dot{\gamma}(t))(A_{\RDist}{(\gamma,q_0)}(t))\dot{\gamma}(t)=0,$$
for all $t\in [a,b]$,
then $\hat{Y}$ defined there
would be a Jacobi field along $\hat{\gamma}_{\RDist}{(\gamma,q_0)}$.
Hence, $\Rol$ measures the obstruction for $\hat{Y}=A_{\RDist}{(\gamma,q_0)}(t)Y(t)$ to be a Jacobi field
of $\hat{M}$, if $Y(t)$ is a Jacobi field on $M$ along $\gamma$.
\end{remark}

Before proceeding with the computations of higher order brackets of the vector fields $\LRD(X)$,
we prove the following lemma.

\begin{lemma}\label{le:Rbar_cov_diff}
Let $\tilde{A}\in\Gamma(\pi_{T^*M\otimes T\hat{M}})$ and $(x,\hat{x};A)\in T^*M\otimes T\hat{M}$
such that $\tilde{A}|_{(x,\hat{x})}=A$ and $\ol{\nabla}_{\ol{X}} \tilde{A}=0$ for all $\ol{X}\in T|_{(x,\hat{x})} (M\times \hat{M})$.
Then, for $X_1,\dots,X_{k+2},Y\in \VF(M)$,
\begin{align}\label{eq:Rbar_cov_diff}
& \ol{\nabla}_{(Y, AY)} \big(\ol{\nabla}^k \Rol(X_1,X_2,X_3,\dots,X_{k+2})(\tilde{A})\big) \\
=&\ol{\nabla}^{k+1} \Rol(X_1,\dots,X_{k+2},Y)(A)+\sum_{i=1}^{k+2} \ol{\nabla}^k \Rol(X_1,\dots,\nabla_Y X_i \dots,X_{k+2})(A) 
\end{align}
\end{lemma}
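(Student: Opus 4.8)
\textbf{Proof plan for Lemma \ref{le:Rbar_cov_diff}.}

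The plan is to reduce the statement to the basic Leibniz rule for $\ol{\nabla}$ applied to the tensor-valued maps appearing in Definition \ref{def-rol}, carefully tracking that the parallel section $\tilde{A}$ has been chosen so that $\ol{\nabla}_{\ol{Z}}\tilde{A}=0$ for all $\ol{Z}\in T|_{(x,\hat{x})}(M\times\hat{M})$, and in particular for $\ol{Z}=(Y,AY)$. First I would write out $\ol{\nabla}^k\Rol(X_1,\dots,X_{k+2})(\tilde{A})$ explicitly using \eqref{rol1}, namely as the difference $\tilde{A}\nabla^k R(X_1,X_2,(\cdot),X_3,\dots,X_{k+2})-\hat\nabla^k\hat R(\tilde{A}X_1,\tilde{A}X_2,\tilde{A}(\cdot),\tilde{A}X_3,\dots,\tilde{A}X_{k+2})$, viewed as a local section of $\pi_{T^*M\otimes T\hat M}$ near $(x,\hat{x})$. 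I would then apply $\ol{\nabla}_{(Y,AY)}$ to each of the two terms. Since $\ol{\nabla}_{(Y,AY)}\tilde{A}=0$ at $(x,\hat{x})$, the covariant derivative passes through $\tilde{A}$ in the first term, producing $A\,\ol{\nabla}_{Y}\big(\nabla^k R(X_1,X_2,(\cdot),X_3,\dots,X_{k+2})\big)$; for the second term, each occurrence of $\tilde{A}X_i$ (and $\tilde{A}(\cdot)$) contributes, via Leibniz, a term where $\ol{\nabla}_{(Y,AY)}$ hits $\tilde{A}$ (which vanishes) plus a term where it hits $X_i$, giving $\hat\nabla^k\hat R(\dots,\tilde{A}\nabla_Y X_i,\dots)=\hat\nabla^k\hat R(\dots,A\nabla_Y X_i,\dots)$ at the point, plus the term where it hits the $\hat\nabla^k\hat R$ tensor itself.

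The key algebraic identity to invoke is the standard formula for covariant derivatives of iterated covariant derivatives of a tensor, Eq. \eqref{eq:tensor_extension} and its iterates: for any $(k,m)$-tensor field $T$ one has $(\nabla^{k+1}T)(\dots,Y)=\nabla_Y(\nabla^k T)(\dots)$, so that $\ol{\nabla}_Y\big(\nabla^k R(X_1,X_2,(\cdot),X_3,\dots,X_{k+2})\big)$ equals $\nabla^{k+1}R(X_1,X_2,(\cdot),X_3,\dots,X_{k+2},Y)$ plus the correction terms $\sum_i \nabla^k R(X_1,\dots,\nabla_Y X_i,\dots,X_{k+2})$ coming from differentiating the arguments $X_i$ rather than the slot $(\cdot)$; the same identity holds on $\hat M$ after pushing the $X_i$ forward by $A$ and using that $A$ is parallel along $(\gamma,\hat\gamma)$-directions. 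Collecting the two contributions and matching them against the definition \eqref{rol1} of $\ol{\nabla}^{k+1}\Rol(X_1,\dots,X_{k+2},Y)(A)$ and of $\ol{\nabla}^k\Rol(X_1,\dots,\nabla_Y X_i,\dots,X_{k+2})(A)$ yields exactly the right-hand side of \eqref{eq:Rbar_cov_diff}. A small bookkeeping point is that the distinguished "evaluation" slot $(\cdot)$ in $\Rol$ should be treated on the same footing as the other $X_i$ when one differentiates, but since $\nabla_Y$ of an unspecified parallel-transported argument is zero, or more precisely since the identity is tensorial, the slot $(\cdot)$ contributes only to the leading $\nabla^{k+1}$ term and not to the sum over $i$ — this is consistent with the statement, where the sum runs over $i=1,\dots,k+2$ only.

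The main obstacle I anticipate is purely notational: keeping straight which $\nabla$ (the one on $M$, on $\hat M$, or the product connection $\ol{\nabla}$) acts where, and ensuring that the hatted terms are evaluated using $\tilde A|_{(x,\hat x)}=A$ only \emph{after} the differentiation, so that the Leibniz corrections $\tilde A\nabla_Y X_i$ are correctly identified with $A\nabla_Y X_i$. There is no genuine geometric difficulty once the parallel extension $\tilde A$ from Lemma \ref{le:nice_extension_A} is in hand; the computation is a direct expansion plus one application of the iterated-covariant-derivative formula \eqref{eq:tensor_extension}. I would present it as a single displayed computation with the Leibniz expansion, the vanishing of the $\ol{\nabla}\tilde A$ terms, and the regrouping into $\ol{\nabla}^{k+1}\Rol$ and the sum of $\ol{\nabla}^k\Rol$ terms, each step justified by a one-line remark.
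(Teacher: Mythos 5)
Your proposal is correct and follows essentially the same route as the paper: a Leibniz expansion of the definition \eqref{rol1} using the parallel extension $\tilde{A}$ (so that all terms where $\ol{\nabla}_{(Y,AY)}$ hits $\tilde{A}$ vanish), together with the iterated-covariant-derivative identity \eqref{eq:tensor_extension} to regroup the remaining terms into $\ol{\nabla}^{k+1}\Rol$ plus the sum of correction terms. The only cosmetic difference is that the paper writes out the $k=0$ case explicitly and invokes induction for $k>0$, whereas you carry out the general-$k$ computation directly; your handling of the evaluation slot $(\cdot)$ is also the correct one.
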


\begin{proof}
If $k=0$, we have $\ol{\nabla}^0 \Rol(X_1,X_2)(\tilde{A})=\Rol(X_1,X_2)(\tilde{A})$ and
since $\ol{\nabla}_{(Y, AY)} \tilde{A}=0$, one gets
\[
& \ol{\nabla}_{(Y, AY)} \Rol(X_1,X_2)(\tilde{A})
=\ol{\nabla}_{(Y, AY)} \big(\tilde{A}R(X_1,X_2)-\hat{R}(\tilde{A}X_1,\tilde{A}X_2)\tilde{A}\big) \\
=& A \nabla_{Y} (R(X_1,X_2))-(\ol{\nabla}_{(Y, AY)} \hat{R}(\tilde{A}X_1,\tilde{A}X_2)) \tilde{A} \\
=&A\nabla R(X_1,X_2,(\cdot),Y)+AR(\nabla_Y X_1,X_2)+AR(X_1,\nabla_Y X_2) \\
&-\hat{\nabla} \hat{R}(AX_1,AX_2,A(\cdot),AY)-\hat{R}(A\nabla_Y X_1,AX_2)A-\hat{R}(AX_1,A\nabla_Y X_2),
\]
where on the last line we have computed $\ol{\nabla}_{(Y, AY)} (\tilde{A}X_i)=A\nabla_Y X_i$.
The case $k> 0$ is proved by induction and similar computations.  

\end{proof}

\subsubsection{Computation of more Lie brackets}

\begin{proposition}\label{pr:R_comm_L3}
Let $X,Y,Z\in\VF(M)$. Then, for $q=(x,\hat{x};A)\in T^*M\otimes T\hat{M}$, one has
\[
[\LRD(Z),\nu(\Rol(X,Y)(\cdot))]|_{q}
=&-\LNSD(\Rol(X,Y)(A)Z)|_q+\nu\big(\ol{\nabla}^1\Rol(X,Y,Z)(A)\big)\big|_q \\
& +\nu\big(\Rol(\nabla_Z X,Y)(A)\big)\big|_q
+\nu\big(\Rol(X,\nabla_Z Y)(A)\big)\big|_q.
\]
\end{proposition}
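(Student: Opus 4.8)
The statement is a special case of the general commutator formula of Proposition \ref{pr:NS_comm_HV}, once we identify $\nu(\Rol(X,Y)(\cdot))$ with a vertical field of the form $\nu(U(\cdot))$ and $\LRD(Z)$ with a horizontal field of the form $\LNSD(\ol{T}(\cdot))$. The plan is to apply Proposition \ref{pr:NS_comm_HV} with $\mc{O}=T^*M\otimes T\hat{M}$ (so the tangency hypotheses are automatic), $\ol{T}(B)=(Z, BZ)$ — i.e. the data defining $\LRD(Z)=\LNSD(\ol{T}(\cdot))$ by Definition \ref{def:LRD} — and $U(B)=\Rol(X,Y)(B)=BR(X,Y)-\hat{R}(BX,BY)B$. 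Proposition \ref{pr:NS_comm_HV} then gives
\[
[\LRD(Z),\nu(\Rol(X,Y)(\cdot))]|_q
=-\LNSD\big(\nu(U(q))|_q\ol{T}\big)|_q+\nu\big(\LNSD(\ol{T}(q))|_q U\big)|_q,
\]
and the whole task reduces to computing the two derivatives $\nu(U(q))|_q\ol{T}$ and $\LNSD(\ol{T}(q))|_q U$ explicitly, with $q=(x,\hat{x};A)$.

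First I would compute $\nu(U(q))|_q\ol{T}$. Since $\ol{T}(B)=(Z, BZ)$ has first component independent of $B$ and second component linear in $B$, the vertical derivative $\nu(U(q))|_q\ol{T}$ — which by the rules in Lemma \ref{le:rules_of_computation}(vi) applied to the map $B\mapsto BZ$ — gives $\big(0, U(q)Z\big)=\big(0,\Rol(X,Y)(A)Z\big)$. Hence $\LNSD(\nu(U(q))|_q\ol{T})|_q=\LNSD(0,\Rol(X,Y)(A)Z)|_q$. To match the stated right-hand side I must rewrite $\LNSD(0,\Rol(X,Y)(A)Z)|_q$ as $\LNSD(\Rol(X,Y)(A)Z)|_q$, which is exactly the notational identification $(0,\hat V)=\hat V$ fixed in the Notations section (an element $\hat V\in T|_{\hat x}\hat M$ viewed inside $T|_{(x,\hat x)}(M\times\hat M)$); note $\Rol(X,Y)(A)Z=AR(X,Y)Z-\hat{R}(AX,AY)AZ\in T|_{\hat x}\hat M$ since $A\in Q$ maps $T|_xM$ to $T|_{\hat x}\hat M$. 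This produces the term $-\LNSD(\Rol(X,Y)(A)Z)|_q$.

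The main work is the second derivative $\LNSD(\ol{T}(q))|_q U$ where $\ol{T}(q)=(Z, AZ)$, i.e. computing $\LNSD(Z, AZ)|_q\big(B\mapsto BR(X,Y)-\hat{R}(BX,BY)B\big)$. I would use Lemma \ref{le:nice_extension_A} to pick a local section $\tilde A$ through $A$ with $\ol{\nabla}\tilde A|_{(x,\hat x)}=0$, then use the definition \eqref{eq:semi-general_LNSD}/\eqref{eq:general_LNSD} of $\LNSD(\ol X)|_q\ol T$ together with Lemma \ref{le:Rbar_cov_diff}. Indeed, $U(\tilde A)=\Rol(X,Y)(\tilde A)=\ol\nabla^0\Rol(X,Y)(\tilde A)$, and since $\ol{\nabla}_{(Z, AZ)}\tilde A=0$ the vertical term in \eqref{eq:semi-general_LNSD} vanishes, leaving $\LNSD(Z,AZ)|_q U=\ol{\nabla}_{(Z, AZ)}\big(\Rol(X,Y)(\tilde A)\big)$. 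By Lemma \ref{le:Rbar_cov_diff} with $k=0$, $Y\rightsquigarrow Z$, $(X_1,X_2)=(X,Y)$, this equals
\[
\ol{\nabla}^1\Rol(X,Y,Z)(A)+\Rol(\nabla_Z X,Y)(A)+\Rol(X,\nabla_Z Y)(A),
\]
which are precisely the three vertical terms appearing in the proposition. Applying $\nu(\cdot)|_q$ and assembling the two pieces gives the claimed identity.

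\textbf{Main obstacle.} The only genuinely delicate point is the careful bookkeeping of the identifications $T|_xM, T|_{\hat x}\hat M\subset T|_{(x,\hat x)}(M\times\hat M)$ and $\so(M)$-valued versus $T^*M\otimes T\hat M$-valued objects: one must check that $U(q)=\Rol(X,Y)(A)$ is really a legitimate choice of vertical data (it lies in $A(\so(T|_xM))$ by the remark after Definition \ref{def-rol}, hence $\nu(U(q))|_q$ is tangent to $Q$ — though since we work in $T^*M\otimes T\hat M$ this is not even needed here), and that the derivative $\nu(U(q))|_q\ol T$ is computed against the correct contraction as in Lemma \ref{le:rules_of_computation}(vi), remembering that $\tilde T(q)\in T|_xM$ plays the role there. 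All of this is routine given the machinery already set up; no new ideas are required.
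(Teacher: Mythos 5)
Your proposal is correct and follows essentially the same route as the paper: apply Proposition \ref{pr:NS_comm_HV} with $\ol{T}(B)=(Z,BZ)$ and $U=\Rol(X,Y)$, compute $\nu(\Rol(X,Y)(A))|_q\ol{T}=\Rol(X,Y)(A)Z$, and evaluate $\ol{\nabla}_{(Z,AZ)}(\Rol(X,Y)(\tilde A))$ via Lemma \ref{le:Rbar_cov_diff} with a normalized section $\tilde A$. The only cosmetic difference is that you invoke Lemma \ref{le:rules_of_computation}(vi) for the vertical derivative where the paper differentiates $t\mapsto Z+(A+t\Rol(X,Y)(A))Z$ directly (and your aside "since $A\in Q$" is unnecessary, as the statement holds for all $A\in T^*M\otimes T\hat M$), neither of which affects the argument.
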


\begin{proof}
Taking $\ol{T}(B)=(Z, BZ)$ and $U=\Rol(X,Y)$ for $B\in T^*M\otimes T\hat{M}$
in Proposition \ref{pr:NS_comm_HV}, we get
\[
& [\LRD(Z),\nu(\Rol(X,Y)(\cdot))]|_{q} \\
=&-\LNSD(\nu(\Rol(X,Y)(A))|_q (Z+(\cdot)Z))|_q+\nu(\ol{\nabla}_{Z+AZ} (\Rol(X,Y)(\tilde{A})))|_q.
\]
From here, one easily computes that
\[
\nu(\Rol(X,Y)(A))|_q (Z+(\cdot)Z)
=\dif{t}|_0 \big(Z+(A+t\Rol(X,Y)(A))Z\big)=\Rol(X,Y)(A)Z,
\]
and by Lemma \ref{le:Rbar_cov_diff}, one gets
\[
\ol{\nabla}_{Z+AZ} (\Rol(X,Y)(\tilde{A}))
=\ol{\nabla}^1\Rol(X,Y,Z)(A)+\Rol(\nabla_Z X,Y)(A)+\Rol(X,\nabla_Z Y)(A).
\]

\end{proof}

By Proposition \ref{pr:R_comm_L2}, the last two terms (when considered as vector fields on $T^*M\otimes T\hat{M}$) on the right hand side belong to $\VF_{\RDist}^2$.

Since for $X,Y\in\VF(M)$ and $q=(x,\hat{x};A)\in Q$
we have $\nu(\Rol(X,Y)(A))|_q\in \mc{O}_{\RDist}(q)$ by Proposition \ref{pr:R_comm_L2},
it is reasonable to compute the Lie-bracket of two elements of this type.
This is given in the following proposition.

\begin{proposition}\label{vert-vert0}
For any $q=(x,\hat{x};A)\in Q$ and $X,Y,Z,W\in\VF(M)$ we have
\[
& \big[\nu(\Rol(X,Y)(\cdot)),\nu(\Rol(Z,W)(\cdot))\big]\big|_q \\
=&
\nu\big(\Rol(X,Y)(A)R(Z,W)-\hat{R}(\Rol(X,Y)(A)Z,AW)A-\hat{R}(AZ,\Rol(X,Y)(A)W)A \\
&-\hat{R}(AZ,AW)\Rol(X,Y)(A)
-\Rol(Z,W)(A)R(X,Y)+\hat{R}(\Rol(Z,W)(A)X,AY)A \\
&+\hat{R}(AX,\Rol(Z,W)(A)Y)A+\hat{R}(AX,AY)\Rol(Z,W)(A)\big)\big|_q.
\]
\end{proposition}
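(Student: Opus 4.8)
The idea is to apply the general vertical-vertical bracket formula of Proposition~\ref{pr:NS_comm_VV}, with $\mc{O}=Q$, $U(\cdot)=\Rol(X,Y)(\cdot)$ and $V(\cdot)=\Rol(Z,W)(\cdot)$, both of which are genuine $\pi_Q$-vertical vector fields by Proposition~\ref{pr:R_comm_L2} (i.e. $\nu(\Rol(X,Y)(A))|_q,\nu(\Rol(Z,W)(A))|_q\in T|_q\mc{O}_{\RDist}(q)\subset T|_qQ$). That proposition reduces the computation to evaluating the two vertical derivatives $\nu(U(q))|_q V$ and $\nu(V(q))|_q U$, i.e. to differentiating the map $B\mapsto \Rol(Z,W)(B)=BR(Z,W)-\hat{R}(BZ,BW)B$ in the fibre direction at $B=A$, in the direction $U(q)=\Rol(X,Y)(A)$, and symmetrically.

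\textbf{Key steps.} First I would compute $\nu(\Rol(X,Y)(A))|_q\big(\Rol(Z,W)(\cdot)\big)$ using the rule $\nu(U)|_q F=\dif{t}|_0 \tilde F(A+tU)$ from Lemma~\ref{le:bundle_extension}(ii) / Lemma~\ref{le:rules_of_computation}, viewing $B\mapsto \Rol(Z,W)(B)$ as a map into the fixed vector space $T^*|_xM\otimes T|_{\hat x}\hat M$. Writing $U=\Rol(X,Y)(A)$ for brevity and substituting $B=A+tU$, the term $BR(Z,W)$ contributes $UR(Z,W)$, while differentiating the quadratic-in-$B$ term $\hat R(BZ,BW)B$ by the Leibniz rule gives the three terms $\hat R(UZ,AW)A+\hat R(AZ,UW)A+\hat R(AZ,AW)U$. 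Hence
\[
\nu(\Rol(X,Y)(A))|_q\big(\Rol(Z,W)\big)
=UR(Z,W)-\hat R(UZ,AW)A-\hat R(AZ,UW)A-\hat R(AZ,AW)U,
\]
with $U=\Rol(X,Y)(A)$. Interchanging $(X,Y)\leftrightarrow(Z,W)$ gives the analogous expression for $\nu(\Rol(Z,W)(A))|_q\big(\Rol(X,Y)\big)$ with $U$ replaced by $\Rol(Z,W)(A)$. Finally I would substitute both into $[\nu(U(\cdot)),\nu(V(\cdot))]|_q=\nu\big(\nu(U(q))|_qV-\nu(V(q))|_qU\big)|_q$ from Proposition~\ref{pr:NS_comm_VV}, which yields exactly the eight-term expression in the statement (the first four terms from $\nu(U(q))|_qV$, the last four, with opposite sign, from $-\nu(V(q))|_qU$).

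\textbf{Main obstacle.} There is no real conceptual obstacle here; the proposition is essentially a direct corollary of Proposition~\ref{pr:NS_comm_VV} once one carries out the fibre-derivative of the quadratic map $B\mapsto\hat R(BZ,BW)B$. The only points requiring care are (a) justifying that one may use the ``fixed vector space'' interpretation of the derivative, which is licensed by Lemma~\ref{le:bundle_extension}(ii) together with the fact that the vectors involved are tangent to $Q$ (equivalently to the orbit), and (b) bookkeeping: keeping straight which of the three Leibniz terms carries the $A$ on the left, in the middle, or on the right, and getting the signs right when forming the antisymmetrized combination. So the ``hard part'' is purely organizational — an accurate application of the Leibniz rule to a degree-three expression — and the proof can be written in a few lines by quoting Proposition~\ref{pr:NS_comm_VV} and then displaying the two vertical derivatives.
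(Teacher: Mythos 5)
Your proof is correct and follows essentially the same route as the paper: the paper likewise invokes Proposition \ref{pr:NS_comm_VV} with $U(A)=\Rol(X,Y)(A)$, $V(A)=\Rol(Z,W)(A)$, computes $\nu(B)|_qU=BR(X,Y)-\hat{R}(BX,AY)A-\hat{R}(AX,BY)A-\hat{R}(AX,AY)B$ by the same fibre-direction Leibniz differentiation, and substitutes $B=V(A)$ (and symmetrically). Your computation of the two vertical derivatives and the final antisymmetrization match the paper's exactly.
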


\begin{proof}
We use Proposition \ref{pr:NS_comm_VV} where for $U,V$ we take
$U(A)=\Rol(X,Y)(A)$ and $V(A)=\Rol(Z,W)(A)$.
First compute for $B$ such that $\nu(B)|_q\in V|_q(Q)$ that
\[\nu(B)|_q U
=&\nu(B)|_q \big(\tilde{A}\mapsto \tilde{A}R(X,Y)-\hat{R}(\tilde{A}X,\tilde{A}Y)
\tilde{A}\big) \\
=&\dif{t}\big|_0 \big((A+tB)R(X,Y)-\hat{R}((A+tB)X,(A+tB)Y)(A+tB)\big) \\
=&BR(X,Y)-\hat{R}(BX,AY)A-\hat{R}(AX,BY)A-\hat{R}(AX,AY)B
\]
So by taking $B=V(A)$
we get
\[\nu(V(A))|_q U
=&\Rol(Z,W)(A)R(X,Y)-\hat{R}(\Rol(Z,W)(A)X,AY)A \\
&-\hat{R}(AX,\Rol(Z,W)(A)Y)A-\hat{R}(AX,AY)\Rol(Z,W)(A)
\]
and similarly for $\nu(U(A))|_q V$.
\end{proof}

For later use, we find it convenient to provide another expression for Proposition 
\ref{vert-vert0} and, for that purpose, we recall the following notation. For $A,B\in \so(T|_x M)$, we define
\[
[A,B]_{\so}:=A\circ B-B\circ A\in \so(T|_x M).
\]
Then, one has the following corollary. 

\begin{corollary}\label{cor:vcomm}
For any $q=(x,\hat{x};A)\in Q$ and $X,Y,Z,W\in\VF(M)$ we have
\begin{align}\label{eq:vcomm}
& \nu|_q^{-1}\big[\nu(\Rol(X,Y)(\cdot)),\nu(\Rol(Z,W)(\cdot))\big]\big|_q \nonumber \\
=&A\big[R(X,Y),R(Z,W)\big]_{\so}-\big[\hat{R}(AX,AY),\hat{R}(AZ,AW)\big]_{\so}A \nonumber \\
&-\hat{R}(\Rol(X,Y)(A)Z,AW)A-\hat{R}(AZ,\Rol(X,Y)(A)W)A \nonumber \\
&+\hat{R}(AX,\Rol(Z,W)(A)Y)A+\hat{R}(\Rol(Z,W)(A)X,AY)A.
\end{align}
\end{corollary}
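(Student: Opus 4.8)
The plan is to start from Proposition \ref{vert-vert0}, which already gives the full expression for $\big[\nu(\Rol(X,Y)(\cdot)),\nu(\Rol(Z,W)(\cdot))\big]\big|_q$ as $\nu$ of a single $(1,1)$-tensor in $A(\so(T|_x M))$, and simply rearrange the eight terms on its right-hand side into the form asked for in \eqref{eq:vcomm}. Since $\nu|_q:T^*|_xM\otimes T|_{\hat{x}}\hat{M}\to V|_q(\pi_Q)$ is the linear isomorphism from Proposition \ref{pr:vertical_of_Q}, applying $\nu|_q^{-1}$ to Proposition \ref{vert-vert0} is immediate and the task reduces to an algebraic identity between linear maps $T|_x M\to T|_{\hat{x}}\hat{M}$.

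Concretely, first I would group the term $\Rol(X,Y)(A)R(Z,W)$ with $-\hat{R}(AZ,AW)\Rol(X,Y)(A)$: using the definition $\Rol(X,Y)(A)=AR(X,Y)-\hat{R}(AX,AY)A$ one expands
\[
\Rol(X,Y)(A)R(Z,W)-\hat{R}(AZ,AW)\Rol(X,Y)(A)
\]
into four summands. The two summands involving only $A$ composed with curvatures of $M$ combine to $A\big(R(X,Y)R(Z,W)\big)$ after commuting $A$ past $\hat{R}(AZ,AW)$ — except the cross terms are $-\hat{R}(AX,AY)AR(Z,W)$ and $-\hat{R}(AZ,AW)AR(X,Y)+\hat{R}(AZ,AW)\hat{R}(AX,AY)A$; here one uses that $\hat{R}(AZ,AW)A R(X,Y)=\hat R(AZ,AW)(AR(X,Y))$ so that the purely ``$\hat R$'' pieces assemble. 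Doing the symmetric computation for the block coming from $-\Rol(Z,W)(A)R(X,Y)+\hat{R}(AX,AY)\Rol(Z,W)(A)$ and adding, the mixed terms $-\hat{R}(AX,AY)AR(Z,W)$ and $+\hat{R}(AZ,AW)AR(X,Y)$ (and their partners) cancel pairwise, leaving exactly $A[R(X,Y),R(Z,W)]_{\so}$ from the $M$-side and $-[\hat{R}(AX,AY),\hat{R}(AZ,AW)]_{\so}A$ from the $\hat{M}$-side. The remaining four terms of Proposition \ref{vert-vert0}, namely $-\hat{R}(\Rol(X,Y)(A)Z,AW)A-\hat{R}(AZ,\Rol(X,Y)(A)W)A$ together with $+\hat{R}(\Rol(Z,W)(A)X,AY)A+\hat{R}(AX,\Rol(Z,W)(A)Y)A$, are already in the desired shape and are carried over verbatim.

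There is essentially no obstacle here: this corollary is pure bookkeeping, and the only care needed is the sign convention for $[\cdot,\cdot]_{\so}$ (which the paper fixes as $A\circ B-B\circ A$) and keeping track of which argument slot of $\hat R$ receives $\Rol(X,Y)(A)$ versus $A$. The one point worth stating explicitly is why the terms regroup into brackets rather than into something messier: it is precisely the antisymmetry built into the definition of $\Rol$ — the ``$A R$'' part and the ``$\hat R A$'' part enter with opposite signs — that forces the $M$-curvature contributions to appear only through $AR(X,Y)R(Z,W)-AR(Z,W)R(X,Y)$ and the $\hat M$-curvature contributions only through the corresponding commutator post-composed with $A$, with all genuinely mixed terms cancelling. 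Thus the proof is: substitute the definition of $\Rol$ into the eight terms of Proposition \ref{vert-vert0}, collect the four ``double-curvature'' terms into the two $[\cdot,\cdot]_{\so}$ expressions, observe the four ``cross'' terms cancel, and retain the four ``$\Rol$-inside-$\hat R$'' terms unchanged; applying $\nu|_q^{-1}$ gives \eqref{eq:vcomm}.
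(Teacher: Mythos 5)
Your proposal is correct and follows the same route as the paper: the paper's proof of Corollary \ref{cor:vcomm} simply states that it is immediate from Proposition \ref{vert-vert0} and the definition of $\Rol$, and your substitution $\Rol(X,Y)(A)=AR(X,Y)-\hat{R}(AX,AY)A$ with the pairwise cancellation of the cross terms is exactly that computation spelled out. The four ``$\Rol$-inside-$\hat R$'' terms carry over verbatim and the remaining four terms assemble into $A[R(X,Y),R(Z,W)]_{\so}-[\hat R(AX,AY),\hat R(AZ,AW)]_{\so}A$, as you checked.
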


\begin{proof} This is immediate by standard computations and the definition of $\Rol$.
\end{proof}

From Proposition \ref{pr:NS_comm_HV} we get the following proposition.

\begin{proposition}\label{pr:H_directions_from_V}
Let $q_0=(x_0,\hat{x}_0;A_0)\in Q$. Suppose that, for some $X\in\VF(M)$ and a real sequence $(t_n)_{n=1}^\infty$ s.t. $t_n\neq 0$ for all $n$, $\lim_{n\to\infty} t_n=0$, we have
\begin{align}\label{eq:V_in_orbit}
V|_{\Phi_{\LRD(X)}(t_n,q_0)}(\pi_Q)\subset T(\mc{O}_{\RDist}(q_0)),\quad \forall n.
\end{align}
Then $\LNSD(Y, \hat{Y})\big|_{q_0}\in T|_{q_0} \mc{O}_{\RDist}(q_0)$
for every $Y\in T|_{x_0} M$ that is $g$-orthogonal to $X|_{x_0}$
and every $\hat{Y}\in T|_{\hat{x}_0} \hat{M}$ that is $\hat{g}$-orthogonal to $A_0X|_{x_0}$.
Hence the orbit $\mc{O}_{\RDist}(q_0)$
has codimension at most $1$ inside $Q$.

\end{proposition}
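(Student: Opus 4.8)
The plan is to combine the vertical-bracket formulas from Proposition \ref{pr:NS_comm_HV} (applied with $\mc{O}=\mc{O}_{\RDist}(q_0)$) with the hypothesis \eqref{eq:V_in_orbit}, exploiting a limiting argument as $t_n\to 0$. First I would set $q_n:=\Phi_{\LRD(X)}(t_n,q_0)$ and, for an arbitrary $B\in(T^*M\otimes T\hat{M})|_{(x_0,\hat{x}_0)}$ with $\nu(B)|_{q_0}\in T|_{q_0}Q$, transport $B$ to a suitable vertical field near the flow line of $\LRD(X)$. Concretely, take a section $U$ of $\pi_{T^*M\otimes T\hat{M}}$ along the curve $t\mapsto\Phi_{\LRD(X)}(t,q_0)$ with $U(q_0)=B$; by \eqref{eq:V_in_orbit} the field $\nu(U(\cdot))$ is tangent to the orbit at each $q_n$, and $\LRD(X)$ is tangent to the orbit everywhere, so the Lie bracket $[\LRD(X),\nu(U(\cdot))]|_{q_n}$ is tangent to $\mc{O}_{\RDist}(q_0)$ for every $n$. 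By Proposition \ref{pr:NS_comm_HV}, this bracket equals $-\LNSD(\nu(U(q_n))|_{q_n}\ol{T})|_{q_n}+\nu(\LNSD(\ol{T}(q_n))|_{q_n}U)|_{q_n}$ where $\ol{T}(B')=(X,B'X)$; the second term is vertical and the first term is the $\NSDist$-lift of some tangent vector in $T|_{x_n}M$.

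Next I would extract the horizontal content. Using \eqref{eq:V_in_orbit} again at $q_n$ to kill the vertical summand, we conclude that $\LNSD(\nu(U(q_n))|_{q_n}\ol{T})|_{q_n}\in T|_{q_n}\mc{O}_{\RDist}(q_0)$, i.e. $\LNSD(-B_nX,\ (\nu(B_n)|_{q_n})(\,\cdot\,X)\,)|_{q_n}\in T|_{q_n}\mc{O}_{\RDist}(q_0)$ for an appropriate $B_n\in(T^*M\otimes T\hat{M})|_{(x_n,\hat{x}_n)}$. The key computation is $\nu(B)|_q\ol{T}=(-BX,\,(\nu(B)|_q\ol{T})\text{-component})$ with the $T\hat M$-component being $A(\nu(B)|_qX)+BX=BX$ since $X\in\VF(M)$ is $q$-independent in its $M$-component — so the lifted vector is $\LNSD(-BX, -BX+\,\ldots)$; more carefully, rule (vi) of Lemma \ref{le:rules_of_computation} gives $\nu(B)|_q((\cdot)T(\cdot))=BT(q)$ when $T=X$ is a fixed vector field, so $\nu(B)|_q\ol{T}=(-BX, BX)$... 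I should double-check the sign bookkeeping here, but the upshot is that the horizontal directions obtained are exactly $\LNSD(W, AW)$-type combined with $\LNSD(0,\hat W)$-type vectors where $W\perp X$ and $\hat W\perp A_0X$. Letting $B$ range over $A_0\so(T|_{x_0}M)$ (all admissible vertical directions, by Proposition \ref{pr:vertical_of_Q}), the images $B X$ sweep out the entire orthogonal complement of $X|_{x_0}$ in $T|_{x_0}M$ on the $M$-side and the orthogonal complement of $A_0X|_{x_0}$ on the $\hat M$-side, independently. Finally, passing to the limit $t_n\to 0$ (the orbit is an immersed submanifold, hence $T|_{q_n}\mc{O}_{\RDist}(q_0)\to T|_{q_0}\mc{O}_{\RDist}(q_0)$ in the Grassmannian along the smooth curve, and the lifted vectors depend smoothly on the base point) yields $\LNSD(Y,\hat Y)|_{q_0}\in T|_{q_0}\mc{O}_{\RDist}(q_0)$ for all $Y\perp X|_{x_0}$, $\hat Y\perp A_0X|_{x_0}$.

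For the codimension claim, I would count: $T|_{q_0}\mc{O}_{\RDist}(q_0)$ contains $\RDist|_{q_0}=\LRD(T|_{x_0}M)|_{q_0}$ (which is $n$-dimensional, projecting isomorphically onto $T|_{x_0}M$), and it also contains $\LNSD(\{0\}\times (A_0X|_{x_0})^\perp)|_{q_0}$, which is an $(n-1)$-dimensional space of purely "$\hat M$-horizontal" vectors. These intersect trivially inside $\NSDist|_{q_0}$ because $\LNSD$ is injective and $(v,A_0v)=(0,\hat Y)$ forces $v=0$. Hence $T|_{q_0}\mc{O}_{\RDist}(q_0)\cap\NSDist|_{q_0}$ has dimension at least $n+(n-1)=2n-1=\dim\NSDist|_{q_0}-1$, so the orbit already contains all of $\NSDist|_{q_0}$ except at most a one-dimensional complement; combined with the splitting $T|_{q_0}Q=\NSDist|_{q_0}\oplus V|_{q_0}(\pi_Q)$ and the fact that the vertical directions appearing in the brackets above (the $\nu(\LNSD(\ol T(q_n))|_{q_n}U)$ terms, plus Proposition \ref{pr:R_comm_L2}) already contribute further dimensions, a dimension count gives $\codim_Q\mc{O}_{\RDist}(q_0)\le 1$. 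The main obstacle I anticipate is the limiting/smoothness argument: making rigorous that tangency of a smoothly-varying family of lifted vectors at the sequence of points $q_n$ passes to tangency at $q_0$, which requires knowing that $\mc{O}_{\RDist}(q_0)$, as an immersed submanifold, has tangent spaces varying continuously along the $C^\infty$ curve $t\mapsto\Phi_{\LRD(X)}(t,q_0)$ — this is where one must invoke the Orbit Theorem structure (locally the orbit is the image of an endpoint map, so tangent spaces are spanned by finitely many smooth vector fields) rather than just the set-theoretic definition of the orbit.
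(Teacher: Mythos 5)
The crucial step of your argument does not go through. From \eqref{eq:V_in_orbit} you only know that the vertical vectors are tangent to the orbit at the isolated points $q_n=\Phi_{\LRD(X)}(t_n,q_0)$; this does not let you conclude that $[\LRD(X),\nu(U(\cdot))]|_{q_n}$ is tangent to $\mc{O}_{\RDist}(q_0)$. A Lie bracket is not determined by the values of the fields at a point, and tangency of a bracket to an immersed submanifold requires both fields to be tangent along an open piece of the submanifold, not merely at the point of evaluation (this is also why you cannot simply invoke Proposition \ref{pr:NS_comm_HV} with $\mc{O}=\mc{O}_{\RDist}(q_0)$: that proposition assumes $\nu(U(q))|_q\in T|_q\mc{O}$ for \emph{all} $q\in\mc{O}$). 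The hypothesis provides no such neighbourhood information, so the inference ``$\nu(U(\cdot))$ tangent at $q_n$ and $\LRD(X)$ tangent everywhere $\Rightarrow$ bracket tangent at $q_n$'' fails, and with it the extraction of the horizontal summand at $q_n$ and the subsequent limit $n\to\infty$, whose justification via continuity of the orbit's tangent spaces you yourself flagged as the weak point.

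The paper's proof is organized precisely to avoid both issues, and the difference is worth internalizing. One fixes $B$ with $A_0^{\ol{T}}B\in\so(T|_{x_0}M)$, extends $A_0^{\ol{T}}B$ to a section $\tilde{S}$ of $\so(TM)$ by parallel transport along radial geodesics from $x_0$ (so that $\nabla_Y\tilde{S}=0$ at $x_0$), and takes the globally defined vertical field $U(x,\hat{x};A)=A\tilde{S}|_x$; with this choice the vertical term $\nu(\ol{\nabla}_{(X,A_0X)}(U(\tilde{A})))|_{q_0}$ in the bracket formula vanishes at $q_0$ by construction, with no appeal to \eqref{eq:V_in_orbit}. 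Tangency of the bracket is then proved at $q_0$ (not at $q_n$) by writing
\begin{align*}
[\LRD(X),\nu(U(\cdot))]|_{q_0}
=\lim_{n\to\infty}\frac{1}{t_n}\Big(\big((\Phi_{\LRD(X)})_{-t_n}\big)_*\,\nu(U(\cdot))|_{q_n}-\nu(B)|_{q_0}\Big):
\end{align*}
since the flow of $\LRD(X)$ preserves the orbit, each pullback lies in the single closed subspace $T|_{q_0}\mc{O}_{\RDist}(q_0)$, as does $\nu(B)|_{q_0}$ (obtained from \eqref{eq:V_in_orbit} by the same limiting device), hence so does the limit; no Grassmannian continuity or Orbit Theorem parametrization is needed. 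This also supplies the ingredient your codimension count leaves vague, namely $V|_{q_0}(\pi_Q)\subset T|_{q_0}\mc{O}_{\RDist}(q_0)$, which together with the $(2n-1)$-dimensional span of the lifts of $X^\perp\times\{0\}$, $\R(X,A_0X)$ and $\{0\}\times(A_0X)^\perp$ gives $\dim\mc{O}_{\RDist}(q_0)\geq\dim Q-1$ directly. Finally, a computational slip: $\nu(B)|_q\ol{T}=(0,BX)$, not $(-BX,BX)$, since the $M$-component of $\ol{T}$ does not depend on $A$; the bracket therefore produces only the directions $\LNSD(0,\hat{Y})$ with $\hat{Y}\perp A_0X$, and the $X^\perp$-directions on the $M$-side are obtained afterwards by subtracting $\LRD(Y)$ — they are not swept out ``independently'' by the bracket alone.
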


\begin{proof}
Letting $n$ tend to infinity, it follows from (\ref{eq:V_in_orbit}) that $V|_{q_0} (\pi_Q)\subset T|_{q_0} \mc{O}_{\RDist}(q_0)$.
Recall, from Proposition \ref{pr:vertical_of_Q}, that every element of $V|_{q_0}(\pi_Q)$
is of the form $\nu(B)|_{q_0}$, with a unique $B\in Q|_{(x_0,\hat{x}_0)}$ satisfying
$A_0^{\ol{T}} B\in \so(T|_{x_0} M)$.
Fix such a $B$ and define a smooth local section $\tilde{S}$ 
of $\so(TM)\to M$
defined on an open set $W\ni x_0$
by
\[
\tilde{S}|_{x}=P_0^1\big(t\mapsto \exp_{x_0}(t\exp_{x_0}^{-1}(x))\big) (A_0^{\ol{T}} B).
\]
Then clearly, $\tilde{S}|_{x_0}=A_0^{\ol{T}} B$ and $\nabla_Y \tilde{S}=0$ for all $Y\in T|_{x_0} M$ and it is easy to verify that $\tilde{S}|_{x}\in \so(T|_x M)$ for all $x\in W$. 

We next define a smooth map $U:\pi_Q^{-1}(W\times \hat{M})\to T^*M\otimes T\hat{M}$ by $U(x,\hat{x};A)=A\tilde{S}|_x$.
Obviously $\nu(U(x,\hat{x};A))\in V|_{(x,\hat{x};A)} (\pi_Q)$ for all $(x,\hat{x};A)$.
Then, choosing in Proposition \ref{pr:NS_comm_HV}, $\ol{T}=X+(\cdot)X$ (and the above $U$) and noticing that
\[
\nu(U(A_0))|_{q_0} \ol{T}=U(A_0)X=BX,
\]
one gets
\begin{align}\label{eq:special_NS_comm_VH}
[\LRD(X),\nu(U(\cdot))]|_{q_0}=-\LNSD(BX)|_{q_0}
+\nu(\ol{\nabla}_{(X, A_0X)}(U(\tilde{A})))|_{q_0}
\end{align}
where $\tilde{A}|_{(x_0,\hat{x}_0)}=A_0$.
By the choice of $\tilde{S}$ and $\tilde{A}$, we have, for all $\ol{Y}=(Y, \hat{Y})\in T|_{(x_0,\hat{x}_0)} M\times\hat{M}$,
\[
\nabla_{\ol{Y}} (U(\tilde{A}))=\nabla_{\ol{Y}} (\tilde{A}\tilde{S})=(\nabla_{\ol{Y}}\tilde{A})\tilde{S}|_{(x_0,\hat{x}_0)}+\tilde{A}|_{(x_0,\hat{x}_0)}\nabla_{Y}\tilde{S}=0,
\]
and hence the last term on the right hand side of (\ref{eq:special_NS_comm_VH}) actually vanishes.

By definition, the vector field $q\mapsto \LRD(X)|_q$
is tangent to the orbit $\mc{O}_{\RDist}(q_0)$
and, by the assumption of Equation (\ref{eq:V_in_orbit}), the values of the map $q=(x,\hat{x};A)\mapsto \nu(U(A))|_q$ are also tangent to $\mc{O}_{\RDist}(q_0)$
at the points $\Phi_{\LRD(X)}(t_n,q_0)$, $n\in\N$.
Hence $\big((\Phi_{\LRD(X)})_{-t_n}\big)_* \nu(U(\cdot))|_{\Phi_{\LRD(X)}(t_n,q_0)}\in T|_{q_0} \mc{O}_{\RDist}(q_0)$ and therefore,
\[
& [\LRD(X),\nu(U(\cdot))]|_{q_0} \\
=&\lim_{n\to\infty} \frac{\big((\Phi_{\LRD(X)})_{-t_n}\big)_* \nu(U(\cdot))|_{\Phi_{\LRD(X)}(t_n,q_0)}-\nu(B)|_{q_0}}{t_n}\in T|_{q_0} \mc{O}_{\RDist}(q_0),
\]
i.e., the left hand side of (\ref{eq:special_NS_comm_VH}) must belong to $T|_{q_0} \mc{O}_{\RDist}(q_0)$.
But this implies that
\[
\LNSD(BX)|_{q_0}\in T|_{q_0} \mc{O}_{\RDist}(q_0),\quad \forall B\textrm{\ s.t.\ }\nu(B)\in V|_{q_0}(\pi_Q)
\]
i.e.,
\[
\LNSD(A_0\so(T|_{x_0} M)X)|_{q_0}\subset T|_{q_0} \mc{O}_{\RDist}(q_0).
\]
Notice next that $\so(T|_{x_0} M)X$ is exactly the set  $X|_{x_0}^{\perp}$ of vectors of $T|_{x_0} M$ that are $g$-perpendicular to $X|_{x_0}$. Since $A_0\in Q$, it follows that the set $A_0 \so(T|_{x_0} M)X$ is equal to 
$A_0 X|_{x_0}^\perp$ which is the set of vectors of $T|_{\hat{x}_0}\hat{M}$
that are $\hat{g}$-perpendicular to $A_0X|_{x_0}$.
We conclude that $\LNSD(Y)|_{q_0}=\LRD(Y)|_{q_0}-\LNSD(A_0Y)|_{q_0}\in T|_{q_0} \mc{O}_{\RDist}(q_0)$
for all $Y\in X|_{x_0}^\perp$.

Finally notice that
since the subspaces $X^\perp\times\{0\}$, $\R(X,A_0X)$ and $\{0\}\times (A_0X)^\perp$
of $T|_{(x_0,\hat{x}_0)}(M\times\hat{M})$ are linearly independent, their $\LNSD$-lifts at $q_0$ are that also
and hence these lifts span a $(n-1)+1+(n-1)=2n-1$ dimensional
subspace of $T|_{q_0}\mc{O}_{\RDist}(q_0)$. 
This combined with the fact that $V|_{q_0}(\pi_Q)\subset T|_{q_0}\mc{O}_{\RDist}(q_0)$
shows $\dim \mc{O}_{\RDist}(q_0)\geq 2n-1+\dim V|_{q_0}(\pi_Q)=\dim(Q)-1$
i.e., the orbit $\mc{O}_{\RDist}(q_0)$ has codimension at most $1$ in $Q$.
This finishes the proof.

\end{proof}

\begin{corollary}\label{cor:vert}
Suppose there is a point $q_0=(x_0,\hat{x}_0;A_0)\in Q$
and $\epsilon>0$ such that
for every $X\in \VF(M)$ with $\n{X}_g<\epsilon$ on $M$ one has
\[
V|_{\Phi_{\LRD(X)}(t,q_0)}(\pi_Q)\subset T\big(\mc{O}_{\RDist}(q_0)\big),
\quad |t|<\epsilon.
\]
Then the orbit $\mc{O}_{\RDist}(q_0)$ is open in $Q$.

As a consequence, we have the following characterization of complete controllability: the control system $(\Sigma)_R$ is completely controllable if and only if
\begin{equation}\label{spinning0}
\forall q\in Q, \quad V|_{q} (\pi_Q)\subset T|_{q} \mc{O}_{\RDist}(q).
\end{equation}

\end{corollary}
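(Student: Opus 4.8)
\textbf{Proof plan for Corollary \ref{cor:vert}.}

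The plan is to combine Proposition \ref{pr:H_directions_from_V} (applied along \emph{many} geodesics simultaneously) with Proposition \ref{pr:rol_geodesic} in order to produce, at every point of the orbit, all of $\NSDist$ \emph{and} all of $V(\pi_Q)$, hence all of $TQ$ by the splitting \eqref{eq:2.1:5}. First I would fix $q_0=(x_0,\hat{x}_0;A_0)\in Q$ satisfying the hypothesis. For a unit vector $X_0\in T|_{x_0}M$ choose $X\in\VF(M)$ with $\n{X}_g<\epsilon$ and $X|_{x_0}$ a (small nonzero) multiple of $X_0$; by Proposition \ref{pr:rol_geodesic}, the integral curve $t\mapsto \Phi_{\LRD(X)}(t,q_0)$ stays in $\mc{O}_{\RDist}(q_0)$, and the hypothesis says $V|_{\Phi_{\LRD(X)}(t,q_0)}(\pi_Q)\subset T(\mc{O}_{\RDist}(q_0))$ for $|t|<\epsilon$. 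Then Proposition \ref{pr:H_directions_from_V}, with the sequence $t_n\to 0$ taken inside $(-\epsilon,\epsilon)$, yields $\LNSD(Y,\hat Y)|_{q_0}\in T|_{q_0}\mc{O}_{\RDist}(q_0)$ for every $Y\perp X_0$ and every $\hat Y\perp A_0X_0$. Also $\LRD(X_0)|_{q_0}=\LNSD(X_0,A_0X_0)|_{q_0}\in T|_{q_0}\mc{O}_{\RDist}(q_0)$ trivially, since $\RDist\subset T\mc{O}_{\RDist}(q_0)$.

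The key step is then a short linear-algebra argument at the single point $q_0$. Letting $X_0$ range over a basis (or just two distinct unit vectors) of $T|_{x_0}M$, the subspaces $\{X_0\}^{\perp}\times\{0\}$ already span all of $T|_{x_0}M\times\{0\}$ (each vector $Z\in T|_{x_0}M$ is orthogonal to \emph{some} unit $X_0$, as $n\geq 2$), and similarly the subspaces $\{0\}\times(A_0X_0)^{\perp}$ span $\{0\}\times T|_{\hat x_0}\hat M$. Since $\LNSD(\cdot)|_{q_0}$ is linear and injective on $T|_{(x_0,\hat x_0)}(M\times\hat M)$ (Proposition right after Definition \ref{def:2.1:1}), we conclude $\NSDist|_{q_0}=\LNSD(T|_{(x_0,\hat x_0)}(M\times\hat M))|_{q_0}\subset T|_{q_0}\mc{O}_{\RDist}(q_0)$. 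On the other hand, taking $t_n\to0$ in the hypothesis directly (with any single fixed $X$) gives $V|_{q_0}(\pi_Q)\subset T|_{q_0}\mc{O}_{\RDist}(q_0)$. By the direct sum decomposition $TQ=\NSDist\oplus_Q V(\pi_Q)$ of \eqref{eq:2.1:5}, it follows that $T|_{q_0}\mc{O}_{\RDist}(q_0)=T|_{q_0}Q$, so the orbit is open at $q_0$; since $q_0$ was an arbitrary point of the orbit, $\mc{O}_{\RDist}(q_0)$ is open in $Q$.

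For the controllability characterization, the ``only if'' direction is immediate: if $\srol$ is completely controllable then $\mc{O}_{\RDist}(q)=Q$ for every $q$, so $T|_q\mc{O}_{\RDist}(q)=T|_qQ\supset V|_q(\pi_Q)$. For the ``if'' direction, assume \eqref{spinning0} holds at every $q\in Q$. Then along any rolling curve — in particular along $t\mapsto \Phi_{\LRD(X)}(t,q_0)$, whose points all lie in $\mc{O}_{\RDist}(q_0)$ — we have $V|_{\Phi_{\LRD(X)}(t,q_0)}(\pi_Q)=V|_{\Phi_{\LRD(X)}(t,q_0)}(\pi_Q)\cap T(\mc{O}_{\RDist}(q_0))$, because every vertical vector at a point of the orbit is tangent to the orbit by \eqref{spinning0}; hence the hypothesis of the first part of the corollary is satisfied (for every $X$ and every $t$, so in particular for $\n{X}_g<\epsilon$, $|t|<\epsilon$), and every orbit $\mc{O}_{\RDist}(q_0)$ is open in $Q$. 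Since $Q$ is connected and the $\RDist$-orbits partition $Q$ into nonempty open sets, there is exactly one orbit, i.e. $\mc{O}_{\RDist}(q)=Q$ for all $q$, which is complete controllability. The main obstacle, and the only place needing care, is the verification that the spanning argument in the second paragraph really uses only the hypotheses available — i.e. checking that the finitely many choices of $X$ (one per basis direction) can each be realized by a globally defined vector field of $g$-norm $<\epsilon$ with the prescribed value at $x_0$, which is a standard bump-function construction, and that the resulting $\LNSD$-lifts genuinely land in $T|_{q_0}\mc{O}_{\RDist}(q_0)$ and not merely in $T|_{q_0}Q$; this is exactly what Proposition \ref{pr:H_directions_from_V} guarantees, so no real difficulty remains.
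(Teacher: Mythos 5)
Your argument is correct and follows essentially the same route as the paper's own proof: apply Proposition \ref{pr:H_directions_from_V} with arbitrary directions at $x_0$ to capture $\NSDist|_{q_0}$, combine this with the verticality hypothesis at $t=0$ and the splitting \eqref{eq:2.1:5} to conclude openness, and then use connectedness of $Q$ for the controllability characterization. The only difference is that you spell out the spanning argument and the partition-into-open-orbits step that the paper leaves implicit.
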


\begin{proof}
For the first part of the corollary, the assumptions and the previous proposition imply that for every $X\in T|_{x_0} M$
we have $\LNSD(Y, \hat{Y})|_{q_0}\in T|_{q_0} \mc{O}_{\RDist}(q_0)$
for every $Y\in X^\perp$, $\hat{Y}\in A_0 X^\perp$.
But since $X$ is an arbitrary element of $T|_{x_0} M$,
this means that $\NSDist|_{q_0}\subset T|_{q_0} \mc{O}_{\RDist}(q_0)$
and because $T|_{q_0} Q=\NSDist|_{q_0}\oplus V|_{q_0}(\pi_Q)$,
we get $T|_{q_0} Q=T|_{q_0} \big(\mc{O}_{\RDist}(q_0)\big)$.
This implies that $\mc{O}_{\RDist}(q_0)$ is open in $Q$.
The last part of the corollary is an immediate consequence of this
and the fact that $Q$ is connected.
\end{proof}

\begin{remark}\label{crabe}
The above corollary is  intuitively obvious. Assumption given by Eq. (\ref{spinning0}) simply means that there is complete freedom for infinitesimal spinning, i.e., for reorienting one manifold with respect to the other one without moving in $M\times \hM$. In that case, proving complete controllability is easy, by using a crab-like motion.
\end{remark}

%%%%%%%%%%%%%%%%%%%%%%%%%%%%%%%%%%%%%%%%%%%%%%%%%
\subsection{Controllability Properties of $\RDist$}
%%%%%%%%%%%%%%%%%%%%%%%%%%%%%%%%%%%%%%%%%%%%%%%%%

%%%%%%%%%%%%%%%%%%%%%%%%%%%
\subsubsection{First Results}
%%%%%%%%%%%%%%%%%%%%%%%%%%%

Proposition \ref{pr:R_comm_L2} has the following simple consequence.

\begin{corollary}\label{cor:2.5:1}
The following cases are equivalent:
\begin{itemize}
\item[(i)] The rolling distribution $\RDist$ on $Q$ is involutive.

\item[(ii)] For all $X,Y,Z\in T|_x M$ and $(x,\hat{x};A)\in T^*(M)\otimes T(\hat{M})$
\[
\Rol(X,Y)(A)=0.
\]

\item[(iii)] $(M,g)$ and $(\hat{M},\hat{g})$ both have constant and equal curvature.
\end{itemize}
The same result holds when one replaces $Q$ by $T^*M\otimes T\hat{M}$.
\end{corollary}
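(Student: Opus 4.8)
The plan is to prove the equivalences (i) $\Leftrightarrow$ (ii) $\Leftrightarrow$ (iii) in a cycle, using Proposition \ref{pr:R_comm_L2} as the main engine. The equivalence (i) $\Leftrightarrow$ (ii) is essentially immediate: by Proposition \ref{pr:R_comm_L2}, for $X,Y\in\VF(M)$ and $q=(x,\hat{x};A)$ we have
\[
[\LRD(X),\LRD(Y)]|_q=\LRD([X,Y])|_q+\nu(\Rol(X,Y)(A))|_q,
\]
and since $\LRD([X,Y])|_q\in\RDist|_q$ while $\nu(\Rol(X,Y)(A))|_q\in V|_q(\pi_Q)$ lies in a complementary subspace (by Remark \ref{re:tangent_splitting}, the splitting $TQ=\NSDist\oplus V(\pi_Q)$ and $\RDist\subset\NSDist$), the bracket $[\LRD(X),\LRD(Y)]|_q$ is tangent to $\RDist$ at $q$ if and only if $\nu(\Rol(X,Y)(A))|_q=0$, i.e. $\Rol(X,Y)(A)=0$. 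Since $\RDist$ is spanned pointwise by the $\LRD(X)$, $X\in\VF(M)$, and any $X\in T|_x M$ extends to a vector field, involutivity of $\RDist$ is equivalent to $\Rol(X,Y)(A)=0$ for all $X,Y\in T|_xM$ and all $(x,\hat{x};A)$. The same argument works verbatim on $T^*M\otimes T\hat{M}$, since Proposition \ref{pr:R_comm_L2} and the analogous splitting \eqref{eq:2.1:4} hold there.

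Next I would prove (ii) $\Leftrightarrow$ (iii). For (iii) $\Rightarrow$ (ii): if both manifolds have constant curvature $\kappa$, then $R(X,Y)Z=\kappa(g(Y,Z)X-g(X,Z)Y)$ and similarly $\hat{R}(\hat{X},\hat{Y})\hat{Z}=\kappa(\hat{g}(\hat{Y},\hat{Z})\hat{X}-\hat{g}(\hat{X},\hat{Z})\hat{Y})$ with the same $\kappa$. Since $A$ is an isometry, $\hat{g}(AX,AZ)=g(X,Z)$, so for every $Z\in T|_xM$,
\[
\hat{R}(AX,AY)AZ=\kappa\big(g(Y,Z)AX-g(X,Z)AY\big)=A\big(\kappa(g(Y,Z)X-g(X,Z)Y)\big)=AR(X,Y)Z,
\]
hence $\Rol(X,Y)(A)=AR(X,Y)-\hat{R}(AX,AY)A=0$. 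The harder direction is (ii) $\Rightarrow$ (iii). Here I would argue as follows. Fix $x\in M$; since $M$ is connected we may (by Proposition \ref{pr:char_of_Q} or directly) realize any o-isometry $A:T|_xM\to T|_{\hat{x}}\hat{M}$ as an element of $Q$, so (ii) says $AR(X,Y)=\hat{R}(AX,AY)A$ for \emph{all} such $A$ and all $X,Y$. First take any fixed $A_0\in Q$ over $(x,\hat{x})$. For $h\in\SO(T|_xM)$, the map $A_0\circ h$ is again an o-isometry $T|_xM\to T|_{\hat{x}}\hat{M}$ (it need not be the parallel transport of anything, but it is a legitimate point of $Q$ over $(x,\hat{x})$ since $Q$ over a fixed base point is all of $\SO$-many isometries — this is exactly the fiber description). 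Applying (ii) to $A=A_0 h$ and comparing with $A=A_0$ gives, for all $X,Y,Z$,
\[
A_0 h R(X,Y)Z=\hat{R}(A_0hX,A_0hY)A_0hZ=A_0 h\,R(h^{-1}X,h^{-1}Y)h^{-1}(hZ)\cdot(\text{rewrite}),
\]
more cleanly: $h R(X,Y)h^{-1}=R(hX,hY)$ for all $h\in\SO(T|_xM)$, i.e. the curvature operator at $x$ is invariant under the full special orthogonal group acting on $\wedge^2T|_xM$. A standard Schur-type lemma then forces $R|_x$ to be a scalar multiple $\kappa(x)$ of the identity on $\wedge^2 T|_xM$ (for $n\geq 3$; for $n=2$ it is automatic), which means $M$ has (pointwise) constant sectional curvature $\kappa(x)$, and by Schur's theorem (for $n\geq 3$) $\kappa$ is a constant; for $n=2$ one handles the constancy of $\kappa$ separately via the equality with $\hat\kappa$ below. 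Symmetrically $\hat{M}$ has constant curvature $\hat\kappa$, and then $\Rol=0$ read off on any single $A$ gives $\kappa=\hat\kappa$.

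The main obstacle I expect is the rigorous passage from (ii) to the $\SO$-invariance of the curvature operator and then to constancy: one must be careful that points of the fiber $Q|_{(x,\hat{x})}$ are genuinely parametrized by $\SO(n)$ (this is the bundle structure established in Section \ref{sec:1.1}, in particular the local trivializations $\tau_{F,\hat F}$), so that applying (ii) to $A_0 h$ for arbitrary $h$ is legitimate; and that the Schur lemma argument (an $\SO(n)$-equivariant symmetric endomorphism of $\wedge^2\R^n$ that is moreover a curvature-type operator is scalar) is invoked in the correct form — for $n\ge4$ one uses that $\wedge^2\R^n$ is an irreducible $\SO(n)$-module, while for $n=3$ it is the (irreducible) standard representation, and for $n=2$ the statement degenerates and constancy must come from Schur's lemma on $\kappa$ together with $\kappa\equiv\hat\kappa$. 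I would also remark explicitly that since none of the steps used that $A$ lies in $Q$ rather than in $T^*M\otimes T\hat M$ except to know the fiber is $\SO(n)$ (and $GL$-isometries over a point still contain all of $\SO$), the final sentence "the same result holds when one replaces $Q$ by $T^*M\otimes T\hat M$" follows by the identical argument.
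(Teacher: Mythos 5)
Your treatment of (i) $\Leftrightarrow$ (ii) and of (iii) $\Rightarrow$ (ii) coincides with the paper's (Proposition \ref{pr:R_comm_L2} plus the splitting, and the direct computation with the constant-curvature form of $R$ and $\hat R$, which indeed uses $\hat g(A\cdot,A\cdot)=g(\cdot,\cdot)$, i.e. $A\in Q$). The genuine divergence, and the one real problem, is in (ii) $\Rightarrow$ (iii). Your derivation of the equivariance $hR(X,Y)h^{-1}=R(hX,hY)$ for all $h\in\SO(T|_xM)$ is fine, but the Schur-type step as you state it is wrong for $n=4$: $\wedge^2\R^4$ is \emph{not} an irreducible $\SO(4)$-module (it splits into the self-dual and anti-self-dual summands, reflecting $\so(4)\cong\so(3)\oplus\so(3)$), so $\SO(4)$-equivariance only forces $R|_x$ to be a scalar on each summand, with a priori different scalars. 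The conclusion can be rescued, but you must actually use the "curvature-type" hypothesis you only gesture at: the difference of the two scalars corresponds to the Hodge star on $\wedge^2$, whose associated $(0,4)$-tensor is totally antisymmetric and hence killed by the first Bianchi identity. Simpler still, equivariance gives $\sigma_{(hX,hY)}=\sigma_{(X,Y)}$ and $\SO(n)$ acts transitively on orthonormal $2$-frames, so all sectional curvatures at $x$ agree -- no representation theory needed. As written, though, the $n=4$ case of your argument is a gap.

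It is worth comparing with the paper's route, which is both shorter and stronger: instead of fixing $A_0$ and conjugating by $h$, it observes that for \emph{any} $x\in M$, $\hat x\in\hat M$ and any orthonormal pairs $(X,Y)$ at $x$, $(\hat X,\hat Y)$ at $\hat x$, there is $A\in Q|_{(x,\hat x)}$ with $AX=\hat X$, $AY=\hat Y$ (up to a harmless sign when $n=2$), so (ii) forces $\sigma_{(X,Y)}=\hat\sigma_{(\hat X,\hat Y)}$ for all such choices simultaneously. Since the two sides depend on independent data, all sectional curvatures of both manifolds equal one fixed number: pointwise constancy, global constancy and the equality $K=\hat K$ come out in one stroke, with no appeal to Schur's theorem and no separate $n=2$ or $n=4$ discussion. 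Your route, once repaired, still needs Schur's theorem for $n\ge 3$ and a separate low-dimensional argument, so it buys nothing over the paper's. Finally, be more careful with your closing remark about replacing $Q$ by $T^*M\otimes T\hat M$: the implication (iii) $\Rightarrow$ (ii) genuinely uses that $A$ is an isometry (for non-isometric $A$ and $k\neq 0$ one does not get $\Rol(X,Y)(A)=0$), so "the identical argument" does not literally go through for arbitrary $A$ in the tensor bundle; the paper's own proof of that implication also restricts to $A\in Q$.
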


\begin{proof}
(i)$\iff$(ii) follows from Proposition \ref{pr:2.5:1}.

For the rest of the proof, we use 
\[
\sigma_{(X,Y)}=g(R(X,Y)Y,X),\hbox{ and }
\sigma_{(\hat{X},\hat{Y})}=\hat{g}(\hat{R}(\hat{X},\hat{Y})\hat{Y},\hat{X}),
\]
to denote the sectional curvature of $M$ w.r.t orthonormal vectors $X,Y\in T|_x M$ and the sectional curvature of $\hat{M}$ w.r.t. orthonormal vectors $\hat{X},\hat{Y}\in T|_{\hat{x}} \hat{M}$ respectively.
We have seen that the involutivity of $\RDist$ is equivalent
to the condition in (ii) which is again equivalent
(since sectional curvatures completely determine Riemannian curvatures)
to the equation 
\begin{align}\label{eq:cor:2.5.1:1}
\sigma_{(X,Y)}=\hat{\sigma}_{(AX,AY)},\quad \forall (x,\hat{x};A)\in Q,\ X,Y\in T|_x M.
\end{align}

(ii)$\Rightarrow$(iii)
If we fix $x\in M$ and $g$-orthonormal vectors $X,Y\in T|_x M$,
then, for any $\hat{x}\in \hat{M}$ and any $\hat{g}$-orthonormal vectors $\hat{X},\hat{Y}\in T|_{\hat{x}}\hat{M}$,
we may choose $A\in Q|_{(x,\hat{x})}$ such that $AX=\hat{X}$, $AY=\hat{Y}$
(in the case $n=2$ we may have to replace, say, $\hat{X}$ by $-\hat{X}$ but this does not change anything
in the argument below).
Hence the above equation (\ref{eq:cor:2.5.1:1}) shows that the sectional curvatures at every point $\hat{x}\in \hat{M}$
and w.r.t every orthonormal pair $\hat{X},\hat{Y}$ are all the same i.e., $\sigma_{(X,Y)}$. Thus $(\hat{M},\hat{g})$
has constant sectional curvatures i.e., it has a constant curvature. Changing the roles of $M$ and $\hat{M}$
we see that $(M,g)$ also has constant curvature and  the constants of curvatures are the same.

(iii)$\Rightarrow$(ii) Suppose that $M,\hat{M}$ have constant and equal curvatures. 
By a standard result (see \cite{sakai91} Lemma II.3.3), this is equivalent to the fact that
there exists $k\in\R$ such that
\[
& R(X,Y)Z=k\big(g(Y,Z)X-g(X,Z)Y\big),\quad X,Y,Z\in T|_x M,\ x\in M, \\
& \hat{R}(\hat{X},\hat{Y})\hat{Z}=k\big(\hat{g}(\hat{Y},\hat{Z})\hat{X}-\hat{g}(\hat{X},\hat{Z})\hat{Y}\big),\quad
\hat{X},\hat{Y},\hat{Z}\in T|_{\hat{x}}\hat{M},\ \hat{x}\in \hat{M}.
\]
On the other hand, if $A\in Q$, $X,Y,Z\in T|_x M$, we would then have
\[
& \hat{R}(AX,AY)(AZ)=k(\hat{g}(AY,AZ)AX-\hat{g}(AX,AZ)(AY)) \\
=& A(k(g(Y,Z)X-g(X,Z)Y)=A(R(X,Y)Z).
\]
This implies that $\Rol(X,Y)(A)=0$ since $Z$ was arbitrary.

\end{proof}

In the situation of the previous corollary,
the control system $\srol$ is as far away from
being controllable as possible: all the
orbits $\mc{O}_{\RDist}(q)$, $q\in Q$, are integral manifolds of $\RDist$.

The next consequence of Proposition \ref{pr:R_comm_L2}
can be seen as a (partial) generalization of the previous corollary
and a special case of the Ambrose's theorem \ref{th:ambrose}.
The corollary gives a necessary and sufficient condition
describing the case in which at least one $\RDist$-orbit is an integral manifold of $\RDist$.
It will be used in the proof of Theorem \ref{th:fixed_point} below.

\begin{corollary}\label{cor:weak_ambrose}
Suppose that $(M,g)$ and $(\hat{M},\hat{g})$ are complete.
The following cases are equivalent:
\begin{itemize}
\item[(i)] There exists a $q_0=(x_0,\hat{x}_0;A_0)\in Q$
such that the orbit $\mc{O}_{\RDist}(q_0)$ is an integral manifold of $\RDist$.

\item[(ii)] There exists a $q_0=(x_0,\hat{x}_0;A_0)\in Q$
such that
\[
\Rol(X,Y)(A)=0,\quad \forall (x,\hat{x};A)\in \mc{O}_{\RDist}(q_0),\ X,Y\in T|_x M.
\]

\item[(iii)] There is a complete Riemannian manifold $(N,h)$
and Riemannian covering maps $F:N\to M$, $G:N\to\hat{M}$.
In particular, $(M,g)$ and $(\hat{M},\hat{g})$ are locally isometric.
\end{itemize}
\end{corollary}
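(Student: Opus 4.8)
The plan is to prove the cycle of implications (iii)$\Rightarrow$(i)$\Rightarrow$(ii)$\Rightarrow$(iii). The implication (i)$\Rightarrow$(ii) is immediate: if $\mc{O}_{\RDist}(q_0)$ is an integral manifold of $\RDist$, then $\RDist$ is involutive along $\mc{O}_{\RDist}(q_0)$, so for all $X,Y\in\VF(M)$ the bracket $[\LRD(X),\LRD(Y)]|_q$ is tangent to $\mc{O}_{\RDist}(q_0)$, hence lies in $\RDist|_q$; comparing with Proposition~\ref{pr:R_comm_L2}, whose vertical term $\nu(\Rol(X,Y)(A))|_q$ must then vanish, gives $\Rol(X,Y)(A)=0$ for all $(x,\hat x;A)\in\mc{O}_{\RDist}(q_0)$ and all $X,Y$. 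The implication (ii)$\Rightarrow$(i) goes the other way: assuming the rolling curvature vanishes identically along $\mc{O}_{\RDist}(q_0)$, Proposition~\ref{pr:R_comm_L2} shows $[\LRD(X),\LRD(Y)]=\LRD([X,Y])$ along the orbit, so $\RDist$ restricted to $\mc{O}_{\RDist}(q_0)$ is involutive; since $\RDist$ has rank $n$ and $\mc{O}_{\RDist}(q_0)$, being a $\RDist$-orbit, is generated by $\RDist$-admissible curves, the orbit must itself be an $n$-dimensional integral manifold of $\RDist$ (its tangent space is both spanned by $\RDist$ by the Orbit Theorem reasoning and cannot be larger because all brackets close up). Thus (i)$\Leftrightarrow$(ii) is essentially a direct corollary of Proposition~\ref{pr:R_comm_L2}.

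\textbf{The substantive implications.} The heart of the matter is (ii)$\Rightarrow$(iii) and (iii)$\Rightarrow$(ii). For (iii)$\Rightarrow$(ii): given Riemannian covering maps $F:N\to M$ and $G:N\to\hat M$, I would use Proposition~\ref{cov} (applied with $M_1=N=\hat M_1$, or more precisely factoring through the common cover) to lift the rolling problem; the point is that the identity isometry pattern between the two copies of $N$ gives an $\RDist$-orbit in $Q(N,N)$ that is an integral manifold (by Corollary~\ref{cor:2.5:1}(ii)$\Leftrightarrow$(i) reasoning, or directly: over $N$ against $N$, there is a trivial ``diagonal'' rolling where $A_0=\id$ stays the identity and both curvatures match so $\Rol\equiv 0$ along it). Pushing this orbit forward by the covering map $\Pi$ of Proposition~\ref{cov}, and using that $\Pi_*(\RDist)_1=\RDist$ together with $\Pi_*$ preserving the rolling curvature relation, yields a $q_0\in Q$ with $\Rol(X,Y)(A)=0$ along $\mc{O}_{\RDist}(q_0)$. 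Concretely: pick any $q_0=(x_0,\hat x_0;A_0)\in Q$ of the form $\hat\pi_*\circ(\mathrm{lift})\circ\pi_*^{-1}$ induced by a point of $N$ where $F$ and $G$ ``agree'' in the sense that parallel transport commutes with $A_0$; then curvature identities transported along the orbit give the claim.

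\textbf{The main obstacle.} The hard direction is (ii)$\Rightarrow$(iii), which is a Cartan--Ambrose--Hicks type statement and I expect it to be the crux. The idea: fix $q_0=(x_0,\hat x_0;A_0)$ with $\Rol\equiv 0$ on $\mc{O}_{\RDist}(q_0)$. Using completeness of both manifolds and Proposition~\ref{pr:rol_geodesic}, rolling along geodesics is globally defined, and the vanishing of $\Rol$ along the orbit means (via Lemma~\ref{le:rol_Jacobi}) that $A_{\RDist}(\gamma,q_0)(t)$ carries Jacobi fields of $M$ to Jacobi fields of $\hat M$ along the developed geodesic $\hat\gamma_{\RDist}$. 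This is precisely the infinitesimal hypothesis of the Cartan--Ambrose--Hicks theorem (compare Theorem~\ref{th:cartan} and Proposition~\ref{pr:weak-C-A-H}), so I would invoke (or reprove in the rolling language) that the assignment $\exp_{x_0}(X)\mapsto \widehat{\exp}_{\hat x_0}(A_0X)$ is a well-defined local isometry, and that global completeness plus a monodromy/simply-connectedness argument (lifting to universal covers, where the orbit is still an integral manifold by Proposition~\ref{cov}) upgrades it to Riemannian covering maps $F:N\to M$, $G:N\to\hat M$ from the common cover $N$. The delicate points are: (a) checking the map is well-defined independent of the geodesic chosen, which is where the ``$\Rol\equiv 0$ \emph{along the whole orbit}'' hypothesis (not just at $q_0$) is used, through the group property of rolling in Remark~\ref{re:group_property_of_rolling}; and (b) handling the non-simply-connected case by passing to universal covers and using Corollary~\ref{CC-simply-conn}/Proposition~\ref{cov} to transfer the integral-manifold property up and the covering conclusion down. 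I would organize the argument so that the bulk is a citation to the Cartan--Ambrose--Hicks machinery already discussed in the paper, with the rolling-specific bookkeeping (orbit is integral manifold $\Leftrightarrow$ developing map is single-valued) made explicit.
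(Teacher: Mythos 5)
Your treatment of (i)$\Leftrightarrow$(ii) via Proposition~\ref{pr:R_comm_L2} is exactly the paper's, and your (iii)$\Rightarrow$(ii) — once stripped of the detour through Proposition~\ref{cov} — reduces to the paper's direct construction: set $A_0:=G_*|_{z_0}\circ(F_*|_{z_0})^{-1}$, lift a curve $\gamma$ of $M$ through the covering $F$ to $\Gamma$ in $N$, and check that $t\mapsto(\gamma(t),G(\Gamma(t));G_*\circ(F_*|_{\Gamma(t)})^{-1})$ is the rolling curve, along which $\Rol$ vanishes because $F,G$ are local isometries. Where you genuinely diverge is the crux (ii)$\Rightarrow$(iii): you propose the Cartan--Ambrose--Hicks route (Jacobi-field transport, well-definedness of $\exp_{x_0}(X)\mapsto\widehat{\exp}_{\hat x_0}(A_0X)$, monodromy, passage to universal covers), whereas the paper's proof is essentially one line: the common covering space $N$ \emph{is the orbit itself}. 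Namely, by (i) the orbit $N:=\mc{O}_{\RDist}(q_0)$ is an $n$-dimensional integral manifold of $\RDist$; equip it with $h:=(\pi_{Q,M}|_N)^*g$, so that $F:=\pi_{Q,M}|_N$ is tautologically a local isometry and $G:=\pi_{Q,\hat M}|_N$ is one too because each $A$ is an isometry ($\hat g(G_*\LRD(X),G_*\LRD(Y))=\hat g(AX,AY)=g(X,Y)$); completeness of $(N,h)$ follows from completeness of $M,\hat M$ via Proposition~\ref{pr:rol_geodesic}, and a complete local isometry is a Riemannian covering (Sakai, Prop.\ II.1.1). This avoids entirely the two "delicate points" you flag — single-valuedness of the developing map and the non-simply-connected monodromy — since no developing map is ever constructed; those difficulties only reappear later, in the paper's proofs of Theorem~\ref{th:cartan}, Theorem~\ref{th:ambrose} and Theorem~\ref{th:fixed_point}, where stronger conclusions (explicit covering maps $M\to\hat M$, or hypotheses only along loops at one point) are sought. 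Your route can be made to work — lift to universal covers via Proposition~\ref{cov}, apply an Ambrose-type theorem (proved independently through Theorem~\ref{th:cartan}, so there is no circularity), and use that a covering between simply connected complete manifolds is an isometry to produce the common cover — but it imports machinery that this corollary is precisely designed to bypass, and it costs you the clean observation that the orbit with the pulled-back metric already realizes $(N,h)$.
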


\begin{proof}
(i) $\Rightarrow$ (ii):
Notice that the restrictions of vector fields $\LRD(X)$, $X\in \VF(M)$,
to the orbit $\mc{O}_{\RDist}(q_0)$
are smooth vector fields of that orbit.
Thus $[\LRD(X),\LRD(Y)]$ is also tangent to this orbit
for any $X,Y\in\VF(M)$
and hence Proposition \ref{pr:R_comm_L2} implies the claim.

(ii) $\Rightarrow$ (i):
It follows, from Proposition \ref{pr:R_comm_L2}, that $\RDist|_{\mc{O}_{\RDist}(q_0)}$, the restriction of $\RDist$ 
to the manifold $\mc{O}_{\RDist}(q_0)$,
is involutive.
Since maximal connected integral manifolds of an involutive
distribution are exactly its orbits, it follows
that $\mc{O}_{\RDist}(q_0)$ is an integral manifold of $\RDist$.

(i) $\Rightarrow$ (iii):
Let $N:=\mc{O}_{\RDist}(q_0)$ and $h:=(\pi_{Q,M}|_N)^*(g)$ i.e.,
for $q=(x,\hat{x};A)\in N$ and $X,Y\in T|_x M$,
define 
\[
h(\LRD(X)|_q,\LRD(Y)|_q)=g(X,Y).
\]
If $F:=\pi_{Q,M}|_N$ and $G:=\pi_{Q,\hat{M}}|_N$,
we  immediately see that $F$ is a local isometry (note that $\dim(N)=n$)
and the fact that $G$ is a local isometry follows from the
following computation: for $q=(x,\hat{x};A)\in N$, $X,Y\in T|_x M$, one has
\[
\hat{g}(G_*(\LRD(X)|_q),G_*(\LRD(Y)|_q))
=\hat{g}(AX,AY)=g(X,Y)=h(\LRD(X)|_q,\LRD(Y)|_q).
\]
The completeness of $(N,h)$ 
can be easily deduced from the completeness of $M$ and $\hat{M}$
together with Proposition \ref{pr:rol_geodesic}.
Proposition II.1.1 in \cite{sakai91} proves that the maps
$F,G$ are in fact (surjective and) Riemannian coverings.

(iii) $\Rightarrow$ (ii):
Let $x_0\in M$ and choose $z_0\in N$ such that $F(z_0)=x_0$.
Define $\hat{x}_0=G(z_0)\in\hat{M}$
and $A_0:=G_*|_{z_0}\circ (F_*|_{z_0})^{-1}$
which is an element of $Q|_{(x_0,\hat{x}_0)}$
since $F,G$ were local isometries. Write $q_0=(x_0,\hat{x}_0;A_0)\in Q$.

Let $\gamma:[0,1]\to M$ be an a.c. curve with $\gamma(0)=x_0$.
Since $F$ is a smooth covering map, there is a unique a.c. curve $\Gamma:[0,1]\to N$
with $\gamma=F\circ\Gamma$ and $\Gamma(0)=z_0$.
Define $\hat{\gamma}=G\circ\Gamma$
and $A(t)=G_*|_{\Gamma(t)}\circ (F_*|_{\Gamma(t)})^{-1}\in Q$, $t\in [0,1]$.
It follows that,  for a.e. $t\in[0,1]$,
$$\dot{\hat{\gamma}}(t)=G_*|_{\Gamma(t)}\dot{\Gamma}(t)=A(t)\dot{\gamma}(t).$$
Since $F,G$ are local isometries,
$\ol{\nabla}_{(\dot{\gamma}(t),\dot{\hat{\gamma}}(t))} A(\cdot)=0$ for a.e. $t\in [0,1]$.
Thus $t\mapsto (\gamma(t),\hat{\gamma}(t);A(t))$ is the unique rolling
curve along $\gamma$ starting at $q_0$ and defined on $[0,1]$
and therefore curves of $Q$ formed in this fashion fill
up the orbit $\mc{O}_{\RDist}(q_0)$.
Moreover, since $F,G$ are local isometries,
it follows that for every $z\in N$ and $X,Y\in T|_{F(z)} M$,
$\Rol(X,Y)(G_*|_z\circ (F_*|_z)^{-1})=0$.
These facts prove that the condition in (ii) holds
and the proof is therefore finished.
\end{proof}

\begin{remark}\label{re:weak_ambrose}
If one does not assume that $(M,g)$ and $(\hat{M},\hat{g})$ are complete in Corollary \ref{cor:weak_ambrose},
then (iii) in the above corollary must be replaced by the following:
\begin{itemize}
\item[(iii)'] There is a connected Riemannian manifold $(N,h)$ (not necessarily complete)
and Riemannian covering maps $F:N\to M^\circ$, $G:N\to \hat{M}^\circ$
where $M^\circ$, $\hat{M}^\circ$ are open sets of $M$ and $\hat{M}$
and there is a $z_0\in N$
such that if $q_0=(x_0,\hat{x}_0;A_0)\in Q$ is defined
by $A_0:=G_*|_{z_0}\circ (F_*|_{z_0})^{-1}$,
then $M^\circ=\pi_{Q,M}(\mc{O}_{\RDist}(q_0))$ and $\hat{M}^\circ=\pi_{Q,\hat{M}}(\mc{O}_{\RDist}(q_0))$.
\end{itemize}
In particular, $M^\circ$, $\hat{M}^\circ$ are connected and $(M^\circ,g)$, $(\hat{M}^\circ,\hat{g})$
are locally isometric.

Indeed, the argument in the implication (i) $\Rightarrow$ (iii)
goes through except for the completeness of $(N,h)$,
where $N=\mc{O}_{\RDist}(q_0)$ (connected).
Proposition \ref{pr:R_orbit_bundle} and Remark \ref{pr:R_orbit_bundle}
show that $F=\pi_{Q,M}|_N:N\to M^\circ$, $G=\pi_{Q,\hat{M}}|_N:N\to \hat{M}^\circ$
are bundles with discrete fibers. Now it is a standard (easy) fact that a bundle
$\pi:X\to Y$ with connected total space $X$ and discrete fibers is a covering map (this could have been
used in the above proof instead of referring to \cite{sakai91}).

On the other hand, in the argument of the implication (iii) $\Rightarrow$ (ii)
we did not even use completeness of $(N,h)$
but only the fact that $F:N\to M$
is a covering map to lift a curve $\gamma$ in $M$ to the curve $\Gamma$ in $Q$.
In this non-complete setting,
we just have to consider using curves $\gamma$ in $M^\circ$
and lift them to $N$ by using $F:N\to M^\circ$.
Indeed, if $q=(x,\hat{x};A)\in \mc{O}_{\RDist}(q_0)$,
there is a curve $\gamma:[0,1]\to M$ such that $q_{\RDist}(\gamma,q_0)(1)=q$.
But for all $t$ one has
\[
\gamma(t)=\pi_{Q,M}(q_{\RDist}(\gamma,q_0)(t))\in \pi_{Q,M}(\mc{O}_{\RDist}(q_0))=M^\circ,
\]
so $\gamma$ is actually a curve in $M^\circ$.

Finally, notice that the assumption in (iii)' that $\hat{M}^\circ=\pi_{Q,\hat{M}}(\mc{O}_{\RDist}(q_0))$
follows from the others. Indeed, making only the other assumptions,
it is first of all clear that if $q$ and $\gamma$ are as above, then
\[
\pi_{Q,\hat{M}}(q)=\pi_{Q,\hat{M}}(q_{\RDist}(\gamma,q_0)(1))=G(\Gamma(1))\in \hat{M}^\circ,
\]
so $\pi_{Q,\hat{M}}(\mc{O}_{\RDist}(q_0))\subset \hat{M}^\circ$.
Then if $\hat{x}\in \hat{M}^\circ$, one may take a path $\hat{\gamma}:[0,1]\to \hat{M}^\circ$
such that $\hat{\gamma}(0)=\hat{x}_0$, $\hat{\gamma}(1)=\hat{x}$
and lift it by the covering map $G$ to a curve $\hat{\Gamma}(t)$ in $N$
starting from $z_0$. Then if $\gamma(t):=F(\hat{\Gamma}(t))$, $t\in [0,1]$,
we easily see that $\hat{\gamma}=\hat{\gamma}_{\RDist}(\gamma,q_0)$,
whence $\hat{x}=\hat{\gamma}(1)\in \pi_{Q,\hat{M}}(\mc{O}_{\RDist}(q_0))$.

\end{remark}

We conclude this subsection with a necessary condition for complete controllability, which is an immediate consequence of Theorem \ref{th:non-sym}.
\begin{proposition}
Assume that the manifolds $M$ and $\hat M$ are complete non-symmetric, simply connected, irreducible and $n\neq 8$.
If the control system $\srol$ is completely controllable, then one of the holonomy groups of $M$ or $\hat{M}$ is equal to $\SO(n)$ (w.r.t some orthonormal frames).
\end{proposition}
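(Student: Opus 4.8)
The plan is to derive this as a direct logical consequence of the previously established controllability results for the problem $(NS)$. The key observation is that the distribution $\RDist$ is a subdistribution of $\NSDist$, so by the remark in Section~\ref{notations} on subdistributions one has $\mc{O}_{\RDist}(q_0)\subset \mc{O}_{\NSDist}(q_0)$ for every $q_0\in Q$. Hence complete controllability of $\srol$, which means $\mc{O}_{\RDist}(q_0)=Q$ for all $q_0$, forces $\mc{O}_{\NSDist}(q_0)=Q$ for all $q_0$, i.e. complete controllability of $\sns$.

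First I would invoke this inclusion to reduce the claim to a statement purely about the $(NS)$ problem: if $\srol$ is completely controllable, then so is $\sns$. Then, under the standing hypotheses that $M$ and $\hat{M}$ are complete, non-symmetric, simply connected, irreducible, and $n\neq 8$, I would apply Theorem~\ref{th:non-sym} verbatim: it states precisely that $\sns$ is completely controllable if and only if $H$ or $\hat{H}$ equals $\SO(n)$ with respect to some orthonormal frames. Combining the two steps yields that complete controllability of $\srol$ implies $H=\SO(n)$ or $\hat{H}=\SO(n)$, which is the assertion.

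The proof is therefore essentially a two-line syllogism and there is no real obstacle; the only point requiring a moment's care is to make sure that the irreducibility and non-symmetry hypotheses in Theorem~\ref{th:non-sym} are exactly those assumed here (they are, word for word), so that the theorem applies directly without any need to re-examine Berger's classification. I would write:

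\begin{proof}
Since $\RDist$ is a subdistribution of $\NSDist$, one has $\mc{O}_{\RDist}(q_0)\subset \mc{O}_{\NSDist}(q_0)$ for every $q_0\in Q$. Hence, if $\srol$ is completely controllable, i.e. $\mc{O}_{\RDist}(q_0)=Q$ for all $q_0\in Q$, then also $\mc{O}_{\NSDist}(q_0)=Q$ for all $q_0\in Q$, so that $\sns$ is completely controllable. Under the present assumptions ($M$ and $\hat{M}$ complete, non-symmetric, simply connected, irreducible, and $n\neq 8$), Theorem~\ref{th:non-sym} then implies that one of the holonomy groups $H$ of $M$ or $\hat{H}$ of $\hat{M}$ is equal to $\SO(n)$ with respect to some orthonormal frames.
\end{proof}
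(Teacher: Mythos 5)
Your proposal is correct and is exactly the argument the paper intends: the paper labels this proposition "an immediate consequence of Theorem \ref{th:non-sym}", relying on the same two facts you use, namely that $\RDist\subset\NSDist$ gives $\mc{O}_{\RDist}(q_0)\subset\mc{O}_{\NSDist}(q_0)$ (so complete controllability of $\srol$ forces that of $\sns$), after which Theorem \ref{th:non-sym} yields $H=\SO(n)$ or $\hat{H}=\SO(n)$.
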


\subsubsection{A ''Rolling Along Loops'' Characterization of Isometry}

In this paragraph, we provide a general non-controllability result
that will be used later on.
It will be the converse of the following simple proposition.

\begin{proposition}
Suppose $(M,g)$ and $(\hat{M},\hat{g})$
have a common Riemannian covering space $(N,h)$
with projections (local isometries) $F:N\to M$ and $G:N\to\hat{M}$.
Then if there exist $x_0\in M$, $\hat{x}_0\in\hat{M}$ such that
\[
F^{-1}(x_0)\subset G^{-1}(\hat{x}_0),
\]
then for $q_0=(x_0,\hat{x}_0;A_0)\in Q$ with $A_0=G_*\circ (F_*|_{q_0})^{-1}$
one has that for every loop $\gamma\in\Omega_{x_0}(M)$ based at $x_0$
the corresponding curve $\hat{\gamma}_{\RDist}(\gamma,q_0)$ on $\hat{M}$
determined by the rolling curve starting from $q_0$
(exists and) is a loop based $\hat{x}_0$
i.e.,
\[
\gamma\in\Omega_{x_0}(M)\quad\Longrightarrow\quad \hat{\gamma}_{\RDist}(\gamma,q_0)\in \Omega_{\hat{x}_0}(\hat{M}).
\]
\end{proposition}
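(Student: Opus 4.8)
The plan is to reduce everything to the lifting property of the covering map $G$. First I would recall from Remark~\ref{re:Lambda} that the rolling curve along a curve $\gamma$ starting at $x_0$ from the position $q_0=(x_0,\hat{x}_0;A_0)$ with $A_0=G_*|_{z_0}\circ(F_*|_{z_0})^{-1}$ for some $z_0\in F^{-1}(x_0)$ can be constructed \emph{geometrically} rather than analytically: given the loop $\gamma\in\Omega_{x_0}(M)$, use the covering map $F:N\to M$ to lift $\gamma$ to the unique curve $\Gamma:[0,1]\to N$ with $F\circ\Gamma=\gamma$ and $\Gamma(0)=z_0$, then set $\hat{\gamma}:=G\circ\Gamma$ and $A(t):=G_*|_{\Gamma(t)}\circ(F_*|_{\Gamma(t)})^{-1}$. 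This is exactly the construction used in the implication (iii)$\Rightarrow$(ii) of Corollary~\ref{cor:weak_ambrose}: since $F$ and $G$ are local isometries one checks $\dot{\hat\gamma}(t)=G_*|_{\Gamma(t)}\dot\Gamma(t)=A(t)\dot\gamma(t)$ (no-slipping) and $\ol{\nabla}_{(\dot\gamma(t),\dot{\hat\gamma}(t))}A(\cdot)=0$ (no-spinning, because $A(\cdot)$ is built from the parallel-transport-preserving maps $F_*$, $G_*$), so $t\mapsto(\gamma(t),\hat\gamma(t);A(t))$ is \emph{the} rolling curve along $\gamma$ with initial position $q_0$. Hence $\hat\gamma_{\RDist}(\gamma,q_0)=G\circ\Gamma$ and $A_{\RDist}(\gamma,q_0)(t)=G_*|_{\Gamma(t)}\circ(F_*|_{\Gamma(t)})^{-1}$; in particular the rolling curve exists on all of $[0,1]$ (one could alternatively invoke completeness of $\hat M$ via Proposition~\ref{pr:rolling_curves}).

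Second, I would use the hypothesis $F^{-1}(x_0)\subset G^{-1}(\hat{x}_0)$ to close the loop on $\hat M$. Since $\gamma$ is a loop at $x_0$, its $F$-lift $\Gamma$ satisfies $F(\Gamma(1))=\gamma(1)=x_0$, so $\Gamma(1)\in F^{-1}(x_0)$. By assumption this forces $\Gamma(1)\in G^{-1}(\hat{x}_0)$, i.e. $G(\Gamma(1))=\hat{x}_0$. Therefore
\[
\hat\gamma_{\RDist}(\gamma,q_0)(0)=G(\Gamma(0))=G(z_0)=\hat{x}_0=G(\Gamma(1))=\hat\gamma_{\RDist}(\gamma,q_0)(1),
\]
so $\hat\gamma_{\RDist}(\gamma,q_0)$ is a loop based at $\hat{x}_0$, which is the claim. (If $\gamma$ is only piecewise $C^1$, lift it piece by piece; $\Gamma$ is then piecewise $C^1$ and the argument is unchanged, and $\hat\gamma_{\RDist}(\gamma,q_0)=G\circ\Gamma$ is piecewise $C^1$ as well, so it indeed lies in $\Omega_{\hat x_0}(\hat M)$.)

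The only genuine point requiring care—though it is not really an obstacle—is the consistency of the definition of $A_0$: one must observe that the formula $A_0=G_*|_{z_0}\circ(F_*|_{z_0})^{-1}$ does not depend on which preimage $z_0\in F^{-1}(x_0)$ is chosen, \emph{under the stated hypothesis}; more precisely the statement is about \emph{some} such $q_0$, so it suffices to fix one $z_0$ and run the argument above. Everything else is a direct application of the uniqueness of solutions of the rolling ODE (Proposition~\ref{pr:rolling_curves}) together with the unique path-lifting property of covering maps, so no substantial computation is needed; the proposition is essentially a bookkeeping converse to Corollary~\ref{cor:weak_ambrose}(iii)$\Rightarrow$(ii).
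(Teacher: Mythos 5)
Your proposal is correct and follows essentially the same route as the paper's own proof: lift $\gamma$ through $F$ to $\Gamma$ starting at a chosen $z_0\in F^{-1}(x_0)$, check that $(\gamma,G\circ\Gamma;G_*\circ(F_*|_{\Gamma(\cdot)})^{-1})$ satisfies the no-slip and no-spin conditions so that by uniqueness it is the rolling curve, and then use $\Gamma(1)\in F^{-1}(x_0)\subset G^{-1}(\hat x_0)$ to close the loop. The only difference is cosmetic: you phrase the no-spinning verification via the construction in Corollary~\ref{cor:weak_ambrose}(iii)$\Rightarrow$(ii), whereas the paper redoes the short parallel-transport computation inline.
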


\begin{proof}
If $\gamma\in\Omega_{x_0}(M)$,
let $\Gamma$ be the unique lift of $\gamma$ to $N$
such that $\Gamma(0)=q_0$
and define $\hat{\gamma}=F\circ \Gamma$, $A(t)=G_*\circ (F_*|_{\Gamma(t)})^{-1}$.
Then $q(t)=(\gamma(t),\hat{\gamma}(t);A(t))$ is an element of $Q|_{(\gamma(t),\hat{\gamma}(t))}$,
since $F,G$ are local isometries and moreover, $q_0=q(0)$,
\[
\dot{\hat{\gamma}}(t)=&\dif{t}(G\circ \Gamma)(t)
=G_*\dot{\Gamma}(t)=(G_*\circ (F_*|_{\Gamma(t)})^{-1})(F_*\dot{\Gamma}(t))
=A(t)\dot{\gamma}(t) \\
\hat{\nabla}_{\dot{\hat{\gamma}}(t)} (A(t)P_0^t(\gamma)X)
=&\hat{\nabla}_{G_*\dot{\Gamma}(t)} (G_*(F_*|_{\Gamma(t)})^{-1}P_0^t(\gamma)X) \\
=&G_*\nabla^h_{(F_*|_{\Gamma(t)})^{-1}\dot{\gamma}(t)}((F_*|_{\Gamma(t)})^{-1}P_0^t(\gamma)X) \\
=&(G_*\circ (F_*|_{\Gamma(t)})^{-1})\nabla_{\dot{\gamma}(t)} (P_0^t(\gamma)X)=0,
\]
for every $t\in [0,1]$
and every $X\in T|_{x_0} M$.
This proves that $q(t)=q_{\RDist}(\gamma,q_0)(t)$
and since $\gamma$ is a loop based at $x_0$,
$F(\Gamma(1))=\gamma(1)=x_0$,
which means that $\Gamma(1)\in F^{-1}(x_0)\subset G^{-1}(\hat{x}_0)$
and thus $\hat{\gamma}_{\RDist}(\gamma,q_0)(1)=\hat{\gamma}(1)=G(\Gamma(1))=\hat{x}_0$.
By definition, $\hat{\gamma}_{\RDist}(\gamma,q_0)(0)=\hat{x}_0$
and hence $\hat{\gamma}_{\RDist}(\gamma,q_0)\in\Omega_{\hat{x}_0}(\hat{M})$.
This completes the proof.
\end{proof}

Conversely, we have the following theorem which is the main result of this subsection.

\begin{theorem}\label{th:fixed_point}
Let $(M,g)$, $(\hat{M},\hat{g})$ be complete Riemannian manifolds
and suppose that there is a $q_0=(x_0,\hat{x}_0;A_0)\in Q$
such that for every loop $\gamma\in\Omega_{x_0}(M)$ based at $x_0$
the corresponding curve $\hat{\gamma}_{\RDist}(\gamma,q_0)$ on $\hat{M}$
determined by the rolling curve starting from $q_0$
is a loop based $\hat{x}_0$
i.e.,
\begin{align}\label{eq:loops_give_loops}
\gamma\in\Omega_{x_0}(M)\quad\Longrightarrow\quad \hat{\gamma}_{\RDist}(\gamma,q_0)\in \Omega_{\hat{x}_0}(\hat{M}).
\end{align}
Then $(M,g)$ and $(\hat{M},\hat{g})$ have a common Riemannian covering space
$(N,h)$ such that if $F:N\to M$, $G:N\to\hat{M}$
are the corresponding covering maps,
then
\[
F^{-1}(x_0)\subset G^{-1}(\hat{x}_0).
\]
\end{theorem}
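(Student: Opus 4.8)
\textbf{Proof plan for Theorem \ref{th:fixed_point}.}

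The plan is to build the common covering space $N$ directly out of the orbit $\mc{O}_{\RDist}(q_0)$, i.e.\ to show that, under the loops-give-loops hypothesis \eqref{eq:loops_give_loops}, the orbit through $q_0$ is in fact an integral manifold of $\RDist$, and then to invoke Corollary \ref{cor:weak_ambrose} together with the bundle structure from Proposition \ref{pr:R_orbit_bundle}. First I would observe that, by Proposition \ref{pr:rolling_curves} and the completeness of $\hat{M}$, the rolling curve $q_{\RDist}(\gamma,q_0)$ is defined on all of $[0,1]$ for every a.c.\ $\gamma:[0,1]\to M$ starting at $x_0$, so that the map $\gamma\mapsto q_{\RDist}(\gamma,q_0)(1)$ is defined on all a.c.\ curves from $x_0$ and its image is $\mc{O}_{\RDist}(q_0)$. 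Next I would use \eqref{eq:loops_give_loops} to show that this endpoint depends only on the endpoint $\gamma(1)\in M$, not on $\gamma$ itself: given two curves $\gamma,\omega$ from $x_0$ with the same endpoint, the concatenation $\omega^{-1}.\gamma$ is a loop at $x_0$ (after reparametrisation as in the Notations section), so by hypothesis $\hat\gamma_{\RDist}(\omega^{-1}.\gamma,q_0)$ is a loop at $\hat{x}_0$; combining this with the group property of rolling curves (Remark \ref{re:group_property_of_rolling}, i.e.\ $q_{\RDist}(\omega^{-1}.\gamma,q_0)=q_{\RDist}(\omega^{-1},q_{\RDist}(\gamma,q_0)(1)).q_{\RDist}(\gamma,q_0)$) and the fact that parallel transport along $\gamma$ then along $\omega^{-1}$ composes, one deduces first that $\hat\gamma_{\RDist}(\gamma,q_0)(1)=\hat\gamma_{\RDist}(\omega,q_0)(1)$ and then, since the $A$-component is determined by the two base curves via \eqref{eq:parallel_trans_A}, that $A_{\RDist}(\gamma,q_0)(1)=A_{\RDist}(\omega,q_0)(1)$. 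This gives a well-defined map $\Phi:M\to Q$, $\Phi(x)=q_{\RDist}(\gamma_x,q_0)(1)$ for any a.c.\ $\gamma_x$ from $x_0$ to $x$, with $\pi_{Q,M}\circ\Phi=\id_M$.

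Then I would check that $\Phi$ is smooth and that its image is exactly $\mc{O}_{\RDist}(q_0)$, so that $\mc{O}_{\RDist}(q_0)$ is the graph of a smooth section of $\pi_{Q,M}$, hence an $n$-dimensional embedded submanifold of $Q$ that is mapped diffeomorphically onto $M$ by $\pi_{Q,M}$. Smoothness near a point $x$ is obtained by writing $\gamma_x$ as the concatenation of a fixed curve from $x_0$ to $x$ with short geodesics and using Proposition \ref{pr:rol_geodesic} (which gives $\hat\gamma$ and $A$ explicitly via $\widehat{\exp}$ and parallel transport) together with the local trivialisation $\tau_F$ of $\pi_{Q,M}$ from Remark \ref{re:Q_bundle_over_M}; this is the routine part. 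Since $\mc{O}_{\RDist}(q_0)$ has dimension $n=\rank\RDist$ and contains, at each of its points $q$, the subspace $\RDist|_q$, we conclude $T|_q\mc{O}_{\RDist}(q_0)=\RDist|_q$, i.e.\ the orbit is an integral manifold of $\RDist$. Now Corollary \ref{cor:weak_ambrose}, case (i)$\Rightarrow$(iii), applies: there is a complete Riemannian manifold $(N,h)$ and Riemannian covering maps $F:N\to M$, $G:N\to\hat{M}$; concretely one takes $N=\mc{O}_{\RDist}(q_0)$ with $h=(\pi_{Q,M}|_N)^*g$, $F=\pi_{Q,M}|_N$, $G=\pi_{Q,\hat{M}}|_N$, exactly as in the proof of that corollary.

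It remains to verify the fibre inclusion $F^{-1}(x_0)\subset G^{-1}(\hat{x}_0)$. A point of $F^{-1}(x_0)$ is a $q_1=(x_0,\hat{x}_1;A_1)\in \mc{O}_{\RDist}(q_0)$, so $q_1=q_{\RDist}(\gamma,q_0)(1)$ for some loop $\gamma\in\Omega_{x_0}(M)$ (using that the orbit is reached by curves from $x_0$, and here the endpoint is $x_0$ itself); by the hypothesis \eqref{eq:loops_give_loops}, $\hat{x}_1=\hat\gamma_{\RDist}(\gamma,q_0)(1)=\hat{x}_0$, hence $q_1\in \pi_{Q,\hat{M}}^{-1}(\hat{x}_0)\cap N=G^{-1}(\hat{x}_0)$, which is precisely the desired inclusion. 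I expect the main obstacle to be the second step, namely proving rigorously that the endpoint of the rolling curve depends only on $\gamma(1)$: this is where hypothesis \eqref{eq:loops_give_loops} is used in an essential way, and care is needed in juggling the reparametrised concatenation, the group property of rolling curves, and the identity \eqref{eq:parallel_trans_A} for the $A$-component to get well-definedness of \emph{both} the $\hat{M}$-component and the isometry component simultaneously. Everything after that — smoothness of $\Phi$, identification of the orbit as an integral manifold, and the appeal to Corollary \ref{cor:weak_ambrose} — is either routine or already in the excerpt.
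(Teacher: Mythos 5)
The argument breaks at its central step, the well-definedness of $\Phi$, and the gap is not repairable: under hypothesis \eqref{eq:loops_give_loops} the endpoint $q_{\RDist}(\gamma,q_0)(1)$ is in general \emph{not} determined by $\gamma(1)$, so the orbit $\mc{O}_{\RDist}(q_0)$ need not be a graph over $M$ and need not have dimension $n$ for the reason you give. The hypothesis only constrains the $\hat{M}$-component of the endpoint of rolling along \emph{loops}; it says nothing about the isometry component, and your attempted deduction is logically unsound: knowing that rolling along $\omega^{-1}$ \emph{starting from} $q_{\RDist}(\gamma,q_0)(1)$ ends over $\hat{x}_0$ tells you nothing about rolling along $\omega$ starting from $q_0$, because these two rollings start at different points of $Q$ (reversibility, $q_{\RDist}(\omega^{-1},q_{\RDist}(\omega,q_0)(1))(1)=q_0$, only applies along the same curve from its own terminal point). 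Likewise the claim that the $A$-component is "determined by the two base curves via \eqref{eq:parallel_trans_A}" is exactly backwards: $A_{\RDist}(\gamma,q_0)(1)=P_0^1(\hat{\gamma})\circ A_0\circ P_1^0(\gamma)$ depends on parallel transport along the particular curves, hence on holonomy, not just on endpoints. A concrete counterexample: let $\Gamma$ be the group generated by the screw motion of $\R^3$ (rotation by $\theta\neq 0$ about the $z$-axis composed with unit translation along it), let $M=\R^3/\Gamma$, $\hat{M}=\R^3/\Z e_3$, let $x_0,\hat{x}_0$ be the images of the origin and $A_0$ the natural identification coming from the common cover $N=\R^3$ with projections $F,G$. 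Since $F^{-1}(x_0)=G^{-1}(\hat{x}_0)=\Z e_3$, the proposition preceding Theorem \ref{th:fixed_point} shows that hypothesis \eqref{eq:loops_give_loops} holds; yet rolling around the loop in $M$ corresponding to the screw generator returns to $(x_0,\hat{x}_0;R_\theta A_0)\neq q_0$ (the holonomy of $M$ is nontrivial while that of $\hat{M}$ is trivial), and rolling two different paths with the same endpoint $x$ then produces different endpoints even in $\hat{M}$. So the orbit is an infinite covering of $M$, not the graph of a section, and your $\Phi$ does not exist.

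This is precisely why the paper's proof is long: instead of trying to show the orbit is a section, it first propagates the loop condition to every point of the fiber $F_{x_0}=\mc{O}_{\RDist}(q_0)\cap\pi_{Q,M}^{-1}(x_0)$, then uses a Jacobi-field computation to prove the identity $(\widehat{\exp}_{\hat{x}_0})_*|_{A u}(Av)=A_{\RDist}(\gamma_u,q)(1)(\exp_{x_0})_*|_u(v)$, deduces that the maps $\widehat{\exp}_{\hat{x}_0}\circ A\circ\exp_{x_0}^{-1}$ are local isometries, and concludes that $\Rol$ vanishes along \emph{all} rolling curves from $q_0$; only then does Corollary \ref{cor:weak_ambrose} give the common covering, with $N$ equal to the orbit itself (a possibly nontrivial covering of $M$). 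Your final paragraph (the fibre inclusion $F^{-1}(x_0)\subset G^{-1}(\hat{x}_0)$) is fine and agrees with the paper, but everything before it needs to be replaced by an argument of this kind; the orbit-is-a-graph route cannot work.
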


\begin{proof}
For $u,v\in T|_{x_0} M$, a Jacobi field
along the geodesic $t\mapsto \exp_{x_0}(tu)=:\gamma_u(t)$ is given by
\[
Y_{u,v}(t)=\pa{s}\big|_{s=0} \exp_{x_0}(t(u+sv))=t(\exp_{x_0})_*|_{tu}(v),
\]
together with the initial conditions: $Y_{u,v}(0)=0$, $\nabla_{\dot{\gamma}_u(0)} Y_{u,v}=v$.
Define a function $\hat{\omega}_{u,v}:[0,1]\times [-1,1]\to\hat{M}$ by
\[
&\hat{\omega}_{u,v}(t,s):=\\
&\hat{\gamma}_{\RDist}\Big(\tau\mapsto \exp_{x_0}\big( (1-\tau)(u+sv)\big),q_{\RDist}\big(\sigma\mapsto \exp_{x_0}(u+\sigma v),q_{\RDist}(\gamma_u,q_0)(1)\big)(s)\Big)(t).
\]
It is clear from Proposition \ref{pr:rol_geodesic} that for every $s\in [-1,1]$
the map $t\mapsto \hat{\omega}_{u,v}(t,s)$ is a geodesic
and moreover it is clear that $\hat{\omega}(t,0)=\hat{\gamma}_{\RDist}(\gamma_u,q_0)(1-t)$.
This implies that 
\[
\hat{Y}_{u,v}(t):=\pa{s}\Big|_{s=0} \hat{\omega}_{u,v}(1-t,s),\quad t\in [0,1],
\]
defines a Jacobi field of $(\hat{M},\hat{g})$ along the geodesic $\hat{\gamma}_{\RDist}(\gamma_u,q_0)$.
We now derive some properties of this Jacobi field. 

We first observe that 
\begin{align}\label{fp:Jacobi_p1}
\hat{Y}_{u,v}(1)=&\pa{s}\big|_{s=0}  \hat{\gamma}_{\RDist}\big(\sigma\mapsto \exp_{x_0}(u+\sigma v),q_{\RDist}(\gamma_u,q_0)(1)\big)(s) \nonumber \\
=&A_{\RDist}(\gamma_u,q_0)(1)\pa{s}\big|_0 \exp_{x_0}(u+s v) \nonumber \\
=&A_{\RDist}(\gamma_u,q_0)(1)Y_{u,v}(1).
\end{align}

We now claim that $\hat{\omega}_{u,v}(1,s)=\hat{x}_0$ for all $s$.
Indeed, we may write $\hat{\omega}_{u,v}(1,s)$ as
\[
\hat{\omega}_{u,v}(1,s)
=\hat{\gamma}_{\RDist}\big(\underbrace{\big(\tau\mapsto \exp_{x_0}((1-\tau)(u+sv))\big).\big(\sigma\mapsto \exp_{x_0}(u+\sigma sv)\big).\gamma_u}_{=:(\star)\in\Omega_{x_0}(M)},q_0\big)(1)
=\hat{x}_0
\]
and since the expression $(\star)$ is a loop on $M$ based at $x_0$,
it follows from the assumption that the path defined on right of the first equality sign
is a loop on $\hat{M}$ based at $\hat{x}_0$, hence its value at $t=1$ is $\hat{x}_0$.
From this follows the second property of $\hat{Y}_{u,v}$, namely
\begin{align}\label{fp:Jacobi_p2}
\hat{Y}_{u,v}(0)=0,
\end{align}
since $\hat{Y}_{u,v}(0)=\pa{s}\big|_{s=0} \hat{\omega}_{u,v}(1,s)=\pa{s}\big|_0 (s\mapsto x_0)=0$.

This is a key property since it implies that $\hat{Y}_{u,v}$ has the form
\begin{align}\label{fp:Jacobi_explicit}
\hat{Y}_{u,v}(t)=\pa{s}\big|_0\widehat{\exp}_{\hat{x}_0}\big(t(A_0u+s\hat{v}(u,v))\big)
=t(\widehat{\exp}_{\hat{x}_0})_*|_{tA_0u}(\hat{v}(u,v)),
\end{align}
where $\hat{v}(u,v):=\hat{\nabla}_{\dif{t}\widehat{\exp}_{\hat{x}_0}(tA_0 u)} \hat{Y}_{u,v}\big|_{t=0}$.
It is clear that $(u,v)\mapsto \hat{v}(u,v)$ is a smooth map $(T|_{x_0} M)^2\to T|_{\hat{x}_0} \hat{M}$.
We also observed that $\hat{\gamma}_u(t):=\hat{\gamma}_{\RDist}(\gamma_u,q_0)=\widehat{\exp}_{\hat{x}_0}(tA_0 u)$.

We next show the following relation.
\begin{lemma}\label{le-loop-1}
With the above notations,
\begin{align}\label{fp:Jacobi_p3}
\hat{\nabla}_{\dot{\hat{\gamma}}_u(t)} \hat{Y}_{u,v}|_{t=1}=
A_{\RDist}(\gamma_u,q_0)(1)\nabla_{\dot{\gamma}_u(t)} Y_{u,v}|_{t=1}.
\end{align}
\end{lemma}
\begin{proof}
Writing
$\partial_t:=\pa{t}\exp_{x_0}(t(u+sv))$,
$\partial_s:=\pa{s}\exp_{x_0}(t(u+sv))$
$\hat{\partial}_t:=\pa{t}\hat{\omega}(1-t,s)$,
$\hat{\partial}_s:=\pa{s}\hat{\omega}(1-t,s)$
and $\ol{\partial}_s=(\partial_s,\hat{\partial}_s)$, we have
\[
& \hat{\nabla}_{\dot{\hat{\gamma}}_u(t)} \hat{Y}_{u,v}|_{t=1}
=\hat{\nabla}_{\hat{\partial}_t} \pa{s}\Big|_0 \hat{\omega}(1-t,s)\Big|_{t=1}
=\hat{\nabla}_{\hat{\partial}_s} \pa{t}\Big|_1 \hat{\omega}(1-t,s)\Big|_{s=0} \\
=&\hat{\nabla}_{\hat{\partial}_s} \pa{t}\Big|_1 \hat{\gamma}_{\RDist}
\Big(\tau\mapsto \exp_{x_0}((1-\tau)(u+sv)), \\
& \quad q_{\RDist}\big(\big(\sigma\mapsto \exp_{x_0}(u+\sigma sv)\big).\gamma_u,q_0\big)(1)\Big)(1-t)\Big|_{s=0} \\
=&\hat{\nabla}_{\hat{\partial}_s}\Big(A_{\RDist}\big(\big(\sigma\mapsto \exp_{x_0}(u+\sigma sv)\big).\gamma_u,q_0\big)(1)\pa{t}\big|_1 \exp_{x_0}(t(u+sv))\Big)\Big|_{s=0} \\
=&\hat{\nabla}_{\hat{\partial}_s}\Big(A_{\RDist}\big(\sigma\mapsto \exp_{x_0}(u+\sigma v),q_{\RDist}(\gamma_u,q_0)(1)\big)(s)\pa{t}\big|_1 \exp_{x_0}(t(u+sv))\Big)\Big|_{s=0} \\
=&\underbrace{\Big(\ol{\nabla}_{\ol{\partial}_s}A_{\RDist}\big(\sigma\mapsto \exp_{x_0}(u+\sigma v),q_{\RDist}(\gamma_u,q_0)(1)\big)(s)\Big)\Big|_{s=0}}_{=0}\pa{t}\big|_1 \exp_{x_0}(t(u+sv)) \\
&+A_{\RDist}\big(\sigma\mapsto \exp_{x_0}(u+\sigma v),q_{\RDist}(\gamma_u,q_0)(1)\big)(0)\Big(\nabla_{\partial_s}\pa{t}\big|_1 \exp_{x_0}(t(u+sv))\Big)\Big|_{s=0} \\
=&A_{\RDist}(\gamma_u,q_0)(1)\nabla_{\partial_t} \underbrace{\pa{s}\big|_0 \exp_{x_0}(t(u+sv))}_{=Y_{u,v}(t)}\big|_{t=1} \\
=&A_{\RDist}(\gamma_u,q_0)(1)\nabla_{\dot{\gamma}_u(t)} Y_{u,v}|_{t=1},
\]
which gives (\ref{fp:Jacobi_p3}).

\end{proof}

The next technical result goes as follows.
\begin{lemma}\label{le-loop-2}
Consider $\hat{v}(u,v)$ defined by (\ref{fp:Jacobi_explicit}). Then, 
\begin{align}\label{fp:hat_v}
\hat{v}(u,v)=A_0v,\quad \forall u,v\in T|_{x_0} M.
\end{align}
\end{lemma}

\begin{proof}
Notice first that for any $\tau\in\R$,
\[
Y_{\tau u,v}(t)=\pa{s}\big|_0 \exp_{x_0}(t(\tau u+sv))
=\frac{1}{\tau}\pa{\sigma}\big|_0 \exp_{x_0}(t\tau (u+\sigma v))
=\frac{1}{\tau} Y_{u,v}(t\tau),
\]
where, in the first equality, we substituted $\sigma:=\frac{s}{\tau}$.
Therefore (\ref{fp:Jacobi_p1}) implies that
\[
\hat{Y}_{\tau u,v}(1)=A_{\RDist}(\gamma_{\tau u},q_0)(1)Y_{\tau u,v}(1)
=\frac{1}{\tau}A_{\RDist}(\gamma_{u},q_0)(\tau)Y_{u,v}(\tau),
\]
i.e.,
\begin{align}\label{fp:Jacobi_scaling}
A_{\RDist}(\gamma_{u},q_0)(\tau)Y_{u,v}(\tau)=\tau \hat{Y}_{\tau u,v}(1).
\end{align}
On one hand, from (\ref{fp:Jacobi_p3}), (\ref{fp:Jacobi_scaling}) and (\ref{fp:Jacobi_explicit}) one has
(recall that $\hat{\gamma}_u(t)=\gamma_{\RDist}(\gamma_u,q_0)(t)=\widehat{\exp}_{\hat{x}_0}(tA_0u)$)
\[
\hat{\nabla}_{\dot{\hat{\gamma}}_u(t)} \hat{Y}_{u,v}\big|_{t=1}
=&A_{\RDist}(\gamma_u,q_0)(1)\nabla_{\dot{\gamma}_u(t)} Y_{u,v}|_{t=1}
=\hat{\nabla}_{\dot{\hat{\gamma}}_u(t)} \big(A_{\RDist}(\gamma_u,q_0)(t)Y_{u,v}(t)\big)|_{t=1} \\
=&\hat{\nabla}_{\dot{\hat{\gamma}}_u(t)} (t \hat{Y}_{t u,v}(1))\big|_{t=1}
=\hat{Y}_{u,v}(1)+\hat{\nabla}_{\dot{\hat{\gamma}}_u(t)} \hat{Y}_{t u,v}(1)\big|_{t=1} \\
=&\hat{Y}_{u,v}(1)+\hat{\nabla}_{\dot{\hat{\gamma}}_u(t)} \pa{s}\big|_0 \widehat{\exp}_{\hat{x}_0}\big(tA_0 u+s\hat{v}(tu,v))\big)\big|_{t=1} \\
=&\hat{Y}_{u,v}(1)+\hat{\nabla}_{\dot{\hat{\gamma}}_u(t)} \big((\widehat{\exp}_{\hat{x}_0})_*|_{tA_0 u}(\hat{v}(tu,v)\big)\big|_{t=1}.
\]
On the other hand, using only (\ref{fp:Jacobi_explicit}) one has
\[
\hat{\nabla}_{\dot{\hat{\gamma}}_u(t)} \hat{Y}_{u,v}\big|_{t=1}
=&\hat{\nabla}_{\dot{\hat{\gamma}}_u(t)} \pa{s}\big|_0 \widehat{\exp}_{\hat{x}_0}\big(t(A_0 u+s\hat{v}(u,v))\big)\big|_{t=1} \\
=&\hat{\nabla}_{\dot{\hat{\gamma}}_u(t)} \big(t(\widehat{\exp}_{\hat{x}_0})_*|_{tA_0 u}(\hat{v}(u,v))\big)\big|_{t=1} \\
=&(\widehat{\exp}_{\hat{x}_0})_*|_{A_0 u}(\hat{v}(u,v))+\hat{\nabla}_{\dot{\hat{\gamma}}_u(t)} \big((\widehat{\exp}_{\hat{x}_0})_*|_{tA_0 u}(\hat{v}(u,v))\big)\big|_{t=1} \\
=&\pa{s}\big|_0 \widehat{\exp}_{\hat{x}_0}(A_0u+s\hat{v}(u,v))+\hat{\nabla}_{\dot{\hat{\gamma}}_u(t)} \big((\widehat{\exp}_{\hat{x}_0})_*|_{tA_0 u}(\hat{v}(u,v))\big)\big|_{t=1}  \\
=&\hat{Y}_{u,v}(1)+\hat{\nabla}_{\dot{\hat{\gamma}}_u(t)} \big((\widehat{\exp}_{\hat{x}_0})_*|_{tA_0 u}(\hat{v}(u,v))\big)\big|_{t=1}.
\]
Combining these two formulas, whose left hand sides are equal, and canceling the common
terms $\hat{Y}_{u,v}(1)$ we end up with
\[
\hat{\nabla}_{\dot{\hat{\gamma}}_u(t)} \big((\widehat{\exp}_{\hat{x}_0})_*|_{tA_0 u}(\hat{v}(tu,v))\big)\big|_{t=1}
=\hat{\nabla}_{\dot{\hat{\gamma}}_u(t)} \big((\widehat{\exp}_{\hat{x}_0})_*|_{tA_0 u}(\hat{v}(u,v))\big)\big|_{t=1}.
\]
Here we can simplify the left hand side by the following computation:
With the notation 
\[
D_s:=\pa{s} \widehat{\exp}_{\hat{x}_0}\big(tA_0 u+s\hat{v}(tu,v))\big),
\ \ \ D_t:=\pa{t} \widehat{\exp}_{\hat{x}_0}\big(tA_0 u+s\hat{v}(tu,v))\big),
\]
(notice also that $D_t|_{s=0}=\dot{\hat{\gamma}}_u(t)$) we get
\[
& \hat{\nabla}_{\dot{\hat{\gamma}}_u(t)} \big((\widehat{\exp}_{\hat{x}_0})_*|_{tA_0 u}(\hat{v}(tu,v))\big)\big|_{t=1}
=\hat{\nabla}_{D_t} \pa{s}\big|_0 \widehat{\exp}_{\hat{x}_0}\big(tA_0 u+s\hat{v}(tu,v))\big)\big|_{t=1} \\
=&\hat{\nabla}_{D_s} \pa{t}\big|_1 \widehat{\exp}_{\hat{x}_0}\big(tA_0 u+s\hat{v}(tu,v))\big)\big|_{s=0}\\
=&\hat{\nabla}_{D_s} (\widehat{\exp}_{\hat{x}_0})_*|_{A_0u+s\hat{v}(u,v)}\big(A_0 u+s\partial_1\hat{v}(u,v)(u))\big)\big|_{s=0} \\
=&\hat{\nabla}_{D_s} (\widehat{\exp}_{\hat{x}_0})_*|_{A_0u+s\hat{v}(u,v)}\big(A_0 u\big)|_{s=0}+\hat{\nabla}_{D_s} (\widehat{\exp}_{\hat{x}_0})_*|_{A_0u+s\hat{v}(u,v)}\big(s\partial_1\hat{v}(u,v)(u)\big)\big|_{s=0} \\
=&\hat{\nabla}_{D_s}\pa{t}\big|_1 \widehat{\exp}_{\hat{x}_0}(tA_0u+s\hat{v}(u,v))\big|_{s=0}\\
&-\hat{\nabla}_{D_s} \pa{t}\big|_1 \widehat{\exp}_{\hat{x}_0}\big(A_0u+s\hat{v}(u,v)+(1-t)s\partial_1\hat{v}(u,v)(u)\big)\big|_{s=0} \\
=&\hat{\nabla}_{D_t} \pa{s}\big|_0 \widehat{\exp}_{\hat{x}_0}\big(tA_0 u+s\hat{v}(u,v))\big)\big|_{t=1}\\
&-\hat{\nabla}_{D_t} \pa{s}\big|_0 \widehat{\exp}_{\hat{x}_0}\big(A_0u+s\hat{v}(u,v)+(1-t)s\partial_1\hat{v}(u,v)(u)\big)\big|_{t=1} \\
=&\hat{\nabla}_{\dot{\hat{\gamma}}_u(t)} \big((\widehat{\exp}_{\hat{x}_0})_*|_{tA_0 u}(\hat{v}(u,v))\big)\big|_{t=1}\\
&-\hat{\nabla}_{D_t} (\widehat{\exp}_{\hat{x}_0})_*|_{A_0 u}\big(\hat{v}(u,v)+(1-t)\partial_1\hat{v}(u,v)(u)\big)\big|_{t=1},
\]
where $\partial_1 \hat{v}(u,v)(w)$ for $u,v,w\in T|_{x_0} M$ denotes the directional derivative 
of $\hat{v}$ at $(u,v)$ in the direction $w$.
The last term on the right of the previous formula simplifies to
\[
& \hat{\nabla}_{D_t} (\widehat{\exp}_{\hat{x}_0})_*|_{A_0 u}\big(\hat{v}(u,v)+(1-t)\partial_1\hat{v}(u,v)(u)\big)\big|_{t=1} \\
=&\hat{\nabla}_{D_t} \Big((\widehat{\exp}_{\hat{x}_0})_*|_{A_0 u}\big(\hat{v}(u,v)\big)+(1-t)(\widehat{\exp}_{\hat{x}_0})_*|_{A_0 u}\big(\partial_1\hat{v}(u,v)(u)\big)\Big)\Big|_{t=1} \\
=&-(\widehat{\exp}_{\hat{x}_0})_*|_{A_0 u}\big(\partial_1\hat{v}(u,v)(u)\big).
\]
Combining the last three formulas, one obtains
\[
(\widehat{\exp}_{\hat{x}_0})_*|_{A_0 u}\big(\partial_1\hat{v}(u,v)(u)\big)=0.
\]
Thus for all $u$ such that $A_0 u$ is not in the tangent conjugate locus $\hat{Q}_{\hat{x}_0}$ of $\widehat{\exp}_{\hat{x}_0}$
one has $\partial_1\hat{v}(u,v)(u)=0$. Moreover, since the complement of $\hat{Q}_{\hat{x}_0}$  
is a dense subset of $T|_{\hat{x}_0} \hat{M}$,
the continuity of $(u,v)\mapsto \partial_1\hat{v}(u,v)(u)$ implies that
\[
\partial_1\hat{v}(u,v)(u)=0,\quad \forall u,v\in T|_{\hat{x}_0} M.
\] 
But this implies that
\[
\hat{v}(u,v)-\hat{v}(0,v)=\int_0^1 \dif{t}\hat{v}(tu,v)\diff t
=\int_0^1 \frac{1}{t}\underbrace{\partial_1\hat{v}(tu,v)(tu)}_{=0}\diff t=0,
\]
and hence we need only to know the values of $\hat{v}(0,v)$ to know all values of $\hat{v}(u,v)$.
By the definition of $\hat{\omega}_{u,v}(t)$ one sees that
\[
\hat{\omega}_{0,v}(t,s)=&\hat{\gamma}_{\RDist}\Big(\tau\mapsto \exp_{x_0}\big( (1-\tau)sv\big),q_{\RDist}\big(\sigma\mapsto \exp_{x_0}(\sigma v),\underbrace{q_{\RDist}(\gamma_0,q_0)(1)}_{=q_0}\big)(s)\Big)(t) \\
=&\hat{\gamma}_{\RDist}\Big(\tau\mapsto \exp_{x_0}\big( (1-\tau)sv\big),q_{\RDist}\big(\sigma\mapsto \exp_{x_0}(\sigma sv),q_0\big)(1)\Big)(t) \\
=&\hat{\gamma}_{\RDist}\big(\tau\mapsto \exp_{x_0}\big( \tau sv\big),q_0\big)(1-t)
=\widehat{\exp}_{\hat{x}_0}((1-t)sA_0 v),
\]
which implies that
\[
\hat{Y}_{0,v}(t)=\pa{s}\big|_0 \hat{\omega}_{0,v}(1-t,s)=\pa{s}\big|_0 \widehat{\exp}_{\hat{x}_0}\big(t(0+sA_0 v)\big),
\]
and therefore, comparing to (\ref{fp:Jacobi_explicit}), one obtains
\[
\hat{v}(0,v)=A_0 v.
\]
This finally proves (\ref{fp:hat_v}) since by the above considerations,
$\hat{v}(u,v)=\hat{v}(0,v)=A_0 v$.

\end{proof}

Equations (\ref{fp:Jacobi_p1}), (\ref{fp:Jacobi_explicit}) and (\ref{fp:hat_v})
show that (take $t=1$)
\begin{align}\label{eq:loop_isometry}
(\widehat{\exp}_{\hat{x}_0})_*|_{A_0u}(A_0v)=A_{\RDist}(\gamma_u,q_0)(1)\big((\exp_{x_0})_*|_{u}(v)\big),
\quad \forall u,v\in T|_{x_0} M.
\end{align}

We now show that (\ref{eq:loops_give_loops}) holds
with $q_0$ replaced by any element of the fiber $\mc{O}_{\RDist}(q_0)\cap \pi_{Q,M}^{-1}(x_0)$ of the orbit above $x_0$.

\begin{lemma}
Write $F_{x_0}:=\mc{O}_{\RDist}(q_0)\cap \pi_{Q,M}^{-1}(x_0)$.
Then
\begin{align}\label{eq:loops_give_loops_2}
q\in F_{x_0},\ \gamma\in\Omega_{x_0}(M)\quad\Longrightarrow\quad \hat{\gamma}_{\RDist}(\gamma,q)\in \Omega_{\hat{x}_0}(\hat{M}).
\end{align}

\end{lemma}

\noindent Remark: $\pi_{Q}(F_{x_0})=(x_0,\hat{x}_0)$.

\begin{proof}
Let $q\in F_{x_0}$. Then there is a $\omega\in\Omega_{x_0}(M)$ such that $q=q_{\RDist}(\omega,q_0)(1)$.
Then if $\gamma\in\Omega_{x_0}(M)$,
\[
\hat{\gamma}_{\RDist}(\gamma,q)(1)=\hat{\gamma}_{\RDist}(\gamma,q_{\RDist}(\omega,q_0)(1))(1)
=\hat{\gamma}_{\RDist}(\gamma.\omega,q_0)(1)=\hat{x}_0,
\]
where the last equality follows from (\ref{eq:loops_give_loops})
since $\gamma.\omega\in\Omega_{x_0}(M)$.
Since $\hat{\gamma}_{\RDist}(\gamma,q)(0)=\pi_{Q,\hat{M}}(q)=\hat{x}_0$
as remarked just before the proof,
we have $\hat{\gamma}_{\RDist}(\gamma,q)\in \Omega_{\hat{x}_0}(\hat{M})$.

\end{proof}

Define $U$ to be the subset of $T|_{x_0} M$ of points before the cut time i.e.,
if for $X\in T|_{x_0} M$, $\n{X}_g=1$ we let $\tau(X)\in ]0,\infty]$ denote the time such that the geodesic $\gamma_X$
is optimal on $[0,\tau(X)]$ but not after,
then
\[
U=\{sX\ |\ X\in T|_{x_0} M,\ \n{X}_g=1,\ 0\leq s<\tau(X)\}.
\]
Since $(M,g)$ is complete, $\tilde{U}:=\exp_{x_0}(U)$ is dense in $M$
and $\exp_{x_0}:U\to\tilde{U}$ is a diffeomorphism.

We now have the following result.

\begin{lemma}
For each $q\in F_{x_0}$ let
\[
\phi_q:\tilde{U}\to \hat{M};\quad \phi_q=\widehat{\exp}_{\hat{x}_0}\circ A\circ (\exp_{x_0}|_U)^{-1}
\]
where $q=(x_0,\hat{x}_0;A)$.
Then each mapping $\phi_q$ is a local isometry $(\tilde{U},g|_{\tilde{U}})\to (\hat{M},\hat{g})$
and $(\phi_q)_*|_{T|_{x_0} M}=A$.
\end{lemma}

\begin{proof}
Since $q=(x_0,\hat{x}_0;A)\in F_{x_0}$, the previous lemma implies that (\ref{eq:loop_isometry})
holds with $q_0=(x_0,\hat{x}_0;A_0)$ replaced by $q$.
Therefore, if $x\in \tilde{U}$ and $X\in T|_x M$
write $u=(\exp_{x_0}|_U)^{-1}(x)\in T|_{x_0} M$ and $v=((\exp_{x_0}|_U)^{-1})_*(X)\in T|_u (T|_{x_0} M)=T|_{x_0} M$
and (\ref{eq:loop_isometry}) with $q_0$ replaced by $q$ implies
\[
\n{(\phi_q)_*(X)}_{\hat{g}}=&\n{\big((\widehat{\exp}_{\hat{x}_0})_*\circ A\circ ((\exp_{x_0}|_U)^{-1})_*\big)(X)}_{\hat{g}} \\
=&\n{(\widehat{\exp}_{\hat{x}_0})_*|_{Au}(Av)}_{\hat{g}}
=\n{A_{\RDist}(\gamma_u,q)(1)(\exp_{x_0})_*|_{u}(v)}_{\hat{g}} \\
=&\n{(\exp_{x_0})_*|_{u}(v)}_{g}=\n{(\exp_{x_0})_*|_{u}\big(((\exp_{x_0}|_U)^{-1})_*(X)\big)}_{g} \\
=&\n{X}_g,
\]
where the 4. equality follows from the fact that $A\in T|_{x_0} M\to T|_{\hat{x}_0}\hat{M}$ is an isometry.
The claim $(\phi_q)_*|_{T|_{x_0} M}=A$ is obviously true.
\end{proof}

We will now start proving that $\Rol(\cdot,\cdot)(A_{\RDist}(\gamma,q_0)(t))=0$
for every piecewise $C^1$-path (not necessarily a loop) $\gamma$ on $M$ such that $\gamma(0)=x_0$ and for all $t$.
First we prove a special case of this (but with $q_0$ replaced by any $q\in F_{x_0}$).

\begin{lemma}
Let $q\in F_{x_0}$, $u\in T|_{x_0} M$ be a unit vector and let $\gamma_u$ be the geodesic $t\mapsto \exp_{x_0}(tu)$,
Then
\[
\Rol(\cdot,\cdot)(A_{\RDist}(\gamma_u,q)(t))=0,\quad\forall t\in [0,\tau(u)].
\]
\end{lemma}

\begin{proof}
Write $q=(x_0,\hat{x}_0;A)$ and notice that by definition of $U$
we have $tu\in U$ for all $t\in [0,\tau(u)[$.

Since $\phi_q$ of the previous lemma is a local isometry, it follows that
\[
P_0^t(\phi_q(\gamma_u))\circ A=(\phi_q)_*\circ P_0^t(\gamma_u),\quad \forall 0\leq t<\tau(u).
\]
Also, $\dif{t}\phi_q(\gamma_u)(t)=(\phi_q)_*\dot{\gamma}_u(t)$ for all $t$
so we may conclude that
\[
q_{\RDist}(\gamma_u,q)(t)=\big(\gamma_u(t),(\phi_q\circ\gamma_u)(t);(\phi_q)_*|_{\gamma_u(t)}\big),
\quad\forall 0\leq t<\tau(u).
\]
Again, since $\phi_q$ is a local isometry, for all $X,Y,Z\in T|_x M$, $x\in\tilde{U}$
we have $(\phi_q)_*(R(X,Y)Z)=\hat{R}((\phi_q)_*X,(\phi_q)_*Y)((\phi_q)_*Z)$ i.e., $\Rol(\cdot,\cdot)((\phi_q)_*|_x)=0$
for all $x\in\tilde{U}$. But then
\[
\Rol(\cdot,\cdot)(A_{\RDist}(\gamma_u,q)(t))=\Rol(\cdot,\cdot)\big((\phi_q)_*|_{\gamma_u(t)}\big)=0,
\quad 0\leq t<\tau(u).
\]
Continuity of $\Rol$ and $q_{\RDist}(\gamma_u,q)$ now allows us to conclude
that the above equation holds for all $0\leq t\leq\tau(u)$.
\end{proof}

Now we may prove the claim that was asserted before the previous lemma.

\begin{lemma}
Let $\gamma:[0,1]\to M$ a piecewise $C^1$-path on $M$ such that
$\gamma(0)=x_0$. Then
\[
\Rol(\cdot,\cdot)(A_{\RDist}(\gamma,q_0)(t))=0,\quad\forall t\in [0,1].
\]
\end{lemma}

\begin{proof}
It is clearly enough to prove the claim in the case $t=1$.
Choose any vector $u\in T|_{x_0} M$
such that $\gamma_u:[0,1]\to M$ is the minimal geodesic from $x_0$ to $\gamma(1)$.
Define $q:=q_{\RDist}(\gamma_u^{-1}.\gamma,q_0)(1)$
and notice that since $\gamma_u^{-1}.\gamma\in\Omega_{x_0}(M)$,
we have $q\in F_{x_0}$.
Thus by the previous lemma, 
\[
\Rol(\cdot,\cdot)(A_{\RDist}(\gamma_u,q)(1))
=\Rol(\cdot,\cdot)(A_{\RDist}(\gamma_{\frac{u}{\n{u}_g}},q)(\n{u}_g))=0,
\]
since $\tau(\frac{u}{\n{u}_g})=\n{u}_g$.
But
\[
q_{\RDist}(\gamma_u,q)(1)=q_{\RDist}(\gamma_u,q_{\RDist}(\gamma_u^{-1}.\gamma,q_0)(1))(1)
=q_{\RDist}(\gamma_u.\gamma_u^{-1}.\gamma,q_0)(1)=q_{\RDist}(\gamma,q_0)(1)
\]
and hence
\[
0=\Rol(\cdot,\cdot)(A_{\RDist}(\gamma_u,q)(1))=\Rol(\cdot,\cdot)(A_{\RDist}(\gamma,q_0)(1))
\]
which concludes the proof.
\end{proof}

Finally we may proceed to the proof of the theorem itself.
Indeed, since $\mc{O}_{\RDist}(q_0)$ is the set of all $q_{\RDist}(\gamma,q_0)(1)$
with all the possible piecewise $C^1$-curves $\gamma:[0,1]\to M$ such that $\gamma(0)=x_0$,
the previous lemma implies that the condition (ii)
of Corollary \ref{cor:weak_ambrose} is satisfied.
Thus there is a Riemannian manifold $(N,h)$ and Riemannian covering maps $F:N\to M$, $G:N\to\hat{M}$
i.e., $(M,g)$ and $(\hat{M},\hat{g})$ have a common Riemannian covering space.

Actually, by Corollary \ref{cor:weak_ambrose}, we may take $N=\mc{O}_{\RDist}(q_0)$,
$F=\pi_{Q,M}|_{N}$, $G=\pi_{Q,\hat{M}}|_N$
and hence if $q\in F^{-1}(x_0)$, then there exists
a $\gamma\in\Omega_{x_0}(M)$ such that $q=q_{\RDist}(\gamma,q_0)(1)$
and hence $G(q)=\hat{\gamma}_{\RDist}(\gamma,q_0)(1)=\hat{x}_0$
since $\hat{\gamma}_{\RDist}(\gamma,q_0)\in\Omega_{\hat{x}_0}(\hat{M})$ by the assumption.
This shows that $F^{-1}(x_0)\subset G^{-1}(\hat{x}_0)$
and concludes the proof.

\end{proof}

\begin{remark}
The difficulty in the proof of the previous theorem is due to the fact
that the contact points $x_0$, $\hat{x}_0$
are fixed i.e., we only assume that loops
that are based at $x_0$ generate, by rolling, loops that are based at $\hat{x}_0$.

If we were allowed to have an open neighbourhood of points on $M$
with the property that loops based at these points generate loops on $\hat{M}$,
one could prove that $(M,g)$ and $(\hat{M},\hat{g})$
have the same universal Riemannian covering
by an easier argument than above.

More precisely, suppose there is a $q_0=(x_0,\hat{x}_0;A_0)\in Q$ and a neighbourhood $U$ of $x_0$
which consists of points $x$ such that
whenever one rolls along a geodesic from $x_0$ to $x$
followed by any loop at $x$, then
the corresponding curve on $\hat{M}$, generated by rolling, is a geodesic followed by a loop based
at the end point of this geodesic.

This means that there is a (possibly smaller) normal neighbourhood $U$ of $x_0$ such that defining
a local $\pi_{Q,M}$-section $\tilde{q}$ on $U$ by
\[
\tilde{q}(x)=(x,\hat{f}(x);\tilde{A}|_x):=q_{\RDist}\big((t\mapsto \exp_{x_0}(t\exp_{x_0}^{-1}(x))),q_0\big)(1),
\]
then it holds that
\[
\forall x\in U,\ \gamma\in\Omega_x(M)\quad\Longrightarrow\quad \hat{\gamma}_{\RDist}(\gamma,\tilde{q}(x))\in\Omega_{\hat{f}(x)}(\hat{M}).
\]
Notice that $q_0=\tilde{q}(x_0)$.
(In the case of the previous theorem, we had $U=\{x_0\}$, which is not open.)

We will now sketch an easy argument to reach the conclusion of the theorem
under this stronger assumption.

Write $\pi_{\mc{O}_{\RDist}(q_0)}=\pi_{Q,M}|_{\mc{O}_{\RDist}(q_0)}$ as usual.
We show that the vertical bundle $V(\pi_{\mc{O}_{\RDist}(q_0)})$ is actually trivial
in the sense that all its fibers consist of one point only (the origin).
From this one concludes
that $\mc{O}_{\RDist}(q_0)$ is an integral manifold of $\RDist$ an hence
$\pi_{\mc{O}_{\RDist}(q_0)}$ is (complete and) a Riemannian covering map once
the manifold $\mc{O}_{\RDist}(q_0)$ is equipped with the Riemannian metric pulled
back from that of $M$ (or $\hat{M}$). 

Take $x\in U$ and $v\in V|_{\tilde{q}(x)}(\pi_{\mc{O}_{\RDist}(q_0)})$.
This means that there is a smooth curve $s\mapsto \Gamma(s)$, $s\in [0,1]$,
in $\mc{O}_{\RDist}(q_0)$ such that $\pi_{Q,M}(\Gamma(s))=x$ for all $s$
and $\dot{\Gamma}(0)=v$.

One may then choose for each $s$ a smooth path $\gamma_s$ in $M$ starting at $x$ and defined on $[0,1]$
such that $q_{\RDist}(\gamma_s,\tilde{q}(x))(1)=\Gamma(s)$.
We have $\gamma_s\in\Omega_x(M)$ since
\[
\gamma_s(1)=\pi_{Q,M}\big(q_{\RDist}(\gamma_s,\tilde{q}(x))(1)\big)=\pi_{Q,M}(\Gamma(s))=x.
\]
Thus by assumption,
\[
\hat{\gamma}_{\RDist}(\gamma_s,\tilde{q}(x))\in \Omega_{\tilde{f}(x)}(\hat{M})
\]
from which
\[
(\pi_{Q,\hat{M}})_*v=\dif{s}\big|_0 \pi_{Q,\hat{M}}(\Gamma(s))
=\dif{s}\big|_0 \hat{\gamma}_{\RDist}(\gamma_s,\tilde{q}(x))(1)
=\dif{s}\big|_0 (s\mapsto\tilde{f}(x))=0.
\]

This proves that every element of $V|_{\tilde{q}(x)}(\pi_{\mc{O}_{\RDist}(q_0)})$, $x\in U$,
is of the form $\nu(B)|_{\tilde{q}(x)}$ where $B\in T^*|_xM\otimes T|_{\tilde{f}(x)}\hat{M}$
and $\hat{g}(BX,\tilde{A}|_xY)+\hat{g}(\tilde{A}|_x X,BY)=0$, $\forall X,Y\in T|_x M$.

Take a vector field of the form $q\mapsto \nu(B|_q)|_q$ on $\mc{O}_{\RDist}(q_0)$ defined along the image of $\tilde{q}$.
Arguing as in Proposition \ref{pr:H_directions_from_V} and using Eq. (\ref{eq:special_NS_comm_VH}), we conclude that for every $X\in T|_{x_0} M$
we have
\[
-\LNSD(B|_{q_0}X)|_{q_0}+\nu\big(\ol{\nabla}_{(X, A_0 X)} B|_{\tilde{q}(\cdot)}\big)\in T|_{q_0}\mc{O}_{\RDist}(q_0)
\]
and hence, by what we just proved above,
the image of this vector under $(\pi_{Q,\hat{M}})_*$ must be zero
i.e., $B|_{q_0}X=0$.
Since this holds for all $X\in T|_{x_0} M$, it means that $B|_{q_0}=0$
and hence we have that $V|_{q_0}(\pi_{\mc{O}_{\RDist}(q_0)})=\{0\}$.
Thus the vertical bundle $V(\pi_{\mc{O}_{\RDist}(q_0)})$
has rank $=0$ since its fiber is $=\{0\}$ at one point.

\end{remark}

\begin{remark}
The assumption given by Formula (\ref{eq:loops_give_loops})
is a special case of a more general one: 
There is $q_0=(x_0,\hat{x}_0;A_0)\in Q$
and points $x_1\in M$, $\hat{x}_1\in\hat{M}$
such that
\begin{align}\label{eq:two_fixed_points}
\gamma\in\Omega_{x_0,x_1}(M)\quad\Longrightarrow\quad
\hat{\gamma}_{\RDist}(\gamma,q_0)\in \Omega_{\hat{x}_0,\hat{x}_1}(\hat{M}),
\end{align}
where $\Omega_{x_0,x_1}(M)$ is used to denote the set of piecewise $C^1$-curves
from $x_0$ to $x_1$ in $M$ with $\Omega_{\hat{x}_0,\hat{x}_1}(\hat{M})$
defined similarly for $\hat{M}$.

We actually reduce this setting to the one given in Theorem \ref{th:fixed_point} as follows.
Fix once and for all a curve $\omega:[0,1]\to M$
s.t. $\omega(0)=x_0$, $\omega(1)=x_1$
and write $q_1=q_{\RDist}(\omega,q_0)(1)$.
Then $q_1=(x_1,\hat{x}_1;A_1)$ by assumption given by Eq. (\ref{eq:two_fixed_points}),
with $A_1:T|_{x_1} M\to T|_{\hat{x}_1} \hat{M}$.
Then if $\gamma\in \Omega_{x_1}(M)$ is any loop in $M$  based at $x_1$,
one gets that  $\gamma.\omega\in \Omega_{x_0,x_1}(M)$ is a path from $x_0$ and $x_1$.
By assumption in Eq. (\ref{eq:two_fixed_points}) again, one has 
\[
\hat{\gamma}_{\RDist}(\gamma,q_1)(1)
=\hat{\gamma}_{\RDist}(\gamma.\omega,q_0)(1)
=\hat{x}_1,
\]
and since $\hat{\gamma}_{\RDist}(\gamma,q_1)(0)=\hat{x}_1$,
we have obtained
\[
\gamma\in \Omega_{x_1}(M)\quad\Longrightarrow\quad
\hat{\gamma}_{\RDist}(\gamma,q_1)\in \Omega_{\hat{x}_1}(\hat{M}).
\]
Therefore any result obtained under  Assumption (\ref{eq:loops_give_loops}) will also hold true under
the assumption given by Formula (\ref{eq:two_fixed_points}).
\end{remark}

%%%%%%%%%%%%%%%%%%%%%%%%%%%%%%%%%%%%%%%%%%%%
\subsubsection{The Ambrose's Theorem Revisited}\label{sec:3}\label{app:ambrose}
%%%%%%%%%%%%%%%%%%%%%%%%%%%%%%%%%%%%%%%%%%%%

The results developed so far allow us to somewhat simplify the proof of the Ambrose's theorem (see \cite{sakai91} Theorem III.5.1).
In fact, the elaborate construction of the covering space $X$ (of the manifold $M$)
is no longer needed since we build this space by simple integrating the distribution $\RDist$.
Actually, as in \cite{sakai91}, we will first prove (a version of) the Cartan's theorem (\cite{sakai91} Theorem II.3.2) 
by using the rolling framework and then use that result and some ``patching'' to obtain the Ambrose's theorem.
The considerations are in parallel to those found in \cite{blumenthal89}, \cite{pawel02}.

\begin{definition}\label{def:once_broken_geodesic}
A continuous curve $\gamma:[0,a]\to M$ on a Riemannian manifold $(M,g)$ is called \emph{once broken geodesic}, broken at $t_0$,
if there is a $t_0\in [0,a]$ such that $\gamma|_{[0,t_0]}$, $\gamma|_{[t_0,a]}$ are geodesics of $(M,g)$.
\end{definition}

Notice that if $q=(x,\hat{x};A)\in Q$ and $\gamma$ is a once broken geodesic on $M$
starting at $x$ broken at $t_0$, then $\hat{\gamma}_{\RDist}(\gamma,q)$ is a once broken
geodesic on $\hat{M}$ broken at $t_0$.

Ambrose's theorem can now be stated as follows.

\begin{theorem}[(Ambrose)]\label{th:ambrose}
Let $(M,g)$, $(\hat{M},\hat{g})$ be \emph{complete} $n$-dimensional Riemannian manifolds
and let $q_0=(x_0,\hat{x}_0;A_0)\in Q$.
Suppose that $M$ is simply connected and that, for any once broken geodesic $\gamma:[0,a]\to M$ starting from $x_0$,
we have
\begin{align}\label{eq:ambrose:1}
A_{\RDist}{(\gamma,q_0)}(t)(R(X,Y)Z)=\hat{R}(A_{\RDist}{(\gamma,q_0)}(t) X,A_{\RDist}{(\gamma,q_0)}(t) Y)(A_{\RDist}{(\gamma,q_0)}(t) Z),\quad 
\end{align}
for all $X,Y,Z\in T|_{\gamma(t)} M$ and $t\in [0,a]$.
Then, if for any minimal geodesic $\gamma:[0,a]\to M$ starting from $x_0$,
one defines $\Phi(\gamma(t))=\widehat{\exp}_{\hat{x}_0}(tA_0\dot{\gamma}(0))$, $t\in [0,a]$,
it follows that the map $\Phi:M\to\hat{M}$ is a well-defined Riemannian covering.
\end{theorem}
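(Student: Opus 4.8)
The plan is to prove Ambrose's theorem in three stages, following the classical Cartan--Ambrose--Hicks strategy but carried out entirely within the rolling framework, and leveraging Corollary \ref{cor:weak_ambrose} and Lemma \ref{le:rol_Jacobi} that are already at our disposal.

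\textbf{Step 1: Well-definedness of $\Phi$ via a local Cartan-type argument.} First I would fix a minimal geodesic $\gamma:[0,a]\to M$ with $\gamma(0)=x_0$ and set $u=\dot\gamma(0)$, so that by Proposition \ref{pr:rol_geodesic} the rolling curve along $\gamma$ is $q_{\RDist}(\gamma,q_0)(t)=(\gamma(t),\widehat{\exp}_{\hat x_0}(tA_0u);A_{\RDist}(\gamma,q_0)(t))$; hence the formula $\Phi(\gamma(t))=\widehat{\exp}_{\hat x_0}(tA_0\dot\gamma(0))$ is precisely $\pi_{Q,\hat M}\circ q_{\RDist}(\gamma,q_0)$ evaluated along geodesics. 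Since $M$ is complete, every point of $M$ is $\gamma(a)$ for some minimal geodesic, so $\Phi$ is defined on all of $M$; the issue is that there can be several minimal geodesics reaching the same point. To handle this I would use the hypothesis \eqref{eq:ambrose:1}: it says exactly that $\Rol(\cdot,\cdot)(A_{\RDist}(\gamma,q_0)(t))=0$ along once broken geodesics from $x_0$. By Lemma \ref{le:rol_Jacobi}, this forces $t\mapsto \hat Y(t)=A_{\RDist}(\gamma,q_0)(t)Y(t)$ to be a genuine Jacobi field on $\hat M$ whenever $Y$ is a Jacobi field on $M$ along $\gamma$. Running this with the variation-of-geodesics Jacobi fields $Y_{u,v}(t)=t(\exp_{x_0})_*|_{tu}(v)$ (vanishing at $t=0$), which map to $\hat Y_{u,v}(t)=t(\widehat{\exp}_{\hat x_0})_*|_{tA_0u}(A_0v)$ by the uniqueness of Jacobi fields with given initial data, yields the key differential identity
\[
(\widehat{\exp}_{\hat x_0})_*|_{A_0u}(A_0v)=A_{\RDist}(\gamma_u,q_0)(1)\big((\exp_{x_0})_*|_u(v)\big),\quad \forall u,v\in T|_{x_0}M,
\]
which is the rolling analogue of the formula \eqref{eq:loop_isometry} appearing in the proof of Theorem \ref{th:fixed_point}. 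This shows $\Phi$ is a local isometry wherever $\exp_{x_0}$ is nonsingular.

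\textbf{Step 2: Well-definedness using simple connectivity and the once broken geodesics.} To get that $\Phi(\gamma_1(a_1))=\Phi(\gamma_2(a_2))$ whenever $\gamma_1(a_1)=\gamma_2(a_2)$, I would argue as in Ambrose: given two minimal geodesics ending at the same point $x$, the concatenation $\gamma_2^{-1}.\gamma_1$ is a loop at $x_0$, and, since $M$ is simply connected, it is homotopic to the constant loop; one subdivides the homotopy so that adjacent curves differ by a once broken geodesic variation. Along each such elementary move, the hypothesis \eqref{eq:ambrose:1} (now crucially for once broken, not merely smooth, geodesics) together with the Jacobi-field transfer of Lemma \ref{le:rol_Jacobi} shows that the endpoint $\hat\gamma_{\RDist}(\cdot,q_0)(a)$ on $\hat M$ does not change; iterating around the homotopy gives $\Phi$ well-defined. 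Concretely this is the statement that for every loop $\gamma\in\Omega_{x_0}(M)$ obtained by subdivision of a nullhomotopy into once broken geodesic pieces, $\hat\gamma_{\RDist}(\gamma,q_0)\in\Omega_{\hat x_0}(\hat M)$; then Theorem \ref{th:fixed_point} (or directly Corollary \ref{cor:weak_ambrose}) applies.

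\textbf{Step 3: $\Phi$ is a Riemannian covering.} Once $\Phi$ is a well-defined local isometry of the complete manifold $(M,g)$ into $(\hat M,\hat g)$, I would invoke the standard fact (Proposition II.1.1 in \cite{sakai91}, exactly as used in the proof of Corollary \ref{cor:weak_ambrose}) that a local isometry from a complete Riemannian manifold is automatically surjective and a Riemannian covering map. Alternatively, and more in the spirit of this paper, I would identify $\Phi$ with $G\circ(F)^{-1}$ where $F=\pi_{Q,M}|_N$, $G=\pi_{Q,\hat M}|_N$ and $N=\mc O_{\RDist}(q_0)$: hypothesis \eqref{eq:ambrose:1} ensures $\Rol$ vanishes on $\mc O_{\RDist}(q_0)$, so by Corollary \ref{cor:weak_ambrose} $(M,g)$ and $(\hat M,\hat g)$ are locally isometric through $N$, and $F$ is a covering since $M$ is complete; simple connectivity of $M$ then makes $F$ a diffeomorphism, whence $\Phi=G\circ F^{-1}$ is a Riemannian covering.

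\textbf{Main obstacle.} The delicate point is Step 2: extending the curvature hypothesis from smooth geodesics to once broken geodesics and correctly running the homotopy/subdivision argument so that the Jacobi-field comparison propagates the equality of $\hat M$-endpoints around an entire nullhomotopy. The Jacobi field transfer of Lemma \ref{le:rol_Jacobi} only controls first-order variations, so one must argue that a generic homotopy in the simply connected $M$ can be refined into finitely many elementary once broken geodesic moves and that the rolling endpoint map is constant along each — this is where simple connectivity is used in an essential way and where the bookkeeping (as opposed to new ideas) is heaviest. Everything else reduces to already-established results: Propositions \ref{pr:rol_geodesic}, \ref{pr:rolling_curves}, Lemma \ref{le:rol_Jacobi}, and Corollary \ref{cor:weak_ambrose}.
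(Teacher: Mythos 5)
Your overall route is the classical Ambrose argument (define $\Phi$ along minimal geodesics, prove well-definedness by subdividing a nullhomotopy into once broken geodesic moves, then conclude), whereas the paper deliberately bypasses exactly that argument. Its proof runs: the hypothesis \eqref{eq:ambrose:1} lets one apply Theorem \ref{th:cartan} at every point of the form $q_{\RDist}(\gamma,q_0)(t)$ with $\gamma$ a once broken geodesic, producing local integral manifolds of $\RDist$; one then takes the \emph{maximal connected integral manifold} $N$ of $\RDist$ through $q_0$, equips it with $h=(\pi_{Q,M}|_N)^*g$, and proves $(N,h)$ is complete — this is where the once broken geodesic hypothesis is really used, to show that a finite-time $h$-geodesic of $N$ can be continued, by gluing on the integral manifold that Theorem \ref{th:cartan} provides at its endpoint. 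Then $F=\pi_{Q,M}|_N$ and $G=\pi_{Q,\hat M}|_N$ are local isometries of a complete manifold, hence Riemannian coverings; simple connectedness of $M$ makes $F$ a diffeomorphism and $\Phi=G\circ F^{-1}$. Your Step 2 is therefore not a bookkeeping issue to be deferred: it is the entire content of the theorem in your approach, it is not carried out (propagating endpoint-invariance of $\hat\gamma_{\RDist}$ around a nullhomotopy from the first-order information in Lemma \ref{le:rol_Jacobi} requires the full classical construction, including care at conjugate points where your Step 1 gives nothing), and the paper's contribution is precisely to replace it by the Frobenius-type argument above.

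There is also a concrete error in your Step 3 alternative. You claim that hypothesis \eqref{eq:ambrose:1} "ensures $\Rol$ vanishes on $\mc{O}_{\RDist}(q_0)$", so that Corollary \ref{cor:weak_ambrose} applies with $N=\mc{O}_{\RDist}(q_0)$. But the hypothesis only gives $\Rol(\cdot,\cdot)(A_{\RDist}(\gamma,q_0)(t))=0$ for $\gamma$ a \emph{once broken geodesic} from $x_0$, whereas the orbit is swept out by rolling along arbitrary absolutely continuous curves; a priori there is no reason every point of the orbit is reached by rolling along a broken geodesic, so the vanishing of $\Rol$ on the whole orbit does not follow. The paper flags this explicitly ("we have to be more careful ... since we cannot assume that $N$ is the whole orbit $\mc{O}_{\RDist}(q_0)$"), and this is exactly why it works with the maximal integral manifold rather than the orbit. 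A similar caveat applies to your appeal to Theorem \ref{th:fixed_point}, whose hypothesis quantifies over all piecewise $C^1$ loops at $x_0$, not only broken-geodesic ones.
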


\begin{remark}
The assumption of Ambrose's theorem is equivalent to the following:
For any once broken geodesic $\gamma:[0,a]\to M$ starting from $x_0$ and
for all $X,Y\in \VF(M)$, $t\in [0,a]$,
\[
\Rol(X,Y)(A_{\RDist}{(\gamma,q_0)}(t))=0,
\]
which by Proposition \ref{pr:R_comm_L2} is equivalent to
\[
[\LRD(X),\LRD(Y)]|_{q_{\RDist}{(\gamma,q_0)}(t)}=\LRD([X,Y])|_{q_{\RDist}{(\gamma,q_0)}(t)}.
\]
i.e., that the distribution $\RDist$ is involutive at each point of $Q$ of the form $q_{\RDist}{(\gamma,q_0)}(t)$.
This should suggest that it is worthwhile to study the integrability of $\RDist$ near the point $q_0\in Q$,
although we are not allowed to use Frobenius theorem.
\end{remark}

On a Riemannian manifold $(N,h)$, we use $d_h$ to denote the distance function (metric) on $N$ induced by $h$ and, for $y\in N$, $X\in T|_y N$, $r>0$, we use  $B_{d_h}(y,r)\subset N$ (resp. $B_h(X,r)\subset T|_y N$)
 to denote the open ball of radius $r$ on $N$ (resp. $T|_y N$) centered at $y$ (resp. $X$)
w.r.t $d_h$ (resp. $h$).

The next result provides a local integral manifold of $\RDist$
under milder assumptions than those given in the statement of Ambrose's theorem.

\begin{theorem}[(Cartan)] \label{th:cartan}
Let $(M,g)$ and $(\hat{M},\hat{g})$ be (not necessarily complete) Riemannian manifolds.
Consider $q=(x,\hat{x};A)\in Q$ and $\epsilon>0$ such that
the exponential maps $\exp_x:B_g(0_x,\epsilon)\subset T|_x M\to B_{d_g}(x,\epsilon)$
and $\widehat{\exp}_{\hat{x}}:B(0_{\hat{x}},\epsilon)\subset T|_{\hat{x}} \hat{M}\to B_{d_{\hat{g}}}(\hat{x},\epsilon)$ are (defined and) diffeomorphisms.
Then the following are equivalent:
\begin{itemize}
\item[(i)] For every (non-broken) \emph{geodesic} $\gamma:[0,1]\to B_{d_g}(x,\epsilon)$ starting from $x$, we have
\begin{align}\label{eq:cartan:1}
&A_{\RDist}{(\gamma,q)}(t)(R(X,Y)Z)\nonumber\\
=&\hat{R}(A_{\RDist}{(\gamma,q)}(t) X,A_{\RDist}{(\gamma,q)}(t) Y)(A_{\RDist}{(\gamma,q)}(t) Z)
\end{align}
i.e., $\Rol(X,Y)(A_{\RDist}{(\gamma,q)}(t))Z=0$
for every $X,Y,Z\in T|_{\gamma(t)} M$ and $t\in [0,1]$.

\item[(ii)] For every (non-broken) \emph{geodesic} $\gamma:[0,1]\to B_{d_g}(x,\epsilon)$ starting from $x$, we have
\begin{align}\label{eq:cartan:2}
&A_{\RDist}{(\gamma,q)}(t)(R(X,\dot{\gamma}(t))\dot{\gamma}(t))\nonumber\\
=&\hat{R}(A_{\RDist}{(\gamma,q)}(t) X,\dot{\hat{\gamma}}_{\RDist}(\gamma,q)(t))\dot{\hat{\gamma}}_{\RDist}(\gamma,q)(t)
\end{align}
i.e., $\Rol(X,\dot{\gamma}(t))(A_{\RDist}{(\gamma,q)}(t))\dot{\gamma}(t)=0$
for every $X\in T|_{\gamma(t)} M$ and $t\in [0,1]$
(except the break point of $\gamma$).

\item[(iii)] There is a connected integral manifold $N$ of $\RDist$
passing through $q$ such that $\pi_{Q,M}|_N\to B_{d_g}(x,\epsilon)$
(or $\pi_{Q,\hat{M}}|_N\to B_{d_{\hat{g}}}(\hat{x},\epsilon)$)
is a bijection.

\item[(iv)] The map $\Phi:=\widehat{\exp}_{\hat{x}}\circ A\circ \exp_x^{-1}|_{B_{d_g}(x,\epsilon)}$ is an isometric diffeomorphism (onto $B_{d_{\hat{g}}}(\hat{x},\epsilon)$).

\end{itemize}

Moreover, if any of the above cases holds, then, for every $X\in B_g(0_x,\epsilon)$, it holds that
\begin{align}\label{eq:cartan:3}
\Phi_*|_{\exp_x(X)}&=P_0^1(s\mapsto \widehat{\exp}_{\hat{x}} (sAX))\circ A\circ P_1^0(s\mapsto \exp_{x} (sX))\nonumber  \\
&=P_0^1\Big(s\mapsto \ol{\exp}_{(x,\hat{x})} \big(s(X, AX)\big)\Big).
\end{align}
\end{theorem}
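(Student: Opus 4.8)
\textbf{Plan of proof for Theorem \ref{th:cartan}.}
The plan is to prove the cyclic chain of implications (i)$\Rightarrow$(ii)$\Rightarrow$(iii)$\Rightarrow$(iv)$\Rightarrow$(i), and then to extract the explicit formula \eqref{eq:cartan:3} from the construction used in (iii) and (iv). The implication (i)$\Rightarrow$(ii) is trivial: specialize $Y=Z=\dot\gamma(t)$ in \eqref{eq:cartan:1} and use that $A_{\RDist}(\gamma,q)(t)\dot\gamma(t)=\dot{\hat\gamma}_{\RDist}(\gamma,q)(t)$, which holds by the no-slip part of the rolling equations (Proposition \ref{pr:rolling_curves} and the definition of $\RDist$). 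The real content sits in (ii)$\Rightarrow$(iii) and (iii)$\Rightarrow$(iv), so I would spend the bulk of the argument there.

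For (ii)$\Rightarrow$(iii), the idea is to build the integral manifold by rolling along radial geodesics emanating from $x$, exactly as in the construction of the local trivializations in Proposition \ref{pr:R_orbit_bundle}. Concretely, define $N=\{q_{\RDist}(\gamma_X,q)(1)\ :\ X\in B_g(0_x,\epsilon)\}$, where $\gamma_X(t)=\exp_x(tX)$; by Proposition \ref{pr:rol_geodesic} this is a smooth $n$-dimensional submanifold and $\pi_{Q,M}|_N$ is a bijection onto $B_{d_g}(x,\epsilon)$. The crux is to show $\RDist|_q\subset T|_qN$ for every $q\in N$, i.e. that $N$ is an \emph{integral} manifold, not merely a section. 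Here is where hypothesis (ii) enters: I would compute $T_qN$ using Jacobi fields. Tangent vectors to $N$ at $q_{\RDist}(\gamma_X,q)(1)$ are obtained by varying $X$; by the standard identification, a variation $X+sV$ produces along $\gamma_X$ the Jacobi field $Y_{X,V}(t)=t(\exp_x)_*|_{tX}(V)$, and Lemma \ref{le:rol_Jacobi} shows that its image $A_{\RDist}(\gamma_X,q)(t)Y_{X,V}(t)$ fails to be Jacobi on $\hat M$ precisely by the term $\Rol(\dot\gamma_X(t),Y_{X,V}(t))(A_{\RDist}(\gamma_X,q)(t))\dot\gamma_X(t)$, which vanishes by (ii). Thus the varied rolling curve stays a rolling curve of a geodesic, and differentiating the relation $\dot q=\LRD(\dot\gamma)|_q$ one sees that the variation field at $t=1$ together with $\LRD(X)|_q$ spans an $n$-dimensional subspace of $T_qN$ on which $(\pi_{Q,M})_*$ is the identity; since $\RDist|_q$ is the unique $n$-plane mapped isomorphically onto $T_{\pi_{Q,M}(q)}M$ by $(\pi_{Q,M})_*$ and lying in the kernel of the "no-spin minus no-slip" constraint, this forces $\RDist|_q=T_qN$. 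I expect the bookkeeping with the two-parameter family (geodesic followed by the variation) and making "differentiate the rolling ODE in the parameter" rigorous to be the main obstacle, and I would handle it by the same device used in the proof of Theorem \ref{th:fixed_point}, namely writing an explicit two-variable map $\hat\omega_{X,V}(t,s)$ into $\hat M$ and computing $\partial_s$.

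For (iii)$\Rightarrow$(iv): given the integral manifold $N$ with $\pi_{Q,M}|_N$ bijective, set $s:=(\pi_{Q,M}|_N)^{-1}:B_{d_g}(x,\epsilon)\to N$ and $\Psi:=\pi_{Q,\hat M}\circ s$. Since $N$ is tangent to $\RDist$ and $\RDist$-curves are rolling curves, for the radial geodesic $\gamma_X$ the curve $s\circ\gamma_X$ is exactly $q_{\RDist}(\gamma_X,q)$, hence by Proposition \ref{pr:rol_geodesic} its $\hat M$-projection is $\widehat\exp_{\hat x}(tAX)$; evaluating at $t=1$ gives $\Psi(\exp_x(X))=\widehat\exp_{\hat x}(AX)$, i.e. $\Psi=\Phi$. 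That $\Phi$ is an isometry follows because along each $N$ the map $A_{\RDist}$ component is, by Proposition \ref{pr:2.1:1}, a composition of parallel transports and the isometry $A$, hence an isometry at each point; differentiating $\Phi$ along an arbitrary curve and decomposing its velocity via $\RDist$ (using that $(\pi_{Q,\hat M})_*$ restricted to $\RDist|_q$ is $A$ followed by nothing extra) shows $\Phi_*$ is a linear isometry pointwise, and it is a diffeomorphism onto $B_{d_{\hat g}}(\hat x,\epsilon)$ because $\exp_x$, $\widehat\exp_{\hat x}$ are and $A$ is invertible. The implication (iv)$\Rightarrow$(i) is then immediate: an isometry intertwines curvature tensors and parallel transport, so $\Phi_*$ conjugates $R$ to $\hat R$; comparing with the expression for $A_{\RDist}(\gamma,q)(t)$ along a geodesic from Proposition \ref{pr:rol_geodesic} (parallel transport $\circ\,A\,\circ$ parallel transport, which equals $\Phi_*$ at the relevant point by the formula one is about to prove) yields \eqref{eq:cartan:1}.

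Finally, for the formula \eqref{eq:cartan:3}: once (iv) holds, the argument of (iii)$\Rightarrow$(iv) already identifies $\Phi_*|_{\exp_x(X)}$ along the radial direction with $A_{\RDist}(\gamma_X,q)(1)$, which by Proposition \ref{pr:rol_geodesic} is $P_0^1(s\mapsto\widehat\exp_{\hat x}(sAX))\circ A\circ P_1^0(s\mapsto\exp_x(sX))$; since $\Phi$ is an isometry, $\Phi_*$ is determined on all of $T|_{\exp_x(X)}M$ by its restriction to any frame that is parallel along the radial geodesic, and parallel transport on $M$ (resp. $\hat M$) along $s\mapsto\exp_x(sX)$ (resp. $s\mapsto\widehat\exp_{\hat x}(sAX)$) intertwines with $\Phi_*$ because $\Phi$ is an isometry carrying one geodesic to the other; this gives the first equality, and the second is the product-connection identity $P_0^1(\ol\exp_{(x,\hat x)}(s(X,AX)))=P_0^1(s\mapsto\widehat\exp_{\hat x}(sAX))\circ(\,\cdot\,)\circ P_1^0(s\mapsto\exp_x(sX))$ from \eqref{eq:parallel_trans_A} applied to the geodesic $s\mapsto(\exp_x(sX),\widehat\exp_{\hat x}(sAX))$ on $M\times\hat M$. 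I do not expect this last step to pose difficulties beyond careful use of \eqref{eq:iso_parallel} and \eqref{eq:parallel_trans_A}.
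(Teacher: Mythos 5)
Your cycle (i)$\Rightarrow$(ii)$\Rightarrow$(iii)$\Rightarrow$(iv)$\Rightarrow$(i) differs from the paper's order ((i)$\Rightarrow$(ii), (ii)$\Rightarrow$(iv), (iv)$\Rightarrow$(iii), (iii)$\Rightarrow$(i)), and the steps (i)$\Rightarrow$(ii), (iii)$\Rightarrow$(iv) and (iv)$\Rightarrow$(i) you sketch are sound. The problem is the step (ii)$\Rightarrow$(iii), which is exactly the step the paper's ordering is designed to avoid, and as written it has a genuine gap. Your Jacobi-field computation (via Lemma \ref{le:rol_Jacobi} and uniqueness of Jacobi fields) establishes only that
$\big(\widehat{\exp}_{\hat{x}}\big)_*\big|_{AX}(AV)=A_{\RDist}(\gamma_X,q)(1)\,(\exp_x)_*|_X(V)$,
i.e.\ that the two base-point components of the variation $s\mapsto q_{\RDist}(\gamma_{X+sV},q)(1)$ satisfy the no-slip relation. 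For $T|_{q'}N\subset\RDist|_{q'}$ you also need the \emph{no-spin} (vertical) condition: the $A$-component $s\mapsto A_{\RDist}(\gamma_{X+sV},q)(1)$ must be $\ol{\nabla}$-parallel along the base variation, equivalently the component of $\partial_s|_0\, q_{\RDist}(\gamma_{X+sV},q)(1)$ in $V|_{q'}(\pi_Q)$ must vanish. Your argument never touches this, and the justification you give instead is false: $\RDist|_{q'}$ is \emph{not} the unique $n$-plane in $T|_{q'}Q$ mapped isomorphically onto $T|_{x'}M$ by $(\pi_{Q,M})_*$ (any graph over $T|_{x'}M$ built from $\NSDist|_{q'}\oplus V|_{q'}(\pi_Q)$ has this property), so "$T_qN$ projects isomorphically" does not force $T_qN=\RDist|_q$. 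Moreover the missing vertical condition cannot be extracted from (ii) by the same device: the $s$-derivative of $P_0^1(\hat{\gamma}_{X+sV})\circ A\circ P_1^0(\gamma_{X+sV})$ involves the full endomorphisms $R(J,\dot\gamma)$ and $\hat{R}(\hat{J},\dot{\hat\gamma})$ integrated along the geodesic, whereas (ii) only controls their action on $\dot\gamma$.

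The natural repair is precisely the paper's route, and it is worth noting that your Jacobi computation already proves it: the displayed identity above, for all $X,V$, is the statement $\Phi_*|_{\exp_x(X)}=A_{\RDist}(\gamma_X,q)(1)\in Q$, i.e.\ case (iv) together with formula \eqref{eq:cartan:3}. Once $\Phi$ is known to be an isometry with $\Phi_*|_x=A$, the integral manifold in (iii) is the graph $y\mapsto(y,\Phi(y);\Phi_*|_y)$, and the fact that it is tangent to $\RDist$ — including the no-spin part you are missing — follows because isometries intertwine parallel transport, Eq.\ \eqref{eq:iso_parallel}; this is Lemma \ref{le:isometries_as_sections_of_Q} in the paper. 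So you should reorder your chain as (ii)$\Rightarrow$(iv)$\Rightarrow$(iii) (keeping your (iv)$\Rightarrow$(i), which is fine and plays the role of the paper's (iii)$\Rightarrow$(i)); as it stands, your (ii)$\Rightarrow$(iii) is not proved, and any honest completion of it would pass through (iv) anyway.
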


\begin{proof}
(i) $\Rightarrow$ (iii): 
By taking $Y=Z=\dot{\gamma}$, one has
\[
A_{\RDist}{(\gamma,q)}(t) Y=A_{\RDist}{(\gamma,q)}(t) Z=\dot{\hat{\gamma}}_{\RDist}(\gamma,q)(t),
\]
for all $t\in [0,1]$.

(ii) $\Rightarrow$ (iv): 
Let $u,v\in T|_x M$, $\n{u}_g<\epsilon$
and define for $t\in [0,1]$
\[
Y_{u,v}(t):=\dif{s}\big|_0 \exp_x(t(u+sv))=t(\exp_x)_*|_{tu} v.
\]
It is the Jacobi field on $M$
along the geodesic $\gamma_u(t):=\exp_x(tu)$, $t\in [0,1]$,
with $Y_{u,v}(0)=0$, $\nabla_{u} Y_{u,v}=v$.

Proposition \ref{pr:rol_geodesic}
implies that the rolling
curve $q_{\RDist}{(\gamma_u,q)}$ along $\gamma_u$
is given as 
\[
\hat{\gamma}_{\RDist}{(\gamma_u,q)}(t)=\widehat{\exp}_{\hat{x}}(tAu),\quad
A_{\RDist}{(\gamma_u,q)}(t)=P_0^t(\hat{\gamma}_{\RDist}{(\gamma_u,q)})\circ A\circ P_t^0(\gamma_u).
\]
On the other hand, the assumption
implies that
\[
\Rol(Y_{u,v}(t),\dot{\gamma}_u(t))(A_{\RDist}(\gamma_u,q)(t))\dot{\gamma}_u(t)=0,\quad t\in [0,1]
\]
and Proposition \ref{le:rol_Jacobi}
imply that $\hat{Y}_{u,v}:=A_{\RDist}{(\gamma_u,q)}Y_{u,v}$
is a Jacobi field on $\hat{M}$ along
the geodesic $\hat{\gamma}_{\RDist}{(\gamma_u,q)}$.

Clearly, $\hat{Y}_{u,v}(0)=0$ and 
$\hat{\nabla}_{A u} \hat{Y}_{u,v}=A\nabla_u Y_{u,v}=Av$,
from which it follows (by the uniqueness of solutions of second order ODEs) that $\hat{Y}_{u,v}$ must be the Jacobi field given by
\[
\hat{Y}_{u,v}(t)=\dif{s}\big|_0 \widehat{\exp}_{\hat{x}}(tA(u+sv))
=t(\widehat{\exp}_{\hat{x}})_*|_{tAu} (Av).
\]
Setting $t=1$, we see that
\[
A_{\RDist}{(\gamma_u,q)}(1)(\exp_x)_*|_{u} v=(\widehat{\exp}_{\hat{x}})_*|_{Au} (Av),
\]
for all $u,v\in T|_x M$ with $\n{u}_g<\epsilon$.
In other words, this means that
\[
\Phi_*|_{y}=(\widehat{\exp}_{\hat{x}})_*|_{Au}\circ A\circ (\exp_x^{-1})_*|_y
=A_{\RDist}{(\gamma_{\exp_x^{-1}(y)},q)}(1),
\]
for every $y\in B_{d_g}(x,\epsilon)$, where $B_{d_g}(x,\epsilon)$ is also equal to $\{\exp_x(u)\in T|_x M\ |\ \n{u}_g<\epsilon\}$.
Since $A_{\RDist}{(\gamma_{\exp_x^{-1}(y)},q)}(1)\in Q$,
this means that $\Phi_*|_y$ is an isometry $T|_y M\to T|_{\Phi(y)} \hat{M}$
i.e., $\Phi$ is an isometry.

(iv) $\Rightarrow$ (iii):
This follows from Lemma \ref{le:isometries_as_sections_of_Q} below.

(iii) $\Rightarrow$ (i):
Proposition \ref{pr:R_comm_L2} implies that  $\Rol(X,Y)(A')=0$ for all $(x',\hat{x}';A')\in N$ and $X,Y\in T|_{x'} M$.

On the other hand, the assumption implies that
$f:=(\pi_{Q,M})|_N^{-1}$ is a smooth local section
of $\pi_{Q,M}$ defined on $B_{d_g}(x,\epsilon)$
whose image is the integral manifold $N$ of $\RDist$.

Let $\gamma(t)=\exp_x(tu)$, $t\in [0,1]$, be a geodesic of $M$ with $\n{u}_g<\epsilon$.
Then, since $f\circ\gamma$ is an integral curve of $\RDist$
and $f(\gamma(0))=q$, the rolling curve $q_{\RDist}{(\gamma,q)}$ is defined on $[0,1]$ 
and is given by $q_{\RDist}{(\gamma,q)}(t)=f(\gamma(t))$ for all $t\in [0,1]$.
Hence $q_{\RDist}{(\gamma,q)}(t)\in N$ for all $t\in [0,1]$,
which implies that $\Rol(X,Y)(A_{\RDist}{(\gamma,q)}(t))Z=0$
for all $X,Y,Z\in T|_{\gamma(t)} M$. This completes the proof.
\end{proof}

\begin{lemma}\label{le:isometries_as_sections_of_Q}
Let $x_0\in M$ and $\hat{x}_0\in\hat{M}$
with corresponding open neighborhoods $U$ and $\hat{U}$. Then
 there is a isometry onto $\Phi:U\to \hat{U}$
if and only if there is a smooth local $\pi_{Q,M}$-section
$f:U\to Q$, whose image is an integral manifold of $\RDist$
and projects bijectively by $\pi_{Q,\hat{M}}$ onto $\hat{U}$.

Moreover, the correspondence $\Phi\leftrightarrow f$
is given by
\[
& f_\Phi(x)=(x,\Phi(x);\Phi_*|_x), \\
& \Phi_f(x)=\pi_{Q,\hat{M}}\circ f.
\]
\end{lemma}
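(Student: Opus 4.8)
\textbf{Proof plan for Lemma \ref{le:isometries_as_sections_of_Q}.}
The statement is an ``if and only if'' together with explicit formulas for the correspondence, so the plan is to verify that $f_\Phi$ and $\Phi_f$ are well-defined in the two directions and that they are mutually inverse. I would organize the argument around the two maps given in the statement.

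First, suppose $\Phi:U\to\hat U$ is an isometry. I would set $f_\Phi(x)=(x,\Phi(x);\Phi_*|_x)$ and check: (a) $f_\Phi(x)\in Q$, which holds because $\Phi$ being a (orientation-preserving, after shrinking $U$ if needed) isometry makes $\Phi_*|_x:T|_xM\to T|_{\Phi(x)}\hat M$ an o-isometry; (b) $f_\Phi$ is smooth and $\pi_{Q,M}\circ f_\Phi=\id_U$, i.e.\ it is a local $\pi_{Q,M}$-section; (c) $\pi_{Q,\hat M}\circ f_\Phi=\Phi$ is a diffeomorphism onto $\hat U$, so the image projects bijectively onto $\hat U$; and (d) the image of $f_\Phi$ is an integral manifold of $\RDist$. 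For (d) the key computation is that for any smooth curve $\gamma$ in $U$ starting at $x$, the curve $t\mapsto f_\Phi(\gamma(t))=(\gamma(t),\Phi(\gamma(t));\Phi_*|_{\gamma(t)})$ satisfies the rolling conditions: the no-slipping condition $\dot{\widehat{\gamma}}(t)=\Phi_*\dot\gamma(t)=A(t)\dot\gamma(t)$ is immediate from the chain rule, and the no-spinning condition $\ol\nabla_{(\dot\gamma,\dot{\widehat\gamma})}A(\cdot)=0$ follows because an isometry commutes with parallel transport, Eq.~(\ref{eq:iso_parallel}): indeed $\Phi_*|_{\gamma(t)}\circ P_0^t(\gamma)=P_0^t(\Phi\circ\gamma)\circ\Phi_*|_{\gamma(0)}$, so $A(t)=P_0^t(\widehat\gamma)\circ A(0)\circ P_t^0(\gamma)$, which by Corollary \ref{cor:LNSD_along_path} (or Proposition \ref{pr:2.1:1}) says exactly that $t\mapsto f_\Phi(\gamma(t))$ is tangent to $\NSDist$, hence to $\RDist$. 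Since such rolling curves through $q=f_\Phi(x_0)$ fill out the image of $f_\Phi$ and this image has dimension $n=\rank\RDist$, it is an integral manifold of $\RDist$.

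Conversely, suppose $f:U\to Q$ is a smooth local $\pi_{Q,M}$-section whose image $N:=f(U)$ is an integral manifold of $\RDist$ projecting bijectively by $\pi_{Q,\hat M}$ onto $\hat U$. I would define $\Phi_f:=\pi_{Q,\hat M}\circ f$, which is smooth, and show it is an isometry. Bijectivity onto $\hat U$ is the stated hypothesis. For the metric property: write $f(x)=(x,\Phi_f(x);A(x))$ with $A(x)\in Q|_{(x,\Phi_f(x))}$. Given $X\in T|_xM$, pick a geodesic $\gamma$ in $U$ with $\dot\gamma(0)=X$; then $f\circ\gamma$ is an integral curve of $\RDist$ starting at $f(x)$, hence equals the rolling curve $q_{\RDist}(\gamma,f(x))$, and in particular $\dot{(\Phi_f\circ\gamma)}(0)=A(x)\dot\gamma(0)=A(x)X$ by the no-slipping equation of $\srol$. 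Thus $(\Phi_f)_*|_x=A(x)$, which is an isometry since $A(x)\in Q$; so $\Phi_f$ is a local isometry, and being bijective onto $\hat U$, an isometry. Finally, the two constructions are visibly mutually inverse: starting from $\Phi$, we get $f_\Phi$ with $\Phi_{f_\Phi}=\pi_{Q,\hat M}\circ f_\Phi=\Phi$; starting from $f$, the identity $(\Phi_f)_*|_x=A(x)$ just established gives $f_{\Phi_f}(x)=(x,\Phi_f(x);(\Phi_f)_*|_x)=(x,\Phi_f(x);A(x))=f(x)$.

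The main obstacle, though it is a mild one, is the step identifying $(\Phi_f)_*|_x$ with $A(x)$, i.e.\ showing that an arbitrary integral manifold of $\RDist$ that is a section over $U$ is automatically of the form $\{(x,\Phi(x);\Phi_*|_x)\}$; this is where one must invoke that integral curves of $\RDist$ are rolling curves (Proposition \ref{pr:rolling_curves}) and extract the pointwise derivative of $\Phi_f$ from the no-slipping condition. One should also take a moment to address orientation: if $U$ is connected and small, an isometry $\Phi$ is either orientation preserving or reversing on all of $U$, and replacing the orientation of $\hat U$ if necessary, or simply restricting attention to the orientation-preserving case (consistent with the standing conventions of Definition \ref{defA}), makes $\Phi_*|_x$ land in $Q$ rather than merely in the bundle of isometries; I would remark on this explicitly. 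Everything else is routine bookkeeping with the definitions of $\RDist$, $\NSDist$, and the rolling curves.
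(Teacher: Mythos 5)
Your proof is correct and follows essentially the same route as the paper: the forward direction uses that isometries commute with parallel transport (Eq. (\ref{eq:iso_parallel})) to show the curves $t\mapsto f_\Phi(\gamma(t))$ are tangent to $\RDist$, and the converse identifies $(\Phi_f)_*|_x$ with $A(x)$ via the fact that the image of $f$ is an integral manifold of $\RDist$ (the paper phrases this as $f_*(X)=\LRD(X)|_{f(x)}$ and projects by $\pi_{Q,\hat M}$, which is equivalent to your rolling-curve argument). The only additions beyond the paper's proof are your explicit check that the two constructions are mutually inverse and the remark on orientation, both of which are harmless.
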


\begin{proof}
Let $\Phi$ be an isometry onto $U\to \hat{U}$
and define $f_\Phi$ as above.
For every $x\in U$ and $u\in T|_x M$,
let $\gamma_u(t):=\exp_x(tX)$.
Since $\Phi$ is an isometry,
$\Phi\circ\gamma_u$ is a $\hat{g}$-geodesic
starting at $\Phi(x)$.
Moreover, defining $A(t)=\Phi_*|_{\gamma_u(t)}\in Q$
and taking any $X_0\in T|_x M$, $X(t)=P^t_0(\gamma_u)X_0$,
we have $A(t)X(t)=P^t_0(\Phi\circ\gamma_u)(\Phi_*(X_0))$
and hence
\[
\big(\ol{\nabla}_{(\dot{\gamma}_u(t),\dif{t}(\Phi\circ\gamma_u)(t))} A(t)\big)X(t)
=\hat{\nabla}_{\Phi_*\dot{\gamma}_u(t)} (A(t)X(t))-A(t)\nabla_{\dot{\gamma}_u(t)} X(t)=0,
\]
which proves that $t\mapsto (\gamma_u(t),(\Phi\circ\gamma_u)(t);A(t))=:q(t)$ is an integral curve of $\RDist$
through $q(0)=(x,\Phi(x);\Phi_*|_x)$. 
On the other hand, $q(t)=f_{\Phi}(\gamma_u(t))$
and thus it follows that
\[
(f_{\Phi})_*(u)=\dot{q}(0)\in\RDist|_{q(0)}.
\]
Hence the image of $f_{\Phi}$ is an integral manifold of $\RDist$
and it clearly projects bijectively onto $\hat{U}$ by $\pi_{Q,\hat{M}}$.

Conversely, suppose that $f:U\to Q$
is a local $\pi_Q$-section whose image is an integral manifold of $\RDist$
and which projects onto $\hat{U}$.
Define $\Phi_f$ as above. Then $\Phi_f:U\to \hat{U}$
and, for every $x\in U$ and $X\in T|_x M$,
we have $f_*(X)=\LRD(X)|_{f(x)}$ and thus
\[
\n{(\Phi_f)_*X}_{\hat{g}}
=\n{(\pi_{Q,\hat{M}})_*(f_*(X))}_{\hat{g}}
=\n{(\pi_{Q,\hat{M}})_*(\LRD(X)|_{f(x)})}_{\hat{g}}
=\n{f(x)X}_{\hat{g}}=\n{X}_g,
\]
where the final equality follows from the fact that $f(x)\in Q$.
The fact that $\Phi$ is a bijection $U\to\hat{U}$ is clear.
Hence the conclusion.
\end{proof}

We can now provide an argument for Theorem \ref{th:ambrose}.
According to the assumptions done in the statement, Theorem \ref{th:cartan} implies that there is an integral manifold of $\RDist$ passing through $q_0=(x_0,\hat{x}_0;A_0)$.
Hence, we may choose the maximal connected integral manifold $N$ of $\RDist$ passing throught $q_0$
(where $N$ is the union of all connected integral manifold of $\RDist$ passing through $q_0$, see e.g. Lemma 3.19 in \cite{kolar93}).

Endow $N$ with a Riemannian metric $h$
given by: $h(\LRD(X)|_q,\LRD(Y)|_q)=g(X,Y)$ for $q=(x,\hat{x};A)\in N$ and $X,Y\in T|_x M$.
It is then clear that 
$$F:=\pr_1\circ\pi_Q|_N:N\to M,\hbox{ and }
G:=\pr_2\circ\pi_Q|_N:N\to\hat{M},$$
are local isometries onto open subsets of $M$ and $\hat{M}$
(see also the proof of Corollary \ref{cor:weak_ambrose}).

We next intend to prove that $(N,h)$ is a complete Riemannian manifold.
Here, we have to be more careful than in the proof of "(i) $\Rightarrow$ (iii)" in Corollary \ref{cor:weak_ambrose}
since we cannot assume that $N$ is the whole orbit $\mc{O}_{\RDist}(q_0)$.

First of all, the facts that $N$ is an integral manifold of $\RDist$
and $F$ is a local isometry
imply that, for any $q=(x,\hat{x};A)\in N$
and any $g$-geodesic $t\mapsto \gamma(t)=\exp_x(tX)$ on $M$ starting at $x$, the rolling curve $t\mapsto q_{\RDist}(\gamma,q)(t)$ 
stays in $N$ is a $h$-geodesic on $N$ for $t$ in a small interval containing $0$.

Let us assume that $N$ is not complete.
Then there exists a $h$-geodesic
$\Gamma:[0,T[\to N$ starting from $q_0$
where $[0,T[$ is the maximal non-negative interval of definition
and $T<\infty$.
Since $F$ is a local isometry, $F\circ\Gamma$ is a $g$-geodesic on $M$
and since $\Gamma$ is an integral curve of $\RDist$,
it follows that there is a unique $X\in T|_{x_0} M$ such that,
for $t\in [0,T[$, one has 
\[
\Gamma(t)&=q_{\RDist}\big((s\mapsto \exp_{x_0}(sX)),q_0\big)(t) \\
&=\big(\exp_{x_0}(tX),\widehat{\exp}_{x_0} (tA_0X);
P_0^t(s\mapsto \widehat{\exp}_{\hat{x}_0}(sA_0X))\circ A_0\circ P^0_t(s\mapsto \exp_{x_0}(sX))\big).
\]
We write $(\gamma(t),\hat{\gamma}(t);A(t)):=\Gamma(t)$.
Since $M$ and $\hat{M}$ are complete,
the right hand side of the above equation makes sense for all $t\geq 0$
and we define $\Gamma$ on $[T,\infty[$ by this formula.
We emphasize that we assume $\Gamma$ to be a geodesic on $N$ only for $[0,T[$.

Write $q_T=(x_T,\hat{x}_T;A_T):=\Gamma(T)$.
Choose $\epsilon>0$ such that $\exp_{x_T}$ and $\widehat{\exp}_{\hat{x}_T}$
are diffeomorphisms $B(0,\epsilon)\to B_{d_g}(x_T,\epsilon)$, $B(0,\epsilon)\to B_{d_{\hat{g}}}(\hat{x}_T,\epsilon)$ respectively.

If $\omega$ is any geodesic $[0,1]\to B_{d_g}(x_T,\epsilon)$
starting from $x_T$,
then the concatenation $\omega\sqcup\gamma$ of $\omega$ and $\gamma$
is a once broken geodesic
starting from $x_0$ and therefore, Eq. (\ref{eq:ambrose:1}) implies that
the assumptions of Theorem \ref{th:cartan}, Case (i),
are satisfied (with $(\omega,q_T)$ in place of $(\gamma,q)$).
Indeed, for every $X,Y\in T|_{\omega(t)} M$ and $t$,
\[
& \Rol(X,Y)(A_{\RDist}(\omega,q_T)(t))
=\Rol(X,Y)(A_{\RDist}(\omega,q_{\RDist}(\gamma,q_0)(T))(t)) \\
=&\Rol(X,Y)(A_{\RDist}(\omega\sqcup\gamma,q_0)(t+T))=0.
\]
Therefore, Case (iii) there implies
the existence of a connected integral manifold $\tilde{N}$
of $\RDist$ passing through $q_T=\Gamma(T)$.

Since $\tilde{N}$ is an integral manifold of $\RDist$
and $\Gamma$ is an integral curve of $\RDist$
and since $\Gamma(T)\in \tilde{N}$,
it follows that $\Gamma(t)\in \tilde{N}$
for all $t$ in an open interval $]T-\eta,T+\eta[$ containing $T$.
Since $\Gamma(t)\in N$ for $t\in [0,T[$,
it follows that, for some $t_0\in ]T-\eta,T[$,
we have $\Gamma(t_0)\in N\cap\tilde{N}$.

Thus $N\cap\tilde{N}\neq\emptyset$
and hence $N\cup \tilde{N}$ is a connected integral manifold of $\RDist$
passing through $q_0$ which, because of the maximality of $N$,
implies that $\tilde{N}\subset N$.
This implies that $\Gamma$ is a geodesic of $N$
(since $F\circ\Gamma=\gamma$ is a geodesic of $M$
and $F$ is a local isometry)
on the interval $[0,T+\eta[$,
contradicting the choice of the finite time $T$.
Thus $(N,h)$ is complete.

Since $F=\pr_1\circ \pi_Q|_N$
and $G=\pr_2\circ \pi_Q|_N$ are local Riemannian isometries,
it follows from Proposition II.1.1 in \cite{sakai91} 
that they are covering maps. Taking finally into account that $M$ is simply connected, one gets that $F$ is an isometric diffeomorphism $N\to M$
and hence $G\circ F^{-1}:M\to\hat{M}$ is a Riemannian covering map.

Finally notice that if $\gamma:[0,a]\to M$ is a minimal geodesic starting from $x_0$,
then $(G\circ F^{-1})(\gamma(t))=\widehat{\exp}_{\hat{x}_0}(tA_0\dot{\gamma}(0))$
and hence $\Phi=G\circ F^{-1}$.

%%%%%%%%%%%%%%%%%%%%%%%%%%%%%%%%%%%%%%%%%%
\section{Rolling Against a Space Form}\label{space-form}
%%%%%%%%%%%%%%%%%%%%%%%%%%%%%%%%%%%%%%%%%%

This section is devoted to the special case of
the rolling problem $(R)$ with one of the Riemannian manifolds, usually 
$(\hat{M},\hat{g})$,
being equal to a space form i.e., a simply connected complete Riemannian manifold of constant curvature.
The possible cases are: (i) Euclidean space with Euclidean metric (zero curvature), (ii) Sphere (positive curvature) and (iii) Hyperbolic space (negative curvature), cf. e.g. \cite{sakai91}.

As mentioned in the introduction,
the rolling problem against a space form actually presents a fundamental feature:
on the bundle $\pi_{Q,M}:Q\to M$ one can define a principal bundle structure
that preserves the rolling distribution $\RDist$,
and this renders the study of controllability of $\srol$ easier to handle.

We will first provide a detailed study for the rolling against an Euclidean space
and then proceed to the case of space forms with non-zero curvature.

%%%%%%%%%%%%%%%%%%%%%%%%%%%%%%%%%%%%%%%%%%
\subsection{Rolling Against an Euclidean Space}\label{flat}
%%%%%%%%%%%%%%%%%%%%%%%%%%%%%%%%%%%%%%%%%%

In this section, we give a necessary and sufficient
condition for the controllability
of $\srol$ in the case that $\hat{M}=\R^n$
equipped with the Euclidean metric $\hat{g}=s_n$.

Recall that if $V$ is a finite dimensional inner product space with $h$
the inner product, the special Euclidean group of $(V,h)$
also denoted $\Euc(V)$ is equal to $V\times \SO(V)$,
and is equipped with the group operation $\star$ given by
\[
(v,L)\star (u,K):=(Lu+v,L\circ K).
\]
Here $\SO(V)$ is defined with respect to the inner product $h$ of $V$.
In particular, we write $\Euc(n)$ for $\Euc(\R^n)$ with $\R^n$ equipped with the standard
inner product.

Now fix a point $q_0$ of $Q=Q(M,\R^n)$ of the form $q_0=(x_0,0;A_0)$
i.e., the initial contact point on $M$ is equal to $x_0$ and, on $\R^n$, it is the origin.
Since $(\R^n,s_n)$ is flat, for any a.c. curve $t\mapsto \hat{x}(t)$ in $\R^n$ and $\hat{X}\in \R^n$
we have $P^t_0(\hat{x}(t))\hat{X}=\hat{X}$,
where we understand the canonical isomorphisms
$T|_{\hat{x}(0)} \R^n\cong \R^n\cong T|_{\hat{x}(t)} \R^n$.
It follows that we parameterize the rolling curves
explicitly in the form:
\begin{align}\label{eq:roll_plane_explicit}
q_{\RDist}(\gamma,(x_0,\hat{x};A))(t)
=\Big(\gamma(t),\hat{x}+A\int_0^t P^0_s(\gamma)\dot{\gamma}(s)\diff s;
AP^0_t(\gamma)\Big),
\end{align}
where $\gamma\in\Omega_{x_0}(M)$.

From this it follows that for any $(x_0,0;A_0),(x_0,\hat{x};A)\in Q$
and $\gamma\in\Omega_{x_0}(M)$, the point $q_{\RDist}(\gamma,(x_0,\hat{x};A))(1)$
is equal to 
\[
\big(x_0,\hat{x}+AA_0^{-1}\hat{\gamma}_{\RDist}(\gamma,(x_0,0;A_0))(1);
AA_0^{-1}A_{\RDist}(\gamma,(x_0,0;A_0))(1)\big).
\]
Let $\gamma\in\Omega_{x_0}(M)$ be a piecewise $C^1$-loop of $M$ based at $x_0$.
We define a map 
\[
& \rho=\rho_{q_0}:\Omega_{x_0}(M)\to \Euc(n); \\
& \rho(\gamma)=\big(\hat{\gamma}_{\RDist}(\gamma,q_0)(1),A_{\RDist}(\gamma,q_0)(1)A_0^{-1}\big),
\]
where $q_0=(x_0,0;A_0)\in Q$.
Hence by Remark \ref{re:group_property_of_rolling} and the above formulas we have
\[
\rho(\omega.\gamma)
=&\big(
\hat{\gamma}_{\RDist}\big(\omega,q_{\RDist}(\gamma,q_0)(1)\big)(1),
A_{\RDist}\big(\omega,q_{\RDist}(\gamma,q_0)(1)\big)(1)A_0^{-1}\big) \\
=&\big(
\hat{\gamma}_{\RDist}(\gamma,q_0)(1)+A_{\RDist}(\gamma,q_0)(1)A_0^{-1}\hat{\gamma}_{\RDist}(\omega,q_0)(1), \\
& A_{\RDist}(\gamma,q_0)(1)A_0^{-1}A_{\RDist}(\omega,q_0)(1)A_0^{-1}
\big) \\
=&\big(\hat{\gamma}_{\RDist}(\gamma,q_0)(1),A_{\RDist}(\gamma,q_0)(1)A_0^{-1}\big)
\star \big(\hat{\gamma}_{\RDist}(\omega,q_0)(1),A_{\RDist}(\omega,q_0)(1)A_0^{-1}\big) \\
=&\rho(\gamma)\star\rho(\omega).
\]
Thus $\rho$ is a group anti-homomorphism
$(\Omega_{x_0}(M),.)\to (\Euc(n),\star)$.
This proves that the elements of the form $\rho(\omega)$,
$\omega\in\Omega_{x_0} (M)$, form a subgroup of $\Euc(n)$.
We also see that
\[
(\hat{\gamma}_{\RDist}(\gamma,q)(1),A_{\RDist}(\gamma,q)(1))
=(\hat{x},A)\star (0,A_0)^{-1}\star \rho_{q_0}(\gamma)\star (0,A_0),
\]
where $q=(x_0,\hat{x};A),q_0=(x_0,0;A_0)\in Q$ and $\gamma\in\Omega_{x_0}(M)$.

We also make the simple observation from Eq. (\ref{eq:roll_plane_explicit})
that the image of $\pr_2\circ \rho:\Omega_{x_0}(M)\to\SO(n)$
is exactly $A_0H|_{x_0}A_0^{-1}$, where $H|_{x_0}$ is the holonomy group of $(M,g)$ at $x_0$.
Here $A_0H|_{x_0}A_0^{-1}=H|_{F}$ with respect to the orthonormal frame $F=(A_0^{-1}e_1,\dots,A_0^{-1}e_n)$
where $e_1,\dots,e_n$ is the standard basis of $\R^n$.

From these remarks the next proposition follows easily.

\begin{proposition}\label{pr:principal_bundle-against_plane}
Let $Q=Q(M,\R^n)$ and $q_0=(x_0,\hat{x}_0;A_0)\in Q$.
Then the map
\[
& K_{q_0}:\pi_{Q,M}^{-1}(x_0)\to \Euc(n); \\
& (x_0,\hat{x};A)\mapsto ( \hat{x}-\hat{x}_0 ,AA_0^{-1})
\]
is a diffeomorphism which carries the fiber $\pi_{\mc{O}_{\RDist}(q_0),M}^{-1}(x_0)$
of the orbit $\mc{O}_{\RDist}(q_0)$
to a
submanifold of $\Euc(n)$.
In particular, if $\hat{x}_0=0$ we have that
\[
K_{q_0}(\pi_{\mc{O}_{\RDist}(q_0),M}^{-1}(x_0))=\rho_{q_0}(\Omega_{x_0}(M))
\]
which is a Lie subgroup of $\Euc(n)$.
\end{proposition}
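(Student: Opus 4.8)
The plan is to verify the three claims in order: that $K_{q_0}$ is a diffeomorphism, that it carries the orbit fiber to a submanifold, and that in the case $\hat{x}_0=0$ the image coincides with $\rho_{q_0}(\Omega_{x_0}(M))$.

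First I would observe that $\pi_{Q,M}^{-1}(x_0)$ consists exactly of pairs $(x_0,\hat{x};A)$ with $\hat{x}\in\R^n$ and $A:T|_{x_0}M\to T|_{\hat{x}}\R^n\cong\R^n$ an orientation-preserving isometry; using the flatness of $\R^n$ and the canonical identification $T|_{\hat{x}}\R^n\cong\R^n$, this fiber is diffeomorphic to $\R^n\times\{A:T|_{x_0}M\to\R^n\ \text{o-isometry}\}$, and the latter factor is a coset of $\SO(n)$ (it is nonempty and acted on simply transitively by $\SO(\R^n)$ on the left or by $\SO(T|_{x_0}M)$ on the right). The map $K_{q_0}$ sends $(x_0,\hat{x};A)$ to $(\hat{x}-\hat{x}_0, AA_0^{-1})$, where $AA_0^{-1}:T|_{\hat{x}_0}\R^n\cong\R^n\to\R^n$ is a genuine element of $\SO(n)$ because both $A$ and $A_0$ are isometries onto $\R^n$ with the same determinant sign. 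Smoothness of $K_{q_0}$ and of its inverse $(\hat{v},L)\mapsto (x_0,\hat{v}+\hat{x}_0; LA_0)$ is immediate from the local-trivialization description of $\pi_Q$ given in Section~\ref{sec:1.1} (or simply from writing everything in fiber coordinates), so $K_{q_0}$ is a diffeomorphism onto $\Euc(n)=\R^n\times\SO(n)$.

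Next, for the orbit statement: by Proposition~\ref{pr:R_orbit_bundle} (and Remark~\ref{re:R_orbit_bundle}, since $\hat{M}=\R^n$ is complete so $M^\circ=M$), the map $\pi_{\mc{O}_{\RDist}(q_0),M}$ is a smooth subbundle of $\pi_{Q,M}$, hence each fiber $\pi_{\mc{O}_{\RDist}(q_0),M}^{-1}(x_0)=\mc{O}_{\RDist}(q_0)\cap\pi_{Q,M}^{-1}(x_0)$ is an (embedded closed) submanifold of $\pi_{Q,M}^{-1}(x_0)$; applying the diffeomorphism $K_{q_0}$ carries it to a submanifold of $\Euc(n)$. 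For the final claim, I would specialize to $\hat{x}_0=0$ and use the explicit rolling formula \eqref{eq:roll_plane_explicit} together with the definition of $\rho_{q_0}$: an element of $\pi_{\mc{O}_{\RDist}(q_0),M}^{-1}(x_0)$ is $q_{\RDist}(\gamma,q_0)(1)$ for some $\gamma\in\Omega_{x_0}(M)$ (here one uses that orbit fibers over the base point are reached by rolling along loops, exactly as in the proof of Theorem~\ref{th:NS_orbit_V}, or directly from Proposition~\ref{pr:R_orbit_bundle}'s surjectivity combined with the fact that returning to $x_0$ forces $\gamma$ to be a loop), and by \eqref{eq:roll_plane_explicit} this point equals $\big(x_0,\hat{\gamma}_{\RDist}(\gamma,q_0)(1);A_{\RDist}(\gamma,q_0)(1)\big)$, whose image under $K_{q_0}$ is precisely $\big(\hat{\gamma}_{\RDist}(\gamma,q_0)(1),A_{\RDist}(\gamma,q_0)(1)A_0^{-1}\big)=\rho_{q_0}(\gamma)$. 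Hence $K_{q_0}(\pi_{\mc{O}_{\RDist}(q_0),M}^{-1}(x_0))=\rho_{q_0}(\Omega_{x_0}(M))$, and this set was shown in the discussion preceding the proposition to be a subgroup of $\Euc(n)$ (via the anti-homomorphism property of $\rho_{q_0}$); being also a submanifold by the previous part, it is a Lie subgroup of $\Euc(n)$.

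The main obstacle I anticipate is not any single computation but being careful with the identifications: $A$ is a map into $T|_{\hat{x}}\R^n$ while $AA_0^{-1}$ must be read as a map into a fixed $\R^n$, and one must check that the canonical identifications $T|_{\hat{x}}\R^n\cong\R^n$ are exactly the parallel transports (which are trivial by flatness) so that $\rho_{q_0}$ and $K_{q_0}$ really agree on the orbit fiber; likewise one should confirm that a submanifold of $\Euc(n)$ which is simultaneously a subgroup is automatically a Lie subgroup (this is a standard fact: a subgroup that is an embedded submanifold is closed and a Lie subgroup, and even an immersed submanifold subgroup is a Lie subgroup by Cartan's closed-subgroup-type arguments — I would just cite \cite{lee02}). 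Everything else reduces to bookkeeping with the explicit formula \eqref{eq:roll_plane_explicit} and the results already established in Sections~\ref{charac} and~\ref{sec:2.55}.
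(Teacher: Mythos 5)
Your proof is correct and follows exactly the route the paper intends: the paper leaves this proposition as "follows easily" from the preceding remarks, and your argument fills in those details — the explicit fiber description and inverse of $K_{q_0}$, the subbundle property from Proposition \ref{pr:R_orbit_bundle} for the submanifold claim, and Eq. (\ref{eq:roll_plane_explicit}) together with the anti-homomorphism property of $\rho_{q_0}$ for the identification of the image with $\rho_{q_0}(\Omega_{x_0}(M))$. No gaps beyond the minor bookkeeping points you already flag, which are handled as you describe.
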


We will make some standard observations
of subgroups $G$ of an Euclidean group $\Euc(V)$, where $(V,h)$ is a finite dimensional inner product space.
Call an element of $G$ of the form $(v,\id_V)$ a \emph{pure translation} of $G$
and write $T=T(G)$ for the set that they form. Clearly $T$ is a subgroup of $G$.
As before, $\pr_1$, $\pr_2$ denote the projections $\Euc(V)\to V$ and $\Euc(V)\to \SO(V)$.
The natural action, also written by $\star$, of $\SO(V)$ on $V$
is defined as
\[
(u,K)\star v:=Kv+u,\quad (u,K)\in\SO(V),\ v\in V.
\]

\begin{proposition}\label{pr:subgroup_of_SE}
Let $G$ be a Lie subgroup of $\Euc(V)$ with $\pr_2(G)=\SO(V)$.
Then either of the following cases hold:
\begin{itemize}
\item[(i)] $G=\Euc(V)$ or
\item[(ii)] there exists $v^*\in V$ which is a fixed point of $G$.
\end{itemize}
\end{proposition}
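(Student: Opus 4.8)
The plan is to analyze the structure of $G$ by looking at its translation subgroup $T=T(G)=\{v\in V\ |\ (v,\id_V)\in G\}$, which is a closed subgroup of $(V,+)$, hence of the form $T=W\oplus L$ where $W$ is a linear subspace of $V$ and $L$ is a lattice in a complementary subspace (by the classification of closed subgroups of $\R^n$). First I would observe that $T$ is normal in $G$: indeed, for $(u,K)\in G$ and $(v,\id_V)\in T$ we compute $(u,K)\star(v,\id_V)\star(u,K)^{-1}=(Kv,\id_V)$, so $T$ is normal and moreover $K(T)=T$ for every $K\in\pr_2(G)=\SO(V)$. Since the only subspaces of $V$ invariant under all of $\SO(V)$ are $\{0\}$ and $V$ (here one uses $\dim V=n\geq 2$, so $\SO(V)$ acts irreducibly on $V$), the linear part $W$ of $T$ is either $\{0\}$ or $V$; and since a lattice $L$ of positive rank cannot be $\SO(V)$-invariant either (a nonzero vector of minimal length in $L$ would have its whole $\SO(V)$-orbit, a sphere, inside $L$, contradicting discreteness), we conclude that $T=\{0\}$ or $T=V$.

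If $T=V$, then $G$ contains all pure translations and surjects onto $\SO(V)$, so $G=\Euc(V)$, giving case (i). So the remaining task is to show that if $T=\{0\}$ then $G$ has a global fixed point $v^*\in V$. The idea here is that with no pure translations, $\pr_2|_G:G\to\SO(V)$ is injective (if $(u,K),(u',K)\in G$ then $(u-u',\id_V)\in T=\{0\}$), hence it is a Lie group isomorphism onto $\SO(V)$ because it is also surjective by hypothesis; write $K\mapsto (s(K),K)$ for its inverse, so $s:\SO(V)\to V$ is smooth and satisfies the cocycle identity $s(KK')=Ks(K')+s(K)$. I would then produce the fixed point by averaging: set $v^*=-\int_{\SO(V)} s(K)\,d\mu(K)$ where $\mu$ is the normalized Haar (probability) measure on the compact group $\SO(V)$. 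Using the cocycle relation and translation-invariance of Haar measure, one checks that for any $(s(K_0),K_0)\in G$, $(s(K_0),K_0)\star v^* = K_0 v^* + s(K_0) = -\int K_0 s(K)\,d\mu(K) + s(K_0) = -\int \big(s(K_0K)-s(K_0)\big)\,d\mu(K)+s(K_0) = -\int s(K)\,d\mu(K)=v^*$, so $v^*$ is fixed by every element of $G$, giving case (ii).

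The main obstacle I anticipate is the clean handling of the translation subgroup: one must be careful that $T$ need not be a linear subspace a priori (it is only a closed subgroup of $(V,+)$), so the argument genuinely needs the classification $T\cong W\oplus L$ together with the $\SO(V)$-invariance to rule out the lattice part and the proper nonzero subspace part; this is where the hypotheses $\dim V=n\geq 2$ and $\pr_2(G)=\SO(V)$ are both essential (for $n=1$, $\SO(1)$ is trivial and the statement is false). A secondary point to get right is that $\pr_2|_G$ being a continuous bijective Lie group homomorphism onto $\SO(V)$ is actually an isomorphism of Lie groups, so that $s$ is smooth and Haar averaging applies — this follows from the closed subgroup / automatic continuity properties of Lie groups, but should be stated explicitly. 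Everything else is the routine cocycle computation sketched above.
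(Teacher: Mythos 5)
Your overall strategy is the same as the paper's: split according to the subgroup $T=T(G)$ of pure translations, show that $T\neq\{0\}$ forces $G=\Euc(V)$, and in the case $T=\{0\}$ use that $\pr_2|_G$ is an injective, hence bijective, Lie group homomorphism onto $\SO(V)$ together with Haar averaging to produce a fixed point. The one place where you genuinely diverge is the step ``$T\neq\{0\}\Rightarrow T=V$'': you invoke the classification of \emph{closed} subgroups of $(V,+)$ plus irreducibility of the $\SO(V)$-action, whereas the paper argues directly that conjugation puts the whole sphere $S^{n-1}(0,\n{v})$ into $T$, that sums of two sphere points fill the closed ball $\ol{B}(0,\n{v})$, and that iterated sums then give all of $V$. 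Your route has a gap: you assert that $T$ is closed in $V$, but $G$ is only assumed to be a Lie subgroup (in the application it is a holonomy-type group $\rho_{q_0}(\Omega_{x_0}(M))$, which need not be closed in $\Euc(V)$), so $T=\ker(\pr_2|_G)$ is closed in $G$ but not obviously closed in $V$, and the classification theorem does not apply as stated. The defect is repairable — either run the paper's sphere-and-sums argument, which uses no closedness, or note that any nonzero $\SO(V)$-invariant subgroup of $V$ already contains a sphere and hence, by the same sum trick, equals $V$ — but as written the step is not justified.

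In case (ii) your idea (average over the compact group via the cocycle $s$) is the paper's argument transported from $G$ to $\SO(V)$, but the formula and the verification contain a sign error. With the cocycle $s(KK')=Ks(K')+s(K)$ the fixed point is $v^*=+\int_{\SO(V)}s(K)\,\diff\mu(K)$, not $-\int s(K)\,\diff\mu(K)$: indeed $(I-K_0)\int s(K)\,\diff\mu(K)=\int\big(s(K)-s(K_0K)+s(K_0)\big)\diff\mu(K)=s(K_0)$ by left invariance, which is exactly the fixed-point equation, whereas with your sign one gets $K_0v^*+s(K_0)=v^*+2s(K_0)$. Your displayed chain of equalities silently drops one of the two $s(K_0)$ terms. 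Equivalently, take $v^*$ to be the $\mu$-average of the $G$-orbit of any point, $v^*=\int_G(B\star v)\,\diff\mu_{\mathrm{H}}(B)$, which is how the paper phrases it; then the computation is the one you intended. With these two repairs your proof coincides in substance with the paper's.
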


\begin{proof}
Suppose first that $T=T(G)$ is non-trivial
i.e., there exists a pure translation $(v,\id_V)\in T$, $v\neq 0$.
Then for any $(w,A)\in G$ it holds that
\[
G\ni &(w,A)^{-1}\star (v,\id_V)\star (w,A)
=(-A^{-1}w,A^{-1})\star (v+w,A) \\
&=(A^{-1}(v+w)-A^{-1}w,\id_V)=(A^{-1}v,\id_V)
\]
which implies that
\[
T\supset\{(A^{-1}v,\id_V)\ |\ (w,A)\in G\}
=&\{(A^{-1}v,\id_V)\ |\ A\in \pr_2(G)=\SO(V)\} \\
=&S^{n-1}(0,\n{v})\times \{\id_V\}
\]
where $S^{n-1}(w,r)$, $w\in\R^n,r>0$ is the sphere of radius $r$ centered at $w\in V$
and $\n{\cdot}=h(\cdot,\cdot)^{1/2}$.
If $w\in V$ such that $\n{w}\leq \n{v}$ then it is clear
that there are $u,u'\in S^{n-1}(0,\n{v})$ such that $u+u'=w$
(choose $u\in S^{n-1}(0,\n{v})\cap S^{n-1}(w,\n{v})$
and $u'=w-u$).
Therefore
\[
(w,\id_V)=(u,\id_V)\star (u',\id_V)\in T
\]
i.e., $\ol{B}(0,\n{v})\subset T$ where $\ol{B}(w,r)$ is the closed ball of radius $r$ centered at $w$.
Thus for all $k\in\N$,
\[
& \{\underbrace{\ol{B}(0,\n{v})+\dots+\ol{B}(0,\n{v})}_{k\ \textrm{times}}\}\times\{\id_{V}\} \\
=&\underbrace{(\ol{B}(0,\n{v})\times \{\id_V\})\star \dots \star (\ol{B}(0,\n{v})\times \{\id_V\})}_{k\ \textrm{times}}\subset T.
\]
From this we conclude that
$V\times\{\id_V\}=T$.

Therefore we get the case (i) since
\[
G=&T\star G=\{(u,\id_V)\star (w,A)\ |\ u\in V,\ (w,A)\in G\} \\
=&\{(u+w,A)\ |\ u\in V,\ (w,A)\in G\} \\
=&\{(u,A)\ |\ u\in V,\ A\in\pr_2(G)=\SO(V)\} \\
=&V\times\SO(V)=\Euc(V).
\]

The case that is left to investigate is the one where $T$ is trivial i.e., $T=\{(0,\id_V)\}$.
In this case the smooth surjective Lie group homomorphism $\pr_2|_G:G\to\SO(V)$ is also injective.
In fact, if $A=\pr_2(v,A)=\pr_2(w,A)$ for $(v,A),(w,A)\in G$ and $v\neq w$,
then
\[
G\ni (w,A)\star (v,A)^{-1}=(w,A)\star (-A^{-1}v,A^{-1})=(w-v,\id_V)\in T
\]
and since $(w-v,\id_V)\neq (0,\id_V)$, this contradicts the triviality of $T$.
It follows that $\pr_2|_G$ is a Lie group isomorphism onto $\SO(V)$ and hence a diffeomorphism. 
In particular, $G$ is compact since $\SO(V)$ is compact.

We next show that there exists $v^*\in V$ which is a fixed point of $G$.
Indeed, taking arbitrary $v\in V$ and writing $\mu_{\mathrm{H}}$ for the (right- and) left-invariant normalized (to $1$) Haar measure of the compact group $G$, then we may define
\[
v^*:=\int_{G} (B\star v)\diff\mu_{\mathrm{H}}(B).
\]
Thus for $(w,A)\in G$,
\[
(w,A)\star v^*=&w+Av^*
=\int_{G} \big(w+A(B\star v)\big)\diff\mu_{\mathrm{H}}(B)
=\int_{G} \big(((w,A)\star B)\star v\big)\diff\mu_{\mathrm{H}}(B) \\
=&\int_G (B\star v)\diff\mu_{\mathrm{H}}(B)=v^*,
\]
where, in the second equality, we have used the linearity of the integral and normality of the Haar measure
and in the last phase the left invariance of the Haar measure.
This proves that $v^*$ is a fixed point of $G$
and completes the proof.

\end{proof}

\begin{remark}\label{re:rem1}
With a slight modification, the previous proof actually gives the following generalisation of the last proposition:
If $G$ is a connected subgroup of $\Euc(V)$ such that the subgroup $\pr_2(G)$ of $\SO(V)$
acts transitively on the unit sphere of $V$
then either (i) $G=V\times\pr_2(G)$ or (ii) there is a fixed point $v^*$ of $G$.

\end{remark}

The previous proposition allows us prove the main theorem of this section.

\begin{theorem}
Suppose $(M,g)$ is a complete Riemannian $n$-manifold and $(\hat{M},\hat{g})=(\R^n,s_n)$ is the Euclidean $n$-space.
Then the rolling problem $\srol$ is completely controllable
if and only if the holonomy group of $(M,g)$ is $\SO(n)$
(w.r.t. an orthonormal frame).
\end{theorem}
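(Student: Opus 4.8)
The statement has two directions. The ``only if'' direction is essentially already available: by Theorem~\ref{th:non-sym} (or more directly by Theorem~\ref{theo-00}, since complete controllability of $\srol$ forces complete controllability of $\sns$), if $\srol$ is completely controllable then $\mathfrak{h}+\hat{\mathfrak{h}}=\so(n)$; but $\hat{M}=\R^n$ is flat, so $\hat{\mathfrak{h}}=\{0\}$, whence $\mathfrak{h}=\so(n)$, i.e.\ $H|_{x_0}=\SO(n)$ (using that the holonomy group of a complete Riemannian manifold is closed, hence connected holonomy algebra $\so(n)$ gives $H=\SO(n)$, possibly after noting $M$ may be replaced by its universal cover via Corollary~\ref{CC-simply-conn}). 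So the real content is the ``if'' direction.

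\textbf{The ``if'' direction.} Assume the holonomy group of $(M,g)$ at $x_0$ is $\SO(n)$ (w.r.t.\ some orthonormal frame). I would fix a base point $q_0=(x_0,0;A_0)\in Q=Q(M,\R^n)$ and work inside the fiber $\pi_{Q,M}^{-1}(x_0)$. By Proposition~\ref{pr:principal_bundle-against_plane}, the diffeomorphism $K_{q_0}$ carries the fiber of the orbit $\pi_{\mc{O}_{\RDist}(q_0),M}^{-1}(x_0)$ onto $\rho_{q_0}(\Omega_{x_0}(M))$, which is a Lie subgroup $G$ of $\Euc(n)$. The key computation recalled just before Proposition~\ref{pr:principal_bundle-against_plane} is that $\pr_2\circ\rho_{q_0}$ has image exactly $A_0 H|_{x_0} A_0^{-1}$, which by hypothesis equals $\SO(n)$. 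Hence $\pr_2(G)=\SO(n)$, so Proposition~\ref{pr:subgroup_of_SE} applies: either $G=\Euc(n)$, or $G$ has a fixed point $v^*\in\R^n$. I would rule out the second alternative: a fixed point $v^*$ of $G$ means that for every loop $\gamma\in\Omega_{x_0}(M)$ one has $\hat\gamma_{\RDist}(\gamma,q_0)(1)+A_{\RDist}(\gamma,q_0)(1)A_0^{-1}v^*=v^*$; taking $\gamma$ to be a loop whose holonomy is nontrivial and choosing $v^*$ generic relative to it, and also using loops with trivial holonomy but nonzero ``translation part'' (which exist precisely because holonomy is full, so one can find loops producing any rotation while the traced curve on $\R^n$ is a genuine loop of varying shape), one reaches a contradiction — concretely, if $v^*$ were fixed, then varying $\gamma$ over all of $\Omega_{x_0}(M)$ would force $\hat\gamma_{\RDist}(\gamma,q_0)(1)$ to lie on a single sphere around $v^*$, contradicting that these endpoints fill an open set of $\R^n$ once the rotational part runs over all of $\SO(n)$. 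So $G=\Euc(n)$, i.e.\ the orbit fiber over $x_0$ equals the whole fiber $\pi_{Q,M}^{-1}(x_0)\cong\Euc(n)$.

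\textbf{From full fiber to complete controllability.} Once $\mc{O}_{\RDist}(q_0)\cap\pi_{Q,M}^{-1}(x_0)=\pi_{Q,M}^{-1}(x_0)$, the orbit is open: indeed the fiber direction $V|_{q}(\pi_Q)$ is tangent to the orbit at every $q$ over $x_0$, and combined with $\RDist|_q\subset T_q\mc{O}_{\RDist}(q_0)$ which projects isomorphically onto $T_{x_0}M$, we get $T_{q_0}\mc{O}_{\RDist}(q_0)=T_{q_0}Q$; alternatively one invokes Corollary~\ref{cor:vert} with the ``crab motion'' remark. Since $\hat{M}=\R^n$ is complete, Proposition~\ref{pr:R_orbit_bundle} says $\pi_{\mc{O}_{\RDist}(q_0),M}$ is a subbundle of $\pi_{Q,M}$ over all of $M$, so having the full fiber over one point $x_0$ together with openness, plus connectedness of $Q$, forces $\mc{O}_{\RDist}(q_0)=Q$. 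As the base point was arbitrary (all fibers of the holonomy bundle are conjugate since $M$ is connected), $\srol$ is completely controllable.

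\textbf{Main obstacle.} The routine parts are the reductions and the bundle/openness bookkeeping. The delicate step I expect to be the crux is the careful ruling-out of the fixed-point alternative in Proposition~\ref{pr:subgroup_of_SE}(ii): one must argue that full holonomy genuinely produces enough independent ``translation parts'' $\hat\gamma_{\RDist}(\gamma,q_0)(1)$ — not just enough rotations — so that $G$ cannot fix a point of $\R^n$. This amounts to showing the translation parts are not all constrained to a sphere, which I would handle by exhibiting, for a generic rotation $B\in\SO(n)$ realized by a loop $\gamma$, a rescaled or reparametrized loop realizing the same $B$ but a different traced curve on $\R^n$ (e.g.\ composing with a homotopically trivial loop of adjustable size), and using Eq.~(\ref{eq:roll_plane_explicit}) to see the endpoint moves.
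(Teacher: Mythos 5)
Your skeleton matches the paper's: the reduction to the subgroup $G=\rho_{q_0}(\Omega_{x_0}(M))\subset\Euc(n)$ with $\pr_2(G)=A_0H|_{x_0}A_0^{-1}=\SO(n)$, the dichotomy of Proposition \ref{pr:subgroup_of_SE}, and the passage from ``full fiber over one point'' to $\mc{O}_{\RDist}(q_0)=Q$ via Proposition \ref{pr:R_orbit_bundle}; your ``only if'' direction (via controllability of $\sns$ and Theorem \ref{theo-00} with $\hat{\mathfrak{h}}=0$) is a valid, slightly different route from the paper's direct identification $K_{q_0}(\pi_{Q,M}^{-1}(x_0))=\Euc(n)$. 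The genuine gap is in the step you yourself flag as the crux: ruling out the fixed-point alternative. Your argument is circular. If $G$ fixes $v^*$, then every element $(w,B)\in G$ satisfies $w=(\id-B)v^*$, so the translation part is \emph{determined} by the rotation part and the endpoints $\hat\gamma_{\RDist}(\gamma,q_0)(1)$ lie exactly on the sphere $v^*-S^{n-1}(0,\n{v^*})$ (or are all $0$ if $v^*=0$). Asserting that these endpoints ``fill an open set of $\R^n$ once the rotational part runs over all of $\SO(n)$'' is precisely the statement that the alternative fails, i.e.\ what must be proved. Your proposed repair does not close this: reparametrizing a loop does not change $\int_0^1 P^0_s(\gamma)\dot\gamma(s)\diff s$ at all, ``rescaling'' a loop has no meaning on a general $(M,g)$, and composing with a contractible loop changes rotation and translation in a correlated way governed by exactly the structure in question. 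In particular, what you would need — a loop with trivial holonomy but nonzero translation part — cannot exist at $x_0$ under hypothesis (ii), and producing one is essentially equivalent to the theorem; full holonomy gives you all rotations but says nothing a priori about translations.

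The paper's proof of this step is genuinely geometric and not a soft/generic argument. It assumes the fixed-point alternative holds for \emph{every} $q=(x,0;A)$ (not just one base point), uses full holonomy and the identity $(P_0^1(\gamma)-\id)A^{-1}w_q^*+\int_0^1P^0_s(\gamma)\dot\gamma\,\diff s=0$ to show $A^{-1}w_q^*$ is independent of $A$, hence defines a global vector field $V$ on $M$ with $P_0^1(\gamma)V|_x-V|_x=-\int_0^1P^0_s(\gamma)\dot\gamma\,\diff s$ for all loops; then, following the geodesic with initial velocity $V|_x$ (completeness of $M$ is used here), it finds a point $z$ with $V|_z=0$, at which every loop of $M$ rolls to a loop of $\R^n$ based at $0$. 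At that point either Theorem \ref{th:fixed_point} or the direct computation $\nabla_XV=-X$, $R(X,V)V=0$, together with the resulting Jacobi-field estimate showing $\exp_z$ is a local isometry, yields that $(\R^n,s_n)$ is a Riemannian covering of $(M,g)$, so $M$ is flat — contradicting $H=\SO(n)$ for $n\geq 2$. Without an argument of this strength (or some substitute for the vector field $V$ and the flatness conclusion), your case (ii) remains open and the sufficiency direction is not established.
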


\begin{proof}
Suppose first that $\srol$ is completely controllable.
Then for any given $q_0=(x_0,\hat{x}_0;A_0)\in Q$
we have that $\pi_{Q,M}^{-1}(x_0)=\pi_{\mc{O}_{\RDist}(q_0),M}^{-1}(x_0)$.
In particular, taking any $q_0\in Q$ of the form $q_0=(x_0,0;A_0)$ (i.e., $\hat{x}_0=0$),
we have by Proposition \ref{pr:principal_bundle-against_plane} that
\[
\Euc(n)=K_{q_0}(\pi_{Q,M}^{-1}(x_0))=K_{q_0}(\pi_{\mc{O}_{\RDist}(q_0),M}^{-1}(x_0))
=\rho_{q_0}(\Omega_{x_0}(M)).
\]
Hence the image of $\pr_2\circ\rho_{q_0}$ is $\SO(n)$
and, on the other hand, this image is also $A_0H|_{x_0} A_0^{-1}$
as noted previously. This proves the necessity of the condition.

Assume now that the holonomy group of $M$ is $\SO(n)$
or, more precisely, that for any $x\in M$
we have $H|_{x}=\SO(T|_{x} M)$.
Let $q=(x,0;A)\in Q$ and let $G_q:=K_q(\pi_{\mc{O}_{\RDist}(q),M}^{-1}(x))$
(see Proposition \ref{pr:principal_bundle-against_plane}) which is a subgroup of $\Euc(n)$.
Since
\[
\SO(n)=AH|_x A^{-1}=(\pr_2\circ\rho)(\Omega_x(M))=\pr_2(K_q(\pi_{\mc{O}_{\RDist}(q),M}^{-1}(x)))=\pr_2(G_q),
\]
by Proposition \ref{pr:subgroup_of_SE}, either (i) $G_q=\Euc(n)$ or (ii) there exists
a fixed point $w_q^*\in \R^n$ of $G_q$.

If (i) is the case for some $q_0=(x_0,0;A_0)\in Q$,
then, since $K_{q_0}$ maps $\pi_{Q,M}^{-1}(q_0)\cap \mc{O}_{\RDist}(q_0)$
diffeomorphically onto $G_q=\Euc(n)$,
it follows that $\pi_{Q,M}^{-1}(q_0)\cap \mc{O}_{\RDist}(q_0)=\pi_{Q,M}^{-1}(q_0)$
and hence $\mc{O}_{\RDist}(q_0)=Q$
(since $\pi_{\mc{O}_{\RDist}(q_0),M}$ is a subbundle of $\pi_{Q,M}$)
i.e., $\srol$ is completely controllable.

Therefore suppose that (ii) holds i.e.,
for every $q\in Q$ of the form $q=(x,0;A)$ there is a fixed point $w_q^*\in\R^n$
of $G_q$. We will prove that this implies that $(M,g)$ is flat
which is a contradiction since $(M,g)$ does not have a trivial holonomy group.

Thus for any point of $Q$ of the form $q=(x,0;A)$
we have for all loops $\gamma\in\Omega_{x}(M)$ that
\[
AP^0_1(\gamma)A^{-1}w_q^*+A\int_0^1 P^0_s(\gamma)\dot{\gamma}(s)\diff s=w_q^*,
\]
since $\rho_q$ is a bijection onto $G_q$ and $w_q^*$ is a fixed point of $G_q$.
In other words we have
\[
(P^0_1(\gamma)-\id)A^{-1}w_q^*+\int_0^1 P^0_s(\gamma)\dot{\gamma}(s)\diff s=0.
\]
Thus if $q=(x,0;A)$ and $q'=(x,0;A')$ are on the same $\pi_Q$ fiber over $(x,0)$,
then
\[
(P^0_1(\gamma)-\id)(A^{-1}w_q^*-A'{}^{-1}w_{q'}^*)=0
\]
for every $\gamma\in\Omega_x(M)$.
On the other hand, since $M$ has full holonomy i.e., $H|_x=\SO(T|_{x} M)$,
and $H|_x=\{P^0_1(\gamma)\ |\ \gamma\in\Omega_x(M)\}$,
it follows from the above equation that
\[
A^{-1}w_q^*=A'{}^{-1}w_{q'}^*.
\]

This means that for every $x\in M$ there is a unique vector $V|_x\in T|_x M$
such that
\[
V|_x=A^{-1}w_q^*,\quad \forall q\in \pi_{Q}^{-1}(x,0).
\]
Moreover, the map $V:M\to TM$; $x\mapsto V|_x$ is a vector field
on $M$ satisfying
\begin{align}\label{eq:parallel_transport_of_V}
P^0_1(\gamma)V|_x-V|_x=-\int_0^1 P^0_s(\gamma)\dot{\gamma}(s)\diff s,
\quad \forall \gamma\in\Omega_{x}(M).
\end{align}

It follows from this that, for any piecewise $C^1$ path $\gamma\in C^1_{\mathrm{pw}}([0,1],M)$,
we have
\begin{align}\label{eq:parallel_transport_of_V_2}
V|_{\gamma(1)}=P_0^1(\gamma)\Big(V|_{\gamma(0)}-\int_0^1 P_s^0(\gamma)\dot{\gamma}(s)\diff s\Big).
\end{align}
Indeed, if $\omega\in\Omega_{\gamma(1)}(M)$,
then $\gamma^{-1}.\omega.\gamma\in\Omega_{\gamma(0)}(M)$ and therefore
\[
& P_1^0(\gamma)P_1^0(\omega)P_0^1(\gamma)V|_{\gamma(0)}-V|_{\gamma(0)}
=P^0_1(\gamma^{-1}.\omega.\gamma)V|_{\gamma(0)}-V|_{\gamma(0)} \\
=&-\int_0^1 P^0_s(\gamma^{-1}.\omega.\gamma)\dif{s}(\gamma^{-1}.\omega.\gamma)(s)\diff s \\
=&-\int_0^1 P^0_s(\gamma)\dot{\gamma}(s)\diff s
-P^0_1(\gamma)\int_0^1 P^0_s(\omega)\dot{\omega}(s)\diff s
-P^0_1(\gamma)P^0_1(\omega)\int_0^1 P^0_s(\gamma^{-1})\dif{s}\gamma^{-1}(s)\diff s \\
=&-\int_0^1 P^0_s(\gamma)\dot{\gamma}(s)\diff s
+P^0_1(\gamma)(P^0_1(\omega)V|_{\gamma(1)}-V|_{\gamma(1)})\\
&+P^0_1(\gamma)P^0_1(\omega)P_0^1(\gamma)\int_0^1 P^0_s(\gamma)\dot{\gamma}(s)\diff s,
\]
that is
\[
\big(P_1^0(\omega)-\id\big)P_0^1(\gamma)\Big(V|_{\gamma(0)}-\int_0^1 P^0_s(\gamma)\dot{\gamma}(s)\diff s\Big)
=\big(P^0_1(\omega)-\id\big)V|_{\gamma(1)}.
\]
Equation (\ref{eq:parallel_transport_of_V_2}) then follows from this 
since $\{P_1^0(\omega)\ |\ \omega\in\Omega_{\gamma(1)}(M)\}=H|_{\gamma(1)}=\SO(T|_{\gamma(1)} M)$.

Since $(M,g)$ is complete, the geodesic $\gamma_X(t)=\exp_x (tX)$
is defined for all $t\in [0,1]$.
Inserting this to Eq. (\ref{eq:parallel_transport_of_V_2})
and noticing that $P^0_s(\gamma_X)\dot{\gamma}_X(s)=X$ in this case for all $s\in [0,1]$,
we get
\[
V|_{\gamma_X(1)}=P^1_0(\gamma_X)(V|_x-X),
\]
Therefore, if $X=V|_x$ and $z:=\gamma_X(1)=\exp_x(V|_x)$, we get
\begin{align}\label{plane:V_zero}
V|_{z}=0.
\end{align}

Inserting this fact into Eq. (\ref{eq:parallel_transport_of_V}), one gets
\[
\int_0^1 P^0_s(\gamma)\dot{\gamma}(s)\diff s=0,
\quad \forall \gamma\in\Omega_{z}(M).
\]
Fix $q^*=(z,0;A_0)\in Q$ (for any isometry $A_0:T|_z M\to T|_0 \R^n$).
Eq. (\ref{eq:roll_plane_explicit}) implies that
\[
\hat{\gamma}_{\RDist}(\gamma,(x_0,0;A_0))(1)=0,\quad \forall \gamma\in\Omega_z(M).
\]

We now apply Theorem \ref{th:fixed_point} to conclude that $(M,g)$ has $(\R^n,s_n)$
as a Riemannian covering (i.e., $(M,g)$ is flat)
and hence reach the desired contradiction mentioned above.
Even though this allows to conclude the proof, we will also give below a direct argument
showing this.

Equation (\ref{eq:parallel_transport_of_V_2}) is trivially equivalent to
\[
P_t^0(\gamma)V|_{\gamma(t)}=V|_{\gamma(0)}-\int_0^t P_s^0(\gamma)\dot{\gamma}(s)\diff s
\]
where $\gamma\in C^1_{\mathrm{pw}}([a,b],M)$, $a<b$, is arbitrary.
Taking $\gamma$ to be smooth and differentiating the above equation w.r.t to $t$
(notice that both sides of the equation are in $T|_{\gamma(0)} M$ for all $t$),
we get
\[
P_t^0(\gamma)\nabla_{\dot{\gamma}(t)} V|_{\gamma(\cdot)}
=-P_t^0(\gamma)\dot{\gamma}(t),
\]
that is
\[
\nabla_{\dot{\gamma}(t)} V|_{\gamma(\cdot)}=-\dot{\gamma}(t).
\]
Since $\gamma$ was an arbitrary smooth curve,
this implies that $V$ is a smooth vector field on $M$ and
\begin{align}\label{plane:V_local}
\nabla_X V=-X,\quad \forall X\in\VF(M).
\end{align}

For any $X\in\VF(M)$, the special curvature $R(X,V)V$
can be seen to vanish everywhere since
\[
R(X,V)V=&\nabla_{X} \nabla_V V-\nabla_V \nabla_X V-\nabla_{[X,V]} V
=-\nabla_X V+\nabla_V X+[X,V] \\
=&[V,X]+[X,V]=0,
\]
where, in the second equality, we used (\ref{plane:V_local}).

For any $X\in T|_z M$, we write $\gamma_X(t)=\exp_z(tX)$ for the geodesic through $z$
in the direction of $X$.
It follows that
\begin{align}\label{plane:V_explicit}
V|_{\gamma_X(t)}=&P_0^t(\gamma_X)(V|_z-\int_0^t P^0_s(\gamma_X)\dot{\gamma}_X(s)\diff s) \nonumber \\
=&P_0^t(\gamma_X)(-\int_0^t X\diff s)=P_0^t(\gamma_X)(-tX)=-t\dot{\gamma}_X(t).
\end{align}
Now for given $X,v\in T|_z M$
let $Y(t)=\pa{s}\big|_0 \exp_z(t(X+sv))$
be the Jacobi field along $\gamma_X$ such that $Y(0)=0$, $\nabla_{\dot{\gamma}_X(t)}Y|_{t=0}=v$.
Then one has
\[
\nabla_{\dot{\gamma}_X(t)} \nabla_{\dot{\gamma}_X} Y
=R(\dot{\gamma}_X(t),Y(t))\dot{\gamma}_X(t)=\frac{1}{t^2}R(V|_{\gamma_X(t)},Y(t))V|_{\gamma_X(t)}=0,
\]
for $t\neq 0$
which means that $t\mapsto \nabla_{\dot{\gamma}_X(t)} Y$ is parallel along $\gamma_X$
i.e.,
\[
\nabla_{\dot{\gamma}_X(t)} Y=P_0^t(\gamma_X)\nabla_{\dot{\gamma}_X(0)} Y=P_0^t(\gamma_X)v.
\]
This allows us to compute
\[
\frac{\diff^2}{\diff t^2} \n{Y(t)}_g^2
=&2\dif{t} g(\nabla_{\dot{\gamma}_X(t)} Y,Y(t)) \\
=&2g(\underbrace{\nabla_{\dot{\gamma}_X(t)} \nabla_{\dot{\gamma}_X} Y}_{=0},Y(t))
+2g(\nabla_{\dot{\gamma}_X(t)} Y,\nabla_{\dot{\gamma}_X(t)} Y) \\
=&2g(P_0^t(\gamma_X)v,P_0^t(\gamma_X)v)=2\n{v}_g^2
\]
and hence for any $t$
\[
\dif{t}\n{Y(t)}_g^2=2\n{v}_g^2 t+\dif{t}\big|_0\n{Y(t)}_g^2=2\n{v}_g^2 t,
\]
because $\dif{t}\big|_0\n{Y(t)}_g^2=2g(\nabla_{\dot{\gamma}_X(0)} Y,Y(0))=0$
since $Y(0)=0$. Again, since $Y(0)=0$,
\[
\n{Y(t)}_g^2=\n{v}_g^2 t^2+\n{Y(0)}_g^2=\n{v}_g^2 t^2
\]
which, when spelled out, means that $\n{t(\exp_z)_*|_{tX}(v)}_g=\n{tv}_g$ and hence,
when $t=1$,
\begin{align}\label{plane:exp_is_isometric}
\n{(\exp_z)_*|_{X}(v)}_g=\n{v}_g,\quad \forall X,v\in T|_z M.
\end{align}

This proves that $\exp_z$ is a local isometry $(T|_z M,g|_z)\to (M,g)$
and hence a Riemannian covering.
Thus $(M,g)$ is flat and the proof if finished.

\end{proof}

\begin{remark}
For results and proofs in similar lines
to those of the above Proposition and Theorem,
see Theorem IV.7.1, p. 193 and Theorem IV.7.2, p. 194 in \cite{kobayashi63}.
\end{remark}

%%%%%%%%%%%%%%%%%%%%%%%%%%%%%%%%%%%
\subsection{Rolling Against a Non-Flat Space Form}
%%%%%%%%%%%%%%%%%%%%%%%%%%%%%%%%%%%

In this subsection, we study the controllability problem of $\srol$
in the case where $\hat{M}$ is a simply connected $n$-dimensional manifold with non zero constant curvature equal to
$\frac{1}{k}$,  with $k\neq 0$.

%%%%%%%%%%%%%%%%%%%%%%%%%%%%%%%%%%%
\subsubsection{Standard Results on Space Forms}
%%%%%%%%%%%%%%%%%%%%%%%%%%%%%%%%%%%

Following section V.3 of \cite{kobayashi63}, we define the space form $\hat{M}_k$ of curvature $\frac{1}{k}$ as a subset of $\R^{n+1}$, $n\in \N$,  given by
\[
\hat{M}_k:=\big\{(x_1,\dots,x_{n+1})\in\R^{n+1}\ |\ x_1^2+\dots+x_n^2+kx_{n+1}^2=k,\ 
x_{n+1}+\frac{k}{|k|}\geq 0\big\}.
\]
Equip $\hat{M}_k$ with a Riemannian metric $\hat{g}_k$ defined as 
the restriction to $\hat{M}_k$ of the non-degenerate symmetric $(0,2)$-tensor 
\[
s_{n,k}:=(\diff x_1)^2+\dots+(\diff x_n)^2+k(\diff x_{n+1})^2.
\]
The condition $x_{n+1}+\frac{k}{|k|}\geq 0$ in the definition $\hat{M}_k$
guarantees that $\hat{M}_k$ is connected also when $k<0$.
If the dimension $n$ is not clear from context, we write $(\hat{M}_{n,k},\hat{g}_{n,k})$
for the above Riemannian manifolds.

\begin{remark}
\begin{itemize}
\item[(i)] If $k=1$ then $\hat{M}_{1}=S^n$ (the usual Euclidean unit sphere in $\R^{n+1}$)
and $s_{n,1}$ is the usual Euclidean metric $s_{n+1}$ on $\R^{n+1}$.
For a fixed $n\in\N$, the spaces $\hat{M}_{k}$ for $k>0$ are all diffeomorphic:
the map $\phi_k:(x_1,\dots,x_n,x_{n+1})\mapsto (\frac{x_1}{\sqrt{k}},\dots,\frac{x_{n}}{\sqrt{k}},x_{n+1})$
gives a diffeomorphism from $\hat{M}_k$ onto $\hat{M}_1$.
Moreover, $\phi_k$ is a homothety since
$\phi_k^*\hat{g}_1=\frac{1}{k}\hat{g}_k$.

\item[(ii)] If $k=-1$ then $s_{n,-1}$ is the usual Minkowski "metric" on $\R^{n+1}$.
For a fixed $n\in\N$, the spaces $\hat{M}_{k}$ for $k<0$ are all diffeomorphic:
the map $\phi_k:(x_1,\dots,x_n,x_{n+1})\mapsto (\frac{x_1}{\sqrt{-k}},\dots,\frac{x_{n}}{\sqrt{-k}},x_{n+1})$
gives a diffeomorphism from $\hat{M}_k$ onto $\hat{M}_{-1}$.
Moreover, $\phi_k$ is a homothety since
$\phi_k^*\hat{g}_1=-\frac{1}{k}\hat{g}_k$.
\end{itemize}
\end{remark}

Let $G(n,k)$ be the Lie group of linear maps $\R^{n+1}\to \R^{n+1}$
that leave invariant the bilinear form
\[
\la x,y\ra_{n,k}:=\sum_{i=1}^n x_i y_i+kx_{n+1}y_{n+1},
\]
for $x=(x_1,\dots,x_{n+1})$, $y=(y_1,\dots,y_{n+1})$ and having determinant $+1$.
In other words, a linear map $B:\R^{n+1}\to\R^{n+1}$ belongs to $G(n,k)$ if and only if
$\det(B)=+1$ and
\[
\la Bx,By\ra_{n,k}=\la x,y\ra_{n,k},\quad \forall x,y\in\R^{n+1},
\]
or, equivalently,
\[
B^T I_{n,k} B=I_{n,k},\quad \det(B)=+1,
\]
where $I_{n,k}=\mathrm{diag}(1,1,\dots,1,k)$.
In particular, $G(n,1)=\SO(n+1)$ and $G(n,-1)=\SO(n,1)$.

The Lie algebra of the Lie group $G(n,k)$ will be denoted by $\mathfrak{g}(n,k)$.
Notice that an $(n+1)\times (n+1)$ real matrix $B$ belongs to $\mathfrak{g}(n,k)$
if and only if
\[
B^T I_{n,k} + I_{n,k}B=0,
\]
where $I_{n,k}$ was introduced above.

Sometimes we identify the form $s_{n,k}$ on $\R^{n+1}$
with $\la\cdot,\cdot\ra_{n,k}$ using the canonical identification of the tangent spaces $T|_{v}\R^{n+1}$
with $\R^{n+1}$.
Notice that if $\hat{x}\in\hat{M}_k$ and $V\in T|_{\hat{x}} \R^{n+1}$,
then
\[
V\in T|_{\hat{x}} \hat{M}_k
\quad\iff\quad
s_{n,k}(V,\hat{x})=0.
\]
In fact, if we identify $V$ as a vector $(V_1,\dots,V_{n+1})$ in $\R^{n+1}$,
then the condition for $V$ to be tangent to the hypersurface $\hat{M}_k$ is
\[
0=&\la V,\hbox{ grad }(x_1^2+\dots+x_n^2+kx_{n+1})\ra_{n+1} \\
=&\la V,(x_1,\dots,x_n,kx_{n+1})\ra_{n+1} \\
=&\sum_{i=1}^n x_i V_i+kx_{n+1}V_{n+1} \\
=&s_{n,k}(V,\hat{x}),
\]
with $\la\cdot,\cdot\ra_{n+1}$ the standard Euclidean inner product of $\R^{n+1}$.

\begin{remark}
By using the bilinear form $\la\cdot,\cdot\ra_{n,k}$ one may restate the definition of $\hat{M}_k$ by
\[
\hat{M}_k=\big\{\hat{x}\in\R^{n+1}\ |\ \la \hat{x},\hat{x}\ra_{n,k}=k,\ 
x_{n+1}+\frac{k}{|k|}\geq 0\big\}.
\]

\end{remark}

\begin{remark}For convenience we recall a standard result (\cite{kobayashi63}, Theorem V.3.1):
The Riemannian manifold $(\hat{M}_k,\hat{g}_k)$ has constant sectional curvature $\frac{1}{k}$
and the isometry group $\Iso(\hat{M}_k,\hat{g}_k)$ is equal to $G(n,k)$.
\end{remark}

We understand without mention that when considering the action of $G(n,k)$
on $\hat{M}_k$ we consider the restriction of the maps of $G(n,k)$ onto the set $\hat{M}_k$.

%%%%%%%%%%%%%%%%%%%%%%%%%%%%%%%%%%%
\subsubsection{Orbit Structure}\label{seq:orbit_structure_constant_curvature}
%%%%%%%%%%%%%%%%%%%%%%%%%%%%%%%%%%%

\begin{proposition}
The bundle $\pi_{Q,M}:Q\to M$ is a principal $G(n,k)$-bundle
with a left action $\mu:G(n,k)\times Q\to Q$ defined by
\[
\mu(B,q)=(x,B\hat{x};B\circ A),
\]
where $q=(x,\hat{x};A)$
and in $B\circ A$ we understand the range $T|_{\hat{x}} \hat{M}_k$
of $A$ to be identified with a linear subspace of $\R^{n+1}$
in the canonical way.

Moreover, the action $\mu$ preserves the distribution $\RDist$
i.e., for any $q\in Q$ and $B\in G(n,k)$,
\[
(\mu_B)_*\RDist|_q=\RDist|_{\mu(B,q)}
\]
where $\mu_B:Q\to Q$; $q\mapsto \mu(B,q)$.
\end{proposition}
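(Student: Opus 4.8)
The plan is to verify three things in turn: first that $\mu$ is a well-defined action, second that it makes $\pi_{Q,M}$ into a principal $G(n,k)$-bundle, and third — the crux — that it preserves $\RDist$. For well-definedness I would argue that if $q=(x,\hat{x};A)\in Q$ and $B\in G(n,k)$, then $B\hat{x}\in\hat{M}_k$ because $B$ preserves $\langle\cdot,\cdot\rangle_{n,k}$ and the component sign condition (here one should note that $G(n,k)$, as $\Iso(\hat{M}_k,\hat{g}_k)$, maps $\hat{M}_k$ to itself — this is the standard result quoted just above), and that $B\circ A:T|_xM\to T|_{B\hat{x}}\hat{M}_k$ is again an orientation-preserving isometry, since $A$ is one and $B|_{\hat{M}_k}$ is an orientation-preserving isometry of $(\hat{M}_k,\hat{g}_k)$ whose differential at $\hat{x}$ is the restriction of the linear map $B$ to $T|_{\hat{x}}\hat{M}_k$ (using that $B$ is linear, so $B_*=B$ under the canonical identifications). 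The action axioms $\mu(B',\mu(B,q))=\mu(B'B,q)$ and $\mu(\id,q)=q$ are then immediate from associativity of composition. Freeness: $\mu(B,q)=q$ forces $B\hat{x}=\hat{x}$ and $B\circ A=A$, hence $B=\id$ on the range of $A$, which is all of $T|_{\hat{x}}\hat{M}_k$, an $n$-dimensional subspace of $\R^{n+1}$; combined with $B\hat{x}=\hat{x}$ and the fact that $\hat{x}\notin T|_{\hat{x}}\hat{M}_k$ (since $\langle\hat{x},\hat{x}\rangle_{n,k}=k\neq 0$ while tangent vectors $V$ satisfy $\langle V,\hat{x}\rangle_{n,k}=0$), $B$ is the identity on a spanning set of $\R^{n+1}$, so $B=\id$. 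Transitivity on fibers: given $q=(x,\hat{x};A)$ and $q'=(x,\hat{x}';A')$ in $\pi_{Q,M}^{-1}(x)$, one produces $B\in G(n,k)$ with $B\hat{x}=\hat{x}'$ and $B A=A'$ by prescribing $B$ on the basis $\{\hat{x}\}\cup A(F)$ for an orthonormal frame $F$ of $T|_xM$ (sending $\hat{x}\mapsto\hat{x}'$ and $A(F)\mapsto A'(F)$), checking it preserves $\langle\cdot,\cdot\rangle_{n,k}$ (because $\langle\hat{x},\hat{x}\rangle_{n,k}=\langle\hat{x}',\hat{x}'\rangle_{n,k}=k$, tangent vectors are $\langle\cdot,\cdot\rangle_{n,k}$-orthogonal to the position vector, and $A,A'$ are isometries) and has determinant $+1$ (both $A,A'$ orientation-preserving, and a connectedness/sign argument fixes the sign). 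Smoothness of $\mu$ is routine in the local trivializations of $\pi_{Q,M}$ from Remark~\ref{re:Q_bundle_over_M}.

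For the invariance of $\RDist$, the key is the explicit description of rolling curves. Recall from Proposition~\ref{pr:rolling_curves} and Remark~\ref{re:Lambda} that a curve $t\mapsto q(t)=(\gamma(t),\hat{\gamma}(t);A(t))$ is tangent to $\RDist$ iff $\dot{\hat\gamma}(t)=A(t)\dot\gamma(t)$ and $A(t)=P_0^t(\hat\gamma)\circ A(0)\circ P_t^0(\gamma)$. I would take an arbitrary $\RDist$-admissible curve $q(t)$ through $q=q(0)$ and show $\mu_B\circ q$ is again $\RDist$-admissible; since $(\mu_B)_*$ is linear and $\RDist$ has rank $n=\dim M$ mapped isomorphically by $(\pi_{Q,M})_*$, showing $(\mu_B)_*\RDist|_q\subseteq\RDist|_{\mu(B,q)}$ for all $q$ suffices to get equality. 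Now $(\mu_B\circ q)(t)=(\gamma(t),B\hat\gamma(t);B\circ A(t))$. Its $M$-component is unchanged; its $\hat{M}$-component has velocity $\dif{t}(B\hat\gamma(t))=B\dot{\hat\gamma}(t)=B(A(t)\dot\gamma(t))=(B A(t))\dot\gamma(t)$, which is the no-slipping condition for the new curve. For the no-spinning part one uses that $B|_{\hat{M}_k}$ is an isometry of $(\hat{M}_k,\hat{g}_k)$, hence commutes with parallel transport by Eq.~\eqref{eq:iso_parallel}: $B\circ P_0^t(\hat\gamma)=P_0^t(B\circ\hat\gamma)\circ B$. Therefore $B\circ A(t)=B\circ P_0^t(\hat\gamma)\circ A(0)\circ P_t^0(\gamma)=P_0^t(B\circ\hat\gamma)\circ (B\circ A(0))\circ P_t^0(\gamma)$, which is exactly the parallel-transport formula characterizing $\RDist$-admissibility for $\mu_B\circ q$. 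Hence $\mu_B\circ q$ is a rolling curve, so $(\mu_B)_*$ carries $\RDist$-tangent vectors to $\RDist$-tangent vectors, giving $(\mu_B)_*\RDist|_q=\RDist|_{\mu(B,q)}$.

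The main obstacle I anticipate is not any single deep step but the bookkeeping around the identifications: keeping straight that $B\in G(n,k)$ is a linear map of $\R^{n+1}$, that its restriction to $\hat{M}_k$ is a Riemannian isometry, that the differential of this restriction at $\hat{x}$ is $B$ restricted to the subspace $T|_{\hat{x}}\hat{M}_k\subset\R^{n+1}$, and that "$B\circ A$" therefore simultaneously makes sense as composition of linear maps and as the differential relation needed in the definition of $Q$. The one genuinely non-formal point is the determinant/orientation sign in the transitivity argument (and in checking $B\circ A$ is \emph{orientation}-preserving), where one invokes that $G(n,k)=\Iso(\hat{M}_k,\hat{g}_k)$ acts by orientation-preserving maps on the connected manifold $\hat{M}_k$ — equivalently that we have imposed $\det B=+1$ — so no sign flip can occur; here the definition of $\hat{M}_k$ with the half-space condition ensuring connectedness is what we rely on. Everything else reduces to the identity~\eqref{eq:iso_parallel} and the characterization of rolling curves already established.
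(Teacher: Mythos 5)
Your proposal is correct and takes essentially the same route as the paper: the paper verifies $\mu(B,q)\in Q$ by the same bilinear-form computations, declares the action axioms, freeness and fiberwise transitivity to be clear, and obtains the $\RDist$-invariance by viewing $B|_{\hat{M}_k}$ as an element of $\Iso(\hat{M}_k,\hat{g}_k)$ and invoking Proposition \ref{pr:iso_equivariance}. The only difference is cosmetic — where the paper cites that equivariance proposition, you re-derive its special case inline from the rolling-curve characterization and Eq. (\ref{eq:iso_parallel}), which is precisely the computation underlying that proposition.
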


\begin{proof}
Let us first check that for $(x,\hat{x};A)\in Q$,
$B\circ A:T|_x M\to \R^{n+1}$ can be viewed as an orientation preserving map $T|_x M\to T|_{B\hat{x}} \hat{M}_k$
and that really $(x,B\hat{x};B\circ A)$ is an element of $Q$.
First of all, $B\hat{x}\in \hat{M}_k$ when $\hat{x}\in\hat{M}_k$
as remarked above.
Moreover, for $X\in T|_x M$,
\[
s_{n,k}((B\circ A)(X),B\hat{x})=s_{n,k}(AX,\hat{x})=0,
\]
since $AX\in T|_{\hat{x}}\hat{M}_k$. Hence $B\circ A:T|_x M\to T|_{B\hat{x}} \hat{M}$.
Similarly, for $X,Y\in T|_x M$,
\[
& \hat{g}_k((B\circ A)(X),(B\circ A)(Y))=s_{n,k}((B\circ A)(X),(B\circ A)(Y)) \\
=&s_{n,k}(AX,AY)=\hat{g}_k(AX,AY)=g(X,Y),
\]
and clearly $B\circ A$ preserves orientation (since $G(n,k)$ is connected).
Thus $(x,B\hat{x};B\circ A)\in Q$.

It is clear that $\mu$ is a well defined left $G(n,k)$-action
on $Q$, that it is free, maps each $\pi_{Q,M}$-fiber to itself
($\pi_{Q,M}\circ\mu(B,q)=\pi_{Q,M}(q)$) and that it
is transitive fiberwise (for each $q,q'\in \pi_{Q,M}^{-1}(x)$,
$\mu(B,q)=q'$ for some $B\in G(n,k)$).
It remains to check the claim that this action
preserves $\RDist$ in the sense stated above.

Let $B\in G(n,k)$ and $q_0=(x_0,\hat{x}_0;A_0)$. The fact that $\Iso(\hat{M}_k,\hat{g})=G(n,k)$
means that defining $F:\hat{M}_k\to\hat{M}_k$; $F=B|_{\hat{M}_k}$
then $F\in \Iso(\hat{M}_k,\hat{g})$.
Clearly $F(\hat{x}_0)=B\hat{x}_0$ and $\hat{F}_*|_{\hat{x}_0}=B|_{T|_{\hat{x}_0} \hat{M}_k}$
and hence by Proposition \ref{pr:iso_equivariance}
\[
\mu(B,q_{\RDist}(\gamma,q_0)(t))=\hat{F}\cdot q_{\RDist}(\gamma,q_0)(t)
=q_{\RDist}(\gamma,\hat{F}\cdot q_0)(t)
=q_{\RDist}(\gamma,\mu(B,q_0))(t),
\]
for any smooth curve $\gamma:[0,1]\to M$, $\gamma(0)=x_0$ and $t\in [0,1]$.
Taking derivative with respect to $t$ at $t=0$ and using the fact that, by definition,
$q_{\RDist}(\gamma,q_0)$ is tangent to $\RDist$, we find that 
\[
(\mu_B)_*\LRD(\dot{\gamma}(0))|_{q_0}
=&(\mu_B)_*\dif{t}\big|_0 q_{\RDist}(\gamma,q_0)(t)
=\dif{t}\big|_0 \mu(B,q_{\RDist}(\gamma,q_0)(t)) \\
=&\dif{t}\big|_0 q_{\RDist}(\gamma,\mu(B,q_0))(t)
=\LRD(\dot{\gamma}(0))|_{\mu(B,q_0)}.
\]
This allows us to conclude.
\end{proof}

We will denote the left action of $B\in G(n,k)$ on $q\in Q$
usually by $B\cdot q=\mu(B,q)$.

\begin{proposition}
For any given $q=(x,\hat{x};A)\in Q$
there is a unique subgroup $G_q$ of $G(n,k)$,
called the holonomy group of $\RDist$,
such that
\[
G_q\cdot q=\mc{O}_{\RDist}(q)\cap \pi_{Q,M}^{-1}(x).
\]
Also, if $q'=(x,\hat{x}';A')\in Q$ is in the same $\pi_{Q,M}$-fiber as $q$,
then $G_q$ and $G_{q'}$ are conjugate in $G(n,k)$
and all conjugacy classes of $G_q$ in $G(n,k)$ are of the form $G_{q'}$.
This conjugacy class will be denoted by $G$.

Moreover, $\pi_{\mc{O}_{\RDist}(q),M}:\mc{O}_{\RDist}(q)\to M$ is a principal $G$-bundle over $M$.
\end{proposition}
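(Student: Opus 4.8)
The plan is to exploit the principal $G(n,k)$-bundle structure $\mu:G(n,k)\times Q\to Q$ on $\pi_{Q,M}$ together with the invariance $(\mu_B)_*\RDist|_q=\RDist|_{B\cdot q}$ established in the previous proposition. First I would fix $q=(x,\hat x;A)\in Q$ and define
\[
G_q:=\{B\in G(n,k)\ |\ B\cdot q\in\mc{O}_{\RDist}(q)\}.
\]
The key observation is that this is a subgroup: if $B\cdot q\in\mc{O}_{\RDist}(q)$, pick a rolling curve $q_{\RDist}(\gamma,q)$ from $q$ to $B\cdot q$; applying $\mu_{B'}$ to a rolling curve from $q$ to $B'\cdot q$ and using the $\RDist$-invariance of the action shows $\mu_{B'}$ maps $\RDist$-admissible curves to $\RDist$-admissible curves, so $B'\cdot(B\cdot q)=(B'B)\cdot q$ is reachable from $B'\cdot q$, hence from $q$; the inverse $B^{-1}$ works the same way by reversing the curve. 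Next I would show $G_q\cdot q=\mc{O}_{\RDist}(q)\cap\pi_{Q,M}^{-1}(x)$: the inclusion $\subset$ is immediate, and for $\supset$, any $q'$ in this intersection lies in the same fiber as $q$, so by fiberwise transitivity of $\mu$ there is $B\in G(n,k)$ with $B\cdot q=q'$, and then $B\in G_q$ by definition. Uniqueness of $G_q$ with this property follows from freeness of $\mu$: two subgroups with the same orbit through $q$ coincide.

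For the conjugacy statement, if $q'=(x,\hat x';A')$ is in the same $\pi_{Q,M}$-fiber as $q$, choose $C\in G(n,k)$ with $C\cdot q=q'$. I would verify $G_{q'}=CG_qC^{-1}$ by the direct computation: $B\in G_{q'}$ iff $B\cdot q'\in\mc{O}_{\RDist}(q')$; but $\mc{O}_{\RDist}(q')=\mc{O}_{\RDist}(q)$ exactly when $q'\in\mc{O}_{\RDist}(q)$ — here one must be slightly careful, since $q'$ need not be in the orbit of $q$. The cleaner route: $B\cdot q'\in\mc{O}_{\RDist}(q')$ iff there is an $\RDist$-admissible curve from $q'$ to $B\cdot q'$; pushing forward by $\mu_{C^{-1}}$ (which preserves $\RDist$) this holds iff there is one from $q$ to $C^{-1}BC\cdot q$, i.e. iff $C^{-1}BC\in G_q$. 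Thus $G_{q'}=CG_qC^{-1}$. Conversely, given any conjugate $CG_qC^{-1}$ with $C\in G(n,k)$, set $q'=C\cdot q$, which lies in $\pi_{Q,M}^{-1}(x)$, and the same computation gives $G_{q'}=CG_qC^{-1}$; so the conjugacy class $G$ is exactly realized by the groups $G_{q'}$ over the fiber of $x$.

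Finally, for the principal $G$-bundle claim on $\pi_{\mc{O}_{\RDist}(q),M}:\mc{O}_{\RDist}(q)\to M$: by Proposition \ref{pr:R_orbit_bundle} this map is already a smooth subbundle of $\pi_{Q,M}$ with fibers $\mc{O}_{\RDist}(q)\cap\pi_{Q,M}^{-1}(x')$, each of which — by the orbit description and the equivariance \eqref{eq:equivariance_of_rolling} transporting $q$ to any $q'$ over $x'$ along a rolling curve — is a single $G_{q'}$-orbit, hence diffeomorphic to $G_q$ (all the $G_{q'}$ being conjugate, hence isomorphic as Lie groups). The restriction of $\mu$ to $G\times\mc{O}_{\RDist}(q)$ is then a well-defined, free, fiberwise-transitive action: well-definedness (that $\mu$ preserves $\mc{O}_{\RDist}(q)$ under the relevant conjugate subgroups) is exactly the content of the $\RDist$-invariance of $\mu$ plus the subgroup property of the $G_{q'}$, and freeness is inherited from $\mu$. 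Local triviality as a principal bundle comes from combining the local trivializations of $\pi_{\mc{O}_{\RDist}(q),M}$ from Proposition \ref{pr:R_orbit_bundle} with a choice of local section, against which the $G$-action gives the trivialization. The main obstacle I anticipate is the bookkeeping in the conjugacy argument — keeping straight that $G$ is a conjugacy class rather than a fixed subgroup, and checking that the action of this "floating" structure group on the orbit is genuinely well defined independent of the chosen base point $q$ in the fiber; once the identity $G_{C\cdot q}=CG_qC^{-1}$ is nailed down, everything else is routine.
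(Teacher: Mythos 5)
Your proposal is correct and follows essentially the same route as the paper: both arguments rest on the free, fiberwise transitive, $\RDist$-preserving $G(n,k)$-action, define $G_q$ as the subgroup carrying $q$ into its orbit (your orbit-membership definition agrees with the paper's definition via rolling along loops $\gamma\in\Omega_x(M)$), obtain conjugacy by transporting with a fixed group element, and invoke Proposition \ref{pr:R_orbit_bundle} for the bundle structure. The only point worth noting is that smoothness of the restricted action on the (immersed) orbit is justified in the paper by the standard orbit-theorem fact that a smooth map into $Q$ with image in the orbit is smooth into the orbit; this is routine and does not affect the validity of your argument.
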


\begin{proof}
These results follow from the general theory of principal bundle connections
(cf. \cite{joyce07}, \cite{kobayashi63}) but the argument is 
reproduced here for convenience.

Let $q'\in \mc{O}_{\RDist}(q)\cap \pi_{Q,M}^{-1}(x)$ and choose a $\gamma\in\Omega_x(M)$ such that
$q'=q_{\RDist}(\gamma,q)(1)$.
Since the $G(n,k)$ action is free and transitive on $\pi_{Q,M}^{-1}(x)$,
it follows that there is a unique $B_q(\gamma)\in G(n,k)$ such that
$B_q(\gamma)\cdot q=q'$.
We define $G_q=\{B_q(\gamma)\ |\ \gamma\in\Omega_x(M)\}$
and note that for $\gamma,\omega\in\Omega_x(M)$
one has
\[
& (B_q(\gamma)B_q(\omega))\cdot q
=B_q(\gamma)\cdot (B_q(\omega)\cdot q)
=B_q(\gamma)\cdot q_{\RDist}(\omega,q)(1)
=q_{\RDist}(\omega,B_q(\gamma)\cdot q)(1) \\
=&q_{\RDist}(\omega,q_{\RDist}(\gamma,q)(1))(1)
=q_{\RDist}(\omega.\gamma,q)(1)\in \mc{O}_{\RDist}(q)\cap \pi_{Q,M}^{-1}(x),
\]
which proves that $B_q(\gamma)B_q(\omega)=B_q(\omega.\gamma)\in G_q$.
Next if $\gamma^{-1}:[0,1]\to M$ denotes the inverse path of $\gamma$ i.e., $\gamma^{-1}(t)=\gamma(1-t)$ for $t\in [0,1]$,
it follow that
\[
(B_q(\gamma)B_q(\gamma^{-1}))\cdot q=q_{\RDist}(\gamma^{-1}.\gamma,q)(1)=q,
\]
i.e., $B_q(\gamma)^{-1}=B_q(\gamma^{-1})\in G_q$.
This shows that $G_q$ is indeed a subgroup of $G(n,k)$.
Moreover, it is clear that
\[
G_q\cdot q=\mc{O}_{\RDist}(q)\cap \pi_{Q,M}^{-1}(x),
\]
where the left hand side is $\{B\cdot q\ |\ B\in G_q\}$.

Let us prove the statement about the conjugacy class of $G_q$.
Take $q'=(x,\hat{x}';A')\in Q$.
Because $G(n,k)$ acts transitively on the fibers,
there exists a $B\in G(n,k)$ such that $q'=B\cdot q$.
Therefore for any $\gamma\in\Omega_x(M)$,
\[
(B^{-1}B_{q'}(\gamma)B)\cdot q
=&(B^{-1} B_{q'}(\gamma))\cdot q'
=B^{-1}\cdot q_{\RDist}(\gamma,q') \\
=&q_{\RDist}(\gamma,B^{-1}\cdot q')
=q_{\RDist}(\gamma,q)=B_{q}(\gamma)\cdot q,
\]
i.e., $B^{-1}B_{q'}(\gamma)B=B_q(\gamma)$ since the $G(n,k)$ action is free.
This proves that $B^{-1}G_{q'}B=G_q$.
Moreover, if there is a $B\in G(n,k)$ and a subgroup $G'$ of $G(n,k)$
such that $B^{-1}G'B=G_q$, then defining $q':=B\cdot q$
one gets that $G'=G_{q'}$. 

By Proposition \ref{pr:R_orbit_bundle}, $\pi_{\mc{O}_{\RDist}(q),M}$ is a smooth bundle
and, by what has been said already,
it is clear that $G_q$ preserves the fibers
$\pi_{\mc{O}_{\RDist}(q),M}^{-1}(x)=\pi_{Q,M}^{-1}(x)\cap \mc{O}_{\RDist}(q)$
and the action is free.
Recall that, if a map from some manifold to the ambient manifold
is smooth and its image is contained in the orbit (as a set),
then this map is also smooth as a map into the orbit (as a manifold) (cf. \cite{kolar93}, Theorem 3.22 and Lemma 2.17). As a consequence, the action of $G_q$
is also smooth.
From this, one concludes that $\pi_{\mc{O}_{\RDist}(q),M}$
is a $G_q$-bundle and hence a $G$-bundle
since the Lie groups in the conjugacy class are all isomorphic.
\end{proof}

%%%%%%%%%%%%%%%%%%%%%%%%%%%%%%%%%%%%%%%%%%
\subsubsection{The Rolling Connection}
%%%%%%%%%%%%%%%%%%%%%%%%%%%%%%%%%%%%%%%%%%

Let $\pi_{TM\oplus\R}:TM\oplus\R\to M$
be the vector bundle over $M$ where  $\pi_{TM\oplus\R}(X,r)=\pi_{TM}(X)$.
In this section we will prove the following result.

\begin{theorem}\label{th:rolling_connection}
There exists a vector bundle connection $\nabla^{\Rol}$ of the vector bundle $\pi_{TM\oplus\R}$ that we call the \emph{rolling connection},
and which we define as follows: for every $x\in M$, $Y\in T|_x M$, $X\in\VF(M)$, $r\in\Cinf(M)$,
\begin{align}\label{eq:nabla_rol_explicit}
\nabla^{\Rol}_Y (X,r)=\Big(\nabla_Y X+r(x)Y,Y(r)-\frac{1}{k}g\big(X|_x,Y)\Big),
\end{align}
such that in the case of $M$ rolling
against the space form $\hat{M}_k$, $k\neq 0$, the holonomy group $G$ of $\RDist$
is isomorphic to the holonomy group $H^{\nabla^{\Rol}}$ of $\nabla^{\Rol}$.

Moreover, if one defines a fiber inner product $h_k$ on $TM\oplus\R$ by
\[
h_k((X,r),(Y,s))=g(X,Y)+krs,
\]
where $X,Y\in T|_x M$, $r,s\in\R$,
then $\nabla^\Rol$ is a metric connection in the sense that
for every $X,Y,Z\in\VF(M)$, $r,s\in\Cinf(M)$,
\[
Z\big( h_k((X,r),(Y,s)) \big)=h_k(\nabla^{\Rol}_Z (X,r),(Y,s))+h_k((X,r),\nabla^{\Rol}_Z (Y,s)).
\]
\end{theorem}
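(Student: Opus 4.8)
The plan is to establish the three assertions of Theorem \ref{th:rolling_connection} --- that \eqref{eq:nabla_rol_explicit} defines a genuine vector bundle connection, that it is $h_k$-metric, and that its holonomy group is isomorphic to the holonomy group $G$ of $\RDist$ --- essentially in that order, since the third (and hardest) part will lean on the interpretation of $\nabla^{\Rol}$-parallel transport built up in the first two.

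First I would verify that the formula \eqref{eq:nabla_rol_explicit} is well defined and satisfies the axioms of a linear connection: $C^\infty(M)$-linearity in $Y$ is immediate from the right-hand side, additivity in $(X,r)$ is clear, and the Leibniz rule $\nabla^{\Rol}_Y(f\cdot(X,r)) = Y(f)(X,r) + f\nabla^{\Rol}_Y(X,r)$ for $f\in\Cinf(M)$ follows by a direct check, the $Y(fr)$ and $Y(f)$ terms combining as required while the algebraic terms $r(x)Y$ and $-\tfrac1k g(X,Y)$ scale by $f$. This is routine. Next, the metric property: I would expand $Z(h_k((X,r),(Y,s))) = Z(g(X,Y)) + k Z(rs)$ using that $\nabla$ is the Levi-Civita connection (hence $g$-metric) and the Leibniz rule for $Z(rs)$, then compare with $h_k(\nabla^{\Rol}_Z(X,r),(Y,s)) + h_k((X,r),\nabla^{\Rol}_Z(Y,s))$ expanded via \eqref{eq:nabla_rol_explicit}. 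The cross-terms $k\cdot r(Z)\cdot(-\tfrac1k g(X,Z))\cdot s$ type contributions: more precisely the term $h_k((r(x)Z, -\tfrac1k g(X,Z)),(Y,s)) = r(x)g(Z,Y) + k(-\tfrac1k g(X,Z))s = r(x)g(Z,Y) - g(X,Z)s$ must cancel against the symmetric term coming from $\nabla^{\Rol}_Z(Y,s)$, namely $s(x)g(Z,X) - g(Y,Z)r$, and indeed $r g(Z,Y) - g(X,Z)s + s g(Z,X) - g(Y,Z)r = 0$. So the algebraic pieces cancel and the $\nabla$-metric and ordinary Leibniz pieces account for the rest. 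This is again a short computation.

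The substantive part is the holonomy isomorphism. The idea is to realize $TM\oplus\R$, with the inner product $h_k$, as (isometric to) the restriction to $M$ of the trivial bundle $M\times\R^{n+1}$ carrying the form $\langle\cdot,\cdot\rangle_{n,k}$, in such a way that a point $q=(x,\hat{x};A)\in Q$ corresponds to a fiberwise isometry of $(T|_xM\oplus\R, h_k)$ onto $\R^{n+1}$ with $\langle\cdot,\cdot\rangle_{n,k}$; concretely, send $(X,r)\in T|_xM\oplus\R$ to $AX + r\,\hat{x}\in\R^{n+1}$, using $\langle\hat x,\hat x\rangle_{n,k}=k$ and $s_{n,k}(AX,\hat x)=0$ to see this is an $h_k$-isometry onto $\R^{n+1}$. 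Under this identification, the left $G(n,k)$-action $\mu$ on the $\pi_{Q,M}$-fiber over $x$ becomes the action of $G(n,k)$ on such isometries, and $G(n,k)$ is exactly the structure group $\mathrm{O}(h_k)$-type group (determinant one) of the bundle $\pi_{TM\oplus\R}$ equipped with $h_k$. Then I would show: the rolling curve $q_{\RDist}(\gamma,q_0)$ induces, via this identification, precisely $\nabla^{\Rol}$-parallel transport along $\gamma$. This is verified by differentiating: if $(X(t),r(t))$ is $\nabla^{\Rol}$-parallel along $\gamma$ then \eqref{eq:nabla_rol_explicit} gives the ODE system $\nabla_{\dot\gamma}X = -r\dot\gamma$, $\dot r = \tfrac1k g(X,\dot\gamma)$; one checks that $t\mapsto A_{\RDist}(\gamma,q_0)(t)X(t) + r(t)\hat\gamma_{\RDist}(\gamma,q_0)(t)$ is then a parallel (constant) vector field along $\hat\gamma_{\RDist}(\gamma,q_0)$ inside $\R^{n+1}$, using the no-slip relation $A\dot\gamma = \dot{\hat\gamma}$, the no-spin relation $\ol\nabla_{(\dot\gamma,\dot{\hat\gamma})}A=0$, and the fact (from \cite{kobayashi63}, Theorem V.3.1) that geodesics and parallel transport in $\hat M_k$ are the restrictions of the affine structure of $\R^{n+1}$ adapted to $\langle\cdot,\cdot\rangle_{n,k}$ --- specifically $\nabla^{\hat M_k}_{\dot{\hat\gamma}}\hat X = (\text{ambient derivative of }\hat X) + \tfrac1k s_{n,k}(\dot{\hat\gamma},\hat X)\,\hat\gamma$. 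Once this correspondence is in place, holonomy loops $\gamma\in\Omega_{x_0}(M)$ produce on one side $B_{q_0}(\gamma)\in G_{q_0}\subset G(n,k)$ and on the other side the $\nabla^{\Rol}$-holonomy transformation $(P^{\nabla^{\Rol}})_0^1(\gamma)$, and the identification $q_0$ conjugates one to the other, giving a group isomorphism $G_{q_0}\cong H^{\nabla^{\Rol}}|_{x_0}$.

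The main obstacle I anticipate is precisely the bookkeeping in that last step: keeping straight the canonical identifications $T|_{\hat x}\hat M_k\subset\R^{n+1}$, the sign of $k$, and the correct form of the Levi-Civita connection of $(\hat M_k,\hat g_k)$ as a ``tangential ambient derivative'' (this is where the $-\tfrac1k g(X,Y)$ coefficient in \eqref{eq:nabla_rol_explicit} comes from, and getting that constant right is the crux), and then matching the resulting ODE with the $\nabla^{\Rol}$-parallel equation. Everything else --- connection axioms, metricity, and the abstract fact that parallel transport of a principal/vector bundle connection determines its holonomy up to the conjugation induced by change of base frame --- is standard and I would cite \cite{kobayashi63} for the holonomy generalities rather than reprove them.
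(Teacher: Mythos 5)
Your proposal is correct, but it is organized differently from the paper's argument. The paper does not start from the formula \eqref{eq:nabla_rol_explicit} and verify it; it \emph{derives} it: since $\RDist$ is a principal connection on the $G(n,k)$-bundle $\pi_{Q,M}$, the associated vector bundle $\xi=(Q\times\R^{n+1})/G(n,k)$ carries an induced linear connection whose holonomy is isomorphic to $G$ by the general theory of associated bundles (cited from Joyce), so the holonomy statement comes for free; the work then consists of identifying $\xi$ with $\pi_{TM\oplus\R}$ through exactly your fiberwise map $(X,r)\mapsto AX+r\hat{x}$, computing the induced connection via the lift $\LRD(Y)|_q\ol{f}$, and recognizing the tangential part as the Levi-Civita connection by checking it is metric and torsion-free and invoking uniqueness — that is how the coefficient $-\tfrac1k g(X,Y)$ and the term $rY$ are produced rather than postulated. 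You instead take the stated formula as the definition, check the connection axioms and $h_k$-metricity by hand (your cancellation of the algebraic cross-terms is right), and prove the holonomy isomorphism directly by showing that $t\mapsto A_{\RDist}(\gamma,q_0)(t)X(t)+r(t)\hat\gamma_{\RDist}(\gamma,q_0)(t)$ is constant in $\R^{n+1}$ when $(X(t),r(t))$ solves the parallel system $\nabla_{\dot\gamma}X=-r\dot\gamma$, $\dot r=\tfrac1k g(X,\dot\gamma)$; this uses only no-slip, no-spin, and the Gauss-formula description of the Levi-Civita connection of $\hat M_k\subset(\R^{n+1},s_{n,k})$, and is essentially the computation the paper packages inside the associated-bundle construction. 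What each route buys: the paper's explains where the formula comes from and gets the holonomy claim without any ODE argument, at the price of the extra torsion-freeness step and the appeal to associated-bundle holonomy theory; yours is more elementary and self-contained. One small point to make explicit in your last step: with the paper's conventions $B_{q_0}(\omega.\gamma)=B_{q_0}(\gamma)B_{q_0}(\omega)$ while $(P^{\nabla^{\Rol}})_0^1(\omega.\gamma)=(P^{\nabla^{\Rol}})_0^1(\omega)\circ(P^{\nabla^{\Rol}})_0^1(\gamma)$, and the identification gives $(P^{\nabla^{\Rol}})_0^1(\gamma)=\iota_{q_0}^{-1}\circ B_{q_0}(\gamma)^{-1}\circ\iota_{q_0}$, so the natural correspondence $G_{q_0}\to H^{\nabla^{\Rol}}$ is an anti-isomorphism; composing with inversion still yields the claimed group isomorphism, but this should be said rather than left implicit.
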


Before providing the proof of the theorem, we present the equations of parallel transport
w.r.t $\nabla^\Rol$ along a general curve and along a geodesic of $M$
and also the curvature of $\nabla^\Rol$.
Let $\gamma:[0,1]\to M$ be an a.c. curve on $M$, $\gamma(0)=x$ and let $(X_0,r_0)\in T|_x M\oplus\R$.
Then the parallel transport $(X(t),r(t))=(P^{\nabla^{\Rol}})_0^t(\gamma)(X_0,r_0)$
of $(X_0,r_0)$ is determined from the equations
\begin{align}\label{eq:rol_parallel}
\begin{cases}
\displaystyle \nabla_{\dot{\gamma}(t)} X+r(t)\dot{\gamma}(t)=0, \cr
\displaystyle \dot{r}(t)-\frac{1}{k}g(\dot{\gamma}(t),X(t))=0,
\end{cases}
\end{align}
for a.e. $t\in [0,1]$.
In particular, if $\gamma$ is a geodesic on $(M,g)$, one may derive the following uncoupled 
second order differential equations for $X$ and $r$,
\begin{align}\label{eq:rol_parallel_geodesic}
\begin{cases}
\displaystyle \nabla_{\dot{\gamma}(t)}\nabla_{\dot{\gamma}(t)} X+\frac{1}{k}g(X(t),
\dot{\gamma}(t))\dot{\gamma}(t)=0, \cr
\displaystyle \ddot{r}(t)+\frac{ \n{\dot{\gamma}(t)}^2_g }{k}r(t)=0,
\end{cases}
\end{align}
for all $t$.

One easily checks by direct computation that the connection $\nabla^\Rol$ on $\pi_{TM\oplus\R}$
has the curvature, 
\begin{align}\label{eq:curvature_of_nabla_rol}
R^{\nabla^{\Rol}}(X,Y)(Z,r)
=\big(R(X,Y)Z-\frac{1}{k}(g(Y,Z)X-g(X,Z)Y),0\big),
\end{align}
where $X,Y,Z\in\VF(M)$, $r\in\Cinf(M)$.

We will devote the rest of the subsection to prove Theorem \ref{th:rolling_connection}.

\begin{proof}
The rolling distribution $\RDist$ is a principal bundle
connection for the principal $G(n,k)$-bundle $\pi_{Q,M}:Q\to M$
and hence there is a vector bundle $\xi:E\to M$
with fibers isomorphic to $\R^{n+1}$
and a unique linear vector bundle connection $\nabla^{\Rol}:\Gamma(\xi)\times \VF(M)\to \Gamma(\xi)$
which induces the distribution $\RDist$ on $Q$.
This clearly implies that the holonomy group $G$ of $\RDist$
and $H^{\nabla^{\Rol}}$ of $\nabla^{\Rol}$ are isomorphic.
We will eventually show that $\xi$ is further isomorphic to $\pi_{TM\oplus\R}$
and give the explicit expression (\ref{eq:nabla_rol_explicit}) for the connection 
of $\pi_{TM\oplus\R}$ induced by this isomorphims
from $\nabla^{\Rol}$ on $\xi$.

There is a canonical non-degenerate metric $h_k:E\odot E\to M$ on the vector bundle $\xi$
(positive definite when $k>0$ and indefinite of Minkowskian type if $k<0$)
and the connection $\nabla^{\Rol}$ is a metric connection w.r.t. to $h_k$
i.e., for any $Y\in\VF(M)$ and $s,\sigma\in \Gamma(\nu)$,
\begin{align}\label{eq:rol_connection_metric}
Y\big( h_k(s,\sigma) \big)=h_k(\nabla^{\Rol}_Y s,\sigma)+h_k(s,\nabla^{\Rol}_Y \sigma).
\end{align}

The construction of $\xi$ goes as follows (see \cite{joyce07}, section 2.1.3).
Define a left $G(n,k)$-group action $\beta$ on $Q\times \R^{n+1}$
by
\[
\beta(B,(q,v))=(B\cdot q,Bv),
\]
where $q\in Q$, $v\in\R^{n+1}$, $B\in G(n,k)$.
The action $\beta$ is clearly smooth, free and proper.
Hence $E:=(Q\times\R^{n+1})/\beta$ is a smooth manifold
of dimension $n+(n+1)=2n+1$. The $\beta$-equivalence classe (i.e., $\beta$-orbit)
of $(q,v)\in Q\times\R^{n+1}$
is denoted by $[(q,v)]$. Then one defines $\xi\big([(q,v)]\big)=\pi_{Q,M}(q)$
which is well defined since the $\beta$-action preserves the fibers of 
$Q\times\R^{n+1}\to M$;
$(q,v)\mapsto \pi_{Q,M}(q)$. 
We prove now that $\xi$ is isomorphic, as a vector bundle over $M$,
to 
\begin{align}
\pi_{TM\oplus\R}:TM\oplus\R&\to M,\nonumber\\
(X,t)&\mapsto \pi_{TM}(X).\nonumber
\end{align}
Indeed, let $f\in\Gamma(\xi)$
and notice that for any $q\in Q$ there exists a unique $\ol{f}(q)\in\R^{n+1}$
such that $[(q,\ol{f}(q))]=f(\pi_{Q,M}(q)))$ by the definition of the action $\beta$.
Then $\ol{f}:Q\to\R^{n+1}$ is well defined and,
for each $q=(x,\hat{x};A)$, there are unique $X|_{q}\in T|_xM$, $r(q)\in\R$ such that
\[
\ol{f}(q)=AX|_{q}+r(q)\hat{x}.
\]
The maps $q\mapsto X|_q$ and $q\mapsto r(q)$
are smooth.
We show that the vector $X|_q$ and the real number $r(q)$ depend only on $x$ and hence define a vector field
and a function on $M$.
One has $[((x,\hat{x};A),v)]=[((x,\hat{y};B),w)]$ if and only if
there is $C\in G(n,k)$ such that
$C\hat{x}=\hat{y}$, $CA=B$ and $Cv=w$.
This means that $C|_{\IM A}=BA^{-1}|_{\IM A}:T|_{\hat{x}} \hat{M}_k\to T|_{\hat{y}} \hat{M}_k$
(with $\IM A$ denoting the image of $A$)
and this defines $C$ uniquely as an element of $G(n,k)$
and also, by the definition of $\ol{f}$,
$C\ol{f}(x,\hat{x},A)=\ol{f}(x,\hat{y},B)$.
Therefore,
\[
BX|_{(x,\hat{y};B)}+r(x,\hat{y};B)\hat{y}
=C(AX|_{(x,\hat{x};A)}+r(x,\hat{x};A)\hat{x})
=BX|_{(x,\hat{x};A)}+r(x,\hat{x};A)\hat{y},
\]
which shows that $X|_{(x,\hat{y};B)}=X|_{(x,\hat{x};A)}$,
$r(x,\hat{y};B)=r(x,\hat{x};A)$ and proves the claim.

Hence for each $f\in\Gamma(\xi)$ there are unique $X_f\in\VF(M)$ and $r_f\in\Cinf(M)$
such that
\[
f(x)=\big[\big((x,\hat{x};A),AX_f|_x+r_f(x)\hat{x}\big)\big],
\]
(here the right hand side does not depend on the choice of $(x,\hat{x};A)\in\pi_{Q,M}^{-1}(x)$).

Conversely, given $X\in\VF(M)$, $r\in\Cinf(M)$
we may define $f_{(X,r)}\in\Gamma(\xi)$ by
\[
f_{(X,r)}(x)=\big[\big((x,\hat{x};A),AX|_x+r(x)\hat{x}\big)\big],
\]
where the right hand side does not depend on the choice of $(x,\hat{x};A)\in\pi_{Q,M}^{-1}(x)$.

Clearly, for $f\in\Gamma(\xi)$, one has $f_{(X_f,r_f)}=f$
and, for $(X,r)\in\VF(M)\times\Cinf(M)$,
one has $(X_{f_{(X,r)}},r_{f_{(X,r)}})=(X,r)$.
This proves that the map defined by 
\begin{align}
\Gamma(\xi)&\to \VF(M)\times \Cinf(M)\nonumber\\
f&\mapsto (X_f,r_f)\nonumber
\end{align}
is a bijection. It is easy to see that it is actually
a $\Cinf(M)$-module homomorphism.
Since $\Cinf(M)$-modules $\Gamma(\xi)$ and $\VF(M)\times\Cinf(M)$ are
isomorphic and since $\VF(M)\times\Cinf(M)$ is 
obviously isomorphic, as a $\Cinf(M)$-module, to $\Gamma(\pi_{TM\oplus\R})$,
it follows that $\xi$ and $\pi_{TM\oplus\R}$
are isomorphic vector bundles over $M$.

We now describe the connection $\nabla^{\Rol}$ and the inner product structure
$h_k$ on $\xi$ and we determine to which objects they correspond to in the isomorphic bundle $\pi_{TM\oplus\R}$.

By Section 2.1.3 in \cite{joyce07} and the above notation, one defines
for $f\in\Gamma(\xi)$, $Y\in T|_x M$, $x\in M$
\[
\nabla^{\Rol}_Y f|_x:=\big[\big((x,\hat{x};A),\LRD(Y)|_{(x,\hat{x};A)}\ol{f}\big)\big],
\]
where $\ol{f}:Q\to\R^{n+1}$ is defined above and
$\LRD(Y)|_{(x,\hat{x};A)}\ol{f}$ is defined componentwise
(i.e., we let $\LRD(Y)|_{(x,\hat{x};A)}$ to operate separately to each of the $n+1$ component
functions of $\ol{f}$).
The definition does not depend on $(x,\hat{x};A)\in\pi_{Q,M}^{-1}(x)$
as should be evident from the above discussions.
The inner product on $\xi$, on the other hand, is defined by
\[
h_k([((x,\hat{x};A),v)],[((x,\hat{y};B),w)])
=g(X,Y)+krt,
\]
where $v=AX+r\hat{x}$, $w=BY+t\hat{y}$. It is clear that $h_k$ is well defined.

We slightly work out the expression for $\nabla^{\Rol}$.
Let $f\in\Gamma(\xi)$, $Y\in T|_x M$, $x\in M$.
Then $\ol{f}(y,\hat{y},B)=BX_{f}|_y+r_f(y)\hat{y}$ where $X_f\in\VF(M)$, $r_f\in\Cinf(M)$,
\[
\LRD(Y)|_{(x,\hat{x};A)}\ol{f}
=\LRD(Y)|_{(x,\hat{x};A)}\big((y,\hat{y};B)\mapsto BX_f|_y\big)+Y(r_f)\hat{x}+r_f(x)AY
\]
and choosing some path $\gamma$ on $M$ such that $\dot{\gamma}(0)=Y$,
then $\dot{q}_{\RDist}(\gamma,q)(0)=\LRD(Y)|_q$, where $q=(x,\hat{x};A)$ and therefore
\[
& s_{n,k}\big(\LRD(Y)|_{(x,\hat{x};A)}\big((y,\hat{y};B)\mapsto BX_f|_y\big),\hat{x}\big)
=s_{n,k}\big(\dif{t}\big|_0(A_{\RDist}(\gamma,q)(t)X_f|_{\gamma(t)}),\hat{x}\big) \\
=&\dif{t}\big|_0 s_{n,k}\big(A_{\RDist}(\gamma,q)(t)X_f|_{\gamma(t)},\hat{\gamma}_{\RDist}(\gamma,q)(t)\big)
-s_{n,k}\big(AX_f|_x,AY) \\
=&-\hat{g}_k(AX_f|_x,AY)=-g\big(X_f|_x,Y)
=s_{n,k}(-\sfrac{1}{k}g\big(X_f|_x,Y)\hat{x},\hat{x}).
\]
Therefore,
\[
\LRD(Y)|_{(x,\hat{x};A)}\big((y,\hat{y};B)\mapsto BX_f|_y\big)+\frac{1}{k}g\big(X_f|_x,Y)\hat{x}\in T|_{\hat{x}} \hat{M}_k,
\]
and we write
\[
\LRD(Y)|_{(x,\hat{x};A)}\ol{f}
=&\Big(\LRD(Y)|_{(x,\hat{x};A)}\big((y,\hat{y};B)\mapsto BX_f|_y\big)+\frac{1}{k}g\big(X_f|_x,Y)\hat{x}+r_f(x)AY\Big) \\
&+(Y(r_f)-\frac{1}{k}g(X_f|_x,Y))\hat{x}.
\]

Correspondingly,
using the isomorphism of $\xi$ and $\pi_{TM\oplus\R}$,
to the connection $\nabla^{\Rol}$ and the non-degenerate metric $h_k$ on $\xi$,
there is a connection $\nabla^{\Rol}$ and an indefinite metric $h_k$ (with the same names as the ones on $\xi$)
on $\pi_{TM\oplus\R}$ such that for $X\in\VF(M)$, $r\in\Cinf(M)$ and $Y\in T|_x M$,
\begin{align}\label{eq:rolling_connection}
\nabla^{\Rol}_Y (X,r)=&\Big(A^{-1}\big(\LRD(Y)|_{(x,\hat{x};A)}\big((y,\hat{y};B)\mapsto BX|_y\big)+\frac{1}{k}g\big(X|_x,Y)\hat{x}\big)+r(x)Y, \nonumber \\
&Y(r)-\frac{1}{k}g\big(X|_x,Y)\Big),
\end{align}
where $(x,\hat{x};A)\in Q$ is arbitrary point of $Q$ over $x$
and
\[
h_k((X,r),(Y,s))=g(X,Y)+krs,
\]
for $X,Y\in T|_x M$, $r,s\in\R$.

We will now prove the metric property (\ref{eq:rol_connection_metric})
of the connection $\nabla^\Rol$. This
will be done in the case of the bundle $\pi_{TM\oplus\R}$
but it gives the equivalent result on $\xi$.

If $(X,r),(Y,s)\in\Gamma(\pi_{TM\oplus\R})$ and $Z\in T|_x M$ then
\[
Z(h_k((X,r),(Y,s)))=&Z(g(X,Y)+krs) \\
=&g(\nabla_Z X,Y|_x)+g(X|_x,\nabla_Z Y)+kZ(r)s(x)+kr(x)Z(s).
\]
On the other hand,
\[
h_k(\nabla^{\Rol}_Z (X,r),(Y,s))
=&s_{n,k}\Big(\LRD(Z)|_{(x,\hat{x};A)}\big((y,\hat{y};B)\mapsto BX|_y\big)\\+&\frac{1}{k}g(X|_x,Z)\hat{x}+r(x)AZ,AY\Big) 
+k\big(Z(r)-\frac{1}{k}g(X|_x,Z)\big)s(x),
\]
for any $q=(x,\hat{x};A)\in\pi_{Q,M}^{-1}(x)$
and choosing a path $\gamma$ s.t. $\dot{\gamma}(0)=Z$ we get
\[
h(\nabla^{\Rol}_Z (X,r),(Y,s))
=&s_{n,k}(\dif{t}\big|_0 \big(A_{\RDist}(\gamma,q)(t)X|_{\gamma(t)}\big),AY) \\
&+r(x)g(Z,Y|_x)+\big(kZ(r)-g(X|_x,Z)\big)s(x),
\]
from which we finally get
\[
& h_k(\nabla^{\Rol}_Z (X,r),(Y,s))+h_k((X,r),\nabla^{\Rol}_Z (Y,s)) \\
=&s_{n+1}(\dif{t}\big|_0 \big(A_{\RDist}(\gamma,q)(t)X|_{\gamma(t)}\big),AY)
+s_{n+1}(AX,\dif{t}\big|_0 \big(A_{\RDist}(\gamma,q)(t)Y|_{\gamma(t)}\big)) \\
&+r(x)g(Z,Y|_x)+\big(kZ(r)-g(X|_x,Z)\big)s(x)
+s(x)g(Z,X|_x)\\
&+\big(kZ(s)-g(Y|_x,Z)\big)r(x) \\
=&\dif{t}\big|_0 s_{n+1}(A_{\RDist}(\gamma,q)(t)X|_{\gamma(t)},A_{\RDist}(\gamma,q)(t)Y|_{\gamma(t)})+kZ(r)s(x)+kr(x)Z(s) \\
=&\dif{t}\big|_0 g(X|_{\gamma(t)},Y|_{\gamma(t)})+kZ(r)s(x)+kr(x)Z(s) \\
=&g(\nabla_Z X,Y|_x)+g(X|_x,\nabla_Z Y)+kZ(r)s(x)+kr(x)Z(s),
\]
which is exactly $Z\big(h_k((X,r),(Y,s))\big)$.

Motivated by Eq. (\ref{eq:rolling_connection}), we make the following definition.
If $Y\in T|_x M$ and $X\in\VF(M)$ then define
\[
\tilde{\nabla}^{\Rol}_Y X:=A^{-1}\big(\LRD(Y)|_{(x,\hat{x};A)}\big((y,\hat{y};B)\mapsto BX|_y\big)+\frac{1}{k}g\big(X|_x,Y)\hat{x}\big),
\]
where $(x,\hat{x};A)$ is an arbitrary point on the fiber $\pi_{Q,M}^{-1}(x)$ over $x$.
It is easily seen that it is $\R$-linear in $X$ and $Y$
and, for $f\in\Cinf(M)$,
\[
\tilde{\nabla}^{\Rol}_Y (fX)=Y(f)X|_x+f(x)\tilde{\nabla}^{\Rol}_Y X,
\]
so $\tilde{\nabla}^{\Rol}$ is a connection on $M$.
Moreover, from the above computations, we see that $\tilde{\nabla}^{\Rol}$
is a metric connection with respect to $g$ i.e.,
for $X,Y\in\VF(M)$ and $Z\in T|_x M$,
\[
Z(g(X,Y))=g(\tilde{\nabla}^{\Rol}_Z X,Y)+g(X,\tilde{\nabla}^{\Rol}_Z Y).
\]

We will prove that $\tilde{\nabla}^{\Rol}=\nabla$ i.e., that $\tilde{\nabla}^{\Rol}$
is the Levi-Civita connection of $g$.
To do this, we show that the connection
$\tilde{\nabla}^{\Rol}$ is torsion-free. 

Let $X,Y\in \VF(M)$, $x\in M$.
Then taking any $q=(x,\hat{x};A)\in \pi_{Q,M}^{-1}(x)$
and any local smooth $\pi_Q$-section $\tilde{A}$ such that $\tilde{A}|_x=A$
and $\ol{\nabla} \tilde{A}|_x=0$, we compute
\[
(\tilde{\nabla}^{\Rol}_X Y-\tilde{\nabla}^{\Rol}_Y X)|_x
=&A^{-1}\big(\LRD(X)|_{(x,\hat{x};A)}((y,\hat{y};B)\mapsto BY|_y)+g(X|_x,Y|_x)\hat{x}\big) \\
&-A^{-1}\big(\LRD(Y)|_{(x,\hat{x};A)}((y,\hat{y};B)\mapsto BX|_y)+g(X|_x,Y|_x)\hat{x}\big) \\
=&A^{-1}\big(\ol{\nabla}_{(X, AX)} (\tilde{A}Y)-\ol{\nabla}_{(Y, AY)} (\tilde{A}X)\big) \\
=&(\nabla_X Y-\nabla_Y X)|_x=[X,Y]|_x.
\]

Since $\tilde{\nabla}^{\Rol}$ is a torsion-free metric connection w.r.t. $g$
on $M$, it follows by uniqueness of Levi-Civita connection that
\[
\tilde{\nabla}^{\Rol}=\nabla.
\]

Thus if $X\in\VF(M)$, $r\in\Cinf(M)$ and $Y\in T|_x M$,
\[
\nabla^{\Rol}_Y (X,r)=\Big(\nabla_Y X+r(x)Y,Y(r)-\frac{1}{k}g\big(X|_x,Y)\Big).
\]
This concludes the proof of Theorem \ref{th:rolling_connection}.

\end{proof}

\begin{remark}
Define a number $\delta_{ij}^k$ for $i,j=1,\dots,n+1$ as follows,
\[
\delta_{ij}^k:=\begin{cases}
0, &i\neq j , \\
1, & i=j=1,\dots,n, \\
k, &i=j=n+1.
\end{cases} 
\]
We say that a frame $(X_i,t_i)_{i=1}^{n+1}$ of $T|_x M\oplus\R$
is $h_k$-orthonormal if $h_k((X_i,t_i),(X_j,t_j))=\delta_{ij}^k$.
We may build the manifold $F_{\mathrm{OON}}^{h_k}(\pi_{TM\oplus\R})$
of $h_k$-orthonormal frames in the standard way.

Now we will prove that the bundle $F_{\mathrm{OON}}^{h_k}(\pi_{TM\oplus\R})$
of $h_k$-orthonormal frames of $\pi_{TM\oplus\R}$
is isomorphic to $\pi_{Q,M}$
as a bundle over $M$. The isomorphism $\Phi_k:\pi_{Q,M} \to F_{\mathrm{OON}}^{h_k}(\pi_{TM\oplus\R})$
can be described as follows.
Let $(x,\hat{x};A)\in Q$. Then there are unique $(X_i,t_i)\in T|_x M\oplus\R$, $i=1,\dots,n+1$
such that $e_i=AX_i+t_i\hat{x}$ where $e_i$, $i=1,\dots,n+1$, is the standard basis of $\R^{n+1}$.
One easily computes that
\[
&h_k((X_i,t_i),(X_j,t_j))
=g(X_i,X_j)+kt_it_j
=s_{n,k}(AX_i,AX_j)+s_{n,k}(t_i\hat{x},t_j\hat{x}) \\
=&s_{n,k}(e_i,e_j)=\delta_{ij}^k,
\]
since $s_{n,k}(AX_i,t_j\hat{x})=0$, $s_{n,k}(t_i\hat{x},AX_j)=0$.
Thus define $\Phi(x,\hat{x};A):=(X_i,t_i)_{i=1}^{n+1}$.

We will give a description the inverse map $\Phi^{-1}$.
Let $(X_i,t_i)_{i=1}^{n+1}\in F_{\mathrm{OON}}^{h_k}(\pi_{TM\oplus\R})$.
Then there are unique $a_i\in\R$ such that $\sum_{i=1}^{n+1} a_i(X_i,t_i)=(0,1)$.
We notice that $a_i=kt_i$ for all $i=1,\dots,n$ and $a_{n+1}=t_{n+1}$,
since 
$$
0=g(\sum_{i=1}^{n+1} a_i X_i,X_j)=\sum_{i=1}^{n+1} a_i(\delta_{ij}^k-kt_it_j),
$$
and because $\sum_{i=1}^{n+1} a_i t_i=1$.
Hence $k\sum_{i=1}^n t_i^2+t_{n+1}^2=1$.
Define $\hat{x}:=\sum_{i=1}^{n} (kt_i)e_i+t_{n+1}e_{n+1}$
for which $s_{n,k}(\hat{x},\hat{x})=k(k\sum_{i=1}^n t_i^2+t_{n+1}^2)=k$
i.e., $\hat{x}\in \hat{M}_k$. Moreover, it is easy to see that each $e_i-t_i\hat{x}$
is $s_{n,k}$-orthogonal to $\hat{x}$
and hence we may define $A:T|_x M\to T|_{\hat{x}} \hat{M}_k$
by requiring that $AX_i=e_i-t_i\hat{x}$, $i=1,\dots,n+1$.
It can be shown that $A$ is well defined by this formula and an orthogonal linear map
i.e., $(x,\hat{x};A)\in Q$. Also, evidently $\Phi(x,\hat{x};A)=(X_i,t_i)_{i=1}^{n+1}$.
\end{remark}

%%%%%%%%%%%%%%%%%%%%%%%%%%%%%%%%%%%%%%%%%%
\subsection{Controllability Results for Rolling Against a Non-Flat Space Form}
%%%%%%%%%%%%%%%%%%%%%%%%%%%%%%%%%%%%%%%%%%

It is now clear, thanks to Theorem \ref{th:rolling_connection}, that the controllability of the rolling problem
of a manifold $M$ against a space form $\hat{M}_k$
amounts to checking whether the connection $\nabla^{\Rol}$
of $\pi_{TM\oplus\R}$ has full holonomy or not 
i.e., whether $H^{\nabla^{\Rol}}=G(n,k)$ or not.

For the rest of the section, we assume that $k$ only takes the values $1$ and $-1$, and for notational purposes, we use the letter ''$c$'' instead of ''$k$'' and thus $c\in \{+1,-1\}$.

In Riemannian geometry, 
the reducibility of the Riemannian holonomy group
is characterized (in the complete simply connected case) by the
de Rham Theorem (see \cite{sakai91}).
We aim at giving an analog of this result w.r.t. $\nabla^{\Rol}$ 
in Theorem \ref{th:reducible_rol} below.
Before doing so, we first prove a simpler result
showing that the conclusion of Theorem \ref{th:reducible_rol} below is not trivial.

\begin{proposition}\label{pr:rol_holonomy_space_form}
Suppose that $(M,g)$ is a space form of constant curvature
equal to $c\in \{+1,-1\}$. Then the rolling connection $\nabla^\Rol$ defined by the rolling problem$(R)$ of $(M,g)$
against $(\hat{M}_c,s_{n,c})$ (i.e., we roll $(M,g)$ against itself) is reducible and, for each $x\in M$,
the irreducible subspaces of the action of the holonomy group $H^{\nabla^\Rol}|_x$ on
$T|_x M\oplus\R$ are all $1$-dimensional.
\end{proposition}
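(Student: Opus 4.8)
The plan is to show that in this very special situation the rolling connection $\nabla^{\Rol}$ is in fact \emph{flat}, and then to exploit the simple connectivity of a space form.

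First I would compute the curvature of $\nabla^{\Rol}$ from the formula \eqref{eq:curvature_of_nabla_rol}, which here (with $k=c$) reads
\[
R^{\nabla^{\Rol}}(X,Y)(Z,r)=\Big(R(X,Y)Z-\tfrac{1}{c}\big(g(Y,Z)X-g(X,Z)Y\big),\,0\Big),
\]
where $R$ is the curvature tensor of $(M,g)$. Since $(M,g)$ is a space form of constant sectional curvature $c$, one has $R(X,Y)Z=c\big(g(Y,Z)X-g(X,Z)Y\big)$ for all $X,Y,Z$, and, because $c\in\{+1,-1\}$ gives $1/c=c$, the term in parentheses vanishes identically. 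Hence $R^{\nabla^{\Rol}}\equiv 0$, i.e. $\nabla^{\Rol}$ is flat. (Equivalently: both $(M,g)$ and $(\hat{M}_c,s_{n,c})$ have constant and equal curvature, so by Corollary \ref{cor:2.5:1} the distribution $\RDist$ is involutive; since $\RDist$ is the horizontal distribution of the principal connection inducing $\nabla^{\Rol}$, this is again the statement that $\nabla^{\Rol}$ is flat.)

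Next, since a space form is in particular simply connected and complete, I would invoke the standard fact that a flat vector bundle connection over a simply connected base manifold has trivial holonomy group: by the Ambrose--Singer theorem the holonomy Lie algebra is generated by parallel transports of curvature terms, hence is trivial, and the holonomy group of a connection over a simply connected base is connected (see \cite{kobayashi63}, \cite{joyce07}). Thus $H^{\nabla^{\Rol}}|_x=\{\id_{T|_x M\oplus\R}\}$ for every $x\in M$.

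Finally, once the holonomy at $x$ is trivial, every subspace of $T|_x M\oplus\R$ is $H^{\nabla^{\Rol}}|_x$-invariant, so $T|_x M\oplus\R$ splits as a direct sum of $n+1$ one-dimensional invariant subspaces (for instance the lines spanned by the vectors of any basis, e.g. an $h_c$-orthonormal one). Each such line is trivially irreducible, and since $n+1\ge 3$ this both exhibits $\nabla^{\Rol}$ as reducible and shows that every irreducible component has dimension $1$. There is no genuine obstacle here: the only point that needs a little care is the curvature computation together with the correct reading of ``constant curvature $c$'' as meaning $R(X,Y)Z=c\big(g(Y,Z)X-g(X,Z)Y\big)$; the rest of the argument is formal. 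The moral is that the space form case is precisely the degenerate one in which the rolling connection carries no curvature at all.
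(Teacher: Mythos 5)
Your proof is correct, but it takes a genuinely different route from the paper's. You deduce flatness of $\nabla^{\Rol}$ from the curvature formula \eqref{eq:curvature_of_nabla_rol} (equivalently, from the involutivity of $\RDist$ given by Corollary \ref{cor:2.5:1}, since both manifolds have constant curvature $c=1/c$), and then combine the Ambrose--Singer theorem with the simple connectivity of a space form to conclude that $H^{\nabla^{\Rol}}|_x$ is trivial, so every line is invariant and every irreducible subspace is $1$-dimensional. The paper instead works with the explicit realization $M=\hat{M}_c\subset\R^{n+1}$ and exhibits $n+1$ globally $\nabla^{\Rol}$-parallel sections $(Y_j,r^j)$, with $Y_j=\partial_{p^j}-c\,s_{n,c}(\partial_{p^j},Z)Z$ and $r^j=c\,s_{n,c}(\partial_{p^j},Z)$ where $Z$ is the radial field, verifying $\nabla^{\Rol}_X(Y_j,r^j)=0$ by direct computation; the lines spanned by these sections are then the required $1$-dimensional holonomy-invariant subbundles. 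What each approach buys: yours is shorter and more conceptual, makes the triviality of the holonomy explicit, and needs no ambient coordinates, but it leans on \eqref{eq:curvature_of_nabla_rol} (asserted in the paper only as an easy direct check, so the computational burden is merely relocated) and on the definitional fact that space forms are simply connected; the paper's argument produces a concrete parallel orthonormal frame of $TM\oplus\R$, i.e.\ an explicit trivialization tied to the embedding of the space form, which also yields trivial holonomy without invoking holonomy theory or $\pi_1(M)=0$. Both are valid, and the invariant one-dimensional subspaces you obtain (spanned by any fixed basis) coincide in spirit with the paper's explicit ones.
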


\begin{proof}
Let $(p^1,\dots,p^{n+1})$ be the canonical chart of $\R^{n+1}$
where $p^j$ is the projection onto the $j$-th factor
and write $h=h_c$ for the inner product in $TM\oplus\R$.
We will assume that the space form $M$ is the subset $\hat{M}_c$ of $\R^{n+1}$
as defined previously.
Define a vector field
$Z:=\sum_{i=1}^{n+1} p^i\pa{p^i}$
i.e., $Z$ is equal to the half of the gradient  in $(\R^{n+1},s_{n,c})$ of the function $(p^1)^2+\dots+(p^{n})^2+c(p^{n+1})^2$.
Notice that $Z$ is $s_{n,c}$-orthogonal to the submanifold $M=\hat{M}_c$ of $\R^{n+1}$
and hence $T|_x M$ is the $s_{n,c}$-orthogonal complement of $Z|_x$ for $x\in M$.
Moreover, $s_{n,c}(Z,Z)=c$.

Next we define, for $j=1,\dots,n+1$, the vector fields
\[
Y_j:=\pa{p^j}-cs_{n,c}\big(\pa{p^j},Z)Z
\]
and functions
\[
r^j(x)=cs_{n,c}(\pa{p^j},Z|_x).
\]
The restrictions of $Z,Y_j,r^j$ onto $M$ will be denoted by the same letters.
Notice that $(Y_j,r^j)$, $j=1,\dots,n+1$ are $h$-orthogonal at
each point of $M$ and hence they form a global orthogonal frame of $\pi_{TM\oplus\R}$.

Denote, as usual, by $\nabla$ the Levi-Civita connection of $(M,g)$.
Take any vector fields $X=\sum_{i=1}^{n+1} X^i\pa{p^i}\in \VF(M)$
and $U=\sum_{i=1}^{n+1} U^i\pa{p^i}\in \VF(M)$
and let $\tilde{U}$ be some extension of $U$
onto a neighbourhood of $M$ in $\R^{n+1}$
with corresponding components $\tilde{U}^i$.
Then we have for $x\in M$,
\[
\nabla_X U|_x=\tilde{U}_*(X|_x)-cs_{n,c}(\tilde{U}_*(X|_x),Z|_x)Z|_x.
\]
where we understand $\tilde{U}_*$ as a map $T\R^{n+1}\to T\R^{n+1}$
using the obvious isomorphisms $T|_X (T\R^{n+1})\to T|_x\R^{n+1}$ for each $X\in T|_x \R^{n+1}$.

Then we compute for any $x\in M$ and $X=\sum_{i=1}^{n+1} X^i\pa{p^i}\in T|_x M$
\[
Z_*(X)=\sum_{i=1}^{n+1} X^i Z_*\big(\pa{p^i}\big)=\sum_{i=1}^{n+1} X^i \pa{p^i}=X,
\]
and (notice that $(\pa{p^j}\big)_*=0$)
\[
(Y_j)_*(X)=&\sum_{i=1}^{n+1} X^i (Y_j)_*\big(\pa{p^i}\big)
=-cs_{n,c}(\pa{p^j},Z_*(X))Z|_x-cs_{n,c}(\pa{p^j},Z|_x)Z_*(X) \\
=& -cs_{n,c}(\pa{p^j},X)Z|_x-cs_{n,c}(\pa{p^j},Z|_x)X,
\]
from which we get
\[
& \nabla_X Y_j=(Y_j)_*(X|_x)-cs_{n,c}((Y_j)_*(X|_x),Z|_x)Z|_x \\
=& -cs_{n,c}(\pa{p^j},X)Z|_x-cs_{n,c}(\pa{p^j},Z|_x)X
+cs_{n,c}(\pa{p^j},X)Z|_x\\
+&s_{n,c}(\pa{p^j},Z|_x)\underbrace{s_{n,c}(X,Z|_x)}_{=0}Z|_x 
=-cs_{n,c}(\pa{p^j},Z|_x)X.
\]
Moreover,
\[
X(r^j)=&cs_{n,c}(\pa{p^j},Z_*(X))=cs_{n,c}(\pa{p^j},X) \\
s_{n,c}(X,Y_j)=&s_{n,c}\big(X,\pa{p^j}\big)-c\underbrace{s_{n,c}\big(X,Z|_x\big)}_{=0}s_{n,c}\big(Z,\pa{p^j}\big)
=s_{n,c}\big(X,\pa{p^j}\big),
\]
and thus
\[
\nabla^{\Rol}_X (Y_j,r^j)
=&\big(\nabla_X Y_j+r^j(x)X,X(r^j)-\frac{1}{c}s_{n,c}(X,Y_j|_x)\big) \\
=&\Big(-cs_{n,c}(\pa{p^j},Z|_x)X+cs_{n,c}(\pa{p^j},Z|_x)X,
cs_{n,c}(\pa{p^j},X)-cs_{n,c}\big(X,\pa{p^j}\big)\Big) \\
=&(0, 0).
\]

This means that all the $\pi_{TM\oplus\R}$-sections $(Y_j,r^j)$, $j=1,\dots,n+1$
are $\nabla^\Rol$-parallel globally.
In particular, for any $x\in M$ and loop $\gamma\in \Omega_x(M)$,
\[
\dif{t}(P^{\nabla^\Rol})_t^0(\gamma)\big((Y_j,p^j)|_{\gamma(t)}\big)
=(P^{\nabla^\Rol})_t^0(\gamma)\nabla_{\dot{\gamma}(t)} (Y_j,p^j)=0,
\]
which means that ($x=\gamma(0)$)
\[
(Y_j,p^j)|_{\gamma(t)}=(P^{\nabla^\Rol})^t_0(\gamma)(Y_j,p^j)|_x,\forall t,
\]
and hence
\[
(P^{\nabla^\Rol})^t_0(\gamma)(Y_j,p^j)|_x=(Y_j,p^j)|_{\gamma(1)}=(Y_j,p^j)|_x,
\]
i.e., that the $1$-dimensional subbundles
spanned by each $(Y_j,r^j)$ are invariant under the
holonomy group of $\nabla^{\Rol}$. Thus we have proved what we claimed.

\end{proof}

Below we will only consider the case of positive curvature $c=+1$ i.e., rolling against the unit sphere.

\begin{theorem}\label{th:reducible_rol}
Let $(M,g)$ be a complete Riemannian manifold and $(\hat{M}_1,s_{n+1})$ be the unit sphere
with the metric induced from the Euclidean metric of $\R^{n+1}$.
If the rolling connection $\nabla^{\Rol}$ (see (\ref{eq:nabla_rol_explicit}))
corresponding to rolling of $(M,g)$ against $(\hat{M}_1,s_{n+1})$
is reducible, then $(\hat{M}_1,s_{n+1})$ is a Riemannian covering of $(M,g)$ .

\end{theorem}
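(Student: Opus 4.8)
The plan is to exploit the reducibility of $\nabla^{\Rol}$ to produce a parallel line subbundle of $\pi_{TM\oplus\R}$, then extract from its parallel section a vector field $V$ on $M$ together with a function $r$ satisfying the coupled system \eqref{eq:rol_parallel}, and finally argue that this data forces $(M,g)$ to be locally isometric to the unit sphere, after which Corollary \ref{cor:weak_ambrose} (or Theorem \ref{th:fixed_point}) upgrades ``locally isometric'' to ``$\hat M_1$ is a Riemannian covering of $M$''. Concretely, since $\nabla^{\Rol}$ is reducible there is a proper nonzero parallel subbundle $W\subset TM\oplus\R$; because $\nabla^{\Rol}$ is $h_1$-metric (Theorem \ref{th:rolling_connection}) and $h_1$ is positive definite when $c=k=1$, the orthogonal complement $W^{\perp_{h_1}}$ is also parallel, so we may as well take $W$ (or $W^\perp$) to be of rank $\le n$; by intersecting/iterating and using positive-definiteness of $h_1$ we can find a parallel subbundle of rank exactly $1$, say spanned locally by a $\nabla^{\Rol}$-parallel section $(X,r)$, which we can normalize to $h_1((X,r),(X,r)) = g(X,X)+r^2 \equiv 1$ (the norm is constant along any curve since the section is parallel).

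Next I would write out what parallelism of $(X,r)$ means along an arbitrary curve $\gamma$, namely exactly the equations \eqref{eq:rol_parallel} with $k=1$:
\[
\nabla_{\dot\gamma} X + r\,\dot\gamma = 0,\qquad \dot r - g(\dot\gamma, X) = 0.
\]
Differentiating the first relation along a geodesic and substituting gives the decoupled equations \eqref{eq:rol_parallel_geodesic}; more usefully, these relations say that $(X,r)$ behaves exactly like the tangential/normal decomposition of a fixed unit vector of $\R^{n+1}$ restricted to a hypersphere. I would make this precise: feeding the section $(X,r)$ through the bundle isomorphism $\xi\cong\pi_{TM\oplus\R}$ and the identification $\Phi_k\colon \pi_{Q,M}\to F^{h_k}_{\mathrm{OON}}(\pi_{TM\oplus\R})$ of the last Remark, a global $\nabla^{\Rol}$-parallel section of $\pi_{TM\oplus\R}$ corresponds to a $\RDist$-horizontal (hence holonomy-invariant) vector $v\in\R^{n+1}$, i.e. the holonomy group $G$ of $\RDist$, equivalently $H^{\nabla^{\Rol}}\subset \SO(n+1)$, fixes a nonzero vector $v\in\R^{n+1}$. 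Then I compute the resulting constraint on $M$: along the rolling curve $q_{\RDist}(\gamma,q_0)$ the image of $v$ under $A_{\RDist}(\gamma,q_0)^{-1}$ (suitably interpreted via the embedding $\hat M_1\subset\R^{n+1}$) is $\nabla^{\Rol}$-parallel, and unwinding this through Proposition \ref{pr:rol_geodesic} and Lemma \ref{le:rol_Jacobi} shows that the rolling along every geodesic of $M$ matches the rolling of the sphere against itself, i.e. $\Rol(\cdot,\cdot)(A_{\RDist}(\gamma,q_0)(t))=0$ for all once-broken geodesics $\gamma$.

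Once that vanishing is established, Corollary \ref{cor:weak_ambrose} (equivalence of (ii) and (iii), using that $M$ and $\hat M_1$ are complete — $\hat M_1$ being compact) yields a complete Riemannian manifold $(N,h)$ and Riemannian covering maps $N\to M$, $N\to\hat M_1$, so $M$ and $\hat M_1$ are locally isometric; since $\hat M_1=S^n$ is simply connected and complete, $N\to\hat M_1$ must be an isometry, so $N$ is (isometric to) $S^n$ and $N\to M$ exhibits $S^n$ as a Riemannian covering of $(M,g)$, which is the claim. \textbf{The main obstacle} I anticipate is the bookkeeping in the middle step: correctly translating ``$\nabla^{\Rol}$ reducible'' into ``holonomy of $\RDist$ fixes a vector of $\R^{n+1}$'' and then back into a statement about the Rolling Curvature vanishing along geodesics — one must be careful that reducibility gives a parallel \emph{subbundle} (a priori only of some intermediate rank), use $h_1$-positive-definiteness to whittle it down to a rank-one parallel subbundle, and check that the rank-one case is the one that actually forces flatness-relative-to-the-sphere (the curvature formula \eqref{eq:curvature_of_nabla_rol} shows $R^{\nabla^{\Rol}}$ lands in the $TM$-part with a correction $-\tfrac1k(g(Y,Z)X - g(X,Z)Y)$, so a parallel section with nonzero $TM$-component is exactly what kills this term and forces $M$ to have constant curvature $\tfrac1k = 1$, hence locally $S^n$). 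Verifying that a genuine reduction must produce such a section with nonzero tangential part (rather than, say, the trivial $\R$-summand being parallel, which does not happen here since $\nabla^{\Rol}(0,1) = (Y, 0)\neq 0$ in general) is the delicate point that requires the positive-definiteness of $h_1$ and therefore genuinely uses $c=+1$.
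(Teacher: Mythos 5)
There is a genuine gap, and it occurs at the very first step of your reduction. Reducibility of $\nabla^{\Rol}$ gives you a proper nonzero parallel subbundle $W$ and (by $h_1$-metricity) its parallel orthogonal complement, but it does \emph{not} give you a parallel subbundle of rank one: the holonomy group $H^{\nabla^{\Rol}}\subset\SO(n+1)$ may preserve a splitting $\R^{n+1}=V_1\oplus V_2$ with $\dim V_1,\dim V_2\geq 2$ and act irreducibly on each factor (think of a holonomy group contained in $\SO(p)\times\SO(q)$ with $p,q\geq 2$), and no amount of "intersecting/iterating" with positive definiteness produces an invariant line in that situation. This is not a peripheral case: the paper's proof takes the two parallel subbundles $\mc{D}_1,\mc{D}_2$ of ranks $m+1$ and $n-m$, explicitly assumes \emph{both have rank at least two} as the main case (the rank-one case is the one it dismisses as "similar"), and then has to do real geometry — the sets $N_j=\{x\ |\ (0,1)\in\mc{D}_j|_x\}$ are shown to be nonempty, complete, totally geodesic, at distance $\pi/2$ from each other, covered by round spheres, and finally an explicit local isometry $\Psi$ from $\bigl(]0,\tfrac{\pi}{2}[\times S_{x_1}\times S_{x_2},\ \diff t^2+\sin^2 t\,g+\cos^2 t\,g\bigr)$ onto a dense open subset of $M$ forces constant curvature $1$. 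Your proposal skips precisely this content.

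Even in the rank-one case your middle step is asserted rather than proved. A parallel section $(X,r)$ gives, via \eqref{eq:curvature_of_nabla_rol}, only the identity $R(Y,Z)X|_x=g(Z,X)Y-g(Y,X)Z$, i.e.\ sectional curvature $1$ for planes containing $X|_x$ at points where $X|_x\neq 0$; it says nothing directly about the other $2$-planes, so the claim that "$\Rol(\cdot,\cdot)(A_{\RDist}(\gamma,q_0)(t))=0$ for all once-broken geodesics" (equivalently, that $M$ already has constant curvature $1$) does not follow by "unwinding" Proposition \ref{pr:rol_geodesic} and Lemma \ref{le:rol_Jacobi}; that statement is essentially the conclusion of the theorem. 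To close the rank-one case you would need an actual rigidity argument: note that \eqref{eq:rol_parallel} says $X=\nabla r$ and $\nabla_Y\nabla r=-r\,Y$, i.e.\ $\mathrm{Hess}\,r=-r\,g$ with $r$ nonconstant, and then invoke (or reprove, along the lines of the paper's Jacobi-field computations in Lemmas of Section \ref{space-form}) an Obata-type theorem to conclude $(M,g)$ is the round sphere. Neither that argument nor any substitute for the higher-rank case appears in your proposal, so as written the proof does not go through.
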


Recall that the reducibility of the connection $\nabla^{\Rol}$
means that its holonomy group, which is a subgroup of $G(n,c)$,
is reducible i.e., there exists two nontrivial invariant subspaces $V_1,V_2\notin\{ \{0\},\R^{n+1}\}$
of $\R^{n+1}$ which are invariant by the action of this group.

\begin{proof}
In this case we have $c=+1$ (corresponding to the sphere space form) and we will write
$h=h_1$ for the inner product on $TM\oplus\R$. 

Fix once and for all a point $x_0\in M$.
The assumption that $\nabla^{\Rol}$ is reducible means that
there are two subspaces $V_1,V_2\subset T|_{x_0}M\oplus\R$
which are nontrivial (i.e., $V_1,V_2\notin \{\{0\},T|_{x_0}M\oplus\R\}$)
and invariant by the action of the holonomy group of $\nabla^{\Rol}$
at $x_0$.
Since the holonomy group of $\nabla^\Rol$ acts
$h$-orthogonally on $T|_{x_0} M$,
it follows that $V_1\perp V_2$.

Define subbundles $\pi_{\mc{D}_j}:\mc{D}_j\to M$, $j=1,2$ of $\pi_{TM\oplus\R}$
such that for any $x\in M$ one chooses a piecewise $C^1$ curve $\gamma:[0,1]\to M$
from $x_0$ to $x$ and defines
\[
\mc{D}_j|_{x}=(P^{\nabla^{\Rol}})_0^1(\gamma)V_j,\quad j=1,2.
\]
These definitions are independent of the chosen path $\gamma$
since if $\omega$ is another such curve, then
$\omega^{-1}.\gamma\in\Omega_{x_0}(M)$ is a loop based at $x_0$
and hence by the invariance of $V_j$, $j=1,2$ under the holonomy of $\nabla^{\Rol}$,
\[
(P^{\nabla^{\Rol}})_0^1(\gamma)V_j
=(P^{\nabla^{\Rol}})_0^1(\omega)\underbrace{(P^{\nabla^{\Rol}})_0^1(\omega^{-1}.\gamma)V_j}_{=V_j}
=(P^{\nabla^{\Rol}})_0^1(\omega)V_j.
\]
Moreover, since parallel transport $(P^{\nabla^{\Rol}})_0^1(\gamma)$
is an $h$-orthogonal map,
it follows that $\mc{D}_1\perp \mc{D}_2$ w.r.t the vector bundle metric $h$.

It is a standard fact that $\mc{D}_j$, $j=1,2$,
are smooth embedded submanifolds of $TM\oplus\R$
and that the restriction of $\pi_{TM\oplus\R}$
to $\mc{D}_j$ defines a smooth subbundle $\pi_{\mc{D}_j}$ as claimed.
Moreover, it is clear that
\[
\pi_{\mc{D}_1}\oplus\pi_{\mc{D}_2}=\pi_{TM\oplus\R},
\]
and this sum is $h$-orthogonal.

We will now assume that both $\mc{D}_j$, $j=1,2$,
have dimension at least $2$. The case where one of them has dimension $=1$
can be treated in a similar fashion and will be omitted.
So we let $m+1=\dim\mc{D}_1$ where $m\geq 1$ and then $n-m=(n+1)-(m+1)=\dim\mc{D}_2\geq 2$
i.e., $1\leq m\leq n-2$.
Define for $j=1,2$
\[
\mc{D}_j^M=\pr_1(\mc{D}_j)=\big\{X\ |\ (X,r)\in \mc{D}_j\}\subset TM,
\]
and
\[
N_j=\{x\in M\ |\ (0,1)\in \mc{D}_j|_x\}\subset M.
\]
Trivially, $N_1\cap N_2=\emptyset$.
Also, $N_j$, $j=1,2$, are closed subsets of $M$
since they can be written as $N_j=\{x\in M\ |\ p_j^{\perp}(T|_x)=T|_x\}$
where $p_j^\perp:TM\oplus\R\to \mc{D}_j$ is the $h$-orthogonal projection
onto $\mc{D}_j$
and $T$ is the (smooth) constant section $x\mapsto (0,1)$ of $\pi_{TM\oplus\R}$.

We next briefly sketch the rest of proof. We will show that  $N_j$ are nonempty totally geodesic submanifolds of $M$
and, for any given $x_j\in N_j$, $j=1,2$, that $(M,g)$ is locally isometric to
the sphere
\[
S=\{(X_1,X_2)\in T|_{x_1}^{\perp} N_1\oplus T|_{x_2}^{\perp} N_2\ |\ \n{X_1}^2_g+\n{X_2}^2_g=1\},
\]
with the metric $G:=(g|_{T|_{x_1}^{\perp} N_1}\oplus g|_{T|_{x_2}^{\perp} N_2})|_S$.
Here $\perp$ denotes the orthogonal complement inside $T|_x M$ w.r.t. $g$.
Since $(S,G)$ is isometric to the Euclidean sphere $(\hat{M}_1,s_{n,1})$
this would finish the argument.
The latter is rather long and we decompose it in a sequence of ten lemmas and we start with the first one.

\begin{lemma}\label{le-redu-1}
The sets $N_j$, $j=1,2$, are non-empty.
\end{lemma}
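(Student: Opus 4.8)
\textbf{Plan of proof for Lemma \ref{le-redu-1}.}
The plan is to show that neither $N_1$ nor $N_2$ can be empty, arguing by contradiction using the completeness of $(M,g)$ together with the explicit parallel transport equations \eqref{eq:rol_parallel}--\eqref{eq:rol_parallel_geodesic} for $\nabla^{\Rol}$. The key observation is that the section $T:x\mapsto (0,1)$ of $\pi_{TM\oplus\R}$ decomposes at $x_0$ as $(0,1)=w_1+w_2$ with $w_j\in V_j$, and we may assume without loss of generality that $w_1\neq 0$. The strategy is to parallel transport $w_1$ along well-chosen geodesics and extract, from the second-order ODE \eqref{eq:rol_parallel_geodesic}, a point where the $\R$-component of the transported vector is $\pm 1$ while the $TM$-component vanishes: such a point will lie in $N_1$ (or $N_2$).

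First I would fix a unit vector $X\in T|_{x_0}M$ and the geodesic $\gamma_X(t)=\exp_{x_0}(tX)$, which by completeness is defined for all $t\in\R$. Writing $w_1=(X_1,r_1)$ and $(X(t),r(t))=(P^{\nabla^{\Rol}})_0^t(\gamma_X)w_1$, equation \eqref{eq:rol_parallel_geodesic} with $c=1$ and $\n{\dot\gamma_X}_g=1$ gives $\ddot r(t)+r(t)=0$, so $r(t)=r_1\cos t+\dot r(0)\sin t$ where $\dot r(0)=g(X,X_1)$ by the second equation of \eqref{eq:rol_parallel}. Likewise the $TM$-component satisfies $\nabla_{\dot\gamma_X}\nabla_{\dot\gamma_X}X+g(X,\dot\gamma_X)\dot\gamma_X=0$; decomposing $X(t)$ into its component along $\dot\gamma_X(t)$ and its $g$-orthogonal complement, the orthogonal part is parallel and the tangential part $\varphi(t):=g(X(t),\dot\gamma_X(t))$ obeys $\ddot\varphi+\varphi=0$ with $\varphi(0)=g(X_1,X)$, $\dot\varphi(0)=-r_1$ (again from \eqref{eq:rol_parallel}). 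Thus along $\gamma_X$ the pair $(\varphi,r)$ traces a circle of radius $\sqrt{g(X_1,X)^2+r_1^2}$ in $\R^2$, and the orthogonal part of $X(t)$ keeps constant norm. Choosing $X$ judiciously — e.g. taking $X$ in the direction of $X_1$ when $X_1\neq 0$, or any unit vector when $X_1=0$ so that $r_1\neq 0$ — one arranges that at some time $t_*$ the transported vector $(X(t_*),r(t_*))$ has vanishing $TM$-component (both its tangential and orthogonal parts being zero) and $\R$-component equal to $\pm\sqrt{\n{X_1}_g^2+r_1^2}$. Since $w_1$ has $h$-norm $\n{X_1}_g^2+r_1^2=h(w_1,w_1)$ and $h$-norm is preserved, after normalizing $w_1$ we may take this constant to be $1$, so $(X(t_*),r(t_*))=(0,\pm 1)$.

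Because $\mc{D}_1$ is invariant under $\nabla^{\Rol}$-parallel transport, $(X(t_*),r(t_*))\in\mc{D}_1|_{\gamma_X(t_*)}$, hence $(0,\pm 1)\in\mc{D}_1|_{\gamma_X(t_*)}$; as $\mc{D}_1$ is a linear subbundle this forces $(0,1)\in\mc{D}_1|_{\gamma_X(t_*)}$, i.e. $\gamma_X(t_*)\in N_1$, proving $N_1\neq\emptyset$. The same argument applied to $w_2$ — which is nonzero because $(0,1)=w_1+w_2$ and $w_1$, having $h$-norm $1$, cannot equal $(0,1)$ unless $V_1=\operatorname{span}(0,1)$ is a trivial invariant line, a case one disposes of separately by running the argument with the roles of the factors interchanged, or by noting $V_2\ne\{0\}$ forces some nonzero $w_2$ after adjusting the decomposition — shows $N_2\neq\emptyset$ as well. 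The main obstacle I anticipate is the bookkeeping needed to guarantee that \emph{all} components of the transported vector (tangential to $\dot\gamma_X$, $g$-orthogonal to $\dot\gamma_X$ within $TM$, and the $\R$-part) can be made to vanish simultaneously except the $\R$-part; handling the $g$-orthogonal-to-$\dot\gamma_X$ part requires either choosing $X$ collinear with the $TM$-part of $w_j$ or a short additional argument that this orthogonal part must already vanish when $w_j$ is chosen as a suitable parallel-transported representative. Once that is settled, the conclusion is immediate from the invariance and linearity of $\mc{D}_j$.
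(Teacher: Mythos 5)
Your core argument is correct, and it takes a genuinely different (and in fact more economical) route than the paper. The mechanism is the same in both proofs: along a unit-speed geodesic the pair $\big(g(X(t),\dot\gamma(t)),\,r(t)\big)$ of a $\nabla^{\Rol}$-parallel section rotates at unit speed, while the part of $X(t)$ that is $g$-orthogonal to $\dot\gamma$ is $\nabla$-parallel — your computation of $\dot\varphi=-r$, $\dot r=\varphi$, $\nabla_{\dot\gamma}X^\perp=0$ is exactly what drives the paper's $\ddot r_i+r_i=0$. But the proofs are organized differently. The paper first shows $N_1\cup N_2\neq M$, picks $x'\notin N_1\cup N_2$, builds an $h$-orthonormal basis of $\mc{D}_1|_{x'}$ in which only one basis vector has nonzero $\R$-component, rolls along the geodesic in the direction of that vector's $TM$-part, and finds a time at which \emph{all} $\R$-components vanish, so the whole fiber $\mc{D}_1$ is $h$-orthogonal to $(0,1)$ there, i.e.\ the point lies in $N_2$. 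You instead transport a \emph{single} unit vector of $\mc{D}_j$ along the geodesic in the direction of its $TM$-part and land exactly on $(0,\pm1)$, which puts the point directly in $N_j$; this needs neither the preliminary step $N_1\cup N_2\neq M$ nor the adapted basis, and since $\mc{D}_j|_x$ is a linear subspace the sign ambiguity is harmless. Choosing the geodesic direction collinear with the $TM$-part, as you do, is precisely what kills the orthogonal component, so the "main obstacle" you flag is already resolved by your own choice.

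The one loose end is your use of the decomposition $(0,1)=w_1+w_2$. If $(0,1)\in V_1$ then $w_2=0$, and neither of your proposed remedies works as stated: the decomposition is unique (so it cannot be "adjusted"), and interchanging the roles of the factors does not change the fact that the $V_2$-component of $(0,1)$ vanishes. The clean fix is to drop the decomposition altogether: by the reducibility hypothesis each $V_j$ is a nontrivial invariant subspace, so pick an arbitrary nonzero $w=(X_j,r_j)\in V_j$ of $h$-norm $1$. If $X_j=0$ then $(0,1)\in V_j$ and $x_0\in N_j$ already; if $X_j\neq 0$, run your geodesic argument in the direction $X_j/\n{X_j}_g$ to produce $(0,\pm1)\in\mc{D}_j$ at some point, hence a point of $N_j$. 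With that replacement your proof is complete and, if anything, shorter than the paper's.
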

\begin{proof} 
Note first that $N_1\cup N_2\neq M$ since otherwise
$N_1=M\backslash N_2$ would be open and closed
and similarly for $N_2$. But then if, say, $N_1\neq\emptyset$ 
we have $N_1=M$ by connectedness of $M$ i.e., the point $(0,1)\in \mc{D}_1|_x$ for all $x\in M$.
Then for all $x\in M$, $X\in\VF(M)$ one has, by the invariance of $\mc{D}_1$
by the holonomy of $\nabla^{\Rol}$ and by (\ref{eq:nabla_rol_explicit}),
\[
\mc{D}_1|_x\ni \nabla^{\Rol}_{X|_x}(0,1)=(X|_x,0),
\]
which implies that $\mc{D}_1=TM\oplus\R$, a contradiction.

Let $x'\in M\backslash (N_1\cup N_2)$ be arbitrary.
Choose a basis $(X_0,r_0),\dots (X_m,r_m)$ of $\mc{D}_1|_{x'}$.
Then at least one of the numbers $r_0,\dots,r_m$ is non-zero,
since otherwise one would have $(X_i,r_i)=(X_i,0)\perp (0,1)$ for all $i$
and thus $\mc{D}_1|_{x'}\perp (0,1)$ i.e., $(0,1)\in \mc{D}_2|_{x'}$ i.e., $x'\in N_2$ which is absurd.
We assume that it is $r_0$ which is non-zero.
By taking appropriate linear combinations of $(X_i,r_i)$, $i=0,\dots,m$
(and by Gram-Schmidt's process), one may change the basis $(X_i,r_i)$, $i=0,\dots,m$,
of $\mc{D}_1|_{x}$
so that $r_1,\dots,r_m=0$, $r_0\neq 0$
and that $(X_0,r_0),(X_1,0)\dots,(X_m,0)$ are $h$-orthonormal.
Also, $X_0,\dots,X_m$ are non-zero: for $X_1,\dots,X_m$ this is evident,
and for $X_0$ it follows from the fact that if $X_0=0$,
then $r_0=1$ and hence $x'\in N_1$, which contradicts our choice of $x'$. 

Now let $\gamma:\R\to M$ be the unit speed geodesic with $\gamma(0)=x'$, $\dot{\gamma}(0)=\frac{X_0}{\n{X_0}_g}$.
Parallel translate $(X_i,r_i)$ along $\gamma$ by $\nabla^{\Rol}$
to get $\pi_{\mc{D}_1}$-sections $(X_i(t),r_i(t))$ along $\gamma$.
In particular, from (\ref{eq:rol_parallel_geodesic}) one gets
\[
\ddot{r_i}(t)+r_i(t)=0,
\]
with $r_0(0)\neq 0$, $r_1(0)=\dots=r_m(0)=0$.
From the second equation in (\ref{eq:rol_parallel})
one obtains $\dot{r}_i(0)=g(\dot{\gamma}(0),X_i(0))=\n{X_0}_g^{-1}g(X_0,X_i)$
and thus $\dot{r}_i(0)=0$ for $i=1,\dots,m$ since $(X_i,0)$ is $h$-orthogonal to
$(X_0,r_0)$. Moreover, $\dot{r}_0(0)=\n{X_0}_g$.
Hence $r_i(t)=0$ for all $t$ and $i=1,\dots,m$ and
$r_0(t)=\n{X_0}_g\sin(t)+r_0\cos(t)$.
In particular, at $t=t_0:=\arctan(-\frac{r_0}{\n{X_0}_g})$
one has $r_i(t_0)=0$ for all $i=0,\dots,m$
which implies that $\mc{D}_1|_{\gamma(t_0)}\perp (0,1)$ i.e., $\gamma(t_0)\in N_2$.
This proves that $N_2$ is non-empty. The same argument with $\mc{D}_1$ and $\mc{D}_2$
interchanged shows that $N_1$ is non-empty.

\end{proof}
\begin{lemma}\label{le-redu-2}
For any $x\in M$ and any unit vector $u\in T|_x M$,
\begin{align}\label{eq:parallel_translate_of_0_1}
(P^{\nabla^\Rol})_0^t(\gamma_u)(0,1)=(-\sin(t)\dot{\gamma}_u(t),\cos(t)).
\end{align}
\end{lemma}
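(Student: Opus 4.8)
\textbf{Proof plan for Lemma \ref{le-redu-2}.}
The statement to be proved is the explicit formula \eqref{eq:parallel_translate_of_0_1} for the $\nabla^\Rol$-parallel transport of the section $(0,1)$ along a unit-speed geodesic $\gamma_u(t)=\exp_x(tu)$. The natural approach is to verify that the right-hand side satisfies the parallel transport ODE \eqref{eq:rol_parallel} (with $k=c=1$) and has the correct initial value, then to invoke uniqueness of solutions of linear ODEs. So the plan is: write $(X(t),r(t)):=(-\sin(t)\dot\gamma_u(t),\cos(t))$, check $(X(0),r(0))=(0,1)$, and then check the two equations $\nabla_{\dot\gamma_u(t)}X+r(t)\dot\gamma_u(t)=0$ and $\dot r(t)-g(\dot\gamma_u(t),X(t))=0$ hold for all $t$.

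The key computational input is that $\gamma_u$ is a geodesic, so $\nabla_{\dot\gamma_u(t)}\dot\gamma_u(\cdot)=0$, and that it has unit speed, so $\n{\dot\gamma_u(t)}_g^2=1$ for all $t$. First I would compute $\nabla_{\dot\gamma_u(t)}X(\cdot)=\nabla_{\dot\gamma_u(t)}\big(-\sin(\cdot)\dot\gamma_u(\cdot)\big)=-\cos(t)\dot\gamma_u(t)-\sin(t)\nabla_{\dot\gamma_u(t)}\dot\gamma_u(\cdot)=-\cos(t)\dot\gamma_u(t)$, using the Leibniz rule for the covariant derivative along a curve and the geodesic equation. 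Then $\nabla_{\dot\gamma_u(t)}X+r(t)\dot\gamma_u(t)=-\cos(t)\dot\gamma_u(t)+\cos(t)\dot\gamma_u(t)=0$, which is the first equation. For the second, $\dot r(t)=-\sin(t)$ and $g(\dot\gamma_u(t),X(t))=g(\dot\gamma_u(t),-\sin(t)\dot\gamma_u(t))=-\sin(t)\n{\dot\gamma_u(t)}_g^2=-\sin(t)$, so $\dot r(t)-g(\dot\gamma_u(t),X(t))=-\sin(t)+\sin(t)=0$. Together with the initial condition, the uniqueness part of the theorem on parallel transport (Proposition \ref{pr:parallel_tensor}, applied to the vector bundle $\pi_{TM\oplus\R}$ with the connection $\nabla^\Rol$) yields $(P^{\nabla^\Rol})_0^t(\gamma_u)(0,1)=(X(t),r(t))$, which is exactly \eqref{eq:parallel_translate_of_0_1}.

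I do not anticipate any serious obstacle here: this is a short and routine verification, essentially a special case of the geodesic parallel-transport equations \eqref{eq:rol_parallel_geodesic} already recorded in the text (where the scalar part satisfies $\ddot r+r=0$ with $k=1$, whose solution with $r(0)=1$ and $\dot r(0)=g(\dot\gamma_u(0),X(0))=0$ is precisely $r(t)=\cos t$, and then $X(t)$ is recovered from the first equation of \eqref{eq:rol_parallel}). The only point requiring a little care is to keep track of the Leibniz rule for $\nabla$ applied to the product of the scalar function $-\sin t$ and the vector field $\dot\gamma_u$ along $\gamma_u$, and to use unit speed of $\gamma_u$ in the $g$-pairing; both are immediate. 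If one prefers, one can instead argue directly by solving the decoupled system \eqref{eq:rol_parallel_geodesic}, obtaining $r$ first and then substituting back, which makes the uniqueness invocation even more transparent.
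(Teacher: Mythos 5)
Your proof is correct and follows essentially the same route as the paper: both verify the claimed pair against the first-order parallel-transport system \eqref{eq:rol_parallel} (with $k=1$) along the unit-speed geodesic and conclude by uniqueness, the paper merely obtaining $r_0(t)=\cos t$ first from the decoupled equation \eqref{eq:rol_parallel_geodesic} before matching the vector part — a variant you yourself note at the end.
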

\begin{proof} Here and in what follows, $\gamma_u(t):=\exp_x(tu)$.
Write 
$$(X_0(t),r_0(t)):=(P^{\nabla^\Rol})_0^t(\gamma_u)(0,1).$$
The second equation in (\ref{eq:rol_parallel})
implies that $\dot{r}_0(0)=g(\dot{\gamma}_u(0),X_0(0))=g(u,0)=0$
and, since $r_0(0)=1$, the second equation in (\ref{eq:rol_parallel_geodesic})
gives
\[
r_0(t)=\cos(t).
\]
Notice that, for all $t\in\R$,
\[
&\nabla_{\dot{\gamma}_u(t)} (-\sin(t)\dot{\gamma}_u(t))+r_0(t)\dot{\gamma}_u(t) \\
=&\nabla_{\dot{\gamma}_u(t)} (-\sin(t))\dot{\gamma}_u(t)-\sin(t)\nabla_{\dot{\gamma}_u(t)} \dot{\gamma}_u(t)
+\cos(t)\dot{\gamma}_u(t) \\
=&-\cos(t)\dot{\gamma}_u(t)-0+\cos(t)\dot{\gamma}_u(t)=0,
\]
i.e., $-\sin(t)\dot{\gamma}_u(t)$ solves the same first order ODE as $X_0(t)$,
namely $\nabla_{\dot{\gamma}_u(t)} X_0+r_0(t)\dot{\gamma}_u(t)=0$ for all $t$
by the first equation in (\ref{eq:rol_parallel}).
Moreover, since 
$$\big(-\sin(t)\dot{\gamma}_u(t)\big)|_{t=0}=0=X_0(0),$$
it follows that $X_0(t)=-\sin(t)\dot{\gamma}_u(t)$,
which, combined with the fact that $r_0(t)=\cos(t)$ proven above, gives (\ref{eq:parallel_translate_of_0_1}).

\end{proof}

\begin{lemma}\label{le-redu-3}
The sets $N_j$, $j=1,2$, are complete, totally geodesic submanifolds of $(M,g)$
and $\mc{D}^M_j|_x=T|_x N_j$, $\forall x\in N_j$, $j=1,2$.
\end{lemma}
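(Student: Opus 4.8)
The plan is to show that each $N_j$ is locally, near any of its points, the zero set of a family of smooth sections of $\pi_{TM\oplus\R}$ that are governed by a Jacobi-type ODE along geodesics, and to exploit the explicit parallel transport formula \eqref{eq:parallel_translate_of_0_1} of Lemma~\ref{le-redu-3}'s predecessor (Lemma \ref{le-redu-2}). Concretely, fix $x\in N_j$, so $(0,1)\in\mc{D}_j|_x$, and take a unit vector $u\in T|_x M$. The key dichotomy is whether $u\in\mc{D}^M_j|_x$ or whether $u$ is $g$-orthogonal to $\mc{D}^M_j|_x$. First I would prove the pointwise identity $\mc{D}^M_j|_x=T|_x N_j$ at once with the totally geodesic property: if $u\in\mc{D}^M_j|_x$, pick $(u,s)\in\mc{D}_j|_x$; subtracting an appropriate multiple of $(0,1)\in\mc{D}_j|_x$ we may take $(u,0)\in\mc{D}_j|_x$. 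Parallel transporting $(0,1)$ along $\gamma_u$ stays inside $\mc{D}_j$ (since $\mc{D}_j$ is $\nabla^{\Rol}$-parallel), and by \eqref{eq:parallel_translate_of_0_1} it equals $(-\sin t\,\dot\gamma_u(t),\cos t)$; adding $\sin t$ times the parallel transport of $(u,0)$ (which at $t=0$ is $(u,0)$, and which stays in $\mc{D}_j$) and using that $\nabla_{\dot\gamma_u}(0,1)$-type combinations remain in $\mc{D}_j$, one checks the vector $(\ast,\cos t)$ obtained has second component $\cos t$, hence never forces $(0,1)\in\mc{D}_j$ except where $\cos t$ and the first component vanish — so in fact I want to argue the other way: that $(0,1)$ stays in $\mc{D}_j$ along $\gamma_u$ \emph{only if} $u\in\mc{D}^M_j|_x$, which shows $\gamma_u$ stays in $N_j$ precisely for those $u$, giving both that $N_j$ is (locally) the image under $\exp_x$ of $\mc{D}^M_j|_x$ near $0$, that it is a submanifold with $T|_x N_j=\mc{D}^M_j|_x$, and that it is totally geodesic.

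More carefully, the mechanism is this. Along $\gamma_u$, write $(X(t),r(t))=(P^{\nabla^\Rol})_0^t(\gamma_u)(0,1)=(-\sin t\,\dot\gamma_u(t),\cos t)$. Since $\mc{D}_j$ is holonomy-invariant and $(0,1)\in\mc{D}_j|_x$, we have $(X(t),r(t))\in\mc{D}_j|_{\gamma_u(t)}$ for all $t$. Thus $\gamma_u(t)\in N_j$ iff $(0,1)\in\mc{D}_j|_{\gamma_u(t)}$ iff the $\mc{D}_j$-line through $(X(t),r(t))$ plus the rest of $\mc{D}_j|_{\gamma_u(t)}$ contains $(0,1)$. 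Differentiating: $\nabla^{\Rol}_{\dot\gamma_u(t)}$ of the parallel section $(X(t),r(t))$ is $0$, but $\nabla^{\Rol}_{\dot\gamma_u(t)}(-\sin t\,\dot\gamma_u(t),\cos t)$ computed via \eqref{eq:nabla_rol_explicit} against a \emph{non}-parallel reference gives $(-\cos t\,\dot\gamma_u(t),-\sin t)$, which therefore also lies in $\mc{D}_j|_{\gamma_u(t)}$. Hence both $(-\sin t\,\dot\gamma_u,\cos t)$ and $(-\cos t\,\dot\gamma_u,-\sin t)$ are in $\mc{D}_j|_{\gamma_u(t)}$; taking the combination $\cos t\cdot(-\sin t\,\dot\gamma_u,\cos t)-\sin t\cdot(-\cos t\,\dot\gamma_u,-\sin t)=(0,1)$ shows $(0,1)\in\mc{D}_j|_{\gamma_u(t)}$ for \emph{all} $t$ — i.e.\ $\gamma_u(\R)\subset N_j$ whenever $u\in\mc{D}^M_j|_x$ (here I use $\dot\gamma_u(t)\in\mc{D}^M_j|_{\gamma_u(t)}$, which itself follows because $(-\sin t\,\dot\gamma_u,\cos t)\in\mc{D}_j$ and adding the parallel transport of $(u,0)$ stays in $\mc{D}_j$). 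Conversely, if $u\perp\mc{D}^M_j|_x$ then the same formula shows $(0,1)$ leaves $\mc{D}_j$ immediately (its derivative $(-\dot\gamma_u,0)$ at $t=0$ is $(−u,0)\notin\mc{D}_j|_x$ since $u\perp\mc{D}^M_j|_x$), so $\gamma_u(t)\notin N_j$ for small $t\neq0$. Completeness of $N_j$ is then immediate: geodesics of $M$ tangent to $N_j$ stay in $N_j$ and $M$ is complete.

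Therefore the structure of the argument is: (1) establish the ODE/parallel-transport computation above to get $\gamma_u\subset N_j\iff u\in\mc{D}^M_j|_x$ for $x\in N_j$; (2) deduce that $\exp_x$ maps a neighborhood of $0$ in $\mc{D}^M_j|_x$ onto a neighborhood of $x$ in $N_j$ and that this exhibits $N_j$ as an embedded submanifold with $T|_xN_j=\mc{D}^M_j|_x$; (3) totally geodesic and complete then follow formally. I expect the main obstacle to be bookkeeping the stability of $\mc{D}_j$ under the relevant derivative operations — i.e.\ verifying cleanly that $\nabla^{\Rol}_{\dot\gamma_u}$ applied to the \emph{explicit} (non-parallel) expression $(-\sin t\,\dot\gamma_u(t),\cos t)$ lands again in $\mc{D}_j$, and that $\dot\gamma_u(t)\in\mc{D}^M_j|_{\gamma_u(t)}$ — and dealing with the degenerate orientation where $\mc{D}_j$ has a one-dimensional fiber (which, as the text says for the parallel statement, is handled similarly and can be remarked rather than rewritten). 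Smoothness of $N_j$ as a submanifold can alternatively be seen directly from $N_j=\{x: p_j^{\perp}(T|_x)=T|_x\}$ plus constant-rank, as already noted, so step (2) has two routes.
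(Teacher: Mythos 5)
Your first half, showing that a geodesic $\gamma_u$ with $u\in\mc{D}^M_j|_x$, $x\in N_j$, stays in $N_j$, is in substance the paper's argument, but the justification you give at the decisive step is not valid as written. The vector $(-\cos t\,\dot{\gamma}_u(t),-\sin t)$ is not obtained by any intrinsic differentiation of the parallel section $(-\sin t\,\dot{\gamma}_u(t),\cos t)$: the $\nabla^{\Rol}$-derivative of that section is zero, and ``differentiating against a non-parallel reference'' is not an operation that preserves a parallel subbundle -- to know that the result stays in $\mc{D}_j$ you would already need to know that vectors of the form $(\dot{\gamma}_u(t),0)$ lie in $\mc{D}_j$ along the curve, which is part of what is being proved. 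What actually makes the step work is the separate, one-line verification that $\nabla^{\Rol}_{\dot{\gamma}_u(t)}\big(\cos t\,\dot{\gamma}_u(t),\sin t\big)=0$, so that $(\cos t\,\dot{\gamma}_u(t),\sin t)=(P^{\nabla^{\Rol}})_0^t(\gamma_u)(u,0)$ with $(u,0)\in\mc{D}_j|_x$ (using $u\in\mc{D}^M_j|_x$ and $(0,1)\in\mc{D}_j|_x$). You mention the parallel transport of $(u,0)$ in passing but never check this parallelism, and it carries the whole weight; once it is in place, your trigonometric combination recovering $(0,1)$ is exactly the paper's.

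The more serious gap is in your step (2). To conclude that $\exp_x$ maps a ball in $\mc{D}^M_j|_x$ \emph{onto} a neighbourhood of $x$ in $N_j$, you must exclude $\exp_x(tu)\in N_j$ for every $u\notin\mc{D}^M_j|_x$ and small $t>0$; your converse treats only $u\perp\mc{D}^M_j|_x$, and the argument ``the derivative of $(0,1)$ at $t=0$ is not in $\mc{D}_j$'' at best rules out $(0,1)$ remaining in $\mc{D}_j$ on an interval of $t$, not membership $\gamma_u(t)\in N_j$ at an isolated small $t$. A clean repair is the back-transport identity $(P^{\nabla^{\Rol}})_t^0(\gamma_u)(0,1)=(\sin t\,u,\cos t)$, which gives $\gamma_u(t)\in N_j\iff(u,0)\in\mc{D}_j|_x$ whenever $0<t<\pi$; you do not have this. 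The paper avoids the issue by a different, level-set argument: it transports a basis $(X_{m+1},r_{m+1}),\dots,(X_n,r_n)$ of $\mc{D}_2|_x$ radially via $\exp_x$, notes that locally $N_1=\{r_{m+1}=\dots=r_n=0\}$, and computes from the parallel-transport equations that $\nabla r_j|_x=X_j|_x$ are linearly independent, which exhibits $N_1$ as an embedded submanifold with $T|_xN_1=\mc{D}^M_1|_x$; totally geodesic and completeness then follow from the (repaired) first half exactly as you say. Your alternative ``constant-rank'' remark is not worked out and, as stated, identifies neither the submanifold structure nor the tangent space.
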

\begin{proof}
We show this for $N_1$. The same argument then proves the claim for $N_2$.
Let $x\in N_1$ and $u\in \mc{D}_1^M|_x$ a unit vector. Since $(0,1)\in \mc{D}_1|_x$,
Eq. (\ref{eq:parallel_translate_of_0_1}) implies that
\[
\mc{D}_1|_{\gamma_u(t)}\ni (P^{\nabla^\Rol})_0^t(\gamma_u)(0,1)=(-\sin(t)\dot{\gamma}_u(t),\cos(t))
\]
Next notice that
\[
\nabla^{\Rol}_{\dot{\gamma}_u(t)} \big(\cos(t)\dot\gamma_u(t),\sin(t)\big)
=&\big(-\sin(t)\dot{\gamma}_u(t)+\sin(t)\dot{\gamma}_u(t),\cos(t)-g(\dot{\gamma}_u(t),\cos(t)\dot\gamma_u(t))\big)\\=&(0,0),
\]
and hence, since $\big(\cos(t)\dot\gamma_u(t),\sin(t)\big)|_{t=0}=(u,0)\in \mc{D}_1|_x$
(this is so because $u\in \mc{D}_1^M|_x$,
hence there is some $r\in\R$ such that $(u,r)\in \mc{D}_1|_x$
and since $(0,1)\in \mc{D}_1|_x$ because $x\in N_1$, then $\mc{D}_1|_x\ni (u,r)-r(0,1)=(u,0)$),
we have, for all $t\in\R$,
\[
\big(\cos(t)\dot\gamma_u(t),\sin(t)\big)
=(P^{\nabla^\Rol})^t_0(u,0)\in \mc{D}_1|_{\gamma_u(t)}.
\]
Hence for all $t\in\R$,
\[
\mc{D}_1|_{\gamma_u(t)} \ni \sin(t)\big(\cos(t)\dot\gamma_u(t),\sin(t)\big)+\cos(t)\big(-\sin(t)\dot{\gamma}_u(t),\cos(t)\big)
=(0,1).
\]

This proves that any geodesic starting from a point of $N_1$ with
the initial direction from $\mc{D}^M_1$ stays in $N_1$ forever.
Hence, once it has been shown that $N_1$ is a submanifold of $M$
with tangent space $T|_{x} N_1=\mc{D}^M_1|_{x}$ for all $x\in N_1$,
then automatically $N_1$ is totally geodesic and complete.

Let $x\in N_1$.  If one takes an open neighbourhood $U$ of $x$
and local $\pi_{\mc{D}_2}$-sections $(X_{m+1},r_{m+1}),\dots,(X_n,r_n)$
which form a basis of $\mc{D}_2$ over $U$,
then it is clear that $N_1\cap U=\{x\in U\ |\ r_{m+1}(x)=\dots=r_n(x)=0\}$

Thus let $(X_{m+1},r_{m+1}),\dots,(X_n,r_n)\in \mc{D}_2|_x$ be a basis of $\mc{D}_2|_x$.
Choose $\epsilon>0$ such that $\exp_x$ is a diffeomorphism from $B_g(0,\epsilon)$
onto its image $U_{\epsilon}$ and define for $y\in U_{\epsilon}$, $j=m+1,\dots,n$,
\[
(X_j,r_j)|_y=(P^{\nabla^\Rol})^1_0\big(\tau\mapsto\exp_x(\tau\exp_x^{-1}(y))\big)(X_j,r_j).
\]
Then $(X_j,r_j)$ are local $\pi_{\mc{D}_2}$-sections
and it is clear that 
$$
N_1\cap U_{\epsilon}=\{y\in U_{\epsilon}\ |\ r_{m+1}(y)=\dots=r_n(y)=0\}.
$$
Moreover, from (\ref{eq:rol_parallel}),
\[
\nabla r_j|_x=X_j|_x,\quad j=m+1,\dots,n,
\]
which are linearly independent. Hence, by taking $\epsilon>0$ possibly smaller,
we may assume that the local vector fields $\nabla r_j$, $j=m+1,\dots,n$,
are linearly independent on $U_\epsilon$.
But this means that $N_1\cap U_{\epsilon}=\{y\in U_{\epsilon}\ |\ r_{m+1}(y)=\dots=r_n(y)=0\}$
is a smooth embedded submanifold of $U_{\epsilon}$
with tangent space
\[
T|_x N_1=&\{X\in T|_x M\ |\ g(\nabla r_j,X)=0,\ j=m+1,\dots,n\} \\
=&\{X\in T|_x M\ |\ g(X_j,X)=0,\ j=m+1,\dots,n\} \\
=&\mc{D}_1^M|_x.
\]

Since $x\in N_1$ was arbitrary, this proves that $N_1$ is indeed an embedded submanifold of $M$
and $T|_x N_1 = \mc{D}_1^M|_x$ for all $x\in N_1$.

\end{proof}

\begin{lemma}\label{le-redu-4}
Let $d_i(x):=d_g(N_i,x)$, $x\in M$. Then
in the set where $d_i$ is smooth,
\begin{align}\label{eq:distance_function_stuff}
(\nabla \cos(d_i(\cdot)),\cos(d_i(\cdot)))\in \mc{D}_i^M,
\end{align}
where $\nabla$ is the gradient w.r.t $g$.
\end{lemma}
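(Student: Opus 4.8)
The plan is to deduce the statement from two ingredients that are already available: (a) each subbundle $\mc{D}_i$ is parallel with respect to $\nabla^{\Rol}$, and (b) the explicit formula for the $\nabla^{\Rol}$-parallel transport of the constant section $(0,1)$ along geodesics, namely Lemma~\ref{le-redu-2}. For (a) I would simply observe that the defining formula $\mc{D}_i|_x=(P^{\nabla^{\Rol}})_0^1(\gamma)V_i$ combined with the concatenation property of parallel transport yields, for any absolutely continuous curve $\sigma$ from $x$ to $y$ and any such $\gamma$ from the fixed point $x_0$ to $x$, that $(P^{\nabla^{\Rol}})_0^1(\sigma)\mc{D}_i|_x=(P^{\nabla^{\Rol}})_0^1(\sigma\sqcup\gamma)V_i=\mc{D}_i|_y$; this is nothing but the path-independence already checked when the $\mc{D}_i$ were introduced.

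Next I would fix a point $x$ at which $d_i$ is smooth. If $x\in N_i$ the claim is immediate: then $d_i(x)=0$, $\nabla d_i|_x=0$, and $(0,1)\in\mc{D}_i|_x$ by the very definition of $N_i$. So assume $d_i(x)>0$. Here I invoke the standard description of the distance function to a complete submanifold: since $N_i$ is a complete, totally geodesic submanifold of $(M,g)$ (Lemma~\ref{le-redu-3}) which is closed in the complete manifold $M$, the value $d_i(x)=d_g(N_i,x)$ is attained along a minimizing unit-speed geodesic $\gamma\colon[0,d_i(x)]\to M$ with $\gamma(0)=:x_0\in N_i$ and $\gamma(d_i(x))=x$; by the first variation formula $\gamma$ meets $N_i$ orthogonally at $x_0$, and at a point of smoothness of $d_i$ one has $\dot\gamma(d_i(x))=\nabla d_i|_x$. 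Equivalently, writing $u:=\dot\gamma(0)\in T|_{x_0}^{\perp}N_i$ (a unit vector) and $\gamma_u(t):=\exp_{x_0}(tu)$, we get $\gamma_u(d_i(x))=x$ and $\dot\gamma_u(d_i(x))=\nabla d_i|_x$. This Riemannian-geometry input (see e.g. \cite{sakai91}) is the only genuinely non-formal step, and I expect it to be the main, though mild, obstacle.

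Finally I would put the pieces together. Since $x_0\in N_i$ we have $(0,1)\in\mc{D}_i|_{x_0}$, so by Lemma~\ref{le-redu-2} and the parallelism (a),
$$(P^{\nabla^{\Rol}})_0^t(\gamma_u)(0,1)=\big(-\sin(t)\,\dot\gamma_u(t),\ \cos(t)\big)\in\mc{D}_i|_{\gamma_u(t)},\qquad\text{for all }t.$$
Evaluating at $t=d_i(x)$ and using the chain rule $\nabla\cos(d_i(\cdot))|_x=-\sin(d_i(x))\,\nabla d_i|_x$ together with $\dot\gamma_u(d_i(x))=\nabla d_i|_x$, I obtain
$$\big(\nabla\cos(d_i(\cdot)),\ \cos(d_i(\cdot))\big)|_x=\big(-\sin(d_i(x))\,\nabla d_i|_x,\ \cos(d_i(x))\big)\in\mc{D}_i|_x,$$
and in particular $\nabla\cos(d_i(\cdot))|_x\in\mc{D}_i^M$. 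Since $x$ was an arbitrary point of smoothness of $d_i$, this establishes the lemma.
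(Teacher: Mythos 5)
Your proposal is correct and follows essentially the same route as the paper: take the minimal unit-speed geodesic from $N_i$ meeting it orthogonally, note that $(0,1)\in\mc{D}_i$ at the foot point, transport it by Lemma \ref{le-redu-2} using the $\nabla^{\Rol}$-parallelism of $\mc{D}_i$, and identify $\dot\gamma_u(d_i(x))$ with $\nabla d_i|_x$. The only cosmetic difference is that the paper first argues on the complement of the cut and conjugate loci of $N_i$ and then extends by density and continuity, whereas you invoke directly the standard fact that at points of smoothness $\nabla d_i$ is the terminal velocity of the unique minimizing normal geodesic; both are fine.
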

\begin{proof}
Let $x\in M\backslash N_1$. Choose $y\in N_1$, $u\in (T|_y N_1)^\perp$
such that $\gamma_u:[0,d_i(x)]\to M$ is the minimal normal unit speed geodesic from $N_1$ to $x$.
Since $(0,1)\in \mc{D}_1|_y$ (because $y\in N_1$), it follows that
the parallel translate of $(0,1)$ along $\gamma_u$ stays in $\mc{D}_1$
which, in view of (\ref{eq:parallel_translate_of_0_1}), gives
\[
\mc{D}_1|_x\ni (P^{\nabla^\Rol})_0^{d_1(x)}(\gamma_u)(0,1)
=&\big(-\sin(d_1(x))\dot{\gamma}_u(d_1(x)),\cos(d_1(x))\big) \\
=&\big(-\sin(d_1(x))\nabla(d_1(\cdot))|_x,\cos(d_1(x))\big) \\
=&\big(\nabla\cos(d_1(\cdot))|_x,\cos(d_1(x))\big),
\]
where the last two equalities hold true if $x$ is not in the cut nor the conjugate
locus of $N_1$ (nor is $x$ in $N_1$, by assumption). Working in the complement of these points, which is a dense subset of $M$
and using a continuity argument, we may assure that the result
holds true everywhere where $d_i$ is smooth.
The same argument proves the formula (\ref{eq:distance_function_stuff}) for $d_2$ as well.

\end{proof}

\begin{lemma}\label{le-redu-5}
For every $Y\in\VF(M)$, one has
\begin{align}\label{eq:the_curvature_formula}
g\big(R(Y,\nabla d_1(\cdot))\nabla d_1(\cdot),Y\big)=g(Y,Y)-\big(\nabla_Y (d_1(\cdot))\big)^2,
\end{align}
wherever $d_1(\cdot)$ is smooth.
\end{lemma}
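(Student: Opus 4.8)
The plan is to exploit the parallelism of the section $(\nabla\cos(d_1(\cdot)),\cos(d_1(\cdot)))$ coming from Lemma~\ref{le-redu-4} together with the explicit formula for the curvature of $\nabla^{\Rol}$ given in Eq.~(\ref{eq:curvature_of_nabla_rol}). Write $\phi:=\cos(d_1(\cdot))$, so that by Lemma~\ref{le-redu-4} the section $s:=(\nabla\phi,\phi)$ is a local $\pi_{\mc{D}_1}$-section wherever $d_1$ is smooth, and $\nabla\phi=-\sin(d_1)\nabla d_1$. The key observation I would first establish is that $s$ is $\nabla^{\Rol}$-parallel along the normal geodesics to $N_1$ — indeed along such a geodesic $\gamma_u$, $s$ is exactly $(P^{\nabla^\Rol})_0^t(\gamma_u)(0,1)$ by Eq.~(\ref{eq:parallel_translate_of_0_1}) in Lemma~\ref{le-redu-2}. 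But more is true: since $\mc{D}_1$ is a parallel subbundle and $s$ spans the "$(0,1)$-direction inside $\mc{D}_1$" in a canonical way, $s$ should be parallel in all directions, not just radial ones. Concretely, I would compute $\nabla^{\Rol}_Y s$ directly from Eq.~(\ref{eq:nabla_rol_explicit}):
\[
\nabla^{\Rol}_Y(\nabla\phi,\phi)=\big(\nabla_Y\nabla\phi+\phi Y,\ Y(\phi)-g(\nabla\phi,Y)\big).
\]
The second component vanishes identically because $Y(\phi)=g(\nabla\phi,Y)$ by definition of the gradient (here $c=k=1$). For the first component, I would use that $\mc{D}_1$ is parallel to conclude $\nabla^{\Rol}_Y s\in\mc{D}_1$, and then that at points of $N_1$ and along normal geodesics the $\mc{D}_1$-component in the "$(0,1)$ direction" is forced; alternatively, and more cleanly, one differentiates the radial identity and uses the Gauss lemma / the fact that $\nabla d_1$ has unit length where smooth, to get $\nabla_Y\nabla\phi=-\phi Y$, i.e. $\nabla^{\Rol}_Y s=0$ for all $Y$.

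Granting $\nabla_Y\nabla\phi=-\phi\,Y$ for all $Y\in\VF(M)$ (this is the Hessian identity $\mathrm{Hess}\,\phi=-\phi\,g$, which is precisely the obstruction computation one expects from a space-form comparison), the curvature identity follows by a standard Bochner-type manipulation. I would write, with $W:=\nabla d_1=\nabla\phi/(-\sin d_1)$ a unit vector field where $d_1$ is smooth,
\[
R(Y,\nabla\phi)\nabla\phi=\nabla_Y\nabla_{\nabla\phi}\nabla\phi-\nabla_{\nabla\phi}\nabla_Y\nabla\phi-\nabla_{[Y,\nabla\phi]}\nabla\phi.
\]
Using $\nabla_Z\nabla\phi=-\phi Z$ for every $Z$ repeatedly: $\nabla_{\nabla\phi}\nabla\phi=-\phi\nabla\phi$, so $\nabla_Y\nabla_{\nabla\phi}\nabla\phi=-Y(\phi)\nabla\phi-\phi\nabla_Y\nabla\phi=-Y(\phi)\nabla\phi+\phi^2 Y$; next $\nabla_{\nabla\phi}\nabla_Y\nabla\phi=\nabla_{\nabla\phi}(-\phi Y)=-(\nabla\phi)(\phi)Y-\phi\nabla_{\nabla\phi}Y$; and $\nabla_{[Y,\nabla\phi]}\nabla\phi=-\phi[Y,\nabla\phi]=-\phi(\nabla_Y\nabla\phi-\nabla_{\nabla\phi}Y)=\phi^2Y+\phi\nabla_{\nabla\phi}Y$. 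Adding these with the correct signs, the $\nabla_{\nabla\phi}Y$ terms cancel and one is left with
\[
R(Y,\nabla\phi)\nabla\phi=-Y(\phi)\nabla\phi+\phi^2Y+(\nabla\phi)(\phi)Y-\phi^2Y=(\nabla\phi)(\phi)Y-Y(\phi)\nabla\phi.
\]
Now pair with $Y$: $g(R(Y,\nabla\phi)\nabla\phi,Y)=(\nabla\phi)(\phi)g(Y,Y)-Y(\phi)g(\nabla\phi,Y)=-|\nabla\phi|^2g(Y,Y)+\big(Y(\phi)\big)^2\cdot(-1)$... I must be careful with signs here; the cleanest route is to rescale back to $d_1$. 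Since $\nabla\phi=-\sin(d_1)\nabla d_1$ and $|\nabla d_1|^2=1$ on the smooth set, bilinearity in the $\nabla\phi$ slots gives $g(R(Y,\nabla d_1)\nabla d_1,Y)=\sin^{-2}(d_1)\,g(R(Y,\nabla\phi)\nabla\phi,Y)$, and a short computation with $(\nabla\phi)(\phi)=-\sin(d_1)\cos(d_1)\cdot(\nabla d_1)(\cos d_1)$ reduces the right-hand side to $g(Y,Y)-\big(\nabla_Y(d_1(\cdot))\big)^2$, using $Y(\phi)=-\sin(d_1)\,Y(d_1)$. I would present this final reduction carefully since the trigonometric bookkeeping is where an error is easy to make.

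The main obstacle, and the one step I would want to nail down rigorously, is the justification that $\mathrm{Hess}\,\phi=-\phi\,g$ holds in \emph{all} directions $Y$, not merely radially from $N_1$. The honest argument is: the vanishing of the second component of $\nabla^{\Rol}_Y s$ is automatic, and $\nabla^{\Rol}_Y s\in\mc{D}_1$ because $\mc{D}_1$ is $\nabla^{\Rol}$-parallel (its sections are closed under covariant differentiation); combined with the fact that $s$ restricted to any normal geodesic is the parallel transport of $(0,1)\in\mc{D}_1$, the uniqueness of parallel transport forces $\nabla^{\Rol}_Y s=0$ for radial $Y$, and then one invokes that $\phi$ depends only on $d_1$ together with the first variation formula to upgrade to all $Y$ — equivalently, differentiating the radial identity $\nabla^{\Rol}_{W}s=0$ in transverse directions using that $W=\nabla d_1$ is itself a gradient field. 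I expect this to be the place where one must either cite the structure of distance functions to totally geodesic submanifolds (Lemma~\ref{le-redu-3} gives that $N_1$ is totally geodesic and complete) or give a direct Jacobi-field argument as in Lemma~\ref{le-redu-2}; everything after that is the routine Bochner computation displayed above.
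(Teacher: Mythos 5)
Your reduction of the lemma to the pointwise identity $\mathrm{Hess}(\cos d_1)=-\cos(d_1)\,g$ is where the argument breaks, and no amount of care in the "upgrade to all $Y$" step can rescue it, because that identity is false in the situation at hand. Parallelism of the subbundle $\mc{D}_1$ only guarantees that $\nabla^{\Rol}_Y s\in\mc{D}_1$ for the unit section $s=(\nabla\cos d_1,\cos d_1)$; since $s$ has constant $h$-norm, $\nabla^{\Rol}_Y s$ is $h$-orthogonal to $s$ inside $\mc{D}_1$, but it need not vanish — and it does not. Completing $s$ to an $h$-orthonormal frame $(X_0,r_0)=s,(X_1,0),\dots,(X_m,0)$ of $\mc{D}_1$ (recall $\dim\mc{D}_1=m+1\geq 2$ in this part of the argument), one finds $\nabla_Y\nabla\cos d_1+\cos(d_1)Y=\tfrac{1}{\cos d_1}\sum_{j}g(X_j,Y)X_j$, which is nonzero whenever $Y$ has a component along the $X_j$'s. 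The radial facts you invoke (Lemma \ref{le-redu-2}, Lemma \ref{le-redu-4}) only say that $s$ is the parallel transport of $(0,1)$ along the normal geodesics; they control $\nabla^{\Rol}_{\nabla d_1}s$ and say nothing about the transverse derivatives. The model case makes the failure concrete: for a totally geodesic $S^m\subset S^n$, the Hessian of $\cos(d_1)$ has eigenvalue $(1-\cos^2 d_1)/\cos d_1$, not $-\cos d_1$, on the $m$ directions coming from $N_1$, exactly matching the extra term above. Your Bochner manipulation is fine as algebra and would deliver the stated formula if the space-form Hessian held, but the premise fails, so the proposed proof does not establish the lemma.

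The paper's proof succeeds precisely because it never assumes that identity. It computes the full Hessian of $d_1$ from the connection one-forms $\omega^i_j$ of the above frame of $\mc{D}_1$, obtaining $\nabla_Y\nabla d_1=-\cot(d_1)\nabla_Y(d_1)\nabla d_1+\cot(d_1)Y-\tfrac{1}{\sin(d_1)\cos(d_1)}\sum_j g(X_j,Y)X_j$, and then substitutes this into the Riccati-type equation $-g(R(Y,\nabla d_1)\nabla d_1,Z)=\mathrm{Hess}^2(d_1)(Y,Z)+(\nabla_{\nabla d_1}\mathrm{Hess}(d_1))(Y,Z)$ satisfied by distance functions. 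In the contraction with $Z=Y$ the contributions of the extra term cancel — the troublesome sum is of the form $\sum_{i,j}\omega^i_j(X_0)\,g(Y,X_i)g(X_j,Y)$ and vanishes because $\omega^i_j(X_0)$ is skew in $(i,j)$ while the other factor is symmetric — and only after this cancellation does one land on $g(R(Y,\nabla d_1)\nabla d_1,Y)=g(Y,Y)-(\nabla_Y d_1)^2$. So if you want to keep your structure, you must replace the claimed global parallelism of $s$ by the actual formula for $\nabla^{\Rol}_Y s$ in terms of the frame of $\mc{D}_1$ and carry the resulting extra Hessian term through the curvature computation, which is exactly what the paper does.
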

\begin{proof}

\newcommand{\Hess}{\mathrm{Hess}}

It is known  (see \cite{petersen06}) 
that for any $Y,Z\in \VF(M)$, $d_1(\cdot)$ satisfies a PDE
\[
-g(R(Y|_y,\nabla d_1(y))\nabla d_1(y),Z|_y)=&\Hess^2 (d_1(\cdot))(Y|_y,Z|_y)\\
+&\big(\nabla_{\nabla d_1(y)} \Hess (d_1(\cdot))\big)(Y|_y,Z|_y),
\]
for every $y\in M$ such that $d_1$ is smooth at $y$ (and this is true in a dense subset of $M$). In particular, $y\notin N_1$.
Also, since the set of points $y\in M$ where $\cos(d_1(y))=0$ or $\sin(d_1(y))=0$ is clearly
Lebesgue zero-measurable, we may assume that $\cos(d_1(y))\neq 0$ and $\sin(d_1(y))\neq 0$.

Notice that $(X_0,r_0):=(\nabla\cos(d_1(\cdot)),\cos(d_1(\cdot)))$ belongs to $\mc{D}_1$ and has $h$-norm equal to $1$.
We may choose in a neighbourhood $U$ of $y$ vector fields $X_1,\dots,X_m\in \VF(U)$
such that $(X_0,r_0),(X_1,0),\dots,(X_m,0)$ is an $h$-orthonormal basis of $\mc{D}_1$ over $U$.
Assume also that $(X_0,r_0)$ is smooth on $U$.
This implies that there are smooth one-forms $\omega^i_j$, $i,j=0,\dots,m$
defined by (set here $r_1=\dots=r_m=0$)
\[
\nabla^{\Rol}_{Y} (X_i,r_i)=\sum_{i=0}^m \omega^j_i(Y)(X_j,r_j),\quad Y\in\VF(M),
\]
or, more explicitly,
\[
\begin{cases}
\displaystyle \nabla_Y X_j+r_jY=\sum_{i=0}^m \omega^i_j(Y)X_i \\
\displaystyle Y(r_j)-g(Y,X_j)=\sum_{i=0}^m \omega^i_j(Y)r_i,
\end{cases}
\]
Since $(X_0,r_0),\dots,(X_m,r_m)$ are $h$-orthonormal, it follows that $\omega^i_j=-\omega^j_i$.
The fact that $r_1=\dots=r_m=0$ implies that
\[
-g(Y,X_j)=\omega_j^0(Y)r_0,\quad j=1,\dots,m
\]
i.e.,
\[
\omega_0^j(Y)=\frac{g(Y,X_j)}{\cos(d_1(\cdot))}.
\]
But then one has that (notice that $\omega^0_0=0$)
\[
\nabla_Y X_0+r_0 Y=\sum_{j=1}^m \omega_0^j(Y)X_j,
\]
which simplifies to
\[
& -\sin(d_1(\cdot))\nabla_Y \nabla d_1(\cdot)
-\cos(d_1(\cdot))\nabla_Y (d_1(\cdot))\nabla d_1(\cdot) \\
=&-\cos(d_1(\cdot))Y+\frac{1}{\cos(d_1(\cdot))}\sum_{j=1}^m g(X_j,Y)X_j,
\]
or
\[
\nabla_Y \nabla d_1(\cdot)=&-\cot(d_1(\cdot))\nabla_Y (d_1(\cdot))\nabla d_1(\cdot)+\cot(d_1(\cdot))Y\\
&-\frac{1}{\sin(d_1(\cdot))\cos(d_1(\cdot))}\sum_{j=1}^m g(X_j,Y)X_j.
\]

Writing $S(Y):=\nabla_Y \nabla d_1(\cdot)=\Hess (d_1(\cdot))(Y,\cdot)$, one obtains
\[
& (\nabla_{\nabla d_1(\cdot)} S)(Y)
=\nabla_{\nabla d_1(\cdot)} (S(Y))-S(\nabla_{\nabla d_1(\cdot)} Y) \\
=&\frac{1}{\sin^2 (d_1(\cdot))}\nabla_Y (d_1(\cdot))\nabla d_1(\cdot)
-\cot(d_1(\cdot))g(\nabla_{\nabla d_1(\cdot)} Y,\nabla d_1(\cdot))\nabla d_1(\cdot) \\
&-\frac{1}{\sin^2(d_1(\cdot))}Y-\Big(\frac{1}{\cos^2(d_1(\cdot))}-\frac{1}{\sin^2(d_1(\cdot))}\Big)\sum_{j=1}^m g(Y,X_j)X_j \\
&-\frac{1}{\sin(d_1(\cdot))\cos(d_1(\cdot))}\sum_{j=1}^m \big(g(Y,\nabla_{\nabla d_1(\cdot)} X_j)X_j+g(Y,X_j)\nabla_{\nabla d_1(\cdot)} X_j\big) \\
&+\cot(d_1(\cdot))\underbrace{\nabla_{\nabla_{\nabla d_1(\cdot)} Y} (d_1(\cdot))}_{=g(\nabla d_1(\cdot),\nabla_{\nabla d_1(\cdot)} Y)}\nabla d_1(\cdot),
\]
where we used that $\nabla_{\nabla d_1(\cdot)} (d_1(\cdot))=g(\nabla d_1(\cdot),\nabla d_1(\cdot))=1$.
On the other hand,
\[
&\Hess^2 (d_1(\cdot))(Y,\cdot)=S^2(Y)=S(S(Y)) \\
=&S\Big(-\cot(d_1(\cdot))\nabla_Y (d_1(\cdot))\nabla d_1(\cdot)+\cot(d_1(\cdot))Y\\
&-\frac{1}{\sin(d_1(\cdot))\cos(d_1(\cdot))}\sum_{j=1}^m g(X_j,Y)X_j\Big) \\
=&-\cot^2(d_1(\cdot))\nabla_Y (d_1(\cdot))\nabla d_1(\cdot) 
+\cot^2(d_1(\cdot))Y
-\frac{2}{\sin^2(d_1(\cdot))}\sum_{j=1}^m g(X_j,Y)X_j \\
&+\frac{1}{\sin^2(d_1(\cdot))\cos^2(d_1(\cdot))}\sum_{j=1}^m g(X_j,Y)X_j,
\]
where we used that $\nabla d_1(\cdot),X_1,\dots,X_m$ are $g$-orthonormal
(recall that  
$$
X_0=-\sin(d_1(\cdot))\nabla d_1(\cdot).)
$$
Thus, for any $Y,Z\in\VF(M)$, one has on $U$ that
\[
&-g(R(Y,\nabla d_1(\cdot))\nabla d_1(\cdot),Z) \\
=&-g(Y,Z)+\Big(\frac{1}{\sin^2 (d_1(\cdot))}-\cot^2(d_1(\cdot))\Big)\nabla_Y (d_1(\cdot))\nabla_Z (d_1(\cdot)) \\
&-\frac{1}{\sin(d_1(\cdot))\cos(d_1(\cdot))}\sum_{j=1}^m \big(g(Y,\nabla_{\nabla d_1(\cdot)} X_j)g(X_j,Z)+g(Y,X_j)g(\nabla_{\nabla d_1(\cdot)} X_j,Z)\big).
\]
We also set $Z=Y$ and hence get
\[
-g(R(Y,\nabla d_1(\cdot))\nabla d_1(\cdot),Y)
=&-g(Y,Y)+\nabla_Y (d_1(\cdot))\nabla_Y (d_1(\cdot)) \\
-&\frac{2}{\sin(d_1(\cdot))\cos(d_1(\cdot))}\sum_{j=1}^m g(Y,\nabla_{\nabla d_1(\cdot)} X_j)g(X_j,Y). \\
\]
Here
\[
&\sum_{j=1}^m g(Y,\nabla_{\nabla d_1(\cdot)} X_j)g(X_j,Y)
=-\frac{1}{\sin (d_1(\cdot))}\sum_{j=1}^m g(Y,\nabla_{X_0} X_j)g(X_j,Y) \\
=&-\frac{1}{\sin (d_1(\cdot))}\sum_{j=1}^m g(Y,\sum_{i=1}^m \omega_j^i(X_0)X_i g(X_j,Y) \\
=&-\frac{1}{\sin (d_1(\cdot))}\sum_{i,j=1}^m \underbrace{\omega_j^i(X_0)}_{(\star)_1}\underbrace{g(Y,X_i) g(X_j,Y)}_{(\star)_2}=0,
\]
where expression $(\star)_1$ is skew-symmetric in $(i,j)$ while
$(\star)_2$ is symmetric on $(i,j)$. Hence the sum is zero.
We finally obtain
\[
g(R(Y,\nabla d_1(\cdot))\nabla d_1(\cdot),Y)
=g(Y,Y)-\big(\nabla_Y (d_1(\cdot))\big)^2,
\]
as claimed.
It is clear that this formula now holds at every point of $M$ where $d_1(\cdot)$
is smooth and for any $Y\in \VF(M)$.
In particular, if $Y$ is a unit vector $g$-perpendicular to $\nabla d_1(\cdot)$
at a point $y$ of $M$, then $\nabla_Y d_1(\cdot)|_y=g(\nabla d_1(\cdot)|_y,Y|_y)=0$
and hence
\[
\mathrm{sec}(Y,d_1(\cdot))|_y=+1.
\]
\end{proof}

\begin{lemma}\label{le-redu-6}
For every $x\in N_1$, a unit vector $u\in (T|_x N_1)^\perp$
and $v\in T|_x M$ with $v\perp u$,
\begin{align}\label{eq:diff_of_exp}
\n{(\exp_x)_*|_{tu} (v)}_g=\big|\frac{\sin(t)}{t}\big|\n{v}_g,\quad t\in\R.
\end{align}
In particular, for all unit vectors $u_1,u_2\in (T|_x N_1)^\perp$ one has
\[
\exp_x(\pi u_1)=\exp_x(\pi u_2).
\]
\end{lemma}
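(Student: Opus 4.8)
I would fix $x\in N_1$ and a unit vector $u\in(T|_xN_1)^\perp$, write $\gamma_u(t)=\exp_x(tu)$, and for $v\in T|_xM$ with $v\perp u$ consider the Jacobi field $Y(t)=t(\exp_x)_*|_{tu}(v)$ along $\gamma_u$, which satisfies $Y(0)=0$ and $\nabla_{\dot\gamma_u(0)}Y=v$. Since $t\mapsto g(Y(t),\dot\gamma_u(t))$ is affine and vanishes at $t=0$ together with its derivative $g(v,u)=0$, one has $Y(t)\perp\dot\gamma_u(t)$ for all $t$; hence $\n{(\exp_x)_*|_{tu}(v)}_g=\n{Y(t)}_g/|t|$, and it suffices to prove $\n{Y(t)}_g=|\sin t|\,\n{v}_g$ for all $t$.

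The key claim is that $R(Z,\dot\gamma_u(t))\dot\gamma_u(t)=Z$ for every $Z\in T|_{\gamma_u(t)}M$ with $Z\perp\dot\gamma_u(t)$ and every $t$. Wherever $\gamma_u$ realizes the distance to $N_1$ one has $d_1(\gamma_u(t))=|t|$ and $\nabla d_1|_{\gamma_u(t)}=\dot\gamma_u(t)$, so Lemma~\ref{le-redu-5} gives $g(R(Z,\dot\gamma_u)\dot\gamma_u,Z)=\n{Z}_g^2$ for $Z\perp\dot\gamma_u$; since $Z\mapsto R(Z,\dot\gamma_u)\dot\gamma_u$ is a $g$-selfadjoint endomorphism of $\dot\gamma_u^\perp$, this quadratic form forces it to be the identity, which is the claim on that interval. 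To cover the full interval $t\in(0,\pi)$ I would use only facts already available: for $x\in N_1$ one has $\mc D_1|_x=T|_xN_1\oplus\R$ (Lemma~\ref{le-redu-3}) and hence $\mc D_2|_x=(T|_xN_1)^\perp$; solving the parallel transport equations \eqref{eq:rol_parallel}, \eqref{eq:rol_parallel_geodesic} along the unit-speed geodesic $\gamma_u$ gives $(P^{\nabla^{\Rol}})_0^t(\gamma_u)(u,0)=(\cos t\,\dot\gamma_u(t),\sin t)$ and, for $a\in T|_xN_1$, $(P^{\nabla^{\Rol}})_0^t(\gamma_u)(a,0)=(P_0^t(\gamma_u)a,0)$. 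Combined with Lemma~\ref{le-redu-2} and the holonomy invariance of $\mc D_1,\mc D_2$, evaluation at $t=\pi/2$ and $t=\pi$ shows $\gamma_u(\pi/2)\in N_2$ with $\gamma_u$ meeting $N_2$ orthogonally there, and $\gamma_u(\pi)\in N_1$ with $\gamma_u$ meeting $N_1$ orthogonally there. Then the curvature identity near $t=0$ comes from $N_1$, and, running $\gamma_u$ backward from $\gamma_u(\pi/2)\in N_2$, the identity near $t=\pi/2$ comes from the symmetric analogue of Lemma~\ref{le-redu-5} for $N_2$; standard comparison arguments (the focal distance of a complete totally geodesic submanifold in curvature $+1$ is $\pi/2$, and no cut point occurs before the first focal point in this situation) show that these two regions together exhaust $(0,\pi)$, so $R(Z,\dot\gamma_u)\dot\gamma_u=Z$ on $\dot\gamma_u^\perp$ for all $t\in(0,\pi)$.

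With the curvature identity in hand, the Jacobi equation for $Y\perp\dot\gamma_u$ on $(0,\pi)$ becomes $\nabla_{\dot\gamma_u(t)}\nabla_{\dot\gamma_u(\cdot)}Y(\cdot)=R(\dot\gamma_u,Y)\dot\gamma_u=-R(Y,\dot\gamma_u)\dot\gamma_u=-Y$, whose unique solution with $Y(0)=0$, $\nabla_{\dot\gamma_u(0)}Y=v$ is $Y(t)=\sin(t)E(t)$, where $E$ is the $g$-parallel field along $\gamma_u$ with $E(0)=v$. Since parallel transport is a $g$-isometry this gives $\n{Y(t)}_g=|\sin t|\,\n{v}_g$ on $[0,\pi]$ by continuity at the endpoints. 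Because $\gamma_u(\pi)\in N_1$ and $\gamma_u$ meets $N_1$ orthogonally there, the whole argument restarts at $\gamma_u(\pi)$ and propagates the identity to $[\pi,2\pi]$, and inductively to all of $[0,\infty)$; negative $t$ is obtained by applying the result to $\gamma_{-u}$, noting $-u\in(T|_xN_1)^\perp$. This yields $\n{(\exp_x)_*|_{tu}(v)}_g=|\sin t/t|\,\n{v}_g$ for all $t\in\R$, and along the way that $Y(t)=\sin(t)E(t)$ globally. For the last assertion, at $t=\pi$ this identity gives $(\exp_x)_*|_{\pi u}(v)=\tfrac1\pi Y(\pi)=0$ for every $v\perp u$, while $(\exp_x)_*|_{\pi u}(u)=\dot\gamma_u(\pi)$; hence for any smooth path $s\mapsto u(s)$ of unit vectors in $(T|_xN_1)^\perp$ one gets $\tfrac{d}{ds}\exp_x(\pi u(s))=(\exp_x)_*|_{\pi u(s)}(\pi u'(s))=0$ since $u'(s)\perp u(s)$, and as the unit sphere of $(T|_xN_1)^\perp$ is connected ($\dim(T|_xN_1)^\perp=n-m\ge2$), the map $u\mapsto\exp_x(\pi u)$ is constant, so $\exp_x(\pi u_1)=\exp_x(\pi u_2)$.

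The main obstacle is the middle paragraph: upgrading the curvature identity from the region where $d_1$ is a priori smooth to the whole geodesic $\gamma_u$, and then bootstrapping past $t=\pi$. Compared with the classical Ambrose/de Rham arguments, the new feature is that one must exploit the \emph{pair} $N_1,N_2$ — $N_1$ governs $\gamma_u$ near $t=0$, $N_2$ governs it near $t=\pi/2$ — together with precise cut- and focal-locus bookkeeping to see that these two regions meet; the $\nabla^{\Rol}$-parallel transport formulas, which are forced by \eqref{eq:rol_parallel_geodesic}, are exactly what locate the crossing points $\gamma_u(\pi/2)\in N_2$, $\gamma_u(\pi)\in N_1$ and establish the orthogonality there, so this is the step I would write out most carefully.
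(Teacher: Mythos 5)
Your core argument is the same as the paper's: the Jacobi field $Y_{u,v}(t)=t(\exp_x)_*|_{tu}(v)$, its perpendicularity to $\dot\gamma_u$ via the Gauss lemma, the polarized curvature identity coming from Lemma \ref{le-redu-5}, the explicit solution $Y_{u,v}(t)=\sin(t)P_0^t(\gamma_u)v$ of the resulting Jacobi equation, and the final step that $\exp_x$ has vanishing differential on the radius-$\pi$ sphere in $(T|_xN_1)^\perp$, which is connected because $\dim(T|_xN_1)^\perp\geq 2$, hence is constant there. Where you diverge is the long middle paragraph: the paper does none of that — it simply applies the polarized identity of Lemma \ref{le-redu-5} along all of $\gamma_u$ (leaving the question of where $d_1$ is smooth along the geodesic implicit) and solves the ODE for all $t$ at once, with no crossing through $N_2$, no bootstrap past $t=\pi$, and no separate treatment of negative $t$. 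The auxiliary facts you invoke there are legitimate and are essentially Lemma \ref{le-redu-7}, whose proof in the paper uses only \eqref{eq:rol_parallel} and \eqref{eq:rol_parallel_geodesic} and is independent of the present lemma, so there is no circularity; but the one assertion carrying the extra rigor you aim for — that in this situation no cut point of $N_1$ (or $N_2$) occurs before the first focal point, so that the two regions governed by $N_1$ and $N_2$ exhaust $(0,\pi)$ — is not a standard fact for distance functions to submanifolds and is not proved in your sketch, so as written it does not actually close the smoothness gap; it only relocates it. To be fair, the paper's own proof glosses over exactly the same point, so if you drop or properly justify that claim, your write-up is a faithful (and more self-aware) version of the paper's argument rather than a different proof.
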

\begin{proof}
Let $Y_{u,v}(t)=\pa{s}|_0 \exp_x(t(u+sv))$ be the Jacobi field along $\gamma_u(t)=\exp_x(tu)$
such that $Y_{u,v}(0)=0$, $\nabla_{\dot{\gamma}_u(0)} Y_{u,v}=v$.
Since $v\perp u$, it follows from the Gauss lemma (see \cite{sakai91})
that $Y_{u,v}(t)\perp \dot{\gamma}_u(t)$ for all $t$.
Moreover, the assumption $u\in (T|_{x} N_1)^\perp$
implies that $\nabla d_1(\cdot)|_{\gamma_u(t)}=\dot{\gamma}_u(t)$
and thus $\nabla_{Y_{u,v}(t)} (d_1(\cdot))=g(\dot{\gamma}_u(t),Y_{u,v}(t))=0$.

By polarization, one may write (\ref{eq:the_curvature_formula}) into the form
\[
R(Z(t),\dot{\gamma}_u(t))\dot{\gamma}_u(t)=Z(t)-g(Z(t),\dot{\gamma}_u(t))\dot{\gamma}_u(t),
\]
for any vector field $Z$ along $\gamma_u$.
In particular,
\[
\nabla_{\dot{\gamma}_u}\nabla_{\dot{\gamma}_u} Y_{u,v}
=-R(Y_{u,v},\dot{\gamma}_u)\dot{\gamma}_u
=-Y_{u,v},
\]
since $g(Y_{u,v}(t),\dot{\gamma}_u(t))=0$ for all $t$.
On the other hand, the vector field 
$Z(t)=\sin(t)P_0^t(\gamma_u)v$ satisfies along $\gamma_u$ 
\[
&\nabla_{\dot{\gamma}_u(t)}\nabla_{\dot{\gamma}_u} Z=-Z(t),\quad \forall t \\
&Z(0)=0,\ \nabla_{\dot{\gamma}_u} Z|_{t=0}=v,
\]
i.e., the same initial value problem as $Y_{u,v}$.
This implies that
\begin{align}\label{eq:Jacobi_explicit}
Y_{u,v}(t)=\sin(t)P_0^t(\gamma_u)v,
\end{align}
from which we obtain (\ref{eq:diff_of_exp}) because $Y_{u,v}(t)=t(\exp_x)_*|_{tu}(v)$.

The last claim follows from the fact that
the map $\exp_x|_S:S\to M$ where $S=\{u\in (T|_x N_1)^\perp\ |\ \n{u}=\pi\}$
is a constant map. Indeed,
if $u\in S$, $v\in T|_u S$ and we identify $v$ as an element of $T|_x M$ as usual,
then by what we have just proved (note that $u=\pi\frac{u}{\n{u}}_g$),
\[
\n{(\exp_x)_*|_{u}(v)}_g=\frac{\sin(\pi)}{\pi}\n{v}_g=0.
\]
Hence $\exp_x|_S$ has zero differential on all over $S$ which is connected,
since its dimension is $n-m-1\geq 1$ by assumption. Hence $\exp_x|_S$ is a 
constant map.

\end{proof}
\begin{lemma}\label{le-redu-7}
For every $x\in N_1$ and unit normal vector $u\in (T|_x N_1)^\perp$, the geodesic $t\mapsto \gamma_u(t)$ meets $N_2$ exactly at $t\in (\Z+\frac{1}{2})\pi$.
The same holds with the roles of $N_1$ and $N_2$ interchanged.
\end{lemma}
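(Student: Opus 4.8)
The plan is to leverage the material already developed for $N_1$ in Lemmas \ref{le-redu-2} through \ref{le-redu-6} and combine it with the totally geodesic, complete structure of $N_2$ (Lemma \ref{le-redu-3}), together with the parallel-transport formulas for $\nabla^\Rol$. First I would fix $x\in N_1$ and a unit normal $u\in(T|_x N_1)^\perp$, and consider the unit speed geodesic $\gamma_u(t)=\exp_x(tu)$. By Lemma \ref{le-redu-6}, we have $\n{(\exp_x)_*|_{tu}(v)}_g = |\sin(t)/t|\,\n{v}_g$ for $v\perp u$, so $\gamma_u$ returns (in the differential-geometric sense) at $t=\pi$: indeed $\exp_x(\pi u)$ is independent of the chosen unit normal $u$. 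The natural candidate is that $\exp_x(\tfrac{\pi}{2}u)\in N_2$ and more generally $\gamma_u(t)\in N_2$ exactly for $t\in(\Z+\tfrac12)\pi$.

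The key computation is to track the section $(0,1)$ of $\pi_{TM\oplus\R}$ along $\gamma_u$ using $\nabla^\Rol$-parallel transport. Since $x\in N_1$, we have $(0,1)\in\mc{D}_1|_x$, and by Lemma \ref{le-redu-2}, $(P^{\nabla^\Rol})^t_0(\gamma_u)(0,1) = (-\sin(t)\dot\gamma_u(t),\cos(t))$, which lies in $\mc{D}_1|_{\gamma_u(t)}$ for all $t$ because $\mc{D}_1$ is $\nabla^\Rol$-invariant. At $t=\tfrac{\pi}{2}$ this equals $(-\dot\gamma_u(\tfrac\pi2),0)$, a purely "vector" element. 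I also need to know that $\gamma_u(t)\in N_2$ is equivalent to $(0,1)\in\mc{D}_2|_{\gamma_u(t)}$, i.e. to $(0,1)\perp_h \mc{D}_1|_{\gamma_u(t)}$. So I would argue: $\mc{D}_1|_{\gamma_u(t)}$ is spanned by $(-\sin(t)\dot\gamma_u(t),\cos(t))$ together with parallel translates of vectors $(X_1,0),\dots,(X_m,0)\in\mc{D}_1|_x$ that are $h$-orthogonal to $(0,1)$; tracking these via \eqref{eq:rol_parallel} (with $r_i(0)=0$, $\dot r_i(0)=g(\dot\gamma_u(0),X_i)=g(u,X_i)=0$ since $X_i\in T|_xN_1\perp u$) shows they stay of the form $(\cdot,0)$, hence remain $h$-orthogonal to $(0,1)$ for all $t$. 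Therefore $(0,1)\in\mc{D}_2|_{\gamma_u(t)}$ iff $(0,1)\perp_h(-\sin(t)\dot\gamma_u(t),\cos(t))$, i.e. iff $\cos(t)=0$, i.e. iff $t\in(\Z+\tfrac12)\pi$. This proves one inclusion and, being an equivalence, the full statement: $\gamma_u$ meets $N_2$ precisely at $t\in(\Z+\tfrac12)\pi$.

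For the converse direction — that these geodesics genuinely hit $N_2$ rather than just being candidates — I note completeness of $(M,g)$ guarantees $\gamma_u$ is defined for all $t$, so $\gamma_u(\tfrac\pi2)$ is a well-defined point of $M$ at which $(0,1)\in\mc{D}_2$, hence it lies in $N_2$ by the definition $N_2=\{x\in M\ |\ (0,1)\in\mc{D}_2|_x\}$. Finally, the symmetric statement with $N_1$ and $N_2$ interchanged follows verbatim, since the entire setup ($\mc{D}_1,\mc{D}_2$, the curvature formula, the distance function computations) is symmetric under swapping the two subspaces $V_1,V_2$; one only needs that $\dim N_2 = n-m\geq 2$ so that the analog of Lemma \ref{le-redu-6} applies, which holds under the running assumption $1\leq m\leq n-2$.

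The main obstacle I anticipate is making rigorous the claim that $\mc{D}_1|_{\gamma_u(t)}$ decomposes as the span of the two types of parallel sections described above, and in particular verifying that the "horizontal" complement $(X_i,0)$ in $\mc{D}_1|_x$ parallel-transports to sections with vanishing $\R$-component along $\gamma_u$. This requires that $u\perp T|_xN_1=\mc{D}_1^M|_x$ (Lemma \ref{le-redu-3}) so that $g(u,X_i)=0$, and then a uniqueness-of-ODE argument applied to \eqref{eq:rol_parallel}; it is routine but must be stated carefully because the $X_i$ are only defined at the single point $x$ and transported, not extended as vector fields. Once that is in place, the equivalence $\cos(t)=0 \iff \gamma_u(t)\in N_2$ is immediate.
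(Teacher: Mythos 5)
Your proposal is correct and follows essentially the same route as the paper: parallel-transport elements of $\mc{D}_1$ along $\gamma_u$, use that $\dot r(0)=g(u,X)=0$ (since $u\perp\mc{D}_1^M|_x=T|_xN_1$) together with $\ddot r+r=0$ to get $r(t)=r(0)\cos(t)$, and conclude $(0,1)\perp_h\mc{D}_1|_{\gamma_u(t)}$, i.e. $\gamma_u(t)\in N_2$, exactly when $\cos(t)=0$; the paper simply treats a general $(X,r)\in\mc{D}_1|_x$ instead of splitting off $(0,1)$ and a complementary basis, which is the same computation. (Your closing remark that the swapped statement needs the analogue of Lemma \ref{le-redu-6} is unnecessary — the argument only uses Lemmas \ref{le-redu-2} and \ref{le-redu-3} and the parallel-transport ODEs, all symmetric in $N_1,N_2$ — but this does not affect correctness.)
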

\begin{proof}
Let $x\in N_1$ and $u\in (T|_x N_1)^\perp$ be a unit vector normal vector to $N_1$.
For $(X,r)\in \mc{D}_1|_x$ define $(X(t),r(t))=(P^{\nabla^{\Rol}})^t_0(\gamma_u)(X,r)$.
Then by (\ref{eq:rol_parallel}), (\ref{eq:rol_parallel_geodesic}) 
we have (notice that $g(u,X)=0$ since $u\in (T|_x N_1)^\perp=(\mc{D}_1^M|_x)^\perp$
and $X\in \mc{D}^M_1|_x$)
\[
r(t)=r(0)\cos(t).
\]
Hence, $(X(t),r(t))$ is $h$-orthogonal to $(0,1)$ if and only of $r(t)=0$ i.e., $r(0)\cos(t)=0$.
This proves that $(0,1)\perp \mc{D}_1|_{\gamma_u(t)}$ i.e., $(0,1)\in \mc{D}_2|_{\gamma_u(t)}$
i.e., $\gamma_u(t)\in N_2$ if and only if $t\in (\sfrac{1}{2}+\Z)\pi$
(obviously, there is a vector $(X,r)\in \mc{D}_1|_x$ with $r\neq 0$).

\end{proof}

\begin{lemma}\label{le-redu-8}
The submanifolds $N_1$, $N_2$ are isometrically covered by Euclidean spheres
of dimensions $m$ and $n-m$, respectively,
and the fundamental groups of $N_1$ and $N_2$ are finite and have the same number of elements.
More precisely, for any $x\in N_1$ define
\[
S_{x}=\{u\in (T|_{x} N_1)^\perp\ |\ \n{u}_g=1\},
\]
equipped with the restriction of the metric $g|_x$ of $T|_x M$.
Then
\[
S_x\to N_2;\quad u\mapsto \exp_x(\frac{\pi}{2}u),
\]
is a Riemannian covering.
The same claim holds with $N_1$ and $N_2$ interchanged.
\end{lemma}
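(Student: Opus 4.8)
The plan is to show that the map $\Psi_x\colon S_x\to N_2$, $u\mapsto \exp_x(\frac{\pi}{2}u)$, is a local isometry and a covering map; by symmetry the same argument applies with $N_1$ and $N_2$ exchanged, and the statement about fundamental groups then follows because a connected manifold covered by a simply connected Euclidean sphere $S^m$ (resp. $S^{n-m}$) has finite fundamental group whose order equals the number of sheets of the covering, and a covering $S_x\to N_2$ of a sphere onto $N_2$ has the same number of sheets as the covering of $N_1$ (both spaces being $\Z_2$-quotients, or trivial, depending on parity). First I would record that $S_x$ is indeed a round sphere of dimension $n-m-1$ when viewed inside $N_2$ — wait; more carefully, $\dim S_x=(n-m)-1$, but the image $\Psi_x(S_x)\subset N_2$ and $\dim N_2=n-m$, so $\Psi_x$ cannot be a covering onto $N_2$ as stated unless one instead takes $S_x$ to be the unit sphere in $(T|_x N_1)^\perp$ of the correct dimension; since $\dim (T|_x N_1)^\perp = n-m$ (because $\dim N_1 = m$ by Lemma \ref{le-redu-3} and $\dim\mc{D}^M_1 = m+1$... ) I would first pin down these dimension counts precisely using Lemma \ref{le-redu-3}, namely $\dim N_j=\dim\mc{D}^M_j$, and then $\dim S_x = n-m$ matches $\dim N_2$.

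Next I would prove that $\Psi_x$ is a local isometry. The key computational input is Lemma \ref{le-redu-6}, equation \eqref{eq:diff_of_exp}: for a unit normal $u\in (T|_x N_1)^\perp$ and $v\perp u$,
\[
\n{(\exp_x)_*|_{tu}(v)}_g=\Big|\frac{\sin(t)}{t}\Big|\,\n{v}_g .
\]
Applying this at $t=\frac{\pi}{2}$ gives $\n{(\exp_x)_*|_{\frac{\pi}{2}u}(v)}_g=\frac{2}{\pi}\n{v}_g$. The tangent space $T|_u S_x$ consists precisely of the $v\in (T|_x N_1)^\perp$ with $v\perp u$, and the metric on $S_x$ that makes $u\mapsto \exp_x(\frac{\pi}{2}u)$ an isometry is the round metric of radius $\frac{\pi}{2}$ on the unit sphere — so I would either rescale $S_x$ to radius $\frac{\pi}{2}$ or note that the statement is about being an isometric covering up to this fixed homothety; I will phrase it so that $S_x$ carries the metric of the sphere of radius $\frac{\pi}{2}$ (equivalently $\frac{\pi}{2}$ times the restriction of $g|_x$), as is implicitly required. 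One must also check that the radial direction is handled: $\Psi_x$ is the restriction of $\exp_x$ to the sphere of radius $\frac{\pi}{2}$, and by the Gauss lemma the radial Jacobi fields are irrelevant since $S_x$ has no radial tangent directions. Finally, I would verify $\Psi_x(S_x)\subset N_2$ and surjectivity: containment is Lemma \ref{le-redu-7} (the geodesic $\gamma_u$ meets $N_2$ exactly at $t\in(\Z+\frac12)\pi$, so $\gamma_u(\frac{\pi}{2})\in N_2$); surjectivity follows because $N_2$ is connected and $\Psi_x$, being a local isometry between manifolds of equal dimension with $N_2$ complete (Lemma \ref{le-redu-3}), has open and closed image.

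Then I would argue $\Psi_x$ is a covering map. Since $S_x$ (a Euclidean sphere of radius $\frac{\pi}{2}$) is complete and $\Psi_x$ is a local isometry into the complete manifold $N_2$, a standard fact (the same one invoked via Proposition II.1.1 of \cite{sakai91} earlier in the text, e.g.\ in the proof of Corollary \ref{cor:weak_ambrose}) shows $\Psi_x$ is a Riemannian covering. Hence $N_2$ is isometrically covered by a Euclidean sphere of dimension $n-m$, and symmetrically $N_1$ by one of dimension $m$. For the fundamental-group claim: $\pi_1(N_j)$ is the deck group of the universal cover; if $n-m\ge 2$ the sphere $S^{n-m}$ is already simply connected and is the universal cover of $N_2$, so $\pi_1(N_2)$ is finite with order equal to the number of sheets, and likewise for $N_1$ (here one needs $m\ge 2$; the remaining low-dimensional cases $m=1$ or $n-m=1$, where a "sphere" $S^1$ is not simply connected, I would dispose of separately, noting they fall under the degenerate situations already set aside earlier in the proof of Theorem \ref{th:reducible_rol}). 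To see the two sheet-numbers agree, I would observe that a single geodesic segment $\gamma_u$, $u\in S_x$, of length $\pi$ runs from $x\in N_1$ to $\gamma_u(\frac{\pi}{2})\in N_2$ and on to $\gamma_u(\pi)\in N_1$; by Lemma \ref{le-redu-6} the endpoint $\gamma_u(\pi)=\exp_x(\pi u)$ is independent of $u\in S_x$, which forces the covering $S_x\to N_2$ and the analogous covering of $N_1$ to be quotients by groups of the same order (both trivial, or both $\Z_2$). The main obstacle I anticipate is precisely this bookkeeping — matching dimensions of $S_x$, $N_1$, $N_2$ correctly and handling the parity/low-dimensional degenerate cases cleanly — rather than any new geometric input, since the essential curvature computation is already packaged in Lemma \ref{le-redu-6}.
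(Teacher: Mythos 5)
Your key computation of the differential is wrong, and it changes what you would end up proving. The differential of $\Psi_x:u\mapsto\exp_x(\frac{\pi}{2}u)$ at $u\in S_x$ in a direction $v\in T|_uS_x$ is $(\exp_x)_*|_{\frac{\pi}{2}u}\big(\frac{\pi}{2}v\big)$, not $(\exp_x)_*|_{\frac{\pi}{2}u}(v)$: the chain-rule factor $\frac{\pi}{2}$ exactly cancels the factor $\frac{\sin(\pi/2)}{\pi/2}=\frac{2}{\pi}$ coming from Lemma \ref{le-redu-6}, so $\n{(\Psi_x)_*v}_g=\n{v}_g$. Hence $\Psi_x$ is an honest local isometry for the metric named in the statement, the restriction of $g|_x$ (the unit sphere), and no rescaling of $S_x$ is needed; introducing one, as you propose, replaces the lemma by a different statement which is in fact false (and even on your own terms the rescaled radius would have to be $\frac{2}{\pi}$, not $\frac{\pi}{2}$). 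This matters downstream: Lemma \ref{le-redu-9} uses $\n{(f_1)_*|_u(X_1)}_g=\n{X_1}_g$ for the unrescaled unit spheres. Your dimension bookkeeping is also off: for $x\in N_1$ one has $(0,1)\in\mc{D}_1|_x$, so $\dim\mc{D}_1^M|_x=m$, $\dim N_1=m$, $\dim N_2=n-m-1$ and $\dim S_x=n-m-1$; the dimensions do match, but not with the values you wrote.

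The second genuine gap is that you simply assert that $N_2$ is connected. Connectedness of $N_1$ and $N_2$ has not been established before this lemma, and proving it is roughly half of the paper's proof: starting from a component $C_1$ of $N_1$ and a minimal normal geodesic $\gamma_v$ to an arbitrary point of $N_1$, one uses Lemma \ref{le-redu-7} and the connectedness of the normal spheres $S_{x_2}$ to show $\gamma_v(k\pi)\in C_1$ for all $k\in\N_0$ and that $d_g(y_1,C_1)\in\N_0\pi$, whence $N_1=C_1$, and symmetrically for $N_2$. Without this, the open-and-closed-image argument (closedness coming from compactness of $S_x$, not from completeness of $N_2$) only gives that $\Psi_x$ covers a component. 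Finally, the equality of the cardinalities of $\pi_1(N_1)$ and $\pi_1(N_2)$ is not proved by observing that $\exp_x(\pi u)$ is independent of $u$, and your claim that the deck groups must be trivial or $\Z_2$ is unjustified (other finite groups act freely on spheres). What is needed, and what the paper supplies, is an explicit bijection between the fibers of the two coverings, via the mutually inverse maps $\Phi_1(u)=-\dif{t}\big|_{\frac{\pi}{2}}\exp_{x_1}(tu)$ and $\Phi_2(u)=-\dif{t}\big|_{\frac{\pi}{2}}\exp_{x_2}(tu)$; identifying sheet number with $|\pi_1|$ then uses simple connectedness of the covering spheres, and the low-dimensional cases are not in fact ``already set aside'' earlier, since the standing assumption there is only $1\le m\le n-2$.
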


\begin{proof}
Denote by $C_1$ the component of $N_1$ containing $x$.
We will show first that $C_1=N_1$ i.e., $N_1$ is connected.

Let $y_1\in N_1$.
Since $C_1$ is a closed subset of $M$, there is a minimal geodesic
$\gamma_v$ in $M$ from $C_1$ to $y_1$
with $\dot{\gamma}_v(0)=v$ a unit vector, $x_1:=\gamma_v(0)\in C_1$
and $\gamma_v(d)=y_1$, with $d:=d_g(y_1,C_1)$.
By minimality, $v\in (T|_{x_1} C_1)^{\perp}=(T|_{x_1} N_1)^{\perp}$.
Hence by Lemma \ref{le-redu-7} the point $x_2:=\exp_{x_1}(\frac{\pi}{2}v)=\gamma_v(\frac{\pi}{2})$
belongs to $N_2$.
Since the set
\[
S_{x_2}=\{u\in (T|_{x_2} N_2)^\perp\ |\ \n{u}_g=1\}.
\]
is connected (its dimension is $m\geq 1$, by assumption that we made before),
Lemma \ref{le-redu-7} implies that $\exp_{x_2}\big(\frac{\pi}{2}S_{x_2}\big)$ is contained in a single component $C'_1$ of $N_1$.
Writing $u:=\dot{\gamma}_v(\frac{\pi}{2})$,
we have $\pm u\in S_{x_2}$
so 
\[
C_1'\ni \exp_{x_2}(-\sfrac{\pi}{2}u)=\exp_{x_2}\big(-\frac{\pi}{2}\dif{t}\big|_{\frac{\pi}{2}}\exp_{x_1}(tv)\big)
=\exp_{x_1}((\sfrac{\pi}{2}-t)v)|_{t=\frac{\pi}{2}}=x_1,
\]
and since also $x_1\in C_1$, it follow that $C_1'=C_1$.
But this implies that
\[
\gamma_v(\pi)=\exp_{x_1}(\pi v)=\exp_{x_2}\big(\frac{\pi}{2}\dif{t}\big|_{\frac{\pi}{2}}\exp_{x_1}(tv)\big)=\exp_{x_2}(\sfrac{\pi}{2}u)\in C_1.
\]
It also follows from $u\in (T|_{x_2} N_2)^\perp$ that $\dot{\gamma}_v(\pi)=\dif{t}\big|_{\frac{\pi}{2}} \exp_{x_2}(tu)\in (T|_{\gamma_v(\pi)} N_1)^\perp$.
Since $\exp_{x_2}((d-\frac{\pi}{2})u)=y_1\in N_1$,
Lemma \ref{le-redu-7} implies that $d-\frac{\pi}{2}\in (\frac{1}{2}+\Z)\pi$,
from which, since $d\geq 0$, we get $d\in \N_0\pi$, where $\N_0=\{0,1,2,\dots\}$.

By taking $x_2'=\gamma_v(\frac{3}{2}\pi)\in N_2$ we may show similarly
that $\gamma_v(2\pi)\in C_1$
and by induction we get $\gamma_v(k\pi)\in C_1$ for every $k\in\N_0$.
In particular, since $d\in \N_0 \pi$,
we get $y_1=\gamma_v(d)\in C_1$.
Since $y_1\in N_1$ was arbitrary, we get $N_1\subset C_1$ which proves the claim.

By repeating the argument with $N_1$ and $N_2$ interchanged, we see that $N_2$
is connected.

Eq. (\ref{eq:diff_of_exp}) shows that, taking $u\in S_x$ and $v\in T|_{u} S_x$,
i.e., $v\perp u$, $v\perp T|_x N_1$, 
\[
\n{\dif{t}\big|_0 \exp_x\big(\frac{\pi}{2}(u+tv)\big)}_g
=\n{(\exp_x)_*|_{\frac{\pi}{2} u} (\frac{\pi}{2} v)}_g=\n{v}_g.
\]
This shows that $u\mapsto \exp_x(\frac{\pi}{2}u)$ is a local isometry $S_x\to N_2$.
In particular, the image is open and closed in $N_2$, which is connected, hence $u\mapsto \exp_x(\frac{\pi}{2}u)$ is onto $N_2$.
According to Proposition II.1.1 in \cite{sakai91}, $u\mapsto \exp_x(\frac{\pi}{2}u)$ is a covering $S_x\to N_2$.

Similarly, for any $y\in N_2$ the map $S_y\to N_1$; $u\mapsto \exp_y(\frac{\pi}{2}u)$
is a Riemannian covering.

Finally, let us prove the statement about fundamental groups.
Fix a point $x_i\in N_i$ and write $\phi_i(u)=\exp_{x_i} (\sfrac{\pi}{2}u)$,
$i=1,2$,
for maps $\phi_1:S_{x_1}\to N_2$, $\phi_2:S_{x_2}\to N_1$.
The fundamental groups $\pi_1(N_1)$, $\pi_1(N_2)$
of $N_1$, $N_2$ are finite since their universal coverings are the (normal) spheres $S_{x_2}$, $S_{x_1}$ which are compact. Also, $\phi_1^{-1}(x_2)$ and $\phi_2^{-1}(x_1)$
are in one-to-one correspondence with $\pi_1(N_2)$ and $\pi_1(N_1)$ respectively.

Define $\Phi_1:\phi_1^{-1}(x_2)\to S_{x_2}$; $\Phi_1(u)=-\dif{t}\big|_{\frac{\pi}{2}} \exp_{x_1}(tu)\in S_{x_2}$
and similarly $\Phi_2:\phi_2^{-1}(x_1)\to S_{x_1}$; $\Phi_2(u)=-\dif{t}\big|_{\frac{\pi}{2}} \exp_{x_2}(tu)\in S_{x_1}$.
Clearly, for $u\in \phi_1^{-1}(x_2)$,
\[
\phi_2(\Phi_1(u))=\exp_{x_2}\big(-\frac{\pi}{2}\dif{t}\big|_{\frac{\pi}{2}} \exp_{x_1}(tu)\big)
=\exp_{x_1}((\sfrac{\pi}{2}-t)u)|_{t=\frac{\pi}{2}}=x_1,
\]
i.e., $\Phi_1$ maps $\phi_1^{-1}(x_2)\to \phi_2^{-1}(x_1)$.
Similarly $\Phi_2$ maps $\phi_2^{-1}(x_1)\to \phi_1^{-1}(x_2)$.
Finally, $\Phi_1$ and $\Phi_2$ are inverse maps to each other
since for $u\in \phi_1^{-1}(x_2)$,
\[
\Phi_2(\Phi_1(u))=-\dif{t}\big|_{\frac{\pi}{2}} \exp_{x_2}\big(-t\dif{s}\big|_{\frac{\pi}{2}} \exp_{x_1}(su) \big)
=-\dif{t}\big|_{\frac{\pi}{2}}  \exp_{x_1}((\sfrac{\pi}{2}-t)u)=u,
\]
and similarly $\Phi_1(\Phi_2(u))=u$ for $u\in \phi_2^{-1}(x_1)$.

\end{proof}

For the sake of simplicity, we will finish the proof of Theorem \ref{th:reducible_rol} under the assumption that $N_2$ is simply connected and indicate in Remark \ref{rem00} following the proof how to handle the general case. 

The fact that  $N_2$ is simply connected is clearly
equivalent to saying that 
\begin{align}
S_x\to N_2;\quad u\mapsto \exp_x(\frac{\pi}{2}u),\nonumber
\end{align}
defined in Lemma \ref{le-redu-8}
is an isometry for some (and hence every) $x\in N_1$.
It then follows from Lemma \ref{le-redu-8} that $N_1$ is (simply connected and) isometric to a sphere as well. 

We next get the following.
\begin{lemma}\label{le-redu-9}
Fix $x_i\in N_j$, $j=1,2$ and let
\[
S_{x_1}=\{u\in (T|_{x_1} N_1)^\perp\ |\ \n{u}_g=1\}, \qquad
S_{x_2}=\{u\in (T|_{x_2} N_2)^\perp\ |\ \n{u}_g=1\},
\]
the unit normal spheres to $N_1,N_2$ at $x_1,x_2$ respectively. Consider first the maps
\begin{align}
 f_1:S_{x_1}&\to N_2\qquad \qquad \qquad  f_2:S_{x_2}\to N_1\\
 f_1(u)&=\exp_{x_1}(\frac{\pi}{2}u)\qquad \qquad  f_2(v)=\exp_{x_2}(\frac{\pi}{2}v),\nonumber
 \end{align}
and the map $w$ which associates to $(u,v)\in S_{x_1}\times S_{x_2}$ the unique element of $S_{f_2(v)}$ such that $\exp_{f_2(v)}(\sfrac{\pi}{2}w(u,v))=f_1(u)$.
Finally let 
\begin{align}
\Psi:]0,\frac{\pi}{2}[&\times S_{x_1}\times S_{x_2}\to M\\
\Psi(t,u,v)&=\exp_{f_2(v)} (tw(u,v)).\nonumber
\end{align}
Suppose that $\tilde{S}:=]0,\frac{\pi}{2}[\times S_{x_1}\times S_{x_2}$ is endowed
with the metric $\tilde{g}$ such that
\[
\tilde{g}|_{(t,u,v)}=\diff{t}^2+\sin^2(t)g|_{T|_u S_{x_1}}+\cos^2(t)g|_{T|_v S_{x_2}}.
\] 
Then $\Psi$ is a local isometry.
\end{lemma}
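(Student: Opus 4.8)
The plan is to exploit the whole geometric machinery already assembled in this section: the submanifolds $N_1,N_2$, the subbundles $\mc{D}_1,\mc{D}_2$, the curvature identity \eqref{eq:the_curvature_formula}, the Jacobi field formula \eqref{eq:Jacobi_explicit}, and the covering maps of Lemma \ref{le-redu-8}. The map $\Psi$ is designed so that, for fixed $(u,v)$, the curve $t\mapsto \Psi(t,u,v)=\exp_{f_2(v)}(tw(u,v))$ is exactly the minimal normal geodesic issuing from $N_2$ at the point $f_2(v)$ in the direction $w(u,v)$, and it hits $N_1$ at $f_1(u)$ when $t=\sfrac{\pi}{2}$ (by Lemma \ref{le-redu-7} and the definition of $w$). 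Thus $\Psi$ is a geodesic in its first variable for each frozen $(u,v)$. The first step is to record that $\partial_t\Psi$ has $\tilde g$-norm $1$ and is $g$-orthogonal to the images of $\partial_u\Psi$ and $\partial_v\Psi$; the former is immediate, the latter follows from the Gauss lemma since $\partial_u\Psi,\partial_v\Psi$ are variation fields of a geodesic variation through geodesics starting orthogonally to a fixed submanifold.

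The heart of the computation is to identify $\Psi_*\partial_u$ and $\Psi_*\partial_v$ as Jacobi fields along the geodesic $t\mapsto\Psi(t,u,v)$ and to compute their norms. Fix $(u,v)$ and write $\gamma(t)=\Psi(t,u,v)$. A variation in the $u$-direction keeps the base point $f_2(v)\in N_2$ fixed and varies the endpoint on $N_1$; a variation in the $v$-direction varies the base point on $N_2$. In both cases the resulting Jacobi field $J$ along $\gamma$ satisfies $J\perp\dot\gamma$, so by the polarized form of \eqref{eq:the_curvature_formula} (already derived in the proof of Lemma \ref{le-redu-6}) one has $\nabla_{\dot\gamma}\nabla_{\dot\gamma}J=-J$, i.e. $J(t)=\sin(t-a)P(\cdot)J'(a)+\cos(t-a)P(\cdot)J(a)$ for suitable parallel transports $P$, where $a$ is the endpoint at which the corresponding variation is "anchored" ($a=\sfrac{\pi}{2}$ for the $u$-variation, anchored on $N_1$, and $a=0$ for the $v$-variation, anchored on $N_2$). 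Using that the anchoring submanifold is totally geodesic (Lemma \ref{le-redu-3}) one gets that at the anchor point the Jacobi field vanishes in the normal-to-$N_j$ sense appropriately, so that $\|\Psi_*\partial_u|_{(t,u,v)}(\xi)\|_g=\sin(t)\|\xi\|_g$ for $\xi\in T|_uS_{x_1}$ and $\|\Psi_*\partial_v|_{(t,u,v)}(\eta)\|_g=\cos(t)\|\eta\|_g$ for $\eta\in T|_vS_{x_2}$, together with the mutual orthogonality of the three blocks. This is exactly the statement that $\Psi^*g=\diff t^2+\sin^2(t)g|_{T S_{x_1}}+\cos^2(t)g|_{TS_{x_2}}=\tilde g$, i.e. $\Psi$ is a local isometry.

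The main obstacle I expect is the bookkeeping in the second step: one must be careful that the variation fields $\partial_u\Psi$ and $\partial_v\Psi$ really are Jacobi fields anchored as claimed, which requires checking that $w(u,v)$ depends smoothly on $(u,v)$ and that differentiating the defining relation $\exp_{f_2(v)}(\sfrac{\pi}{2}w(u,v))=f_1(u)$ produces the correct boundary data (value and covariant derivative) for the Jacobi fields at $t=0$ and $t=\sfrac{\pi}{2}$. Here the fact that $f_1,f_2$ are Riemannian coverings (Lemma \ref{le-redu-8}), hence local isometries, is what lets us transfer norms of tangent vectors between $S_{x_i}$ and $N_j$ without loss. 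A secondary, purely local point is smoothness of $\Psi$ on the open interval $]0,\sfrac{\pi}{2}[$, which avoids the conjugate points at $t=0,\sfrac{\pi}{2}$ where the $\sin/\cos$ factors degenerate; this is why the domain is taken to be the open box. Once Lemma \ref{le-redu-9} is in place, $(\tilde S,\tilde g)$ is recognized as a dense open subset of the round sphere $(S,G)$ described before Lemma \ref{le-redu-1}, and composing with the completeness of $M$ (so that $\Psi$, being a local isometry from a piece of a complete simply connected space of constant curvature $+1$, extends to a Riemannian covering) yields that $(\hat M_1,s_{n+1})$ covers $(M,g)$, completing the proof of Theorem \ref{th:reducible_rol}.
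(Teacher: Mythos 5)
Your overall strategy is the paper's: represent $\Psi_*\partial_u$ and $\Psi_*\partial_v$ as Jacobi fields along $t\mapsto\exp_{f_2(v)}(tw(u,v))$, use the identity \eqref{eq:the_curvature_formula} to reduce the Jacobi equation to $\nabla_{\dot\gamma}\nabla_{\dot\gamma}J=-J$, feed in the boundary data coming from $f_1,f_2$ and Lemma \ref{le-redu-8} (Eq. \eqref{eq:diff_of_exp}), and read off $\Psi^*g=\tilde g$. But as written there are two gaps. The substantive one is the cross term: you assert "the mutual orthogonality of the three blocks" without any argument, yet $g(\Psi_*\partial_u,\Psi_*\partial_v)=0$ along the whole geodesic is exactly what prevents off-diagonal terms in $\Psi^*g$, and it is the step the paper isolates as a separate claim ($Y_1\perp Y_2$) and proves using the reducibility structure: the sections $((f_2)_*X_2,0)\in\mc{D}_1$ and $((f_1)_*X_1,0)\in\mc{D}_2$ are $\nabla^{\Rol}$-parallel along the geodesic with vanishing $\R$-component, so their $TM$-parts are the Levi–Civita parallel transports, and $\mc{D}_1\perp\mc{D}_2$ w.r.t. $h$ gives the pointwise orthogonality. (There is also a shortcut consistent with your setup: once you know $J_u(t)=\sin(t)P_0^t(\gamma)V_1$ with $V_1=\nabla_{\dot\gamma}J_u(0)=\tfrac{d}{ds}\big|_0 w(u(s),v)\in (T|_{f_2(v)}N_1)^\perp$ and $J_v(t)=\cos(t)P_0^t(\gamma)V_2$ with $V_2=(f_2)_*\eta\in T|_{f_2(v)}N_1$, the cross term is $\sin t\cos t\,g(V_1,V_2)=0$; but some such argument must be supplied.)

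The second gap is in how you justify the $\sin(t)$ and $\cos(t)$ profiles. Total geodesy of the "anchoring" submanifold only kills the shape-operator (tangential) part of $\nabla_{\dot\gamma}J$ at the anchor; it says nothing about the normal-bundle derivative of the varying normal field $w(u,v(\cdot))$, so your claim that the Jacobi field "vanishes in the normal-to-$N_j$ sense at the anchor" does not give $\nabla_{\dot\gamma}J_v(0)=0$ (and your anchors are also attached to the wrong ends: the fixed base point gives $J_u(0)=0$, while the defining relation $\exp_{f_2(v)}(\sfrac{\pi}{2}w(u,v))=f_1(u)$, being independent of $v$, gives $J_v(\sfrac{\pi}{2})=0$; note also $f_2(v)\in N_1$ and $f_1(u)\in N_2$, the labels are swapped in your text). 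The correct and easy route — which is what the paper does via the geodesic flow on $TM$ — is to use precisely these two vanishing endpoint values together with $J_u(\sfrac{\pi}{2})=(f_1)_*\xi$, $J_v(0)=(f_2)_*\eta$ and the ODE $\nabla_{\dot\gamma}\nabla_{\dot\gamma}J=-J$ to pin down $J_u(t)=\sin(t)P^t_{\pi/2}(\gamma)((f_1)_*\xi)$ and $J_v(t)=\cos(t)P_0^t(\gamma)((f_2)_*\eta)$, after which Lemma \ref{le-redu-8} gives $\n{(f_1)_*\xi}_g=\n{\xi}_g$, $\n{(f_2)_*\eta}_g=\n{\eta}_g$ and the norm computation closes. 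With these two points repaired, your argument coincides with the paper's proof.
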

\begin{proof}
We use $G$ to denote the \emph{geodesic vector field} on $TM$
i.e., for $u\in TM$ we have
\[
G|_u:=\ddot{\gamma}_u(0)=\frac{\diff^2}{\diff t^2}\big|_0 \exp_{\pi_{TM}(u)}(tu).
\]
Then the projections on $M$ by $\pi_{TM}$ of its integral curves
are geodesics.
Indeed, first we notice that
\[
G|_{\dot{\gamma}u(t)}
=\frac{\diff^2}{\diff s^2}\big|_0 \exp_{\gamma_u(t)}(s\dot{\gamma}_u(t))
=\frac{\diff^2}{\diff s^2}\big|_0 \gamma_u(t+s)=\ddot{\gamma}_u(t),
\]
and hence, if $\Gamma$ be a curve on $TM$ defined by $\Gamma(t)=\dot{\gamma}_u(t)$,
then
\[
\dot{\Gamma}(t)=\ddot{\gamma}_u(t)=G|_{\dot{\gamma}_u(t)}=G|_{\Gamma(t)},
\]
and $\Gamma(0)=u$.
Hence $\Gamma$ satisfies the same initial value problem as $t\mapsto \Phi_G(t,u)$,
which implies that
\[
\Phi_G(t,u)=\dot{\gamma}_u(t),\quad \forall t\in\R,\ u\in TM,
\]
and in particular,
\[
(\pi_{TM}\circ \Phi_G)(t,u)=\gamma_u(t),\quad \forall t\in\R,\ u\in TM.
\]

For every $u\in TM$ there is a direct sum decomposition $H_u\oplus V_u$
of $T|_u TM$ where $V_u=V|_{u}(\pi_{TM})$ is the $\pi_{TM}$-vertical fiber over $u$
and $H_u$ is defined as
\[
H_u=\{\dif{t}\big|_0 P_0^t(\gamma_X)u\ |\ X\in T|_{\pi_{TM}(u)} M\}.
\]
We write the elements of $T|_u TM$ w.r.t. this direct sum decomposition
as $(A,B)$ where $A\in H_u$, $B\in V_u$.
It can now be shown that (see \cite{sakai91} Lemma 4.3, Chapter II)
\[
((\Phi_G)_t)_*|_u(A,B)=(Z_{(A,B)}(t),\nabla_{\dot{\gamma}_u(t)} Z_{(A,B)}),
\quad (A,B)\in T|_u TM,\quad u\in TM,
\]
where $Z_{(A,B)}$ is the unique Jacobi field along geodesic $\gamma_u$ such that
$Z_{(A,B)}(0)=A$, $\nabla_{\dot{\gamma}_u(0)} Z_{(A,B)}=B$.

We are now ready to prove the claim.
First observe that
\[
\Psi(t,u,v)=(\pi_{TM}\circ \Phi_G)(t,w(u,v))
\]
and hence, for $(\pa{t},X_1,X_2)\in T\tilde{S}$,
\[
\Psi_*(\pa{t},X_1,X_2)=&(\pi_{TM})_*\big(\pa{t}\Phi_G(t,w(u,v))+((\Phi_G)_t)_*|_{w(u,v)}w_*(X_1,X_2)\big) \\
=&(\pi_{TM})_*\Big(G|_{\Phi_G(t,w(u,v))}+\big(Z_{w_*(X_1,X_2)}(t),\nabla_{\pa{t}(\pi_{TM}\circ \Phi_G)(t,w(u,v))} Z_{w_*(X_1,X_2)}\big)\Big) \\
=&\dot{\gamma}_{w(u,v)}(t)+Z_{w_*(X_1,X_2)}(t).
\]
On the other hand,
\[
(\pi_{TM}\circ\Phi_G)\big(\frac{\pi}{2},w(u,v)\big)=f_1(u),
\]
from where
\[
(f_1)_*|_u(X_1)=Z_{w_*(X_1,X_2)}\big(\frac{\pi}{2}\big).
\]
Similarly, since
\[
(\pi_{TM}\circ\Phi_G)\big(0,w(u,v)\big)=\pi_{TM}(w(u,v))=f_2(v),
\]
we get
\[
(f_2)_*|_v(X_2)=Z_{w_*(X_1,X_2)}(0).
\]

As in the proof of Lemma \ref{le-redu-6},
we see that the Jacobi equation that $Z_{w_*(X_1,X_2)}$ satisfies is
$\nabla_{\dot{\gamma}_{w(u,v)}(t)} \nabla_{\dot{\gamma}_{w(u,v)}} Z_{w_*(X_1,X_2)}=-Z_{w_*(X_1,X_2)}(t)$.
It is clear that this implies that $Z_{w_*(X_1,X_2)}$ has the form
\[
Z_{w_*(X_1,X_2)}(t)=\sin(t)P_0^t(\gamma_{w(u,v)})V_1+\cos(t)P_0^t(\gamma_{w(u,v)})V_2,
\]
for some $V_1,V_2\in T|_{f_2(u)} M$.
Now, taking into account the boundary values of $Z_{w_*(X_1,X_2)}(t)$
at $t=0$ and $t=\frac{\pi}{2}$ as derived above,
we get
\[
V_1=&P^0_{\frac{\pi}{2}}(\gamma_{w(u,v)})((f_1)_*|_u(X_1)), \\
V_2=&(f_2)_*|_v(X_2).
\]
Define
\[
& Y_1(t)=\sin(t)P_0^t(\gamma_{w(u,v)})V_1
=\sin(t)P_{\frac{\pi}{2}}^t(\gamma_{w(u,v)})((f_1)_*|_u(X_1)), \\
& Y_2(t)=\cos(t)P_0^t(\gamma_{w(u,v)})V_2
=\cos(t)P_{0}^t(\gamma_{w(u,v)})((f_2)_*|_v(X_2)),
\]
which means that
\[
Z=Y_1+Y_2.
\]
Notice that $Y_1$ and $Y_2$ are Jacobi fields along $\gamma_{w(u,v)}$.

Since $w(u,v)\in (T|_{f_2(v)}N_1)^\perp$ and $\dot{\gamma}_{w(u,v)}(\frac{\pi}{2})\in (T|_{f_1(u)} N_2)^\perp$
and
\[
Y_1(\sfrac{\pi}{2})=(f_1)_*|_u(X_1)\in T|_{f_1(u)}N_2,\quad Y_2(0)=(f_2)_*|_v(X_2)\in T|_{f_2(v)}N_1,
\]
it follows that
\[
Y_1,Y_2\perp \gamma_{w(u,v)}.
\]

We claim that moreover
\[
Y_1\perp Y_2.
\]
Indeed, since $(f_2)_*|_v(X_2)\in T|_{f_2(v)}N_1$ and $(0,1)\in \mc{D}_1|_{f_2(v)}$ (by definition of $N_1$),
we have $((f_2)_*|_v(X_2),0)\in \mc{D}_1|_{f_2(v)}$ and hence, for all $t$,
\[
(Z_1(t),r_1(t)):=(P^{\nabla^\Rol})_0^t(\gamma_{w(u,v)})((f_2)_*|_v(X_2),0)\in \mc{D}_1.
\]
On the other hand, $r_1$ satisfies $\ddot{r}_1+r_1=0$
with initial conditions $r_1(0)=0$ and $\dot{r}_1(0)=g(\dot{\gamma}_{w(u,v)}(0),Z_1(0))=g(w(u,v),(f_2)_*|_v(X_2))=0$
so $r_1(t)=0$ for all $t$. Thus $Z_1(t)$ satisfies $\nabla_{\dot{\gamma}_{w(u,v)}(t)}Z_1=0$
i.e., $Z_1(t)=P_0^t(\gamma_{w(u,v)})((f_2)_*|_v(X_2))$.
Similarly,
if $w'(u,v):=-\dif{t}\big|_{\frac{\pi}{2}} \exp_{f_2(v)}(tw(u,v))=-\dot{\gamma}_{w(u,v)}(\frac{\pi}{2})$,
\[
(Z_2(\sfrac{\pi}{2}-t),r_2(\sfrac{\pi}{2}-t)):=(P^{\nabla^\Rol})_0^t(\gamma_{w'(u,v)})((f_1)_*|_u(X_1),0)\in \mc{D}_2,
\]
and we have $r_2(\frac{\pi}{2}-t)=0$ and $Z_2(\frac{\pi}{2}-t)=P_0^t(\gamma_{w'(u,v)})((f_1)_*|_v(X_1))$
i.e.,
$Z_2(t)=P_{\frac{\pi}{2}}^t(\gamma_{w(u,v)})((f_1)_*|_v(X_1))$.
But since $\mc{D}_1\perp\mc{D}_2$ w.r.t. $h$,
we have that $(Z_1,r_1)\perp (Z_2,r_2)$ w.r.t. $h$
i.e., $g(Z_1(t),Z_2(t))=0$ for all $t$ (since $r_1(t)=r_2(t)=0$).
Thus,
\[
g(Y_1(t),Y_2(t))
=&\sin(t)\cos(t)g\big(P_{\frac{\pi}{2}}^t(\gamma_{w(u,v)})((f_1)_*|_u(X_1)),
P_{0}^{t}(\gamma_{w(u,v)})((f_2)_*|_v(X_1))\big) \\
=&\sin(t)\cos(t)g(Z_2(t),Z_1(t))=0
\]
This proves the claim, i.e., $Y_1\perp Y_2$.

Since $\n{w(u,v)}_g=1$, one has
\[
\n{\Psi_*(\pa{t},X_1,X_2)}_g^2
=&\n{\dot{\gamma}_{w(u,v)}(t)+Y_1(t)+Y_2(t)}_g^2\\
=&\n{\dot{\gamma}_{w(u,v)}(t)}_g^2+\n{Y_1(t)}_g^2+\n{Y_2(t)}_g^2 \\
=&1+\sin^2(t)^2\n{(f_1)_*|_u(X_1)}_g^2+\cos^2(t)\n{(f_2)_*|_v(X_2)}_g.
\]
Finally, since
\[
(f_1)_*|_u(X_1)=(\exp_{x_1})_*|_{\frac{\pi}{2}u}(\frac{\pi}{2}X_1)
\hbox{ and }
(f_2)_*|_v(X_2)=(\exp_{x_2})_*|_{\frac{\pi}{2}v}(\frac{\pi}{2}X_2),
\]
Eq. (\ref{eq:diff_of_exp}) implies that
\[
& \n{(f_1)_*|_u(X_1)}_g=|\sin(\frac{\pi}{2})|\n{X_1}_g=\n{X_1}_g, \\
& \n{(f_2)_*|_v(X_2)}_g=|\sin(\frac{\pi}{2})|\n{X_2}_g=\n{X_2}_g,
\]
and therefore
\[
\n{\Psi_*\big(\pa{t},X_1,X_2\big)}_g^2=&1+\sin^2(t)\n{X_2}_g^2+\cos^2(t)\n{X_1}_g^2 \\
=&\tilde{g}|_{(t,u,v)}\big(\pa{t},X_1,X_2\big),
\]
i.e., $\Psi$ is a local isometry $\tilde{S}\to M$.
\end{proof}
We next need one extra lemma.
\begin{lemma}\label{le-redu-10}
The manifold $M$ has constant constant curvature equal to $1$.
\end{lemma}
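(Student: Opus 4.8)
The plan is to identify $(\tilde S,\tilde g)$ from Lemma~\ref{le-redu-9} with an open dense piece of the round unit $n$-sphere and to transport constant curvature through the local isometry $\Psi$. Since $S_{x_1}$ and $S_{x_2}$ are the unit spheres of the Euclidean spaces $\big((T|_{x_1}N_1)^\perp,g|_{x_1}\big)$, $\big((T|_{x_2}N_2)^\perp,g|_{x_2}\big)$, they are round unit spheres $S^q(1)$, $S^p(1)$ with $q=\dim S_{x_1}$, $p=\dim S_{x_2}$, and $p+q+1=n$ because $\dim\tilde S=\dim M=n$ ($\Psi$ being a local diffeomorphism). Writing the round unit sphere in join form $S^n(1)=\{(b',a')\in\R^{q+1}\times\R^{p+1}\ |\ \n{b'}^2+\n{a'}^2=1\}$, parametrised on $]0,\frac{\pi}{2}[\times S^q\times S^p$ by $(t,b,a)\mapsto(\sin t\,b,\cos t\,a)$, a one-line computation shows that the pullback of its induced metric is precisely $\diff t^2+\sin^2 t\,g|_{TS^q}+\cos^2 t\,g|_{TS^p}=\tilde g$. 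Hence $(\tilde S,\tilde g)$ has constant sectional curvature $1$, and so, by Lemma~\ref{le-redu-9}, $(M,g)$ has constant sectional curvature $1$ at every point of the open set $\Psi(\tilde S)$.

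Next I would show $\Psi(\tilde S)=M\setminus(N_1\cup N_2)$, which is open and dense in $M$. The inclusion $\Psi(\tilde S)\subset M\setminus(N_1\cup N_2)$ follows from Lemma~\ref{le-redu-7}, since a unit normal geodesic issuing from $N_1$ meets $N_1\cup N_2$ only at parameter values in $\frac{\pi}{2}\Z$. For the converse inclusion, first note that $f_1:S_{x_1}\to N_2$ and $f_2:S_{x_2}\to N_1$ are surjective, being Riemannian coverings by Lemma~\ref{le-redu-8}, and that for fixed $v$ the map $u\mapsto w(u,v)$ is a local isometry of the compact connected sphere $S_{x_1}$ into the equidimensional sphere $S_{f_2(v)}$, hence surjective; therefore $\Psi(\tilde S)=\{\exp_y(tn)\ |\ y\in N_1,\ n\in S_y,\ t\in]0,\frac{\pi}{2}[\}$. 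Then I would establish the global estimate $d_g(x,N_1)\le\frac{\pi}{2}$ for all $x\in M$: a minimal unit normal geodesic from $N_1$ to $x$ of length $d>\frac{\pi}{2}$ would enter $N_2$ at parameter $\frac{\pi}{2}$ and, by Lemma~\ref{le-redu-7} applied at $N_2$, re-enter $N_1$ at parameter $\pi$; playing the resulting shortcuts against minimality of $d$ forces $\frac{\pi}{2}<d<\pi$ and $d_g(x,N_2)=d-\frac{\pi}{2}$, after which one further bounce of the minimal $N_2$–to–$x$ geodesic back into $N_1$ contradicts $d_g(x,N_1)=d$. So every $x\notin N_1\cup N_2$ lies on a minimal unit normal geodesic from $N_1$ of length $<\frac{\pi}{2}$, i.e.\ $x\in\Psi(\tilde S)$.

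To finish, $\Psi(\tilde S)$ is open because the local isometry $\Psi$ is an open map, and its complement $N_1\cup N_2$ is a union of proper submanifolds and hence has empty interior; thus $\Psi(\tilde S)$ is open and dense. Since the sectional curvature is a continuous function on the bundle of tangent $2$-planes and equals $1$ over the dense set $\Psi(\tilde S)$, it equals $1$ everywhere, i.e.\ $(M,g)$ has constant curvature $1$. Combined with completeness of $M$, this yields via Killing--Hopf that $(\hat{M}_1,s_{n+1})=S^n(1)$ is the universal — hence a Riemannian — covering of $(M,g)$, which is what is needed to conclude Theorem~\ref{th:reducible_rol}.

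The main obstacle is the middle paragraph, specifically the estimate $d_g(\cdot,N_1)\le\frac{\pi}{2}$: it requires careful bookkeeping of how minimal geodesics bounce between the totally geodesic spheres $N_1$ and $N_2$ — using Lemma~\ref{le-redu-7} in both directions — and repeatedly pitting these bounces against minimality. By contrast, the metric identification in the first paragraph and the continuity/density argument in the last are routine.
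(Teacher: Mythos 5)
Your proof is correct and takes essentially the same route as the paper: identify $(\tilde S,\tilde g)$ with an open subset of the round unit sphere (so it has constant curvature $1$), push this through the local isometry $\Psi$ of Lemma \ref{le-redu-9} onto the dense set $\Psi(\tilde S)=M\setminus(N_1\cup N_2)$, and conclude by continuity of the sectional curvature. The only difference is that you actually prove the identification and density of the image — via surjectivity of $f_1,f_2,w(\cdot,v)$ and the bound $d_g(\cdot,N_1)\le\frac{\pi}{2}$ obtained by bouncing minimal geodesics between $N_1$ and $N_2$ with Lemma \ref{le-redu-7} — whereas the paper asserts this step as clear; your sketch of that bounce argument is sound and completes without difficulty.
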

\begin{proof}
By Lemma \ref{le-redu-9}, we know that $\Psi:\tilde{S}\to M$ is a local isometry.
Now $(\tilde{S},\tilde{g})$ has constant curvature $=1$
since it is isometric to an open subset of the unit sphere (cf. \cite{petersen06} Chapter 1, Section 4.2).
The image $\Psi(\tilde{S})$ of $\Psi$ is clearly a dense subset of $M$
(indeed, $\Psi(\tilde{S})=M\backslash (N_1\cup N_2)$),
which implies that $M$ has constant curvature $=1$. 

\end{proof}

This completes the proof the theorem in the case $1\leq m\leq n-2$,
since a complete Riemannian manifold $(M,g)$ with constant curvature $=1$
is covered, in a Riemannian sense, by the unit sphere i.e., $\hat{M}_1$.
The cases $m=0$ and $m=n-1$ i.e., $\dim\mc{D}_1=1$ and $\dim\mc{D}_2=1$, respectively,
are treated exactly in the same way as above, but in this case $N_1$ is a discrete set
which might not be connected.

\end{proof}

\begin{remark}\label{rem00} 
The argument can easily be modified to deal with the case where $N_2$ (nor $N_1$)
is not simply-connected. The simplifying assumption of simply connectedness of $N_1$ and $N_2$
made previously just serves to render the map $w(\cdot,\cdot)$ globally defined on $S_{x_1}\times S_{x_2}$.
Otherwise we must define $w$ only locally and, in its definition, make a choice corresponding
to different sheets (of which there is a finite number).
\end{remark}

\begin{remark}
As mentioned in the introduction, the following issue to address is that of an irreducible holonomy group of the rolling connection $\nabla^\Rol$ i.e., for a given $x\in M$, the only non-trivial subspace of $T|_x M\oplus\R$
left invariant by parallel transport w.r.t $\nabla^\Rol$ along loops based at $x\in M$
is $T|_x M\oplus\R$.

\end{remark}

\section{Rolling Problem $(R)$ in 3D}\label{se:3D}

As mentioned in introduction, the goal of this chapter is to provide
a local structure theorem of the orbits $\mc{O}_{\RDist}(q_0)$
when $M$ and $\hat{M}$ are 3-dimensional Riemannian manifolds.
Recall that complete controllability of $\srol$ is equivalent to openess of {\it all} the orbits of $\srol$, thanks to the fact that $Q$ is connected and $\srol$ is driftless. In case there is no complete controllability, then there exists a non open orbit which is an immersed manifold in $Q$ of dimension at most eigth. Moreover, as a fiber bundle over $M$, the fiber has dimension at most five.

%%%%%%%%%%%%%%%%%%%%%%%%%%%%%%%%%%%%%%%%%%%
\subsection{Statement of the Results and Proof Strategy}
%%%%%%%%%%%%%%%%%%%%%%%%%%%%%%%%%%%%%%%%%%%

Our first theorem provides all the possibilities for the local structure of a non open orbit for the rolling $(R)$ of two 3D Riemannian manifolds. 

\begin{theorem}\label{th:3D-1}
Let $(M,g)$, $(\hat{M},\hat{g})$ be 3-dimensional Riemannian manifolds.
Assume that $\srol$ is not completely controllable and let $\mc{O}_{\RDist}(q_0)$, for some 
$q_0\in Q$, be a non open orbit. Then, there exists an open and dense subset $O$ of $\mc{O}_{\RDist}(q_0)$ so that, for every $q_1=(x_1,\hat{x}_1;A_1)\in O$, 
there are neighbourhoods $U$ of $x_1$ and $\hat{U}$ of $\hat{x}_1$
such that one of the following holds:
\begin{itemize}
\item[(a)] $(U,g|_U)$ and $(\hat{U},\hat{g}|_{\hat{U}})$ are (locally) isometric;
\item[(b)] $(U,g|_U)$ and $(\hat{U},\hat{g}|_{\hat{U}})$ are both of class $\mc{M}_{\beta}$
for some $\beta>0$;
\item[(c)] $(U,g|_U)$ and $(\hat{U},\hat{g}|_{\hat{U}})$ are both isometric
to warped products $(I\times N,h_f)$, $(I\times \hat{N},\hat{h}_{\hat{f}})$
for some open interval $I\subset\R$ and warping functions $f,\hat{f}$
which moreover satisfy either

\begin{itemize}
\item[(A)] $\displaystyle \frac{f'(t)}{f(t)}=\frac{\hat{f}'(t)}{\hat{f}(t)}$ for all $t\in I$ or
\item[(B)] there is a constant $K\in\R$
such that $\displaystyle \frac{f''(t)}{f(t)}=-K=\frac{\hat{f}''(t)}{\hat{f}(t)}$ for all $t\in I$.
\end{itemize}
\end{itemize}
\end{theorem}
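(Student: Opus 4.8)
The plan is to work infinitesimally with the Lie-bracket structure of $\RDist$ developed in Sections \ref{sec:2.55}, specializing everything to $n=3$. The first step is to analyze the pointwise algebraic object $\widetilde{\Rol}_q$ (Definition \ref{def:rol-tilde}): in dimension three $\wedge^2 T|_x M$ is $3$-dimensional, so $\widetilde{\Rol}_q$ is a symmetric endomorphism of a $3$-dimensional space and we can stratify $Q$ by the rank of $\widetilde{\Rol}_q$ and by the multiplicities of its eigenvalues. The key dichotomy is that if $\widetilde{\Rol}_q$ has rank $3$ and three distinct nonzero eigenvalues on an open set, then (by iterating Propositions \ref{pr:R_comm_L2}, \ref{pr:R_comm_L3}, \ref{vert-vert0} and Corollary \ref{cor:vcomm}, and invoking Corollary \ref{cor:vert}) the vertical directions $V|_q(\pi_Q)$ get generated inside $T|_q\mc{O}_{\RDist}(q_0)$, forcing the orbit to be open. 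So on a non-open orbit we may pass to an open dense subset $O$ where $\widetilde{\Rol}$ has \emph{constant} rank $r\in\{0,1,2\}$ and a \emph{constant} eigenvalue-multiplicity pattern, by the usual semicontinuity-of-rank argument plus the fact that the "bad" locus where the pattern is not locally constant is closed with empty interior.

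The second step is to translate each stratum into geometry of $M$ and $\hat M$. Case $r=0$ means $\Rol\equiv 0$ on the orbit, which by Corollary \ref{cor:weak_ambrose} (or Remark \ref{re:weak_ambrose} in the incomplete setting) forces $(U,g)$ and $(\hat U,\hat g)$ to be locally isometric — this is case (a). For $r=1$ and $r=2$ the plan is to exploit that $\widetilde{\Rol}_q=A^{\ol T}\Rol_q$ and $\Rol_q(X\wedge Y)=AR(X,Y)-\hat R(AX,AY)A$ encode a relation between the curvature operators $R|_x$ and $\hat R|_{\hat x}$: the eigenspace decomposition of $\widetilde{\Rol}_q$, together with the higher brackets (which involve $\ol\nabla\Rol$ and hence the covariant derivatives of both curvature tensors), pins down the second fundamental form data of the eigendistributions on $M$ and $\hat M$. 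Here I would feed in the structure equations for $3$-dimensional Riemannian manifolds and the classification of such manifolds by the behaviour of the Ricci operator: rank $2$ (a double eigenvalue plus a simple one, with the appropriate integrability of eigendistributions) is exactly the warped-product situation $(I\times N,h_f)$, while the special contact geometry with the extra first-order constraint is exactly the class $\mc M_\beta$ described in \cite{agrachev10} and recalled in Appendix \ref{app:3D} — giving cases (c) and (b) respectively. The two subcases (A) and (B) in (c) arise from distinguishing whether the orbit closes up already at the level of the "horizontal" comparison of warping functions ($f'/f=\hat f'/\hat f$) or only after one more bracket, which produces the second-order ODE $f''/f=-K=\hat f''/\hat f$; concretely, one computes $\Rol$ and $\ol\nabla\Rol$ for a warped product and reads off precisely when the resulting vertical vector fields fail to span $V|_q(\pi_Q)$.

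The third step is bookkeeping: show that on each stratum the distribution $\VF_{\RDist}^k$ stabilizes at dimension $\le 8$ (never $9$), and that the corresponding $(M,g)$, $(\hat M,\hat g)$ are forced into exactly one of (a), (b), (c); conversely, in each of these three geometric situations one exhibits an explicit integrable-enough sub-bundle of $Q$ containing $\RDist$ to witness non-openness (the converse direction — producing the non-open orbit given the geometry — largely follows from the equivariance/space-form machinery of Section \ref{space-form} applied fibrewise, or from a direct integration). I expect the \textbf{main obstacle} to be the rank-$2$ analysis: disentangling, from the eigenvalue data of $\widetilde{\Rol}_q$ and the first covariant derivative of the rolling curvature, the precise integrability and umbilicity conditions on the eigendistributions that characterize warped products, and separating the $\mc M_\beta$ (contact) branch from the warped-product branch — this is where the $3$-dimensional structure theory and a somewhat delicate moving-frame computation are unavoidable, and where one must be careful that the "open dense" reduction genuinely removes all degenerate points. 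The completeness issues are minor by contrast: everything here is local, so one works on $U,\hat U$ and never needs global completeness of $M,\hat M$.
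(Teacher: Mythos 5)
Your proposal follows essentially the same route as the paper's proof: stratify the non-open orbit by the (locally constant) rank of $\widetilde{\Rol}_q$ into an open dense union $O_0\cup O_1\cup O_2$ (rank $3$ being excluded by vertical generation and Corollary \ref{cor:vert}), obtain case (a) on $O_0$ from Corollary \ref{cor:weak_ambrose}, and on $O_1,O_2$ use the tangency to the orbit of the vertical fields produced by $\Rol$, together with non-openness, to force the special connection tables of Proposition \ref{pr:special_3D} on both manifolds, which yields the $\mc{M}_\beta$/warped-product dichotomy. One calibration: in the paper the subcases (c)(A) and (c)(B) track the rank-$1$ and rank-$2$ strata respectively (via $AE_2=\pm\hat{E}_2$ in the rank-$1$ case and the constancy of the common double eigenvalue $K$ in the rank-$2$ case), not the bracket depth at which the orbit "closes", and the matching of the data of $M$ and $\hat{M}$ (same $\beta$, same $K$, the relation between $f$ and $\hat{f}$) is obtained by the explicit cross-comparisons of Lemma \ref{le:Rol2:gamma_123} and Proposition \ref{pr:Rol1:main} rather than by a Ricci-operator classification.
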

For the definition and results on warped products and class $\mc{M}_{\beta}$,
we refer to Appendix \ref{app:m_beta}.

Note that we do not address here to the issue of the global structure of a non open orbit for the rolling $(R)$ of two 3D Riemmanian manifolds. For that, one would have to ''glue'' together the local information provided by Theorem \ref{th:3D-1}. Instead, our second theorem below shows, in some sense, that the list of possibilities established in Theorem \ref{th:3D-1} is complete. We will exclude the case where $\mc{O}_{\RDist}(q_0)$
is an integral manifold since in this case this orbit has dimension 3
and $(M,g)$, $(\hat{M},\hat{g})$ are locally isometric, see Corollary \ref{cor:weak_ambrose} and Remark \ref{re:weak_ambrose}.

\begin{theorem}\label{th:3D-2}
Let $(M,g)$, $(\hat{M},\hat{g})$ be 3D Riemannian manifolds,
$q_0=(x_0,\hat{x}_0;A_0)\in Q$ and suppose $\mc{O}_{\RDist}(q_0)$ is
not an integral manifold of $\RDist$.
If one writes $M^\circ:=\pi_{Q,M}(\mc{O}_{\RDist}(q_0))$, $\hat{M}^\circ:=\pi_{Q,\hat{M}}(\mc{O}_{\RDist}(q_0))$,
then the following holds true.
\begin{itemize}
\item[(a)] If $(M,g)$, $(\hat{M},\hat{g})$ are both of class $\mc{M}_{\beta}$
and if
$E_1,E_2,E_3$ and $\hat{E}_1,\hat{E}_2,\hat{E}_3$ are adapted frames
of $(M,g)$ and $(\hat{M},\hat{g})$, respectively,
then one has:
\begin{itemize}
\item[(A)] If $A_0E_2|_{x_0}=\pm \hat{E}_2|_{\hat{x}_0}$, then $\dim\mc{O}_{\RDist}(q_0)=7$;
\item[(B)] If $A_0E_2|_{x_0}\neq \pm\hat{E}_2|_{\hat{x}_0}$
and if (only) one of $(M^\circ,g)$ or $(\hat{M}^\circ,\hat{g})$ has constant curvature,
then $\dim\mc{O}_{\RDist}(q_0)=7$;
\item[(C)] Otherwise, $\dim\mc{O}_{\RDist}(q_0)=8$.
\end{itemize}
\item[(b)] If $(M,g)=(I\times N,h_f)$, $(\hat{M},\hat{g})=(\hat{I}\times\hat{N},\hat{h}_{\hat{f}})$
are warped products, where $I,\hat{I}\subset\R$ are open intervals,
and if $x_0=(r_0,y_0)$, $\hat{x}_0=(\hat{r}_0,\hat{y}_0)$, then one has
\begin{itemize}
\item[(A)] If $\displaystyle A_0\pa{r}|_{(r_0,y_0)}=\pa{r}|_{(\hat{r}_0,\hat{y}_0)}$
and if for every $t$ s.t. $(t+r_0,t+\hat{r}_0)\in I\times \hat{I}$ it holds
\[
\frac{f'(t+r_0)}{f(t+r_0)}=\frac{\hat{f}'(t+\hat{r}_0)}{\hat{f}(t+\hat{r}_0)},
\]
then $\dim\mc{O}_{\RDist}(q_0)=6$;
\item[(B)] Suppose there is a constant $K\in\R$ such that $\displaystyle \frac{f''(r)}{f(r)}=-K=\frac{\hat{f}''(\hat{r})}{\hat{f}(\hat{r})}$
for all $(r,\hat{r})\in I\times \hat{I}$.
\begin{itemize}

\item[(B1)] If $\displaystyle A_0\pa{r}|_{(r_0,y_0)}=\pm \pa{r}|_{(\hat{r}_0,\hat{y}_0)}$
and $\displaystyle \frac{f'(r_0)}{f(r_0)}=\pm \frac{\hat{f}'(\hat{r}_0)}{\hat{f}(\hat{r}_0)}$,
with $\pm$-cases correspondingly on both cases, then $\dim\mc{O}_{\RDist}(q_0)=6$;

\item[(B2)] If (only) one of $(M^\circ,g)$, $(\hat{M}^\circ,\hat{g})$ has constant curvature,
then one has $\dim\mc{O}_{\RDist}(q_0)=6$;

\item[(B3)] Otherwise $\dim\mc{O}_{\RDist}(q_0)=8$.
\end{itemize}

\end{itemize}
Here $(r,y)\mapsto \pa{r}|_{(r,y)}$, $(\hat{r},\hat{y})\mapsto \pa{r}|_{(\hat{r},\hat{y})}$,
are the vector fields in $I\times N$ and $\hat{I}\times\hat{N}$
induced by the canonical, positively oriented vector field $r\mapsto \pa{r}\big|_r$ on $I,\hat{I}\subset\R$.
\end{itemize}
\end{theorem}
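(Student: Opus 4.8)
The plan is to compute $\dim\mc{O}_{\RDist}(q_0)$ through the Orbit Theorem, which identifies $T|_q\mc{O}_{\RDist}(q_0)$ with the evaluation at $q$ of the flag $\VF_{\RDist}^1\subset\VF_{\RDist}^2\subset\cdots$; in each structured case the special form of the curvature will make this flag stabilise into an involutive distribution of constant rank near $q$, so it suffices to identify that rank. First I would fix the pointwise splitting $T|_qQ=\RDist|_q\oplus\mc{S}|_q\oplus V|_q(\pi_Q)$, where $\mc{S}|_q:=\{\LNSD(0,\hat{Y})|_q\ :\ \hat{Y}\in T|_{\hat{x}}\hat{M}\}$ is the $3$-dimensional ``slip'' complement of $\RDist$ inside $\NSDist$ (it is complementary to $\RDist|_q$ because $A$ is invertible, and $\RDist|_q\oplus\mc{S}|_q=\NSDist|_q$), and $V|_q(\pi_Q)\cong\so(T|_xM)\cong\wedge^2 T|_xM$ is the $3$-dimensional ``spin'' fibre (Propositions \ref{pr:vertical_of_Q}, \ref{pr:decomposition_of_VF}). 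Writing $\mc{S}_0:=\mathrm{pr}_{\mc{S}}(T|_q\mc{O}_{\RDist}(q_0))$ and $V_0:=\mathrm{pr}_{V}(T|_q\mc{O}_{\RDist}(q_0))$, one has $\dim\mc{O}_{\RDist}(q_0)\le 3+\dim\mc{S}_0+\dim V_0$, with equality once enough independent bracket vectors are exhibited; by Proposition \ref{pr:R_orbit_bundle} this is consistent with the bundle picture, the orbit fibre over $M^\circ$ having dimension $\dim\mc{O}_{\RDist}(q_0)-3$. So the whole problem reduces to determining the smallest pair $(\mc{S}_0,V_0)$ closed under the bracket rules of Propositions \ref{pr:R_comm_L2}, \ref{pr:R_comm_L3}, \ref{pr:NS_comm_HH}, \ref{pr:NS_comm_HV}, \ref{vert-vert0} and Corollary \ref{cor:vcomm}.

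The generating mechanism is: (i) first brackets of $\RDist$-lifts produce $\nu(\Rol(X,Y)(A))|_q$ (Proposition \ref{pr:R_comm_L2}), so $V_0\supseteq\IM\widetilde{\Rol}_q$ (Definition \ref{def:rol-tilde}); (ii) bracketing such a vertical with an $\RDist$-lift $\LRD(Z)$ yields, via Proposition \ref{pr:R_comm_L3}, both the slip direction $-\LNSD(\Rol(X,Y)(A)Z)|_q$ (feeding $\mc{S}_0$) and the vertical direction $\nu(\ol{\nabla}^1\Rol(X,Y,Z)(A))|_q$ (feeding $V_0$), the covariant derivatives being computed through Lemma \ref{le:Rbar_cov_diff}; (iii) brackets of two verticals give $\nu$ of the expression of Corollary \ref{cor:vcomm}; (iv) brackets involving slip directions are governed by Propositions \ref{pr:NS_comm_HH}, \ref{pr:NS_comm_HV}. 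In particular, the moment $V_0=V|_q(\pi_Q)$ Corollary \ref{cor:vert} forces the orbit to be open (dimension $9$), so for a non-open orbit $\dim V_0\le 2$; and since $\mc{O}_{\RDist}(q_0)$ is not an integral manifold, $\dim V_0\ge 1$ (Corollary \ref{cor:weak_ambrose}).

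The heart of the proof is to make $\Rol$ and its iterated covariant derivatives explicit in the two classes, using adapted frames. For $\mc{M}_\beta$ (case (a)) one invokes the structure equations recalled in Appendix \ref{app:3D}: relative to an adapted frame $E_1,E_2,E_3$ the full curvature tensor of $(M,g)$ is determined by $\beta$ and a single curvature function, and similarly for $(\hat M,\hat g)$; one then writes $\widetilde{\Rol}_q$ and $\ol{\nabla}^k\Rol$ as $\so(T|_xM)$-valued expressions in the matrix entries $\mc{M}_{E,\hat{E}}(A)$. The condition $A_0E_2|_{x_0}=\pm\hat{E}_2|_{\hat{x}_0}$ is precisely the degeneracy locus on which $\widetilde{\Rol}_q$ together with all its derivatives take values in a two-dimensional subspace aligned with $E_2$, whence $\dim V_0=2$ and, tracking the slip directions generated by (ii), $\dim\mc{S}_0=2$, giving $\dim\mc{O}_{\RDist}(q_0)=7$; when the orientation is off this locus but one of $(M^\circ,g)$, $(\hat{M}^\circ,\hat{g})$ has constant curvature, the corresponding curvature operator is a scalar multiple of the identity and the same count recurs; otherwise a generic orientation makes the accessible distribution codimension one, so $\dim\mc{O}_{\RDist}(q_0)=8$. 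For the warped products $(I\times N,h_f)$ (case (b)) the curvature is governed by $f'/f$, $f''/f$ and the Gauss curvature of $N$; the analogous computation, with $\pa{r}$ now playing the role of the distinguished direction and hypotheses (A), (B) on $f,\hat{f}$ being exactly what keeps $\ol{\nabla}^k\Rol$ inside a rank-three accessible distribution, yields $\dim=6$ in cases (A), (B1), (B2) and $\dim=8$ otherwise. In the lower-dimensional cases I would verify directly (using the explicit expressions and Lemma \ref{le:rol_Jacobi}/Theorem \ref{th:cartan} for the geometric interpretation) that the accessible directions close under bracket into an involutive distribution of constant rank, so that the Orbit Theorem returns the dimension exactly rather than merely a lower bound.

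The main obstacle is twofold. First, the bookkeeping of the closure of $(\mc{S}_0,V_0)$ under all four bracket operations: one must show that no higher iterated bracket escapes the claimed accessible distribution, which in the generic sub-cases amounts to pinning down exactly the single missing direction so that the dimension is $8$ and not $9$. Second, one must check that the degeneracy conditions in (A)/(B)/(B1)/(B2) are not merely sufficient but, combined with the standing ``non-open, non-integral'' hypothesis, exhaust all orientations $A_0$ giving a non-open orbit — this is where the precise curvature formulas for $\mc{M}_\beta$ manifolds and for warped products, and a careful analysis of the spectral decomposition of the symmetric operator $\widetilde{\Rol}_q$, are indispensable.
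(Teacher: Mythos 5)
There is a real gap in how you plan to get the \emph{upper} bounds $6$, $7$, $8$. The Orbit Theorem only gives $\mathrm{Lie}(\RDist)|_q\subset T|_q\mc{O}_{\RDist}(q_0)$; outside the analytic category the orbit can be strictly larger than the evaluation of the bracket-generated flag, so "bookkeeping the closure of $(\mc{S}_0,V_0)$ under brackets" cannot by itself cap the dimension. What the paper does instead is exhibit objects that contain the whole orbit: in case (a)(A) the set $Q_1=\{q:AE_2|_x=\pm\hat{E}_2|_{\hat x}\}$ is shown to be a closed $7$-dimensional submanifold to which $\RDist$ is tangent (by checking $\LRD(E_i)h_j=0$ for $h_1=\hat g(AE_1,\hat E_2)$, $h_2=\hat g(AE_3,\hat E_2)$), and in case (b)(A)/(B1) an ODE-uniqueness argument shows rolling preserves the relations $A\pa{r}=\pa{r}$ and $r-r_0=\hat r-\hat r_0$, confining the orbit to a $6$-dimensional submanifold $Q^*_+$; in the generic cases an explicitly involutive rank-$8$ distribution containing $\RDist$ is built and Frobenius is applied. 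Your sketch of the $\mc{M}_\beta$ case (A) ("$\widetilde{\Rol}$ and all its derivatives lie in a $2$-dimensional subspace, hence $\dim V_0=2$, $\dim\mc{S}_0=2$") is not the mechanism and, as an argument, proves nothing about an upper bound. Moreover, if you do try the Frobenius route you must face a difficulty you never mention: the frame data used to define the candidate distribution (the vector $X_A$ spanning $AE_2^\perp\cap\hat E_2^\perp$, the angles $\theta,\phi$ and the function $\lambda$) degenerate exactly on $Q_1$, and in the warped-product case the orbit crosses $Q_1\setminus S_1$ transversally, so the $8$-dimensional distribution defined on $Q_0$ must be extended (the paper does this only in a $C^1$ sense, via the Sasaki-metric normal field and the functions $H_1=\cos\theta/\lambda$, $H_3=\sin\theta/\lambda$, after showing $\lambda\to\pm\infty$ there) before a $C^1$ Frobenius theorem can be invoked on all of $Q\setminus S_1$.

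The lower bounds and the dichotomies (a)(B)/(a)(C) and (b)(B2)/(b)(B3) also need more than "generic orientation makes the accessible distribution codimension one". The $6$- and $7$-dimensional lower bounds come from the non-integral-manifold hypothesis (Corollary \ref{cor:weak_ambrose}) producing a point where $\widetilde{\Rol}_q\neq 0$, hence $\nu(A\star E_2)|_q$ tangent to the orbit, and then Proposition \ref{pr:3D-2}(i) supplying $L_1,L_2,L_3$; to get dimension exactly $8$ in the "otherwise" cases one must show the orbit actually reaches the region where \emph{both} curvature operators are non-degenerate ($M_0\times\hat M_0$ in the paper's notation), which the paper achieves by constructing auxiliary distributions $D,\hat D$ whose integral lifts $\Gamma(\gamma,q)$, $\hat\Gamma(\hat\gamma,q)$ exist for all time (a compactness argument in $\SO(3)$) and by tangency of $D,\hat D$ to the orbit over $M_0\times\hat M$ resp. $M\times\hat M_0$. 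Without this propagation step, and without the invariant-submanifold/extended-Frobenius arguments above, your plan establishes neither the stated upper bounds nor the case distinctions, so as it stands the proof does not go through.
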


From now on$(M,g)$, $(\hat{M},\hat{g})$
will be connected, oriented 3-dimensional Riemannian manifolds.
The Hodge-duals of $(M,g)$, $(\hat{M},\hat{g})$
are denoted by $\star:=\star_M$ and $\hat{\star}:=\star_{\hat{M}}$.

As a reminder, for $q_0=(x_0,\hat{x}_0;A_0)\in Q$,
we will write
\[
\pi_{\mc{O}_{\RDist}(q_0)}:=&\pi_Q|_{\mc{O}_{\RDist}(q_0)}:\mc{O}_{\RDist}(q_0)\to M\times\hat{M} \\
\pi_{\mc{O}_{\RDist}(q_0),M}:=&\pr_1\circ \pi_{\mc{O}_{\RDist}(q_0)}:\mc{O}_{\RDist}(q_0)\to M \\
\pi_{\mc{O}_{\RDist}(q_0),\hat{M}}:=&\pr_2\circ \pi_{\mc{O}_{\RDist}(q_0)}:\mc{O}_{\RDist}(q_0)\to \hat{M} \\
\]
where $\pr_1:M\times\hat{M}\to M$, $\pr_2:M\times\hat{M}\to \hat{M}$
are projections onto the first and second factor, respectively.

Before we start the arguments for Theorems \ref{th:3D-1} and \ref{th:3D-2}, we give next two propositions which are both instrumental in these arguments and also of independant interest.

\begin{proposition}\label{pr:3D-1}
Let $(M,g)$, $(\hat{M},\hat{g})$ be two Riemannian manifolds of dimension $3$,
$q_0=(x_0,\hat{x}_0;A_0)\in Q$
and suppose there
is an open subset $O$ of $\mc{O}_{\RDist}(q_0)$
and a smooth unit vector field $E_2\in \VF(\pi_{Q,M}(O))$ such that $\nu(A\star E_2)|_q$ is tangent
to $\mc{O}_{\RDist}(q_0)$ for all $q\in O$.
If the orbit  $\mc{O}_{\RDist}(q_0)$ is not open in $Q$, then
for any $x\in \pi_{Q,M}(O)$ and any unit vector fields $E_1,E_3$
such that $E_1,E_2,E_3$ is an orthonormal frame
in some neighbourhood $U$ of $x$ in $M$, then the connection table associated to 
$E_1,E_2,E_3$ is given by 
\[
\Gamma=\qmatrix{
\Gamma^1_{(2,3)} &                                0                                & -\Gamma^1_{(1,2)} \cr
\Gamma^1_{(3,1)} & \Gamma^2_{(3,1)} 				& \Gamma^3_{(3,1)} \cr
\Gamma^1_{(1,2)} &                                0                               & \Gamma^1_{(2,3)} \cr
},
\]
and
\[
& V(\Gamma^1_{(2,3)})=0,\quad V(\Gamma^1_{(1,2)})=0,\quad \forall V\in E_2|_y^\perp,\quad y\in U, \\
\]
where $\Gamma=[(\Gamma_{\star i}^j)_j^i],\quad \Gamma^j_{(i,k)}=g(\nabla_{E_j} E_i,E_k)$
and $\star 1=(2,3)$, $\star 2=(3,1)$ and $\star 3=(1,2)$
\end{proposition}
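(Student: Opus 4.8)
Argue by contraposition and exploit that, by the Orbit Theorem, $\mc{O}_{\RDist}(q_0)$ is an immersed submanifold closed under Lie brackets of vector fields tangent to it. Fix the pointwise splitting $T_q Q=\RDist|_q\oplus H_q\oplus V|_q(\pi_Q)$, where $H_q:=\LNSD(\{0\}\oplus T|_{\hat{x}}\hat{M})|_q$. Since $(\pi_{Q,M})_*$ is an isomorphism on $\RDist|_q$ and $\pi_{Q,M}(\mc{O}_{\RDist}(q_0))$ is open in $M$ (Remark~\ref{re:R_orbit_bundle}), one has $T_q\mc{O}_{\RDist}(q_0)=\RDist|_q\oplus W_q$ with $W_q\subseteq H_q\oplus V|_q(\pi_Q)$, and $\dim W_q\le 5$ as long as the orbit is not open. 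The hypothesis says exactly that $\nu(A\star E_2)|_q\in W_q$ for $q\in O$. The plan is: if at some $q_1\in O$ over $x$ the connection table of an adapted frame $E_1,E_2,E_3$ fails to have the asserted shape (or a derivative condition fails), I will produce by iterated brackets enough tangent vectors at $q_1$ to span $T_{q_1}Q$, forcing $\mc{O}_{\RDist}(q_0)$ to be open, a contradiction.

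First I compute the basic brackets $[\LRD(X),\nu(A\star E_2)]$ on $O$ from Proposition~\ref{pr:NS_comm_HV} (with $\ol T(A)=(X,AX)$, $U(A)=A\star\tilde{E_2}$, and using that $\star$ is $\nabla$-parallel):
$$[\LRD(X),\nu(A\star E_2)]|_q=-\LNSD\big(0,A(\star E_2)X\big)\big|_q+\nu\big(A\star\nabla_X E_2\big)\big|_q ,$$
which lies in $H_q\oplus V|_q(\pi_Q)$ and hence is a new element of $W_q$. Taking $X=E_2$ annihilates the $H_q$-part (as $(\star E_2)E_2=0$), leaving the purely vertical vector $\nu(A\star\nabla_{E_2}E_2)|_q$; applying the same bracket to this vertical direction regenerates $H_q$-directions through $(\star\nabla_{E_2}E_2)E_2\ne 0$ whenever $\nabla_{E_2}E_2\ne 0$, and one further step against $\RDist|_q$ (Proposition~\ref{pr:NS_comm_HH}) makes $W_q$ exhaust $H_q$ and then $V|_q(\pi_Q)$, a contradiction. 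This gives $\nabla_{E_2}E_2=0$, i.e. the two zeros $\Gamma^2_{(2,3)}=\Gamma^2_{(1,2)}=0$ of the table. Taking $X=E_1$ and $X=E_3$ instead, the $H_q$-parts become $\mp\LNSD(0,AE_3)|_q$ and $\pm\LNSD(0,AE_1)|_q$, so that, together with $\RDist|_q=\LNSD(\{(Y,AY)\})|_q$, the pure $TM$-lifts $\LNSD(E_1,0)|_q$ and $\LNSD(E_3,0)|_q$ lie in $T_q\mc{O}_{\RDist}(q_0)$.

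Next I bracket these new tangent fields against $\LRD(E_i)$ (Proposition~\ref{pr:NS_comm_HH}, whose vertical contribution is the rolling-curvature term $\nu(AR(\cdot,\cdot)-\hat{R}(A\cdot,A\cdot)A)|_q$, rewritten through the Hodge identification $\so(T|_xM)\cong T|_xM$) and again against $\nu(A\star E_2)$. Since $W_q$ already contains a codimension-one subspace of $H_q$ together with $\nu(A\star E_2)$, the only way all these brackets can avoid producing the missing direction $\LNSD(0,AE_2)|_q$ and avoid producing excess verticals — hence keep $\dim W_q\le 5$ — is that the appropriate linear combinations of the $\Gamma^j_{(i,k)}$ vanish; separating the $\LNSD$- and $\nu$-components yields precisely $\Gamma^3_{(2,3)}=-\Gamma^1_{(1,2)}$ and $\Gamma^3_{(1,2)}=\Gamma^1_{(2,3)}$, which is the displayed shape of $\Gamma$. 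The two derivative conditions $V(\Gamma^1_{(2,3)})=V(\Gamma^1_{(1,2)})=0$ for $V\perp E_2$ then come from the second-order brackets $[\LRD(E_1),[\LRD(E_1),\nu(A\star E_2)]]$, $[\LRD(E_3),[\LRD(E_1),\nu(A\star E_2)]]$ and their analogues with $\LNSD(E_1,0)$, $\LNSD(E_3,0)$: expanding them with Lemma~\ref{le:rules_of_computation} produces terms carrying the $E_1$- and $E_3$-directional derivatives of $g(\nabla_{E_1}E_2,E_3)$ and $g(\nabla_{E_1}E_1,E_2)$, and requiring no new tangent direction forces these derivatives to vanish.

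The main obstacle is the bookkeeping in the middle step: one must check at each stage that the freshly produced bracket is genuinely independent of the span accumulated so far, so that the bound $\dim W_q\le 5$ really forces the relations rather than merely making them sufficient, and one must manipulate the curvature terms $AR(\cdot,\cdot)$ and $\hat{R}(A\cdot,A\cdot)A$ precisely enough to distinguish genuinely new verticals from multiples of $\nu(A\star E_2)$. A secondary technical point is that the hypothesis only supplies $\nu(A\star E_2)$ along the open set $O$, so all the bracket identities are a priori valid only over $O$; one concludes for every $x\in\pi_{Q,M}(O)$ by openness of $O$, and passes to an arbitrary completion $E_1,E_3$ of $E_2$ by noting that each of the stated conditions is a frame-independent (tensorial) statement about $E_2$ and $\star E_2$.
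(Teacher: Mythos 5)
Your opening computations are on the right track (the bracket formula $[\LRD(X),\nu((\cdot)\star E_2)]|_q=-\LNSD(0,A(\star E_2)X)|_q+\nu(A\star\nabla_XE_2)|_q$ is exactly what the paper obtains), but the mechanism by which you derive the contradiction has a genuine gap. Your plan is to span all of $T_{q}Q$ by brackets, i.e.\ to produce the full complement $H_q\oplus V|_q(\pi_Q)$ of $\RDist|_q$. In the first step, however, the vectors you list give at most $\RDist|_q$ (3 dimensions), the $H_q$-directions $\LNSD(0,A(\star E_2)E_i)$ and $\LNSD(0,A(\star\nabla_{E_2}E_2)E_i)$ (3 more), and the two verticals $\nu(A\star E_2)$, $\nu(A\star\nabla_{E_2}E_2)$ — eight dimensions in total, which does not contradict non-openness ($\dim\mc{O}_{\RDist}(q_0)\le 8$). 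The paper closes this step differently and more cheaply: it never produces $H_q$-directions at all, but computes the \emph{vertical-vertical} bracket $[\nu(A\star\nabla_{E_2}E_2(\cdot)),\nu((\cdot)\star E_2)]$ via Proposition \ref{pr:NS_comm_VV} and Lemma \ref{le:speciality_of_3D}(ii), getting a third vertical field; if $\nabla_{E_2}E_2\neq 0$ these three verticals are linearly independent, hence span $V|_q(\pi_Q)$ on an open subset of the orbit, and then Corollary \ref{cor:vert} (built on Proposition \ref{pr:H_directions_from_V}: tangency of the whole vertical fibre along the rolling flows forces the orbit to be open) yields the contradiction. You neither compute that vertical-vertical bracket nor invoke the corollary, so your chain "exhaust $H_q$ and then $V|_q(\pi_Q)$" is not substantiated and, as written, falls short by one dimension.

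The same issue propagates to the later steps. The relations $\Gamma^3_{(2,3)}=-\Gamma^1_{(1,2)}$, $\Gamma^3_{(1,2)}=\Gamma^1_{(2,3)}$ are not obtained in the paper by "separating the $\LNSD$- and $\nu$-components" of a bracket; they follow from a linear-dependence (determinant) condition among three explicit vertical fields: $\nu((\cdot)\star E_2)$ together with $M_3=[L_1,\nu((\cdot)\star E_2)]$ and $M_1=[L_3,\nu((\cdot)\star E_2)]$, where $L_1,L_3$ are the tangent fields extracted from $[\LRD(E_1),\nu((\cdot)\star E_2)]=\LRD(E_3)-L_3$ and its analogue — independence would again force openness via Corollary \ref{cor:vert}. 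Likewise the derivative conditions come from the brackets $[\LRD(E_1),L_1]$ and $[\LRD(E_3),L_3]$, which produce vertical fields $J_1,J_3$ carrying $E_1(\Gamma^1_{(2,3)}),E_1(\Gamma^1_{(1,2)})$ and $E_3(\cdot)$ respectively, followed by one more bracket with $\nu((\cdot)\star E_2)$ and the same three-verticals dependence argument. Your proposal gestures at second-order brackets of roughly the right kind, but without the "verticals span $\Rightarrow$ open" principle and without the explicit dependence computations it does not show why precisely the displayed relations (and not some other combination of connection coefficients) must vanish. To repair the proof, replace the dimension count of $W_q$ by the vertical-space criterion of Corollary \ref{cor:vert} and carry out the three dependence computations; your remark at the end about arbitrary completions $E_1,E_3$ is then unnecessary, since the computations are valid for any orthonormal completion from the start.
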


\begin{remark}
In particular, this means that the assumptions of the previous propositions
imply that the assumptions of Proposition \ref{pr:special_3D} are fulfilled.
\end{remark}

\begin{proof}
Notice that $\pi_{Q,M}(O)$ is open in $M$ since $\pi_{\mc{O}_{\RDist}(q_0),M}=\pi_{Q,M}|_{\mc{O}_{\RDist}(q_0)}$ is a submersion.
Without loss of generality, we may assume that there exist $E_1,E_3\in\VF(\pi_{Q,M}(O))$
such that $E_1,E_2,E_3$ form an orthonormal basis.

We begin by computing in $O$,
\[
[\LRD(E_2),\nu((\cdot)\star E_2)]|_q
=&-\LNSD(A(\star E_2)E_2)|_q+\nu(A\star \nabla_{E_2} E_2)|_q  \\
=&\nu(A\star (-\Gamma^2_{(1,2)} E_1+\Gamma^2_{(2,3)} E_3)|_q=:V_2|_q
\]
whence $V_2$ is a vector field in $O$ and furthermore
\[
[V_2,\nu((\cdot)\star E_2)]|_q=&\nu(A[\star (-\Gamma^2_{(1,2)} E_1+\Gamma^2_{(2,3)} E_3),\star E_2]_{\so})|_q \\
=&\nu(A\star (-\Gamma^2_{(1,2)} E_3-\Gamma^2_{(2,3)} E_1))|_q=:M_2|_q
\]
where $M_2$ is a vector field in $O$ as well.
Now if there were an open subset $O'$ of $O$
the $\pi_{\mc{O}_{\RDist}(q_0)}$-vertical vector fields
where $\nu(A\star E_2)|_q,V_2|_q,M_2|_q$ were linearly independent
for all $q\in O'$, it would follow that they form a basis of $V|_q(\pi_Q)$
for $q\in O'$ and hence $V|_q(\pi_Q)\subset T|_q(\mc{O}_{\RDist}(q_0))$ for $q\in O'$.
Then Corollary \ref{cor:vert} would imply that $\mc{O}_{\RDist}(q_0)$ is open,
which is a contradiction.
Hence in a dense subset $O_d$ of $O$ one has
that $\nu(A\star E_2)|_q,V_2|_q,M_2|_q$ are linearly dependent
which implies
\[
0=\det\qmatrix{0 & 1 & 0\cr -\Gamma^2_{(1,2)} & 0 & \Gamma^2_{(2,3)} \cr -\Gamma^2_{(2,3)} & 0 & -\Gamma^2_{(1,2)}}
=-((\Gamma^2_{(1,2)})^2+(\Gamma^2_{(2,3)})^2)
\]
i.e.
\[
\Gamma^2_{(1,2)}=0,\quad \Gamma^2_{(2,3)}=0
\]
on $\pi_{\mc{O}_{\RDist}(q_0),M}(O_d)$.
It is clear that  $\pi_{\mc{O}_{\RDist}(q_0),M}(O_d)$ is dense in $\pi_{\mc{O}_{\RDist}(q_0),M}(O)$
so the above relation holds on the open subset $\pi_{\mc{O}_{\RDist}(q_0),M}(O)$ of $M$.

Next compute
\[
[\LRD(E_1),\nu((\cdot)\star E_2)]|_q
=&\LNSD(AE_3)|_q+\nu(A\star (-\Gamma^1_{(1,2)}E_1+\Gamma^1_{(2,3)}E_3))|_q
=\LRD(E_3)|_q-L_3|_q, \\
[\LRD(E_3),\nu((\cdot)\star E_2)]|_q
=&-\LNSD(AE_1)|_q-\nu(A\star (-\Gamma^3_{(1,2)}E_1+\Gamma^3_{(2,3)}E_3))|_q\\
=&-\LRD(E_1)|_q+L_1|_q,
\]
where $L_1,L_3\in\VF(O')$ such that
\[
L_1|_q:=&\LNSD(E_1)|_q+\nu(A\star (-\Gamma^3_{(1,2)}E_1+\Gamma^3_{(2,3)}E_3))|_q, \\
L_3|_q:=&\LNSD(E_3)|_q-\nu(A\star (-\Gamma^1_{(1,2)}E_1+\Gamma^1_{(2,3)}E_3))|_q.
\]
Continuing by taking brackets of these against $\nu(A\star E_2)|_q$ gives
\[
[L_1,\nu((\cdot)\star E_2)]|_q
=&\nu(A\star (-\Gamma^1_{(1,2)}E_1+\Gamma^1_{(2,3)}E_3))|_q
+\nu(A[\star (-\Gamma^3_{(1,2)}E_1+\Gamma^3_{(2,3)}E_3),\star E_2]_{\so})|_q \\
=&\nu(A\star (-(\Gamma^1_{(1,2)}+\Gamma^3_{(2,3)})E_1+(\Gamma^1_{(2,3)}-\Gamma^3_{(1,2)})E_3)|_q=:M_{3} \\
[L_3,\nu((\cdot)\star E_2)]|_q
=&\nu(A\star (-\Gamma^3_{(1,2)}E_1+\Gamma^3_{(2,3)}E_3))|_q
-\nu(A[\star (-\Gamma^1_{(1,2)}E_1+\Gamma^1_{(2,3)}E_3),\star E_2]_{\so})|_q \\
=&\nu(A\star ((-\Gamma^3_{(1,2)}+\Gamma^1_{(2,3)})E_1+(\Gamma^3_{(2,3)}+\Gamma^1_{(1,2)})E_3)|_q=:M_{1}.
\]
Since $\nu(A\star E_2)|_q,M_{1}|_q,M_{3}|_q$ are smooth $\pi_{\mc{O}_{\RDist}(q_0)}$-vertical vector fields
defined on $O'$,
we may again resort to Corollary \ref{cor:vert} to
deduce that
\[
0=\det\qmatrix{0 & 1 & 0 \cr -(\Gamma^1_{(1,2)}+\Gamma^3_{(2,3)}) & 0 & \Gamma^1_{(2,3)}-\Gamma^3_{(1,2)} \cr
-\Gamma^3_{(1,2)}+\Gamma^1_{(2,3)} & 0 & \Gamma^3_{(2,3)}+\Gamma^1_{(1,2)}}
=-((\Gamma^1_{(1,2)}+\Gamma^3_{(2,3)})^2+(\Gamma^1_{(2,3)}-\Gamma^3_{(1,2)})^2)
\]
i.e.
\[
\Gamma^3_{(2,3)}=-\Gamma^1_{(1,2)},
\quad \Gamma^3_{(1,2)}=\Gamma^1_{(2,3)}
\]
on $\pi_{\mc{O}_{\RDist}(q_0),M}(O)$.

We will now prove that
derivatives of $\Gamma^1_{(2,3)}$ and $\Gamma^1_{(1,2)}$
in the $E_2^\perp$-directions
vanish on $\pi_{\mc{O}_{\RDist}(q_0),M}(O)$.
To reach this we first notice that
\[
L_1|_q=\LNSD(E_1)|_q-\nu(A\star (\Gamma^1_{(2,3)}E_1+\Gamma^1_{(1,2)}E_3))|_q
\]
and then compute
\[
[\LRD(E_1),L_1]|_q
=&\LNSD(\Gamma^1_{(1,2)}E_2-\Gamma^1_{(3,1)}E_3)|_q-\LRD(\nabla_{E_1} E_1)|_q \\
&+\nu(AR(E_1\wedge E_1)-\hat{R}(AE_1\wedge 0)A)|_q \\
&+\Gamma^1_{(1,2)}\LNSD(AE_2)|_q
-\nu\big(A\star (E_1(\Gamma^1_{(2,3)})E_1+E_1(\Gamma^1_{(1,2)})E_3)\big)|_q \\
&-\nu\big(A\star (\Gamma^1_{(2,3)}(\Gamma^1_{(1,2)}E_2-\Gamma^1_{(3,1)}E_3)
+\Gamma^1_{(1,2)}(\Gamma^1_{(3,1)}E_1-\Gamma^1_{(2,3)}E_2))\big)|_q \\
=&\Gamma^1_{(1,2)}\LRD(E_2)|_q-\Gamma^1_{(3,1)}L_3|_q-\LRD(\nabla_{E_1} E_1)|_q \\
&-\nu\big(A\star (E_1(\Gamma^1_{(2,3)})E_1+E_1(\Gamma^1_{(1,2)})E_3)\big)|_q.
\]
So if one define $J_1|_q:=\nu\big(A\star (E_1(\Gamma^1_{(2,3)})E_1+E_1(\Gamma^1_{(1,2)})E_3)\big)|_q$,
then $J_1$ is a smooth vector field in $O$ (tangent to $\mc{O}_{\RDist}(q_0)$)
and
\[
[J_1,\nu((\cdot)\star E_2]|_q
=\nu\big(A\star (E_1(\Gamma^1_{(2,3)})E_3-E_1(\Gamma^1_{(1,2)})E_1)\big)|_q.
\]
Since $\nu(A\star E_1)|_q,J_1|_q$ and $[J_1,\nu((\cdot)\star E_2]|_q$
are $\pi_{\mc{O}_{\RDist}(q_0)}$ vertical vector fields in $O$
and $\mc{O}_{\RDist}(q_0)$ is not open, we again deduce that
\[
E_1(\Gamma^1_{(2,3)})=0,\quad E_1(\Gamma^1_{(1,2)})=0.
\]
In a similar way,
\[
[\LRD(E_3),L_3]|_q
=&\LNSD(\Gamma^3_{(3,1)}E_1+\Gamma^1_{(1,2)}E_2)|_q-\LRD(\nabla_{E_3} E_3)|_q \\
&+\nu(AR(E_3\wedge E_3)-\hat{R}(AE_3\wedge 0)A)|_q \\
&+\Gamma^1_{(1,2)}\LNSD(AE_2)|_q
-\nu\big(A\star (-E_3(\Gamma^1_{(1,2)})E_1+E_3(\Gamma^1_{(2,3)})E_3)\big)|_q \\
&-\nu\big(A\star (-\Gamma^1_{(1,2)}(\Gamma^1_{(2,3)}E_2-\Gamma^3_{(3,1)}E_3)
+\Gamma^1_{(2,3)}(\Gamma^3_{(3,1)}E_1+\Gamma^1_{(1,2)}E_2)\big)|_q \\
=&\Gamma^3_{(3,1)}L_1|_q+\Gamma^1_{(1,2)}\LRD(E_2)|_q-\LRD(\nabla_{E_3} E_3)|_q  \\
&-\nu\big(A\star (-E_3(\Gamma^1_{(1,2)})E_1+E_3(\Gamma^1_{(2,3)})E_3)\big)|_q
\]
so $J_3|_q:=\nu\big(A\star (-E_3(\Gamma^1_{(1,2)})E_1+E_3(\Gamma^1_{(2,3)})E_3)\big)|_q$
defines a smooth vector field on $O$
and
\[
[J_3,\nu((\cdot)\star E_2)]|_q
=\nu\big(A\star (-E_3(\Gamma^1_{(1,2)})E_3-E_3(\Gamma^1_{(2,3)})E_1)\big)|_q.
\]
The same argument as before implies that
\[
E_3(\Gamma^1_{(1,2)})=0,\quad E_3(\Gamma^1_{(2,3)})=0.
\]
Since $E_2^\perp$ is spanned by $E_1,E_3$, the claim follows.
This completes the proof.
\end{proof}

We next provide a completary result to Proposition \ref{pr:3D-1} which will be fundamental for the proof of Theorem \ref{th:3D-2}.

\begin{proposition}\label{pr:3D-2}
Let $(M,g)$, $(\hat{M},\hat{g})$ be two Riemannian manifolds of dimension $3$,
$q_0=(x_0,\hat{x}_0;A_0)\in Q$.
Assume that there is an open subset $O$ of $\mc{O}_{\RDist}(q_0)$
and a smooth orthonormal local frame $E_1,E_2,E_3\in \VF(U)$
defined on the open subset $U:=\pi_{Q,M}(O)$
 of $M$ with respect to which the connection table has the form
\[
\Gamma=\qmatrix{
\Gamma^1_{(2,3)} &                                0                                & -\Gamma^1_{(1,2)} \cr
\Gamma^1_{(3,1)} & \Gamma^2_{(3,1)} 				& \Gamma^3_{(3,1)} \cr
\Gamma^1_{(1,2)} &                                0                               & \Gamma^1_{(2,3)} \cr
},
\]
and that moreover
\[
& V(\Gamma^1_{(2,3)})=0,\quad V(\Gamma^1_{(1,2)})=0,\quad \forall V\in E_2|_y^\perp,\quad y\in U.
\]
Define smooth vector fields $L_1,L_2,L_3$ on the open subset $\tilde{O}:=\pi_{Q,M}^{-1}(U)$ of $Q$ by
\[
L_1|_q=&\LNSD(E_1)|_q-\nu(A\star (\Gamma^1_{(2,3)}E_1+\Gamma^1_{(1,2)}E_3))|_q \\
L_2|_q=&\Gamma^1_{(2,3)}(x)\LNSD(E_2)|_q \\
L_3|_q=&\LNSD(E_3)|_q-\nu(A\star (-\Gamma^1_{(1,2)}E_1+\Gamma^1_{(2,3)}E_3))|_q.
\]
Then we have the following:

\begin{itemize}
\item[(i)]  If $\nu(A\star E_2)|_q$ is tangent to the orbit $\mc{O}_{\RDist}(q_0)$ at every point $q\in O$,
then the vectors
\[
\LRD(E_1)|_q,\ \LRD(E_2)|_q,\ \LRD(E_3)|_q,\ \nu(A\star E_2)|_q,\ L_1|_q,\ L_2|_q,\ L_3|_q
\]
are all tangent to $\mc{O}_{\RDist}(q_0)$ for every $q\in O$.

\item[(ii)] On $\tilde{O}$ we have the following Lie-bracket formulas
\[
[\LRD(E_1),\nu((\cdot)\star E_2)]|_q&=\LRD(E_3)|_q-L_3|_q \\
[\LRD(E_2),\nu((\cdot)\star E_2)]|_q&=0 \\
[\LRD(E_3),\nu((\cdot)\star E_2)]|_q&=-\LRD(E_1)|_q+L_1|_q \\
[L_1,\nu((\cdot)\star E_2)]|_q&=0 \\
[L_3,\nu((\cdot)\star E_2)]|_q&=0 \\
[\LRD(E_1),L_1]|_q&=-\Gamma^1_{(3,1)}L_3|_q+\Gamma^1_{(3,1)}\LRD(E_3)|_q \\
[\LRD(E_3),L_3]|_q&=\Gamma^3_{(3,1)}L_1|_q-\Gamma^3_{(3,1)}\LRD(E_1)|_q \\
[\LRD(E_2),L_1]|_q=&\Gamma^1_{(1,2)}L_1|_q-(\Gamma^1_{(2,3)}+\Gamma^2_{(3,1)})L_3|_q \\
[\LRD(E_2),L_3]|_q=&(\Gamma^1_{(2,3)}+\Gamma^2_{(3,1)})L_1|_q+\Gamma^1_{(1,2)}L_3|_q \\
[\LRD(E_3),L_1]|_q =&2L_2|_q-\Gamma^3_{(3,1)}L_3|_q-\LRD(\nabla_{E_1} E_3)|_q-\Gamma^1_{(2,3)}\LRD(E_2)|_q \\
&-(K_2+(\Gamma^1_{(2,3)})^2+(\Gamma^1_{(1,2)})^2)\nu(A\star E_2)|_q \\
[\LRD(E_1),L_3]|_q=&-2L_2|_q+\Gamma^1_{(3,1)}L_1|_q-\LRD(\nabla_{E_3} E_1)|_q+\Gamma^1_{(3,1)}\LRD(E_2)|_q \\
&+(K_2+(\Gamma^1_{(1,2)})^2+(\Gamma^1_{(2,3)})^2)\nu(A\star E_2)|_q \\
[L_3,L_1]|_q=&2L_2|_q-\Gamma^1_{(3,1)}L_1|_q-\Gamma^3_{(3,1)} L_3|_q \\
&-(K_2+(\Gamma^1_{(2,3)})^2+(\Gamma^1_{(1,2)})^2)\nu(A\star E_2)|_q.
\]
\end{itemize}
\end{proposition}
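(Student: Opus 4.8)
The plan is to prove part (ii) — the twelve Lie-bracket identities — first, and then deduce part (i) from it almost for free. Once (ii) is established, the tangency assertions of (i) follow from the Orbit Theorem: $\LRD(E_1),\LRD(E_2),\LRD(E_3)$ are tangent to $\mc{O}_{\RDist}(q_0)$ because they span $\RDist\subset T\mc{O}_{\RDist}(q_0)$, and $\nu((\cdot)\star E_2)$ is tangent along $O$ by hypothesis, so on $O$ every vector field obtained by Lie-bracketing these is again tangent. From the first and third identities of (ii) one reads off
\[
L_3|_q=\LRD(E_3)|_q-[\LRD(E_1),\nu((\cdot)\star E_2)]|_q,\qquad L_1|_q=\LRD(E_1)|_q+[\LRD(E_3),\nu((\cdot)\star E_2)]|_q,
\]
so $L_1,L_3$ are tangent along $O$; and the identity for $[\LRD(E_3),L_1]$ expresses $2L_2|_q$ as a combination of $[\LRD(E_3),L_1]|_q$, $L_3|_q$, $\LRD(\nabla_{E_1}E_3)|_q$, $\LRD(E_2)|_q$ and $\nu(A\star E_2)|_q$, all tangent along $O$, whence $L_2$ is tangent as well.

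For part (ii) the computational engine is Section~\ref{sec:2.3}: Propositions~\ref{pr:NS_comm_HH}, \ref{pr:NS_comm_HV}, \ref{pr:NS_comm_VV} together with the calculus rules of Lemma~\ref{le:rules_of_computation} (and Proposition~\ref{pr:R_comm_L2} for the bracket of two $\LRD$-fields when it arises). Two standing simplifications are used throughout: under the identification $\so(T|_xM)\cong\wedge^2T|_xM$ the field $\star E_i$ is $\nabla$-parallel, so $\LNSD((X,AX))|_q$ applied to the $\so(TM)$-valued map $\star E_i$ equals $\nabla_X(\star E_i)=\star(\nabla_XE_i)$; and $(\star E_i)E_j=\pm E_k$ for $\{i,j,k\}$ cyclic, $(\star E_i)E_i=0$. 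I would start with the ``vertical'' identities. For example $[\LRD(E_2),\nu((\cdot)\star E_2)]|_q=0$ follows from Proposition~\ref{pr:NS_comm_HV} with $\ol{T}=(E_2,(\cdot)E_2)$ and $U=(\cdot)\star E_2$: the term $\nu(A\star E_2)|_q\ol{T}$ vanishes since $(\star E_2)E_2=0$, and $\LRD(E_2)|_qU=A\nabla_{E_2}(\star E_2)=A\star(\nabla_{E_2}E_2)=0$ because the prescribed form of $\Gamma$ forces $\Gamma^2_{(2,1)}=\Gamma^2_{(2,3)}=0$, i.e.\ $\nabla_{E_2}E_2=0$. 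The identities for $[\LRD(E_1),\nu((\cdot)\star E_2)]$, $[\LRD(E_3),\nu((\cdot)\star E_2)]$, $[L_1,\nu((\cdot)\star E_2)]$, $[L_3,\nu((\cdot)\star E_2)]$ come out the same way, using in addition $[\star V,\star E_2]_{\so}=\star((\star E_2)V)$ for $V\in E_2^\perp$ and the splitting $\LNSD((0,AE_3))=\LRD(E_3)-\LNSD((E_3,0))$ (which is what converts the raw output into the stated $\LRD(E_3)-L_3$, and similarly for $\LRD(E_1)+L_1$). The identities $[\LRD(E_i),L_i]$ and $[\LRD(E_2),L_i]$ are also of this mechanical type: one writes $L_i=\LNSD((E_i,0))-\nu(A\star(\dots))$, expands termwise, and their curvature contributions are $\nu(AR(E_a,E_a))=0$.

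The genuinely heavier identities are the last three, $[\LRD(E_3),L_1]$, $[\LRD(E_1),L_3]$ and $[L_3,L_1]$, where a true curvature term survives. Here one again decomposes each $L_i=\LNSD((E_i,0))-\nu(A\star(\dots))$ and expands with Propositions~\ref{pr:NS_comm_HH}, \ref{pr:NS_comm_HV}, \ref{pr:NS_comm_VV}. A crucial structural point — and the reason no $\hat R$ appears on any right-hand side — is that in each of these expansions at least one horizontal argument has vanishing $\hat M$-component (it is of the form $(E_i,0)$), so the $\hat R$-contribution of Proposition~\ref{pr:NS_comm_HH} drops out and only $\nu(AR(E_a,E_b))$ remains. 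One then expresses $R(E_a,E_b)$, viewed in $\so(T|_xM)\cong\wedge^2T|_xM$, in the basis $\star E_1,\star E_2,\star E_3$: the coefficient of $\star E_2$ is $K_2$, the sectional curvature of the $E_1,E_3$-plane, while the coefficients of $\star E_1,\star E_3$ are off-diagonal components of the curvature operator, which the second structure equation expresses through the $\Gamma^j_{(i,k)}$ and their $E_2^\perp$-derivatives. At this point the hypothesis $V(\Gamma^1_{(2,3)})=V(\Gamma^1_{(1,2)})=0$ for $V\perp E_2$ kills the obstructing derivative terms, the $\star E_1,\star E_3$-pieces are absorbed into $L_1,L_3$ and the $\LRD(\nabla_{E_i}E_j)$ terms, and the quadratic remainder collects into $K_2+(\Gamma^1_{(2,3)})^2+(\Gamma^1_{(1,2)})^2$.

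The main obstacle is precisely this last step: managing the sign bookkeeping of the Hodge-dual identifications, invoking the second structure equation correctly to pin down the off-diagonal curvature components, and checking that — after using the vanishing-derivative hypothesis and the prescribed form of $\Gamma$ — every remaining term lands in the span of $\LRD(E_1),\LRD(E_2),\LRD(E_3),\nu(A\star E_2),L_1,L_2,L_3$ with exactly the coefficients listed. The other nine brackets, although longer to write out, present no conceptual difficulty once the first few are done, since they all reduce to repeated application of the same three commutator propositions and of Lemma~\ref{le:rules_of_computation}.
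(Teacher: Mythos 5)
Your plan is essentially the paper's proof: part (ii) is obtained by exactly the same mechanical use of Propositions \ref{pr:NS_comm_HH}, \ref{pr:NS_comm_HV}, \ref{pr:NS_comm_VV} and Lemma \ref{le:rules_of_computation} (the paper merely imports the first seven brackets, and the tangency of $\LRD(E_i)$, $\nu((\cdot)\star E_2)$, $L_1$, $L_3$, from the computations already carried out in the proof of Proposition \ref{pr:3D-1}, and then computes the remaining five), and part (i) is deduced just as you describe, with the tangency of $L_2$ read off from the formula for $[\LRD(E_3),L_1]$.

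Two details in your sketch need repair before the computations will actually close. First, it is the Hodge operator $\star$ (equivalently the volume form), not the field $\star E_i$, that is $\nabla$-parallel; the formula you then use, $\nabla_X(\star E_i)=\star(\nabla_X E_i)$, is the correct one, so this is only a matter of wording. Second, and more seriously, your auxiliary identity has the wrong sign: with the paper's convention $\phi(X\wedge Y)=g(\cdot,X)Y-g(\cdot,Y)X$ and Lemma \ref{le:speciality_of_3D}(ii), one has, for $V\in E_2^\perp$, $[\star V,\star E_2]_{\so}=\phi(V\wedge E_2)=\star\big((\star V)E_2\big)=-\star\big((\star E_2)V\big)$; check $V=E_1$, where the left-hand side is $\phi(E_1\wedge E_2)$, i.e.\ $\star E_3$, while $(\star E_2)E_1=-E_3$. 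If you use the identity as you stated it, the vertical-vertical contributions do not cancel and you obtain, e.g., $[L_1,\nu((\cdot)\star E_2)]|_q=2\nu(A\star(-\Gamma^1_{(1,2)}E_1+\Gamma^1_{(2,3)}E_3))|_q$ instead of $0$; with the corrected sign the computation reproduces the stated formulas.
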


\begin{proof}
It has been already shown in the course of the proof of Proposition \ref{pr:3D-1} that the vectors $\LRD(E_1)|_q,\LRD(E_2)|_q,\LRD(E_3)|_q,\nu(A\star E_2)|_q,L_1|_q,L_3|_q$
are tangent to $\mc{O}_{\RDist}(q_0)$ for $q\in O$.
Moreover,
the first 7 brackets appearing in the statement of this corollary
are immediately established from the computations done explicitly in the proof of Proposition \ref{pr:3D-1}.

We compute,
\[
&[\LRD(E_2),L_1]|_q \\
=&-\LRD(\nabla_{E_1} E_2)|_q+\LNSD(-\Gamma^2_{(3,1)} E_3)|_q
+\nu(AR(E_2\wedge E_1)-\hat{R}(AE_2\wedge 0)A) \\
&+\LNSD(A(\star (\Gamma^1_{(2,3)}E_1+\Gamma^1_{(1,2)}E_3))E_2)|_q \\
&-\nu\big(A\star (\Gamma^1_{(2,3)}(-\Gamma^2_{(3,1)}E_3)+\Gamma^1_{(1,2)}(\Gamma^2_{(3,1)}E_1))\big)\big|_q \\
&-\nu\big(A\star (E_2(\Gamma^1_{(2,3)}) E_1+E_2(\Gamma^1_{(1,2)})E_3))|_q \\
=&-\LRD(\nabla_{E_1} E_2)|_q-\Gamma^2_{(3,1)}L_3|_q+K\nu(A\star E_3)|_q+\LNSD(A(\Gamma^1_{(2,3)}E_3-\Gamma^1_{(1,2)}E_1))|_q \\
&-\nu\big(A\star (E_2(\Gamma^1_{(2,3)}) E_1+E_2(\Gamma^1_{(1,2)})E_3))|_q \\
=&-\LRD(\nabla_{E_1} E_2)|_q-\Gamma^2_{(3,1)}L_3|_q+\LRD(\Gamma^1_{(2,3)}E_3-\Gamma^1_{(1,2)}E_1)|_q-\Gamma^1_{(2,3)}L_3+\Gamma^1_{(1,2)}L_1 \\
&+(2\Gamma^1_{(2,3)}\Gamma^1_{(1,2)}-E_2(\Gamma^1_{(2,3)}))\nu(A\star E_1)|_q \\
&+(-E_2(\Gamma^1_{(1,2)})+K-(\Gamma^1_{(2,3)})^2+(\Gamma^1_{(1,2)})^2)\nu(A\star E_3)|_q
\]
But one knows from Eq. (\ref{eq:RsE3sE3}) that $-K=-E_2(\Gamma^1_{(1,2)})+(\Gamma^1_{(1,2)})^2-(\Gamma^1_{(2,3)})^2$
and $-E_2(\Gamma^1_{(2,3)})+2\Gamma^1_{(1,2)}\Gamma^1_{(2,3)}=0$ and since also
$\nabla_{E_1} E_2=-\Gamma^1_{(1,2)} E_1+\Gamma^1_{(2,3)}E_3$, this simplifies to
\[
[\LRD(E_2),L_1]_q=-\Gamma^2_{(3,1)}L_3|_q-\Gamma^1_{(2,3)}L_3+\Gamma^1_{(1,2)}L_1
\]
Bracket $[\LRD(E_2),L_3]_q$ can be found by similar computations.

We compute $[\LRD(E_3),L_1]|_q$.
We have, recalling that $E_i(\Gamma^1_{(2,3)})=0$, $E_i(\Gamma^1_{(2,3)})=0$ for $i=1,3$,
\[
& [\LRD(E_3),L_1]|_q \\
=&-\LRD(\nabla_{E_1} E_3)|_q+\LNSD(\Gamma^1_{(2,3)}E_2-\Gamma^3_{(3,1)}E_3)|_q \\
&+\nu(AR(E_3\wedge E_1)|_q-\hat{R}(AE_3\wedge 0)|_q \\
&+\LNSD(A(\star(\Gamma^1_{(2,3)}E_1+\Gamma^1_{(1,2)}E_3))E_3)|_q \\
&-\nu\big(A\star (\Gamma^1_{(2,3)}(\Gamma^1_{(2,3)}E_2-\Gamma^3_{(3,1)}E_3)
+\Gamma^1_{(1,2)}(\Gamma^3_{(3,1)}E_1+\Gamma^1_{(1,2)}E_2)\big)\big|_q \\
=&-\LRD(\nabla_{E_1} E_3)|_q+(-K_2-(\Gamma^1_{(2,3)})^2-(\Gamma^1_{(1,2)})^2)\nu(A\star E_2)|_q \\
&-\Gamma^3_{(3,1)}L_3|_q-\Gamma^1_{(2,3)}\LRD(E_2)|_q+2L_2|_q.
\]
The computation of $[\LRD(E_1),L_3]|_q$ is similar.

We compute $[L_3,L_1]$ with the following 4 steps:
\[
[\LNSD(E_3),\LNSD(E_1)]|_q=&\LNSD(-\Gamma^1_{(3,1)}E_1+2\Gamma^1_{(2,3)}E_2-\Gamma^3_{(3,1)}E_3)|_q \\
&+\nu(AR(E_3\wedge E_1)-\hat{R}(0\wedge 0)A)|_q
\]
\[
& \big[\LNSD(E_3),\nu\big((\cdot)\star (\Gamma^1_{(2,3)}E_1+\Gamma^1_{(1,2)}E_3)\big)\big]|_q \\
=&\nu\big(A\star (\Gamma^1_{(2,3)}(\Gamma^1_{(2,3)}E_2-\Gamma^3_{(3,1)}E_3)+\Gamma^1_{(1,2)}(\Gamma^3_{(3,1)}E_1+\Gamma^1_{(1,2)}E_2))\big)\big|_q
\]
\[
& \big[\nu\big((\cdot)\star (-\Gamma^1_{(1,2)}E_1+\Gamma^1_{(2,3)}E_3)\big),\LNSD(E_1)\big]|_q \\
=&-\nu\big(A\star (-\Gamma^1_{(1,2)}(\Gamma^1_{(1,2)}E_2-\Gamma^1_{(3,1)}E_3)+\Gamma^1_{(2,3)}(\Gamma^1_{(3,1)}E_1-\Gamma^1_{(2,3)}E_2))\big)\big|_q
\]
\[
&\big[\nu\big((\cdot)\star (-\Gamma^1_{(1,2)}E_1+\Gamma^1_{(2,3)}E_3)\big),
\nu\big((\cdot)\star (\Gamma^1_{(2,3)}E_1+\Gamma^1_{(1,2)}E_3)\big)\big]|_q \\
=&\nu\big(A\big[\star (-\Gamma^1_{(1,2)}E_1+\Gamma^1_{(2,3)}E_3),
\star (\Gamma^1_{(2,3)}E_1+\Gamma^1_{(1,2)}E_3)\big]_{\so}\big)\big|_q \\
=&((\Gamma^1_{(1,2)})^2+(\Gamma^1_{(2,3)})^2)\nu(A\star E_2)|_q.
\]
Collecting these gives,
\[
[L_3,L_1]|_q
=&-\Gamma^1_{(3,1)}L_1|_q-\Gamma^3_{(3,1)} L_3|_q
+2\Gamma^1_{(2,3)}\LNSD(E_2)|_q \\
&-(K_2+(\Gamma^1_{(2,3)})^2+(\Gamma^1_{(1,2)})^2)\nu(A\star E_2)|_q.
\]

\end{proof}

%%%%%%%%%%%%%%%%%%%%%%%%%%%%%%%%%%%%%%%%%
\subsection{Proof of Theorem \ref{th:3D-1}}
%%%%%%%%%%%%%%%%%%%%%%%%%%%%%%%%%%%%%%%%%

In this subsection we will prove Theorem \ref{th:3D-1}. We therefore fix for the rest of the paragraph a non open orbit $\mc{O}_{\RDist}(q_0)$, for some $q_0\in Q$. By Proposition \ref{pr:R_orbit_bundle}, one has that  $\dim\mc{O}_{\RDist}(q_0)<9=\dim Q$ and, by Corollary \ref{cor:vcomm}, one knows that the rank of $\Rol_q$ is less than or equal to two, for every $q\in \mc{O}_{\RDist}(q_0)$. 

For $j=0,1,2$, we define $O_j$ as the set of points of $\mc{O}_{\RDist}(q_0)$
where $\rank\Rol_q$ is locally equal to $j$, i.e.,
\[
O_j=\{q=(x,\hat{x};A)\in \mc{O}_{\RDist}(q_0)\ |\ & \textrm{there exists an open neighbourhood $O$} \\
& \textrm{of $q$ in $\mc{O}_{\RDist}(q_0)$
such that}\ \rank\Rol_{q'}=j,\ \forall q'\in O\}.
\]
Notice that the union of the $O_j$'s, when $j=0,1,2$, is an open and dense in $\mc{O}_{\RDist}(q_0)$ since each $O_j$ is open in $\mc{O}_{\RDist}(q_0)$ (but might be empty). 

Clearly, Item $(a)$ in Theorem \ref{th:3D-1} describes the local structures of $(M,g)$
and $(\hat{M},\hat{g})$ at a point $q\in O_0$. The rest of the argument consists in addressing the same issue, first for $q\in O_2$ and then $q\in O_1$.

%%%%%%%%%%%%%%%%%%%%%%%%%%%%%%%%%%%%%%%%%%%%%
\subsubsection{Local Structures for the Manifolds Around $q\in O_2$}\label{sss:local-O2}
%%%%%%%%%%%%%%%%%%%%%%%%%%%%%%%%%%%%%%%%%%%%%

Throughout the subsection, we assume, if not otherwise stated, that the orbit 
$\mc{O}_{\RDist}(q_0)$ is not open in $Q$
(i.e., $\dim\mc{O}_{\RDist}(q_0)<9=\dim Q$)
and, in the statements involving $O_2$, the latter is non empty. Note that $O_2$ could have been defined simply as the set of points of $\mc{O}_{\RDist}(q_0)$ where $\rank\Rol_q$ is equal to $2$.

\begin{proposition}\label{pr:Rol2:good_basis}
Let $q_0=(x_0,\hat{x}_0;A_0)\in Q$
and assume that the orbit $\mc{O}_{\RDist}(q_0)$ is not open in $Q$.
Then for every $q=(x,\hat{x};A)\in O_2$
there exist an orthonormal pair $X_A,Y_A\in T|_x M$
such that if $Z_A:=\star (X_A\wedge Y_A)$
then $X_A,Y_A,Z_A$ is a positively oriented orthonormal pair
with respect to which $R$ and $\widetilde{\Rol}$ may be written as
\[
R(X_A\wedge Y_A)&=\qmatrix{
0 & K(x) & 0 \cr
-K(x) & 0 & 0 \cr
0 & 0 & 0
},\quad \star R(X_A\wedge Y_A)=\qmatrix{0 \cr 0 \cr -K(x)} \\
R(Y_A\wedge Z_A)&=\qmatrix{
0 & 0 & 0 \cr
0 & 0 & K_1(x) \cr
0 & -K_1(x) & 0
},\quad \star R(Y_A\wedge Z_A)=\qmatrix{-K_1(x) \cr 0 \cr 0} \\
R(Z_A\wedge X_A)&=\qmatrix{
0 & 0 & -K_2(x) \cr
0 & 0 & 0 \cr
K_2(x) & 0 & 0
},\quad \star R(Z_A\wedge X_A)=\qmatrix{0 \cr -K_2(x) \cr 0} \\
\widetilde{\Rol}_q(X_A\wedge Y_A)&=0, \\
\widetilde{\Rol}_q(Y_A\wedge Z_A)&=\qmatrix{
0 & 0 & -\alpha(q) \cr
0 & 0 & K^{\Rol}_1(q) \cr
\alpha(q) & -K^{\Rol}_1(q) & 0
},\quad \star \widetilde{\Rol}_q(Y_A\wedge Z_A)=\qmatrix{-K^{\Rol}_1(q) \cr -\alpha(q) \cr 0}, \\
\widetilde{\Rol}_q(Z_A\wedge X_A)&=\qmatrix{
0 & 0 & -K^{\Rol}_2(q) \cr
0 & 0 & \alpha(q) \cr
K^{\Rol}_2(q) & -\alpha (q)& 0
},\quad \star \widetilde{\Rol}_q(Z_A\wedge X_A)=\qmatrix{-\alpha(q) \cr -K^{\Rol}_2(q) \cr 0},
\]
where $K,K_1,K_2:M\to\R$.
\end{proposition}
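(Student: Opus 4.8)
The plan is to split the statement into two parts: a purely linear‑algebraic normalization of $\widetilde{\Rol}_q$ using only its symmetry, and then the essential point, namely that the hypothesis ``$\mc{O}_{\RDist}(q_0)$ not open'' forces the curvature operator $R|_x$ to respect the splitting of $\wedge^2 T|_x M$ produced by $\widetilde{\Rol}_q$.

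First I would normalize $\widetilde{\Rol}_q$. Since $q=(x,\hat x;A)\in O_2$, the operator $\widetilde{\Rol}_q=A^{\ol T}\Rol_q$ is a real symmetric endomorphism of $\wedge^2 T|_x M\cong\so(T|_x M)$ of rank $2$, so $\ker\widetilde{\Rol}_q$ is a line spanned by a unit decomposable $2$‑vector $\omega$. I write $\omega=X_A\wedge Y_A$ with $X_A,Y_A$ a $g$‑orthonormal pair (unique up to rotation in their plane) and put $Z_A:=\star(X_A\wedge Y_A)$, so that $X_A,Y_A,Z_A$ is a positively oriented orthonormal frame at $x$. Identifying $\wedge^2 T|_x M$ with $T|_x M$ through $\star$ by $Y_A\wedge Z_A\mapsto X_A$, $Z_A\wedge X_A\mapsto Y_A$, $X_A\wedge Y_A\mapsto Z_A$, the symmetry of $\widetilde{\Rol}_q$ together with $\widetilde{\Rol}_q(X_A\wedge Y_A)=0$ forces $\widetilde{\Rol}_q$ to preserve $(\R\,X_A\wedge Y_A)^{\perp}$ and to act there by an arbitrary symmetric $2\times2$ matrix, whose entries I name $-K^{\Rol}_1(q),-K^{\Rol}_2(q),-\alpha(q)$; transporting this back through $\star$ gives exactly the three displayed formulas for $\widetilde{\Rol}_q$.

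The core of the proof is to show that $\omega=X_A\wedge Y_A$ is an eigenvector of the curvature operator $R|_x$ on $\wedge^2 T|_x M$. By Proposition~\ref{pr:R_comm_L2} the vertical vector fields $q'\mapsto\nu(\Rol(U,V)(A'))|_{q'}$ are tangent to $\mc{O}_{\RDist}(q_0)$ for all $U,V\in\VF(M)$, hence so are their Lie brackets; and, exactly as in the proof of Proposition~\ref{pr:3D-1}, if three linearly independent $\pi_Q$‑vertical vectors tangent to the orbit occurred on an open subset of $O_2$ then $V(\pi_Q)$ would be tangent there and Corollary~\ref{cor:vert} would make $\mc{O}_{\RDist}(q_0)$ open, a contradiction. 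Hence on a dense, and so on all, of $O_2$ the vector $[\nu(\Rol(X_A,Y_A)(\cdot)),\nu(\Rol(Z,W)(\cdot))]|_q$ lies in $\nu(A\,\mathrm{Im}\,\widetilde{\Rol}_q)|_q=\nu(A(\R\omega)^{\perp})|_q$ for all $Z,W\in T|_x M$. I then evaluate this bracket with Corollary~\ref{cor:vcomm}: since $\Rol(X_A,Y_A)(A)=A\widetilde{\Rol}_q(\omega)=0$, writing $S(U,V):=A^{-1}\hat R(AU,AV)A=R(U,V)-\widetilde{\Rol}_q(U\wedge V)$, the six‑term formula collapses to
\[
A^{-1}[\nu(\Rol(X_A,Y_A)(\cdot)),\nu(\Rol(Z,W)(\cdot))]|_q=[R(\omega),\theta]_{\so}+R(\theta\cdot\omega)-\widetilde{\Rol}_q(\theta\cdot\omega),
\]
where $\theta:=\widetilde{\Rol}_q(Z\wedge W)$ ranges over all of $(\R\omega)^{\perp}$ and $\theta\cdot\omega$ denotes the derivation action of $\theta\in\so(T|_x M)$ on $\omega\in\wedge^2 T|_x M$. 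Because $(\R\omega)^{\perp}$ is $\widetilde{\Rol}_q$‑invariant and $\theta\cdot\omega\in(\R\omega)^{\perp}$, the tangency condition reduces to $[R(\omega),\theta]_{\so}+R(\theta\cdot\omega)\perp\omega$ for every $\theta\perp\omega$; passing to the vector model $\so(T|_x M)\cong T|_x M$ (axis vectors, bracket $\leftrightarrow$ cross product, and $\theta\cdot(\cdot)\leftrightarrow$ the matrix $\theta$) and denoting by $n$ the unit vector representing $\omega$, a scalar‑triple‑product identity rewrites this as $2\,(R n)\cdot(t\times n)=0$ for all $t\perp n$; since $\{t\times n:t\perp n\}=n^{\perp}$, this yields $R n\parallel n$, i.e.\ $\omega$ is an eigenvector of $R|_x$.

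To finish, $R|_x$ being symmetric with $\omega$ an eigenvector, it preserves $(\R\omega)^{\perp}=\mathrm{span}(Y_A\wedge Z_A,Z_A\wedge X_A)$; a rotation of $(X_A,Y_A)$ in their plane fixes $\omega$, $Z_A$ and $X_A\wedge Y_A$, rotates the pair $(Y_A\wedge Z_A,Z_A\wedge X_A)$, and so can be chosen to diagonalize the symmetric $2\times2$ block of $R|_x$ there, changing only the values of $K^{\Rol}_1,K^{\Rol}_2,\alpha$ (the shape of $\widetilde{\Rol}_q$ found above is preserved under such a rotation). In the resulting frame $R|_x$ is diagonal on $(Y_A\wedge Z_A,Z_A\wedge X_A,X_A\wedge Y_A)$ with eigenvalues $-K_1(x),-K_2(x),-K(x)$ --- these being exactly the eigenvalues of the curvature operator of $(M,g)$ at $x$ --- and writing out the matrices of $R(X_A\wedge Y_A)$, $R(Y_A\wedge Z_A)$, $R(Z_A\wedge X_A)$ and of $\widetilde{\Rol}_q$ in the basis $(X_A,Y_A,Z_A)$ gives the asserted formulas. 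The main obstacle will be the bracket computation of the previous paragraph: reducing the six terms of Corollary~\ref{cor:vcomm} correctly and keeping track of the Hodge‑star and $\so(3)\cong\R^3$ identifications (and their signs) so that the two surviving contributions add, rather than cancel, producing the nonzero factor $2$.
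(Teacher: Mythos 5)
Your proof is correct, and its skeleton coincides with the paper's (kernel direction of the rank-two symmetric operator $\widetilde{\Rol}_q$, tangency of the vertical fields $\nu(\Rol(\cdot,\cdot)(A))$ and of their brackets to the orbit, non-openness forcing those brackets into the two-dimensional vertical span, and a final rotation of $X_A,Y_A$ diagonalizing $R$ on $(X_A\wedge Y_A)^\perp$). Where you genuinely diverge is in the execution of the two pivotal steps. First, to confine the brackets to $\nu\big(A\,\mathrm{Im}\,\widetilde{\Rol}_q\big)|_q$ you invoke the "three independent vertical tangent vectors on an open set plus Corollary \ref{cor:vert}" contradiction (the pattern of Proposition \ref{pr:3D-1}), whereas the paper's Lemma \ref{le:Rol2:bundle} reaches the same conclusion through a dimension count ($\pi_Q|_{O_2}$ a submersion, $\dim\mc{O}_{\RDist}(q_0)=8$, vertical fibre of the orbit exactly two-dimensional); your route is shorter but does not deliver those extra facts, which the paper reuses later (they are not needed for this proposition). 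Second, and more substantially, the paper extracts the eigenvector property of $X_A\wedge Y_A$ from Corollary \ref{cor:vcomm} by two explicit component computations with $(Z,W)=(Y_A,Z_A)$ and $(Z_A,X_A)$, obtaining the linear system $K_1^{\Rol}\eta_A-\alpha\beta_A=0$, $\alpha\eta_A-K_2^{\Rol}\beta_A=0$ and killing $\eta_A,\beta_A$ via the nonvanishing determinant $K_1^{\Rol}K_2^{\Rol}-\alpha^2$ (Lemma \ref{le:Rol2:ab_zero}), followed by the rotation of Lemma \ref{le:Rol2:xi_zero}; you instead collapse the six-term formula once and for all using $\Rol_q(X_A\wedge Y_A)=0$ to $[R(\omega),\theta]_{\so}+R(\theta\cdot\omega)-\widetilde{\Rol}_q(\theta\cdot\omega)$ with $\theta$ sweeping $\mathrm{Im}\,\widetilde{\Rol}_q=(\R\omega)^{\perp}$, and the triple-product identity then gives $R\omega\parallel\omega$ in one stroke, the rank-two hypothesis entering only through the surjectivity of $\widetilde{\Rol}_q$ onto $(\R\omega)^{\perp}$ rather than through a determinant. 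I checked the collapse and the sign bookkeeping (the two surviving triple products indeed add, giving the factor $2$, consistent with the paper's $2(K_1^{\Rol}\eta_A-\alpha\beta_A)$ etc.), so your basis-free variant is a clean and correct alternative to Lemmas \ref{le:Rol2:ab_zero}--\ref{le:Rol2:xi_zero}.
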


Consequently, we see that
with respect to the orthonormal oriented basis $X_A,Y_A,Z_A$ of $T|_{\hat{x}}\hat{M}$
given by the proposition, we have
\begin{align}\label{eq:Rol2:good_basis-hatR}
\star A^{\ol{T}}\hat{R}(AX_A\wedge AY_A)A&=\qmatrix{0 \cr 0 \cr -K(x)} \nonumber \\
\star A^{\ol{T}}\hat{R}(AY_A\wedge AZ_A)A&=\qmatrix{-K_1(x)+K_1^\Rol(q) \cr \alpha(q) \cr 0} \nonumber \\
\star A^{\ol{T}}\hat{R}(AZ_A\wedge AX_A)A&=\qmatrix{\alpha(q) \cr -K_2(x)+K_2^\Rol(q) \cr 0}.
\end{align}

Before pursuing to the proof,
we want to fix some additional notations and so we make the following remark.

\begin{remark}
By the last proposition $-K_1(x),-K_2(x),-K(x)$
are the eigenvalues of $R|_{x}$ corresponding to eigenvectors
$\star X_A,\star Y_A,\star Z_A$ given by Proposition \ref{pr:Rol2:good_basis},
for $q=(x,\hat{x};A)\in O_2$.

Recall that $Q(M,\hat{M})\to Q(\hat{M},M)$
such that $q=(x,\hat{x};A)\mapsto \hat{q}=(\hat{x},x;A^{\ol{T}})$
is an diffeomorphism which maps $\RDist$ to $\widehat{\RDist}$,
where the latter is the rolling distribution on $Q(\hat{M},M)$.
Hence this map maps $\RDist$-orbits $\mc{O}_{\RDist}(q)$
to $\widehat{\RDist}$-orbits $\mc{O}_{\widehat{\RDist}}(\hat{q})$, for all $q\in Q$.
So the rolling problem (R) is completely symmetric w.r.t.
the changing of the roles of $(M,g)$ and $(\hat{M},\hat{g})$.

Hence Proposition \ref{pr:Rol2:good_basis} gives, when the roles of $(M,g)$, $(\hat{M},\hat{g})$
are changed, for every $q=(x,\hat{x};A)\in O_2$
vectors $\hat{X}_A,\hat{Y}_A,\hat{Z}_A\in T|_{\hat{x}} \hat{M}$
such that $\widetilde{\Rol}_q((A^{\ol{T}}X_A)\wedge (A^{\ol{T}}\hat{Y}_A))=0$
and that $\hat{\star} \hat{X}_A,\hat{\star} \hat{Z}_A,\hat{\star} \hat{Z}_A$
are eigenbasis of $\hat{R}|_{\hat{x}}$ with eigenvalues which we call
$-\hat{K}_1(\hat{x}),-\hat{K}_2(\hat{x}),-\hat{K}(\hat{x})$, respectively.

The condition $\widetilde{\Rol}_q(X_A\wedge Y_A)=0$ implies
that $K(x)=\hat{K}(\hat{x})$
for every $q=(x,\hat{x};A)\in O_2$
and also that $AZ_A=\hat{Z}_A$, since
$\star (X_A\wedge Y_A)=Z_A$, $\hat{\star} (\hat{X}_A\wedge \hat{Y}_A)=\hat{Z}_A$.
\end{remark}

We divide the proof of Proposition \ref{pr:Rol2:good_basis} into several lemmas.

\begin{lemma}\label{le:Rol2:good_basis}
For every $q=(x,\hat{x};A)\in O_2$
and any orthonormal pair (which exists) $X_A,Y_A\in T|_x M$ such that
$\Rol(X_A\wedge Y_A)=0$
and $X_A,Y_A,Z_A:=\star (X_A\wedge Y_A)$
is an oriented orthonormal basis of $T|_x M$,
one has with respect to the basis $X_A,Y_A,Z_A$,
\[
R(X_A\wedge Y_A)&=\qmatrix{
0 & K_A & \eta_A \cr
-K_A & 0 & -\beta_A \cr
-\eta_A & \beta_A & 0
},\quad \star R(X_A\wedge Y_A)=\qmatrix{\beta_A \cr \eta_A \cr -K_A} \\
R(Y_A\wedge Z_A)&=\qmatrix{
0 & -\beta_A & \xi_A \cr
\beta_A & 0 & K^1_A \cr
-\xi_A & -K^1_A & 0
},\quad \star R(Y_A\wedge Z_A)=\qmatrix{-K^1_A \cr \xi_A \cr \beta_A} \\
R(Z_A\wedge X_A)&=\qmatrix{
0 & -\eta_A & -K^2_A \cr
\eta_A & 0 & -\xi_A \cr
K^2_A & \xi_A & 0
},\quad \star R(Z_A\wedge X_A)=\qmatrix{\xi_A \cr -K^2_A \cr \eta_A} \\
\widetilde{\Rol}_q(X_A\wedge Y_A)&=0, \\
\widetilde{\Rol}_q(Y_A\wedge Z_A)&=\qmatrix{
0 & 0 & -\alpha \cr
0 & 0 & K^{\Rol}_1 \cr
\alpha & -K^{\Rol}_1 & 0
},\quad \star \widetilde{\Rol}_q(Y_A\wedge Z_A)=\qmatrix{-K^{\Rol}_1 \cr -\alpha \cr 0}, \\
\widetilde{\Rol}_q(Z_A\wedge X_A)&=\qmatrix{
0 & 0 & -K^{\Rol}_2 \cr
0 & 0 & \alpha \cr
K^{\Rol}_2 & -\alpha & 0
},\quad \star \widetilde{\Rol}_q(Z_A\wedge X_A)=\qmatrix{-\alpha \cr -K^{\Rol}_2 \cr 0} .
\]
Here $\eta_A,\beta_A,\xi_A,\alpha,K_1^\Rol,K_2^\Rol$
depend \emph{a priori} on the basis $X_A,Y_A,Z_A$ and on the point $q$.

Moreover, the choice of the above quantities can be made \emph{locally smoothly} on $O_2$
i.e. every $q\in O_2$ admits an open neighbourhood $O_2'$ in $O_2$
such that the selection of these quantities can be performed smoothly on $O_2'$.
\end{lemma}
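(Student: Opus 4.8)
The plan is to extract the normal form directly from the structure of $\widetilde{\Rol}_q = A^{\ol{T}}\Rol_q$ as a \emph{symmetric} endomorphism of the $3$-dimensional space $\wedge^2 T|_x M$ (Definition \ref{def:rol-tilde}). Since $A^{\ol{T}}$ is invertible and $q\in O_2$, we have $\rank\widetilde{\Rol}_q = \rank\Rol_q = 2$, so $\ker\widetilde{\Rol}_q$ is a line in $\wedge^2 T|_x M$; here the standing hypothesis that $\mc{O}_{\RDist}(q_0)$ is not open is what ensures (via Corollary \ref{cor:vert} together with Proposition \ref{pr:R_comm_L2}) that $\rank\Rol_{q'}\le 2$ on the whole orbit, so that $O_2$ really is the locus where this rank equals $2$. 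Since the Hodge isomorphism $\star : \wedge^2 T|_x M\to T|_x M$ is linear, it carries a generator of $\ker\widetilde{\Rol}_q$ to a nonzero vector of $T|_x M$; I would let $Z_A$ be the corresponding unit vector and choose an orthonormal pair $X_A,Y_A$ spanning $Z_A^{\perp}$ ordered so that $X_A,Y_A,Z_A$ is positively oriented, i.e. $\star(X_A\wedge Y_A) = Z_A$. Then $X_A\wedge Y_A$ spans $\ker\widetilde{\Rol}_q$, hence $\widetilde{\Rol}_q(X_A\wedge Y_A) = 0$, which is the one substantive identity.

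The displayed formulas are then just the coordinate expressions of the symmetric endomorphisms $R|_x$ and $\widetilde{\Rol}_q$ of $\wedge^2 T|_x M$ in the orthonormal basis $(Y_A\wedge Z_A,\, Z_A\wedge X_A,\, X_A\wedge Y_A)$, read simultaneously in the three standard descriptions of a $2$-vector: as an element of $\wedge^2 T|_x M$, as its Hodge dual in $T|_x M$ (the ``$\star$''-columns), and as the associated skew-symmetric endomorphism under the metric identification $\wedge^2 T|_x M\cong\so(T|_x M)$ recalled before Definition \ref{def:rol-tilde}. Since $R|_x$ is a symmetric endomorphism, its matrix in this basis is an arbitrary symmetric $3\times 3$ matrix; naming its diagonal entries $-K^1_A,-K^2_A,-K_A$ and its off-diagonal entries $\xi_A,\beta_A,\eta_A$ and expanding reproduces the first three groups of formulas. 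For $\widetilde{\Rol}_q$ there is one extra constraint: being symmetric with kernel $\spn\{X_A\wedge Y_A\}$, its image is the orthogonal complement $\spn\{Y_A\wedge Z_A,\, Z_A\wedge X_A\}$, so its matrix in the same basis is symmetric with vanishing third row and column, with $-K^{\Rol}_1,-K^{\Rol}_2$ on the remaining diagonal entries and $-\alpha$ in the remaining off-diagonal slot; translating this back into the $2$-vector and $\so$-descriptions gives the last three groups of formulas.

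For the local smoothness I would argue that $\widetilde{\Rol}$, viewed as a smooth bundle endomorphism of $(\pi_{Q,M})^*(\wedge^2 TM)$ over $\mc{O}_{\RDist}(q_0)$, has locally constant rank $2$ on the open set $O_2$, so its kernel is a smooth rank-$1$ subbundle there; over a sufficiently small neighbourhood $O_2'$ of any given $q$ it admits a nonvanishing smooth section, which after normalization and application of $\star$ yields a smooth unit vector field $Z_A$ on $O_2'$, and then $X_A,Y_A$ may be chosen smoothly on $O_2'$ (for instance by Gram--Schmidt against a fixed smooth reference frame of the plane bundle $Z_A^{\perp}$, oriented by the ambient orientation of $M$). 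With the frame so selected, $K_A,\eta_A,\beta_A,\xi_A,K^1_A,K^2_A,\alpha,K^{\Rol}_1,K^{\Rol}_2$ are components of the smooth tensors $R$ and $\widetilde{\Rol}$ with respect to a smooth frame, hence smooth on $O_2'$. The only mildly delicate point is the bookkeeping of the three parallel descriptions of a $2$-vector so that all signs agree with the statement; the remaining ingredients are elementary linear algebra and the standard fact that a constant-rank bundle map has a smooth kernel subbundle.
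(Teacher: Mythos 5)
Your proposal is correct and follows essentially the same route as the paper: the rank-two hypothesis gives a one-dimensional kernel of $\widetilde{\Rol}_q$, whose (pure, by the 3D Hodge identification) generator yields the frame $X_A,Y_A,Z_A$, and the vanishing entries then follow from the symmetry of $\widetilde{\Rol}_q$, the remaining entries being just the components of the symmetric maps $R|_x$ and $\widetilde{\Rol}_q$ in the induced basis of $\wedge^2 T|_x M$. Your constant-rank kernel-subbundle argument merely spells out the local smoothness that the paper asserts without detail, so no gap remains.
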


\begin{proof}
Since $\rank \Rol_q=2<3$ for $q\in O_2$,
it follows that there is a unit vector $\omega_A\in \wedge^2 T|_x M$
such that $\Rol_q(\omega_A)=0$.
But in dimension 3, as mentioned in Appendix,
one then has an orthonormal pair $X_A,Y_A\in T|_x M$ such that $\omega_A=X_A\wedge Y_A$.
Moreover, the assignments $q\mapsto \omega_A,X_A,Y_A$ can be made locally smoothly.
Then defining $Z_A=\star (X_A\wedge Y_A)$,
the fact that $\widetilde{\Rol}_q$ is a symmetric map implies that
\[
g(\widetilde{\Rol}_q(Y_A\wedge Z_A),X_A\wedge Y_A)
=&g(\widetilde{\Rol}_q(X_A\wedge Y_A),Y_A\wedge Z_A)=0 \\
g(\widetilde{\Rol}_q(Z_A\wedge X_A),X_A\wedge Y_A)
=&g(\widetilde{\Rol}_q(X_A\wedge Y_A),Z_A\wedge X_A)=0.
\]
This finishes the proof.
\end{proof}

As a consequence of the previous result and
because, for $X,Y\in T|_x M$, one gets
$$
A^{\ol{T}}\hat{R}(AX\wedge AY)A=R(X\wedge Y)-\widetilde{\Rol}_q(X\wedge Y)
$$ then 
we have that, w.r.t. the oriented orthonormal basis $AX_A,AY_Z,AZ_A$ of $T|_{\hat{x}}\hat{M}$,
\begin{align}
\hat{\star} A^{\ol{T}}\hat{R}(AX_A\wedge AY_A)A&=\qmatrix{\beta_A \cr \eta_A \cr -K_A} \nonumber \\
\hat{\star} A^{\ol{T}}\hat{R}(AY_A\wedge AZ_A)A&=\qmatrix{-K^1_A+K_1^\Rol \cr \xi_A+\alpha \cr \beta_A} \nonumber \\
\hat{\star} A^{\ol{T}}\hat{R}(AZ_A\wedge AX_A)A&=\qmatrix{\xi_A+\alpha \cr -K^2_A+K_2^\Rol \cr \eta_A}.
\end{align}

Notice that the assumption that $\rank \Rol_q=2$ on $O_2$
is equivalent to
the fact that for any choice of $X_A,Y_A,Z_A$ as above, $\widetilde{\Rol}_q(Y_A\wedge Z_A)$
and $\widetilde{\Rol}_q(Z_A\wedge X_A)$ are linearly independent
for every $q=(x,\hat{x};A)\in O_2$ i.e.
\begin{align}\label{eq:rank2}
K_1^{\Rol}(q)K_2^{\Rol}(q)-\alpha(q)^2\neq 0.
\end{align}

We will now show that,
with any (non-unique) choice of $X_A,Y_A$
as in Lemma \ref{le:Rol2:good_basis},
one has that $\eta_A=\beta_A=0$.

\begin{lemma}\label{le:Rol2:bundle}
Choose any $X_A,Y_A,Z_A=\star (X_A\wedge Y_A)$ as in Lemma \ref{le:Rol2:good_basis}.
Then for every $q=(x,\hat{x};A)\in O_2$ and any vector fields $X,Y,Z,W\in\VF(M)$
one has
\begin{align}\label{eq:Rol2:span_Vorbit}
\big[\nu(\Rol(X\wedge Y)(\cdot)),\nu(\Rol(Z\wedge W)(\cdot))\big]\big|_q\in \nu(\spn\{\star X_A,\star Y_A\})|_q\subset T|_q\mc{O}_{\RDist}(q_0).
\end{align}
Moreover, $\pi_{Q}|_{O_2}$ is an submersion (onto an open subset of $M\times\hat{M}$),
$\dim V|_q(\mc{O}_{\RDist}(q_0))=2$ for all $q\in O_2$ and $\dim\mc{O}_{\RDist}(q_0)=8$.
\end{lemma}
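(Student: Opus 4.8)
The plan is to combine the algebra of $\widetilde{\Rol}_q$ on $O_2$ provided by Lemma \ref{le:Rol2:good_basis} with the bracket formulas of Propositions \ref{pr:R_comm_L2}, \ref{pr:R_comm_L3} and \ref{pr:NS_comm_VV}, and to invoke the standing hypothesis only through the single inequality $\dim\mc{O}_{\RDist}(q_0)<9$. Throughout I write $\mathrm{pr}_{\NSDist}$ for the projection of $TQ$ onto $\NSDist$ along $V(\pi_Q)$ given by \eqref{eq:2.1:5}, and I recall that $V|_q(\mc{O}_{\RDist}(q_0))=T|_q\mc{O}_{\RDist}(q_0)\cap V|_q(\pi_Q)$.

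First I would unwind the algebraic picture. By Lemma \ref{le:Rol2:good_basis}, $\widetilde{\Rol}_q$ is a symmetric endomorphism of $\wedge^2 T|_x M$ with $\widetilde{\Rol}_q(X_A\wedge Y_A)=\widetilde{\Rol}_q(\star Z_A)=0$, and its rank is exactly $2$, which (reading off the matrices in that lemma) is equivalent to $K^\Rol_1 K^\Rol_2-\alpha^2\neq 0$. Hence $\ker\widetilde{\Rol}_q=\R\,\star Z_A$ and, by symmetry, $\IM\widetilde{\Rol}_q=(\star Z_A)^\perp=\spn\{\star X_A,\star Y_A\}=\spn\{Y_A\wedge Z_A,Z_A\wedge X_A\}$. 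Since $\Rol(X\wedge Y)(A)=A\,\widetilde{\Rol}_q(X\wedge Y)$ for all $X,Y$ (from $\widetilde{\Rol}_q=A^{\ol T}\Rol_q$ and $AA^{\ol T}=\id$), it follows that $\Rol(X\wedge Y)(A)\in A\spn\{\star X_A,\star Y_A\}$ for all $X,Y$, and that $\Rol(Y_A,Z_A)(A)$, $\Rol(Z_A,X_A)(A)$ form a basis of this $2$-plane. By Proposition \ref{pr:R_comm_L2} the vectors $\nu(\Rol(Y_A,Z_A)(A))|_q$, $\nu(\Rol(Z_A,X_A)(A))|_q$ are tangent to $\mc{O}_{\RDist}(q_0)$, so
\[
\nu\big(A\spn\{\star X_A,\star Y_A\}\big)|_q\subset T|_q\mc{O}_{\RDist}(q_0),\qquad \dim V|_q(\mc{O}_{\RDist}(q_0))\geq 2,\quad q\in O_2;
\]
this already yields the inclusion ``$\subset T|_q\mc{O}_{\RDist}(q_0)$'' in \eqref{eq:Rol2:span_Vorbit} (with $\nu(\spn\{\star X_A,\star Y_A\})|_q$ in the statement understood as $\nu(A\spn\{\star X_A,\star Y_A\})|_q$).

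Next I would show $\mathrm{pr}_{\NSDist}\big(T|_q\mc{O}_{\RDist}(q_0)\big)=\NSDist|_q$. Fix $q=(x,\hat x;A)\in O_2$, extend $X_A,Y_A,Z_A$ to a local orthonormal frame $E_1,E_2,E_3$ near $x$, and consider, for $(X,Y)\in\{(E_2,E_3),(E_3,E_1)\}$ and $i=1,2,3$, the brackets $[\LRD(E_i),\nu(\Rol(X,Y)(\cdot))]|_q$, which are tangent to $\mc{O}_{\RDist}(q_0)$. By Proposition \ref{pr:R_comm_L3} each such bracket equals $-\LNSD(\Rol(X,Y)(A)E_i|_x)|_q$ plus the vertical terms $\nu(\ol{\nabla}^1\Rol(X,Y,E_i)(A))|_q$, $\nu(\Rol(\nabla_{E_i}X,Y)(A))|_q$ and $\nu(\Rol(X,\nabla_{E_i}Y)(A))|_q$; the last two already lie in $\nu(A\spn\{\star X_A,\star Y_A\})|_q\subset T|_q\mc{O}_{\RDist}(q_0)$ because $\Rol(U,V)(A)\in A\,\IM\widetilde{\Rol}_q$ for all $U,V$. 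Subtracting them, we get vectors of $T|_q\mc{O}_{\RDist}(q_0)$ whose $\NSDist$-components are exactly $-\LNSD(\Rol(X,Y)(A)E_i|_x)|_q$. Since $\widetilde{\Rol}_q(E_2\wedge E_3)$ and $\widetilde{\Rol}_q(E_3\wedge E_1)$ are linearly independent skew-symmetric endomorphisms of the $3$-dimensional $T|_x M$, and the images of any two linearly independent skew maps in dimension $3$ span the whole space, the vectors $\Rol(X,Y)(A)E_i|_x=A\,\widetilde{\Rol}_q(X\wedge Y)E_i|_x$ span $T|_{\hat x}\hat M$; hence $\mathrm{pr}_{\NSDist}(T|_q\mc{O}_{\RDist}(q_0))\supset\LNSD(T|_{\hat x}\hat M)|_q$. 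Adding $\LRD(T|_x M)|_q=\LNSD(\{(X,AX):X\in T|_x M\})|_q\subset T|_q\mc{O}_{\RDist}(q_0)$ and using linearity of $\LNSD$ in its first argument, $\LNSD(T|_{\hat x}\hat M)|_q+\LRD(T|_x M)|_q=\LNSD(T|_{(x,\hat x)}(M\times\hat M))|_q=\NSDist|_q$, which gives the claim.

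Finally I would assemble the count. Rank--nullity for the restriction of $\mathrm{pr}_{\NSDist}$ to $T|_q\mc{O}_{\RDist}(q_0)$, whose kernel is $V|_q(\mc{O}_{\RDist}(q_0))$, gives $\dim\mc{O}_{\RDist}(q_0)=\dim\NSDist|_q+\dim V|_q(\mc{O}_{\RDist}(q_0))=6+\dim V|_q(\mc{O}_{\RDist}(q_0))\geq 8$. Because $\mc{O}_{\RDist}(q_0)$ is not open, $\dim\mc{O}_{\RDist}(q_0)<9$, so $\dim\mc{O}_{\RDist}(q_0)=8$ and $\dim V|_q(\mc{O}_{\RDist}(q_0))=2$; with the inclusion from the first step this forces $V|_q(\mc{O}_{\RDist}(q_0))=\nu(A\spn\{\star X_A,\star Y_A\})|_q$. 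Submersivity of $\pi_Q|_{O_2}$ follows since $\NSDist|_q\subset T|_q\mc{O}_{\RDist}(q_0)$ and $(\pi_Q)_*$ carries $\NSDist|_q$ isomorphically onto $T|_{(x,\hat x)}(M\times\hat M)$. And \eqref{eq:Rol2:span_Vorbit} now holds for all $X,Y,Z,W\in\VF(M)$: by Proposition \ref{pr:NS_comm_VV} the left-hand bracket is $\pi_Q$-vertical, and being a bracket of vector fields tangent to $\mc{O}_{\RDist}(q_0)$ it lies in $V|_q(\mc{O}_{\RDist}(q_0))=\nu(A\spn\{\star X_A,\star Y_A\})|_q$. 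The main obstacle is the bracket bookkeeping in the third paragraph --- correctly decomposing $[\LRD(E_i),\nu(\Rol(X,Y)(\cdot))]|_q$ via Proposition \ref{pr:R_comm_L3}, isolating that only the harmless $\ol{\nabla}^1\Rol$-term can escape the controlled $2$-plane, and handling the (convention-sensitive) linear algebra of skew endomorphisms in dimension $3$; note that, in contrast with the $O_1$ analysis, no appeal to Corollary \ref{cor:vert} is needed here.
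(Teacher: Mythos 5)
Your argument is correct and follows essentially the same route as the paper's proof: the invertibility of $\widetilde{\Rol}_q$ on $(\star Z_A)^\perp$ supplies the two vertical directions $\nu(A\star X_A)|_q,\nu(A\star Y_A)|_q$ tangent to the orbit, brackets of $\LRD$-lifts against vertical $\Rol$-fields show that $(\pi_Q)_*$ restricted to $T|_q\mc{O}_{\RDist}(q_0)$ is onto $T|_{(x,\hat{x})}(M\times\hat{M})$, and non-openness then pins the dimension at $8$; the only (immaterial) variations are that the paper invokes Proposition \ref{pr:NS_comm_HV} with the locally smooth assignments $q\mapsto X_A,Y_A$ where you use Proposition \ref{pr:R_comm_L3} with the globally defined fields $\nu(\Rol(X,Y)(\cdot))$, and that it runs the dimension count as $\dim V|_q\leq\dim O_2-6\leq 2$ rather than via rank--nullity from below.

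One caveat: in your last paragraph you justify submersivity of $\pi_Q|_{O_2}$ by asserting $\NSDist|_q\subset T|_q\mc{O}_{\RDist}(q_0)$, which you have not proved and which is in fact generally false on $O_2$ --- later in the paper only the combination $F_X|_q=\LNSD(X_A)|_q-\Gamma^1_{(1,2)}\nu(A\star Z_A)|_q$ is tangent to the orbit while $\nu(A\star Z_A)|_q$ is not, so $\LNSD(X_A)|_q\notin T|_q\mc{O}_{\RDist}(q_0)$ whenever $\Gamma^1_{(1,2)}\neq 0$ (cf.\ the remark preceding Lemma \ref{le:Rol2:deriv_theta}). The fix is immediate from what you did establish: since $(\pi_Q)_*$ kills $V(\pi_Q)$, one has $(\pi_Q)_*\big(T|_q\mc{O}_{\RDist}(q_0)\big)=(\pi_Q)_*\big(\mathrm{pr}_{\NSDist}(T|_q\mc{O}_{\RDist}(q_0))\big)=(\pi_Q)_*(\NSDist|_q)=T|_{(x,\hat{x})}(M\times\hat{M})$, which is exactly the required submersivity.
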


\begin{proof}
First notice that by Lemma \ref{le:Rol2:good_basis}
\[
\qmatrix{
\Rol_q(\star X_A) \cr
\Rol_q(\star Y_A)}
=\qmatrix{-K_1^\Rol & -\alpha \cr -\alpha & -K_2^\Rol}\qmatrix{\star X_A\cr \star Y_A}
\]
for $q=(x,\hat{x};A)\in O_2$
and since the determinant of the matrix on the right hand side is, at $q\in O_2$,
$K_1^{\Rol}(q)K_2^{\Rol}(q)-\alpha(q)^2\neq 0$,
as noticed in (\ref{eq:rank2}) above, it follows that
\[
\star X_A,\star Y_A\in \spn\{\Rol_q(\star X_A),\Rol_q(\star Y_A)\}.
\]
Next, from Proposition \ref{pr:R_comm_L2} we know that, for every $q=(x,\hat{x};A)\in \mc{O}_{\RDist}(q_0)$ and every $Z,W\in T|_x M$
$$
\nu(\Rol_q(Z\wedge W))|_q\in V|_q(\pi_{\mc{O}_{\RDist}(q_0)})\subset T|_q\mc{O}_{\RDist}(q_0).
$$
Hence, $\nu(\Rol_q(\star X_A)),\nu(\Rol_q(\star Y_A))\in V|_q(\pi_{\mc{O}_{\RDist}(q_0)})$ for every $q\in O_2$
and it follows from the above that
\begin{align}\label{eq:Rol2:X_A-Y_A-tangent}
\nu(A\star X_A),\nu(A\star Y_A)\in V|_q(\pi_{\mc{O}_{\RDist}(q_0)}),
\end{align}
for all $q=(x,\hat{x};A)\in O_2$.

We claim that $\pi_{\mc{O}_{\RDist}(q_0)}|_{O_2}$ is a submersion (onto an open subset of $M\times\hat{M}$).
Indeed, for any vector field $W\in\VF(M)$ one has $\LRD(W)|_q\in T|_q\mc{O}_{\RDist}(q_0)$
for $q=(x,\hat{x};A)\in O_2$
and since the assignments $q\mapsto X_A,Y_A$ can be made locally smoothly,
then also $[\LRD(W),\nu(A\star X_A)]|_q \in T|_q\mc{O}_{\RDist}(q_0)$.
But then Proposition \ref{pr:NS_comm_HV} implies that
\[
& (\pi_{\mc{O}_{\RDist}(q_0)})_*([\LRD(W),\nu(A\star X_A)]|_q) \\
=&(\pi_{\mc{O}_{\RDist}(q_0)})_*\big(-\LNSD(A(\star X_A)W)|_q+\nu(A\star \LRD(W)|_qX_{(\cdot)})|_q\big) \\
=&(0,-A(\star X_A)W)
\]
where we wrote $X_{(\cdot)}$ as for the map $q\mapsto X_A$.
Similarly,
\[
(\pi_{\mc{O}_{\RDist}(q_0)})_*([\LRD(W),\nu(A\star Y_A)]|_q)=(0,-A(\star Y_A)W).
\]
This shows that for all $q=(x,\hat{x};A)\in O_2$ and $Z,W\in T|_x M$, we have
\[
(0,-A(\star X_A)W),(0,-A(\star Y_A)W)\in (\pi_{\mc{O}_{\RDist}(q_0)})_* T|_q\mc{O}_{\RDist}(q_0)\subset T|_x M\times T|_{\hat{x}}\hat{M}.
\]
Because $\star X_A,\star Y_A$ are linearly independent,
this implies that
\[
\{0\}\times T|_{\hat{x}}\hat{M}\subset (\pi_{\mc{O}_{\RDist}(q_0)})_*T|_q\mc{O}_{\RDist}(q_0).
\]
Finally, because $\LRD(W)|_q\in T|_q\mc{O}_{\RDist}(q_0)$ for any $q=(x,\hat{x};A)\in\mc{O}_{\RDist}(q_0)$ and any $W\in T|_x M$,
and $(\pi_{\mc{O}_{\RDist}(q_0)})_*\LRD(W)|_q=(W,AW)$,
one also has
\[
(W,0)=(W,AW)-(0,AW)\in (\pi_{\mc{O}_{\RDist}(q_0)})_*T|_q\mc{O}_{\RDist}(q_0),
\]
which implies
\[
T|_{x}M\times\{0\}\subset (\pi_{\mc{O}_{\RDist}(q_0)})_*T|_q\mc{O}_{\RDist}(q_0).
\]
This proves that $\pi_{\mc{O}_{\RDist}(q_0)}|_{O_2}=\pi_Q|_{O_2}$ is indeed a submersion.

Because $O_2$ is not open in $Q$ (otherwise $\mc{O}_{\RDist}(q_0)$ would be an open subset of $Q$),
it follows that $\dim O_2\leq 8$
and since $\pi_{\mc{O}_{\RDist}(q_0)}|_{O_2}$ has rank $6$, being a submersion,
we deduce that for all $q\in O_2$,
\[
\dim V|_q(\pi_{\mc{O}_{\RDist}(q_0)})=\dim O_2-6\leq 2.
\]
But because of (\ref{eq:Rol2:X_A-Y_A-tangent}) we see that 
$\dim V|_q(\pi_{\mc{O}_{\RDist}(q_0)})\geq 2$ i.e.
\[
\dim V|_q(\pi_{\mc{O}_{\RDist}(q_0)})=2,
\]
which shows that $\dim O_2=8$, hence $\dim\mc{O}_{\RDist}(q_0)=8$
and
\[
\spn\{\nu(A\star X_A)|_q,\nu(A\star Y_A)|_q\}=V|_q(\mc{O}_{\RDist}(q_0)),
\quad \forall q=(x,\hat{x};A)\in O_2.
\]

To conclude the proof, it is enough to notice that since
for any $X,Y,Z,W\in\VF(M)$,
$\nu(\Rol(X\wedge Y)(A))|_q,\nu(\Rol(Z\wedge W)(A))|_q\in V|_q(\mc{O}_{\RDist}(q_0))$,
then
\[
[\nu(\Rol(X\wedge Y)(\cdot)),\nu(\Rol(Z\wedge W)(\cdot))]|_q\in V|_q(\mc{O}_{\RDist}(q_0)).
\]
\end{proof}

\begin{lemma}\label{le:Rol2:ab_zero}
If one chooses any $X_A,Y_A,Z_A=\star (X_A\wedge Y_A)$ as in Lemma \ref{le:Rol2:good_basis},
then
\[
\eta_A=\beta_A=0,
\quad \forall q=(x,\hat{x};A)\in O_2.
\]
\end{lemma}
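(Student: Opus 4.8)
The plan is to exploit the Lie bracket machinery from Section~\ref{sec:2.3} together with the fact that $\mc{O}_{\RDist}(q_0)$ is not open in $Q$, which (via Corollary~\ref{cor:vert}) forbids $V|_q(\pi_Q)$ from being tangent to the orbit at any point. By Lemma~\ref{le:Rol2:bundle}, the vertical space of the orbit is exactly $\nu(\spn\{A\star X_A,A\star Y_A\})|_q$, a $2$-dimensional subspace of the $3$-dimensional $V|_q(\pi_Q)$. The key observation is then that any vertical vector field produced by the bracket operations must land in this $2$-plane: if we can exhibit (for a suitably chosen frame) three vertical vector fields whose values span a $3$-dimensional space at some point of $O_2$, we get a contradiction. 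This is precisely the "determinant $=0$" mechanism already used in the proofs of Propositions~\ref{pr:3D-1} and~\ref{pr:Rol2:good_basis} and its lemmas.

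\textbf{Key steps.} First I would fix $q=(x,\hat{x};A)\in O_2$ and the locally smooth orthonormal frame $X_A,Y_A,Z_A$ with $Z_A=\star(X_A\wedge Y_A)$ furnished by Lemma~\ref{le:Rol2:good_basis}; extend it to a local orthonormal frame of $M$ (still calling it $X_A,Y_A,Z_A$), and set $E_1=X_{(\cdot)}$, $E_2=Z_{(\cdot)}$, $E_3=Y_{(\cdot)}$, viewed as maps on an open neighbourhood $O$ of $q$ in $O_2$ (these are locally smooth by Lemma~\ref{le:Rol2:good_basis}). Then $\nu(A\star E_2)|_q=\nu(AZ_A)|_q$ lies in $V|_q(\mc{O}_{\RDist}(q_0))$? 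No — I must be careful: Lemma~\ref{le:Rol2:bundle} says the vertical space is spanned by $\nu(A\star X_A),\nu(A\star Y_A)$, i.e.\ by $\nu(A\star E_1),\nu(A\star E_3)$, \emph{not} by $\nu(A\star E_2)$. This is exactly the setup where Proposition~\ref{pr:3D-1} applies with the roles permuted: the direction $E_2=Z_{(\cdot)}$ plays the role of the "missing" vertical direction. Indeed $\nu(A\star E_1)|_q,\nu(A\star E_3)|_q$ are tangent to the orbit for all $q\in O$, so $\nu(A\star(X_A\wedge Z_A))|_q=\nu(-A\star E_3\cdot\text{?})$ — I would instead directly invoke that $\nu(A\star E_2)|_q$ is \emph{not} tangent (being the complementary direction in $V|_q(\pi_Q)$). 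Now compute, using Propositions~\ref{pr:NS_comm_HH}, \ref{pr:NS_comm_HV}, \ref{pr:NS_comm_VV}, the brackets $[\LRD(E_1),\nu((\cdot)\star E_1)]$, $[\LRD(E_3),\nu((\cdot)\star E_3)]$, $[\nu((\cdot)\star E_1),\nu((\cdot)\star E_3)]$ etc., and extract the vertical components; each such vertical component is a combination of $\nu(A\star E_1),\nu(A\star E_3),\nu(A\star E_2)$, and tangency to the orbit forces the $\nu(A\star E_2)$-coefficient to vanish. The $\nu(A\star E_2)$-coefficients will be expressible in terms of the sectional-curvature data and, crucially, in terms of $\eta_A,\beta_A$ via the expressions for $R(X_A\wedge Y_A)$, $R(Y_A\wedge Z_A)$, $R(Z_A\wedge X_A)$ in Lemma~\ref{le:Rol2:good_basis}. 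Setting these coefficients to zero should force $\eta_A=\beta_A=0$ directly, or force a determinant to vanish whose only solution compatible with $\rank\Rol_q=2$ (i.e.\ with $K_1^\Rol K_2^\Rol-\alpha^2\neq0$ from \eqref{eq:rank2}) is $\eta_A=\beta_A=0$.

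\textbf{Concretely}, I expect the cleanest route is: apply Proposition~\ref{pr:R_comm_L2} (i.e.\ Proposition~\ref{pr:2.5:1}) to get $[\LRD(E_i),\LRD(E_j)]|_q=\LRD([E_i,E_j])|_q+\nu(\Rol(E_i\wedge E_j)(A))|_q$; since the left side and $\LRD([E_i,E_j])|_q$ are tangent to the orbit, so is $\nu(\Rol(E_i\wedge E_j)(A))|_q$, which we already knew. The new information comes from bracketing $\LRD(E_i)$ with the vertical fields $\nu((\cdot)\star E_1),\nu((\cdot)\star E_3)$, whose values are tangent; Proposition~\ref{pr:NS_comm_HV} gives $[\LRD(E_i),\nu((\cdot)\star E_j)]|_q=-\LNSD(A(\star E_j)E_i)|_q+\nu(A\star\nabla_{E_i}E_j+\dots)|_q$ plus the derivative terms, and the $\LNSD$-part decomposes via $\LNSD(Y)=\LRD(\text{proj})-\LNSD(A\cdot\perp\text{-part})$ — so requiring the full bracket to be tangent, and projecting onto $\nu(A\star E_2)|_q$ (the non-tangent vertical direction) and onto the $\LNSD$ of the $E_2$-direction, yields scalar equations. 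Carrying out this bookkeeping for enough pairs $(i,j)$ and combining with the structure of $\widetilde{\Rol}_q$ from Lemma~\ref{le:Rol2:good_basis} should pin down $\eta_A=\beta_A=0$.

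\textbf{Main obstacle.} The hard part will be the bookkeeping: correctly tracking which vertical directions are and are not tangent (the asymmetric roles of $E_1,E_3$ versus $E_2$), and organizing the many bracket computations so that the $\eta_A,\beta_A$-dependence is isolated cleanly rather than buried among curvature terms. A secondary subtlety is ensuring the frame selection $q\mapsto X_A,Y_A,Z_A$ is genuinely smooth on a neighbourhood (guaranteed by Lemma~\ref{le:Rol2:good_basis}) so that the bracket computations are legitimate, and handling the possibility that $\eta_A,\beta_A$ might individually vanish only on a dense subset of $O_2$ — in which case a continuity argument (as used repeatedly in Proposition~\ref{pr:3D-1}) extends the conclusion to all of $O_2$.
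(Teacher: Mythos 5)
You have the right endgame in view — a $2\times2$ linear system in $(\eta_A,\beta_A)$ whose determinant is $K_1^\Rol K_2^\Rol-\alpha^2\neq 0$ by \eqref{eq:rank2} — and the right constraint to exploit, namely Eq.~\eqref{eq:Rol2:span_Vorbit}: every vertical vector obtained by bracketing fields tangent to the orbit must have vanishing $\star Z_A$-component. But the brackets you commit to computing cannot produce that system. The vertical parts of the mixed brackets $[\LRD(E_i),\nu((\cdot)\star E_j)]$ are, by Proposition~\ref{pr:NS_comm_HV} and Lemma~\ref{le:rules_of_computation}, of the form $\nu(A\star(\LRD(E_i)|_q E_j))|_q$: they consist of covariant derivatives of the frame $X_{(\cdot)},Y_{(\cdot)}$ (connection coefficients and derivatives of the angle functions defining $X_A,Y_A$), not of curvature components. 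Likewise $[\nu((\cdot)\star X_{(\cdot)}),\nu((\cdot)\star Y_{(\cdot)})]$ yields, via Proposition~\ref{pr:NS_comm_VV}, the term $\nu(A\star Z_A)|_q$ plus vertical derivatives of the frame. None of these first-order brackets contains $\eta_A$ or $\beta_A$, so forcing their $\star Z_A$-components to vanish gives constraints on the frame and the connection table (essentially what the proof of Proposition~\ref{pr:3D-1} extracts), not the equations you need; and the horizontal leftover $\LNSD(A(\star E_j)E_i)$ drags you into the $L_1,L_3$-type bookkeeping of Proposition~\ref{pr:3D-1}, which is both much longer and logically downstream of the present lemma.

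The missing ingredient is Corollary~\ref{cor:vcomm}. The paper brackets the two rolling-curvature vertical fields $\nu(\Rol(X\wedge Y)(\cdot))$ and $\nu(\Rol(Z\wedge W)(\cdot))$ for \emph{fixed} $X,Y,Z,W\in\VF(M)$ with $X|_x=X_A$, $Y|_x=Y_A$ and $(Z|_x,W|_x)$ equal first to $(Y_A,Z_A)$ and then to $(Z_A,X_A)$. These fields are tangent to the orbit by Proposition~\ref{pr:R_comm_L2}, and Corollary~\ref{cor:vcomm} gives their bracket by a closed algebraic formula quadratic in $R$ and $\hat{R}$ in which no derivative of any frame appears. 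Because $\Rol_q(X_A\wedge Y_A)=0$, most terms drop, and the $\star Z_A$-components of the two resulting vertical vectors are $2(K_1^\Rol\eta_A-\alpha\beta_A)$ and $2(\alpha\eta_A-K_2^\Rol\beta_A)$; Eq.~\eqref{eq:Rol2:span_Vorbit} forces both to vanish, and the non-vanishing of $K_1^\Rol K_2^\Rol-\alpha^2$ gives $\eta_A=\beta_A=0$ at every point of $O_2$, with no density or continuity argument needed. (Note that one of the two bracketed fields vanishes \emph{at} $q$ but not near $q$, which is why its bracket with the other is still nonzero there.) Without this step your plan does not close.
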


\begin{proof}
Fix $q=(x,\hat{x};A)\in O_2$.
Choosing in Corollary \ref{cor:vcomm} $X,Y\in\VF(M)$
such that $X|_x=X_A$, $Y|_x=Y_A$,
we get, since $\Rol_q(X_A\wedge Y_A)=0$,
\[
& \nu|_q^{-1}\big[\nu(\Rol(X\wedge Y)(\cdot)),\nu(\Rol(Z\wedge W)(\cdot))\big]\big|_q \nonumber \\
=&A\big[R(X_A\wedge Y_A),R(Z|_x\wedge W|_x)\big]_{\so}-\big[\hat{R}(AX_A\wedge AY_A),\hat{R}(AZ|_x\wedge AW|_x)\big]_{\so}A \nonumber \\
&+\hat{R}(AX_A, A\widetilde{\Rol}_q(Z|_x\wedge W|_x)Y_A)A+\hat{R}(A\widetilde{\Rol}_q(Z|_x\wedge W|_x)X_A,AY_A)A \nonumber.
\]

We compute the right hand side of this formula in
in two special cases (a)-(b) below.

\begin{itemize}
\item[(a)] Take $Z,W\in\VF(M)$ such that $Z|_x=Y_A$, $W|_x=Z_A$.
\end{itemize}

In this case,
computing the matrices in the basis $\star X_A,\star Y_A,\star Z_A$,
\[
& A^{\ol{T}}\nu|_q^{-1}\big[\nu(\Rol(X\wedge Y)(\cdot)),\nu(\Rol(Z\wedge W)(\cdot))\big]\big|_q \\
=&\big[R(X_A\wedge Y_A),R(Y_A\wedge Z_A)\big]_{\so}-A^{\ol{T}}\big[\hat{R}(AX_A\wedge AY_A),\hat{R}(AY_A\wedge AZ_A)\big]_{\so}A \\
&+A^{\ol{T}}\hat{R}(AX_A,A\widetilde{\Rol}_q(Y_A\wedge Z_A)Y_A)A
+A^{\ol{T}}\hat{R}(A\widetilde{\Rol}_q(Y_A\wedge Z_A)X_A,AY_A)A \\
=&\qmatrix{\beta_A \cr \eta_A \cr -K_A }\wedge\qmatrix{-K_A^1 \cr \xi_A \cr \beta_A}
-\qmatrix{\beta_A \cr \eta_A \cr -K_A }\wedge \qmatrix{-K_A^1+K_1^\Rol \cr \xi_A+\alpha \cr \beta_A} \\
&+A^{\ol{T}}\hat{R}(AX_A,-K_1^\Rol AZ_A)A
+A^{\ol{T}}\hat{R}(\alpha AZ_A,AY_A)A \\
=&-\qmatrix{\beta_A \cr \eta_A \cr -K_A }\wedge \qmatrix{K_1^\Rol \cr \alpha \cr 0}
+K_1^\Rol\qmatrix{\xi_A+\alpha \cr -K^2_A+K_2^\Rol \cr \eta_A}
-\alpha\qmatrix{-K^1_A+K_1^\Rol \cr \xi_A+\alpha \cr \beta_A} \\
=&\qmatrix{
-\alpha K_A+K_1^\Rol(\xi_A+\alpha)-\alpha (-K_A^1+K_1^\Rol) \cr
K_A K_1^\Rol+K_1^\Rol(-K_A^2+K_2^\Rol)-\alpha (\xi_A+\alpha) \cr
-\alpha \beta_A+K_1^\Rol \alpha_A+K_1^\Rol \alpha_A-\alpha \beta_A
}=\qmatrix{\star \cr \star \cr 2(K_1^\Rol \eta_A-\alpha \beta_A)}.
\]
By Lemma \ref{le:Rol2:bundle} the right hand side should
belong to the span of $\star X_A,\star Y_A$ which implies
\begin{align}\label{pr:ab_zero:eq1}
K_1^\Rol \eta_A-\alpha \beta_A=0.
\end{align}

\begin{itemize}
\item[(b)] Take $Z,W\in\VF(M)$ such that $Z|_x=Z_A$, $W|_x=X_A$.
\end{itemize}
Again, computing w.r.t. the basis $\star X_A,\star Y_A,\star Z_A$, yields
\[
& A^{\ol{T}}\nu|_q^{-1}\big[\nu(\Rol(X,Y)(\cdot)),\nu(\Rol(Z,W)(\cdot))\big]\big|_q \nonumber \\
=&\big[R(X_A,Y_A),R(Z_A,X_A)\big]_{\so}-A^{\ol{T}}\big[\hat{R}(AX_A,AY_A),\hat{R}(AZ_A,AX_A)\big]_{\so}A \\
&+A^{\ol{T}}\hat{R}(AX_A,A\widetilde{\Rol}_q(Z_A,X_A)Y_A)A+A^{\ol{T}}\hat{R}(A\widetilde{\Rol}_q(Z_A,X_A)X_A,AY_A)A \\
=&\Big(\qmatrix{\beta_A\cr \alpha_A\cr -K_A}\wedge \qmatrix{\xi_A \cr -K_A^2 \cr \eta_A}
-\qmatrix{\beta_A\cr \eta_A\cr -K_A}\wedge \qmatrix{\xi_A+\alpha \cr -K^2_A+K_2^\Rol \cr \eta_A} \\
&+A^{\ol{T}}\hat{R}(AX_A,-\alpha AZ_A)A+A^{\ol{T}}\hat{R}(K_2^\Rol AZ_A,AY_A)A \\
=&
-\qmatrix{\beta_A\cr \eta_A\cr -K_A}\wedge \qmatrix{\alpha \cr K_2^\Rol \cr 0}
+\alpha \qmatrix{\xi_A+\alpha \cr -K^2_A+K_2^\Rol \cr \eta_A}-K_2^\Rol \qmatrix{-K^1_A+K_1^\Rol \cr \xi_A+\alpha \cr \beta_A} \\
=&\qmatrix{-K_AK_2^\Rol+\alpha (\xi_A+\alpha)-K_2^\Rol (-K_A^1+K_1^\Rol) \cr
\alpha K_A+\alpha (-K_A^2+K_2^\Rol)-K_2^\Rol (\xi_A+\alpha) \cr
-\beta_A K_2^\Rol + \alpha \eta_A + \alpha \eta_A - K_2^\Rol \beta_A
}=\qmatrix{\star \cr \star \cr 2(\alpha \eta_A-\beta_A K_2^\Rol)}
\]
Since the right hand side
belongs to the span of $\star X_A,\star Y_A$, by Lemma \ref{le:Rol2:bundle}, we obtain
\begin{align}\label{pr:ab_zero:eq2}
\alpha \eta_A-K_2^\Rol\beta_A=0.
\end{align}

Combining Equations (\ref{pr:ab_zero:eq1}) and (\ref{pr:ab_zero:eq2})
we get
\[
\qmatrix{K_1^\Rol & \alpha \cr \alpha & K_2^\Rol}\qmatrix{\eta_A\cr \beta_A}=\qmatrix{0 \cr 0}.
\]
But by Eq. (\ref{eq:rank2}) the determinant of the $2\times 2$-matrix on the left hand side
does not vanish, which implies that $\eta_A=\beta_A=0$.
The proof is finished.
\end{proof}

\begin{lemma}\label{le:Rol2:xi_zero}
For every $q=(x,\hat{x};A)\in O_2$
there are orthonormal $X_A,Y_A\in T|_x M$
such that $X_A,Y_A,Z_A=\star (X_A\wedge Y_A)$ is an oriented orthonormal basis of $T|_x M$
with respect to which in Lemma \ref{le:Rol2:good_basis}
one has
\[
\eta_A=\beta_A=\xi_A=0
\]
i.e. $\star X_A,\star Y_A,\star Z_A$ are eigenvectors of $R|_x$.
\end{lemma}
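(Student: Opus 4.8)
The plan is to build on Lemma~\ref{le:Rol2:good_basis} and Lemma~\ref{le:Rol2:ab_zero} to normalize the remaining off-diagonal term $\xi_A$ by a suitable \emph{rotation} of the pair $X_A,Y_A$ inside the plane $Z_A^\perp$. By Lemma~\ref{le:Rol2:ab_zero} we already know $\eta_A=\beta_A=0$ for \emph{every} choice of $X_A,Y_A$ as in Lemma~\ref{le:Rol2:good_basis}, so in that basis
\[
\star R(Y_A\wedge Z_A)=\qmatrix{-K^1_A \cr \xi_A \cr 0},\quad
\star R(Z_A\wedge X_A)=\qmatrix{\xi_A \cr -K^2_A \cr 0},\quad
\star R(X_A\wedge Y_A)=\qmatrix{0\cr 0\cr -K_A},
\]
i.e. the symmetric curvature operator $R|_x$ leaves the $2$-plane $\spn\{\star X_A,\star Y_A\}=\spn\{Y_A\wedge Z_A, Z_A\wedge X_A\}$ invariant and acts on it by the symmetric matrix $\left(\begin{smallmatrix} -K^1_A & \xi_A \\ \xi_A & -K^2_A\end{smallmatrix}\right)$ (and $\star Z_A$ is already an eigenvector with eigenvalue $-K_A$).

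The key step is then purely linear algebra: a symmetric $2\times 2$ matrix can be diagonalized by an orthogonal change of basis. Concretely, replace $X_A,Y_A$ by $X_A'=\cos\theta\, X_A+\sin\theta\, Y_A$, $Y_A'=-\sin\theta\, X_A+\cos\theta\, Y_A$ for an appropriate angle $\theta=\theta(q)$; then $Z_A'=\star(X_A'\wedge Y_A')=Z_A$ is unchanged, the new pair $X_A',Y_A',Z_A'$ is still a positively oriented orthonormal basis of $T|_x M$, and by construction $X_A'\wedge Y_A'=X_A\wedge Y_A$, so the new pair still satisfies $\Rol_q(X_A'\wedge Y_A')=\widetilde{\Rol}_q(X_A'\wedge Y_A')=0$ and hence all the hypotheses of Lemma~\ref{le:Rol2:good_basis} remain in force (so in particular the ``new'' $\eta,\beta$ again vanish by Lemma~\ref{le:Rol2:ab_zero}). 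The rotation acts on the plane $\spn\{Y_A\wedge Z_A,Z_A\wedge X_A\}$ exactly as the standard $\SO(2)$ action, so choosing $\theta$ to be (half of) the usual diagonalizing angle for $\left(\begin{smallmatrix} -K^1_A & \xi_A \\ \xi_A & -K^2_A\end{smallmatrix}\right)$ makes the new $\xi_{A}$ vanish. After this change $\star X_A,\star Y_A,\star Z_A$ are all eigenvectors of $R|_x$, which is the assertion.

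Finally I would record that this selection can be made so that all the quantities are consistent with the earlier normalization: one checks that with $\xi_A=\eta_A=\beta_A=0$ the matrices of $R$ and, using $A^{\ol T}\hat R(A\cdot\wedge A\cdot)A=R(\cdot\wedge\cdot)-\widetilde{\Rol}_q(\cdot\wedge\cdot)$, of $\hat R$ take precisely the forms stated in Proposition~\ref{pr:Rol2:good_basis}, with $K(x)=K_A$, $K_1(x)=K^1_A$, $K_2(x)=K^2_A$ (these being then genuinely functions of $x$ alone, since they are, up to sign, the eigenvalues of the point-dependent operator $R|_x$). I do not expect a real obstacle here; the only mild subtlety is that the diagonalizing angle $\theta$ is only locally smooth and not globally well-defined (it jumps where $K^1_A=K^2_A$ and $\xi_A=0$, i.e. where the plane-eigenvalues collide), but since Proposition~\ref{pr:Rol2:good_basis} and the ambient Theorem~\ref{th:3D-1} are local statements — we work on a small neighbourhood $O_2'$ of a given $q\in O_2$, as already allowed by the ``locally smoothly'' clause of Lemma~\ref{le:Rol2:good_basis} — this causes no trouble, and on the (open) locus where the two plane-eigenvalues coincide one may simply take $\theta=0$.
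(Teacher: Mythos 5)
Your proposal is correct and follows essentially the same route as the paper: rotate $X_A,Y_A$ in the plane $Z_A^\perp$ (so $Z_A$, and hence the kernel condition and the vanishing of $\eta_A,\beta_A$ via Lemma \ref{le:Rol2:ab_zero}, are preserved) and use the fact that the symmetric operator $R|_x$ restricted to $(\star Z_A)^\perp$ can be diagonalized by a rotation, which the paper phrases as matching $\star X_A(t_0),\star Y_A(t_0)$ with the orthonormal eigenvectors of $R|_x$ in that plane. The only cosmetic quibble is the parenthetical ``(half of)'': since $\star$ is linear, the rotation by $\theta$ of $(X_A,Y_A)$ rotates $(\star X_A,\star Y_A)$ by the same angle $\theta$, so one simply takes the standard angle diagonalizing the $2\times 2$ symmetric block; and the smoothness-in-$q$ caveat is not needed here, as the lemma is pointwise (the paper defers smooth selection to Corollary \ref{cor:Rol2:loc_smooth}).
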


\begin{proof}
Fix $q=(x,\hat{x};A)\in O_2$, choose any $X_A,Y_A,Z_A=\star (X_A\wedge Y_A)$ as in Lemma \ref{le:Rol2:good_basis}
and suppose $\xi_A\neq 0$ (otherwise we are done).
Notice that by Lemma \ref{le:Rol2:ab_zero}, we have $\eta_A=\beta_A=0$
which means that $\star Z_A$ is an eigenvector of $R|_x$.

We let $t\in\R$,
\[
\qmatrix{X_A(t)\cr Y_A(t)}:=\qmatrix{\cos(t) & \sin(t) \cr -\sin(t) & \cos(t)}\qmatrix{X_A \cr Y_A}.
\]
Then clearly $Z_A(t):=\star (X_A(t)\wedge Y_A(t))=\star (X_A\wedge Y_A)=Z_A$,
and $X_A(t),Y_A(t),Z_A(t)$ is an orthonormal positively oriented basis of $T|_x M$.
Since 
$$
\Rol_q(\star Z_A(t))=\Rol_q(\star Z_A)=0,
$$
Lemma \ref{le:Rol2:ab_zero} implies that $\eta_A(t),\beta_A(t)=0$
if one writes $\eta_A(t),\beta_A(t),\xi_A(t)$ for the
coefficients of matrices in Lemma \ref{le:Rol2:good_basis}
w.r.t $X_A(t),Y_A(t),Z_A(t)$.
Our goal is to show that $\xi_A(t)=0$ for some $t\in\R$.

First of all $\star Z_A(t)=\star Z_A$ is a unit eigenvector of $R|_x$ which does not depend on $t$.
On the other hand, $R|_x$ is a symmetric map $\wedge^2 T|_x M\to\wedge^2 T|_x M$,
so it has two orthogonal unit eigenvectors, say, $u_1,u_2$
in $(\star Z_A)^\perp=\star (Z_A^\perp)$. Thus $u_1,u_2,\star Z_A$ forms an orthonormal basis of $\wedge^2 T|_x M$,
which we may assume to be oriented (otherwise swap $u_1,u_2$).
But then $\spn\{u_1,u_2\}=\star Z_A^\perp=\spn\{\star X_A,\star Y_A\}$
so there is definitely $t_0\in\R$
such that $\star X_A(t_0)=u_1$, $\star Y_A(t_0)=u_2$ (in this order, by the assumption on orientation of $u_1,u_2,\star Z_A$ and $X_A,Y_A,Z_A$).
Since $R|_x(\star X_A(t_0))=-K_1\star X_A(t_0)$, $R|_x(\star Y_A(t_0))=-K_2\star Y_A(t_0)$,
we have $\xi_A(t_0)=0$ as well as $\eta_A(t_0)=\beta_A(t_0)=0$
This allows us to conclude.
\end{proof}

\begin{remark}
Notice that the choice of $Z_A$ can be made locally smoothly on $O_2$
but, at this stage of the argument, it is not clear that one can choose $X_A,Y_A$,
with $\xi_A=0$, locally smoothly on $O_2$.
However, it will be the case cf. Corollary \ref{cor:Rol2:loc_smooth}.
\end{remark}

We now aim to prove, roughly speaking, that the eigenvalue $-K$
has to be double for both spaces $(M,g)$, $(\hat{M},\hat{g})$
if neither one of them has constant curvature.

\begin{lemma}\label{le:double_eig_K}
If the eigenspace at $x_1\in \pi_{\mc{O}_{\RDist}(q_0),M}(O_2)$ corresponding to the eigenvalue $-K(x_1)$ of the
curvature operator $R$
has multiplicity 1,
then $(\hat{M},\hat{g})$ has constant curvature $K(x_1)$
on the open set $\pi_{\mc{O}_{\RDist}(q_0),\hat{M}}(\pi_{\mc{O}_{\RDist}(q_0),M}^{-1}(x_1))$
of $\hat{M}$.

The claim also holds with the roles of $(M,g)$ and $(\hat{M},\hat{g})$
interchanged. 
\end{lemma}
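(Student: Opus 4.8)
The plan is to combine the ``good basis'' produced by Proposition~\ref{pr:Rol2:good_basis} with the vertical mobility of $A$ inside an $\RDist$-orbit lying in $O_2$, recorded in Lemma~\ref{le:Rol2:bundle}. First I would localize: the hypothesis says $x_1\in\pi_{\mc{O}_{\RDist}(q_0),M}(O_2)$, so fix any $q_1=(x_1,\hat{x}_1;A_1)\in O_2$ with $\hat{x}_1$ the point at which we want constant curvature, and let $O$ be a neighbourhood of $q_1$ in $O_2$. By Proposition~\ref{pr:Rol2:good_basis} and the remarks following it, for every $q=(x_1,\hat{x};A)\in O_2$ the $1$-dimensional kernel of $\Rol_q$ is the $R|_{x_1}$-eigendirection with eigenvalue $-K(x_1)$; since by hypothesis this eigenvalue is simple, the corresponding unit eigenvector $Z^{*}\in T|_{x_1}M$ (equivalently the $2$-vector $X_A\wedge Y_A=\star Z^{*}$) is independent of $q$ up to sign, and one may take $Z_A=\pm Z^{*}$. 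Moreover the relation $\widetilde{\Rol}_q(X_A\wedge Y_A)=0$ forces $\hat{R}(AX_A\wedge AY_A)=A\,R(X_A\wedge Y_A)\,A^{-1}$ (as skew endomorphisms), so that $AX_A\wedge AY_A=\star(AZ^{*})$ is an eigenvector of the curvature operator $\hat{R}|_{\hat{x}}$ with the same eigenvalue; this is exactly the content of ``$K(x)=\hat{K}(\hat{x})$'' and ``$AZ_A=\hat{Z}_A$'' in the remark after Proposition~\ref{pr:Rol2:good_basis}. In particular, for every $q=(x_1,\hat{x}_1;A)\in O_2$ the vector $\star(AZ^{*})$ lies in the $(-K(x_1))$-eigenspace of $\hat{R}|_{\hat{x}_1}$.

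Next I would vary $A$ vertically. By Lemma~\ref{le:Rol2:bundle} we have $\dim\mc{O}_{\RDist}(q_0)=8$, $\pi_Q|_{O_2}$ is a submersion onto an open subset of $M\times\hat{M}$, and $V|_q(\mc{O}_{\RDist}(q_0))=\spn\{\nu(A\star X_A)|_q,\nu(A\star Y_A)|_q\}$ for $q\in O_2$. Hence the $\pi_Q$-fibre $F$ of $\mc{O}_{\RDist}(q_0)$ through $q_1$ is a $2$-dimensional submanifold of $Q|_{(x_1,\hat{x}_1)}$, and I would study the smooth map $F\to S:=\{v\in T|_{\hat{x}_1}\hat{M}:\ \|v\|_{\hat{g}}=1\}$, $A\mapsto AZ^{*}$ (well defined since $A$ is an isometry and $\|Z^{*}\|_g=1$). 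Its differential at $A_1$ sends $\nu(A_1\star X_{A_1})|_{q_1}$ and $\nu(A_1\star Y_{A_1})|_{q_1}$ to $A_1(\star X_{A_1})Z^{*}$ and $A_1(\star Y_{A_1})Z^{*}$; because $\star X_{A_1},\star Y_{A_1}\in\so(T|_{x_1}M)$ generate the rotations fixing the $X_{A_1}$- and $Y_{A_1}$-axes and $Z^{*}=\pm Z_{A_1}$, these two images are nonzero and lie in $A_1\spn\{Y_{A_1}\}$ and $A_1\spn\{X_{A_1}\}$ respectively, so the image of the differential is $\spn\{A_1X_{A_1},A_1Y_{A_1}\}=(A_1Z^{*})^{\perp}=T|_{A_1Z^{*}}S$. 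Thus $A\mapsto AZ^{*}$ is a submersion $F\to S$ near $A_1$, and its image contains a neighbourhood $\mc{V}$ of $A_1Z^{*}$ in $S$; since $O_2$ is open, all the corresponding $q=(x_1,\hat{x}_1;A)$ belong to $O_2$, so $\star(AZ^{*})$ is a $(-K(x_1))$-eigenvector of $\hat{R}|_{\hat{x}_1}$ for each of them.

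Finally I would conclude. Since $\star$ is a linear isometry $T|_{\hat{x}_1}\hat{M}\to\wedge^2 T|_{\hat{x}_1}\hat{M}$, the set $\{\star(AZ^{*}):\ AZ^{*}\in\mc{V}\}$ is a neighbourhood of $\star(A_1Z^{*})$ in the unit sphere of $\wedge^2 T|_{\hat{x}_1}\hat{M}$, all of whose elements are $(-K(x_1))$-eigenvectors of the symmetric operator $\hat{R}|_{\hat{x}_1}$. A symmetric endomorphism of a Euclidean space whose $\lambda$-eigenspace meets the unit sphere in a set with nonempty interior must equal $\lambda\cdot\id$ (the intersection of a proper subspace with the sphere is nowhere dense). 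Hence $\hat{R}|_{\hat{x}_1}$ is scalar, i.e. $(\hat{M},\hat{g})$ has constant sectional curvature $K(x_1)$ at $\hat{x}_1$. Running this argument at every point of $W:=\pi_{\mc{O}_{\RDist}(q_0),\hat{M}}(\pi_{\mc{O}_{\RDist}(q_0),M}^{-1}(x_1)\cap O_2)$, which is open in $\hat{M}$ (being the slice $\{\hat{x}:(x_1,\hat{x})\in\pi_Q(O_2)\}$ of the open set $\pi_Q(O_2)$) and contains $\hat{x}_1$, yields constant curvature $K(x_1)$ on $W$; this is the open subset of $\pi_{\mc{O}_{\RDist}(q_0),\hat{M}}(\pi_{\mc{O}_{\RDist}(q_0),M}^{-1}(x_1))$ asserted in the statement, and since constant curvature $K(x_1)$ is a closed condition it propagates to the rest of that set by the usual open-and-closed argument. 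The symmetric assertion, with $(M,g)$ and $(\hat{M},\hat{g})$ interchanged, follows by applying everything to the rolling of $\hat{M}$ against $M$ via the diffeomorphism $\iota\colon Q(M,\hat{M})\to Q(\hat{M},M)$ of Proposition~\ref{pr:rol_inverse}, which sends $\RDist$ to $\widehat{\RDist}$ and hence $O_2$ to the corresponding set. The main obstacle is the vertical-variation step: one must verify carefully, in the Hodge-duality conventions of Section~\ref{se:3D}, that as $A$ ranges over the $2$-dimensional $\pi_Q$-fibre of the orbit the distinguished direction $AZ^{*}$ genuinely fills an open subset of the unit sphere of $T|_{\hat{x}_1}\hat{M}$ — only then does the single eigenvector datum propagate to all of $\hat{R}|_{\hat{x}_1}$; a secondary point requiring care is the passage from $W$ to the full open set named in the statement.
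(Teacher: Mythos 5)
Your proof is correct, but it follows a genuinely different route from the paper's. The paper argues on a whole neighbourhood $U$ of $x_1$ in $M$: simplicity of the eigenvalue lets it choose a smooth frame $\tilde X,\tilde Y,\tilde Z$ on $U$ with $\tilde X\wedge\tilde Y$ spanning the $-K(\cdot)$-eigenspace, so that the vertical field $\nu(\Rol(\tilde X\wedge\tilde Y)(\cdot))$ vanishes identically on $\pi_{\mc{O}_{\RDist}(q_0),M}^{-1}(U)\cap O_2$; feeding this into the two bracket computations from the proof of Lemma \ref{le:Rol2:ab_zero} gives $\alpha=0$ and $-K_1+K_1^{\Rol}=-K=-K_2+K_2^{\Rol}$ on that set, and then Eq.~(\ref{eq:Rol2:good_basis-hatR}) exhibits $AX_A\wedge AY_A$, $AY_A\wedge AZ_A$, $AZ_A\wedge AX_A$ as an orthonormal eigenbasis of $\hat{R}|_{\hat x}$ with the single eigenvalue $-K(x)$. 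You instead stay at one fibre over $(x_1,\hat x_1)$: using $V|_q(\mc{O}_{\RDist}(q_0))=\spn\{\nu(A\star X_A)|_q,\nu(A\star Y_A)|_q\}$ from Lemma \ref{le:Rol2:bundle}, you show $A\mapsto AZ^*$ is submersive from the orbit fibre to the unit sphere of $T|_{\hat x_1}\hat M$ (your computation that $(\star X_{A_1})Z^*$ and $(\star Y_{A_1})Z^*$ span $Z_{A_1}^{\perp}$ is right), so the $(-K(x_1))$-eigenspace of $\hat{R}|_{\hat x_1}$ meets the unit sphere of $\wedge^2T|_{\hat x_1}\hat M$ in a set with nonempty interior and must be everything. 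Both arguments rest on the same structural inputs — Proposition \ref{pr:Rol2:good_basis} (in particular that $\ker\widetilde{\Rol}_q$ is the $-K(x)$-eigendirection, with $K$ a function of $x$ alone) and Lemma \ref{le:Rol2:bundle} — but yours replaces the curvature/bracket identities by a purely pointwise open-mapping argument, while the paper's computation buys, as a by-product, the quantitative relations $\alpha=0$, $K_i^{\Rol}=K_i-K$ on a whole neighbourhood, the kind of information it keeps exploiting later in Section \ref{se:3D}. One caveat: like the paper's own proof, you actually establish constant curvature only on the open set $W=\pi_{\mc{O}_{\RDist}(q_0),\hat M}\big(\pi_{\mc{O}_{\RDist}(q_0),M}^{-1}(x_1)\cap O_2\big)$ (your openness argument via the slice of $\pi_Q(O_2)$ is fine, and parallels the paper's submersion argument for its set $\hat S=W$); your final sentence about propagating to the rest of $\pi_{\mc{O}_{\RDist}(q_0),\hat M}(\pi_{\mc{O}_{\RDist}(q_0),M}^{-1}(x_1))$ by an open-and-closed argument is unsupported (that larger set need not be open or connected), but it is also unnecessary, since what is proved on $W$ is exactly what the paper's proof delivers and what is used afterwards.
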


\begin{proof}
So suppose that at $x_1\in \pi_{\mc{O}_{\RDist}(q_0),M}(O_2)$
the eigenspace of $R|_{x_1}$ corresponding to the eigenvalue $-K(x_1)$
has multiplicity $1$.
By continuity then,
the $-K(\cdot)$-eigenspace of $R$ is of multiplicity 1
on an open neighbourhood $U$ of $x_1$.
Since this eigenspace depends smoothly on a point of $M$,
we may choose, taking $U$ smaller around $x_1$ if needed, positively oriented orthonormal
smooth vector fields $\tilde{X},\tilde{Y},\tilde{Z}$ on $U$
such that $\star\tilde{Z}=\tilde{X}\wedge \tilde{Y}$ spans the $-K(\cdot)$-eigenspace
of $R$ at each point of $U$.

Taking arbitrary $q'=(x',\hat{x}';A')\in (\pi_{\mc{O}_{\RDist}(q_0),M})^{-1}(U)\cap O_2$
and letting $X_{A'},Y_{A'},Z_{A'}$
be the vectors provided by Theorem \ref{le:Rol2:good_basis} at $q$,
we have that the $-K(x')$-eigenspace of $R|_{x'}$ is also spanned by $X_{A'}\wedge Y_{A'}$.
By the orthonormality and orientability, $X_{A'}\wedge Y_{A'}=\tilde{X}|_{x'}\wedge \tilde{Y}|_{x'}$
from which $\tilde{Z}|_{x'}=Z_{A'}$
and $\Rol(\tilde{X}|_{x'}\wedge \tilde{Y}|_{x'})(A')=\Rol(X_{A'}\wedge Y_{A'})(A')=0$.

Now fix, for a moment, $q=(x,\hat{x};A)\in (\pi_{\mc{O}_{\RDist}(q_0),M})^{-1}(U)\cap O_2$.
By replacing $\tilde{X}$ by $\cos(t)\tilde{X}+\sin(t)\tilde{Y}$
and $\tilde{Y}$ by $-\sin(t)\tilde{X}+\cos(t)\tilde{Y}$
on $U$ for a certain constant $t=t_x\in\R$,
we may assume that $\tilde{X}|_x=X_A$, $\tilde{Y}|_x=Y_A$.

Since, as we just proved, for all $(x',\hat{x}';A')\in (\pi_{\mc{O}_{\RDist}(q_0),M})^{-1}(U)\cap O_2$, one has 
$$
\Rol(\tilde{X}|_{x'}\wedge \tilde{Y}|_{x'})(A')=0,
$$ then
the vector field $\nu(\Rol(\tilde{X}\wedge \tilde{Y})(\cdot))\in \VF(\pi_{\mc{O}_{\RDist}(q_0),M})$
vanishes identically
i.e. $\nu(\Rol(\tilde{X}\wedge \tilde{Y})(\cdot))=0$ on $(\pi_{\mc{O}_{\RDist}(q_0),M})^{-1}(U)\cap O_2$.

Therefore, the computation in part (a) of the proof of Lemma \ref{le:Rol2:ab_zero}
(replace $X\to \tilde{X}$, $Y\to\tilde{Y}$, $Z\to\tilde{Y}$, $W\to\tilde{Z}$ there;
recall also that $\xi_A=0$ by the choice of $X_A,Y_A,Z_A$)
gives,
by noticing also that here $K_A=K(x)$, $K_A^1=K_1(x)$ and $K_A^2=K_2(x)$,
\[
0=&A^{\ol{T}}\nu|_q^{-1}\big[\nu(\Rol(\tilde{X},\tilde{Y})(\cdot)),\nu(\Rol(\tilde{Y},\tilde{Z})(\cdot))\big]\big|_q \\
=&\qmatrix{
-\alpha K_A+\alpha K_1^\Rol-\alpha (-K_A^1+K_1^\Rol) \cr
K_A K_1^\Rol+K_1^\Rol(-K_A^2+K_2^\Rol)-\alpha^2 \cr
0}
=&\qmatrix{
\alpha (-K+K_1) \cr
K_1^\Rol (K-K_2+K_2^\Rol)-\alpha^2 \cr
0}.
\]
Similarly, the computation in part (b) of the proof of Lemma \ref{le:Rol2:ab_zero}
(now replace $X\to \tilde{X}$, $Y\to\tilde{Y}$, $Z\to\tilde{Z}$, $W\to\tilde{X}$ there)
gives,
\[
0=&A^{\ol{T}}\nu|_q^{-1}\big[\nu(\Rol(\tilde{X},\tilde{Y})(\cdot)),\nu(\Rol(\tilde{Z},\tilde{X})(\cdot))\big]\big|_q \nonumber \\
=&\qmatrix{
-K_AK_2^\Rol+\alpha^2-K_2^\Rol (-K_A^1+K_1^\Rol) \cr
\alpha K_A+\alpha (-K_A^2+K_2^\Rol)-K_2^\Rol \alpha \cr
0
}
=\qmatrix{
K_2^\Rol(-K+K_1-K_1^\Rol)+\alpha^2 \cr
\alpha(K-K_2) \cr
0
}.
\]

By assumption, $-K(\cdot)$ is an eigenvalue of $R$ distinct from the other eigenvalues
$-K_1(\cdot)$, $-K_2(\cdot)$ on $U$,
and hence we must have $\alpha(q)=0$.
Since $0\neq K_1^\Rol(q) K_2^\Rol(q)-\alpha(q)^2=K_1^\Rol(q) K_2^\Rol(q)$,
we have $K_1^\Rol(q)\neq 0$ and $K_2^\Rol(q)\neq 0$
and hence $K(x)-K_1(x)+K_1^\Rol(q)=0$ and $K(x)-K_2(x)+K_2^\Rol(q)=0$
for $q=(x,\hat{x};A)\in (\pi_{\mc{O}_{\RDist}(q_0),M})^{-1}(U)\cap O_2$.

Since $q=(x,\hat{x};A)\in (\pi_{\mc{O}_{\RDist}(q_0),M})^{-1}(U)\cap O_2$
was arbitrary, we have proven that
\[
\alpha(q)&=0, \\
-K_1(x)+K_1^\Rol(q)&=-K(x), \\
-K_2(x)+K_2^\Rol(q)&=-K(x),
\]
for all $q=(x,\hat{x};A)\in (\pi_{\mc{O}_{\RDist}(q_0),M})^{-1}(U)\cap O_2$.

Looking at (\ref{eq:Rol2:good_basis-hatR}) now reveals that
for every $q=(x,\hat{x};A)\in (\pi_{\mc{O}_{\RDist}(q_0),M})^{-1}(U)\cap O_2$,
the three 2-vectors
$AX_A\wedge AY_A$,
$AY_A\wedge AZ_A$ and
$AZ_A\wedge AX_A$
are mutually orthonormal eigenvectors
of $\hat{R}|_{\hat{x}}$
corresponding all to the eigenvalue $-K(x)$
which means that $(\hat{M},\hat{g})$ has constant curvature $-K(x)$ at $\hat{x}$.

In particular, since $x_1\in U$, the Riemannian space $(\hat{M},\hat{g})$ has
constant curvature $-K(x_1)$ at all points
$\hat{x}_1\in \pi_{\mc{O}_{\RDist}(q_0),\hat{M}}\big((\pi_{\mc{O}_{\RDist}(q_0),M})^{-1}(x_1)\cap O_2\big)$.

Finally, we argue that $\hat{S}:= \pi_{\mc{O}_{\RDist}(q_0),\hat{M}}\big((\pi_{\mc{O}_{\RDist}(q_0),M})^{-1}(x_1)\cap O_2\big)$
is an open subset of $\hat{M}$.
It is enough to show that $\pi_{Q,\hat{M}}|_{\hat{O}_{x_1}}:\hat{O}_{x_1}\to \hat{M}$
is a submersion
where $\hat{O}_{x_1}:=(\pi_{\mc{O}_{\RDist}(q_0),M})^{-1}(x_1)\cap O_2$
is a submanifold of $O_2$.

To begin with, recall that $\pi_{Q}|_{O_2}$ is an submersion from onto an open subset of $M\times\hat{M}$
by Lemma \ref{le:Rol2:bundle}.
Let $q\in \hat{O}_{x_1}$ and write $q=(x_1,\hat{x};A)$.
Choose any frame $\hat{X}_1,\hat{X}_2,\hat{X}_3$ of $T|_{\hat{x}} \hat{M}$.
Then there are $\hat{W}_i\in T|_q(\mc{O}_{\RDist}(q_0))$, $i=1,2,3$,
such that $(\pi_Q)_*(\hat{W}_i)=(0,\hat{X}_i)$.
In particular, $(\pi_{Q,M})_*(\hat{W}_i)=0$, so $\hat{W}_i\in V|_q(\pi_{\mc{O}_{\RDist}(q_0),M})$.
But since $T|_q \hat{O}_{x_1}=V|_q(\pi_{\mc{O}_{\RDist}(q_0),M})$,
we have $\hat{W}_i\in T|_q \hat{O}_{x_1}$
and thus $\hat{X}_i=(\pi_{Q,\hat{M}})_*\hat{W}_i\in \IM(\pi_{Q,\hat{M}}|_{\hat{O}_{x_1}})_*$,
which proves the claim and finishes the proof.
\end{proof}

\begin{remark}
It is actually obvious that the eigenvalue $-K(\cdot)$ of $R$ of $(M,g)$
is constant, equal to $K(x_1)$ say,
in a some neighbourhood of $x_1$ in $M$,
if $-K(x_1)$ were a single eigenvalue of $R|_{x_1}$.

Even more is true: One could show, even without questioning
whether $-K(\cdot)$ is a single eigenvalue for $R$ and/or $\hat{R}$ or not,
that on $\pi_{Q,M}(O_2)$ and $\pi_{Q,\hat{M}}(O_2)$
this eigenvalue is actually locally constant (i.e. the function $K(\cdot)$ is locally constant).
This is fact will be observed e.g. in Lemma \ref{le:Rol2:special_3D} below.
\end{remark}

\begin{lemma}\label{le:Rol2:no_simple_eig}
The following hold:
\begin{itemize}
\item[(1)] For any $q_1=(x_1,\hat{x}_1;A_1)\in O_2$,
the space $(\hat{M},\hat{g})$ cannot have constant curvature at $\hat{x}_1$.

\item[(2)] There does not exist a $q_1=(x_1,\hat{x}_1;A_1)\in O_2$
such that $-K(x_1)$ is a single eigenvalue of $R|_{x_1}$.
\end{itemize}
This also holds with the roles of $(M,g)$ and $(\hat{M},\hat{g})$ interchanged.
\end{lemma}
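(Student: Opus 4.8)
The plan is to prove both items (1) and (2) simultaneously by contradiction, exploiting the symmetry of the rolling problem under the interchange of $(M,g)$ and $(\hat M,\hat g)$, which swaps $\mc{O}_{\RDist}(q_0)$ with $\mc{O}_{\widehat{\RDist}}(\iota(q_0))$ (Proposition \ref{pr:rol_inverse}). Suppose for contradiction that $(\hat M,\hat g)$ has constant curvature at some $\hat x_1$, with $q_1 = (x_1,\hat x_1;A_1) \in O_2$. By Lemma \ref{le:double_eig_K} applied in the reversed direction (rolling $\hat M$ against $M$), this constant curvature forces information back on the $M$-side; but more directly, I would first observe that if $(\hat M,\hat g)$ has constant curvature at $\hat x_1$, say equal to some value $\hat\kappa$, then on a neighbourhood in $\pi_{\mc{O}_{\RDist}(q_0),\hat M}(\cdot)$ the tensor $\hat R$ is a multiple of the identity on $\wedge^2$, and we can feed this into the formulas \eqref{eq:Rol2:good_basis-hatR} for $\hat\star A^{\ol T}\hat R(\cdot)A$ expressed in the frame $X_{A_1},Y_{A_1},Z_{A_1}$ of Proposition \ref{pr:Rol2:good_basis}. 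Comparing with the three displayed vectors there, constancy of $\hat R$ means all three 2-vectors $A_1 X_{A_1}\wedge A_1 Y_{A_1}$, $A_1 Y_{A_1}\wedge A_1 Z_{A_1}$, $A_1 Z_{A_1}\wedge A_1 X_{A_1}$ are eigenvectors of $\hat R$ with the same eigenvalue $\hat\kappa$. Reading off the components this yields $K(x_1) = -\hat\kappa$, $\alpha(q_1) = 0$, $K_1(x_1) - K_1^\Rol(q_1) = -\hat\kappa = K_2(x_1) - K_2^\Rol(q_1)$.

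The key obstruction to overcome is that, with $\alpha(q_1) = 0$, the rank-2 condition \eqref{eq:rank2} becomes $K_1^\Rol(q_1) K_2^\Rol(q_1) \neq 0$, so both are nonzero, and the above relations give $K_i^\Rol(q_1) = K_i(x_1) + \hat\kappa = K_i(x_1) - K(x_1)$ for $i=1,2$. Thus $K_1(x_1) \neq K(x_1)$ and $K_2(x_1) \neq K(x_1)$, so $-K(x_1)$ is a \emph{simple} eigenvalue of $R|_{x_1}$. This is precisely the hypothesis of Lemma \ref{le:double_eig_K}, which (in the forward direction) then tells us that $(\hat M,\hat g)$ has constant curvature $-K(x_1) = \hat\kappa$ on $\pi_{\mc{O}_{\RDist}(q_0),\hat M}(\pi_{\mc{O}_{\RDist}(q_0),M}^{-1}(x_1))$ — consistent so far, no contradiction yet. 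To close the loop I would instead run Lemma \ref{le:double_eig_K} with the roles of $M$ and $\hat M$ interchanged: the fact that $-\hat K(\hat x_1) = -\hat\kappa$ is a triple (non-simple) eigenvalue of $\hat R|_{\hat x_1}$ must be reconciled with the structure on the $M$-side. Here the symmetric statement of Lemma \ref{le:double_eig_K} says: if the $-\hat K$-eigenspace at $\hat x_1$ has multiplicity $1$ then $(M,g)$ has constant curvature; contrapositively, since it has multiplicity $3$, this gives no constraint, so I need the finer computation. The real mechanism is that the relation $K_i^\Rol(q_1) = K_i(x_1) - K(x_1)$ combined with \eqref{eq:Rol2:good_basis-hatR} forces $\hat R$ to be \emph{simultaneously} diagonal in the frame $A_1 X_{A_1}, A_1 Y_{A_1}, A_1 Z_{A_1}$ with eigenvalues $-K(x_1), -K(x_1)+K_1^\Rol(q_1), -K(x_1)+K_2^\Rol(q_1)$ which equal $-K(x_1), -K_1(x_1), -K_2(x_1)$ respectively; but constant curvature $\hat\kappa$ demands these all be $\hat\kappa = -K(x_1)$, forcing $K_1(x_1) = K(x_1) = K_2(x_1)$, contradicting simplicity of $-K(x_1)$. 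Hence item (1) holds.

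For item (2): suppose $q_1 = (x_1,\hat x_1;A_1)\in O_2$ with $-K(x_1)$ a simple eigenvalue of $R|_{x_1}$. Then Lemma \ref{le:double_eig_K} directly gives that $(\hat M,\hat g)$ has constant curvature on a nonempty open subset of $\hat M$ containing points of $\pi_{\mc{O}_{\RDist}(q_0),\hat M}(O_2)$; in particular there is a point $\hat x_1' \in \hat M$ and $q_1' = (x_1', \hat x_1'; A_1')\in O_2$ at which $(\hat M,\hat g)$ has constant curvature, contradicting item (1) just established. Therefore no such $q_1$ exists, proving (2). The interchanged statement follows by applying the diffeomorphism $\iota$ of Proposition \ref{pr:rol_inverse}, which carries $O_2 \subset \mc{O}_{\RDist}(q_0)$ to the analogous locus in $\mc{O}_{\widehat\RDist}(\iota(q_0))$ and swaps the roles of the two manifolds, so items (1) and (2) transcribe verbatim. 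The main delicate point throughout is keeping careful track, in the frame supplied by Proposition \ref{pr:Rol2:good_basis}, of which eigenvalue relations come from $R$, which from $\widetilde\Rol$, and which from $\hat R = R - \widetilde\Rol$ via \eqref{eq:Rol2:good_basis-hatR}, together with honestly using the rank-2 nondegeneracy \eqref{eq:rank2}; I expect the bookkeeping in deriving $\alpha(q_1) = 0$ and the eigenvalue identities to be where all the work sits, while the logical skeleton (reduce to a simple eigenvalue, invoke Lemma \ref{le:double_eig_K}, contradict item (1)) is short.
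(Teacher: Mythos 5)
Your reduction at the start of item (1) is fine: from \eqref{eq:Rol2:good_basis-hatR} and $\hat{R}|_{\hat{x}_1}=-\hat{K}\,\id$ you correctly get $K(x_1)=\hat{K}$, $\alpha(q_1)=0$ and $K_i^{\Rol}(q_1)=K_i(x_1)-K(x_1)\neq 0$ for $i=1,2$, hence that $-K(x_1)$ is a simple eigenvalue of $R|_{x_1}$. But the step you call the ``real mechanism'' is wrong, and it is wrong in a way that cannot be repaired at the pointwise level. You write that $\hat{R}$ is diagonal in the frame $A_1X_{A_1},A_1Y_{A_1},A_1Z_{A_1}$ with eigenvalues $-K(x_1),\,-K(x_1)+K_1^{\Rol}(q_1),\,-K(x_1)+K_2^{\Rol}(q_1)$ ``which equal $-K(x_1),-K_1(x_1),-K_2(x_1)$''. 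The correct reading of \eqref{eq:Rol2:good_basis-hatR} is that the eigenvalues are $-K(x_1)$, $-K_1(x_1)+K_1^{\Rol}(q_1)$, $-K_2(x_1)+K_2^{\Rol}(q_1)$, and the constant-curvature condition forces these to equal $-K(x_1)$, i.e.\ exactly $K_i^{\Rol}(q_1)=K_i(x_1)-K(x_1)$ — which is the relation you already had, not a new constraint. There is no algebraic contradiction: a point $q_1\in O_2$ with $\hat{R}|_{\hat{x}_1}$ of constant curvature is perfectly consistent with $\rank\widetilde{\Rol}_{q_1}=2$ and with $-K(x_1)$ being simple. Your argument never uses the standing hypothesis of this subsection that $\mc{O}_{\RDist}(q_0)$ is \emph{not open} in $Q$, and that hypothesis is indispensable (rolling a generic $3$-manifold against a round sphere gives exactly such configurations with an open orbit), so no purely pointwise curvature computation can prove (1).

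The missing idea is the controllability input. In the paper's proof one takes a frame $E_1,E_2,E_3$ near $x_1$ diagonalizing $R$, notes that $\nu(\Rol(\star E_1)(\cdot))$ and $\nu(\Rol(\star E_2)(\cdot))$ are vector fields tangent to the orbit on $O_2\cap\pi_{Q,M}^{-1}(U)$, and computes their Lie bracket at $q_1$ via Proposition \ref{pr:NS_comm_VV}: because $\hat{R}|_{\hat{x}_1}$ has constant curvature this bracket equals $(-K_1(x_1)+\hat{K})(-K_2(x_1)+\hat{K})\,\nu(A\star E_3)|_{q_1}$, and the two factors are nonzero by the rank-$2$ condition. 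Thus three linearly independent $\pi_Q$-vertical vectors are tangent to $\mc{O}_{\RDist}(q_0)$ near $q_1$, so $V|_q(\pi_Q)\subset T|_q\mc{O}_{\RDist}(q_0)$ there, and Corollary \ref{cor:vert} forces the orbit to be open — contradicting the non-openness assumption. Your item (2) and the interchange remark are fine as reductions (they match the paper: simple eigenvalue plus Lemma \ref{le:double_eig_K} gives constant curvature of $(\hat{M},\hat{g})$ near some $\hat{x}_1'$ with a point of $O_2$ above it, contradicting (1), and Proposition \ref{pr:rol_inverse} handles the symmetry), but they stand or fall with (1), so the proposal as written has a genuine gap there.
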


\begin{proof}
(1) Suppose $(\hat{M},\hat{g})$ has a constant curvature $\hat{K}$ at $\hat{x}_1$.
Let $E_1,E_2,E_3$ be an oriented orthonormal frame on a neighbourhood $U$ of $x_1$
such that $\star E_1|_{x_1},\star E_2|_{x_1},\star E_3|_{x_1}$ are
eigenvectors of $R$ at $x_1$
with eigenvalues $-K_1(x_1)$, $-K_2(x_1)$, $-K(x_1)$, respectively,
where these eigenvalues are as in Proposition \ref{pr:Rol2:good_basis}.
As we have noticed, $\hat{K}=K(x_1)$.

Because $\hat{R}|_{\hat{x}_1}=-\hat{K}\id_{\wedge^2 T|_{\hat{x}_1}\hat{M}}$,
one has
\[
\widetilde{\Rol}_{q_1}(\star E_1)=&(-K_1(x_1)+\hat{K}) \star E_1|_{x_1} \\
\widetilde{\Rol}_{q_1}(\star E_2)=&(-K_2(x_1)+\hat{K}) \star E_2|_{x_1} \\
\widetilde{\Rol}_{q_1}(\star E_3)=&(-K(x_1)+\hat{K})\star E_3|_{x_1}=0.
\]
Since $\rank\widetilde{\Rol}_{q_1}=2$, we have $-K_1(x_1)+\hat{K}\neq 0$,
$-K_2(x_1)+\hat{K}\neq 0$.

Because the vector fields $\nu(\Rol(\star E_1)(\cdot))$, $\nu(\Rol(\star E_2)(\cdot))$
are tangent to the orbit $\mc{O}_{\RDist}(q_0)$
on $O_2':=O_2\cap \pi_{Q,M}^{-1}(U)$, so is their Lie bracket. According to Proposition \ref{pr:NS_comm_VV}, the value of this bracket at $q_1$ is equal to
\[
[\nu(\Rol(\star E_1)(\cdot)),\nu(\Rol(\star E_2)(\cdot))]|_{q_1}
=(-K_1(x_1)+\hat{K})(-K_2(x_1)+\hat{K})\nu(A\star E_3)|_{q_1}.
\]
Hence $\nu(\Rol(\star E_1)(\cdot)$, $\nu(\Rol(\star E_2)(\cdot)$, $[\nu(\Rol(\star E_1)(\cdot)),\nu(\Rol(\star E_2)(\cdot))]$
are tangent to $\mc{O}_{\RDist}(q_0)$ and since they are linearly independent at $q_1$,
hence they are linearly independent on an open neighbourhood of $q_1$ in $\mc{O}_{\RDist}(q_0)$.
Therefore, from Corollary \ref{cor:vert} it follows that the orbit $\mc{O}_{\RDist}(q_0)$
is open in $Q$, which is a contradiction.

(2) Suppose $-K(x_1)$ is a single eigenvector of $R|_{x_1}$,
where $q_1=(x_1,\hat{x}_1;A_1)\in O_2$.
Then by Lemma \ref{le:double_eig_K}, the space $(\hat{M},\hat{g})$
would have a constant curvature in an open set which is a neighbourhood of $\hat{x}_1$.
By the case (1), this leads to a contradiction.
\end{proof}

By the last two lemmas, we may thus assume that
for every $q=(x,\hat{x};A)\in O_2$ the common eigenvalue $-K(x)=-\hat{K}(\hat{x})$
of $R|_x$, $\hat{R}|_{\hat{x}}$ has multiplicity two. 
It has the following consequence.

\begin{corollary}\label{cor:Rol2:loc_smooth}
The assignments $q\mapsto X_A,Y_A,Z_A$ and $q\mapsto K_1^\Rol(q),K_2^\Rol(q),\alpha(q)$
as in Proposition \ref{pr:Rol2:good_basis} can be made locally smoothly on $O_2$.
\end{corollary}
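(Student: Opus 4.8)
The plan is to reconstruct, for $q=(x,\hat x;A)\in O_2$, the frame $X_A,Y_A,Z_A$ and the scalars $K_1^{\Rol}(q),K_2^{\Rol}(q),\alpha(q)$ of Proposition~\ref{pr:Rol2:good_basis} \emph{intrinsically} from the pair $(R|_x,\widetilde{\Rol}_q)$, and then to observe that each reconstruction step produces objects depending smoothly on $q$, possibly after a harmless locally constant sign/labelling choice. First I would record that $q\mapsto\widetilde{\Rol}_q=A^{\ol T}\Rol_q$ is a smooth field of symmetric endomorphisms of $\pi_{Q,M}^{*}(\bigwedge^{2}TM)$ over $Q$ (being built from the smooth curvature tensors $R,\hat R$ and the tautological $A$), and that on $O_2$ it has constant rank $2$; hence $\ker\widetilde{\Rol}_q$ is a smooth line subbundle over $O_2$. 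By Lemma~\ref{le:Rol2:xi_zero}/Proposition~\ref{pr:Rol2:good_basis}, for each $q\in O_2$ this kernel is spanned by $X_A\wedge Y_A=\star Z_A$, which is an $R|_x$-eigenvector with eigenvalue $-K(x)$; consequently $\ker\widetilde{\Rol}_q$ and its $g$-orthogonal complement $(\ker\widetilde{\Rol}_q)^{\perp}\subset\bigwedge^{2}T|_xM$ are both $R|_x$-invariant, and $(\ker\widetilde{\Rol}_q)^{\perp}$ is again a smooth rank-$2$ subbundle over $O_2$.

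Next I would analyse $R|_x$ restricted to $(\ker\widetilde{\Rol}_q)^{\perp}$: it is symmetric with eigenvalues $-K_1(x),-K_2(x)$. The key point — and the step I expect to be the crux — is that these two eigenvalues are \emph{distinct} at every point of $O_2$. Indeed, if they coincided they would either both equal $-K(x)$, forcing $R|_x=-K(x)\,\id$ and so $(M,g)$ to have constant curvature at $x$, in contradiction with Lemma~\ref{le:Rol2:no_simple_eig}(1) applied with the roles of $M$ and $\hat M$ exchanged; or they would equal a common value $\lambda\neq -K(x)$, forcing $-K(x)$ to be a simple eigenvalue of $R|_x$, in contradiction with Lemma~\ref{le:Rol2:no_simple_eig}(2). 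By the same dichotomy one of the two (now distinct) eigenvalues must equal $-K(x)$, since otherwise $-K(x)$ would again be simple. Being simple on $O_2$, the two eigenline subbundles $L_1,L_2\subset(\ker\widetilde{\Rol}_q)^{\perp}$ depend smoothly on $q\in O_2$ (spectral projection for a simple eigenvalue); I would label $L_1$ as the one with eigenvalue $-K(x)$ and $L_2$ as the one with eigenvalue $\neq -K(x)$, a smooth (indeed locally constant) labelling since $-K(\cdot)$ is, by Proposition~\ref{pr:Rol2:good_basis}, a fixed function on $M$.

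Finally I would assemble the frame: set the line subbundles $\ell_X:=\star(L_1)$ and $\ell_Z:=\star(\ker\widetilde{\Rol}_q)$ of $\pi_{Q,M}^{*}(TM)$ over $O_2$, which are smooth and mutually $g$-orthogonal; locally trivialising them yields smooth unit sections $X_A$ of $\ell_X$ and $Z_A$ of $\ell_Z$ (defined up to sign on a connected neighbourhood, which does not affect smoothness), and then $Y_A:=\star(Z_A\wedge X_A)$ is smooth and $(X_A,Y_A,Z_A)$ is a positively oriented $g$-orthonormal frame with $X_A\wedge Y_A=\star Z_A\in\ker\widetilde{\Rol}_q$ and $\star X_A\in L_1$, $\star Y_A\in L_2$, $\star Z_A$ all $R|_x$-eigenvectors — exactly the properties required in Proposition~\ref{pr:Rol2:good_basis}. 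With this locally smooth choice of $X_A,Y_A,Z_A$, the scalars $K_1^{\Rol}(q),K_2^{\Rol}(q),\alpha(q)$ are, by the explicit matrix expressions for $\widetilde{\Rol}_q$ in Proposition~\ref{pr:Rol2:good_basis}, mere contractions of the smooth field $\widetilde{\Rol}_q$ against $X_A,Y_A,Z_A$ (e.g.\ $\alpha(q)=-g(\star\widetilde{\Rol}_q(Y_A\wedge Z_A),Y_A)$, $K_1^{\Rol}(q)=-g(\star\widetilde{\Rol}_q(Y_A\wedge Z_A),X_A)$), hence smooth in $q\in O_2$, which proves the claim. The only genuine subtlety is the one highlighted above: without Lemma~\ref{le:Rol2:no_simple_eig} the eigenvalues on $(\ker\widetilde{\Rol}_q)^{\perp}$ could collide and the directions $X_A,Y_A$ would fail to vary smoothly.
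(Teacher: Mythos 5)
Your proof is correct and follows essentially the same route as the paper: both arguments rest on the smooth rank-one kernel bundle of $\widetilde{\Rol}_q$ (constant rank $2$ on $O_2$) together with the simplicity of the eigenvalue $-K_2$ of $R$ secured by Lemmas \ref{le:double_eig_K} and \ref{le:Rol2:no_simple_eig}, completing the frame with the Hodge star and noting that the scalars are smooth contractions of $\widetilde{\Rol}_q$ against the frame. The only cosmetic difference is that the paper obtains $Y_A$ directly from the simple $-K_2$-eigenline of $R|_x$ and sets $X_A=\star(Y_A\wedge Z_A)$, whereas you obtain $X_A$ from the $-K$-eigenline inside $(\ker\widetilde{\Rol}_q)^{\perp}$ and set $Y_A=\star(Z_A\wedge X_A)$, which uses the same spectral input in a different order.
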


\begin{proof}
Let $q_1=(x_1,\hat{x}_1;A_1)\in O_2$. 
By Lemma \ref{le:no_simple_eig} there are open neighbourhoods $U\ni x_1$ and $\hat{U}\ni \hat{x}_1$
such that the eigenvalue $-K_2(x)$ of $R|_x$ and $-\hat{K}_2(\hat{x})$ of $\hat{R}|_{\hat{x}}$ are both
simple. Therefore the assignment $q\mapsto Y_A$ can be made locally smoothly on $O_2$.
Moreover, recall that the assignment $q\mapsto Z_A$ can be made locally smoothly
since it corresponds to the 1-dimensional kernel of $\widetilde{\Rol}_q$
and $X_A=\star (Y_A\wedge Z_A)$.
\end{proof}

\begin{lemma}\label{le:Rol2:special_3D}
For every $q_1=(x_1,\hat{x}_1;A_1)\in O_2$,
there is are open neighbourhoods $U,\hat{U}$ of $x_1,\hat{x}_1$
and oriented orthonormal frames $E_1,E_2,E_3$ on $M$, $\hat{E}_1,\hat{E}_2,\hat{E}_3$ on $\hat{M}$
with respect to which the connections tables are of the form
\[
\Gamma=\qmatrix{
\Gamma^1_{(2,3)} &                                0                                & -\Gamma^1_{(1,2)} \cr
\Gamma^1_{(3,1)} & \Gamma^2_{(3,1)} 				& \Gamma^3_{(3,1)} \cr
\Gamma^1_{(1,2)} &                                0                               & \Gamma^1_{(2,3)} \cr
},\quad
\hat{\Gamma}=\qmatrix{
\hat{\Gamma}^1_{(2,3)} &                                0                                & -\hat{\Gamma}^1_{(1,2)} \cr
\hat{\Gamma}^1_{(3,1)} & \hat{\Gamma}^2_{(3,1)} 				& \hat{\Gamma}^3_{(3,1)} \cr
\hat{\Gamma}^1_{(1,2)} &                                0                               & \hat{\Gamma}^1_{(2,3)} \cr
},
\]
and
\[
& V(\Gamma^1_{(2,3)})=0,\quad V(\Gamma^1_{(1,2)})=0,\quad \forall V\in E_2|_x^\perp,\quad x\in U, \\
& \hat{V}(\hat{\Gamma}^1_{(2,3)})=0,\quad \hat{V}(\hat{\Gamma}^1_{(1,2)})=0,\quad \forall \hat{V}\in \hat{E}_2|_{\hat{x}}^\perp,\quad \hat{x}\in \hat{U}.
\]
Moreover, $\star E_1,\star E_2,\star E_3$ are eigenvectors of $R$ with eigenvalues $-K,-K_2(\cdot),-K$
on $U$
and similarly $\hat{\star} \hat{E}_1,\hat{\star} \hat{E}_2,\hat{\star} \hat{E}_3$ are eigenvectors of $\hat{R}$ with eigenvalues $-K,-\hat{K}_2(\cdot),-K$
on $\hat{U}$,
where $K\in\R$ is constant.
\end{lemma}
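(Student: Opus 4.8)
The plan is to apply Proposition~\ref{pr:Rol2:good_basis} and Proposition~\ref{pr:3D-1} simultaneously to the two manifolds, using the symmetry of the rolling problem under the swap $(M,g)\leftrightarrow(\hat M,\hat g)$, $q=(x,\hat x;A)\mapsto\hat q=(\hat x,x;A^{\ol T})$ (which sends $\RDist$-orbits to $\widehat{\RDist}$-orbits). The key point secured by Lemmas~\ref{le:Rol2:xi_zero}, \ref{le:double_eig_K}, \ref{le:Rol2:no_simple_eig} and Corollary~\ref{cor:Rol2:loc_smooth} is that near any $q_1=(x_1,\hat x_1;A_1)\in O_2$ one can choose, locally smoothly, oriented orthonormal frames $X_A,Y_A,Z_A$ of $T|_xM$ diagonalizing $R|_x$ and frames $\hat X_A,\hat Y_A,\hat Z_A$ of $T|_{\hat x}\hat M$ diagonalizing $\hat R|_{\hat x}$, with $AZ_A=\hat Z_A$ (since $\widetilde{\Rol}_q(X_A\wedge Y_A)=0$ forces $A(X_A\wedge Y_A)=\hat X_A\wedge\hat Y_A$ and hence $A\star(X_A\wedge Y_A)=\hat\star(\hat X_A\wedge\hat Y_A)$), and with the common eigenvalue $-K$ attached to both $\star Z_A$ and $\hat\star\hat Z_A$ of multiplicity exactly two.

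\textbf{Step 1: build the frames and verify the hypotheses of Proposition~\ref{pr:3D-1}.} Fix $q_1\in O_2$. By Corollary~\ref{cor:Rol2:loc_smooth}, choose a smooth local oriented orthonormal frame $E_1,E_2,E_3$ on a neighbourhood $U$ of $x_1$ with $\star E_i$ eigenvectors of $R$; by Lemma~\ref{le:Rol2:no_simple_eig}(2) the $-K$-eigenspace has dimension two, so we may arrange $E_1=X_A$, $E_2=Y_A$, $E_3=Z_A$ at each point over $U$, giving eigenvalues $-K,-K_2(\cdot),-K$ (the labelling of which pair is the ``double'' one is a smooth choice on $O_2$). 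The double eigenvalue is automatically \emph{locally constant}: this is exactly the kind of computation already done in Lemma~\ref{le:double_eig_K}, or alternatively it follows from the second Bianchi identity combined with $\widetilde{\Rol}_q(\star E_3)=0$ and the algebraic constraints $-K_1+K_1^\Rol=-K$, $-K_2+K_2^\Rol=-K$ that appear there; I would record it as a short lemma-free remark. Next I apply Proposition~\ref{pr:3D-1} with this frame: its hypothesis is that $\nu(A\star E_2)|_q$ is tangent to $\mc{O}_{\RDist}(q_0)$ on an open subset $O$ of the orbit with $\pi_{Q,M}(O)=U$. But from Proposition~\ref{pr:Rol2:good_basis} and Lemma~\ref{le:Rol2:bundle} we have $\nu(A\star X_A)|_q,\nu(A\star Y_A)|_q\in V|_q(\pi_{\mc O_{\RDist}(q_0)})$ for all $q\in O_2$, and since $E_2|_x=Y_A$ this gives exactly the tangency of $\nu(A\star E_2)|_q$. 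Hence Proposition~\ref{pr:3D-1} yields the claimed shape of $\Gamma$ and the vanishing conditions $V(\Gamma^1_{(2,3)})=V(\Gamma^1_{(1,2)})=0$ for $V\in E_2^\perp$.

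\textbf{Step 2: repeat on $\hat M$ via the swap.} Applying the diffeomorphism $\iota:Q(M,\hat M)\to Q(\hat M,M)$ of Proposition~\ref{pr:rol_inverse} sends $\mc O_{\RDist}(q_0)$ to $\mc O_{\widehat{\RDist}}(\iota(q_0))$, which is again a non-open orbit in $Q(\hat M,M)$, and $O_2$ maps to the corresponding locus $\hat O_2$ (rank of the rolling curvature is symmetric, Corollary~\ref{cor:vcomm}). Over $\hat U:=\pi_{Q,\hat M}(O)$ near $\hat x_1$ I then run Step~1 verbatim with $(\hat M,\hat g)$ in place of $(M,g)$, producing a frame $\hat E_1,\hat E_2,\hat E_3$ with $\hat\star\hat E_i$ eigenvectors of $\hat R$ (eigenvalues $-K,-\hat K_2(\cdot),-K$ with the \emph{same} constant $K$, since $K(x)=\hat K(\hat x)$ on $O_2$) and with $\hat\Gamma$ of the stated form and $\hat V(\hat\Gamma^1_{(2,3)})=\hat V(\hat\Gamma^1_{(1,2)})=0$ for $\hat V\in\hat E_2^\perp$. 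The tangency input needed on the $\hat M$ side, namely $\nu(A^{\ol T}\hat\star\hat E_2)|_{\hat q}$ tangent to $\mc O_{\widehat{\RDist}}(\iota(q_0))$, is the $\iota$-image of the fact that $\nu(A\star\hat Y_A)|_q$ — equivalently $\nu(\Rol_q$-combinations of $\hat\star\hat X_A,\hat\star\hat Y_A)$ — is tangent, which is the swapped version of Lemma~\ref{le:Rol2:bundle}. Shrinking $U,\hat U$ so that all smooth selections are defined on the relevant neighbourhoods, we obtain both frames with all the asserted properties; this finishes the proof.

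\textbf{Main obstacle.} The only delicate point is the compatibility bookkeeping between the two sides: one must be sure that the \emph{same} neighbourhood $O$ of $q_1$ in the orbit projects onto both $U$ (on $M$) and $\hat U$ (on $\hat M$) via submersions — this uses that $\pi_Q|_{O_2}$ is a submersion (Lemma~\ref{le:Rol2:bundle}) — and that the common eigenvalue $-K$ is genuinely a single constant on the connected pieces in play (rather than two a priori different locally constant functions on $U$ and $\hat U$), which follows from $K(x)=\hat K(\hat x)$ holding pointwise on $O_2$ together with the submersion property making $\pi_{Q,M},\pi_{Q,\hat M}$ open maps. Everything else is a direct transcription of results already proved.
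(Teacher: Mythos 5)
Your proposal is correct and follows essentially the same route as the paper: multiplicity two of the common eigenvalue $-K$ forces $E_2|_x=\pm Y_A$, so tangency of $\nu(A\star Y_A)|_q$ (Lemma \ref{le:Rol2:bundle}) gives tangency of $\nu(A\star E_2)|_q$, Proposition \ref{pr:3D-1} then yields the connection table, and the $\hat M$-side follows by the swap symmetry; the constancy of $K$ is obtained, as you note at the end, from $K(x)=\hat K(\hat x)$ on the connected set $\pi_Q(O_2)\supset U\times\hat U$. The only small imprecision is the claim that one can arrange $E_1=X_A$, $E_3=Z_A$ pointwise — these depend on $A$ and not only on $x$ — but this is immaterial, since only $E_2=\pm Y_A$ is needed and any oriented orthonormal completion of $E_2$ automatically diagonalizes $R$ on the two-dimensional $-K$-eigenspace.
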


\begin{proof}
As we just noticed, for every $q=(x,\hat{x};A)\in O_2$,
the common eigenvalue $-K(x)=-\hat{K}(\hat{x})$
of $R|_x$ and $\hat{R}|_{\hat{x}}$ has multiplicity equal to two.

Fix $q_1=(x_1,\hat{x}_1;A_1)\in O_2$
and let $E_1,E_2,E_3$ (resp. $\hat{E}_1,\hat{E}_2,\hat{E}_3$) be an orthonormal oriented frame of $(M,g)$
defined on an open set $U\ni x_1$ (resp. $\hat{U}\ni \hat{x}_1$) such that $U\times\hat{U}\subset \pi_{Q}(O_2)$
and that $\star E_1,\star E_2,\star E_3$ (resp. $\hat{\star} \hat{E}_1,\hat{\star} \hat{E}_2,\hat{\star} \hat{E}_3$) are eigenvectors with eigenvalues $-K_1(\cdot),-K_2(\cdot),-K(\cdot)$
(resp. $-\hat{K}_1(\cdot),-\hat{K}_2(\cdot),-\hat{K}_3(\cdot)$)
on $U$ (resp. $\hat{U}$) as given by Proposition \ref{pr:Rol2:good_basis}.
Since $-K$ is double on $U$
(resp. $-\hat{K}$ is double on $\hat{U}$), we assume that $K_1(\cdot)=K(\cdot)\neq K_2(\cdot)$ everywhere on $U$,
(resp.  $\hat{K}_1(\cdot)=\hat{K}(\cdot)\neq \hat{K}_2(\cdot)$ everywhere on $\hat{U}$)
without loss of generality.
Recall that $K(x)=\hat{K}(\hat{x})$ for all $q=(x,\hat{x};A)\in O_2$ by Proposition \ref{pr:Rol2:good_basis}
(and the remark that follows it)
and hence for all $x\in U$, $\hat{x}\in \hat{U}$, $K(x)=\hat{K}(\hat{x})$.
Taking $U,\hat{U}$ to be connected, this immediately imples that both
$K$ and $\hat{K}$ are constant functions on $U$ and $\hat{U}$.
We denote the common constant value simply by $K$.

Let $X_A,Y_A,Z_A$ be chosen as in Proposition \ref{pr:Rol2:good_basis}
for every $q=(x,\hat{x};A)\in O_2$.
Then as $\star Y_A$ is a unit eigenvector of $R|_x$ corresponding to the single eigenvalue $-K_2(x)$,
we must have $E_2|_x=\pm Y_A$
and since $\nu(A\star Y_A)|_q$ is tangent to the orbit $\mc{O}_{\RDist}(q_0)$,
by Lemma \ref{le:Rol2:bundle},
it follows that for every $q=(x,\hat{x};A)\in O_2$,
the vector $\nu(A\star E_2|_x)|_q$ is tangent to $\mc{O}_{\RDist}(q_0)$.
This with Proposition \ref{pr:3D-1} proves the claim for $(M,g)$. Symmetrically (working in $Q(\hat{M},M)$)
the claim also holds for $(\hat{M},\hat{g})$. The proof is complete.
\end{proof}

We will now aim at proving that, using the notations of the previous lemma, $\Gamma^1_{(2,3)}(x)=\hat{\Gamma}^1_{(2,3)}(\hat{x})$
for all $(x,\hat{x})\in \pi_Q(O_2')$,
where $O_2'=\pi_{Q}^{-1}(U\times\hat{U})\cap O_2$
and $U,\hat{U}$ are the domains of definition of
orthonormal frames $E_1,E_2,E_3$ and $\hat{E}_1,\hat{E}_2,\hat{E}_3$
as given by Lemma \ref{le:Rol2:special_3D} above.
That will then allow us, as will be seen, to conclude
the study of the case where orbit is not open and rank of $\Rol$ is equal to $2$.

To this end, we define $\theta:O_2'\to\R$ (restricting to smaller sets $U$, $\hat{U}$ if necessary) to be a smooth function
such that for all $q=(x,\hat{x};A)\in O_2'$,
\[
X_A=&\cos(\theta(q))E_1+\sin(\theta(q))E_3 \\
Z_A=&-\sin(\theta(q))E_1+\cos(\theta(q))E_3
\]
where $X_A,Z_A$ (and also $Y_A$) are chosen using Proposition \ref{pr:Rol2:good_basis}.
Indeed, this is well defined since $X_A,Z_A$ lie in the plane $Y_A^\perp=E_2|_x^\perp$
as do also $E_1|_x,E_3|_x$, for all $q=(x,\hat{x};A)\in O_2'$.

To simplify the notation, we write $c_\theta:=\cos(\theta(q))$ and $s_\theta:=\sin(\theta(q))$
as well as $\Gamma^i_{(j,k)}=\Gamma^i_{(j,k)}(x)$,
when there is no room for confusion.
We will be always working on $O_2'$ if not mentioned otherwise.
Moreover, it is convenient to denote the vector field $E_2$ of $M$ by $Y$
in the computations that follow (since $E_2|_x$ is parallel to $Y_A$ for all $q\in O_2'$,
this notation is justified).
We will do computations on the "side of $M$" but the results are, by symmetry,
always valid for $\hat{M}$ as well.

We will make use of the following formulas which are easily verified (see Lemma \ref{le:rules_of_computation}),
\begin{align}\label{eq:Rol2:LR_X_A}
\LRD(X_A)|_q X_{(\cdot)}&=(\LRD(X_A)|_q\theta-c_{\theta}\Gamma^1_{(3,1)}-s_\theta \Gamma^3_{(3,1)})Z_A+\Gamma^1_{(1,2)}Y \nonumber \\
\LRD(Y)|_q X_{(\cdot)}&=(\LRD(Y)|_q\theta-\Gamma^2_{(3,1)})Z_A \nonumber \\
\LRD(Z_A)|_q X_{(\cdot)}&=(\LRD(Z_A)|_q\theta+s_{\theta}\Gamma^1_{(3,1)}-c_\theta \Gamma^3_{(3,1)})Z_A+\Gamma^1_{(2,3)}Y \nonumber \\
\LRD(X_A)|_qY&=-\Gamma^1_{(1,2)}X_A+\Gamma^1_{(2,3)}Z_A \nonumber \\
\LRD(Y)|_qY&=0 \nonumber \\
\LRD(Z_A)|_qY&= -\Gamma^1_{(2,3)}X_A-\Gamma^1_{(1,2)}Z_A \nonumber \\
\LRD(X_A)|_q Z_{(\cdot)}&=(-\LRD(X_A)|_q\theta+c_\theta\Gamma^1_{(3,1)}+s_\theta\Gamma^3_{(3,1)})X_A-\Gamma^1_{(2,3)} Y \nonumber \\
\LRD(Y)|_q Z_{(\cdot)}&=(-\LRD(Y)|_q\theta+\Gamma^2_{(3,1)})X_A \nonumber \\
\LRD(Z_A)|_q Z_{(\cdot)}&=(-\LRD(Z_A)|_q\theta-s_\theta\Gamma^1_{(3,1)}+c_\theta\Gamma^3_{(3,1)})X_A+\Gamma^1_{(1,2)}Y.
\end{align}

\begin{remark}
Notice that $\nu(A\star Z_A)|_q$ is not tangent to the orbit $\mc{O}_{\RDist}(q_0)$
for any $q=(x,\hat{x};A)\in O_2'$. Indeed, otherwise there 
would be an open neighbourhood $O\subset O_2'$ of $q$
such that for all $q'=(x',\hat{x}';A')$
the vectors $\nu(A'\star X_{A'})|_{q'}, \nu(A'\star Y)|_{q'}, \nu(A'\star Z_{A'})|_{q'}$
would span $V|_{q'}(\pi_Q)$ while being tangent to $T|_{q'} \mc{O}_{\RDist}(q_0)$,
which implies $V|_{q'}(\pi_Q)\subset T|_{q'}\mc{O}_{\RDist}(q_0)$.
Then Corollary \ref{cor:vert} would imply that $\mc{O}_{\RDist}(q_0)$
is open, which is not the case.
We will use this fact frequently in what follows.
\end{remark}

Taking $U$, $\hat{U}$ smaller if necessary,
we may also assume that $\theta$ is actually defined on only on $O_2'$
but on an open neighbourhood $\tilde{O}_2'$ of $O_2$ in $Q$.
We will make this technical assumption
to be able to write e.g. $\nu(A\star Z_A)|_q\theta$ whenever needed.

\begin{lemma}\label{le:Rol2:deriv_theta}
For every $q=(x,\hat{x};A)\in O_2'$ we have
\[
\nu(A\star Y)|_q\theta&=1, \\ 
\LRD(X_A)|_q\theta&=c_\theta \Gamma^1_{(3,1)}+s_\theta \Gamma^3_{(3,1)}, \\
\LRD(Y)|_q\theta&=\Gamma^2_{(3,1)}-\Gamma^1_{(2,3)}.
\]
Moreover, if one defines for $q=(x,\hat{x};A)\in O_2'$,
\[
F_X|_q:=&\LNSD(X_A)|_q-\Gamma^1_{(1,2)}\nu(A\star Z_A)|_q \\
F_Z|_q:=&\LNSD(Z_A)|_q-\Gamma^1_{(2,3)}\nu(A\star Z_A)|_q,
\]
then $F_X,F_Z$ are smooth vector fields on $O_2'$ tangent to
the orbit $\mc{O}_{\RDist}(q_0)$.
\end{lemma}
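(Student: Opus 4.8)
\textbf{Proof strategy for Lemma \ref{le:Rol2:deriv_theta}.}

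The plan is to compute the relevant derivatives of $\theta$ by applying the known bracket formulas (Propositions \ref{pr:NS_comm_HH}, \ref{pr:NS_comm_HV}, \ref{pr:NS_comm_VV} and the rules of calculation in Lemma \ref{le:rules_of_computation}) to the defining relation $X_A=c_\theta E_1+s_\theta E_3$, $Z_A=-s_\theta E_1+c_\theta E_3$, and then read off constraints by using the fact that $\mc{O}_{\RDist}(q_0)$ is \emph{not} open, which forces certain $\pi_Q$-vertical vectors to be linearly dependent (via Corollary \ref{cor:vert}). Throughout I work on $\tilde O_2'$ where $\theta$ is extended, and restrict to $O_2'$ at the end.

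First I would get $\nu(A\star Y)|_q\theta=1$. Differentiate $X_A=c_\theta E_1+s_\theta E_3$ in the $\nu(A\star Y)|_q$-direction. Since $E_1,E_2=Y,E_3$ depend only on $x\in M$, the operator $\nu(A\star Y)|_q$ annihilates them (Lemma \ref{le:rules_of_computation}(vii) and the fact that $\nu(U)|_q$ kills functions pulled back from $M$), so $\nu(A\star Y)|_q X_{(\cdot)}=(\nu(A\star Y)|_q\theta)(-s_\theta E_1+c_\theta E_3)=(\nu(A\star Y)|_q\theta)Z_A$. On the other hand $X_{(\cdot)}:q'=(x',\hat x';A')\mapsto X_{A'}\in T|_{x'}M$ and one computes $\nu(A\star Y)|_q X_{(\cdot)}$ directly from Definition \ref{def:general_vert}: the curve $t\mapsto A+tA\star Y$ (staying in $Q$ since $A\star Y\in A\so(T|_xM)$) represents $\nu(A\star Y)|_q$, and since $X_{A'}$ is characterized by $\widetilde\Rol_{q'}(X_{A'}\wedge Y)=0$ with $\star X_{A'}$ the $(-K)$-eigendirection of $R|_{x}$ that is perpendicular to $\star Y$... but $R|_x$ is fixed, so $X_{A'}$ for $q'=(x,\hat x;A+tA\star Y)$ is obtained by the rotation that keeps $Y$ fixed and rotates the $X,Z$ plane; chasing through the $\so$-action shows $\tfrac{d}{dt}|_0 X_{A+tA\star Y}=Z_A$. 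Comparing gives $\nu(A\star Y)|_q\theta=1$. A cleaner route: apply Corollary \ref{cor:vcomm} or directly Proposition \ref{pr:NS_comm_VV} to $[\nu(A\star Y),\nu(A\star Z_A)]$ and $[\nu(A\star Y),\nu(A\star X_A)]$ and use that these must close up among $\nu(A\star X_A),\nu(A\star Y)$ (by Lemma \ref{le:Rol2:bundle}, Eq. (\ref{eq:Rol2:span_Vorbit})); the coefficient bookkeeping yields $\nu(A\star Y)|_q\theta=1$.

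Next, for $\LRD(X_A)|_q\theta$ and $\LRD(Y)|_q\theta$, I would apply $\LRD(X_A)|_q$ and $\LRD(Y)|_q$ to the relation $X_A=c_\theta E_1+s_\theta E_3$, using Lemma \ref{le:rules_of_computation}(i),(ii),(vii): e.g. $\LRD(Y)|_q X_{(\cdot)}=(\LRD(Y)|_q\theta)Z_A+c_\theta\nabla_Y E_1+s_\theta\nabla_Y E_3$, and expand $\nabla_Y E_1,\nabla_Y E_3$ in the frame using the connection table of Lemma \ref{le:Rol2:special_3D}, whose relevant entries are $\nabla_Y E_1=\Gamma^2_{(1,2)}\cdot(\ldots)$ — here I use $\Gamma^2_{(1,2)}=0$, $\Gamma^2_{(2,3)}=0$ so $\nabla_Y E_1=\Gamma^2_{(3,1)}E_3$ and $\nabla_Y E_3=-\Gamma^2_{(3,1)}E_1$. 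This matches the formula list (\ref{eq:Rol2:LR_X_A}), giving $\LRD(Y)|_q X_{(\cdot)}=(\LRD(Y)|_q\theta-\Gamma^2_{(3,1)})Z_A$. Independently, I compute $\LRD(Y)|_q X_{(\cdot)}$ using that $X_A=\star(Y_A\wedge Z_A)$ is (up to sign and the kernel condition) determined by $\widetilde\Rol_q$ and $\LRD(Y)$ is a horizontal lift; the ``no-spinning'' parallelity $\ol\nabla A=0$ along rolling curves together with $\star Z_A$ being the kernel of $\widetilde\Rol$ forces $\LRD(Y)|_q\theta-\Gamma^2_{(3,1)}=-\Gamma^1_{(2,3)}$, i.e. $\LRD(Y)|_q\theta=\Gamma^2_{(3,1)}-\Gamma^1_{(2,3)}$. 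The analogous computation in the $X_A$-direction, where $\nabla_{X_A}E_1,\nabla_{X_A}E_3$ bring in $\Gamma^1_{(3,1)},\Gamma^3_{(3,1)},\Gamma^1_{(1,2)},\Gamma^1_{(2,3)}$, gives $\LRD(X_A)|_q\theta=c_\theta\Gamma^1_{(3,1)}+s_\theta\Gamma^3_{(3,1)}$ after cancelling the $E_2$-components against the kernel condition. The main obstacle here is the second, ``independent'' evaluation: one must pin down how $X_A$ (equivalently $\theta$) varies along a rolling curve, which requires combining the eigenvector characterization of $\star X_A,\star Z_A$ under $\widetilde\Rol_q=A^{\ol T}\Rol_q$ with $\ol\nabla_{(\dot\gamma,A\dot\gamma)}A=0$ — essentially differentiating the spectral decomposition of $\widetilde\Rol$ along the rolling; I expect this bookkeeping (tracking which eigenvalue is simple, using Lemmas \ref{le:Rol2:no_simple_eig}, \ref{le:Rol2:special_3D}) to be the delicate part.

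Finally, for the last assertion: $F_X,F_Z$ are smooth on $O_2'$ since $\LNSD(X_A),\LNSD(Z_A),\nu(A\star Z_A)$ and the coefficients $\Gamma^1_{(1,2)},\Gamma^1_{(2,3)}$ are smooth there (using Corollary \ref{cor:Rol2:loc_smooth} for smoothness of $q\mapsto X_A,Z_A$). To see tangency to $\mc{O}_{\RDist}(q_0)$, I write $\LRD(X_A)|_q=\LNSD(X_A)|_q+\LNSD(AX_A)|_q$ wait—more precisely $\LRD(X_A)|_q=\LNSD(X_A,AX_A)|_q$, so $\LNSD(X_A,0)|_q=\LRD(X_A)|_q-\LNSD(0,AX_A)|_q$; the issue is that $\LNSD(0,AX_A)|_q$ need not be tangent. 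Instead the right approach is to exhibit $F_X|_q$ as a suitable Lie bracket of vectors already known to be tangent: from the bracket $[\LRD(\cdot),\nu((\cdot)\star Y)]$ computed via Proposition \ref{pr:NS_comm_HV} (as in the proof of Proposition \ref{pr:3D-1}, where the analogous vector fields $L_1,L_3$ arose), one gets that $\LNSD(X_A)|_q-\nu(A\star(\ldots))|_q$-type combinations are brackets of $\LRD(E_1),\nu(A\star E_2)$; re-expressing in the $X_A,Y,Z_A$ frame and using $\nu(A\star Y)|_q$ tangency (Lemma \ref{le:Rol2:bundle}) collapses the correction term to exactly $\Gamma^1_{(1,2)}\nu(A\star Z_A)|_q$ resp. $\Gamma^1_{(2,3)}\nu(A\star Z_A)|_q$, identifying $F_X,F_Z$ with vectors in $T\mc{O}_{\RDist}(q_0)$. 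I would close by remarking that $\nu(A\star Z_A)|_q$ itself is \emph{not} tangent (as in the Remark preceding the lemma), which is consistent and in fact is what prevents $F_X,F_Z$ from being trivially the horizontal lifts $\LRD(X_A),\LRD(Z_A)$.
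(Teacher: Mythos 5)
There is a genuine gap, and it sits in your derivation of $\LRD(X_A)|_q\theta$ and $\LRD(Y)|_q\theta$. The ``independent evaluation'' you propose (differentiating the spectral/kernel characterization of $X_A,Z_A$ along rolling curves using $\ol{\nabla}A=0$) cannot produce the stated formulas, because the kernel direction $Z_A$ is pinned down by the condition $\hat{g}(AZ_A,\hat{E}_2)=0$, i.e. by the geometry of $\hat{M}$ as well. If you carry out that computation, the hatted connection table enters and you obtain $\LRD(Y)|_q\theta=\Gamma^2_{(3,1)}-\hat{\Gamma}^1_{(2,3)}(\hat{x})$ and $\LRD(X_A)|_q\theta=c_\theta\Gamma^1_{(3,1)}+s_\theta\Gamma^3_{(3,1)}+\cot(\hat{\phi})\big(\hat{\Gamma}^1_{(2,3)}(\hat{x})-\Gamma^1_{(2,3)}(x)\big)$; to convert these into the claimed expressions you would need $\hat{\Gamma}^1_{(2,3)}(\hat{x})=\Gamma^1_{(2,3)}(x)$, which is precisely Lemma \ref{le:Rol2:gamma_123} — proved \emph{after} this lemma and \emph{using} it — so your route is circular (or simply stalls). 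The ingredient that is actually needed, and that the paper uses, is the non-openness of the orbit: one computes brackets of vector fields already known to be tangent to $\mc{O}_{\RDist}(q_0)$, namely $[\LRD(Y),\nu((\cdot)\star X_{(\cdot)})]+[\LRD(X_{(\cdot)}),\nu((\cdot)\star Y)]$ and $[\LRD(X_{(\cdot)}),\nu((\cdot)\star X_{(\cdot)})]$ via Propositions \ref{pr:NS_comm_HH} and \ref{pr:NS_comm_HV}, and then observes that their $\nu(A\star Z_A)$-components must vanish because $\nu(A\star Z_A)|_q$ is \emph{not} tangent to the non-open orbit; the vanishing coefficients are exactly $\LRD(Y)|_q\theta-\Gamma^2_{(3,1)}+\Gamma^1_{(2,3)}$ and $\LRD(X_A)|_q\theta-c_\theta\Gamma^1_{(3,1)}-s_\theta\Gamma^3_{(3,1)}$. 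You invoke this mechanism only as an aside for the first identity, but it is the second and third identities that genuinely require it.

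Two smaller points. Your primary argument for $\nu(A\star Y)|_q\theta=1$ is also circular: the claim $\tfrac{d}{dt}\big|_0X_{A+tA\star Y}=Z_A$ \emph{is} the statement to be proved, and the $(-K)$-eigenspace of $R|_x$ is two-dimensional, so the eigendirection condition on $R|_x$ alone does not determine $X_{A'}$ — the dependence on $A'$ enters only through $\hat{R}$. The paper instead differentiates $\hat{g}(AZ_A,\hat{E}_2)=0$ in the $\nu(A\star Y)$-direction and shows $\hat{g}(AX_A,\hat{E}_2)\neq 0$ (otherwise $\widetilde{\Rol}_q$ would have rank at most $1$, contradicting $q\in O_2$). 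Your fallback via the vertical bracket does work, but only for $[\nu((\cdot)\star Y),\nu((\cdot)\star X_{(\cdot)})]$, whose sole component is $(\nu(A\star Y)|_q\theta-1)\nu(A\star Z_A)|_q$; the bracket with $\nu((\cdot)\star Z_{(\cdot)})$ that you also mention is inadmissible, since that field is not tangent to the orbit. Finally, your treatment of the tangency of $F_X,F_Z$ through the brackets $[\LRD(X_{(\cdot)}),\nu((\cdot)\star Y)]$ and $[\LRD(Z_{(\cdot)}),\nu((\cdot)\star Y)]$ is essentially the paper's argument and is fine, but note it presupposes the value $\nu(A\star Y)|_q\theta=1$, so it only goes through once that identity has been established non-circularly.
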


\begin{proof}
We begin by showing that $\nu(A\star Y)|_q\theta=1$.
Indeed, we have for every $q=(x,\hat{x};A)\in O_2'$
that $\hat{g}(AZ_A,\hat{E}_2)=0$.
Differentiating this w.r.t. $\nu(A\star Y)|_q$ yields
\[
0=\hat{g}(A(\star Y)Z_A,\hat{E}_2)
-\nu(A\star Y)|_q\theta \hat{g}(AX_A,\hat{E}_2)
=\hat{g}(AX_A,\hat{E}_2)(1-\nu(A\star Y)|_q\theta).
\]
We show that $\hat{g}(AX_A,\hat{E}_2)\neq 0$, whence $\nu(A\star Y)|_q\theta=1$.
Indeed, if it were that $\hat{g}(AX_A,\hat{E}_2)=0$,
then $AX_A\in E_2^\perp$ and hence $\hat{\star} (AX_A)$ 
would be an eigenvector of $\hat{R}|_{\hat{x}}$ with eigenvalue $-K$.
But this would then imply that
\[
\widetilde{\Rol}_q(\star X_A)=R(\star X_A)-A^{\ol{T}}\hat{R}(\hat{\star} (AX_A))A
=-K\star X_A+KA^{\ol{T}}(\hat{\star} (AX_A)A=0.
\]
Because, $\widetilde{\Rol}_q(X_A\wedge Y)=0$ as well,
we see that $\widetilde{\Rol}_q$ has rank $\leq 1$ as a map $\wedge^2 T|_x M\to \wedge^2 T|_x M$,
which is a contradiction since $q\in O_2'\subset O_2$
and $O_2$ is, by definition, the set of points of the orbit
where $\widetilde{\Rol}_q$ has rank 2.
This contradiction establishes the above claim.

Next we may compute the Lie brackets
\[
[\LRD(Y),\nu((\cdot)\star X_{(\cdot)}]|_q
=&-\LNSD(A(\star X_A)Y)|_q+\nu(A\star \LRD(Y)|_qX_{(\cdot)})|_q \\
=&-\LNSD(AZ_A)|_q+(\LRD(Y)|_q\theta-\Gamma^2_{(3,1)})\nu(A\star Z_A)|_q \\
[\LRD(X_{(\cdot)}),\nu((\cdot)\star Y)]|_q
=&-\LRD(\nu(A\star Y)|_qX_{(\cdot)})|_q-\nu(A\star Y)|_q\theta \LNSD(A(\star Y)X_A)|_q \\
&+\nu(A\star (c_\theta (-\Gamma^1_{(1,2)}E_1+\Gamma^1_{(2,3)}E_3)+s_\theta (-\Gamma^1_{(2,3)}E_1-\Gamma^1_{(1,2)}E_3)))|_q \\
=&-\LRD(\nu(A\star Y)|_qX_{(\cdot)})|_q+\LNSD(AZ_A)|_q \\
&-\Gamma^1_{(1,2)}\nu(A\star X_A)|_q+\Gamma^1_{(2,3)}\nu(A\star  Z_A)|_q,
\]
from where by adding we get
\[
&[\LRD(Y),\nu((\cdot)\star X_{(\cdot)}]|_q+[\LRD(X_{(\cdot)}),\nu((\cdot)\star Y)]|_q \\
=&(\LRD(Y)|_q\theta-\Gamma^2_{(3,1)}+\Gamma^1_{(2,3)})\nu(A\star Z_A)|_q
-\LRD(\nu(A\star Y)|_qX_{(\cdot)})|_q \\
&-\Gamma^1_{(1,2)}\nu(A\star X_A)|_q.
\]
Since this has to be tangent to $\mc{O}_{\RDist}(q_0)$, we get that the $\nu(A\star Z_A)|_q$-component vanished i.e.,
\[
\LRD(Y)|_q\theta=\Gamma^2_{(3,1)}-\Gamma^1_{(2,3)}.
\]
Next compute
\[
[\LRD(X_{(\cdot)}),\nu((\cdot)\star X_{(\cdot)}]|_q
=&-\nu(A\star X_A)|_q\theta \LRD(Z_A)|_q-\LNSD(A\underbrace{(\star X_A)X_A}_{=0})|_q \\
&+\nu\big(A\star \big((\LRD(X_A)|_q\theta-c_\theta\Gamma^1_{(3,1)}-s_\theta\Gamma^3_{(3,1)})Z_A)+\Gamma^1_{(1,2)}Y\big)\big|_q
\]
and so we must have again that the $\nu(A\star Z_A)|_q$-component is zero i.e.,
\[
\LRD(X_A)|_q\theta=c_\theta\Gamma^1_{(3,1)}+s_\theta\Gamma^3_{(3,1)}.
\]

Notice that $[\LRD(X_{(\cdot)}),\nu((\cdot)\star Y)]|_q$
can be written, since $\LNSD(AZ_A)|_q=\LRD(Z_A)|_q-\LNSD(Z_A)|_q$, as
\[
[\LRD(X_{(\cdot)}),\nu((\cdot)\star Y)]|_q
=&-F_Z|_q+\LRD(Z_A)|_q-\LRD(\nu(A\star Y)|_qX_{(\cdot)})|_q-\Gamma^1_{(1,2)}\nu(A\star X_A)|_q \\
=&-F_Z|_q-\Gamma^1_{(1,2)}\nu(A\star X_A)|_q,
\]
which proves that $F_Z$, as defined in the statement, is indeed tangent to the orbit on $O_2'$.
To show that $F_X$ is also tangent to the orbit we compute
\[
[\LRD(Z_{(\cdot)}),\nu((\cdot)\star Y)]|_q
=&-\LRD(\nu(A\star Y)|_qZ_{(\cdot)})|_q-\nu(A\star Y)|_q\theta \LNSD(A(\star Y)Z_A)|_q \\
&+\nu(A\star (-s_\theta (-\Gamma^1_{(1,2)}E_1+\Gamma^1_{(2,3)}E_3)+c_\theta (-\Gamma^1_{(2,3)}E_1-\Gamma^1_{(1,2)}E_3)))|_q \\
=&-\LRD(\nu(A\star Y)|_q Z_{(\cdot)})|_q-\LNSD(AX_A)|_q \\
&-\Gamma^1_{(1,2)}\nu(A\star Z_A)|_q-\Gamma^1_{(2,3)}\nu(A\star  X_A)|_q \\
=&F_X|_q-\LRD(X_A)|_q-\LRD(\nu(A\star Y)|_q Z_{(\cdot)})|_q-\Gamma^1_{(2,3)}\nu(A\star  X_A)|_q \\
=&F_X|_q-\Gamma^1_{(2,3)}\nu(A\star  X_A)|_q,
\]
which finishes the proof.
\end{proof}

\begin{lemma}\label{le:Rol2:gamma_123}
For all $(x,\hat{x})\in \pi_Q(O_2')$ one has
\[
\Gamma^1_{(2,3)}(x)=\hat{\Gamma}^1_{(2,3)}(\hat{x}).
\]
\end{lemma}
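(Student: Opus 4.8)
The plan is to exploit the $M\leftrightarrow\hat{M}$ symmetry of the rolling problem $(R)$, encoded by the diffeomorphism $\iota:Q=Q(M,\hat{M})\to\hat{Q}:=Q(\hat{M},M)$, $\iota(x,\hat{x};A)=(\hat{x},x;A^{-1})$ of Proposition \ref{pr:rol_inverse}, together with the fact (already in Lemma \ref{le:Rol2:deriv_theta}) that the field $F_Z$ is tangent to the orbit. First I would record how $\iota_*$ acts on the two types of lifts. From $P^t_0(x,\hat{x})A=P^t_0(\hat{x})\circ A\circ P^0_t(x)$ (Proposition \ref{pr:2.1:1}) one gets $\iota\big(P^t_0(x,\hat{x})A\big)=P^t_0(x)\circ A^{-1}\circ P^0_t(\hat{x})$, which is exactly the no-spinning parallel transport of $A^{-1}$ in $\hat{Q}$ along $(\hat{x}(t),x(t))$; hence $\iota_*$ carries $\LNSD\big((0,\hat{X})\big)|_q$ to the corresponding lift of $(\hat{X},0)$ in $\hat{Q}$ at $\iota(q)$. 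Differentiating $(A+tB)^{-1}=A^{-1}-tA^{-1}BA^{-1}+O(t^2)$ shows that $\iota_*\big(\nu(B)|_q\big)$ equals the vertical vector determined by $-A^{-1}BA^{-1}$ in $\hat{Q}$ at $\iota(q)$. I would also use the elementary identity, valid for an orientation-preserving isometry $A:T|_xM\to T|_{\hat{x}}\hat{M}$ in dimension three, that $A\circ(\star Z_A)\circ A^{-1}=\hat{\star}(AZ_A)$; combined with $AZ_A=\hat{Z}_A$ from the Remark after Proposition \ref{pr:Rol2:good_basis}, this gives $A\circ(\star Z_A)=(\hat{\star}\hat{Z}_A)\circ A$.

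Next, since $O_2$ and the non-openness of the orbit are symmetric under exchanging $(M,g)$ and $(\hat{M},\hat{g})$, Lemma \ref{le:Rol2:special_3D} already furnishes, on the same neighbourhoods, adapted frames $\hat{E}_1,\hat{E}_2,\hat{E}_3$ of $\hat{M}$ for which $\hat{\Gamma}$ has the special $3$D form with $\hat{V}(\hat{\Gamma}^1_{(2,3)})=\hat{V}(\hat{\Gamma}^1_{(1,2)})=0$ for $\hat{V}\perp\hat{E}_2$, and $\iota$ maps $O_2'$ onto the corresponding set $\hat{O}_2'$ for the rolling of $\hat{M}$ against $M$. Applying the mirror of Lemma \ref{le:Rol2:deriv_theta} on $\hat{Q}$ then produces a vector field $\hat{F}$ on $\hat{O}_2'$, tangent to $\mc{O}_{\widehat{\RDist}}(\iota(q_0))$, which in the notation of $\hat{Q}$ reads $\hat{F}|_{(\hat{x},x;A^{-1})}=\LNSD_{\hat{Q}}\big((\hat{Z}_A,0)\big)|_{(\hat{x},x;A^{-1})}-\hat{\Gamma}^1_{(2,3)}(\hat{x})\,\nu_{\hat{Q}}\big(A^{-1}\circ\hat{\star}\hat{Z}_A\big)|_{(\hat{x},x;A^{-1})}$. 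Pushing $\hat{F}$ forward by $\iota^{-1}$ (which maps $\mc{O}_{\widehat{\RDist}}(\iota(q_0))$ diffeomorphically onto $\mc{O}_{\RDist}(q_0)$) and using the three facts above yields
\[
(\iota^{-1})_*\hat{F}|_q=\LNSD(AZ_A)|_q+\hat{\Gamma}^1_{(2,3)}(\hat{x})\,\nu(A\star Z_A)|_q .
\]
Now rewrite $\LNSD(AZ_A)|_q=\LRD(Z_A)|_q-\LNSD(Z_A)|_q=\LRD(Z_A)|_q-F_Z|_q-\Gamma^1_{(2,3)}(x)\,\nu(A\star Z_A)|_q$, the last equality being the definition of $F_Z$ in Lemma \ref{le:Rol2:deriv_theta}. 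This gives
\[
(\iota^{-1})_*\hat{F}|_q=\LRD(Z_A)|_q-F_Z|_q+\big(\hat{\Gamma}^1_{(2,3)}(\hat{x})-\Gamma^1_{(2,3)}(x)\big)\,\nu(A\star Z_A)|_q .
\]
The left-hand side is tangent to $\mc{O}_{\RDist}(q_0)$, and so are $\LRD(Z_A)|_q$ and $F_Z|_q$; hence $\big(\hat{\Gamma}^1_{(2,3)}(\hat{x})-\Gamma^1_{(2,3)}(x)\big)\nu(A\star Z_A)|_q$ is tangent to the orbit. Since $\nu(A\star Z_A)|_q$ is \emph{not} tangent to $\mc{O}_{\RDist}(q_0)$ for $q\in O_2'$ (the Remark preceding this lemma), the coefficient must vanish, which is the assertion.

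The main obstacle I anticipate is bookkeeping rather than conceptual: making the ``mirror'' use of Lemmas \ref{le:Rol2:special_3D} and \ref{le:Rol2:deriv_theta} fully rigorous requires spelling out that the frames $\hat{E}_i$ and the angle $\hat{\theta}$ produced on the $\hat{M}$-side are the same data as already fixed on $O_2'$, that $\iota(O_2')=\hat{O}_2'$, and that $AZ_A=\hat{Z}_A$ so that the pushed-forward lift of $(\hat{Z}_A,0)$ is genuinely $\LNSD(AZ_A)|_q$; all of these follow from the symmetry of the constructions. The only other delicate point is getting the signs right in the two $\iota_*$-transformation rules and in $A\circ(\star Z_A)\circ A^{-1}=\hat{\star}(AZ_A)$.
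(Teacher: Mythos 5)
Your argument is correct, but it follows a genuinely different route from the paper's. The paper proves the lemma by a direct frame computation: it differentiates the orthogonality relation $\hat{g}(AZ_A,\hat{E}_2)=0$ along $\LRD(E_2)$, uses the simplification $\LRD(E_2)|_qZ_{(\cdot)}=\Gamma^1_{(2,3)}(x)X_A$ coming from Lemma \ref{le:Rol2:deriv_theta} together with Eq.\ (\ref{eq:Rol2:LR_X_A}), expands $\hat{\nabla}_{AE_2}\hat{E}_2$ in the hatted frame via auxiliary angles $\hat{\theta},\hat{\phi}$, and arrives at $0=\sin(\hat{\phi})\big(\Gamma^1_{(2,3)}(x)-\hat{\Gamma}^1_{(2,3)}(\hat{x})\big)$, concluding from $\sin\hat{\phi}\neq 0$ (i.e.\ $AX_A\notin\hat{E}_2^\perp$). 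You instead exploit the involution $\iota$ of Proposition \ref{pr:rol_inverse}: transport the hatted analogue of the tangent field $F_Z$ back to $Q$ via the transformation rules $\iota_*\LNSD(X,\hat{X})|_q=\widehat{\LNSD}(\hat{X},X)|_{\iota(q)}$, $\iota_*\nu(B)|_q=\nu(-A^{-1}BA^{-1})|_{\iota(q)}$ and the $\SO$-equivariance $A\circ(\star Z_A)\circ A^{-1}=\hat{\star}(AZ_A)$ (all of which I checked are correct, including the signs), and then read off the identity from the non-tangency of $\nu(A\star Z_A)|_q$ to the orbit. Both proofs ultimately hinge on the same two inputs — the derivative information of Lemma \ref{le:Rol2:deriv_theta} and a nondegeneracy statement ($\sin\hat{\phi}\neq0$ in the paper, non-tangency of $\nu(A\star Z_A)$ in yours, which are closely related) — but your version makes the $M\leftrightarrow\hat{M}$ symmetry do the work of the explicit hatted-frame expansion, at the price of the bookkeeping you already flag: that the mirror versions of Proposition \ref{pr:Rol2:good_basis} and Lemmas \ref{le:Rol2:special_3D}, \ref{le:Rol2:deriv_theta} apply at $\iota(q)$ (the rank of the rolling curvature and the non-openness of the orbit are preserved by $\iota$, and $AZ_A=\pm\hat{Z}_A$, with the sign ambiguity harmless since it flips the transported field as a whole). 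With that bookkeeping spelled out, your proof is complete and is arguably a cleaner explanation of \emph{why} the two coefficients must agree, while the paper's computation is shorter to state once the angles $\hat{\theta},\hat{\phi}$ are in place.
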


\begin{proof}
We begin by observing that for all $q=(x,\hat{x};A)\in O_2'$ one has
\[
\hat{g}(AZ_A,\hat{E}_2)=0.
\]
Indeed, $AZ_A$  and $\hat{E}_2|_{\hat{x}}$ are eigenvectors of $\hat{R}|_{\hat{x}}$
corresponding to non-equal eigenvalues $-K$ and $-\hat{K}_2(\hat{x})$,
hence they must be orthogonal.

Since $AZ_A\in \hat{E}_2|_{\hat{x}}^\perp$, there is a $\hat{\theta}=\hat{\theta}(q)$, for all $q=(x,\hat{x};A)\in O_2'$,
such that
\[
AZ_A=-s_{\hat{\theta}}\hat{E}_1+c_{\hat{\theta}}\hat{E}_3.
\]
Because $AX_A,AY\in (AZ_A)^\perp$, there exits also a $\hat{\phi}=\hat{\phi}(q)$
such that
\[
AX_A=&c_{\hat{\phi}}(c_{\hat{\theta}}\hat{E}_1+s_{\hat{\theta}}\hat{E}_3)+s_{\hat{\phi}}\hat{E}_2 \\
AY=&-s_{\hat{\phi}}(c_{\hat{\theta}}\hat{E}_1+s_{\hat{\theta}}\hat{E}_3)+c_{\hat{\phi}}\hat{E}_2.
\]
Moreover, Lemma \ref{le:Rol2:deriv_theta} along with Eq. (\ref{eq:Rol2:LR_X_A}) implies that $\LRD(Y)|_qZ_{(\cdot)}$ simplifies to
\[
\LRD(Y)|_qZ_{(\cdot)}=\Gamma^1_{(2,3)}(x) X_A.
\]
Therefore, differentiating $\hat{g}(AZ_A,\hat{E}_2)=0$ with respect to $\LRD(X_A)|_q$, one obtains
\[
0=&\LRD(Y)|_q\hat{g}((\cdot)Z_{(\cdot)},\hat{E}_2)
=\hat{g}(A\LRD(Y)|_qZ_{(\cdot)},\hat{E}_2)+\hat{g}(AZ_A,\hat{\nabla}_{AY}\hat{E}_2) \\
=&\Gamma^1_{(2,3)}\hat{g}(AX_A,\hat{E}_2)+\hat{g}\big(AZ_A,-s_{\hat{\phi}} c_{\hat{\theta}}(-\hat{\Gamma}^1_{(1,2)}\hat{E}_1+\hat{\Gamma}^1_{(2,3)}\hat{E}_3)-s_{\hat{\phi}} s_{\hat{\theta}}(-\hat{\Gamma}^1_{(2,3)}\hat{E}_1-\hat{\Gamma}^1_{(1,2)}\hat{E}_3)\big) \\
=&s_{\hat{\phi}}\Gamma^1_{(2,3)}-s_{\hat{\phi}}\hat{g}\big(AZ_A,\hat{\Gamma}^1_{(2,3)}AZ_A
-\hat{\Gamma}^1_{(1,2)}(c_{\hat{\theta}}\hat{E}_1+s_{\hat{\theta}}\hat{E}_3)\big) \\
=&s_{\hat{\phi}}(\Gamma^1_{(2,3)}(x)-\hat{\Gamma}^1_{(2,3)}(\hat{x})).
\]
We claim that $\sin(\hat{\phi}(q))\neq 0$ for $q=(x,\hat{x};A)\in O_2'$,
which would then imply that $\Gamma^1_{(2,3)}(x)-\hat{\Gamma}^1_{(2,3)}(\hat{x})=0$
and finish the proof.

Indeed, $\sin(\hat{\phi}(q))=0$ would mean that
$AX_A=\pm (c_{\hat{\theta}}\hat{E}_1+s_{\hat{\theta}}\hat{E}_3)$,
thus $AX_A\in \hat{E}_2^\perp$.
By the argument at the beginning of the proof of Lemma \ref{le:Rol2:deriv_theta},
this would be a contradiction.
\end{proof}

\begin{corollary}
\begin{itemize}
\item[(i)] If for some $(x_1,\hat{x}_1)\in \pi_{Q}(O_2')$,
one has $\Gamma^1_{(2,3)}(x_1)\neq 0$ (or $\hat{\Gamma}^1_{(2,3)}(\hat{x}_1)\neq 0$),
there are open neighbourhoods $U'\ni x_1$, $\hat{U}'\ni \hat{x}_1$
such that $(U',g)$, $(\hat{U}',\hat{g})$
are both of class $\mc{M}_\beta$ for $\beta=\Gamma^1_{(2,3)}(x_1)$
(or $\beta=\hat{\Gamma}^1_{(2,3)}(\hat{x}_1)$).

\item[(ii)] If for some $(x_1,\hat{x}_1)\in \pi_{Q}(O_2')$,
one has $\Gamma^1_{(2,3)}(x_1)=0$ (or $\hat{\Gamma}^1_{(2,3)}(\hat{x}_1)=0$),
there are open neighbourhoods $U'\ni x_1$, $\hat{U}'\ni \hat{x}_1$
such that $U'\times \hat{U}'\subset\pi_{Q}(O_2')$
and isometries $F:(I\times N,h_f)\to (U,g)$, $\hat{F}:(I\times \hat{N},\hat{h}_{\hat{f}})\to (\hat{U},\hat{g})$,
where $I\subset\R$ is an open interval, such that
\[
\frac{f''(t)}{f(t)}=-K=\frac{\hat{f}''(t)}{\hat{f}(t)},\quad \forall t\in I.
\]
\end{itemize}
\end{corollary}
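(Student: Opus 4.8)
The claim is essentially a translation, into the language of warped products and the class $\mc{M}_\beta$, of the structural information about $(M,g)$ and $(\hat M,\hat g)$ that we have already extracted along the orbit $\mc{O}_{\RDist}(q_0)$ in the rank-two case. The starting point is Lemma \ref{le:Rol2:special_3D} together with Lemma \ref{le:Rol2:gamma_123}: on the open set $O_2'$ both manifolds carry, near $x_1$ resp. $\hat x_1$, oriented orthonormal frames $E_1,E_2,E_3$ and $\hat E_1,\hat E_2,\hat E_3$ with respect to which the connection tables $\Gamma,\hat\Gamma$ have the special form displayed in Lemma \ref{le:Rol2:special_3D}, the Hodge-duals $\star E_i$ (resp.\ $\hat\star\hat E_i$) are curvature eigenvectors with eigenvalues $-K,-K_2(\cdot),-K$ (resp.\ $-K,-\hat K_2(\cdot),-K$), the constant $K$ is the same for both, and moreover $\Gamma^1_{(2,3)}(x)=\hat\Gamma^1_{(2,3)}(\hat x)$ for all $(x,\hat x)\in\pi_Q(O_2')$. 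So the whole proof is ``local on $O_2'$'' and splits into the two cases $\Gamma^1_{(2,3)}(x_1)\neq 0$ and $\Gamma^1_{(2,3)}(x_1)=0$.

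\emph{Case (i): $\Gamma^1_{(2,3)}(x_1)\neq0$.} Here I would invoke the characterisation of manifolds of class $\mc{M}_\beta$ recalled in Appendix \ref{app:3D} (and the associated results in Appendix \ref{app:m_beta}): a $3$-dimensional Riemannian manifold is of class $\mc{M}_\beta$, $\beta>0$, precisely when it admits locally an oriented orthonormal (``adapted'') frame whose connection table has the structure appearing in Lemma \ref{le:Rol2:special_3D} with $\Gamma^1_{(2,3)}=\Gamma^1_{(1,2)}$-type off-diagonal entries controlled and with $\Gamma^1_{(2,3)}$ equal to a positive constant $\beta$, together with the stated eigenvalue pattern for $R$. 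The first task is therefore to check, using the Koszul/structure equations in the given frame, that $\Gamma^1_{(2,3)}$ is \emph{locally constant} on $\pi_{Q,M}(O_2')$ (this is exactly the kind of computation already performed for $\Gamma^1_{(2,3)},\Gamma^1_{(1,2)}$ via the vertical-derivative vanishing in Proposition \ref{pr:3D-1}, combined with the Bianchi-type identities for the eigenvalues $K,K_2$); once that is known, shrinking to a connected neighbourhood $U'\ni x_1$ gives $\Gamma^1_{(2,3)}\equiv\Gamma^1_{(2,3)}(x_1)=:\beta\neq0$ on $U'$, and after possibly changing orientation we may take $\beta>0$. The frame $E_1,E_2,E_3$ is then an adapted frame in the sense of Appendix \ref{app:3D}, so $(U',g)$ is of class $\mc{M}_\beta$. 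Since $\hat\Gamma^1_{(2,3)}(\hat x_1)=\Gamma^1_{(2,3)}(x_1)=\beta$ by Lemma \ref{le:Rol2:gamma_123}, the identical argument applied on the ``$\hat M$-side'' (i.e.\ in $Q(\hat M,M)$, using the symmetry $\iota$ of Proposition \ref{pr:rol_inverse}) shows $(\hat U',\hat g)$ is of class $\mc{M}_\beta$ for the same $\beta$, proving (i).

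\emph{Case (ii): $\Gamma^1_{(2,3)}(x_1)=0$.} By Lemma \ref{le:Rol2:gamma_123} one then also has $\hat\Gamma^1_{(2,3)}(\hat x_1)=0$, and by the local-constancy established in Case (i) we may shrink $U',\hat U'$ so that $\Gamma^1_{(2,3)}\equiv0$ on $U'$ and $\hat\Gamma^1_{(2,3)}\equiv0$ on $\hat U'$, with $U'\times\hat U'\subset\pi_Q(O_2')$. With $\Gamma^1_{(2,3)}=0$ the connection table of $E_1,E_2,E_3$ degenerates further; the plan is to read off from it that $E_2$ is (up to sign) the gradient of a distance-type function and that the integral curves of $E_2$ are unit-speed geodesics whose orthogonal complement distribution $E_2^\perp$ is integrable with totally umbilic leaves — exactly the local picture of a warped product $(I\times N,h_f)$ with the fibre over $t\in I$ scaled by $f(t)$, the warping function $f$ being the common ``umbilicity factor'' read from $\Gamma^1_{(1,2)}$. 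This is a standard local de Rham / Hiepko-type splitting argument; here I would cite the warped-product results of Appendix \ref{app:m_beta}. It remains to identify the warping function: the eigenvalue pattern $-K,-K_2(\cdot),-K$ for $R$ in the adapted frame, expressed through the standard curvature formulas for a warped product $I\times N$, yields $-f''/f=\mathrm{sec}(\pa r\wedge\cdot)=K$ on $I$ (the ``$K$-eigenvalue'' being precisely the sectional curvature of the planes containing $\pa r$). Applying the same reasoning to $\hat M$ gives an isometry $\hat F:(I\times\hat N,\hat h_{\hat f})\to(\hat U',\hat g)$ with $-\hat f''/\hat f=\hat K$ on $I$, and since $K=\hat K$ (same constant, by Lemma \ref{le:Rol2:special_3D}) we obtain $f''/f=-K=\hat f''/\hat f$ on $I$ as required; a common interval $I$ is arranged by restricting both to the overlap of their domains.

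\textbf{Main obstacle.} The routine-but-nontrivial heart of the argument is the local-constancy of $\Gamma^1_{(2,3)}$ on $\pi_{Q,M}(O_2')$ and the precise identification, in Case (ii), of the warped-product splitting together with its warping function in terms of the curvature data: this requires combining the vertical-derivative vanishing from Proposition \ref{pr:3D-1}, the $O_2$-specific relations derived in Lemmas \ref{le:Rol2:ab_zero}--\ref{le:Rol2:special_3D}, and the second Bianchi identity for the three curvature eigenvalues in the adapted frame. Once these structural facts are in place, invoking the definitions and splitting theorems of Appendices \ref{app:3D} and \ref{app:m_beta} is immediate, and matching $K$ across the two sides is forced by Lemma \ref{le:Rol2:special_3D} and the frame-symmetry $\iota$ of Proposition \ref{pr:rol_inverse}.
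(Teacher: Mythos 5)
Your overall skeleton is the paper's: Lemma \ref{le:Rol2:special_3D}, Lemma \ref{le:Rol2:gamma_123}, the appendix Proposition \ref{pr:special_3D} (case (ii) for class $\mc{M}_\beta$, case (iii) for the warped product), and the curvature relation (\ref{eq:RsE3sE3}) to identify $f''/f=-K=\hat f''/\hat f$. The difference lies in where the constancy of $\Gamma^1_{(2,3)}$ comes from, and there your case (ii) has a gap as written. The paper uses Lemma \ref{le:Rol2:gamma_123} at full strength: the identity $\Gamma^1_{(2,3)}(x)=\hat{\Gamma}^1_{(2,3)}(\hat x)$ holds for \emph{every} pair $(x,\hat x)\in\pi_Q(O_2')$, and $\pi_Q(O_2')$ is open (Lemma \ref{le:Rol2:bundle}), so after choosing a product neighbourhood $U'\times\hat U'\subset\pi_Q(O_2')$ one freezes $\hat x=\hat x_1$ and lets $x$ vary to get $\Gamma^1_{(2,3)}(x)=\hat{\Gamma}^1_{(2,3)}(\hat x_1)=\Gamma^1_{(2,3)}(x_1)$ for all $x\in U'$, i.e. $\Gamma^1_{(2,3)}$ is \emph{constant} on $U'$ — whether the value is $\beta\neq 0$ or $0$ — and symmetrically for $\hat{\Gamma}^1_{(2,3)}$ on $\hat U'$; no structure-equation or Bianchi computation is needed, both cases are handled uniformly, and the equality of the constants on the two sides is automatic.

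You instead use Lemma \ref{le:Rol2:gamma_123} only at the single pair $(x_1,\hat x_1)$ and plan to re-derive constancy from the frame computations. In case (i) that works: by continuity $\Gamma^1_{(2,3)}\neq 0$ near $x_1$ (and near $\hat x_1$, since $\hat{\Gamma}^1_{(2,3)}(\hat x_1)=\beta$), and Proposition \ref{pr:special_3D}(ii) then yields constancy together with $\Gamma^1_{(1,2)}=0$, which is what the definition of $\mc{M}_\beta$ actually requires. But in case (ii) the appeal to ``the local-constancy established in Case (i)'' does not go through: that constancy rests on the bracket identity $0=2\Gamma^1_{(2,3)}E_2(\Gamma^1_{(2,3)})$, which is vacuous precisely where $\Gamma^1_{(2,3)}$ vanishes, whereas you need $\Gamma^1_{(2,3)}\equiv 0$ on a whole neighbourhood of $x_1$ (and $\hat{\Gamma}^1_{(2,3)}\equiv 0$ near $\hat x_1$) before Proposition \ref{pr:special_3D}(iii) can be invoked. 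The step can be salvaged within your route using Eq. (\ref{eq:RsE3sE1}): combined with $V(\Gamma^1_{(2,3)})=0$ for $V\in E_2^\perp$ it gives $d\Gamma^1_{(2,3)}=2\Gamma^1_{(1,2)}\Gamma^1_{(2,3)}\,g(E_2,\cdot)$, so along any curve $\Gamma^1_{(2,3)}$ satisfies a linear ODE and its zero at $x_1$ propagates to a neighbourhood — but the cleanest repair is the paper's frozen-variable use of Lemma \ref{le:Rol2:gamma_123}. The remainder of your case (ii) (Hiepko-type splitting, $E_2=\pa{r}$, $\Gamma^1_{(1,2)}=-f'/f$, and reading $f''/f=-K$ from the curvature of planes containing $\pa{r}$) is exactly what Proposition \ref{pr:special_3D}(iii) and Eq. (\ref{eq:RsE3sE3}) provide, so once the vanishing step is fixed your argument coincides with the paper's.
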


\begin{proof}
Let $U',\hat{U}'$ be connected neighbourhoods of $x_1,\hat{x}_1$
such that $U'\times \hat{U}'\subset\pi_{Q}(O_2')$
(recall that by Lemma \ref{le:Rol2:bundle}, $\pi_{Q}(O_2')$ is open in $M\times\hat{M}$).

(i) Set $\beta=\Gamma^1_{(2,3)}(x_1)\neq 0$.
By Lemma \ref{le:Rol2:gamma_123}, one has for every $x\in U'$, $\hat{x}\in\hat{U}'$
that
\[
& \hat{\Gamma}^1_{(2,3)}(\hat{x})=\Gamma^1_{(2,3)}(x_1)=\beta, \\
& \Gamma^1_{(2,3)}(x)=\hat{\Gamma}^1_{(2,3)}(\hat{x}_1)=\beta.
\]
By Proposition \ref{pr:special_3D} case (ii), it follows that (after shrinking $U',\hat{U}'$)
$(U,g)$ and $(\hat{U},\hat{g})$ are both of class $\mc{M}_{\beta}$
This gives (i).

(ii)
By Lemma \ref{le:Rol2:gamma_123}, one has for every $x\in U'$, $\hat{x}\in\hat{U}'$
that
\[
& \hat{\Gamma}^1_{(2,3)}(\hat{x})=\Gamma^1_{(2,3)}(x_1)=0 \\
& \Gamma^1_{(2,3)}(x)=\hat{\Gamma}^1_{(2,3)}(\hat{x}_1)=0,
\]
i.e. $\Gamma^1_{(2,3)}$ and $\hat{\Gamma}^1_{(2,3)}$ vanish on $U',\hat{U}'$, respectively.

Then Proposition \ref{pr:special_3D} case (iii) gives (after shrinking $U',\hat{U}'$)
the desired isometries $F,\hat{F}$.
Moreover, Eq. (\ref{eq:RsE3sE3}) in that proposition gives, since $E_2=\pa{r}$, $\hat{E}_2=\pa{r}$,
\[
-K=\dif{r}\frac{f'(r)}{f(r)}+\big(-\frac{f'(r)}{f(r)}\big)-0^2=\frac{f''(r)}{f(r)} \\
-K=\dif{r}\frac{\hat{f}'(r)}{\hat{f}(r)}+\big(-\frac{\hat{f}'(r)}{\hat{f}(r)}\big)-0^2=\frac{\hat{f}''(r)}{\hat{f}(r)},
\]
where $r\in I$. This proves (ii).
\end{proof}

%%%%%%%%%%%%%%%%%%%%%%%%%%%%%%%%%%%%%%%%
\subsubsection{Local Structures for the Manifolds Around $q\in O_1$}
%%%%%%%%%%%%%%%%%%%%%%%%%%%%%%%%%%%%%%%%

In analogy to Proposition (\ref{pr:Rol2:good_basis})
we will first prove the following result.
In the results below that concern $O_1$, we always assume that $O_1\neq\emptyset$.

For the next proposition, contrary to an analogous Proposition \ref{pr:Rol2:good_basis} of Subsubsection \ref{sss:local-O2},
we do not need to assume that $\mc{O}_{\RDist}(q_0)$ is not open.
The subsequent result only relies on the fact that $O_1$ is not empty.

\begin{proposition}\label{pr:Rol1:good_basis}
Let $q_0=(x_0,\hat{x}_0;A_0)\in Q$.
Then for every $q=(x,\hat{x};A)\in O_1$
there exist an orthonormal pair $X_A,Y_A\in T|_x M$
such that if $Z_A:=\star (X_A\wedge Y_A)$
then $X_A,Y_A,Z_A$ is a positively oriented orthonormal pair
with respect to which $R$ and $\widetilde{\Rol}$ may be written as
\begin{align}\label{eq2:good_basis_for_R_Rol}
R(X_A\wedge Y_A)&=\qmatrix{
0 & K(x) & 0 \cr
-K(x) & 0 & 0 \cr
0 & 0 & 0
},\quad \star R(X_A\wedge Y_A)=\qmatrix{0 \cr 0 \cr -K(x)} \nonumber \\
R(Y_A\wedge Z_A)&=\qmatrix{
0 & 0 & 0 \cr
0 & 0 & K(x) \cr
0 & -K(x) & 0
},\quad \star R(Y_A\wedge Z_A)=\qmatrix{-K(x) \cr 0 \cr 0} \nonumber \\
R(Z_A\wedge X_A)&=\qmatrix{
0 & 0 & -K_2(x) \cr
0 & 0 & 0 \cr
K_2(x) & 0 & 0
},\quad \star R(Z_A\wedge X_A)=\qmatrix{0 \cr -K_2(x) \cr 0} \nonumber \\
\widetilde{\Rol}_q(X_A\wedge Y_A)&=0, \nonumber \\
\widetilde{\Rol}_q(Y_A\wedge Z_A)&=0, \nonumber \\
\widetilde{\Rol}_q(Z_A\wedge X_A)&=\qmatrix{
0 & 0 & -K^{\Rol}_2(q) \cr
0 & 0 & 0 \cr
K^{\Rol}_2(q) & 0 & 0
},\quad \star \widetilde{\Rol}_q(Z_A\wedge X_A)=\qmatrix{0 \cr -K^{\Rol}_2(q) \cr 0},
\end{align}
where $K,K_2:M\to\R$.
\end{proposition}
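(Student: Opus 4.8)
The plan is to proceed exactly as in the rank-2 case (Proposition \ref{pr:Rol2:good_basis} and its supporting lemmas), but now exploiting that $\rank\widetilde{\Rol}_q=1$ on $O_1$. First I would recall that $\widetilde{\Rol}_q$ is a symmetric endomorphism of $\wedge^2 T|_x M$ (Definition \ref{def:rol-tilde}), so it has an orthonormal eigenbasis and, since its rank is $1$, a two-dimensional kernel. Using the dimension-$3$ fact that any unit $2$-vector is decomposable, I would pick a nonzero eigenvector $\omega_A$ of $\widetilde{\Rol}_q$ with $\widetilde{\Rol}_q(\omega_A)\neq 0$ (spanning the image), write $\star\omega_A = X_A\wedge Y_A$ for an orthonormal pair $X_A, Y_A$, and set $Z_A=\star(X_A\wedge Y_A)$. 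Wait—more carefully: the image is spanned by a decomposable $2$-vector, which after a Hodge dual identification I can call $\star(Z_A\wedge X_A)$, i.e. $Z_A\wedge X_A$ is the image direction and the kernel is $\spn\{X_A\wedge Y_A, Y_A\wedge Z_A\}$. The selections can be made locally smoothly on $O_1$ exactly as in Lemma \ref{le:Rol2:good_basis} (the $1$-dimensional image line depends smoothly on $q$, and $X_A, Y_A, Z_A$ are then chosen smoothly).

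Next, since $X_A\wedge Y_A$ and $Y_A\wedge Z_A$ lie in $\ker\widetilde{\Rol}_q$, I would write out what this means: $\widetilde{\Rol}_q(X_A\wedge Y_A)=0$ and $\widetilde{\Rol}_q(Y_A\wedge Z_A)=0$, giving two of the displayed equations directly. For the third, $\widetilde{\Rol}_q(Z_A\wedge X_A)$ is a symmetric operator supported on the line $\R(Z_A\wedge X_A)$, so in the basis $\star X_A,\star Y_A,\star Z_A$ of $\wedge^2 T|_x M$ it is diagonal with a single nonzero entry; this yields the stated matrix with a single parameter $K_2^\Rol(q)$. The main step is then to pin down the curvature operator $R|_x$. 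Here I would invoke (and adapt) the argument that produced Lemmas \ref{le:Rol2:bundle}, \ref{le:Rol2:ab_zero}, and \ref{le:Rol2:xi_zero}: the Lie-bracket identity of Corollary \ref{cor:vcomm} applied to the vertical vector fields $\nu(\Rol(\cdot)(\cdot))$, together with the constraint that iterated brackets of these must remain tangent to the non-open orbit $\mc{O}_{\RDist}(q_0)$ (so the relevant determinants vanish, by Corollary \ref{cor:vert}), forces the off-diagonal coefficients $\eta_A,\beta_A,\xi_A$ of $R$ to vanish, i.e. $\star X_A,\star Y_A,\star Z_A$ become eigenvectors of $R|_x$. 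Since $\widetilde{\Rol}_q = A^{\ol T}\Rol_q$ and $\Rol_q = R|_x - A^{\ol T}\hat R A$ (Definitions \ref{def-rol}, \ref{def:rol-tilde}), the vanishing of $\widetilde{\Rol}_q$ on the two-plane $\spn\{X_A\wedge Y_A, Y_A\wedge Z_A\}$ forces the corresponding eigenvalues of $R|_x$ to coincide—this is precisely why the eigenvalue in the $X_A\wedge Y_A$ slot and the $Y_A\wedge Z_A$ slot are both $-K(x)$, i.e. $K_1(x)=K(x)$ in the notation of Lemma \ref{le:Rol2:good_basis}. The remaining eigenvalue $-K_2(x)$ is free, and smoothness of $K, K_2$ follows since eigenvalues of a smooth symmetric operator field are continuous and, on the locus where they are simple, smooth.

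The main obstacle I anticipate is justifying the vanishing of the cross terms $\eta_A,\beta_A,\xi_A$ of $R$ in the rank-1 setting: in the rank-2 case one had \emph{two} independent nonzero columns of $\widetilde{\Rol}_q$ to feed into the bracket computations of Lemma \ref{le:Rol2:ab_zero}, whereas here there is only one, so the linear-algebraic system extracted from "iterated brackets stay tangent to a non-open orbit" is leaner and I will need to be careful that it still has only the trivial solution—possibly supplementing the $\nu(\Rol)$-brackets with mixed brackets involving $\LRD$-lifts (as in Proposition \ref{pr:H_directions_from_V} and Corollary \ref{cor:vert}) to get enough equations. A secondary point is the smooth/local selection of $X_A, Y_A$ inside the kernel plane: as in Lemma \ref{le:Rol2:xi_zero} one may first only get them pointwise, and then a separate argument (paralleling Corollary \ref{cor:Rol2:loc_smooth}, using simplicity of the eigenvalue $-K_2$) is needed to make the choice locally smooth; but for the bare statement of Proposition \ref{pr:Rol1:good_basis} pointwise existence suffices, so I would relegate smoothness to a remark or a later lemma. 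Everything else is routine bookkeeping with the Hodge-dual identifications and the matrix forms already established in the excerpt.
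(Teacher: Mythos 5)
Your setup (diagonalizing the symmetric operator $\widetilde{\Rol}_q$, identifying the $2$-dimensional kernel with $\spn\{X_A\wedge Y_A,\,Y_A\wedge Z_A\}$ and the image with $\R(Z_A\wedge X_A)$, then feeding this into the general form of Lemma \ref{le:Rol2:good_basis} with $K_1^{\Rol}=\alpha=0$) matches the paper. The gap is in how you pin down $R|_x$. You propose to kill the off-diagonal coefficients by demanding that iterated brackets of the $\nu(\Rol(\cdot,\cdot)(\cdot))$ fields stay tangent to a \emph{non-open} orbit and invoking Corollary \ref{cor:vert}. This fails on two counts. First, the proposition makes no non-openness assumption, and the paper stresses that none is needed: only $O_1\neq\emptyset$ is used. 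Second, even granting non-openness, the determinant/linear-dependence argument of the rank-$2$ case yields nothing here: on $O_1$ every field $\nu(\Rol(X\wedge Y)(\cdot))$ is pointwise proportional to the single vertical direction $\nu(A\star Y_A)$, so "three vertical fields are dependent" is automatic and gives no equations. You sense this obstacle, but the proposed fix (unspecified mixed brackets with $\LRD$-lifts) is not worked out and is not what closes the argument. The paper's mechanism is different and openness-free: $q\mapsto \nu(\Rol_q(\wedge^2 T|_xM))|_q=\spn\{\nu(A\star Y_A)|_q\}$ is a smooth rank-one, hence involutive, distribution on $O_1$, so the bracket computed via Corollary \ref{cor:vcomm} (with $X|_x=X_A$, $Y|_x=Y_A$, $Z|_x=Z_A$, $W|_x=X_A$) must again lie on that line; setting its $\star X_A$- and $\star Z_A$-components to zero gives $K_2^{\Rol}(K_A^1-K_A)=0$ and $\beta_A K_2^{\Rol}=0$, whence $\beta_A=0$ and $K_A=K_A^1$ pointwise, since $K_2^{\Rol}\neq 0$ on $O_1$.

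The second gap is your assertion that the vanishing of $\widetilde{\Rol}_q$ on the kernel plane "forces" the two corresponding eigenvalues of $R|_x$ to coincide. That is a non sequitur: it only says $R$ and $A^{\ol{T}}\hat{R}A$ agree on that plane, and Lemma \ref{le:Rol1:good_basis} shows the kernel conditions alone still allow $K_A\neq K_A^1$ and nonzero cross terms. The double eigenvalue is precisely the substantive content, and in the paper it takes three further steps beyond the bracket identity above: a rotation of $X_A,Z_A$ within the kernel plane killing the coefficient $\alpha_A$ (legitimate because the kernel is an eigenspace of $\widetilde{\Rol}_q$, so the conditions $\Rol_q(X_A\wedge Y_A)=0=\Rol_q(Y_A\wedge Z_A)$ are preserved), and then the argument of Lemma \ref{le2:xi_zero} for $\xi_A=0$: if $-K_A$ were a simple eigenvalue of $R|_x$, one could choose the corresponding eigen-$2$-vector field smoothly, the field $\nu(\Rol(X\wedge Y)(\cdot))$ would vanish identically on an open piece of the orbit, and the same bracket formula would force $\xi_A K_2^{\Rol}=0$, contradicting simplicity; a final pointwise linear-algebra step then gives $\xi_A=0$ in general. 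Without these steps your outline does not reach the stated diagonal form; your remark that pointwise existence of the frame suffices (deferring smooth selection) is, by contrast, consistent with the statement.
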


With respect to $X_A,Y_A,Z_A$ given by the theorem, we also have
\begin{align}\label{eq:good_basis_for_hatR}
\star A^{\ol{T}}\hat{R}(AX_A\wedge AY_A)A&=\qmatrix{0 \cr 0 \cr -K(x)} \nonumber \\
\star A^{\ol{T}}\hat{R}(AY_A\wedge AZ_A)A&=\qmatrix{-K(x) \cr 0 \cr 0} \nonumber \\
\star A^{\ol{T}}\hat{R}(AZ_A\wedge AX_A)A&=\qmatrix{0 \cr -K_2(x)+K_2^\Rol(q) \cr 0}.
\end{align}

We collect some important observations concerning the previous proposition
into the following remark.

\begin{remark}\label{re:Rol1:good_basis}
\begin{itemize}
\item[(a)] The last proposition says that $\star X_A,\star Y_A,\star Z_A$
are eigenvectors of $R|_x$, for every $q=(x,\hat{x};A)\in O_1$,
with the corresponding eigenvalues $-K(x)$, $-K_2(x)$ and $-K(x)$.

Changing the roles of $(M,g)$ and $(\hat{M},\hat{g})$,
the proposition gives, for every $q=(x,\hat{x};A)\in O_1$, eigenvectors
$\hat{\star} \hat{X}_A,\hat{\star} \hat{Y}_A,\hat{\star} \hat{Z}_A$
are eigenvectors of $\hat{R}|_x$, 
with the corresponding eigenvalues $-\hat{K}(\hat{x}),-\hat{K}_2(\hat{x}),-\hat{K}(x)$.

\item[(b)] Moreover, the eigenvalues $K$ and $\hat{K}$ coincide on the set of points
that can be reached, locally, by the rolling.
More precisely, Proposition \ref{pr:Rol1:good_basis} tells us that
\[
-\hat{K}(\hat{x})=-K(x),\quad \forall (x,\hat{x})\in \pi_Q(O_1)
\]
and that this eigenvalue is at least a double eigenvalue for  both $R|_x$ and $\hat{R}|_{\hat{x}}$.

\item[(c)]
It is also seen that the above at-least-double eigenvalue cannot be a triple
eigenvalue for both $R|_x$ and $\hat{R}|_{\hat{x}}$ at the same time,
for $(x,\hat{x})\in\pi_Q(O_1)$.
Indeed, if $K_2(x)=K(x)$ and $\hat{K}_2(\hat{x})=\hat{K}(\hat{x})$,
then clearly this would imply that $\Rol_q=0$, which
contradicts the fact that $q\in O_1$ implies $\rank\Rol_q=1$.

\item[(d)] Finally, notice that
it is not clear that the assignments $q\mapsto X_A,Z_A$ can be made locally smoothly on $O_1$.
However, it is the case for the assignment $q\mapsto Y_A$.
In addition, for every $q=(x,\hat{x};A)\in O_1$,
the choice of $Y_A$ and $\hat{Y}_A$ are uniquely determined
up to multiplication by $-1$.
Indeed, $\star Y_A=Z_A\wedge X_A$ is a unit eigenvector of $\widetilde{\Rol}_q$
corresponding to the simple non-zero eigenvalue $-K_2^\Rol(q)$
(it is non-zero since $\rank\Rol_q=1$, $q\in O_1$).
By symmetry, the same claim holds of $\hat{Y}_A$ as well.
Moreover, this implies that
\[
AY_A=\pm\hat{Y}_A,\quad \forall q=(x,\hat{x};A)\in O_1.
\]
\end{itemize}
\end{remark}

We begin by the following simple lemma.

\begin{lemma}\label{le:Rol1:good_basis}
For every $q=(x,\hat{x};A)\in O_1$
and any orthonormal pair (which exists) $X_A,Y_A\in T|_x M$ such that
$X_A,Y_A,Z_A:=\star (X_A\wedge Y_A)$
is an oriented orthonormal basis of $T|_x M$
and $\Rol_q(X_A\wedge Y_A)=0$, $\Rol_q(Y_A\wedge Z_A)=0$,
one has with respect to the basis $X_A,Y_A,Z_A$,
\[
R(X_A\wedge Y_A)&=\qmatrix{
0 & K_A & \alpha_A \cr
-K_A & 0 & -\beta_A \cr
-\alpha_A & \beta_A & 0
},\quad \star R(X_A\wedge Y_A)=\qmatrix{\beta_A \cr \alpha_A \cr -K_A} \\
R(Y_A\wedge Z_A)&=\qmatrix{
0 & -\beta_A & \xi_A \cr
\beta_A & 0 & K^1_A \cr
-\xi_A & -K^1_A & 0
},\quad \star R(Y_A\wedge Z_A)=\qmatrix{-K^1_A \cr \xi_A \cr \beta_A} \\
R(Z_A\wedge X_A)&=\qmatrix{
0 & -\alpha_A & -K^2_A \cr
\alpha_A & 0 & -\xi_A \cr
K^2_A & \xi_A & 0
},\quad \star R(Z_A\wedge X_A)=\qmatrix{\xi_A \cr -K^2_A \cr \alpha_A} \\
\widetilde{\Rol}_q(X_A\wedge Y_A)&=0, \\
\widetilde{\Rol}_q(Y_A\wedge Z_A)&=0, \\
\widetilde{\Rol}_q(Z_A\wedge X_A)&=\qmatrix{
0 & 0 & -K^{\Rol}_2 \cr
0 & 0 & 0 \cr
K^{\Rol}_2 & 0& 0
},\quad \star \widetilde{\Rol}_q(Z_A\wedge X_A)=\qmatrix{0 \cr -K^{\Rol}_2 \cr 0} .
\]
Moreover, the choice of the above quantities can be made locally smoothly on $O_1$.
\end{lemma}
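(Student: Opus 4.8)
\textbf{Proof plan for Lemma \ref{le:Rol1:good_basis}.}
The plan is to produce a well-adapted orthonormal frame by simultaneously diagonalizing $R|_x$ and using the rank-one structure of $\widetilde{\Rol}_q$. Since $q\in O_1$, the symmetric endomorphism $\widetilde{\Rol}_q:\wedge^2 T|_x M\to\wedge^2 T|_x M$ has rank exactly one, so it has a $1$-dimensional eigenspace with non-zero eigenvalue (call it $-K_2^\Rol(q)$) and a $2$-dimensional kernel. As recalled in the appendix, in dimension $3$ a unit element of $\wedge^2 T|_x M$ is decomposable, so the unit generator of the non-zero eigenspace can be written as $Z_A\wedge X_A$ for some orthonormal pair $Z_A,X_A\in T|_x M$; set $Y_A:=\star(Z_A\wedge X_A)$, so that $X_A,Y_A,Z_A$ is a positively oriented orthonormal frame and $\star Y_A$ spans the non-zero eigenspace of $\widetilde{\Rol}_q$. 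The kernel of $\widetilde{\Rol}_q$ is then spanned by $\star X_A,\star Z_A$, which gives $\widetilde{\Rol}_q(X_A\wedge Y_A)=\widetilde{\Rol}_q(Y_A\wedge Z_A)=0$, and the matrix of $\widetilde{\Rol}_q(Z_A\wedge X_A)$ is forced to have the stated form with the single entry $\pm K_2^\Rol(q)$ off the diagonal (the sign/placement dictated by the identification $\so(T|_x M)\cong\wedge^2 T|_x M$ and the orientation convention, i.e. $\star(Z_A\wedge X_A)=Y_A$).

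Next I would record the matrices of $R(X_A\wedge Y_A)$, $R(Y_A\wedge Z_A)$, $R(Z_A\wedge X_A)$ in this frame simply by writing a general element of $\so(T|_x M)$ and naming its entries: $\star R(X_A\wedge Y_A)=(\beta_A,\alpha_A,-K_A)^T$, $\star R(Y_A\wedge Z_A)=(-K_A^1,\xi_A,\beta_A)^T$, $\star R(Z_A\wedge X_A)=(\xi_A,-K_A^2,\alpha_A)^T$. Here the apparent coincidences among the off-diagonal names ($\beta_A$ appearing in the first two, $\alpha_A$ in the first and third, $\xi_A$ in the second and third) are \emph{not} assumptions but consequences of the symmetry of the curvature operator $R|_x$ on $\wedge^2 T|_x M$: for instance $g(R(X_A\wedge Y_A),Y_A\wedge Z_A)=g(X_A\wedge Y_A,R(Y_A\wedge Z_A))$ forces the $Y_A\wedge Z_A$-component of $R(X_A\wedge Y_A)$ to equal the $X_A\wedge Y_A$-component of $R(Y_A\wedge Z_A)$, and similarly for the other two pairs. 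This is the same bookkeeping already used in Lemma \ref{le:Rol2:good_basis}, so I would just cite that self-symmetry argument and read off the named entries. The relation $A^{\ol{T}}\hat{R}(AX\wedge AY)A=R(X\wedge Y)-\widetilde{\Rol}_q(X\wedge Y)$ then yields \eqref{eq:good_basis_for_hatR} once $\alpha_A=\beta_A=\xi_A=0$ and $K_A^1=K_A=:K(x)$ have been established (the latter two being exactly what the final displayed matrices in the statement assert). Those vanishings are the content of the analogue of Lemmas \ref{le:Rol2:ab_zero} and \ref{le:Rol2:xi_zero}; in the $O_1$ case the argument is actually easier because two of the three $\widetilde{\Rol}_q$-blocks already vanish, so the Lie-bracket obstruction (via Corollary \ref{cor:vcomm} / Corollary \ref{cor:vert} and the fact that $\mc{O}_{\RDist}(q_0)$ is not open) immediately kills the cross terms; I would remark that the whole first displayed matrix block $\star R(X_A\wedge Y_A)$ being an eigenvector direction of $R$ follows by the same diagonalization reasoning applied to $R|_x$ restricted to the $\widetilde{\Rol}_q$-kernel, which is $2$-dimensional and $R$-invariant because $R$ and $\widetilde{\Rol}_q$ share the eigenvector $\star Y_A$ (indeed $A^{\ol T}\hat R(AZ_A\wedge AX_A)A$ must be proportional to $\star Y_A$ by \eqref{eq:good_basis_for_hatR}).

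Finally, for the local smoothness claim I would argue as in Corollary \ref{cor:Rol2:loc_smooth}: on $O_1$ the eigenvalue $-K_2^\Rol(q)$ of $\widetilde{\Rol}_q$ is simple and non-zero, so its (unit, up to sign) eigenvector $\star Y_A=Z_A\wedge X_A$ depends smoothly on $q$ locally, whence $Y_A=\star(Z_A\wedge X_A)$ is a locally smooth choice; this is all that is asserted, since (as noted in Remark \ref{re:Rol1:good_basis}(d)) $X_A$ and $Z_A$ individually need not be locally smooth. The main obstacle, such as it is, is the sign/orientation bookkeeping in matching the $\so$--$\wedge^2$ identification so that the $3\times3$ matrices come out exactly in the stated normal form; everything else is either a direct application of the spectral theorem in dimension three or a verbatim repetition of the already-established $O_2$ computations with two blocks set to zero.
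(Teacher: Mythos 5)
Your existence argument and the bookkeeping coincide, in substance, with the paper's proof, merely run from the other end of the spectral decomposition: the paper chooses an orthonormal basis $\omega_1,\omega_2$ of the two-dimensional kernel of the symmetric rank-one map $\widetilde{\Rol}_q$ and sets $X_A=\star\omega_1$, $Z_A=\star\omega_2$, $Y_A=\star\lambda$, whereas you write the decomposable unit generator of the nonzero eigenspace as $Z_A\wedge X_A$ and put $Y_A=\star(Z_A\wedge X_A)$; either way the two required vanishings hold, $\widetilde{\Rol}_q(Z_A\wedge X_A)=-K_2^{\Rol}(q)\,Z_A\wedge X_A$ is forced by symmetry, and the $R$-matrices are exactly the generic antisymmetric bookkeeping of Lemma \ref{le:Rol2:good_basis} with $K_1^\Rol=0$, $\alpha=0$. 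Up to this point you are proving exactly what the lemma asserts, by the paper's method.

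The problems are in what you add beyond that. First, you have misread the statement: the lemma does \emph{not} assert $\alpha_A=\beta_A=\xi_A=0$ nor $K_A=K^1_A$; these entries are left as free names in the displayed $R$-matrices, and their vanishing is the content of the later Lemmas \ref{le2:beta_zero}, \ref{le2:alpha_zero} and \ref{le2:xi_zero}. Moreover $\alpha_A=\xi_A=0$ is obtained only for a \emph{suitably rotated} pair (Lemma \ref{le2:alpha_zero}), not for every pair satisfying the two hypotheses, so the claim that the Lie-bracket obstruction ``immediately kills the cross terms'' for the given frame is false. Second, unlike the $O_2$ analogue, this lemma and the whole chain up to Proposition \ref{pr:Rol1:good_basis} carry no hypothesis that $\mc{O}_{\RDist}(q_0)$ is non-open (the paper stresses this explicitly just before that proposition; Lemma \ref{le2:beta_zero} only uses involutivity of the rank-one vertical distribution spanned by $\nu(A\star Y_A)$), so invoking Corollary \ref{cor:vert} and non-openness here is not admissible. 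Your remark that $\ker\widetilde{\Rol}_q$ is $R$-invariant ``because $R$ and $\widetilde{\Rol}_q$ share the eigenvector $\star Y_A$, by \eqref{eq:good_basis_for_hatR}'' is circular: that display is derived only after the vanishings are established, and $R$ need not commute with $\widetilde{\Rol}_q$ at this stage. Finally, the smoothness claim of the lemma concerns the whole frame $X_A,Y_A,Z_A$ (hence all the named entries), not just $Y_A$; Remark \ref{re:Rol1:good_basis}(d) refers to the refined choice of Proposition \ref{pr:Rol1:good_basis}, where the frame must in addition diagonalize $R$. The repair is immediate and is what the paper does: on $O_1$ the kernel of $\widetilde{\Rol}$ is a smooth rank-two subbundle, so locally pick a smooth orthonormal frame $\omega_1,\omega_2$ of it (equivalently, extend your smooth $Y_A$ to a smooth local orthonormal frame of $Y_A^\perp$) and dualize by $\star$.
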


\begin{proof}
We only need to prove the existence of
an oriented orthonormal basis $X_A$, $Y_A$ and $Z_A$
such that $\Rol_q(X_A\wedge Y_A)=0$, $\Rol_q(Y_A\wedge Z_A)=0$.
Indeed, when this has been established,
one may use Lemma \ref{le:Rol2:good_basis},
where we now have $K_1^\Rol(q)=0$, $\alpha(q)=0$ because $\Rol_q(Y_A\wedge Z_A)=0$,
to conclude.

Since for a given $q=(x,\hat{x};A)\in O_1$,
$\widetilde{\Rol}_q:\wedge^2 T|_x M\to \wedge^2 T|_x M$
is symmetric linear map that has rank 1,
it follows that its eigenspaces are orthogonal and its kernel 
has dimension exactly 2.
Thus there is an orthonormal basis $\omega_1,\omega_2,\lambda$ of $\wedge^2 T|_x M$
such that $\widetilde{\Rol}_q(\omega_i)=0$, $i=1,2$.
Taking $X_A=\star \omega_1$, $Z_A=\star \omega_2$ and $Y_A=\star \lambda$
we get, up to replacing $X_A$ with $-X_A$ if necessary,
an oriented orthonormal basis of $T|_x M$ such that
$\Rol(X_A\wedge Y_A)=0$, $\Rol(Y_A\wedge Z_A)=0$.
\end{proof}

As a consequence of the lemma and
because $A^{\ol{T}}\hat{R}(AX,AY)A=R(X,Y)-\widetilde{\Rol}_q(X,Y)$ for $X,Y\in T|_x M$,
we have that w.r.t. the oriented orthonormal basis $X_A,Y_A,Z_A$,
\begin{align}
\star A^{\ol{T}}\hat{R}(AX_A,AY_A)A&=\qmatrix{\beta_A \cr \alpha_A \cr -K_A} \nonumber \\
\star A^{\ol{T}}\hat{R}(AY_A,AZ_A)A&=\qmatrix{-K^1_A \cr \xi_A \cr \beta_A} \nonumber \\
\star A^{\ol{T}}\hat{R}(AZ_A,AX_A)A&=\qmatrix{\xi_A \cr -K^2_A+K_2^\Rol \cr \alpha_A}.
\end{align}

Notice that the assumption that $\rank \Rol_q=1$
is equivalent to
the fact that 
for every $q=(x,\hat{x};A)\in O_1$,
\begin{align}\label{eq2:rank1}
K_2^{\Rol}(q)\neq 0.
\end{align}
This implies that $Y_A$ is uniquely determined up to multiplication by $-1$
(see also Remark \ref{re:Rol1:good_basis} above).
Hence, in particular, for every $q=(x,\hat{x};A)\in O_1$, 
\[
\Rol_q(\wedge^2 TM)(A)=\spn\{\nu(A(Z_A\wedge X_A))|_q\}=\spn\{\nu(A\star Y_A)|_q\}.
\]
We will now show that,
with any (non-unique) choice of a pair $X_A,Y_A$
as in Lemma \ref{le:Rol1:good_basis},
one has that $\alpha_A=0$ and $K_A=K_A^1$.

\begin{lemma}\label{le2:beta_zero}
If one chooses any $X_A,Y_A,Z_A=\star (X_A\wedge Y_A)$ as in Lemma \ref{le:Rol1:good_basis},
then
\[
\beta_A=0,\quad K_A=K_A^1,
\quad \forall q=(x,\hat{x};A)\in \mc{O}_{\RDist}(q_0).
\]
\end{lemma}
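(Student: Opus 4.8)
\textbf{Proof proposal for Lemma \ref{le2:beta_zero}.}
The plan is to exploit the same mechanism used throughout Subsubsection \ref{sss:local-O2}: the orbit $\mc{O}_{\RDist}(q_0)$ is \emph{not} open in $Q$ (this is assumed for the $O_1$-analysis), so any triple of $\pi_{\mc{O}_{\RDist}(q_0)}$-vertical vector fields tangent to the orbit must be linearly dependent at a dense set of points, by Corollary \ref{cor:vert}. Since $q\in O_1$ forces $\rank\Rol_q=1$, the orbit contains at every point of $O_1$ the single vertical direction $\nu(A\star Y_A)|_q = \nu(A(Z_A\wedge X_A))|_q$ (up to sign $Y_A$ is the canonically determined eigenvector of $\widetilde{\Rol}_q$, cf. Remark \ref{re:Rol1:good_basis}(d)). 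Taking vector fields $X,Y,Z,W\in\VF(M)$ with prescribed values $X_A,Y_A,Z_A$ at $x$, all the brackets $[\nu(\Rol(X\wedge Y)(\cdot)),\nu(\Rol(Z\wedge W)(\cdot))]|_q$ lie in $V|_q(\pi_{\mc{O}_{\RDist}(q_0)})$ by Proposition \ref{pr:R_comm_L2}, hence in the one-dimensional span of $\nu(A\star Y_A)|_q$. The idea is to feed the explicit formula of Corollary \ref{cor:vcomm} with the two natural choices $(Z|_x,W|_x)=(Y_A,Z_A)$ and $(Z|_x,W|_x)=(Z_A,X_A)$, using $\Rol_q(X_A\wedge Y_A)=0$ and the matrix expressions from Lemma \ref{le:Rol1:good_basis}, and to read off that certain components of the result — the ones transverse to $\star Y_A$ — must vanish.

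First I would run the computation of Corollary \ref{cor:vcomm} with $X|_x=X_A$, $Y|_x=Y_A$ and $Z|_x=Y_A$, $W|_x=Z_A$. Because $\Rol_q(X_A\wedge Y_A)=0$, the terms $\hat R(\Rol(X,Y)(A)Z,AW)A$ and $\hat R(AZ,\Rol(X,Y)(A)W)A$ drop out, and one is left with $A[R(X_A\wedge Y_A),R(Y_A\wedge Z_A)]_{\so}-[\hat R(AX_A\wedge AY_A),\hat R(AY_A\wedge AZ_A)]_{\so}A$ plus the two $\widetilde{\Rol}_q(Y_A\wedge Z_A)$-terms, which themselves vanish since $\widetilde{\Rol}_q(Y_A\wedge Z_A)=0$. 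Converting everything to the $\star X_A,\star Y_A,\star Z_A$ basis via the cross-product identification (exactly as in the parts (a) and (b) of the proof of Lemma \ref{le:Rol2:ab_zero}, but now with $\xi_A$ still present and $K_1^\Rol=\alpha=0$), the bracket becomes a wedge of two explicit column vectors; requiring its $\star X_A$- and $\star Z_A$-components to vanish yields two scalar equations relating $\beta_A$, $K_A$, $K_A^1$, $\xi_A$ and $K_2^\Rol$. Then I would repeat with $Z|_x=Z_A$, $W|_x=X_A$, obtaining two further scalar equations. The bookkeeping is routine — it is a transcription of the two determinant/wedge computations already carried out in Lemma \ref{le:Rol2:ab_zero}, specialized to the rank-one situation.

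The outcome I expect is a small linear-algebraic system in $(\beta_A,\;K_A-K_A^1)$ whose coefficient matrix has determinant a nonzero multiple of $K_2^\Rol(q)$ (or of $K_2^\Rol(q)$ together with the already-known data), which by \eqref{eq2:rank1} is nonzero on $O_1$; this forces $\beta_A=0$ and $K_A=K_A^1$. Concretely, after setting $\beta_A=0$, one of the equations should collapse to $K_2^\Rol\cdot(K_A-K_A^1)=0$ (or $K_2^\Rol\,\xi_A=\text{something}$ feeding back into it), giving the second identity. The main obstacle is purely computational: organizing the wedge products so that the transverse components are isolated cleanly, and making sure the right tangency constraint (span of $\star Y_A$, not span of $\star X_A,\star Y_A$ as in the rank-two case) is imposed — here \emph{two} components, not one, must be killed, which is precisely what the rank-one hypothesis buys us. Once $\beta_A=0$ and $K_A=K_A^1$ are in hand, $\star Y_A$ being an eigenvector of $R|_x$ with eigenvalue $-K_2(x)$ and $\star X_A,\star Z_A$ eigenvectors with the common eigenvalue $-K_A=-K(x)$ follows, matching the statement of Proposition \ref{pr:Rol1:good_basis}, so this lemma is really the computational heart of that proposition. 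I would also note, as in the $O_2$ case, that no genuine smoothness of the assignment $q\mapsto X_A,Z_A$ is needed: the argument is pointwise at each $q\in O_1$, with $X,Y,Z,W$ chosen as arbitrary local extensions of the values $X_A,Y_A,Z_A$.
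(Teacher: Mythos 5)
There is a genuine gap at the single step your whole computation rests on: the claim that each bracket $[\nu(\Rol(X\wedge Y)(\cdot)),\nu(\Rol(Z\wedge W)(\cdot))]|_q$ must lie in the one-dimensional span of $\nu(A\star Y_A)|_q$. Your justification — the bracket is vertical and tangent to the orbit, and $\rank\Rol_q=1$ — is a non sequitur: rank-one-ness bounds the image of $\Rol_q$, not the vertical part of $T|_q\mc{O}_{\RDist}(q_0)$, which could a priori be two- or three-dimensional. Your fallback via non-openness and Corollary \ref{cor:vert} does not close this either. First, the lemma carries no non-openness hypothesis; the paper states explicitly, just before Proposition \ref{pr:Rol1:good_basis}, that the $O_1$ analysis up to that point only needs $O_1\neq\emptyset$. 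Second, even granting non-openness, the dimension count you import from the $O_2$ case rests on $\pi_Q|_{O_2}$ being a submersion (Lemma \ref{le:Rol2:bundle}), and that proof used the \emph{two} independent vertical directions $\nu(A\star X_A),\nu(A\star Y_A)$; in the rank-one situation only $\nu(A\star Y_A)$ is available, the vectors $A(\star Y_A)W$ span merely a two-dimensional subspace of $T|_{\hat{x}}\hat{M}$, and the submersion property does not follow. And even a bound $\dim V|_q(\pi_{\mc{O}_{\RDist}(q_0)})\leq 2$ would let you kill only one transverse component per bracket, whereas you need both the $\star X_A$- and $\star Z_A$-components to vanish to extract $\beta_A=0$ and $K_A=K_A^1$.

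The paper gets the needed one-dimensional constraint for free, with no reference to the orbit's dimension at all: since $\widetilde{\Rol}_{q'}$ has rank one for every $q'\in O_1$ and its image is spanned by $A'\star Y_{A'}$, with $Y_{A'}$ canonically determined up to sign (Remark \ref{re:Rol1:good_basis}), the assignment $q'\mapsto \nu(\Rol_{q'}(\wedge^2 T|_{x'}M))|_{q'}=\spn\{\nu(A'\star Y_{A'})|_{q'}\}$ is a \emph{smooth rank-one distribution} on $O_1$, hence automatically involutive; both $\nu(\Rol(X\wedge Y)(\cdot))$ and $\nu(\Rol(Z\wedge W)(\cdot))$ are sections of it, so their bracket is a section as well. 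With that in hand, a single application of Corollary \ref{cor:vcomm} with $(X,Y,Z,W)|_x=(X_A,Y_A,Z_A,X_A)$ — your second computation; the one with $(Z,W)|_x=(Y_A,Z_A)$ is not needed — yields, in the basis $\star X_A,\star Y_A,\star Z_A$, a column whose first and third entries are nonzero multiples of $K_2^\Rol\,(K_A-K_A^1)$ and $\beta_A K_2^\Rol$; their vanishing together with $K_2^\Rol(q)\neq 0$ from (\ref{eq2:rank1}) gives the lemma. Note also that, contrary to your closing remark, smoothness of the assignment $q\mapsto Y_A$ (up to sign, i.e. of the line field) is genuinely used: it is what makes the involutivity argument available, so the proof is not purely pointwise.
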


\begin{proof}
Fix $q=(x,\hat{x};A)\in O_1$.
Choosing in Corollary \ref{cor:vcomm} $X,Y\in\VF(M)$
such that $X|_x=X_A$, $Y|_x=Y_A$,
we get, since $\Rol_q(X_A\wedge Y_A)=0$,
\[
& \nu|_q^{-1}\big[\nu(\Rol(X,Y)(\cdot)),\nu(\Rol(Z,W)(\cdot))\big]\big|_q \nonumber \\
=&A\big[R(X_A\wedge Y_A),R(Z|_x\wedge W|_x)\big]_{\so}-\big[\hat{R}(AX_A\wedge AY_A),\hat{R}(AZ|_x\wedge AW|_x)\big]_{\so}A \nonumber \\
&+\hat{R}(AX_A\wedge A\widetilde{\Rol}_q(Z|_x\wedge W|_x)Y_A)A+\hat{R}(A\widetilde{\Rol}_q(Z|_x\wedge W|_x)X_A,AY_A)A \nonumber.
\]

Since $q'=(x',\hat{x}';A')\mapsto \nu(\Rol(\wedge^2 T|_{x'}M)(A'))|_{q'}=\spn\{\nu(A'\star Y_{A'})\}$
is a smooth rank 1 distribution on $O_1$,
it follows that it is involutive
and hence for all $X,Y,Z,W\in\VF(M)$,
\[
\big[\nu(\Rol(X\wedge Y)(\cdot)),\nu(\Rol(Z\wedge W)(\cdot))\big]\big|_q
\in \spn\{\nu(A\star Y_A)|_q\},
\]
where we used that $\Rol(\wedge^2 TM)(A)=\spn\{A\star Y_A\}$ as observed above.

We compute the right hand side of this formula in different cases. 
We begin by taking any smooth vector fields $X,Y,Z,W$
with $X|_x=X_A$, $Y|_x=Y_A$, $Z|_x=Z_A$, $W|_x=X_A$.
One gets
\[
& A^{\ol{T}}\nu|_q^{-1}\big[\nu(\Rol(X,Y)(\cdot)),\nu(\Rol(Z,W)(\cdot))\big]\big|_q \nonumber \\
=&\big[R(X_A\wedge Y_A),R(Z_A\wedge X_A)\big]_{\so}-\big[A^{\ol{T}}\hat{R}(AX_A\wedge AY_A)A,A^{\ol{T}}\hat{R}(AZ_A\wedge AX_A)A\big]_{\so} \nonumber \\
&+A^{\ol{T}}\hat{R}(AX_A\wedge \Rol(Z_A\wedge X_A)(A)Y_A)A+A^{\ol{T}}\hat{R}(\Rol(Z_A\wedge X_A)(A)X_A\wedge AY_A)A \\
=&\qmatrix{\beta_A \cr \alpha_A \cr -K_A}\wedge \qmatrix{0 \cr -K_2^\Rol \cr 0}+A^{\ol{T}}\hat{R}(AX_A\wedge 0)A+A^{\ol{T}}\hat{R}(K_2^\Rol AZ_A\wedge AY_A)A \\
=&\qmatrix{-K_AK_2^\Rol \cr 0 \cr -\beta_A K_2^\Rol}-K_2^\Rol\qmatrix{-K_A^1 \cr \xi_A \cr \beta_A}
=\qmatrix{K_2^\Rol(-K_A+K_A^1) \cr K_2^\Rol \xi_A \cr -2\beta_A K_2^\Rol}\in \spn\{\nu(A\star Y_A)|_q\}.
\]
Because $K_2^\Rol(q)\neq 0$, this immediately implies that
\[
-K_A+K_A^1=0,\quad \beta_A=0.
\]
This completes the proof.
\end{proof}

We will now rotate $X_A,Y_A,Z_A$ in such a way that we may also set $\alpha_A$ equal to zero.

\begin{lemma}\label{le2:alpha_zero}
For every $q=(x,\hat{x};A)\in O_1$
there are orthonormal $X_A,Y_A\in T|_x M$
such that $X_A,Y_A,Z_A=\star (X_A\wedge Y_A)$ is an oriented orthonormal basis of $T|_x M$
with respect to which in Lemma \ref{le:Rol1:good_basis}
one has $\alpha_A=0$.
\end{lemma}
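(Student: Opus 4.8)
The statement to prove is Lemma~\ref{le2:alpha_zero}: given $q=(x,\hat{x};A)\in O_1$, we want an oriented orthonormal basis $X_A,Y_A,Z_A=\star(X_A\wedge Y_A)$ of $T|_x M$ in which $\alpha_A=0$ (in the notation of Lemma~\ref{le:Rol1:good_basis}). The natural approach is to start with \emph{any} basis $X_A,Y_A,Z_A$ as furnished by Lemma~\ref{le:Rol1:good_basis} (so that $\widetilde{\Rol}_q(X_A\wedge Y_A)=0$, $\widetilde{\Rol}_q(Y_A\wedge Z_A)=0$, and, by Lemma~\ref{le2:beta_zero}, already $\beta_A=0$ and $K_A=K_A^1$), and then rotate $X_A,Z_A$ within the plane $Y_A^\perp$, keeping $Y_A$ fixed. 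This is exactly the device used in the proof of Lemma~\ref{le:Rol2:xi_zero}: set
\[
\qmatrix{X_A(t)\cr Z_A(t)}=\qmatrix{\cos(t) & \sin(t)\cr -\sin(t) & \cos(t)}\qmatrix{X_A\cr Z_A},
\]
with $Y_A(t):=Y_A$. Since $\star\big(X_A(t)\wedge Y_A(t)\big)=\star(X_A\wedge Y_A)=Z_A(t)$ up to the rotation sign conventions, each triple $X_A(t),Y_A(t),Z_A(t)$ is again an oriented orthonormal basis, and it still has $\widetilde{\Rol}_q(X_A(t)\wedge Y_A(t))=\widetilde{\Rol}_q(Y_A(t)\wedge Z_A(t))=0$ because these $2$-planes stay inside the $2$-dimensional kernel of $\widetilde{\Rol}_q$ (recall $\rank\widetilde{\Rol}_q=1$ on $O_1$, so the kernel is exactly $\spn\{X_A\wedge Y_A,\ Y_A\wedge Z_A\}=Y_A^\perp$-plane directions, i.e. the rotated planes remain in the kernel). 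Hence Lemma~\ref{le:Rol1:good_basis} applies verbatim to $X_A(t),Y_A(t),Z_A(t)$, producing coefficients $\alpha_A(t)$, $\xi_A(t)$, etc., and by Lemma~\ref{le2:beta_zero} we automatically have $\beta_A(t)=0$ and $K_A(t)=K_A^1(t)$ for all $t$.

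The remaining task is to show $\alpha_A(t_0)=0$ for some $t_0$. The quantity $\alpha_A(t)$ is, by the formulas in Lemma~\ref{le:Rol1:good_basis}, an off-diagonal matrix entry of $R|_x$ read in the basis $\star X_A(t),\star Y_A(t),\star Z_A(t)$ of $\wedge^2 T|_x M$; concretely it is (up to sign) $g\big(R(\star X_A(t)\wedge\star Y_A(t)),\ \text{appropriate generator}\big)$, i.e. it measures the failure of $\star X_A(t)$ and $\star Z_A(t)$ to be eigenvectors of $R|_x$ within the plane $\star(Y_A^\perp$-directions$)$. But $R|_x:\wedge^2 T|_x M\to\wedge^2 T|_x M$ is a symmetric operator, and $\star Y_A$ is already an eigenvector of $R|_x$ (with eigenvalue $-K_2(x)$, by Lemma~\ref{le:Rol1:good_basis} / Proposition~\ref{pr:Rol1:good_basis}), so $R|_x$ preserves the orthogonal complement $(\star Y_A)^\perp = \spn\{\star X_A,\star Z_A\}$. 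A symmetric operator on this $2$-dimensional space has an orthonormal eigenbasis $u_1,u_2\in(\star Y_A)^\perp$; since $\spn\{\star X_A(t),\star Z_A(t)\}$ sweeps out, as $t$ varies, all oriented orthonormal bases of $(\star Y_A)^\perp$, there is a $t_0$ with $\star X_A(t_0)=u_1$, $\star Z_A(t_0)=u_2$ (swapping the labels of $u_1,u_2$ if necessary to match orientation). For this $t_0$ both $\star X_A(t_0)$ and $\star Z_A(t_0)$ are eigenvectors of $R|_x$, which forces $\alpha_A(t_0)=0$ (and, as a byproduct, $\xi_A(t_0)=0$, so one even recovers the cleaner normal form of Proposition~\ref{pr:Rol1:good_basis}). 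Replacing $X_A,Y_A,Z_A$ by $X_A(t_0),Y_A(t_0),Z_A(t_0)$ gives the desired basis.

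The main —- and really only —- point requiring care is checking that the rotation genuinely keeps us inside the hypotheses of Lemma~\ref{le:Rol1:good_basis}, namely that $\widetilde{\Rol}_q(X_A(t)\wedge Y_A(t))=0$ and $\widetilde{\Rol}_q(Y_A(t)\wedge Z_A(t))=0$ for all $t$; this is where one uses that $\ker\widetilde{\Rol}_q$ is exactly $2$-dimensional on $O_1$ (equivalently $K_2^{\Rol}(q)\neq 0$, Eq.~\eqref{eq2:rank1}) and that the two rotated $2$-vectors $X_A(t)\wedge Y_A(t)$ and $Y_A(t)\wedge Z_A(t)$ span the same plane in $\wedge^2 T|_x M$ as the original pair, hence lie in the kernel. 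Everything else is the linear-algebra fact that a symmetric endomorphism of a Euclidean plane is diagonalizable, applied to $R|_x$ restricted to $(\star Y_A)^\perp$. As in Lemma~\ref{le:Rol2:xi_zero}, I would also note that the selection of $Y_A$ can be made locally smoothly on $O_1$ (since $\star Y_A$ is the unit eigenvector of $\widetilde{\Rol}_q$ for the simple nonzero eigenvalue $-K_2^{\Rol}(q)$), but the smoothness of the full normalized triple $q\mapsto X_A,Z_A$ is a separate matter to be addressed afterwards, exactly as flagged in Remark~\ref{re:Rol1:good_basis}(d).
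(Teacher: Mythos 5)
Your setup (rotating $X_A,Z_A$ inside the plane $Y_A^\perp$ while keeping $Y_A$ fixed, and checking via the rank-one condition \eqref{eq2:rank1} that the rotated $2$-planes stay in the $2$-dimensional kernel of $\widetilde{\Rol}_q$, so that Lemmas \ref{le:Rol1:good_basis} and \ref{le2:beta_zero} still apply) is exactly the paper's, but the step by which you conclude $\alpha_A(t_0)=0$ has a genuine gap: you assert that $\star Y_A$ is an eigenvector of $R|_x$, citing Lemma \ref{le:Rol1:good_basis} / Proposition \ref{pr:Rol1:good_basis}. Lemma \ref{le:Rol1:good_basis} gives no such thing — there $\star Y_A=Z_A\wedge X_A$ is an eigenvector of $\widetilde{\Rol}_q$ (eigenvalue $-K_2^{\Rol}(q)\neq 0$), not of $R|_x$; indeed, with $\beta_A=0$ and $K_A=K_A^1$ from Lemma \ref{le2:beta_zero} one has $R(\star Y_A)=\xi_A\,\star X_A-K_A^2\,\star Y_A+\alpha_A\,\star Z_A$, so $\star Y_A$ is an $R$-eigenvector precisely when $\alpha_A=\xi_A=0$, which is (part of) what the chain of lemmas is trying to establish. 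Citing Proposition \ref{pr:Rol1:good_basis} is circular, since that proposition is proved via Lemmas \ref{le:Rol1:good_basis}--\ref{le2:xi_zero}; the eigenvalue structure it asserts only becomes available in Lemma \ref{le2:xi_zero}, whose proof is not pure linear algebra but uses the vanishing of $\nu(\Rol(X\wedge Y)(\cdot))$ on an open piece of the orbit together with the bracket computation. You are in effect transplanting the argument of Lemma \ref{le:Rol2:xi_zero}, but in the $O_2$ case the analogous fact ($\star Z_A$ an $R$-eigenvector) is available beforehand from Lemma \ref{le:Rol2:ab_zero}; in the $O_1$ case it is not.

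Concretely, your claimed byproduct $\xi_A(t_0)=0$ cannot hold in general: by symmetry of $R|_x$, $\alpha_A(t)=g(R(\star Y_A),\star Z_A(t))$ and $\xi_A(t)=g(R(\star Y_A),\star X_A(t))$ are the components, in the rotating frame of $(\star Y_A)^\perp$, of the fixed vector obtained by projecting $R(\star Y_A)$ onto that plane, so $\alpha_A(t)^2+\xi_A(t)^2$ is constant in $t$, and you can never annihilate both by a rotation unless both vanish already. The repair is elementary and is what the paper does: under the rotation one computes $\alpha_A(t)=\pm\big(\alpha_A\cos(t)-\xi_A\sin(t)\big)$, and (assuming $\alpha_A\neq 0$, otherwise there is nothing to prove) one chooses $t_0$ with $\cot(t_0)=\xi_A/\alpha_A$; this kills $\alpha_A$, while $\beta_A(t_0)=0$ and $K_A(t_0)=K_A^1(t_0)$ persist by Lemma \ref{le2:beta_zero}, and the vanishing of $\xi_A$ is then obtained separately in Lemma \ref{le2:xi_zero} using the geometric input described above.
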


\begin{proof}
Fix $q=(x,\hat{x};A)\in O_1$, choose any $X_A,Y_A,Z_A=\star (X_A\wedge Y_A)$ as in Lemma \ref{le:Rol1:good_basis}
and suppose $\alpha_A\neq 0$ (otherwise we are done).
We let $t\in\R$,
\[
\qmatrix{X_A(t)\cr Z_A(t)}=\qmatrix{\cos(t) & \sin(t) \cr -\sin(t) & \cos(t)}\qmatrix{X_A \cr Z_A}.
\]
Then clearly $Y_A(t):=\star (X_A(t)\wedge Z_A(t))=\star (X_A\wedge Z_A)=Y_A$
and $X_A(t),Y_A(t),Z_A(t)$ is an orthonormal positively oriented basis of $T|_x M$.
Since $\widetilde{\Rol}_q$ is a symmetric map $\wedge^2 T|_x M\to \wedge^2 T|_x M$
and since $\star X_A,\star Z_A$ are its eigenvectors corresponding to the eigenvalue $0$,
it follows that $\star X_A(t),\star Z_A(t)$,
which are just rotated $\star X_A,\star Z_A$ in the plane that the form,
are eigenvectors of $\Rol_q$ corresponding to eigenvalue $0$,
i.e. $\Rol_q(X_A(t)\wedge Y_A)=0$, $\Rol_q(Y_A\wedge Z_A(t))=0$ for all $t\in\R$.

Hence the conclusion of Lemma \ref{le:Rol1:good_basis}
holds for basis $X_A(t),Y_A,Z_A(t)$
and we write $\xi_A(t),\alpha_A(t),\beta_A(t),K_A(t),K_A^1(t),K_A^2(t)$
for the coefficients of the matrices of $R$ given there w.r.t. $X_A(t),Y_A,Z_A(t)$.
Then Lemma \ref{le2:beta_zero} implies that $\beta_A(t)=0$, $K_A(t)=K_A^1(t)$ for all $t\in\R$.
Computing now
\[
\alpha_A(t)=&g(R(X_A(t)\wedge Y_A)Z_A(t),X_A(t))
=g(R(Z_A(t)\wedge X_A(t))X_A(t),Y_A(t)) \\
=&-g(R(Z_A\wedge X_A)Y_A,X_A(t)) \\
=&-g(-\alpha_A X_A+\xi_A Z_A,\cos(t)X_A+\sin(t)Z_A) \\
=&-\alpha_A\cos(t)+\xi_A\sin(t).
\]
Thus choosing $t_0\in\R$ such that
\[
\cot(t_0)=\frac{\xi_A}{\alpha_A},
\]
we get that $\alpha_A(t_0)=0$.
As already observed, we also have $\beta_A(t_0)=0$, $K_A^1(t_0)=K_A(t_0)$
and $\Rol_q(X_A(t_0)\wedge Y_A)=0$,
$\Rol_q(Y_A\wedge Z_A(t_0))=0$.
\end{proof}

Since $\alpha_A$ and $\beta_A$ vanish w.r.t $X_A,Y_A,Z_A$,
as chosen by the previous lemma, 
we have that $-K_A$ is an eigenvalue of $R|_x$
with eigenvector $X_A\wedge Y_A$, where $q=(x,\hat{x};A)\in O_1$.
Knowing this, we may prove that even $\xi_A$ is zero as well and that (automatically) $-K_A$
is a at least a double eigenvalue of $R|_x$.
This is given in the lemma that follows.

\begin{lemma}\label{le2:xi_zero}
If $q=(x,\hat{x};A)\in O_1$ and
$X_A,Y_A,Z_A$ as in Lemma \ref{le2:alpha_zero},
then $\xi_A=0$.
\end{lemma}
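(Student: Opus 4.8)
The strategy mirrors the one used in the case $q\in O_2$ (Lemmas \ref{le:Rol2:good_basis}--\ref{le:Rol2:xi_zero}): having already brought $R$ into the normal form of Lemma \ref{le2:alpha_zero}, in which $\alpha_A=\beta_A=0$ and $K_A=K_A^1$, I want to exploit that the rank-$1$ distribution $q\mapsto \nu(\Rol(\wedge^2 TM)(A))|_q=\spn\{\nu(A\star Y_A)|_q\}$ on $O_1$ is involutive, so that every bracket of the form $[\nu(\Rol(X\wedge Y)(\cdot)),\nu(\Rol(Z\wedge W)(\cdot))]|_q$ must again lie in $\spn\{\nu(A\star Y_A)|_q\}$. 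This forces algebraic identities on the coefficients $\xi_A$ and the curvature eigenvalues, exactly as (\ref{eq2:rank1}) together with Lemma \ref{le2:beta_zero} forced $\beta_A=0$ and $K_A=K_A^1$.

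\textbf{Key steps.} First I would fix $q=(x,\hat{x};A)\in O_1$ and the basis $X_A,Y_A,Z_A$ of Lemma \ref{le2:alpha_zero}, and recall from that lemma and Lemma \ref{le2:beta_zero} that, with respect to it,
\[
\star R(X_A\wedge Y_A)=\qmatrix{0\cr 0\cr -K_A},\quad
\star R(Y_A\wedge Z_A)=\qmatrix{-K_A\cr \xi_A\cr 0},\quad
\star R(Z_A\wedge X_A)=\qmatrix{\xi_A\cr -K_A^2\cr 0},
\]
and $\star\widetilde{\Rol}_q(Z_A\wedge X_A)=\qmatrix{0\cr -K_2^\Rol\cr 0}$ with $K_2^\Rol(q)\neq 0$, while $\widetilde{\Rol}_q$ kills $X_A\wedge Y_A$ and $Y_A\wedge Z_A$. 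Then I would extend $X_A,Y_A,Z_A$ to vector fields $X,Y,Z$ near $x$, apply Corollary \ref{cor:vcomm} to the pair $(X\wedge Y,\ Y\wedge Z)$ (the "missing" pair, since the pair $(X\wedge Y,\ Z\wedge X)$ was already used in Lemma \ref{le2:beta_zero}), and compute the right-hand side in the frame $\star X_A,\star Y_A,\star Z_A$. Using $\Rol_q(X_A\wedge Y_A)=0$, $\widetilde{\Rol}_q(Y_A\wedge Z_A)=0$, and $\star A^{\ol{T}}\hat R(AY_A\wedge AZ_A)A=\star R(Y_A\wedge Z_A)=\qmatrix{-K_A\cr \xi_A\cr 0}$, the bracket reduces to a cross product type expression of the form $\qmatrix{0\cr0\cr-K_A}\wedge\qmatrix{-K_A\cr \xi_A\cr 0}$ plus $\hat R$-terms coming from $\widetilde{\Rol}_q(Y_A\wedge Z_A)X_A$ and $\widetilde{\Rol}_q(Y_A\wedge Z_A)Y_A$, both of which vanish because $\widetilde{\Rol}_q(Y_A\wedge Z_A)=0$. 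This leaves a vector whose $\star X_A$- and $\star Z_A$-components are (up to sign) $K_A\xi_A$; involutivity of the rank-$1$ distribution demands that these components vanish, hence $K_A\xi_A=0$. To conclude $\xi_A=0$ itself I then need to rule out the degenerate possibility $K_A=0$ together with $\xi_A\neq 0$: in that case one inspects the $\star X_A,\star Y_A,\star Z_A$ entries of the full curvature operator and of $\widetilde{\Rol}_q$, notes that $\xi_A$ would make $\star X_A$ and $\star Z_A$ no longer eigenvectors while $\widetilde{\Rol}_q$ still annihilates $X_A\wedge Y_A$ and $Y_A\wedge Z_A$, and derives a contradiction with $\rank\widetilde{\Rol}_q=1$ (equivalently $K_2^\Rol(q)\neq 0$) by a short linear-algebra argument — possibly after one further bracket computation, e.g. $[\nu(\Rol(Y\wedge Z)(\cdot)),\nu(\Rol(Z\wedge X)(\cdot))]$, which must again land in $\spn\{\nu(A\star Y_A)|_q\}$ and produces a term proportional to $\xi_A K_2^\Rol$.

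\textbf{Main obstacle.} The routine part is the bracket computation via Corollary \ref{cor:vcomm}; the delicate point is the endgame after obtaining $K_A\xi_A=0$, namely excluding the branch $K_A=0,\ \xi_A\neq 0$. Here one cannot simply invoke the rank condition blindly, because $\widetilde{\Rol}_q$ and $R|_x$ live on different bundles; the cleanest route is to use a second commutator relation to get an independent equation linear in $\xi_A$ with coefficient $K_2^\Rol(q)\neq 0$, which immediately yields $\xi_A=0$ and makes the case split unnecessary. I expect that one extra bracket — most naturally $[\nu(\Rol(Y\wedge Z)(\cdot)),\nu(\Rol(Z\wedge X)(\cdot))]|_q$ or $[\nu(\Rol(X\wedge Y)(\cdot)),\nu(\Rol(Y\wedge Z)(\cdot))]|_q$ handled more carefully — suffices, exactly paralleling how parts (a) and (b) of Lemma \ref{le:Rol2:ab_zero} combined into a $2\times2$ invertible system. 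Once $\xi_A=0$ is established, Proposition \ref{pr:Rol1:good_basis} follows at once, since then $\star X_A,\star Y_A,\star Z_A$ are all eigenvectors of $R|_x$ with eigenvalues $-K_A,-K_A^2,-K_A$ (so $-K_A$ is at least double), and the corresponding statements for $\hat R|_{\hat x}$ and the identity $\hat K(\hat x)=K(x)$ on $\pi_Q(O_1)$ come from the $Q(\hat M,M)$-symmetry together with $\widetilde{\Rol}_q(X_A\wedge Y_A)=\widetilde{\Rol}_q(Y_A\wedge Z_A)=0$.
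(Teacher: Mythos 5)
There is a genuine gap, and it sits exactly at your "key step". The bracket you propose to compute, for the pair $(X\wedge Y,\,Y\wedge Z)$ with values $X_A\wedge Y_A$ and $Y_A\wedge Z_A$ at $q$, is identically zero and yields no relation at all. Indeed, in Proposition \ref{vert-vert0} (equivalently Corollary \ref{cor:vcomm}) every term carries a factor $\Rol(X_A,Y_A)(A)$ or $\Rol(Y_A,Z_A)(A)$ — both zero on $O_1$ — except the pair of commutators $A[R(X_A,Y_A),R(Y_A,Z_A)]_{\so}-[\hat{R}(AX_A,AY_A),\hat{R}(AY_A,AZ_A)]_{\so}A$; and since $\widetilde{\Rol}_q$ annihilates both $X_A\wedge Y_A$ and $Y_A\wedge Z_A$, one has $\hat{R}(AX_A,AY_A)=AR(X_A,Y_A)A^{\ol{T}}$ and $\hat{R}(AY_A,AZ_A)=AR(Y_A,Z_A)A^{\ol{T}}$, so these two commutators cancel exactly. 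The $[\hat R,\hat R]$ term you dropped is precisely what kills your cross product $\qmatrix{K_A\xi_A\cr K_A^2\cr 0}$, so the relation $K_A\xi_A=0$ is not obtained. Your fallback pair $(Y\wedge Z,\,Z\wedge X)$ fares no better: the surviving pieces are $A[R(Y_A,Z_A),\widetilde{\Rol}_q(Z_A\wedge X_A)]_{\so}$ and $K_2^{\Rol}AR(X_A,Y_A)$, whose $\star$-duals are $\qmatrix{0\cr 0\cr K_AK_2^{\Rol}}$ and $\qmatrix{0\cr 0\cr -K_AK_2^{\Rol}}$, so this bracket also vanishes identically. More structurally, pointwise involutivity of the rank-one distribution can never see $\xi_A$: it only constrains the $\star X_A$- and $\star Z_A$-components of such brackets, whereas the coefficient $K_2^{\Rol}\xi_A$ sits (already in the computation of Lemma \ref{le2:beta_zero}) in the unconstrained $\star Y_A$-direction.

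What is actually needed — and what the paper does — is different in kind. First one proves that $-K_A$ is at least a double eigenvalue of $R|_x$, by contradiction: if it were simple near $x$, then $X_A\wedge Y_A$ could be chosen as a smooth eigenvector field of $R$ on an open set $U$, so that $\nu(\Rol(X\wedge Y)(\cdot))$ is the \emph{zero vector field} on $\pi_{Q,M}^{-1}(U)\cap O_1$; then the bracket of Lemma \ref{le2:beta_zero} (pair $(X\wedge Y,\,Z\wedge X)$) is exactly zero, not merely contained in $\spn\{\nu(A\star Y_A)\}$, hence its middle entry $K_2^{\Rol}\xi_A$ vanishes and $\xi_A=0$; but then $X_A\wedge Y_A$ and $Y_A\wedge Z_A$ are two independent $-K_A$-eigenvectors, contradicting simplicity. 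Second, with multiplicity two established, a purely linear-algebraic step finishes: a unit $-K_A$-eigenvector orthogonal to $X_A\wedge Y_A$ has the form $\cos(t)\,Y_A\wedge Z_A+\sin(t)\,Z_A\wedge X_A$, and comparing components of $R$ applied to it gives $\xi_A\sin(t)=0$, each branch of which forces $\xi_A=0$. Neither the global "zero section" upgrade nor the eigenvalue-multiplicity dichotomy appears in your outline, and your proposed endgame (ruling out $K_A=0$, $\xi_A\neq 0$) is moot since the premise $K_A\xi_A=0$ was never established.
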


\begin{proof}
Since for any $q=(x,\hat{x};A)\in O_1$, $-K_A$ is an eigenvalue of $R|_x$,
we know that its value only depends on the point $x$ of $M$
and hence we consider it as a smooth function $-K(x)$ on $M$.

We claim that that $-K(x)$ is at least a double eigenvalue of $R|_x$.
Suppose it is not.
Then in a neighbourhood $U$ of $x$
we have that $-K(y)$ is a simple eigenvalue of $R|_y$ for all $y\in U$.
In that case, we may choose smooth vector fields $X,Y$ on $U$,
taking $U$ smaller if necessary,
such that $X|_y\wedge Y|_y$ is a (non-zero) eigenvector
of $R|_y$ corresponding to $-K(y)$
and $X|_x=X_A$, $Y|_x=Y_A$.
Write $O:=\pi_{Q,M}^{-1}(U)\cap O_1$.

But for any $(y,\hat{y};B)\in O$,
we know that $X_B\wedge Y_B$ is a unit eigenvector of $R|_y$
corresponding to $-K(y)$
and hence, modulo replacing $X$ by $-X$,
we have $X_B\wedge Y_B=X|_y\wedge Y|_y$.

Then for all $(y,\hat{y};B)\in O$ with $y\in U$, one has
\[
\nu(\Rol(X|_y\wedge Y|_y)(B))|_{(y,\hat{y};B)}
=\nu(\Rol(X_B\wedge Y_B)(B))|_{(y,\hat{y};B)}=0
\]
i.e. $\nu(\Rol(X\wedge Y)(\cdot))$ is a zero vector field on the open subset $O$
of the orbit.

If we also take some smooth vector fields $Z,W$
such that $Z|_x=Z_A$, $W|_x=X_A$,
we get by the fact that $\nu(\Rol(X\wedge Y)(\cdot))=0$
and from the computations in the proof of Lemma \ref{le2:beta_zero}
that
\[
0=\nu|_q^{-1}\big[\nu(\Rol(X,Y)(\cdot)),\nu(\Rol(Z,W)(\cdot))\big]\big|_q
=\qmatrix{K_2^\Rol(-K_A+K_A^1) \cr K_2^\Rol \xi_A \cr -2\beta_A K_2^\Rol}
=\qmatrix{0 \cr K_2^\Rol \xi_A \cr 0}.
\]
Since $K_2^\Rol(q)\neq 0$ we get $\xi_A=0$.
But this implies, along with the results obtained in the previous lemma
(i.e. $K=K_A^1$, $\beta_A=\alpha_A=0$)
that w.r.t. the basis $X_A,Y_A,Z_A$,
\[
\star R(X_A\wedge Y_A)=\qmatrix{0 \cr 0 \cr -K_A},\quad 
\star R(Y_A\wedge Z_A)=\qmatrix{-K_A \cr 0 \cr 0},
\]
which means that $X_A\wedge Y_A$ and $Y_A\wedge Z_A$
are linearly independent eigenvectors of $R|_x$
corresponding to the eigenvalue $-K_A=-K(x)$.
This in contradiction to what we assumed in the beginning of the proof.

Hence we have that $-K_A$ is, for every $q=(x,\hat{x};A)\in O_1$,
and eigenvalue of $R|_x$ of multiplicity at least 2.

Finally, since we know that w.r.t. $X_A,Y_A,Z_A$,
\[
\star R(X_A\wedge Y_A)=\qmatrix{0 \cr 0 \cr -K_A},\quad
\star R(Y_A\wedge Z_A)=\qmatrix{-K_A \cr \xi_A \cr 0},\quad
\star R(Z_A\wedge X_A)=\qmatrix{\xi_A \cr -K_A^2 \cr 0},
\]
and since $R|_x$ is a symmetric linear map having double eigenvalue $-K_A$,
we know that there is a unit eigenvector $\omega$ of $R|_x$
corresponding to $-K_A$
which lies in the plane orthogonal to $X_A\wedge Y_A$ (in $\wedge^2 T|_x M$).
Hence, $\omega=\cos(t)Y_A\wedge Z_A+\sin(t)Z_A\wedge X_A$ for some $t\in\R$ and
\[
-K_A\qmatrix{\cos(t)\cr \sin(t)\cr 0}
=&-K_A\star \omega=\star R(\omega)=\cos(t)\star R(Y_A\wedge Z_A)+\sin(t)\star R(Z_A\wedge X_A) \\
=&\cos(t)\qmatrix{-K_A \cr \xi_A \cr 0}+\sin(t)\qmatrix{\xi_A \cr -K_A^2 \cr 0}
=\qmatrix{-K_A\cos(t)+\xi_A\sin(t) \cr \xi_A\cos(t)-K_A^2\sin(t) \cr 0},
\]
where the matrices are again formed w.r.t. $X_A,Y_A,Z_A$.
From the first row we get $\xi_A\sin(t)=0$.
So either $\xi_A=0$ and we are done
or $\sin(t)=0$ which implies that $\omega=1_{\pm} Y_A\wedge Z_A$ with $1_{\pm}\in \{-1,+1\}$ and
hence
\[
\qmatrix{-K_A \cr \xi_A \cr 0}=&\star R(Y_A\wedge Z_A)=1_{\pm} \star R(\omega)=-K_A(1_{\pm} \star \omega)\\
=&-K_A \star (Y_A\wedge Z_A)=\qmatrix{-K_A\cr 0\cr 0},
\]
which gives $\xi_A=0$ anyway.

\end{proof}

The previous lemma implies Proposition \ref{pr:Rol1:good_basis},
since now $-K_A=-K_A^1$, $-K_A^2$ are eigenvalues of $R|_x$
for every $(x,\hat{x};A)\in O_1$
and hence, defining $K(x):=K_A$, $K_2(x):=K_A^2$,
we obtain well defined functions $K,K_2:M\to\R$.

The following Proposition is the last result of this subsection.
Notice that it does need the assumption that $\mc{O}_{\RDist}(q_0)$ 
is not open while the previous results do not need this assumption.

\begin{proposition}\label{pr:Rol1:main}
Suppose $\mc{O}_{\RDist}(q_0)$ is not open in $Q$.
Then there is an open dense subset $O_1^\circ$ of $O_1$
such that for every $q_1=(x_1,\hat{x}_1;A_1)\in O_1^\circ$
there are neighbourhoods $U$ and $\hat{U}$ of $x_1$ and $\hat{x}_1$, respectively,
such that either
\begin{itemize}
\item[(i)] both $(U,g|_U)$, $(\hat{U},\hat{g}|_{\hat{U}})$ are of class $\mc{M}_{\beta}$ or

\item[(ii)] both $(U,g|_U)$, $(\hat{U},\hat{g}|_{\hat{U}})$ are isometric
to warped products $(I\times N,h_f)$, $(I\times\hat{N},\hat{h}_{\hat{f}})$
and $\frac{f'(r)}{f(r)}=\frac{\hat{f}'(r)}{\hat{f}(r)}$, for all $r\in I$.
\end{itemize}

Moreover, there is an oriented orthonormal frame $E_1,E_2,E_3$
(resp. $\hat{E}_1,\hat{E}_2,\hat{E}_3$) defined on $U$ (resp. on $\hat{U}$)
respectively, such that $\star E_1,\star E_3$ (resp. $\star \hat{E}_1,\star \hat{E}_3$)
are eigenvectors of $\hat{R}$ with common eigenvalue $-K(\cdot)$
(resp. $-\hat{K}(\cdot)$)
and one has
\[
A_1E_2|_{x_1}=\hat{E}_2|_{\hat{x}_1}.
\]

\end{proposition}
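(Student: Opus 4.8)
The plan is to analyse the open dense set $O_1$ where $\rank\Rol_q=1$, using the normal form of $R$, $\hat R$ and $\widetilde{\Rol}_q$ from Proposition \ref{pr:Rol1:good_basis} together with Remark \ref{re:Rol1:good_basis}. The key structural fact already in hand is that at each $q=(x,\hat x;A)\in O_1$ the curvature operator $R|_x$ has $-K(x)$ as an eigenvalue of multiplicity $\geq 2$ with a distinguished orthogonal unit eigenvector $\star Y_A$ for the remaining (possibly equal) eigenvalue $-K_2(x)$, and symmetrically for $\hat R|_{\hat x}$; moreover $AY_A=\pm\hat Y_A$ and the common double eigenvalue satisfies $K(x)=\hat K(\hat x)$ on $\pi_Q(O_1)$. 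So the ``special'' direction $E_2:=Y_A$ (resp. $\hat E_2:=\hat Y_A$) is canonically determined up to sign, hence defines (locally, on an open dense subset where the sign can be chosen consistently) a smooth line field on $M$ and $\hat M$, and the relation $A_1E_2|_{x_1}=\pm\hat E_2|_{\hat x_1}$ is forced. The remaining work is (a) upgrading this to the $+$ sign by an appropriate choice of orientation of $E_2,\hat E_2$, and (b) deriving the two dichotomous local geometric structures.

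\textbf{Step 1: produce the frames and the vertical direction in the orbit.} First I would show that on an open dense subset $O_1^\circ$ of $O_1$ one can make all choices of $X_A,Y_A,Z_A$ (and hence of the functions $K,K_2,K_2^\Rol$) locally smoothly; this follows because $\star Y_A$ is the unit eigenvector of $\widetilde{\Rol}_q$ for the simple nonzero eigenvalue $-K_2^\Rol(q)$ (using $\rank\Rol_q=1$), while the $0$-eigenspace of $\widetilde{\Rol}_q$ is $2$-dimensional; one then picks $E_1,E_3$ smoothly inside it, restricting to the dense open set where the relevant eigenvalue multiplicities of $R$, $\hat R$ are locally constant. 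Next, since $\nu(\Rol(Z_A\wedge X_A)(A))|_q=\nu(-K_2^\Rol(q)A\star Y_A)|_q$ is tangent to the orbit by Proposition \ref{pr:R_comm_L2}, and $K_2^\Rol(q)\neq 0$, the vector $\nu(A\star E_2)|_q$ is tangent to $\mc O_{\RDist}(q_0)$ for every $q\in O_1^\circ$. This is exactly the hypothesis of Proposition \ref{pr:3D-1}, which (because $\mc O_{\RDist}(q_0)$ is not open) gives the connection table of $E_1,E_2,E_3$ in the stated form with $V(\Gamma^1_{(2,3)})=V(\Gamma^1_{(1,2)})=0$ for $V\perp E_2$, and likewise for $\hat E_1,\hat E_2,\hat E_3$ by the $M\leftrightarrow\hat M$ symmetry via $\iota$ (Proposition \ref{pr:rol_inverse}).

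\textbf{Step 2: the $\pm$ sign and the dichotomy.} To get $A_1E_2|_{x_1}=\hat E_2|_{\hat x_1}$ exactly, I would argue that $A E_2|_x$ is, for $q=(x,\hat x;A)\in O_1$, a unit $\hat R|_{\hat x}$-eigenvector for the simple eigenvalue $-\hat K_2(\hat x)$ (this is forced by $\widetilde{\Rol}_q(E_2\wedge Z_A)=0$ and $\widetilde{\Rol}_q(E_2\wedge X_A)=0$, i.e. $A^{\ol T}\hat R(A\,\cdot\wedge A\,\cdot)A$ and $R$ agree on the two planes containing $E_2$), hence $AE_2|_x=\pm\hat E_2|_{\hat x}$; then one simply chooses the sign of the local frame $\hat E_1,\hat E_2,\hat E_3$ at $\hat x_1$ so that the $+$ sign holds at $q_1$. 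Finally, for the dichotomy I would invoke the structure result on $3$D Riemannian manifolds whose connection table has this special form (the proposition referred to just before this statement as Proposition \ref{pr:special_3D}, stating the trichotomy into class $\mc M_\beta$, warped products of type (A), and warped products of type (B)): applying it to both $(U,g)$ and $(\hat U,\hat g)$, and then matching the two descriptions using $K(x)=\hat K(\hat x)$ on $\pi_Q(O_1^\circ)$ together with a Lie-bracket computation (as in Proposition \ref{pr:3D-2} and Proposition \ref{pr:3D-1}, using Corollary \ref{cor:vert} and the non-openness of the orbit) to force $\Gamma^1_{(2,3)}(x)=\hat\Gamma^1_{(2,3)}(\hat x)$; the case $\Gamma^1_{(2,3)}\equiv\beta\neq0$ lands in (i), the case $\Gamma^1_{(2,3)}\equiv 0$ in (ii) with $f'/f=\hat f'/\hat f$ coming from identifying $E_2=\partial_r$, $\hat E_2=\partial_r$ and reading off $\Gamma^1_{(1,2)}=-f'/f$, $\hat\Gamma^1_{(1,2)}=-\hat f'/\hat f$.

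\textbf{Main obstacle.} The delicate point is not any single computation but the bookkeeping of signs and of multiplicities: one must carve out the open dense $O_1^\circ$ on which (a) the eigenvalue multiplicities of $R$ and $\hat R$ are locally constant, (b) the distinguished direction $E_2$ (and $\hat E_2$) can be chosen as a smooth \emph{vector} field (not merely a line field) with the sign synchronised so that $AE_2=\hat E_2$, and (c) the frames $E_1,E_3$ inside the $2$-dimensional eigenspace can be chosen smoothly; and then one must check that the Lie-bracket relations forcing $\Gamma^1_{(2,3)}(x)=\hat\Gamma^1_{(2,3)}(\hat x)$ and excluding the genuinely warped type (B) within $O_1$ really do go through — i.e. that type (B) warped products with $f''/f=-K$ but $f'/f\neq\hat f'/\hat f$ do \emph{not} occur over $O_1$, which should follow because in that regime $\Rol$ would have rank $2$ rather than $1$, contradicting $q\in O_1$. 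Verifying that rank count carefully, via the normal forms \eqref{eq2:good_basis_for_R_Rol} and \eqref{eq:good_basis_for_hatR}, is where I expect the real content to lie.
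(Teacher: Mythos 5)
Your overall architecture (identify $Y_A$ with an eigenvector field $E_2$, feed the tangency of $\nu(A\star E_2)$ into Proposition \ref{pr:3D-1}, then invoke Proposition \ref{pr:special_3D}) is the paper's, but there are two genuine gaps. First, your construction is silently symmetric in $M$ and $\hat{M}$: to apply Proposition \ref{pr:3D-1} you need $E_2$ to be a vector field on an open subset of $M$ (not a $q$-dependent choice on the orbit), which forces you to identify $Y_A$ with the $R|_x$-eigenvector field for the \emph{simple} eigenvalue $-K_2(x)$, and your Step 2 likewise assumes $-\hat{K}_2(\hat{x})$ is a simple eigenvalue of $\hat{R}|_{\hat{x}}$ so that $AE_2=\pm\hat{E}_2$ is forced. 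Remark \ref{re:Rol1:good_basis} only guarantees simplicity on \emph{one} side, and the other case is non-empty: if $(\hat{M},\hat{g})$ has constant curvature $K$ near $\hat{x}_1$ (e.g.\ a non-constant-curvature warped product with $f''/f=-K$ rolling against the space form of curvature $K$), then $\widetilde{\Rol}_q=R+K\,\mathrm{id}$ has rank $1$ for \emph{every} $A$, so such points fill $O_1$, yet there is no canonical $\hat{E}_2$, no forced alignment $AE_2=\pm\hat{E}_2$, and nothing to ``read off'' on the $\hat{M}$ side. The paper handles this as a separate Case I, obtaining (i) or (ii) from the flexibility of constant-curvature spaces (they are of class $\mc{M}_\beta$ with $\beta^2=K$, and by Example \ref{ex:warped_const} admit warped-product presentations with $\hat{f}(0),\hat{f}'(0)$ prescribed to match $f$). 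Your sketch omits this case entirely.

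Second, even in the case where both $-K_2$ and $-\hat{K}_2$ are simple, the step that actually yields (ii) is the pointwise identity $\Gamma^1_{(1,2)}(x)=\hat{\Gamma}^1_{(1,2)}(\hat{x})$ \emph{together with} $\Gamma^1_{(2,3)}(x)=\hat{\Gamma}^1_{(2,3)}(\hat{x})$ on $\pi_Q(O_1')$; the paper gets both at once by differentiating $AE_2=\hat{E}_2$ along $\LRD(E_1)$, which gives $c_\varphi(\hat{\Gamma}^1_{(1,2)}-\Gamma^1_{(1,2)})+s_\varphi(\Gamma^1_{(2,3)}-\hat{\Gamma}^1_{(2,3)})=0$, and then using that the vertical field $\nu((\cdot)\star E_2)$ (tangent to the orbit exactly because $\rank\Rol_q=1$) rotates the angle $\varphi$ while fixing $(x,\hat{x})$, so both coefficients must vanish. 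You only claim the $\Gamma^1_{(2,3)}$ identity, and ``reading off'' $\Gamma^1_{(1,2)}=-f'/f$, $\hat{\Gamma}^1_{(1,2)}=-\hat{f}'/\hat{f}$ gives nothing without that first identity; moreover to get $f'/f=\hat{f}'/\hat{f}$ for all $r\in I$ one must still synchronize the two $r$-parameters, which the paper does by rolling along the $E_2$-geodesic from $q_1$ (this is precisely where $A_1E_2|_{x_1}=\hat{E}_2|_{\hat{x}_1}$ is used, so that $(F(t,y_1),\hat{F}(t,\hat{y}_1))\in\pi_Q(O_1')$). Finally, your proposed exclusion of type (B) warped products with $f'/f\neq\hat{f}'/\hat{f}$ via a rank count is incorrect as stated: at a point with $AE_2=\pm\hat{E}_2$ over such a pair one computes $\widetilde{\Rol}_q(\star E_1)=\widetilde{\Rol}_q(\star E_3)=0$, so the rank there is $1$, not $2$; such points fail to lie in $O_1$ only because nearby orbit points are unaligned and have rank $2$ (the rank is not \emph{locally} $1$), or, in the paper's route, because the forced identity $\Gamma^1_{(1,2)}=\hat{\Gamma}^1_{(1,2)}$ rules them out. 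This verification is the real content you flagged as the obstacle, and the argument you propose for it does not go through.
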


\begin{proof}
Let $q_1=(x_1,\hat{x}_1;A_1)\in O_1$.
Notice that, as observed in Remark \ref{re:Rol1:good_basis},
either $R|_{x_1}$ or $\hat{R}|_{\hat{x}_1}$
has $-K_2(x_1)$ or $-\hat{K}_2(\hat{x}_1)$, respectively,
as a single eigenvalue.
By symmetry of the problem in $(M,g)$, $(\hat{M},\hat{g})$,
we may assume that this is the case for $R|_{x_1}$.
Hence there is a neighbourhood $U$ of $x_1$
such that $K_2(x)\neq K(x)$ for all $x\in U$.

It is easy to see that there is an open dense subset $O_1'$ of $O_1\cap \pi_{Q,M}^{-1}(U)$
such that, for every $q=(x,\hat{x};A)\in O_1'$, there exists an open neighbourhood $\hat{V}$ of $\hat{x}$
where either $\hat{K}_2=\hat{K}$ on $\hat{V}$ or $\hat{K}_2(\hat{y})\neq \hat{K}(\hat{y})$ for $\hat{y}\in\hat{V}$.
For the rest of the argument, we assume that $q_1$ belongs to $O_1'$.

By shrinking $U$ around $x_1$
and taking a small enough neighbourhood $\hat{U}$ of $\hat{x}_1$,
we may assume there are oriented orthonormal 
frames $E_1,E_2,E_3$ on $U$ (resp. $\hat{E}_1,\hat{E}_2,\hat{E}_3$ on $\hat{U}$)
such that $\star E_1,\star E_2,\star E_3$
(resp. $\hat{E}_1,\hat{E}_2,\hat{E}_3$)
are eigenvectors of $R$ (resp. $\hat{R})$
with eigenvalues $-K(\cdot),-K_2(\cdot),-K(\cdot)$
(resp. $-\hat{K}(\cdot),-\hat{K}_2(\cdot),-\hat{K}(\cdot)$),
where these eigenvalues correspond to those in Proposition \ref{pr:Rol1:good_basis}.

Taking $U$, $\hat{U}$ smaller if necessary, 
we may take $X_A,Y_A,Z_A$ as given by Proposition \ref{pr:Rol1:good_basis}
for $M$ and $\hat{X}_A,\hat{Y}_A,\hat{Z}_A$ for $\hat{M}$
on $\pi_Q^{-1}(U\times\hat{U})\cap O_1'$, which we still denote by $O_1'$.

Since $\star Y_A$ and $\star E_2|_x$ are both eigenvalues of $R|_x$, for $q=(x,\hat{x};A)\in O_1'$,
corresponding to single eigenvalue $-K_2(x)$,
we may moreover assume that $Y_A=E_2|_x$, $\forall q=(x,\hat{x};A)\in O_1'$.

Then because $\nu(\Rol_q(Z_A\wedge X_A))|_q=-K_2^\Rol(q)\nu(A\star E_2)|_q$
is tangent to the orbit $\mc{O}_{\RDist}(q_0)$ at the points $q=(x,\hat{x};A)\in O_1'$,
we may conclude from Proposition \ref{pr:3D-1} 
that
\[
\Gamma=\qmatrix{
\Gamma^1_{(2,3)} &                                0                                & -\Gamma^1_{(1,2)} \cr
\Gamma^1_{(3,1)} & \Gamma^2_{(3,1)} 				& \Gamma^3_{(3,1)} \cr
\Gamma^1_{(1,2)} &                                0                               & \Gamma^1_{(2,3)} \cr
},
\]
where $\Gamma$ and $\Gamma^i_{(j,k)}$ are as defined there.

We will now divide the proof in two parts (cases I and II below), depending whether
$(\hat{M},\hat{g})$ has, in certain areas, constant curvature or not.

{\bf Case I:} Suppose, after shrinking $\hat{U}$ around $x_1$, that
$\hat{K}_2(\hat{x})=\hat{K}(\hat{x})$ for all $\hat{x}\in \hat{U}$.
We also assume that $\hat{U}$ is connected.
This implies by Schur Lemma (see \cite{sakai91}, Proposition II.3.6)
that $\hat{K}_2=\hat{K}$ is constant on $\hat{U}$
and we write simply $\hat{K}$ for this constant.
Again shrinking $\hat{U}$, we may assume that $(\hat{U},\hat{g}|_{\hat{U}})$
is isometric to an open subset of a 3-sphere of curvature $\hat{K}$.

Assume first that $\Gamma^1_{(2,3)}\neq 0$ on $U$.
Then Proposition \ref{pr:special_3D}, case (ii), implies that $\Gamma^1_{(1,2)}=0$ on $U$
and $(\Gamma^1_{(2,3)})^2=K(x)$ is constant on $U$, which must be $\hat{K}$. 
Hence if $\beta:=\Gamma^1_{(2,3)}$, which is constant on $U$,
then $(U,g|_U)$ is of class $\mc{M}_{\beta}$
as is $(\hat{U},\hat{g}|_{\hat{U}})$ and we are done (recall that $\mc{M}_{-\beta}=\mc{M}_{\beta}$)
i.e. this is case (i).

On the other hand, if $\Gamma^1_{(2,3)}=0$ on $U$,
then we have that $(U,g|_U)$, after possibly shrinking $U$,
is isometric, by some $F$, to a warped product $(I\times N,h_f)$ by Proposition \ref{pr:special_3D} case (iii).
At the same time, the space of constant curvature $(\hat{U},\hat{g}|_{\hat{U}})$,
again after shrinking $\hat{U}$ if necessary, can
be presented, isometrically by certain $\hat{F}$, as a warped product $(\hat{I}\times \hat{N},\hat{h}_{\hat{f}})$
as shown in Example \ref{ex:warped_const},
where $\hat{N}$ is a 2-dimensional space of constant curvature.

Because for all $x\in U$ we have $K(x)=\hat{K}$, we get that for all $(r,y)\in I\times N$,
$\hat{r}\in \hat{I}$,
\[
-\frac{f''(r)}{f(r)}=K(F(r,y))=\hat{K}=-\frac{\hat{f}''(\hat{r})}{\hat{f}(\hat{r})}.
\]
Example \ref{ex:warped_const} shows that we may choose $\hat{f}$ such that
$\hat{f}(0)=f(0)$ and $\hat{f}'(0)=f'(0)$, which then implies that $\hat{f}(r)=f(r)$,
for all $r\in I$. This leads us to case (ii)

{\bf Case II:}
We assume here that $\hat{K}_2(\hat{x})\neq \hat{K}(\hat{x})$ for all $\hat{x}\in \hat{U}$.
The same way as for $(M,g)$ above,
this implies that $\hat{Y}_A=\hat{E}_2|_{\hat{x}}$
and that w.r.t. the frame $\hat{E}_1,\hat{E}_2,\hat{E}_3$, Proposition \ref{pr:3D-1} yields
\[
\hat{\Gamma}=\qmatrix{
\hat{\Gamma}^1_{(2,3)} &                                0                                & -\hat{\Gamma}^1_{(1,2)} \cr
\hat{\Gamma}^1_{(3,1)} & \hat{\Gamma}^2_{(3,1)} 				& \hat{\Gamma}^3_{(3,1)} \cr
\hat{\Gamma}^1_{(1,2)} &                                0                               & \hat{\Gamma}^1_{(2,3)} \cr
},
\]
where $\hat{\Gamma}^i_{(j,k)}=\hat{g}(\hat{\nabla}_{\hat{E}_i}\hat{E}_j,\hat{E}_k)$ etc.

We will now claim that for all $(x,\hat{x})\in \pi_Q(O_1')$,
we have
\[
\Gamma^1_{(2,3)}(x)&=\hat{\Gamma}^1_{(2,3)}(\hat{x}) \\
\Gamma^1_{(1,2)}(x)&=\hat{\Gamma}^1_{(1,2)}(\hat{x}).
\]

By Remark \ref{re:Rol1:good_basis}, we have $AY_A=\pm \hat{Y}_A$ for $q=(x,\hat{x};A)\in O_1'$,
and so we get $AE_2|_x=\pm \hat{E}_2|_{\hat{x}}$.
Without loss of generality, we assume that the '$+$' -case holds here.
In particular, if $X\in\VF(M)$, one may differentiate the identity $AE_2=\hat{E}_2$
w.r.t. $\LRD(X)|_q$ to obtain
\[
A\nabla_X E_2=\hat{\nabla}_{AX} \hat{E}_2,\quad \forall q=(x,\hat{x};A)\in O_1'.
\]

Since $AE_1,AE_2,\hat{E}_1,\hat{E}_2\in (AE_2)^\perp=\hat{E}_2^\perp$, there is for every $q\in O_1'$,
a $\varphi=\varphi(q)\in\R$ such that
\[
AE_1|_x&=\cos(\varphi(q)) \hat{E}_1|_{\hat{x}}+\sin(\varphi(q))\hat{E}_3|_{\hat{x}} \\
AE_3|_x&=-\sin(\varphi(q)) \hat{E}_1|_{\hat{x}}+\cos(\varphi(q))\hat{E}_3|_{\hat{x}}.
\]
As usual, we write below $\cos(\varphi(q))=c_{\varphi}$, $\sin(\varphi(q))=s_{\varphi}$
Having these, we compute
\[
A\nabla_{E_1} E_2=&A(-\Gamma^1_{(1,2)}E_1+\Gamma^1_{(2,3)}E_3) \\
=&(-c_{\varphi}\Gamma^1_{(1,2)}-s_{\varphi}\Gamma^1_{(2,3)})\hat{E}_1
+(-s_{\varphi}\Gamma^1_{(1,2)}+c_{\varphi}\Gamma^1_{(2,3)})\hat{E}_3
\]
and, on the other hand,
\[
\hat{\nabla}_{AE_1}\hat{E}_2
=&c_{\varphi}(-\hat{\Gamma}^1_{(1,2)}\hat{E}_1+\hat{\Gamma}^1_{(2,3)}\hat{E}_3)
+s_{\varphi}(-\hat{\Gamma}^1_{(2,3)}\hat{E}_1-\hat{\Gamma}^1_{(1,2)}\hat{E}_3) \\
=&(-c_{\varphi}\hat{\Gamma}^1_{(1,2)}-s_{\varphi}\hat{\Gamma}^1_{(2,3)})\hat{E}_1
+(c_{\varphi}\hat{\Gamma}^1_{(2,3)}-s_{\varphi}\hat{\Gamma}^1_{(1,2)})\hat{E}_3.
\]

Taking $X=E_1$ above and using the last two formulas, we get
\[
&(-c_{\varphi}\Gamma^1_{(1,2)}-s_{\varphi}\Gamma^1_{(2,3)})\hat{E}_1
+(-s_{\varphi}\Gamma^1_{(1,2)}+c_{\varphi}\Gamma^1_{(2,3)})\hat{E}_3
=A\nabla_{E_1} E_2 \\
=&\hat{\nabla}_{AE_1}\hat{E}_2
=(-c_{\varphi}\hat{\Gamma}^1_{(1,2)}-s_{\varphi}\hat{\Gamma}^1_{(2,3)})\hat{E}_1
+(c_{\varphi}\hat{\Gamma}^1_{(2,3)}-s_{\varphi}\hat{\Gamma}^1_{(1,2)})\hat{E}_3
\]
from which
\[
c_{\varphi}(-\Gamma^1_{(1,2)}+\hat{\Gamma}^1_{(1,2)})
+s_{\varphi}(\Gamma^1_{(2,3)}-\hat{\Gamma}^1_{(2,3)})=0.
\]

Next we notice that differentiating the identity $AE_1=c_\varphi \hat{E}_1+s_\varphi\hat{E}_3$ with respect to $\nu(A\star E_2)|_q$ gives
\[
A(\star E_2)E_1=(\nu(A\star E_2)|_q\varphi)(-s_{\varphi}\hat{E}_1+c_{\varphi}\hat{E}_3)
\]
which simplifies to
\[
-AE_3=(\nu(A\star E_2)|_q\varphi) AE_3
\]
and hence yields
\[
\nu(A\star E_2)|_q\varphi=-1,\quad \forall q=(x,\hat{x};A)\in O_1'.
\]
Thus, if $(t,q)\mapsto \Phi(t,q)$ is the flow of $\nu((\cdot)\star E_2)$ in $O_2'$
with initial position at $t=0$ at $q\in O_1'$,
the above implies that $\varphi(\Phi(t,q))=\varphi(q)+t$ for all $t$ such that $|t|$ is small enough.
Since $\sin$ and $\cos$ are linearly independent functions on any non-empty open real interval,
the above relation implies that
\[
-\Gamma^1_{(1,2)}(x)+\hat{\Gamma}^1_{(1,2)}(\hat{x})=0 \\
\Gamma^1_{(2,3)}(x)-\hat{\Gamma}^1_{(2,3)}(\hat{x})=0,
\]
which establishes the claim.

We may now finish the proof of the proposition.
Indeed, if $\Gamma^1_{(2,3)}\neq 0$ on $U$,
Proposition \ref{pr:special_3D} implies that $\Gamma^1_{(2,3)}=:\beta$ is constant and $\Gamma^1_{(1,2)}=0$ on $U$.
If $\hat{x}$ belongs to the set $\pi_{Q,\hat{M}}(O_1')$, which is open in $\hat{M}$,
there is a $q=(x,\hat{x};A)\in O_1'$ where $(x,\hat{x})\in U\times\hat{U}$, by the definition of $O_1'$.
Then what was shown above implies
\[
\hat{\Gamma}^1_{(1,2)}(\hat{x})=\Gamma^1_{(1,2)}(x)=0,
\quad
\hat{\Gamma}^1_{(2,3)}(\hat{x})=\Gamma^1_{(2,3)}(x)=\beta.
\]
Thus shrinking $\hat{U}$ if necessary, this shows that
$\hat{\Gamma}^1_{(1,2)}$ vanishes on $\hat{U}$
and $\hat{\Gamma}^1_{(2,3)}$ is constant $=\beta$ on $\hat{U}$.
We conclude that $(U,g|_U)$ and $(\hat{U},\hat{g}|_{\hat{U}})$
both belong to class $\mc{M}_{\beta}$
and we are in case (i).

Similarly, if  $\Gamma^1_{(2,3)}=0$ on $U$,
the above argument implies that, after taking smaller $\hat{U}$,
$\hat{\Gamma}^1_{(2,3)}=0$ on $\hat{U}$.
Proposition \ref{pr:special_3D} implies that there is, taking smaller $U,\hat{U}$ if needed,
open interval $I=\hat{I}\subset\R$,
smooth functions $f,\hat{f}:I=\hat{I}\to\R$, 2-dimensional Riemannian manifolds
$(N,h)$, $(\hat{N},\hat{h})$ and isometries $F:(I\times N,h_f)\to (U,g|_U)$,
$\hat{F}:(\hat{I}\times\hat{N},\hat{h}_{\hat{f}})\to \hat{U}$ 
such that
\[
\frac{f'(r)}{f(r)}&=\Gamma^1_{(1,2)}(F(r,y)),\quad \forall (r,y)\in I\times N \\
\frac{\hat{f}'(\hat{r})}{\hat{f}(\hat{r})}&=\hat{\Gamma}^1_{(1,2)}(\hat{F}(\hat{r},\hat{y})),
\quad \forall (\hat{r},\hat{y})\in \hat{I}\times \hat{N}.
\]
Clearly we may assume that $0\in I=\hat{I}$
and $F(0,y_1)=x_1$, $\hat{F}(0,\hat{y}_1)=\hat{x}_1$
for some $y_1\in N$, $\hat{y}_1\in\hat{N}$.

Since $t\mapsto (t,y_1)$ and $t\mapsto (t,\hat{y}_1)$ are geodesics in $(I\times N,h_f)$, $(\hat{I}\times \hat{N},\hat{h}_{\hat{f}})$,
respectively,
$\gamma(t):=F(t,y_1)$ and $\hat{\gamma}(t)=\hat{F}(t,\hat{y}_1)$
are geodesics on $M$ and $\hat{M}$.
In addition,
\[
\hat{\gamma}'(0)=\hat{E}_2|_{\hat{x}_1}=A_1E_2|_{x_1}=A_1\gamma'(0),
\]
so $\hat{\gamma}(t)=\hat{\gamma}_{\RDist}(\gamma,q_1)(t)$ for all $t$.
This means that
\[
(F(t,y_1),\hat{F}(t,\hat{y}_1))=(\gamma(t),\hat{\gamma}(t))\in \pi_Q(O_1')
\]
and therefore
\[
\frac{f'(t)}{f(t)}=\Gamma^1_{(1,2)}(F(t,y_1))
=\hat{\Gamma}^1_{(1,2)}(\hat{F}(t,\hat{y}_1))
=\frac{\hat{f}'(t)}{\hat{f}(t)},
\]
for all $t\in I=\hat{I}$.
This shows that we belong to case (ii) 
and allows us to conclude the proof of the proposition.
\end{proof}

We have studied the case where $q$ belongs to $O_1\cup O_2$.
As for the points of $O_0$, one uses Corollary \ref{cor:weak_ambrose} and Remark \ref{re:weak_ambrose}
to conclude that for every $q_0=(x_0,\hat{x}_0;A_0)\in O_0$,
there are open neighbourhoods $U\ni x_0$ and $\hat{U}\ni \hat{x}_0$
such that $(U,g|_U)$ and $(\hat{U},\hat{g}|_{\hat{U}})$ are locally isometric.
With the choice of the set $O$ as the union of $O_0\cup O_1^\circ\cup O_2$, (where $O_1^\circ$ was introduced in Proposition \ref{pr:Rol1:main}), one concludes the proof of Theorem \ref{th:3D-1}.

%%%%%%%%%%%%%%%%%%%%%%%%%%%%%%
\subsection{Proof of Theorem \ref{th:3D-2}}
%%%%%%%%%%%%%%%%%%%%%%%%%%%%%%
The proof of the theorem only concerns Items $(b)$ and $(c)$, which are treated separately in two subsubsections.

\subsubsection{Case where both Manifolds are of Class $\mc{M}_{\beta}$}
Consider two manifolds $(M,g)$ and $(\hat{M},\hat{g})$ of class $\mc{M}_\beta$, 
$\beta\geq 0$ and oriented orthonormal frames $E_1,E_2,E_3$ and $\hat{E}_1,\hat{E}_2,\hat{E}_3$ which are adapted frames for  of $(M,g)$ and $(\hat{M},\hat{g})$ respectively. We will prove that in this situation, the rolling problem is not completely controllable.

We define on $Q$ two subsets
\[
Q_0:=&\{q=(x,\hat{x};A)\in Q\ |\ AE_2\neq \pm \hat{E}_2\} \\
Q_1:=&\{q=(x,\hat{x};A)\in Q\ |\ AE_2=\pm \hat{E}_2\}.
\]

\begin{proposition}\label{pr:mbeta-1}
Let $(M,g)$, $(\hat{M},\hat{g})$ be of class $\mc{M}_{\beta}$ for $\beta\in\R$.
Then for any $q_0=(x_0,\hat{x}_0;A_0)\in Q_1$
one has $\mc{O}_{\RDist}(q_0)\subset Q_1$.
Moreover, $Q_1$ is a closed 7-dimensional submanifold of $Q$
and hence in particular $\dim \mc{O}_{\RDist}(q_0)\leq 7$.
\end{proposition}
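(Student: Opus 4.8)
\textbf{Proof strategy for Proposition \ref{pr:mbeta-1}.}

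The plan is to show that the condition $AE_2 = \pm \hat{E}_2$ is preserved along rolling curves, and then to identify $Q_1$ as the image of a single embedded submanifold. For the invariance, I would take $q_0 = (x_0,\hat{x}_0;A_0) \in Q_1$ and an arbitrary smooth curve $\gamma:[0,a]\to M$ with $\gamma(0)=x_0$, and let $q_{\RDist}(\gamma,q_0)(t) = (\gamma(t),\hat{\gamma}(t);A(t))$ be the associated rolling curve. The key is to compute $\ol{\nabla}_{(\dot\gamma(t),\dot{\hat\gamma}(t))}(A(\cdot)E_2 \mp \hat{E}_2)$, or more precisely to compare $\hat{\nabla}_{\dot{\hat\gamma}(t)}(A(t)E_2|_{\gamma(t)})$ with $\pm\hat{\nabla}_{\dot{\hat\gamma}(t)}\hat{E}_2|_{\hat\gamma(t)}$. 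Since $A(\cdot)$ is parallel along $(\gamma,\hat\gamma)$ (no-spinning), one has $\hat{\nabla}_{\dot{\hat\gamma}(t)}(A(t)E_2) = A(t)\nabla_{\dot\gamma(t)} E_2$. So the issue reduces to the defining structure equations of class $\mc{M}_\beta$: for an adapted frame, the covariant derivatives $\nabla_{E_i} E_2$ and $\hat{\nabla}_{\hat{E}_i}\hat{E}_2$ are governed by the same connection coefficients (those appearing in the $\mc{M}_\beta$ connection table, recalled in Appendix \ref{app:3D}), namely $\Gamma^1_{(1,2)}, \Gamma^1_{(2,3)}$ and their $\hat{}$-counterparts which are equal as functions pulled back appropriately along the rolling (here I would invoke the rigidity of the $\mc{M}_\beta$ structure: the relevant coefficients are the \emph{constant} $\beta$). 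Setting $X(t) := A(t)E_2|_{\gamma(t)} \mp \hat{E}_2|_{\hat\gamma(t)}$, one derives a linear first-order ODE $\hat{\nabla}_{\dot{\hat\gamma}(t)} X(t) = B(t) X(t)$ for some (bundle) endomorphism $B(t)$, with $X(0)=0$; by uniqueness of solutions of linear ODEs, $X\equiv 0$, hence $AE_2 = \pm\hat{E}_2$ for all $t$. This shows $q_{\RDist}(\gamma,q_0)(t)\in Q_1$ for all $t$ and all $\gamma$, so $\mc{O}_{\RDist}(q_0)\subset Q_1$.

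For the submanifold claim, I would note that $Q_1$ is the disjoint union (or rather a union along a lower-dimensional locus, but generically disjoint) of the two sets $Q_1^{\pm} = \{(x,\hat{x};A): AE_2|_x = \pm\hat{E}_2|_{\hat{x}}\}$. Fixing the sign $+$, the condition $AE_2|_x = \hat{E}_2|_{\hat{x}}$ is, in local terms, the statement that the map $(x,\hat{x};A)\mapsto AE_2|_x - \hat{E}_2|_{\hat{x}}$ (a section of the pullback of $T\hat{M}$) vanishes; since $A$ ranges over an $\SO(3)$-fiber and $A\mapsto AE_2|_x$ is a submersion onto the unit sphere of $T|_{\hat{x}}\hat{M}$ (the orbit map of the isotropy action), this cuts down the fiber dimension from $3$ to $1$. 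Concretely, once $AE_2|_x$ is fixed, $A$ is determined up to the $\SO(2)$ of rotations fixing that vector. Hence $\pi_Q|_{Q_1^+}:Q_1^+\to M\times\hat{M}$ has $1$-dimensional fibers and $Q_1^+$ is a smooth embedded submanifold of dimension $2n + 1 = 7$ (with $n=3$); it is closed since it is the zero set of a continuous map. The same holds for $Q_1^-$. Therefore $\dim Q_1 = 7$ and, since $\mc{O}_{\RDist}(q_0)$ is an immersed submanifold contained in $Q_1$, $\dim\mc{O}_{\RDist}(q_0)\leq 7$.

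\textbf{Main obstacle.} The delicate point is the ODE argument establishing invariance: I need to verify that for adapted frames of two manifolds of class $\mc{M}_\beta$ (with the \emph{same} $\beta$), the connection coefficients that enter $\nabla_{\cdot} E_2$ really do match up along the rolling curve in a way that produces a \emph{homogeneous} linear ODE for $X(t)$ with zero initial data — i.e. that there is no inhomogeneous term. This requires using the precise form of the $\mc{M}_\beta$ connection table from Appendix \ref{app:3D}, in particular that $E_2$ is the distinguished (Reeb-type) direction whose covariant derivatives are expressed through the structure constants of the contact structure, and that the no-spinning condition transports these consistently. A secondary technical point is the behaviour of $Q_1$ along the locus where $AE_2 = +\hat{E}_2$ and $AE_2 = -\hat{E}_2$ could conceivably meet, but this cannot happen since $\hat{E}_2 \neq -\hat{E}_2$, so $Q_1^+$ and $Q_1^-$ are genuinely disjoint closed pieces and the manifold structure is unambiguous.
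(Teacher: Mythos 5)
Your proposal is correct, but it reaches the two claims by a route that differs from the paper's in both halves. For the invariance, the paper works infinitesimally: it introduces $h_1(q)=\hat{g}(AE_1,\hat{E}_2)$ and $h_2(q)=\hat{g}(AE_3,\hat{E}_2)$, observes $Q_1=(h_1,h_2)^{-1}(0)$, checks that $(h_1,h_2)$ is regular along $Q_1$ by differentiating in the vertical directions $\nu(A\star E_1),\nu(A\star E_3)$ (this is also how it gets the closed $7$-dimensional submanifold statement), and then verifies $\LRD(E_i)|_q h_j=0$ on $Q_1^{\pm}$, so that $\RDist$ is tangent to $Q_1$ and orbits through $Q_1$ stay in $Q_1$. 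You instead integrate along rolling curves and get the submanifold claim from the fiberwise submersion $A\mapsto AE_2|_x$ onto the unit sphere of $T|_{\hat{x}}\hat{M}$; both steps are legitimate, and your tangency/ODE argument is just the integrated form of the paper's Lie-derivative computation, so the substance of what must be verified is the same. The obstacle you flag does close, and in a cleaner way than "matching coefficients along the rolling": the $\mc{M}_\beta$ connection table gives $\nabla_X E_2=\beta\,\star(X\wedge E_2)$ and $\hat{\nabla}_{\hat{X}}\hat{E}_2=\beta\,\hat{\star}(\hat{X}\wedge\hat{E}_2)$ for all $X,\hat{X}$, and since $A\in Q$ is an orientation-preserving isometry it intertwines the Hodge duals, $A\star(X\wedge Y)=\hat{\star}(AX\wedge AY)$; combining these with the no-spinning identity $\hat{\nabla}_{\dot{\hat\gamma}}(A(\cdot)E_2)=A\nabla_{\dot\gamma}E_2$ and the no-slip relation $\dot{\hat\gamma}=A\dot\gamma$ yields, for $X(t)=A(t)E_2|_{\gamma(t)}\mp\hat{E}_2|_{\hat\gamma(t)}$,
\begin{align*}
\hat{\nabla}_{\dot{\hat\gamma}(t)}X(t)=\beta\,\hat{\star}\big(A(t)\dot\gamma(t)\wedge X(t)\big),
\end{align*}
a homogeneous linear equation (pull it back by parallel transport along $\hat\gamma$ to a linear ODE in the fixed space $T|_{\hat x_0}\hat M$), so $X(0)=0$ forces $X\equiv 0$; here it is essential that the two manifolds share the same constant $\beta$, which is exactly the hypothesis. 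With that computation supplied, and a word to justify that containment of the immersed orbit in the closed embedded $Q_1$ bounds its dimension by $7$ (velocities of curves in the orbit lie in $TQ_1$), your argument is complete. What the paper's route buys is that the explicit $h_1,h_2$ and the Lie brackets $\LRD(E_i)h_j$ are reused verbatim in the subsequent propositions on $\mc{M}_\beta$ and warped products; what yours buys is a coordinate-light invariance proof that does not require choosing the defining functions at all.
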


\begin{proof}
Define $h_1,h_2:Q\to\R$ by
\[
h_1(q)=\hat{g}(AE_1,\hat{E}_2),\quad h_2(q)=\hat{g}(AE_3,\hat{E}_2),
\]
when $q=(x,\hat{x};A)\in Q$.
It is clear that if $h=(h_1,h_2):Q\to\R^2$, then $Q_1=h^{-1}(0)$.
We will first show that $h$ is regular at the points of $Q_1$, which then implies that $Q_1$
is a closed submanifold of $Q$ of codimension 2 i.e. $\dim Q_1=7$ as claimed.

Before proceeding, we divide $Q_1$ into two disjoint subsets
\[
Q_1^+=&\{q=(x,\hat{x};A)\in Q\ |\ AE_2=+\hat{E}_2\} \\
Q_1^-=&\{q=(x,\hat{x};A)\in Q\ |\ AE_2=-\hat{E}_2\},
\]
whence $Q=Q_1^+\cup Q_1^-$. These are the components of $Q$
and we prove the claims only for $Q_1^+$, the considerations for $Q_1^-$ being completely similar.

First, since for every $q=(x,\hat{x};A)\in Q_1^+$ one has $AE_2=\hat{E}_2$,
it follows that $AE_1,AE_3\in \hat{E}_2^\perp$ and hence there is a smooth $\phi:Q_1^+\to\R$ such that
\[
AE_1&=\cos(\phi)\hat{E}_1+\sin(\phi)\hat{E}_3=:\hat{X}_A \\
AE_3&=-\sin(\phi)\hat{E}_1+\cos(\phi)\hat{E}_3=:\hat{Z}_A.
\]
In the subsequent computations we shorten the notation as $c_{\phi}=\cos(\phi(q))$,
$s_{\phi}=\sin(\phi(q))$.

We have for $q=(x,\hat{x};A)\in Q_1^+$,
\[
\nu(A\star E_3)|_q h_1&=\hat{g}(A(\star E_3)E_1,\hat{E}_2)=\hat{g}(AE_2,\hat{E}_2)=1 \\
\nu(A\star E_1)|_q h_1&=\hat{g}(A(\star E_1)E_1,\hat{E}_2)=0 \\
\nu(A\star E_3)|_q h_2&=\hat{g}(A(\star E_3)E_3,\hat{E}_2)=0 \\
\nu(A\star E_1)|_q h_2&=\hat{g}(A(\star E_1)E_3,\hat{E}_2)=-\hat{g}(AE_2,\hat{E}_2)=-1,
\]
which shows that indeed $h$ is regular on $Q_1^+$.

Next we show that the vectors $\LRD(E_1)|_q,\LRD(E_2)|_q,\LRD(E_3)|_q$
are all tangent to $Q_1^+$ and hence to $Q_1$. This is equivalent to the fact that $\LRD(E_i)|_qh=0$ for $i=1,2,3$.

We compute for $q=(x,\hat{x};A)\in Q_1^+$, recalling that $AE_1=\hat{X}_A$, $AE_2=\pm \hat{E}_2$, $AE_3=\hat{Z}_A$,
\[
\LRD(E_1)|_q h_1=&\hat{g}(A\nabla_{E_1} E_1,\hat{E}_2)+\hat{g}(AE_1,\hat{\nabla}_{\hat{X}_A} \hat{E}_2) \\
=&-\Gamma^1_{(3,1)}\hat{g}(AE_3,\hat{E}_2)+\hat{g}(\hat{X}_A,\beta c_{\phi} \hat{E}_3-\beta s_{\phi} \hat{E}_1) \\
=&-\Gamma^1_{(3,1)}\hat{g}(\hat{Z}_A,\hat{E}_2)+\hat{g}(\hat{X}_A,\beta \hat{Z}_A)=0 \\
\LRD(E_1)|_q h_2=&\hat{g}(A\nabla_{E_1} E_3,\hat{E}_2)+\hat{g}(AE_3,\hat{\nabla}_{\hat{X}_A} \hat{E}_2) \\
=&\hat{g}(A(\Gamma^1_{(3,1)}E_1-\beta E_2),\hat{E}_2)+\hat{g}(\hat{Z}_A,\beta \hat{Z}_A) \\
=&\hat{g}(\Gamma^1_{(3,1)}\hat{X}_A-\beta \hat{E}_2,\hat{E}_2)+\beta=0 \\
\LRD(E_2)|_q h_1=&\hat{g}(A\nabla_{E_2} E_1,\hat{E}_2)+\hat{g}(AE_1,\hat{\nabla}_{\hat{E}_2} \hat{E}_2)
=-\Gamma^2_{(3,1)}\hat{g}(\hat{Z}_A,\hat{E}_2)+0=0 \\
\LRD(E_2)|_q h_2=&\hat{g}(A\nabla_{E_2} E_3,\hat{E}_2)+\hat{g}(AE_3,\hat{\nabla}_{\hat{E}_2} \hat{E}_2)
=\Gamma^2_{(3,1)}\hat{g}(\hat{X}_A,\hat{E}_2)+0=0 \\
\LRD(E_3)|_q h_1=&\hat{g}(A\nabla_{E_3} E_1,\hat{E}_2)+\hat{g}(AE_1,\hat{\nabla}_{\hat{Z}_A} \hat{E}_2) \\
=&\hat{g}(A(\beta E_2-\Gamma^3_{(3,1)}E_3),\hat{E}_2)+\hat{g}(\hat{X}_A,-\beta s_{\phi} \hat{E}_3-\beta c_{\phi} \hat{E}_1) \\
=&\hat{g}(\beta \hat{E}_2-\Gamma^3_{(3,1)}\hat{Z}_A),\hat{E}_2)-\beta \hat{g}(\hat{X}_A,\hat{X}_A)
=\beta-\beta=0 \\
\LRD(E_3)|_q h_2=&\hat{g}(A\nabla_{E_3} E_3,\hat{E}_2)+\hat{g}(AE_3,\hat{\nabla}_{\hat{Z}_A} \hat{E}_2) \\
=&\Gamma^3_{(3,1)}\hat{g}(AE_1,\hat{E}_2)+\hat{g}(\hat{Z}_A,-\beta \hat{X}_A)
=\Gamma^3_{(3,1)}\hat{g}(\hat{X}_A,\hat{E}_2)+0=0.
\]

Thus $\LRD(E_1)|_q,\LRD(E_2)|_q,\LRD(E_3)|_q$ and hence $\RDist$ is tangent
to $Q_1^+$, which implies that any orbit $\mc{O}_{\RDist}(q)$
through a point $q\in Q_1^+$ is also a subset of $Q_1^+$.
The same observation obviously holds for $Q_1^-$ and therefore
the proof is complete.
\end{proof}

Next we will show that if $(M,g)$ and $(\hat{M},\hat{g})$ are of class $\mc{M}_\beta$
with the same $\beta\in\R$,
then the rolling problem of $M$ against $\hat{M}$ is not controllable.

We begin by completing the proposition in the sense that
we show that the orbit can be of dimension exactly 7,
if $(M,g)$, $(\hat{M},\hat{g})$ are not locally isometric.

\begin{proposition}\label{pr:mbeta-2}
Let $(M,g)$, $(\hat{M},\hat{g})$
be Riemannian manifolds of class $\mc{M}_\beta$, $\beta\neq 0$,
and let $q_0=(x_0,\hat{x}_0;A_0)\in Q_1$.
Then if $\mc{O}_{\RDist}(q_0)$ is not an integral manifold of $\RDist$,
one has $\dim\mc{O}_{\RDist}(q_0)=7$.
\end{proposition}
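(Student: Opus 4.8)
The statement asserts that, for manifolds of class $\mc{M}_\beta$ with the same $\beta\neq 0$ and for $q_0\in Q_1$, the orbit $\mc{O}_{\RDist}(q_0)$ has dimension exactly $7$, \emph{provided} it is not an integral manifold of $\RDist$. By Proposition \ref{pr:mbeta-1} we already know $\mc{O}_{\RDist}(q_0)\subset Q_1$ and $\dim Q_1=7$, so the upper bound $\dim\mc{O}_{\RDist}(q_0)\le 7$ is free. Since an orbit is always an immersed submanifold whose tangent space at $q_0$ contains $\RDist|_{q_0}$, which is $n=3$-dimensional, we have $3\le\dim\mc{O}_{\RDist}(q_0)\le 7$. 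The content is therefore to rule out the intermediate values $4,5,6$ under the hypothesis that the orbit is not an integral manifold (the value $3$ is exactly the integral-manifold case, excluded by assumption). The plan is to exhibit inside $T|_{q_0}\mc{O}_{\RDist}(q_0)$ seven linearly independent vectors, produced as iterated Lie brackets of the frame vector fields $\LRD(E_1),\LRD(E_2),\LRD(E_3)$, using the special structure of the $\mc{M}_\beta$ connection table recalled in Appendix \ref{app:3D} (and the form of $R$ for class $\mc{M}_\beta$).

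\textbf{First steps.} I would fix adapted orthonormal frames $E_1,E_2,E_3$ on $M$ and $\hat{E}_1,\hat{E}_2,\hat{E}_3$ on $\hat{M}$ near $x_0,\hat{x}_0$, and (as in the proof of Proposition \ref{pr:mbeta-1}) introduce on $Q_1^+$ the angle function $\phi$ with $AE_1=\cos(\phi)\hat{E}_1+\sin(\phi)\hat{E}_3=:\hat{X}_A$, $AE_3=-\sin(\phi)\hat{E}_1+\cos(\phi)\hat{E}_3=:\hat{Z}_A$, $AE_2=\pm\hat{E}_2$. The curvature operators $R$ and $\hat R$ of class $\mc{M}_\beta$ manifolds have a prescribed shape (one distinguished eigendirection $\star E_2$ with its own eigenvalue and a $2$-dimensional eigenspace $\spn\{\star E_1,\star E_3\}$, up to the curvature function), so the rolling curvature $\Rol_q$ has rank $\le 2$, and on $Q_1$ the term $\nu(A\star E_2)|_q$ is, after a short computation as in Proposition \ref{pr:3D-1}/\ref{pr:3D-2}, seen to be tangent to the orbit whenever $\Rol_q\ne 0$. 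Then Proposition \ref{pr:3D-2}(ii) gives an explicit list of Lie brackets among $\LRD(E_1),\LRD(E_2),\LRD(E_3)$, $\nu(A\star E_2)$, $L_1,L_2,L_3$ that are all tangent to $\mc{O}_{\RDist}(q_0)$. The candidate seven-dimensional tangent space will be spanned by $\LRD(E_1)|_{q_0},\LRD(E_2)|_{q_0},\LRD(E_3)|_{q_0},\nu(A_0\star E_2)|_{q_0},L_1|_{q_0},L_2|_{q_0},L_3|_{q_0}$, and the first task is to verify, using the $\mc{M}_\beta$ connection coefficients (which force $\Gamma^1_{(2,3)}=\pm\beta$, $\Gamma^1_{(1,2)}=0$, and the remaining $\Gamma$'s to be of a controlled type), that these seven vectors are linearly independent as elements of $T|_{q_0}Q$.

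\textbf{Closing the argument and the main obstacle.} Granting linear independence, it follows that $\dim\mc{O}_{\RDist}(q_0)\ge 7$, hence $=7$ by Proposition \ref{pr:mbeta-1}; note that the list already contains $\RDist|_{q_0}$ together with one genuine vertical direction $\nu(A_0\star E_2)|_{q_0}$ and three ``mixed'' directions $L_1,L_2,L_3$, and that Proposition \ref{pr:3D-2}(ii) shows this span is involutive relative to the bracket with $\nu((\cdot)\star E_2)$, consistent with the orbit being genuinely $7$-dimensional rather than larger. The place where the hypothesis ``$\mc{O}_{\RDist}(q_0)$ is not an integral manifold'' must be used is precisely the step asserting $L_1|_{q_0},L_2|_{q_0},L_3|_{q_0}$ are not all already contained in $\RDist|_{q_0}\oplus\spn\{\nu(A_0\star E_2)|_{q_0}\}$: if $\Rol_{q_0}=0$ on the whole orbit then, by Corollary \ref{cor:weak_ambrose} (or its non-complete version Remark \ref{re:weak_ambrose}) together with Corollary \ref{cor:2.5:1}, the orbit would be an integral manifold of $\RDist$ of dimension $3$, contrary to hypothesis; so at some point of the orbit $\Rol$ is nonzero, and by the equivariance/connectedness arguments (Proposition \ref{pr:iso_equivariance} type reasoning and the openness of the rank condition) one reduces to $q_0$ itself being a point where $\Rol_{q_0}\ne 0$, i.e. $\beta\ne 0$ suffices to guarantee $\nu(A_0\star E_2)|_{q_0}$ is a nonzero orbit-tangent vertical vector and that the $L_i$ bring in the extra $M$-directions. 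The main obstacle I anticipate is the bookkeeping in the linear-independence computation: one must carefully expand each $L_i$ via its definition $L_i=\LNSD(E_i)-\nu(\cdots)$, substitute the $\mc{M}_\beta$-specific values of the $\Gamma^j_{(i,k)}$ and of the curvature functions, and check a $7\times 9$ rank condition at $q_0$; handling the two cases $Q_1^+$ and $Q_1^-$ (and the possibility that the ``extra'' curvature function of one factor is locally constant, which is where case (B) of Theorem \ref{th:3D-2}(a) sits) requires attention but follows the same template as Propositions \ref{pr:3D-1}--\ref{pr:3D-2} and the proof of Proposition \ref{pr:mbeta-1}.
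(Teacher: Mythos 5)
Your proposal follows essentially the same route as the paper's proof: the upper bound comes from Proposition \ref{pr:mbeta-1}, and the lower bound comes from exhibiting the same seven tangent vectors $\LRD(E_1),\LRD(E_2),\LRD(E_3),\nu(A\star E_2),L_1,L_2,L_3$ via Proposition \ref{pr:3D-2}, with the non-integral-manifold hypothesis entering through Corollary \ref{cor:weak_ambrose} to produce a point $q_1$ of the orbit where $\widetilde{\Rol}_{q_1}(\star E_2)=(-K_2(x_1)+\hat{K}_2(\hat{x}_1))\star E_2\neq 0$, hence where $\nu(A\star E_2)$ is orbit-tangent. One small correction: you need not (and cannot, by equivariance) reduce to $\Rol_{q_0}\neq 0$ at $q_0$ itself — since the orbit is a connected immersed submanifold of constant dimension, establishing $\dim\geq 7$ on the open neighbourhood $O$ of $q_1$ suffices; and it is $K_2\neq\hat{K}_2$ at $q_1$ (not $\beta\neq 0$) that makes $\nu(A\star E_2)$ tangent, the role of $\beta\neq 0$ being to make $L_2=\beta\LNSD(E_2)$ a nonzero seventh direction.
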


\begin{proof}
Without loss of generality, we may assume that $A_0E_2|_{x_0}=\hat{E}_2|_{\hat{x}_0}$.
Then Proposition \ref{pr:mbeta-1} and continuity imply that $AE_2|_{x}=\hat{E}_2|_{\hat{x}}$ for all $q=(x,\hat{x};A)\in \mc{O}_{\RDist}(q_0)$
and hence that $AE_1|_x,AE_3|_x\in \spn \{\hat{E}_1|_{\hat{x}},\hat{E}_3|_{\hat{x}}\}$.
This combined with Lemma \ref{le:mbeta-2} implies
\[
\widetilde{\Rol}_q(\star E_1)=0,\quad
\widetilde{\Rol}_q(\star E_2)=(-K_2(x)+\hat{K}_2(\hat{x}))(\star E_2),\quad
\widetilde{\Rol}_q(\star E_3)=0,
\]
for $q=(x,\hat{x};A)\in \mc{O}_{\RDist}(q_0)$,
where $-K_2(x),-\hat{K}_2(\hat{x})$ are eigenvalues of $R|_x,\hat{R}|_{\hat{x}}$
corresponding to eigenvectors $\star E_2|_x,\hat{\star}\hat{E}_2|_{\hat{x}}$, respectively.

Since $\mc{O}_{\RDist}(q_0)$ is not an integral manifold of $\RDist$
there is a point $q_1=(x_1,\hat{x}_1;A_1)\in  \mc{O}_{\RDist}(q_0)$
such that $-K_2(x_1)+\hat{K}_2(\hat{x}_1)\neq 0$
(see Corollary \ref{cor:weak_ambrose} and Remark \ref{re:weak_ambrose}).
Then there are open neighbourhoods $U$ and $\hat{U}$ of $x_1$ and $\hat{x}_1$
in $M$ and $\hat{M}$, respectively,
such that $-K_2(x)+\hat{K}_2(\hat{x})\neq 0$ for all $x\in U,\hat{x}\in\hat{U}$.

Define $O:=\pi_{Q}^{-1}(U\times\hat{U})\cap \mc{O}_{\RDist}(q_0)$,
which is an open subset of $\mc{O}_{\RDist}(q_0)$ containing $q_0$.
Because for all $q=(x,\hat{x};A)\in O$ one has
$\nu(\Rol_q(\star E_2))|_q\in T|_q \mc{O}_{\RDist}(q_0)$
and $-K_2(x)+\hat{K}_2(\hat{x})\neq 0$, it follows that
\[
\nu(A\star E_2)|_q\in T|_q \mc{O}_{\RDist}(q_0),\quad
\forall q=(x,\hat{x};A)\in O.
\]
Moreover, $\Gamma^1_{(1,2)}=0$ and $\Gamma^1_{(2,3)}=\beta$ is constant
and hence one may use Proposition \ref{pr:3D-2}, case (i), to conclude
that the vector fields defined by
\[
L_1|_q=&\LNSD(E_1)|_q-\beta \nu(A\star E_1)|_q \\
L_2|_q=&\beta \LNSD(E_2)|_q \\
L_3|_q=&\LNSD(E_3)|_q-\beta \nu(A\star E_2)|_q
\]
are tangent to the orbit $\mc{O}_{\RDist}(q_0)$.
Therefore the linearly independent vectors
\[
\LRD(E_1)|_q,\LRD(E_2)|_q,\LRD(E_3)|_q,\nu(A\star E_2)|_q,L_1|_q,L_2|_q,L_3|_q
\]
are tangent to $\mc{O}_{\RDist}(q_0)$ for all $q\in O$,
which implies that $\dim\mc{O}_{\RDist}(q_0)\geq 7$.
By Proposition \ref{pr:mbeta-1} we conclude that $\dim\mc{O}_{\RDist}(q_0)=7$
\end{proof}

By the previous proposition,
we are left to study the case of an $\RDist$-orbit which passes
through a point $q_0\in Q_0$.

\begin{proposition}\label{pr:mbeta-3}
Let $(M,g)$ and $(\hat{M},\hat{g})$ be two Riemannian manifolds of class $\mc{M}_\beta$, $\beta\neq 0$,
and let $q_0=(x_0,\hat{x}_0;A_0)\in Q_0$.
Write $M^\circ:=\pi_{Q,M}(\mc{O}_{\RDist}(q_0))$, $\hat{M}^\circ:=\pi_{Q,\hat{M}}(\mc{O}_{\RDist}(q_0))$,
which are open  connected subsets of $M$, $\hat{M}$.
Then we have:
\begin{itemize}
\item[(i)] If only one of $(M^\circ,g)$ or $(\hat{M}^\circ,\hat{g})$
has constant curvature, then $\dim\mc{O}_{\RDist}(q_0)=7$.
\item[(ii)] Otherwise $\dim\mc{O}_{\RDist}(q_0)=8$.
\end{itemize}
\end{proposition}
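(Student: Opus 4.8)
The plan is to localise the analysis near a generic point of $\mc{O}_{\RDist}(q_0)$ and to exploit the rigid form of the curvature operator of a manifold of class $\mc{M}_\beta$. First I would fix adapted orthonormal frames $E_1,E_2,E_3$ on $M$ and $\hat{E}_1,\hat{E}_2,\hat{E}_3$ on $\hat{M}$ (Appendix~\ref{app:m_beta}), so that $\Gamma^1_{(1,2)}=0$, $\Gamma^1_{(2,3)}=\beta$ is constant, the connection tables have the form required by Propositions~\ref{pr:3D-1} and~\ref{pr:3D-2}, and, by Lemma~\ref{le:mbeta-2}, the curvature operators are $R|_x=-\beta^2\,\id_{\wedge^2 T|_xM}+(\beta^2-K_2(x))P_{\star E_2}$ and $\hat{R}|_{\hat{x}}=-\beta^2\,\id+(\beta^2-\hat{K}_2(\hat{x}))P_{\hat{\star}\hat{E}_2}$, where $-K_2(x),-\hat{K}_2(\hat{x})$ are the simple eigenvalues and $P$ denotes $g$- (resp. $\hat{g}$-) orthogonal projection onto the indicated line. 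Since $Q_1$ is closed and $\RDist$-invariant (Proposition~\ref{pr:mbeta-1}) while $q_0\in Q_0=Q\setminus Q_1$, the whole orbit lies in $Q_0$; in particular, writing $v:=A^{-1}\hat{E}_2\in T|_xM$ for $q=(x,\hat{x};A)\in\mc{O}_{\RDist}(q_0)$, one has $v\neq\pm E_2$, hence $\star v\neq\pm\star E_2$. Substituting the two curvature formulas into $\widetilde{\Rol}_q=R|_x-(\wedge^2A)^{-1}\circ\hat{R}|_{\hat{x}}\circ(\wedge^2A)$, the multiples of the identity cancel and
\[
\widetilde{\Rol}_q=(\beta^2-K_2(x))P_{\star E_2}-(\beta^2-\hat{K}_2(\hat{x}))P_{\star v},
\]
a difference of two non-proportional scaled rank-one projections; thus $\rank\widetilde{\Rol}_q$ equals $2$ if $K_2(x)\neq\beta^2$ and $\hat{K}_2(\hat{x})\neq\beta^2$, and equals $1$ if exactly one of these holds (the case where both hold would give $\widetilde{\Rol}\equiv0$, hence an integral-manifold orbit by Corollary~\ref{cor:weak_ambrose}, and does not occur for $\mc{M}_\beta$-manifolds). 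Note that ``$(M^\circ,g)$ has constant curvature'' is equivalent to ``$K_2\equiv\beta^2$ on $M^\circ$'' by Schur's lemma, and likewise for $\hat{M}^\circ$.

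For the lower bounds I would argue as follows. In case~(i) we may assume, interchanging the roles of $M$ and $\hat{M}$ via the diffeomorphism $\iota$ of Proposition~\ref{pr:rol_inverse} if necessary, that $\hat{K}_2\equiv\beta^2$ on $\hat{M}^\circ$ while $K_2\not\equiv\beta^2$; then on the dense open subset of the orbit lying over $\{K_2\neq\beta^2\}$ the image of $\widetilde{\Rol}_q$ is exactly $\R\,\star E_2$, so by Proposition~\ref{pr:R_comm_L2} the field $\nu((\cdot)\star E_2)$ is tangent to $\mc{O}_{\RDist}(q_0)$ there, the hypotheses of Proposition~\ref{pr:3D-2} are met (recall $\Gamma^1_{(1,2)}=0$, $\Gamma^1_{(2,3)}=\beta$), and its part~(i) produces the seven pointwise-independent tangent fields $\LRD(E_1),\LRD(E_2),\LRD(E_3),\nu((\cdot)\star E_2),L_1,L_2,L_3$; hence $\dim\mc{O}_{\RDist}(q_0)\geq7$. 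In case~(ii) pick $q_1$ on the orbit with $\rank\widetilde{\Rol}_{q_1}=2$ (this is possible since not both $M^\circ,\hat{M}^\circ$ have constant curvature); near $q_1$ the two fields of the form $\nu(\Rol(E_i,E_j)(\cdot))$ spanning $\nu(AW)|_q$, where $W:=\IM\widetilde{\Rol}_q=\spn\{\star E_2,\star v\}$, are tangent to the orbit by Proposition~\ref{pr:R_comm_L2}; bracketing them against $\LRD(E_1),\LRD(E_2),\LRD(E_3)$ via Proposition~\ref{pr:NS_comm_HV} (as in the proof of Proposition~\ref{pr:R_comm_L3}) one obtains the directions $-\LNSD(\Rol(E_i,E_j)(A)E_k)=-\LNSD(A\widetilde{\Rol}_q(E_i\wedge E_j)E_k)$, and since $E_2^\perp+v^\perp=T|_xM$ these span all of $\LNSD(\{0\}\oplus T|_{\hat{x}}\hat{M})$; together with $\LRD(E_1),\LRD(E_2),\LRD(E_3)$ this gives $\NSDist|_q\oplus\nu(AW)|_q\subseteq T|_q\mc{O}_{\RDist}(q_0)$, so $\dim\mc{O}_{\RDist}(q_0)\geq8$.

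The matching upper bounds are the crux. For case~(ii) I would show that $\mc{O}_{\RDist}(q_0)$ is not open by producing an invariant codimension-one distribution: let $u:=\star E_2\times\star v$ (the $g$-cross product in $\wedge^2T|_xM$, nowhere zero on $Q_0$) and let $\vartheta$ be the smooth $1$-form on $Q_0$ annihilating $\NSDist$ and $\nu(A\spn\{\star E_2,\star v\})$ and pairing nontrivially with $\nu(Au)$; then $\ker\vartheta$ is a rank-$8$ distribution containing $\RDist$, and the remaining task is to verify $\vartheta\wedge d\vartheta=0$, i.e.\ that $\ker\vartheta$ is involutive, using the $\mc{M}_\beta$-identities among the connection coefficients together with Corollary~\ref{cor:vcomm} (the rigidity $R|_x=-\beta^2\,\id+(\text{rank }1)$ makes the $\so$-brackets of the curvature-vertical fields collapse into $\nu(AW)$) and Proposition~\ref{pr:NS_comm_HV}; once $\ker\vartheta$ is involutive, every $\RDist$-admissible curve stays in a single $8$-dimensional leaf of it, whence $\dim\mc{O}_{\RDist}(q_0)\leq8$. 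For case~(i) one can either (a) show directly, by the same bracket bookkeeping applied to the seven fields of Proposition~\ref{pr:3D-2}(ii), that $\Lie(\RDist)|_q$ has dimension exactly $7$ on the relevant open set --- here the special values of the $\mc{M}_\beta$ connection coefficients are exactly what makes the potentially dangerous vertical terms in brackets such as $[L_1,L_2]$ cancel --- or (b) invoke Section~\ref{space-form}: since $\hat{M}^\circ$ is locally the space form of curvature $\beta^2>0$, $\pi_{\mc{O}_{\RDist}(q_0),M}$ is a principal bundle over $M$ with structure group isomorphic to the holonomy group of the rolling connection $\nabla^{\Rol}$ on $TM\oplus\R$ (Theorem~\ref{th:rolling_connection}), and from (\ref{eq:curvature_of_nabla_rol}) and the $\mc{M}_\beta$-form of $R$ one checks that $R^{\nabla^{\Rol}}$ has rank one with image $\R\,\star E_2$, so an Ambrose--Singer count gives $\dim H^{\nabla^{\Rol}}=4$ and hence $\dim\mc{O}_{\RDist}(q_0)=3+4=7$. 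The hard part, in either route, is precisely this finite but intricate verification that the $\mc{M}_\beta$-relations force every iterated bracket to lie in the expected $7$- (resp.\ $8$-) dimensional span.

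Finally, since an orbit of a distribution has constant dimension, the dimension computed at $q_1$ (or on the dense open subset in case~(i)) equals $\dim\mc{O}_{\RDist}(q_0)$, which together with Proposition~\ref{pr:mbeta-1} completes the proof.
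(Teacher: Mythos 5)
Your overall strategy matches the paper's: localise near a generic point, read the rank of $\widetilde{\Rol}_q$ off the rigid $\mc{M}_\beta$ form of the curvature operators (your identity $\widetilde{\Rol}_q=(\beta^2-K_2(x))P_{\star E_2}-(\beta^2-\hat{K}_2(\hat{x}))P_{\star v}$ is correct and is a clean repackaging of what the paper computes in the frame $X_A,E_2,Z_A$), use Propositions~\ref{pr:3D-1}--\ref{pr:3D-2} for the lower bounds, and build involutive distributions of rank $7$ resp.\ $8$ for the upper bounds.

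There is, however, a genuine gap in your lower bound for case~(ii). You ``pick $q_1$ on the orbit with $\rank\widetilde{\Rol}_{q_1}=2$'' and justify this by ``not both $M^\circ,\hat{M}^\circ$ have constant curvature.'' That hypothesis only gives you $M_0\cap M^\circ\neq\emptyset$ \emph{and} $\hat{M}_0\cap\hat{M}^\circ\neq\emptyset$ separately (with $M_0=\{K_2\neq\beta^2\}$, $\hat{M}_0=\{\hat{K}_2\neq\beta^2\}$); a rank-$2$ point requires a \emph{single} orbit point lying over $M_0\times\hat{M}_0$, and since $\pi_Q(\mc{O}_{\RDist}(q_0))$ need not be all of $M^\circ\times\hat{M}^\circ$, this does not follow. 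The paper closes exactly this hole with a nontrivial propagation argument: it introduces the auxiliary rank-$3$ distributions $\hat D=\mathrm{span}\{\hat K_1,\hat K_2,\hat K_3\}$ and $D$, proves they are tangent to the orbit over $Q_0\cap\pi_Q^{-1}(M_0\times\hat M)$ and $Q_0\cap\pi_Q^{-1}(M\times\hat M_0)$ respectively (this uses Proposition~\ref{pr:3D-2} once $\nu(A\star E_2)$, resp.\ $\nu((\hat\star\hat E_2)A)$, is known to be tangent there), establishes by a compactness argument that the corresponding lifts $\Gamma(\gamma,\cdot)$, $\hat\Gamma(\hat\gamma,\cdot)$ exist on all of $[0,1]$, and then slides an orbit point from $M_1\times\hat M_0$ (or $M_0\times\hat M_1$) into $M_0\times\hat M_0$ while staying inside the orbit. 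You need this step, or a substitute for it. Two smaller points: your parenthetical claim that ``both constant curvature'' cannot occur for $\mc{M}_\beta$-manifolds is false (a sphere of curvature $\beta^2$ is of class $\mc{M}_\beta$; in that degenerate situation the orbit is an integral manifold, which is why the paper's case analysis explicitly assumes neither factor has constant curvature); and both of your upper-bound arguments — the involutivity of $\ker\vartheta$ in case~(ii) and the closedness of the seven-field system in case~(i) — are precisely the long explicit bracket computations that constitute the bulk of the paper's proof, and you have deferred rather than performed them, so as written the proposal establishes neither upper bound.
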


\begin{proof}
As before, we let $E_1,E_2,E_3$ and $\hat{E}_1,\hat{E}_2,\hat{E}_3$ to be some adapted frames of $(M,g)$
and $(\hat{M},\hat{g})$ respectively.
We will not fix the choice of $q_0$ in $Q_0$ (and hence do not define $M^\circ$, $\hat{M}^\circ$)
until the last half of the proof (where we introduce the sets $M_0,M_1,\hat{M}_0,\hat{M}_1$ below).
Notice that Proposition \ref{pr:mbeta-1} implies that $\mc{O}_{\RDist}(q_0)\subset Q_0$,
for every $q_0\in Q_0$.

The fact that $AE_2|_{x}\neq \pm \hat{E}_2|_{\hat{x}}$ for $q=(x,\hat{x};A)\in Q_0$
is equivalent
to the fact that the intersection $(AE_2^\perp|_{x})\cap \hat{E}_2^\perp|_{\hat{x}}$
is non-trivial for all $q=(x,\hat{x};A)\in Q_0$.
Therefore, for a small enough open neighbourhood $\tilde{O}$ of $q_0$ inside $Q_0$,
we may find a smooth functions $\theta,\hat{\theta}:\tilde{O}\to\R$
such that this intersection is spanned by
$AZ_A=\hat{Z}_A$, where
\[
Z_A:=&-\sin(\theta(q))E_1|_x+\cos(\theta(q))E_3|_x \\
\hat{Z}_A:=&-\sin(\hat{\theta}(q))\hat{E}_1|_{\hat{x}}+\cos(\hat{\theta}(q))\hat{E}_3|_{\hat{x}}.
\]
We also define
\[
X_A:=&\cos(\theta(q))E_1|_{x}+\sin(\theta(q))E_3|_{x} \\
\hat{X}_A:=&\cos(\hat{\theta}(q))\hat{E}_1|_{\hat{x}}+\sin(\hat{\theta}(q))\hat{E}_3|_{\hat{x}}.
\]

To unburden the formulas, we write from now on usually $s_\tau:=\sin(\tau(q))$, $c_\tau:=\cos(\tau(q))$
if $\tau:\tilde{O}\to\R$ is some function and the point $q\in \tilde{O}$ is clear from the context.

Since $X_A,E_2|_x,Z_A$ (resp. $\hat{X}_A,\hat{E}_2|_{\hat{x}},\hat{Z}_A$)
form an orthonormal frame for every $q=(x,\hat{x};A)\in \tilde{O}$
and because $A(Z_A^\perp)=\hat{Z}_A^\perp$, 
it follows that there is a smooth $\phi:O'\to\R$ such that
\[
AX_A=&c_{\hat{\phi}} \hat{X}_A+s_{\hat{\phi}} \hat{E}_2=c_{\hat{\phi}}(c_{\hat{\theta}}\hat{E}_1+s_{\hat{\theta}}\hat{E}_3)+s_{\hat{\phi}}\hat{E}_2 \\
AE_2=&-s_{\hat{\phi}} \hat{X}_A+c_{\hat{\phi}} \hat{E}_2=-s_{\hat{\phi}}(c_{\hat{\theta}}\hat{E}_1+s_{\hat{\theta}}\hat{E}_3)+c_{\hat{\phi}}\hat{E}_2 \\
AZ_A=&\hat{Z}_A.
\]
In particular,
\[
\hat{g}(AZ_A,\hat{E}_2)=0,
\]
for all $q=(x,\hat{x};A)\in \tilde{O}$.

Notice that for all $q=(x,\hat{x};A)\in\tilde{O}$, since $A\star Z_A=\hat{\star} \hat{Z}_A A$,
\[
\widetilde{\Rol}_q(\star Z_A)=R(\star Z_A)-A^{\ol{T}}\hat{R}(\hat{\star} \hat{Z}_A)A
=-K\star Z_A+KA^{\ol{T}}\hat{\star} \hat{Z}_AA=0
\]
and hence, since $\widetilde{\Rol}_q:\wedge^2 T|_x M\to \wedge^2 T|_x M$ is a symmetric map,
\[
\widetilde{\Rol}_q(\star X_A)=&-K_1^\Rol(q)\star X_A-\alpha \star E_2 \\
\widetilde{\Rol}_q(\star E_2)=&-\alpha\star X_A-K_2^\Rol(q) \star E_2,
\]
for some smooth functions $K_1^\Rol,K_2^\Rol,\alpha:\tilde{O}\to\R$.

We begin by considering the smooth 5-dimensional distribution $\Delta$ on the open subset $\tilde{O}$
of $Q_0$ spanned by
\[
\LRD(E_1)|_q,\LRD(E_2)|_q,\LRD(E_3)|_q,\nu(A\star E_2)|_q,\nu(A\star X_{A})|_q.
\]
What will be shown is that $\Lie(\Delta)$ spans at every point $q\in \mc{O}$
a smooth distribution $\Lie(\Delta)|_q$ of dimension 8
which, by construction, is then involutive.
We consider $\VF_{\RDist}^k, \VF_{\Delta}^k, \Lie(\Delta)$ as $\Cinf(\tilde{O})$-modules.

Since $X_A=c_\theta E_1+s_\theta E_3$,
in order to compute brackets of the first 4 vector fields above against $\nu(A\star X_{A})|_q$,
we need to know some derivatives of $\theta$.
This will be done next.
We begin by computing
\[
\LRD(X_A)|_qZ_{(\cdot)}=&(-\LRD(X_A)|_q\theta+c_\theta\Gamma^1_{(3,1)}+s_\theta\Gamma^3_{(3,1)})X_A-\beta E_2 \\
\LRD(E_2)|_q Z_{(\cdot)}=&(-\LRD(E_2)|_q\theta+\Gamma^2_{(3,1)})X_A \\
\LRD(Z_A)|_qZ_{(\cdot)}=&(-\LRD(Z_A)|_q\theta-s_\theta\Gamma^1_{(3,1)}+c_\theta\Gamma^3_{(3,1)})X_A.
\]
Differentiating $\hat{g}(AZ_A,\hat{E}_2)=0$ with respect to $\LRD(X_A)|_q$ gives,
\[
0=&\hat{g}(A\LRD(X_A)|_q Z_{(\cdot)},\hat{Y})+\hat{g}(AZ_A,\hat{\nabla}_{AX_A} \hat{E}_2) \\
=&\hat{g}(A(-\LRD(X_A)|_q\theta+c_\theta\Gamma^1_{(3,1)}+s_\theta\Gamma^3_{(3,1)})X_A-\beta E_2),\hat{E}_2) \\
&+\hat{g}(AZ_A,c_{\hat{\phi}}c_{\hat{\theta}}\beta\hat{E}_3-c_{\hat{\phi}}s_{\hat{\theta}}\beta \hat{E}_1) \\
=&s_{\hat{\phi}}(-\LRD(X_A)|_q\theta+c_\theta\Gamma^1_{(3,1)}+s_\theta\Gamma^3_{(3,1)})-\beta c_{\hat{\phi}}
+c_{\hat{\phi}}s_{\hat{\theta}}^2\beta+c_{\hat{\phi}}c_{\hat{\theta}}^2\beta \\
=&s_{\hat{\phi}}(-\LRD(X_A)|_q\theta+c_\theta\Gamma^1_{(3,1)}+s_\theta\Gamma^3_{(3,1)}).
\]
Since $s_{\hat{\phi}}\neq 0$ (because otherwise $AE_2=\pm \hat{E}_2$),
we get
\[
\LRD(X_A)|_q\theta=c_\theta\Gamma^1_{(3,1)}+s_\theta\Gamma^3_{(3,1)}.
\]
In a similar way, differentiating $\hat{g}(AZ_A,\hat{E}_2)=0$ with respect to $\LRD(Z_A)|_q$,
$\LRD(E_2)|_q$,
one finds
\[
\LRD(Z_A)|_q\theta&=-s_\theta\Gamma^1_{(3,1)}+c_\theta\Gamma^3_{(3,1)} \\
\LRD(E_2)|_q\theta&=-\beta+\Gamma^2_{(3,1)}.
\]
Finally, applying $\nu(A\star E_2)|_q$ on the equation $\hat{g}(AZ_A,\hat{E}_2)=0$
gives,
\[
0=&\hat{g}(\nu(A\star E_2)|_q((\cdot)Z_{(\cdot)},\hat{E}_2)
=\hat{g}(A(\star E_2)Z_A-(\nu(A\star E_2)|_q\theta)AX_A,\hat{E}_2) \\
=&(1-\nu(A\star E_2)|_q\theta)\hat{g}(AX_A,\hat{E}_2)
\]
and since $\hat{g}(AX_A,\hat{E}_2)=s_{\hat{\phi}}\neq 0$,
\[
\nu(A\star E_2)|_q\theta=1.
\]

Using the definition of $X_A$ and $Z_A$, we
may now summarize
\[
\LRD(E_1)|_q\theta=\Gamma^1_{(3,1)},\quad &
\LRD(E_2)|_q\theta=-\beta+\Gamma^2_{(3,1)} \\
\LRD(E_3)|_q\theta=\Gamma^3_{(3,1)},\quad &
\nu(A\star E_2)|_q\theta=1.
\]

By Proposition \ref{pr:3D-2} and the fact that $\beta\neq 0$, we see that $\VF_{\Delta}^2$ contains
the vector fields given by
\[
L_1|_q=&\LNSD(E_1)|_q-\beta \nu(A\star E_1)|_q \\
\tilde{L}_2|_q=&\LNSD(E_2)|_q \\
L_3|_q=&\LNSD(E_3)|_q-\beta \nu(A\star E_3)|_q \\
\]
i.e. $\tilde{L}_2=\frac{1}{\beta}L_2$.
Computing
\[
[\LRD(E_1),\nu((\cdot)\star X_{(\cdot)})]|_q
=&-s_\theta\LRD(E_2)|_q+s_\theta \tilde{L}_2|_q-s_\theta \beta\nu(A\star E_2)|_q \\
[\LRD(E_2),\nu((\cdot)\star X_{(\cdot)})]|_q
=&-\LRD(Z_A)|_q-s_\theta L_1|_q+c_\theta L_3|_q \\
[\LRD(E_3),\nu((\cdot)\star X_{(\cdot)})]|_q
=&c_\theta\LRD(E_2)|_q-c_{\theta}\tilde{L}_2|_q-c_\theta \beta\nu(A\star E_2)|_q \\
[\nu((\cdot)\star E_2),\nu((\cdot)\star X_{(\cdot)})]|_q
=&0
\]
and since one also has
\[
[\LRD(E_1),\LRD(E_2)]|_q=&\LRD([E_1,E_2])|_q-s_{\theta} K_1^\Rol \nu(A\star X_A)|_q-s_{\theta}\alpha \nu(A\star E_2)|_q \\
[\LRD(E_2),\LRD(E_3)]|_q=&\LRD([E_2,E_3])|_q-c_{\theta} K_1^\Rol \nu(A\star X_A)|_q-c_{\theta}\alpha \nu(A\star E_2)|_q  \\
[\LRD(E_3),\LRD(E_1)]|_q=&\LRD([E_3,E_1])|_q-\alpha \nu(A\star X_A)|_q-K_2^\Rol\nu(A\star E_2)|_q,
\]
we see using in addition Proposition \ref{pr:3D-2}, case (ii) (the first three Lie brackets there),
that $\VF_{\Delta}^2$ is generated by the following 8 linearly independent vector fields
defined on $\tilde{O}$ by
\[
\LRD(E_1)|_q,\LRD(E_2)|_q,\LRD(E_3)|_q,\nu(A\star E_2)|_q,\nu(A\star X_{A})|_q,
L_1|_q,\tilde{L}_2|_q,L_3|_q.
\]

We now proceed to show that $\Lie(\Delta)=\VF_{\Delta}^2$.
According to Proposition \ref{pr:3D-2} case (ii)
and previous computations,
we know that all the brackets between $\LRD(E_1)$, $\LRD(E_2)$, $\LRD(E_3)$, $\nu((\cdot)\star E_2)$
and $L_1,L_3$ and also $[L_1,L_3]$ belong to $\VF_{\Delta}^2$,
so we are left to compute the bracket of $\nu((\cdot)\star X_{(\cdot)}),\tilde{L}_2$
against $L_1,L_3$
and also $\tilde{L}_2$ against $\LRD(E_1)|_q,\LRD(E_2)|_q,\LRD(E_3)|_q,\nu(A\star E_2)|_q,\nu((\cdot)\star X_{(\cdot)})|_q$.

To do that, we need to know more derivatives of $\theta$.
Since $[\LRD(E_1),\nu((\cdot)\star E_2)]=\LRD(E_3)|_q-L_3|_q$, we get
\[
L_3|_q\theta=&\LRD(E_3)|_q\theta-\LRD(E_1)|_q\big(\underbrace{\nu((\cdot)\star E_2)\theta}_{=1}\big)
+\nu(A\star E_2)|_q\big(\underbrace{\LRD(E_1)\theta}_{=\Gamma^1_{(3,1)}}\big)
=\Gamma^3_{(3,1)}
\]
and similarly, by using $[\LRD(E_3),\nu((\cdot)\star E_2)]=-\LRD(E_1)|_q+L_1|_q$,
\[
L_1|_q\theta=\Gamma^1_{(3,1)}.
\]
On the other hand
\[
\LNSD(E_2)|_q Z_{(\cdot)}=&(-\LNSD(E_2)|_q\theta+\Gamma^2_{(3,1)})X_A,
\]
and to compute $\tilde{L}_2|_q\theta=\LNSD(E_2)|_q\theta$, operate by $\LNSD(E_2)|_q$ onto equation $\hat{g}(AZ_A,\hat{E}_2)=0$
to get
\[
\tilde{L}_2|_q\theta=\Gamma^2_{(3,1)}.
\]
With these derivatives of $\theta$ being available, we easily see that
\[
[L_1,\nu((\cdot)\star X_{(\cdot)})]|_q=&0 \\
[L_1,\tilde{L}_2]|_q=&(\Gamma^2_{(3,1)}+\beta)L_3|_q \\
[L_3,\nu((\cdot)\star X_{(\cdot)})]|_q=&0 \\
[L_3,\tilde{L}_2]|_q=&-(\Gamma^2_{(3,1)}+\beta)L_1|_q \\
[\LRD(E_1),\tilde{L}_2]|_q=&\beta L_3|_q-\LRD(\nabla_{E_2} E_1)|_q \\
[\LRD(E_2),\tilde{L}_2]|_q=&0 \\
[\LRD(E_3),\tilde{L}_2]|_q=&-\beta L_1|_q-\LRD(\nabla_{E_2} E_3)|_q \\
[\nu((\cdot)\star E_2),\tilde{L}_2]|_q=&0 \\
[\nu((\cdot)\star X_{(\cdot)}),\tilde{L}_2]|_q=&0.
\]

Hence we have proved that $\VF_{\Delta}^2$ is involutive and hence
\[
\Lie(\Delta)=\VF_{\Delta}^2.
\]
There being 8 linearly independent generators for $\Lie(\Delta)=\VF_{\Delta}^2$,
we conclude that the distribution $\mc{D}$ spanned pointwise on $\tilde{O}$
by $\Lie(\Delta)$ is integrable by the theorem of Frobenius.

The choice of $q_0\in Q_0$ was arbitrary and
we see that we may build an 8-dimensional smooth involutive distribution $\mc{D}$
by the above construction on the whole $Q_0$.
Since $\RDist\subset \Delta\subset\mc{D}$, we have $\mc{O}_{\RDist}(q_0)\subset \mc{O}_{\mc{D}}(q_0)$
for all $q_0\in Q_0$ and thus $\dim \mc{O}_{\RDist}(q_0)\leq 8$.
We will show when the equality holds here
and show when actually $\dim \mc{O}_{\RDist}(q_0)=7$.

Define
\[
M_0=&\{x\in M\ |\ \beta^2\neq K_2(x)\} \\
M_1=&\{x\in M\ |\ \exists\ \textrm{open}\ V\ni x\ \textrm{s.t.}\ \forall x'\in V,\ \beta^2=K_2(x')\} \\
\hat{M}_0=&\{\hat{x}\in \hat{M}\ |\ \beta^2\neq \hat{K}_2(\hat{x})\} \\
\hat{M}_1=&\{\hat{x}\in \hat{M}\ |\ \exists\ \textrm{open}\ \hat{V}\ni \hat{x}\ \textrm{s.t.}\ \forall \hat{x}'\in \hat{V},\ \beta^2=\hat{K}_2(\hat{x}')\},
\]
and notice that $M_0\cup M_1$ (resp. $\hat{M}_0\cup \hat{M}_1$)
is an open dense subset of $M$ (resp. $\hat{M}$).
At this point we also fix $q_0\in Q_0$ and write $M^\circ=\pi_{Q,M}(\mc{O}_{\RDist}(q_0))$,
$\hat{M}^\circ=\pi_{Q,\hat{M}}(\mc{O}_{\RDist}(q_0))$ as in the statement
of this proposition.

Let $q_1=(x_1,\hat{x}_1;A_1)\in \pi_{Q}^{-1}(M_0\times \hat{M}_0)\cap Q_0$.
Take an open neighbourhood $\tilde{O}$ of $q_1$ in $Q_0$ as above (now for $q_1$ instead of $q_0$
which we fixed)
such that $\pi_Q(\tilde{O})\subset M_0\times \hat{M}_0$,
and introduce on $\tilde{O}$ the vectors $X_A,Z_A,\hat{X}_A,\hat{Z}_A$
along with the angles $\theta,\hat{\theta},\hat{\phi}$, again as above.
Then one computes for $q\in \tilde{O}$,
\[
\qmatrix{\widetilde{\Rol}_q(\star X_A) \cr
\widetilde{\Rol}_q(\star E_2)}
=&\qmatrix{s_{\hat{\phi}}^2(-\beta^2+\hat{K}_2) & c_{\hat{\phi}}s_{\hat{\phi}}(-\beta^2+\hat{K}_2) \cr
(-\beta^2+\hat{K}_2)s_{\hat{\phi}}c_{\hat{\phi}} & -K_2+s_{\hat{\phi}}^2\beta^2+c_{\hat{\phi}}^2\hat{K}_2}
\qmatrix{\star X_A \cr \star E_2} \\
\widetilde{\Rol}_q(\star Z_A)=&0.
\]
The determinant $d(q)$ of the above matrix equals
\[
d(q)=-s_{\hat{\phi}}^2(-K_2+\beta^2)(-\hat{K}_2+\beta^2),
\]
so $d(q)\neq 0$ since $q\in \tilde{O}\subset \pi_{Q}^{-1}(M_0\times \hat{M}_0)\cap Q_0$.
Since $\nu(\Rol(\star E_2)(A))|_{q_1}\in T|_{q_1}\mc{O}_{\RDist}(q_1)$,
we obtain that $\nu(A_1\star E_2)|_{q_1}\in T|_{q_1}\mc{O}_{\RDist}(q_1)$.

If $q_1=(x_1,\hat{x}_1;A_1)\in \pi_{Q}^{-1}(M_0\times \ol{\hat{M}_0})\cap Q_0$,
then one can take a sequence $q_n'=(x_n',\hat{x}_n';A_n')\in \mc{O}_{\RDist}(q_1)$
such that $q_n'\to q_1$ while $\hat{x}_n'\in \hat{M}_0$.
Since $M_0$ and $Q_0$ are open, we have for large enough $n$ that
$q_n'\in \pi_{Q}^{-1}(M_0\times \hat{M}_0)\cap Q_0$,
hence $\nu(A_n'\star E_2)|_{q_n'}\in T|_{q_n'} \mc{O}_{\RDist}(q_1)$
and by taking the limit as $n\to\infty$, we have $\nu(A_1\star E_2)|_{q_1}\in T|_{q_1} \mc{O}_{\RDist}(q_1)$.

Next suppose $q_1=(x_1,\hat{x}_1;A_1)\in \pi_{Q}^{-1}(M_0\times \hat{M}_1)\cap Q_0$.
Then $\widetilde{\Rol}_{q_1}(\star E_1)=\widetilde{\Rol}_{q_1}(\star E_3)=0$,
$\widetilde{\Rol}_{q_1}(\star E_2)=(-K_2(x_1)+\beta^2)\star E_2$ with $K_2(x_1)\neq \beta^2$
and hence $\nu(A\star E_2)|_{q_1}\in T|_{q_1}\mc{O}_{\RDist}(q_1)$.
Thus we have proven that
\[
\nu(A\star E_2)|_q\in T|_q\mc{O}_{\RDist}(q),\quad \forall q\in Q_0\cap \pi_Q^{-1}(M_0\times \hat{M}).
\]
Changing the roles of $M$ and $\hat{M}$
we also have
\[
\nu((\hat{\star} \hat{E}_2)A)|_q\in T|_q\mc{O}_{\RDist}(q),\quad \forall q\in Q_0\cap \pi_Q^{-1}(M\times \hat{M}_0).
\]

We define on $Q$ two 3-dimensional distributions $D,\hat{D}$:
for $q\in Q$ one defines $\hat{D}|_q$ to be the span of
\[
& \hat{K}_1|_q=\LNSD(AE_1)|_q+\beta \nu(A\star E_1)|_q \\
& \hat{K}_2|_q=\LNSD(AE_2)|_q \\
& \hat{K}_3|_q=\LNSD(AE_3)|_q+\beta \nu(A\star E_3)|_q
\]
and $D|_q$ to be the span of
\[
& K_1|_q=\LNSD(A^{\ol{T}}\hat{E}_1)|_q-\beta \nu((\hat{\star} \hat{E}_1)A)|_q \\
& K_2|_q=\LNSD(A^{\ol{T}}\hat{E}_2)|_q \\
& K_3|_q=\LNSD(A^{\ol{T}}\hat{E}_3)|_q-\beta \nu((\hat{\star} \hat{E}_3)A)|_q
\]
We claim that for any $q_1=(x_1,\hat{x}_1;A_1)\in Q$ and any smooth paths $\gamma:[0,1]\to M$,  $\hat{\gamma}:[0,1]\to \hat{M}$  with 
$\gamma(0)=x_1$, $\hat{\gamma}(0)=\hat{x}_1$
there are unique curves $\Gamma,\hat{\Gamma}:[0,1]\to Q$ of the same regularity as $\gamma,\hat{\gamma}$
such that $\Gamma$ is tangent to $D$, $\Gamma(0)=q_1$ and $\pi_{Q,M}(\Gamma(t))=\gamma$
and similarly $\hat{\Gamma}$ is tangent to $\hat{D}$, $\hat{\Gamma}(0)=q_1$ and $\pi_{Q,\hat{M}}(\hat{\Gamma}(t))=\hat{\gamma}$.
The key point here is that $\Gamma,\hat{\Gamma}$ are defined on $[0,1]$ and
not only on a smaller interval $[0,T]$ with $T\leq 1$.
We write these curves as $\Gamma=\Gamma(\gamma,q_1)$ and $\hat{\Gamma}=\hat{\Gamma}(\hat{\gamma},q_1)$,
respectively. Notice that since $(\pi_{Q,\hat{M}})_*D=0$ and $(\pi_{Q,M})_*\hat{D}=0$,
one has
\[
\pi_{Q,\hat{M}}(\Gamma(\gamma,q_1)(t))=\hat{x}_1,
\quad
\pi_{Q,M}(\hat{\Gamma}(\hat{\gamma},q_1)(t))=x_1,
\quad \forall t\in [0,1].
\]

We prove the above claim for $D$ only since the proof for $\hat{D}$ is similar.
Uniqueness and local existence are clear.
Take some extension of $\gamma$ to an interval $]-\epsilon,1+\epsilon[=:I$
and write $\Gamma_1:=\Gamma(\gamma,q_1)$.
Consider a trivialization (which is global since we assumed the frames $E_i$, $\hat{E}_i$, $i=1,2,3$
to be global) of $\pi_{Q}$ given by
\[
\Phi:Q\to M\times\hat{M}\times \SO(n);
\quad (x,\hat{x};A)\mapsto (x,\hat{x},\mc{M}_{F,\hat{F}}(A)),
\]
where $F=(E_1,E_2,E_3)$, $\hat{F}=(\hat{E}_1,\hat{E}_2,\hat{E}_3)$.

Clearly for every $(s,C)\in I\times \SO(n)$ one has
\[
\Phi(\Gamma(\gamma(s+\cdot),\Phi^{-1}(\gamma(s),\hat{x}_1;C))(t))=(\gamma(s+t),\hat{x}_1,B_{(s,C)}(t)),
\]
where $B_{(s,C)}(t)\in \SO(n)$ and $t$ in some small open interval containing $0$.
On $I\times\SO(n)$ we define a vector field
\[
\mc{X}|_{(s,C)}=(\pa{t},\dot{B}_{(s,C)}(0)).
\]
If $\Phi(\Gamma(\gamma,q_1)(t))=(\gamma(t),\hat{x}_1;C_1(t))$, then since
\[
\dif{s}\Phi(\Gamma_1(s))=&\dif{t}\big|_0\Phi(\Gamma(\gamma,q_1)(t+s))
=\dif{t}\big|_0 \Phi\big(\Gamma(\gamma(s+\cdot),\Gamma(\gamma,q_1)(s))(t)\big) \\
=&\dif{t}\big|_0 (\gamma(t+s),\hat{x}_1,B_{(s,C_1(s))}(t))
=(\dot{\gamma}(s),0,(\pr_2)_*\mc{X}|_{(s,C_1(s))}),
\]
we see that $s\mapsto (s,(\pr_3\circ \Phi\circ \Gamma_1)(s))=(s,C_1(s))$
is the integral curve of $\mc{X}$ starting from $(0,C_1(0))$.
Conversely, if $\Lambda_1(t)=(t,C(t))$ is the integral curve of $\mc{X}$ starting from $(0,C_1(0))$,
then $\tilde{\Gamma}_1(t):=\Phi^{-1}(\gamma(t),\hat{x}_1,C(t))$ gives an integral curve of $D$
starting from $q_1$ and $\pi_{Q,M}(\tilde{\Gamma}_1(t))=\gamma(t)$.

Hence the maximal positive interval of definition of $\Gamma_1$
is the same as that of the integral curve $\Lambda_1$ of $\mc{X}$ starting from $(0,C_1)$.
If it is of the form $[0,t_0[$ for some $t_0<1+\epsilon$,
then, because $[0,1]\times\SO(n)$ is a compact subset of $I\times \SO(n)$,
there is a $t_1 \in [0,t_0[$ with $\Lambda_1(t_1)\notin [0,1]\times\SO(n)$
i.e. $t_1\notin [0,1]$ which is only possible if $t_1>1$,
and thus $t_0>1$.
We have shown that the existence of
$\Gamma_1(t)=\Gamma(\gamma,q_1)(t)$ is guaranteed on the whole interval $[0,1]$.

Since for all  $q\in Q_0\cap \pi_Q^{-1}(M_0\times \hat{M})$,
which is an open subset of $Q$, one has $\nu(A\star E_2)|_q\in T|_q\mc{O}_{\RDist}(q)$,
it follows from Proposition \ref{pr:3D-2}
that 
\[
L_1|_q&=\LNSD(E_1)|_q-\beta\nu(A\star E_1)|_q \\
\tilde{L}_2|_q&=\LNSD(E_2)|_q \\
L_3|_q&=\LNSD(E_3)|_q-\beta\nu(A\star E_3)|_q
\]
are tangent to the orbit $\mc{O}_{\RDist}(q)$
and hence so are $\LRD(E_1)|_q-L_1|_q=\hat{K}_1|_q$, $\LRD(E_2)|_q-\tilde{L}_2=\hat{K}_2|_q$
and $\LRD(E_3)|_q-L_3|_q=\hat{K}_3|_q$
i.e.
\[
\hat{D}|_q\subset T|_q \mc{O}_{\RDist}(q),\quad \forall q\in Q_0\cap \pi_Q^{-1}(M_0\times \hat{M}).
\]
Similar argument shows that
\[
D|_q\subset T|_q \mc{O}_{\RDist}(q),\quad \forall q\in Q_0\cap \pi_Q^{-1}(M\times \hat{M}_0).
\]

Assume now that $(M_1\times \hat{M}_0)\cap \pi_Q(\mc{O}_{\RDist}(q_0))\neq\emptyset$
and that $M_0\neq\emptyset$.
Choose any $q_1=(x_1,\hat{x}_1;A_1)\in \mc{O}_{\RDist}(q_0)$
with $(x_1,\hat{x}_1)\in M_1\times \hat{M}_0$ and take any
curve $\gamma:[0,1]\to M$ with $\gamma(0)=x_1$, $\gamma(1)\in M_0$.
Then since $\pi_{Q,\hat{M}}(\Gamma(\gamma,q_1)(t))=\hat{x}_1$,
we have $\pi_Q(\Gamma(\gamma,q_1)(t))\in M\times \hat{M}_0$ for all $t\in [0,1]$
and since also $D|_{q}\subset T|_q \mc{O}_{\RDist}(q_0)$ for all $q\in\mc{O}_{\RDist}(q_0)\cap \pi_Q^{-1}( M\times \hat{M}_0)$,
we have that $\Gamma(\gamma,q_1)(t)\in \mc{O}_{\RDist}(q_0)$ for all $t\in [0,1]$.

Indeed, suppose there is a $0\leq t<1$ with $\Gamma(\gamma,q_1)(t)\notin \mc{O}_{\RDist}(q_0)$
and define $t_1=\inf\{t\in [0,1]\ |\ \Gamma(\gamma,q_1)(t)\notin \mc{O}_{\RDist}(q_0)\}$.
Clearly $t_1>0$.
Because $q_2:=\Gamma(\gamma,q_1)(t_1)\in \pi_Q^{-1}(M\times \hat{M}_0)$,
it follows that for $|t|$ small one has $\Gamma(\gamma,q_1)(t_1+t)\in \mc{O}_{\RDist}(q_2)$,
whence if $t<0$ small, $\Gamma(\gamma,q_1)(t_1+t)\in \mc{O}_{\RDist}(q_2)\cap \mc{O}_{\RDist}(q_0)$,
which means that $q_2\in \mc{O}_{\RDist}(q_0)$
and thus for $t\geq 0$ small $\Gamma(\gamma,q_1)(t_1+t)\in \mc{O}_{\RDist}(q_0)$, a contradiction.

Hence one has $\pi_Q(\Gamma(\gamma,q_1)(1))\in (M_0\times \hat{M}_0)\cap \pi_Q(\mc{O}_{\RDist}(q_0))$.
In other words we have the implication:
\[
(M_1\times \hat{M}_0)\cap \pi_Q(\mc{O}_{\RDist}(q_0))\neq\emptyset,\quad
M_0\neq\emptyset
\quad\Longrightarrow\quad (M_0\times \hat{M}_0)\cap \pi_Q(\mc{O}_{\RDist}(q_0))\neq\emptyset.
\]
By a similar argument, using $\hat{D}$ instead of $D$,
one has that 
\[
(M_0\times \hat{M}_1)\cap \pi_Q(\mc{O}_{\RDist}(q_0))\neq\emptyset,
\quad \hat{M}_0\neq\emptyset
\quad\Longrightarrow\quad
(M_0\times \hat{M}_0)\cap \pi_Q(\mc{O}_{\RDist}(q_0))\neq\emptyset.
\]

Suppose now that there exists $q_1=(x_1,\hat{x}_1;A_1)\in \pi_Q^{-1}(M_0\times \hat{M}_0)\cap \mc{O}_{\RDist}(q_0)$.
We already know that $T|_{q_1}\mc{O}_{\RDist}(q_0)$ contains vectors
\[
& \LRD(E_1)|_{q_1}, \LRD(E_2)|_{q_1}, \LRD(E_3)|_{q_1}, \\
& \nu(A\star E_2)|_{q_1}, \nu((\hat{\star}\hat{E}_2)A)|_{q_1} \\
& L_1|_{q_1},\tilde{L}_2|_{q_1}, L_3|_{q_1}
\]
which are linearly independent since $q_1\in  (M_0\times \hat{M}_0)\cap \pi_Q(\mc{O}_{\RDist}(q_0))$.
Indeed, if one introduces $X_A,Z_A$ and an angle $\phi$ as before, we have
$\sin(\phi(q_1))\neq 0$ as $q_1\in Q_0$ and
\[
\nu((\hat{\star}\hat{E}_2)A_1)|_{q_1}
=\nu(A_1\star (A_1^{\ol{T}}\hat{E}_2))|_{q_1}
=\sin(\phi(q_1))\nu(A_1\star X_{A_1})|_{q_1}+\cos(\phi(q_1))\nu(A_1\star E_2)|_{q_1}.
\]
Therefore $\dim\mc{O}_{\RDist}(q_0)\geq 8$
and since we have also shown that $\dim\mc{O}_{\RDist}(q_0)\leq 8$,
we have that
\[
(M_0\times \hat{M}_0)\cap \pi_Q(\mc{O}_{\RDist}(q_0))\neq\emptyset
\quad\Longrightarrow\quad \dim\mc{O}_{\RDist}(q_0)=8
\]

Write $Q^\circ:=\pi_Q^{-1}(M^\circ\times \hat{M}^\circ)$, which is an open subset of $Q$
and clearly $\mc{O}_{\RDist}(q_0)\subset Q^\circ$.
To finish the proof, we proceed case by case.

\begin{itemize}
\item[a)] Suppose $(\hat{M}^\circ,\hat{g})$ has constant curvature i.e. $\hat{M}_0\cap \hat{M}^\circ=\emptyset$.
By assumption then, $(M^\circ, g)$ does not have constant curvature, which means that $M_0\cap M^\circ\neq\emptyset$.

At every $q=(x,\hat{x};A)\in Q^\circ$, one has 
$\widetilde{\Rol}_q(\star E_1)=\widetilde{\Rol}_q(\star E_3)=0$
and $\widetilde{\Rol}_q(\star E_2)=(-K_2(x)+\beta^2)\star E_2$
and therefore
\[
[\LRD(E_1),\LRD(E_2)]|_q&=\LRD([E_1,E_2])|_q,\quad
[\LRD(E_2),\LRD(E_3)]|_q=\LRD([E_2,E_3])|_q \\
[\LRD(E_3),\LRD(E_1)]|_q&=\LRD([E_3,E_1])|_q+(-K_2(x)+\beta^2)\nu(A\star E_2)|_q.
\]
From these, Proposition \ref{pr:3D-2} case (ii) and from the brackets (as above)
\[
[\LRD(E_1),\tilde{L}_2]|_q=&\beta L_3|_q-\LRD(\nabla_{E_2} E_1)|_q \\
[\LRD(E_3),\tilde{L}_2]|_q=&-\beta L_1|_q-\LRD(\nabla_{E_2} E_3)|_q \\
[\LRD(E_2),\tilde{L}_2]|_q=&0 \\
[\nu((\cdot)\star E_2),\tilde{L}_2]|_q=&0 \\
[L_1,\tilde{L}_2]|_q=&(\Gamma^2_{(3,1)}+\beta)L_3|_q \\
[L_3,\tilde{L}_2]|_q=&-(\Gamma^2_{(3,1)}+\beta)L_1|_q,
\]
we see that the distribution $\tilde{\mc{D}}$on $Q^\circ$ spanned by the 7  linearly independent vector fields
\[
\LRD(E_1),\LRD(E_2),\LRD(E_3),\nu((\cdot)\star E_2),L_1,\tilde{L}_2,L_3
\] 
with $L_1,\tilde{L}_2,L_3$ as above, is involutive. Moreover $\tilde{\mc{D}}$ contains $\RDist|_{Q^\circ}$,
which implies $\mc{O}_{\RDist}(q_0)=\mc{O}_{\RDist|_{Q^\circ}}(q_0)\subset \mc{O}_{\tilde{\mc{D}}}(q_0)$
and hence $\dim \mc{O}_{\RDist}(q_0)\leq 7$.

To show the equality here, notice that since $M_0\cap M^\circ\neq\emptyset$,
one has that $O:=\pi_{Q,M}^{-1}(M_0)\cap \mc{O}_{\RDist}(q_0)$
is an open non-empty subset of $ \mc{O}_{\RDist}(q_0)$.
Moreover, because $K_2(x)\neq \beta^2$ on $M_0\cap M^\circ$,
we get that $\nu(A\star E_2)|_q\in T|_{q}\mc{O}_{\RDist}(q_0)$ for all $q\in O$,
from which one deduces by Proposition \ref{pr:3D-2}, case (i)
that $\tilde{\mc{D}}|_q\subset T|_q \mc{O}_{\RDist}(q_0)$,
which then implies $\dim \mc{O}_{\RDist}(q_0)\geq 7$.
This proves one half of case (i) in the statement of this proposition.

\item[b)] If $(M^\circ,g)$ has constant curvature,
one proves as in case a), by simply changing the roles of $M$ and $\hat{M}$,
that $\dim \mc{O}_{\RDist}(q_0)=7$.
This finishes the proof of case (i) of this proposition.
\end{itemize}

For the last case, we assume that neither $(M^\circ,g)$ nor $(\hat{M}^\circ,\hat{g})$
have constant curvature i.e. we have $M^\circ\cap M_0\neq\emptyset$
and $\hat{M}^\circ\cap \hat{M}_0\neq\emptyset$.

\begin{itemize}

\item[c)] Since $M^\circ\cap M_0\neq\emptyset$,
there is a $q_1=(x_1,\hat{x}_1;A_1)\in \mc{O}_{\RDist}(q_0)$
such that $x_1\in M_0$.
If $\hat{x}_1\in \hat{M}_0$,
we have $(M_0\times \hat{M}_0)\cap \pi_Q(\mc{O}_{\RDist}(q_0))\neq\emptyset$
and which implies, as we have shown, that $\dim\mc{O}_{\RDist}(q_0)=8$.

Suppose then that $\hat{x}_1\in \ol{\hat{M}_1}$.
Then one may choose a sequence $q_n'=(x_n',\hat{x}_n';A_n')\in \mc{O}_{\RDist}(q_0)$
such that $q_n'\to q_1$ and $\hat{x}_n'\in \hat{M}_1$.
Because $M_0$ is open, for $n$ large enough one has $(x_n',\hat{x}_n')\in (M_0\times \hat{M}_1)\cap \pi_Q(\mc{O}_{\RDist}(q_0))$.
Hence $(M_0\times \hat{M}_1)\cap \pi_Q(\mc{O}_{\RDist}(q_0))\neq\emptyset$
and $\emptyset\neq \hat{M}^\circ\cap \hat{M}_0\subset \hat{M}_0$,
which has been shown to imply that  $(M_0\times \hat{M}_0)\cap \pi_Q(\mc{O}_{\RDist}(q_0))\neq\emptyset$
and again $\dim\mc{O}_{\RDist}(q_0)=8$.
\end{itemize}

The proof is complete.
\end{proof}

\begin{remark}
One could adapt the proofs of Propositions \ref{pr:mbeta-1}, \ref{pr:mbeta-2} and \ref{pr:mbeta-3}
to deal also with the case $\beta=0$.
For example, Proposition \ref{pr:mbeta-1} as formulated already is valid in this case,
but the conclusion when $\beta=0$
could be strengthened to $\dim\mc{O}_{\RDist}(q_0)\leq 6$.
However, since a Riemannian manifold of class $\mc{M}_{0}$
is also locally a Riemannian product, and hence locally a warped product,
we prefer to view this special case $\beta=0$ as part
of the subject of subsection \ref{subsec:warped}.
\end{remark}

%%%%%%%%%%%%%%%%%%%%%%%%%%%%%%%%%%%%%%%%%%%%%%%
\subsubsection{Case where both manifolds are Warped Products}\label{subsec:warped}
%%%%%%%%%%%%%%%%%%%%%%%%%%%%%%%%%%%%%%%%%%%%%%%

Suppose $(M,g)=(I\times N,h_f)$ and $(\hat{M},\hat{g})=(\hat{I}\times \hat{N},\hat{h}_{\hat{f}})$,
where $I,\hat{I}\subset\R$ are open intervals, $(N,h)$ and $(\hat{N},\hat{h})$ are connected, oriented $2$-dimensional Riemannian manifolds and the warping functions $f,\hat{f}$ are smooth and positive everywhere.
We write $\pa{r}$ for the canonical, positively directed unit vector field on $(\R,s_1)$
and consider it as a vector field on $(M,g)$ and $(\hat{M},\hat{g})$
as is usual in direct products.
Notice that then $\pa{r}$ is a $g$-unit (resp. $\hat{g}$-unit) vector field on $M$ (resp. $\hat{M}$)
which is orthogonal to $T|_{y} N$ (resp. $T|_{\hat{y}}\hat{N}$)
for every $(r,y)\in M$ (resp. $(\hat{r},\hat{y})\in \hat{N}$).

We will prove that starting from any point point $q_0\in Q=Q(M,\hat{M})$
and if the warping functions $f,\hat{f}$ satisfy extra conditions relative to each other,
then the orbit $\mc{O}_{\RDist}(q_0)$ is either 6- or 8-dimensional.
The first case is formulated in the following proposition.

\begin{proposition}\label{pr:wp-1}
Let $(M,g)=(I\times N,h_f)$, $(\hat{M},\hat{g})=(\hat{I}\times \hat{N},\hat{h}_{\hat{f}})$ be warped products of dimension $3$,
with $I,\hat{I}\subset\R$ open intervals.
Also, let $q_0=(x_0,\hat{x}_0;A_0)\in Q$
be such that if one writes $x_0=(r_0,y_0)$, $\hat{x}_0=(\hat{r}_0,\hat{y}_0)$, then
\begin{align}\label{eq:A_0_S_to_hatS}
A_0\pa{r}\big|_{(r_0,y_0)}=\pa{r}\big|_{(\hat{r}_0,y_0)}.
\end{align}
holds and
\begin{align}\label{eq:Df_over_f}
\frac{f'(t+r_0)}{f(t+r_0)}=\frac{\hat{f}'(t+\hat{r}_0)}{\hat{f}(t+\hat{r}_0)},\quad \forall t\in (I-r_0)\cap (\hat{I}-\hat{r}_0).
\end{align}
Then if $\mc{O}_{\RDist}(q_0)$ is not an integral manifold of $\RDist$, one has $\dim \mc{O}_{\RDist}(q_0)=6$.
\end{proposition}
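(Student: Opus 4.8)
The plan is to sandwich $\dim\mc{O}_{\RDist}(q_0)$ between $6$ and $6$. Throughout, write $S:=\pa{r}$ for the canonical unit vector field on $M=I\times N$ and $\hat{S}:=\pa{r}$ for the one on $\hat{M}=\hat{I}\times\hat{N}$, and recall the warped--product identities $\nabla_S S=0$ and $\nabla_V S=\tfrac{f'(r)}{f(r)}\big(V-g(V,S)S\big)$ for $V\in T|_xM$, $x=(r,y)$ (similarly for $\hat S$). On a small enough open set, choose an $h_f$--orthonormal \emph{adapted} frame $E_1,E_2,E_3$ of $M$ with $E_2=S$ and $E_1,E_3$ tangent to the $N$--fibres, and likewise $\hat E_1,\hat E_2=\hat S,\hat E_3$ on $\hat M$. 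A direct computation shows that the connection table of such a frame has exactly the form required in Proposition \ref{pr:3D-2}, with moreover $\Gamma^1_{(2,3)}=0$ (and $\Gamma^2_{(3,1)}=0$) and $\Gamma^1_{(1,2)}=-\tfrac{f'(r)}{f(r)}$ depending only on $r$, so that $V(\Gamma^1_{(2,3)})=V(\Gamma^1_{(1,2)})=0$ for $V\perp E_2$; in particular the vector field $L_2$ of Proposition \ref{pr:3D-2} vanishes identically here.

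For the \emph{upper bound} I would show $\mc{O}_{\RDist}(q_0)\subset Q_S$, where $Q_S$ is the set of $(x,\hat x;A)\in Q$ with $AS|_x=\hat S|_{\hat x}$ and $r(\hat x)-\hat r_0=r(x)-r_0$; writing out the defining equations as in the proof of Proposition \ref{pr:mbeta-1} shows $Q_S$ is a smooth embedded submanifold of $Q$ of codimension $3$, whence $\dim\mc{O}_{\RDist}(q_0)\le 6$. To get the inclusion, take a rolling curve $q_{\RDist}(\gamma,q_0)(t)=(\gamma(t),\hat\gamma(t);A(t))$ and set $W(t):=A(t)S|_{\gamma(t)}-\hat S|_{\hat\gamma(t)}$ and $\delta(t):=\big(r(\hat\gamma(t))-\hat r_0\big)-\big(r(\gamma(t))-r_0\big)$. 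Using $\ol{\nabla}_{(\dot\gamma,\dot{\hat\gamma})}A=0$, the no--slipping identity $\dot{\hat\gamma}=A\dot\gamma$, the formula for $\nabla_\bullet S$, and rewriting $\tfrac{\hat f'(r(\hat\gamma))}{\hat f(r(\hat\gamma))}$ as $\tfrac{f'(r(\gamma)+\delta)}{f(r(\gamma)+\delta)}$ via hypothesis \eqref{eq:Df_over_f}, one obtains a first--order ODE system for $(W,\delta)$ whose right--hand side vanishes at $(W,\delta)=(0,0)$; since \eqref{eq:A_0_S_to_hatS} gives $(W(0),\delta(0))=(0,0)$, uniqueness of solutions forces $(W,\delta)\equiv(0,0)$. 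Running the same argument piece by piece along a piecewise smooth curve (using the previous endpoint as initial condition, and noting $r(\gamma)+\delta=r(\gamma)\in I$ whenever $\delta=0$, so the continuation never leaves the valid range) gives $\mc{O}_{\RDist}(q_0)\subset Q_S$. In particular $r(\hat x)-\hat r_0=r(x)-r_0$ on the orbit; differentiating \eqref{eq:Df_over_f} in $t$ then yields $\tfrac{f''}{f}=\tfrac{\hat f''}{\hat f}$ at the matched radii, i.e.\ the radial sectional curvatures of $M$ and $\hat M$ agree along the orbit.

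For the \emph{lower bound}, note that along $\mc{O}_{\RDist}(q_0)$ the isometry $A$ maps $E_2$ to $\hat E_2$, hence carries the radial $2$--planes of $M$ to those of $\hat M$ and $\partial_r^\perp$ to $\hat\partial_r^{\,\perp}$; combining this with the curvature matching just obtained, a short computation gives that $\widetilde{\Rol}_q$ is diagonal in the basis $\star E_1,\star E_2,\star E_3$ with eigenvalues $0,\ -K_2(x)+\hat K_2(\hat x),\ 0$, where $-K_2,-\hat K_2$ are the fibre sectional curvatures. Since $\mc{O}_{\RDist}(q_0)$ is not an integral manifold of $\RDist$, Corollary \ref{cor:weak_ambrose} (cf.\ also Remark \ref{re:weak_ambrose}) shows $\Rol$ does not vanish identically on it, so there is a nonempty open $O\subset\mc{O}_{\RDist}(q_0)$ on which $K_2\ne\hat K_2$; there $\nu(\Rol_q(\star E_2))|_q=(-K_2(x)+\hat K_2(\hat x))\,\nu(A\star E_2)|_q$ is tangent to the orbit by Proposition \ref{pr:R_comm_L2}, hence so is $\nu(A\star E_2)|_q$. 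Shrinking $O$ if needed so that an adapted frame exists on $U:=\pi_{Q,M}(O)$, Proposition \ref{pr:3D-2}(i) applies and (using $L_2=0$) yields that $\LRD(E_1)|_q,\LRD(E_2)|_q,\LRD(E_3)|_q,\nu(A\star E_2)|_q,L_1|_q,L_3|_q$ are all tangent to $\mc{O}_{\RDist}(q_0)$ for $q\in O$. These six are linearly independent: modulo the vertical distribution $V(\pi_Q)$ they map under $(\pi_Q)_*$ to $(E_1,AE_1),(E_2,AE_2),(E_3,AE_3),0,(E_1,0),(E_3,0)$, of which the five nonzero ones are independent in $T|_xM\oplus T|_{\hat x}\hat M$, while $\nu(A\star E_2)|_q$ is a nonzero vertical vector not in the span of the other five. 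Hence $\dim\mc{O}_{\RDist}(q_0)\ge 6$, and with the upper bound $\dim\mc{O}_{\RDist}(q_0)=6$.

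The main obstacle is the propagation statement $\mc{O}_{\RDist}(q_0)\subset Q_S$: the ODE governing $W$ has coefficients that themselves depend on $\delta$ (because $\tfrac{\hat f'}{\hat f}$ gets evaluated at a radius equal to $r(\gamma)+\delta$), so one cannot close a naive Gronwall estimate on $W$ alone and must instead treat $(W,\delta)$ as a coupled system and invoke uniqueness — while simultaneously keeping careful track of the intervals $I,\hat I$ so that \eqref{eq:Df_over_f} is invoked at a legitimate argument. The remaining ingredients (that a warped product's adapted frame meets the precise connection--table hypotheses of Propositions \ref{pr:3D-1}--\ref{pr:3D-2} and that $L_2=0$, and that $Q_S$ is a genuine codimension--$3$ embedded submanifold) are routine but must be checked so that Proposition \ref{pr:3D-2}(i) is genuinely applicable.
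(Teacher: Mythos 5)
Your proof is correct and follows essentially the same route as the paper's: an ODE-uniqueness argument showing every rolling curve preserves the conditions $A\pa{r}=\pa{r}$ and $\hat{r}-\hat{r}_0=r-r_0$, so the orbit lies in that $6$-dimensional submanifold (giving $\dim\le 6$), and then non-integrality plus $\nu(A\star\pa{r})$-tangency feeding into Proposition \ref{pr:3D-2}(i) to produce six independent tangent vector fields (giving $\dim\ge 6$). Your bookkeeping variants — running the uniqueness argument on the coupled difference $(W,\delta)$ rather than showing the two pairs $(\zeta,S)$, $(\hat\zeta,\hat S)$ solve one and the same ODE, and describing the submanifold as a level set in the style of Remark \ref{re:wp-1} — are immaterial.
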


\begin{proof}
For convenience we write $\kappa(r):=\frac{f'(r+r_0)}{f(r+r_0)}=\frac{\hat{f}'(r+\hat{r}_0)}{\hat{f}(r+\hat{r}_0)}$,
$r\in (I-r_0)\cap (\hat{I}-\hat{r}_0)=:J$.
Let $\gamma$ be a smooth curve in $M$ defined on some interval containing $0$ and such that $\gamma(0)=x_0$
and let $(\gamma(t),\hat{\gamma}(t);A(t))=q_{\RDist}(\gamma,q_0)(t)$ be the rolling curve
generated by $\gamma$ starting at $q_0$ and defined on some (possible smaller) maximal
interval containing $0$.
Write $\gamma(t)=(r(t),\gamma_1(t))$ and $\hat{\gamma}(t)=(\hat{r}(t),\hat{\gamma}_1(t))$
corresponding to the direct products $M=I\times N$ and $\hat{M}=\hat{I}\times \hat{N}$.
Define also,
\[
\zeta(t):=r(t)-r_0,\quad S(t):=&\pa{r}\big|_{\gamma(t)} \\
\hat{\zeta}(t):=\hat{r}(t)-\hat{r}_0,\quad \hat{S}(t):=&A(t)^{-1}\pa{r}\big|_{\hat{\gamma}(t)}
\]
which are vector fields on $M$ along $\gamma$.

Notice that
\[
\dot{\zeta}(t)=\dot{r}(t)=&g\big(\dot{\gamma}(t),\pa{r}\big|_{\gamma(t)}\big)=g(\dot{\gamma}(t),S(t)), \\
\dot{\hat{\zeta}}(t)=\dot{\hat{r}}(t)=&\hat{g}\big(\dot{\hat{\gamma}}(t),\pa{r}\big|_{\hat{\gamma}(t)}\big)
=\hat{g}\big(A(t)\dot{\gamma}(t),\pa{r}\big|_{\hat{\gamma}(t)}\big)=g(\dot{\gamma}(t),\hat{S}(t)).
\]
By Proposition 35, Chapter 7, p. 206 in \cite{oneill83}, we have
\[
\nabla_{\dot{\gamma}(t)}\pa{r}=&\frac{f'(r(t))}{f(r(t))}\big(\dot{\gamma}(t)-\dot{r}(t)\pa{r}\big|_{\gamma(t)}\big) \\
=&\kappa(\zeta(t))\big(\dot{\gamma}(t)-\dot{\zeta}(t)\pa{r}\big|_{\gamma(t)}\big) \\
\hat{\nabla}_{\dot{\hat{\gamma}}(t)}\pa{r}=&\frac{\hat{f}'\big(\hat{r}(t))}{\hat{f}(\hat{r}(t))}\big(\dot{\hat{\gamma}}(t)-\dot{\hat{r}}(t)\pa{r}\big|_{\hat{\gamma}(t)}\big) \\
=&\kappa(\hat{\zeta}(t))\big(\dot{\hat{\gamma}}(t)-\dot{\hat{\zeta}}(t)\pa{r}\big|_{\hat{\gamma}(t)}\big),
\]
i.e.
\[
\nabla_{\dot{\gamma}(t)}S(t)=&\kappa(\zeta(t))(\dot{\gamma}(t)-\dot{\zeta}(t)S(t)) \\
\nabla_{\dot{\gamma}(t)}\hat{S}(t)=&A(t)^{-1}\hat{\nabla}_{\dot{\hat{\gamma}}(t)}\pa{r}
=\kappa(\hat{\zeta}(t))\big(A(t)^{-1}\dot{\hat{\gamma}}(t)-\dot{\hat{\zeta}}(t)A(t)^{-1}\pa{r}\big|_{\hat{\gamma}(t)}\big) \\
=&\kappa(\hat{\zeta}(t))(\dot{\gamma}(t)-\dot{\hat{\zeta}}(t)\hat{S}(t))
\]

Let $\rho\in\Cinf(\R)$ and $t\mapsto X(t)$ be a vector field along $\gamma$
and consider a first order ODE
\[
\begin{cases}
\dot{\rho}(t)=g(\dot{\gamma}(t),X(t)) \\
\nabla_{\dot{\gamma}(t)} X=\kappa(\rho(t))(\dot{\gamma}(t)-\dot{\rho}(t)X(t)).
\end{cases}
\]
By the above we see that the pairs $(\rho,X)=(\zeta,S)$ and $(\rho,X)=(\hat{\zeta},\hat{S})$
both solve this ODE.
Moreover, by assumption $\zeta(0)=0=\hat{\zeta}(0)$
and $\hat{S}(0)=A(0)^{-1}\pa{r}\big|_{\hat{x}_0}=\pa{r}\big|_{x_0}=S(0)$
so these pairs have the same initial conditions and hence
$(\zeta,S)=(\hat{\zeta},\hat{S})$ on the interval where they are both defined.
In other words,
\[
r(t)-r_0=&\hat{r}(t)-\hat{r}_0 \\
A(t)\pa{r}\big|_{\gamma(t)}=&\pa{r}\big|_{\hat{\gamma}(t)}
\]
for all $t$ in the interval where the rolling curve $q_{\RDist}(\gamma,q_0)$ is defined.

Define
\[
Q^*_+=\big\{q=(x,\hat{x};A)=((r,y),(\hat{r},\hat{y});A)\in Q\ |\ r-r_0=\hat{r}-\hat{r}_0,\ A\pa{r}\big|_{x}=\pa{r}\big|_{\hat{x}}\big\}.
\]
By the above considerations,
\[
q_{\RDist}(\gamma,q_0)(t)\in Q^*_+,\quad \forall t
\]
which implies that $\mc{O}_{\RDist}(q_0)\subset Q^*_+$.

We show that $Q^*_+$ is a $6$-dimensional submanifold of $Q$.
Let $q=(x,\hat{x};A)=((r,y),(\hat{r},\hat{y});A)\in Q$
such that $A\pa{r}\big|_{x}=\pa{r}\big|_{\hat{x}}$.
Then for all $\alpha\in\R,X'\in T|_{y} N$ one has
\[
\n{X'}^2_{g}+\alpha^2=\n{X'+\alpha\pa{r}\big|_x}^2_g=\n{A(X'+\alpha\pa{r}\big|_x)}^2_{\hat{g}}
=\n{AX'}^2_{\hat{g}}+2\hat{g}\big(AX',\alpha\pa{r}\big|_{\hat{x}}\big)+\alpha^2.
\]
This implies that
\[
& \n{X'}^2_{g}=\n{AX'}^2_{\hat{g}} \\
& \hat{g}\big(AX',\pa{r}\big|_{\hat{x}}\big)=0
\]
for all $X'\in T|_{y} N$.
Thus $AT|_{y} N\perp \pa{r}\big|_{\hat{x}}$ and also $A\pa{r}\big|_{x}\perp T|_{\hat{y}}\hat{N}$
by assumption.
Define
\[
Q_1^+=\big\{q=(x,\hat{x};A)=((r,y),(\hat{r},\hat{y});A)\in Q\ |\ A\pa{r}\big|_{x}=\pa{r}\big|_{\hat{x}}\big\}
\]
and let $q_1=(x_1,\hat{x}_1;A_1)=((r_1,y_1),(\hat{r}_1,\hat{y}_1);A_1)\in Q_1^+$.
Choose a local oriented $h$- and $\hat{h}$-orthonormal frames $X_1',X_2'$ in $N$ around $y_1$
and $\hat{X}_1',\hat{X}_2'$ in $\hat{N}$ around $\hat{y}_1$.
Let the corresponding domains be $U'$ and $\hat{U}'$.
Writing $E_1=\pa{r}$, $E_2=\frac{1}{f}X_1'$, $E_3=\frac{1}{f}X_2'$ on $M$
and $\hat{E}_1=\pa{r}$, $\hat{E}_2=\frac{1}{\hat{f}}\hat{X}_1'$, $\hat{E}_3=\frac{1}{\hat{f}}\hat{X}_2'$ on $\hat{M}$,
we see that $E_1,E_2,E_3$ and $\hat{E}_1,\hat{E}_2,\hat{E}_3$
are $g$- and $\hat{g}$-orthonormal oriented frames
and we define
\[
& \Psi:V:=\pi_Q^{-1}((\R\times U')\times(\R\times \hat{U}'))\to \SO(3); \\
& \Psi(x,\hat{x};A)=[(\hat{g}(AE_i,\hat{E}_j))_i^j].
\]
This is a chart of $Q$ and clearly
\[
\Psi(V\cap Q_1^+)=(\R\times U')\times(\R\times \hat{U}')\times\Big\{\qmatrix{1& 0 \cr 0 & A' }\ |\ A'\in\SO(2) \Big\}.
\]
This shows that $Q_1^+\cap V$ is a 7-dimensional submanifold of $Q$
and hence $Q_1^+$ is a closed 7-dimensional submanifold of $Q$.

Defining $F:Q_1^+\to\R$ by $F((r,y),(\hat{r},\hat{y});A)=(r-r_0)-(\hat{r}-\hat{r}_0)$,
we see that $Q^*_+=F^{-1}(0)$.
Once we show that $F$ is a submersion, it follows that $Q^*_+$
is a closed codimension 1 submanifold of $Q_1^+$ (i.e. $\dim Q^*_+=7-1=6$)
and thus it is a 6-dimensional submanifold of $Q$.

Indeed, let $q=(x,\hat{x};A)\in Q_1^+$ and let $\gamma(t)$ be an integral curve
of $\pa{r}$ starting from $x$
and $\hat{\gamma}(t)=\hat{x}$ a constant path.
Let $q(t)=(\gamma(t),\hat{\gamma}(t);A(t))$ be the $\NSDist$-lift of $(\gamma,\hat{\gamma})$ starting from $q$.
Then $\dot{\gamma}(t)=\pa{r}\big|_{\gamma(t)}$, $\dot{\hat{\gamma}}(t)=0$
and since $\pa{r}$ is a unit geodesic field on $M$, one has
\[
\dif{t}\hat{g}\big(A(t)\pa{r}\big|_{\gamma(t)},\pa{r}\big|_{\hat{\gamma}(t)}\big)
=\hat{g}\big(A(t)\nabla_{\dot{\gamma}(t)}\pa{r},\pa{r}\big|_{\hat{\gamma}(t)}\big)
+\hat{g}\big(A(t)\pa{r},\hat{\nabla}_{0}\pa{r}\big|_{\hat{\gamma}(t)}\big)
=0.
\]
This shows that $q(t)\in Q_1^+$ for all $t$ and in particular,
$\LNSD(\pa{r}\big|_x)|_q=\dot{q}(0)\in T|_q Q_1^+$.
Then if one writes $\gamma(t)=(r(t),\gamma_1(t))$,
$\hat{\gamma}(t)=\hat{x}=(\hat{r},\hat{y})$=constant,
one has $\dot{r}(t)=1$ and therefore
\[
\dif{t}\big|_0 F(q(t))=\dif{t}\big|_0 \big((r(t)-r_0)-(\hat{r}-\hat{r}_0)\big)=1,
\]
i.e. $F_*\LNSD(\pa{r}\big|_{x})|_q=1$, which shows that $F$ is submersive.
(Alternatively, one could have uses the charts $\Psi$ as above to prove this fact.)

Since we have shown that $\dim Q^*_+=6$ and $\mc{O}_{\RDist}(q_0)\subset Q^*_+$,
it follows that $\mc{O}_{\RDist}(q_0)\leq 6$.
To prove the equality here, we will use the assumption that $\mc{O}_{\RDist}(q_0)$ is not an integral manifold of $\RDist$.

Take local frames $E_i,\hat{E}_i$ as above near $x_1$ and $\hat{x}_1$,
where $q_1=(x_1,\hat{x}_1;A_1)=((r_1,y_1),(\hat{r}_1,\hat{y}_1);A_1)\in\mc{O}_{\RDist}(q_0)$.
The assumption that $\frac{f'(t+r_0)}{f(t+r_0)}=\frac{\hat{f}'(t+\hat{r}_0)}{\hat{f}(t+\hat{r}_0)}$ for all $t\in J$
easily imply that $\frac{f''(t+r_0)}{f(t+r_0)}=\frac{\hat{f}''(t+\hat{r}_0)}{\hat{f}(t+\hat{r}_0)}=:\kappa_2(t)$ for all $t\in J$ as well.
Respect to the frames $\star E_1,\star E_2,\star E_3$
and $\hat{\star} \hat{E}_1,\hat{\star} \hat{E}_2,\hat{\star} E_3$
one has (see Proposition 42, Chapter 7, p. 210 of \cite{oneill83})
\[
R|_{(r,y)}=&\qmatrix{-\frac{\sigma(y)}{f(r)^2}+\kappa(r-r_0)^2 & 0 & 0 \\
0 & \kappa_2(r-r_0) & 0 \\
0 & 0 & \kappa_2(r-r_0)
}, \\
\quad
\hat{R}|_{(\hat{r},\hat{y})}=&\qmatrix{-\frac{\hat{\sigma}(\hat{y})}{\hat{f}(\hat{r})^2}+\kappa(\hat{r}-\hat{r}_0)^2 & 0 & 0 \\
0  & \kappa_2(\hat{r}-\hat{r}_0) & 0 \\
0 & 0 & \kappa_2(\hat{r}-\hat{r}_0)
},
\]
where $\sigma(y)$ and $\hat{\sigma}(\hat{y})$
are the unique sectional (or Gaussian) curvatures of $(N,h)$ and $(\hat{N},\hat{h})$
at points $y,\hat{y}$.
Write
\[
-K_2(r,y)=-\frac{\sigma(y)}{f(r)^2}+\kappa(r-r_0),\quad
-\hat{K}_2(\hat{r},\hat{y})=-\frac{\hat{\sigma}(\hat{y})}{\hat{f}(\hat{r})^2}+\kappa(\hat{r}-\hat{r}_0).
\]

Since $A_1\pa{r}\big|_{x_1}=\pa{r}\big|_{\hat{x}_1}$,
we already know that $A_1 E_2|_{x_1}$ and $A_1 E_3|_{x_1}$
are in the plane $\spn\{\hat{E}_2|_{\hat{x}_1},\hat{E}_3|_{\hat{x}_1}\}$.
This and the fact that $r_1-r_0=\hat{r}_1-\hat{r}_0$ 
imply
\[
\widetilde{\Rol}_{q_1}=\qmatrix{
-K_2(x_1)+\hat{K}_2(\hat{x}_1) & 0 & 0 \cr 
0 & 0 & 0 \cr 
0 & 0 & 0
}
\]
w.r.t. $\star E_1|_{x_1}, \star E_2|_{x_1}, \star E_3|_{x_1}$.

Since $\mc{O}_{\RDist}(q_0)$ is not an integral manifold of $\RDist$,
it follows from Corollary \ref{cor:weak_ambrose} and Remark \ref{re:weak_ambrose}
that there is a $q_1\in\mc{O}_{\RDist}(q_0)$, where $\widetilde{\Rol}_{q_1}\neq 0$.
Hence there is a neighbourhood $O$ of $q_1$ in $\mc{O}_{\RDist}(q_0)$
such that $\widetilde{\Rol}_{q}\neq 0$.
With respect to local frames $E_i,\hat{E}_i$ as above (taking $O$ smaller if necessary),
this means that $K_2(x)\neq \hat{K}_2(\hat{x})$ for all $q=(x,\hat{x};A)\in O$
and since $\nu(\Rol_{q}(\star E_1))|_{q}=(-K_2(x)+\hat{K}_2(\hat{x}))\nu(A\star E_1)|_q$,
we have
\[
\nu(A\star E_1)|_q\in T|_q\mc{O}_{\RDist}(q_0),\quad \forall q\in O.
\]

Hence applying Proposition \ref{pr:3D-2} case (i)
to the frame $F_1:=E_2$, $F_2:=E_1$, $F_3:=E_3$ implies that
the 6 linearly independent vectors (notice that we have $\Gamma^1_{(2,3)}=0$ in that proposition)
\[
\LRD(F_1)|_q,\LRD(F_2)|_q,\LRD(F_3)|_q,\nu(A\star F_2)|_q,L_1|_q,L_3|_q
\]
are tangent to $\mc{O}_{\RDist}(q_0)$ at $q\in O$,
where
\[
L_1&=\LNSD(F_1)|_q-\Gamma^1_{(1,2)}(x)\nu(A\star F_3)|_q \\
L_3&=\LNSD(F_3)|_q+\Gamma^1_{(1,2)}(x)\nu(A\star F_1)|_q,
\]
where $\Gamma^1_{(1,2)}(x)=g(\nabla_{F_1} F_1 F_2)=g(\nabla_{E_2} E_2,E_1)=-\frac{f'(r)}{f(r)}$
if $x=(r,y)$.
This proves that $\dim\mc{O}_{\RDist}(q_0)\geq 6$.
End of the proof.
\end{proof}

\begin{remark}\label{re:condition_for_6-dim_orbit}
The condition $\Rol_{q_1}\neq 0$ in the proof of the previous proposition
was equivalent to the condition $K_2(x_1)\neq \hat{K}_2(\hat{x}_1)$
which again means that if $x_1=(r_1,y_1)$, $\hat{x}_1=(\hat{x}_1,\hat{y}_1)$,
\[
\frac{\sigma(y_1)}{f(r_1)^2}\neq \frac{\hat{\sigma}(\hat{y}_1)}{\hat{f}(r_1)^2}.
\]
where $\sigma(y)$ (resp. $\hat{\sigma}(\hat{y})$) is the sectional curvature
of $(N,h)$ at $y\in N$ (resp. of $(\hat{N},\hat{h})$ at $\hat{y}\in \hat{N}$).
\end{remark}

\begin{remark}\label{re:wp-1}
To show that $\dim \mc{O}_{\RDist}(q_0)\leq 6$ under the assumptions of the
proposition, we showed that if $q=(x,\hat{x};A)\in Q^*_+$, then $q_{\RDist}(\gamma,q)(t)\in Q^*_+$
for any path $\gamma$ starting from $x$. For this we basically used the uniqueness of the
solutions of an ODE.

Alternatively, one could have proceeded exactly in the same way as in the proof
of Proposition \ref{pr:mbeta-1}.
To this end, one defines as there $h_1,h_1:Q\to\R$ and also $F:Q\to\R$ as above as
\[
h_1(q)=\hat{g}(AE_1,\hat{E}_2),
\quad h_2(q)=\hat{g}(AE_3,\hat{E}_2),
\quad F(q)=(r-r_0)-(\hat{r}-\hat{r}_0).
\]
Now write $H=(h_1,h_2,F):Q\to\R^3$, $Q^*:=H^{-1}(0)$ and $Q=Q^*_+\cup Q^*_-$ where $Q^*_{+}$ (resp. $Q^*_-$)
consists of all $q=(x,\hat{x};A)\in Q^*$ where $A\pa{r}=+\pa{r}$
(resp. $A\pa{r}=-\pa{r}$).

Now for all $q\in Q^*_+$,
\[
H_*\nu(A\star E_1)|_q=(0,-1,0),
\quad
H_*\nu(A\star E_3)|_q=(1,0,0),
\quad
H_*\LNSD(\pa{r},0)|_q=(0,0,1),
\]
which shows (again) that $Q^*_+$ is a $6$-dimensional closed submanifold of $Q$
(and so is $Q^*$)
while w.r.t. orthonormal bases $E_1,E_2,E_3$,  $\hat{E}_1,\hat{E}_2,\hat{E}_3$,
where $E_2=\pa{r}$, $\hat{E}_2=\pa{r}$, one has for $q=(x,\hat{x};A)\in Q^*_+$,
since $x=(r,y)$, $\hat{x}=(\hat{r},\hat{y})$ with $r-r_0=\hat{r}-\hat{r}_0=:t$
\[
\LRD(E_1)|_q h_1&=\hat{g}(A(\Gamma^1_{(1,2)}E_2-\Gamma^1_{(3,1)}E_3),\hat{E}_2)
+\hat{g}(AE_1,-\hat{\Gamma}^1_{(1,2)}AE_1) \\
&=-\frac{f'(r)}{f(r)}+\frac{\hat{f}'(\hat{r})}{\hat{f}(\hat{r})}
=-\frac{f'(t+r_0)}{f(t+r_0)}+\frac{\hat{f}'(t+\hat{r}_0)}{\hat{f}(t+\hat{r}_0)}=0 \\
\LRD(E_1)|_q h_2&=\Gamma^1_{(3,1)}\hat{g}(AE_1,\hat{E}_2)
+\hat{g}(AE_3,-\hat{\Gamma}^1_{(1,2)}AE_1)=0 \\
\LRD(E_2)|_q h_1&=-\Gamma^2_{(3,1)}\hat{g}(AE_3,\hat{E}_2)=0 \\
\LRD(E_2)|_q h_2&=\Gamma^2_{(3,1)}\hat{g}(AE_1,\hat{E}_2)=0 \\
\LRD(E_3)|_q h_1&=\LRD(E_3)|_q h_2=0 \\
\LRD(E_1)|_q F&=\LRD(E_2)|_q F=\LRD(E_3)|_q F=0,
\]
hence $\RDist|_q\subset T|_qQ^*_+$ for all $q\in Q^*_+$.
This obviously implies that $\mc{O}_{\RDist}(q)\subset Q^*_+$ for all $q\in Q^*_+$
and thus $\dim \mc{O}_{\RDist}(q)\leq \dim Q^*_+=6$.
\end{remark}

For the following proposition we introduce some notation,
\[
Q_0:=&\{q=((r,y),(\hat{r},\hat{y});A)\in Q\ |\ A\pa{r}\big|_{(r,y)}\neq\pm \pa{r}\big|_{(\hat{r},\hat{y})}\} \\
Q_1^+:=&Q\backslash Q_0=\{q=(x,\hat{x};A)\in Q\ |\ A\pa{r}\big|_{(r,y)}=+\pa{r}\big|_{(\hat{r},\hat{y})}\} \\
Q_1^-:=&Q\backslash Q_0=\{q=(x,\hat{x};A)\in Q\ |\ A\pa{r}\big|_{(r,y)}=-\pa{r}\big|_{(\hat{r},\hat{y})}\} \\
Q_1:=&Q_1^+\cup Q_1^- \\
S_1^+:=&\big\{q=((r,y),(\hat{r},\hat{y});A)\in Q_1^+\ |\ \frac{f'(r)}{f(r)}=+\frac{\hat{f}'(\hat{r})}{\hat{f}(\hat{r})}\big\} \\
S_1^-:=&\big\{q=((r,y),(\hat{r},\hat{y});A)\in Q_1^-\ |\ \frac{f'(r)}{f(r)}=-\frac{\hat{f}'(\hat{r})}{\hat{f}(\hat{r})}\big\} \\
S_1:=&S_1^+\cup S_1^-.
\]
We have that $Q$ decomposes into a disjoint unions
\[
Q=S_1\cup (Q\backslash S_1)=S_1\cup (Q_1\backslash S_1)\cup Q_0.
\]

\begin{proposition}\label{pr:wp-2}
Let $(M,g)=(I\times N,h_f)$ and $(\hat{M},\hat{g})=(\hat{I}\times \hat{N},\hat{h}_{\hat{f}})$,
be warped products with $I,\hat{I}\subset\R$ open intervals and
suppose that there is a constant $K\in\R$ such that
\[
\frac{f''(r)}{f(r)}=-K=\frac{\hat{f}''(\hat{r})}{\hat{f}(\hat{r})},\quad \forall (r,\hat{r})\in I\times \hat{I}.
\]
Let $q_0=(x_0,\hat{x}_0;A_0)\in Q$ and write $M^\circ:=\pi_{Q,M}(\mc{O}_{\RDist}(q_0))$,
$\hat{M}^\circ:=\pi_{Q,\hat{M}}(\mc{O}_{\RDist}(q_0))$.
Assuming that $\mc{O}_{\RDist}(q_0)$ is not an integral manifold of $\RDist$,
we have the following cases:
\begin{itemize}
\item[(i)]  If $q_0\in S_1$, then $\dim\mc{O}_{\RDist}(q_0)=6$;

\item[(ii)] If $q_0\in Q\backslash S_1$ and if only one of $(M^\circ,g)$ or $(\hat{M}^\circ,\hat{g})$
has constant curvature, then $\dim\mc{O}_{\RDist}(q_0)=6$;
\item[(iii)] Otherwise $\dim\mc{O}_{\RDist}(q_0)=8$.
\end{itemize}
\end{proposition}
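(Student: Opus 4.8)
The strategy is to reduce the warped-product case with the constraint $f''/f=-K=\hat{f}''/\hat{f}$ to the situations already handled, and for the non-integrable-orbit dimension count to invoke Propositions \ref{pr:3D-2}, \ref{pr:wp-1} and, where constant curvature appears, to build an explicit involutive distribution as in the proof of Proposition \ref{pr:mbeta-3}. The key preliminary observation is that the hypothesis $f''/f=-K=\hat{f}''/\hat{f}$ forces all the curvature tensors $R$, $\hat{R}$ to have a very rigid diagonal form with respect to the frames $\star E_1,\star E_2,\star E_3$ and $\hat{\star}\hat{E}_1,\hat{\star}\hat{E}_2,\hat{\star}\hat{E}_3$ coming from local orthonormal frames of $N$, $\hat{N}$ scaled by $1/f$, $1/\hat{f}$ respectively (here $E_2=\pa{r}$, $\hat{E}_2=\pa{r}$); this was recorded in the proof of Proposition \ref{pr:wp-1} via O'Neill's curvature formulas for warped products. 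In particular the two "mixed" curvature eigenvalues corresponding to $\star E_1$ and $\star E_3$ are both equal to $\kappa_2(r-r_0)=-K$ (constant!), so $\star E_1$ and $\star E_3$ always span a $\star$-eigenplane of $R$ with the \emph{same} eigenvalue $-K$, and similarly for $\hat{M}$. This is exactly the algebraic configuration appearing in the class-$\mc{M}_\beta$ analysis and in the $O_1$ analysis of Subsubsection on $O_1$, which is what makes the arguments transportable.

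\textbf{Case (i): $q_0\in S_1$.} First I would show $\mc{O}_{\RDist}(q_0)\subset S_1$ by checking that $\RDist$ is tangent to $S_1$ (working componentwise on $S_1^+$ and $S_1^-$, which are the intersections of $S_1$ with the two components of $Q$). The defining functions of $S_1^+$ inside $Q$ are $h_1(q)=\hat{g}(AE_1,\hat{E}_2)$, $h_2(q)=\hat{g}(AE_3,\hat{E}_2)$ together with $G(q)=\tfrac{f'(r)}{f(r)}-\tfrac{\hat f'(\hat r)}{\hat f(\hat r)}$; one computes $\LRD(E_i)|_q h_j$ and $\LRD(E_i)|_q G$ and finds they vanish on $S_1^+$ using $f''/f=\hat f''/\hat f$ (which gives $(f'/f)'=(f'/f)^2-K$ etc., so $\LRD(E_2)|_q G$ involves precisely the compensating second-derivative terms). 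This is the direct analogue of the computation in Proposition \ref{pr:mbeta-1} and in Remark \ref{re:wp-1}. A regularity check on $(h_1,h_2,G)$ shows $S_1^+$ is a closed $6$-dimensional submanifold, hence $\dim\mc{O}_{\RDist}(q_0)\le 6$. For the reverse inequality, since $\mc{O}_{\RDist}(q_0)$ is not an integral manifold, Corollary \ref{cor:weak_ambrose} and Remark \ref{re:weak_ambrose} give a point $q_1\in\mc{O}_{\RDist}(q_0)$ with $\widetilde{\Rol}_{q_1}\neq 0$; by the rigid diagonal form above, $\widetilde{\Rol}_{q_1}$ is then a rank-one map whose image is spanned by $\nu(A\star E_1)|_{q_1}$ (because $AE_2=\pm\pa{r}$ forces $AE_2,AE_3$ into the $\pm$-eigenplane), so $\nu(A\star E_1)|_q\in T_q\mc{O}_{\RDist}(q_0)$ on a neighbourhood $O$ of $q_1$ in the orbit, and Proposition \ref{pr:3D-2}(i) applied to the reordered frame $F_1=E_2$, $F_2=E_1$, $F_3=E_3$ yields six linearly independent tangent vectors, so $\dim\mc{O}_{\RDist}(q_0)=6$.

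\textbf{Cases (ii) and (iii): $q_0\in Q\backslash S_1$.} Here one first establishes, exactly as in the proof of Proposition \ref{pr:mbeta-3}, that $\dim\mc{O}_{\RDist}(q_0)\le 8$ by building on $Q_0$ a smooth $8$-dimensional involutive distribution $\mc{D}$ containing $\RDist$: introduce, on a neighbourhood of any point of $Q_0$, the angle functions $\theta,\hat\theta,\hat\phi$ and the vectors $X_A,Z_A,\hat X_A,\hat Z_A$ with $AZ_A=\hat Z_A$, compute the derivatives of $\theta$ against the $\RDist$-lifts and against $\nu(A\star E_2)|_q$ (these reproduce the formulas from Proposition \ref{pr:mbeta-3} with $\beta$ replaced throughout by the relevant $\Gamma^1_{(1,2)}$), note that $\Gamma^1_{(2,3)}=0$ here so the Lie-bracket table of Proposition \ref{pr:3D-2}(ii) simplifies, and verify that the span $\mc{D}$ of $\LRD(E_1),\LRD(E_2),\LRD(E_3),\nu((\cdot)\star E_2),\nu((\cdot)\star X_{(\cdot)}),L_1,\tilde L_2,L_3$ is closed under brackets. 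Then one runs the same $M_0,M_1,\hat M_0,\hat M_1$ dichotomy: define $M_0$ as the set where $(N,h)$'s rescaled Gaussian curvature differs from $-K$ (equivalently $\widetilde{\Rol}\ne0$ there), build auxiliary rank-$3$ distributions $D$, $\hat D$ on $Q$ whose parallel-transport flows are defined on all of $[0,1]$ (the compactness-of-$\SO(3)$ argument is verbatim), show $D|_q,\hat D|_q\subset T_q\mc{O}_{\RDist}(q)$ on $\pi_Q^{-1}(M\times\hat M_0)$ resp.\ $\pi_Q^{-1}(M_0\times\hat M)$ via Proposition \ref{pr:3D-2}(i), and propagate: if the orbit meets $\pi_Q^{-1}(M_0\times\hat M_0)$ then $\dim=8$ (one exhibits eight independent tangent vectors there, using $\nu((\hat\star\hat E_2)A)|_q=\sin(\phi)\nu(A\star X_A)|_q+\cos(\phi)\nu(A\star E_2)|_q$ with $\sin\phi\ne0$ since $q\in Q_0$), and the only way to avoid this is that one of $M^\circ$, $\hat M^\circ$ has constant curvature $-K$, in which case $\widetilde{\Rol}_q$ is diagonal of rank $\le1$ everywhere on the orbit, $\nu(A\star E_1)$ (or $\nu((\hat\star\hat E_1)A)$) becomes tangent on a nonempty open subset, and Proposition \ref{pr:3D-2}(i) together with an explicit involutive $7$-dimensional... \emph{correction}: one gets a $6$-dimensional involutive distribution here because $\Gamma^1_{(2,3)}=0$ kills one of the vertical directions present in the $\mc{M}_\beta$ case; carrying out the bracket computations shows $\LRD(E_1),\LRD(E_2),\LRD(E_3),\nu((\cdot)\star E_1),L_1,L_3$ span an involutive distribution containing $\RDist$, giving $\dim\mc{O}_{\RDist}(q_0)=6$. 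The single genuine obstacle I anticipate is the propagation/continuity argument of case (iii): showing that if the orbit touches $\overline{M_1}\times M_0$-type boundary strata one can still slide into $M_0\times\hat M_0$ via the $D$- and $\hat D$-curves, which requires the "no-escape from the orbit" lemma (the $t_1=\inf$ argument) and careful bookkeeping of which of $M_0,M_1$ the projection of a limiting sequence lands in; everything else is a faithful transcription of the $\mc{M}_\beta$ proof with $\beta\rightsquigarrow f'/f$ and the constant-curvature input $f''/f=-K$.
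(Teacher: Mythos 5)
Your case (i) has a genuine gap in the upper bound. You bound $\dim\mc{O}_{\RDist}(q_0)$ by showing the orbit stays inside $S_1^+$ and claiming that $(h_1,h_2,G)$ cuts out a closed $6$-dimensional submanifold. But under the standing hypothesis $f''/f=-K=\hat f''/\hat f$, both $u=f'/f$ and $\hat u=\hat f'/\hat f$ solve the same Riccati equation $u'=-K-u^2$, and nothing prevents them from being the \emph{same constant} solution: e.g.\ $f\equiv\hat f\equiv 1$ with $K=0$ (Riemannian products), or $f=ae^{\sqrt{-K}r}$, $\hat f=\hat a e^{\sqrt{-K}\hat r}$ with $K<0$. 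In those cases $G\equiv 0$ on $Q_1^+$, so $S_1^+=Q_1^+$ is $7$-dimensional, your regularity check fails, and tangency of $\RDist$ to $S_1$ only yields $\dim\mc{O}_{\RDist}(q_0)\le 7$. The constraint you are missing is the synchronization $(r-r_0)-(\hat r-\hat r_0)=0$ along rolling curves, which together with $A\pa{r}=\pa{r}$ defines the \emph{always} $6$-dimensional submanifold $Q^*_+$ of Proposition \ref{pr:wp-1}; the intended argument for case (i) is precisely a reduction to that proposition: $w(t)=\frac{f'(t+r_0)}{f(t+r_0)}-\frac{\hat f'(t+\hat r_0)}{\hat f(t+\hat r_0)}$ satisfies $w'=-w\big(\frac{f'}{f}+\frac{\hat f'}{\hat f}\big)$ with $w(0)=0$, hence $w\equiv 0$, so Proposition \ref{pr:wp-1} applies (for $q_0\in S_1^-$ one first reflects $\hat r\mapsto -\hat r$ via the isometry $\hat f^\vee(t)=\hat f(-t)$). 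Also note that with your own convention $E_2=\pa{r}$, on $Q_1$ the image of $\widetilde{\Rol}_q$ is spanned by $\star E_2$, so the vertical vector you get tangent to the orbit is $\nu(A\star E_2)|_q$, not $\nu(A\star E_1)|_q$; the same index slip reappears in your list of spanning fields in case (ii).

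For cases (ii)--(iii) your outline follows the paper's route, but the step you flag as the "single genuine obstacle" (the $M_0,M_1,\hat M_0,\hat M_1$ propagation) is in fact carried over verbatim from Proposition \ref{pr:mbeta-3}; the truly delicate missing piece is the upper bound $\dim\mc{O}_{\RDist}(q_0)\le 8$ when the orbit meets $Q_1\backslash S_1$ -- in particular when $q_0$ itself lies there. Your involutive distribution $\mc{D}$ is built only on $Q_0$, using the angle $\theta$ and the function $\lambda=\LRD(Z_A)|_q\theta+s_\theta\Gamma^1_{(3,1)}-c_\theta\Gamma^3_{(3,1)}$; but $\theta$ admits no $C^1$ extension across $Q_1\backslash S_1$ and $\lambda\to\pm\infty$ there (since $s_\phi\lambda=c_\phi\Gamma^1_{(1,2)}-\hat\Gamma^1_{(1,2)}$ has nonzero limit while $s_\phi\to 0$), so $\mc{D}$ as you construct it is not defined along the whole orbit and Frobenius cannot be invoked. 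The paper closes this by exhibiting the Sasaki-orthogonal normal field $\mc{N}=H_1\mc{X}_1+\mc{X}_2+H_3\mc{X}_3$ with $H_1=\cos\theta/\lambda$, $H_3=\sin\theta/\lambda$, proving $H_1,H_3$ extend by zero in a $C^1$ way across $Q_1\backslash S_1$, and then applying the $C^1$ version of the Frobenius theorem on all of $Q\backslash S_1$. Without this extension (or a substitute argument), your case (iii) bound for $q_0\in Q_1\backslash S_1$ is not established.
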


\begin{proof}
As in the proof of Proposition \ref{pr:wp-1} (see also Remark \ref{re:wp-1}) it is clear that $Q_1$ is a closed 7-dimensional closed
submanifolds of $Q$ and $Q_1^-,Q_1^+$ are disjoint open and closed submanifolds of $Q_1$.
Also, $S_1,S_1^+,S_1^-$ are closed subsets of $Q_1$.

Let us begin with the case where $q_0\in S_1^+$.
Writing $x_0=(r_0,y_0)$, $\hat{x}_0=(\hat{r}_0,\hat{y}_0)$
and defining $w(t):=\frac{f'(t+r_0)}{f(t+r_0)}-\frac{\hat{f}'(t+\hat{r}_0)}{\hat{f}(t+\hat{r}_0)}$, we see that
for all $t\in (I-r_0)\cap (\hat{I}-\hat{r}_0)$,
\[
w'(t)=\underbrace{\frac{f''(t+r_0)}{f(t+r_0)}}_{=-K}-\Big(\frac{f'(t+r_0)}{f(t+r_0)}\Big)^2
-\underbrace{\frac{\hat{f}''(t+\hat{r}_0)}{\hat{f}(t+\hat{r}_0)}}_{=-K}+\Big(\frac{\hat{f}'(t+\hat{r}_0)}{\hat{f}(t+\hat{r}_0)}\Big)^2
\]
i.e.
\[
w'(t)=-w(t)\Big(\frac{f'(t+r_0)}{f(t+r_0)}+\frac{\hat{f}'(t+\hat{r}_0)}{\hat{f}(t+\hat{r}_0)}\Big),\quad w(0)=0.
\]
This shows that $w(t)=0$ for all $t\in (I-r_0)\cap (\hat{I}-\hat{r}_0)$ and hence the assumptions of 
Proposition \ref{pr:wp-1} have been met. Thus $\dim \mc{O}_{\RDist}(q_0)=6$.

On the other hand, if $q_0=(x_0,\hat{x}_0;A_0)\in S_1^-$
and $x_0=(r_0,y_0)$, $\hat{x}_0=(\hat{r}_0,\hat{y}_0)$,
define $\hat{f}^\vee(t):=\hat{f}(-t)$, $\hat{I}^\vee:=-\hat{I}$ and notice that
$\varphi:(\hat{I}\times \hat{N},\hat{h}_{\hat{f}})\to (\hat{I}^\vee\times \hat{N},\hat{h}_{\hat{f}^\vee})=:(\hat{M}^\vee,\hat{g}^\vee)$
given by $(\hat{y},\hat{r})\mapsto (\hat{y},-\hat{r})$ is an isometry,
which induces a diffeomorphism $\Phi:Q\to Q(M,\hat{M}^\vee)$
by $(x,\hat{x};A)\mapsto (x,\varphi(\hat{x});\varphi_*|_{\hat{x}}\circ A)$
which preserves the respective rolling distributions and orbits:
$\Phi_*(\RDist|_q)=\RDist^\vee|_{\Phi(q)}$,
$\Phi(\mc{O}_{\RDist}(q))=\mc{O}_{\RDist^\vee}(\Phi(q))$, the notation being clear here.
But now $\Phi(A_0)=\varphi_*(A_0\pa{r})=-\varphi_*\pa{r}=\pa{r}$
and since $q_0^\vee:=\Phi(q_0)=((r_0,y_0),(-\hat{r}_0,\hat{y}_0);\varphi_*\circ A_0)$,
\[
\frac{(f^\vee)'(-\hat{r}_0)}{f^\vee(-\hat{r}_0)}
=\frac{\dif{t}\big|_0 f^{\vee}(t-\hat{r}_0)}{\hat{f}(\hat{r}_0)}
=\frac{\dif{t}\big|_0 f(\hat{r}_0-t)}{\hat{f}(\hat{r}_0)}
=-\frac{f'(\hat{r}_0)}{\hat{f}(\hat{r}_0)}=\frac{f'(r_0)}{f(r_0)}.
\]
Thus $\Phi(q_0)$ belongs to the set $S_1^+$ of $Q(M,\hat{M}^\vee)$
(which corresponds by $\Phi$ to $S_1^-$ of $Q$) and thus the above argument
implies that $\dim \mc{O}_{\RDist^\vee}(\Phi(q_0))=6$ and therefore $\mc{O}_{\RDist}(q_0)=6$.
Hence we have proven (i).

Now we deal with the case where $q_0\in Q\backslash S_1$.
Up until the second half of the proof, where we introduce the sets $M_0,M_1,\hat{M}_0,\hat{M}_1$,
we assume that the choice of $q_0\in Q\backslash S_1$ is not fixed
(and hence $M^\circ$, $\hat{M}^\circ$ are not defined yet).

So let $q_0=(x_0,\hat{x}_0;A_0)=((r_0,y_0),(\hat{r}_0,\hat{y}_0);A_0)\in Q\backslash S_1$
and choose some orthonormal frame $X_1,X_3$ (resp. $\hat{X}_1,\hat{X}_3$)
on $N$ (resp. $\hat{N})$
defined on an open neighbourhood $U'$ of $y_0$ (resp. $\hat{U}'$ of $\hat{y}_0$)
and consider them, in the natural way, as vector fields on $M$ (resp. $\hat{M})$.
Moreover, assume that $X_1,\pa{r},X_3$ (resp. $\hat{X}_1,\pa{r},\hat{X}_3$)
is oriented.
Writing $E_1=\frac{1}{f}X_1$, $E_2=\pa{r}$, $E_3=\frac{1}{f}X_3$,
and $\hat{E}_1=\frac{1}{\hat{f}}\hat{X}_1$, $\hat{E}_2=\pa{r}$, $\hat{E}_3=\frac{1}{\hat{f}}\hat{X}_3$,
we get positively oriented orthonormal frames of $M$ and $\hat{M}$,
defined on $U:=I\times U'$, $\hat{U}:=\hat{I}\times\hat{U}'$,
respectively.

Then we have, by \cite{oneill83}, Chapter 7, Proposition 42
(one should pay attention that there the definition of the curvature tensor
differs by sign to the definition used here)
that with respect to the frames $\star E_1,\star E_2,\star E_3$
and $\hat{\star} \hat{E}_1,\hat{\star} \hat{E}_2,\hat{\star} E_3$,
\[
R=\qmatrix{-K & 0 & 0 \\
0 & \frac{-\sigma+(f')^2}{f^2} & 0 \\
0 & 0 & -K
},
\quad
\hat{R}=\qmatrix{-K & 0 & 0 \\
0  & \frac{-\hat{\sigma}+(\hat{f}')^2}{\hat{f}^2} & 0 \\
0 & 0 & -K
},
\]
where $\sigma(y)$ and $\hat{\sigma}(\hat{y})$
are the unique sectional (or Gaussian) curvatures of $(N,h)$ and $(\hat{N},\hat{h})$
at points $y,\hat{y}$.
Write $-K_2:=\frac{-\sigma+(f')^2}{f^2}$ and $-\hat{K}_2:=\frac{-\hat{\sigma}+(\hat{f}')^2}{\hat{f}^2}$.

We now take an open neighbourhood $\tilde{O}$ of $q_0$ in $Q$ according to the following cases:
\begin{itemize}
\item[(a)] If $q_0\in Q_0$, we assume that $\tilde{O}\subset Q_0\cap \pi_Q^{-1}(U\times\hat{U})$.
\item[(b)] If $q_0\in Q_1^+\backslash S_1$
(resp. $q_0\in Q_1^-\backslash S_1$)
we assume that $\tilde{O}\subset \pi_Q^{-1}(U\times\hat{U})\backslash (S_1\cup Q_1^-)$
(resp. $\tilde{O}\subset \pi_Q^{-1}(U\times\hat{U})\backslash (S_1\cup Q_1^+)$).
\end{itemize}
Write $\tilde{O}_0:=\tilde{O}\cap Q_0$.
Thus in case (a) one has $\tilde{O}=\tilde{O}_0\ni q_0$
while in case (b) one has $\tilde{O}=\tilde{O}_0\cup (\tilde{O}\cap (Q_1^{\pm}\backslash S_1))$,
as a disjoint union, and $q_0\notin \tilde{O}_0$,
the "$\pm$" depending on the respective situation.
Moreover, if the case (b) occurs, we assume that $q_0\in Q_1^+\backslash S_1$
since the case where $q_0\in Q_1^-\backslash S_1$ is handled in a similar way.

We will still shrink $\tilde{O}$ around $q_0$ whenever convenient and always keep
in mind that $\tilde{O}_0=\tilde{O}\cap Q_0$ even after the shrinking.
Notice that this shrinking does not change the properties in (a) and (b) above.

Moreover, \cite{oneill83}, Chapter 7, Proposition 35
implies that
if $\Gamma$, $\hat{\Gamma}$ are connection
tables w.r.t. $E_1,E_2,E_3$ and $\hat{E}_1,\hat{E}_2,\hat{E}_3$,
respectively,
\[
\Gamma=\qmatrix{
0 & 0 & -\Gamma^1_{(1,2)}\\
\Gamma^1_{(3,1)} & \Gamma^2_{(3,1)} & \Gamma^3_{(3,1)} \\
\Gamma^1_{(1,2)} & 0 & 0
},
\quad \hat{\Gamma}=\qmatrix{
0 & 0 & -\hat{\Gamma}^1_{(1,2)}\\
\hat{\Gamma}^1_{(3,1)} & \hat{\Gamma}^2_{(3,1)} & \hat{\Gamma}^3_{(3,1)} \\
\hat{\Gamma}^1_{(1,2)} & 0 & 0
}
\]
and
\[
W(\Gamma^1_{(1,2)})=&0,\quad \forall W\in E_2^\perp, \\
\hat{W}(\hat{\Gamma}^1_{(1,2)})=&0,\quad \forall \hat{W}\in \hat{E}_2^\perp,
\]
since $\Gamma^1_{(1,2)}(r,y)=-\frac{f'(r)}{f(r)}$ and $\hat{\Gamma}^1_{(1,2)}(\hat{r},\hat{y})=-\frac{\hat{f}'(\hat{r})}{\hat{f}(\hat{r})}$.
Actually one even has $\Gamma^2_{(3,1)}=0$ and $\hat{\Gamma}^2_{(3,1)}=0$,
but we don't use this fact; one could for example
rotate $E_1,E_3$ (resp. $\hat{E}_1,\hat{E}_3$) between them, in a non-constant way,
to destroy this property.

The fact that $AE_2|_{x}\neq \pm \hat{E}_2|_{\hat{x}}$ for $q=(x,\hat{x};A)\in Q_0$
is equivalent
to the fact that the intersection $(AE_2^\perp|_{x})\cap \hat{E}_2^\perp|_{\hat{x}}$
is non-trivial for all $q=(x,\hat{x};A)\in Q_0$.
Therefore, by shrinking $\tilde{O}$ around $q_0$
if necessary, we may find a smooth functions $\theta,\hat{\theta}:\tilde{O}_0\to\R$
such that this intersection is spanned by
$AZ_A=\hat{Z}_A$, where
\[
Z_A:=&-\sin(\theta(q))E_1|_x+\cos(\theta(q))E_3|_x \\
\hat{Z}_A:=&-\sin(\hat{\theta}(q))\hat{E}_1|_{\hat{x}}+\cos(\hat{\theta}(q))\hat{E}_3|_{\hat{x}}.
\]
We also define
\[
X_A:=&\cos(\theta(q))E_1|_{x}+\sin(\theta(q))E_3|_{x} \\
\hat{X}_A:=&\cos(\hat{\theta}(q))\hat{E}_1|_{\hat{x}}+\sin(\hat{\theta}(q))\hat{E}_3|_{\hat{x}}.
\]

To unburden the formulas, we write from now on usually $s_\tau:=\sin(\tau(q))$, $c_\tau:=\cos(\tau(q))$
if $\tau:\tilde{V}\to\R$ is some function, $\tilde{V}\subset Q$, and the point $q\in \tilde{V}$ is clear from the context.

Since $X_A,E_2|_x,Z_A$ (resp. $\hat{X}_A,\hat{E}_2|_{\hat{x}},\hat{Z}_A$)
form an orthonormal frame for every $q=(x,\hat{x};A)\in \tilde{O}_0$
and because $A(Z_A^\perp)=\hat{Z}_A^\perp$, 
it follows that there is a smooth $\phi:\tilde{O}_0\to\R$ such that
\[
AX_A=&c_{\phi} \hat{X}_A+s_{\phi} \hat{E}_2=c_{\phi}(c_{\hat{\theta}}\hat{E}_1+s_{\hat{\theta}}\hat{E}_3)+s_{\phi}\hat{E}_2 \\
AE_2=&-s_{\phi} \hat{X}_A+c_{\phi} \hat{E}_2=-s_{\phi}(c_{\hat{\theta}}\hat{E}_1+s_{\hat{\theta}}\hat{E}_3)+c_{\phi}\hat{E}_2 \\
AZ_A=&\hat{Z}_A.
\]
In particular,
\[
\hat{g}(AZ_A,\hat{E}_2)=0,
\]
for all $q=(x,\hat{x};A)\in \tilde{O}_0$.

It is clear that formulas in Eq. (\ref{eq:Rol2:LR_X_A}) on page \pageref{eq:Rol2:LR_X_A} hold  with $\Gamma^1_{(2,3)}=0$
and $Y=\hat{E}_2$.
Since they are very useful in computations,
we will now derive three relations, two of which simplify Eq. (\ref{eq:Rol2:LR_X_A}),
and all of which play an important role later on in the proof.

Differentiating the identity $\hat{g}(AZ_A,\hat{E}_2)=0$
with respect to $\LRD(X_A)|_q$, $\LRD(E_2)|_q$ and $\LRD(Z_A)|_q$, one at a time, yields on $\tilde{O}_0$,
\[
0=&\hat{g}(A\LRD(X_A)Z_{(\cdot)},\hat{E}_2)+\hat{g}(AZ_A,\hat{\nabla}_{AX_A}\hat{E}_2) \\
=&(-\LRD(X_A)|_q\theta+c_\theta\Gamma^1_{(3,1)}+s_\theta\Gamma^3_{(3,1)})\hat{g}(AX_A,\hat{E}_2)
+\hat{g}(\hat{Z}_A,-c_{\phi}\hat{\Gamma}^1_{(1,2)}\hat{X}_A) \\
=&s_{\phi}(-\LRD(X_A)|_q\theta+c_\theta\Gamma^1_{(3,1)}+s_\theta\Gamma^3_{(3,1)}) \\
0=&\hat{g}(A\LRD(E_2)Z_{(\cdot)},\hat{E}_2)+\hat{g}(AZ_A,\hat{\nabla}_{AE_2}\hat{E}_2) \\
=&(-\LRD(Y)|_q\theta+\Gamma^2_{(3,1)})\hat{g}(AX_A,\hat{E}_2)
+\hat{g}(\hat{Z}_A,s_{\phi}\hat{\Gamma}^1_{(1,2)}\hat{X}_A) \\
=&s_{\theta}(-\LRD(Y)|_q\theta+\Gamma^2_{(3,1)}) \\
0=&\hat{g}(A\LRD(Z_A)Z_{(\cdot)},\hat{E}_2)+\hat{g}(AZ_A,\hat{\nabla}_{AZ_A}\hat{E}_2) \\
=&(-\LRD(Z_A)|_q\theta-s_\theta\Gamma^1_{(3,1)}+c_\theta\Gamma^3_{(3,1)})\hat{g}(AX_A,\hat{E}_2) \\
&+\Gamma^1_{(1,2)}\hat{g}(AE_2,\hat{E}_2)
+\hat{g}(\hat{Z}_A, -\hat{\Gamma}^1_{(1,2)} \hat{Z}_A) \\
=&s_{\phi}(-\LRD(Z_A)|_q\theta-s_\theta\Gamma^1_{(3,1)}+c_\theta\Gamma^3_{(3,1)})
+c_{\phi}\Gamma^1_{(1,2)}-\hat{\Gamma}^1_{(1,2)}.
\]
Define
\[
\lambda(q):=\LRD(Z_A)|_q\theta+s_{\theta}\Gamma^1_{(3,1)}-c_{\theta}\Gamma^3_{(3,1)},\quad q\in \tilde{O}_0,
\]
which is a smooth function on $\tilde{O}_0$.
Since $\sin(\phi(q))=0$ would imply that $AE_2=\pm \hat{E}_2$,
we have  $\sin(\phi(q))\neq 0$ on $\tilde{O}_0\subset Q_0$ and hence
we get
\[
\LRD(X_A)|_q\theta&=c_\theta\Gamma^1_{(3,1)}+s_\theta\Gamma^3_{(3,1)} \\
\LRD(E_2)|_q\theta&=\Gamma^2_{(3,1)} \\
s_{\phi}\lambda&=c_{\phi}\Gamma^1_{(1,2)}-\hat{\Gamma}^1_{(1,2)}.
\]

These formulas, along with $\Gamma^1_{(2,3)}=0$,
simplify Eq. (\ref{eq:Rol2:LR_X_A}) to
\begin{align}\label{eq:warp:LR_X_A}
\LRD(X_A)|_q X_{(\cdot)}&=\Gamma^1_{(1,2)}E_2,\quad
\LRD(E_2)|_q X_{(\cdot)}=0,\quad 
\LRD(Z_A)|_q X_{(\cdot)}=\lambda Z_A
\nonumber \\
\LRD(X_A)|_qE_2&=-\Gamma^1_{(1,2)}X_A,\quad 
\LRD(E_2)|_qE_2=0,\quad
\LRD(Z_A)|_qE_2=-\Gamma^1_{(1,2)}Z_A \nonumber \\
\LRD(X_A)|_q Z_{(\cdot)}&=0,\quad
\LRD(E_2)|_q Z_{(\cdot)}=0,\quad
\LRD(Z_A)|_q Z_{(\cdot)}=-\lambda X_A+\Gamma^1_{(1,2)}E_2,
\end{align}
at $q\in \tilde{O}_0$.
We use these in the rest of the proof without further mention.

We make an interesting remark on the behaviour of $\lambda$
in the case where $q_0\in Q_1^+\backslash S_1$.
For any $q=(x,\hat{x};A)\in (Q_1^+\backslash S_1)\cap \tilde{O}$,
and any sequence (which exist as $Q_1\cap \tilde{O}$ is a nowhere dense subset of $\tilde{O}$)
$q_n\in \tilde{O}_0$, $q_n\to q$, we have $\cos(\phi(q_n))\to \cos(\phi(q))=1$,
hence $0\neq \sin(\phi(q_n))\to 0$.
Because
\[\lim_{n\to\infty} (c_{\phi}\Gamma^1_{(1,2)}-\hat{\Gamma}^1_{(1,2)})(q_n)=(c_{\phi}\Gamma^1_{(1,2)}-\hat{\Gamma}^1_{(1,2)})(q)=\Gamma^1_{(1,2)}(x)-\hat{\Gamma}^1_{(1,2)}(\hat{x})\neq 0
\]
as $q\in Q_1^+\backslash S_1$, we get
\[
\lim_{n\to\infty} \big(\sin(\phi(q_n))\lambda(q_n)\big)\neq 0,\quad \lim_{n\to\infty} \sin(\phi(q_n))=0,
\]
which implies that the sequence $\lambda(q_n)$ is unbounded,
\[
\lim_{n\to\infty} \lambda(q_n)=\pm\infty.
\]
In particular, we see that, even after shrinking $\tilde{O}$,
one cannot extend the definition of $\theta$ in a smooth, or even $C^1$, way onto $\tilde{O}$,
since if this were possible, the definition of $\lambda$ above would imply that $\lambda$ is continuous on $\tilde{O}$
and hence the above sequences $\lambda(q_n)$ would be bounded.
This fact about the unboudedness of $\lambda(q)$ as $q$ approaches $(Q_1^+\backslash S_1)\cap \tilde{O}$
will be used later.
To get around this problem, we will be working for a while uniquely on $\tilde{O}_0$.

Define on $\tilde{O}_0$ a 5-dimensional smooth distribution $\Delta$
spanned by
\[
\LRD(E_1)|_q, \LRD(E_2)|_q, \LRD(E_3)|_q, \nu(A\star E_2)|_q, \nu(A\star X_{A})|_q,\quad q\in \tilde{O}_0.
\]
We will proceed to show that the Lie algebra $\Lie(\Delta)$
spans at every point of $q\in \tilde{O}_0$ a 8-dimensional distribution $\Lie(\Delta)|_q$
which is then necessarily involutive.
Notice that we consider $\VF^k_{\Delta}$, $k=1,2,\dots$ and $\Lie(\Delta)$ as $\Cinf(\tilde{O}_0)$-modules.

Since $\LRD(X_{(\cdot)}),\LRD(E_2),\LRD(Z_{(\cdot)})$ span $\RDist$ on $\tilde{O}_0$,
they generate the module $\VF_{\RDist|_{\tilde{O}_0}}$
and hence $\Lie(\RDist|_{\tilde{O}_0})$.
Moreover, the brackets
\[
[\LRD(X_{(\cdot)}),\LRD(E_2)]|_q=&-\Gamma^1_{(1,2)}\LRD(X_A)|_q \\
[\LRD(E_2),\LRD(Z_{(\cdot)})]|_q=&\Gamma^1_{(1,2)}\LRD(Z_A)|_q-K_1^\Rol \nu(A\star X_A)|_q-\alpha \nu(A\star E_2)|_q \\
[\LRD(Z_{(\cdot)}),\LRD(X_{(\cdot)})]|_q=&\lambda\LRD(Z_A)|_q-\alpha \nu(A\star X_A)|_q-K_2^\Rol \nu(A\star E_2)|_q,
\]
along with the definition of $X_A,Z_A$, show that $\VF_{\RDist|_{\tilde{O}_0}}^2\subset \VF_{\Delta}$.

The first three Lie brackets in Proposition \ref{pr:3D-2} case (ii) show that $\VF_{\Delta}^2$ contains
vector fields $L_1,L_3$ given by
$L_1|_q=\LNSD(E_1)|_q-\Gamma^1_{(1,2)}\nu(A\star E_3)|_q$,
$L_3|_q=\LNSD(E_3)|_q+\Gamma^1_{(1,2)}\nu(A\star E_1)|_q$,
and also $L_2|_q$, which in this setting is just the zero-vector field
on $\tilde{O}_0$.

We define $F_X|_q:=c_{\theta}L_1|_q+s_{\theta}L_3|_q$
and $F_Z|_q:=-s_{\theta}L_1|_q+c_{\theta}L_3|_q-\Gamma^1_{(1,2)}\nu(A\star X_A)|_q$,
hence $F_X,F_X\in \VF_{\Delta}^2$
and one easily sees that they simplify to
\[
F_X|_q=&\LNSD(X_A)|_q-\Gamma^1_{(1,2)}\nu(A\star Z_A)|_q \\
F_Z|_q=&\LNSD(Z_A)|_q.
\]
It is clear that the vector fields
\[
\LRD(X_{(\cdot)}),\LRD(E_2),\LRD(Z_{(\cdot)}),\nu((\cdot)\star E_2),\nu((\cdot)\star X_{(\cdot)}),F_X,F_Z
\]
span the same $\Cinf(\tilde{O}_0)$-submodule of $\VF_{\Delta}^2$ as do
\[
\LRD(E_1),\LRD(E_2),\LRD(E_3),\nu((\cdot)\star E_2),\nu((\cdot)\star X_{(\cdot)}),L_1,L_3.
\]

We now want to find generators of $\VF_{\Delta}^2$.
By what we have already done and said,
it remains us to compute
need to prove that the Lie-brackets between the 4 vector fields
\[
\LRD(X_A)|_q,\LRD(E_2)|_q,\LRD(Z_A)|_q,\nu(A\star E_2)|_q
\]
and $\nu((\cdot)\star X_{(\cdot)})|_q$.

Since we will have to derivate $X_A$,
it follows that the derivatives of $\theta$ will also appear.
That is why we first compute with respect to all the (pointwise linearly independent) vectors that appear above.
As a first step, compute
\[
F_X|_q Z_{(\cdot)}=&(-F_X|_q\theta+c_{\theta}\Gamma^1_{(3,1)}+s_{\theta}\Gamma^3_{(3,1)})X_A \\
F_Z|_q Z_{(\cdot)}=&\LNSD(Z_A)|_q Z_{(\cdot)}=(-F_Z|_q\theta - s_{\theta}\Gamma^1_{(3,1)} + c_{\theta}\Gamma^3_{(3,1)} )X_A+\Gamma^1_{(1,2)}E_2.
\]

Knowing already $\LRD(X_A)|_q\theta,\LRD(Y)|_q\theta,\LRD(Z_A)|_q\theta$,
we derivate the identity $$\hat{g}(AZ_A,\hat{E}_2)=0$$
with respect to $\nu(A\star E_2)|_q,\nu(A\star X_A)|_q,F_X|_q,F_Z|_q$
which gives (notice that the derivative of $\hat{E}_2$ with respect to these vanishes)
\[
0=&\hat{g}(A(\star E_2)Z_A-\nu(A\star E_2)|_q\theta AX_A,\hat{E}_2) \\
=&(1-\nu(A\star E_2))\hat{g}(AX_A,\hat{E}_2)=s_{\phi}(1-\nu(A\star E_2)) \\
0=&\hat{g}(A(\star X_A)Z_A-\nu(A\star X_A)|_q\theta AX_A,\hat{E}_2) \\
=&-\hat{g}(AE_2,\hat{E}_2)-\nu(A\star X_A)|_q\theta \hat{g}(AX_A,\hat{E}_2) \\
=&-c_{\phi}-s_{\phi}\nu(A\star X_A)|_q\theta \\
0=&\hat{g}(-\Gamma^1_{(1,2)}A(\star Z_A)Z_A,\hat{E}_2)
+(-F_X|_q\theta+c_{\theta}\Gamma^1_{(3,1)}+s_{\theta}\Gamma^3_{(3,1)})\hat{g}(AX_A,\hat{E}_2) \\
=&s_{\phi}(-F_X|_q\theta+c_{\theta}\Gamma^1_{(3,1)}+s_{\theta}\Gamma^3_{(3,1)}) \\
0=&(-F_Z|_q\theta - s_{\theta}\Gamma^1_{(3,1)} + c_{\theta}\Gamma^3_{(3,1)} )\hat{g}(AX_A,E_2)+\Gamma^1_{(1,2)}\hat{g}(AE_2,\hat{E}_2) \\
=&s_{\phi}(-F_Z|_q\theta - s_{\theta}\Gamma^1_{(3,1)} + c_{\theta}\Gamma^3_{(3,1)} )
+c_{\phi}\Gamma^1_{(1,2)}
\]
and since $s_{\phi}\neq 0$ on $\tilde{O}_0$,
\[
\nu(A\star E_2)|_q\theta=&1 \\
\nu(A\star X_A)|_q\theta=&-\cot(\hat{\phi}) \\
F_X|_q\theta=&c_{\theta}\Gamma^1_{(3,1)}+s_{\theta}\Gamma^3_{(3,1)} \\
F_Z|_q\theta=&- s_{\theta}\Gamma^1_{(3,1)} + c_{\theta}\Gamma^3_{(3,1)}+\cot(\phi)\Gamma^1_{(1,2)}.
\]
These simplify the above formulas to
\[
F_X|_q Z_{(\cdot)}=&0 \\
F_Z|_q Z_{(\cdot)}=&\LNSD(Z_A)|_q Z_{(\cdot)}=-\cot(\phi)X_A+\Gamma^1_{(1,2)}E_2
\]
and moreover it is now easy to see that for $q\in \tilde{O}_0$,
\[
F_X|_q X_{(\cdot)}=&\Gamma^1_{(1,2)}E_2,\quad
F_X|_q E_2=-\Gamma^1_{(1,2)}X_A \\
F_Z|_q X_{(\cdot)}=&\cot(\phi)\Gamma^1_{(1,2)}Z_A,\quad
F_Z|_q E_2=-\Gamma^1_{(1,2)}Z_A.
\]

The brackets
\[
[\LRD(X_{(\cdot)}),\nu((\cdot)\star X_{(\cdot)})]|_q
=&\cot(\phi)\LRD(Z_A)|_q-\LNSD(A\star (\star X_A)X_A)|_q+\Gamma^1_{(1,2)}\nu(A\star E_2)|_q \\
=&\cos(\phi)\LRD(Z_A)|_q+\Gamma^1_{(1,2)}\nu(A\star E_2)|_q \\
[\LRD(E_2),\nu((\cdot)\star X_{(\cdot)})]|_q=&-\LNSD(A(\star X_A)E_2)|_q+\nu(A\star 0)|_q=F_Z|_q-\LRD(Z_A)|_q \\
[\LRD(Z_{(\cdot)}),\nu((\cdot)\star X_{(\cdot)})]|_q=&-\cot(\phi)\LRD(X_A)|_q-\LNSD(A\star (\star X_A)Z_A)|_q+\nu(A\star (\lambda Z_A)) \\
=& -\cot(\phi)\LRD(X_A)|_q+\LRD(E_2)|_q-(\LNSD(E_2)|_q-\lambda\nu(A\star Z_A)|_q) \\
[\nu(A\star E_2),\nu((\cdot)\star X_{(\cdot)})]|_q=&\nu(A[\star E_2,\star X_A]_{\so})|_q+\nu(A\star Z_A)|_q=0,
\]
show that if on defines
\[
F_Y|_q:=\LNSD(E_2)|_q-\lambda\nu(A\star Z_A)|_q,
\] 
then one may write
\[
[\LRD(Z_{(\cdot)}),\nu((\cdot)\star X_{(\cdot)})]|_q=-\cot(\phi)\LRD(X_A)|_q+\LRD(E_2)|_q-F_Y|_q
\]
and hence we have shown that $\VF_{\Delta}^2$ is generated by vector fields
\[
\LRD(X_{(\cdot)}),\LRD(E_2),\LRD(Z_{(\cdot)}),\nu((\cdot)\star E_2),\nu((\cdot)\star X_{(\cdot)}),F_X,F_Y,F_Z
\]
which are all pointwise linearly independent on $\tilde{O}_0$.

Next we will proceed to show that the $\VF_{\Delta}^2$ generated by the above
8 vector fields is in fact involutive, which then establishes that $\Lie(\Delta)=\VF_{\Delta}^2$.

At first, the last 9 brackets in Proposition \ref{pr:3D-2} (recall that we have $\Gamma^1_{(2,3)}=0$)
show that $[F_Z,F_X]$ and the brackets of $\LRD(X_{(\cdot)})$, $\LRD(E_2)$, $\LRD(Z_{(\cdot)})$, $\nu((\cdot)\star E_2)$,
with $F_X$ and $F_Z$ all belong to $\VF_{\Delta}^2$
as well as do
\[
[F_X,\nu((\cdot)\star X_{(\cdot)})]|_q=&
-\LNSD(-\cot(\phi)Z_A)|_q+\nu(A\star (\LNSD(X_A)|_q X_{(\cdot)}))|_q \\
&-\Gamma^1_{(1,2)}\nu(A[\star Z_A,\star X_A]_{\so}+\nu(A\star Z_A)|_q X_{(\cdot)}-\cot(\phi)A\star X_A)|_q \\
=&\cot(\phi)\LNSD(Z_A)|_q+\nu(A\star F_X|_q X_{(\cdot)})|_q \\
&-\Gamma^1_{(1,2)}\nu(A\star E_2)|_q+\Gamma^1_{(1,2)}\cot(\phi)\nu(A\star X_A)|_q \\
=&\cot(\phi)F_Z|_q+\Gamma^1_{(1,2)}\cot(\phi)\nu(A\star X_A)|_q \\
[F_Z,\nu((\cdot)\star X_{(\cdot)})]|_q=&-\LNSD(\cot(\phi)X_A)|_q+\cot(\phi)\Gamma^1_{(1,2)}\nu(A\star Z_A)|_q \\
=&-\cot(\phi)F_X|_q.
\]
Therefore, it remains to us to prove that the brackets of $F_Y$
with all the other 7 generators of $\VF_{\Delta}^2$, as listed above, also belong to $\VF_{\Delta}^2$.

Now it is clear that since the expression of $F_Y$
involves $\lambda$, which was defined earlier,
we need to know its derivatives in all the possible directions (except in $F_Y$-direction)
as well as the expression for $F_Y|_q\theta$.
We begin by computing this latter derivative.

As usual, the way to proceed is to derivate $0=\hat{g}(AZ_A,\hat{E}_2)$
w.r.t. $F_Y|_q$,
for which, we first compute
\[
F_Y|_q Z_{(\cdot)}=(-F_Y|_q\theta +\Gamma^2_{(3,1)})X_A
\]
and hence (notice that $F_Y|_q \hat{E}_2=0$)
\[
0=\hat{g}(-\lambda A(\star Z_A)Z_A,\hat{E}_2)+(-F_Y|_q\theta +\Gamma^2_{(3,1)})\hat{g}(AX_A,\hat{E}_2)
=s_{\phi}(-F_Y|_q\theta +\Gamma^2_{(3,1)}),
\]
from where
\[
F_Y|_q\theta=\Gamma^2_{(3,1)}.
\]
One then easily computes that on $\tilde{O}_0$,
\[
F_Y|_q X_{(\cdot)}=0,\quad F_Y|_q E_2=0,\quad F_Y|_q Z_{(\cdot)}=0.
\]

To compute the derivatives of $\lambda$, we differentiate the
identity
$s_{\phi}\lambda=c_{\phi}\Gamma^1_{(1,2)}-\hat{\Gamma}^1_{(1,2)}$
proved above.
Obviously, this will require the knowledge of derivatives of $\phi$,
so we begin there.

To do that, one will differentiate the identity $c_{\phi}=\hat{g}(AE_2,\hat{E}_2)$
in different directions. First it is clear that
\[
\hat{\nabla}_{AX_A} \hat{E}_2&=-c_{\phi}\hat{\Gamma}^1_{(1,2)}\hat{X}_A \\
\hat{\nabla}_{AE_2} \hat{E}_2&=s_{\phi}\hat{\Gamma}^1_{(1,2)}\hat{X}_A \\
\hat{\nabla}_{AZ_A} \hat{E}_2&=-\hat{\Gamma}^1_{(1,2)}\hat{Z}_A,
\]
and hence
\[
-s_{\phi}\LRD(X_A)|_q\phi=&\hat{g}(-\Gamma^1_{(1,2)}AX_A,\hat{E}_2)+\hat{g}(AE_2,\hat{\nabla}_{AX_A} \hat{E}_2) \\
=&-s_\phi \Gamma^1_{(1,2)}+\hat{g}(AE_2,-c_{\phi}\hat{\Gamma}^1_{(1,2)}\hat{X}_A) \\
=&-s_\phi \Gamma^1_{(1,2)}+s_{\phi}c_{\phi}\hat{\Gamma}^1_{(1,2)} \\
-s_{\phi}\LRD(E_2)|_q\phi=&\hat{g}(A\LRD(E_2)|_qE_2,\hat{E}_2)
+\hat{g}(AE_2,\hat{\nabla}_{AE_2}\hat{E}_2) \\
=&0+\hat{g}(AE_2,s_{\phi}\hat{\Gamma}^1_{(1,2)}\hat{X}_A)
=-s_{\phi}^2\hat{\Gamma}^1_{(1,2)} \\
-s_{\phi}\LRD(Z_A)|_q\phi=&\hat{g}(-\Gamma^1_{(1,2)}AZ_A,\hat{E}_2)+\hat{g}(AE_2,-\hat{\Gamma}^1_{(1,2)}\hat{Z}_A)=0 \\
-s_{\phi}\nu(A\star E_2)|_q\phi=&\hat{g}(A(\star E_2)E_2,\hat{E}_2)=0 \\
-s_{\phi}\nu(A\star X_A)|_q\phi=&\hat{g}(A(\star X_A)E_2,\hat{E}_2)=\hat{g}(AZ_A,\hat{E}_2)=0 \\
-s_{\phi}F_X|_q\phi=&\hat{g}(-\Gamma^1_{(1,2)}A(\star Z_A)E_2-\Gamma^1_{(1,2)}AX_A,\hat{E}_2)=0 \\
-s_{\phi} F_Z|_q\phi=&\hat{g}(-\Gamma^1_{(1,2)}AZ_A,\hat{E}_2)=0 \\
-s_{\phi} F_Y|_q\phi=&\hat{g}(-\lambda A(\star Z_A)E_2+0,\hat{E}_2)=\lambda\hat{g}(AX_A,\hat{E}_2)=s_{\phi}\lambda.
\]
Because $s_{\phi}\neq 0$ on $\tilde{O}_0$, these yield
\[
\LRD(X_A)|_q\phi=&\Gamma^1_{(1,2)}-c_\phi\hat{\Gamma}^1_{(1,2)} \\
\LRD(E_2)|_q\phi=&s_{\phi}\hat{\Gamma}^1_{(1,2)} \\
F_Y|_q\phi=&-\lambda \\
\LRD(Z_A)|_q\phi=&\nu(A\star E_2)|_q\phi=\nu(A\star X_A)|_q\phi=F_X|_q\phi=F_Z|_q\phi=0.
\]

Next notice that
\[
& \LRD(X_A)|_q\Gamma^1_{(1,2)}=F_X|_q\Gamma^1_{(1,2)}=X_A(\Gamma^1_{(1,2)})=0 \\
& \LRD(E_2)|_q\Gamma^1_{(1,2)}=F_Y|_q\Gamma^1_{(1,2)}=E_2(\Gamma^1_{(1,2)}) \\
& \LRD(Z_A)|_q\Gamma^1_{(1,2)}=F_Z|_q\Gamma^1_{(1,2)}=Z_A(\Gamma^1_{(1,2)})=0,
\]
because $X_A,Z_A\in E_2^\perp$
and similarly, since $\hat{X}_A,\hat{Z}_A\in \hat{E}_2^\perp$,
\[
& \LRD(X_A)|_q\hat{\Gamma}^1_{(1,2)}=AX_A(\hat{\Gamma}^1_{(1,2)})=s_{\phi}\hat{E}_2(\hat{\Gamma}^1_{(1,2)}) \\
& \LRD(E_2)|_q\hat{\Gamma}^1_{(1,2)}=AE_2(\hat{\Gamma}^1_{(1,2)})=c_{\phi}\hat{E}_2(\hat{\Gamma}^1_{(1,2)}) \\
& \LRD(Z_A)|_q\hat{\Gamma}^1_{(1,2)}=AZ_A(\hat{\Gamma}^1_{(1,2)})=0 \\
& F_X|_q\hat{\Gamma}^1_{(1,2)}=F_Y|_q\hat{\Gamma}^1_{(1,2)}=F_Z|_q\hat{\Gamma}^1_{(1,2)}=0.
\]

Finally, derivating the identity $s_\phi \lambda=c_\phi \Gamma^1_{(1,2)}-\hat{\Gamma}^1_{(1,2)}$
and using the previously derived rules,
\[
& c_{\phi}(\Gamma^1_{(1,2)}-c_{\phi}\hat{\Gamma}^1_{(1,2)})\lambda
+s_{\phi}\LRD(X_A)|_q\lambda=-s_{\phi}\Gamma^1_{(1,2)}(\Gamma^1_{(1,2)}-c_{\phi}\hat{\Gamma}^1_{(1,2)})
-s_{\phi}\hat{E}_2(\hat{\Gamma}^1_{(1,2)}) \\
& s_{\phi}c_{\phi}\hat{\Gamma}^1_{(1,2)}\lambda+s_{\phi}\LRD(E_2)|_q\lambda
=-s_{\phi}^2\hat{\Gamma}^1_{(1,2)}\Gamma^1_{(1,2)}+c_{\phi}E_2(\Gamma^1_{(1,2)})-c_{\phi}\hat{E}_2(\hat{\Gamma}^1_{(1,2)}) \\
& s_{\phi}\LRD(Z_A)|_q\lambda=0 \\
& s_{\phi}\nu(A\star E_2)|_q\lambda=0 \\
& s_{\phi}\nu(A\star X_A)|_q\lambda=0 \\
& s_{\phi}F_X|_q\lambda=0 \\
& -c_{\phi}\lambda^2+s_{\phi}F_Y|_q\lambda=s_{\phi}\Gamma^1_{(1,2)}\lambda+c_{\phi}E_2(\Gamma^1_{(1,2)}) \\
& s_{\phi}F_Z|_q\lambda=0
\]
from which the last 6 simplify immediately to
\[
\LRD(Z_A)|_q\lambda=&\nu(A\star E_2)|_q\lambda=\nu(A\star X_A)|_q\lambda=F_X|_q\lambda=F_Z|_q\lambda=0 \\
F_Y|_q\lambda=&\cot(\phi)(E_2(\Gamma^1_{(1,2)})+\lambda^2)+\Gamma^1_{(1,2)}\lambda.
\]
Next simplify $\LRD(E_2)|_q\lambda$ by using first $s_{\phi}\lambda=c_{\phi}\Gamma^1_{(1,2)}-\hat{\Gamma}^1_{(1,2)}$,
\[
s_{\phi}\LRD(E_2)|_q\lambda
=&-s_{\phi}c_{\phi}\hat{\Gamma}^1_{(1,2)}\lambda-s_{\phi}^2\hat{\Gamma}^1_{(1,2)}\Gamma^1_{(1,2)}+c_{\phi}E_2(\Gamma^1_{(1,2)})-c_{\phi}\hat{E}_2(\hat{\Gamma}^1_{(1,2)}) \\
=&-c_{\phi}\hat{\Gamma}^1_{(1,2)}(c_{\phi}\Gamma^1_{(1,2)}-\hat{\Gamma}^1_{(1,2)})-s_{\phi}^2\hat{\Gamma}^1_{(1,2)}\Gamma^1_{(1,2)}+c_{\phi}E_2(\Gamma^1_{(1,2)})-c_{\phi}\hat{E}_2(\hat{\Gamma}^1_{(1,2)}) \\
=&-\Gamma^1_{(1,2)}\hat{\Gamma}^1_{(1,2)}+c_{\phi}E_2(\Gamma^1_{(1,2)})
+c_{\phi}(-\hat{E}_2(\hat{\Gamma}^1_{(1,2)})+(\hat{\Gamma}^1_{(1,2)})^2)
\]
and then using $-K=-\hat{E}_2(\hat{\Gamma}^1_{(1,2)})+(\hat{\Gamma}^1_{(1,2)})^2$,
to obtain
\[
s_{\phi}\LRD(E_2)|_q\lambda=&-\Gamma^1_{(1,2)}\hat{\Gamma}^1_{(1,2)}+c_{\phi}E_2(\Gamma^1_{(1,2)})-c_{\phi}K,
\]
once more $\hat{\Gamma}^1_{(1,2)}=c_{\phi}\Gamma^1_{(1,2)}-s_{\phi}\lambda$,
\[
s_{\phi}\LRD(E_2)|_q\lambda=&-\Gamma^1_{(1,2)}(c_{\phi}\Gamma^1_{(1,2)}-s_{\phi}\lambda)+c_{\phi}E_2(\Gamma^1_{(1,2)})-c_{\phi}K \\
=&c_{\phi}(-K-(\Gamma^1_{(1,2)})^2+E_2(\Gamma^1_{(1,2)}))+s_{\phi}\Gamma^1_{(1,2)}\lambda,
\]
which finally simplifies, thanks to $-K=-E_2(\Gamma^1_{(1,2)})+(\Gamma^1_{(1,2)})^2$ and $s_{\phi}\neq 0$,
\[
\LRD(E_2)|_q\lambda=\lambda\Gamma^1_{(1,2)}.
\]

Next we simplify $\LRD(X_A)|_q\lambda$ by using the same identities as above
when simplifying $\LRD(E_2)|_q\lambda$:
\[
s_{\phi}\LRD(X_A)|_q\lambda
=&-c_{\phi}(\Gamma^1_{(1,2)}-c_{\phi}\hat{\Gamma}^1_{(1,2)})\lambda
-s_{\phi}\Gamma^1_{(1,2)}(\Gamma^1_{(1,2)}-c_{\phi}\hat{\Gamma}^1_{(1,2)})
-s_{\phi}\hat{E}_2(\hat{\Gamma}^1_{(1,2)}) \\
=&-\lambda(s_{\phi}\lambda+\hat{\Gamma}^1_{(1,2)})+c_{\phi}^2\hat{\Gamma}^1_{(1,2)}\lambda  \\
&-s_{\phi}(\Gamma^1_{(1,2)})^2
+s_{\phi}\hat{\Gamma}^1_{(1,2)}(s_\phi\lambda+\hat{\Gamma}^1_{(1,2)})
 -s_{\phi}(K+(\hat{\Gamma}^1_{(1,2)})^2) \\
 =&-s_{\phi}(\lambda^2+(\Gamma^1_{(1,2)})^2+K)
 -\lambda \hat{\Gamma}^1_{(1,2)}+c_{\phi}^2\lambda\hat{\Gamma}^1_{(1,2)}+s_{\phi}^2\hat{\Gamma}^1_{(1,2)}\lambda \\
 =&-s_{\phi}(\lambda^2+(\Gamma^1_{(1,2)})^2+K),
\]
which implies, at last,
\[
\LRD(X_A)|_q\lambda=-(\lambda^2+(\Gamma^1_{(1,2)})^2+K).
\]

Finally, on $\tilde{O}_0$, we compute the brackets
\[
[\LRD(X_A),F_Y]|_q
=&\LNSD(-\Gamma^1_{(1,2)}X_A)|_q-\LRD(\LNSD(E_2)|_q X_{(\cdot)})|_q \\
&+\nu(AR(X_A\wedge E_2)-\hat{R}(AX_A\wedge 0)A)|_q
-\LRD(X_A)|_q\lambda \nu(A\star Z_A)|_q \\
&-\lambda\big(-\LNSD(A(\star Z_A)X_A)-\LRD(\nu(A\star Z_A)|_q X_{(\cdot)})
+\nu(A\star 0)|_q\big) \\
=&-\Gamma^1_{(1,2)}F_X|_q-\LRD(F_Y|_q X_{(\cdot)})|_q
+\lambda \LRD(E_2)|_q-\lambda F_Y|_q \\
&+\underbrace{(-(\Gamma^1_{(1,2)})^2-K-\LRD(X_A)|_q\lambda-\lambda^2)}_{=0}\nu(A\star Z_A)|_q \\
[\LRD(E_2),F_Y]|_q
=&-\LRD(E_2)|_q\lambda \nu(A\star Z_A)|_q-\lambda (-\LNSD(A(\star Z_A)E_2)|_q+\nu(A\star 0)|_q) \\
=&-\lambda \LRD(X_A)|_q+\lambda F_X|_q
+\underbrace{(\lambda\Gamma^1_{(1,2)}-\LRD(E_2)|_q\lambda )}_{=0}\nu(A\star Z_A)|_q \\
[\LRD(Z_A),F_Y]|_q
=&\LNSD(-\Gamma^1_{(1,2)}Z_A)|_q+\LRD(\LNSD(E_2)|_q Z_{(\cdot)}) \\
&+\nu(AR(Z_A\wedge E_2)-\hat{R}(AZ_A\wedge 0)A)|_q
-\LRD(Z_A)|_q\lambda \nu(A\star Z_A)|_q \\
&-\lambda \big(-\LNSD(A(\star Z_A)Z_A)|_q+\LRD(\nu(A\star Z_A)|_q Z_{(\cdot)})|_q) \\
&-\lambda\nu(A\star (-\lambda X_A+\Gamma^1_{(1,2)}E_2)|_q \\
=&-\Gamma^1_{(1,2)}F_Z|_q+\LRD(F_Y|_q Z_{(\cdot)})|_q+K\nu(A\star X_A)|_q \\
&-\underbrace{\LRD(Z_A)|_q\lambda}_{=0}\nu(A\star Z_A)|_q-\lambda\nu(A\star (-\lambda X_A+\Gamma^1_{(1,2)}E_2)|_q
\]
\[
[\nu((\cdot)\star E_2),F_Y]|_q
=&-\nu(A\star E_2)|_q\lambda \nu(A\star Z_A)|_q \\
&-\lambda\nu(A[\star E_2,\star Z_A]_{\so}-\nu(A\star E_2)|_q\theta A\star X_A)|_q \\
=&-\nu(A\star E_2)|_q\lambda \nu(A\star Z_A)|_q=0 \\
[\nu((\cdot)\star X_{(\cdot)},F_Y]|_q
=&-\nu(A\star \LNSD(E_2)|_q X_{(\cdot)})|_q
-\nu(A\star X_A)|_q\lambda \nu(A\star Z_A)|_q \\
&-\lambda\nu(A[\star X_A,\star Z_A]_{\so} -\nu(A\star X_A)|_q\theta A\star X_A)|_q \\
&-\lambda\nu(-A\star \nu(A\star Z_A)|_qX_{(\cdot)})      )|_q \\
=&-\nu(A\star \underbrace{F_Y|_q X_{(\cdot)}}_{=0})|_q-\underbrace{\nu(A\star X_A)|_q\lambda}_{=0} \nu(A\star Z_A)|_q \\
&-\lambda(A\star (-E_2+\cot(\phi)X_A))|_q \\
[F_Z,F_Y]|_q=&\LNSD(-\Gamma^1_{(1,2)}Z_A-\LNSD(E_2)|_qZ_{(\cdot)})|_q+\nu(AR(Z_A\wedge E_2))|_q \\
&-F_Z|_q\lambda\nu(A\star Z_A)|_q
-\lambda(-\LNSD(\nu(A\star Z_A)|_q Z_{(\cdot)})+\nu(A\star F_Z|_q Z_{(\cdot)})|_q) \\
=&-\Gamma^1_{(1,2)}F_Z|_q-\LNSD(\underbrace{F_Y|_q Z_{(\cdot)}}_{=0})|_q+K\nu(A\star X_A)|_q \\
&-\underbrace{F_Z|_q\lambda}_{=0}\nu(A\star Z_A)|_q
-\lambda\nu(A\star (-\cot(\phi)X_A+\Gamma^1_{(1,2)}E_2))|_q
\]
and finally, noticing that $-\lambda F_X|_q+\Gamma^1_{(1,2)}F_Y|_q=-\lambda\LNSD(X_A)|_q+\Gamma^1_{(1,2)}\LNSD(E_2)|_q$,
\[
[F_X,F_Y]|_q
=&\LNSD(\LNSD(X_A)|_q E_2-\LNSD(E_2)|_q X_{(\cdot)})|_q+\nu(AR(X_A\wedge E_2))|_q \\
&-\LRD(X_A)|_q\lambda\nu(A\star Z_A)|_q+E_2(\Gamma^1_{(1,2)})\nu(A\star Z_A)|_q \\
&-\lambda(-\LNSD(\nu(A\star Z_A)|_q X_{(\cdot)})+\nu(A\star \LNSD(X_A)|_q Z_{(\cdot)})|_q) \\
&+\Gamma^1_{(1,2)}\nu(A\star \LNSD(E_2)|_q Z_{(\cdot)})|_q
+\Gamma^1_{(1,2)}\nu(A\star Z_A)|_q\lambda \nu(A\star Z_A)|_q \\
=&-\Gamma^1_{(1,2)}\LNSD(X_A)|_q-\LNSD(\underbrace{F_Y|_q X_{(\cdot)}}_{=0})|_q \\
&+\nu(A\star \underbrace{(-\lambda F_X|_q+\Gamma^1_{(1,2)}F_Y|_q)Z_{(\cdot)}}_{=0})|_q \\
&+(-K-F_X|_q\lambda+E_2(\Gamma^1_{(1,2)}))\nu(A\star Z_A)|_q \\
=&-\Gamma^1_{(1,2)}F_X|_q+(-K-F_X|_q\lambda+E_2(\Gamma^1_{(1,2)})-(\Gamma^1_{(1,2)})^2)\nu(A\star Z_A)|_q,
\]
which, after using $F_X|_q\lambda=0$ and Eq. \eqref{eq:RsE3sE3},
simplifies to $[F_X,F_Y]|_q=-\Gamma^1_{(1,2)}F_X|_q$.

Since all these Lie brackets also belong to $\VF_{\Delta}^2$,
we may finally conclude that $\VF_{\Delta}^2$ is involutive and therefore
\[
\Lie(\Delta)=\VF_{\Delta}^2.
\]
Therefore the span of $\Lie(\Delta)$ at each point $\tilde{O}_0$
is 8-dimensional subspace of $T|_q Q$, since $\VF_{\Delta}^2$
is generated by 8 pointwise linearly independent vector fields.

Since $q_0\in Q\backslash S_1$ was arbitrary and since the choice
of $X_A,E_2,Z_A$ in $\tilde{O}_0$ are unique up to multiplication by $-1$,
we have shown that on $Q_0$ there is a smooth
5-dimensional distribution $\Delta$ containing $\RDist|_{Q_0}$ such that $\Lie(\Delta)=\VF_{\Delta}^2$
spans an 8-dimensional distribution $\mc{D}$ and which is then, by construction, involutive.

We already know from the beginning of the proof that $q\in S_1$ implies that
$\mc{O}_{\RDist}(q)\subset S_1$ so, equivalently,
$q\in Q\backslash S_1$ implies that $\mc{O}_{\RDist}(q)\subset Q\backslash S_1$.
Hence we are interested to see how $\mc{D}$
can be extended on all over $Q\backslash S_1$
i.e. we have to see how to define it on $Q_1\backslash S_1$.

For this purpose, we define the Sasaki metric $G$ on $Q$
by
\[
& \mc{X}=\LNSD(X,\hat{X})|_q+\nu(A\star Z)|_q,\quad \mc{Y}=\LNSD(Y,\hat{Y})|_q+\nu(A\star W)|_q \\
& G(\mc{X},\mc{Y})=g(X,Y)+\hat{g}(\hat{X},\hat{Y})+g(Z,W),
\]
for $q=(x,\hat{x};A)\in Q$, $X,Y,Z,W\in T|_x M$, $\hat{X},\hat{Y}\in T|_{\hat{x}} \hat{M}$.
Notice that any vector $\mc{X}\in T|_q Q$ can be written in the form $\LNSD(X,\hat{X})|_q+\nu(A\star Z)|_q$
for some $X,\hat{X},Z$ as above.

Since $\mc{D}$ is a smooth codimension 1 distribution on $Q_0$, it has
a smooth normal line bundle $\mc{D}^\perp$ w.r.t. $G$ defined on $Q_0$ which uniquely determines $\mc{D}$.
We will use the Sasaki metric $G$ to determine a smooth vector field $\mc{N}$ near a point $q_0\in Q_1\backslash S_1$
spanning $\mc{D}^\perp$.

So let $q_0\in Q_1\backslash S_1$ and assume, as before, that $q_0\in Q_1^+\backslash S_1$
the case of $Q_1^-\backslash S_1$ being handled similarly.
Take the frames $E_1,E_2,E_3$, $\hat{E}_1,\hat{E}_2,\hat{E}_3$ 
and $\tilde{O}$, $\tilde{O}_0$, $X_A,Z_A$ as done above (the case (b)).
Because $\cos(\phi(q_0))\Gamma^1_{(1,2)}(x_0)-\hat{\Gamma}^1_{(1,2)}(\hat{x}_0)\neq 0$,
one may assume after shrinking $\tilde{O}$ around $q_0$
that we have $\cos(\phi(q))\Gamma^1_{(1,2)}(x)-\hat{\Gamma}^1_{(1,2)}(\hat{x})\neq 0$
for all $q=(x,\hat{x};A)\in \tilde{O}$,
which then implies that $\lambda(q)\neq 0$ on $\tilde{O}_0$.
Here to say what is the value of $\cos(\phi(q))$ even at $q\in Q_1\backslash S_1$,
we use the fact that $\cos(\phi(q))=g(AE_2,\hat{E}_2)$ for all $q\in \tilde{O}$
(though $\phi(q)$ is not \emph{a priori} defined).

To determine a smooth vector field $\mc{N}\in \mc{D}^\perp$ on $\tilde{O}_0$,
we write
\[
\mc{N}|_q=&a_1\LNSD(X_A)|_q+a_2+\LNSD(E_2)|_q+a_3\LNSD(Z_A)|_q \\
&+b_1\LNSD(AX_A)|_q+b_2+\LNSD(AE_2)|_q+b_3\LNSD(AZ_A)|_q \\
&+v_1\nu(A\star X_A)|_q+v_2\nu(A\star E_2)|_q+v_3\nu(A\star Z_A)|_q
\]
and since this must be $G$-orthogonal to $\mc{D}$, we get
\[
0=&G(\mc{N},\LRD(X_A))=a_1+b_1,\quad 0=G(\mc{N},\LRD(E_2))=a_2+b_2,\quad 0=G(\mc{N},\LRD(Z_A))=a_3+b_3 \\
0=&G(\mc{N},\nu(A\star X_A))=v_1,\quad 0=G(\mc{N},\nu(A\star E_2))=v_2 \\
0=&G(\mc{N},F_X)=a_1-\Gamma^1_{(1,2)}v_3,\quad 0=G(\mc{N},F_Y)=a_2-\lambda v_3,
\quad 0=G(\mc{N},F_Z)=a_3.
\]
So if we set $v_3=\frac{1}{\lambda}$
and introduce the notation
\[
\LRD^\perp(X)|_q:=\LNSD(X,-AX)\in \NSDist|_q,\quad q=(x,\hat{x};A)\in Q,\ X\in T|_x M
\]
we get a smooth vector field $\mc{N}$ on $\tilde{O}_0$
which is $G$-perpendicular to $\mc{D}$ and is given by
\[
\mc{N}|_q=&\frac{1}{\lambda(q)}\Gamma^1_{(1,2)}(x)\LRD^\perp(X_A)|_q+\LRD^\perp(E_2)+\frac{1}{\lambda(q)}\nu(A\star Z_A)|_q,\quad q\in \tilde{O}_0 \\
=&\frac{c_{\theta}}{\lambda(q)}(\Gamma^1_{(1,2)}\LRD^\perp(E_1)|_q+\nu(A\star E_3)|_q)
+\LRD^\perp(E_2)|_q \\
&+\frac{s_{\theta}}{\lambda(q)}(\Gamma^1_{(1,2)}\LRD^\perp(E_3)|_q-\nu(A\star E_1)|_q).
\]
i.e.
\[
\mc{N}|_q=H_1(q)\mc{X}_1|_q+\mc{X}_2|_q+H_3(q)\mc{X}_3|_q,
\]
where $\mc{X}_1,\mc{X}_2,\mc{X}_3$ are pointwise
linearly independent smooth vector fields on $\tilde{O}$ (and not only $\tilde{O}_0$)
\[
\mc{X}_1|_q=&\Gamma^1_{(1,2)}\LRD^\perp(E_1)|_q+\nu(A\star E_3)|_q, \\
\mc{X}_2|_q=&\LRD^\perp(E_2)|_q, \\
\mc{X}_3|_q=&\Gamma^1_{(1,2)}\LRD^\perp(E_3)|_q-\nu(A\star E_1)|_q,
\]
while $H_1,H_3$ are smooth functions on $\tilde{O}_0$
defined by
\[
H_1&=\frac{\cos(\theta)}{\lambda},\quad H_3=\frac{\sin(\theta)}{\lambda}.
\]
Notice that $\theta$ and $\lambda$ cannot be extended in a smooth or even $C^1$-way from $\tilde{O}_0$
to $\tilde{O}$, but as we will show, one can extend $H_1,H_3$
in at least $C^1$-way onto $\tilde{O}$.

First, since $\lambda(q)\to \pm\infty$ while $\cos(\theta(q))$, $\sin(\theta(q))$ stay bounded,
it follows that $H_1,H_3$ extend uniquely to $\tilde{O}\cap Q_1$
by declaring $H_1(q)=H_3(q)=0$ for all $q\in\tilde{O}\cap Q_1$.
Of course, these extensions, which we still denote by $H_1,H_3$, are continuous functions on $\tilde{O}$.

Next objective is to show that $H_1,H_3$ are at least $C^1$ on $\tilde{O}$.
For this, let $\mc{X}\in\VF(\tilde{O})$ and decompose it uniquely as
\[
\mc{X}=\sum_{i=1}^3 a_i\LRD(E_i)+\sum_{i=1}^3 b_i\LNSD(E_i)+\sum_{i=1}^3 v_i\nu((\cdot)\star E_i),
\]
with $a_i,b_i,v_i\in\Cinf(\tilde{O})$.

We will need to know the derivatives of $\theta$ and $\lambda$
in all the directions on $\tilde{O}_0$. These have been computed above by using the frame $X_A,E_2,Z_A$
instead of $E_1,E_2,E_3$ except in the direction of $\nu(A\star Z_A)|_q$.
As before, one computes (using that $s_{\phi}\neq 0$ on $\tilde{O}_0$ as usual),
\[
& \nu(A\star Z_A)|_q \theta=0 ,\quad
\nu(A\star Z_A)|_q \phi=1 ,\\
& \nu(A\star Z_A)|_q \lambda=-\Gamma^1_{(1,2)}(x)-\lambda(q)\cot(\phi(q)).
\]
One now easily computes that on $\tilde{O}_0$,
\[
\mc{X}(\theta)=&(-a_1s_{\theta}+a_3c_{\theta})\lambda+(-b_1s_{\theta}\Gamma^1_{(1,2)} +b_3c_{\theta}\Gamma^1_{(1,2)}-v_1c_{\theta}-v_3s_{\theta})\cot(\phi)
+B_1(q) \\
\mc{X}(\lambda)=&(-a_1c_{\theta}-a_3s_{\theta})\lambda^2
+(-b_1c_{\theta}\Gamma^1_{(1,2)}-b_3s_{\theta}\Gamma^1_{(1,2)}+v_1s_{\theta}-v_3c_{\theta})\lambda \cot(\phi) \\
&+a_2\Gamma^1_{(1,2)}\lambda+b_2\cot(\phi)E_2(\Gamma^1_{(1,2)})+B_2(q),
\]
where
\[
B_1(q)&=(a_1+b_1)\Gamma^1_{(3,1)}+(a_2+b_2)\Gamma^2_{(3,1)}+(a_3+b_3)\Gamma^3_{(3,1)}+v_2 \\
B_2(q)&=(-a_1c_{\theta}-a_3s_{\theta})((\Gamma^1_{(1,2)})^2+K)+(-b_1c_{\theta}-b_3s_{\theta})(\Gamma^1_{(1,2)})^2
+(v_1s_{\theta}-v_3c_{\theta})\Gamma^1_{(1,2)}.
\]
Then
\[
\mc{X}(H_1)=&-s_{\theta}\frac{\mc{X}(\theta)}{\lambda}-c_{\theta}\frac{\mc{X}(\lambda)}{\lambda^2} \\
=&a_1+(b_1\Gamma^1_{(1,2)}+v_3)\frac{\cot(\phi)}{\lambda}
-\frac{a_2c_{\theta}\Gamma^1_{(1,2)}}{\lambda}-\frac{b_2c_{\theta}E_2(\Gamma^1_{(1,2)})}{\lambda}\frac{\cot(\phi)}{\lambda}
-\frac{s_{\theta}B_1}{\lambda}-\frac{c_{\theta}B_2}{\lambda^2} \\
\mc{X}(H_3)=&c_{\theta}\frac{\mc{X}(\theta)}{\lambda}-s_{\theta}\frac{\mc{X}(\lambda)}{\lambda^2} \\
=&a_3+(b_3\Gamma^1_{(1,2)}-v_1)\frac{\cot(\phi)}{\lambda}
-\frac{a_2s_{\theta}\Gamma^1_{(1,2)}}{\lambda}-\frac{b_3s_{\theta}E_2(\Gamma^1_{(1,2)})}{\lambda}\frac{\cot(\phi)}{\lambda}
+\frac{c_{\theta}B_1}{\lambda}-\frac{s_{\theta}B_2}{\lambda^2}.
\]
Since $s_{\theta}\lambda=c_{\theta}\Gamma^1_{(1,2)}-\hat{\Gamma}^1_{(1,2)}$, one has
\[
\frac{\cot(\phi)}{\lambda}=\frac{c_{\phi}}{c_{\phi}\Gamma^1_{(1,2)}-\hat{\Gamma}^1_{(1,2)}}
\]
and therefore as $q$ tends to a point $q_1$ of $Q_1^+\cap \tilde{O}$, we have 
$$\lim_{q\to q_1}\frac{\cot(\phi)}{\lambda}=\frac{1}{\Gamma^1_{(1,2)}-\hat{\Gamma}^1_{(1,2)}}.$$
Since $B_1,B_2$ stay bounded as $q$ approaches a point of $Q_1^+\cap \tilde{O}$,
we get for every $q_1=(x_1,\hat{x}_1;A_1)\in Q_1^+\cap \tilde{O}$ that
\[
\lim_{q\to q_1} \mc{X}(H_1)&=a_1(q_1)+\frac{b_1(q_1)\Gamma^1_{(1,2)}(x_1)+v_3(q_1)}{\Gamma^1_{(1,2)}(x_1)-\hat{\Gamma}^1_{(1,2)}(\hat{x}_1)}=:D_{\mc{X}}H_1(q_1), \\
\lim_{q\to q_1} \mc{X}(H_3)&=a_3(q_1)+\frac{b_3(q_1)\Gamma^1_{(1,2)}(x_1)-v_1(q_1)}{\Gamma^1_{(1,2)}(x_1)-\hat{\Gamma}^1_{(1,2)}(\hat{x}_1)}=:D_{\mc{X}}H_3(q_1).
\]
From these,
it is now readily seen that $H_1,H_3$ are differentiable on $\tilde{O}\cap Q_1^+$
with $\mc{X}|_{q_1}(H_1)=D_{\mc{X}}H_1(q_1)$,
$\mc{X}|_{q_1}(H_3)=D_{\mc{X}}H_3(q_1)$
and that $H_1,H_3$ are $C^1$-functions on $\tilde{O}$.

Now that we have extended $H_1,H_3$,
we have that $\mc{N}$ is a well defined $C^1$-vector field on $\tilde{O}$
and since $\mc{D}=\mc{N}^\perp$ w.r.t. $G$ on $\tilde{O}_0$,
it follows that $\mc{D}$ extends in $C^1$-sense on $\tilde{O}$.
Since $q_0\in Q_1^+\backslash S_1$ was arbitrary
and because the case $q_0\in Q_1^-\backslash S_1$ is handled similarly,
we see that $\mc{D}$ can be extended onto the open subset $Q\backslash S_1$ of $Q$
as a (at least) $C^1$-distribution, which is $C^\infty$ on $Q_0$.
Since $\RDist|_{Q\backslash S_1}\subset \mc{D}$
and because $q\in Q\backslash S_1$ implies that $\mc{O}_{\RDist}(q)\subset Q\backslash S_1$
as we have seen,
it follows that for every $q_0\in Q\backslash S_1$
we have $\mc{O}_{\RDist}(q_0)\subset \mc{O}_{\mc{D}}(q_0)$
where the orbit on the right is \emph{a priori} an immersed $C^1$-submanifold of $Q\backslash S_1$.
However, since $\mc{D}$ is involutive and $\dim\mc{D}=8$ on $Q\backslash S_1$,
we get by the $C^1$-version of the Frobenius theorem that
$\dim\mc{O}_{\mc{D}}(q_0)=8$ and hence
\[
\dim\mc{O}_{\RDist}(q_0)\leq\dim \mc{O}_{\mc{D}}(q_0)=8,
\]
for every $q_0\in Q\backslash S_1$.

We will now investigate when the equality holds here.
Define
\[
M_0&=\{x\in M\ |\ K_2(x)\neq K\} \\
M_1&=\{x\in M\ |\ \exists\ \textrm{open}\ V\ni x\ \textrm{s.t.}\ K_2(x')=K\ \forall x'\in V\} \\
\hat{M}_0&=\{\hat{x}\in \hat{M}\ |\ \hat{K}_2(\hat{x})\neq K\} \\
\hat{M}_1&=\{\hat{x}\in \hat{M}\ |\ \exists\ \textrm{open}\ \hat{V}\ni \hat{x}\ \textrm{s.t.}\ K_2(\hat{x}')=K\ \forall \hat{x}'\in \hat{V}\}
\]
and notice that $M_0\cup M_1$
(resp. $\hat{M}_0\cup \hat{M}_1$) is a dense subset of $M$
(resp. $\hat{M}$).
Here we also fix the choice of $q_0=(x_0,\hat{x}_0;A_0)\in Q\backslash S_1$
and define $M^\circ=\pi_{Q,M}(\mc{O}_{\RDist}(q_0))$, $\hat{M}^\circ=\pi_{Q,\hat{M}}(\mc{O}_{\RDist}(q_0))$
as in the statement.
Write also $Q^\circ:=\pi_Q^{-1}(M^\circ\times \hat{M}^\circ)$ and notice that $\mc{O}_{\RDist}(q_0)\subset Q^\circ$.

We define on $Q$ two 2-dimensional distributions $D$ and $\hat{D}$.
For every $q_1=(x_1,\hat{x}_1;A_1)\in Q$, take orthonormal frames $E_1,E_2,E_3$,
$\hat{E}_1,\hat{E}_2,\hat{E}_3$ of $M,\hat{M}$ defined
on open neighbourhoods $U,\hat{U}$ of $x_1,\hat{x}_1$ with $E_2=\pa{r}$, $\hat{E}_2=\pa{r}$.
Then for $q\in \pi_Q^{-1}(U\times \hat{U})\cap Q$,
the 2-dimensional plane $D|_q$ is spanned by
\[
K_1|_q&=\LNSD(A^{\ol{T}}\hat{E}_1)|_q-\hat{\Gamma}^1_{(1,2)}(x)\nu((\hat{\star} \hat{E}_3)A)|_q \\
K_3|_q&=\LNSD(A^{\ol{T}}\hat{E}_3)|_q+\hat{\Gamma}^1_{(1,2)}(x)\nu((\hat{\star} \hat{E}_1)A)|_q,
\]
and $\hat{D}|_q$ is spanned by
\[
\hat{K}_1|_q&=\LNSD(AE_1)|_q+\Gamma^1_{(1,2)}(x)\nu(A\star E_3)|_q \\
\hat{K}_3|_q&=\LNSD(AE_3)|_q-\Gamma^1_{(1,2)}(x)\nu(A\star E_1)|_q.
\]
Obviously different choices of frames $E_i$, $\hat{E}_i$, $i=1,2,3$,
give $K_1,K_3,\hat{K}_1,\hat{K}_3$ that span the same planes $D,\hat{D}$,
since we have fixed the choice of $E_2=\pa{r}$, $\hat{E}_2=\pa{r}$.

Exactly as in proof of Proposition \ref{pr:mbeta-3},
one can show that for every $q_1=((r_1,y_1),(\hat{r}_1,\hat{y}_1);A_1)\in Q$
and smooth paths $\gamma:[0,1]\to N$, $\hat{\gamma}:[0,1]\to\hat{N}$
with $\gamma(0)=y_1$, $\hat{\gamma}(0)=\hat{y}_1$
there are unique smooth paths $\Gamma,\hat{\Gamma}:[0,1]\to Q$
such that for all $t\in [0,1]$,
\[
& \dot{\Gamma}(t)\in D|_{\Gamma(t)},\quad \Gamma(0)=q_1,\quad (\pi_{Q,M}\circ\Gamma)(t)=(r_1,\gamma(t)) \\
& \dot{\hat{\Gamma}}(t)\in \hat{D}|_{\hat{\Gamma}(t)},\quad \hat{\Gamma}(0)=q_1,\quad
(\pi_{Q,\hat{M}}\circ\hat{\Gamma})(t)=(\hat{r}_1,\hat{\gamma}(t)).
\]
Notice that since $(\pi_{Q,\hat{M}})_*D=0$ (resp. $(\pi_{Q,M})_*\hat{D}=0$),
one has $\pi_{Q,\hat{M}}(\Gamma(t))=\hat{x}_1$ (resp. $\pi_{Q,M}(\hat{\Gamma}(t))=x_1$) for all $t\in [0,1]$.
We write these as $\Gamma=\Gamma(\gamma,q_1)$, $\hat{\Gamma}=\hat{\Gamma}(\hat{\gamma},q_1)$.

If $E_2=\pa{r}$, $\hat{E}_2=\pa{r}$, then
by exactly the same arguments as in the proof of Proposition \ref{pr:mbeta-3} we have
\[
& \nu(A\star E_2)|_q\in T|_q\mc{O}_{\RDist}(q),\quad \forall q\in Q_0\cap \pi_Q^{-1}(M_0\times \hat{M}) \\
& \nu((\hat{\star} \hat{E}_2)A)|_q\in T|_q\mc{O}_{\RDist}(q),\quad \forall q\in Q_0\cap \pi_Q^{-1}(M\times \hat{M}_0).
\]
We will see that here one may replace $Q_0$ by $Q\backslash S_1$.

Take frames $E_i$, $\hat{E}_i$, $i=1,2,3$, as above when defining $D,\hat{D}$
for some $q_1\in Q_1\backslash S_1$. We may assume here without
loss of generality that $q_1\in Q_1^+\backslash S_1$
since the case $q_1\in Q_1^-\backslash S_1$ can be dealt with in a similar way.
 
If $h_1,h_2:\pi_Q^{-1}(U\times\hat{U})\to\R$
are defined as $h_1(q)=\hat{g}(AE_1,\hat{E}_2)$, $h_2(q)=\hat{g}(AE_3,\hat{E}_2)$,
we have $Q_1\cap \pi_Q^{-1}(U\times\hat{U})=(h_1,h_2)^{-1}(0)$
and $(h_1,h_2):\pi_Q^{-1}(U\times\hat{U})\to\R^2$ is a regular map
at the points of $Q_1$ (see e.g. Remark \ref{re:wp-1} or the proof of Proposition \ref{pr:mbeta-1}).

Since $q_1\in Q_1^+\backslash S_1$, then $\LRD(E_1)|_{q_1} h_1=\Gamma^1_{(1,2)}(x_1)-\hat{\Gamma}^1_{(1,2)}(\hat{x}_1)\neq 0$
and $\LRD(E_3)|_{q_1} h_2=\Gamma^1_{(1,2)}(x_1)-\hat{\Gamma}^1_{(1,2)}(\hat{x}_1)\neq 0$,
which shows that $\mc{O}_{\RDist}(q_1)$ intersects $Q_1^+$
transversally at $q_1$
(hence at every point $q\in \mc{O}_{\RDist}(q_1)$), by dimensional reasons (because $\dim Q_1=7$, $\dim Q=9$).
From this we may conclude that $\mc{O}_{\RDist}(q_1)\cap Q_1$ is a smooth closed submanifold
of $\mc{O}_{\RDist}(q_1)$
and that there is a sequence $q_n'=(x_n',\hat{x}_n';A_n')\in \mc{O}_{\RDist}(q_1)\cap Q_0$
such that $q_n'\to q_1$.

Now if $q_1\in \pi_Q^{-1}(M_0\times \hat{M})\cap Q_1\backslash S_1$,
then we know that for $n$ large enough, $q_n'\in  \pi_Q^{-1}(M_0\times \hat{M})\cap Q_0$
and hence $\nu(A\star E_2)|_{q_n'}\in T|_{q_n'}\mc{O}_{\RDist}(q_n')=T|_{q_n'}\mc{O}_{\RDist}(q_1)$.
Taking the limit implies that $\nu(A\star E_2)|_{q_1}\in T|_{q_1}\mc{O}_{\RDist}(q_1)$.
Similarly, if $q_1\in \pi_Q^{-1}(M\times \hat{M}_0)\cap Q_1\backslash S_1$,
one has $\nu((\hat{\star} \hat{E}_2)A)|_{q_1}\in T|_{q_1}\mc{O}_{\RDist}(q_1)$.

Hence we have that if $E_2=\pa{r}$, $\hat{E}_2=\pa{r}$, then
\[
& \nu(A\star E_2)|_q\in T|_q\mc{O}_{\RDist}(q),\quad \forall q\in (Q\backslash S_1)\cap \pi_Q^{-1}(M_0\times \hat{M}) \\
& \nu((\hat{\star} \hat{E}_2)A)|_q\in T|_q\mc{O}_{\RDist}(q),\quad \forall q\in (Q\backslash S_1)\cap \pi_Q^{-1}(M\times \hat{M}_0).
\]

For every $q\in (Q\backslash S_1)\cap \pi_Q^{-1}(M_0\times \hat{M})$, which is
an open subset of $Q$, one has $\nu(A\star E_2)|_q\in T|_q\mc{O}_{\RDist}(q)$
with $E_2=\pa{r}$ and hence by Proposition \ref{pr:3D-2}, case (i), it follows that
\[
& L_1|_q=\LNSD(E_1)|_q-\Gamma^1_{(1,2)}(x)\nu(A\star E_3)|_q \\
& L_3|_q=\LNSD(E_3)|_q+\Gamma^1_{(1,2)}(x)\nu(A\star E_1)|_q,
\]
are tangent to $\mc{O}_{\RDist}(q)$,
where $E_1,E_2=\pa{r},E_3$ is an orthonormal frame in an open neighbourhood of $x_1$.
But because $\hat{K}_1|_q=\LRD(E_1)|_q-L_1|_q$,
$\hat{K}_3|_q=\LRD(E_3)|_q-L_3|_q$,
we get that
\[
\hat{D}|_q\subset T|_{q}\mc{O}_{\RDist}(q),\quad \forall q\in (Q\backslash S_1)\cap \pi_Q^{-1}(M_0\times \hat{M}).
\]
Moreover, if $q=(x,(\hat{r},\hat{y});A)\in (Q\backslash S_1)\cap \pi_Q^{-1}(M_0\times \hat{M})$
and if $\hat{\gamma}:[0,1]\to \hat{N}$ is any curve with $\hat{\gamma}(0)=\hat{y}$,
then one shows with exactly the same argument as in the proof of Proposition \ref{pr:mbeta-3} that
\[
\hat{\Gamma}(\hat{\gamma},q)(t)\in \mc{O}_{\RDist}(q)\cap \pi_Q^{-1}(M_0\times \hat{M}),\quad \forall t\in [0,1].
\]
In particular,
\[
\exists q=(x,(\hat{r},\hat{y});A)\in (Q\backslash S_1)\cap \pi_Q^{-1}(M_0\times \hat{M})
\quad\Longrightarrow\quad
\{x\}\times (\{\hat{r}\}\times \hat{N})\subset \pi_Q(\mc{O}_{\RDist}(q)).
\]

A similar argument shows that
\[
D|_q\subset T|_{q}\mc{O}_{\RDist}(q),\quad \forall q\in (Q\backslash S_1)\cap \pi_Q^{-1}(M\times \hat{M}_0)
\]
and that for all $q=((r,y),\hat{x};A)\in (Q\backslash S_1)\cap \pi_Q^{-1}(M\times \hat{M}_0)$
and $\gamma:[0,1]\to N$ with $\gamma(0)=y$,
\[
\Gamma(\gamma,q)(t)\in \mc{O}_{\RDist}(q)\cap \pi_Q^{-1}(M\times \hat{M}_0),\quad \forall t\in [0,1].
\]
In particular,
\[
\exists q=((r,y),\hat{x};A)\in (Q\backslash S_1)\cap \pi_Q^{-1}(M\times \hat{M}_0)
\quad\Longrightarrow\quad
(\{r\}\times N)\times \{\hat{x}\}\subset \pi_Q(\mc{O}_{\RDist}(q)).
\]

Everything so far has been very much the same as in the proof of Proposition \ref{pr:mbeta-3}
and continues to be so, with few minor changes (notably, here $\dim D=\dim\hat{D}=2$ instead of $3$).

Suppose that $(M_1\times \hat{M}_0)\cap \pi_{Q}(\mc{O}_{\RDist}(q_0))\neq\emptyset$.
Take $q_1=(x_1,\hat{x}_1;A_1)\in \pi_Q^{-1}(M_1\times \hat{M}_0)\cap \mc{O}_{\RDist}(q_0)$,
with $x_1=(r_1,y_1)$.
If $\sigma(y)$ is the unique sectional curvature of $N$ at $y$, we have
\[
K_2(r_1,y_1)=\frac{\sigma(y_1)-(f'(r_1))^2}{f(r_1)^2}=K.
\]
We go from here case by case.

\begin{itemize}
\item[(I)] 
Suppose $N$ does not have a constant curvature.
Then there is a $y_2\in N$ with $\sigma(y_2)\neq \sigma(y_1)$
and hence
\[
K_2(r_1,y_2)=\frac{\sigma(y_2)-(f'(r_1))^2}{f(r_1)^2}\neq K,
\]
i.e. $(r_1,y_2)\in M_0$.

Since $q_1\in \mc{O}_{\RDist}(q_0)\subset Q\backslash S_1$, 
we have by the above that
\[
((r_1,y_2),\hat{x}_1)\in (\{r_1\}\times N)\times \{\hat{x}_1\}\subset \pi_{Q}(\mc{O}_{\RDist}(q_1))=\pi_{Q}(\mc{O}_{\RDist}(q_0))
\]
and since $((r_1,y_2),\hat{x}_1)\in M_0\times\hat{M}_0$, we get that
which implies that $(M_0\times \hat{M}_0)\cap \pi_{Q}(\mc{O}_{\RDist}(q_0))\neq\emptyset$.

\item[(II)] Suppose that $(N,h)$ has constant curvature $C$ i.e. $\sigma(y)=C$ for all $y\in N$.
We write $K_2(r,y)=K_2(r)$ on $M$ since its value only depends on $r\in I$
and notice that for all $r\in I$,
\[
\di{K_2}{r}=-2\frac{f'(r)}{f(r)}(K_2(r)-K).
\]
But $K_2(r_1)=K$, so by the uniqueness of solutions of ODEs, we get $K_2(r)=K$ for all $r\in I$
and hence $(M,g)$ has constant curvature $K$.
\end{itemize}

Of course, regarding case (II), it is clear that
if $(M,g)$ has constant curvature $K$, then $(N,h)$ has a constant curvature.

Hence we have proved that
if $(M,g)$ does not have a constant curvature and if
$(M_1\times \hat{M}_0)\cap \pi_{Q}(\mc{O}_{\RDist}(q_0))\neq\emptyset$,
then also $(M_0\times \hat{M}_0)\cap \pi_Q(\mc{O}_{\RDist}(q_0))\neq\emptyset$.

The argument being symmetric in $(M,g)$, $(\hat{M},\hat{g})$,
we also have that
if $(\hat{M},\hat{g})$ does not have a constant curvature and if
$(M_0\times \hat{M}_1)\cap \pi_{Q}(\mc{O}_{\RDist}(q_0))\neq\emptyset$,
then also $(M_0\times \hat{M}_0)\cap \pi_Q(\mc{O}_{\RDist}(q_0))\neq\emptyset$.

Notice that $(M^\circ,g)$ and $(\hat{M}^\circ,\hat{g})$ cannot both have constant curvature,
since this violates the assumption that $\mc{O}_{\RDist}(q_0)$ is not an integral manifold of $\RDist$
(see Corollary \ref{cor:weak_ambrose} and Remark \ref{re:weak_ambrose}).
We can now finish the proof by considering, again,
different cases.

\begin{itemize}
\item[a)] Assume that $(\hat{M}^\circ,\hat{g})$ has a constant curvature, which must then be $K$.
We have $\hat{M}_0\cap \hat{M}^\circ=\emptyset$.
If $E_2=\pa{r}$, then
Hence, $\widetilde{\Rol}_q(\star X)=0$ for all $q\in Q^\circ=\pi_Q^{-1}(M^\circ\times\hat{M}^\circ)$, $X\in E_2^\perp$
while $\widetilde{\Rol}_q(\star E_2)=(-K_2(x)+K)\star E_2$.

At $q_1=(x_1,\hat{x}_1;A_1)\in Q^\circ$,
take an open neighbourhood $U$ of $x_1$ and an ortonormal basis $E_1,E_2,E_3$ with $E_2=\pa{r}$
and let $D_1$ be a distribution on $\pi_{Q,M}^{-1}(U)$ spanned by
\[
\LRD(E_1), \LRD(E_2), \LRD(E_3), \nu((\cdot)\star E_2), L_1, L_3
\]
where $L_1,L_3$ are as in Proposition \ref{pr:3D-2}.
Obviously, one defines in this way a 6-dimensional smooth distribution $D_1$ on the whole $Q^\circ$
and the above from of $\widetilde{\Rol}_q$, $q\in Q^\circ$, along with Proposition \ref{pr:3D-2}, case (ii), reveal that it is involutive (recall that $\Gamma^1_{(2,3)}=0$ there).
Clearly, $\RDist\subset D_1$ on $Q^\circ$ and since $\mc{O}_{\RDist}(q_0)\subset Q^\circ$,
we have $\mc{O}_{\RDist}(q_0)\subset \mc{O}_{D_1}(q_0)$
and hence $\dim \mc{O}_{\RDist}(q_0)\leq 6$.

Because $(M^\circ,g)$ does not have constant curvature (as noticed previously),
we have $M_0\cap M^\circ\neq\emptyset$
and thus $O:=\mc{O}_{\RDist}(q_0)\cap \pi_{Q,M}^{-1}(M_0)$ is a non-empty
open subset of $\mc{O}_{\RDist}(q_0)$.
For every $q=(x,\hat{x};A)\in O$, one has $\widetilde{\Rol}_q(\star E_2)=(-K_2(x)+K)\star E_2\neq 0$
and hence that $\nu(A\star E_2)|_q\in T|_q\mc{O}_{\RDist}(q_0)$.
Therefore, Proposition \ref{pr:3D-2}, case (i), implies that $D_1|_{O}$ is
tangent to $\mc{O}_{\RDist}(q_0)$.
This gives $\dim \mc{O}_{\RDist}(q_0)\geq 6$
and hence $\dim \mc{O}_{\RDist}(q_0)=6$.

\item[b)] If $(M^\circ,g)$ has constant curvature,
then the argument of case a) with the roles of $(M,g)$, $(\hat{M},\hat{g})$ interchanged,
shows that $\dim \mc{O}_{\RDist}(q_0)=6$.
\end{itemize}

Hence we have proven (ii). For the rest of the cases,
we may assume that neither $(M^\circ,g)$ nor $(\hat{M}^\circ,\hat{g})$
has constant curvature i.e.
$M^\circ\cap M_0\neq\emptyset$, $\hat{M}^\circ\cap \hat{M}_0\neq\emptyset$.

\begin{itemize}
\item[c)] Suppose $(M_0\times \hat{M}_0)\cap \pi_{Q}(\mc{O}_{\RDist}(q_0))\neq\emptyset$
and let $q_1=(x_1,\hat{x}_1;A_1)\in \pi_Q^{-1}(M_0\times \hat{M}_0)\cap \mc{O}_{\RDist}(q_0)$.
We already know that $T|_{q_1}\mc{O}_{\RDist}(q_0)$ contains vectors
\[
& \LRD(E_1)|_{q_1}, \LRD(E_2)|_{q_1}, \LRD(E_3)|_{q_1}, \\
& \nu(A\star E_2)|_{q_1}, \nu((\hat{\star}\hat{E}_2)A)|_{q_1} \\
& L_1|_{q_1}, L_3|_{q_1},\hat{L}_1|_{q_1},\hat{L}_3|_{q_1},
\]
where
\[
& \hat{L}_1|_{q_1}=\LNSD(\hat{E}_1)|_{q_1}+\hat{\Gamma}^1_{(1,2)}(\hat{x}_1)\nu((\hat{\star} \hat{E}_3)A_1)|_{q_1} \\
& \hat{L}_3|_{q_1}=\LNSD(\hat{E}_3)|_{q_1}-\hat{\Gamma}^1_{(1,2)}(\hat{x}_1)\nu((\hat{\star}\hat{E}_1)A_1)|_{q_1}.
\]
Moreover, these span an 8-dimensional subspace of $T|_{q_1}\mc{O}_{\RDist}(q_0)$,
at least if $q_1\in Q_0$.

Indeed, if $q_1\in Q_0$, one introduces $X_{A_1},Z_{A_1},\hat{X}_{A_1},\hat{Z}_{A_1}$ and an angles $\phi,\theta,\hat{\theta}$ as before, we have
$\sin(\phi(q_1))\neq 0$ and
\[
\nu((\hat{\star}\hat{E}_2)A_1)|_{q_1}
=&\nu(A_1\star (A_1^{\ol{T}}\hat{E}_2))|_{q_1} \\
=&\sin(\phi(q_1))\nu(A_1\star X_{A_1})|_{q_1}+\cos(\phi(q_1))\nu(A_1\star E_2)|_{q_1}, \\
c_{\theta} L_1|_{q_1}+s_{\theta} L_3|_{q_1}=&\LNSD(X_{A_1})|_{q_1}-\Gamma^1_{(1,2)}(x_1)\nu(A_1\star Z_{A_1})|_{q_1} \\
-s_{\theta} L_1|_{q_1}+c_{\theta} L_3|_{q_1}=&\LNSD(Z_{A_1})|_{q_1}+\Gamma^1_{(1,2)}(x_1)\nu(A_1\star X_{A_1})|_{q_1} \\
c_{\hat{\theta}} \hat{L}_1|_{q_1}+s_{\hat{\theta}} \hat{L}_3|_{q_1}=&\LNSD(\hat{X}_{A_1})|_{q_1}+\hat{\Gamma}^1_{(1,2)}(x_1)\nu(A_1\star Z_{A_1})|_{q_1} \\
=&c_{\phi}\LNSD(A_1X_{A_1})|_{q_1}-s_{\phi}\LNSD(A_1E_2)|_{q_1}+\hat{\Gamma}^1_{(1,2)}(x_1)\nu(A_1\star Z_{A_1})|_{q_1} \\
-s_{\hat{\theta}} \hat{L}_1|_{q_1}+c_{\hat{\theta}} \hat{L}_3|_{q_1}=&\LNSD(A_1Z_{A_1})|_{q_1}-\hat{\Gamma}^1_{(1,2)}(x_1)\nu(A_1\star (A^{\ol{T}}\hat{X}_{A_1}))|_{q_1} \\
=&\LNSD(A_1Z_{A_1})|_{q_1}-\hat{\Gamma}^1_{(1,2)}(x_1)\big(c_{\phi}\nu(A_1\star X_{A_1})|_{q_1}-s_{\phi}\nu(A_1\star E_2)|_{q_1}\big).
\]
On the other hand, if $q_1\in Q_1$, then since $Q_1$ is transversal to $\mc{O}_{\RDist}(q_0)$
at $q_1$, we can replace $q_1$ by a nearby $q_1'\in \pi_Q^{-1}(M_0\times \hat{M}_0)\cap \mc{O}_{\RDist}(q_0)\cap Q_0$
and the above holds at $q_1'$.

Therefore $\dim\mc{O}_{\RDist}(q_0)\geq 8$
and since we have also shown that $\dim\mc{O}_{\RDist}(q_0)\leq 8$,
we have the equality. 

\item[d)] Since $M^\circ\cap M_0\neq\emptyset$,
there is a $q_1=(x_1,\hat{x}_1;A_1)\in \mc{O}_{\RDist}(q_0)$ such that $x_1\in M_0$.
If $\hat{x}_1\in \hat{M}_0$, one has that $(M_0\times \hat{M}_0)\cap \pi_{Q}(\mc{O}_{\RDist}(q_0))\neq\emptyset$
and hence case c) implies that $\dim\mc{O}_{\RDist}(q_0)\leq 8$.

But if $\hat{x}_1\notin \hat{M}_0$, then $\hat{x}_1\in \ol{\hat{M}_1}$.
Therefore, we may find a sequence $q_n'=(x_n',\hat{x}_n';A_n')\in \mc{O}_{\RDist}(q_0)$
such that $q_n'\to q_1$ and $\hat{x}_n'\in \hat{M}_1$.
So for $n$ large enough, we have $(x_n',\hat{x}_n')\in (M_0\times \hat{M}_1)\cap \pi_Q(\mc{O}_{\RDist}(q_0))$.

Thus $(\hat{M},\hat{g})$ does not have constant curvature
and $(M_0\times \hat{M}_1)\cap \pi_Q(\mc{O}_{\RDist}(q_0))\neq\emptyset$
which we have shown to imply that $(M_0\times\hat{M}_0)\cap \pi_{Q}(\mc{O}_{\RDist}(q_0))\neq\emptyset$
from which the above case c) implies that $\dim\mc{O}_{\RDist}(q_0)\leq 8$.
\end{itemize}

The cases c) and d) above give (iii) and therefore the proof is complete.
\end{proof}

\begin{remark} It is not difficult to see that Proposition \ref{pr:wp-1} generalizes to higher dimension as follows. Keeping the same notations as before, let $(M,g)=(I,s_1)\times_{f}  (N,h)$ and $(\hat{M},\hat{g})=(\hat{I},s_1)\times_{\hat{f}}(\hat{N},\hat{h})$, $I,\hat{I}\subset\R$,
be warped products where $(N,h)$ and $(\hat{N},\hat{h})$ are now connected, oriented $(n-1)$-dimensional Riemannian manifolds. As before, let $q_0=(x_0,\hat{x}_0;A_0)\in Q$
be such that if we write $x_0=(r_0,y_0)$, $\hat{x}_0=(\hat{r}_0,\hat{y}_0)$,
then \eqref{eq:A_0_S_to_hatS} and \eqref{eq:Df_over_f} hold true.
Then, the exact argument of Proposition \ref{pr:wp-1} yields that the orbit $\mc{O}_{\RDist}(q_0)$ has dimension at most equal to $n(n+1)/2$.

Note that one can have equality, if the $(n-1)$-dimensional manifolds $(N,h)$
and $(\hat{N},\hat{h})$ are such that that the corresponding $\widetilde{\Rol}_{q_0'}$ operator (in $(n-1)$-dimensional setting)
is invertible at $q_0'=(y_0,\hat{y}_0;A_0')\in Q(N,\hat{N})$, where
$A_0':\pa{r}\big|_{x_0}^\perp\to \pa{r}\big|_{\hat{x}_0}^\perp$ is the restriction of $A_0$ 
and if we also assume that $f(r_0)=1$, $\hat{f}(r_0)=1$, an assumption that
can always be satisfied after rescaling the metrics of $(N,h)$ and $(\hat{N},\hat{h})$.
\end{remark}

\begin{remark}
Notice that in the particular situation where the warped products are in fact Riemannian products $(M,g)=(I\times N,s_1\oplus h)$,
$(\hat{M},\hat{g})=(\hat{I}\times \hat{N},s_1\oplus \hat{h})$, i.e. where $f=\hat{f}=1$,
then the fact that for every $q_0=(x_0,\hat{x}_0;A_0)\in Q$
the orbit $\mc{O}_{\RDist}(q_0)$ is at most of dimension 8,
can be deduced more easily by using Theorem \ref{th:NS_orbit_V}.

Indeed, Theorem \ref{th:NS_orbit_V}
tells us that $\pi_{\mc{O}_{\NSDist}(q_0)}^{-1}(x_0,\hat{x}_0)=\hat{H}|_{\hat{x}_0}A_0H|_{x_0}$
Since $M$ and $\hat{M}$ are Riemannian
products and because the holonomy group of $(\R,s_1)$ is trivial,
one has $H|_{x_0}=H^{h}|_{y_0}$
and $\hat{H}|_{\hat{x}_0}=H^{\hat{h}}|_{\hat{y}_0}$,
isomorphically, where $x_0=(r_0,y_0)$, $\hat{x}_0=(\hat{r}_0,\hat{y}_0)$.
But $\dim H^{h}|_{y_0}\leq \dim \SO(2)=1$
and $\dim H^{\hat{h}}|_{\hat{y}_0}\leq 1$,
so
\[
\dim\pi_{\mc{O}_{\NSDist}(q_0)}^{-1}(x_0,\hat{x}_0)=\dim(\hat{H}|_{\hat{x}_0}A_0H|_{x_0})\leq
\dim \hat{H}|_{\hat{x}_0}+\dim H|_{x_0}\leq 2
\]
which implies that $\dim \mc{O}_{\NSDist}(q_0)\leq 3+3+2=8$.
Because $\dim\mc{O}_{\RDist}(q_0)\subset \dim\mc{O}_{\NSDist}(q_0)$,
we also have $\dim\mc{O}_{\RDist}(q_0)\leq 8$.
\end{remark}

%%%%%%%%%%%%%%%%%%%%%%%%%%
\section{Rolling of Spaces of Different Dimensions}\label{diff-dim}
%%%%%%%%%%%%%%%%%%%%%%%%%%

\subsection{Definitions of the State Space and the Rolling Distributions}
\begin{definition}
Let $(M,g)$, $(\hat{M},\hat{g})$ be Riemannian manifolds
of dimensions $n=\dim(M)\geq 2$ and $\hat{n}=\dim(\hat{M})\geq 2$,
not necessarily equal. Then one defines:
\begin{itemize}
\item[(i)] if $n\leq \hat{n}$,
\[
Q(M,\hat{M}):=\{A\in T^*M\otimes T\hat{M}\ |\ \hat{g}(AX,AY)=g(X,Y),\ X,Y\in T|_x M,\ x\in M\},
\]
the set of isometric infinitesimal immersions.
This defines a smooth manifold of $T^*M\otimes T\hat{M}$ of dimension
\[
\dim(Q):=n+\hat{n}+n(\hat{n}-n)+\frac{n(n-1)}{2}
=n+\hat{n}+n\hat{n}-\frac{n(n+1)}{2}.
\]

\item[(ii)] If $n\geq \hat{n}$,
\[
Q(M,\hat{M})=\{A\in T^*M\otimes T\hat{M}\ |\ & \hat{g}(AX,AY)=g(X,Y),\ X,Y\in 
(\ker A)^\perp,\\ & x\in M, \ 
 A\ \textrm{is onto a tangent space of $\hat{M}$}\},
\]
where $L^\perp$ is the orthogonal subspace of $L\subset T|_x M$ w.r.t. $g$.
This defines a smooth submanifold of $T^*M\otimes T\hat{M}$ of dimension
\[
\dim(Q)=n+\hat{n}+\hat{n}(n-\hat{n})+\frac{\hat{n}(\hat{n}-1)}{2}
=       n+\hat{n}+n\hat{n}-\frac{\hat{n}(\hat{n}+1)}{2}.
\]
\end{itemize}

If $n=\hat{n}$ and $M,\hat{M}$ are oriented we also demand in (i) and (ii) that the elements of 
$Q$ to preserve the orientations. Hence we recover the definition used before.
\end{definition}

One defines distributions $\NSDist$, $\RDist$ on $Q$ and the lifts $\LNSD$, $\LRD$
as before.
In both cases the dimension of $\NSDist$ is $n+\hat{n}$
and that of $\RDist$ is $n$.
Notice that by the above definition the dimension of $Q(M,\hat{M})$ is the same
as that of $Q(\hat{M},M)$. These manifolds are actually diffeomorphic
as the next proposition shows.

Before proceeding, we introduce some notations.
Given $(M,g)$ and $(\hat{M},\hat{g})$ as before, we write $Q=Q(M,\hat{M})$
and $\hat{Q}=Q(\hat{M},M)$.
We write $\NSDist$, $\RDist$, $\LNSD$ and $\LRD$
on $Q$ as before but on $\hat{Q}$ we write the corresponding objects
as $\widehat{\NSDist}$, $\widehat{\RDist}$, $\widehat{\LNSD}$ and $\widehat{\LRD}$.
Thus $\dim \RDist=n$ but $\dim\widehat{\RDist}=\hat{n}$.
As before, for $q=(x,\hat{x};A)\in Q$ we write $A^{\ol{T}}:T|_{\hat{x}} \hat{M}\to T|_{x} M$
the $(g,\hat{g})$-transpose of $A$ i.e., $g(X,A^{\ol{T}}\hat{Y})=\hat{g}(AX,\hat{Y})$
for all $X\in T|_x M$, $\hat{Y}\in T|_{\hat{x}}\hat{M}$.

\begin{proposition}\label{pr:diffeo_Q_hatQ}
For every $(x,\hat{x};A)\in Q$, one has $(\hat{x},x;A^{\ol{T}})\in \hat{Q}$
and the application
\[
\ol{T}:Q\to\hat{Q};\quad \ol{T}(x,\hat{x};A)=(\hat{x},x;A^{\ol{T}}),
\]
is a diffeomorphism. Moreover, this diffeomorphism $\ol{T}$ is an isometry of fiber bundles
$\pi_{Q}\to \pi_{\widehat{Q}}$
that preserves the no-spinning distributions on these manifolds i.e.,
\[
\ol{T}_*\NSDist=\widehat{\NSDist}.
\]
\end{proposition}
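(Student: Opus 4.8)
The statement has three parts: (1) $\ol{T}$ is well-defined, i.e.\ $(\hat{x},x;A^{\ol{T}})\in \hat{Q}$ whenever $(x,\hat{x};A)\in Q$; (2) $\ol{T}$ is a diffeomorphism (indeed a bundle isometry covering the swap map $M\times\hat{M}\to\hat{M}\times M$); (3) $\ol{T}_*\NSDist=\widehat{\NSDist}$. I would treat these in order, since each uses the previous one.

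\textbf{Step 1: well-definedness.} Fix $q=(x,\hat{x};A)\in Q$ and recall $A^{\ol{T}}:T|_{\hat{x}}\hat{M}\to T|_x M$ is characterized by $g(X,A^{\ol{T}}\hat{Y})=\hat{g}(AX,\hat{Y})$. I would split into the two defining cases. If $n\le\hat{n}$, then $A$ is an isometric injection, so $\IM A$ is an $n$-dimensional subspace of $T|_{\hat{x}}\hat{M}$ on which $A^{-1}$ is defined; one checks $\ker A^{\ol{T}}=(\IM A)^\perp$ (again from the defining relation), so $(\ker A^{\ol{T}})^\perp=\IM A$, and for $\hat{Y},\hat{Z}\in\IM A$ writing $\hat{Y}=AX$, $\hat{Z}=AX'$ gives $g(A^{\ol{T}}\hat{Y},A^{\ol{T}}\hat{Z})=\hat{g}(AX,\hat{Z})=\hat g(AX,AX')=g(X,X')=\hat g(\hat Y,\hat Z)$ using $X=A^{\ol T}\hat Y$ on the base of $(\ker A)^\perp=T|_xM$; and $A^{\ol{T}}$ is onto $(\ker A)^\perp=T|_x M$ since $A$ is injective. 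This is exactly condition (ii) in the definition of $\hat{Q}=Q(\hat{M},M)$ with the roles $n\leftrightarrow\hat n$ reversed (note $\hat n\geq n$ there). Symmetrically, if $n\ge\hat{n}$, $A$ is a submersion of $(\ker A)^\perp$ onto $T|_{\hat{x}}\hat{M}$, and the same bookkeeping shows $A^{\ol{T}}$ is an isometric injection $T|_{\hat{x}}\hat{M}\hookrightarrow T|_x M$, i.e.\ $(\hat{x},x;A^{\ol{T}})$ satisfies condition (i) of $\hat{Q}$. The case $n=\hat{n}$ with orientations is already covered in the excerpt (Proposition~\ref{pr:rol_inverse}). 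I would also record $(A^{\ol{T}})^{\ol{T}}=A$, which is immediate from the defining relation and symmetry of $g,\hat g$.

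\textbf{Step 2: diffeomorphism and bundle isometry.} Since $(A^{\ol{T}})^{\ol{T}}=A$, the map $\hat{Q}\to Q$ given by the analogous transpose is a two-sided inverse of $\ol{T}$, so $\ol{T}$ is a bijection. Smoothness in both directions is a fiberwise-linear-algebra fact: in the vector bundle $T^*M\otimes T\hat{M}$, the assignment $A\mapsto A^{\ol{T}}$ is smooth because in local orthonormal frames $F,\hat F$ it is just $\mc{M}_{F,\hat F}(A)\mapsto \mc{M}_{F,\hat F}(A)^T$ (the ordinary matrix transpose), using the identity $\mc{M}_{F,G}(L)^T=\mc{M}_{G,F}(L^{T_{g,h}})$ quoted in the Notations section; so $\ol{T}$ is the restriction to $Q$ of a smooth vector-bundle map, hence smooth, and likewise its inverse. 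That $\ol{T}$ covers the flip $M\times\hat{M}\to\hat{M}\times M$ is immediate from the formula, so it is a fiber-bundle isomorphism $\pi_Q\to\pi_{\hat Q}$; ``isometry'' here should mean it intertwines the natural fiber metrics coming from the ambient Sasaki-type structure, which again reduces to the matrix transpose being an orthogonal-group-equivariant isometry of $\gl(n)$ (or of the relevant Stiefel-type fibers) — I would just remark this, as it is routine.

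\textbf{Step 3: $\ol{T}_*\NSDist=\widehat{\NSDist}$ — the main point.} This is the substantive step. The cleanest route is via parallel transport, exactly as in Proposition~\ref{pr:2.1:1} and its generalization. Recall $\NSDist|_q=\LNSD(T|_{(x,\hat{x})}(M\times\hat{M}))|_q$, and by definition $\LNSD((X,\hat X),q)=\frac{\diff}{\diff t}|_0 P^t_0(x,\hat x)A$ for a curve $(x,\hat x)$ with initial velocity $(X,\hat X)$; moreover $P^t_0(x,\hat x)A=P^t_0(\hat x)\circ A\circ P^0_t(x)$ (Eq.~\eqref{eq:parallel_trans_A}, which holds verbatim in the unequal-dimensional setting since its proof only used that $A\in T^*M\otimes T\hat M$ and the product-connection formula). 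The key algebraic observation is that the $(g,\hat g)$-transpose commutes with parallel transport in the right way: since parallel transport is a linear isometry, $\big(P^t_0(\hat x)\circ A\circ P^0_t(x)\big)^{\ol{T}}=\big(P^0_t(x)\big)^{T}\circ A^{\ol{T}}\circ\big(P^t_0(\hat x)\big)^{T}=P^t_0(x)\circ A^{\ol{T}}\circ P^0_t(\hat x)=P^t_0(\hat x, x)A^{\ol{T}}$, where I use that the transpose (adjoint) of an isometric parallel transport is its inverse, which is parallel transport backwards. Differentiating at $t=0$ along a curve with velocity $(X,\hat X)$ gives $\ol{T}_*\big(\LNSD(X,\hat X)|_q\big)=\widehat{\LNSD}(\hat X,X)|_{\ol{T}(q)}$. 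Since $(X,\hat X)\mapsto(\hat X,X)$ is a linear isomorphism $T|_{(x,\hat x)}(M\times\hat M)\to T|_{(\hat x,x)}(\hat M\times M)$, taking the image over all $(X,\hat X)$ yields $\ol{T}_*(\NSDist|_q)=\widehat{\NSDist}|_{\ol{T}(q)}$, as desired. I would present Step 3 in this order: (a) state the transpose-vs-parallel-transport commutation lemma and prove it from the isometry property; (b) combine with Eq.~\eqref{eq:parallel_trans_A} and the definition of $\LNSD$ to get the pointwise lift identity; (c) conclude. The only real care needed is checking that Eq.~\eqref{eq:parallel_trans_A} and Proposition~\ref{pr:2.1:1}'s argument genuinely survive when $\dim M\neq\dim\hat M$ — but inspecting that proof, it never used equality of dimensions, only $A_0\omega=0$, $A_0\hat Y=0$ for $\omega\in T^*M$, $\hat Y\in T\hat M$ inside the ambient $(1,1)$-tensor bundle, which is still valid. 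I expect this verification, plus getting the adjoint-of-parallel-transport bookkeeping exactly right, to be the main (though not deep) obstacle.
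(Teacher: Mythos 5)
Your proposal is correct and follows essentially the same route as the paper: well-definedness via $\ker A^{\ol{T}}=(\IM A)^\perp$ and the isometry check on the image, invertibility via the double transpose, and the identity $\big(P^t_0(\hat x)\circ A\circ P_t^0(x)\big)^{\ol{T}}=P^t_0(x)\circ A^{\ol{T}}\circ P_t^0(\hat x)$ differentiated at $t=0$ to transport $\LNSD(X,\hat X)$ to $\widehat{\LNSD}(\hat X,X)$. The only difference is that you justify the transpose-vs-parallel-transport commutation (adjoint of an isometry is its inverse) and the survival of Eq.~(\ref{eq:parallel_trans_A}) in unequal dimensions explicitly, where the paper simply asserts them.
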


\begin{proof}
Suppose w.l.o.g. that $n\leq \hat{n}$.
It is clear that $A^{\ol{T}}A=\id_{T|_x M}$ for every $(x,\hat{x};A)\in Q$
and $\ker (A^{\ol{T}})=\IM(A)^{\perp}$
and thus if $\hat{X},\hat{Y}\in \ker (A^{\ol{T}})^{\perp}=\IM(A)$, one gets
\[
g(A^{\ol{T}}\hat{X},A^{\ol{T}}\hat{Y})
=g(A^{\ol{T}}AX,A^{\ol{T}}AY)=g(X,Y)=\hat{g}(AX,AY)=\hat{g}(\hat{X},\hat{Y}),
\]
where $X,Y\in T|_x M$ were such that $AX=\hat{X}$, $AY=\hat{Y}$.
This proves that $\ol{T}(x,\hat{x};A)$ is actually an element of $\hat{Q}$.

Let then $\hat{q}=(\hat{x},x;B)\in \hat{Q}$
and define
\[
\ol{S}(\hat{q})=(x,\hat{x};B^{\ol{T}})\in T^*M\otimes T\hat{M}.
\]
Since $\IM(B^{\ol{T}})=\ker(B)^\perp$, we have for $X,Y\in T|_x M$,
\[
g(B^{\ol{T}}X,B^{\ol{T}}Y)=\hat{g}(BB^{\ol{T}}X,BB^{\ol{T}}Y)=\hat{g}(X,Y),
\]
directly from the definition of $\hat{Q}$ and
since $BB^{\ol{T}}=\id_{T|_{x} M}$ (since $n\leq \hat{n}$).
This shows that $\ol{S}:\hat{Q}\to Q$.

Moreover, one clearly has that 
$\ol{T}$ and $\ol{S}$ are maps inverse to each other.
They are obviously smooth, hence $Q$ and $\hat{Q}$ are diffeomorphic.
Also, $\ol{T}$ is actually a bundle isomorphism $\pi_{Q}\to \pi_{\hat{Q}}$
whose inverse as a bundle isomorphism is $\ol{S}$.

Finally, observe that if $\gamma$, $\hat{\gamma}$ are smooth paths in $M$, $\hat{M}$
starting at $x_0,\hat{x}_0$, respectively, at $t=0$,
and if $q_0=(x_0,\hat{x}_0;A_0)\in Q(M,\hat{M})$
then
\[
& (P_0^t(\hat{\gamma})\circ A_0\circ P_t^0(\gamma))^{\ol{T}}
=P_0^t(\gamma)\circ A_0^{\ol{T}}\circ P_t^0(\hat{\gamma}),
\]
so
\[
\ol{T}\big(\gamma(t),\hat{\gamma}(t);P_0^t(\hat{\gamma})\circ A_0\circ P^t_0(\gamma)\big)
=\big(\hat{\gamma}(t),\gamma(t);P_0^t(\gamma)\circ \ol{T}(x_0,\hat{x}_0;A_0)\circ P_t^0(\hat{\gamma})\big),
\]
which immediately shows, by differentiating $\dif{t}\big|_0$ and using the definition of $\LNSD$, that
\[
\ol{T}_*|_{q_0}\LNSD(X, \hat{X})\Big|_{q_0}
=\widehat{\LNSD}(\hat{X}, X)\Big|_{\ol{T}(q_0)}
\]
where $X=\dot{\gamma}(0)$, $\hat{X}=\dot{\hat{\gamma}}(0)$.
This proves in particular that $\ol{T}_*$ maps $\NSDist$
isomorphically onto $\widehat{\NSDist}$. This completes the proof.

\end{proof}

\begin{corollary}
In the case $n\leq \hat{n}$, one has for $q_0=(x_0,\hat{x}_0;A_0)\in Q(M,\hat{M})$ and $X\in T|_x M$,
\[
\ol{T}_*|_{q_0} \LRD(X)|_{q_0}=\widehat{\LRD}(A_0 X)|_{\Phi(q_0)}.
\] 
In particular, $\ol{T}_*\RDist\subset \widehat{\RDist}$.
\end{corollary}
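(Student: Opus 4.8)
The plan is to deduce this corollary directly from Proposition \ref{pr:diffeo_Q_hatQ} by specializing the no-spinning lift formula to the rolling lift. Recall that by definition, for $q_0=(x_0,\hat{x}_0;A_0)\in Q$ and $X\in T|_{x_0}M$, one has $\LRD(X)|_{q_0}=\LNSD(X,A_0X)|_{q_0}$, and symmetrically, in $\hat{Q}$, for a vector $\hat{X}\in T|_{\hat{x}_0}\hat{M}$ one has $\widehat{\LRD}(\hat{X})|_{\hat{q}}=\widehat{\LNSD}(\hat{X},B\hat{X})|_{\hat{q}}$ whenever $\hat{q}=(\hat{x}_0,x_0;B)$. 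The key fact established in the proof of Proposition \ref{pr:diffeo_Q_hatQ} is the identity
\[
\ol{T}_*|_{q_0}\LNSD(Y,\hat{Y})\big|_{q_0}=\widehat{\LNSD}(\hat{Y},Y)\big|_{\ol{T}(q_0)},\quad Y\in T|_{x_0}M,\ \hat{Y}\in T|_{\hat{x}_0}\hat{M}.
\]

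First I would apply this identity with $Y=X$ and $\hat{Y}=A_0X$, which gives
\[
\ol{T}_*|_{q_0}\LRD(X)|_{q_0}=\ol{T}_*|_{q_0}\LNSD(X,A_0X)|_{q_0}=\widehat{\LNSD}(A_0X,X)\big|_{\ol{T}(q_0)}.
\]
Now $\ol{T}(q_0)=(\hat{x}_0,x_0;A_0^{\ol{T}})$, and the second slot of the no-spinning lift on the right-hand side is $X$, which I want to recognize as $A_0^{\ol{T}}(A_0X)$. Here is where the hypothesis $n\leq\hat{n}$ enters: in that case $A_0^{\ol{T}}A_0=\id_{T|_{x_0}M}$, so indeed $X=A_0^{\ol{T}}(A_0X)$. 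Therefore $\widehat{\LNSD}(A_0X,X)|_{\ol{T}(q_0)}=\widehat{\LNSD}(A_0X,A_0^{\ol{T}}(A_0X))|_{\ol{T}(q_0)}=\widehat{\LRD}(A_0X)|_{\ol{T}(q_0)}$ by the definition of $\widehat{\LRD}$. Writing $\Phi=\ol{T}$ (matching the notation of the statement) completes the first claim.

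For the inclusion $\ol{T}_*\RDist\subset\widehat{\RDist}$, I would simply note that $\RDist|_{q_0}=\LRD(T|_{x_0}M)|_{q_0}$ and that, by the formula just proved, $\ol{T}_*|_{q_0}$ maps this onto $\widehat{\LRD}(A_0(T|_{x_0}M))|_{\ol{T}(q_0)}$, which is a subspace of $\widehat{\LRD}(T|_{\hat{x}_0}\hat{M})|_{\ol{T}(q_0)}=\widehat{\RDist}|_{\ol{T}(q_0)}$. This is genuinely only an inclusion and not an equality precisely because $A_0$ need not be surjective when $n<\hat{n}$, so $A_0(T|_{x_0}M)$ is a proper $n$-dimensional subspace of $T|_{\hat{x}_0}\hat{M}$; it reflects the (expected) failure of the symmetry $Q(M,\hat{M})\leftrightarrow Q(\hat{M},M)$ to intertwine the rolling distributions in unequal dimensions.

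There is no real obstacle here: the only subtlety is keeping track of which of $A_0^{\ol{T}}A_0$ or $A_0A_0^{\ol{T}}$ equals the identity, and the statement as given presupposes $n\leq\hat{n}$ (inherited from the surrounding subsection), which is exactly the case where $A_0^{\ol{T}}A_0=\id$ makes the argument go through. The whole proof is a two-line specialization of the preceding proposition, so I would present it as a short direct computation.
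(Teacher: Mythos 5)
Your proof is correct and follows essentially the same route as the paper's: both apply the identity $\ol{T}_*|_{q_0}\LNSD(Y,\hat{Y})|_{q_0}=\widehat{\LNSD}(\hat{Y},Y)|_{\ol{T}(q_0)}$ with $(Y,\hat{Y})=(X,A_0X)$ and then use $X=A_0^{\ol{T}}(A_0X)$ (valid since $n\leq\hat{n}$) to recognize the result as $\widehat{\LRD}(A_0X)|_{\ol{T}(q_0)}$. Your added remark on why the inclusion $\ol{T}_*\RDist\subset\widehat{\RDist}$ is strict when $n<\hat{n}$ is consistent with the paper's subsequent remark.
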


\begin{proof}
Indeed, for $X\in T|_{x_0} M$ and $q_0=(x_0,\hat{x}_0;A_0)$ one has
\[
\ol{T}_*|_{q_0} \LRD(X)|_{q_0}
=\ol{T}_*|_{q_0} \LNSD(X, A_0 X)\Big|_{q_0}
=\widehat{\LNSD}(A_0 X, X)\Big|_{\ol{T}(q_0)}
=\widehat{\LRD}(A_0 X)|_{\ol{T}(q_0)},
\]
since $X=(A_0^{\ol{T}})(A_0 X)=\ol{T}(q_0)(A_0X)$.
Hence $\ol{T}$ maps $\RDist$ of $Q(M,\hat{M})$ into $\widehat{\RDist}$ of $Q(\hat{M},M)$.

\end{proof}

\begin{remark}
Recall that the distribution $\RDist$ on $Q(M,\hat{M})$ has dimension $n$
and $\widehat{\RDist}$ on $Q(\hat{M},M)$ has dimension $\hat{n}$.
Hence the inclusion $\ol{T}_*\RDist\subset \widehat{\RDist}$ is strict whenever $n<\hat{n}$.
This shows that the model of rolling of manifolds of different dimensions against each other
is not symmetric with respect to the order of the manifolds $M$ and $\hat{M}$.         
\end{remark} 

We can now provide a description of the vertical fiber $V|_q(\pi_Q)$ for a point 
$q=(x,\hat{x};A)\in Q$.

\begin{proposition}\label{prop-vert-fiber}
If $q=(x,\hat{x};A)\in Q$, then the vertical fiber $V|_q(\pi_Q)$ is given by 
\[
V|_q(\pi_Q)=\begin{cases}
\nu\big(\big\{B\in T^*|_x M\otimes T|_{\hat{x}}\hat{M}\ |\ A^{\ol{T}}B\in \so(T|_x M)\big\}\big)\big|_q, & \textrm{if}\ n\leq\hat{n},\cr
\nu\big(\big\{B\in T^*|_x M\otimes T|_{\hat{x}}\hat{M}\ |\ BA^{\ol{T}}\in \so(T|_{\hat{x}} \hat{M})\big\}\big)\big|_q, & \textrm{if}\ n\geq\hat{n}.
\end{cases}.
\]
\end{proposition}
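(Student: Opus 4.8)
The statement to prove identifies the vertical tangent space $V|_q(\pi_Q)$ for $q=(x,\hat{x};A)\in Q$ in the two cases $n\le\hat n$ and $n\ge\hat n$. The plan is to proceed exactly as in the proof of Proposition~\ref{pr:vertical_of_Q}, which handled the equidimensional (oriented) case, namely by differentiating along curves the algebraic defining equations of $Q$ inside the ambient vector bundle $T^*M\otimes T\hat{M}$. Recall that $\pi_{T^*M\otimes T\hat{M}}$ is a genuine \emph{vector} bundle, so every element of $V|_q(\pi_{T^*M\otimes T\hat{M}})$ is of the form $\nu(B)|_q$ for a unique $B\in T^*|_xM\otimes T|_{\hat{x}}\hat{M}$, and since $Q$ is an embedded submanifold with $\pi_Q=\pi_{T^*M\otimes T\hat{M}}|_Q$ and $V|_q(\pi_Q)=T|_q Q\cap V|_q(\pi_{T^*M\otimes T\hat{M}})$, the task reduces to characterizing which $B$ satisfy $\nu(B)|_q\in T|_q Q$.

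First I would treat the case $n\le\hat n$. Here $Q=\{A\in T^*M\otimes T\hat{M}\ |\ \hat g(AX,AY)=g(X,Y)\ \forall X,Y\in T|_xM\}$, i.e. $A^{\ol T}A=\id_{T|_xM}$. Take a smooth curve $t\mapsto A(t)$ in the fiber $(T^*M\otimes T\hat{M})|_{(x,\hat{x})}$ with $A(0)=A$ and $\dot A(0)=B$; then $\nu(B)|_q\in T|_qQ$ iff this curve may be chosen inside $Q$, which forces $0=\frac{\diff}{\diff t}\big|_0\big(A(t)^{\ol T}A(t)\big)=B^{\ol T}A+A^{\ol T}B$. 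Conversely, if $B^{\ol T}A+A^{\ol T}B=0$ one constructs such a curve, e.g. via $A(t)=A\exp(t A^{\ol T}B)$ (note $A^{\ol T}B\in\so(T|_xM)$ by the relation, so $\exp(tA^{\ol T}B)\in\mathrm{O}(T|_xM)$ and $A(t)^{\ol T}A(t)=\exp(-tA^{\ol T}B)A^{\ol T}A\exp(tA^{\ol T}B)=\id$), which stays in $Q$ and has $\dot A(0)=AA^{\ol T}B$; one must then observe $AA^{\ol T}B=B$, which holds because $AA^{\ol T}$ is the orthogonal projection onto $\IM(A)$ and $B^{\ol T}A+A^{\ol T}B=0$ together with a short computation shows $B$ maps into $\IM(A)$ — alternatively just use a curve adapted to the submanifold chart. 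The upshot is $V|_q(\pi_Q)=\nu(\{B\ |\ A^{\ol T}B\in\so(T|_xM)\})|_q$, and a dimension count ($\dim\so(T|_xM)=n(n-1)/2$ plus the $n(\hat n-n)$ dimensions coming from $B$ landing outside $\IM(A)$, which are unconstrained by $A^{\ol T}B=0$) confirms this matches $\dim Q-(n+\hat n)$.

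For the case $n\ge\hat n$ I would apply the diffeomorphism $\ol T:Q(M,\hat{M})\to Q(\hat{M},M)$ of Proposition~\ref{pr:diffeo_Q_hatQ}, under which $\dim\hat M=\hat n\le n=\dim M$, so the previous case applies to $\hat Q$: for $\hat q=\ol T(q)=(\hat x,x;A^{\ol T})$ one gets $V|_{\hat q}(\pi_{\hat Q})=\nu(\{C\ |\ (A^{\ol T})^{\ol T}C\in\so(T|_{\hat x}\hat M)\})|_{\hat q}=\nu(\{C\ |\ AC\in\so(T|_{\hat x}\hat M)\})|_{\hat q}$. Since $\ol T$ is a fiberwise-linear bundle isomorphism, $\ol T_*$ intertwines the vertical fibers and the vertical lift maps, and writing $C=B^{\ol T}$ (so $AC=AB^{\ol T}$, whose membership in $\so(T|_{\hat x}\hat M)$ is equivalent to $BA^{\ol T}\in\so(T|_{\hat x}\hat M)$ by transposition), one pulls the description back to $V|_q(\pi_Q)=\nu(\{B\ |\ BA^{\ol T}\in\so(T|_{\hat x}\hat M)\})|_q$, as claimed. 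Alternatively, one can redo the direct differentiation argument: for $n\ge\hat n$ the defining equations of $Q$ are $AA^{\ol T}=\id_{T|_{\hat x}\hat M}$ (surjectivity being an open condition), and differentiating gives $B A^{\ol T}+A B^{\ol T}=0$, i.e. $BA^{\ol T}\in\so(T|_{\hat x}\hat M)$.

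\textbf{Main obstacle.} The routine part is the differentiation of the quadratic constraint; the only genuinely delicate point is the \emph{converse} direction — showing that every $B$ satisfying the algebraic condition is actually realized as the velocity of a curve \emph{inside} $Q$ — which amounts to checking that $Q$ is cut out cleanly (as a submanifold) by these equations and that the linearization of the constraint map at $A$ has exactly the kernel described. This is where one must be careful that in the non-equidimensional case $B$ has components transverse to $\IM A$ (when $n<\hat n$) or that $A$ is only a partial isometry (when $n>\hat n$); the cleanest route is to use the explicit exponential curves above together with the projection identities $AA^{\ol T}=\mathrm{proj}_{\IM A}$, $A^{\ol T}A=\mathrm{proj}_{(\ker A)^\perp}$, rather than invoking a submanifold chart, so that the computation is self-contained. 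I expect this to occupy the bulk of a careful write-up, but it presents no conceptual difficulty beyond what already appears in Proposition~\ref{pr:vertical_of_Q}.
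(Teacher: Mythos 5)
Your necessity direction (differentiate the isometry constraint along a $\pi_Q$-vertical curve) is exactly the paper's argument, and your handling of $n\geq\hat{n}$ — either transporting through $\ol{T}$ of Proposition \ref{pr:diffeo_Q_hatQ} or differentiating $A(t)A(t)^{\ol{T}}=\id_{T|_{\hat{x}}\hat{M}}$ directly — also matches the paper (the latter is what its proof does, the former is noted in the remark following it). The genuine problem is in your converse for $n\leq\hat{n}$: the claim that $B^{\ol{T}}A+A^{\ol{T}}B=0$ forces $B$ to map into $\IM(A)$, hence $AA^{\ol{T}}B=B$, is false precisely when $n<\hat{n}$. Any $B$ with image contained in $\IM(A)^{\perp}$ satisfies $A^{\ol{T}}B=0\in\so(T|_xM)$ (since $\ker A^{\ol{T}}=\IM(A)^{\perp}$), yet $AA^{\ol{T}}B=0\neq B$; these are exactly the $n(\hat{n}-n)$ "transverse" directions that you yourself count two sentences later, so the write-up is internally inconsistent, and the curve $A(t)=A\exp(tA^{\ol{T}}B)$ only realizes velocities with image inside $\IM(A)$ — it cannot produce all vertical directions. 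Since your case $n\geq\hat{n}$ is reduced to this case via $\ol{T}$, the flaw propagates there too.

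The gap is easy to close, and you already hold the tool: the paper settles the reverse inclusion by a pure dimension count. Differentiation gives $V|_q(\pi_Q)\subset\nu\big(\{B\ |\ A^{\ol{T}}B\in\so(T|_xM)\}\big)\big|_q$, both sides are linear subspaces, and the right-hand side has dimension $\tfrac{n(n-1)}{2}+n(\hat{n}-n)=\dim\pi_Q^{-1}(x,\hat{x})$, so equality holds; you mention this count only as a "confirmation", whereas it should be the argument. Alternatively, if you insist on explicit curves, decompose $B=B_1+B_2$ with $\IM B_1\subset\IM A$ and $\IM B_2\subset\IM(A)^{\perp}$; then $A^{\ol{T}}B_1=A^{\ol{T}}B\in\so(T|_xM)$, the map $C:=B_2A^{\ol{T}}-AB_2^{\ol{T}}$ lies in $\so(T|_{\hat{x}}\hat{M})$ with $CA=B_2$ (because $B_2^{\ol{T}}A=0$), and the curve $A(t)=\exp(tC)\,A\,\exp(tA^{\ol{T}}B_1)$ stays in $Q$ and has $\dot{A}(0)=B_2+AA^{\ol{T}}B_1=B$. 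With either repair your proof coincides in substance with the paper's.
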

\begin{proof}  Let $q=(x,\hat{x};A)\in Q$ and $B\in T^*|_xM\otimes T|_{\hat{x}}\hat{M}$. Proving the proposition amounts to show that $\nu(B)|_q$ (which is \emph{a priori} only an element of $V|_q(\pi_{T^*M\otimes T\hat{M}})$) belongs to $V|_q(\pi_Q)$ if and only if
\begin{itemize}
\item[(i)] $A^{\ol{T}}B\in\so(n)$, if $n\leq \hat{n}$,
\item[(ii)] $BA^{\ol{T}}\in\so(\hat{n})$, if $n\geq \hat{n}$.
\end{itemize}

Choose first a $\pi_Q$-vertical curve $q(t)=(x,\hat{x};A(t))$ inside $Q$
such that $A(0)=A$ i.e., $q(t)\in \pi_Q^{-1}(x,\hat{x})\subset T^*|_xM\otimes T|_{\hat{x}}\hat{M}$.

In the case (i), we have $\hat{g}(A(t)X,A(t)Y)=g(X,Y)$ for all $X,Y\in T|_x M$
so by differentiating, at $t=0$,
$g(AX,BY)+g(BX,AY)=0$ for all $X,Y\in T|_x M$,
where $B\in T^*|_x M\otimes T|_{\hat{x}}\hat{M}$
is such that $A'(0)=\nu(B)|_q$.
This condition can be written as $g(B^{\ol{T}}AX,Y)+g(A^{\ol{T}}BX,Y)=0$ for all $X,Y$
and hence $B^{\ol{T}}A+A^{\ol{T}}B=0$.
The result follows, since for a given $(x,\hat{x};A)\in Q$,
the set of $B\in T^*|_xM\otimes T|_{\hat{x}}\hat{M}$ s.t. $A^{\ol{T}}B\in\so(T|_x M)$
has dimension equal to $\dim \pi_{Q}^{-1}(x,\hat{x})$.

In the case (ii), we have $g(A(t)X,A(t)Y)=g(X,Y)$ for all $X,Y\in (\ker A(t))^\perp=\IM (A(t)^{\ol{T}})$.
Choose $\hat{X},\hat{Y}\in T|_{\hat{x}} M$.
Then $g(A(t)^{\ol{T}}\hat{X},A(t)^{\ol{T}}\hat{Y})=\hat{g}(\hat{X},\hat{Y})$,
since $A(t)A(t)^{\ol{T}}=\id_{T|_{\hat{x}}\hat{M}}$,
and so by differentiating at $t=0$, we get $g(A^{\ol{T}}\hat{X},B^{\ol{T}}\hat{Y})+g(B^{\ol{T}}\hat{X},A^{\ol{T}}\hat{Y})=0$,
where $B\in T^*|_x M\otimes T|_{\hat{x}}\hat{M}$
is such that $A'(0)=\nu(B)|_q$.
This clearly means that $BA^{\ol{T}}+AB^{\ol{T}}=\id_{T|_{\hat{x}}\hat{M}}$
and the result follows.

\end{proof}

\begin{remark}
The case (ii) considered above could be handled by using the diffeomorphism $\ol{T}:Q\to\hat{Q}$ introduced in Proposition \ref{pr:diffeo_Q_hatQ}.
Indeed, if $n\geq\hat{n}$, we may apply (i) on $\hat{Q}$
to obtain that for $q'=(\hat{x},x;A')\in \hat{Q}$, 
we have that $V|_{q'}(\pi_{\hat{Q}})$ consists of $B'\in T^*|_{\hat{x}}\hat{M}\otimes T|_x M$
such that ${A'}^{\ol{T}}B\in\so(T|_{\hat{x}}\hat{M})$.
But taking $q=(x,\hat{x};A)$, $q'=\ol{T}(q)$, $B\in T^*|_x M\otimes T|_{\hat{x}}\hat{M}$
and $B'=B^{\ol{T}}$,
this means $AB^{\ol{T}}\in \so(T|_{\hat{x}}\hat{M})$ i.e., $BA^{\ol{T}}\in \so(T|_{\hat{x}}\hat{M})$.
\end{remark}

As an interesting special cases of rolling with different dimension,
we next consider the cases where $n=\hat{n}+1$ or $\hat{n}=n+1$.

\begin{proposition}\label{pr:hatM_codim1}
Let $(M,g)$, $(\hat{M},\hat{g})$ be oriented Riemannian manifolds
of dimensions $n$ and $\hat{n}=n-1$, with $n\geq 2$.
Define $(\hat{M}^{(1)},\hat{g}^{(1)})$ to be the Riemannian product $(\R\times\hat{M},s_1\oplus\hat{g})$,
with the obvious orientation,
and write $Q^{(1)}=Q(M,\hat{M}^{(1)})$
and let $\LRD^{(1)}$, $\RDist^{(1)}$ to be the rolling lift and the rolling distribution on $Q^{(1)}$.
We define for every $a\in\R$,
\[
\iota_a:Q\to Q^{(1)};
\quad \iota_a(x,\hat{x};A)=(x,(a,\hat{x});A^{(1)}),
\]
where $A^{(1)}:T|_x M\to T|_{(a,\hat{x})} (\R\times \hat{M})$ is defined as follows:
$A^{(1)}\in Q^{(1)}$ and
\[
A^{(1)}|_{(\ker A)^\perp}=&(0,A|_{(\ker A)^\perp}) \\
A^{(1)}(\ker A)=&\R\pa{r}\big|_{(a,\hat{x})}\times \{0\},
\]
where $\pa{r}$ is the canonical vector field on $\R$ in the positive direction,
which we consider to be a vector field on $\hat{M}^{(1)}$ in the usual way.

Then for every $a\in\R$ the map $\iota_a$ is an embedding and
for every $q_0=(x_0,\hat{x}_0;A_0)\in Q$, $a_0\in\R$ and $X\in T|_x M$ one has
\[
\LRD(X)|_{q_0}=&\Pi_*\LRD^{(1)}(X)|_{\iota_{a_0}(q_0)} \\
\mc{O}_{\RDist}(q_0)=&\Pi(\mc{O}_{\RDist}^{(1)}(\iota_{a_0}(q_0))).
\]
where
\[
\Pi:Q^{(1)}\to Q;\quad \Pi(x,(a,\hat{x}),A^{(1)})=(x,\hat{x};(\pr_2)_*\circ A^{(1)}),
\]
is a surjective submersion and $\pr_2:\R\times M\to M$ is the projection onto the second factor.
\end{proposition}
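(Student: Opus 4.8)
The plan is to exploit the $\R$-bundle structure of $\Pi$ together with the splitting of parallel transport in the Riemannian product $\hat{M}^{(1)}=\R\times\hat{M}$. First I would check that $\iota_a$ is well defined and smooth: for $q=(x,\hat{x};A)\in Q$ the kernel $\ker A$ is a smoothly varying line in $T|_xM$ (every $A\in Q$ here has rank $\hat{n}=n-1$), and $A^{(1)}$ is built by letting it act as $(0,A)$ on $(\ker A)^\perp$, sending $\ker A$ isometrically onto $\R\pa{r}\big|_{(a,\hat{x})}$, with the remaining sign on $\ker A$ fixed by orientation; this makes $A^{(1)}\in Q^{(1)}$ and $\iota_a$ a smooth map. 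A direct computation gives $(\pr_2)_*\circ A^{(1)}=A$, hence $\Pi\circ\iota_a=\id_Q$. This already shows that $\iota_a$ is an injective immersion whose inverse on its image is the restriction of the continuous map $\Pi$, so $\iota_a$ is an embedding, and that $\Pi$ is surjective. Unwinding the definitions also yields $\Pi^{-1}(x,\hat{x};A)=\{\iota_b(x,\hat{x};A):b\in\R\}$, so $\Pi$ is an $\R$-bundle (in particular a submersion) with the maps $\iota_a$ as global sections.

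The core of the argument is a comparison of rolling curves. Fix $q_0=(x_0,\hat{x}_0;A_0)\in Q$, $a_0\in\R$, and a smooth curve $\gamma$ on $M$ with $\gamma(0)=x_0$; let $(\gamma,\hat{\gamma};A)=q_{\RDist}(\gamma,q_0)$ be the rolling curve in $Q$, so $A(t)=P_0^t(\hat{\gamma})\circ A_0\circ P_t^0(\gamma)$ and $\dot{\hat{\gamma}}(t)=A(t)\dot{\gamma}(t)$ (the analogue of Remark~\ref{re:Lambda}, which transfers verbatim to the unequal-dimensional setting). Define $a(t)=a_0+\int_0^t \hat{g}^{(1)}\big((A(s))^{(1)}\dot{\gamma}(s),\pa{r}\big)\,\diff s$; this is well posed because the $\R\pa{r}$-direction is the same at every point of $\R\times\{\hat{\gamma}(s)\}$, so the integrand does not depend on $a$. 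I would then show that
\[
t\mapsto \big(\gamma(t),(a(t),\hat{\gamma}(t));(A(t))^{(1)}\big)
\]
is exactly the rolling curve $q^{(1)}_{\RDist}(\gamma,\iota_{a_0}(q_0))$ in $Q^{(1)}$. Two things must be verified: (i) $t\mapsto (A(t))^{(1)}$ is parallel along $(\gamma,(a,\hat{\gamma}))$ for the product connection of $\hat{g}^{(1)}$ — this follows because parallel transport along $(a,\hat{\gamma})$ is the identity on the $\R\pa{r}$-factor and $P_0^t(\hat{\gamma})$ on the $T\hat{M}$-factor (Eq.~\eqref{eq:product_connection}), because $\ker A(t)=P_0^t(\gamma)(\ker A_0)$, and because all maps involved preserve orientation, which pins down the sign on the kernel; and (ii) the no-slipping condition $\dot{\hat{\gamma}}^{(1)}(t)=(A(t))^{(1)}\dot{\gamma}(t)$, which on the $T\hat{M}$-factor reduces to $\dot{\hat{\gamma}}(t)=A(t)\dot{\gamma}(t)$ and on the $\R\pa{r}$-factor is precisely the definition of $a(t)$.

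Granting this, applying $\Pi$ gives $\Pi\circ q^{(1)}_{\RDist}(\gamma,\iota_{a_0}(q_0))=q_{\RDist}(\gamma,q_0)$ as curves. Differentiating at $t=0$ with $\dot{\gamma}(0)=X$ and using that a rolling curve is tangent to the respective rolling distribution gives $\Pi_*\LRD^{(1)}(X)\big|_{\iota_{a_0}(q_0)}=\LRD(X)\big|_{q_0}$, which also reproves $\Pi_*\RDist^{(1)}\supseteq\RDist$ pointwise. For the orbit identity, recall that every $\RDist$-admissible curve in $Q$ (resp. $\RDist^{(1)}$-admissible curve in $Q^{(1)}$) is a rolling curve along its projection to $M$ (Proposition~\ref{pr:rolling_curves}); combining this with the curve comparison and taking endpoints at $t=1$ yields both inclusions between $\mc{O}_{\RDist}(q_0)$ and $\Pi(\mc{O}^{(1)}_{\RDist}(\iota_{a_0}(q_0)))$, the relevant rolling curves being simultaneously defined on all of $[0,1]$ since the construction of $a(t)$ never obstructs. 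The main obstacle I anticipate is the bookkeeping in step (i): tracking how $\ker A(t)$ moves under parallel transport and checking that the orientation conventions force $(A(t))^{(1)}$ to coincide with the parallel transport of $A_0^{(1)}$, rather than differing by a sign on the kernel; once the product-connection splitting is set up, the rest is routine.
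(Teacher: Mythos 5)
Your proposal is correct and follows essentially the same route as the paper: lift the rolling curve of $Q$ along $\gamma$ to $Q^{(1)}$ by prescribing the extra $\R$-coordinate through the integral of the $\pa{r}$-component and using the splitting of parallel transport in the product $\R\times\hat{M}$, then project by $\Pi$ (with $\Pi\circ\iota_a=\id_Q$ giving the embedding and submersion claims). The only cosmetic difference is that you build the lifted isometry pointwise as $(A(t))^{(1)}$ and verify it is parallel, whereas the paper defines it directly as the parallel transport of $\iota_{a_0}(A_0)$ and verifies the no-slip condition; the two computations are equivalent.
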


\begin{proof}
Let $\gamma$ be a path in $M$ starting at $x_0$
and $q(t)=(\gamma(t),\hat{\gamma}(t);A(t)):=q_{\RDist}(\gamma,q_0)(t)$.
We define
a path on $q^{(1)}(t)=(\gamma(t),\hat{\gamma}^{(1)}(t);A^{(1)}(t))$
on $Q^{(1)}$ as follows:
\[
\hat{\gamma}^{(1)}(t):=&\Big(a_0+\int_0^t \iota_{a_0}(A_0)p^{T}(A_0)P_s^0(\gamma)\dot{\gamma}(s)\diff s,\hat{\gamma}(t)\Big) \\
A^{(1)}(t):=&P_0^t(\hat{\gamma}^{(1)})\circ \iota_{a_0}(A_0)\circ P_t^0(\gamma),
\]
where for every $q=(x,\hat{x};A)\in Q$ one defines
\[
& p^\perp(A):T|_x M\to (\ker A)^\perp \\
& p^T(A):T|_x M\to \ker A,
\]
as the $g$-orthogonal projections.

We will show that $q^{(1)}$ is the rolling curve on $Q^{(1)}$ starting from $\iota_{a_0}(q_0)$.
Indeed, clearly $q^{(1)}(0)=(\gamma(0),(a_0,\hat{\gamma}(0));\iota_{a_0}(A_0))=\iota_{a_0}(q_0)$
and $A^{(1)}(t)\in Q^{(1)}$ for all $t$.
Also, on one hand 
\[
\dot{\hat{\gamma}}^{(1)}(t)
=\big(b(t)\pa{r}\big|_{\hat{\gamma}^{(1)}(t)},\dot{\hat{\gamma}}(t)\big),
\]
where $b(t)$ is defined by $\iota_{a_0}(A_0)p^{T}(A_0)P_t^0(\gamma)\dot{\gamma}(t)=:(b(t)\pa{r}|_{(\hat{x}_0,a_0)},0)$,
while on the other hand
\[
A^{(1)}(t)\dot{\gamma}(t)
=&P_0^t(\hat{\gamma}^{(1)})\iota_{a_0}(A_0)P_t^0(\gamma)\dot{\gamma}(t) \\
=&P_0^t(\hat{\gamma}^{(1)})\iota_{a_0}(A_0)(p^T(A_0)+p^\perp(A_0))P_t^0(\gamma)\dot{\gamma}(t).
\]
Now since $A_0X=A_0p^{\perp}(A_0)X$ for every $X\in T|_{x_0} M$, we have
\[
A(t)\dot{\gamma}(t)=&P_0^t(\hat{\gamma})A_0P_t^0(\gamma)\dot{\gamma}(t) \\
=&P_0^t(\hat{\gamma})A_0p^{\perp}(A_0)P_t^0(\gamma)\dot{\gamma}(t)
\]
and also
\[
\iota_{a_0}(A_0)p^T(A_0)X=(\iota_{a_0}(A_0)p^T(A_0)X,0).
\]
Since $M^{(1)}$ is a Riemannian product, we have for every $\hat{X}\in T|_{\hat{x}_0} \hat{M}\subset T|_{(a_0,\hat{x}_0)} (\R\times\hat{M})$
\[
P_0^t(\hat{\gamma}^{(1)})\hat{X}=&(0,P_0^t(\hat{\gamma})\hat{X}) \\
P_0^t(\hat{\gamma}^{(1)})\pa{r}|_{\iota_{a_0}(q_0)}=&\pa{r}\big|_{\gamma^{(1)}(t)}
\]
where the latter implies that since $\iota_{a_0}(A_0)p^T(A_0)P_t^0(\gamma)\dot{\gamma}(t)=(b(t)\pa{r}\big|_{(\hat{x}_0,a_0)},0)$,
then
\[
P_0^t(\hat{\gamma}^{(1)})\iota_{a_0}(A_0)p^T(A_0)P_t^0(\gamma)\dot{\gamma}(t)
=(b(t)P_0^t(\gamma^{(1)})\pa{r}\big|_{(\hat{x}_0,a_0)},0)=(b(t)\pa{r}\big|_{\gamma^{(1)}(t)},0).
\]
Therefore,
\[
A^{(1)}(t)\dot{\gamma}(t)
=&\big(b(t)\pa{r}\big|_{\gamma^{(1)}(t)},P_0^t(\hat{\gamma})\iota_{a_0}(A_0)p^\perp(A_0)P_t^0(\gamma)\dot{\gamma}(t)\big) \\
=&\big(b(t)\pa{r}\big|_{\gamma^{(1)}(t)},P_0^t(\hat{\gamma})A_0P_t^0(\gamma)\dot{\gamma}(t)\big)
=(b(t)\pa{r}\big|_{\gamma^{(1)}(t)},A(t)\dot{\gamma}(t)) \\
=&(b(t)\pa{r}\big|_{\gamma^{(1)}(t)},\dot{\hat{\gamma}}(t))=\dot{\hat{\gamma}}^{(1)}(t).
\]
This and the definition of $A^{(1)}(t)$ show
that
$q^{(1)}(t)=q_{\RDist^{(1)}}(\gamma,\iota_{a_0}(q_0))(t)$ for all $t$.

Finally, since $A^{(1)}(t)=P_0^t(\hat{\gamma}^{(1)})\iota_{a_0}(A_0)(p^T(A_0)+p^\perp(A_0))P_t^0(\gamma)$
and by what was said above about parallel transport in Riemannian product,
it follows that
\[
(\pr_1)_*A^{(1)}(t)=P_0^t(\hat{\gamma}^{(1)})\iota_{a_0}(A_0)p^\perp(A_0)P_t^0(\gamma)
=P_0^t(\gamma)\iota_{a_0}(A_0)P_t^0(\gamma),
\]
which proves that
\[
\Pi\big(q_{\RDist^{(1)}}(\gamma,\iota_{a_0}(q_0))(t)\big)=q_{\RDist}(\gamma,q_0)(t)
\]
and hence $\mc{O}_{\RDist}(q_0)\subset \Pi\big(\mc{O}_{\RDist^{(1)}}(\iota_{a_0}(q_0))\big)$
as well as
\[
\Pi_*\LRD^{(1)}(\dot{\gamma}(0))|_{\iota_{a_0}(q_0)})
=\Pi_*\dot{q}_{\RDist^{(1)}}(\gamma,\iota_{a_0}(q_0))(0)
=\dot{q}_{\RDist}(\gamma,q_0)(0)=\LRD(\dot{\gamma}(0))|_{q_0}.
\]

Finally, if $q^{(1)}=(x,(a,\hat{x});A^{(1)})\in \mc{O}_{\RDist^{(1)}}(\iota_{a_0}(q_0))$,
take a path $\gamma$ in $M$ starting from $x_0$
such that $q^{(1)}=q_{\RDist^{(1)}}(\gamma,q_0)(1)$.
By what was done above, it follows that
$\Pi\big(q_{\RDist^{(1)}}(\gamma,\iota_{a_0}(q_0))(t)\big)=q_{\RDist}(\gamma,q_0)(t)$
and thus, evaluating this at $t=1$
gives $\Pi(q^{(1)})\in \mc{O}_{\RDist}(q_0)$,
whence $\Pi\big(\mc{O}_{\RDist^{(1)}}(\iota_{a_0}(q_0))\big)\subset \mc{O}_{\RDist}(q_0)$.

The claim that $\iota_a$ is an embedding for every $a\in\R$ is obvious.
We still want to prove that $\Pi$ is a surjective submersion.
This follows trivially from $\Pi\circ\iota_a=\id_Q$.
End of the proof.
\end{proof}

\begin{corollary}\label{cor:hatM_codim1}
With the assumptions and notations of Proposition \ref{pr:hatM_codim1},
if the orbit $\mc{O}_{\RDist}(q_0)$ is not open in $Q$ for some $q_0\in Q$,
then $\mc{O}_{\RDist^{(1)}}(\iota_{a_0}(q_0))$
is not open in $Q^{(1)}$.
\end{corollary}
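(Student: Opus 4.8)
The corollary is an immediate consequence of Proposition \ref{pr:hatM_codim1} combined with the elementary fact that a submersion between smooth manifolds is an open map. The plan is to argue by contraposition: I will show that if $\mc{O}_{\RDist^{(1)}}(\iota_{a_0}(q_0))$ is open in $Q^{(1)}$, then $\mc{O}_{\RDist}(q_0)$ must be open in $Q$.

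\textbf{Key steps.} First I would recall from Proposition \ref{pr:hatM_codim1} that the map $\Pi:Q^{(1)}\to Q$, $\Pi(x,(a,\hat{x}),A^{(1)})=(x,\hat{x};(\pr_2)_*\circ A^{(1)})$, is a surjective submersion and that it satisfies
\[
\Pi\big(\mc{O}_{\RDist^{(1)}}(\iota_{a_0}(q_0))\big)=\mc{O}_{\RDist}(q_0).
\]
Second, I would invoke the standard fact that a submersion $\Pi$ is an open map: for each $q^{(1)}\in Q^{(1)}$ one may choose submersion charts in which $\Pi$ becomes a coordinate projection $\R^{N+k}\to\R^{N}$, which is manifestly open, and openness is a local property of the target. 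Third, assuming for contradiction that $\mc{O}_{\RDist^{(1)}}(\iota_{a_0}(q_0))$ is open in $Q^{(1)}$, its image $\Pi\big(\mc{O}_{\RDist^{(1)}}(\iota_{a_0}(q_0))\big)$ is open in $Q$; but by the displayed identity this image is exactly $\mc{O}_{\RDist}(q_0)$, so $\mc{O}_{\RDist}(q_0)$ is open in $Q$, contradicting the hypothesis. Hence $\mc{O}_{\RDist^{(1)}}(\iota_{a_0}(q_0))$ is not open in $Q^{(1)}$.

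\textbf{Main obstacle.} There is essentially no obstacle here; the content has already been carried by Proposition \ref{pr:hatM_codim1}, and the only additional ingredient is the openness of submersions, which is entirely standard. The one point worth a sentence of care is to make sure that "open orbit" is read as "open as a subset of the ambient manifold" (as opposed to merely being an immersed submanifold), so that the words "open in $Q^{(1)}$" and "open in $Q$" match the way the notion is used throughout Section \ref{se:3D}; with that reading the argument above is complete.
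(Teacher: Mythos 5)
Your proposal is correct and follows exactly the paper's own argument: contraposition via the identity $\Pi\big(\mc{O}_{\RDist^{(1)}}(\iota_{a_0}(q_0))\big)=\mc{O}_{\RDist}(q_0)$ from Proposition \ref{pr:hatM_codim1} together with the openness of the submersion $\Pi$. Nothing further is needed.
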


\begin{proof}
Suppose $\mc{O}_{\RDist^{(1)}}(\iota_{a_0}(q_0))$ were open in $Q^{(1)}$,
then since $\Pi:Q^{(1)}\to Q$ is a smooth submersion,
it is an open map and hence its image
$\Pi(\mc{O}_{\RDist^{(1)}}(\iota_{a_0}(q_0)))=\mc{O}_{\RDist}(q_0)$
is open. End of the proof.

\end{proof}

As a consequence of this corollary and Theorem \ref{th:3D-1}
we get the following theorem concerning 
non-controllability in the case $n=3$, $\hat{n}=2$,
whence $\dim Q=8$.
Recall that $Q=Q(M,\hat{M})$ is connected
and thus the rolling problem $(R)$
is non-controllable if and only if there exists a $q_0\in Q$
such that $\mc{O}_{\RDist}(q_0)$ is not open in $Q$.

\begin{theorem}
Let $n=\dim M=3$, $\hat{n}=\dim \hat{M}=2$
and $q_0=(x_0,\hat{x}_0;A_0)\in Q$.

If the orbit $\mc{O}_{\RDist}(q_0)$
is not open in $Q$, then there exists an open dense subset $O$ of $\mc{O}_{\RDist}(q_0)$
such that for every $q_1=(x_1,\hat{x}_1;A_1)\in O$
there is an open neighbourhood $U$ of $x_1$
for which it holds that
$(U,g|_U)$ is isometric to some warped product $(I\times N,h_f)$,
where $I\subset\R$ is an open interval and the warping function $f$ satisfying $f''=0$.
\end{theorem}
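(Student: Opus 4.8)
The strategy is to reduce the $n=3$, $\hat n=2$ situation to the equal-dimensional case $n=\hat n=3$ via Proposition \ref{pr:hatM_codim1} and Corollary \ref{cor:hatM_codim1}, and then invoke Theorem \ref{th:3D-1} for the product manifold $\hat{M}^{(1)}=\R\times\hat{M}$. First I would set $\hat{M}^{(1)}=(\R\times\hat{M},s_1\oplus\hat{g})$, which is a $3$-dimensional Riemannian manifold, form $Q^{(1)}=Q(M,\hat{M}^{(1)})$ with its rolling distribution $\RDist^{(1)}$, and fix $a_0\in\R$ so that $\iota_{a_0}(q_0)\in Q^{(1)}$ is defined as in Proposition \ref{pr:hatM_codim1}. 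By Corollary \ref{cor:hatM_codim1}, the hypothesis that $\mc{O}_{\RDist}(q_0)$ is not open in $Q$ forces $\mc{O}_{\RDist^{(1)}}(\iota_{a_0}(q_0))$ to be non-open in $Q^{(1)}$; in particular the rolling problem $(R)$ for $(M,g)$ against $(\hat{M}^{(1)},\hat{g}^{(1)})$ is not completely controllable. Then Theorem \ref{th:3D-1} applies to $\mc{O}_{\RDist^{(1)}}(\iota_{a_0}(q_0))$: there is an open dense subset $O^{(1)}$ of this orbit such that each point $q_1^{(1)}=(x_1,\hat{x}_1^{(1)};A_1^{(1)})\in O^{(1)}$ admits neighbourhoods $U\ni x_1$, $\hat{U}^{(1)}\ni\hat{x}_1^{(1)}$ with $(U,g|_U)$, $(\hat{U}^{(1)},\hat{g}^{(1)}|_{\hat{U}^{(1)}})$ falling into one of the three cases (a), (b), (c) of that theorem.

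The next step is to rule out cases (a) and (b) using the special structure of $\hat{M}^{(1)}=\R\times\hat{M}$. Since $\hat{M}^{(1)}$ is a Riemannian product with an $\R$-factor, locally around any point it is a warped product $(\R\times\hat{M},s_1\oplus\hat{g})$ with constant warping function $\equiv 1$; equivalently its curvature operator always has a kernel containing $\hat{\star}(\pa{r}\wedge(\cdot))$, so $\hat{M}^{(1)}$ is nowhere of class $\mc{M}_\beta$ with $\beta>0$ (such manifolds are contact and have no parallel line field / no direct $\R$-factor — this should be extracted from the properties of $\mc{M}_\beta$ recalled in Appendix \ref{app:m_beta}). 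This excludes case (b) for $\hat{M}^{(1)}$ and hence, by symmetry of the classification, for $U$ as well. Case (a), local isometry of $(U,g)$ and $(\hat{U}^{(1)},\hat{g}^{(1)})$, would make $(U,g)$ a Riemannian product with an $\R$-factor, which is a particular warped product with $f\equiv 1$, hence $f''=0$; so case (a) is actually subsumed under the desired conclusion. Thus we are left with case (c): $(U,g|_U)$ is isometric to a warped product $(I\times N,h_f)$ and $(\hat{U}^{(1)},\hat{g}^{(1)}|_{\hat{U}^{(1)}})$ to $(I\times\hat{N},\hat{h}_{\hat{f}})$ with $f,\hat{f}$ satisfying either (A) $f'/f=\hat{f}'/\hat{f}$ or (B) $f''/f=-K=\hat{f}''/\hat{f}$ on $I$.

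Now I would exploit that $\hat{M}^{(1)}$ is a product: the factor $(\hat{U}^{(1)},\hat{g}^{(1)})$ must itself be presentable with constant warping function. If $(\hat{U}^{(1)},\hat{g}^{(1)})\cong(I\times\hat{N},\hat{h}_{\hat{f}})$ and at the same time $\hat{g}^{(1)}=s_1\oplus\hat{g}$ near the point, then one can arrange (after possibly shrinking and reparametrising $I$) that $\hat f$ is the warping function of a $2$-dimensional space form direction coming from $\hat{M}$; more to the point, the geodesic $r\mapsto(r,\hat y)$ together with the $\R$-factor forces $\hat f''\equiv 0$ — indeed a $\hat{g}^{(1)}$-parallel unit vector field exists (the $\pa{r}$ of $\R\times\hat M$), and in a warped-product presentation $(I\times\hat N,\hat h_{\hat f})$ a parallel unit vector field orthogonal to the slices exists precisely when $\hat f$ is constant, while one along the slices exists only in degenerate situations; a short computation with the O'Neill warped-product connection formulas (Proposition 35, Chapter 7 of \cite{oneill83}, already used in Section \ref{se:3D}) pins down $\hat f''=0$. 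In case (B) this immediately gives $K=-\hat f''/\hat f=0$, hence $f''=-Kf=0$ on $I$; in case (A), $f'/f=\hat f'/\hat f$ with $\hat f$ affine and $\hat f''=0$ yields (differentiating $f'/f$) $f''/f=(f'/f)^2+(f'/f)'=(\hat f'/\hat f)^2+(\hat f'/\hat f)'=\hat f''/\hat f=0$, so again $f''=0$. Finally, set $O:=\Pi(O^{(1)})$, where $\Pi:Q^{(1)}\to Q$ is the surjective submersion of Proposition \ref{pr:hatM_codim1}; since $\Pi$ restricted to $\mc{O}_{\RDist^{(1)}}(\iota_{a_0}(q_0))$ covers $\mc{O}_{\RDist}(q_0)$ and is open, $O$ is open and dense in $\mc{O}_{\RDist}(q_0)$, and for $q_1=(x_1,\hat{x}_1;A_1)\in O$ the associated neighbourhood $U$ of $x_1$ is exactly the one produced above, so $(U,g|_U)$ is isometric to a warped product with $f''=0$.

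\textbf{Main obstacle.} The delicate point is the passage from ``$(\hat{U}^{(1)},\hat{g}^{(1)})$ is locally a product $\R\times\hat{M}$'' to ``in any warped-product presentation its warping function is affine'', i.e. ruling out case (b) and reading off $f''=0$ from case (c): one must show that a $3$-manifold which is locally a Riemannian product with an $\R$-factor cannot be of class $\mc{M}_\beta$ with $\beta>0$ and that its warping functions (in any warped-product splitting with one-dimensional base) satisfy $\hat f''\equiv 0$, using only the characterisations in Appendix \ref{app:m_beta} and the O'Neill formulas. I also need to check carefully that the point $q_1=\Pi(q_1^{(1)})$ in $O$ has first-manifold neighbourhood $U$ literally the one given by Theorem \ref{th:3D-1}, which is immediate since $\Pi$ acts as the identity on the $M$-coordinate. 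Everything else is bookkeeping with Proposition \ref{pr:hatM_codim1}, Corollary \ref{cor:hatM_codim1} and Theorem \ref{th:3D-1}.
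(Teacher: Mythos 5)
Your proposal is correct and follows essentially the same route as the paper: reduce to the equal-dimensional problem via $\hat{M}^{(1)}=\R\times\hat{M}$ using Proposition \ref{pr:hatM_codim1} and Corollary \ref{cor:hatM_codim1}, apply Theorem \ref{th:3D-1} to the non-open orbit upstairs, observe that case (b) is impossible and case (a) already gives a product (so $f\equiv 1$), and in case (c) use the curvature structure of the product $\R\times\hat{M}$ to force $\hat f''=0$ and hence $f''=0$ in both subcases (A) and (B), finally projecting $O^{(1)}$ by $\Pi$. The only cosmetic difference is that you justify $\hat f''=0$ via the parallel field $\pa{r}$ and the O'Neill formulas, whereas the paper reads it off directly from the eigenvalues $0,-\hat{\sigma},0$ of $\hat{R}^{(1)}$; both amount to the same observation.
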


\begin{proof}
Let $(\hat{M}^{(1)},\hat{g}^{(1)})$ be the Riemannian product $(\R\times \hat{M},s_1\oplus \hat{g})$
and let $a_0\in\R$.

Since the orbit $\mc{O}_{\RDist}(q_0)$ is not open in $Q$,
it follows from Corollary \ref{cor:hatM_codim1} that the orbit $\mc{O}_{\RDist^{(1)}}(\iota_{a_0}(q_0))$ is not open in $Q^{(1)}$.
But then Theorem \ref{th:3D-1}
provides an open dense subset $O^{(1)}$ of $\mc{O}_{\RDist^{(1)}}(\iota_{a_0}(q_0))$
such that one of (a)-(c) there holds.
Then $O:=\Pi(O^{(1)})$ is open an dense in $\mc{O}_{\RDist}(q_0)$.
Let $q_1=(x_1,\hat{x}_1;A_1)\in O$, choose $q_1^{(1)}\in O^{(1)}$ such that $\Pi(q^{(1)})=q_1$,
whence $q^{(1)}=\iota_{a_1}(q_1)$ for some $a_1\in\R$.

Let $U,\hat{U}^{(1)}$ be the neighbourhoods of $x_1,(a_1,\hat{x}_1)$
as in Theorem \ref{th:3D-1} corresponding to $q_1^{(1)}$. We choose $\hat{U}^{(1)}$ to be of the form $\hat{U}\times I$ for $I\subset\R$
an open interval and $\hat{U}\subset \hat{M}$ open.

If (a) there holds, it means that $(U,g|_U)$ is (locally) isometric to the Riemannian product $I\times \hat{U}$
(hence we have $f=1$).

The case (b) cannot occur, since $(\hat{U}^{(1)},\hat{g}^{(1)}|_{\hat{U}^{(1)}})$,
as a Riemannian product, cannot be of class $\mc{M}_{\beta}$ for $\beta>0$.

Suppose therefore that (c) holds.
Let $F:(I\times N,h_f)\to U$ and $\hat{F}:(\hat{I}\times \hat{N},\hat{h}_{\hat{f}})\to \hat{U}$
be the isomorphisms.
The eigenvalues of the curvature tensor $\hat{R}^{(1)}|_{(a_1,\hat{x})}$ being $0,-\hat{\sigma}(\hat{x}),0$,
with $\hat{\sigma}(\hat{x})$ the sectional curvature of $(\hat{M},\hat{g})$ at $\hat{x}$,
we see that the warping function $f$ must satisfy $f''=0$.
\end{proof}

\begin{proposition}\label{pr:M_codim1}
Let $(M,g)$, $(\hat{M},\hat{g})$ be oriented Riemannian manifolds
of dimensions $n=\hat{n}-1$ and $\hat{n}$, with $\hat{n}\geq 2$.
Define $(M^{(1)},g^{(1)})$ to be the Riemannian product $(\R\times M,s_1\oplus g)$,
with the obvious orientation,
and write $Q^{(1)}=Q(M^{(1)},\hat{M})$
and let $\LRD^{(1)}$, $\RDist^{(1)}$ to be the rolling lift and the rolling distribution on $Q^{(1)}$.
We define for every $a\in\R$,
\[
\iota_a:Q\to Q^{(1)};
\quad \iota_a(x,\hat{x};A)=((a,x),\hat{x};A^{(1)}),
\]
where $A^{(1)}:T|_{(a,x)} (M\times\R)\to T|_{\hat{x}} \hat{M}$ is defined as follows:
$A^{(1)}\in Q^{(1)}$ and
\[
A^{(1)}|_{T|_x M}=&A \\
A^{(1)}\pa{r}\big|_{(a,x)}\in &(\IM A)^\perp,
\]
where $\pa{r}$ is the canonical vector field on $\R$ in the positive direction,
which we consider to be a vector field on $M^{(1)}$ in the usual way.

Then for every $a\in\R$ the map $\iota_a$ is an embedding and
for every $q_0=(x_0,\hat{x}_0;A_0)\in Q$, $a_0\in\R$ and $X\in T|_x M\subset T|_{(a,x)} (\R\times M)$ one has
\[
(\iota_{a_0})_*\LRD(X)|_{q_0}=&\LRD^{(1)}(X)|_{\iota_{a_0}(q_0)}.
\]
Moreover, if one defines
\[
\Pi:Q^{(1)}\to Q;\quad \Pi((x,a),\hat{x};A^{(1)})=(x,\hat{x};A\circ  (i_a)_*),
\]
where $i_a:M\to \R\times M$; $x\mapsto (a,x)$
and if $\Delta_{\mathrm{R}}$
is the subdistribution of $\RDist^{(1)}$ defined by
\[
\Delta_{\mathrm{R}}|_{q^{(1)}}=(\iota_a)_*\RDist|_{\Pi(q^{(1)})},\quad \forall q^{(1)}=((a,x),\hat{x};A^{(1)})\in Q^{(1)},
\]
then
\[
\iota_{a_0}(\mc{O}_{\RDist}(q_0))=&\mc{O}_{\Delta_{\mathrm{R}}}(\iota_{a_0}(q_0))\subset \mc{O}_{\RDist^{(1)}}(\iota_{a_0}(q_0)).
\]
\end{proposition}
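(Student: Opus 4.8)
The plan is to follow the blueprint of Proposition~\ref{pr:hatM_codim1}, using that $M^{(1)}=\R\times M$ is a Riemannian product so that parallel transport along curves with constant $\R$-coordinate decomposes, and that since here $\dim M^{(1)}=\hat n=\dim\hat M$ the manifold $Q^{(1)}$ is of the equal-dimension type.

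First I would record the elementary bookkeeping. Writing $i_a:M\to M^{(1)}$, $x\mapsto(a,x)$, the differential $(i_a)_*$ is just the canonical inclusion $T|_xM\hookrightarrow T|_{(a,x)}M^{(1)}=\R\pa{r}|_{(a,x)}\oplus T|_xM$, so the defining property $A^{(1)}|_{T|_xM}=A$ of $\iota_a$ gives $A^{(1)}\circ(i_a)_*=A$, i.e. $\Pi\circ\iota_a=\id_Q$ for every $a\in\R$. Hence each $\iota_a$ is an injective immersion with smooth left inverse, so an embedding, and $\Pi$ is a surjective submersion; moreover $\tilde Q:=\bigcup_{a\in\R}\iota_a(Q)$ is an embedded submanifold of $Q^{(1)}$, diffeomorphic to $\R\times Q$ via $q^{(1)}\mapsto(r(q^{(1)}),\Pi(q^{(1)}))$, where $r:Q^{(1)}\to\R$ returns the $\R$-component of the contact point in $M^{(1)}$. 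Under this diffeomorphism $\Delta_{\mathrm R}$ corresponds to $\{0\}\oplus\RDist$, so it is a smooth rank-$n$ distribution on $\tilde Q$ tangent to every leaf $\iota_a(Q)$; once the lift formula below is proved it follows that $\Delta_{\mathrm R}$ is a subdistribution of $\RDist^{(1)}|_{\tilde Q}$.

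Next I would establish the lift formula. Fix $q_0=(x_0,\hat x_0;A_0)\in Q$, $a_0\in\R$ and an a.c. curve $\gamma$ in $M$ with $\gamma(0)=x_0$, put $\tilde\gamma(t)=(a_0,\gamma(t))$ and let $(\gamma,\hat\gamma;A)=q_{\RDist}(\gamma,q_0)$. Since $\tilde\gamma$ has constant $\R$-coordinate and $M^{(1)}$ is a product, parallel transport along $\tilde\gamma$ splits as $P_t^0(\tilde\gamma)|_{T|_{\gamma(t)}M}=P_t^0(\gamma)$ and $P_t^0(\tilde\gamma)\pa{r}|_{(a_0,\gamma(t))}=\pa{r}|_{(a_0,x_0)}$. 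Set $A^{(1)}(t):=P_0^t(\tilde\gamma,\hat\gamma)(\iota_{a_0}(A_0))$; by Proposition~\ref{pr:2.1:1} this stays in $Q^{(1)}$ and equals $P_0^t(\hat\gamma)\circ\iota_{a_0}(A_0)\circ P_t^0(\tilde\gamma)$. Using the splitting together with $\iota_{a_0}(A_0)|_{T|_{x_0}M}=A_0$, $\iota_{a_0}(A_0)\pa{r}|_{(a_0,x_0)}\in(\IM A_0)^\perp$, and that the isometry $P_0^t(\hat\gamma)$ carries $\IM A_0$ onto $\IM A(t)$, one checks directly that $A^{(1)}(t)|_{T|_{\gamma(t)}M}=A(t)$ and $A^{(1)}(t)\pa{r}|_{(a_0,\gamma(t))}\in(\IM A(t))^\perp$, i.e. $((a_0,\gamma(t)),\hat\gamma(t);A^{(1)}(t))=\iota_{a_0}(q_{\RDist}(\gamma,q_0)(t))$. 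Moreover $A^{(1)}(t)\dot{\tilde\gamma}(t)=P_0^t(\hat\gamma)\iota_{a_0}(A_0)P_t^0(\gamma)\dot\gamma(t)=A(t)\dot\gamma(t)=\dot{\hat\gamma}(t)$ and $A^{(1)}$ is $\ol{\nabla}$-parallel along $(\tilde\gamma,\hat\gamma)$ by construction, so $t\mapsto\iota_{a_0}(q_{\RDist}(\gamma,q_0)(t))$ is the rolling curve $q_{\RDist^{(1)}}(\tilde\gamma,\iota_{a_0}(q_0))$. Differentiating at $t=0$ with $X=\dot\gamma(0)\in T|_{x_0}M$ yields $(\iota_{a_0})_*\LRD(X)|_{q_0}=\LRD^{(1)}(X)|_{\iota_{a_0}(q_0)}$, which is the second claim and also gives $(\iota_a)_*\RDist\subset\RDist^{(1)}$, hence $\Delta_{\mathrm R}\subset\RDist^{(1)}|_{\tilde Q}$.

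Finally the orbit identity. The inclusion $\mc{O}_{\Delta_{\mathrm R}}(\iota_{a_0}(q_0))\subset\mc{O}_{\RDist^{(1)}}(\iota_{a_0}(q_0))$ is immediate, since every $\Delta_{\mathrm R}$-admissible curve is $\RDist^{(1)}$-admissible because $\Delta_{\mathrm R}\subset\RDist^{(1)}|_{\tilde Q}$. For $\iota_{a_0}(\mc{O}_{\RDist}(q_0))\subset\mc{O}_{\Delta_{\mathrm R}}(\iota_{a_0}(q_0))$, any point of $\mc{O}_{\RDist}(q_0)$ is $q_{\RDist}(\gamma,q_0)(1)$, and by the previous step $t\mapsto\iota_{a_0}(q_{\RDist}(\gamma,q_0)(t))$ is an a.c. curve issuing from $\iota_{a_0}(q_0)$ whose velocity lies in $(\iota_{a_0})_*\RDist=\Delta_{\mathrm R}$, hence it is $\Delta_{\mathrm R}$-admissible. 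For the reverse inclusion, let $c$ be a $\Delta_{\mathrm R}$-admissible curve with $c(0)=\iota_{a_0}(q_0)$; since $r\circ\iota_a\equiv a$ we have $r_*\circ(\iota_a)_*=0$, so $\Delta_{\mathrm R}$ is tangent to the fibres of $r$ and $t\mapsto r(c(t))$ is constant equal to $a_0$; thus $c$ stays in the slice $\iota_{a_0}(Q)$ and, $\iota_{a_0}$ being an embedding, $c=\iota_{a_0}\circ c'$ for a unique a.c. curve $c'$ in $Q$ with $c'(0)=q_0$; injectivity of $(\iota_{a_0})_*$ then forces $\dot c'(t)\in\RDist|_{c'(t)}$ a.e., so $c'$ is $\RDist$-admissible and $c(1)=\iota_{a_0}(c'(1))\in\iota_{a_0}(\mc{O}_{\RDist}(q_0))$. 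The only delicate point in the whole argument is precisely this last step: confirming that a $\Delta_{\mathrm R}$-admissible curve cannot leave the slice $\iota_{a_0}(Q)$, which is exactly the conservation of the $\R$-coordinate along $\Delta_{\mathrm R}$; once that is in place everything reduces to the lift formula together with the embedding property of $\iota_{a_0}$.
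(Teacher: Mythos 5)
Your argument is correct, and its core is the same as the paper's: you lift a rolling curve of $Q$ along $\tilde\gamma(t)=(a_0,\gamma(t))$, use the splitting of parallel transport in the Riemannian product $\R\times M$ to identify $\iota_{a_0}\circ q_{\RDist}(\gamma,q_0)$ with $q_{\RDist^{(1)}}(\tilde\gamma,\iota_{a_0}(q_0))$, and differentiate at $t=0$ to obtain the lift formula — this is exactly the paper's computation. Where you diverge is the final identity $\mc{O}_{\Delta_{\mathrm R}}(\iota_{a_0}(q_0))=\iota_{a_0}(\mc{O}_{\RDist}(q_0))$: the paper disposes of it in one line by noting that $\iota_{a_0}$ restricted to the orbit is an immersion carrying $\RDist$ onto $\Delta_{\mathrm R}$, whereas you use the conserved $\R$-coordinate (with $r$ the $\R$-component of the contact point in $M^{(1)}$, $\Delta_{\mathrm R}\subset\ker r_*$) to confine every $\Delta_{\mathrm R}$-admissible curve to the slice $r^{-1}(a_0)$, and then pull it back through $\Pi$ to an $\RDist$-admissible curve in $Q$. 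This buys an explicit justification of exactly the point the paper leaves implicit — that admissible curves cannot escape the image — and it rests on the closed embedded slice rather than on the merely immersed orbit image, which is the cleaner mechanism. The one item you use without stating it is that $r^{-1}(a_0)=\iota_{a_0}(Q)$, equivalently $\iota_{a_0}\circ\Pi=\id$ on that slice; it is immediate (an element of $Q^{(1)}$ is an orientation-preserving isometry, hence the unique orientation-preserving extension of its restriction to $T|_x M$), but it should be recorded, since both the inclusion $\Delta_{\mathrm R}\subset\RDist^{(1)}$ at every point of the slice and your step ``thus $c$ stays in $\iota_{a_0}(Q)$'' rely on it.
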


\begin{proof}
The claim that $\iota_a$ is an embedding is obvious
and since $\Pi\circ\iota_a=\id_Q$, it follows that $\Pi$ is a submersion.

Let $\gamma$ be a path in $M$ starting from $x_0$
and $q(t)=(\gamma(t),\hat{\gamma}(t);A(t))=q_{\RDist}(\gamma,q_0)(t)$.
We define a path $q^{(1)}(t)=(\gamma^{(1)}(t),\hat{\gamma}(t);A^{(1)}(t))$
on $Q^{(1)}$ by
\[
\gamma^{(1)}(t)=&(a_0,\gamma(t)) \\
A^{(1)}(t)=&P_0^t(\hat{\gamma})\circ \iota_{a_0}(A_0)\circ P_0^t(\gamma^{(1)}).
\]
Since $M^{(1)}$ is a Riemannian product,
\[
P_0^t(\gamma^{(1)})\dot{\gamma}^{(1)}(t)
=P_0^t(\gamma^{(1)})(0,\dot{\gamma}(t))=(0,P_0^t(\gamma)\dot{\gamma}(t))
\]
and so
\[
A^{(1)}(t)\dot{\gamma}^{(1)}(t)=P_0^t(\hat{\gamma})\iota_{a_0}(A_0)(P_0^t(\gamma)\dot{\gamma}(t),0)
=P_0^t(\hat{\gamma})A_0P_0^t(\gamma)\dot{\gamma}(t)=A(t)\dot{\gamma}(t)=\dot{\hat{\gamma}}(t).
\]
Since also $q^{(1)}(0)=\iota_{a_0}(q_0)$, this proves that $q^{(1)}(t)=q_{\RDist^{(1)}}(\gamma^{(1)},\iota_{a_0}(q_0))(t)$ for all $t$.

Next notice that
\[
\pi_{Q^{(1)}}(\iota_{a_0}(q(t)))=((a_0,\gamma(t)),\hat{\gamma}(t))=(\gamma^{(1)}(t),\hat{\gamma}(t))=\pi_{Q^{(1)}}(q^{(1)}(t))
\]
and if $X\in T|_x M\subset T|_{(a_0,x)}(\R\times M)$,
\[
A^{(1)}(t)(0,X)=A(t)X=\iota_{a_0}(A(t))X
\]
and since $A^{(1)}(t)T|_{\gamma(t)} M\perp A^{(1)}(t)\pa{r}\big|_{(\gamma^{(1)}(t)}$,
and $(\iota_{a_0}\circ A(t))T|_x M\perp (\iota_{a_0}\circ A(t))\pa{r}\big|_{(\gamma^{(1)}(t)}$,
we must have, by orientation,
\[
A^{(1)}(t)\pa{r}\big|_{(\gamma^{(1)}(t)}= (\iota_{a_0}\circ A(t))\pa{r}\big|_{(\gamma^{(1)}(t)}.
\]
This proves that $\iota_{a_0}(q(t))=q^{(1)}(t)$
and therefore
\[
\LRD^{(1)}((\dot{\gamma}(0),0))|_{\iota_{a_0}(q_0)}
=\LRD^{(1)}(\dot{\gamma}^{(1)}(0))|_{\iota_{a_0}(q_0)}
=\dot{q}^{(1)}(0)=(\iota_{a_0})_*\dot{q}(t)=(\iota_{a_0})_*\LRD(\dot{\gamma}(0))|_{q_0},
\]
which shows that for every $X\in T|_{x_0} M\subset T|_{(a_0,x_0)} (\R\times M)$ one has
\[
\LRD^{(1)}(X)|_{\iota_{a_0}(q_0)}=(\iota_{a_0})_*\LRD(X)|_{q_0}.
\]

Notice also that since we proved that $\iota_{a_0}(q_{\RDist}(\gamma,q_0)(t))=q_{\RDist^{(1)}}(\gamma,q_0)(t)$,
we have shown that
\[
\iota_{a_0}(\mc{O}_{\RDist}(q_0))\subset \mc{O}_{\RDist^{(1)}}(\iota_{a_0}(q_0)).
\]
Also, for every $q\in \mc{O}_{\RDist}(q_0)$ we have (recall that $\Pi\circ\iota_{a_0}=\id_Q$)
\[
\Delta_{\mathrm{R}}|_{\iota_{a_0}(q)}=(\iota_{a_0})_*\RDist|_{q}\subset T|_{\iota_{a_0}(q_0)} (\iota_{a_0}(\mc{O}_{\RDist}(q_0))).
\]
Finally, because $\iota_{a_0}|_{\mc{O}_{\RDist}(q_0)}$ is an immersion into $Q^{(1)}$ and
\[
\Delta_{\mathrm{R}}|_{\iota_{a_0}(\mc{O}_{\RDist}(q_0))}=(\iota_{a_0})_*\RDist|_{\mc{O}_{\RDist}(q_0)},
\]
we have
\[
\mc{O}_{\Delta_{\textrm{R}}}(\iota_{a_0}(q_0))=\iota_{a_0}(\mc{O}_{\RDist}(q_0)).
\]
The proof is finished.
\end{proof}

\begin{corollary}
With the assumptions and notations of Proposition \ref{pr:M_codim1},
if the orbit $\mc{O}_{\RDist}(q_0)$ is open in $Q$ for some $q_0\in Q$,
then $\mc{O}_{\RDist^{(1)}}(\iota_{a_0}(q_0))$
has codimension at most $1$ in $Q^{(1)}$.
\end{corollary}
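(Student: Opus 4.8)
The plan is to read this off almost immediately from Proposition~\ref{pr:M_codim1} together with a dimension count, so I would begin by recording the identity $\dim Q^{(1)}=\dim Q+1$. Since here $n=\dim M=\hat{n}-1\le\hat{n}=\dim\hat{M}$, the definition of the state space at the start of Section~\ref{diff-dim} gives $\dim Q=n+\hat{n}+n\hat{n}-\tfrac{n(n+1)}{2}$, which with $\hat{n}=n+1$ simplifies to $\dim Q=2n+1+\tfrac{n(n+1)}{2}$. On the other hand $M^{(1)}=\R\times M$ and $\hat{M}$ have equal dimension $\hat{n}=n+1$, so (recovering Definition~\ref{defA} in the oriented equidimensional case) $\dim Q^{(1)}=2\hat{n}+\tfrac{\hat{n}(\hat{n}-1)}{2}=2n+2+\tfrac{n(n+1)}{2}$. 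Subtracting, $\dim Q^{(1)}=\dim Q+1$.

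Next I would exploit the structure provided by Proposition~\ref{pr:M_codim1}. Assume $\mc{O}_{\RDist}(q_0)$ is open in $Q$, i.e.\ $\dim\mc{O}_{\RDist}(q_0)=\dim Q$. Because $\iota_{a_0}:Q\to Q^{(1)}$ is an embedding, its restriction to the open set $\mc{O}_{\RDist}(q_0)$ is an embedding onto a submanifold of $Q^{(1)}$ of dimension $\dim Q$; by the last displayed equality of Proposition~\ref{pr:M_codim1} this submanifold equals $\mc{O}_{\Delta_{\mathrm{R}}}(\iota_{a_0}(q_0))$, where $\Delta_{\mathrm{R}}$ is the subdistribution of $\RDist^{(1)}$ introduced there. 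Hence $\dim\mc{O}_{\Delta_{\mathrm{R}}}(\iota_{a_0}(q_0))=\dim Q$.

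Finally, since $\Delta_{\mathrm{R}}\subset\RDist^{(1)}$, the general inclusion for orbits of a subdistribution recorded in the preliminaries gives $\mc{O}_{\Delta_{\mathrm{R}}}(\iota_{a_0}(q_0))\subset\mc{O}_{\RDist^{(1)}}(\iota_{a_0}(q_0))$; moreover, by the Orbit Theorem the latter is an immersed submanifold of $Q^{(1)}$ and, since any smooth map into $Q^{(1)}$ whose image lies in the orbit is smooth into the orbit, $\mc{O}_{\Delta_{\mathrm{R}}}(\iota_{a_0}(q_0))$ is an immersed submanifold of $\mc{O}_{\RDist^{(1)}}(\iota_{a_0}(q_0))$. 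Therefore $\dim\mc{O}_{\RDist^{(1)}}(\iota_{a_0}(q_0))\ge\dim\mc{O}_{\Delta_{\mathrm{R}}}(\iota_{a_0}(q_0))=\dim Q=\dim Q^{(1)}-1$, i.e.\ $\mc{O}_{\RDist^{(1)}}(\iota_{a_0}(q_0))$ has codimension at most $1$ in $Q^{(1)}$, as claimed. The only point requiring care is this last comparison of dimensions, namely that an orbit of a subdistribution sits as an immersed submanifold inside the orbit of the larger distribution; this is exactly the sub-orbit property cited above, applied in $Q^{(1)}$, and beyond that the argument is just the bookkeeping of dimensions.
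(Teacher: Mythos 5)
Your proof is correct and follows essentially the same route as the paper: the dimension identity $\dim Q^{(1)}=\dim Q+1$ together with the inclusion $\iota_{a_0}(\mc{O}_{\RDist}(q_0))=\mc{O}_{\Delta_{\mathrm{R}}}(\iota_{a_0}(q_0))\subset\mc{O}_{\RDist^{(1)}}(\iota_{a_0}(q_0))$ from Proposition \ref{pr:M_codim1}, giving $\dim\mc{O}_{\RDist^{(1)}}(\iota_{a_0}(q_0))\geq\dim Q=\dim Q^{(1)}-1$. Your extra remark justifying why the dimensions can be compared (the sub-orbit being an immersed submanifold of the larger orbit) only makes explicit a step the paper leaves implicit.
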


\begin{proof}
The dimensions of $Q$ and $Q^{(1)}$ are $\dim Q=2\hat{n}-1+\frac{\hat{n}(\hat{n}-1)}{2}=\dim Q^{(1)}-1$,
so if $\mc{O}_{\RDist}(q_0)$ is open in $Q$, one has $\dim\mc{O}_{\RDist}(q_0)=\dim Q$ and thus,
\[
\dim  \mc{O}_{\RDist^{(1)}}(\iota_{a_0}(q_0))
\geq \dim \mc{O}_{\Delta_{\mathrm{R}}}(\iota_{a_0}(q_0))
=\dim \iota_{a_0}(\mc{O}_{\RDist}(q_0))
=\dim Q=\dim Q^{(1)}-1.
\]
\end{proof}

%%%%%%%%%%%%%%%%%%%%%%%
\subsection{Controllability Results}
%%%%%%%%%%%%%%%%%%%%%%%

\subsubsection{Rolling Problem $(NS)$}
Since Theorem \ref{th:NS_orbit_V} and Corollary \ref{cor:NS_orbit_V_inf} 
evidently hold as such in the case of non-equal dimensions (i.e., $n\neq\hat{n}$),
we will be more interested to see how Theorem \ref{theo-00}
could be formulated.
We first need a definition.

\begin{definition}
For $n,\hat{n}\geq 2$, we define
\[
\SO(n;\hat{n}):=\begin{cases}
\{A\in (\R^n)^*\otimes \R^{\hat{n}}\ |\ A^TA=\id_{\R^n}\}, & \mathrm{if}\ n<\hat{n}, \cr
\{A\in (\R^n)^*\otimes \R^{\hat{n}}\ |\ AA^T=\id_{\R^{\hat{n}}}\}, & \mathrm{if}\ n>\hat{n}, \cr
\SO(n), & \mathrm{if}\ n=\hat{n},
\end{cases}
\]
where $(\R^n)^*\otimes \R^{\hat{n}}$ is the set of $n\times\hat{n}$ real matrices
and $A^T$ denotes the usual transpose of matrices. 
\end{definition}

\begin{theorem}
Fix some orthonormal frames $F$, $\hat{F}$ of $M$, $\hat{M}$
at $x\in M$, $\hat{x}\in\hat{M}$ and
let $\mathfrak{h}=\mathfrak{h}|_F\subset\so(n)$, $\hat{\mathfrak{h}}=\hat{\mathfrak{h}}|_{\hat{F}}\subset\so(\hat{n})$
be the holonomy Lie-algebras of $M$, $\hat{M}$ w.r.t to these frames.
Then the control system $\sns$ is completely controllable if and only if
for every $A\in \SO(n;\hat{n})$,
\[
A\mathfrak{h}-\hat{\mathfrak{h}}A=
\begin{cases}
\big\{B\in(\R^n)^*\otimes \R^n\ |\ A^{T}B\in \so(n)\big\}, & \textrm{if}\ n\leq \hat{n}, \cr
\big\{B\in(\R^n)^*\otimes \R^n\ |\ BA^{T}\in \so(\hat{n})\big\}, & \textrm{if}\ n\geq \hat{n}.
\end{cases}
\]

\end{theorem}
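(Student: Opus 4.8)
The plan is to follow, almost verbatim, the proof of Theorem~\ref{theo-00}, the only change being that the description of the vertical fibre $V|_q(\pi_Q)$ is now the one provided by Proposition~\ref{prop-vert-fiber} rather than by Proposition~\ref{pr:vertical_of_Q}. First I would note that the constructions of Subsection~\ref{sec:2.1} leading to the splitting $T(Q)=\NSDist\oplus_Q V(\pi_Q)$ (Definition~\ref{def:2.1:1} and the proposition following it) make sense and remain valid verbatim when $n\neq\hat{n}$, since they only use parallel transport of elements of $T^*M\otimes T\hat{M}$; consequently $\mc{O}_{\NSDist}(q_0)$ is open in $Q$ if and only if $V|_q(\pi_Q)\subset T|_q\mc{O}_{\NSDist}(q_0)$ for some (hence, by the Orbit Theorem, every) $q\in\mc{O}_{\NSDist}(q_0)$. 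As in Theorem~\ref{theo-00}, connectedness of $Q$ then gives that $\sns$ is completely controllable if and only if every $\NSDist$-orbit is open. Moreover, Proposition~\ref{pr:NS_orbit_bundle}, whose proof uses only connectedness of $M$ and $\hat{M}$ together with the holonomy description of the fibres of the orbit, holds verbatim for $n\neq\hat{n}$, so every $\NSDist$-orbit meets every $\pi_Q$-fibre. Fixing $x_0\in M$, $\hat{x}_0\in\hat{M}$, this reduces complete controllability of $\sns$ to the condition that $V|_q(\pi_Q)\subset T|_q\mc{O}_{\NSDist}(q)$ for every $q=(x_0,\hat{x}_0;A)\in\pi_Q^{-1}(x_0,\hat{x}_0)$.

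Next I would insert the two infinitesimal descriptions. Corollary~\ref{cor:NS_orbit_V_inf} holds verbatim in the unequal-dimension setting (its proof only uses that the holonomy groups $H|_{x_0}$, $\hat{H}|_{\hat{x}_0}$ act smoothly on $T|_{x_0}M$, $T|_{\hat{x}_0}\hat{M}$ and that the associated map $f$ is a submersion), so
\[
T|_q\mc{O}_{\NSDist}(q)\cap V|_q(\pi_Q)=\nu\bigl(\hat{\mathfrak{h}}|_{\hat{x}_0}\circ A-A\circ\mathfrak{h}|_{x_0}\bigr)\big|_q.
\]
On the other hand, Proposition~\ref{prop-vert-fiber} identifies $V|_q(\pi_Q)$ with $\nu(W_A)|_q$, where $W_A=\{B\in T^*|_{x_0}M\otimes T|_{\hat{x}_0}\hat{M}\ |\ A^{\ol{T}}B\in\so(T|_{x_0}M)\}$ if $n\le\hat{n}$, and $W_A=\{B\ |\ BA^{\ol{T}}\in\so(T|_{\hat{x}_0}\hat{M})\}$ if $n\ge\hat{n}$. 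Since $\nu|_q$ is a linear isomorphism onto $V|_q(\pi_{T^*M\otimes T\hat{M}})$, the criterion from the previous paragraph becomes: $\hat{\mathfrak{h}}|_{\hat{x}_0}\circ A-A\circ\mathfrak{h}|_{x_0}=W_A$ for every $A\in\pi_Q^{-1}(x_0,\hat{x}_0)$.

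Finally, I would translate this into matrices by means of the chosen orthonormal frames $F$, $\hat{F}$. The assignment $A\mapsto\mc{M}_{F,\hat{F}}(A)$ is a bijection of $\pi_Q^{-1}(x_0,\hat{x}_0)$ onto $\SO(n;\hat{n})$ (this is exactly the content of the defining relations of $Q(M,\hat{M})$, written out in the frames $F,\hat{F}$), it carries $\mathfrak{h}|_{x_0}$ to $\mathfrak{h}|_F=\mathfrak{h}$, $\hat{\mathfrak{h}}|_{\hat{x}_0}$ to $\hat{\mathfrak{h}}|_{\hat{F}}=\hat{\mathfrak{h}}$, the $(g,\hat{g})$-transpose $A^{\ol{T}}$ to the matrix transpose $\mc{M}_{F,\hat{F}}(A)^T$, and the various compositions to matrix products, all in accordance with the conventions of Section~\ref{notations}. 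Under this identification $W_A$ becomes $\{B\ |\ A^TB\in\so(n)\}$ when $n\le\hat{n}$ and $\{B\ |\ BA^T\in\so(\hat{n})\}$ when $n\ge\hat{n}$, and since these sets are invariant under $B\mapsto-B$, the equality $\hat{\mathfrak{h}}A-A\mathfrak{h}=W_A$ is the same as $A\mathfrak{h}-\hat{\mathfrak{h}}A=W_A$, which is the asserted condition. As $F$, $\hat{F}$ were arbitrary, the theorem follows; and for $n=\hat{n}$ one has $\SO(n;\hat{n})=\SO(n)$, $A$ is invertible, $W_A=A\so(n)$, and $A\mathfrak{h}-\hat{\mathfrak{h}}A=A\so(n)$ is equivalent to $\mathfrak{h}+A^{-1}\hat{\mathfrak{h}}A=\so(n)$, so Theorem~\ref{theo-00} is recovered. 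The only real work here is bookkeeping: verifying that Propositions~\ref{pr:NS_orbit_bundle} and~\ref{cor:NS_orbit_V_inf} and the decomposition of $T(Q)$ genuinely survive the passage to $n\neq\hat{n}$, and matching the left/right multiplication and transpose conventions between Proposition~\ref{prop-vert-fiber} and Section~\ref{notations}; there is no genuine analytic or geometric obstacle.
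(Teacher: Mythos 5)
Your proposal is correct, and it is exactly the argument the paper intends: the paper states this theorem without proof, relying on the preceding remark that Theorem \ref{th:NS_orbit_V}, Corollary \ref{cor:NS_orbit_V_inf} and Proposition \ref{pr:NS_orbit_bundle} "evidently hold as such" for $n\neq\hat n$, so the proof of Theorem \ref{theo-00} carries over once Proposition \ref{pr:vertical_of_Q} is replaced by Proposition \ref{prop-vert-fiber}. Your bookkeeping (the automatic inclusion $\hat{\mathfrak{h}}A-A\mathfrak{h}\subset W_A$, the sign-invariance of $W_A$, connectedness of the Stiefel fibres, and the matrix/transpose conventions of Section \ref{notations}) is all that needs checking, and it checks out.
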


\subsubsection{Rolling Problem $(R)$}
Notice that Proposition \ref{pr:R_comm_L2}
still holds when $n=\dim M$ is not equal to $\hat{n}=\dim \hat{M}$.
The rolling curvature of $\RDist$ on $Q$ is denoted as before
but that of $\widehat{\RDist}$ on $\hat{Q}$ is written as $\widehat{\Rol}$ i.e.,
\[
\widehat{\Rol}(\hat{X},\hat{Y})(B)
=B\hat{R}(\hat{X},\hat{Y})-R(B\hat{X},B\hat{Y})B,
\]
for $(\hat{x},x;B)\in \hat{Q}$ and $\hat{X},\hat{Y}\in T|_{\hat{x}}\hat{M}$.

As a consequence, we have a generalization of Corollary \ref{cor:2.5:1}.

\begin{corollary}
Use the notations introduced previously
and assume that $n\leq\hat{n}$.
Then the following two cases are equivalent:
\begin{itemize}
\item[(i)] $\RDist$ is involutive,
\item[(ii)] $(M,g)$ and $(\hat{M},\hat{g})$ have constant and equal curvature.
\end{itemize}
Also, if $n<\hat{n}$, then there is an equivalence between the two cases below:
\begin{itemize}
\item[(1)] $\widehat{\RDist}$ is involutive,
\item[(2)] $(M,g)$ and $(\hat{M},\hat{g})$ are both flat.
\end{itemize}
\end{corollary}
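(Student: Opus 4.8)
The plan is to reduce both equivalences, just as in the proof of Corollary~\ref{cor:2.5:1}, to the vanishing of the appropriate rolling curvature, and then to exploit the asymmetry coming from $n\neq\hat{n}$. Since Proposition~\ref{pr:R_comm_L2} remains valid for non-equal dimensions, one has, for $q=(x,\hat{x};A)\in Q$ and $X,Y\in\VF(M)$, the identity $[\LRD(X),\LRD(Y)]|_q=\LRD([X,Y])|_q+\nu(\Rol(X,Y)(A))|_q$, and the analogous identity on $\hat{Q}$ with $\widehat{\LRD}$ and $\widehat{\Rol}$. As $\LRD(X)|_q=\LNSD(X,AX)|_q$ is sent by $(\pi_Q)_*$ to $(X,AX)$, the distribution $\RDist$ is transverse to the $\pi_Q$-vertical subspace, so that $\RDist$ is involutive if and only if $\Rol(X,Y)(A)=0$ for all $(x,\hat{x};A)\in Q$ and $X,Y\in T|_xM$; similarly $\widehat{\RDist}$ is involutive if and only if $\widehat{\Rol}(\hat{X},\hat{Y})(B)=0$ for all $(\hat{x},x;B)\in\hat{Q}$ and $\hat{X},\hat{Y}\in T|_{\hat{x}}\hat{M}$.

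For the first equivalence (with $n\le\hat{n}$), the direction ``constant and equal curvature $\Rightarrow$ involutive'' is the same computation as in Corollary~\ref{cor:2.5:1}: writing $R(X,Y)Z=k(g(Y,Z)X-g(X,Z)Y)$ and the corresponding formula for $\hat{R}$ with the same constant $k$, and using that $A$ is isometric on all of $T|_xM$, one finds $\hat{R}(AX,AY)AZ=A R(X,Y)Z$, i.e.\ $\Rol(X,Y)(A)=0$. Conversely, from $AR(X,Y)=\hat{R}(AX,AY)A$ I take an orthonormal pair $X,Y\in T|_xM$, apply the identity to $Y$ and pair with $AX$ using $\hat{g}$; since $A$ is an isometric injection this yields $\sigma_{(X,Y)}=\hat{\sigma}_{(AX,AY)}$ for all $(x,\hat{x};A)\in Q$. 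Because $n\le\hat{n}$, for any $\hat{x}$ and any orthonormal $\hat{X},\hat{Y}\in T|_{\hat{x}}\hat{M}$ one can choose $A\in Q|_{(x,\hat{x})}$ with $AX=\hat{X}$, $AY=\hat{Y}$ (extend both pairs to orthonormal bases; when $n=\hat{n}$ a single sign change restores orientation and does not affect sectional curvature). Hence $\hat{\sigma}$ is constant, equal to the number $\sigma_{(X,Y)}$, which is therefore independent of $x,X,Y$; so both manifolds carry the same constant curvature.

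For the second equivalence (with $n<\hat{n}$), ``both flat $\Rightarrow$ involutive'' is immediate from $\widehat{\Rol}\equiv 0$. For the converse, the key new point is that every $B\in\hat{Q}$ has $\ker B$ of dimension $\hat{n}-n\ge 1$. If $\hat{X}\in\ker B$ then $B\hat{X}=0$, so $\widehat{\Rol}(\hat{X},\hat{Y})(B)=0$ forces $B(\hat{R}(\hat{X},\hat{Y})\hat{Z})=0$, i.e.\ $\hat{R}(\hat{X},\hat{Y})\hat{Z}\in\ker B$, for all $\hat{Y},\hat{Z}$. Fixing $\hat{x}$ and a nonzero $\hat{X}\in T|_{\hat{x}}\hat{M}$ and letting $B$ range over all elements of $\hat{Q}$ whose kernel is any prescribed codimension-$n$ subspace containing $\hat{X}$ (possible for any base point $x\in M$), and using that the intersection of all codimension-$n$ subspaces through the line $\mathbb{R}\hat{X}$ is exactly $\mathbb{R}\hat{X}$ when $n<\hat{n}$, one gets $\hat{R}(\hat{X},\hat{Y})\hat{Z}\in\mathbb{R}\hat{X}$; antisymmetry in $\hat{X},\hat{Y}$ (valid since $\hat{n}\ge 3$) then gives $\hat{R}\equiv 0$. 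Substituting $\hat{R}=0$ back into $\widehat{\Rol}(\hat{X},\hat{Y})(B)=0$ leaves $R(B\hat{X},B\hat{Y})B=0$; taking $\hat{X},\hat{Y}\in(\ker B)^\perp$, so that $B\hat{X},B\hat{Y}$ sweep out all of $T|_xM$, and using that $B$ is onto $T|_xM$, one concludes $R\equiv 0$, i.e.\ $M$ is flat.

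The tensor manipulations and the elementary linear-algebra fact about intersections of codimension-$n$ subspaces through a line are routine. The only steps requiring genuine care are the realization lemmas --- producing an $A\in Q$ with a prescribed image of a given orthonormal pair (first equivalence) and a $B\in\hat{Q}$ with a prescribed kernel (second equivalence) --- together with the orientation bookkeeping when $n=\hat{n}$; this is precisely where the hypotheses $n\le\hat{n}$ and $n<\hat{n}$ enter, and the second realization step (exploiting the nontrivial kernel of $B$) is the main conceptual novelty compared to Corollary~\ref{cor:2.5:1}.
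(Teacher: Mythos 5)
Your proof is correct. For the first equivalence (the case $n\leq\hat{n}$) your argument is essentially the paper's: vanishing of $\Rol$ characterizes involutivity, sectional curvatures are transported by $A$, and the realization of a prescribed orthonormal pair as $(AX,AY)$ (with the sign flip when $n=\hat{n}$) gives constancy and equality of the curvatures; the converse is the same direct computation with the constant-curvature form of $R$ and $\hat{R}$.

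For the second equivalence, your proof of (1)$\Rightarrow$(2) takes a genuinely different route from the paper's. The paper first repeats the sectional-curvature realization argument on $\hat{Q}$ (defining $B$ by prescribing its values on orthonormal frames and using $B^{\ol{T}}$) to conclude that $(M,g)$ and $(\hat{M},\hat{g})$ have constant and equal curvature $k$, and only then uses a kernel vector: picking $0\neq\hat{X}\in\ker B$, $\hat{Y}\in(\ker B)^\perp$, the constant-curvature formula in $\widehat{\Rol}(\hat{X},\hat{Y})(B)\hat{X}=0$ collapses to $-k\n{\hat{X}}_{\hat{g}}^2B\hat{Y}=0$, forcing $k=0$. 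You instead exploit $\ker B$ from the start: for $\hat{X}\in\ker B$ the identity gives $\hat{R}(\hat{X},\hat{Y})\hat{Z}\in\ker B$ for all $\hat{Y},\hat{Z}$, and since every codimension-$n$ subspace containing $\hat{X}$ is realized as $\ker B$ for some $B\in\hat{Q}$ (isometry on the orthocomplement, zero on the subspace — no orientation constraint as $n<\hat{n}$), intersecting over these kernels pins $\hat{R}(\hat{X},\hat{Y})\hat{Z}$ to $\R\hat{X}$, whence $\hat{R}=0$ by antisymmetry in the first two slots (only $\hat{n}\geq 2$ is needed there, not $\hat{n}\geq 3$); surjectivity of $B$ onto $T|_xM$ then kills $R$. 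Your route avoids the constant-curvature intermediate step entirely and isolates where the dimension gap enters (flatness of $\hat{M}$ comes purely from varying $\ker B$, flatness of $M$ from surjectivity), at the modest price of the subspace-intersection lemma and the kernel-realization observation; the paper's route is slightly longer but reuses verbatim the machinery already set up for the first equivalence, so both halves of the corollary are handled uniformly.
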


\begin{proof}
For some of the notations, see the proof of Corollary \ref{cor:2.5:1}.

(i) $\Longrightarrow$ (ii):
Assume that $\RDist$ is involutive.
This is equivalent to the vanishing of $\Rol$
i.e.,
\[
A(R(X,Y)Z)=\hat{R}(AX,AY)(AZ),\quad \forall (x,\hat{x};A)\in Q,\ X,Y,Z\in T|_x M,
\]
which implies
\[
\sigma_{(X,Y)}=&g(R(X,Y)Y,X)
=\hat{g}(A(R(X,Y)Y),AX) \\
=&\hat{g}(\hat{R}(AX,AY)(AY),AX)=\hat{\sigma}_{(AX,AY)},
\]
for every $X,Y$ orthonormal in $T|_x M$ and $(x,\hat{x};A)\in Q$.

Let $x\in M$, $\hat{x}\in\hat{M}$ be arbitrary points
and $X,\hat{X}\in T|_x M$ and $\hat{X},\hat{Y}\in T|_{\hat{x}}\hat{M}$
be arbitrary pairs of orthonormal vectors.

Choose any vectors $X_{3},\dots,X_n\in T|_x M$ and $\hat{X}_3,\dots,\hat{X}_{\hat{n}}\in T|_{\hat{x}}\hat{M}$
such that $X,Y,X_3,\dots,X_n$
and $\hat{X},\hat{Y},\hat{X}_3,\dots,\hat{X}_{\hat{n}}$ are positively oriented orthonormal frames.

Since $n\leq\hat{n}$, we may define $q=(x,\hat{x};A)\in Q$
by
\[
AX=\hat{X},\quad AY=\hat{Y},\quad AX_i=\hat{X}_i,\quad i=3,\dots,n,
\]
to obtain that $\sigma_{(X,Y)}=\hat{\sigma}_{(\hat{X},\hat{Y})}$.
Thus $(M,g)$ and $(\hat{M},\hat{g})$ have equal and constant curvature,
since the orthonormal pairs $X,Y$ and $\hat{X},\hat{Y}$ were arbitrary
and chosen independently from one another.

(ii) $\Longrightarrow$ (i):
Since $(M,g)$, $(\hat{M},\hat{g})$ both have equal constant curvature, say $k\in\R$,
we have
\[
& R(X,Y)Z=k\big(g(Y,Z)X-g(X,Z)Y\big),\quad X,Y,Z\in T|_x M,\ x\in M, \\
& \hat{R}(\hat{X},\hat{Y})\hat{Z}=k\big(\hat{g}(\hat{Y},\hat{Z})\hat{X}-\hat{g}(\hat{X},\hat{Z})\hat{Y}\big),\quad
\hat{X},\hat{Y},\hat{Z}\in T|_{\hat{x}}\hat{M},\ \hat{x}\in \hat{M}.
\]
On the other hand, if $(x,\hat{x};A)\in Q$, $X,Y,Z\in T|_x M$ we get
\[
& \hat{R}(AX,AY)(AZ)=k(\hat{g}(AY,AZ)(AX)-\hat{g}(AX,AZ)(AY)) \\
=& A(k(g(Y,Z)X-g(X,Z)Y)=A(R(X,Y)Z).
\]
This implies that $\Rol(X,Y)(A)=0$ since $Z$ was arbitrary. 
Hence $\RDist$ is involutive.

(2) $\Longrightarrow$ (1):
In this case $R=0$ and $\hat{R}=0$ so that
clearly $\widehat{\Rol}(\hat{X},\hat{Y})(B)\hat{Z}=B(\hat{R}(\hat{X},\hat{Y})\hat{Z})-R(B\hat{X},B\hat{Y})(B\hat{Z})=0$
for all $\hat{X},\hat{Y},\hat{Z}\in T|_{\hat{x}}\hat{M}$ and $(\hat{x},x;B)\in \hat{Q}$.
This proves that $\widehat{\RDist}$ is involutive.

(1) $\Longrightarrow$ (2):
Assume that $\widehat{\RDist}$ is involutive
i.e.,
\[
B(\hat{R}(\hat{X},\hat{Y})\hat{Z})=R(B\hat{X},B\hat{Y})(B\hat{Z}),
\quad \forall (\hat{x},x;B)\in\hat{Q},\ \hat{X},\hat{Y},\hat{Z}\in T|_{\hat{x}}\hat{M}.
\]
Then
\[
\sigma_{(B\hat{X},B\hat{Y})}=g(B(\hat{R}(\hat{X},\hat{Y})\hat{Y}),B\hat{X}),
\]
or
\[
\sigma_{(X,Y)}=g(B(\hat{R}(B^{\ol{T}}X,B^{\ol{T}}Y)(B^{\ol{T}}Y),X)
=\hat{\sigma}_{(B^{\ol{T}}X,B^{\ol{T}}Y)}.
\]

Given any $x\in M$, $\hat{x}\in\hat{M}$, $X,Y\in T|_x M$ and $\hat{X},\hat{Y}\in T|_{\hat{x}} M$,
choose some $X_3,\dots,X_n\in T|_x M$, $\hat{X}_3,\dots,\hat{X}_{\hat{n}}\in T|_{\hat{x}}\hat{M}$
such that 
\[
X,Y,X_3,\dots,X_n\hbox{
and }\hat{X},\hat{Y},\hat{X}_3,\dots,\hat{X}_{\hat{n}},
\]
are positively oriented orthonormal frames.
We define
\[
& B\hat{X}=X,\quad B\hat{Y}=Y,\quad B\hat{X}_i=X_i,\quad i=3,\dots,n, \\
& B\hat{X}_i=0,\quad i=n+1,\dots,\hat{n}
\]
so that $q=(\hat{x},x;B)\in \hat{Q}$,
$B^{\ol{T}}X=\hat{X}$, $B^{\ol{T}}Y=\hat{Y}$
and hence $\sigma_{(X,Y)}=\hat{\sigma}_{(\hat{X},\hat{Y})}$.
Thus $(M,g)$, $(\hat{M},\hat{g})$ have constant and equal curvature.

Suppose that the common constant curvature of $(M,g)$, $(\hat{M},\hat{g})$ is $k\in\R$.
We need to show that $k=0$.
Choose any $(\hat{x},x;B)\in\hat{Q}$.
Since $n<\hat{n}$, we may choose non-zero vectors $\hat{X}\in \ker B$ and $\hat{Y}\in(\ker B)^\perp$.
Then
\[
0=&\widehat{\Rol}(\hat{X},\hat{Y})(B)\hat{X}
=k\big(\hat{g}(\hat{Y},\hat{X})B\hat{X}-\hat{g}(\hat{X},\hat{X})B\hat{Y}\big)
-R(B\hat{X},B\hat{Y})(B\hat{X}) \\
=&k(0-\n{\hat{X}}_{\hat{g}}^2B\hat{Y})-0=-k\n{\hat{X}}_{\hat{g}}^2B\hat{Y}
\]
and since $\n{\hat{X}}_{\hat{g}}\neq 0$ and $B\hat{Y}\neq 0$ (since $0\neq \hat{Y}\in(\ker B)^\perp$),
it follows that $k=0$.
This completes the proof.
\end{proof}

We may also easily generalize Corollary \ref{cor:weak_ambrose}. The use will be made of Gauss-formula, which relates the curvature of a submanifold to that of the ambient Riemannian manifold and O'Neill-formulas, which relate the various curvatures related to Riemannian submersions
(see \cite{sakai91}, Propositions 3.8, 6.1, 6.2 and Corollary 6.3, Chapter II).
Since the proof is slightly less trivial, we state this as a proposition.

\begin{proposition}\label{pr:weak-C-A-H}
Suppose that $(M,g)$ and $(\hat{M},\hat{g})$ are complete with $\dim M=n$, 
$\dim \hat{M}=\hat{n}$. The following cases are equivalent:
\begin{itemize}
\item[(i)] There exists a $q_0=(x_0,\hat{x}_0;A_0)\in Q$
such that $\mc{O}_{\RDist}(q_0)$ is an integral manifold of $\RDist$.

\item[(ii)] There exists a $q_0=(x_0,\hat{x}_0;A_0)\in Q$
such that
\[
\Rol(X,Y)(A)=0,\quad \forall (x,\hat{x};A)\in \mc{O}_{\RDist}(q_0),\ X,Y\in T|_x M.
\]

\item[(iii)] There is a complete Riemannian manifold $(N,h)$,
a Riemannian covering map $F:N\to M$
and a smooth map $G:N\to\hat{M}$ such that:
\begin{itemize}
\item[(1)] If $n\leq \hat{n}$, $G$ is a Riemannian immersion that maps $h$-geodesics to $\hat{g}$-geodesics.

\item[(2)] If $n\geq \hat{n}$, $G$ is a Riemannian submersion
such that the co-kernel distribution $(\ker G_*)^{\perp}\subset TN$ is involutive
and the fibers $G^{-1}(\hat{x})$, $\hat{x}\in \hat{M}$, are totally geodesic submanifolds of $(N,h)$.
\end{itemize}
\end{itemize}

Moreover, in the case (iii)-(2), we may choose $N$ to be simply connected
and then $(N,h)$ is a Riemannian product of $(N_1,h_1)$, $(N_2,h_2)$,
where $\dim N_1=\hat{n}$, $\dim N_2=n-\hat{n}$,
the space $(N_1,h_1)$ is the universal Riemannian covering of $(\hat{M},\hat{g})$
and $G$ is given by
\[ 
G:N=N_1\times N_2\to \hat{M};\quad G(y_1,y_2)=\hat{\pi}(y_1)
\]
where $\hat{\pi}:N_1\to \hat{M}$ is a Riemannian covering map.
\end{proposition}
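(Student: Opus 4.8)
The plan is to adapt the proof of Corollary~\ref{cor:weak_ambrose} to the unequal-dimensional setting, the new content being entirely in the implication (i)~$\Rightarrow$~(iii) and in the final structural assertion; the key tool will be the explicit form of rolling curves along geodesics given by Proposition~\ref{pr:rol_geodesic}.

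\emph{The equivalence (i)~$\Leftrightarrow$~(ii) and the implication (iii)~$\Rightarrow$~(ii).} The first is verbatim the argument of Corollary~\ref{cor:weak_ambrose}, since Proposition~\ref{pr:R_comm_L2} holds without the hypothesis $n=\hat n$: if $\mc{O}_{\RDist}(q_0)$ is an integral manifold then all $\LRD(X)$, $X\in\VF(M)$, are tangent to it and Proposition~\ref{pr:R_comm_L2} forces $\Rol(X,Y)(A)=0$ there, while conversely the vanishing of $\Rol$ on the orbit makes $\RDist|_{\mc{O}_{\RDist}(q_0)}$ involutive, so the orbit is an integral manifold of $\RDist$. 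For (iii)~$\Rightarrow$~(ii) one proceeds as in Corollary~\ref{cor:weak_ambrose}: fix $z_0\in N$, set $x_0=F(z_0)$, $\hat x_0=G(z_0)$, $A_0=G_*|_{z_0}\circ(F_*|_{z_0})^{-1}$ (which lies in $Q$ by (1) resp.\ (2)), lift an a.c.\ curve $\gamma$ of $M$ through the covering $F$ to $\Gamma$ in $N$, and check that $(\gamma,\,G\circ\Gamma,\,G_*|_\Gamma\circ(F_*|_\Gamma)^{-1})$ is the rolling curve along $\gamma$ from $q_0=(x_0,\hat x_0;A_0)$; in case (1) the Gauss equation for a totally geodesic immersion, and in case (2) O'Neill's formulas for a Riemannian submersion with vanishing integrability tensor (which is what the integrable co-kernel amounts to) together with the totally geodesic fibers, give $\Rol(X,Y)(A(t))=0$, i.e.\ (ii).

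\emph{The implication (i)~$\Rightarrow$~(iii).} Take $N:=\mc{O}_{\RDist}(q_0)$ with the metric $h$ defined by $h(\LRD(X)|_q,\LRD(Y)|_q)=g(X,Y)$, and put $F:=\pi_{Q,M}|_N$, $G:=\pi_{Q,\hat M}|_N$. As in Corollary~\ref{cor:weak_ambrose}, $F$ is a local isometry of $n$-manifolds, complete by Propositions~\ref{pr:rol_geodesic} and~\ref{pr:R_orbit_bundle}, hence a Riemannian covering by Proposition~II.1.1 of~\cite{sakai91}. The crucial observation is that, since $N$ is an integral manifold of $\RDist$, one has $T|_q N=\RDist|_q$, so every $h$-geodesic $\Gamma$ of $N$ through $q$ is a rolling curve $q_{\RDist}(\gamma,q)$ with $\gamma=F\circ\Gamma$ a $g$-geodesic; Proposition~\ref{pr:rol_geodesic} then gives $G(\Gamma(t))=\widehat{\exp}_{G(q)}\!\big(tA_q\dot\gamma(0)\big)$ and $\ker A_{\RDist}(\gamma,q)(t)=P_0^t(\gamma)(\ker A_q)$. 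If $n\le\hat n$ then $\hat g(G_*\LRD(X)|_q,G_*\LRD(Y)|_q)=\hat g(A_qX,A_qY)=g(X,Y)$ so $G$ is a Riemannian immersion, and since $A_q$ is injective it sends every $h$-geodesic to a nonconstant $\hat g$-geodesic. If $n\ge\hat n$ then $A_q$ is onto, $G$ is a Riemannian submersion with $\ker G_*|_q=\LRD(\ker A_q)|_q$, and: an $h$-geodesic $\Gamma$ with $\dot\Gamma(0)\in\ker G_*|_q$ has $\dot\gamma(0)\in\ker A_q$, so $G(\Gamma(t))=\widehat{\exp}_{G(q)}(0)$ is constant, whence $\Gamma$ stays in the fiber $G^{-1}(G(q))$, which is therefore totally geodesic; and an $h$-geodesic $\Gamma$ with $\dot\Gamma(0)\in(\ker G_*|_q)^\perp$ has $\dot\gamma(0)\perp\ker A_q$, so $\dot\gamma(t)=P_0^t(\gamma)\dot\gamma(0)\perp P_0^t(\gamma)(\ker A_q)=\ker A_{\RDist}(\gamma,q)(t)$, whence $\Gamma$ stays tangent to $(\ker G_*)^\perp$; in particular $(\ker G_*)^\perp$ is integrable.

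\emph{The final assertion.} By the previous paragraph, in case $n>\hat n$ both $\ker G_*$ and $(\ker G_*)^\perp$ are integrable with totally geodesic leaves, hence both are $h$-parallel, so $(N,h)$ is locally a Riemannian product; passing to the universal cover $\tilde N$ (complete, with $\tilde F:\tilde N\to M$ a Riemannian covering and $\tilde G:\tilde N\to\hat M$ a Riemannian submersion with the same structure) and using the de Rham decomposition theorem, $\tilde N=N_1\times N_2$ with $\dim N_1=\hat n$, $\dim N_2=n-\hat n$ and $\tilde G$ constant on each $\{y_1\}\times N_2$; restricting $\tilde G$ to the totally geodesic, hence complete, horizontal leaf $N_1\times\{y_2\}$ yields a complete local isometry onto $\hat M$, i.e.\ a Riemannian covering $\hat\pi:N_1\to\hat M$, with $\tilde G(y_1,y_2)=\hat\pi(y_1)$; since $N_1$ is simply connected it is the universal Riemannian covering of $\hat M$. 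The step requiring the most care is the submersion case: identifying $\ker G_*$ with $\LRD(\ker A)$, deriving the totally geodesic properties of both distributions from the formula for $A_{\RDist}(\gamma,q)$ along geodesics, and then running the de Rham globalization so as to pin down $N_1$ as the universal cover of $\hat M$; the immersion case $n\le\hat n$ and the equivalences (i)~$\Leftrightarrow$~(ii)~$\Leftrightarrow$~(iii) for $n\le\hat n$ are routine transcriptions of Corollary~\ref{cor:weak_ambrose}.
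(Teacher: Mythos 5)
Your proposal follows the paper's overall architecture, and most of it is sound, but there is a genuine gap in the implication (i) $\Rightarrow$ (iii), case $n\geq\hat{n}$: you deduce that every $h$-geodesic of $N$ starting tangent to $(\ker G_*)^{\perp}$ remains tangent to it, and then conclude ``in particular $(\ker G_*)^{\perp}$ is integrable.'' This inference is false. What your geodesic argument establishes is that $(\ker G_*)^{\perp}$ is \emph{geodesically invariant}, i.e. that the symmetric part $\nabla^h_{\ol{X}}\ol{Y}+\nabla^h_{\ol{Y}}\ol{X}$ stays in the distribution for horizontal $\ol{X},\ol{Y}$; integrability requires control of the antisymmetric part $[\ol{X},\ol{Y}]$, which is an independent condition. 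The Heisenberg group $H_3$ with a left-invariant metric making the frame $X,Y,Z$ (with $[X,Y]=Z$, $[X,Z]=[Y,Z]=0$) orthonormal gives a counterexample to your inference: the contact distribution $\mathrm{span}\{X,Y\}$ satisfies $\nabla_XY+\nabla_YX=0$ and $\nabla_XX=\nabla_YY=0$, hence is geodesically invariant, while its orthogonal complement $\R Z$ is auto-parallel — exactly the configuration you produce — yet $\mathrm{span}\{X,Y\}$ is not integrable. So nothing in what you have proved forces involutivity, and the de Rham decomposition in your final paragraph (which needs both distributions to be parallel, hence in particular both integrable with totally geodesic leaves) does not get off the ground.

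The paper closes this gap by a different mechanism: since $\Rol=0$ on the orbit and $F$ is a Riemannian covering, one computes for orthonormal horizontal $\ol{X},\ol{Y}$ that
\[
\sigma^h_{(\ol{X},\ol{Y})}=g(R(F_*\ol{X},F_*\ol{Y})F_*\ol{Y},A^{\ol{T}}AF_*\ol{X})=\hat{\sigma}_{(G_*\ol{X},G_*\ol{Y})},
\]
and then invokes O'Neill's formula for Riemannian submersions (Corollary 6.3, Chapter II of \cite{sakai91}), which says that the horizontal sectional curvature of $N$ and the sectional curvature of the base differ by a positive multiple of the squared norm of the vertical part of $[\ol{X},\ol{Y}]$; the equality of curvatures therefore forces that vertical part to vanish, i.e. $(\ker G_*)^{\perp}$ is involutive. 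Your treatment of the fibers is fine (for an actual submanifold, geodesic invariance does characterize being totally geodesic), and the rest of your outline — the equivalence (i) $\Leftrightarrow$ (ii), the immersion case, and the de Rham globalization once parallelism of the splitting is in hand — matches the paper. You should replace the integrability step by the curvature-comparison argument (and, in (iii) $\Rightarrow$ (ii), be aware that verifying the no-spinning condition $\ol{\nabla}_{(\dot{\gamma},\dot{\hat{\gamma}})}A=0$ in the submersion case also uses the parallelism of the splitting $TN=\ker G_*\oplus(\ker G_*)^{\perp}$, which the paper establishes before applying de Rham).
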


\begin{proof}
(i) $\iff$ (ii): This is proved with exactly the same argument that was used in the proof of Corollary \ref{cor:weak_ambrose}.

(i) $\Rightarrow$ (iii):
Let $N:=\mc{O}_{\RDist}(q_0)$ and $h:=(\pi_{Q,M}|_N)^*(g)$ i.e.,
for $q=(x,\hat{x};A)\in N$ and $X,Y\in T|_x M$,
define 
\[
h(\LRD(X)|_q,\LRD(Y)|_q)=g(X,Y).
\]
If $F:=\pi_{Q,M}|_N$ and $G:=\pi_{Q,\hat{M}}|_N$,
we  immediately see that $F$ is a local isometry (note that $\dim(N)=n$).
The completeness of $(N,h)$  follows from the completeness of $M$ and $\hat{M}$
using Proposition \ref{pr:rol_geodesic} which holds in verbatim also in the case where $n\neq\hat{n}$.
Hence by Proposition II.1.1 in \cite{sakai91},
$F$ is a (surjective) Riemannian covering.

Suppose then that $n\leq \hat{n}$.
Then for $q=(x,\hat{x};A)\in N$, $X,Y\in T|_x M$, one has
\[
\hat{g}(G_*(\LRD(X)|_q),G_*(\LRD(Y)|_q))
=\hat{g}(AX,AY)=g(X,Y)=h(\LRD(X)|_q,\LRD(Y)|_q),
\]
i.e., $G$ is a Riemannian immersion.
Moreover, if $\ol{\Gamma}:[0,1]\to N$ is an $h$-geodesic,
it is tangent to $\RDist$ and since it projects by $F$ to a $g$-geodesic $\gamma$,
it follows that $\ol{\Gamma}=q_{\RDist}(\gamma,\ol{\Gamma}(0))$
and Proposition \ref{pr:rol_geodesic} shows that
$G\circ \ol{\Gamma}=\hat{\gamma}_{\RDist}(\gamma,\ol{\Gamma}(0))$
is a $\hat{g}$-geodesic. This proves (iii)-(1).

On the other hand, if $n\geq\hat{n}$,
then for $q=(x,\hat{x};A)\in N$, any $X\in T|_x M$
s.t. $\LRD(X)|_q\in (\ker G_*|_q)^{\perp}$
and $Z\in \ker A$,
we have $G_*(\LRD(Z)|_q)=AZ=0$ i.e., $\LRD(Z)|_q\in\ker (G_*|_q)$
from which $g(X,Z)=h(\LRD(X)|_q,\LRD(Z)|_q)=0$.
This shows that $X\in(\ker A)^\perp$
and therefore, for all $X,Y\in T|_x M$
such that $\LRD(X)|_q,\LRD(Y)|_q\in (\ker G_*|_q)^{\perp}$,
we get $\hat{g}(G_*(\LRD(X)|_q),G_*(\LRD(Y)|_q))=h(\LRD(X)|_q,\LRD(Y)|_q)$ as above.
This proves that $G:N\to\hat{M}$ is a Riemannian submersion.

For any $\ol{X},\ol{Y}\in\VF(N)$ orthonormal and tangent to $(\ker G_*)^\perp$
around a point $q\in N$, we have 
$\sigma^h_{(\ol{X},\ol{Y})}=\hat{\sigma}_{(G_*\ol{X},G_*\ol{Y})}$ ($\sigma^h$ is the sectional curvature on $N$)
in that neighbourhood because $F$ is a Riemannian covering map
and because
\[
& \hat{\sigma}_{(G_*\ol{X},G_*\ol{Y})}
=\hat{g}(\hat{R}(G_*\ol{X},G_*\ol{Y})(G_*\ol{Y}),G_*\ol{X}))
=\hat{g}(\hat{R}(AF_*\ol{X},AF_*\ol{Y})(AF_*\ol{Y}),AF_*\ol{X})) \\
=&\hat{g}(AR(F_*\ol{X},F_*\ol{Y})F_*\ol{Y},AF_*\ol{X}))
=g(R(F_*\ol{X},F_*\ol{Y})F_*\ol{Y},A^{\ol{T}}AF_*\ol{X})) \\
=&g(R(F_*\ol{X},F_*\ol{Y})F_*\ol{Y},F_*\ol{X})=\sigma_{(F_*\ol{X},F_*\ol{Y})},
\]
since $\Rol=0$, $F_*\ol{X}\in(\ker A)^\perp$ on $N$ and where we wrote $A=G_*\circ (F_*|_q)^{-1}$ for $q=(x,\hat{x};A)$
in the chosen neighbourhood.
By Corollary 6.3, Chapter II in \cite{sakai91}, it follows that
for any $\ol{X},\ol{Y}\in\VF(N)$ tangent to $(\ker G_*)^\perp$ in an open set,
$[\ol{X},\ol{Y}]$ is tangent to $(\ker G_*)^\perp$ in that open set.
Thus $(\ker G_*)^\perp$ is involutive.

We still need to prove that the $G$-fibers are totally geodesic.
Let $q=(x,\hat{x};A)\in Q$, $V\in \ker G_*|_q=T|_q(G^{-1}(\hat{x}))$.
Then $V=\LRD(u)|_q$ for some $u\in T|_x M$
and if $\gamma$ is the $g$-geodesic starting from $x$ with the initial velocity $u$,
then $\Gamma:=q_{\RDist}(\gamma,q)$ is the $h$-geodesic with initial velocity $V$
(since $F$ is a Riemannian covering)
and also $\hat{\gamma}:=\hat{\gamma}_{\RDist}(\gamma,q)$ is a $\hat{g}$-geodesic by Proposition \ref{pr:rol_geodesic}
with initial velocity $\dot{\hat{\gamma}}(0)=A\dot{\gamma}(0)=Au=G_* V=0$,
by the choice of $V$.
But this means that $\hat{\gamma}$ is a constant curve, $\hat{\gamma}(\cdot)\equiv \hat{x}$ for all $t$,
which implies that $G(\Gamma(t))=\hat{\gamma}(t)=\hat{x}$ for all $t$
i.e., $\Gamma(t)\in G^{-1}(\hat{x})$.
This proves that every fiber $G^{-1}(\hat{x})$, $\hat{x}\in \hat{M}$,
is a totally geodesic submanifold of $(N,h)$
and so we have finally proved (iii)-(2).

(iii) $\Rightarrow$ (ii):
Let $x_0\in M$ and choose $z_0\in N$ such that $F(z_0)=x_0$.
Define $\hat{x}_0=G(z_0)\in\hat{M}$
and $A_0:=G_*|_{z_0}\circ (F_*|_{z_0})^{-1}:T|_{x_0} M\to T|_{\hat{x}_0}\hat{M}$.

The fact that $q_0=(x_0,\hat{x}_0;A_0)$ belongs to $Q$ can be seen as follows:
if (iii)-(1) holds, we have
\[
& \hat{g}(A_0X,A_0Y)=\hat{g}\big(G_*|_{z_0}((F_*|_{z_0})^{-1}X),G_*|_{z_0}((F_*|_{z_0})^{-1}Y)\big) \\
=& h((F_*|_{z_0})^{-1}X),(F_*|_{z_0})^{-1}Y)=g(X,Y),
\]
where we used that $G$ is a Riemannian immersion and that $F$ is a Riemannian covering map.
On the other hand, if (iii)-(2) holds
and if $X,Y\in(\ker A_0)^{\perp}$, clearly $(F_*|_{z_0})^{-1}X,(F_*|_{z_0})^{-1}Y\in(\ker G_*|_{z_0})^{\perp}$
and hence also $\hat{g}(A_0X,A_0Y)=g(X,Y)$ since $G$ is a Riemannian submersion.

Let $\gamma:[0,1]\to M$ be a smooth curve with $\gamma(0)=x_0$.
Since $F$ is a smooth covering map, there is a unique smooth curve $\Gamma:[0,1]\to N$
with $\gamma=F\circ\Gamma$.
Define $\hat{\gamma}=G\circ\Gamma$
and $A(t)=G_*|_{\Gamma(t)}\circ (F_*|_{\Gamma(t)})^{-1}$, $t\in [0,1]$.
As before, it follows that $A(t)\in Q$ for all $t\in [0,1]$ and
\begin{align}\label{eq:p_wa:0}
\dot{\hat{\gamma}}(t)=G_*|_{\Gamma(t)}\dot{\Gamma}(t)=A(t)\dot{\gamma}(t).
\end{align}

We claim that, for all $t\in [0,1]$,
\begin{align}\label{eq:p_wa:1}
\ol{\nabla}_{(\dot{\gamma}(t),\dot{\hat{\gamma}}(t))} A(\cdot)=0,
\end{align}
and
\begin{align}\label{eq:p_wa:2}
\Rol(\cdot,\cdot)(A(t))=0.
\end{align}

Indeed, suppose now that (iii)-(1) holds.
This means that, for every $z\in N$, there is a neighbourhood $U$ of $z$ in $N$
such that $G(U)$ is a totally geodesic submanifold of $(\hat{M},\hat{g})$
and $G:U\to \hat{M}$ is an isometric embedding.
Now if $X$ is a vector field parallel along $\gamma$ in $M$,
then since $F$ is a Riemannian covering,
there is a unique vector field $\ol{X}$ parallel along $\Gamma$ in $(N,h)$
such that $F_*\ol{X}=X$. For any $t_0\in [0,1]$, choose $U$ as above
around $\Gamma(t_0)$. Then near $t_0$
we have that $G_*\ol{X}$ is parallel to $\hat{\gamma}$ in $(\hat{M},\hat{g})$.
This proves that
\[
0=\hat{\nabla}_{\dot{\hat{\gamma}}(t)}(G_*\ol{X}(t))
=\hat{\nabla}_{\dot{\hat{\gamma}}(t)} (A(\cdot)X(\cdot))
=\big(\ol{\nabla}_{(\dot{\gamma}(t),\dot{\hat{\gamma}}(t))} A(\cdot)\big)X(t),
\]
and since $X(t)$ was an arbitrary field parallel along $\gamma$,
we have $\ol{\nabla}_{(\dot{\gamma}(t),\dot{\hat{\gamma}}(t))} A(\cdot)=0$
i.e., (\ref{eq:p_wa:1}).

Since, locally, the shape operator of $G(N)$ w.r.t $(\hat{M},\hat{g})$ vanishes
and $G(N)$ is locally Riemannian embedded submanifold of $(\hat{M},\hat{g})$,
we also have $G_*(R^h(\ol{X},\ol{Y})\ol{Z})=\hat{R}(G_*\ol{X},G_*\ol{Y})(G_*\ol{Z})$ for all $\ol{X},\ol{Y},\ol{Z}\in T|_z N$, $z\in N$
(see \cite{sakai91}, Proposition 3.8, Chapter II)
and hence for all $X,Y,Z\in T|_{\gamma(t)} M$,
\[
& A(t)(R(X,Y)Z)=G_*(F_*|_{\Gamma(t)})^{-1}(R(X,Y)Z)
=G_*(R^h(\ol{X},\ol{Y})\ol{Z}) \\
=&\hat{R}(G_*\ol{X},G_*\ol{Y})(G_*\ol{Z})=\hat{R}(A(t)X,A(t)Y)(A(t)Z),
\]
where $\ol{X}=(F_*|_{\Gamma(t)})^{-1} X$ etc.
This proves (\ref{eq:p_wa:2}).

On the other hand, suppose (iii)-(2) holds.
First we see that Eq. (\ref{eq:p_wa:2}) follows from \cite{sakai91}, Proposition 6.2, Chapter II (the operators $A$ and $T$ there vanish,
by assumptions on $N$ and $G$) and the fact that $F$ is a Riemannian covering.

To prove (\ref{eq:p_wa:1}) we proceed as follows.
Taking the simply connected covering of $N$, lifting the metric $h$ and composing
$G$ and $F$ with the projection from this covering to $N$,
we see that the conditions (iii)-(2) still hold
and thus we may assume that $N$ was simply connected in the first place.
Take any piecewise $C^1$ curve $\omega$ on $N$
and let $V_0\in\ker G_*|_{\omega(0)}$, $X_0\in (\ker G_*)^\perp|_{\omega(0)}$.
If $Z(t)$ is the parallel translate of $X_0+V_0$ along $\omega$,
we get from \cite{sakai91}, Proposition 6.1, Chapter II
(again, the operators $A$ and $T$ there vanish by assumptions)
that 
\[
0=&\nabla^h_{\dot{\omega}(t)} Z(t)
=(\nabla^h_{\dot{\omega}(t)} Z(t)^\perp)^\perp
+(\nabla^h_{\dot{\omega}(t)} Z(t)^T)^T,
\]
where for $Y\in TN$ we wrote $Y^T$ and $Y^\perp$ for the
components of $Y$ in the distributions $(\ker G_*)^\perp$ and $\ker G_*$, respectively
(this notation is in accordance with the notation in the referred result of \cite{sakai91}
and is not completely compatible with ours).
This proves that $Z(t)^T$ and $Z(t)^\perp$ are fields parallel to $\omega$
and since $Z(0)^T=X_0$, $Z(0)^\perp=V_0$,
we have that $Z(t)^T$ and $Z(t)^\perp$ are the parallel translates of $X_0$ and $V_0$,
respectively.
But this implies that
\[
(P^{\nabla^h})_0^t(\omega)(\ker G_*|_{\omega(0)})=\ker G_*|_{\omega(t)},
\quad
(P^{\nabla^h})_0^t(\omega)\big((\ker G_*|_{\omega(0)})^\perp\big)=(\ker G_*|_{\omega(t)})^\perp,
\]
i.e., $TN=\ker G_*\oplus (\ker G_*)^\perp$
is a splitting to $TN$ into two subbundles that are invariant under $\nabla^h$-parallel transport.

Since $N$ is simply connected and complete,
it follows from de Rham's Theorem (see \cite{sakai91}, Theorem 6.11, Chapter II)
that $(N,h)=(N_1,h_1)\times (N_2,h_2)$, a Riemannian product,
where $(N_1,h_1)$ and $(N_2,h_2)$ are both complete and simply connected
and $TN_1=(\ker G_*)^\perp$, $TN_2=\ker G_*$.

To see now that Eq. (\ref{eq:p_wa:1}) holds,
let $X$ be a vector field parallel along $\gamma$ in $M$,
write $\Gamma=(\Gamma_1,\Gamma_2)$,
take $\ol{X}=(\ol{X}_1,\ol{X}_2)$
(w.r.t $TN=TN_1\oplus TN_2$)
to be the unique lift of $X$ onto a vector field along $\Gamma$ in $N$
and compute
\[
0=&A(t)\nabla_{\dot{\gamma}(t)} X(\cdot)
=G_*\nabla^h_{(\dot{\Gamma}_1(t),\dot{\Gamma}_2(t))} \ol{X}(\cdot)
=G_*\nabla^h_{\dot{\Gamma}_1(t)}\ol{X}_1+G_*\nabla^h_{\dot{\Gamma}_2(t)}\ol{X}_2
=G_*\nabla^h_{\dot{\Gamma}_1(t)}\ol{X}_1,
\]
since $\nabla^h_{\dot{\Gamma}_2(t)}\ol{X}_2\in TN_2=\ker G_*$.
On the other hand, $G^{y_2}:N_1\to \hat{M}$; $y_1\mapsto G(y_1,y_2)$
is a local isometry for any $y_2\in N_2$ and hence
\[
0=&G_*\nabla^h_{\dot{\Gamma}_1(t)}\ol{X}_1
=(G^{\Gamma_2(t)})_*\nabla^h_{\dot{\Gamma}_1(t)}\ol{X}_1
=\hat{\nabla}_{(G^{\Gamma_2(t)})_*\dot{\Gamma}_1(t)} ((G^{\Gamma_2(t)})_*\ol{X}_1) \\
=&\hat{\nabla}_{\dot{\hat{\gamma}}(t)} (G_*\ol{X})
=\hat{\nabla}_{\dot{\hat{\gamma}}(t)} (A(\cdot)X(\cdot)),
\]
since $\dot{\hat{\gamma}}(t)=G_*\dot{\Gamma}(t)=G_*\dot{\Gamma}_1(t)=(G^{\Gamma_2(t)})_*\dot{\Gamma}_1(t)$
and $G_*\ol{X}=G_*\ol{X}_1=(G^{\Gamma_2(t)})_*\ol{X}_1$.
Thus (\ref{eq:p_wa:1}) holds and
this finishes the proof of (\ref{eq:p_wa:1})-(\ref{eq:p_wa:2}) in the case that (iii)-(2) holds.

Thus we have shown, because of (\ref{eq:p_wa:0}) and (\ref{eq:p_wa:1}), that $t\mapsto (\gamma(t),\hat{\gamma}(t);A(t))$ is the unique rolling
curve along $\gamma$ starting at $q_0=(x_0,\hat{x}_0;A_0)$ and defined on $[0,1]$
and therefore curves of $Q$ formed in this fashion fill
up the orbit $\mc{O}_{\RDist}(q_0)$.
Therefore, Eq. (\ref{eq:p_wa:2}) implies that $\Rol$ vanishes on $\mc{O}_{\RDist}(q_0)$
which means that we are in case (ii).

To prove the last claim in the statement of the proposition,
we continue the deduction done above in the case that condition (iii)-(2) holds.
Since for any $y_2\in N_2$, $G^{y_2}:N_1\to\hat{M}$
is a local Riemannian isometry and $N_1$ is simply connected and complete, 
it follows that $G^{y_2}$ is a universal Riemannian covering of $\hat{M}$.
We show that the map $G^{y_2}$ is independent
of the choice of $y_2\in N_2$ i.e., that $G(y_1,y_2)=G(y_1,y_2')$ for all $y_1\in N_1$ and $y_2,y_2'\in N_2$.
Indeed, take any smooth path $\Gamma_2$ in $N_2$ from $\Gamma_2(0)=y_2$ to $\Gamma_2(1)=y_2'$.
Then,
$\dif{t} (G_{y_1}\circ\Gamma_2(t))=G_*\dot{\Gamma}_2(t)=0$ for all $t$
since $\dot{\Gamma}_2(t)\in TN_2=\ker G_*$.
This shows that $G_{y_1}\circ\Gamma_2$ is a constant curve in $\hat{M}$
and thus $G(y_1,y_2)=G_{y_1}(\Gamma_2(0))=G_{y_1}(\Gamma_2(1))=G(y_1,y_2')$.

We fix $y'_2\in N_2$ and define $\hat{\pi}:=G^{y'_2}:N_1\to\hat{M}$
which is a universal Riemannian covering.
By what we just proved, it holds that $G(y_1,y_2)=G(y_1,y_2')=\hat{\pi}(y_1)$
which establishes the claim.
\end{proof}

\appendix

%%%%%%%%%%%%%%%%%%%%%%%%%%%%%%%%%%%%%%%%%%%%%%%%%%%%
\section{Fiber and Local Coordinates Point of View}\label{sec:2.2}\label{app:local}
%%%%%%%%%%%%%%%%%%%%%%%%%%%%%%%%%%%%%%%%%%%%%%%%%%%%

Let $F=(X_i)$, $\hat{F}=(\hat{X}_i)$
be (not necessarily orthonormal) local frames of $M$ and $\hat{M}$
defined on the open subsets $U,\hat{U}$, respectively.
We have local frames of 1-forms $F^*=((\theta^i),U)$, $\hat{F}^*=((\hat{\theta}^j),\hat{U})$
dual to these frames i.e., defined by $\theta^j(X_i)=\delta^j_i$,
$\hat{\theta}^j(\hat{X}_i)=\delta^j_i$.
The Christoffel symbols $\Gamma^k_{ij}=(\Gamma_F)^k_{ij}$,
$\hat{\Gamma}^k_{ij}=(\hat{\Gamma}_{\hat{F}})^k_{ij}$
 of $\nabla$, $\hat{\nabla}$
w.r.t the frames $F$, $\hat{F}$ are defined by (see \cite{spivakII99}, p. 266)
$\nabla_{X_i} X_j=\sum_k \Gamma^k_{ij} X_k$,
$\hat{\nabla}_{\hat{X}_i} \hat{X}_j=\sum_k \hat{\Gamma}^k_{ij} \hat{X}_k$.
Any $(x,\hat{x};A)\in T^*|_x(M)\otimes T|_{\hat{x}}(\hat{M})$
with $(x,\hat{x})\in U\times\hat{U}$ can be written in the form
\[
A=\sum_{i,j} A^i_j \theta^j|_x\otimes \hat{X}_i|_{\hat{x}},
\]
i.e., $(\pr_2\circ \tau_{F,\hat{F}})(A)=[A^i_j]$ (see section \ref{sec:1.1}).
Moreover, if $F,\hat{F}$ are orthonormal frames, then $A\in Q$ if and only if $[A^i_j]\in \SO(n)$.

Let $t\mapsto \gamma(t)$, $t\mapsto \hat{\gamma}(t)$, $t\in I$, be smooth curves
in $U$, $\hat{U}$, respectively, such that $\gamma(0)=x_0$, $\hat{\gamma}(0)=\hat{x}_0$,
where $I$ is a compact interval containing $0$.
Moreover, take $q_0=(x_0,\hat{x}_0;A_0)\in T^* M\otimes T\hat{M}$.
The no-spinning condition (\ref{eq:nospin2}) (i.e., the parallel translation equation)
for the  curve $t\mapsto q(t)=(\gamma(t),\hat{\gamma}(t);A(t))$
(i.e., $A(t)=P_0^t(\gamma,\hat{\gamma}) A$)
starting at $q$ can be written as 
\begin{align}\label{eq:2.2:0a}
\di{A^i_j}{t}(t)
-\sum_{k,m} \Gamma^k_{m,j}(\gamma(t)) A^i_k(t) v^m(t)
+\sum_{k,m} \hat{\Gamma}^i_{m,k}(\hat{\gamma}(t)) A^k_j(t) \hat{v}^m(t)=0,
\end{align}
where $t\in I$ and
\[
\dot{\gamma}(t)=\sum_i v^i(t) X_i|_{\gamma(t)},\quad
\dot{\hat{\gamma}}(t)=\sum_i \hat{v}^i(t) \hat{X}_i|_{\hat{\gamma}(t)}.
\]
This shows immediately that the equation for no-spinning
is a linear ODE in $A^i_j$ and thus the solution with the initial condition $A(0)=A_0$ exists for the whole interval $I$ where $\gamma,\hat{\gamma}$ are defined.
The control system $\sns$ can now be written locally in the form
\[
\begin{cases}
\displaystyle \dot{\gamma}(t)=\sum_i v^i(t) X_i|_{\gamma(t)}, \\
\displaystyle \dot{\hat{\gamma}}(t)=\sum_i \hat{v}^i(t) \hat{X}_i|_{\hat{\gamma}(t)}, \\
\displaystyle \di{A^i_j}{t}(t)
=\sum_{k,m} \Gamma^k_{m,j}(\gamma(t)) A^i_k(t) v^m(t)
-\sum_{k,m} \hat{\Gamma}^i_{m,k}(\hat{\gamma}(t)) A^k_j(t) \hat{v}^m(t)
\end{cases},
\quad t\in I,
\]
or
\[
\begin{cases}
\displaystyle \dot{\gamma}(t)=\sum_i v^i(t) X_i|_{\gamma(t)}, \\
\displaystyle \dot{\hat{\gamma}}(t)=\sum_i \hat{v}^i(t) \hat{X}_i|_{\hat{\gamma}(t)}, \\
\displaystyle \dot{A}(t)
=\sum_i v^i\LNSD(X_i, 0)|_{A(t)}+\sum_i \hat{v}^i\LNSD(0, \hat{X}_i)|_{A(t)},
\end{cases}
\quad t\in I,
\]
where the controls $v=(v^1,\dots,v^n)$, $\hat{v}=(\hat{v}^1,\dots,\hat{v}^n)$
are elements of $L^1_{\mathrm{loc}}(I,\R^n)$
(actually an open subset of it since the images of $\gamma,\hat{\gamma}$
should belong to $U,\hat{U}$, respectively).
From this local form, we see that $\sns$ is a driftless control affine system.

The curve $t\mapsto q(t)=(\gamma(t),\hat{\gamma}(t);A(t))$
is a rolling curve i.e., satisfies conditions (\ref{eq:nospin}) and (\ref{eq:noslip})
if and only if
\begin{align}\label{eq:RD_equation_local} 
& \hat{v}^i(t)=\sum_k A^i_k(t) v^k(t), \\
& \di{A^i_j}{t}(t)-\sum_m \Big(\sum_{k}\Gamma^k_{m,j}(\gamma(t)) A^i_k(t)-\sum_{k,l}  \hat{\Gamma}^i_{l,k}(\hat{\gamma}(t)) A^k_j(t) A^l_m(t)\Big)v^m(t)=0,
\end{align}
where
$\dot{\gamma}(t)=\sum_i v^i(t) X_i|_{\gamma(t)}$, $\dot{\hat{\gamma}}(t)=\sum_i \hat{v}^i(t) \hat{X}_i|_{\hat{\gamma}(t)}$.
Supposing that $\hat{U}$ is a domain of a coordinate chart $\hat{\phi}=(\hat{x}^1,\dots,\hat{x}^n)$ of $\hat{M}$
and taking as the frame $\hat{F}$ the coordinate fields $\hat{X}_i=\pa{\hat{x}^i}$,
the previous equation can be written as
\begin{align}\label{simon} 
& \di{\hat{\gamma}^i}{t}(t)=\sum_k A^i_k(t) v^k(t), \\
& \di{A^i_j}{t}(t)=\sum_m \Big(\sum_{k}\Gamma^k_{m,j}(\gamma(t)) A^i_k(t)-\sum_{k,l} {\hat{\Gamma}}^i_{l,k}(\hat{\gamma}(t)) A^k_j(t) A^l_m(t)\Big)v^m(t),
\end{align}
where $\hat{\gamma}^i=\hat{x}^i\circ\hat{\gamma}$ and $t\in I$.
This system is nonlinear in $\hat{\gamma}^i, A^i_j$
and thus the existence of solutions, for a given initial condition $\hat{\gamma}(0)=\hat{x}_0$, $A(0)=A_0$ cannot be guaranteed
on a given compact interval $I\ni 0$ where $\gamma$ is defined
(even in a case where one is able to get $\hat{U}=\hat{M}$).

Moving back to a general frame $\hat{F}$ (i.e., we are not assuming that it consists of coordinate vector fields),
the local form of the control system $\srol$
can be written as
\[
\begin{cases}
\displaystyle \dot{\gamma}(t)=\sum_i v^i(t)X_i|_{\gamma(t)}, \\
\displaystyle \dot{\hat{\gamma}}(t)=\sum_i v^i(t)A(t)X_i|_{\gamma(t)}, \\
\displaystyle \di{A^i_j}{t}(t)=\sum_m \Big(\sum_{k}\Gamma^k_{m,j}(\gamma(t)) A^i_k(t)-\sum_{k,l} \hat{\Gamma}^i_{l,k}(\hat{\gamma}(t)) A^k_j(t) A^l_m(t)\Big)v^m(t),
\end{cases}
\quad t\in I,
\]
or
\[
\begin{cases}
\displaystyle \dot{\gamma}(t)=\sum_i v^i(t)X_i|_{\gamma(t)}, \\
\displaystyle \dot{\hat{\gamma}}(t)=\sum_i v^i(t)A(t)X_i|_{\gamma(t)}, \\
\displaystyle \dot{A}(t)=\sum_{i} v^i(t)\LRD(X_i)|_{A(t)},
\end{cases}
\quad t\in I,
\]
where the controls $v=(v^1,\dots,v^n)$ are elements of $L^1_{\mathrm{loc}}(I,\R^n)$
as above.
From this local form, we see that $\srol$ is a driftless control affine system.

%%%%%%%%%%%%%%%%%%%%%%%%%%%%%%%%%%%%%%%%%%%%%%%%%%%%
\section{Sasaki-metric on $T^*M\otimes T\hat{M}$ and $Q$}\label{sec:2.4}
%%%%%%%%%%%%%%%%%%%%%%%%%%%%%%%%%%%%%%%%%%%%%%%%%%%%

The no-spinning distribution $\NSDist$ can be given a natural sub-Riemannian structure
(see e.g. \cite{montgomery06})
with the sub-Riemannian metric $h_{\mathrm{NSD}}:=(\pi_Q^*(\ol{g}))|_{\NSDist}$
since $(\pi_Q)_*|_{\NSDist|_{(x,\hat{x};A)}}$ is a linear isomorphism $T|_{(x,\hat{x};A)} Q\to T|_{(x,\hat{x})}(M\times\hat{M})$
at each point $q=(x,\hat{x};A)\in Q$ and 
$(\pi_Q)_*(\LNSD(X, \hat{X})|_q)=(X, \hat{X})$,
for every $q=(x,\hat{x};A)\in Q$ and $X\in T|_x M$, $\hat{X}\in T|_{\hat{x}}\hat{M}$.

Actually, we have more since there is a Sasaki-metric $\ol{g}^1_1$ on the whole tensor space $T^*(M)\otimes T(\hat{M})$ given
in the following.

\begin{definition}\label{def:2.4:1}
The \emph{Sasaki-metric} $\ol{g}^1_1$ on $T^*(M)\otimes T(\hat{M})$ is defined by
\begin{align}\label{eq:2.4:1}
\ol{g}^1_1(\xi,\eta)=&\ol{g}\big(\LNSD|_q^{-1}(\pr_1(\xi)),\LNSD|_q^{-1}(\pr_1(\eta))\big)\nonumber\\
&+((g^*\otimes \hat{g})\circ\tau)\big(\nu|_q^{-1}(\pr_2(\xi)),\nu|_q^{-1}(\pr_2(\eta))\big), 
\end{align}
where $$q=(x,\hat{x};A)\in T^*(M)\otimes T(\hat{M}),\  \xi,\eta\in T|_{q} (T^*(M)\otimes T(\hat{M})),$$
$\pr_1$, $\pr_2$ are projections of the decomposition 
$$T(T^*(M)\otimes T(\hat{M}))=\NSDist\oplus V(\pi_{T^*(M)\otimes T(\hat{M})}),$$
(see (\ref{eq:2.1:4}))
onto the first and  second factors, $\LNSD|_q^{-1}$, $\nu|_q^{-1}$
are the inverse maps of 
\begin{align}
T|_{(x,\hat{x})} (M\times\hat{M})&\to T|_q(T^*(M)\otimes T(\hat{M}))\nonumber\\
\ol{X}&\mapsto \LNSD(\ol{X})|_q,
\end{align}
and 
\begin{align}
(T^*(M)\otimes T(\hat{M}))|_{(x,\hat{x})}&\to V|_q(\pi_{T^*(M)\otimes T(\hat{M})})\nonumber\\
B&\mapsto \nu(B)|_q.
\end{align}
Note that $g^*:T^*(M)\otimes T^*(M)\to\R$ is the dual metric induced by $g$
and finally $\tau$ is the $\R$-linear isomorphism
\[
(T^*M\otimes T\hat{M})\otimes (T^* M\otimes T\hat{M})\to (T^*M\otimes T^*M)\otimes (T\hat{M}\otimes T\hat{M})
\]
uniquely determined by
\[
\tau\big((\omega\otimes \hat{X})\otimes (\theta\otimes\hat{Y})\big)=(\omega\otimes \theta)\otimes (\hat{X}\otimes\hat{Y}).
\]
\end{definition}

Denote by $\ol{g}_Q$ the restriction (i.e., the pull-back) of the metric $\ol{g}^1_1$ onto $Q$.

Let us now use the local frames and notation as in Appendix \ref{sec:2.2}.
Writing $\xi,\eta\in T|_{q} (T^*M\otimes T\hat{M})$, $q=(x,\hat{x};A)$, as
\[
\xi&=\sum_i \big(\xi^i X_i|_x+\hat{\xi}^i \hat{X}_i|_{\hat{x}}\big)+\sum_{i,j} {\xi_{\nu}}^i_j \theta^j|_x\otimes \hat{X}_i|_{\hat{x}}, \\ 
\eta&=\sum_i \big(\eta^i X_i|_x+\eta{\xi}^i \hat{X}_i|_{\hat{x}}\big)+\sum_{i,j} {\eta_{\nu}}^i_j\theta^j|_x\otimes \hat{X}_i|_{\hat{x}}, 
\]
one gets
\[
\pr_2(\xi)&=\xi-\LNSD\sum_i \big(\xi^i X_i|_x+\hat{\xi}^i \hat{X}_i|_{\hat{x}}\big)\big|_q\\
&=\sum_{i,j} \Big({\xi_{\nu}}^i_j-\sum_{k,m}\Gamma^k_{m,j} A^i_k \xi^m+{\bf \hat{\Gamma}}^i_{m,k} A^k_j\hat{\xi}^m\Big)\theta^i|_{x}\otimes \hat{X}_j|_{\hat{x}},
\]
the similar formula holding for $\pr_2(\eta)$ and hence
\[
\ol{g}^1_1=&\sum_{i,j} \xi^i \eta^j g(X_i|_x,X_j|_x)+\sum_{i,j} \hat{\xi}^i \hat{\eta}^j g(\hat{X}_i|_{\hat{x}},\hat{X}_j|_{\hat{x}}) \\
&+\sum_{i,j,\alpha,\beta} \Big({\xi_{\nu}}^i_j-\sum_{k,m}\Gamma^k_{m,j} A^i_k \xi^m+{\bf \hat{\Gamma}}^i_{m,k} A^k_j\hat{\xi}^m\Big)\\
&\cdot\Big({\eta_{\nu}}^\alpha_\beta-\sum_{k,m}\Gamma^k_{m,\beta} A^\alpha_k \xi^m+{\bf \hat{\Gamma}}^\alpha_{m,k} A^k_\beta\hat{\xi}^m\Big)g^*(\theta^i,\theta^\alpha)\hat{g}(\hat{X}_j,\hat{X}_\beta).
\]

Moreover, with this choice of the Riemannian metric on $T^*M\otimes T\hat{M}$ and $Q$ we
have the following result.

\begin{proposition}
Let $U,V\in C^\infty(\pi_{T^*M\otimes T\hat{M}},\pi_{T^*M\otimes T\hat{M}})$,
$\ol{X}\in T|_{(x_0,\hat{x}_0)} (M\times \hat{M})$ and $q_0=(x_0,\hat{x}_0;A_0)$.
Then the Sasaki-metric $\ol{g}^1_1$ has the following properties:
\begin{itemize}
\item[(i)] Letting $\tr=\tr_{T|_{x_0}^*M\otimes T|_{x_0}M}$ denote the trace of linear maps $T|_{x_0} M\to T|_{x_0} M$
and $\ol{T}$ the $(g,\hat{g})$-transpose of the linear maps $T|_{x_0} M\to T|_{\hat{x}_0} \hat{M}$, one has
\begin{align}
\ol{g}^1_1|_{q_0}\big(\nu(U(A_0))|_{q_0},\nu(V(A_0))|_{q_0}\big)=\tr\big(U(A_0)^{\ol{T}}V(A_0)\big)
\end{align}

\item[(ii)] Choosing a smooth local $\pi_{T^*M\otimes T\hat{M}}$-section $\tilde{A}$ s.t. $\tilde{A}|_{(x_0,\hat{x}_0)}=A_0$ and $\ol{\nabla}_{\ol{Y}}\tilde{A}=0$
for all $\ol{Y}\in T|_{(x_0,\hat{x}_0)} (M\times\hat{M})$,
\begin{align}
&\LNSD(\ol{X})|_{q_0} \big(\ol{g}^1_1\big(\nu(U(\cdot)),\nu(V(\cdot))\big)\big) \\
=&\ol{g}^1_1(\nu(\ol{\nabla}_{\ol{X}} U(\tilde{A}))|_{q_0},\nu(V(A_0))|_{q_0})+\ol{g}^1_1(\nu(U(A_0))|_{q_0},\nu(\ol{\nabla}_{\ol{X}} V(\tilde{A}))|_{q_0}). \nonumber
\end{align}

\end{itemize}
The same result holds if we throughout replace $T^*M\otimes T\hat{M}$ by $Q$
and $\ol{g}^1_1$ by $\ol{g}_Q$
with the exception that $U,V\in C^\infty(\pi_{Q},\pi_{T^*M\otimes T\hat{M}})$
s.t. $U(A),V(A)\in A\so(T|_x M)$ for all $(x,\hat{x};A)\in Q$.
\end{proposition}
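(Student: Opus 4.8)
The plan is to establish (i) first and then deduce (ii) from it by differentiating a contraction along a parallel curve; the $Q$-version then follows by restriction with one extra tangency check.

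\emph{Part (i).} First I would note that both $\nu(U(A_0))|_{q_0}$ and $\nu(V(A_0))|_{q_0}$ are $\pi_{T^*M\otimes T\hat{M}}$-vertical, so in the splitting $T(T^*M\otimes T\hat{M})=\NSDist\oplus V(\pi_{T^*M\otimes T\hat{M}})$ their $\pr_1$-components vanish and $\pr_2$ acts as the identity on them. Hence, by the definition \eqref{eq:2.4:1} of $\ol{g}^1_1$,
\[
\ol{g}^1_1|_{q_0}\big(\nu(U(A_0))|_{q_0},\nu(V(A_0))|_{q_0}\big)=\big((g^*\otimes\hat{g})\circ\tau\big)\big(U(A_0),V(A_0)\big).
\]
Since $\tau$ rearranges the factors so that $(g^*\otimes\hat{g})\circ\tau$ pairs the two $T^*M$-slots against the inverse metric $g^*$ and the two $T\hat{M}$-slots against $\hat{g}$, the right-hand side is the full contraction of $U(A_0)$ and $V(A_0)$; writing them in a $g$-orthonormal frame $(X_i)$ of $T|_{x_0}M$ with dual $(\theta^i)$ and a $\hat{g}$-orthonormal frame $(\hat{X}_j)$ of $T|_{\hat{x}_0}\hat{M}$, this contraction equals $\sum_{i,j}(U(A_0))^j_i(V(A_0))^j_i$, which a short computation with the definition of the $(g,\hat{g})$-transpose $\ol{T}$ identifies with $\tr\big(U(A_0)^{\ol{T}}V(A_0)\big)$. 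This proves (i), and shows that the fibre bilinear form $\langle\!\langle B,C\rangle\!\rangle:=((g^*\otimes\hat{g})\circ\tau)(B,C)$ on the bundle $T^*M\otimes T\hat{M}$ is exactly $(B,C)\mapsto\tr(B^{\ol{T}}C)$.

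\emph{Part (ii).} Fix $q_0=(x_0,\hat{x}_0;A_0)$ and, by Lemma~\ref{le:nice_extension_A}, pick a local $\pi_{T^*M\otimes T\hat{M}}$-section $\tilde{A}$ with $\tilde{A}|_{(x_0,\hat{x}_0)}=A_0$ and $\ol{\nabla}_{\ol{Y}}\tilde{A}=0$ for all $\ol{Y}\in T|_{(x_0,\hat{x}_0)}(M\times\hat{M})$; take a smooth curve $\gamma$ in $M\times\hat{M}$ with $\dot{\gamma}(0)=\ol{X}$ and set $A(t)=P^t_0(\gamma)A_0$, so $\LNSD(\ol{X})|_{q_0}=\dot{A}(0)$. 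By part (i) the function to differentiate is $f:q\mapsto \ol{g}^1_1|_q(\nu(U(q))|_q,\nu(V(q))|_q)=\tr\big(U(q)^{\ol{T}}V(q)\big)$. The second-order expansion \eqref{eq:2.1:3a} of parallel transport together with $\ol{\nabla}_{\ol{X}}\tilde{A}=0$ gives $\tilde{A}|_{\gamma(t)}=A(t)+O(t^2)$ inside the fibre over $\gamma(t)$, and since $f$ restricted to each fibre is smooth, $\LNSD(\ol{X})|_{q_0}(f)=\dif{t}\big|_0\tr\big(U(\tilde{A}|_{\gamma(t)})^{\ol{T}}V(\tilde{A}|_{\gamma(t)})\big)$. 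Now $t\mapsto U(\tilde{A}|_{\gamma(t)})$ and $t\mapsto V(\tilde{A}|_{\gamma(t)})$ are curves along $\gamma$ in the subbundle $T^*M\otimes T\hat{M}$ of $T^1_1(M\times\hat{M})$, which is $\ol{\nabla}$-parallel by Proposition~\ref{pr:2.1:1}; by definition their $\ol{\nabla}$-derivatives at $t=0$ are $\ol{\nabla}_{\ol{X}}(U(\tilde{A}))$ and $\ol{\nabla}_{\ol{X}}(V(\tilde{A}))$, the covariant derivatives of the tensor fields $y\mapsto U(\tilde{A}|_y)$, $y\mapsto V(\tilde{A}|_y)$. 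Finally, the fibre metric $\langle\!\langle\cdot,\cdot\rangle\!\rangle=(g^*\otimes\hat{g})\circ\tau$ is $\ol{\nabla}$-parallel since $g^*$ is $\nabla$-parallel and $\hat{g}$ is $\hat{\nabla}$-parallel, so exactly as for metric connections $\dif{t}\langle\!\langle W(t),Z(t)\rangle\!\rangle=\langle\!\langle\frac{\ol{\nabla}}{dt}W,Z\rangle\!\rangle+\langle\!\langle W,\frac{\ol{\nabla}}{dt}Z\rangle\!\rangle$ for curves $W,Z$ in $T^*M\otimes T\hat{M}$. Applying this at $t=0$ and re-expressing via part (i),
\[
\LNSD(\ol{X})|_{q_0}(f)=\tr\big((\ol{\nabla}_{\ol{X}}U(\tilde{A}))^{\ol{T}}V(A_0)\big)+\tr\big(U(A_0)^{\ol{T}}\,\ol{\nabla}_{\ol{X}}V(\tilde{A})\big),
\]
and one more use of part (i) rewrites each term as the asserted $\ol{g}^1_1$.

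For the statement on $Q$ I would run the identical argument with $\tilde{A}$ a $\pi_Q$-section (again Lemma~\ref{le:nice_extension_A}), noting that $\ol{g}_Q$ is by definition the restriction of $\ol{g}^1_1$ and that $U,V$ now take values in $\{B:A^{\ol{T}}B\in\so(T|_xM)\}$ over $Q$. The only additional point is to check $\nu(\ol{\nabla}_{\ol{X}}U(\tilde{A}))|_{q_0}$ is tangent to $Q$: since $\so(TM)$ is a $\nabla$-parallel subbundle of $T^1_1(M)$ and $y\mapsto\tilde{A}|_y^{\ol{T}}U(\tilde{A}|_y)$ is $\so(TM)$-valued, differentiating and using $\ol{\nabla}_{\ol{X}}\tilde{A}=0$ gives $A_0^{\ol{T}}\ol{\nabla}_{\ol{X}}U(\tilde{A})\in\so(T|_{x_0}M)$, so $\nu(\ol{\nabla}_{\ol{X}}U(\tilde{A}))|_{q_0}\in V|_{q_0}(\pi_Q)$ by Proposition~\ref{pr:vertical_of_Q}; similarly for $V$. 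I expect the only real difficulty to be bookkeeping — identifying the object produced by differentiating $t\mapsto U(\tilde{A}|_{\gamma(t)})$ with the "$\ol{\nabla}_{\ol{X}}U(\tilde{A})$" named in the statement, and keeping all covariant derivatives inside the relevant parallel subbundles ($T^*M\otimes T\hat{M}$, resp. $\so(TM)$) — rather than any substantial computation.
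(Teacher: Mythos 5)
Your proof is correct, and its skeleton is the paper's: part (i) is essentially the paper's argument (only the vertical term of the Sasaki metric survives, and the orthonormal-frame computation identifies $((g^*\otimes \hat{g})\circ\tau)(B,C)$ with $\tr(B^{\ol{T}}C)$). In part (ii) you take a mildly different route to the same Leibniz identity: where the paper picks frames that are normal at the point ($\nabla_Y X_i=0$, $\hat{\nabla}_{\hat{Y}}\hat{X}_i=0$) and differentiates the component sum $\sum_{i,j}a^i_j(\tilde{A})b^i_j(\tilde{A})$, you instead use that the fibre pairing $\tr(B^{\ol{T}}C)$ is $\ol{\nabla}$-parallel (metric-compatibility of $\nabla,\hat{\nabla}$) and apply the Leibniz rule along the curve $\tilde{A}\circ\gamma$; likewise you justify $\LNSD(\ol{X})|_{q_0}f=\ol{X}(f\circ\tilde{A})$ via the expansion in Eq.~(\ref{eq:2.1:3a}) rather than by quoting Theorem~\ref{th:LNSD_formula}, which is equivalent. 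Both arguments hinge on exactly the same two facts — the section $\tilde{A}$ with $\ol{\nabla}\tilde{A}=0$ at $(x_0,\hat{x}_0)$, and parallelism of $g,\hat{g}$ — so nothing substantive changes; your version simply trades the normal-frame bookkeeping for an invariant Leibniz rule, and your observation that covariant derivatives of sections of the parallel subbundle $T^*M\otimes T\hat{M}\subset T^1_1(M\times\hat{M})$ stay in that subbundle is the right way to make sense of the transpose along the curve. The closing verification in the $Q$-case, that $\nu(\ol{\nabla}_{\ol{X}}U(\tilde{A}))|_{q_0}$ is tangent to $Q$ (via $\so(TM)$ being a parallel subbundle, $\ol{\nabla}_{\ol{X}}\tilde{A}=0$, and Proposition~\ref{pr:vertical_of_Q}), is a detail the paper leaves implicit; it is correct and worth including, since without it the right-hand side of (ii) read with $\ol{g}_Q$ is not literally defined.
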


\begin{proof}
Let $(X_i)$, $(\hat{X}_i)$ be smooth $g$,$\hat{g}$-orthonormal frames of vector fields $M$, $\hat{M}$ defined on the neighborhoods $U$, $\hat{U}$ of $x_0$, $\hat{x}_0$. Denote by $(\theta^i)$, $(\hat{\theta}^i)$ the corresponding dual frames.
Then there are unique functions $a^i_j,b^i_j\in\Cinf(\pi_{T^*M\otimes T\hat{M}}^{-1}(U\times \hat{U}))$ s.t.
\[
U(A)=\sum_{i,j} a^i_j(x,\hat{x};A) \theta^j|_x\otimes \hat{X}_i|_{\hat{x}},
\quad
V(A)=\sum_{i,j} b^i_j(x,\hat{x};A) \theta^j|_x\otimes \hat{X}_i|_{\hat{x}},
\]
and thus (below we will denote $a^i_j(x_0,\hat{x}_0;A_0),b^i_j(x_0,\hat{x}_0;A_0)$ simply by $a^i_j$, $b^i_j$)
\[
& \ol{g}^1_1|_{q_0}\big(\nu(U(A_0))|_{q_0},\nu(V(A_0))|_{q_0}\big)\\
=&((g^*\otimes \hat{g})\circ\tau)\big(\sum_{i,j,k,l} a^i_j b^k_l (\theta^j|_{x_0}\otimes \hat{X}_i|_{\hat{x}_0})\otimes (\theta^l|_{x_0}\otimes \hat{X}_k|_{\hat{x}_0})\big) \\
=&\sum_{i,j,k,l} a^i_j b^k_l g^*(\theta^j|_{x_0}\otimes \theta^l|_{x_0})\otimes \hat{g}(\hat{X}_j|_{\hat{x}_0}\otimes \hat{X}_k|_{\hat{x}_0})
=\sum_{i,j,k,l} a^i_j b^k_l \delta^{jl}\delta_{jk} \\
=&\sum_{i,j} a^i_j b^i_j=\sum_{i} \hat{g}(U(A_0)X_i,V(A_0)X_i)=\tr(U(A_0)^{\ol{T}} V(A_0)).
\]
This proves (i).

Next, by the definition of $\LNSD$ and the choice of $\tilde{A}$,
we have
\[
& \LNSD(\ol{X})|_{q_0} \big(\ol{g}^1_1\big(\nu(U(\cdot)),\nu(V(\cdot))\big)\big)
=\ol{X}\ol{g}^1_1\big(\nu(U(\tilde{A}))|_{\tilde{A}},\nu(V(\tilde{A})|_{\tilde{A}})\big) \\
=&\ol{X}\big(\sum_{i,j} a^i_j(\tilde{A}) b^i_j(\tilde{A})\big)
=\sum_{i,j} \ol{X}(a^i_j(\tilde{A})) b^i_j(q_0)+\sum_{i,j} a^i_j(q_0) \ol{X}(b^i_j(\tilde{A})).
\]
Assuming for simplicity that $(X_i)$, $(\hat{X}_i)$ were chosen so that
$\nabla_Y X_i=0$, $\hat{\nabla}_{\hat{Y}} \hat{X}_i=0$ for all $i$ and $Y\in T|_{x_0} M$, $\hat{Y}\in T|_{\hat{x}_0}\hat{M}$
(and hence $\nabla_Y \theta^i=0$ for all $i$ and $Y\in T|_{x_0} M$),
we get
\[
\ol{\nabla}_{\ol{X}}(U(\tilde{A}))=\LNSD(\ol{X})|_{q_0}(a^i_j)\theta^j|_{x_0}\otimes \hat{X}_i|_{\hat{x}_0},\
\ol{\nabla}_{\ol{X}}(V(\tilde{A}))=\LNSD(\ol{X})|_{q_0}(b^i_j)\theta^j|_{x_0}\otimes \hat{X}_i|_{\hat{x}_0}.
\]
This shows that
\[
& \sum_{i,j} \LNSD(\ol{X})|_{q_0}(a^i_j) b^i_j(q_0)+\sum_{i,j} a^i_j(q_0) \LNSD(\ol{X})|_{q_0}(b^i_j) \\
=&\tr\big((\ol{\nabla}_{\ol{X}}(U(\tilde{A})))^{\ol{T}}V(A_0)\big)
+\tr\big(U(A_0)^{\ol{T}}(\ol{\nabla}_{\ol{X}}(V(\tilde{A}))\big) \\
=&\ol{g}^1_1(\nu(\ol{\nabla}_{\ol{X}} U(\tilde{A}))|_{q_0},\nu(V(A_0))|_{q_0})+\ol{g}^1_1(\nu(U(A_0))|_{q_0},\nu(\ol{\nabla}_{\ol{X}} V(\tilde{A}))|_{q_0}).
\]

\end{proof}

\begin{proposition}\label{sas1}
The maps $\pi_{T^*(M)\otimes T(\hat{M})}$ and $\pi_Q$ are surjective Riemannian submersions onto $M\times\hat{M}$.
Hence the restrictions of the Levi-Civita connection $\nabla^{\ol{g}^1_1}$ and the Riemannian curvature $R^{\ol{g}^1_1}$ on the $\NSDist$-horizontal fields are respectively given by
\[
& \nabla^{\ol{g}^1_1}_{\LNSD(\ol{X})} \LNSD(\ol{Y})|_q=\LNSD(\ol{\nabla}_{\ol{X}} \ol{Y})|_q+\frac{1}{2}\nu(AR(X,Y)-\hat{R}(\hat{X},\hat{Y})A)|_q,\]
and
\[
& \ol{g}^1_1\big(R^{\ol{g}^1_1}(\LNSD(\ol{X}),\LNSD(\ol{Y}))\LNSD(\ol{X}),\LNSD(\ol{Y})\big) \\
=&\ol{g}(\ol{R}(\ol{X},\ol{Y})\ol{X},\ol{Y})
+\frac{3}{4}\n{\nu(AR(X,Y)-\hat{R}(\hat{X},\hat{Y})A)|_q}_{\ol{g}^1_1},
\]
with $q=(x,\hat{x};A)\in T^*M\otimes T\hat{M}$, $\ol{X}=(X,\hat{X})$ where $X\in\VF(M),\hat{X}\in\VF(\hat{M})$ and similarly for $\ol{Y}$.
The same formulas hold if one replaces $\ol{g}^1_1$ with $\ol{g}_Q$.
\end{proposition}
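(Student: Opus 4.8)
The plan is to recognise $\pi:=\pi_{T^*(M)\otimes T(\hat M)}$ and $\pi_Q$ as Riemannian submersions onto the Riemannian product $(M\times\hat M,\ol g)$ and then to read off the two formulas from O'Neill's formulas for Riemannian submersions, the only non-standard ingredient being the commutator formula of Proposition \ref{pr:NS_comm_HH}.

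\textbf{Step 1: the submersion property.} By the construction of the Sasaki metric (Definition \ref{def:2.4:1}) the splitting $T(T^*M\otimes T\hat M)=\NSDist\oplus V(\pi)$ of (\ref{eq:2.1:4}) is $\ol g^1_1$-orthogonal, so $\NSDist$ is precisely the horizontal distribution of $\pi$. Since $(\pi)_*$ maps $\NSDist|_q$ isomorphically onto $T|_{(x,\hat x)}(M\times\hat M)$ with inverse $\ol X\mapsto\LNSD(\ol X)|_q$, and the first term of (\ref{eq:2.4:1}) gives $\ol g^1_1(\LNSD(\ol X)|_q,\LNSD(\ol Y)|_q)=\ol g(\ol X,\ol Y)$, the map $(\pi)_*|_{\NSDist|_q}$ is a linear isometry; hence $\pi$ is a (clearly surjective) Riemannian submersion. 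For $\pi_Q$ one argues identically, using (\ref{eq:2.1:5}) to get the $\ol g_Q$-orthogonal splitting $TQ=\NSDist\oplus V(\pi_Q)$ — here $\NSDist$ is $\ol g^1_1$-orthogonal to the full ambient vertical, which contains $V(\pi_Q)$ — and the same isometry computation on $\NSDist|_q\subset T|_q Q$.

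\textbf{Step 2: the connection and curvature formulas.} A vector field $\ol X$ on $M\times\hat M$ splits as $\ol X=(X,\hat X)$ with $X\in\VF(M)$, $\hat X\in\VF(\hat M)$, and its basic (horizontal) lift is exactly $\LNSD(\ol X)$, because $(\pi)_*\LNSD(\ol X)|_q=\ol X$ (the Proposition after Definition \ref{def:2.1:1}) and $\LNSD(\ol X)$ is horizontal by Step 1; such lifts span $\NSDist$ pointwise, so it suffices to evaluate everything on them. O'Neill's formula for a Riemannian submersion gives
\begin{align*}
\nabla^{\ol g^1_1}_{\LNSD(\ol X)}\LNSD(\ol Y)\big|_q
=\LNSD(\ol\nabla_{\ol X}\ol Y)\big|_q+\tfrac12\,\mathrm{pr}_{V}\big[\LNSD(\ol X),\LNSD(\ol Y)\big]\big|_q,
\end{align*}
where $\mathrm{pr}_V$ is the projection onto the vertical along $\NSDist$. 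Applying Proposition \ref{pr:NS_comm_HH} with $\mc O=T^*M\otimes T\hat M$ and the fibrewise-constant sections $\ol T(A)=\ol X$, $\ol S(A)=\ol Y$ — for which $\LNSD(\ol X)|_q\ol S=\ol\nabla_{\ol X}\ol Y$, etc., by Lemma \ref{le:rules_of_computation} — yields
\begin{align*}
\big[\LNSD(\ol X),\LNSD(\ol Y)\big]\big|_q
=\LNSD\big(\ol\nabla_{\ol X}\ol Y-\ol\nabla_{\ol Y}\ol X\big)\big|_q+\nu\big(AR(X,Y)-\hat R(\hat X,\hat Y)A\big)\big|_q,
\end{align*}
whose horizontal part is $\LNSD([\ol X,\ol Y])|_q$ and whose vertical part is therefore $\nu(AR(X,Y)-\hat R(\hat X,\hat Y)A)|_q$; substituting gives the stated connection formula. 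For the curvature, O'Neill's identity for horizontal fields reads, with the sign convention for $R$ fixed in Section \ref{notations},
\begin{align*}
\ol g^1_1\big(R^{\ol g^1_1}(\LNSD(\ol X),\LNSD(\ol Y))\LNSD(\ol X),\LNSD(\ol Y)\big)
=\ol g\big(\ol R(\ol X,\ol Y)\ol X,\ol Y\big)+\tfrac34\,\big\|\mathrm{pr}_V[\LNSD(\ol X),\LNSD(\ol Y)]|_q\big\|^2_{\ol g^1_1},
\end{align*}
into which we insert the vertical bracket just computed. On $Q$ the argument is unchanged once one notes that this vertical bracket is tangent to $Q$ (Proposition \ref{pr:NS_comm_HH}, consistently with Proposition \ref{pr:vertical_of_Q} since $A^{\ol T}(AR(X,Y)-\hat R(\hat X,\hat Y)A)\in\so(T|_x M)$).

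\textbf{Expected difficulty.} The substantive work here is bookkeeping rather than geometry: matching the various normalisations of O'Neill's formulas (stated via $[X,Y]^v$ or via the integrability tensor $\tfrac12[X,Y]^v$, and under which curvature-sign convention) against the conventions of Section \ref{notations}, so that the coefficients $\tfrac12$, $\tfrac34$ and the plus sign in front of the base-curvature term come out exactly as written; and, for $\pi_Q$, justifying carefully that $\NSDist|_Q$ is the $\ol g_Q$-\emph{orthogonal} complement of $V(\pi_Q)$ inside $TQ$ (not merely a complement), which is where (\ref{eq:2.1:5}) together with the ambient orthogonality of $\NSDist$ is needed.
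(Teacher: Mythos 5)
Your proposal is correct and follows essentially the same route as the paper: the paper also observes that $\pi_{T^*(M)\otimes T(\hat{M})}$ and $\pi_Q$ are Riemannian submersions by construction of the Sasaki metric, obtains the standard submersion identity $\nabla^{\ol{g}^1_1}_{\LNSD(\ol{X})}\LNSD(\ol{Y})=\LNSD(\ol{\nabla}_{\ol{X}}\ol{Y})+\tfrac12[\LNSD(\ol{X}),\LNSD(\ol{Y})]^v$ (via Koszul, which is the same content as your citation of O'Neill), and leaves the evaluation of the vertical bracket to Proposition \ref{pr:NS_comm_HH}, exactly as you do, with the curvature identity likewise quoted from the standard submersion theory. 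Your added care in Step 1 and in checking tangency to $Q$ only makes explicit what the paper calls "obvious by construction."
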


\begin{proof}
The first statement is obvious by construction. For the statement about the connection, we use Koszul's formula (cf. \cite{lee02}),
to notice that
\[
\nabla^{\ol{g}^1_1}_{\LNSD(\ol{X})} \LNSD(\ol{Y})=\LNSD(\ol{\nabla}_{\ol{X}} \ol{Y})+\frac{1}{2}[\LNSD(\ol{X}),\LNSD(\ol{Y})]^v,
\]
where for $Z\in T (T^*M\otimes T\hat{M})$ we denote $Z=Z^h+Z^v$ with $Z^h\in\NSDist$ and $Z^v\in V(\pi_{T^*M\otimes T\hat{M}})$.
The fact about the Riemannian curvature is deduced similarly (see \cite{lee02}).
\end{proof}

\begin{theorem}\label{th:2.4:1}
Suppose $t\mapsto q(t)=(x(t),\hat{x}(t);A(t))$ is a smooth curve on $T^*(M)\otimes T(\hat{M})$
that is \emph{$\NSDist$-horizontal} i.e., $\dot{q}(t)\in \NSDist$ for all $t$.
Then the following are equivalent:
\begin{itemize}
\item[(i)] $t\mapsto q(t)=(x(t),\hat{x}(t);A(t))$ is a geodesic of $(T^*(M)\otimes T(\hat{M}),\ol{g}^1_1)$
\item[(ii)] $t\mapsto x(t)$ and $t\mapsto \hat{x}(t)$ are geodesics of $(M,g)$ and $(\hat{M},\hat{g})$ respectively.
\end{itemize}
Moreover, in this case $t\mapsto A(t)$ is given by parallel transport as follows:
\begin{align}\label{eq:parallel_trans_A:II}
A(t)=P^t_0(\hat{x})\circ A(0)\circ P^0_t(x)=P^t_0(x,\hat{x})A(0).
\end{align}
The same facts hold if $(T^*M\otimes T\hat{M},\ol{g}^1_1)$ is replaced by $(Q,\ol{g}_Q)$.
\end{theorem}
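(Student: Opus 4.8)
The plan is to reduce both equivalences to a single identity for the covariant acceleration of an arbitrary $\NSDist$-horizontal curve, and to deduce the rest (the parallel transport formula and the $Q$-version) as easy corollaries.

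First I would dispose of the ``moreover'' part. Since $q(\cdot)$ is assumed $\NSDist$-horizontal, Corollary \ref{cor:LNSD_along_path} gives $\ol{\nabla}_{(\dot{x}(t),\dot{\hat{x}}(t))}A(\cdot)=0$ for all $t$, i.e. $t\mapsto A(t)$ is the parallel translate of $A(0)$ along $t\mapsto(x(t),\hat{x}(t))$ in $M\times\hat{M}$, so $A(t)=P^t_0(x,\hat{x})A(0)$; Proposition \ref{pr:2.1:1}, Eq. \eqref{eq:parallel_trans_A}, then rewrites this as $P^t_0(\hat{x})\circ A(0)\circ P^0_t(x)$, which is \eqref{eq:parallel_trans_A:II}. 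In particular $\dot{q}(t)=\LNSD(\dot{x}(t),\dot{\hat{x}}(t))|_{q(t)}$ for all $t$, and this step uses nothing about geodesics.

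The heart of the proof is the claim that for every $\NSDist$-horizontal $q(\cdot)$,
\[
\nabla^{\ol{g}^1_1}_{\dot{q}(t)}\dot{q}=\LNSD\big(\nabla_{\dot{x}(t)}\dot{x},\ \hat{\nabla}_{\dot{\hat{x}}(t)}\dot{\hat{x}}\big)\big|_{q(t)},\qquad\forall t.
\]
To prove it, fix $t_0$ with $(\dot{x}(t_0),\dot{\hat{x}}(t_0))\neq0$; then $(x,\hat{x})$ is an embedding on a small interval about $t_0$, so $(\dot{x},\dot{\hat{x}})$ extends to a local vector field $\ol{Z}\in\VF(M\times\hat{M})$ near $(x(t_0),\hat{x}(t_0))$ with $\ol{Z}|_{(x(t),\hat{x}(t))}=(\dot{x}(t),\dot{\hat{x}}(t))$ for $t$ near $t_0$, whence $\LNSD(\ol{Z})$ restricts to $\dot{q}$ along $q$ near $t_0$ and $\nabla^{\ol{g}^1_1}_{\dot{q}(t_0)}\dot{q}=\nabla^{\ol{g}^1_1}_{\LNSD(\ol{Z})|_{q(t_0)}}\LNSD(\ol{Z})$. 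Now the connection formula of Proposition \ref{sas1}, being tensorial in the differentiation slot and a derivation in the other, extends by multilinearity from product fields $(Z,\hat{Z})$ to an arbitrary $\ol{Z}$, the curvature correction becoming $\tfrac12\nu\big(AR(Z_0,Y_0)-\hat{R}(\hat{Z}_0,\hat{Y}_0)A\big)$ where $\ol{Z}|_{(x,\hat{x})}=(Z_0,\hat{Z}_0)$, etc.; applied here it gives
\[
\nabla^{\ol{g}^1_1}_{\dot{q}(t_0)}\dot{q}=\LNSD\big(\ol{\nabla}_{(\dot{x},\dot{\hat{x}})(t_0)}\ol{Z}\big)\big|_{q(t_0)}+\tfrac12\nu\big(AR(\dot{x}(t_0),\dot{x}(t_0))-\hat{R}(\dot{\hat{x}}(t_0),\dot{\hat{x}}(t_0))A\big)\big|_{q(t_0)}.
\]
The curvature term vanishes by antisymmetry of $R$ and $\hat{R}$ in their first two arguments, and since $\ol{g}=g\oplus\hat{g}$ is a product metric, $\ol{\nabla}_{(\dot{x},\dot{\hat{x}})(t_0)}\ol{Z}=\tfrac{D}{dt}\big|_{t_0}(\dot{x}(t),\dot{\hat{x}}(t))=(\nabla_{\dot{x}(t_0)}\dot{x},\hat{\nabla}_{\dot{\hat{x}}(t_0)}\dot{\hat{x}})$, proving the claim at such $t_0$. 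At a time $t_0$ where $(\dot{x},\dot{\hat{x}})$ vanishes, note the zero set is closed: on its interior $q$ is locally constant (here \eqref{eq:parallel_trans_A:II} forces $A(t)$ constant too) and both sides vanish, while on its boundary the identity follows by continuity from nearby non-degenerate times.

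Granting the claim, the equivalence is immediate: $\LNSD|_{q(t)}$ is a linear isomorphism onto $\NSDist|_{q(t)}$, so $\nabla^{\ol{g}^1_1}_{\dot{q}}\dot{q}\equiv0$ holds iff $\nabla_{\dot{x}}\dot{x}\equiv0$ and $\hat{\nabla}_{\dot{\hat{x}}}\dot{\hat{x}}\equiv0$, i.e. (i)$\iff$(ii). For $(Q,\ol{g}_Q)$, the right-hand side of the claim lies in $\NSDist|_{q(t)}\subset T_{q(t)}Q$ (Proposition \ref{pr:2.1:1}), so the $\ol{g}^1_1$-acceleration of a horizontal curve lying in $Q$ is automatically tangent to $Q$, hence equals the intrinsic $\ol{g}_Q$-acceleration; thus such a curve is a $\ol{g}_Q$-geodesic iff it is a $\ol{g}^1_1$-geodesic, and the same $(x,\hat{x})$ characterization follows, while \eqref{eq:parallel_trans_A:II} keeps $A(t)$ in $Q$ by Proposition \ref{pr:2.1:1}. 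The main obstacle I expect is precisely the differentiation bookkeeping: promoting the Proposition \ref{sas1} formula from product fields to an arbitrary $\ol{Z}$, and handling the times of degenerate velocity, which is the phenomenon flagged in Remark \ref{re:product_connection}; for the direction (ii)$\Rightarrow$(i) one can instead extend $(\dot{x},\dot{\hat{x}})$ to a product \emph{geodesic} vector field $(Z,\hat{Z})$ (with $\nabla_Z Z=0$, $\hat{\nabla}_{\hat{Z}}\hat{Z}=0$) once $x,\hat{x}$ are known to be geodesics, which makes the curvature-free conclusion follow directly from Proposition \ref{sas1} without any generalization.
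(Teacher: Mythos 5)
Your proof is correct and follows essentially the same route as the paper: both rest on the identity $\nabla^{\ol{g}^1_1}_{\dot{q}(t)}\dot{q}=\LNSD\big(\nabla_{\dot{x}(t)}\dot{x},\hat{\nabla}_{\dot{\hat{x}}(t)}\dot{\hat{x}}\big)\big|_{q(t)}$ obtained from Proposition \ref{sas1} with the curvature correction killed by $R(\dot{x},\dot{x})=0$, $\hat{R}(\dot{\hat{x}},\dot{\hat{x}})=0$, followed by the observation that $\LNSD(\cdot)|_{q(t)}$ is an isomorphism. Your extra care in extending $(\dot{x},\dot{\hat{x}})$ to a local field and handling degenerate velocities only fills in bookkeeping the paper leaves implicit (it simply invokes the Riemannian-submersion structure and the formula of Proposition \ref{sas1}).
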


Notice that the claim of the theorem can also be written more compactly as follows:
For any $q=(x,\hat{x};A)\in T^*M\otimes T\hat{M}$,
\begin{align}\label{eq:proj_of_exp}
\pi_{T^*M\otimes T\hat{M}}\circ \exp_q^{\ol{g}_1^1}\circ \LNSD|_q=\ol{\exp}_{(x,\hat{x})},
\end{align} 
with a similar formula holding when $T^*M\otimes T\hat{M}$ is replaced by $Q$.
\begin{proof}
This follows from the fact that $\pi_{T^*M\otimes T\hat{M}}$ (resp. $\pi_Q$) is a Riemannian submersion.
Nevertheless, for the sake of convenience we outline the easy proof here.

The fact that $t\mapsto q(t)=(x(t),\hat{x}(t);A(t))$ is $\NSDist$-horizontal implies that
$\dot{q}(t)=\LNSD(\dot{x}(t), \dot{\hat{x}}(t))|_{q(t)}$ for all $t$.
Thus, by Proposition \ref{sas1}, we get
\[
\nabla^{\ol{g}^1_1}_{\dot{q}(t)} \dot{q}=\LNSD\big(\nabla_{\dot{x}(t)}\dot{x}, \hat{\nabla}_{\dot{\hat{x}}(t)}\dot{\hat{x}}\big)\big|_{q(t)},
\]
since $R(\dot{x}(t),\dot{x}(t))=0$, $\hat{R}(\dot{\hat{x}}(t),\dot{\hat{x}}(t))=0$.
The claim follows from this since $\LNSD(\cdot)|_{q(t)}$ is a linear isomorphism for each $t$.
Also, Eq. (\ref{eq:parallel_trans_A:II}) follows easily from the definition of $\NSDist$ and Eq. (\ref{eq:parallel_trans_A}).

\end{proof}

\begin{corollary}\label{cor:2.4:1}
The $\NSDist$-horizontal curve on $Q$ is a geodesic of $(Q,\ol{g}_Q)$ if
and only if it is a geodesic of $(T^*(M)\otimes T(\hat{M}),\ol{g}^1_1)$.
\end{corollary}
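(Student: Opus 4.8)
The plan is to deduce the corollary directly from Theorem~\ref{th:2.4:1}, which characterizes the geodesics \emph{among $\NSDist$-horizontal curves} in both $(T^*(M)\otimes T(\hat{M}),\ol{g}^1_1)$ and $(Q,\ol{g}_Q)$ by one and the same condition: that the two projected curves on $M$ and $\hat M$ be geodesics there. So the whole proof amounts to invoking that theorem twice and comparing the two equivalent conditions.

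First I would recall that $\ol{g}_Q$ is, by definition, the metric induced on $Q$ by the inclusion $Q\hookrightarrow T^*(M)\otimes T(\hat{M})$, and that by Definition~\ref{def:2.1:1} together with Proposition~\ref{pr:2.1:1} the distribution $\NSDist$ on $Q$ is exactly the restriction to $Q$ of $\NSDist$ on $T^*(M)\otimes T(\hat{M})$; in particular $\NSDist|_Q\subset T(Q)$. Consequently a smooth curve $t\mapsto q(t)=(x(t),\hat{x}(t);A(t))$ whose image lies in $Q$ is $\NSDist$-horizontal as a curve in $Q$ if and only if it is $\NSDist$-horizontal as a curve in $T^*(M)\otimes T(\hat{M})$: both conditions read $\dot{q}(t)\in\NSDist|_{q(t)}$, and $\NSDist|_{q(t)}$ is literally the same linear subspace of $T|_{q(t)}(T^*(M)\otimes T(\hat{M}))$ in either reading.

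Next, let $q(\cdot)$ be such an $\NSDist$-horizontal curve on $Q$. Applying Theorem~\ref{th:2.4:1} in $(Q,\ol{g}_Q)$ shows that $q$ is a geodesic of $(Q,\ol{g}_Q)$ if and only if $t\mapsto x(t)$ and $t\mapsto\hat{x}(t)$ are geodesics of $(M,g)$ and $(\hat{M},\hat{g})$, respectively (and then $A(t)=P_0^t(x,\hat{x})A(0)$). Applying the same theorem in $(T^*(M)\otimes T(\hat{M}),\ol{g}^1_1)$, and using the first paragraph to see that $q$ is also $\NSDist$-horizontal there, shows that $q$ is a geodesic of $(T^*(M)\otimes T(\hat{M}),\ol{g}^1_1)$ if and only if $t\mapsto x(t)$ and $t\mapsto\hat{x}(t)$ are geodesics of $(M,g)$ and $(\hat{M},\hat{g})$. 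Since the two right-hand conditions are identical, the two notions of geodesic coincide on $\NSDist$-horizontal curves, which is precisely the statement of the corollary.

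The only place where care is genuinely needed — and what I would emphasize in the write-up — is that one must \emph{not} try to argue via a ``totally geodesic submanifold'': $Q$ is in general not totally geodesic in $(T^*(M)\otimes T(\hat{M}),\ol{g}^1_1)$, so an arbitrary geodesic of $(Q,\ol{g}_Q)$ need not be a geodesic of the ambient space. The restriction to $\NSDist$-horizontal curves is essential, and it is exactly this restriction (equivalently, the Riemannian-submersion property of $\pi_Q$ and $\pi_{T^*(M)\otimes T(\hat{M})}$ onto $M\times\hat{M}$ from Proposition~\ref{sas1}, together with the compatibility $\pi_{T^*(M)\otimes T(\hat{M})}|_Q=\pi_Q$) that makes Theorem~\ref{th:2.4:1} applicable in both settings and forces the geodesic equations to agree. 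There is no substantial computation beyond what is already contained in Theorem~\ref{th:2.4:1}.
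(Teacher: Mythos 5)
Your argument is correct and is exactly the intended one: Theorem \ref{th:2.4:1} (whose statement explicitly covers both $(T^*(M)\otimes T(\hat{M}),\ol{g}^1_1)$ and $(Q,\ol{g}_Q)$) characterizes geodesics among $\NSDist$-horizontal curves by the same condition on the projected curves in $M$ and $\hat{M}$, so applying it twice and comparing gives the corollary, which is why the paper states it without further proof. Your remark that the restriction to horizontal curves is essential (since $Q$ need not be totally geodesic in the ambient space) is a sensible clarification but not a departure from the paper's route.
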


\begin{theorem}\label{th:2.4:2}
The Riemannian manifolds $(M,g)$, $(\hat{M},\hat{g})$ are complete Riemannian manifolds if and only if $(T^*(M)\otimes T(\hat{M}),\ol{g}^1_1)$ or $(Q,\ol{g}_Q)$ is complete.
\end{theorem}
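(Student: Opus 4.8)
The plan is to show that the three assertions ``$(M,g)$ and $(\hat M,\hat g)$ are complete'', ``$(T^*(M)\otimes T(\hat M),\ol g^1_1)$ is complete'', and ``$(Q,\ol g_Q)$ is complete'' are pairwise equivalent, working throughout with geodesic completeness and the Hopf--Rinow theorem. The structural input is Proposition~\ref{sas1}: both $\pi_{T^*(M)\otimes T(\hat M)}$ and $\pi_Q$ are surjective Riemannian submersions onto $M\times\hat M$. Since a Riemannian product is complete if and only if each factor is, it is enough to prove, for each of the two total spaces $E$ (with base projection $\pi$ onto $M\times\hat M$), that $E$ is complete if and only if $M\times\hat M$ is complete.

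For the implication ``$E$ complete $\Rightarrow$ $M\times\hat M$ complete'', I would take a maximal geodesic $\gamma:[0,a)\to M\times\hat M$, fix $q\in\pi^{-1}(\gamma(0))$, and let $\tilde c$ be the $E$-geodesic with $\tilde c(0)=q$ and $\dot{\tilde c}(0)=\LNSD(\dot\gamma(0))|_q$; completeness of $E$ makes $\tilde c$ defined on $[0,\infty)$. Let $I=\{t\in[0,\infty)\ :\ \dot{\tilde c}(t)\in\NSDist\}$. Then $I$ is closed and contains $0$; it is also open, because at any $t_0\in I$ the $\NSDist$-lift of the (locally defined) base geodesic issued at $\pi(\tilde c(t_0))$ in the direction $(\pi_*\dot{\tilde c}(t_0))$ is an $E$-geodesic by Theorem~\ref{th:2.4:1}, with the same initial data as $s\mapsto\tilde c(t_0+s)$, hence coincides with it for small $|s|$ by uniqueness of geodesics, and is horizontal. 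So $I=[0,\infty)$, $\pi\circ\tilde c$ is a geodesic of $M\times\hat M$ on $[0,\infty)$, and it agrees with $\gamma$ near $0$, hence on $[0,a)$; thus $\gamma$ extends and $a=\infty$. This argument applies verbatim to both $E=T^*(M)\otimes T(\hat M)$ and $E=Q$, and then $M$ and $\hat M$ are complete as factors of the complete product $M\times\hat M$.

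For the converse ``$M\times\hat M$ complete $\Rightarrow$ $E$ complete'', suppose $c:[0,T)\to E$ is a maximal unit-speed geodesic with $T<\infty$. Since $\pi$ is a Riemannian submersion, the speed of $\pi\circ c$ equals $|\dot c^{\,h}|\le|\dot c|=1$, so $\pi\circ c$ has finite length and, by completeness of $M\times\hat M$, converges to a point $b_*$. Choose a local trivialization of $\pi$ over a neighbourhood $W$ of $b_*$ --- $E$ is a fibre bundle over $M\times\hat M$: a vector bundle when $E=T^*(M)\otimes T(\hat M)$, and a bundle with compact fibre $\SO(n)$ when $E=Q$ (Section~\ref{sec:1.1}) --- and write $c(t)=(\pi(c(t)),f(t))$ for $t$ near $T$. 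Using the explicit expression for $\ol g^1_1$ in Definition~\ref{def:2.4:1}, the fibrewise part of the metric over the precompact set $\overline{\pi\circ c([0,T))}\cap W$ is uniformly comparable to a fixed metric on the model fibre; together with $|\dot c^{\,v}|\le 1$ and a uniform bound on the connection one-form, this forces $\dot f$ to be bounded in that fixed metric. Hence $f([T-\varepsilon,T))$ stays in a compact subset of the model fibre (immediate in the $\SO(n)$ case; in the vector bundle case $f$ is Lipschitz on a finite interval, hence uniformly Cauchy), so $c([0,T))$ is relatively compact in $E$, and the standard extension argument for geodesics staying in a compact set contradicts the maximality of $T$. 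Therefore $E$ is geodesically complete.

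The main obstacle is precisely this last implication. Theorem~\ref{th:2.4:1} only controls $\NSDist$-horizontal geodesics, whereas a general $\ol g^1_1$-geodesic is neither horizontal nor vertical, so one cannot reduce it to parallel transport and must instead bound its vertical drift by hand. For $Q$ this reduces to compactness of the fibre $\SO(n)$, but for $T^*(M)\otimes T(\hat M)$, whose fibres are non-compact, one genuinely needs the local uniform comparison of the fibrewise metric with a flat metric, which is exactly what the form of $\ol g^1_1$ in Definition~\ref{def:2.4:1} provides; the completeness of the horizontal ``Ehresmann connection'' --- the global existence of parallel transport, Propositions~\ref{pr:parallel_tensor} and \ref{pr:2.1:1} --- is what guarantees $\pi$ is an honest fibre bundle and rules out the usual pathologies.
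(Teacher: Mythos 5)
Your proposal is essentially sound, but it takes a genuinely different route from the paper in the key direction. For ``total space complete $\Rightarrow$ base complete'' the paper simply invokes the standard fact that completeness descends along Riemannian submersions (via $\pi_Q$, resp.\ $\pi_{T^*M\otimes T\hat M}$, followed by $\pr_1,\pr_2$); your open/closed argument, showing via Theorem~\ref{th:2.4:1} and uniqueness of geodesics that a geodesic of the total space which is $\NSDist$-horizontal at one instant stays horizontal and projects to a geodesic, is a correct hands-on proof of exactly that fact. For the converse the paper argues on the metric side of Hopf--Rinow: it takes a Cauchy sequence in $T^*M\otimes T\hat M$, pushes it down to a convergent sequence in $M\times\hat M$, traps its tail in a trivialization $(\ol{V}\times\ol{\hat V})\times\gl(n)$ over a compact base neighbourhood, concludes convergence there, and then gets completeness of $(Q,\ol{g}_Q)$ for free because $Q$ is a \emph{closed} submanifold of the complete space $(T^*M\otimes T\hat M,\ol{g}^1_1)$. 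You instead run a geodesic-escape argument (finite-time maximal geodesic, projected curve of finite length converging in the base, fibre coordinate trapped), treating $Q$ directly through compactness of its $\SO(n)$ fibres. Both routes rest on the same local ingredients (trivializations, uniform comparability of the fibre metric over a compact base region); the paper's closedness trick for $Q$ is a bit shorter than your separate treatment, while your version makes the geodesic mechanism explicit.

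The one step you should repair is the asserted bound on $\dot f$ in the vector-bundle case. In a trivialization induced by local frames, the $\NSDist$-horizontal part of the velocity contributes to $\dot f$ through the local connection form acting \emph{linearly on the fibre coordinate} $f$ itself (see the parallel-transport equation \eqref{eq:2.2:0a}); so from $|\dot c^{\,v}|\le 1$, unit horizontal speed and bounded Christoffel symbols over the compact base region you only get an estimate of the form $|\dot f|\le C_1+C_2|f|$, not a direct bound on $\dot f$. Since the fibres of $T^*M\otimes T\hat M$ are non-compact, you must first bound $f$ --- e.g.\ by Gr\"onwall, or more intrinsically by observing that the fibrewise norm $\|A\|_{g^*\otimes\hat g}$ is preserved by $\NSDist$-horizontal transport, so its derivative along the geodesic is controlled by the vertical speed alone and grows at most linearly --- and only then conclude that $f$ is Lipschitz, hence convergent. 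With that correction your argument closes; in the $Q$ case the compactness of $\SO(n)$ makes the issue moot, as you note.
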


\begin{proof}
The completeness of $(T^*(M)\otimes T(\hat{M}),\ol{g}^1_1)$ (resp. $(Q,\ol{g}_Q)$)
implies the completeness of $(M,g)$, $(\hat{M},\hat{g})$
since $\pi_{T^*(M)\otimes T(\hat{M})}$ (resp. $\pi_Q$) is a Riemannian submersion onto $(M\times\hat{M},\ol{g})$
and $\pr_1$, $\pr_2$ are Riemannian submersions from $M\times\hat{M}$ onto $M$ and $\hat{M}$, respectively
(recall that Riemannian submersions map geodesics to geodesics).
This proves the direction ``$\Leftarrow$''.

Thus assume that $(M,g)$, $(\hat{M},\hat{g})$ are complete, which is equivalent to the completeness of $(M\times\hat{M},\ol{g})$.
Let $(x_n,\hat{x}_n;A_n)\in T^*(M)\otimes T(\hat{M})$ be a Cauchy-sequence.
Then $(x_n,\hat{x}_n)$ is a Cauchy sequence in $M\times\hat{M}$ and hence converges to a point $(y,\hat{y})\in M\times\hat{M}$ since $M\times\hat{M}$ is a complete (metric) space.
Choose a local trivialization $\tau:\pi_{T^*(M)\times T(\hat{M})}^{-1}(U\times\hat{U})\to (U\times\hat{U})\times \gl(n)$ of $T^*(M)\times T(\hat{M})$
induced by some coordinate charts $((x^i),U)$, $((\hat{x}^i),\hat{U})$ (see Appendix
 \ref{sec:2.2})
of $M$, $\hat{M}$ around $y$, $\hat{y}$ respectively.
By Proposition II.1.1 in \cite{sakai91},
the metric $d_{\ol{g}^1_1}$ induced by $\ol{g}^1_1$ on $T^*(M)\otimes T(\hat{M})$, and hence on $(U\times\hat{U})\times \gl(n)$ through $\tau$,
gives the original manifold topology. Choose an open neighbourhood $V\times\hat{V}$ of $(y,\hat{y})$
such that $\ol{V}\times \ol{\hat{V}}$ is a compact subset of $U\times\hat{U}$.
Then $(\ol{V}\times\ol{\hat{V}})\times \gl(n)$ is a complete space that contains a Cauchy-sequence $((x_n,\hat{x}_n),a_n)=\tau(x_n,\hat{x}_n;A_n)$
for $n$ large enough. Hence it converges to $(y,\hat{y},a)\in (\ol{V}\times\ol{\hat{V}})\times \gl(n)$
and thus $(x_n,\hat{x}_n;A_n)$ converges to $\tau^{-1}(y,\hat{y},a)=(y,\hat{y};A)$. This proves the completeness of $T^*(M)\otimes T(M)$.
We thus get the completeness of $Q$ since $Q$ is a closed submanifold of $T^*(M)\otimes T(M)$.

\end{proof}

\begin{remark}\label{re:completeness_of_vector_bundle}
More generally, suppose $\pi:E\to N$ is a smooth bundle,
$(E,G)$, $(N,g)$ are Riemannian manifolds, $\pi$ is a Riemannian submersion
and the typical fiber of $\pi$ is complete (i.e., all the fibers $\pi^{-1}(x)$ are complete subsets of $E$).
Then the argument of the previous proof applies
and shows that $E$ is a complete Riemannian manifold if and only if $M$ is a complete Riemannian manifold.
\end{remark}

We record the following result.

\begin{proposition}\label{pr:2.5:2}
Let $N$ be an integral manifold of $\RDist$
and equip it with the Riemannian metric $\ol{g}_N:=\ol{g}^1_1|_N$.
Then $(N,\ol{g}_N)$ is a totally geodesic submanifold of $(T^*(M)\otimes T(\hat{M}),\ol{g}^1_1)$.

The same claim holds if one replaces $(T^*M\otimes T\hat{M},\ol{g}^1_1)$ by $(Q,\ol{g}_Q)$ and assumes that $N\subset Q$.
\end{proposition}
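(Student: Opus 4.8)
The plan is to use the following standard criterion: a submanifold $N$ of a Riemannian manifold $(\tilde{M},\tilde{g})$ is totally geodesic precisely when every ambient geodesic issued from a point of $N$ with initial velocity tangent to $N$ stays in $N$ for short time. Indeed, decomposing the ambient geodesic equation $\tilde{\nabla}_{\dot{\Gamma}}\dot{\Gamma}=0$ into its $TN$- and $(TN)^{\perp}$-components gives $\nabla^{N}_{\dot{\Gamma}}\dot{\Gamma}=0$ and $\mathrm{II}(\dot{\Gamma},\dot{\Gamma})=0$ at once, and polarizing the latter over all admissible initial velocities shows that the second fundamental form of $N$ vanishes. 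So it suffices to prove: for every $q_{0}\in N$ and $v\in T|_{q_{0}}N$, the geodesic $t\mapsto\exp^{\ol{g}^{1}_{1}}_{q_{0}}(tv)$ remains in $N$ near $t=0$ (and similarly with $\ol{g}^{1}_{1}$ replaced by $\ol{g}_{Q}$).

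First I would identify this ambient geodesic with a rolling curve. Since $N$ is an integral manifold of $\RDist$, $T|_{q_{0}}N=\RDist|_{q_{0}}$, so writing $q_{0}=(x_{0},\hat{x}_{0};A_{0})$ we have $v=\LRD(X_{0})|_{q_{0}}=\LNSD(X_{0},A_{0}X_{0})|_{q_{0}}$ for a unique $X_{0}\in T|_{x_{0}}M$. Let $\gamma(t)=\exp_{x_{0}}(tX_{0})$ be the $(M,g)$-geodesic it determines. By Proposition \ref{pr:rol_geodesic} (valid on $T^{*}M\otimes T\hat{M}$ by the remark after Proposition \ref{pr:rol_inverse}), the rolling curve $q_{\RDist}(\gamma,q_{0})$ equals $t\mapsto(\gamma(t),\widehat{\exp}_{\hat{x}_{0}}(tA_{0}X_{0});P^{t}_{0}(\hat{\gamma}_{\RDist}(\gamma,q_{0}))\circ A_{0}\circ P^{0}_{t}(\gamma))$; in particular its $\hat{M}$-projection is an $(\hat{M},\hat{g})$-geodesic and its $A$-component is parallel along the pair of projected curves, so by Corollary \ref{cor:LNSD_along_path} this curve is $\NSDist$-horizontal. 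Theorem \ref{th:2.4:1} then tells us it is a geodesic of $(T^{*}M\otimes T\hat{M},\ol{g}^{1}_{1})$, and since its velocity at $t=0$ is $\LRD(X_{0})|_{q_{0}}=v$, uniqueness of geodesics yields $\exp^{\ol{g}^{1}_{1}}_{q_{0}}(tv)=q_{\RDist}(\gamma,q_{0})(t)$ near $t=0$.

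The remaining step — the delicate one, since we are in a generically non-involutive setting — is to check that this rolling curve actually stays inside $N$, and this is where the integral-manifold hypothesis enters essentially. Because $(\pr_{1}\circ\pi_{T^{*}M\otimes T\hat{M}})_{*}$ maps $\RDist|_{q}=T|_{q}N$ isomorphically onto $T|_{x}M$ for every $q=(x,\hat{x};A)\in N$, the restriction $F$ to $N$ of the projection $q\mapsto x$ is a local diffeomorphism near $q_{0}$. Hence the curve $\Gamma_{\gamma}:=F^{-1}\circ\gamma$, using the branch with $F^{-1}(x_{0})=q_{0}$, is defined for $t$ near $0$, lies in $N$, satisfies $\Gamma_{\gamma}(0)=q_{0}$, and has $\dot{\Gamma}_{\gamma}(t)=(F_{*}|_{\Gamma_{\gamma}(t)})^{-1}\dot{\gamma}(t)=\LRD(\dot{\gamma}(t))|_{\Gamma_{\gamma}(t)}\in\RDist|_{\Gamma_{\gamma}(t)}$. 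Thus $\Gamma_{\gamma}$ is an $\RDist$-admissible curve with $M$-projection $\gamma$ and initial point $q_{0}$, so by the uniqueness statement in Proposition \ref{pr:rolling_curves} it coincides with $q_{\RDist}(\gamma,q_{0})$. Combined with the previous step, $\exp^{\ol{g}^{1}_{1}}_{q_{0}}(tv)=\Gamma_{\gamma}(t)\in N$ for small $t$, proving that $(N,\ol{g}_{N})$ is totally geodesic in $(T^{*}M\otimes T\hat{M},\ol{g}^{1}_{1})$. The case $N\subset Q$ is entirely analogous, since Corollary \ref{cor:LNSD_along_path}, Theorem \ref{th:2.4:1} (which explicitly includes the $Q$-statement), Propositions \ref{pr:rol_geodesic} and \ref{pr:rolling_curves}, and the local-diffeomorphism property of $\pi_{Q,M}|_{N}$ all hold verbatim with $\ol{g}^{1}_{1}$ replaced by $\ol{g}_{Q}$.

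The main obstacle is thus not computational but the one just flagged: as $\RDist$ is generically non-involutive (Corollary \ref{cor:2.5:1}), one cannot invoke the Frobenius theorem to claim that a curve tangent to $\RDist$ and starting in $N$ stays in $N$. What rescues the argument is the rigidity of rolling curves (Proposition \ref{pr:rolling_curves}: an $\RDist$-admissible curve is determined by its $M$-projection and initial point) together with the fact that the base projection restricts to a local diffeomorphism on any integral manifold, which lets us reconstruct the ambient Sasaki-geodesic as a curve lying inside $N$.
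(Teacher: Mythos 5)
Your proof is correct, but it runs in the opposite direction from the paper's. You use the characterization ``every ambient geodesic issued tangent to $N$ stays in $N$'' and then argue forward: identify $\exp^{\ol{g}^1_1}_{q_0}(tv)$ with the rolling curve $q_{\RDist}(\gamma,q_0)$ via Proposition \ref{pr:rol_geodesic} and Theorem \ref{th:2.4:1}, and then use the rigidity of $\RDist$-admissible curves (Proposition \ref{pr:rolling_curves}) together with the fact that $\pi_{Q,M}$ restricts to a local diffeomorphism on the integral manifold to force that rolling curve to lie in $N$. The paper instead uses the dual characterization ``every intrinsic geodesic of $(N,\ol{g}_N)$ is an ambient geodesic'': it observes that $\pi_N=\pr_1\circ\pi_{T^*M\otimes T\hat{M}}|_N$ is a local \emph{isometry} onto $(M,g)$ (not merely a diffeomorphism), so a geodesic $t\mapsto(x(t),\hat{x}(t);A(t))$ of $N$ projects to a geodesic of $M$; it then verifies by a one-line covariant-derivative computation, $\hat{\nabla}_{\dot{\hat{x}}}\dot{\hat{x}}=(\ol{\nabla}_{(\dot{x},\dot{\hat{x}})}A)\dot{x}+A\nabla_{\dot{x}}\dot{x}=0$ using no-spinning and the geodesy of $x$, that $\hat{x}$ is a geodesic of $\hat{M}$, and concludes by Theorem \ref{th:2.4:1}. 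Both arguments pivot on Theorem \ref{th:2.4:1}; the paper's is shorter and yields as a by-product that the geodesics of $(N,\ol{g}_N)$ are exactly the rolling curves along $M$-geodesics, while yours makes explicit the point you rightly flag — that the non-involutivity of $\RDist$ is harmless because an $\RDist$-admissible curve is determined by its $M$-projection and its initial point, so no Frobenius-type argument is needed to keep the curve inside $N$.
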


\begin{proof}
The assumptions immediately imply that the projection 
$$\pi_N:=\pr_1\circ \pi_{T^*(M)\otimes T(\hat{M})}|_N,$$
is a local isometric diffeomorphism
from $(N,\ol{g}_N)$ into $(M,g)$
since $\pr_1\circ \pi_{T^*(M)\otimes T(\hat{M})}$ maps $\RDist$ isometrically onto $TM$ by the definition of $\ol{g}^1_1$ and $\RDist$.

Now if $t\mapsto (x(t),\hat{x}(t);A(t))$, $t\in ]a,b[$,
is a geodesic of $N$ then (since it is tangent to $\RDist$)
we have $\dot{\hat{x}}(t)=A(t)\dot{x}(t)$
and $t\mapsto x(t)=\pi_N(x(t),\hat{x}(t);A(t))$, $t\in ]a,b[$, is a geodesic of $(M,g)$.
We have
\[
\hat{\nabla}_{\dot{\hat{x}}(t)}\dot{\hat{x}}(t)
=\hat{\nabla}_{\dot{\hat{x}}(t)} (A(\cdot)\dot{x})
=(\ol{\nabla}_{(\dot{x}(t),\dot{\hat{x}}(t))} A)\dot{x}(t)+A(t)\nabla_{\dot{x}(t)}\dot{x},
\]
and once we use the facts that $\nabla_{\dot{x}(t)} \dot{x}=0$ (since $x$ is a geodesic on $M$) and
$\ol{\nabla}_{(\dot{x}(t),\dot{\hat{x}}(t))} A=0$ (by the definition of $\NSDist$)
to conclude that $\hat{\nabla}_{\dot{\hat{x}}(t)}\dot{\hat{x}}(t)=0$
i.e., $t\mapsto \hat{x}(t)$, $t\in ]a,b[$, is 
a geodesic of $\hat{M}$. Thus Theorem \ref{th:2.4:1}
implies that $t\mapsto (x(t),\hat{x}(t);A(t))$ is a ($\RDist$-horizontal)
geodesic of $(T^*(M)\otimes T(\hat{M}),\ol{g}^1_1)$.
The proof is complete.
\end{proof}

%%%%%%%%%%%%%%%%%%%%%%%%%%%%%%%%%%%%%%%%%%%%
\section{The Rolling Problem Embedded in $\R^N$}
%%%%%%%%%%%%%%%%%%%%%%%%%%%%%%%%%%%%%%%%%%%%

In this section, we compare the rolling model
defined by the state space $Q=Q(M,\hat{M})$,
whose dynamics is governed by the conditions (\ref{eq:nospin2})-(\ref{eq:noslip}) (or, equivalently, by $\RDist$),
with the rolling model of two $n$-dimensional manifolds embedded in $\R^N$ as given in \cite{sharpe97} (Appendix B). See also \cite {norway}, \cite{huper07}. 

Let us first fix $N\in\N$ and introduce some notations. The special Euclidean group of $\R^N$ is the set $\Euc(N):=\mathrm{SO}(N)\times \R^N$
equipped with the group operation $\star$ 
given by 
\[
(p,A)\star (q,B)=(Aq+p,AB),\quad (p,A),(q,B)\in \Euc(N).
\]
We identify $\SO(N)$ with the subgroup $\{0\}\times \SO(N)$
of $\Euc(N)$, while $\R^N$ is identified with the normal subgroup $\R^N\times \{\id_{\R^N}\}$ of $\Euc(N)$.
With these identifications, the action $\star$ of the subgroup $\SO(N)$ on the normal subgroup $\R^N$
is given by
\[
(p,A)\star q=Aq+p,\quad (p,A)\in\Euc(N),\ p\in\R^N.
\]

Let $\mc{M}$ and $\hat{\mc{M}}\subset \R^N$ be two (embedded) submanifolds of dimension $n$.
For every $z\in \mc{M}$, we identify $T|_z \mc{M}$ with a subspace of $\R^N$ (the same holding in the case of $\hat{\mc{M}}$) i.e., elements of $T|_z \mc{M}$
are derivatives $\dot{\sigma}(0)$ of curves $\sigma:I\to M$ with $\sigma(0)=z$ ($I\ni 0$ a nontrivial real interval).

The \emph{rolling of $\mc{M}$ against $\hat{\mc{M}}$ without slipping or twisting} in the sense of \cite{sharpe97} 
is realized by a smooth curves $G:I\to \Euc(N)$; $G(t)=(p(t),U(t))$ ($I$ a nontrivial real interval) called the \emph{rolling map}
and $\sigma:I\to \mc{M}$ called the \emph{development} curve
such that the following conditions (1)-(3) hold for every $t\in I$: 
\begin{itemize}
\item[(1)] (a) $\hat{\sigma}(t):=G(t)\star\sigma(t)\in \hat{\mc{M}}$ and \\
(b) $T|_{\hat{\sigma}(t)} (G(t)\star\mc{M})=T|_{\hat{\sigma}(t)} \hat{\mc{M}}$.

\item[(2)] No-slip: $\dot{G}(t)\star\sigma(t)=0$.

\item[(3)] No-twist: (a) $\dot{U}(t)U(t)^{-1} T|_{\hat{\sigma}(t)}\hat{\mc{M}}\subset (T|_{\hat{\sigma}(t)}\hat{\mc{M}})^{\perp}$ (tangential no-twist), \\
(b) $\dot{U}(t)U(t)^{-1} (T|_{\hat{\sigma}(t)}\hat{\mc{M}})^\perp \subset T|_{\hat{\sigma}(t)}\hat{\mc{M}}$ (normal no-twist).
\end{itemize} 
The orthogonal complements are taken w.r.t. the Euclidean inner product of $\R^N$.
In condition (2) we define the action '$\star$' of $\dot{G}(t)=(\dot{U}(t),\dot{p}(t))$
on $\R^N$ by the same formula as for the action '$\star$' of $\Euc(N)$ on $\R^N$.

The two manifolds $M$ and $\hat{M}$ are embedded inside $\R^N$ by embeddings $\iota:M\to\R^N$ and $\hat{\iota}:\hat{M}\to\R^N$
and their metrics $g$ and $\hat{g}$ are induced from the Euclidean metric $s_N$ of $\R^N$ i.e., $g=\iota^*s_N$ and $\hat{g}=\hat{\iota}^* s_N$.
In the above setting, we take now $\mc{M}=\iota(M)$, $\hat{\mc{M}}=\hat{\iota}(\hat{M})$.

For $z\in\mc{M}$ and $\hat{z}\in\hat{\mc{M}}$, consider the linear orthogonal projections
$$
P^T:T|_z\R^N\to T|_z\mc{M}\hbox{ and }P^\perp:T|_z\R^N\to T|_z\mc{M}^\perp,$$
and 
$$
\hat{P}^T:T|_{\hat{z}}\R^N\to T|_{\hat{z}}\hat{\mc{M}}\hbox{ and }\hat{P}^\perp:T|_{\hat{z}}\R^N\to T|_{\hat{z}}\hat{\mc{M}}^\perp,$$ 
respectively.

For $X\in T|_z \R^N$ and $Y\in \Gamma(\pi_{T\R^N}|_{\mc{M}})$ (here $\pi_{T\R^N}|_{\mc{M}}$ is the pull-back bundle of $T\R^N$ over $\mc{M}$),
we use  $\nabla^\perp_{X} Y$ to denote $P^\perp(\nabla^{s_N}_X Y)$
and one proceeds similarly $\hat{\nabla}^\perp_{\hat{X}} \hat{Y}=\hat{P}^\perp(\nabla^{s_N}_{\hat{X}} \hat{Y})$ 
for $\hat{X}\in T|_{\hat{z}} \R^N$ and $Y\in \Gamma(\pi_{T\R^N}|_{\hat{\mc{M}}})$.
We notice that, for any $z\in\mc{M}$, $X\in T|_z \mc{M}$ and $Y\in\VF(\mc{M})$, we
have 
$$\nabla^{s_N}_X Y=\iota_*(\nabla_{\iota^{-1}_* (X)} \iota_*^{-1}(Y))+\nabla^\perp_{X} Y,$$
and similarly on $\hat{\mc{M}}$.

Notice that $\nabla^\perp$ and $\hat{\nabla}^\perp$ determine (by restriction) connections
of vector bundles $\pi_{T\mc{M}^\perp}:T\mc{M}^\perp\to\mc{M}$ and $\pi_{T\hat{\mc{M}}^\perp}:T\hat{\mc{M}}^\perp\to\hat{\mc{M}}$.
These connections can then be used in an obvious way to determine a connection
$\ol{\nabla}^\perp$ on the vector bundle 
$$
\pi_{(T\mc{M}^\perp)^*\otimes T\mc{M}^\perp}:(T\mc{M}^\perp)^*\otimes T\mc{M}^\perp\to \mc{M}\times\hat{\mc{M}}.$$

Let us take any rolling map $G:I\to\Euc(N)$, $G(t)=(p(t),U(t))$ and development curve $\sigma:I\to\mc{M}$
and define $x=\iota^{-1}\circ \sigma$. We will go throught the meaning of each of the above conditions (1)-(3).

\begin{itemize}
\item[(1)] (a) Since $\hat{\sigma}(t)\in\hat{\mc{M}}$, we may define a smooth curve $\hat{x}:=\hat{\iota}^{-1}\circ\hat{\sigma}$. \\
(b) One easily sees that 
$$
U(t)T|_{\hat{\sigma}(t)} \mc{M}=T|_{\hat{\sigma}(t)} (G(t)\star\mc{M})=T|_{\hat{\sigma}(t)} \hat{\mc{M}}.
$$
Thus $A(t):=\hat{\iota}^{-1}_*\circ U(t)\circ \iota_*|_{T|_{x(t)} M}$ defines a map $T|_{x(t)} M\to T|_{\hat{x}(t)}\hat{M}$,
which is also orthogonal i.e., $A(t)\in Q|_{(x(t),\hat{x}(t))}$ for all $t$.
Moreover, if $B(t):=U(t)|_{T|_{\sigma(t)} \mc{M}^\perp}$, then $B(t)$ is a map $T|_{\sigma(t)} \mc{M}^\perp\to T|_{\hat{\sigma}(t)} \mc{\hat{M}}^\perp$
and, by a slight abuse of notation, we can write $U(t)=A(t)\oplus B(t)$.

Thus Condition (1) just determines a smooth curve $t\mapsto (x(t),\hat{x}(t);A(t))$ inside the state space $Q=Q(M,\hat{M})$.

\item[(2)] We compute 
\[
0=&\dot{G}(t)\star\sigma(t)
=\dot{U}(t)\sigma(t)+\dot{p}(t) \\
=&\dif{t}(G(t)\star\sigma(t))-U(t)\dot{\sigma}(t)=\dot{\hat{\sigma}}(t)-U(t)\circ\iota_*\circ\iota^{-1}_*\circ\dot{\sigma}(t),
\]
which, once composed with $\hat{\iota}^{-1}_*$ from the left, gives $0=\dot{\hat{x}}(t)-A(t)\dot{x}(t)$.
This is exactly the no-slip condition, Eq. (\ref{eq:noslip}).

\item[(3)] Notice that, on $\R^N\times\R^N=\R^{2N}$, the sum metric $s_N\oplus s_N$ is just $s_{2N}$.
Moreover, if $\gamma:I\to\R^N$ is a smooth curve, then smooth vector fields $X:I\to T(\R^N)$ along $\gamma$
can be identified with smooth maps $X:I\to\R^N$
and with this observation one has: $\dot{X}(t)=\nabla^{s_N}_{\dot{\gamma}(t)} X$.

(a) Since $U(t)=A(t)\oplus B(t)$, we get, for $t\mapsto \hat{X}(t)\in T|_{\hat{\sigma}(t)} \hat{\mc{M}}$, that
\[
& \dot{U}(t)U(t)^{-1}\hat{X}(t)=\nabla^{s_{2N}}_{(\dot{\sigma},\dot{\hat{\sigma}})(t)} \hat{X}(\cdot)-U(t)\nabla^{s_{2N}}_{(\dot{\sigma},\dot{\hat{\sigma}})(t)} (U(\cdot)^{-1}\hat{X}(\cdot)) \\
=&P^T\big(\hat{\nabla}^{s_N}_{\dot{\hat{\sigma}}(t)} \hat{X}(\cdot)\big)+\hat{\nabla}^\perp_{\dot{\hat{\sigma}}(t)} \hat{X}(\cdot)\\
&-U(t)\big(P^T\big(\nabla^{s_N}_{\dot{\sigma}(t)} (A(\cdot)^{-1}\hat{X}(\cdot))\big)+\nabla^{\perp}_{\dot{\sigma}(t)} (A(\cdot)^{-1}\hat{X}(\cdot))\big) \\
=&\big(\ol{\nabla}_{(\dot{x},\dot{\hat{x}})(t)} A(\cdot)\big)A(t)^{-1}(\hat{\iota}_*^{-1}\hat{X}(t))
+\big(\hat{\nabla}^\perp_{\dot{\hat{\sigma}}(t)} \hat{X}(\cdot)-B(t)\nabla^{\perp}_{\dot{\sigma}(t)} (A(\cdot)^{-1}\hat{X}(\cdot))\big),
\]
from which it is clear that the tangential no-twist condition corresponds to the condition that $\ol{\nabla}_{(\dot{x}(t),\dot{\hat{x}}(t))} A(\cdot)=0$.
This means exactly that $t\mapsto (x(t),\hat{x}(t);A(t))$ is tangent to $\NSDist$ for all $t\in I$.
Thus, the tangential no-twist condition (3)-(a) is equivalent to the no-spinning condition, Eq. (\ref{eq:nospin}). 

(b) Choose $t\mapsto \hat{X}^\perp(t)\in T|_{\hat{\sigma}(t)} \hat{\mc{M}}^\perp$ and calculate as above 
\[
\dot{U}(t)U(t)^{-1}\hat{X}^\perp(t) 
&=P^T(\nabla^{s_N}_{\dot{\hat{\sigma}}(t)} \hat{X}^\perp(\cdot))+\hat{\nabla}^\perp{\dot{\hat{\sigma}}(t)}\\
&-U(t)\big(P^T\big(\nabla^{s_N}_{\dot{\sigma}(t)} (B(\cdot)^{-1}\hat{X}(\cdot))\big)+\nabla^{\perp}_{\dot{\sigma}(t)} (B(\cdot)^{-1}\hat{X}(\cdot))\big) \\
=&\Big(P^T(\nabla^{s_N}_{\dot{\hat{\sigma}}(t)} \hat{X}^\perp(\cdot)-A(t) P^T\big(\nabla^{s_N}_{\dot{\sigma}(t)} (B(\cdot)^{-1}\hat{X}(\cdot))\big)\Big)\\
&+\big(\ol{\nabla}^{\perp}_{(\dot{\sigma}(t),\dot{\hat{\sigma}}(t))} B(\cdot) \big) B(t)^{-1}\hat{X}(t),
\]
and hence we see that the normal no-twist condition (3)-(b) corresponds to the condition that
\[
\ol{\nabla}^\perp_{(\sigma(t),\hat{\sigma}(t))} B(\cdot)=0,\quad \forall t.
\]
In a similar spirit to how Definition \ref{def:2:2} was given, one easily sees that this condition
just amounts to say that $B$ maps parallel translated normal vectors to $\mc{M}$ to parallel translated normal vectors to $\hat{\mc{M}}$. More precisely, if $X_0\in T\mc{M}^\perp$ and $X(t)=(P^{\nabla^\perp})^t_0(\sigma) X_0$ is a parallel translate of $X_0$ along $\sigma$ w.r.t. to the connection $\nabla^\perp$
(notice that $X(t)\in T|_{\sigma(t)} \mc{M}^\perp$ for all $t$),
then the normal no-twist condition (3)-(b) requires that $t\mapsto B(t)X(t)$ (which is the same as $U(t)X(t)$)
is parallel to $t\mapsto \hat{\sigma}(t)$ w.r.t the connection $\hat{\nabla}^\perp$
i.e., for all $t$,
$$
B(t)((P^{\nabla^\perp})^t_0(\sigma)X_0)=(P^{\hat{\nabla}^\perp})^t_0(\hat{\sigma})(B(0)X_0).
$$

\end{itemize}

We formulate the preceding remarks to a proposition.

\begin{proposition}\label{pr:equivalent_models}
Let $\iota:M\to\R^N$ and $\hat{\iota}:\hat{M}\to\R^N$ be smooth embeddings and let $g=\iota^*(s_N)$ and $\hat{g}=\hat{\iota}^*(s_N)$.
Fix points $x_0\in M$, $\hat{x}_0\in\hat{M}$ and an element $B_0\in \SO(T|_{\iota(x_0)} \mc{M}^\perp,T|_{\hat{\iota}(\hat{x}_0)} \hat{\mc{M}}^\perp)$.
Then, there is a bijective correspondence between the smooth curves $t\mapsto (x(t),\hat{x}(t);A(t))$ of $Q$
tangent to $\NSDist$ (resp. $\RDist$), satisfying $(x(0),\hat{x}(0))=(x_0,\hat{x}_0)$
and the pairs of smooth curves $t\mapsto G(t)=(p(t),U(t))$ of $\Euc(N)$ and $t\mapsto \sigma(t)$ of $\mc{M}$ which satisfy the conditions (1), (3) (resp. (1),(2),(3) i.e., rolling maps)
and $U(0)|_{T|_{\sigma(0)} \mc{M}^\perp}=B_0$.
\end{proposition}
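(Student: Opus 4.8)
The plan is to produce two explicit mutually inverse maps between the two sets in the statement, the bulk of the verification having been carried out in the discussion preceding the proposition. Throughout we use the orthogonal splittings $T|_{\sigma(t)}\R^N = T|_{\sigma(t)}\mc{M}\oplus T|_{\sigma(t)}\mc{M}^\perp$ and $T|_{\hat\sigma(t)}\R^N = T|_{\hat\sigma(t)}\hat{\mc{M}}\oplus T|_{\hat\sigma(t)}\hat{\mc{M}}^\perp$, and recall that for $V\in T|_{\sigma(t)}\mc{M}$ the normal block of $U(t)$ does not act on $V$.

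First I would define the map $\Phi$ from pairs $(G,\sigma)$ satisfying (1),(3) (resp.\ (1),(2),(3)) to curves in $Q$ by the recipe already used above: set $x=\iota^{-1}\circ\sigma$, $\hat\sigma(t)=G(t)\star\sigma(t)$, $\hat x=\hat\iota^{-1}\circ\hat\sigma$ (well defined by (1)(a)), and $A(t)=\hat\iota_*^{-1}\circ U(t)\circ\iota_*|_{T|_{x(t)}M}$. Condition (1)(b) gives $A(t)\in Q|_{(x(t),\hat x(t))}$; the identity computed after writing $U(t)=A(t)\oplus B(t)$ shows that the tangential no-twist condition (3)(a) is equivalent to $\ol\nabla_{(\dot x(t),\dot{\hat x}(t))}A(\cdot)=0$, i.e.\ to $\NSDist$-tangency, and the identity $0=\dot G(t)\star\sigma(t)=\dot{\hat\sigma}(t)-U(t)\dot\sigma(t)$, composed with $\hat\iota_*^{-1}$, gives $\dot{\hat x}(t)=A(t)\dot x(t)$, i.e.\ $\RDist$-tangency. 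Since $x(0)=\iota^{-1}(\sigma(0))=x_0$ and $\hat x(0)=\hat x_0$ (forced by $U(0)|_{T|_{\sigma(0)}\mc{M}^\perp}=B_0$, which only makes sense when $\sigma(0)=\iota(x_0)$ and $\hat\sigma(0)=\hat\iota(\hat x_0)$), the image of $\Phi$ lies in the prescribed set.

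Next I would define the inverse map $\Psi$. Given a curve $(x,\hat x;A)$ in $Q$ tangent to $\NSDist$ (resp.\ $\RDist$) with $(x(0),\hat x(0))=(x_0,\hat x_0)$ and the datum $B_0$, put $\sigma=\iota\circ x$, $\hat\sigma=\hat\iota\circ\hat x$, let $B(t)\colon T|_{\sigma(t)}\mc{M}^\perp\to T|_{\hat\sigma(t)}\hat{\mc{M}}^\perp$ be the parallel translate of $B_0$ along $(\sigma,\hat\sigma)$ with respect to $\ol\nabla^\perp$, define $U(t):=(\hat\iota_*\circ A(t)\circ\iota_*^{-1})\oplus B(t)$ and $p(t):=\hat\iota(\hat x(t))-U(t)\iota(x(t))$, and set $G(t)=(p(t),U(t))$. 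Here $U$ and $p$ are smooth since parallel transport along a smooth curve is smooth (Proposition~\ref{pr:parallel_tensor} and its analogue for $\ol\nabla^\perp$); $U(0)\in\mathrm{SO}(N)$ because $\hat\iota_*A(0)\iota_*^{-1}$ is orientation-preserving on tangent spaces and $B_0\in\SO(T|_{\iota(x_0)}\mc{M}^\perp,T|_{\hat\iota(\hat x_0)}\hat{\mc{M}}^\perp)$, hence by continuity of $t\mapsto U(t)\in\mathrm{O}(N)$ we get $U(t)\in\mathrm{SO}(N)$ for all $t$. That $\Psi(x,\hat x;A)$ satisfies (1) is immediate ($\hat\sigma(t)=G(t)\star\sigma(t)\in\hat{\mc{M}}$ and $U(t)T|_{\sigma(t)}\mc{M}=T|_{\hat\sigma(t)}\hat{\mc{M}}$ since $A(t)$ is onto); (3)(a) follows from $\NSDist$-tangency and (3)(b) from $B$ being $\ol\nabla^\perp$-parallel, both via the equivalences established above; and in the $\RDist$ case (2) follows from $\dot{\hat x}(t)=A(t)\dot x(t)$ read backwards through the computation of the second paragraph.

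Finally I would check $\Phi\circ\Psi=\id$ and $\Psi\circ\Phi=\id$. For $\Phi\circ\Psi$: starting from $(x,\hat x;A)$, the pair $\Psi(x,\hat x;A)$ has $\sigma=\iota\circ x$ and $G(t)\star\sigma(t)=U(t)\iota(x(t))+p(t)=\hat\iota(\hat x(t))$, so $\Phi$ recovers $x$, $\hat x$, and, using $U(t)|_{T|_{\sigma(t)}\mc{M}}=\hat\iota_*A(t)\iota_*^{-1}$, also $A$, while $U(0)|_{T\mc{M}^\perp}=B(0)=B_0$. For $\Psi\circ\Phi$: starting from $(G,\sigma)$, the curve $\Phi(G,\sigma)$ has $A(t)=\hat\iota_*^{-1}U(t)\iota_*$, so the $U$ rebuilt by $\Psi$ agrees with the original on $T|_{\sigma(t)}\mc{M}$, and agrees on $T|_{\sigma(t)}\mc{M}^\perp$ precisely because the original $U(t)|_{T\mc{M}^\perp}$ is, by the normal no-twist condition (3)(b), the $\ol\nabla^\perp$-parallel translate of $U(0)|_{T\mc{M}^\perp}=B_0$; then $p$ is recovered from $p(t)=\hat\iota(\hat x(t))-U(t)\iota(x(t))=G(t)\star\sigma(t)-U(t)\sigma(t)$ since $\hat\sigma(t)=G(t)\star\sigma(t)$. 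The step I expect to require the most care is the orientation bookkeeping ensuring $U(t)\in\mathrm{SO}(N)$ rather than merely $\mathrm{O}(N)$, together with the observation that the normal no-twist condition is exactly what forces the normal block of $U$ to be determined by its initial value $B_0$; the remainder is substitution into the identities already derived.
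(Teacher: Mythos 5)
Your proposal is correct and follows essentially the same route as the paper: the key construction is identical, namely building $U(t)=(\hat{\iota}_*\circ A(t)\circ\iota_*^{-1})\oplus B(t)$ with $B(t)$ the $\ol{\nabla}^\perp$-parallel translate of $B_0$ and $p(t)=\hat{\sigma}(t)-U(t)\sigma(t)$, and reading conditions (1)--(3) through the equivalences established in the preceding discussion. The only difference is that you spell out what the paper compresses into ``this clearly gives the claimed bijective correspondence,'' in particular the observation that the normal no-twist condition forces the normal block of $U$ to be the parallel translate of $B_0$, which is exactly the point making the correspondence injective/surjective.
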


\begin{proof}
Let $t\mapsto q(t)=(x(t),\hat{x}(t);A(t))$ to be a smooth curve in $Q$ such that $(x(0),\hat{x}(0))=(x_0,\hat{x}_0)$.
Denote $\sigma=\iota\circ x$, $\hat{\sigma}=\hat{\iota}\circ\hat{x}$ and let $B(t)=(P^{\ol{\nabla}^\perp})_0^t((\sigma,\hat{\sigma}))B_0$
be the parallel translate of $B_0$ along $t\mapsto (\sigma(t),\hat{\sigma}(t))$ w.r.t the connection $\ol{\nabla}^\perp$.
We define 
$$
U(t):=(\hat{\iota}_*\circ A(t)\circ \iota^{-1}_*)\oplus B(t):T|_{\sigma(t)}\mc{M}\to T|_{\hat{\sigma}(t)}\hat{\mc{M}},
$$
and $p(t)=\hat{\sigma}(t)-U(t)\sigma(t)$. Then, by the above remarks, the smooth curve $t\mapsto G(t)=(p(t),U(t))$ 
satisfies Conditions (1),(3) (resp. (1),(2),(3)) if $t\mapsto q(t)$ is tangent to 
$\NSDist$ (resp. $\RDist$).
This clearly gives the claimed bijective correspondence.
\end{proof}

%%%%%%%%%%%%%%%%%%%%%%%%%%%%%%%%%%%%%%%%%%%%%%%%%%%
\section{Special Manifolds in 3D Riemanniann Geometry}
\subsection{Preliminaries}\label{app:3D}
%%%%%%%%%%%%%%%%%%%%%%%%%%%%%%%%%%%%%%%%%%%%%%%%%%%

On an oriented Riemannian manifold $(M,g)$ one defines the Hodge-dual $\star_M$ as
the linear map uniquely defined by
\[
\star_M:\wedge^k T|_x M\to \wedge^{n-k} T|_x M;
\quad \star_M (X_1\wedge\dots\wedge X_k)=X_{k+1}\wedge\dots\wedge X_n,
\]
with $x\in M$, $k=0,\dots,n=\dim M$
and $X_1,\dots,X_n\in T|_x M$ any oriented basis.

For an oriented Riemannian manifold $(M,g)$ and $x\in M$,
one defines $\so(T|_x M)$ as the set of $g$-antisymmetric linear maps $T|_x M\to T|_x M$.
Moreover, we write $\so(M)$ as disjoint union of $\so(T|_x M)$, $x\in M$.
If $A,B\in \so(T|_x M)$, we define
\[
[A,B]_{\so}:=A\circ B-B\circ A\in \so(T|_x M).
\]
Also, we define the following natural isomorphism $\phi$ by
\[
\phi:{\wedge}^2 TM\to \so(M);\quad \phi(X\wedge Y):=g(\cdot,X)Y-g(\cdot,Y)X.
\]
Using this isomorphism, we may consider, for each $x\in M$, the curvature tensor $R$ of $(M,g)$ 
as a linear map,
\[
\mc{R}:\wedge^2 T|_x M \to \wedge^2 T|_x M;
\quad \mc{R}(X\wedge Y):=\phi^{-1}(R(X,Y)),
\]
where $X,Y\in T|_x M$. Here of course $R(X,Y)$, as an element of $T^*|_x M\otimes T|_x M$,
belongs to $\so(T|_x M)$.
It is a standard fact that $\mc{R}$ is a symmetric map
when $\wedge^2 T|_x M$ is endowed with the inner product, also written as $g$,
\[
g(X\wedge Y,Z\wedge W):=g(X,Z)g(Y,W)-g(X,W)g(Y,Z).
\]
Notice also that for $A,B\in \so(T|_x M)$,
\[
\tr(AB)=g(\phi^{-1}(A),\phi^{-1}(B)).
\]
The map $\mc{R}$ is usually called the curvature operator
and we will, with a slight abuse of notation,
write it simply as $R$.

In dimension $\dim M=3$
one has $\star_M^2=\id$ when $\star_M$ is the map
$\wedge^2 TM\to TM$ and $T M\to \wedge^2 TM$.

Let $X,Y,Z\in T|_x M$ be an orthonormal positively oriented basis.
Then
\[
\star_M (X\wedge Y)=Z,\quad
\star_M (Y\wedge Z)=X,\quad
\star_M (Z\wedge X)=Y.
\]
In terms of this basis $X,Y,Z$ one has
\[
\star_M \phi^{-1}\qmatrix{
0 & -\alpha & \beta \cr
\alpha & 0 & -\gamma \cr
-\beta & \gamma & 0
}=\qmatrix{\gamma \cr \beta \cr \alpha}.
\]
Indeed, since $X,Y,Z\in T|_x M$ form an orthonormal positively oriented basis, then
\[
\phi\Big(\star_M\qmatrix{\gamma \cr \beta \cr \alpha}\Big)
=&\phi\big(\gamma (\star_M X)+\beta (\star_M Y)+\gamma (\star_M Z)\big) 
=\phi\Big(
\gamma Y\wedge Z + \beta Z\wedge X + \alpha X\wedge Y
\Big) \\
=& \qmatrix{
0 & -\alpha & \beta \cr
\alpha & 0 & -\gamma \cr
-\beta & \gamma & 0
}.
\]

\begin{lemma}\label{le:speciality_of_3D}
If $(M,g)$ is a 3-dimensional oriented Riemannian manifold and $x\in M$.
\begin{itemize}
\item[(i)] Then each 2-vector $\xi\in \wedge^2 T|_x M$ is pure i.e.
there exist $X,Y\in T|_x M$ such that $\xi=X\wedge Y$.
\item[(ii)] For every $X,Y\in T|_x M$ one has
\[
[\phi(\star_M X),\phi(\star_M Y)]_{\so}=\phi(X\wedge Y).
\]
\end{itemize}
\end{lemma}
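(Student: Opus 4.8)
The plan is to treat the two parts separately, exploiting the feature special to dimension three that the Hodge star $\star_M$ restricts to a linear isomorphism $\wedge^2 T|_x M \to T|_x M$ satisfying $\star_M^2 = \mathrm{id}$ (recorded in the excerpt), together with the explicit formula $\phi(X\wedge Y)W = g(W,X)Y - g(W,Y)X$ and the identity $\tr(AB) = g(\phi^{-1}(A),\phi^{-1}(B))$ only insofar as it is needed. Both parts are elementary linear algebra in $\wedge^2$ of a three-dimensional inner product space, so the goal of the write-up is mainly to organize the verification cleanly.

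For part (i), given $\xi \in \wedge^2 T|_x M$ I would set $Z := \star_M \xi \in T|_x M$. If $Z = 0$ then $\xi = \star_M \star_M \xi = \star_M Z = 0 = 0\wedge 0$ is trivially pure. Otherwise, complete $Z/\n{Z}_g$ to a positively oriented $g$-orthonormal basis $e_1,e_2,e_3$ of $T|_x M$ with $e_3 = Z/\n{Z}_g$. By definition of $\star_M$ one has $\star_M(e_1\wedge e_2) = e_3$, hence $\star_M\big(\n{Z}_g\, e_1\wedge e_2\big) = Z$; applying $\star_M$ once more and using $\star_M^2 = \mathrm{id}$ gives $\n{Z}_g\, e_1\wedge e_2 = \star_M Z = \xi$. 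Thus $\xi = (\n{Z}_g\, e_1)\wedge e_2$ is pure.

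For part (ii), both sides of the claimed equality are bilinear in $(X,Y)$ — the left-hand side because $\star_M$ and $\phi$ are linear and $[\cdot,\cdot]_{\so}$ is bilinear, the right-hand side obviously — and both are antisymmetric in $(X,Y)$. Hence it suffices to verify the identity on the pairs $(e_i,e_j)$ of a fixed positively oriented $g$-orthonormal basis $e_1,e_2,e_3$; the diagonal pairs give $0$ on both sides, and since $\star_M$, $\phi$ and the maps $A \mapsto [A,\cdot]_{\so}$ are all equivariant under the orientation-preserving isometry that cyclically sends $e_1\mapsto e_2\mapsto e_3\mapsto e_1$, it is enough to check the single pair $(e_1,e_2)$. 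Here $\star_M e_1 = e_2\wedge e_3$ and $\star_M e_2 = e_3\wedge e_1$; writing the $3\times 3$ matrices of $\phi(e_2\wedge e_3)$, $\phi(e_3\wedge e_1)$, $\phi(e_1\wedge e_2)$ in the basis $e_1,e_2,e_3$ from the formula for $\phi$, a short matrix computation gives $[\phi(e_2\wedge e_3),\phi(e_3\wedge e_1)]_{\so} = \phi(e_1\wedge e_2)$, which is exactly the desired equality. Equivalently, one may identify $(T|_x M, g, \mathrm{orientation})$ with $(\mathbb{R}^3,\times)$, observe that $\phi(\star_M Z)W = Z\times W$, and deduce the identity from the Grassmann (double cross product) rule $X\times(Y\times W) - Y\times(X\times W) = g(W,X)Y - g(W,Y)X$.

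I do not expect any genuine obstacle. The only point requiring care is bookkeeping of orientation and sign conventions so that the instances of $\star_M$, $\phi$ and $[\cdot,\cdot]_{\so}$ in the computation match exactly the definitions fixed just before the lemma ($\star_M(X\wedge Y)=Z$ for a positively oriented orthonormal frame $X,Y,Z$, $\phi(X\wedge Y)W = g(W,X)Y - g(W,Y)X$, and $[A,B]_{\so} = A\circ B - B\circ A$); with these conventions the signs come out as stated, and in particular the equivariance argument used to reduce part (ii) to one basis pair is legitimate.
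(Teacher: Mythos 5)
Your proof is correct and follows essentially the same route as the paper: part (i) via the Hodge-star reduction of a 2-vector to a vector (your use of $\star_M^2=\id$ is a touch more direct than the paper's parallelism-plus-norm argument), and part (ii) by explicit verification in an oriented orthonormal frame (the paper computes with general coefficient vectors where you reduce to basis pairs by bilinearity). No gaps.
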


\begin{proof}
(i) To see this, it is enough to take $X,Y$ such that $X,Y,\star_M \xi$ are orthonormal in $T|_x M$
and form a positively oriented basis
and that $\n{X}_g=\n{\star_M\xi}_g$, $\n{Y}_g=1$.
Then
\[
g(\star_M (X\wedge Y),X)=\star_M (X\wedge Y\wedge X)=0,
\quad g(\star_M (X\wedge Y),Y)=\star_M (X\wedge Y\wedge Y)=0,
\]
so $X,Y$ are orthogonal to $\star_M (X\wedge Y)$ and to $\star_M \xi$,
hence $\star_M (X\wedge Y)$ is parallel to $\star_M \xi$
i.e., $X\wedge Y$ is parallel to $\xi$. Since
$\n{X\wedge Y}_g=\n{X}_g\n{Y}_g=\n{\xi}_g$
and taking into account the assumption that  $X,Y,\star_M \xi$
and $X,Y,\star_M (X\wedge Y)$ are oriented basis
(in that order), it follows that $X\wedge Y=\xi$.

(ii) If $X,Y,Z\in T|_x M$ form an orthonormal basis and
if $U,V\in T|_x M$, then with respect to the basis $X,Y,Z$,
\[
U=&\alpha X+\beta Y+\gamma Z=\qmatrix{\gamma\cr \beta\cr \alpha},\quad
V=a X+b Y+cZ=\qmatrix{c\cr b\cr a} \\
U\wedge V=&\qmatrix{\gamma\cr \beta\cr \alpha}\wedge \qmatrix{c\cr b\cr a}
=\star_M\qmatrix{\beta a-\alpha b \cr -\gamma a+\alpha c \cr \gamma b-\beta c} \\
=&\phi^{-1}\qmatrix{
0 & -\gamma b+\beta c & -\gamma a+\alpha c \cr
\gamma b-\beta c & 0 & -\beta a+\alpha b \cr
\gamma a-\alpha c & \beta a-\alpha b & 0
} \\
[\phi(\star_M U),\phi(\star_M V)]_{\so}
=&\Big[
\qmatrix{
0 & -\alpha & \beta \cr
\alpha & 0 & -\gamma \cr
-\beta & \gamma & 0
},
\qmatrix{
0 & -a & b \cr
a & 0 & -c \cr
-b & c & 0
}
\Big]_{\so} \\
=&\qmatrix{
0 & \beta c-\gamma b & \alpha c-\gamma a \cr
-(\beta c-\gamma b) & 0 & \alpha b-\beta a\cr
-(\alpha c-\gamma a) & -(\alpha b-\beta a) & 0
}.
\]
\end{proof}

%%%%%%%%%%%%%%%%%%%%%%%%%%%%%%%%%%%%%%%
\subsection{Manifolds of class $M_{\beta}$}\label{app:m_beta}
%%%%%%%%%%%%%%%%%%%%%%%%%%%%%%%%%%%%%%%

In this subsection, we define and investigate some properties
of special type of 3-dimensional manifolds.

Following the paper \cite{agrachev10} we make the following definition.

\begin{definition}
A 3-dimensional manifold $M$ is called a \emph{contact manifold of type $(\kappa,0)$}
where $\kappa\in\Cinf(M)$ if
there are everywhere linearly independent vector fields $F_1,F_2,F_3\in\VF(M)$
and smooth functions $c,\gamma_1,\gamma_3\in\Cinf(M)$
such that
\[
[F_1,F_2]=&c F_3 \\
[F_2,F_3]=&c F_1 \\
[F_3,F_1]=&-\gamma_1F_1+F_2-\gamma_3 F_3
\]
and
\[
-\kappa=F_3(\gamma_1)-F_1(\gamma_3)+(\gamma_1)^2+(\gamma_3)^2-c.
\]
We call the frame $F_1,F_2,F_3$ an \emph{(normalized) adapted frame of $M$}
and $c,\gamma_1,\gamma_2$ the corresponding structure functions.
\end{definition}

\begin{remark}
\begin{itemize}
\item[(i)]
If $(M,g)$ is as above,
then defining $\lambda\in\Gamma(\pi_{T^*M})$ by
\[
\lambda(F_1)=\lambda(F_3)=0,\quad
\lambda(F_2)=1
\]
one sees that $\lambda$ is a contact form on $(M,g)$
and $F_2$ is its Reeb vector field. Indeed, if $X\in\VF(M)$,
write $X=a_1F_1+a_2F_2+a_3F_3$ and compute
\[
\diff\lambda(F_2,X)=&X(\lambda(F_2))-F_2(\lambda(X))-\lambda([F_2,X]) \\
=&0-F_2(a_2)-\big(F_2(a_2)+\lambda(a_1[F_2,F_1]+a_3[F_2,F_3])\big)=0.
\]

\item[(ii)]
Contact manifolds in 3D are essentially classified by two functions $\kappa,\chi$
defined on these manifolds. Thus one could say in general that
a contact manifold is of class $(\kappa,\chi)$.
We are interested here only in the case where $\chi=0$.
For information on the classification
of contact manifolds, definition of $\chi$ and references, see \cite{agrachev10}.
\end{itemize}
\end{remark}

One may define on such a manifold a Riemannian metric
in a natural way by declaring $F_1,F_2,F_3$ orthogonal.
The structure of connections coefficients and the eigenvalues of the corresponding
curvature tensor are given in the following lemma.

\begin{lemma}\label{le:mbeta-1}
Let $M$ be a contact manifold of type $(\kappa,0)$
with adapted frame $F_1,F_2,F_3$ and structure functions $c,\gamma_1,\gamma_2$.
If $g$ is the unique Riemannian metric which makes $F_1,F_2,F_3$
orthonormal,
then the connection table w.r.t. $F_1,F_2,F_3$ is
\[
\Gamma=\qmatrix{
\frac{1}{2} & 0 & 0 \\
\gamma_1 & c-\frac{1}{2} & \gamma_3 \\
0 & 0 & \frac{1}{2}
},
\]

Moreover, at each point, $\star F_1,\star F_2,\star F_3$ (with $\star$ the Hodge dual)
are eigenvectors of the curvature tensor $R$
with eigenvalues $-K,-K_2(\cdot),-K$, respectively, where
\[
K=&\frac{1}{4},\quad (\textrm{constant}) \\
K_2(x)=&\kappa(x)-\frac{3}{4},\quad x\in M.
\]
\end{lemma}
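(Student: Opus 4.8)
Here is how I would proceed. The statement is a local, pointwise computation, carried out in two stages: first read off the Levi--Civita connection of $g$ from Koszul's formula together with the structure equations, obtaining the displayed connection table; then substitute this into the definition of the curvature operator $\mc{R}$ (as set up in the preliminaries of this appendix, via $\phi$ and $\star$) and diagonalise it.

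For the connection table: since $F_1,F_2,F_3$ is $g$-orthonormal, every function $g(F_i,F_j)$ is constant, so Koszul's formula collapses to
\[
2\,g(\nabla_{F_j}F_i,F_k)=g([F_j,F_i],F_k)-g([F_i,F_k],F_j)+g([F_k,F_j],F_i),
\]
and the right-hand side is, via $[F_1,F_2]=cF_3$, $[F_2,F_3]=cF_1$, $[F_3,F_1]=-\gamma_1F_1+F_2-\gamma_3F_3$, an explicit combination of $c,\gamma_1,\gamma_3$. Using in addition $g(\nabla_{F_j}F_i,F_i)=0$ and $g(\nabla_{F_j}F_i,F_k)=-g(\nabla_{F_j}F_k,F_i)$, one evaluates all nine coefficients $\Gamma^j_{(2,3)},\Gamma^j_{(3,1)},\Gamma^j_{(1,2)}$, $j=1,2,3$. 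For instance $2\Gamma^1_{(2,3)}=c-c+1$, $2\Gamma^2_{(3,1)}=c-1+c$, $2\Gamma^3_{(1,2)}=1-c+c$, $\Gamma^1_{(3,1)}=-g(\nabla_{F_1}F_1,F_3)=\gamma_1$, $\Gamma^3_{(3,1)}=g(\nabla_{F_3}F_3,F_1)=\gamma_3$, and the remaining four coefficients $\Gamma^2_{(2,3)},\Gamma^3_{(2,3)},\Gamma^1_{(1,2)},\Gamma^2_{(1,2)}$ vanish; collecting these gives exactly the displayed matrix. This stage is routine bookkeeping.

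For the curvature: with $\nabla_{F_i}F_j$ now explicit, I would compute $R(F_i,F_j)F_k=\nabla_{F_i}\nabla_{F_j}F_k-\nabla_{F_j}\nabla_{F_i}F_k-\nabla_{[F_i,F_j]}F_k$ for the index combinations needed to assemble the matrix of $\mc{R}$ in the oriented orthonormal basis $\star F_1=F_2\wedge F_3$, $\star F_2=F_3\wedge F_1$, $\star F_3=F_1\wedge F_2$ of $\wedge^2 T|_xM$. Two things must emerge: first, that all off-diagonal entries of this matrix vanish --- this is forced by the many zeros in $\Gamma$ and the symmetry of $\mc{R}$, so only the three diagonal sectional-type components really need to be examined; second, that the diagonal entries are $-\tfrac14$, $\kappa-\tfrac34$, $-\tfrac14$ for the planes $\spn\{F_2,F_3\}$, $\spn\{F_3,F_1\}$, $\spn\{F_1,F_2\}$ respectively. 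The constant $\tfrac14$ comes from the two planes containing $F_2$, whose relevant connection coefficients are the constants $\tfrac12$; for the plane $\spn\{F_1,F_3\}$ the raw expression produces the combination $F_3(\gamma_1)-F_1(\gamma_3)+\gamma_1^2+\gamma_3^2-c$, which equals $-\kappa$ by the defining relation of a contact manifold of type $(\kappa,0)$, giving $K_2=\kappa-\tfrac34$.

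The bulk of the work, and the only real obstacle, is this curvature computation: it carries many terms, and the delicate point is tracking the $\nabla_{[F_i,F_j]}F_k$ contributions, since $[F_3,F_1]=-\gamma_1F_1+F_2-\gamma_3F_3$ has non-constant coefficients and therefore produces precisely the derivative terms $F_3(\gamma_1)$ and $F_1(\gamma_3)$ that must be recognised inside the defining constraint for $\kappa$. One must also confirm that every mixed component (terms of type $g(R(F_2,F_3)F_1,F_3)$) cancels, so that the $\star F_i$ are genuine eigenvectors and not merely that $\mc{R}$ is block-diagonal. Everything else reduces to elementary linear algebra with the orthonormal frame.
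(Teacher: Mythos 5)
Your proposal is correct, and the first half coincides with the paper's proof verbatim: the paper also evaluates the nine coefficients by the Koszul formula for an orthonormal frame and obtains the same table. Where you diverge is the curvature step. The paper does not recompute $R$ at all: it observes that the table you just derived has $\Gamma^1_{(2,3)}=\Gamma^3_{(1,2)}=\tfrac12$ and $\Gamma^1_{(1,2)}=-\Gamma^3_{(2,3)}=0$ with these entries \emph{constant}, so the hypotheses of Proposition \ref{pr:special_3D} are met, and then simply substitutes into the ready-made formulas \eqref{eq:RsE3sE3} and \eqref{eq:RsE2sE2} to get $K=\tfrac14$ and $K_2=\kappa-\tfrac34$; the eigenvector statement (including the doubling of the eigenvalue $-K$ on $(\star F_2)^\perp$) is part of that proposition, proved there by a pointwise rotation argument rather than by checking off-diagonal entries. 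Your direct computation of the matrix of $\mc{R}$ in the basis $\star F_1,\star F_2,\star F_3$ is a legitimate alternative and does close: the $\star F_1$- and $\star F_3$-components of $\mc{R}(\star F_3)$ and $\mc{R}(\star F_1)$ vanish precisely because $\Gamma^1_{(2,3)}$ is constant and $\Gamma^1_{(1,2)}=0$ (they are of the form $-F_2(\Gamma^1_{(2,3)})+2\Gamma^1_{(1,2)}\Gamma^1_{(2,3)}$), while the mixed components against $\star F_2$ vanish once you have shown $\mc{R}(\star F_2)\parallel\star F_2$, by symmetry of $\mc{R}$ -- this spares you verifying identities such as $F_1(c)-F_2(\gamma_1)-c\gamma_3=0$ by hand. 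The trade-off is clear: the paper's route is shorter and reuses machinery needed elsewhere in Section \ref{se:3D}, whereas yours is self-contained but essentially re-proves the special case of Proposition \ref{pr:special_3D} in which the relevant coefficients are the constants $\tfrac12$ and $0$; the bookkeeping of the $\nabla_{[F_i,F_j]}F_k$ terms that you flag as the main burden is exactly what that proposition has already absorbed.
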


\begin{proof}
Since $F_1,F_2,F_3$ are $g$-orthonormal, the Koszul formula simplifies to
\[
2\Gamma^i_{(j,k)}=2g(\nabla_{F_i} F_j,F_k)
=g([F_i,F_j],F_k)-g([F_i,F_k],F_j)-g([F_j,F_k],F_i).
\]
With this we have
\[
2\Gamma^1_{(2,3)}&=c+1-c=-1,\quad
2\Gamma^1_{(3,1)}=\gamma_1-0+\gamma_1=2\gamma_1,\quad
2\Gamma^1_{(1,2)}=0 \\
2\Gamma^2_{(2,3)}&=0,\quad
2\Gamma^2_{(3,1)}=c+c-1=2c-1,\quad
2\Gamma^2_{(1,2)}=0 \\
2\Gamma^3_{(2,3)}=&0,\quad
2\Gamma^3_{(3,1)}=0-2(-\gamma_3)=2\gamma_3,\quad
2\Gamma^3_{(1,2)}=1+c-c=1.
\]

Since $\Gamma^1_{(2,3)}=\Gamma^3_{(1,2)}=\frac{1}{2}$
and $\Gamma^1_{(1,2)}=0=-\Gamma^3_{(2,3)}$
and they are constants,
the conditions of Proposition \ref{pr:special_3D} are fulfilled
and hence $\star F_1,\star F_2,\star F_3$ are eigenvectors
of $R$ with eigenvalues $-K,-K_2,-K$ where
\[
-K=&F_2(0)+0-\big(\frac{1}{2}\big)^2=-\frac{1}{4} \\
-K_2=&\big(\frac{1}{2}\big)^2+F_3(\gamma_1)-F_1(\gamma_3)+(\gamma_1)^2-2\cdot\frac{1}{2}\cdot (c-\frac{1}{2})+(\gamma_3)^2
=-\kappa+\frac{3}{4}
\]

\end{proof}

To justify somewhat our next definition, we make the following remark.

\begin{remark}\label{re:m_beta:scaling}
Notice that if $\beta\in\R$, $\beta\neq 0$ and $g_{\beta}:=\beta^{-2}g$
then the Koszul-formula gives,
\[
2g_{\beta}(\nabla^{g_{\beta}}_{F_i} F_j,F_k)
=&\beta^{-2} g([F_i,F_j],F_k)-\beta^{-2} g([F_i,F_k],F_j)-\beta^{-2} g([F_j,F_k],F_i) \\
=&2\beta^{-2}\Gamma^i_{(j,k)},
\]
because $g_{\beta}(F_i,F_j)=\beta^{-2}\delta_{ij}$.
Then, $E_i:=\beta F_i$, $i=1,2,3$, is a $g_{\beta}$-orthonormal basis 
and if $(\Gamma_\beta)^i_{(j,k)}=g_{\beta}(\nabla_{E_i} E_j,E_k)$,
then for every $i,j,k$.
\[
\beta^{-3}(\Gamma_\beta)^i_{(j,k)}=\beta^{-3}g_{\beta}(\nabla^{g_{\beta}}_{E_i} E_j,E_k)
=g_{\beta}(\nabla^{g_{\beta}}_{F_i} F_j,F_k)=\beta^{-2}\Gamma^i_{(j,k)}
\]
i.e. $(\Gamma_\beta)^i_{(j,k)}=\beta\Gamma^i_{(j,k)}$.
\end{remark}

\begin{definition}
A 3-dimensional Riemannian manifold $(M,g)$
is said to belong to \emph{class $\mc{M}_{\beta}$},
for $\beta\in\R$,
if there exists an orthonormal frame $E_1,E_2,E_3\in\VF(M)$
with respect to which the connection table is of the form
\[
\Gamma=\qmatrix{
\beta & 0 & 0 \\
\Gamma^1_{(3,1)} & \Gamma^2_{(3,1)} & \Gamma^3_{(3,1)} \\
0 & 0 & \beta
}.
\]
In this case the frame $E_1,E_2,E_3$ is called an \emph{adapted frame of $(M,g)$}.
\end{definition}

\begin{remark}
For a given $\beta\in\R$, one can say that a Riemannian space $(M,g)$
is \emph{locally of class $\mc{M}_{\beta}$},
if every $x\in M$ has an open neighbourhood $U$ such that
$(U,g|_U)$ is of class $\mc{M}_{\beta}$.
Since we are interested in local results, we usually speak of manifolds of (globally) class $\mc{M}_{\beta}$.
\end{remark}

\begin{lemma}\label{le:mbeta-2}
If $\beta\neq 0$ and $(M,g)$ is of class $\mc{M}_{\beta}$ with an adapted frame,
then $\star E_1,\star E_2,\star E_3$
are eigenvectors of $R$ with eigenvalues $-\beta^2,-K_2(\cdot),-\beta^2$,
where
\[
-K_2(x)=\beta^2+E_3(\Gamma^1_{(3,1)})-E_1(\Gamma^3_{(3,1)})+(\Gamma^1_{(3,1)})^2+(\Gamma^3_{(3,1)})^2
-2\beta\Gamma^2_{(3,1)},\quad x\in M.
\]
\end{lemma}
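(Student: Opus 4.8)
The plan is to observe that the asserted statement is nothing but a specialization, to the case of a constant "$\Gamma^1_{(2,3)}$" equal to $\beta$ and a vanishing "$\Gamma^1_{(1,2)}$", of the general eigenvalue computation for connection tables of the form treated in Proposition~\ref{pr:special_3D}. Concretely, by definition an adapted frame $E_1,E_2,E_3$ of a manifold $(M,g)$ of class $\mc{M}_{\beta}$ has connection table
\[
\Gamma=\qmatrix{
\beta & 0 & 0 \\
\Gamma^1_{(3,1)} & \Gamma^2_{(3,1)} & \Gamma^3_{(3,1)} \\
0 & 0 & \beta
},
\]
which is the "skew" shape
\[
\Gamma=\qmatrix{
\Gamma^1_{(2,3)} & 0 & -\Gamma^1_{(1,2)} \\
\Gamma^1_{(3,1)} & \Gamma^2_{(3,1)} & \Gamma^3_{(3,1)} \\
\Gamma^1_{(1,2)} & 0 & \Gamma^1_{(2,3)}
}
\]
with $\Gamma^1_{(2,3)}\equiv\beta$ and $\Gamma^1_{(1,2)}\equiv 0$. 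Since these two functions are constant, the regularity hypotheses $V(\Gamma^1_{(2,3)})=0$, $V(\Gamma^1_{(1,2)})=0$ for $V\in E_2^{\perp}$ required by Proposition~\ref{pr:special_3D} hold automatically, so that proposition applies.

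First I would invoke Proposition~\ref{pr:special_3D} to get that $\star E_1,\star E_2,\star E_3$ are, pointwise, eigenvectors of the curvature operator $R$, with $\star E_1$ and $\star E_3$ carrying a common eigenvalue $-K$ and $\star E_2$ carrying $-K_2(\cdot)$. Then I would read the two eigenvalues off from the explicit curvature identities attached to that proposition (the same ones used in the proof of Lemma~\ref{le:mbeta-1}, one of which is the formula in Eq.~(\ref{eq:RsE3sE3})): substituting $\Gamma^1_{(1,2)}=0$, $\Gamma^1_{(2,3)}=\beta$ into the identity $-K=-E_2(\Gamma^1_{(1,2)})+(\Gamma^1_{(1,2)})^2-(\Gamma^1_{(2,3)})^2$ gives $-K=-\beta^2$, and substituting the same values into the corresponding expression $-K_2=(\Gamma^1_{(2,3)})^2+E_3(\Gamma^1_{(3,1)})-E_1(\Gamma^3_{(3,1)})+(\Gamma^1_{(3,1)})^2-2\Gamma^1_{(2,3)}\Gamma^2_{(3,1)}+(\Gamma^3_{(3,1)})^2$ yields precisely the stated formula for $-K_2(x)$. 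As a built-in consistency check, a contact manifold of type $(\kappa,0)$ with its natural metric is of class $\mc{M}_{1/2}$ (Lemma~\ref{le:mbeta-1}), and putting $\beta=\tfrac12$ in the formulas above recovers $K=\tfrac14$ and $K_2=\kappa-\tfrac34$.

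I do not expect a real obstacle here: the entire substance of the lemma is contained in Proposition~\ref{pr:special_3D}, and only a one-line substitution remains. The sole place where care is needed — and where, in a self-contained proof bypassing Proposition~\ref{pr:special_3D}, the bulk of the work would lie — is the verification, from the structure equations, that the off-diagonal entries of the curvature operator in the basis $\star E_1,\star E_2,\star E_3$ vanish; this uses precisely the constancy of $\beta$ and the vanishing of $\Gamma^1_{(1,2)}$, together with metricity to recover the remaining connection coefficients by antisymmetry in the lower indices and the torsion-free identity $[E_i,E_j]=\nabla_{E_i}E_j-\nabla_{E_j}E_i$. That computation is routine but lengthy, which is exactly why appealing to Proposition~\ref{pr:special_3D} is the efficient route.
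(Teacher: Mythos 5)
Your proposal is correct and follows the same route as the paper: the paper's own proof is a one-line appeal to Proposition~\ref{pr:special_3D} (specifically Eq.~\eqref{eq:RsE2sE2}), with the adapted-frame connection table being the special case $\Gamma^1_{(2,3)}=\beta$ constant, $\Gamma^1_{(1,2)}=0$, so the hypotheses hold trivially and the substitution into Eqs.~\eqref{eq:RsE3sE3}--\eqref{eq:RsE2sE2} gives exactly the stated eigenvalues. Your extra remarks (explicit hypothesis check, consistency with Lemma~\ref{le:mbeta-1} at $\beta=\tfrac12$) are fine but not needed.
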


\begin{proof}
Immediate from Proposition \ref{pr:special_3D}, Eq. \eqref{eq:RsE2sE2}.
\end{proof}

Next lemma is the converse of what has been done before the above definition.

\begin{lemma}\label{le:m_beta:relations}
Let $(M,g)$ be of class $\mc{M}_{\beta}$, $\beta\neq 0$, with an adapted frame $E_1,E_2,E_3$.
Then $M$ is a contact manifold of type $(\kappa,0)$
with (normalized) adapted frame $F_i:=\frac{1}{2\beta}E_i$, $i=1,2,3$.
Moreover, $\kappa$ and the structure functions $c,\gamma_1,\gamma_3$ are given by
\[
c=&\frac{\beta+\Gamma^2_{(3,1)}}{2\beta},\quad \gamma_1=\frac{\Gamma^1_{(3,1)}}{2\beta},\quad \gamma_3=-\frac{\Gamma^3_{(3,1)}}{2\beta} \\
\kappa(x)=&\frac{K_2(x)}{4\beta^2}+\frac{3}{4},\quad x\in M.
\]
\end{lemma}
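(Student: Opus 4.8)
The plan is to read everything off from the given connection table by a direct Lie-bracket computation, and then match the outcome against Lemma~\ref{le:mbeta-2}. First I would use that the Levi-Civita connection is torsion-free, so for an orthonormal frame $[E_i,E_j]=\nabla_{E_i}E_j-\nabla_{E_j}E_i=\sum_k(\Gamma^i_{(j,k)}-\Gamma^j_{(i,k)})E_k$, together with the antisymmetry $\Gamma^j_{(i,k)}=-\Gamma^j_{(k,i)}$ coming from $\nabla g=0$. The connection table of class $\mc{M}_{\beta}$ says that the only nonzero coefficients are $\Gamma^1_{(2,3)}=\beta=\Gamma^3_{(1,2)}$ and the three entries $\Gamma^1_{(3,1)},\Gamma^2_{(3,1)},\Gamma^3_{(3,1)}$ (and their sign-flipped partners). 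Substituting these into the bracket formula gives $[E_1,E_2]=(\beta+\Gamma^2_{(3,1)})E_3$, $[E_2,E_3]=(\beta+\Gamma^2_{(3,1)})E_1$ and $[E_3,E_1]=-\Gamma^1_{(3,1)}E_1+2\beta E_2-\Gamma^3_{(3,1)}E_3$.

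Next I would pass to the rescaled frame $F_i:=\frac{1}{2\beta}E_i$, which is licit because $\beta\neq0$ and which keeps the $F_i$ everywhere linearly independent. Since the bracket is bilinear over constants, $[F_i,F_j]=\frac{1}{4\beta^2}[E_i,E_j]$, and writing the right-hand sides back in terms of $E_k=2\beta F_k$ gives $[F_1,F_2]=\frac{\beta+\Gamma^2_{(3,1)}}{2\beta}F_3$, $[F_2,F_3]=\frac{\beta+\Gamma^2_{(3,1)}}{2\beta}F_1$ and $[F_3,F_1]=-\frac{\Gamma^1_{(3,1)}}{2\beta}F_1+F_2-\frac{\Gamma^3_{(3,1)}}{2\beta}F_3$. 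The decisive observation is that, after the three factors of $2\beta$ cancel as they must, the coefficient of $F_2$ in the last bracket is exactly $1$, which is precisely the normalization required in the definition of a contact manifold of type $(\kappa,0)$. Hence $M$ with adapted frame $F_i$ is such a contact manifold, and comparing with the model brackets identifies the structure functions $c=\frac{\beta+\Gamma^2_{(3,1)}}{2\beta}$, $\gamma_1=\frac{\Gamma^1_{(3,1)}}{2\beta}$, and $\gamma_3$ up to sign equal to $\frac{\Gamma^3_{(3,1)}}{2\beta}$ (I would double-check the sign of $\gamma_3$ against the bracket computation, since it is the only place a convention can slip).

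It then remains to verify the formula for $\kappa$. By definition $-\kappa=F_3(\gamma_1)-F_1(\gamma_3)+\gamma_1^2+\gamma_3^2-c$, and since $F_i=\frac{1}{2\beta}E_i$ with $\beta$ constant we have $F_3(\gamma_1)=\frac{1}{4\beta^2}E_3(\Gamma^1_{(3,1)})$ and, correspondingly, $F_1(\gamma_3)=\frac{1}{4\beta^2}E_1(\Gamma^3_{(3,1)})$ up to sign. Substituting the values of $c,\gamma_1,\gamma_3$ turns the right-hand side into $\frac{1}{4\beta^2}\bigl(E_3(\Gamma^1_{(3,1)})-E_1(\Gamma^3_{(3,1)})+(\Gamma^1_{(3,1)})^2+(\Gamma^3_{(3,1)})^2\bigr)-\frac12-\frac{\Gamma^2_{(3,1)}}{2\beta}$. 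Lemma~\ref{le:mbeta-2} gives $-K_2=\beta^2+E_3(\Gamma^1_{(3,1)})-E_1(\Gamma^3_{(3,1)})+(\Gamma^1_{(3,1)})^2+(\Gamma^3_{(3,1)})^2-2\beta\Gamma^2_{(3,1)}$, so dividing that by $4\beta^2$ and comparing constants yields $-\kappa=\frac{-K_2}{4\beta^2}-\frac34$, i.e. $\kappa=\frac{K_2}{4\beta^2}+\frac34$, which is the asserted identity.

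Every step here is elementary linear algebra and differentiation; the only real obstacle is bookkeeping: correctly invoking the antisymmetry $\Gamma^j_{(i,k)}=-\Gamma^j_{(k,i)}$ to extract the coefficients that do not appear literally in the table, and tracking the several factors of $2\beta$ (from applying $F_i$ to its arguments, from re-expressing the output in the $F_k$ basis, and from differentiating the rescaled functions $\gamma_1,\gamma_3$) so that the $F_2$-coefficient normalizes to $1$ and the additive $\frac34$ in the $\kappa$-formula emerges with the right sign. This lemma is essentially the converse of the normalization computation carried out in Lemma~\ref{le:mbeta-1} and Remark~\ref{re:m_beta:scaling}, so those computations can be reused as a consistency check.
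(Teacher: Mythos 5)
Your proposal is correct and follows essentially the paper's own (very terse) proof: the paper computes the same three brackets $[E_1,E_2]=(\beta+\Gamma^2_{(3,1)})E_3$, $[E_2,E_3]=(\beta+\Gamma^2_{(3,1)})E_1$, $[E_3,E_1]=-\Gamma^1_{(3,1)}E_1+2\beta E_2-\Gamma^3_{(3,1)}E_3$ from torsion-freeness and the connection table and then declares the claims immediate, while you spell out the rescaling to $F_i=\tfrac{1}{2\beta}E_i$ and the $\kappa$-identity via Lemma~\ref{le:mbeta-2}, exactly as intended. The sign you flagged resolves from your own (correct) bracket as $\gamma_3=+\tfrac{\Gamma^3_{(3,1)}}{2\beta}$, which is the value consistent with the stated $\kappa$-formula and with Lemma~\ref{le:mbeta-1}; the minus sign in the lemma's statement is a typo in the paper, not a gap in your argument.
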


\begin{proof}
From the torsion freeness of the Levi-Civita connection on $(M,g)$
and from the connection table w.r.t. $E_1,E_2,E_3$, we get
\[
[E_1,E_2]=&(\beta+\Gamma^2_{(3,1)})E_3 \\
[E_2,E_3]=&(\beta+\Gamma^2_{(3,1)})E_1 \\
[E_3,E_1]=&-\Gamma^1_{(3,1)}E_1+2\beta E_2-\Gamma^3_{(3,1)}E_3.
\]
From this and the fact that $\beta\neq 0$, the claims are immediate.
\end{proof}

\begin{remark}\label{re:m_beta:scaling2}
\begin{itemize}
\item[(i)] We notice that the classes $\mc{M}_{\beta}$ and $\mc{M}_{-\beta}$
are the same. Indeed, if $(M,g)$ is of class $\mc{M}_{\beta}$
and $E_1,E_2,E_3$ is an adapted orthonormal frame,
then $(M,g)$ is  of class $\mc{M}_{-\beta}$
with a adapted frame $F_1,F_2,F_3$ where $F_1=E_3$, $F_3=E_1$
(i.e. the change of orientation of $E_1,E_3$ plane moves from $\mc{M}_{\beta}$
to $\mc{M}_{-\beta}$).
So in this sense it would be better to speak of Riemannian manifolds of class $\mc{M}_{\beta}$
with $\beta\geq 0$ or of class $\mc{M}_{|\beta|}$.

\item[(ii)] If one has a Riemannian manifold $(M,g)$ of class $\mc{M}_{\beta}$,
then scaling the metric by $\lambda\neq 0$ one gets a Riemannian manifold $(M,\lambda^2 g)$
of class $\mc{M}_{\beta/\lambda}$.
This follows from Remark \ref{re:m_beta:scaling} above.
\end{itemize}
\end{remark}

\begin{remark}\label{re:m_beta:0}
If $(M,g)$ is of class $\mc{M}_0$,
then since $\beta=0$ and $\Gamma^1_{(1,2)}=0$, one deduces e.g. from Theorem \ref{th:warped_product}
that $(M,g)$ is locally a warped product.
Converse is easily seen to be true i.e. that a Riemannian product manifold
is locally of class $\mc{M}_0$. Hence there are many non-isometric
spaces of class $\mc{M}_0$.
\end{remark}

To conclude this subsection,
we will show that that for every $\beta\in\R$ there exist
3-dimensional Riemannian manifolds
of class $\mc{M}_\beta$ which are not all isometric.
See also \cite{agrachev10}.

\begin{example}
\begin{itemize}
\item[(i)] Let $M$ be $\SO(3)$. There one has left-invariant vector fields $E_1,E_2,E_3$
such that
\[
[E_1,E_2]=&E_3 \\
[E_2,E_3]=&E_1 \\
[E_3,E_1]=&E_2
\]
Hence with the metric $g$ rendering $E_1,E_2,E_3$ orthonormal,
we get a space $(M,g)$ of class $\mc{M}_{1/2}$.
By the definition of $\kappa$ and Lemma \ref{le:m_beta:relations} we have $\kappa=1$ and $K_2=\frac{1}{4}$.

\item[(ii)] Let $M$ be the Heisenberg group $H_3$.
Here one has left-invariant vector fields $E_1,E_2,E_3$
which satisfy
\[
[E_1,E_2]=&0 \\
[E_2,E_3]=&0 \\
[E_3,E_1]=&E_2.
\]
Hence $M$ with the metric for which $E_1,E_2,E_3$ are orthonormal,
is of class $\mc{M}_{1/2}$ and $\kappa=0$, $K_2=-\frac{3}{4}$.

\item[(iii)]
Take $M$ to be $\mathrm{SL}(2)$.
Then on $M$ there are left-invariant vector fields with commutators
\[
[E_1,E_2]=&-E_3 \\
[E_2,E_3]=&-E_1 \\
[E_3,E_1]=&E_2.
\]
If $g$ is a metric with respect to which $E_1,E_2,E_3$ are orthonormal,
it follows that $M$ is of class $\mc{M}_{1/2}$,
Here $\kappa=-1$, so $K_2=-\frac{7}{4}$.

Notice that if one takes the "usual" basis of $\mathfrak{sl}(2)$
as $a,b,c$ satisfying,
\[
[c,a]=2a,\quad [c,b]=-2b,\quad [a,b]=c,
\]
then one may define $e_1=\frac{a+b}{2}$, $e_2=\frac{a-b}{2}$, $e_3=\frac{c}{2}$
to obtain
\[
[e_1,e_2]=-e_3,\quad
[e_2,e_3]=-e_1,\quad
[e_3,e_1]=e_2.
\]
\end{itemize}
None of the examples in (i)-(iii) of Riemannian manifolds of class $\mc{M}_{\beta}$ with $\beta=\frac{1}{2}$
are (locally) isometric one to the other.
This fact is immediately read from the different values of $K_2$ (constant).

Hence by Remarks \ref{re:m_beta:scaling2} and \label{re:m_beta:0},
we see that for every $\beta\in\R$ there are non-isometric Riemannian
manifolds of the same class $\mc{M}_{\beta}$.
\end{example}

%%%%%%%%%%%%%%%%%%%%%%%%%%%%%%%%%%%%%%%%%%%%%%%%%%%%
\subsection{Warped Products}\label{app:warped}
%%%%%%%%%%%%%%%%%%%%%%%%%%%%%%%%%%%%%%%%%%%%%%%%%%%%

\begin{definition}
Let $(M,g)$, $(N,h)$ be Riemannian manifolds and $f\in\Cinf(M)$.
Define a metric $h_f$ on $M\times N$
\[
h_f=\pr_1^*(g)+(f\circ \pr_1)^2\pr_2^*(h),
\]
where $\pr_1,\pr_2$ are projections onto the first and second factor of $M\times N$,
respectively.
Then the Riemannian manifold $(M\times N,h_f)$
is called a \emph{warped product of $(M,g)$ and $(N,h)$} with the \emph{warping function $f$}.

One may write $(M\times N,h_f)$ as $(M,g)\times_f (N,h)$ for short
and $h_f$ as $g\oplus_f h$ if there is a risk of ambiguity.
\end{definition}

We are mainly interested in the case where $(M,g)=(I,s_1)$,
where $I\subset\R$ is an open non-empty interval and $s_1$ is the standard Euclidean metric on $\R$.
By convention, we write $\pa{r}$ for the natural positively directed unit (w.r.t. $s_1$) vector field on $\R$
and identify it in the canonical way as a vector field on
the product $I\times N$ and notice that it is also a unit vector field w.r.t. $h_f$.

Since needed in section \ref{se:3D},
we formulate, and proof, (a local version of) the main result of \cite{hiepko79}
in 3-dimensional case.
The general result allows one to detect Riemannian spaces which
are locally warped products.
In our setting we use it (in the below form)
to detect when a 3-dimensional Riemannian manifold $(M,g)$
is, around a given point, a warped product of the form $(I\times N,h_f)$, with $I\subset\R$, $f\in \Cinf(I)$,
and $(N,h)$ a 2-dimensional Riemannian manifold.

\begin{theorem}(\cite{hiepko79})\label{th:warped_product}
Let $(M,g)$ be a Riemannian manifold of dimension 3.
Suppose that at every point $x_0\in M$
there is an orthonormal frame $E_1,E_2,E_3$ defined in a
neighbourhood of $x_0$ such that
the connection table w.r.t. $E_1,E_2,E_3$ on this neighbourhood
is of the form
\[
\Gamma=\qmatrix{
0 &                                0                                & -\Gamma^1_{(1,2)} \cr
\Gamma^1_{(3,1)} & \Gamma^2_{(3,1)} & \Gamma^3_{(3,1)} \cr
\Gamma^1_{(1,2)} &                                0                               & 0 \cr
}
\]
and moreover
\[
X(\Gamma^1_{(1,2)})=0,\quad \forall X\in E_2^\perp.
\]

Then there is a neighbourhood $U$ of $x$,
an interval $I\subset\R$, $f\in\Cinf(I)$ and a 2-dimensional Riemannian manifold $(N,h)$
such that $(U,g|_U)$ is isometric to the warped product $(I\times N,h_f)$.
If $F:(I\times N,h_f)\to (U,g|_U)$ is the isometry in question, then
for all $(r,y)\in I\times N$,
\[
\frac{f'(r)}{f(r)}=&-\Gamma^1_{(1,2)}(F(r,y)) \\
F_*\pa{r}\big|_{(r,y)}=&E_2|_{\phi(r,y)}.
\]
\end{theorem}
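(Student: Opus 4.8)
The plan is to exploit the assumed shape of the connection table to show that $E_2$ is a geodesic unit field whose orthogonal distribution is integrable with totally umbilic leaves, and then to realise $(U,g|_U)$ as the $E_2$--flow emanating from one leaf; this is the low--dimensional local version of the argument in \cite{hiepko79}.

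\textbf{Step 1 (structure of $\nabla E_2$).} First I would read off from the assumed form of $\Gamma$ the covariant derivatives of $E_2$. Using metric compatibility to rewrite $g(\nabla_{E_i}E_2,E_k)=-g(E_2,\nabla_{E_i}E_k)$, the vanishing entries force $\nabla_{E_2}E_2=0$, while the remaining ones give $\nabla_{E_1}E_2=-\Gamma^1_{(1,2)}E_1$ and $\nabla_{E_3}E_2=-\Gamma^1_{(1,2)}E_3$, i.e. $\nabla_X E_2=-\Gamma^1_{(1,2)}X$ for every $X\in\mc{D}:=E_2^{\perp}=\spn\{E_1,E_3\}$. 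Thus $E_2$ is a geodesic field and the leaves of $\mc{D}$ are totally umbilic. Next I would verify that the $1$--form $\eta:=g(E_2,\cdot)$ is closed: evaluating $d\eta(X,Y)=X\eta(Y)-Y\eta(X)-\eta([X,Y])$ on the pairs $(E_1,E_3)$, $(E_1,E_2)$, $(E_3,E_2)$ and inserting the vanishing entries of $\Gamma$ (together with $g(\nabla_{E_i}E_i,E_i)=0$) makes each term vanish; in particular $g(E_2,[E_1,E_3])=\Gamma^3_{(1,2)}-\Gamma^1_{(2,3)}\cdot(-1)=0$, so $\mc{D}$ is involutive. After shrinking the neighbourhood I may write $\eta=dr$ with $r(x_0)=r_0$; then $E_2=\nabla r$, the (connected components of) level sets of $r$ are the integral leaves of $\mc{D}$, and since $E_2(r)=|\nabla r|^2=1$ the flow $\Phi_{E_2}$ advances $r$ at unit speed, carrying leaves to leaves and hence preserving $\mc{D}$.

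\textbf{Step 2 (the flow parametrisation and the metric).} Let $N_0$ be the leaf through $x_0$ with the induced metric $h$, and $I\subset\R$ a small interval about $r_0$. I would define $F:I\times N_0\to M$ by $F(r,y)=\Phi_{E_2}(r-r_0,y)$, a diffeomorphism onto a neighbourhood $U$ of $x_0$ satisfying $F_*\pa{r}=E_2\circ F$. Because $\Phi_{E_2}$ preserves $\mc{D}$ and $E_2\perp\mc{D}$, the pulled--back metric splits without cross terms, $F^*g=dr^2+g_r$, where $g_r$ is the $r$--family of metrics on $N_0$ obtained by pulling the slices back along $\Phi_{E_2}$. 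Differentiating in $r$ and using $\pa{r}g_r=(\mc{L}_{E_2}g)|_{\mc{D}}$ together with $(\mc{L}_{E_2}g)(X,Y)=g(\nabla_X E_2,Y)+g(X,\nabla_Y E_2)$ and the umbilicity of Step 1, I obtain $\pa{r}g_r=-2\,\Gamma^1_{(1,2)}\,g_r$. The remaining hypothesis, that $\Gamma^1_{(1,2)}$ is annihilated by $E_2^{\perp}$, says exactly that $\Gamma^1_{(1,2)}$ is constant on each leaf, so $\Gamma^1_{(1,2)}\circ F$ is a function of $r$ alone; the resulting scalar--coefficient linear ODE integrates to $g_r=f(r)^2 g_{r_0}=f(r)^2 h$ with $f(r)=\exp\!\big(-\int_{r_0}^{r}\Gamma^1_{(1,2)}\big)$. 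Hence $F^*g=dr^2+f(r)^2h=h_f$, and $f'(r)/f(r)=-\Gamma^1_{(1,2)}(F(r,y))$ while $F_*\pa{r}|_{(r,y)}=E_2|_{F(r,y)}$, which is the assertion.

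\textbf{Main obstacle.} The delicate point is the bookkeeping of Step 2: one must check carefully that transporting the slices by the $E_2$--flow really produces a well--defined $r$--family of tensors on the \emph{fixed} surface $N_0$ whose $r$--derivative is the Lie derivative along $E_2$, and — crucially — that the conformal factor coming out of the ODE does not depend on the point of $N_0$. This last fact is precisely where the hypothesis $X(\Gamma^1_{(1,2)})=0$ for $X\in E_2^{\perp}$ is indispensable: without it the ODE $\pa{r}g_r=-2\Gamma^1_{(1,2)}g_r$ would have point--dependent coefficients and the leaf metrics would fail to remain mutually conformal, so one would only get a twisted product rather than a warped one. Everything else is a direct, if slightly tedious, translation of the given structure equations into statements about $\nabla E_2$ and $d\eta$.
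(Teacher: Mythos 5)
Your proposal is correct, and the first half coincides with the paper's: the paper likewise reads off from the connection table that $E_2$ is geodesic, that $E_2^\perp$ is involutive, and that the leaves are umbilic with mean-curvature function $\eta=\Gamma^1_{(1,2)}$ constant along the leaves (these are exactly Hiepko's conditions (i)--(ii)). Where you genuinely diverge is in how the isometry is produced. The paper takes the leaf $N$ through $x_0$, defines $F(t,y)=\exp_y(tE_2|_y)$ (the same map as yours, since $E_2$ is geodesic), fixes $f$ by the first-order ODE $f'=-\eta(F(t,x_0))f$, and then proves $\n{F_*X}_g=f\,\n{X}_g$ by identifying $F_*X$ with a Jacobi field along the normal geodesics: using that parallel transport preserves $E_2^\perp$ and the curvature identity $-K=-E_2(\Gamma^1_{(1,2)})+(\Gamma^1_{(1,2)})^2=f''/f$, the ansatz $J(t)=\lambda(t)P_0^t(\gamma)X$ solves the Jacobi equation and uniqueness forces $\lambda=f$. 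You instead extract the coordinate $r$ from closedness of the $1$-form $g(E_2,\cdot)$, use that the flow of $E_2=\nabla r$ carries leaves to leaves to get the block-diagonal form $F^*g=dr^2+g_r$, and integrate the first-order equation $\pa{r}g_r=(\mc{L}_{E_2}g)|_{E_2^\perp}=-2\Gamma^1_{(1,2)}g_r$, the leaf-constancy hypothesis making the coefficient a function of $r$ alone. Your route is more elementary in that it needs no curvature computation and no second-order (Jacobi) ODE, only $\nabla E_2$ and a scalar linear ODE for the family of leaf metrics; the paper's route, at the cost of the Jacobi-field machinery, produces along the way the relation $f''/f=-K$, which is reused elsewhere in Section 7. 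You also correctly pinpoint where the hypothesis $X(\Gamma^1_{(1,2)})=0$, $X\in E_2^\perp$, is indispensable (without it one only gets a twisted product). The only blemish is a harmless sign slip in your evaluation of $g(E_2,[E_1,E_3])$; the correct expansion is $-\Gamma^1_{(2,3)}-\Gamma^3_{(1,2)}$, and both terms vanish under the hypotheses, so the conclusion stands.
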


\begin{proof}
Write $O$ for the domain of $E_1,E_2,E_3$.
According to the main result in \cite{hiepko79},
whose proof in our special case will be outlined below,
it is enough to prove the following:
\begin{itemize}
\item[(i)] $E_2$ is a geodesic vector field and the 2-dimensional distribution $E_2^\perp$
is integrable.
\item[(ii)] There exists a function $\eta\in\Cinf(O)$
such that for all $U,W\in E_2^\perp$,
\[
g(\nabla_U W,E_2)=\eta g(U,W)
\]
and
\[
g(\nabla_{U} (\eta E_2),E_2)=0.
\]
\end{itemize}

(i) Since $0=\Gamma^2_{(1,2)}=-g(\nabla_{E_2} E_2,E_1)$
and $0=\Gamma^2_{(3,1)}=-g(\nabla_{E_2} E_2,E_3)$
by assumption and since by normality $0=\Gamma^2_{(2,2)}=g(\nabla_{E_2} E_2,E_2)$,
we see that $E_2$ is a geodesic vector field.

As for the integrability of $E_2$, observe that
since $E_2^\perp$ is spanned by $E_1,E_3$ and
by assumption and torsion freeness of the Levi-Civita connection,
\[
g([E_1,E_3],E_2)=g(\nabla_{E_1} E_3-\nabla_{E_3} E_1,E_2)=\Gamma^1_{(3,2)}-\Gamma^2_{(1,2)}=0-0=0
=0.
\]
which shows that $[E_1,E_3]$ is tangent to $E_2^\perp$.
This shows that condition (i) holds.

As for (ii), we see that
\[
& g(\nabla_{E_1} E_3,E_2)=\Gamma^1_{(3,2)}=0,\quad g(\nabla_{E_3} E_1,E_2)=\Gamma^3_{(1,2)}=0 \\
& g(\nabla_{E_1} E_1,E_2)=\Gamma^1_{(1,2)}=-\Gamma^3_{(2,3)}=g(\nabla_{E_3} E_3,E_2).
\]
Thus defining $\eta:=\Gamma^1_{(1,2)}$,
we see that if $U,W\in\VF(M)$ are tangent to $E_2^\perp$
and hence we may write $U=aE_1+bE_3$, $W=cE_1+dE_3$ for some $a,b,c,d\in\VF(M)$,
\[
g(\nabla_{U} W,E_2)
=&ac \underbrace{\Gamma^1_{(1,2)}}_{=\eta}+ad\underbrace{\Gamma^1_{(3,2)}}_{=0}+bc\underbrace{\Gamma^3_{(1,2)}}_{=0}+bd\underbrace{\Gamma^3_{(3,2)}}_{=-\Gamma^3_{(2,3)}=\eta} \\
=&\eta (ac+bd)=\eta g(U,W).
\]

Finally, if $U=aE_1+bE_3$,
\[
g(\nabla_U(\eta E_2),E_2)
=U(\eta)+\eta \underbrace{g(\nabla_U E_2,E_2)}_{=\sfrac{1}{2}Ug(E_2,E_2)=0}=U(\eta)=0.
\]
This proves that (ii) holds.

We will now show how to construct $f$, $(I\times N,h_f)$ and $F$.
First notice that a simple computation of e.g. $R(E_1\wedge E_2)$
with respect to the frame $E_1,E_2,E_3$ yields
\[
-K=-E_2(\Gamma^1_{(1,2)})+(\Gamma^1_{(1,2)})^2.
\]

Since $E_2^\perp$ is integrable,
there is an integral manifold $N$ of $E_2^\perp$ through $x_0$.
Let $r(x)=d_g(N,x)$ be the geodesic distance from $x$ to $N$.
By shrinking $N$ (such that it still contains $x_0$) if necessary,
we may find real numbers $a<0<b$ such that on $V:=r^{-1}(]a,b[)$
the function $r$ is smooth. Write $I=]a,b[$
and define $F:I\times N\to M$ by
\[
F(t,y)=\exp_y(tE_2|_y),
\]
which is smooth, has image actually in $V$
 and has a smooth inverse $H:V\to I\times N$ defined by
 \[
H(x)=(r(x),\exp_x(-r(x)E_2|_x)).
 \]
Thus $F$ is a diffeomorphism $I\times N\to V$.

Define $f$ by an ODE
\[
f'(t)=-\eta(F(t,x_0))f(t),\quad f(0)=1,
\]
where $\eta=\Gamma^1_{(1,2)}$ as above.
We take $I=]a,b[$ a smaller interval containing $0$ if necessary,
so that $f(t)>0$ for all $t\in I$.
Also, define the metric $h$ on $N$ by
\[
h(X,Z)=\frac{1}{f(H(x))^2}g(X,Z),\quad X,Z\in E_2^\perp|_x=T|_x N,\quad x\in N.
\]

Finally, we will prove that $F$ is an isometry between $(I\times N,h_f)$ and $(V,g|_V)$.
At first we notice that
\[
\n{F_*\pa{t}\big|_{(t,y)}}_g
=\n{E_2|_{F(t,y)}}_g=1=\n{\pa{t}\big|_{(t,y)}}_{h_f}.
\]

Let $X\in T|_y N\subset T|_{(t,y)} (I\times N)$.
Since
\[
F(t,y)=(\pi_{TM}\circ \Phi_G)(t,E_2|_y),
\]
where $G\in\VF(M)$ is the geodesic vector field and $\Phi_G$ its flow,
we get that
\[
F_*X=J_{(E_2|_{y},X,\nabla_X E_2)}(t),
\]
where for $u,A,B\in T|_x M$, $J_{(u,A,B)}(t)$ is the Jacobi field of $(M,g)$
along $t\mapsto \exp_x(tu)$
with initial conditions $J_{(u,A,B)}(0)=A$, $\nabla_{\pa{t}}\big|_0 J_{(u,A,B)}(t)=B$.
One easily computes that
\[
\nabla_X E_2=-\eta(y) X.
\]

Because $\nabla_{E_2} E_1=-\Gamma^2_{(3,1)} E_3$,
$\nabla_{E_2} E_3=\Gamma^2_{(3,1)} E_1$,
we have that
for any vector field $U$ perpendicular to $E_2$, the covariant derivative $\nabla_{E_2} U$
is also perpendicular to $E_2$.
This implies that if we set $\gamma(t):=F(t,y)=\exp_y(tE_2|_y)$,
then $P_0^t(\gamma)X$ is perpendicular to $E_2$, whose integral
curve $\gamma$ is (since $E_2$ is a geodesic vector field).

With this noticed, we take as an \emph{ansatz} for the Jacobi field $J_{(E_2|_{y},X,\nabla_X E_2)}(t)$
a vector field $J(t)$ along $\gamma$ of the form
\[
J(t)=\lambda(t)P_0^t(\gamma)X.
\]
This satisfies
\[
R(\dot{\gamma}(t),J(t))\dot{\gamma}(t)=-K(\gamma(t))(\dot{\gamma}(t)\wedge J(t))\dot{\gamma}(t)
=-K(\gamma(t))J(t).
\]

Moreover, $J(0)=\lambda(0)X$,
\[
\nabla_{\dot{\gamma}(t)} J=\lambda'(t)P_0^t(\gamma)X,
\]
so $\nabla_{\dot{\gamma}(t)} J(0)=\lambda'(0) X$ and
\[
\nabla_{\dot{\gamma}(t)} \nabla_{\dot{\gamma}} J=\lambda''(t)P_0^t(\gamma)X
=\frac{\lambda''(t)}{\lambda(t)}J(t)
\]
and thus we see that, in order to make $J(t)$ a Jacobi field with the same initial conditions
as for $J_{(u,A,B)}(t)$, we should choose $\lambda$ as a solution to
\[
\lambda''(t)=-K(\gamma(t))\lambda(t),\quad \lambda(0)=1,\quad\lambda'(0)=-\eta(y).
\]

By uniqueness of solutions to 2nd order ODE,
we hence have $J_{(E_2|_{y},X,\nabla_X E_2)}(t)=J(t)$ i.e.
\[
F_*X=\lambda(t)P_0^t(\gamma)X.
\]
Now one also has
\[
-K(\gamma(t))=-E_2|_{\gamma(t)}(\eta)+\eta(\gamma(t))^2
=\dif{t}\big(\frac{f'(t)}{f(t)})+\big(\frac{f'(t)}{f(t)}\big)^2=\frac{f''(t)}{f(t)},
\]
Since $f(0)=1=\lambda(0)$, $f'(0)=-\eta(y)f(0)=-\eta(y)=\lambda'(0)$, we see that $\lambda(t)=f(t)$ for all $t\in I$
and hence
\[
F_*X=f(t)P_0^t(\gamma)X.
\]

Thus we finally have
\[
\n{F_*X}^2_g=f(t)^2\n{X}^2_g=\n{X}^2_{h_f}
\]
which establishes the fact that $F$ is an isometry $(I\times N,h_f)\to (V,g|_V)$.
Notice that by definition of $F$,
\[
F_*\pa{r}\big|_{(r,y)}=\dif{t}\big|_r F(t,y)=\dif{t}\big|_r \exp_{y} (tE_2|_y)=E_2|_{F(t,y)},
\]
since $E_2$ is a geodesic vector field.

We still need to prove first of the formulas at the end of the statement of the theorem.
Let $(r,y)\in I\times N$.
Choose a path $\gamma$ in $N$ from $x_0\in N$ to $y\in N$.
Since $t\mapsto (r,\gamma(t))$ is perpendicular to $\pa{r}|_{(r,\gamma(t)})$ in $I\times N$,
it follows that, since $F$ is an isometry, $t\mapsto F(r,\gamma(t))$
is perpendicular to $E_2^\perp$.
Hence $\dif{t} \Gamma^1_{(1,2)}(F(r,\gamma(t)))=0$,
which implies that $\Gamma^1_{(1,2)}(F(r,x_0))=\Gamma^1_{(1,2)}(F(r,y))$.
On the other hand, by the definition of $f$ and $\eta$,
one has $\frac{f'(r)}{f(r)}=-\Gamma^1_{(1,2)}(F(r,x_0))$.
This completes the proof.
\end{proof}

Following example shows how to build constant curvature
spaces from other constant curvature spaces
as a warped product illustrates the concept.
The example will also be used in the proof of Proposition \ref{pr:Rol1:main}.

\begin{example}\label{ex:warped_const}
In this example we show how one can locally write
a 3-dimensional Riemannian manifold $(M,g)$ of constant curvature $K$
as a warped product $(I\times N,h_f)$,
where $I$ is an open real interval containing $0$
and $(N,h)$ is a 2-dimensional space of constant curvature.

Let $\sigma,K\in\R$
and take any 2-dimensional Riemannian space $(N,h)$ of constant curvature $\sigma$,
let $a,b\in\R,a>0$, and take $f$ to be the solution to
\[
f''(t)=-Kf(t),\quad f(0)=a,\quad f'(0)=b.
\]
This solution is positive for at least some open interval $I\subset\R$ containing $0$.
Since
\[
\dif{t}(Kf(t)^2+f'(t)^2)=2Kf(t)f'(t)+2f'(t)f''(t)=2f(t)f'(t)-2Kf'(t)f(t)=0,
\]
we have
\[
Kf(t)^2+f'(t)^2=Kf(0)^2+f'(0)^2=Ka^2+b^2.
\]
Compute also
\[
\dif{t}\big(\frac{-\sigma+f'(t)^2}{f(t)^2}\big)
=&\frac{2f'(t)f''(t)f(t)^2-2f(t)f'(t)(-\sigma+f'(t)^2)}{f(t)^4} \\
=&\frac{-2f'(t)f(t)(Kf(t)^2+f'(t)^2)+2\sigma f(t)f'(t)}{f(t)^4} \\
=&\frac{-2f'(t)f(t)(Ka^2+b^2)+2\sigma f(t)f'(t)}{f(t)^4} \\
=&-2(Ka^2+b^2-\sigma)\frac{f'(t)}{f(t)^4}
\]
and define
\[
S_K:=\{(a,b,\sigma)\in\R^3\ |\ a>0,\ Ka^2+b^2=\sigma\}.
\]
If $(a,b,\sigma)\in S_K$, it follows that $\dif{t}\big(\frac{-\sigma+f'(t)^2}{f(t)^2}\big)=0$
and hence
\[
\frac{-\sigma+f'(t)^2}{f(t)^2}=\frac{-\sigma+b^2}{a^2}=-K.
\]

Suppose $y\in N$, $t\in\R$ and that $X,Z,\in T|_y N$ are $h$-orthonormal.
We consider $X,Z$ and $T:=\pa{t}|_t$, where $\pa{t}$ is the canonical positively directed
vector field on $I\subset\R$,
as vectors in $T|_{(t,y)} (I\times N)$ in the usual way.
Then by \cite{oneill83}, Chapter 7, Proposition 42,
we get (notice that there the definition of the curvature tensor
differs by sign from ours),
\[
R^{h_f}(X,T)T=&KX \\
R^{h_f}(X,Z)T=&0 \\
R^{h_f}(T,X)Z=&0 \\
R^{h_f}(T,X)X=&Kf(t)^2T \\
R^{h_f}(X,Z)X=&(-\sigma+f'(t)^2)Z.
\]
Thus if $X,Z$ is an $h$-orthonormal frame on $N$, it follows that $E_1:=\frac{1}{f}X$, $E_2:=T$, $E_3:=\frac{1}{f}Z$
is an $g$-orthonormal frame on $I\times N$ and
\[
R^{h_f}(E_1\wedge E_2)=-KE_1\wedge E_2 \\
R^{h_f}(E_2\wedge E_3)=-KE_2\wedge E_3 \\
R^{h_f}(E_3\wedge E_1)=\frac{-\sigma+f'(t)^2}{f(t)^2}E_3\wedge E_1.
\]

Hence for every choice $(a,b,\sigma)\in S_K$ we
get also that $R^{h_f}(E_3\wedge E_1)=-KE_3\wedge E_1$
which then allows us to conclude that $(I\times N,h_f)$
is a space of constant curvature $K$.

For any $(a,b)\in\R^2$ with $a>0$, one may define 
as $\sigma(a,b)=Ka^2+b^2$, which then implies that $(a,b,\sigma(a,b))\in S_K$
and hence the set $\{(a,b)\in\R^2\ |\ \exists \sigma\in\R\ \textrm{s.t.}\ (a,b,\sigma)\in S_K\}$
is the open right half-plane of $\R^2$.

The conclusion here is that for every $K\in\R$ and $a,b\in\R$ with $a>0$,
one can construct a warped product $(I\times N,h_f)$
which has constant curvature $K$ and
where $N$ is a space of constant curvature (with curvature $Ka^2+b^2$)
and that the warping function satisfies: $f(0)=a$, $f'(0)=b$.
This will be used in the proof of Proposition \ref{re:m_beta:0}.
\end{example}

Here we will make a remark about the intersection of the classes $\mc{M}_{\beta}$
and the class of warped products of the form $(I,s_1)\times_f (N,h)$
with $(N,h)$ two-dimensional.

\begin{remark}
Suppose that a Riemannian 3-manifold $(M,g)$
is at the same time a warped product $(I\times N,h_f)$
and belongs to class $\mc{M}_{\beta}$,
with $E_1,E_2,E_3$ an adapted frame.
As a warped product, the curvature tensor $R$
has eigenvalues (functions) $-K(\cdot),-K_2(\cdot),-K(\cdot)$ (with some eigenvector fields)
where $-K(r,y)=\frac{f''(r)}{f(r)}$.
Since as a $\mc{M}_{\beta}$, the operator $R$ has eigenvalues $-\beta^2,-K'_2(\cdot),-\beta^2$,
we must have (taking any combination) that $K=\beta^2$ is a constant and $K_2'=K_2$ everywhere on $M$.
Let us now consider three different cases:
\begin{itemize}
\item[(i)] If $\beta=0$, then it immediately follows that $(M,g)$ is a Riemannian product,
since $f$ is constant.

\item[(ii)] Suppose that $\beta\neq 0$ and $K_2(r_0,y_0)\neq \beta^2$
at some point $(r_0,y_0)\in M$.
Then there is a neighbourhood $U$ of $(r_0,y_0)$ where $K_2\neq \beta^2$.
It follows that $E_2|_{(r,y)}=\pm \pa{r}\big|_{(r,y)}$ for $(r,y)\in U$,
from which it follows that in the connection table w.r.t. $E_1,E_2,E_3$
one must have also $\Gamma^1_{(1,2)}(r,y)=0$ and $\beta=\Gamma^1_{(2,3)}(r,y)=0$ for all $(r,y)\in U$.
Therefore $-\frac{f''(r)}{f(r)}=K=\beta^2=0$ for all $r\in I$,
which implies that $f'(r)$ is a constant function.
But $\Gamma^1_{(1,2)}(r,y)=-\frac{f'(r)}{f(r)}$ vanishes on $U$,
hence $f'(r)$ vanishes for some $r$ and hence $f'(r)=0$ for all $r\in I$.
This implies that $(M,g)$ is a Riemannian product.

\item[(iii)] If $\beta\neq 0$ and $K_2(r,y)=\beta^2$,
then $(M,g)$ has a constant curvature $\beta^2$
and hence is locally isometric to a sphere of curvature $\beta^2$.
\end{itemize}

As a conclusion, if a warped product $(M,g)=(I\times N,h_f)$
belongs to class $\mc{M}_{\beta}$,
then either it is (a) a Riemannian product ($\beta=0$, $f$ constant)
or (b) a space of constant curvature $\beta^2$.
Both (a) and (b) occur if and only if $(M,g)$ is flat.
\end{remark}

%%%%%%%%%%%%%%%%%%%%%%%%
\subsection{Technical propositions}
%%%%%%%%%%%%%%%%%%%%%%%%

Since we will be dealing frequently with orthonormal frames
and connection coefficients,
it is convenient to define the following concept.

\begin{definition}
Let $(M,g)$ be a 3-dimensional Riemannian manifold.
If $E_1,E_2,E_3$ is an orthonormal frame of $M$ defined on an open set $U$,
then $\Gamma^j_{(i,k)}=g(\nabla_{E_j} E_i,E_k)$,
we call the matrix
\[
\Gamma=\qmatrix{
\Gamma^1_{(2,3)} & \Gamma^2_{(2,3)} & \Gamma^3_{(2,3)} \cr
\Gamma^1_{(3,1)} & \Gamma^2_{(3,1)} & \Gamma^3_{(3,1)} \cr
\Gamma^1_{(1,2)} & \Gamma^2_{(1,2)} & \Gamma^3_{(1,2)}
},
\]
the \emph{connection table w.r.t. $E_1,E_2,E_3$}.
To emphasize the frame, we may write $\Gamma=\Gamma_{(E_1,E_2,E_3)}$.
\end{definition}

\begin{remark}
\begin{itemize}
\item[(i)] Since $E_1,E_2,E_3$ is orhonormal,
one has $\Gamma^i_{(j,k)}=-\Gamma^i_{(k,j)}$ for all $i,j,k$.
These relations mean that to know all the connection
coefficients (of an orthonormal frame),
it is enough to know exactly 9 of them.
It is these 9 coefficients, that appear in the connection table.

\item[(ii)] Here it is important that the frame $E_1,E_2,E_3$ is ordered
and hence one should speak of the connection table w.r.t. $(E_1,E_2,E_3)$
(as in the notation $\Gamma=\Gamma_{(E_1,E_2,E_3)}$),
but since we always list the frame in the correct order, there
will be no room for confusion.

\item[(iii)] Notice that the above connection table could be written as $\Gamma=[(\Gamma_{\star i}^j)_j^i]$,
if one writes $\star 1=(2,3)$, $\star 2=(3,1)$ and $\star 3=(1,2)$ i.e.
\[
\Gamma=\qmatrix{
\Gamma^1_{\star 1} & \Gamma^2_{\star 1} & \Gamma^3_{\star 1} \\
\Gamma^1_{\star 2} & \Gamma^2_{\star 2} & \Gamma^3_{\star 2} \\
\Gamma^1_{\star 3} & \Gamma^2_{\star 3} & \Gamma^3_{\star 3}
}.
\]
\end{itemize}
\end{remark}

\begin{proposition}\label{pr:special_3D}
Suppose $(M,g)$ is a 3-dimensional Riemannian manifold
and in some neighbourhood of $x\in M$
there is an orthonormal frame
$E_1,E_2,E_3$ defined on an open set $U$
with respect to which the connection table is of the form
\[
\Gamma=\qmatrix{
\Gamma^1_{(2,3)} &                                0                                & -\Gamma^1_{(1,2)} \cr
\Gamma^1_{(3,1)} & \Gamma^2_{(3,1)} 				& \Gamma^3_{(3,1)} \cr
\Gamma^1_{(1,2)} &                                0                               & \Gamma^1_{(2,3)} \cr
},
\]
on $U$, and moreover it holds that
\[
& V(\Gamma^1_{(2,3)})=0,\quad V(\Gamma^1_{(1,2)})=0,\quad \forall V\in E_2|_y^\perp,\quad y\in U, \\
\]

Then the following are true:
\begin{itemize}
\item[(i)]
For every $y\in U$, $\star E_1|_y,\star E_2|_y,\star E_3|_y$ are
eigenvectors of $R$ with
eigenvalues $-K(y),-K_2(y),-K(y)$, respectively
(i.e. the eigenvalues of $\star E_1|_y$ and $\star E_3|_y$ coincide).

\item[(ii)] If $\Gamma^1_{(2,3)}\neq 0$ on $U$
and if $U$ is connected,
it follows that on $U$ the coefficient $\Gamma^1_{(2,3)}$ is constant,
$\Gamma^1_{(1,2)}=0$ and $K(y)=(\Gamma^1_{(2,3)})^2$ (constant).
Hence $(U,g|_U)$ is of class $\mc{M}_{\beta}$, for $\beta=\Gamma^1_{(2,3)}$.

\item[(iii)] If $\Gamma^1_{(2,3)}=0$ in the open set $U$,
then every $y\in U$ has a neighbourhood $U'\subset U$
such that $(U',g|_{U'})$ is isometric to a warped product $(I\times N,h_f)$
where $I\subset\R$ is an open interval.
Moreover, if $F:I\times_f N\to (U',g|_{U'})$ is the isometry in question, then
\[
\frac{f'(r)}{f(r)}=&-\Gamma^1_{(1,2)}(F(r,y)),\quad \forall (r,y)\in I\times N \\
F_*\pa{r}\big|_{(r,y)}=&E_2|_{F(r,y)}.
\]
\end{itemize}
Moreover, one has
\begin{align}
0=&-E_2(\Gamma^1_{(2,3)})+2\Gamma^1_{(1,2)}\Gamma^1_{(2,3)}  \label{eq:RsE3sE1} \\
-K=&-E_2(\Gamma^1_{(1,2)})+(\Gamma^1_{(1,2)})^2-(\Gamma^1_{(2,3)})^2  \label{eq:RsE3sE3} \\
-K_2=&E_3(\Gamma^1_{(3,1)})-E_1(\Gamma^3_{(3,1)})+(\Gamma^1_{(3,1)})^2+(\Gamma^3_{(3,1)})^2 \label{eq:RsE2sE2} \\
&-2\Gamma^1_{(2,3)}\Gamma^2_{(3,1)}+(\Gamma^1_{(1,2)})^2+(\Gamma^1_{(2,3)})^2 \nonumber
\end{align}
\end{proposition}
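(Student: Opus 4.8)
The plan is to reduce the whole statement to a direct computation of the Levi--Civita connection and the Riemannian curvature operator in the moving frame $E_1,E_2,E_3$, then to read off (i) and the three displayed identities as the components of that operator, with (ii) and (iii) following by short additional arguments. First I would record the structure equations: since $\nabla_{E_j}E_i=\sum_k\Gamma^j_{(i,k)}E_k$ and $\Gamma^j_{(i,k)}=-\Gamma^j_{(k,i)}$ by orthonormality, the prescribed connection table determines every $\nabla_{E_i}E_j$ and hence every Lie bracket $[E_i,E_j]=\nabla_{E_i}E_j-\nabla_{E_j}E_i$; in particular one obtains $[E_1,E_3]=\Gamma^1_{(3,1)}E_1-2\Gamma^1_{(2,3)}E_2+\Gamma^3_{(3,1)}E_3$, which will be the key ingredient for (ii). Next, identifying $\wedge^2T|_yM$ with $\so(T|_yM)$ via $\phi$ and using the $g$-orthonormal basis $\star E_1,\star E_2,\star E_3$ (that is, $E_2\wedge E_3$, $E_3\wedge E_1$, $E_1\wedge E_2$) of $\wedge^2T|_yM$, the curvature operator $\mc{R}$ is the symmetric $3\times3$ matrix whose entries are the universal polynomial expressions in the $\Gamma^j_{(i,k)}$ and their $E_m$-derivatives coming from $R(E_a,E_b)E_c=\nabla_{E_a}\nabla_{E_b}E_c-\nabla_{E_b}\nabla_{E_a}E_c-\nabla_{[E_a,E_b]}E_c$.

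The computational heart of the proof --- and the step I expect to be the main, albeit not conceptual, obstacle --- is to carry this out with the special form of $\Gamma$. Plugging in, each of the three off-diagonal entries of $\mc{R}$ (the $(\star E_1,\star E_2)$-, $(\star E_1,\star E_3)$- and $(\star E_2,\star E_3)$-entries) turns out to be an expression involving a derivative of $\Gamma^1_{(2,3)}$ or of $\Gamma^1_{(1,2)}$ along $E_1$ or $E_3$; by the hypothesis $V(\Gamma^1_{(2,3)})=V(\Gamma^1_{(1,2)})=0$ for all $V\in E_2^\perp$ (applied to the basis $E_1,E_3$ of $E_2^\perp$) these all vanish, so $\mc{R}$ is diagonal in this frame. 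The same computation shows the $(\star E_1,\star E_1)$- and $(\star E_3,\star E_3)$-diagonal entries coincide; denoting their common value by $-K$ and the remaining diagonal entry by $-K_2$, this proves (i), and at the same time the explicit forms of these entries yield the three displayed identities: the vanishing of the $(\star E_1,\star E_3)$-entry is \eqref{eq:RsE3sE1}, the value of the $(\star E_3,\star E_3)$-entry is \eqref{eq:RsE3sE3}, and the value of the $(\star E_2,\star E_2)$-entry is \eqref{eq:RsE2sE2}.

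For (ii), I would argue as follows. Assuming $\Gamma^1_{(2,3)}\neq0$ on the connected open set $U$, apply the operator $E_1E_3-E_3E_1=[E_1,E_3]$ to the function $\Gamma^1_{(2,3)}$: using $E_1(\Gamma^1_{(2,3)})=E_3(\Gamma^1_{(2,3)})=0$ on the left-hand side and the bracket formula above on the right-hand side gives $0=-2\Gamma^1_{(2,3)}\,E_2(\Gamma^1_{(2,3)})$, hence $E_2(\Gamma^1_{(2,3)})=0$ on $U$; together with the vanishing of its $E_1$- and $E_3$-derivatives this makes $\Gamma^1_{(2,3)}$ locally constant, hence constant on the connected set $U$. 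Then \eqref{eq:RsE3sE1} forces $2\Gamma^1_{(1,2)}\Gamma^1_{(2,3)}=0$, so $\Gamma^1_{(1,2)}=0$, and \eqref{eq:RsE3sE3} then gives $K=(\Gamma^1_{(2,3)})^2$. With $\Gamma^1_{(1,2)}=0$ and $\Gamma^1_{(2,3)}=\beta$ a constant, the connection table of $E_1,E_2,E_3$ becomes $\qmatrix{\beta&0&0\cr\Gamma^1_{(3,1)}&\Gamma^2_{(3,1)}&\Gamma^3_{(3,1)}\cr0&0&\beta}$, which is precisely the defining form of an adapted frame of a manifold of class $\mc{M}_\beta$; hence $(U,g|_U)$ is of class $\mc{M}_\beta$.

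Finally, for (iii), if $\Gamma^1_{(2,3)}=0$ on $U$ then the connection table reduces to $\qmatrix{0&0&-\Gamma^1_{(1,2)}\cr\Gamma^1_{(3,1)}&\Gamma^2_{(3,1)}&\Gamma^3_{(3,1)}\cr\Gamma^1_{(1,2)}&0&0}$ and the hypothesis becomes simply $V(\Gamma^1_{(1,2)})=0$ for all $V\in E_2^\perp$. These are exactly the hypotheses of Theorem \ref{th:warped_product}, which then provides, near any point of $U$, an open interval $I\subset\R$, a warping function $f$, a $2$-dimensional Riemannian manifold $(N,h)$ and an isometry $F$ from $(I\times N,h_f)$ onto an open subset $(U',g|_{U'})$, together with the identities $f'(r)/f(r)=-\Gamma^1_{(1,2)}(F(r,y))$ and $F_*\pa{r}\big|_{(r,y)}=E_2|_{F(r,y)}$, which is the claim of (iii) and completes the proof.
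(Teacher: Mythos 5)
Your treatment of (ii) and (iii) follows the paper's own route (the bracket $[E_1,E_3]$ applied to $\Gamma^1_{(2,3)}$, then \eqref{eq:RsE3sE1}--\eqref{eq:RsE3sE3}, and a direct appeal to Theorem \ref{th:warped_product}), and identifying \eqref{eq:RsE3sE3}, \eqref{eq:RsE2sE2} with diagonal entries of the curvature operator is fine. The gap is in the step everything rests on: your claim that all three off-diagonal entries of $\mc{R}$ in the basis $\star E_1,\star E_2,\star E_3$ are expressions in the $E_1$- and $E_3$-derivatives of $\Gamma^1_{(2,3)}$, $\Gamma^1_{(1,2)}$ and hence vanish by hypothesis. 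That is true only for the two entries involving $\star E_2$: the computation of $R(E_3\wedge E_1)$ gives them as $E_3(\Gamma^1_{(2,3)})+E_1(\Gamma^1_{(1,2)})$ and $E_3(\Gamma^1_{(1,2)})-E_1(\Gamma^1_{(2,3)})$, so $\star E_2$ is indeed an eigenvector. The remaining $(\star E_1,\star E_3)$-entry, computed as $g(R(E_1,E_2)E_2,E_3)$, equals $-E_2(\Gamma^1_{(2,3)})+2\Gamma^1_{(1,2)}\Gamma^1_{(2,3)}$: it contains the $E_2$-derivative of $\Gamma^1_{(2,3)}$ and a product term, neither of which is controlled by the hypothesis. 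Its vanishing is precisely identity \eqref{eq:RsE3sE1}, i.e.\ the nontrivial content of part (i), so as written you have assumed the key point; and since in (ii) you deduce $\Gamma^1_{(1,2)}=0$ from \eqref{eq:RsE3sE1}, the gap propagates there as well.

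Two ways to close it. The paper's way: having shown $\star E_2|_y$ is an eigenvector, fix $y$ and rotate $E_1,E_3$ among themselves by a constant rotation (this preserves the assumed form of the table and the derivative conditions), so that $\star E_1|_y,\star E_3|_y$ become eigenvectors of the symmetric operator $R|_y$; the explicit computations of $R(E_1\wedge E_2)$ and $R(E_2\wedge E_3)$ then show both eigenvalues equal $-E_2(\Gamma^1_{(1,2)})+(\Gamma^1_{(1,2)})^2-(\Gamma^1_{(2,3)})^2$, so all of $(\star E_2)^\perp$ is one eigenspace, which gives (i), and the first components of those computations (now known to vanish) give \eqref{eq:RsE3sE1} for the original frame. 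Alternatively, staying within your scheme: the same operator entry can also be computed as $g(R(E_2,E_3)E_1,E_2)=E_2(\Gamma^1_{(2,3)})-2\Gamma^1_{(1,2)}\Gamma^1_{(2,3)}$, using only the form of the connection table; the pair symmetry $g(R(E_1,E_2)E_2,E_3)=g(R(E_2,E_3)E_1,E_2)$ then forces both expressions to vanish, which is \eqref{eq:RsE3sE1} and supplies the missing diagonality (the equality of the two remaining diagonal entries is, as you say, immediate from the computation).
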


\begin{proof}
(i) We begin by computing in the basis $\star E_1,\star E_2,\star E_3$ that
\[
R(E_3\wedge E_1)
=&\qmatrix{-\Gamma^1_{(1,2)} \cr \Gamma^3_{(3,1)} \cr \Gamma^1_{(2,3)}}\wedge\qmatrix{\Gamma^1_{(2,3)} \cr \Gamma^1_{(3,1)} \cr \Gamma^1_{(1,2)}}
+\qmatrix{E_3(\Gamma^1_{(2,3)}) \cr E_3(\Gamma^1_{(3,1)}) \cr E_3(\Gamma^1_{(1,2)})}
-\qmatrix{-E_1(\Gamma^1_{(1,2)}) \cr E_1(\Gamma^3_{(3,1)}) \cr E_1(\Gamma^1_{(2,3)})} \\
&+\Gamma^1_{(3,1)}\qmatrix{\Gamma^1_{(2,3)} \cr \Gamma^1_{(3,1)} \cr \Gamma^1_{(1,2)}}
-2\Gamma^1_{(2,3)}\qmatrix{0 \cr \Gamma^2_{(3,1)}\cr 0}
+\Gamma^3_{(3,1)}\qmatrix{-\Gamma^1_{(1,2)} \cr \Gamma^3_{(3,1)} \cr \Gamma^1_{(2,3)}}
=\qmatrix{0 \cr -K_2 \cr 0},
\]
where we omitted the further computation of row 2 and wrote it simply as $-K_2$
and use the fact that $E_i(\Gamma^1_{(1,2)})=0$, $E_i(\Gamma^1_{(2,3)})=0$ for $i\in\{1,3\}$.
Thus $\star E_2|_y$ is an eigenvector of $R|_y$ for all $y$.

Now fix $y\in U$.
Since $R|_y$ is a symmetric linear map $\wedge^2T|_y M$ to itself
and since $\star E_2|_y$ is an eigenvector for $R|_y$,
we know that the other eigenvectors lie in $\star E_2|_y$,
which is spanned by $\star E_1|_y,\star E_3|_y$.
By rotating $E_1,E_3$ among themselves
by a constant matrix, we may well assume that $\star E_1|_y,\star E_3|_y$
are eigenvectors of $R|_y$
corresponding to eigenvalues, say, $-K_1(y),-K_3(y)$.
We want to show that $K_1(y)=K_3(y)$.

Computing $R|_y(E_1\wedge E_2)$ in the basis $\star E_1|_y,\star E_2|_y,\star E_3|_y$ gives
(we write simply $\Gamma^i_{(j,k)}$ for $\Gamma^i_{(j,k)}(y)$ etc.)
\[
\qmatrix{0 \cr 0\cr -K_3(y)}=&R|_y(\star E_3)=R|_y(E_1\wedge E_2) \\
=&\qmatrix{\Gamma^1_{(2,3)} \cr \Gamma^1_{(3,1)} \cr \Gamma^1_{(1,2)}}\wedge\qmatrix{0 \cr \Gamma^2_{(3,1)} \cr 0}
+\qmatrix{0 \cr E_1(\Gamma^2_{(3,1)}) \cr 0}
-\qmatrix{E_2(\Gamma^1_{(2,3)}) \cr E_2(\Gamma^1_{(3,1)}) \cr E_2(\Gamma^1_{(1,2)})} \\
&+\Gamma^1_{(1,2)}\qmatrix{\Gamma^1_{(2,3)} \cr \Gamma^1_{(3,1)} \cr \Gamma^1_{(1,2)}}-(\Gamma^1_{(2,3)}+\Gamma^2_{(3,1)})\qmatrix{-\Gamma^1_{(1,2)} \cr \Gamma^3_{(3,1)} \cr \Gamma^1_{(2,3)}} \\
=&\qmatrix{
-E_2(\Gamma^1_{(2,3)})+2\Gamma^1_{(1,2)}\Gamma^1_{(2,3)} \cr
E_1(\Gamma^2_{(3,1)})-E_2(\Gamma^1_{(3,1)})+\Gamma^1_{(1,2)}\Gamma^1_{(3,1)}-(\Gamma^1_{(2,3)}+\Gamma^2_{(3,1)})\Gamma^3_{(3,1)} \cr
-E_2(\Gamma^1_{(1,2)})+(\Gamma^1_{(1,2)})^2-(\Gamma^1_{(2,3)})^2
},
\]
from where
\[
-K_3(y)=&-E_2|_y(\Gamma^1_{(1,2)})+(\Gamma^1_{(1,2)}(y))^2-(\Gamma^1_{(2,3)}(y))^2.
\]
Similarly, computing $R|_y(E_2\wedge E_3)$ in basis $\star E_1|_y,\star E_2|_y,\star E_3|_y$,
\[
\qmatrix{-K_1(y)\cr 0\cr 0}
=&R|_y(\star E_1)=R|_y(E_2\wedge E_3) \\
=&\qmatrix{0 \cr \Gamma^2_{(3,1)} \cr 0}
\wedge \qmatrix{-\Gamma^1_{(1,2)} \cr \Gamma^3_{(3,1)} \cr \Gamma^1_{(2,3)}}
+\qmatrix{-E_2(\Gamma^1_{(1,2)}) \cr E_2(\Gamma^3_{(3,1)}) \cr E_2(\Gamma^1_{(2,3)})}
-\qmatrix{0 \cr E_3(\Gamma^2_{(3,1)}) \cr 0}\\
&-(\Gamma^2_{(3,1)}+\Gamma^1_{(2,3)})\qmatrix{\Gamma^1_{(2,3)} \cr \Gamma^1_{(3,1)} \cr \Gamma^1_{(1,2)}}
-\Gamma^1_{(1,2)}\qmatrix{-\Gamma^1_{(1,2)} \cr \Gamma^3_{(3,1)} \cr \Gamma^1_{(2,3)}} \\
=&\qmatrix{-E_2(\Gamma^1_{(1,2)})-(\Gamma^1_{(2,3)})^2+(\Gamma^1_{(1,2)})^2 \cr
E_2(\Gamma^3_{(3,1)})-E_3(\Gamma^2_{(3,1)})-(\Gamma^2_{(3,1)}+\Gamma^1_{(2,3)})\Gamma^1_{(3,1)}
-\Gamma^1_{(1,2)}\Gamma^3_{(3,1)} \cr
E_2(\Gamma^1_{(2,3)})-2\Gamma^1_{(1,2)}\Gamma^1_{(2,3)}
}
\]
leads us to
\[
-K_1(y)&=-E_2|_y(\Gamma^1_{(1,2)})-(\Gamma^1_{(2,3)}(y))^2+(\Gamma^1_{(1,2)}(y))^2.
\]
By comparing to the result of the computations of $R|_y(E_1\wedge E_2)$
and $R|_y(E_2\wedge E_3)$
implies that $K_1(y)=K_3(y)$.
In other words, if one writes $K(y)$ for this common value $K_1(y)=K_3(y)$,
one sees that $E_2|_y^\perp$ is contained in the eigenspace of $R|_y$
corresponding to the eigenvalue $-K(y)$.
This finishes the proof of (i).

(ii) Suppose now that $\Gamma^1_{(2,3)}\neq 0$ on
an open connected subset $U$ of $\pi_{\mc{O}_{\RDist}(q_0),M}(O)$.
Then since $E_1(\Gamma^1_{(2,3)})=0$, $E_3(\Gamma^1_{(2,3)})=0$ on $U$,
one has, on $U$,
$$
[E_3,E_1](\Gamma^1_{(2,3)})=E_3(E_1(\Gamma^1_{(2,3)}))
E_1(E_3(\Gamma^1_{(2,3)}))=0.
$$
On the other hand,
$$
[E_3,E_1]=-\Gamma^1_{(3,1)}E_1+2\Gamma^1_{(2,3)}E_2-\Gamma^3_{(3,1)}E_3,
$$
so
\[
0=&[E_3,E_1](\Gamma^1_{(2,3)})
=-\Gamma^1_{(3,1)}E_1(\Gamma^1_{(2,3)})+2\Gamma^1_{(2,3)}E_2(\Gamma^1_{(2,3)})-\Gamma^3_{(3,1)}E_3(\Gamma^1_{(2,3)}) \\
=&2\Gamma^1_{(2,3)}E_2(\Gamma^1_{(2,3)}).
\]
Since $\Gamma^1_{(2,3)}\neq 0$ everywhere on $U$,
one has $E_2(\Gamma^1_{(2,3)})=0$ on $U$.
Because $E_1,E_2,E_3$ span $TM$ on $U$,
we have that all the derivatives of $\Gamma^1_{(2,3)}$ vanish on $U$
and thus it is constant.

From first row of the computation of $R(E_1\wedge E_2)$ in the case (ii) above,
one gets
\[
0=-E_2(\Gamma^1_{(2,3)})+2\Gamma^1_{(1,2)}\Gamma^1_{(2,3)}=2\Gamma^1_{(1,2)}\Gamma^1_{(2,3)},
\]
which implies $\Gamma^1_{(1,2)}=0$ on $U$.
Finally from the last row
computation of $R(E_1\wedge E_2)$
(recall that $K_1(y)=K_3(y)=:K(y)$)
\[
-K(y)=-E_2(\Gamma^1_{(1,2)})+(\Gamma^1_{(1,2)})^2-(\Gamma^1_{(2,3)})^2=-(\Gamma^1_{(2,3)})^2.
\]
This concludes the proof of (ii).

(iii) This case follows from Theorem \ref{th:warped_product}.
\end{proof}

\end{document}